\pgfplotsset{compat=newest} % Use the newest compatibility layer
\definecolor{diagram-background-color}{RGB}{246, 246, 246}
\definecolor{mesh-background-color}{RGB}{246, 246, 246}
\tikzstyle{diagram-background}=[diagram-background-color]
\tikzstyle{diagram-box}=[fill = lightgray, thick]
\tikzstyle{diagram-wire}=[thick]
\tikzstyle{diagram-wire-over}=[diagram-background-color, line width = 3]
\tikzstyle{diagram-vertex}=[circle, inner sep=2pt, fill = black]
\tikzstyle{mesh-background}=[mesh-background-color]
\tikzstyle{mesh-open-boundary}=[lightgray, densely dotted]
\tikzstyle{mesh-closed-boundary}=[lightgray]
\tikzstyle{mesh-stratum-over}=[mesh-background-color, line width = 3]
\tikzstyle{mesh-stratum}=[thick, black]
\tikzstyle{mesh-stratum-orange}=[thick, BurntOrange]
\tikzstyle{mesh-stratum-blue}=[thick, NavyBlue]
\tikzstyle{mesh-stratum-dual}=[thick, BrickRed]
\tikzstyle{mesh-stratum-3d}=[fill=gray, fill opacity = 0.3]
\tikzstyle{mesh-vertex}=[circle, inner sep = 1.5pt, fill=black]
\tikzstyle{mesh-vertex-orange}=[circle, inner sep = 1.5pt, fill=BurntOrange]
\tikzstyle{mesh-vertex-blue}=[circle, inner sep = 1.5pt, fill=NavyBlue]
\tikzstyle{mesh-vertex-over}=[circle, inner sep = 2.5pt, fill=mesh-background-color]
\tikzstyle{mesh-vertex-dual}=[circle, inner sep = 1.5pt, fill=BrickRed]
\tikzstyle{mesh3-stratum}=[thick, black]
\tikzstyle{mesh3-stratum-orange}=[thick, BurntOrange]
\tikzstyle{mesh3-stratum-blue}=[thick, NavyBlue]
\tikzstyle{mesh3-stratum-dual}=[thick, BrickRed]
\tikzstyle{mesh3-surface}=[fill=gray, fill opacity = 0.3, gray]
\tikzstyle{mesh3-vertex}=[circle, inner sep = 1pt, fill=black]
\tikzstyle{mesh3-vertex-dual}=[circle, inner sep = 1.5pt, fill=BrickRed]
\tikzstyle{mesh3-bounding-box}=[thin, lightgray]
\tikzstyle{truss-vertex}=[]
\newcommand{\meshBoundingBox}[3]{
  \draw[mesh3-bounding-box]
    (0, 0, 0) --
    (#1, 0, 0) -- 
    (#1, 0, #3) -- 
    (0, 0, #3) -- cycle;
  \draw[mesh3-bounding-box]
    (0, #2, 0) --
    (#1, #2, 0) -- 
    (#1, #2, #3) -- 
    (0, #2, #3) -- cycle;
  \draw[mesh3-bounding-box] (0, 0, 0) -- (0, #2, 0);
  \draw[mesh3-bounding-box] (#1, 0, 0) -- (#1, #2, 0); 
  \draw[mesh3-bounding-box] (#1, 0, #3) -- (#1, #2, #3); 
  \draw[mesh3-bounding-box] (0, 0, #3) -- (0, #2, #3);
}
\tikzset{mid vert/.style={/utils/exec=\tikzset{every node/.append style={outer sep=0.5ex}},
postaction=decorate,decoration={markings,
mark=at position 0.5 with {\draw[-] (0,#1) -- (0,-#1);}}},
mid vert/.default=0.5ex}
\tikzset{curve/.style={settings={#1},to path={(\tikztostart)
    .. controls ($(\tikztostart)!\pv{pos}!(\tikztotarget)!\pv{height}!270:(\tikztotarget)$)
    and ($(\tikztostart)!1-\pv{pos}!(\tikztotarget)!\pv{height}!270:(\tikztotarget)$)
    .. (\tikztotarget)\tikztonodes}},
    settings/.code={\tikzset{quiver/.cd,#1}
        \def\pv##1{\pgfkeysvalueof{/tikz/quiver/##1}}},
    quiver/.cd,pos/.initial=0.35,height/.initial=0}
\tikzset{tail reversed/.code={\pgfsetarrowsstart{tikzcd to}}}
\tikzset{2tail/.code={\pgfsetarrowsstart{Implies[reversed]}}}
\tikzset{2tail reversed/.code={\pgfsetarrowsstart{Implies}}}
\tikzset{no body/.style={/tikz/dash pattern=on 0 off 1mm}}
\DeclareMathAlphabet{\mathbbe}{U}{bbold}{m}{n}
\def\degreedate#1{\gdef\@degreedate{#1}}
\def\degree#1{\gdef\@degree{#1}}
\def\college#1{\gdef\@college{#1}}
\def\crest{{\includegraphics{}}}
\newcommand{\submittedtext}{{A thesis submitted for the degree of}}
\renewcommand{\maketitle}{%
	\renewcommand{\footnotesize}{\small}
	\renewcommand{\footnoterule}{\relax}
	\thispagestyle{empty}
	\begin{center}
		{ \LARGE {\bfseries {\@title}} \par}
		{\large \vspace*{30mm} {\crest \par} \vspace*{25mm}}
		{{\Large \@author} \par}
		{\large \vspace*{1ex}
			{{\@college} \par}
			\vspace*{1ex}
			{University of Oxford \par}
			\vspace*{20mm}
			{{\submittedtext} \par}
			\vspace*{1ex}
			{\it {\@degree} \par}
			\vspace*{2ex}
			{\@degreedate}}
	\end{center}
	\vfill\null}
\newtheoremstyle{hypothesis}%
{}{}%
{}{}%
{}{.}%
{ }%
{\textbf{\thmnote{#3}}}
\newtheoremstyle{note}%
{}{}%
{\color{BrickRed}}{}%
{\bfseries}{.}%
{ }%
{}
\newtheoremstyle{theorem} % name of the style
{6pt} % Space above
{3pt} % Space below
{\itshape} % Body font
{} % Indent amount
{\bfseries} % Theorem head font
{. } % Punctuation after theorem head
{0pt} % Space after theorem head
{\thmnumber{#2 }{\thmname{#1}}\thmnote{ (#3)}} % Theorem head spec
\newtheoremstyle{definition} % name of the style
{8pt} % Space above
{3pt} % Space below
{} % Body font
{} % Indent amount
{\bfseries} % Theorem head font
{. } % Punctuation after theorem head
{0pt} % Space after theorem head
{\thmnumber{#2 }{\thmname{#1}}\thmnote{ (#3)}} % Theorem head spec
\newtheoremstyle{para} % name of the style
{8pt} % Space above
{3pt} % Space below
{} % Body font
{} % Indent amount
{\bfseries} % Theorem head font
{. } % Punctuation after theorem head
{0pt} % Space after theorem head
\theoremstyle{theorem}
\newtheorem{thm}{Theorem}[subsection]
\newtheorem{proposition}[thm]{Proposition}
\newtheorem{lemma}[thm]{Lemma}
\newtheorem{cor}[thm]{Corollary}
\theoremstyle{hypothesis}
\theoremstyle{definition}
\newtheorem{definition}[thm]{Definition}
\newtheorem{construction}[thm]{Construction}
\newtheorem{observation}[thm]{Observation}
\newtheorem{example}[thm]{Example}
\newtheorem{remark}[thm]{Remark}
\theoremstyle{para}
\newtheorem{para}[thm]{}
\theoremstyle{note}
\DeclareFontFamily{U}{min}{}
\DeclareFontShape{U}{min}{m}{n}{<-> udmj30}{}
\newcommand{\catname}[1]{\textup{#1}}
\newcommand{\sgn}{\operatorname{sgn}}
\newcommand{\R}{\mathbb{R}}
\newcommand{\ord}[1]{\lbrack #1 \rbrack}
\newcommand{\op}{\textup{op}}
\newcommand{\cat}[1]{\mathcal{#1}}
\newcommand{\strat}[1]{{#1}}
\newcommand{\stratMap}[1]{\cramped{\mathbb{s}}({#1})}
\newcommand{\stratPos}[1]{\cramped{\mathbb{P}}({#1})}
\newcommand{\stratForget}[1]{#1}
\newcommand{\unstrat}{\operatorname{U}}
\newcommand{\framing}{\operatorname{fr}}
\newcommand{\core}{\operatorname{core}}
\newcommand{\CatN}[1]{\catname{Cat}_{(\infty, #1)}}
\newcommand{\CatInfty}{\catname{Cat}_\infty}
\newcommand{\Space}{\catname{S}}
\newcommand{\Tw}{\operatorname{Tw}}
\newcommand{\Cart}[1]{\catname{Cart}(#1)}
\newcommand{\coCart}[1]{\catname{coCart}(#1)}
\newcommand{\LFib}[1]{\catname{LFib}(#1)}
\newcommand{\RFib}[1]{\catname{RFib}(#1)}
\newcommand{\Pos}{\catname{Pos}}
\newcommand{\Set}{\catname{Set}}
\newcommand{\FinSet}{\catname{FinSet}}
\newcommand{\FinOrd}{\Delta}
\newcommand{\sSet}{\catname{sSet}}
\newcommand{\pack}{\operatorname{pack}}
\newcommand{\tr}{\operatorname{tr}}
\newcommand{\sSetM}{\catname{sSet}^+}
\newcommand{\PSh}{\catname{P}}
\newcommand{\poly}[2]{#1[#2]}
\newcommand{\epoly}[2]{#1_*[#2]}
\newcommand{\join}{*}
\newcommand{\Weq}{\catname{W}}
\newcommand{\Cof}{\catname{Cof}}
\newcommand{\Fib}{\catname{Fib}}
\newcommand{\MSKan}{\textup{Kan}}
\newcommand{\MSJoyal}{\textup{Joyal}}
\newcommand{\MSReedy}{\textup{Reedy}}
\newcommand{\MSCov}{\textup{Cov}}
\newcommand{\MSContr}{\textup{Contr}}
\newcommand{\MSCart}{\textup{Cart}}
\newcommand{\psh}[1]{\mathcal{#1}}
\newcommand{\TrussOpen}[1]{\cramped{\dot{\mathfrak{T}}}^{#1}}
\newcommand{\TrussClosed}[1]{\cramped{\bar{\mathfrak{T}}}^{#1}}
\newcommand{\BTruss}[1]{\catname{T}^{#1}}
\newcommand{\BTrussOpen}[1]{\cramped{\dot{\catname{T}}}^{#1}}
\newcommand{\BTrussClosed}[1]{\cramped{\bar{\catname{T}}}^{#1}}
\newcommand{\BTrussOpenL}[2]{\BTrussOpen{#1}(#2)}
\newcommand{\BTrussClosedL}[2]{\BTrussClosed{#1}(#2)}
\newcommand{\ETrussOpen}[1]{\BTrussOpen{#1}_*}
\newcommand{\ETrussClosed}[1]{\BTrussClosed{#1}_*}
\newcommand{\ETrussOpenL}[2]{\BTrussOpen{#1}_*(#2)}
\newcommand{\trussOR}[1]{\dot{\textup{r}}_{#1}}
\newcommand{\trussOS}[1]{\dot{\textup{s}}_{#1}}
\newcommand{\trussCR}[1]{\bar{\textup{r}}_{#1}}
\newcommand{\trussCS}[1]{\bar{\textup{s}}_{#1}}
\newcommand{\StratIntLR}{\mathbb{I}}
\newcommand{\CylMeshClosed}{\textup{C}}
\newcommand{\lbl}[1]{\ell(#1)}
\newcommand{\LLP}{\boxslash}
\newcommand{\llp}{\operatorname{llp}}
\newcommand{\rlp}{\operatorname{rlp}}
\newcommand{\terminal}{*}
\newcommand{\Mesh}[1]{\mathfrak{M}^{#1}}
\newcommand{\MeshOpen}[1]{\cramped{\dot{\mathfrak{M}}}^{#1}}
\newcommand{\MeshClosed}[1]{\cramped{\bar{\mathfrak{M}}}^{#1}}
\newcommand{\BMesh}[1]{\catname{M}^{#1}}
\newcommand{\BMeshOpen}[1]{\cramped{\dot{\catname{M}}}^{#1}}
\newcommand{\BMeshClosed}[1]{\cramped{\bar{\catname{M}}}^{#1}}
\newcommand{\BMeshClosedCB}[1]{\cramped{\bar{\catname{M}}}^{#1}_{\textup{cb}}}
\newcommand{\BMeshL}[2]{\BMesh{#1}(#2)}
\newcommand{\BMeshOpenL}[2]{\BMeshOpen{#1}(#2)}
\newcommand{\BMeshClosedL}[2]{\BMeshClosed{#1}(#2)}
\newcommand{\EMesh}[1]{\BMesh{#1}_*}
\newcommand{\EMeshOpen}[1]{\BMeshOpen{#1}_*}
\newcommand{\EMeshClosed}[1]{\BMeshClosed{#1}_*}
\newcommand{\EMeshClosedL}[2]{\BMeshClosed{#1}_*(#2)}
\newcommand{\TrussOpenNerve}{\textup{N}_{\dot{\textup{T}}}}
\newcommand{\TrussOpenReal}[1]{\| #1 \|_{\dot{\textup{T}}}}
\newcommand{\TrussClosedNerve}{\textup{N}_{\bar{\textup{T}}}}
\newcommand{\TrussClosedReal}[1]{\| #1 \|_{\bar{\textup{T}}}}
\newcommand{\Conf}[1]{\catname{Conf}^{#1}}
\newcommand{\BConf}[1]{\Conf{#1}}
\newcommand{\BConfL}[2]{\Conf{#1}(#2)}
\newcommand{\BConfOrtho}[1]{\Conf{#1}_{\perp}}
\newcommand{\LConfMap}[1]{\mathfrak{C}\textup{onf}^{#1}_{\textup{L}}}
\newcommand{\LConf}[1]{\textup{Conf}^{#1}_{\textup{L}}}
\newcommand{\LConfL}[2]{\textup{Conf}^{#1}_{\textup{L}}(#2)}
\newcommand{\RConfMap}[1]{\mathfrak{C}\textup{onf}^{#1}_{\textup{R}}}
\newcommand{\RConf}[1]{\textup{Conf}^{#1}_{\textup{R}}}
\newcommand{\RConfL}[2]{\textup{Conf}^{#1}_{\textup{R}}(#2)}
\newcommand{\EConfMap}[1]{\mathfrak{C}\textup{onf}^{#1}_{\textup{LR}}}
\newcommand{\EConf}[1]{\textup{Conf}^{#1}_{\textup{LR}}}
\newcommand{\EConfL}[2]{\textup{Conf}^{#1}_{\textup{LR}}(#2)}
\newcommand{\regular}{\textup{r}}
\newcommand{\glob}{\textup{g}}
\newcommand{\inert}{\textup{int}}
\newcommand{\CylO}{\cramped{\textup{Cyl}}_\textup{o}}
\newcommand{\Seg}{\operatorname{Seg}}
\newcommand{\grid}{\cramped{\boxplus}_{\dot{\textup{T}}}}
\newcommand{\gridTrussOpen}{\cramped{\boxplus}_{\scriptscriptstyle\dot{\textup{T}}}}
\newcommand{\gridMeshOpen}{\cramped{\boxplus}_{\scriptscriptstyle\dot{\textup{M}}}}
\newcommand{\gridTrussClosed}{\cramped{\boxplus}_{\scriptscriptstyle\bar{\textup{T}}}}
\newcommand{\gridMeshClosed}{\cramped{\boxplus}_{\scriptscriptstyle\bar{\textup{M}}}}
\newcommand{\trussFC}{\mathfrak{t}}
\newcommand{\meshFC}{\mathfrak{m}}
\newcommand{\frtimes}{\rtimes}
\newcommand{\frleq}{\leq_{\textup{fr}}}
\newcommand{\Cptd}[1]{\cramped{\catname{Cptd}}_{(\infty, #1)}}
\newcommand{\DiagCatFN}[1]{\cramped{\catname{fdCat}}_{(\infty, n)}}
\newcommand{\DiagFree}{\mathbb{F}}
\newcommand{\DiagForget}{\mathbb{U}}
\newcommand{\Arr}{\textup{Arr}}
\newcommand{\BMeshDiag}[1]{\BMesh{#1}_{\textup{d}}}
\newcommand{\BTrussDiag}[1]{\BTruss{#1}_{\textup{d}}}
\newcommand{\Act}{\textup{Act}}
\newcommand{\MfldDiag}{\cramped{\mathbb{D}}}
\newcommand{\MfldDiagTame}{\cramped{\mathbb{D}}_{\textup{tame}}}
\newcommand{\MfldDiagExt}{\cramped{\mathbb{E}}}
\newcommand{\MeshDiag}{\cramped{\mathbb{D}}_{\textup{M}}}
\newcommand{\MeshDiagExt}{\cramped{\mathbb{E}}_{\textup{M}}}
\newcommand{\TrussDiag}{\cramped{\mathbb{D}}_{\textup{T}}}
\newcommand{\TrussDiagExt}{\cramped{\mathbb{E}}_{\textup{T}}}
\newcommand{\degMeshClosed}{\cramped{\textup{deg}}_{\bar{\textup{M}}}}
\newcommand{\degMeshOpen}{\cramped{\textup{deg}}_{\dot{\textup{M}}}}
\newcommand{\atom}[1]{\textup{atom}_{#1}}
\newcommand{\cell}[1]{\textup{cell}_{#1}}
\newcommand{\Grid}{\catname{Grid}}
\newcommand{\Cat}{\textup{Cat}}
\newcommand{\Nat}{\mathbb{N}}
\newcommand{\Z}{\mathbb{Z}}
\newcommand{\sdepth}{\textup{sdepth}}
\newcommand{\stype}{\textup{stype}}
\newcommand{\fproj}{\operatorname{proj}_{\textup{fr}}}
\newcommand{\sd}{\textup{sd}}
\newcommand{\ExitMark}{\textup{Exit}^+}
\newcommand{\StratRealMark}[1]{\|#1\|^+}
\newcommand{\StratReal}[1]{\|#1\|}
\newcommand{\TopReal}[1]{|#1|}
\newcommand{\Sing}{\operatorname{Sing}}
\newcommand{\Strat}{\catname{Strat}}
\newcommand{\StrictInt}{\nabla}
\newcommand{\Fun}{\catname{Fun}}
\newcommand{\TopSp}{\catname{Top}}
\newcommand{\DeltaStrat}[1]{\StratReal{\Delta\ord{#1}}}
\newcommand{\HornStrat}[2]{\StratReal{\Lambda^{#1}\ord{#2}}}
\newcommand{\DeltaStratR}[1]{\StratReal{\Delta\ord{#1}^\op}}
\newcommand{\DeltaTop}[1]{{|\Delta\ord{#1}|}}
\newcommand{\HornTop}[2]{{|\Lambda^{#1}\ord{#2}|}}
\newcommand{\intOC}[1]{{(#1]}}
\newcommand{\intCC}[1]{{[#1]}}
\newcommand{\intCO}[1]{{[#1)}}
\newcommand{\intOO}[1]{{(#1)}}
\newcommand{\FrStrat}{\catname{FrStrat}}
\newcommand{\FrBun}{\catname{FrBun}}
\newcommand{\sing}{\textup{sing}}
\newcommand{\reg}{\textup{reg}}
\newcommand{\Ho}{\textup{h}}
\newcommand{\colim}{\operatorname*{colim}}
\newcommand{\dom}{\textup{dom}}
\newcommand{\cl}{\textup{cl}}
\newcommand{\im}{\textup{im}}
\newcommand{\const}{\textup{const}}
\newcommand{\id}{\textup{id}}
\renewcommand{\dim}{\textup{dim}}
\newcommand{\cod}{\textup{cod}}
\newcommand{\Ex}{\textup{Ex}}
\newcommand{\Sd}{\textup{Sd}}
\newcommand{\Exit}{\textup{Exit}}
\newcommand{\Entr}{\textup{Entr}}
\newcommand{\cone}{\textup{c}}
\newcommand{\shape}[1]{\square{#1}}
\newcommand{\shapeBord}[2]{\square_{#2}{#1}}
\newcommand{\CatToSpace}{\textup{k}}
\newcommand{\nf}{\textup{nf}}
\newcommand{\Nerve}{\textup{N}}
\newcommand{\pullbackcorner}{\arrow[dr, phantom, "\lrcorner", very near start, color=black]}
\newcommand{\pullbackdl}{\ar["\llcorner", phantom, very near start, dl]}
\newcommand{\pushoutcorner}{\ar["\lrcorner", phantom, very near end, dr]}
\newcommand{\defn}[1]{\textit{#1}}
\newcommand{\eps}{\varepsilon}
\newcommand{\classify}[1]{{\llbracket #1 \rrbracket}}
\newcommand{\classifyMark}[1]{{\llbracket #1 \rrbracket^+}}
\newcommand{\Alex}{\textup{Alex}}
\newcommand\pto{\mathrel{\ooalign{\hfil$\mapstochar\mkern5mu$\hfil\cr$\to$\cr}}}
\title{Manifold Diagrams for Higher Categories}
\author{Lukas Heidemann}
\begin{document}

\maketitle

\begin{abstract}
  We develop a graphical calculus of manifold diagrams which generalises
  string and surface diagrams to arbitrary dimensions.
  Manifold diagrams are pasting diagrams for $(\infty, n)$-categories
  that admit a semi-strict composition operation for which associativity and unitality is strict.
  The weak interchange law satisfied by composition of manifold diagrams
  is determined geometrically through isotopies of diagrams.
  By building upon framed combinatorial topology, we can classify critical
  points in isotopies at which the arrangement of cells changes.
  This allows us to represent manifold diagrams combinatorially
  and use them as shapes with which to probe $(\infty, n)$-categories, presented as
  $n$-fold Segal spaces.
  Moreover, for any system of labels for the singularities in a manifold diagram,
  we show how to generate a free $(\infty, n)$-category.
\end{abstract}

\tableofcontents

\clearpage

\chapter{Introduction}

String diagrams, Gray surface diagrams and extended topological quantum field theories
are bridges between geometry and the algebra of higher categories.
In this thesis we generalise string and surface diagrams to arbitrary
dimensions, to obtain an $n$-dimensional graphical calculus of manifold diagrams
for $(\infty, n)$-categories.
Whereas surface diagrams have vertices, edges, faces and volumes, an $n$-manifold
diagram consists of strata that are $k$-manifolds for all $0 \leq k \leq n$,
each representing a $k$-codimensional cell of an $(\infty, n)$-category.

Higher category theory has promising applications throughout mathematics,
physics and computer science by
providing a common language to formally capture homotopy coherent algebraic structures that are otherwise difficult to describe or manipulate.
The expressivity of higher categories comes at the cost of complexity,
which has limited its applications in practice beyond homotopy theory.
The coherence structure alone that guarantees weak associativity, weak unitality
and the weak interchange law for the composition operations is highly non-trivial.
A graphical calculus like string diagrams for higher categories in arbitrary dimensions would enable more concrete calculations and simplify coherence of the composition operation.

While every $2$-category is equivalent to one in which
the coherence structure is strict, strictification is no longer possible 
in general for $n$-categories when $n > 2$ without losing essential structure
~\cite{simpsonHomotopyTypesStrict1998}.
For example the fundamental $3$-groupoid $\Pi_3(S^2)$ of the $2$-sphere $S^2$
cannot be equivalent to a strict $3$-groupoid since then the Eckmann-Hilton
argument would trivialise the $3$-cell that corresponds to the generator of
$\pi_3(S^2) \cong \Z$.
In principle, it is possible to explicitly write down the coherence laws for an $n$-category
for any $n \geq 0$, but the ensuing explosion of coherence axioms makes this approach unworkable.

We can therefore not fully avoid the complexity of the coherence laws without
losing the expressiveness of $n$-categories.
This does not preclude us in principle from strictifying at least part of the coherence structure, arriving at a semi-strict model of $n$-categories that is simpler than but still equivalent to the general theory.
The surface diagram calculus of~\cite{gray-category-diagrams} is built upon Gray categories,
a semi-strict model of $3$-categories in which associativity and unitality of composition holds strictly and the interchange law remains weak.
This style of semi-strictification is a natural choice for a geometric model since it allows translating between composition of terms and gluing of subdiagrams.

So far there have been no known semi-strict models for $4$-categories (or higher)
in the style of Gray categories, with strict associativity and unitality but weak
interchange.
Teisi~\cite{crans-teisi} and the quasi-strict $4$-categories of~\cite{quasistrict-datastructures}
are candidates, but have not yet been proven equivalent to the general theory of $4$-categories.
Instead of finding a graphical calculus for a known semi-strict model
as in~\cite{gray-category-diagrams}, we can go in the opposite direction
and characterise a semi-strict model
for $n$-categories by starting with the geometry of its graphical calculus
in the form of manifold diagrams.
The composition operation of the resulting theory simply juxtaposes
diagrams of varying sizes and therefore, generalising Moore paths, is strictly
associative and unital.

The most widely used models of higher categories to date are based on (abstract) homotopy theory.
Via the homotopy hypothesis, we can already understand a topological space as an $\infty$-groupoid:
the points are the objects, paths are $1$-morphisms, homotopies between paths are $2$-morphisms, etc.
The coherences are fully weak, but the complexity remains manageable via the wide range of tools of homotopy theory. Spaces or $\infty$-groupoids are the first in a sequence of types of $\omega$-categories with a decreasing amount of invertibility:
An $(\infty, n)$-category is an $\omega$-category
with the requirement that a $k$-morphism is invertible whenever $k \geq n$.
The case of $n = 1$ is particularly well understood, with most of $1$-category theory transferring to $(\infty, 1)$-categories.

At first, it appears that moving from $n$-categories to $(\infty, n)$-categories only makes things harder for us.
But coherences are invertible, and so we can offload much of their complexity onto the homotopy theory of spaces.
This trick is already present in the string diagram calculus for plain, braided and
symmetric monoidal categories, where isotopies of the diagrams correspond to
the coherence laws in their algebraic formulation.
We can therefore rely on the geometric nature of manifold diagrams and describe
the weak interchange coherences directly as a space of isotopies rather than characterising them algebraically.

Homotopy theoretic models for higher categories are glued together from a category of basic shapes,
such as the simplices $\FinOrd$, multi-simplices $\FinOrd^n$, globular pasting diagrams $\Theta^n$
and various versions of a category of cubes.
These shapes are typically optimised to be as minimal or basic as possible,
which is a good trade-off in situations where we want to prove general theorems.
There is however value in finding bigger and more structured shapes, as every object that is modelled from them automatically inherits their structure. This is preferable whenever we want to perform calculations on particular examples of higher categories.
\begin{quote}
I don’t like being tied to one particular shape of cell, particularly simplicial ones. When working with low-dimensional higher categories, like 2-categories and 3-categories, we use pasting diagrams with cells of many different shapes. I would like a notion of higher category that has an underlying sort of “higher computad” where we can talk about arbitrary pasting diagrams and “higher surface diagrams.”~\textit{Mike Shulman} - \cite{confessions-higher-cats}
\end{quote}

We identify the higher surface diagrams in this quote as our manifold diagrams.
Manifold diagrams are a particularly useful shape to do higher category theory.
Because manifold diagrams are composed by juxtaposition,
we can describe complex composites in a single shape.
While this composition operation does not satisfy a strict interchange law,
the coherences for weak interchange in $n$-manifold diagrams are isotopies
and therefore themselves $(n + 1)$-diagrams.
Manifold diagrams are also well suited to describe freely generated higher categories.
The generating data for a free $n$-category is called an $n$-computad
and is typically defined inductively by specifying the generating $n$-morphisms whose domain
and codomain are $(n - 1)$-morphisms in a free $(n - 1)$-category.
This interleaved process is not ideal as the free $k$-categories are very hard to describe explicitly.
Because manifold diagrams are closed under composition and isotopy, we can sidestep the intermediate steps
and specify the generating data for all dimensions at once.

String diagrams were formalised as mathematical objects in~\cite{geometry-tensor-calculus}
via topological graphs. The Gray surface diagram calculus of~\cite{gray-category-diagrams}
presents diagrams as a stratification of the
closed $3$-cube $\intCC{0, 1}^3$ which satisfies a series of conditions.
Trimble outlined the beginnings of an approach to $n$-manifold diagrams for arbitrary $n \geq 0$ in an online note~\cite{trimble-surface}, which unfortunately has remained incomplete.
Dorn and Douglas described manifold diagrams~\cite{manifold-diagrams-tame-tangles} in arbitrary dimension for the purpose
of studying the combinatorialisability of smooth structures. However, they do not
consider how such diagrams would map to higher categorical structures.

Trimble's note identified five essential ingredients that a theory of manifold diagrams would be built upon: \textit{tameness}, \textit{coincidences}, \textit{progressivity}, \textit{globularity} and \textit{deformations}. We address each of these ingredients as follows.

Tameness seeks to avoid the wild phenomena that topology is famously~\cite{grothendieck-sketch} filled with, such as space filling curves or Cantor spaces, which are far removed from any intuitive concept of a geometric shape.
Smooth geometry corrects some of them but also comes with various surprising features like infinite oscillations and spirals.
While there is a wealth of tools that are developed to deal with smooth manifolds,
the toolset for smooth stratified spaces is significantly sparser. 
Smoothness is also non-trivial to maintain under many constructions.
Algebraic or analytic geometry is too rigid, since we need bump functions to glue diagrams.
Trimble's note on surface diagrams therefore recommends o-minimal geometry, and specifically the o-minimal structure $\R_{\exp}$ which extends algebraic sets with the exponential function.
The surface diagrams for Gray categories~\cite{gray-category-diagrams} achieve tameness with piecewise linear geometry.
We follow their lead and base our notion of tameness on finite triangulations.

The string-diagram calculus for braided monoidal categories detects edge crossings and translates them into the braiding isomorphism.
Similarly, isotopies of Gray surface diagrams are translated into applications of the various coherence isomorphisms in the structure of a Gray $3$-category.
Lurie's proof sketch for the cobordism hypothesis uses Morse theory to decompose manifolds into a finite sequence of segments, connected via Morse singularities that correspond to the higher cells generated by dualisability.
For a manifold diagram calculus we need a similar way to detect and classify critical points at which the relative positioning of strata changes in a way that is algebraically significant.

From the example of an edge crossing in a string diagram we can intuit that these critical points
are connected to projection:
when we project away the coordinate direction in which one edge passes over the other,
the projected edges intersect exactly at the height where the crossing occurred.
In~\cite{trimble-surface} these critical points are described in terms of \textit{coincidence}.
It is common to define geometric structure by specifying the transformations that preserve it.
Framed combinatorial topology~\cite{framed-combinatorial-topology, framed-combinatorial-topology-infty-cats} is a theory of tame geometry that is based around the concept of coincidence.
In~\S\ref{sec:fct-framing} we define a sequence of equivalence relations $\sim_i$ on $\R^n$
so that $x \sim_i y$ whenever the last $i$ coordinates of $x$ and $y$ agree.
We then equip stratified spaces with an embedding into $\R^n$, called an $n$-framing, and only allow
those transformations that are continuous, respect the stratification and preserve all coincidence relations $\sim_i$.
% We call such a transformation an $n$-framed stratified map.
% {\color{Orange}
% A stratified space is tame in this framework when its framed stratified isomorphism type has a finite description in terms of a mesh.}

Tame $n$-framed stratified spaces such as $n$-manifold diagrams can be refined
by an $n$-mesh, making the structure that is encoded in the $n$-framing visible
in the stratification (see for example Figure~\ref{fig:i-braid-mesh}).
Meshes are defined inductively in~\S\ref{sec:fct-mesh} as a sequence of
$1$-dimensional mesh bundles, each adding one dimension at a time.
We concentrate on two types of meshes: open meshes that model the geometry of manifold diagrams,
and closed meshes that closely resemble commutative or pasting diagrams in higher dimensions.
Open and closed mesh bundles are classified by quasicategories $\BMeshOpen{n}$ and $\BMeshClosed{n}$.
Meshes admit purely combinatorial and finite descriptions in terms of trusses,
which we explore in~\S\ref{sec:fct-truss}.
Trusses are classified by $1$-categories $\BTrussOpen{n} \simeq \BMeshOpen{n}$ and $\BTrussClosed{n} \simeq \BMeshClosed{n}$ (see Figure~\ref{fig:i-mesh-truss}).

\begin{figure}[ht]
  \[
    \begin{tikzpicture}[scale = 0.5, baseline=(current bounding box.center)]
      \begin{scope}[shift={(-2, -2)}]
        \draw[->] (0,0,0) -- (1,0,0) node[anchor=west]{\tiny{2}};
        \draw[->] (0,0,0) -- (0,1,0) node[anchor=south]{\tiny{3}};
        \draw[->] (0,0,0) -- (0,0,1) node[anchor=north east]{\tiny{1}};
      \end{scope}
    
      \begin{scope}[canvas is xz plane at y = 0]
        \coordinate (s0-a) at (3, 1);
        \coordinate (s0-b) at (1, 2);
      \end{scope}
      \begin{scope}[canvas is xz plane at y = 6]
        \coordinate (s2-a) at (1, 1);
        \coordinate (s2-b) at (3, 2);
      \end{scope}

      \draw[mesh3-bounding-box]
        (0, 0, 0) --
        (4, 0, 0) -- 
        (4, 0, 3) -- 
        (0, 0, 3) -- cycle;
      \draw[mesh3-bounding-box]
        (0, 6, 0) --
        (4, 6, 0) -- 
        (4, 6, 3) -- 
        (0, 6, 3) -- cycle;
      \draw[mesh3-bounding-box] (0, 0, 0) -- (0, 6, 0);
      \draw[mesh3-bounding-box] (4, 0, 0) -- (4, 6, 0); 
      \draw[mesh3-bounding-box] (4, 0, 3) -- (4, 6, 3); 
      \draw[mesh3-bounding-box] (0, 0, 3) -- (0, 6, 3);

      \draw[mesh3-stratum-orange] (s0-a) -- (s2-a);
      \draw[mesh3-stratum-blue] (s0-b) -- (s2-b);
    \end{tikzpicture}
    \qquad
    \begin{tikzpicture}[scale = 0.5, baseline=(current bounding box.center)]
      \begin{scope}[shift={(-2, -2)}]
        \draw[->] (0,0,0) -- (1,0,0) node[anchor=west]{\tiny{2}};
        \draw[->] (0,0,0) -- (0,1,0) node[anchor=south]{\tiny{3}};
        \draw[->] (0,0,0) -- (0,0,1) node[anchor=north east]{\tiny{1}};
      \end{scope}
    
      \begin{scope}[canvas is xz plane at y = 0]
        \coordinate (s0-a) at (3, 1);
        \coordinate (s0-b) at (1, 2);

        \coordinate (s0-a-front) at (3, 3);
        \coordinate (s0-a-back) at (3, 0);

        \coordinate (s0-b-front) at (1, 3);
        \coordinate (s0-b-back) at (1, 0);
      \end{scope}
      \begin{scope}[canvas is xz plane at y = 6]
        \coordinate (s2-a) at (1, 1);
        \coordinate (s2-b) at (3, 2);

        \coordinate (s2-a-front) at (1, 3);
        \coordinate (s2-a-back) at (1, 0);

        \coordinate (s2-b-front) at (3, 3);
        \coordinate (s2-b-back) at (3, 0);
      \end{scope}

      \draw[mesh3-bounding-box]
        (0, 0, 0) --
        (4, 0, 0) -- 
        (4, 0, 3) -- 
        (0, 0, 3) -- cycle;
      \draw[mesh3-bounding-box]
        (0, 6, 0) --
        (4, 6, 0) -- 
        (4, 6, 3) -- 
        (0, 6, 3) -- cycle;
      \draw[mesh3-bounding-box] (0, 0, 0) -- (0, 6, 0);
      \draw[mesh3-bounding-box] (4, 0, 0) -- (4, 6, 0); 
      \draw[mesh3-bounding-box] (4, 0, 3) -- (4, 6, 3); 
      \draw[mesh3-bounding-box] (0, 0, 3) -- (0, 6, 3);

      \fill[mesh3-surface] (s0-a-front) -- (s0-a-back) -- (s2-a-back) -- (s2-a-front) -- cycle;
      \fill[mesh3-surface] (s0-b-front) -- (s0-b-back) -- (s2-b-back) -- (s2-b-front) -- cycle;
  
      \begin{scope}[canvas is xz plane at y = 3]
        \fill[mesh3-surface] (0, 0) -- (4, 0) -- (4, 3) -- (0, 3) -- cycle;
      \end{scope}

      \draw[mesh3-stratum-orange] (s0-a) -- (s2-a);
      \draw[mesh3-stratum-blue] (s0-b) -- (s2-b);

      \begin{scope}[canvas is xz plane at y = 3]
        \draw[mesh3-stratum] (2, 0) -- (2, 3);
        \node[mesh3-vertex] at (2, 1) {};
        \node[mesh3-vertex] at (2, 2) {};
      \end{scope}
    \end{tikzpicture}
  \]
  \caption{\label{fig:i-braid-mesh}
    The $3$-diagram which braids a blue and an orange string,
    and its corresponding open $3$-mesh which detects the critical moment
    at which the two strata pass over each other.
  }
\end{figure}
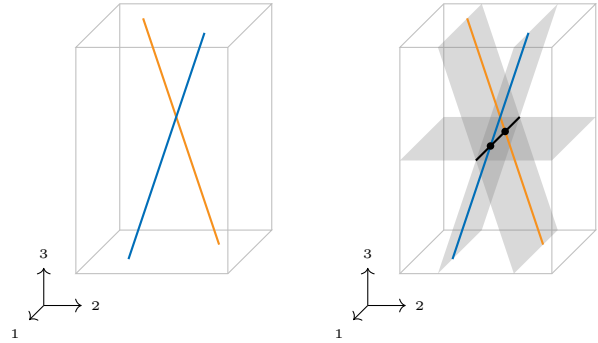

\begin{figure}[ht]
  % \centering
  % \begin{subfigure}[b]{0.4\textwidth}
    \[
      \begin{tikzpicture}[xscale = 0.45, yscale=0.45, baseline=(current bounding box.center)]
        \fill[mesh-background] (0, -1) rectangle (9, 7);
        \draw[mesh-stratum] (1, -1) -- (1, 1) .. controls +(0, 0.2) and +(-1, 0) .. (2, 2);
        \draw[mesh-stratum] (3, -1) -- (3, 1) .. controls +(0, 0.2) and +(1, 0) .. (2, 2);
        \draw[mesh-stratum] (2, 2) -- (2, 5) .. controls +(0, 0.2) and +(-1, 0) .. (3, 6);
        \draw[mesh-stratum] (5, 0) -- (5, 4);
        \draw[mesh-stratum] (5, 4) .. controls +(-1, 0) and +(0, -0.2) .. (4, 5) .. controls +(0, 0.2) and +(1, 0) .. (3, 6);
        \draw[mesh-stratum] (5, 4) .. controls +(1, 0) and +(0, -0.2) .. (6, 5) .. controls +(0, 0.2) and +(-1, 0) .. (7, 6);
        \draw[mesh-stratum] (8, -1) -- (8, 5) .. controls +(0, 0.2) and +(1, 0) .. (7, 6);
        \draw[mesh-stratum] (7, 6) -- (7, 7);
        \node[mesh-vertex] at (5, 0) {};
        \node[mesh-vertex] at (5, 4) {};
        \node[mesh-vertex] at (2, 2) {};
        \node[mesh-vertex] at (7, 6) {};
        \node[mesh-vertex] at (3, 6) {};
      \end{tikzpicture}
      \quad
      \begin{tikzpicture}[xscale = 0.45, yscale=0.45, baseline=(current bounding box.center)]
        \fill[mesh-background] (0, -1) rectangle (9, 7);
        \draw[mesh-stratum] (1, -1) -- (1, 1) .. controls +(0, 0.2) and +(-1, 0) .. (2, 2);
        \draw[mesh-stratum] (3, -1) -- (3, 1) .. controls +(0, 0.2) and +(1, 0) .. (2, 2);
        \draw[mesh-stratum] (2, 2) -- (2, 5) .. controls +(0, 0.2) and +(-1, 0) .. (3, 6);
        \draw[mesh-stratum] (5, 0) -- (5, 4);
        \draw[mesh-stratum] (5, 4) .. controls +(-1, 0) and +(0, -0.2) .. (4, 5) .. controls +(0, 0.2) and +(1, 0) .. (3, 6);
        \draw[mesh-stratum] (5, 4) .. controls +(1, 0) and +(0, -0.2) .. (6, 5) .. controls +(0, 0.2) and +(-1, 0) .. (7, 6);
        \draw[mesh-stratum] (8, -1) -- (8, 5) .. controls +(0, 0.2) and +(1, 0) .. (7, 6);
        \draw[mesh-stratum] (7, 6) -- (7, 7);
        \node[mesh-vertex] at (5, 0) {};
        \node[mesh-vertex] at (5, 4) {};
        \node[mesh-vertex] at (2, 4) {};
        \node[mesh-vertex] at (8, 4) {};
        \node[mesh-vertex] at (2, 2) {};
        \node[mesh-vertex] at (5, 2) {};
        \node[mesh-vertex] at (8, 2) {};
        \node[mesh-vertex] at (7, 6) {};
        \node[mesh-vertex] at (3, 6) {};
        \node[mesh-vertex] at (1, 0) {};
        \node[mesh-vertex] at (3, 0) {};
        \node[mesh-vertex] at (8, 0) {};

        \draw[mesh-stratum] (0, 0) -- (9, 0);
        \draw[mesh-stratum] (0, 2) -- (9, 2);
        \draw[mesh-stratum] (0, 4) -- (9, 4);
        \draw[mesh-stratum] (0, 6) -- (9, 6);
      \end{tikzpicture}
      \quad
    % \]
  %   \caption{A $2$-manifold diagram and the corresponding open $2$-mesh
  %   that refines it.}
  % \end{subfigure}
  % \begin{subfigure}[b]{\textwidth}
    % \[
    \begin{tikzpicture}[xscale = 0.45, yscale = 0.45, baseline=(current bounding box.center)]
      \node [font=\tiny] (r-0-0) at (0, -1) {2};
      \node [font=\tiny] (r-0-1) at (1, -1) {1};
      \node [font=\tiny] (r-0-2) at (2, -1) {2};
      \node [font=\tiny] (r-0-3) at (3, -1) {1};
      \node [font=\tiny] (r-0-4) at (5, -1) {2};
      \node [font=\tiny] (r-0-5) at (8, -1) {1};
      \node [font=\tiny] (r-0-6) at (9, -1) {2};
      \draw[->] (r-0-1) -- (r-0-0);
      \draw[->] (r-0-1) -- (r-0-2);
      \draw[->] (r-0-3) -- (r-0-2);
      \draw[->] (r-0-3) -- (r-0-4);
      \draw[->] (r-0-5) -- (r-0-4);
      \draw[->] (r-0-5) -- (r-0-6);

      \node [font=\tiny] (s-0-0) at (0, 0) {2};
      \node [font=\tiny] (s-0-1) at (1, 0) {1};
      \node [font=\tiny] (s-0-2) at (2, 0) {2};
      \node [font=\tiny] (s-0-3) at (3, 0) {1};
      \node [font=\tiny] (s-0-4) at (4, 0) {2};
      \node [font=\tiny] (s-0-5) at (5, 0) {0};
      \node [font=\tiny] (s-0-6) at (6, 0) {2};
      \node [font=\tiny] (s-0-7) at (8, 0) {1};
      \node [font=\tiny] (s-0-8) at (9, 0) {2};
      \draw[->] (s-0-1) -- (s-0-0);
      \draw[->] (s-0-1) -- (s-0-2);
      \draw[->] (s-0-3) -- (s-0-2);
      \draw[->] (s-0-3) -- (s-0-4);
      \draw[->] (s-0-5) -- (s-0-4);
      \draw[->] (s-0-5) -- (s-0-6);
      \draw[->] (s-0-7) -- (s-0-6);
      \draw[->] (s-0-7) -- (s-0-8);

      \draw[->] (s-0-0) -- (r-0-0);
      \draw[->] (s-0-1) -- (r-0-1);
      \draw[->] (s-0-2) -- (r-0-2);
      \draw[->] (s-0-2) -- (r-0-2);
      \draw[->] (s-0-3) -- (r-0-3);
      \draw[->] (s-0-4) -- (r-0-4);
      \draw[->] (s-0-6) -- (r-0-4);
      \draw[->] (s-0-7) -- (r-0-5);
      \draw[->] (s-0-8) -- (r-0-6);
      
      \node [font=\tiny] (r-1-0) at (0, 1) {2};
      \node [font=\tiny] (r-1-1) at (1, 1) {1};
      \node [font=\tiny] (r-1-2) at (2, 1) {2};
      \node [font=\tiny] (r-1-3) at (3, 1) {1};
      \node [font=\tiny] (r-1-4) at (4, 1) {2};
      \node [font=\tiny] (r-1-5) at (5, 1) {1};
      \node [font=\tiny] (r-1-6) at (6, 1) {2};
      \node [font=\tiny] (r-1-7) at (8, 1) {1};
      \node [font=\tiny] (r-1-8) at (9, 1) {2};
      \draw[->] (r-1-1) -- (r-1-0);
      \draw[->] (r-1-1) -- (r-1-2);
      \draw[->] (r-1-3) -- (r-1-2);
      \draw[->] (r-1-3) -- (r-1-4);
      \draw[->] (r-1-5) -- (r-1-4);
      \draw[->] (r-1-5) -- (r-1-6);
      \draw[->] (r-1-7) -- (r-1-6);
      \draw[->] (r-1-7) -- (r-1-8);

      \draw[->] (s-0-0) -- (r-1-0);
      \draw[->] (s-0-1) -- (r-1-1);
      \draw[->] (s-0-2) -- (r-1-2);
      \draw[->] (s-0-3) -- (r-1-3);
      \draw[->] (s-0-4) -- (r-1-4);
      \draw[->] (s-0-5) -- (r-1-5);
      \draw[->] (s-0-6) -- (r-1-6);
      \draw[->] (s-0-7) -- (r-1-7);
      \draw[->] (s-0-8) -- (r-1-8);

      \node [font=\tiny] (s-1-0) at (0, 2) {2};
      \node [font=\tiny] (s-1-1) at (2, 2) {0};
      \node [font=\tiny] (s-1-2) at (4, 2) {2};
      \node [font=\tiny] (s-1-3) at (5, 2) {1};
      \node [font=\tiny] (s-1-4) at (6, 2) {2};
      \node [font=\tiny] (s-1-5) at (8, 2) {1};
      \node [font=\tiny] (s-1-6) at (9, 2) {2};
      \draw[->] (s-1-1) -- (s-1-0);
      \draw[->] (s-1-1) -- (s-1-2);
      \draw[->] (s-1-3) -- (s-1-2);
      \draw[->] (s-1-3) -- (s-1-4);
      \draw[->] (s-1-5) -- (s-1-4);
      \draw[->] (s-1-5) -- (s-1-6);

      \draw[->] (s-1-0) -- (r-1-0);
      \draw[->] (s-1-1) -- (r-1-1);
      \draw[->] (s-1-1) -- (r-1-3);
      \draw[->] (s-1-2) -- (r-1-4);
      \draw[->] (s-1-3) -- (r-1-5);
      \draw[->] (s-1-4) -- (r-1-6);
      \draw[->] (s-1-5) -- (r-1-7);
      \draw[->] (s-1-6) -- (r-1-8);

      \node [font=\tiny] (r-2-0) at (0, 3) {2};
      \node [font=\tiny] (r-2-1) at (2, 3) {1};
      \node [font=\tiny] (r-2-2) at (4, 3) {2};
      \node [font=\tiny] (r-2-3) at (5, 3) {1};
      \node [font=\tiny] (r-2-4) at (6, 3) {2};
      \node [font=\tiny] (r-2-5) at (8, 3) {1};
      \node [font=\tiny] (r-2-6) at (9, 3) {2};
      \draw[->] (r-2-1) -- (r-2-0);
      \draw[->] (r-2-1) -- (r-2-2);
      \draw[->] (r-2-3) -- (r-2-2);
      \draw[->] (r-2-3) -- (r-2-4);
      \draw[->] (r-2-5) -- (r-2-4);
      \draw[->] (r-2-5) -- (r-2-6);

      \draw[->] (s-1-0) -- (r-2-0);
      \draw[->] (s-1-1) -- (r-2-1);
      \draw[->] (s-1-2) -- (r-2-2);
      \draw[->] (s-1-3) -- (r-2-3);
      \draw[->] (s-1-4) -- (r-2-4);
      \draw[->] (s-1-5) -- (r-2-5);
      \draw[->] (s-1-6) -- (r-2-6);

      \node [font=\tiny] (s-2-0) at (0, 4) {2};
      \node [font=\tiny] (s-2-1) at (2, 4) {1};
      \node [font=\tiny] (s-2-2) at (4, 4) {2};
      \node [font=\tiny] (s-2-3) at (5, 4) {0};
      \node [font=\tiny] (s-2-4) at (6, 4) {2};
      \node [font=\tiny] (s-2-5) at (8, 4) {1};
      \node [font=\tiny] (s-2-6) at (9, 4) {2};
      \draw[->] (s-2-1) -- (s-2-0);
      \draw[->] (s-2-1) -- (s-2-2);
      \draw[->] (s-2-3) -- (s-2-2);
      \draw[->] (s-2-3) -- (s-2-4);
      \draw[->] (s-2-5) -- (s-2-4);
      \draw[->] (s-2-5) -- (s-2-6);

      \draw[->] (s-2-0) -- (r-2-0);
      \draw[->] (s-2-1) -- (r-2-1);
      \draw[->] (s-2-2) -- (r-2-2);
      \draw[->] (s-2-3) -- (r-2-3);
      \draw[->] (s-2-4) -- (r-2-4);
      \draw[->] (s-2-5) -- (r-2-5);
      \draw[->] (s-2-6) -- (r-2-6);

      \node [font=\tiny] (r-3-0) at (0, 5) {2};
      \node [font=\tiny] (r-3-1) at (2, 5) {1};
      \node [font=\tiny] (r-3-2) at (3, 5) {2};
      \node [font=\tiny] (r-3-3) at (4, 5) {1};
      \node [font=\tiny] (r-3-4) at (5, 5) {2};
      \node [font=\tiny] (r-3-5) at (6, 5) {1};
      \node [font=\tiny] (r-3-6) at (7, 5) {2};
      \node [font=\tiny] (r-3-7) at (8, 5) {1};
      \node [font=\tiny] (r-3-8) at (9, 5) {2};
      \draw[->] (r-3-1) -- (r-3-0);
      \draw[->] (r-3-1) -- (r-3-2);
      \draw[->] (r-3-3) -- (r-3-2);
      \draw[->] (r-3-3) -- (r-3-4);
      \draw[->] (r-3-5) -- (r-3-4);
      \draw[->] (r-3-5) -- (r-3-6);
      \draw[->] (r-3-7) -- (r-3-6);
      \draw[->] (r-3-7) -- (r-3-8);

      \draw[->] (s-2-0) -- (r-3-0);
      \draw[->] (s-2-1) -- (r-3-1);
      \draw[->] (s-2-2) -- (r-3-2);
      \draw[->] (s-2-3) -- (r-3-3);
      \draw[->] (s-2-3) -- (r-3-5);
      \draw[->] (s-2-4) -- (r-3-6);
      \draw[->] (s-2-5) -- (r-3-7);
      \draw[->] (s-2-6) -- (r-3-8);

      \node [font=\tiny] (s-3-0) at (0, 6) {2};
      \node [font=\tiny] (s-3-1) at (3, 6) {0};
      \node [font=\tiny] (s-3-2) at (5, 6) {2};
      \node [font=\tiny] (s-3-3) at (7, 6) {0};
      \node [font=\tiny] (s-3-4) at (9, 6) {2};
      \draw[->] (s-3-1) -- (s-3-0);
      \draw[->] (s-3-1) -- (s-3-2);
      \draw[->] (s-3-3) -- (s-3-2);
      \draw[->] (s-3-3) -- (s-3-4);

      \draw[->] (s-3-0) -- (r-3-0);
      \draw[->] (s-3-1) -- (r-3-1);
      \draw[->] (s-3-1) -- (r-3-3);
      \draw[->] (s-3-2) -- (r-3-4);
      \draw[->] (s-3-3) -- (r-3-5);
      \draw[->] (s-3-3) -- (r-3-7);
      \draw[->] (s-3-4) -- (r-3-8);

      \node [font=\tiny] (r-4-0) at (3, 7) {2};
      \node [font=\tiny] (r-4-1) at (7, 7) {1};
      \node [font=\tiny] (r-4-2) at (9, 7) {2};
      \draw[->] (r-4-1) -- (r-4-0);
      \draw[->] (r-4-1) -- (r-4-2);

      \draw[->] (s-3-0) -- (r-4-0);
      \draw[->] (s-3-2) -- (r-4-0);
      \draw[->] (s-3-3) -- (r-4-1);
      \draw[->] (s-3-4) -- (r-4-2);
    \end{tikzpicture}
  \]
  %   \caption{
    
  %   The open $2$-truss that combinatorially represents
  %   the open $2$-mesh.
  %   }
  % \end{subfigure}
  \caption{\label{fig:i-mesh-truss}
    Starting with a string diagram on the left,
    we can find the coarsest open $2$-mesh that refines it.
    The open $2$-mesh can then be described as a combinatorial object
    via the open $2$-truss on the right.
  }
\end{figure}
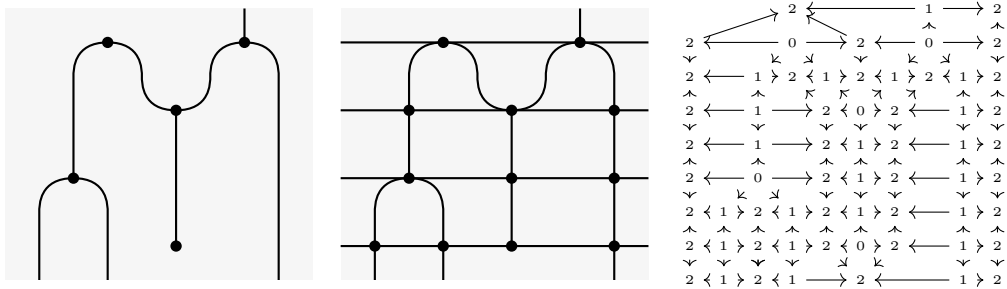

% The framed stratified isomorphism class of an $n$-manifold diagram is captured by
% an open $n$-mesh: an $n$-framed stratified space in which the structure that
% is detected by the $n$-framing is also reflected in the stratification.
The edges in a string diagram have to progress in the direction from the source
to the target boundary of the diagram, without turning around or stalling.
This is the progressivity condition. 
% Generalised to arbitrary dimensions,
% the progressivity condition would require that whenever
% $\strat{S} \subseteq \strat{M}$
% is a $k$-dimensional stratum of an $n$-manifold diagram, the projection map
% $\R^n \to \R^k$ to the last $k$ coordinates restricts to an immersion
% $\strat{S} \to \R^k$.
The manifold diagrams in~\cite{manifold-diagrams-tame-tangles}
guarantee progressivity in higher dimensions by the framed conicality condition which requires that
each point has an open neighbourhood of the form $\cone \strat{L} \times \R^k$
where $\cone\strat{L}$ is a stratified cone.
Framed stratified isomorphisms can not exchange the coordinate order,
and so this framed conicality condition guarantees that on each $k$-dimensional
stratum $\strat{S}$ of an $n$-manifold diagram the projection $\R^n \to \R^k$ onto the last $k$ coordinates restricts to an immersion $\strat{S} \to \R^k$.
We modify the framed conicality requirement of~ 
\cite{manifold-diagrams-tame-tangles}
in~\S\ref{sec:d-mfld-local}
and require that the open neighbourhoods in an $n$-manifold diagram
are refined by mesh atoms, which we identify in~\S\ref{sec:d-diag-space}
as the appropriate geometric shapes for cells in $(\infty, n)$-categories.

The globularity condition of a string diagram controls which strata can intersect
which boundary of the diagram. In particular it restricts vertices to lie strictly
within the interior of the diagram, while edges can intersect only the top and bottom
boundary. In~\S\ref{sec:d-mfld-global} we describe a generalisation of the globularity
condition to arbitrary dimensions, which allows $n$-manifold diagrams to be modelled
on top of any arbitrary closed $n$-mesh. This includes $n$-cubes of varying size,
to ensure semi-strict composition, but also more complex shapes that describe pasting
diagrams of $n$-manifold diagrams. 

The last ingredient for $n$-manifold diagrams are their deformations or isotopies.
Isotopies of string diagrams for symmetric monoidal categories
are required to preserve the order of the incoming and outgoing edges of
every vertex, at least within some neighbourhood around the vertex. Without that condition, any map $A \otimes A \to A$ expressed
in a string diagram would have to automatically be commutative.
The ordering condition for isotopies of string diagrams does not trivially generalise to isotopies of diagrams in higher dimensions since a cell can have adjacent cells of various dimensions in relative arrangements that can not be reduced to a linear order. 
The notes~\cite{trimble-surface} identify this problem as well.
Our solution in~\S\ref{sec:d-mfld-isotopy} requires that the $n$-framed
stratified bundles which represent isotopies of $n$-manifold diagrams
are $n$-framed stratified submersions, i.e.\ trivial up to $n$-framed
stratified isomorphism within a neighbourhood of any stratum.
This notion leaves enough flexibility for the interchange laws to arise as isotopies
while being strict enough to not impose additional relations on the cells
in the modelled $(\infty, n)$-categories.

Free plain, braided or symmetric monoidal categories can be generated via their
string diagram calculi by attaching labels to the strata, often presented via colours.
We demonstrate how to attach labels to manifold diagrams in~\S\ref{sec:d-label},
characterising systems of admissible labels as diagrammatic computads.
Via the equivalence between meshes and trusses, such diagrammatic $n$-computads
can be seen alternatively but equivalently as combinatorial or geometric objects.
Using the theory of transverse configurations in FCT, developed in~\S\ref{sec:fct-conf},
we can show that labelled manifold diagrams create free $(\infty, n)$-categories
from $n$-diagrammatic computads via an adjunction of $(\infty, 1)$-categories
$\DiagFree : \Cptd{n} \rightleftarrows \CatN{n} : \DiagForget$.

As a two-dimensional graphical calculus, string diagrams can be represented on paper, blackboards and screens.
Three-dimensional surface diagrams can still be rendered on a flat surface with projection, using artistic tricks to make them intelligible as needed.
The useability of a graphical notation then quickly declines when the dimension is increased beyond three.
Multiple decades had passed between appearances of string diagrams in the literature~\cite{neumann-string, penrose-diagrams} and their formalisation in~\cite{geometry-tensor-calculus}.
The authors of the latter paper partly attribute this delay to technological limitations
in printing at the time, which prevented graphical notations to escape from
the blackboard onto the printed page at scale.
The \textsc{homotopy.io} project posits that the use of manifold diagrams in higher dimensions similarly can be helped through a technological improvement.

\textsc{homotopy.io} is a graphical proof assistant that enables a user
to inspect and manipulate low-dimensional slices and projections through an (in principle) arbitrary dimensional diagram in an $\omega$-category freely generated by a signature.
The conceptual model behind the freely generated $\omega$-category used for
\textsc{homotopy.io} originates from~\cite{dorn-thesis}, but has not previously been shown equivalent to established models.
Via our combinatorialisability results for manifold diagrams labelled in a diagrammatic computads, discussed in~\S\ref{sec:d-label}, we can describe the
free $\omega$-category generated by a \textsc{homotopy.io} signature
as an $n$-fold Segal space where $n$ is the dimension of the highest dimensional generator in the (finite) signature.
In this way we demonstrate that the proof assistant has sound semantics in higher categories as claimed.

\section{Related Work}

The projects surrounding the cobordism hypothesis~\cite{lurie-tqft, cobordism-hypothesis-note, geometric-cobordism-hypothesis} and factorisation homology~\cite{ayala-factorization-homology} also establish a connection between geometry and category theory in higher dimensions, building upon decades of deep insights into the geometry of smooth manifolds.
These theories aim to study existing geometries that are of prior interest via the language and toolset of higher-dimensional categories.
Our aim in this thesis is to do the opposite:
we start with higher categories and want to elucidate their structure using geometry.
We therefore are afforded the opportunity to pick framed combinatorial topology as a theory of geometry that is tailored specifically to our purpose.

The graphical proof assistant Globular~\cite{globular-proof-assistant}
implemented a combinatorial encoding of the geometry of higher-dimensional
diagrams, based on the data structure described in~\cite{quasistrict-datastructures}.
This data structure, and therefore also the proof assistant, relied
on an explicit classification of generators for the weak interchange law,
such as braids, braid naturality, etc. So far this explicit classification
was not extended beyond dimension four, which limits the expressive power of the system.
Dorn's thesis~\cite{dorn-thesis} is the conceptual origin of the ideas of
framed combinatorial topology. Instead of attempting to classify algebraic
generators for the weak interchange structure, the singular interval bundles
of~\cite{dorn-thesis} mirror the geometry of higher-dimensional diagrams
directly.
This theory admits an alternative mathematical formulation in terms
of the zigzag construction, an implementation as a data structure,
and algorithms to construct and type check isotopies of diagrams step by step~\cite{zigzag-contraction, zigzag-normalisation}.
The proof assistant \textsc{homotopy.io}~\cite{homotopy-io} was then developed based on these results.

The framed combinatorial topology (FCT) project~\cite{framed-combinatorial-topology}
aimed to apply the combinatorial theory of geometry beyond higher categorical
diagrams, with a focus on computability in combinatorial topology.
FCT introduced trusses, derived from the singular interval bundles of~\cite{dorn-thesis},
and meshes, which serve as a bridge
between the geometry of stratified spaces in $\R^n$ and the combinatorics of trusses.
This was applied in~\cite{manifold-diagrams-tame-tangles} to explore combinatorialisability
of smooth structures.
Trusses and meshes were discovered independently in the author's attempts to
give geometric meaning to the zigzag construction of~\cite{zigzag-contraction, zigzag-normalisation}, and were realised with techniques of higher category theory in~\cite{framed-combinatorial-topology-infty-cats}. To avoid fragmentation, we have decided to
adopt the terminology of framed combinatorial topology.
Our formulation of meshes in terms of quasicategories is more technically
convenient for the purposes of this thesis. Moreover, our meshes are not required
to have continuous boundaries, allowing us to express embeddings between meshes
as the inert bordisms in $\BMeshOpen{n}$ and $\BMeshClosed{n}$,
described in~\S\ref{sec:fct-embed}. This is essential for the theory of
diagrammatic spaces which we develop in~\S\ref{sec:d-diag-space}.
% , where they were proven equivalent to meshes. 

The idea of strictifying higher categorical structures by probing them with
shapes that admit strict composition operations themselves originated from~\cite{kv-strictify},
which attempted to strictify associativity, unitality and the interchange law for $\infty$-groupoids.
This was shown to be impossible by Simpson in~\cite{simpsonHomotopyTypesStrict1998},
who suggested compromising via a semi-strict model in which unitality is weak.
The diagrammatic sets of~\cite{amar-diagrammatic-sets} build upon this idea and are conjectured to be a semi-strict model for $\omega$-categories, particularly well-suited for defining the Gray tensor product~\cite{amar-smash-product}
and admitting a computer implementation~\cite{amar-data-structures, amar-matching}.
While this model does admit a geometric realisation similar to manifold diagrams,
each cell must have at least one stratum in its domain and codomain, respectively.
These additional strata prevent any non-trivial braiding to be performed directly by isotopy,
instead relying on the coherence structure of the weak units.

The theory of $(\infty, n)$-categories is determined essentially uniquely,
up to a choice of direction for each dimension~\cite{unicity-infty-n}.
We use complete $n$-fold Segal spaces as the model for $(\infty, n)$-categories,
originally defined in~\cite{barwick-n-fold-segal} and popularised by~\cite{lurie-tqft}.
Our construction of the $n$-diagrammatic spaces $\MfldDiag^n(-)$ and $\MfldDiagExt^n(-)$
with $n$-manifold diagrams that are orthogonal to a closed $n$-mesh is derived
from the technique used in~\cite{lurie-tqft} to construct the $n$-fold Segal space
of $n$-corbordisms.
Our universal bundles
$\MeshClosed{n} : \EMeshClosed{n} \to \BMeshClosed{n}$
and 
$\MeshOpen{n} : \EMeshOpen{n} \to \BMeshOpen{n}$
that classify closed and open $n$-meshes
are inspired by the bundle $\mathcal{E}\textup{xit} \to \mathcal{B}\textup{un}$
of~\cite{stratified-homotopy-hypothesis}, although their concrete implementation as quasicategories differs substantially.
Our $n$-diagrammatic computads were loosely inspired by the geometric structures of~\cite{geometric-cobordism-hypothesis}.

\section{Acknowledgements}

I would like to thank my advisor Jamie Vicary for his exceptional support
and the freedom to pursue my ideas in the directions I found most promising.
I fondly look back
at the time spent with Alex, Calin, Chiara, Cole, Ioannis, Nathan, Nick, Nick, Sivert and Vincent in
the Oxford and Cambridge offices and at numerous conferences.
My thanks also go to Sam Staton who readily picked up the responsibility of
advisor in Oxford after Jamie's move to Cambridge;
to Bob Coecke, Urs Schreiber, Benjamin Kästner, Claus Kirchner and Janis Voigtländer who supported my application to the department;
to Denis-Charles Cisinski, Catherine Meusburger, Thomas Nikolaus and Bernhard von Stengel
for their support with my application for funding;
and to Andr\'e Henriques, Aleks Kissinger and Sam Staton for volunteering their time
for the transfer and confirmation examinations.
My studies have been made possible via the funding of the Studienstiftung des Deutschen Volkes,
and later through the support by Quantinuum enabled by Ross Duncan.
I want to thank Chris Dorn, Chris Douglas and Lukas Waas for many insightful discussions
about framed stratified topology, stratified spaces and beyond.
I am grateful to my parents and my friends for their unwavering support and encouragement,
and of course to Percyval for helping in his own way.

\chapter{Background}\label{sec:b}

In this chapter we broadly review the notions from the theories of $(\infty, 1)$-categories,
polynomial functors and stratified spaces that form the basis of our constructions.
We assume knowledge of ordinary category theory. We do not claim originality
for any concepts in this chapter beyond their presentation.

\section{\texorpdfstring{$(\infty, 1)$-Categories}{Infinity Categories}}

An $(\infty, 1)$-category is an $\omega$-category in which all $k$-morphisms for $k > 1$ are invertible.
Equivalently, we can see an $(\infty, 1)$-category $\cat{C}$ as a collection of objects together with a space
of $1$-morphisms $\cat{C}(X, Y)$ for any pair of objects, equipped with a composition operation that is associative and unital up to homotopy.
$(\infty, 1)$-categories are the natural place for homotopy coherent mathematics,
and we will make constant use of them throughout this thesis.
The theory of $(\infty, 1)$-categories closely matches the theory of ordinary $1$-categories.
We will commonly refer to $(\infty, 1)$-categories simply as $\infty$-categories.
For a proper introduction to $\infty$-category theory we refer to~\cite{cisinski-infty-cats, rezk-intro-quasicats, htt}.
Most of the definitions and results in this section are common,
and can be found in the cited introductory texts;
we will only cite specific items that are not as well known.

\subsection{Quasicategories and Kan Complexes}

Similar to picking a basis in linear algebra, we pick a model for $\infty$-categories.
By now there are multiple choices of such a model which have been shown to be equivalent in the appropriate way.
For the purposes of this thesis we mostly work with quasicategories,
which are the most well developed at this point.

\begin{definition}
  We denote by $\FinOrd$ the category of finite non-empty ordinals,
  whose objects are the linearly ordered sets
  $
    \ord{n} := \{ 0 \leq 1 \leq \cdots \leq n \}
  $
  for all natural numbers $n \geq 0$.
  The maps in $\FinOrd$ are order-preserving functions.
  We denote by for some $n, m \geq 0$ and any monotone sequence
  $0 \leq i_0 \leq \cdots \leq i_n \leq m$ we denote by
  $\langle i_0, \ldots, i_k \rangle$
  the map $\ord{n} \to \ord{m}$ which sends $j \in \ord{n}$ to $i_j \in \ord{m}$.
  A \defn{simplicial set} is a functor $\FinOrd^\op \to \Set$,
  and a map of simplicial sets is a natural transformation of functors.
  We denote by $\sSet$ the $1$-category of simplicial sets.
  For $k \geq 0$ we write $\Delta\ord{k}$ for the simplicial set
  given by the representable functor $\sSet(-, \ord{k})$.
  When $X$ is a simplicial set, we call an element $X(\ord{k})$
  a \defn{$k$-simplex} of $X$; by the Yoneda lemma there is a natural bijection
  between $X(\ord{k})$ and the set of maps $\Delta\ord{k} \to X$.
  The \defn{boundary} of the $k$-simplex $\Delta\ord{k}$ is the union of all subsimplices $\Delta\ord{k - 1} \hookrightarrow \Delta\ord{k}$.
  For $0 \leq i \leq k$ the $i$th horn $\Lambda^i\ord{k}$ is the union of all
  subsimplices $\Delta\ord{k - 1} \hookrightarrow \Delta\ord{k}$
  except the one which corresponds to the inclusion
  $\ord{k - 1} \hookrightarrow \ord{k}$
  that misses $i \in \ord{k}$.
  We write $X \join Y$ for the \defn{join} of two simplicial sets $X$ and $Y$.
\end{definition}

\begin{para}
  Let $\cat{C}$ be a $1$-category and $f : A \to B$, $g : X \to Y$ maps in $\cat{C}$. 
  We write $f \LLP g$ when for every (solid) diagram in $\cat{C}$ of the form
  \[
    \begin{tikzcd}
      {A} \ar[r] \ar[d, "f"'] &
      {X} \ar[d, "g"] \\
      {B} \ar[r] \ar[ur, dashed] &
      {Y}
    \end{tikzcd}
  \]
  there exists a dashed map $B \to Y$ that makes the diagram commute.  
  In this case we say that $f$ has the \defn{left lifting property} with respect to
  $g$, or equivalently that $g$ has the \defn{right lifting property} with respect
  to $f$.  
  For a set $S$ of maps in $\cat{C}$ we write $\llp(S)$ for the subcategory
  of $\cat{C}$ consisting of maps $g$ such that $f \LLP g$ for all $f \in S$.
  The set $\llp(S)$ is closed under composition, pullbacks, products and retracts
  where they exist in $\cat{C}$.
  Dually we write $\rlp(S)$ for
  the subcategory of $\cat{C}$ consisting of maps $f$ such that $f \LLP g$.
  Then $\rlp(S)$ is closed under composition, pushouts, coproducts and retracts
  where they exist in $\cat{C}$.
\end{para}

\begin{para}
  A \defn{quasicategory} is a simplicial set $\cat{C}$ such that the unique
  map $!_{\cat{C}} : \cat{C} \to \Delta\ord{0}$ has the right lifting property with respect to
  all inner horn inclusions $\Lambda^i\ord{k} \hookrightarrow \Delta\ord{k}$ for $0 < i < k$.
  When $\cat{C}$ is a quasicategory, the \defn{objects} of $\cat{C}$ are the $0$-simplices
  and the \defn{maps} of $\cat{C}$ are the $1$-simplices.  
  An inner horn $\Lambda^1 \ord{2} \to \cat{C}$ is a composable sequence of two maps in $\cat{C}$,
  and the existence of a filler $\Delta\ord{2} \to \cat{C}$ guarantees that the sequence has a composite.
  The fillers of inner horns $\Lambda^i \ord{k} \to \cat{C}$ when $k > 2$ ensure that the composite
  is unique up to equivalence, and that composition is weakly associative and unital.
\end{para}

\begin{para}
  Every $1$-category $\cat{C}$ induces a quasicategory $\Nerve \cat{C}$ by taking its nerve.
  The nerve of $\cat{C}$ is the simplicial set whose $k$-simplices are functors
  $\ord{k} \to \cat{C}$, i.e.\ composable sequences in $\cat{C}$ of length $k$
  together with a choice of composites. Because composites in a $1$-category are
  uniquely determined, the nerve $\Nerve \cat{C}$ has unique fillers for all inner
  horns. Conversely, every simplicial set with unique fillers for all inner horns
  is isomorphic to the nerve of a $1$-category. Moreover, the nerve construction
  extends to a fully faithful functor
  $\Nerve : \Cat \hookrightarrow \sSet$.
  We will therefore conflate $1$-categories with their nerve.
\end{para}

\begin{para}
  The nerve functor $\Nerve : \Cat \hookrightarrow \sSet$ has a left adjoint
  $\Ho : \sSet \to \Cat$ which sends a simplicial set $X$ to its \defn{homotopy category}
  $\Ho(X)$. When $\cat{C}$ is a quasicategory, the homotopy category $\Ho(\cat{C})$
  admits the following description: The objects of $\Ho(\cat{C})$ are the $0$-simplices
  of $\cat{C}$ and the maps from $x$ to $y$ in $\Ho(\cat{C})$ are equivalence classes
  of $1$-simplices from $x$ to $y$ in $\cat{C}$, where two $1$-simplices are identified
  when there is a $2$-simplex
  \[
    \begin{tikzcd}
      {} &
      {y} \ar[dr, "\id"] \\
      {x} \ar[rr, "g"'] \ar[ur, "f"]&
      {} &
      {z}
    \end{tikzcd}
  \]
  Then the composition operation is uniquely determined, well-defined,
  unital and associative due to the inner horn fillers for $\cat{C}$.
  We say that a map in $\cat{C}$ is an \defn{equivalence} if it is sent to an
  isomorphism in $\Ho(\cat{C})$ by the adjunction unit 
  $\cat{C} \to \Nerve(\Ho(\cat{C}))$.
\end{para}

\begin{para}
  A \defn{Kan complex} is a simplicial set $X$ such that the unique map
  $!_X : X \to \Delta\ord{0}$ has the right lifting property with respect
  to all horn inclusions $\Lambda^i\ord{k} \hookrightarrow \Delta\ord{k}$.
  A Kan complex is therefore in particular a quasicategory. The fillers for the outer horns
  $\Lambda^0\ord{k}$ and $\Lambda^k\ord{k}$ guarantee that every map of a Kan complex
  is invertible. Kan complexes therefore serve as a model of $\infty$-groupoids or spaces.
\end{para}

\begin{para}
  For any topological space $X$ we can define a simplicial set $\Sing(X)$
  whose $k$-simplices are continuous maps from the topological standard
  simplex $\DeltaTop{k} \to X$. An object in $\Sing(X)$ is a point in $X$.
  A map is a path $\gamma : \DeltaTop{1} \to X$. Famously composition of
  paths with constant length can not be made associative or unital, but
  it is associative and unital up to homotopy.  
  Every topological horn inclusion $\HornTop{i}{k} \hookrightarrow \DeltaTop{k}$
  admits a retraction, with which we can construct horn fillers for $\Sing(X)$.
  Therefore, $\Sing(X)$ is a Kan complex.
  The functor $\Sing : \TopSp \to \sSet$ has a left adjoint,
  the geometric realisation functor $\TopReal{-} : \sSet \to \TopSp$.
\end{para}

\begin{para}
  When $\cat{C}$ is a quasicategory, the \defn{core} of $\cat{C}$ is the largest
  simplicial subset $\core(\cat{C}) \hookrightarrow \cat{C}$ whose $1$-simplices
  must be equivalences in $\cat{C}$. In other words, the core of $\cat{C}$ is
  the largest Kan complex contained within $\cat{C}$. 
\end{para}

\begin{para}
  When $\cat{C}$ and $\cat{D}$ are quasicategories, the product of simplicial
  sets $\cat{C} \times \cat{D}$ is a quasicategory as well. Similarly, the
  coproduct $\cat{C} \sqcup \cat{D}$ is again a quasicategory.
  For any simplicial set $X$ and a quasicategory $\cat{C}$ we write
  $\Fun(X, \cat{C})$ for the simplicial set whose $k$-simplices are maps
  $\Delta\ord{k} \times X \to \cat{C}$. Then $\Fun(X, \cat{C})$ is a quasicategory
  itself, whose objects are \defn{functors} from $X$ to $\cat{C}$ and whose maps
  are \defn{natural transformations}. We write $\CatInfty(X, \cat{C})$ for the
  core of $\Fun(X, \cat{C})$.
  The quasicategory of functors $\Fun(\Delta\ord{1}, \cat{C})$
  is the \defn{arrow category} of $\cat{C}$, equipped with functors
  $\dom, \cod : \Fun(\Delta\ord{1}, \cat{C}) \to \cat{C}$ that sends a map to
  its domain and codomain, respectively.
  For any pair of objects $x$, $y$ in $\cat{C}$ we can take the pullbacks
  of simplicial sets 
  \[
    \begin{tikzcd}
      {\cat{C}(x, y)} \ar[r] \ar[d] \pullbackcorner &
      {\Fun(\Delta\ord{1}, \cat{C})} \ar[d, "{(\dom, \cod)}"] \\
      {\Delta\ord{0}} \ar[r, "{(x, y)}", swap] &
      {\cat{C} \times \cat{C}}
    \end{tikzcd}
    \qquad
    \begin{tikzcd}
      {\cat{C}_{x/}} \ar[r] \ar[d] \pullbackcorner &
      {\Fun(\Delta\ord{1}, \cat{C})} \ar[d, "\dom"] \\
      {\Delta\ord{0}} \ar[r, "x", swap] &
      {\cat{C}}
    \end{tikzcd}
    \qquad
    \begin{tikzcd}
      {\cat{C}_{/y}} \ar[r] \ar[d] \pullbackcorner &
      {\Fun(\Delta\ord{1}, \cat{C})} \ar[d, "\cod"] \\
      {\Delta\ord{0}} \ar[r, "y", swap] &
      {\cat{C}}
    \end{tikzcd}
  \]
  The simplicial set $\cat{C}(x, y)$ is a Kan complex that represents the
  \defn{mapping space} from $x$ to $y$ in $\cat{C}$. The simplicial sets $\cat{C}_{x/ }$
  and $\cat{C}_{/y}$ are the \defn{slice categories} of $\cat{C}$ under $x$ and over $y$.
  More generally for functors $f : \cat{A} \to \cat{C}$ and $g : \cat{B} \to \cat{C}$ of quasicategories, we define the \defn{comma categories} via the pullbacks of simplicial sets
  \[
    \begin{tikzcd}
      {(f / g)} \ar[r] \ar[d] \pullbackcorner &
      {\Fun(\Delta\ord{1}, \cat{C})} \ar[d, "{(\dom, \cod)}"] \\
      {\cat{A} \times \cat{B}} \ar[r, "f \times g"'] &
      {\cat{C} \times \cat{C}}
    \end{tikzcd}
    \qquad
    \begin{tikzcd}
      {\cat{C}_{f /}} \ar[r] \ar[d] \pullbackcorner &
      {\Fun(\Delta\ord{1}, \cat{C})} \ar[d, "\dom"] \\
      {\cat{A}} \ar[r, "f"'] &
      {\cat{C}}
    \end{tikzcd}
    \qquad
    \begin{tikzcd}
      {\cat{C}_{/ g}} \ar[r] \ar[d] \pullbackcorner &
      {\Fun(\Delta\ord{1}, \cat{C})} \ar[d, "\cod"] \\
      {\cat{B}} \ar[r, "g"'] &
      {\cat{C}}
    \end{tikzcd}
  \]
\end{para}

\begin{para}
  A map of simplicial sets $f : X \to Y$ is a \defn{Kan equivalence} when
  its geometric realisation $|f| : |X| \to |Y|$ is a weak homotopy equivalence
  of topological spaces. The map $f : X \to Y$ is a \defn{categorical equivalence}
  when for every quasicategory $\cat{C}$ the map $\CatInfty(Y, \cat{C}) \to \CatInfty(X, \cat{C})$,
  induced by precomposition with $f$, is a Kan equivalence.
\end{para}

\begin{para}
  Let $f : \cat{C} \to \cat{D}$ be a functor between quasicategories.
  Then $f$ is \defn{fully faithful} if for each pair of objects $x$, $y$ in $\cat{C}$
  the induced map $\cat{C}(x, y) \to \cat{D}(f(x), f(y))$
  is a Kan equivalence.
  An object $d$ in $\cat{D}$ is in the \defn{essential image} of
  $f$ when there exists an object $c$ in $\cat{C}$ and an equivalence
  $f(c) \to d$ in $\cat{D}$.
  The functor $f$ is \defn{essentially surjective} when every object
  of $\cat{D}$ is in the essential image of $f$.
  A functor between quasicategories is a categorical equivalence if and only
  if it is fully faithful and essentially surjective.
\end{para}

\begin{para}
  A functor $f : \cat{C} \to \cat{D}$ between quasicategories is a categorical equivalence if and only
  if there exists a functor $g : \cat{D} \to \cat{C}$ and natural equivalences
  $f \circ g \simeq \id_\cat{D}$ as well as $g \circ f \simeq \id_\cat{C}$.
\end{para}

\begin{para}
  Let $f : X \to Y$ be a map of simplicial sets. Suppose that for every
  $k \geq 0$ and every (solid) diagram of simplicial sets
  \[
    \begin{tikzcd}
      {\partial\Delta\ord{k}} \ar[r] \ar[d, hook] &
      {X} \ar[d, "f"] \\
      {\Delta\ord{k}} \ar[r, "v"'] \ar[ur, dashed, "w"{description}] &
      {Y}
    \end{tikzcd}
  \]
  there exists a map $w : \Delta\ord{k} \to Y$ which makes the upper left
  square commute strictly together with a natural equivalence $f \circ w \simeq u$.  
  Then $f$ is a Kan equivalence by~\cite[Lemma~4.3]{quasicat-of-frames-cofibration}.
  Moreover if $Y$ is a Kan complex, then so is $X$.
  The analogous claim holds for categorical equivalences and quasicategories.
\end{para}

\begin{para}
  Let $\cat{C}$ be a quasicategory and $X \subseteq \cat{C}$ a simplicial
  set contained in $\cat{C}$. Then $X$ is a \defn{subcategory} of $\cat{C}$
  if it is closed under composition, i.e. when the inclusion map
  $X \hookrightarrow \cat{C}$ has the right lifting
  property with respect to the horn inclusion $\Lambda^1\ord{2} \to \Delta\ord{2}$.
  In this case $X \hookrightarrow \cat{C}$ has the right lifting property
  with respect to all $\Lambda^i\ord{k} \hookrightarrow \Delta\ord{k}$
  for $0 < i < k$ and so $X$ is itself a quasicategory.
  A subcategory $\cat{D} \subseteq \cat{C}$ is \defn{full} if the inclusion map
  $\cat{D} \hookrightarrow \cat{C}$ is fully faithful. A subcategory
  $\cat{D} \subseteq \cat{C}$ is \defn{wide} if $\cat{D}$ contains all 
  objects of $\cat{C}$.
\end{para}

\begin{para}
  A pair of functors $L : \cat{C} \to \cat{D}$ and $R : \cat{D} \to \cat{C}$
  between quasicategories form an adjunction when there exist natural
  transformations $\eta : \id_{\cat{D}} \to R \circ L$ and $\eps : L \circ R \to \id_{\cat{C}}$,
  called the \defn{unit} and \defn{counit},
  together with $2$-simplices
  \[
    \begin{tikzcd}
      {L} \ar[r, "\id_L \circ \eta"] \ar[dr, "\id_R"'] &
      {L \circ R \circ L} \ar[d, "\eps \circ \id_L"] \\
      {} &
      {L}
    \end{tikzcd}
    \qquad
    \begin{tikzcd}
      {R} \ar[r, "\eta \circ \id_R"] \ar[dr, "\id_R"'] &
      {R \circ L \circ R} \ar[d, "\id_R \circ \eps"] \\
      {} &
      {R}
    \end{tikzcd}
  \]
  in $\Fun(\cat{C}, \cat{D})$ and $\Fun(\cat{D}, \cat{C})$.
  In this case we write $L \dashv R$ and $L : \cat{C} \rightleftarrows \cat{D} : R$.
  The data of an adjunction is determined uniquely up to equivalence by just $L$ or
  $R$ individually, as long as it exists.
  The adjunction is an \defn{adjoint equivalence} whenever both the unit and
  counit are natural equivalences.
\end{para}

\begin{para}
  Let $I$ be a simplicial set.
  Then the left and right cones $I^\triangleleft$ and $I^\triangleright$
  are defined by the pushouts of simplicial sets
  \[
    \begin{tikzcd}[column sep = large]
      {\Delta\ord{0} \times I} \ar[hook, r, "{\langle 0 \rangle \times \id}"] \ar[d] \pushoutcorner &
      {\Delta\ord{1} \times I} \ar[d] \\
      {\Delta\ord{0}} \ar[r] &
      {I^\triangleleft}
    \end{tikzcd}
    \qquad
    \begin{tikzcd}[column sep = large]
      {\Delta\ord{0} \times I} \ar[hook, r, "{\langle 1 \rangle \times \id}"] \ar[d] \pushoutcorner &
      {\Delta\ord{1} \times I} \ar[d] \\
      {\Delta\ord{0}} \ar[r] &
      {I^\triangleright}
    \end{tikzcd}
  \]
  The cones let us define limits and colimits as follows:
  Suppose that $\cat{C}$ is a quasicategory and $\hat{f} : I^\triangleleft \to \cat{C}$ a map of
  simplicial sets which restricts to the map $f : I \to \cat{C}$.
  Then $\hat{f}$ is a \defn{limit diagram} whenever the restriction map
  $\cat{C}_{/ \hat{f}} \to \cat{C}_{/ f}$
  is a categorical equivalence.
  \defn{Colimit diagrams} are defined dually.
  Let $\const_I : \cat{C} \to \Fun(I, \cat{C})$ be the constant diagram functor.
  A right adjoint of $\const_I$, when it exists, computes limits of all $I$-shaped
  diagrams; similarly a left adjoint of $\const_I$ computes colimits of $I$-shaped
  diagrams.
\end{para}

\begin{para}  
  Suppose that $\cat{C}$ is a quasicategory and
  \[
    \begin{tikzcd}
      {X_0} \ar[r] \ar[d] &
      {X_1} \ar[r] \ar[d] \pullbackcorner &
      {X_2} \ar[d] \\
      {Y_0} \ar[r] &
      {Y_1} \ar[r] &
      {Y_2}
    \end{tikzcd}
  \]
  a commutative diagram in $\cat{C}$ in which the square on the right
  is a pullback square. Then the square on the left is a pullback square
  if and only if the overall diagram composes to a pullback square.
\end{para}

\begin{para}
  Suppose that $\cat{C}$ is a quasicategory and 
  \[
    \begin{tikzcd}[column sep = {2.5em, between origins}, row sep = {2.5em, between origins}]
    	& B && Y \\
    	A && X \\
    	& D && W \\
    	C && Z
    	\arrow[from=2-1, to=1-2]
    	\arrow[from=2-1, to=4-1]
    	\arrow[from=4-1, to=4-3]
    	\arrow[from=2-3, to=1-4]
    	\arrow[from=1-2, to=3-2]
    	\arrow[from=4-1, to=3-2]
    	\arrow[from=4-3, to=3-4]
    	\arrow[from=3-2, to=3-4]
    	\arrow[from=1-4, to=3-4]
    	\arrow[from=1-2, to=1-4]
    	\arrow[crossing over, from=2-3, to=4-3]
    	\arrow[crossing over, from=2-1, to=2-3]
    \end{tikzcd}
  \]
  a commutative diagram in $\cat{C}$ in which the left, back and right square
  \[
    \begin{tikzcd}
      {A} \ar[r] \ar[d] \pullbackcorner &
      {B} \ar[d] \\
      {C} \ar[r] &
      {D}
    \end{tikzcd}
    \qquad
    \begin{tikzcd}
      {B} \ar[r] \ar[d] \pullbackcorner &
      {Y} \ar[d] \\
      {D} \ar[r] &
      {W}
    \end{tikzcd}
    \qquad
    \begin{tikzcd}
      {X} \ar[r] \ar[d] \pullbackcorner &
      {Y} \ar[d] \\
      {Z} \ar[r] &
      {W}
    \end{tikzcd}
  \]
  are all pullback squares. Then also the front square
  \[
    \begin{tikzcd}
      {A} \ar[r] \ar[d] &
      {X} \ar[d] \\
      {C} \ar[r] &
      {Z}
    \end{tikzcd}
  \]
  is a pullback square.
\end{para}

\subsection{Localisation and Model Categories}

\begin{para}
  Let $\cat{C}$ be a quasicategory and $W$ a subset of the set of maps in $\cat{C}$.
  The \defn{localisation} of $\cat{C}$ at $W$ is a quasicategory $\cat{C}[W^{-1}]$
  together with a functor $\gamma : \cat{C} \to \cat{C}[W^{-1}]$ so that for
  every quasicategory $\cat{D}$ the functor $\CatInfty(\cat{C}[W^{-1}], \cat{D}) \to \CatInfty(\cat{C}, \cat{D})$
  induced by $\gamma$ is fully faithful, and its essential image consists of those functors
  $f : \cat{C} \to \cat{D}$ which sends maps in $W$ to equivalences in $\cat{D}$.
  The localisation always exists and is determined uniquely up to equivalence.
  Even when $\cat{C}$ is a $1$-category, the localisation of $\cat{C}$ at some set
  may not be equivalent to a $1$-category. Every quasicategory arises as the
  localisation of a $1$-category.
\end{para}

\begin{para}
  Let $\cat{C}$ be a quasicategory and $W$ a subset of the set of maps in $\cat{C}$.
  Then $W$ satisfies the \defn{$2$-out-of-$3$} property
  if whenever $f : x \to y$ and $g : y \to z$ are composable maps in $\cat{C}$ and two of
  $f$, $g$, $g \circ f$ are in $W$, then so is the third.
  In particular if $W$ is the set of equivalences in $\cat{C}$, then it satisfies
  the $2$-out-of-$3$ property.
\end{para}

\begin{para}
  For any Grothendieck universe $U$ we have a $1$-category $\sSet_U$ of
  $U$-small simplicial sets. Then the quasicategory of $U$-small $\defn{spaces}$
  $\Space_U$ is the localisation of $\sSet_U$ at the Kan equivalences.
  The quasicategory of $U$-small $\infty$-categories $(\CatInfty)_{U}$ is the
  localisation of $\sSet_U$ at the categorical equivalences.
  We largely ignore size issues, and therefore drop the Grothendieck
  universe from the notation to write $\Space$ and $\CatInfty$.
\end{para}

Localisations are difficult to work with in general.
However, there are tools that use additional structure on a relative category
$(\cat{C}, \cat{W})$ to enable us to perform computations in the localisation $\cat{C}[\cat{W}^{-1}]$
via constructions purely in $\cat{C}$.
Model categories are one such tool.
We will quickly review the parts of the theory of model categories that are relevant to this thesis,
but refer to the literature for proofs or more in depth discussion.

\begin{para}
  Let $\cat{C}$ be a $1$-category that is complete and cocomplete.
  A \defn{model structure} $S$ on $\cat{C}$ consists of three wide subcategories  
  $\Weq_S$, $\Fib_S$ and $\Cof_S$ of $\cat{C}$ such that
  \begin{enumerate}
    \item $\Weq_S$, $\Fib_S$ and $\Cof_S$ are closed under retract.
    \item $(\Weq_S \cap \Cof_S) \LLP \Fib_S$ and $\Cof_S \LLP (\Weq_S \cap \Fib_S)$.
    \item Each map in $\cat{C}$ factors as a map in $\Weq_S \cap \Cof_S$ followed by a map in $\Fib_S$.
    \item Each map in $\cat{C}$ factors as a map in $\Cof_S$ followed by a map in $\Weq_S \cap \Fib_S$.
  \end{enumerate}
  The $1$-category $\cat{C}$ together with the model structure $S$
  form a \defn{model category} $(\cat{C}, S)$.
  The maps in $\Weq_S$, $\Fib_S$ and $\Cof_S$ are called the
  \defn{weak equivalences}, \defn{fibrations} and \defn{cofibrations}
  of $S$. A fibration or cofibration is \defn{trivial} or \defn{acyclic}
  when it is also in $\Weq_S$. An object $X$ in $\cat{C}$ is
  \defn{fibrant} when the unique map $X \to \terminal$ into the terminal
  object of $\cat{C}$ is a fibration. An object $X$ in $\cat{C}$
  is \defn{cofibrant} when the unique map $\emptyset \to X$
  from the initial object is a cofibration.
  The model structure is \defn{cofibrantly generated} when there exist
  sets of maps $I$, $J$ in $\cat{C}$ such that $\Fib_S = \rlp(I)$
  and $\Fib_S \cap \Weq_S = \rlp(J)$.
  The model category is \defn{combinatorial} when $\cat{C}$ is a
  locally presentable category and the model structure is cofibrantly generated.
\end{para}

\begin{para}
  The \defn{Kan-Quillen model structure} $\MSKan$ on the category $\sSet$
  of simplicial sets is the unique model structure
  on $\sSet$ whose cofibrations are the monomorphisms and the fibrant
  objects are Kan complexes.
  The weak equivalences are the Kan equivalences as defined above.
  It is cofibrantly generated by the sets
  \begin{align*}
    I &:= \{ \Lambda^i\ord{k} \hookrightarrow \Delta\ord{k} \mid 0 \leq i \leq k, k \geq 1 \} \\
    J &:= \{ \partial \Delta\ord{k} \hookrightarrow \Delta\ord{k} \mid k \geq 0 \}
  \end{align*}
  We call the fibrations in $\Fib_{\MSKan}$ the \defn{Kan fibrations}.
  Since $\sSet$ is locally presentable, the model category $(\sSet, \MSKan)$
  is combinatorial.
\end{para}

\begin{para}
  The \defn{Joyal model structure} $\MSJoyal$ is the unique model structure on
  $\sSet$ whose cofibrations are the monomorphisms and the fibrant
  objects are quasicategories.
  Its weak equivalences are the categorical equivalences.
  It is cofibrantly generated by the sets 
  \begin{align*}
    I &:= \{ \Lambda^i\ord{k} \hookrightarrow \Delta\ord{k} \mid 0 < i < k \} \cup \{ \Delta\ord{0} \hookrightarrow E\ord{1} \} \\
    J &:= \{ \partial \Delta\ord{k} \hookrightarrow \Delta\ord{k} \mid k \geq 0 \}
  \end{align*}
  A fibration in $\Fib_{\MSJoyal}$ is a \defn{categorical fibration}.
  The trivial fibrations in $\Fib_{\MSJoyal} \cap \Weq_{\MSJoyal}$ agree with
  the trivial fibrations of the Kan-Quillen model structure.
  Since $\sSet$ is locally presentable, the model category $(\sSet, \MSJoyal)$
  is combinatorial.
  We say that a map in $\sSet$ is an \defn{inner fibration} whenever it has the right lifting property with respect to the set
  \[ \{ \Lambda^i\ord{k} \hookrightarrow \Delta\ord{k} \mid 0 < i < k \} \]
\end{para}

\begin{lemma}\label{lem:b-inner-fib-1-cat}
  Suppose that $f : \cat{C} \to \cat{D}$ is a map of simplicial sets such that
  $\cat{C}$ is a quasicategory and $\cat{D}$ is the nerve of a $1$-category.
  Then $f$ is an inner fibration.  
\end{lemma}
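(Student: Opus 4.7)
The plan is to verify the right lifting property against inner horn inclusions by exploiting the uniqueness of inner horn fillers in the nerve of a $1$-category. So I would set up an arbitrary lifting problem, produce a candidate filler upstairs using the quasicategory structure of $\cat{C}$, and then show that it automatically satisfies the compatibility with the given square, using uniqueness of fillers downstairs.

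More concretely: fix $0 < i < k$ and consider a commutative diagram
\[
\begin{tikzcd}
  {\Lambda^i\ord{k}} \ar[r, "h"] \ar[d, hook, "j"'] &
  {\cat{C}} \ar[d, "f"] \\
  {\Delta\ord{k}} \ar[r, "\sigma"'] &
  {\cat{D}}
\end{tikzcd}
\]
with $f \circ h = \sigma \circ j$. Since $\cat{C}$ is a quasicategory, the map $!_{\cat{C}} : \cat{C} \to \Delta\ord{0}$ has the right lifting property against $j$, so there is an extension $\tilde{h} : \Delta\ord{k} \to \cat{C}$ with $\tilde{h} \circ j = h$. The only thing that needs checking is that $f \circ \tilde{h} = \sigma$ on the nose, not merely on $\Lambda^i\ord{k}$.

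For this, note that both $f \circ \tilde{h}$ and $\sigma$ are maps $\Delta\ord{k} \to \cat{D}$ whose restrictions along $j$ agree with $f \circ h = \sigma \circ j$. Since $\cat{D}$ is the nerve of a $1$-category, every inner horn in $\cat{D}$ has a \emph{unique} filler (this is the characterisation of nerves of $1$-categories among simplicial sets recalled earlier in the section). Consequently $f \circ \tilde{h} = \sigma$, and $\tilde{h}$ provides the required diagonal lift.

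The argument therefore has no real obstacle: the only substantive input is that nerves of $1$-categories have unique inner horn fillers, which has already been recorded. Nothing in the argument depends on whether $\cat{C}$ is small, so no set-theoretic subtleties arise either.
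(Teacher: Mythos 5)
Your argument is correct and is essentially identical to the paper's proof: fill the inner horn in $\cat{C}$ using the quasicategory structure, then use uniqueness of inner horn fillers in the nerve of a $1$-category to conclude that the composite with $f$ agrees with the prescribed $k$-simplex in $\cat{D}$. No changes needed.
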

\begin{proof}
  Suppose that $0 < i < k$ and we have a lifting problem
  \begin{equation}\label{eq:b-inner-fib-1-cat:problem}
    \begin{tikzcd}
      {\Lambda^i\ord{k}} \ar[r, "\tau'"] \ar[d, hook] &
      {\cat{C}} \ar[d, "f"] \\
      {\Delta\ord{k}} \ar[r, "\sigma"'] \ar[ur, dashed] &
      {\cat{D}}
    \end{tikzcd}
  \end{equation}
  Because $\cat{C}$ is a quasicategory, we have an inner horn filler
  \[
    \begin{tikzcd}
      {\Lambda^i\ord{k}} \ar[r, "\tau'"] \ar[d, hook] &
      {\cat{C}} \\
      {\Delta\ord{k}} \ar[ur, dashed, "\tau"'] &
      {}
    \end{tikzcd}
  \]
  Then $f \circ \tau$ and $\sigma$ are both solutions to the lifting problem
  \[
    \begin{tikzcd}
      {\Lambda^i\ord{k}} \ar[r, "f \circ \tau'"] \ar[d, hook] &
      {\cat{D}} \\
      {\Delta\ord{k}} \ar[ur, dashed] &
      {}
    \end{tikzcd}
  \]
  By assumption $\cat{D}$ is the nerve of a $1$-category and so $f \circ \tau = \sigma$.
  Therefore, $\tau$ is a solution to the lifting problem~(\ref{eq:b-inner-fib-1-cat:problem}).
\end{proof}

\begin{para}
  Suppose that $(\cat{C}, S)$ is a model category and $\cat{I}$ is a small $1$-category.
  Let $\cat{W}$ be the wide subcategory of $\Fun(\cat{I}, \cat{C})$ consisting of
  the pointwise weak equivalences, i.e.\ those natural transformations whose
  components are weak equivalences in $\Weq_S$.
  Then the universal property of the localisation induces a functor of $\infty$-categories
  \[
    \Fun(\cat{I}, \cat{C})[\cat{W}^{-1}]
    \longrightarrow 
    \Fun(\cat{I}, \cat{C}[\Weq_S^{-1}]) 
  \]  
  By~\cite[Theorem 7.9.8.]{cisinski-infty-cats} this functor is an equivalence.
  Therefore, every diagram in the localisation $X : \cat{I} \to \cat{C}[\Weq_S^{-1}]$
  can be \defn{rectified} to a diagram $\hat{X} : \cat{I} \to \cat{C}$
  in the model category:
  \[
    \begin{tikzcd}
      {\cat{I}} \ar[r, dashed, "\hat{X}"] \ar[dr, "X"'] &
      {\cat{C}} \ar[d] \\
      {} &
      {\cat{C}[\Weq_S^{-1}]}
    \end{tikzcd}
  \]  
  In particular every diagram of $\infty$-categories in $\CatInfty$
  or spaces in $\Space$
  rectifies to a diagram in the $1$-category of simplicial sets.
\end{para}

\begin{para}
  A \defn{Reedy category} is a $1$-category $\cat{I}$ together with an
  ordinal $\alpha$ and a degree functor $\deg : \cat{I} \to \alpha$ and
  two wide subcategories $\cat{I}_+$ and $\cat{I}_-$ which together satisfy:
  \begin{enumerate}
    \item Each non-identity map in $\cat{I}_+$ strictly raises the degree.
    \item Each non-identity map in $\cat{I}_-$ strictly lowers the degree.
    \item Each map of $\cat{I}$ factors uniquely as a map in $\cat{I}_-$ followed by a map in $\cat{I}_+$.
  \end{enumerate}
  Whenever $\cat{I}$ is a Reedy category, then its opposite $\cat{I}^\op$ can
  also be equipped with the structure of a Reedy category.
  The category $\FinOrd$ is a Reedy category with the degree functor
  $\deg : \FinOrd \to \omega$ which sends $\ord{n}$ to $n \in \omega$,
  where $\FinOrd_+$ and $\FinOrd_-$ are the injective and surjective maps.
  When $(\cat{C}, S)$ is a model category and $\cat{I}$ a Reedy category,
  we can equip $\Fun(\cat{I}, \cat{C})$ with the \defn{Reedy model structure}
  $\MSReedy(\cat{I}, S)$ that presents the quasicategory of functors
  $\Fun(\cat{I}, \cat{C}[\Weq_S^{-1}])$.
\end{para}

\begin{para}
  Let $(\cat{C}, S)$ be a model category and $X \in \cat{C}$ an object.
  Let $U : \cat{C}_{/ X} \to \cat{C}$ be the forgetful functor from the slice
  category. Then $\cat{C}_{/X}$ admits a \defn{slice model structure}
  $S_{/X}$ with
  \begin{align*}
    \Weq_{S_{/X}} &:= \{ f \in \Arr(\cat{C}_{/ X}) \mid U(f) \in \Weq_S \} \\
    \Fib_{S_{/X}} &:= \{ f \in \Arr(\cat{C}_{/ X}) \mid U(f) \in \Fib_S \} \\
    \Cof_{S_{/X}} &:= \{ f \in \Arr(\cat{C}_{/ X}) \mid U(f) \in \Cof_S \}
  \end{align*}
  The slice model structure is cofibrantly generated and combinatorial whenever
  $(\cat{C}, S)$ is.
\end{para}

\begin{para}
  Let $(\cat{C}, S)$ and $(\cat{D}, T)$ be model categories.
  An adjunction $L : \cat{C} \rightleftarrows \cat{D} : R$ is a
  \defn{Quillen adjunction}
  whenever the following equivalent conditions are satisfied:
  \begin{enumerate}
    \item $L$ preserves cofibrations and trivial cofibrations.
    \item $R$ preserves fibrations and trivial fibrations.
    \item $L$ preserves cofibrations and $R$ preserves fibrations.
    \item $L$ preserves trivial cofibrations and $R$ preserves fibrations.
  \end{enumerate}
  In this case we denote the Quillen adjunction by
  \[ L : (\cat{C}, S) \rightleftarrows (\cat{D}, T) : R \]
  and say that $L$, $R$ are left and right Quillen functors, respectively.
  By Ken Brown's Lemma left Quillen functors preserve weak equivalences
  between cofibrant objects and right Quillen functors preserve weak equivalences
  between fibrant objects.
  By~\cite[Theorem 1.1.]{mazel-gee-quillen-adjunctions} the Quillen
  adjunction induces a derived adjunction
  \[ \mathbb{L} : \cat{C}[\Weq_S^{-1}] \rightleftarrows \cat{D}[\Weq_T^{-1}] : \mathbb{R} \]
  between the quasicategories obtained by localisation.
\end{para}

% \begin{thm}
%   A Quillen adjunction $L : (\cat{C}, S) \rightleftarrows (\cat{D}, T) : R$
%   between model categories
%   induces a derived adjunction
%   between the localisations.
%   Moreover the left and right adjoint arise by applying the localisation to the composites {\color{Orange} cofibrant for left, fibrant for right}
% \end{thm}
% \begin{proof}
  
% \end{proof}
% {\color{Orange} by this we can motivate homotopy limits!}

\begin{para}
  The identity functors form a Quillen adjunction
  \[ \id : (\sSet, \MSJoyal) \rightleftarrows (\sSet, \MSKan) : \id \]
  between the Joyal and Kan model structures on simplicial sets,
  which induces an adjunction $\CatInfty \rightleftarrows \Space$.
  The right adjoint is the inclusion $\Space \hookrightarrow \CatInfty$
  of spaces into $\infty$-categories, while the left adjoint
  $\CatToSpace : \CatInfty \to \Space$ produces a space $\CatToSpace(\cat{C})$ from an $\infty$-category $\cat{C}$
  by freely inverting all maps.
  We also call $\CatToSpace(\cat{C})$ the \defn{classifying space} of $\cat{C}$.
\end{para}

\begin{para}
  Let $(\cat{C}, S)$ be a model structure and $\cat{I}$ a Reedy category.
  Then the adjunction between the constant diagram functor and the limit functor
  \[
    \const : (\cat{C}, S) \rightleftarrows (\Fun(\cat{I}, \cat{C}), \MSReedy(\cat{I}, S)) : \lim
  \]
  is a Quillen adjunction so that the induced adjunction
  \[
    \const : \cat{C}[\Weq_S^{-1}]
    \rightleftarrows
    \Fun(\cat{I}, \cat{C}[\Weq_S^{-1}]) : \lim
  \]
  determines $\cat{I}$-shaped limits in the $\infty$-category $\cat{C}[\Weq_S^{-1}]$.
  In particular when $X: \cat{I} \to \cat{C}$ is a diagram in the $1$-category $\cat{C}$ that is fibrant in the Reedy model structure, 
  then the localisation functor $\cat{C} \to \cat{C}[\Weq_S^{-1}]$ preserves the limit of $X$.
  Analogously, the adjunction 
  \[
    \colim : (\Fun(\cat{I}, \cat{C}), \MSReedy(\cat{I}, S)) \rightleftarrows (\cat{C}, S) : \const
  \]
  is a Quillen adjunction   
  and so the localisation $\cat{C} \to \cat{C}[\Weq_S^{-1}]$ preserves colimits of Reedy cofibrant diagrams.
\end{para}

\begin{para}
  Suppose that $(\cat{C}, S)$ is a model category.
  When we let
  \[ \cat{I} := \{ 0 \rightarrow 1 \leftarrow 2 \}, \]
  then an $\cat{I}$-shaped limit is a pullback.
  The category $\cat{I}$ is a Reedy category so that a diagram
  \[
    \begin{tikzcd}
      X(0) \ar[r] & X(1) & X(2) \ar[l]
    \end{tikzcd}
  \]
  is fibrant in $(\Fun(\cat{I}, \cat{C}), \MSReedy(S))$ when the objects
  $X(0)$, $X(1)$ and $X(2)$ are all fibrant in $(\cat{C}, S)$
  and $X(2) \to X(1)$ is a fibration.
  This allows us to compute pullbacks in $\cat{C}[\Weq^{-1}]$
  via certain pullbacks in $\cat{C}$.
  The pullbacks of quasicategories above all have this form and
  therefore represent pullbacks of $\infty$-categories.
\end{para}

\begin{para}
  Let $(\cat{C}, S)$ be a model category and $f : X \to Y$ a map in $\cat{C}$.
  Then the adjunction $\Sigma_f \dashv f^*$ between the slice categories is
  a Quillen adjunction for the slice model structures
  \[
    \Sigma_f : (\cat{C}_{/ X}, S_{/ X}) \rightleftarrows (\cat{C}_{/ Y}, S_{/ Y}) : f^*
  \]
\end{para}

\begin{para}
  Let $\cat{C}$ be an $\infty$-category.
  A \defn{reflective localisation} of $\cat{C}$ is a full subcategory
  $i : \cat{D} \hookrightarrow \cat{C}$ such that the inclusion functor
  has a left adjoint $L : \cat{C} \to \cat{D}$.
  We say that a map $f : X \to Y$ in $\cat{C}$ is $\cat{D}$-local when for every
  object $D \in \cat{D}$, precomposition with $f$ induces an equivalence
  $\cat{C}(Y, D) \to \cat{C}(X, D).$
  Then $L : \cat{C} \to \cat{D}$ is the localisation functor which freely
  inverts the $\cat{D}$-local maps in $\cat{C}$.
\end{para}

\begin{para}
  A diagram $X : \cat{I}^\triangleleft \to \cat{D}$ is a limit diagram
  if and only if $i \circ X : \cat{I}^\triangleleft \to \cat{C}$ is a limit
  diagram. 
  A colimit in $\cat{D}$ is computed by first taking the colimit in $\cat{C}$ and then 
  applying the functor $L$ to the result.
\end{para}

\begin{para}
  Let $(\cat{C}, S)$ be a model category and $W$ a collection of maps in $\cat{C}$.
  The \defn{left Bousfield localisation} of $(\cat{C}, S)$ at $W$ is,
  if it exists, the unique model structure $S[W^{-1}]$ on $\cat{C}$
  with the same cofibrations $\Cof_{S[W^{-1}]} = \Cof_S$ 
  so that the weak equivalences $\Weq_{S[W^{-1}]}$ are those maps of
  $\cat{C}$ that become equivalences in the localisation $\cat{C}[(\Weq_S \cup W)^{-1}]$.
  Then the identity functor $\id : \cat{C} \to \cat{C}$ induces a Quillen
  adjunction $\id : (\cat{C}, S[W^{-1}]) \rightleftarrows (\cat{C}, S) : \id$.
  The derived adjunction $\cat{C}[\Weq_{S[W^{-1}]}] \rightleftarrows \cat{C}[\Weq_{S}]$
  has a fully faithful right adjoint so that $\cat{C}[\Weq_{S[W^{-1}]}]$
  is a reflective subcategory of $\cat{C}[\Weq_{S}]$.
\end{para}

\subsection{Fibrations}

\begin{para}
  A map of simplicial sets $p : X \to Y$ is an isofibration if it is
  an inner fibration and for every object $x_0$ of $X$ and every
  equivalence $f : y_0 \to y_1$ in $Y$ with $p(x_0) = y_0$ there exists
  an equivalence $\hat{f} : x_0 \to x_1$ in $X$ with $p(\hat{f}) = f$.
  Whenever $\cat{E}$ and $\cat{K}$ are quasicategories, a map
  $p : \cat{E} \to \cat{K}$ is an isofibration if and only if it
  is a categorical fibration.
\end{para}

\begin{para}
  A map of simplicial sets $p : E \to K$ is a \defn{left fibration}
  if it has the right lifting property against all horn inclusions
  $\Lambda^i\ord{k} \to \Delta\ord{k}$ with $0 \leq i < k$.
  Dually, a map of simplicial sets $p : E \to K$ is a \defn{right fibration}
  when it has the right lifting property against all horn inclusions  
  $\Lambda^i\ord{k} \to \Delta\ord{k}$ with $0 < i \leq k$.
  Both left and right fibrations are closed under pullback and composition.
\end{para}

\begin{para}
  Let $K$ be a simplicial set. The \defn{covariant model structure} over $K$
  is the unique model structure $\MSCov(K)$ on the slice category $\sSet_{/ K}$ whose fibrant
  objects are the left fibrations and whose cofibrations are the monomorphisms.
  For any map $f : K \to L$ of simplicial sets, the induced base change adjunction between
  the slice categories $\Sigma_f : \sSet_{/K} \rightleftarrows \sSet_{/L} : f^*$
  is a Quillen adjunction for the covariant model structures.
  Moreover, when $f$ is also a categorical equivalence, then the adjunction is a 
  Quillen equivalence.
  When $\cat{K}$ is a quasicategory, the covariant model structure represents a full $\infty$-subcategory
  $\LFib{\cat{K}}$ of $(\CatInfty)_{/ \cat{K}}$.
  There is an equivalence of $\infty$-categories
  $
    \LFib{\cat{K}} \simeq \Fun(\cat{K}, \Space)
  $
  that is natural in $\cat{K}$.
  The functor $\LFib{\cat{K}} \to \Fun(\cat{K}, \Space)$ is called
  \defn{straightening} and its inverse \defn{unstraightening}.
  Given a left fibration $p : \cat{E} \to \cat{K}$ over a quasicategory
  $\cat{K}$ which straightens to a functor $F : \cat{K} \to \Space$,
  we can compute the Kan complex which represents the space $F(x)$
  for an object $x \in \cat{K}$ by taking the pullback of
  simplicial sets
  \[
    \begin{tikzcd}
      {F(x)} \ar[r] \ar[d] \pullbackcorner &
      {\cat{E}} \ar[d, "p"] \\
      {\Delta\ord{0}} \ar[r, "x"'] &
      {\cat{K}}
    \end{tikzcd}
  \]
\end{para}

\begin{para}
  Dually, the right fibrations over a simplicial set $K$ are the fibrant
  objects of the \defn{contravariant model structure} $\MSContr(K)$,
  which is compatible with base change. When $\cat{K}$ is a quasicategory,
  the contravariant model structure represents a full $\infty$-subcategory
  $\RFib{\cat{K}}$ of $(\CatInfty)_{/ \cat{K}}$. 
  There is an equivalence of $\infty$-categories
  $
    \RFib{\cat{K}} \simeq \Fun(\cat{K}^\op, \Space)
  $
  that is natural in $\cat{K}$.
  The functor $\RFib{\cat{K}} \to \Fun(\cat{K}^\op, \Space)$ is called
  \defn{straightening} and its inverse \defn{unstraightening}.
\end{para}

\begin{para}
  Let $\cat{C}$ be a quasicategory. A \defn{presheaf} on $\cat{C}$ is a functor
  $\cat{C}^\op \to \Space$ and a map of presheaves is a natural transformation
  of such functors. Presheaves on $\cat{C}$ organise into the quasicategory
  $\PSh(\cat{C}) := \Fun(\cat{C}^\op, \Space)$.
  Since $\Space$ is complete and cocomplete, and limits and colimits in a functor
  category are computed pointwise, it follows that $\PSh(\cat{C})$ is also complete and cocomplete.
  The $\infty$-category of presheaves $\PSh(\cat{C})$ can be presented by the contravariant model structure over $\cat{C}$ or the covariant model structure over $\cat{C}^\op$.
\end{para}

\begin{para}
  Let $\cat{C}$ be a quasicategory and $x$ an object of $\cat{C}$.
  Then the projection from the slice category $\cat{C}_{x/} \to \cat{C}$
  under $x$ is a left fibration, which represents the covariant functor
  $\cat{C}(x, -) : \cat{C} \to \Space$.
  Similarly, the projection from the slice category $\cat{C}_{/x} \to \cat{C}$
  over $x$ is a right fibration, representing the contravariant functor
  $\cat{C}(-, x) : \cat{C}^\op \to \Space$.
  The functor $\cat{C}(-, x)$ is called the \defn{representable presheaf} for $x$.
\end{para}

\begin{para}
  Let $X$ be a simplicial set.
  The \defn{twisted arrow construction} $\Tw(X)$ of $X$ is the simplicial set
  whose $k$-simplices are the maps $\ord{k}^\op \join \ord{k} \to X$.
  This defines a right adjoint functor $\Tw(-) : \sSet \to \sSet$
  together with a natural projection map $\Tw(X) \to X^\op \times X$ 
  that is induced by the inclusion $\ord{k}^\op \sqcup \ord{k} \hookrightarrow \ord{k}^\op \join \ord{k}$.
  When $\cat{C}$ is a quasicategory, the projection map $\Tw(\cat{C}) \to \cat{C}^\op \times \cat{C}$ is a left fibration which, via the equivalence $\LFib{\cat{C}^\op \times \cat{C}} \simeq \Fun(\cat{C}^\op \times \cat{C}, \Space)$,
  represents the mapping space functor
  $\cat{C}(-, -) : \cat{C}^\op \times \cat{C} \to \Space$.  
  Expanding the definitions, we see that a $k$-simplex of $\Tw(\cat{C})$
  corresponds to a diagram in $\cat{C}$ of the form
  \[
    \begin{tikzcd}
      {s_0} \ar[d] &
      {s_1} \ar[d] \ar[l] &
      {\cdots} \ar[l] &
      {s_{k - 1}} \ar[d] \ar[l]&
      {s_k} \ar[l] \ar[d] \\
      {t_0} \ar[r] &
      {t_1} \ar[r] &
      {\cdots} \ar[r] &
      {t_{k - 1}} \ar[r] &
      {t_k}
    \end{tikzcd}
  \]
\end{para}

\begin{para}
  The \defn{Yoneda embedding} is the fully faithful functor
  $\cat{C} \hookrightarrow \PSh(\cat{C})$ which sends an object $x$ of $\cat{C}$
  to the representable presheaf $\cat{C}(-, x) : \cat{C}^\op \to \Space$.
  The Yoneda embedding can be constructed concretely by starting with the left
  fibration $\Tw(\cat{C}) \to \cat{C}^\op \times \cat{C}$, obtaining the
  functor $\cat{C}(-, -) : \cat{C}^\op \times \cat{C} \to \Space$ and
  then applying the adjunction $(\cat{C}^\op \times -) \dashv \Fun(\cat{C}^\op, -)$.
\end{para}

\begin{para}
  Let $f : \cat{C} \to \cat{D}$ be a functor between quasicategories.
  Then precomposition with $f$ induces a functor $f^* : \PSh(\cat{D}) \to \PSh(\cat{C})$.
  The functor $f^*$ has a left adjoint $f_! : \PSh(\cat{C}) \to \PSh(\cat{D})$
  and a right adjoint $f_* : \PSh(\cat{C}) \to \PSh(\cat{D})$.
  When the quasicategories of presheaves are presented using
  the covariant model structures, the adjunction $f_! : \PSh(\cat{C}) \rightleftarrows \PSh(\cat{D}) : f^*$ is the derived adjunction of the Quillen adjunction
  $\Sigma_f : (\sSet_{/\cat{C}}, \MSCov(\cat{C})) \rightleftarrows (\sSet_{/ \cat{D}}, \MSCov(\cat{D})) : f^*$.
  When the functor $f$ is fully faithful, then so are the left and right adjoints
  $f_!, f_* : \PSh(\cat{C}) \to \PSh(\cat{D})$.
\end{para}

\begin{para}
  Let $\cat{B}$ be a quasicategory and $\cat{W} \hookrightarrow \cat{B}$
  a wide subcategory. Then the slice category $\sSet_{/ \cat{B}}$
  admits a model structure $\MSCov(\cat{B}, \cat{W})$, called the  
  \defn{$\cat{W}$-local covariant model structure}, that presents the
  quasicategory $\Fun(\cat{B}[\cat{W}^{-1}], \Space)$.
  The cofibrations are the monomorphisms and the fibrant objects
  are the left fibrations $p : \cat{E} \to \cat{B}$ that restrict to
  a Kan fibration $p'$ over the wide subcategory $\cat{W}$ of $\cat{B}$:  
  \[
    \begin{tikzcd}
      {\cat{E} \times_{\strat{B}} \cat{W}} \ar[r] \ar[d, "p'"'] \pullbackcorner &
      {\cat{E}} \ar[d, "p"] \\
      {\cat{W}} \ar[r] &
      {\cat{B}}
    \end{tikzcd}
  \]
\end{para}

\begin{para}
  A left fibration $p : E \to B$ is a \defn{left covering map} when the lifts
  against the horn inclusions 
  $\Lambda^i \ord{k} \hookrightarrow \Delta\ord{k}$
  with $0 \leq i < k$ are unique.
  Ignoring size issues, for every right covering map $p : E \to B$ there is a unique 
  pullback square of simplicial sets
  \[
    \begin{tikzcd}
      {E} \ar[r] \ar[d] \pullbackcorner &
      {\Set_{*}} \ar[d] \\
      {B} \ar[r] &
      {\Set}
    \end{tikzcd}
  \]
  where $\Set_* \to \Set$ is the functor from pointed sets to sets
  which forgets the point.
  Analogously, right fibrations with unique lifts are the \defn{right covering maps},
  and classify functors into $\Set^\op$.
\end{para}

\begin{para}
  Let $p : E \to B$ be an inner fibration of simplicial sets.
  A map $f : \Delta\ord{1} \to B$ in $E$ is \defn{$p$-cartesian}
  if for every $k \geq 2$ and every diagram of the form
  \[
    \begin{tikzcd}
      {\Delta\ord{1}} \ar[d, hook, "{\langle k - 1, k\rangle}"'] \ar[dr, "f"] \\
      {\Lambda^k\ord{k}} \ar[r] \ar[d, hook] &
      {E} \ar[d, "p"] \\
      {\Delta\ord{k}} \ar[r] \ar[ur, dashed] &
      {B}
    \end{tikzcd}
  \]
  there exists a lift $\Delta\ord{k} \to E$ that makes the diagram commute.
  Dually a map $f$ in $E$ is \defn{$p$-cocartesian} if it is $p^\op$-cartesian
  as a map of $E^\op$. Both $p$-cartesian and $p$-cocartesian maps are closed
  under composition in $E$.
\end{para}

\begin{para}
  Let $p : E \to B$ be an inner fibration of simplicial sets.
  Then $p$ is a \defn{cartesian fibration} if for every object $e_1$ of $E$
  and every map $f : b_0 \to b_1$ in $B$ with $p(e_1) = b_1$ there exists
  a $p$-cartesian map $\hat{f} : e_0 \to e_1$ in $E$ with $p(\hat{f}) = f$.
  Dually $p$ is a \defn{cocartesian fibration} when $p^\op : E^\op \to B^\op$ is a cartesian fibration.
  Both cartesian and cocartesian fibrations are closed under pullback and composition.
\end{para}

\begin{para}
  A \defn{marked simplicial set} is a simplicial set $X$ together with a subset
  $T \subseteq X(\ord{1})$ which contains all degenerate $1$-simplices;
  we say that a $1$-simplex is \defn{marked} when it is contained in $T$.
  A map of marked simplicial sets from $(X, T)$ to $(Y, S)$ is a map of
  simplical sets $f : X \to Y$ such that the induced map $X(\ord{1}) \to Y(\ord{1})$
  sends elements of $T$ to elements of $S$. Marked simplicial sets and
  their maps form a $1$-category $\sSetM$, which is a quasi-topos and therefore
  complete, cocomplete and locally cartesian closed.
  There is a functor $(-)_\flat : \sSetM \to \sSet$ which forgets the marking.
  The forgetful functor has a left adjoint $(-)^\flat : \sSet \to \sSetM$ which
  sends a simplicial set $X$ to the marked simplicial set $X^\flat$ in which
  only the degenerate $1$-simplices of $X$ are marked.
  The forgetful functor also has a right adjoint $(-)^\sharp : \sSet \to \sSetM$
  so that every $1$-simplex of $X^\sharp$ is marked.
\end{para}

\begin{para}
  Let $p : E \to B$ be a cartesian fibration. We then write
  $E^\natural$ for the marked simplicial set whose underlying simplicial
  set is $E$ and in which a map is marked when it is $p$-cartesian.
  This is called the \defn{natural marking} of $p$.
  The \defn{cartesian model structure} over $B$ is the unique model structure
  $\MSCart(B)$ on the slice category $\sSetM_{/ B^\sharp}$ whose cofibrations
  are those maps whose underlying map of simplicial sets is a monomorphism and whose fibrant objects are the cartesian fibrations
  with the natural marking.
  When $\cat{B}$ is a quasicategory, the cartesian model structure over $\cat{B}$
  represents a subcategory $\Cart{\cat{B}}$ of the $\infty$-category $(\CatInfty)_{/ \cat{B}}$.
  We then have a straightening/unstraightening equivalence
  \[
    \Cart{\cat{B}} \simeq \Fun(\cat{B}^\op, \CatInfty).
  \]
  When $p : \cat{E} \to \cat{B}$ is a cartesian fibration which straightens
  to a functor $F : \cat{B}^\op \to \CatInfty$, we can compute the $\infty$-category
  $F(b)$ for any $b \in \cat{B}$ as a quasicategory by taking the pullback of
  simplicial sets
  \[
    \begin{tikzcd}
      {F(b)} \ar[r] \ar[d] &
      {\cat{E}} \ar[d, "p"] \\
      {\Delta\ord{0}} \ar[r, "b"'] &
      {\cat{B}}
    \end{tikzcd}
  \]
\end{para}

\begin{para}
  When $\cat{B} = \Delta\ord{0}$ we have an isomorphism of $1$-categories
  $\cramped{\sSetM}_{/ \Delta\ord{0}^\sharp} \cong \sSetM$. The cartesian model structure
  over $\Delta\ord{0}$ therefore induces a model structure on the category $\sSetM$ of
  marked simplicial sets. The fibrant objects in this model structure
  are the quasicategories $\cat{C}$ equipped with the \defn{natural marking}
  $\cat{C}^\natural$ in which the marked maps are the equivalences of $\cat{C}$.
  This model structure presents the $\infty$-category $\CatInfty$, and a fibrant
  replacement is computed by localising at the marked maps.
\end{para}

\begin{para}
  Let $\cat{C}$ be a quasicategory.
  Then $\cod : \Fun(\Delta\ord{1}, \cat{C}) \to \cat{C}$
  is a cocartesian fibration which sends an object $x \in \cat{C}$ to the slice
  category $\cat{C}_{/ x}$ over $x$ and a map $f : x \to y$ to the functor
  $\Sigma_f : \cat{C}_{/ x} \to \cat{C}_{/y}$ given by postcomposition.
  When $\cat{C}$ has all pullbacks, the functor $\cod : \Fun(\Delta\ord{1}, \cat{C}) \to \cat{C}$ is also a cartesian fibration which sends a map $f : x \to y$ in $\cat{C}$
  to the pullback functor $f^* : \cat{C}_{/y} \to \cat{C}_{/ x}$.
  In this case $\Sigma_f$ is the left adjoint of $f^*$.
\end{para}

\begin{para}
  Let $\cat{C}$ be a quasicategory. We then have projection maps
  $\dom : \Tw(\cat{C}) \to \cat{C}^\op$ and $\cod : \Tw(\cat{C}) \to \cat{C}$
  from the twisted arrow construction of $\cat{C}$. These maps are cocartesian
  fibrations which straighten to the functors $\cat{C} \to \CatInfty$
  which send $x \in \cat{C}$ to $\cat{C}_{x /}$ and $\cat{C}_{/x}^\op$,
  respectively.
\end{para}

\begin{lemma}\label{lem:b-cat-cocartesian-top-row}
  Suppose that we have a commutative diagram of quasicategories
  \[
    \begin{tikzcd}
      {\cat{E} \times_{\cat{D}} \cat{C}} \ar[r, "q^* r"] \ar[d, "r^* q"'] \pullbackcorner &
      {\cat{E}} \ar[d, "q"] \ar[r, "p"] &
      {\cat{B}} \\
      {\cat{C}} \ar[r, "r"'] &
      {\cat{D}}
    \end{tikzcd}
  \]
  in which the square is a pullback square, the map $r : \cat{C} \to \cat{D}$
  is a categorical fibration and $p$ is a cocartesian fibration.
  Suppose further that a map in $\cat{E}$ is $p$-cocartesian if and only
  if it is sent to an equivalence by $q : \cat{E} \to \cat{D}$.
  Then the top row of the diagram composes to a cocartesian fibration
  $p \circ q^* r$. Moreover, a map in $\cat{E} \times_{\cat{D}} \cat{C}$ is
  $(p \circ q^* r)$-cocartesian if and only if it is sent to an equivalence
  by $r^* q : \cat{E} \times_{\cat{D}} \cat{C} \to \cat{C}$.
\end{lemma}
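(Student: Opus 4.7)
\begin{para}
  The plan is to produce cocartesian lifts in two stages: first in $\cat{E}$ using that $p$ is cocartesian, and then in $\cat{C}$ using that $r$ is a categorical fibration (hence an isofibration) together with the hypothesis that $p$-cocartesian maps in $\cat{E}$ are precisely those sent to equivalences by $q$. The composite $p \circ q^* r$ is automatically an inner fibration since inner fibrations are stable under pullback and composition. For existence, fix a map $f : b_0 \to b_1$ in $\cat{B}$ and an object $(e_0, c_0)$ of $\cat{E} \times_{\cat{D}} \cat{C}$ over $b_0$. Choose a $p$-cocartesian lift $\tilde{f} : e_0 \to e_1$ in $\cat{E}$; by hypothesis $q(\tilde{f})$ is an equivalence in $\cat{D}$ whose source is $q(e_0) = r(c_0)$. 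Because $r$ is an isofibration, we can lift $q(\tilde{f})$ to an equivalence $g : c_0 \to c_1$ in $\cat{C}$, and the pair $(\tilde{f}, g)$ gives a map in the pullback above $f$.
\end{para}

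\begin{para}
  To see that $(\tilde{f}, g)$ is $(p \circ q^* r)$-cocartesian, consider an extension problem
  \[
    \begin{tikzcd}
      {\Delta\ord{1}} \ar[d, hook, "{\langle 0, 1 \rangle}"'] \ar[dr, "{(\tilde{f}, g)}"] &
      {} \\
      {\Lambda^0\ord{k}} \ar[r, "{(u_E, u_C)}"] \ar[d, hook] &
      {\cat{E} \times_{\cat{D}} \cat{C}} \ar[d, "{p \circ q^* r}"] \\
      {\Delta\ord{k}} \ar[r, "\sigma"'] \ar[ur, dashed] &
      {\cat{B}}
    \end{tikzcd}
  \]
  with $k \geq 2$. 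Since $\tilde{f}$ is $p$-cocartesian, there is a filler $\sigma_E : \Delta\ord{k} \to \cat{E}$ with $\sigma_E|_{\Lambda^0\ord{k}} = u_E$ and $p \sigma_E = \sigma$. It remains to find $\sigma_C : \Delta\ord{k} \to \cat{C}$ with $\sigma_C|_{\Lambda^0\ord{k}} = u_C$ and $r \sigma_C = q \sigma_E$, so that the pair constitutes a filler in the pullback. This residual problem is a lifting problem against the left horn inclusion $\Lambda^0\ord{k} \hookrightarrow \Delta\ord{k}$ through the isofibration $r$, in which the initial edge of the horn is $g$. Because isofibrations have the right lifting property against those left horn inclusions whose initial edge is an equivalence, the filler exists.
\end{para}

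\begin{para}
  For the characterization, write $\phi_E = q^* r(\phi)$ and $\phi_C = r^* q(\phi)$ for any map $\phi$ in the pullback. If $\phi_C$ is an equivalence then $q(\phi_E) = r(\phi_C)$ is an equivalence, so by hypothesis $\phi_E$ is $p$-cocartesian and the argument of the previous paragraph applies verbatim to show that $\phi$ itself is $(p \circ q^* r)$-cocartesian. Conversely, if $\phi$ is $(p \circ q^* r)$-cocartesian with source $(e_0, c_0)$ and projection $f = (p \circ q^* r)(\phi)$, then the lift $(\tilde{f}, g)$ constructed above is another cocartesian lift of $f$ with the same source; by the essential uniqueness of cocartesian lifts, $\phi_C$ is equivalent to $g$ and in particular is itself an equivalence. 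The principal obstacle in the argument is the left-horn extension in $\cat{C}$: lifts against left horns through $r$ are not free, and the whole construction turns on the fact that the hypothesis on $q$ forces $q(\tilde{f})$ to be an equivalence, which via the isofibration property of $r$ allows us to arrange for the initial edge $g$ of the residual horn problem to be an equivalence as well.
\end{para}
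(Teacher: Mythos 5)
Your proof is correct and follows essentially the same route as the paper's: establish the inner fibration property by stability under pullback and composition, construct lifts in two stages (a $p$-cocartesian lift in $\cat{E}$, whose image under $q$ is an equivalence by hypothesis, then an equivalence lift in $\cat{C}$ via the isofibration $r$), and solve the outer-horn extension problem componentwise using that equivalences are cocartesian for the categorical fibration $r$. The only difference is that you spell out the ``only if'' direction of the characterisation via essential uniqueness of cocartesian lifts, which the paper leaves implicit.
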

\begin{proof}
  The map $q^* r : \cat{E} \times_{\cat{D}} \cat{C}$ is a categorical fibration
  since it is the pullback of the categorical fibration $r$.
  Therefore $q^* r$
  is in particular an inner fibration. The cocartesian fibration $p : \cat{E} \to \cat{B}$
  is also an inner fibration, and so the composite map $p \circ q^* r$ is an inner fibration.

  Suppose that we have a map $(g, h) : (e_0, c_0) \to (e_1, c_1)$ in $\cat{E} \times_{\cat{D}} \cat{C}$
  such that $h$ is an equivalence, and suppose further that we have
  a lifting problem of the form
  \begin{equation}\label{lem:b-cat-cocartesian-top-row:problem}
    \begin{tikzcd}
      {\Delta\ord{1}} \ar[d, hook, "{\langle 0, 1\rangle}"'] \ar[dr, "{(g, h)}"] \\
      {\Lambda^0\ord{0}} \ar[r, "{(\sigma', \tau')}"] \ar[d, hook] &
      {\cat{E} \times_{\cat{D}} \cat{C}} \ar[d, "p \circ q^* r"] \\
      {\Delta\ord{k}} \ar[r] \ar[ur, dashed] &
      {\cat{B}}
    \end{tikzcd}
  \end{equation}
  Because $h$ is an equivalence, so is the map $r(h) = q(g)$ in $\cat{D}$.
  By assumption we then have that $g$ is $p$-cocartesian, and so there
  exists a solution to the induced lifting problem
  \[
    \begin{tikzcd}
      {\Delta\ord{1}} \ar[d, hook, "{\langle 0, 1\rangle}"'] \ar[dr, "{g}"] \\
      {\Lambda^0\ord{0}} \ar[r, "\sigma'"] \ar[d, hook] &
      {\cat{E}} \ar[d, "p"] \\
      {\Delta\ord{k}} \ar[r] \ar[ur, dashed, "\sigma"{description}] &
      {\cat{B}}
    \end{tikzcd}
  \]
  The map $r$ is a categorical fibration and $h$ is an equivalence in $\cat{C}$.
  We can therefore find a solution to the lifting problem
  \[
    \begin{tikzcd}
      {\Delta\ord{1}} \ar[d, hook, "{\langle 0, 1\rangle}"'] \ar[dr, "{h}"] \\
      {\Lambda^0\ord{0}} \ar[r, "\tau'"] \ar[d, hook] &
      {\cat{C}} \ar[d, "r"] \\
      {\Delta\ord{k}} \ar[r, "q \circ \sigma"'] \ar[ur, dashed, "\tau"{description}] &
      {\cat{D}}
    \end{tikzcd}
  \]
  Then the lifts $\sigma : \Delta\ord{k} \to \cat{E}$ and $\tau : \Delta\ord{k} \to \cat{C}$
  together define a solution $(\sigma, \tau)$ for the lifting problem (\ref{lem:b-cat-cocartesian-top-row:problem}). It follows that $(g, h)$ is $(p \circ q^* r)$-cocartesian.
  
  It remains to show that $p \circ q^* r$ has enough cocartesian lifts.
  Whenever $(e_0, c_0)$ is an object of $\cat{E} \times_{\cat{D}} \cat{C}$
  and $f : b_0 \to b_1$ is a map in $\cat{B}$ with $p(e_0) = b_0$,
  there exists a $p$-cocartesian lift $g : b_0 \to b_1$ with
  $p(g) = f$.
  Then the map $q(g) : q(b_0) \to q(b_1)$ is an equivalence in $\cat{D}$.
  The categorical fibration $r$ is in particular an isofibration, and so
  there exists a lift $h : c_0 \to c_1$ in $\cat{C}$ with $r(h) = q(g)$ so that
  $h$ is an equivalence.
  The maps $g$ and $h$ together determine a map $(e_0, c_0) \to (e_1, c_1)$
  in the pullback $\cat{E} \times_{\cat{D}} \cat{C}$ which is a $(p \circ q^* r)$-cocartesian
  lift of $f$ since $h$ is an equivalence.
\end{proof}

\begin{para}
  Suppose that $\cat{K}$ is an $\infty$-category,
  then the inclusion functor $\coCart{\cat{K}} \hookrightarrow (\CatInfty)_{/ \cat{K}}$
  has a left adjoint which sends a functor $f : \cat{E} \to \cat{K}$ to
  the \defn{free cocartesian fibration} $F$ over $\cat{K}$.
  Following~\cite[Theorem~4.5]{lax-colimits-free-fibrations},
  we can construct $F$ explicitly as the composite of the top row in the diagram
  \[
    \begin{tikzcd}
      {\Fun(\Delta\ord{1}, \cat{K}) \times_{\cat{K}} \cat{E}}
      \ar[r] \ar[d] \pullbackcorner  &
      {\Fun(\Delta\ord{1}, \cat{K})} \ar[d, "\cod"] \ar[r, "\dom"'] &
      {\cat{K}} \\
      {\cat{E}} \ar[r, "f"'] &
      {\cat{K}}
    \end{tikzcd}
  \]
  Similarly the free cartesian fibration $G$ is the composite of the top row in the diagram
  \[
    \begin{tikzcd}
      {\Fun(\Delta\ord{1}, \cat{K}) \times_{\cat{K}} \cat{E}}
      \ar[r] \ar[d] \pullbackcorner &
      {\Fun(\Delta\ord{1}, \cat{K})} \ar[d, "\dom"] \ar[r, "\cod"'] &
      {\cat{K}} \\
      {\cat{E}} \ar[r, "f"'] &
      {\cat{K}}
    \end{tikzcd}
  \]
\end{para}

\begin{para}
  Suppose that $f : \cat{E} \to \cat{K}$ is a functor of $\infty$-categories.
  We can obtain a cocartesian fibration $R$ via the twisted arrow construction
  by composing the top row in the diagram
  \[
    \begin{tikzcd}[column sep = large]
      {\Tw(\cat{K}) \times_{\cat{K}} \cat{E}}
      \ar[r] \ar[d] \pullbackcorner &
      {\Tw(\cat{K})} \ar[d, "\cod"] \ar[r, "\dom"'] &
      {\cat{K}^\op} \\
      {\cat{E}} \ar[r, "f"'] &
      {\cat{K}}
    \end{tikzcd}
  \]
  The cocartesian fibration $R$ represents the functor
  $\cat{K}^\op \to \CatInfty$
  which sends an object $k \in \cat{K}$ to the slice category
  $\cat{K}_{k / } \times_{\cat{K}} \cat{E}$.
  This is the same functor as represented by the free cartesian fibration of $f$.
\end{para}

\begin{para}
  Let $\cat{K}$ be an $\infty$-category.
  The inclusion $\Space \hookrightarrow \CatInfty$ together with its left
  adjoint induce an adjunction on arrows categories
  $\Fun(\cat{K}, \CatInfty) \rightleftarrows \Fun(\cat{K}, \Space)$.
  Then using the straightening/unstraightening equivalences for cocartesian
  and left fibrations over $\cat{K}$, we see how to compute the \defn{free left
  fibration} from a cocartesian fibration:
  \[
    \begin{tikzcd}
    	{\Fun(\cat{K}, \CatInfty)} & {\Fun(\cat{K}, \Space)} \\
    	{\coCart{\cat{K}}} & {\LFib{\cat{K}}}
    	\arrow["\simeq"', from=1-1, to=2-1]
    	\arrow["\simeq", from=1-2, to=2-2]
    	\arrow[""{name=0, anchor=center, inner sep=0}, shift left=2, from=1-1, to=1-2]
    	\arrow[""{name=1, anchor=center, inner sep=0}, shift left=2, hook, from=1-2, to=1-1]
    	\arrow[""{name=2, anchor=center, inner sep=0}, shift left=2, hook, from=2-2, to=2-1]
    	\arrow[""{name=3, anchor=center, inner sep=0}, shift left=2, dashed, from=2-1, to=2-2]
    	\arrow["\dashv"{anchor=center, rotate=-90}, draw=none, from=3, to=2]
    	\arrow["\dashv"{anchor=center, rotate=-90}, draw=none, from=0, to=1]
    \end{tikzcd}
  \]
  We then see that the free left fibration induced by some functor $p : \cat{E} \to \cat{K}$
  is obtained by first replacing it with the free cocartesian fibration:
  \[
    \begin{tikzcd}
    	{(\CatInfty)_{/\cat{K}}} & {\coCart{\cat{K}}} & {\LFib{\cat{K}}}
    	\arrow[""{name=0, anchor=center, inner sep=0}, shift left=2, from=1-1, to=1-2]
    	\arrow[""{name=1, anchor=center, inner sep=0}, shift left=2, from=1-2, to=1-3]
    	\arrow[""{name=2, anchor=center, inner sep=0}, shift left=2, hook, from=1-2, to=1-1]
    	\arrow[""{name=3, anchor=center, inner sep=0}, shift left=2, hook, from=1-3, to=1-2]
    	\arrow["\dashv"{anchor=center, rotate=-90}, draw=none, from=0, to=2]
    	\arrow["\dashv"{anchor=center, rotate=-90}, draw=none, from=1, to=3]
    \end{tikzcd}
  \]
  The dual applies for cartesian fibrations and right fibrations:
  \[
    \begin{tikzcd}
    	{(\CatInfty)_{/\cat{K}}} & {\Cart{\cat{K}}} & {\RFib{\cat{K}}}
    	\arrow[""{name=0, anchor=center, inner sep=0}, shift left=2, from=1-1, to=1-2]
    	\arrow[""{name=1, anchor=center, inner sep=0}, shift left=2, from=1-2, to=1-3]
    	\arrow[""{name=2, anchor=center, inner sep=0}, shift left=2, hook, from=1-2, to=1-1]
    	\arrow[""{name=3, anchor=center, inner sep=0}, shift left=2, hook, from=1-3, to=1-2]
    	\arrow["\dashv"{anchor=center, rotate=-90}, draw=none, from=0, to=2]
    	\arrow["\dashv"{anchor=center, rotate=-90}, draw=none, from=1, to=3]
    \end{tikzcd}
  \]
\end{para}

\begin{para}
  Suppose we have a left fibration $p : \cat{E} \to \cat{C}$ that represents a presheaf
  $\psh{F} : \cat{C}^\op \to \Space$
  and a map of quasicategories $f : \cat{C} \to \cat{D}$.
  Because the postcomposition functor $\Sigma_f : (\sSet_{/\cat{C}}, \MSCov(\cat{C})) \to (\sSet_{/ \cat{D}}, \MSCov(\cat{D}))$
  is a left Quillen functor and every object in the covariant model structure over $\cat{C}$ is
  cofibrant, the composite map $\Sigma_f p = f \circ p : \cat{E} \to \cat{D}$ 
  represents the presheaf $f_! \psh{F}$.
  The map $f \circ p$ itself is not a left fibration in general, but by taking
  the free left fibration of $f \circ p$ we can find a fibrant replacement that
  represents the presheaf $f_!\psh{F}$.  
\end{para}

\subsection{Segal Spaces}

Complete Segal spaces are an alternative model for $\infty$-categories~\cite{rezk-complete-segal},
which generalises to a model for $(\infty, n)$-categories~\cite{barwick-n-fold-segal, lurie-tqft}.

\begin{para}
  An order-preserving map of finite non-empty ordinals $\alpha: \ord{t} \to \ord{k}$ is \defn{inert}
  if $\alpha(i) = \alpha(0) + i$ for all $i \in \ord{t}$.
  We write $\FinOrd^\inert$ for the wide subcategory of $\FinOrd$ consisting of the
  inert maps, and $\FinOrd_{\leq 1}$ for the full subcategory of $\FinOrd$
  that contains $\ord{0}$ and $\ord{1}$.
  A simplicial space $\psh{F} : \FinOrd^\op \to \Space$ is a \defn{Segal space}  
  if for all $k \geq 0$ the natural map
  \[
    \psh{F}(\ord{k}) \longrightarrow
    \lim(
      \FinOrd_{\leq 1} \times_{\FinOrd} \FinOrd^\inert_{/ \ord{k}}
      \longrightarrow
      \FinOrd
      \xrightarrow{\;\psh{F}\;}
      \Space
    )
  \]
  is an equivalence of spaces. In other words, for every $k \geq 1$ we
  have an equivalence
  \[
    \psh{F}(\ord{k}) \simeq
    \underbrace{
    \psh{F}(\ord{1}) \times_{\psh{F}(\ord{0})}
    \cdots 
    \times_{\psh{F}(\ord{0})}
    \psh{F}(\ord{1})
    }_{\text{$k$ times}}
  \]
  We interpret this property as follows: for every composable sequence
  of maps in $\psh{F}$, the space of compatible choices of composites
  is contractible.  
  The Segal spaces form a reflective subcategory $\Seg(\FinOrd)$ of $\PSh(\FinOrd)$.
\end{para}

% \begin{para}
%   For any $k \geq 0$, the \defn{spine} of $\ord{k}$ is the simplicial set
%   $\spine \ord{k}$ that is the union of the subobjects $\langle i - 1, i \rangle :  \ord{1} \hookrightarrow \ord{k}$ for all $1 \leq i \leq k$. 
%   Via the inclusion $\Set \hookrightarrow \Space$ we can interpret the inclusion
%   $\spine\ord{k} \hookrightarrow \Delta\ord{k}$ as a map of simplicial spaces.
%   A simplicial space $\psh{F} : \FinOrd^\op \to \Space$ is a \defn{Segal space}
%   if for every $k \geq 0$ the restriction map to the spine
%   \[
%     \PSh(\FinOrd)(\Delta\ord{k}, \psh{F}) \longrightarrow \PSh(\FinOrd)(\spine\ord{k}, \psh{F})
%   \]
%   is an equivalence of spaces.
% \end{para}

% We interpret the Segal property as follows: a map of simplicial spaces
% $f : \spine\ord{k} \to \psh{F}$ is a composable sequence of maps in $\psh{F}$,
% but without any choice of composite.
% When $\psh{F}$ is a Segal space, there exists a contractible space of
% extensions $\hat{f} : \Delta\ord{k} \to \psh{F}$ of $f$ that include choices
% for all composites.

% \begin{para}
%   The \defn{walking quiver} is the subcategory $\QuivWalk$ of $\FinOrd$ consisting
%   of the objects $\ord{0}$, $\ord{1}$ and the two inclusions $\ord{0} \hookrightarrow \ord{1}$.
%   Let $I : \QuivWalk \hookrightarrow \FinOrd$ denote the inclusion functor.
%   Then a simplicial space $\psh{F} : \FinOrd^\op \to \Space$ is a \defn{Segal space}
%   if $\psh{F}$ is the right Kan extension of $\psh{F} \circ I$ along $I$.
% \end{para}

\begin{para}
  A Segal space $\psh{F} \in \Seg(\FinOrd)$ is \defn{complete} if it
  is local for the unique map $E\ord{1} \to \Delta\ord{0}$
  from the free contractible groupoid with two objects $E\ord{1}$.
  Complete Segal spaces are a model of $\infty$-categories: 
  For every $\infty$-category $\cat{C}$ we have a presheaf on $\FinOrd$
  which sends $\ord{k}$ to the space of functors $\ord{k} \to \cat{C}$.
  This defines a fully faithful functor
  $\CatInfty \hookrightarrow \PSh(\FinOrd)$,
  the $\infty$-categorical version of the nerve
  $\Nerve : \Cat \hookrightarrow \sSet$.
  Then a presheaf is in the essential image of the inclusion
  $\CatInfty \hookrightarrow \PSh(\FinOrd)$
  precisely if it is a complete Segal space.
\end{para}

An $(\infty, n)$-category is an $\omega$-category in which
a $k$-cell must be invertible if $k > n$.
There are multiple equivalent models of $(\infty, n)$-categories.
In particular quasicategories and complete Segal spaces are models of
$(\infty, n)$-categories for $n = 1$.
For general $n \geq 0$ we will present $(\infty, n)$-categories as complete $n$-fold
Segal spaces.

\begin{para}
  For any $n \geq 0$ we write $\FinOrd^n$ for the $n$-fold product of the
  category $\FinOrd$ with itself: 
  \[
    \FinOrd^n := \underbrace{\FinOrd \times \cdots \times \FinOrd}_{\text{$n$ times}}
  \]
  We will sometimes use the following abbreviated notation for objects of $\FinOrd^n$:
  \[
    \ord{k_1, \ldots, k_n} := (\ord{k_1}, \ldots, \ord{k_n})
  \]
  A presheaf $\psh{F} : \FinOrd^{n, \op} \to \Space$ is a \defn{$n$-uple Segal space}
  if for all $k_1, \ldots, k_n \geq 0$ the natural map  
  \[
    \psh{F}(\ord{k_1, \ldots, k_n}) \longrightarrow
    \lim(
      \FinOrd^n_{\leq 1} \times_{\FinOrd^n} \FinOrd^{\inert, n}_{/ \ord{k_1, \ldots, k_n}}
      \longrightarrow
      \FinOrd
      \xrightarrow{\;\psh{F}\;}
      \Space
    )
  \]
  is an equivalence of spaces.  
  The $n$-uple Segal spaces form a reflective subcategory $\Seg(\FinOrd^n)$
  of $\PSh(\FinOrd^n)$.
\end{para}

% \begin{para}
%   For any $n \geq 0$ we write $\FinOrd^n$ for the $n$-fold product of the
%   category $\FinOrd$ with itself:

%   For any $n \geq 0$ we write $\FinOrd^n$ and $\QuivWalk^n$ for the $n$-fold product of
%   the categories $\FinOrd$ and $\QuivWalk$ with themselves, respectively:
%   \[
%     \FinOrd^n := \underbrace{\FinOrd \times \cdots \times \FinOrd}_{\text{$n$ times}}
%     \qquad
%     \QuivWalk^n := \underbrace{\QuivWalk \times \cdots \times \QuivWalk}_{\text{$n$ times}}
%   \]
%   Let
%   $I : \QuivWalk^n \hookrightarrow \FinOrd^n$ denote the inclusion functor.
%   Then a functor $\psh{F} : \FinOrd^{n, \op} \to \Space$ is an \defn{$n$-uple Segal space}
%   if $\psh{F}$ is the right Kan extension of $\psh{F} \circ I$ along $I$.
%   The $n$-uple Segal spaces form a reflective subcategory $\Seg(\FinOrd^n)$
%   of $\PSh(\FinOrd^n)$.
% \end{para}

\begin{para}
  An $n$-uple Segal space $\psh{F}$ is \defn{globular} when
  $\psh{F}(\ord{0}) : \FinOrd^{n - 1, \op} \to \Space$ is a constant diagram
  and $\psh{F}(\ord{i})$ is a globular $(n - 1)$-uple Segal space for all $i \geq 0$.
  Globular $n$-uple Segal spaces are also called \defn{$n$-fold Segal spaces}
  and form a reflective subcategory $\Seg^\glob(\FinOrd^n)$ of $\PSh(\FinOrd^n)$.
\end{para}

\begin{para}
  An $n$-fold Segal space $\psh{F}$ is \defn{complete} when the Segal space
  $\psh{F}(\ord{-, 0, \ldots, 0}) : \FinOrd^\op \to \Space$ is complete and
  the $(n - 1)$-fold Segal space $\psh{F}(\ord{1})$ is complete.
  The complete $n$-fold Segal spaces form a reflective subcategory of
  $\PSh(\FinOrd^n)$ which we denote by $\CatN{n}$.
\end{para}

% \begin{para}
%   Let $p : E \to K$ be an inner fibration of simplicial sets.
%   A map $f : x \to y$ in $K$ is \defn{$p$-cartesian} if the induced map
%   of simplicial sets
%   $
%     E_{/ f} \to E_{/ y} \times_{K_{/ p(y)}} K_{/ p(f)}
%   $
%   is a trivial fibration. Dually a map $f : x \to y$ in $K$ is
%   \defn{$p$-cocartesian} if the induced map
%   $
%     E_{f /} \to E_{x / } \times_{K_{p(x) /}} K_{p(f) /}
%   $
%   is a trivial fibration.
% \end{para}

% \begin{para}
%   Let $p : E \to K$ be an inner fibration of simplicial sets.
%   Then $p$ is a \defn{cartesian fibration} if every diagram
%   \[
%     \begin{tikzcd}
%       {\Lambda^i\ord{k}} \ar[r] \ar[d] &
%       {} \ar[d] \\
%       {} \ar[r] &
%       {}
%     \end{tikzcd}
%   \]
% \end{para}

\section{Polynomial Functors}

\begin{para}
  When $\cat{C}$ is an $\infty$-category with pullbacks,
  every map $f : X \to Y$ in $\cat{C}$ induces a pullback functor
  $f^* : \cat{C}_{/ Y} \to \cat{C}_{/ X}$ that is right adjoint to the
  postcomposition functor $\Sigma_f : \cat{C}_{/ X} \to \cat{C}_{/ Y}$.  
  We say that $f$ is \defn{exponentiable} when the pullback functor
  $f^* : \cat{C}_{/ Y} \to \cat{C}_{/ X}$ also has a right adjoint $\Pi_f : \cat{C}_{/ X} \to \cat{C}_{/ Y}$.
  An $\infty$-category $\cat{C}$ is \defn{locally cartesian closed} when it has pullbacks
  and every map in $\cat{C}$ is exponentiable.
\end{para}

\begin{example}
  For every $1$-category $\cat{C}$, the $1$-category $\Fun(\cat{C}^\op, \Set)$
  of discrete presheaves on $\cat{C}$ is locally cartesian closed.
  This includes the $1$-category of sets $\Set \cong \Fun(\terminal, \Set)$ and the $1$-category of simplicial sets $\sSet \cong \Fun(\FinOrd, \Set)$.
  Analogously, the $\infty$-category of presheaves $\PSh(\cat{C}) = \Fun(\cat{C}^\op, \Space)$ on any $\infty$-category $\cat{C}$ is locally cartesian closed, including the $\infty$-category of spaces $\Space$.
\end{example}

\begin{example}
  The $1$-category of categories $\Cat$ has all pullbacks but is \textbf{not} locally cartesian closed.
  An exponentiable map in $\Cat$ is known as a Conduch\'e fibration.
  The $\infty$-category $\CatInfty$ also is \textbf{not} locally cartesian closed.  
  The exponentiable maps in $\CatInfty$ are called the \defn{exponentiable fibrations}~\cite{ayala-factorization-homology}.
\end{example}

\begin{definition}
  A \defn{polynomial} in an $\infty$-category $\cat{C}$ is a diagram in $\cat{C}$ of the form
  \[
    \begin{tikzcd}
      X &
      Y \ar[l, "f"'] \ar[r, "g"] &
      Z \ar[r, "h"] &
      W
    \end{tikzcd}
  \]
  The \defn{polynomial functor} induced by the polynomial is the composite functor
  \[
    \begin{tikzcd}
      \cat{C}_{/ X} \ar[r, "f^*"] &
      \cat{C}_{/ Y} \ar[r, "\Pi_g"] &
      \cat{C}_{/ Z} \ar[r, "\Sigma_h"] &
      \cat{C}_{/ W}
    \end{tikzcd}
  \]
  provided that $f^*$ and $\Pi_g$ exist.
\end{definition}

\begin{para}
  Suppose that $\cat{C}$ is an $\infty$-category with a terminal object $\terminal$
  and products.
  Then the slice category of $\cat{C}$ over the terminal object $\cat{C}_{/ *}$ 
  is equivalent to $\cat{C}$ itself.
  We will be mostly interested in polynomial functors induced by a polynomial of the form
  \[
    \begin{tikzcd}
      \terminal &
      X \ar[r, "f"] \ar[l, "!_X"'] &
      Y \ar[r, "!_Y"] &
      \terminal 
    \end{tikzcd}
  \]
  where $!_X$ and $!_Y$ are the (essentially) unique maps into the terminal object
  and $f$ is exponentiable.
  In this case we abbreviate $(!_X)^* = (X \times -)$ and $\Sigma_Y := \Sigma_{!_X}$
  and write $\poly{f}{-} : \cat{C} \to \cat{C}$ for the induced polynomial functor  
  \[
    \begin{tikzcd}
      \cat{C} \ar[r, "X \times -"] &
      \cat{C}_{/ X} \ar[r, "\Pi_f"] &
      \cat{C}_{/ Y} \ar[r, "\Sigma_Y"] &
      \cat{C}
    \end{tikzcd}
  \]
\end{para}

The theory of polynomial functors for $1$-categories is well-developed~\cite{kock-polynomial, spivak-poly}.
For $\infty$-categories the literature is much sparser; we refer to~\cite{haugseng-analytic-monad}
for the special case of polynomial functors on the $\infty$-category of spaces $\Space$.
For our purposes we need polynomial functors on $\CatInfty$, which we approach via quasicategories
by constructing polynomial functors on the $1$-category $\sSet$ of simplicial sets.

% The theory of polynomial functors has been explored in the case of the $\infty$-category 
% of spaces $\Space$ in~.
% For our purposes we need polynomial functors $\CatInfty \to \CatInfty$.

% {\color{Orange} TODO: The pullback functor $f^*$ and its left adjoint $\Sigma_f$.}

% \begin{definition}
%   Let $\cat{C}$ be a locally cartesian closed $1$-category with a terminal object.
%   Then the \defn{polynomial functor} induced by a map $f : E \to B$ in $\cat{C}$
%   is the composite
%   \[
%     \begin{tikzcd}
%       {\cat{C}} \ar[r, "\Sigma_B"] &
%       {\cat{C}_{/ E}} \ar[r, "\Pi_f"] &
%       {\cat{C}_{/ B}} \ar[r, "E^*"] &
%       {\cat{C}}
%     \end{tikzcd}
%   \]
% \end{definition}

\begin{lemma}\label{lem:b-poly-characterise}
  Let $\cat{C}$ be a locally cartesian closed $1$-category with a terminal object
  and $f : E \to B$ a map in $\cat{C}$.
  % \[ \poly{f}{-} := \Sigma_B \Pi_f E^* : \sSet \to \sSet \]
  Whenever $A$, $X$ are objects in $\cat{C}$,
  there is a bijection between maps of simplicial sets
  $\varphi : A \to \poly{f}{X}$
  and diagrams of the form
  \[
    \begin{tikzcd}
      {X} &
      {E \times_B A} \ar[r] \ar[d] \ar[l] \pullbackcorner &
      {E} \ar[d, "f"] \\
      {} &
      {A} \ar[r, swap] &
      {B}
    \end{tikzcd}
  \]
  Moreover this bijection is natural in $A$ and $X$.
\end{lemma}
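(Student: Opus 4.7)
My plan is to unfold the definition of $\poly{f}{X} = \Sigma_B \Pi_f (E \times X)$ and then successively apply the three operations that make it up, reading each as a bijection on hom-sets.

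First, I would exploit the fact that $\Sigma_B : \cat{C}_{/B} \to \cat{C}$ is just postcomposition with the terminal map $B \to \terminal$: a map $\varphi : A \to \Sigma_B \Pi_f(E \times X)$ in $\cat{C}$ is, by postcomposition with the structure map $\Pi_f(E \times X) \to B$, equipped with a canonical map $\alpha : A \to B$, and conversely the data of such a $\varphi$ is equivalent to the data of $\alpha$ together with a lift $(A, \alpha) \to \Pi_f(E \times X)$ in $\cat{C}_{/B}$. (In other words, $\Hom_{\cat{C}}(A, \Sigma_B Z)$ is naturally the disjoint union over $\alpha : A \to B$ of $\Hom_{\cat{C}_{/B}}((A,\alpha), Z)$.)

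Next, I would apply the defining adjunction $f^* \dashv \Pi_f$: a map $(A, \alpha) \to \Pi_f(E \times X)$ over $B$ corresponds to a map $f^*(A, \alpha) \to (E \times X, \pi_E)$ over $E$. Since $f^*(A, \alpha)$ is computed as the pullback $E \times_B A$ with its projection to $E$, and a map into $(E \times X, \pi_E)$ over $E$ is the same as a map $E \times_B A \to X$ by the universal property of the product, the composite of these bijections gives exactly a pair consisting of a map $\alpha : A \to B$ and a map $E \times_B A \to X$. This is precisely the data displayed in the diagram of the statement (the left and right downward-pointing maps, plus the induced pullback square on the right).

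Finally, I would check naturality in $A$ and $X$: each of the three bijections above is natural, since they are instances of adjunctions and universal properties. For the dependence on $A$, precomposition with a map $A' \to A$ pulls back both $\alpha$ and $E \times_B A$ compatibly; for the dependence on $X$, postcomposition with a map $X \to X'$ transports the map $E \times_B A \to X$. I expect no genuine obstacle here — the only point that requires some care is keeping the bookkeeping of the three adjunctions and the disjoint-union decomposition of $\Hom(A, \Sigma_B(-))$ straight, so that the resulting bijection matches the geometric picture in the statement on the nose.
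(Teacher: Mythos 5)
Your proposal is correct and follows essentially the same route as the paper's proof: decompose $\poly{f}{X} = \Sigma_B \Pi_f(E \times X)$, read off $\alpha : A \to B$ from the $\Sigma_B$ step, transpose across $f^* \dashv \Pi_f$ to get a map $E \times_B A \to E \times X$ over $E$, and project to $X$. Each step is an invertible, natural bijection, exactly as in the paper.
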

\begin{proof}
  The polynomial functor $\poly{f}{-} : \cat{C} \to \cat{C}$ is the composite
  $\poly{f}{-} = \Sigma_B \circ \Pi_f \circ (E \times -)$.
  The functor $\Sigma_B : {\cat{C}}_{/B} \to \cat{C}$ sends an object
  $X \to B$ 
  of the slice category over $B$
  to the domain $X$.
  Therefore, any map $\varphi : A \to \poly{f}{X}$ induces a map
  $\alpha : A \to B$ so that the diagram
  \[
    \begin{tikzcd}[column sep = small]
      {A} \ar[rr, "\varphi"] \ar[dr, "\alpha", swap] &
      {} &
      {\Sigma_B \Pi_f (E \times X)} \ar[dl, "{\Pi_f (E \times X)}"] \\
      {} &
      {B} &
      {}
    \end{tikzcd}
  \]
  represents a map in the slice category $\cat{C}_{/B}$.
  The dependent product functor $\Pi_f$ is the right adjoint of the pullback functor
  $f^*$.
  Then the bijection
  \[
    \sSet_{/B}((A, \alpha), \Pi_f (E \times X)) \cong
    \sSet_{/E}(f^* (A, \alpha), E \times X)
  \]
  sends $\varphi$ to a map $\varphi_1$ in the slice
  category $\cat{C}_{/E}$ over $E$ which fits into the diagram
  \[
    \begin{tikzcd}
      {} &
      {E \times X} \ar[d]  \\
      {E \times_B A} \ar[r, "f^* \alpha"] \ar[d] \ar[ur, dashed, "\varphi_1"] \pullbackcorner &
      {E} \ar[d, "f"] \\
      {A} \ar[r, "\alpha", swap] &
      {B}
    \end{tikzcd}
  \]
  We then let $\varphi_2$ be the composite of $\varphi_1$ with the product projection
  \[
    \begin{tikzcd}
      {E \times_B A} \ar[r, "\varphi_1"] &
      {E \times X} \ar[r] &
      {X}
    \end{tikzcd}
  \]
  Each step in the construction of $\varphi_2$ from $\varphi$ is invertible.
\end{proof}

\begin{observation}\label{obs:b-poly-base-1}
  Suppose that $\cat{C}$ is a locally cartesian closed $1$-category with a terminal object $\terminal$
  and $f : E \to K$ a map in $\cat{C}$.
  By Lemma~\ref{lem:b-poly-characterise} there is a bijection
  natural between maps $A \to \poly{f}{\terminal}$ and diagrams
  \begin{equation}\label{eq:b-poly-base-1}
    \begin{tikzcd}
      {\terminal} &
      {E \times_B A} \ar[r] \ar[d] \ar[l] \pullbackcorner &
      {E} \ar[d, "f"] \\
      {} &
      {A} \ar[r, swap] &
      {K}
    \end{tikzcd}
  \end{equation}
  Since $\terminal$ is the terminal object of $\cat{C}$, the map
  $E \times_B A \to \terminal$ is uniquely determined.
  Therefore the diagram (\ref{eq:b-poly-base-1}) is uniquely determined
  by the map $A \to K$.
  It follows that the map $\poly{f}{\terminal} \to K$ is an isomorphism.
\end{observation}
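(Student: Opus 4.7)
The plan is to apply Lemma~\ref{lem:b-poly-characterise} with $X = \terminal$ and then use the Yoneda lemma. The natural bijection supplied by that lemma says that maps $A \to \poly{f}{\terminal}$ are the same as diagrams of the shape
\[
  \begin{tikzcd}
    {\terminal} &
    {E \times_K A} \ar[r] \ar[d] \ar[l] \pullbackcorner &
    {E} \ar[d, "f"] \\
    {} &
    {A} \ar[r, swap] &
    {K}
  \end{tikzcd}
\]
where I have renamed $B$ to $K$ to match the ambient notation of the observation. Since $\terminal$ is terminal, the leftmost arrow $E \times_K A \to \terminal$ is forced, and the data of such a diagram is nothing more than a map $A \to K$ (the lower row), with the remainder determined by pulling back along $f$.

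Next I would note that the resulting bijection $\cat{C}(A, \poly{f}{\terminal}) \xrightarrow{\ \cong\ } \cat{C}(A, K)$ is natural in $A$ by the naturality clause of Lemma~\ref{lem:b-poly-characterise}. Therefore by the Yoneda lemma there is a unique isomorphism $\poly{f}{\terminal} \xrightarrow{\ \cong\ } K$ which induces it. The final step is to identify this isomorphism with the structure map $\poly{f}{\terminal} \to K$ coming from $\Sigma_K \circ \Pi_f \circ (E \times -)$; unwinding the proof of Lemma~\ref{lem:b-poly-characterise}, the map $A \to K$ extracted from a morphism $\varphi : A \to \poly{f}{\terminal}$ is exactly the postcomposition of $\varphi$ with this structure map, so the Yoneda isomorphism is the structure map itself.

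I do not expect any serious obstacle; the whole observation is really a sanity-check unwinding of the previous lemma at the terminal object. The only point that requires mild care is verifying that the natural transformation exhibited by the bijection coincides with the canonical structure map rather than merely giving an abstract isomorphism, but this is immediate from inspecting the construction of $\varphi \mapsto \varphi_2$ in the proof of Lemma~\ref{lem:b-poly-characterise}, where the downward leg $A \to K$ is obtained by composing with the projection $\Sigma_K\Pi_f(E\times \terminal) \to K$.
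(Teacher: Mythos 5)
Your proposal is correct and follows the same route as the paper: specialise Lemma~\ref{lem:b-poly-characterise} at $X = \terminal$, observe that the leg into $\terminal$ is forced so the diagram reduces to the map $A \to K$, and conclude via naturality that the structure map $\poly{f}{\terminal} \to K$ is an isomorphism. The paper leaves the Yoneda step and the identification with the structure map implicit, but these are exactly the points you spell out, so there is no substantive difference.
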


\subsection{Maps of Polynomials}

\begin{construction}\label{eq:b-poly-functorial-base}
  Let $\cat{C}$ be a locally cartesian closed $1$-category with a terminal object
  and suppose that we have a diagram in $\cat{C}$ of the form
  \begin{equation}\label{eq:b-poly-functorial-base:map}
    \begin{tikzcd}[column sep = small]
      {E_0} \ar[rr] \ar[dr, "f", swap] &
      {} &
      {E_1} \ar[dl, "g"] \\
      {} &
      {K}
    \end{tikzcd}
  \end{equation}
  Then~(\ref{eq:b-poly-functorial-base:map}) induces a natural transformation
  $\poly{g}{-} \to \poly{f}{-}$.
  In components, we can construct this natural transformation as follows.
  When $A$ and $X$ are objects of $\cat{C}$,
  then by Lemma~\ref{lem:b-poly-characterise} there is a bijection between
  maps $A \to \poly{g}{X}$ and diagrams of the form
  \begin{equation}\label{eq:b-poly-functorial-base:right}
    \begin{tikzcd}
      {X} &
      {E_1 \times_{K_1} A} \ar[r] \ar[d] \ar[l] \pullbackcorner &
      {E_1} \ar[d, "g"] \\
      {} &
      {A} \ar[r, swap] &
      {K}
    \end{tikzcd}
  \end{equation}
  Then (\ref{eq:b-poly-functorial-base:right}) fits into the right side of the diagram
  \begin{equation}\label{eq:b-poly-functorial-base:full}
    \begin{tikzcd}[column sep={3em,between origins}, row sep={3em,between origins}]
    	&& X && X \\
    	& {E_0 \times_{K} A} && {E_1 \times_{K} A} \\
    	{E_0} && {E_1} \\
    	& A && A \\
    	{K} && {K}
    	\arrow[from=2-2, to=3-1]
    	\arrow[from=2-4, to=3-3]
    	\arrow[from=2-2, to=2-4]
    	\arrow["f"'{pos=0.3}, from=3-1, to=5-1]
    	\arrow[from=2-4, to=4-4]
    	\arrow[from=2-2, to=4-2]
    	\arrow[equals, from=4-2, to=4-4]
    	\arrow[from=4-4, to=5-3]
    	\arrow[from=4-2, to=5-1]
    	\arrow[from=5-1, to=5-3, equal]
    	\arrow[from=2-2, to=1-3, dashed]
    	\arrow[from=2-4, to=1-5]
    	\arrow[equals, from=1-3, to=1-5]
    	\arrow[crossing over, from=3-1, to=3-3]
    	\arrow[crossing over, "g"{pos=0.3}, from=3-3, to=5-3]
    \end{tikzcd}
  \end{equation}
  in which the two diagonal squares are pullback squares.
  The left hand square then determines a map $A \to \poly{f}{X}$.
\end{construction}

\begin{construction}\label{con:b-poly-functorial}
  Let $\cat{C}$ be a locally cartesian closed $1$-category with a terminal object
  and suppose that we have a pullback square in $\cat{C}$ of the form
  \begin{equation}\label{eq:b-poly-functorial:map}
    \begin{tikzcd}
      {E_0} \ar[r] \ar[d, "f", swap] \pullbackcorner &
      {E_1} \ar[d, "g"] \\
      {K_0} \ar[r] &
      {K_1}
    \end{tikzcd}
  \end{equation}
  Then (\ref{eq:b-poly-functorial:map}) induces a natural transformation
  $\poly{f}{-} \to \poly{g}{-}$
  between the polynomial functors so that the component on any object $X \in \cat{C}$
  makes the following diagram commute and into a pullback square:
  \[
    \begin{tikzcd}
      {\poly{f}{X}} \ar[r] \ar[d] &
      {\poly{g}{X}} \ar[d] \\
      {K_0} \ar[r] &
      {K_1}
    \end{tikzcd}
  \]
  
  In components, this natural transformation is constructed as follows.
  When $A$ and $X$ are objects of $\cat{C}$,
  then by Lemma~\ref{lem:b-poly-characterise} there is a bijection between
  maps $A \to \poly{f}{X}$ and diagrams of the form
  \begin{equation}\label{eq:b-poly-functorial:left}
    \begin{tikzcd}
      {X} &
      {E_0 \times_{K_0} A} \ar[r] \ar[d] \ar[l] \pullbackcorner &
      {E_0} \ar[d, "f"] \\
      {} &
      {A} \ar[r, swap] &
      {K_0}
    \end{tikzcd}
  \end{equation}
  Then (\ref{eq:b-poly-functorial:left}) fits into the left side of the diagram
  \begin{equation}\label{eq:b-poly-functorial:full}
    \begin{tikzcd}[column sep={3em,between origins}, row sep={3em,between origins}]
    	&& X && X \\
    	& {E_0 \times_{K_0} A} && {E_1 \times_{K_1} A} \\
    	{E_0} && {E_1} \\
    	& A && A \\
    	{K_0} && {K_1}
    	\arrow[from=2-2, to=3-1]
    	\arrow[from=2-4, to=3-3]
    	\arrow[from=2-2, to=2-4]
    	\arrow["f"'{pos=0.3}, from=3-1, to=5-1]
    	\arrow[from=2-4, to=4-4]
    	\arrow[from=2-2, to=4-2]
    	\arrow[equals, from=4-2, to=4-4]
    	\arrow[from=4-4, to=5-3]
    	\arrow[from=4-2, to=5-1]
    	\arrow[from=5-1, to=5-3]
    	\arrow[from=2-2, to=1-3]
    	\arrow[from=2-4, to=1-5, dashed]
    	\arrow[equals, from=1-3, to=1-5]
    	\arrow[crossing over, from=3-1, to=3-3]
    	\arrow[crossing over, "g"{pos=0.3}, from=3-3, to=5-3]
    \end{tikzcd}
  \end{equation}
  in which the two diagonal squares are pullback squares and the front
  square of the cube is the pullback square (\ref{eq:b-poly-functorial:map}).
  Then the back square of the cube is a pullback as well,
  and so the induced map between the pullbacks
  $
    E_0 \times_{K_0} A \to
    E_1 \times_{K_1} A
  $
  must be an isomorphism.
  Therefore, there exists a unique map $E_1 \times_{K_1} A \to X$ which makes the
  overall diagram (\ref{eq:b-poly-functorial:full}) commute.
  Then the right side of (\ref{eq:b-poly-functorial:full}) uniquely corresponds to a map
  $A \to \poly{g}{C}$.
\end{construction}

\begin{construction}\label{con:b-poly-retract}
  Let $\cat{C}$ be a locally cartesian closed $1$-category with a terminal object.
  Suppose that we have a commutative diagram
  \[
    \begin{tikzcd}
      {E_0} \ar[r, hook, "i'"] \ar[d, "f_0"'] &
      {E_1} \ar[d, "f_1"{description}] \ar[r, "r'"] &
      {E_0} \ar[d, "f_0"] \\
      {K_0} \ar[r, hook, "i"'] &
      {K_1} \ar[r, "r", swap] &
      {K_0}
    \end{tikzcd}
  \]
  in $\cat{C}$ in which the rows compose to the identity.
  % We let $F_0, F_1 : \sSet \to \sSet$
  % denote the polynomial functors induced
  % by the maps $f_0$ and $f_1$.
  Then we have natural transformations
  \[
    \begin{tikzcd}
      {\poly{f_0}{-}} \ar[r, "I"] &
      {\poly{f_1}{-}} \ar[r, "R"] &
      {\poly{f_0}{-}}
    \end{tikzcd}
  \]
  that compose to the identity so that for every object $X \in \cat{C}$
  the following diagram commutes
  \[
    \begin{tikzcd}
      {\poly{f_0}{X}} \ar[r, "I_X"] \ar[d] &
      {\poly{f_1}{X}} \ar[r, "R_X"] \ar[d] &
      {\poly{f_0}{X}} \ar[d] \\
      {K_0} \ar[r, "i"', hook] &
      {K_1} \ar[r, "r"'] &
      {K_0}
    \end{tikzcd}
  \]

  For any $X \in \cat{C}$ we only describe the component $I_X$ explicitly;
  the component $R_X$ can be constructed analogously.
  By Lemma~\ref{lem:b-poly-characterise} there is a bijection between maps
  $A \to \poly{f_0}{X}$ and diagrams of the form
  \begin{equation}\label{eq:b-poly-retract:left}
    \begin{tikzcd}
      {X} &
      {E_0 \times_{K_0} A} \ar[r] \ar[d] \ar[l, "\varphi"'] \pullbackcorner &
      {E_0} \ar[d, "f_0"] \\
      {} &
      {A} \ar[r, swap] &
      {K_0}
    \end{tikzcd}
  \end{equation}
  Then (\ref{eq:b-poly-retract:left}) fits twice into the induced diagram
  \begin{equation}\label{eq:b-poly-retract:full}
    \begin{tikzcd}[column sep={3em,between origins}, row sep={3em,between origins}]
    	&& X && X && X \\
    	& {E_0 \times_{K_0} A} && {E_1 \times_{K_1} A} && {E_0 \times_{K_0} A} \\
    	{E_0} && {E_1} && {E_0} \\
    	& A && A && A \\
    	{K_0} & {} & {K_1} && {K_0}
    	\arrow[from=2-2, to=3-1]
    	\arrow[from=2-4, to=3-3]
    	\arrow[from=2-6, to=3-5]
    	\arrow[from=2-4, to=2-6]
    	\arrow["{r'}"{pos=0.8}, from=3-3, to=3-5]
    	\arrow[from=3-1, to=5-1]
    	\arrow[from=2-6, to=4-6]
    	\arrow[from=2-4, to=4-4]
    	\arrow[from=2-2, to=4-2]
    	\arrow[equals, from=4-4, to=4-6]
    	\arrow[from=4-6, to=5-5]
    	\arrow[from=4-4, to=5-3]
    	\arrow[from=4-2, to=5-1]
    	\arrow["r"', from=5-3, to=5-5]
    	\arrow["{\varphi_0}", from=2-2, to=1-3]
    	\arrow["{\varphi_1}", dashed, from=2-4, to=1-5]
    	\arrow["{\varphi_0}", from=2-6, to=1-7]
    	\arrow[equals, from=1-5, to=1-7]
    	\arrow["i"', hook, from=5-1, to=5-3]
    	\arrow[equals, from=4-2, to=4-4]
    	\arrow["{i'}"{pos=0.8}, hook, from=3-1, to=3-3]
    	\arrow[hook, from=2-2, to=2-4]
    	\arrow[equals, from=1-3, to=1-5]
    	\arrow[crossing over, from=3-3, to=5-3]
    	\arrow[crossing over, from=3-5, to=5-5]
    \end{tikzcd}
  \end{equation}
  Because $r$ is a retraction of $i$ it follows that the induced map
  $E_1 \times_{K_1} A \to E_0 \to E_0 \times_{K_0} A$
  is a retraction as well.
  Therefore, there is a unique map
  $\varphi_1$ which completes the diagram (\ref{eq:b-poly-retract:full}).
  By applying Lemma~\ref{lem:b-poly-characterise} to the middle slice of
  (\ref{eq:b-poly-retract:full}) we therefore have a map $A \to \poly{f_1}{X}$.
  % Let $C$ be a simplicial set.  
  % By Observation~[] a $k$-simplex of $F_0(C)$ is a diagram
  % \[
  %   \begin{tikzcd}
  %     {C} &
  %     {E_0 \times_{B_0} \ord{k}} \ar[r] \ar[d] \ar[l, "L"'] \pullbackcorner &
  %     {E_0} \ar[d, "f_0"] \\
  %     {} &
  %     {\ord{k}} \ar[r, "\sigma", swap] &
  %     {B_0}
  %   \end{tikzcd}
  % \]
  % This induces a $k$-simplex of $F_1(C)$ represented by the diagram
  % \[
  %   \begin{tikzcd}
  %     {C} &
  %     {E_1 \times_{B_1} \ord{k}} \ar[r] \ar[d] \ar[l, "L'"'] \pullbackcorner &
  %     {E_1} \ar[d, "f_1"] \\
  %     {} &
  %     {\ord{k}} \ar[r, "i \circ \sigma", swap] &
  %     {B_1}
  %   \end{tikzcd}
  % \]
  % where the map $E_1 \times_{B_1} \ord{k} \to C$ is the composite of $L$ with the induced retraction
  % \[
  %   E_1 \times_{B_1} \ord{k}
  %   \longrightarrow
  %   E_0 \times{B_0} \ord{k}.
  % \]
  % This determines the components of the natural transformation $I : F_0 \to F_1$.
  % The retraction $R : F_1 \to F_0$ can be constructed similarly.
\end{construction}

\subsection{Composition of Polynomials}

\begin{definition}\label{def:b-poly-total}
  Let $\cat{C}$ be a locally cartesian closed $1$-category
  with a terminal object
  and $f : E \to K$ a map in $\cat{C}$.
  Then for any object $X \in \cat{C}$ we define
  $\epoly{f}{X}$
  via the pullback
  \[
    \begin{tikzcd}
      {\epoly{f}{X}} \ar[r] \ar[d] \pullbackcorner &
      {E} \ar[d, "f"] \\
      {\poly{f}{X}} \ar[r] &
      {K}
    \end{tikzcd}
  \]
  By letting $X$ vary 
  this defines a functor $\epoly{f}{-} : \cat{C} \to \cat{C}$ together
  with a natural transformation
  \[ \epoly{f}{-} \longrightarrow \poly{f}{-}.\]
\end{definition}

\begin{observation}
  Suppose that $f : E \to K$ is a map in a locally cartesian closed $1$-category $\cat{C}$ 
  with a terminal object.
  When we evaluate the functor $\epoly{f}{-} : \cat{C} \to \cat{C}$ on
  the terminal object $1$, the result $\epoly{f}{1}$ is defined via the pullback
  \[
    \begin{tikzcd}
      {\epoly{f}{1}} \ar[r] \ar[d] \pullbackcorner &
      {E} \ar[d, "f"] \\
      {\poly{f}{1}} \ar[r] &
      {K}
    \end{tikzcd}
  \]
  Since the bottom map $\poly{f}{1} \to K$ is an isomorphism it therefore follows
  that also $\epoly{f}{1} \to E$ is an isomorphism.
  In particular the pullback square is an isomorphism
  in the arrow category of $\cat{C}$
  between $f$ and
  the map $\epoly{f}{1} \to \poly{f}{1}$.
\end{observation}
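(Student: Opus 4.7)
The plan is to read off the result directly from the preceding Observation~\ref{obs:b-poly-base-1}, which establishes that the canonical map $\poly{f}{\terminal} \to K$ is an isomorphism. The pullback square defining $\epoly{f}{\terminal}$ has this map as its bottom edge, so the argument reduces to the standard fact that pullbacks of isomorphisms are isomorphisms.

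More concretely, I would proceed as follows. First, I recall the defining pullback square
\[
\begin{tikzcd}
{\epoly{f}{\terminal}} \ar[r] \ar[d] \pullbackcorner &
{E} \ar[d, "f"] \\
{\poly{f}{\terminal}} \ar[r] &
{K}
\end{tikzcd}
\]
from Definition~\ref{def:b-poly-total}. By Observation~\ref{obs:b-poly-base-1} the bottom horizontal map $\poly{f}{\terminal} \to K$ is an isomorphism. Since isomorphisms are stable under pullback in any $1$-category, the top horizontal map $\epoly{f}{\terminal} \to E$ is also an isomorphism.

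For the final sentence, I would note that a pullback square in $\cat{C}$ is exactly a commutative square in $\cat{C}$, i.e.\ a map in the arrow category $\Arr(\cat{C})$, whose vertical sides are the map $\epoly{f}{\terminal} \to \poly{f}{\terminal}$ and $f : E \to K$. The horizontal sides are both isomorphisms, so the square is an isomorphism in $\Arr(\cat{C})$ between $\epoly{f}{\terminal} \to \poly{f}{\terminal}$ and $f$.

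I do not expect any obstacle: the statement is essentially immediate from Observation~\ref{obs:b-poly-base-1} together with the pullback-stability of isomorphisms, and the only thing to be slightly careful about is phrasing the second claim correctly in terms of the arrow category.
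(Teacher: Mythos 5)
Your argument is correct and matches the paper's own reasoning exactly: the bottom map is an isomorphism by the preceding observation on $\poly{f}{\terminal}$, pullback-stability of isomorphisms gives that $\epoly{f}{\terminal} \to E$ is an isomorphism, and the square then witnesses an isomorphism in the arrow category. Nothing further is needed.
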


\begin{observation}
  Suppose that $f : E \to K$ is a map in a locally cartesian closed $1$-category $\cat{C}$ 
  with a terminal object.
  When $X$ is an object of $\cat{C}$ then the projection map
  $\epoly{f}{X} \to \poly{f}{X}$
  is uniquely characterised via Lemma~\ref{lem:b-poly-characterise}
  by a diagram of the form
  \[
    \begin{tikzcd}
      {X} &
      {E \times_{K} \poly{f}{X}} \ar[r] \ar[d] \ar[l] \pullbackcorner &
      {E} \ar[d, "f"] \\
      {} &
      {\poly{f}{X}} \ar[r] &
      {K}
    \end{tikzcd}
  \]
  But by definition we have $\epoly{f}{X} = E \times_{K} \poly{f}{X}$.
  Therefore, we have a natural map
  \[
    \epoly{f}{X} \longrightarrow X
  \]
  for every object $X$ of $\cat{C}$.
\end{observation}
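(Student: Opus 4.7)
The plan is to apply Lemma~\ref{lem:b-poly-characterise} to the identity morphism on $\poly{f}{X}$. Setting $A = \poly{f}{X}$ in the lemma yields a natural bijection between maps $\poly{f}{X} \to \poly{f}{X}$ and diagrams of the form
\[
    \begin{tikzcd}
      {X} &
      {E \times_{K} \poly{f}{X}} \ar[r] \ar[d] \ar[l] \pullbackcorner &
      {E} \ar[d, "f"] \\
      {} &
      {\poly{f}{X}} \ar[r, swap] &
      {K}
    \end{tikzcd}
\]
so under this bijection the identity corresponds to a canonical such diagram, whose top-left edge is a map $E \times_{K} \poly{f}{X} \to X$.

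Next I would invoke Definition~\ref{def:b-poly-total}, which exhibits $\epoly{f}{X}$ precisely as the pullback $E \times_K \poly{f}{X}$. Rewriting the canonical edge through this identification gives the desired map $\epoly{f}{X} \to X$.

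Finally, naturality in $X$ follows from two observations: the bijection of Lemma~\ref{lem:b-poly-characterise} is natural in $X$ by hypothesis, and the functor $\epoly{f}{-}$ is defined as a pullback over the functor $\poly{f}{-}$, so it inherits functoriality from the latter. Thus for any map $X \to X'$ the two maps $\epoly{f}{X} \to X \to X'$ and $\epoly{f}{X} \to \epoly{f}{X'} \to X'$ both arise, by the characterisation lemma, from the identity on $\poly{f}{X}$ composed with the functorial action, and therefore agree.

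There is no real obstacle: the observation is essentially a direct unpacking of Definition~\ref{def:b-poly-total} via the universal characterisation provided by Lemma~\ref{lem:b-poly-characterise}, and amounts to recognising that $\epoly{f}{X}$ is exactly the pullback that appears naturally in the characterisation diagram for the identity on $\poly{f}{X}$.
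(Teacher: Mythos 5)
Your proposal is correct and matches the paper's own reasoning: the observation is exactly the unpacking of Lemma~\ref{lem:b-poly-characterise} applied with $A = \poly{f}{X}$ (to the identity map), identifying the resulting pullback with $\epoly{f}{X}$ via Definition~\ref{def:b-poly-total} and reading off the left-hand leg as the map $\epoly{f}{X} \to X$. Naturality is likewise obtained, as you say, from the naturality clause of the lemma together with the functoriality of $\epoly{f}{-}$.
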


\begin{lemma}\label{lem:b-poly-composite}
  Let $\cat{C}$ be a locally cartesian closed $1$-category
  with a terminal object $\terminal$
  and let $f : E_0 \to K_0$, $g: E_1 \to K_1$ a pair of maps in $\cat{C}$.
  Then the composite functor $X \mapsto \poly{g}{\poly{f}{X}}$ is again a polynomial
  functor which is induced by the map
  \[
    \epoly{f}{\poly{g}{\terminal}} \times_{\poly{g}{\terminal}} \epoly{g}{\terminal} 
    \to
    \poly{f}{\poly{g}{\terminal}}
  \]
\end{lemma}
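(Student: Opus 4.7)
The plan is to apply Lemma~\ref{lem:b-poly-characterise} iteratively in order to unpack a map $A \to \poly{g}{\poly{f}{X}}$ into elementary data, and then to reassemble this data as a map into the polynomial functor induced by the stated map. Throughout, I will use Observation~\ref{obs:b-poly-base-1} to freely identify the codomain of each given polynomial with its value at $\terminal$.

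First I would apply Lemma~\ref{lem:b-poly-characterise} to $\poly{g}{-}$: a map $A \to \poly{g}{\poly{f}{X}}$ is the same as a base map $\alpha : A \to K_1$ together with a map $u : E_1 \times_{K_1} A \to \poly{f}{X}$. Applying the same lemma a second time to $u$ unpacks it into a further base map $\beta : E_1 \times_{K_1} A \to K_0$ and a map $v : E_0 \times_{K_0} (E_1 \times_{K_1} A) \to X$, with the inner pullback formed along $\alpha$ and the outer along $\beta$.

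I would then repackage the pair of base maps $(\alpha, \beta)$ by one more application of Lemma~\ref{lem:b-poly-characterise}: the pair corresponds to a single map from $A$ into an iterated polynomial of the form $\poly{-}{\poly{-}{\terminal}}$, which will serve as the base of the composite polynomial. For the total space I would use Definition~\ref{def:b-poly-total} and the pasting law for pullbacks to rewrite the iterated pullback $E_0 \times_{K_0} (E_1 \times_{K_1} A)$ as the pullback of the stated total space along this base map. The universal bundles $\epoly{f}{-}$ and $\epoly{g}{-}$ appearing in the lemma are precisely what is needed to express the two layers of the nested pullback universally: pulling back one universal bundle along the base map recovers one layer tautologically, and the fibred product of the universal bundles along the evaluation map $\epoly{h}{X} \to X$ from Definition~\ref{def:b-poly-total} supplies the gluing data for the outer layer.

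Finally, I would apply Lemma~\ref{lem:b-poly-characterise} in the reverse direction against the stated polynomial to conclude that the assembled data $(\alpha, \beta, v)$ is precisely the data of a map from $A$ into the claimed polynomial functor evaluated at $X$. Since every intermediate step is natural in $A$ and in $X$, the Yoneda lemma applied in $A$ yields the desired natural isomorphism of functors $\cat{C} \to \cat{C}$. The main obstacle is pure bookkeeping: several nested pullbacks must be tracked so that each intermediate square is the unique pullback required by the pasting law, and care is needed in matching the maps $\alpha$ and $\beta$ against the canonical projections of the iterated polynomial and its total-space bundle. No conceptual input beyond Lemma~\ref{lem:b-poly-characterise}, the pasting law, and the construction of $\epoly{-}{-}$ is required.
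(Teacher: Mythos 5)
Your overall strategy is sound and genuinely different from the paper's proof: the paper writes down a single large diagram of pullback squares relating $\epoly{f}{\poly{g}{\terminal}}$, $\epoly{g}{\terminal}$, $\epoly{f}{\terminal}$ and their bases, observes that every square is a pullback, and then delegates the actual verification to Kock's Proposition~1.12, whereas you reprove closure under composition from scratch by iterating Lemma~\ref{lem:b-poly-characterise} and concluding with Yoneda. This buys a self-contained argument at the cost of exactly the bookkeeping you acknowledge: the one nontrivial identification is that pulling back the asserted total space along the repackaged base map recovers the iterated pullback $E_0 \times_{K_0}(E_1\times_{K_1}A)$, and establishing that identification by pasting is precisely the content of the paper's diagram.

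However, your final step --- matching the data $(\alpha,\beta,v)$ against ``the stated polynomial'' --- will not go through as written, because of an orientation clash that your deliberately vague phrase ``an iterated polynomial of the form $\poly{-}{\poly{-}{\terminal}}$'' papers over. Unpacking a map $A \to \poly{g}{\poly{f}{X}}$ as you do yields $\alpha : A \to K_1$ and $\beta : E_1\times_{K_1}A \to K_0$, and repackaging this pair via Lemma~\ref{lem:b-poly-characterise} produces a map $A \to \poly{g}{K_0} \cong \poly{g}{\poly{f}{\terminal}}$ --- not a map into $\poly{f}{\poly{g}{\terminal}}$, which is the codomain of the map displayed in the lemma, and these two objects are not isomorphic in general. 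In fact the displayed map induces the composite in the other order, $X \mapsto \poly{f}{\poly{g}{X}}$, which is also the order actually needed where the lemma is applied in Proposition~\ref{prop:fct-mesh-closed-n-pack}; the ``$\poly{g}{\poly{f}{X}}$'' in the statement is inconsistent with the displayed formula. So you must either swap the roles of $f$ and $g$ throughout your derivation, or note explicitly that your argument shows the composite $\poly{g}{\poly{f}{-}}$ is induced by $\epoly{g}{\poly{f}{\terminal}}\times_{\poly{f}{\terminal}}\epoly{f}{\terminal}\to\poly{g}{\poly{f}{\terminal}}$. Once the orientation is fixed, the remaining steps close.
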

\begin{proof}
  We observe that there is a diagram
  \[
    \begin{tikzcd}[column sep={7.5em,between origins}, row sep={3em,between origins}]
    	&& {\epoly{f}{\poly{g}{\terminal}} \times_{\poly{g}{\terminal}} \epoly{g}{\terminal}} &
      {\epoly{f}{\poly{g}{\terminal}}} & {\poly{f}{\poly{g}{\terminal}}]} \\
    	& {\epoly{g}{\terminal} \times \epoly{f}{\terminal}} & {\poly{g}{\terminal} \times \epoly{f}{\terminal}} \\
    	{\epoly{g}{\terminal}} & {\poly{g}{\terminal}} && {\epoly{f}{\terminal}} & {\poly{f}{\terminal}} \\
    	\terminal && \terminal && \terminal
    	\arrow["g"', from=3-1, to=3-2]
    	\arrow[from=3-2, to=4-3]
    	\arrow[from=3-4, to=4-3]
    	\arrow["f"', from=3-4, to=3-5]
    	\arrow[from=2-3, to=3-2]
    	\arrow[from=2-3, to=3-4]
    	\arrow[from=2-2, to=3-1]
    	\arrow[from=2-2, to=2-3]
    	\arrow[from=1-3, to=2-2]
    	\arrow[from=1-4, to=2-3]
    	\arrow[from=1-4, to=1-5]
    	\arrow[from=1-5, to=3-5]
    	\arrow[from=1-4, to=3-4]
    	\arrow[from=1-3, to=1-4]
    	\arrow[from=3-1, to=4-1]
    	\arrow[from=3-5, to=4-5]
    \end{tikzcd}
  \]
  in which every square is a pullback square.
  We then refer to~\cite[Proposition 1.12]{kock-polynomial}.
\end{proof}

\subsection{Polynomial Functors of Quasicategories}

In general, polynomial functors on $\sSet$ do not induce polynomial functors
on $\CatInfty$. Flat categorical fibrations~\cite[B.3]{higher-algebra} are
the implementation of exponential fibrations in terms of quasicategories.

% {\color{Orange} TODO: Example: In general, polynomial functors on $\sSet$ do not preserve quasicategories.}
% {\color{Orange} TODO: Lemma: Flatness can be checked over $2$-simplices.}

% \begin{example}\color{BrickRed} TODO: Write this properly. The example works.
%   We consider the inclusion $i = \langle 0, 2 \rangle : \ord{1} \to \ord{2}$.
%   Then by Lemma~\ref{lem:b-poly-characterise} for a simplicial set $X$ a horn $\Lambda^1\ord{2} \to \poly{i}{X}$ consists of a diagram
%   \[
%     \begin{tikzcd}
%       {X} &
%       {\Delta\ord{0} \sqcup \Delta\ord{0}} \ar[r] \ar[d] \ar[l] \pullbackcorner &
%       {\Delta\ord{1}} \ar[d, "i"] \\
%       {} &
%       {\Lambda^1\ord{2}} \ar[r] &
%       {\Delta\ord{2}}
%     \end{tikzcd}
%   \]
%   whereas a $2$-simplex $\Delta\ord{2} \to \poly{i}{X}$ is a diagram...
% \end{example}

\begin{para}
  An inner fibration $p : \strat{X} \to \Delta\ord{2}$ is \defn{flat} if
  the induced map $\strat{X} \times_{\Delta\ord{2}} \Lambda^1\ord{2} \hookrightarrow \strat{X}$
  is a categorical equivalence. An inner fibration $p : E \to K$ is \defn{flat} if
  for every $2$-simplex $\sigma : \Delta\ord{2} \to K$ the pullback
  $\sigma^* p : E \times_K \Delta\ord{2} \to \Delta\ord{2}$ is flat.
  Flat inner fibrations are closed under composition, taking opposites,
  products, equivalence in the arrow category and pullback along any map of
  simplicial sets.
\end{para}

\begin{lemma}\label{lem:b-poly-flat-cat-pullback-equiv}
  Suppose we have a diagram of simplicial sets
  \[
    \begin{tikzcd}
      {\cat{X} \times_{\cat{Y}} A} \ar[r] \ar[d] \pullbackcorner &
      {\cat{X} \times_{\cat{Y}} B} \ar[r] \ar[d] \pullbackcorner &
      {\cat{X}} \ar[d] \\
      {A} \ar[r] &
      {B} \ar[r] &
      {\cat{Y}}
    \end{tikzcd}
  \]
  in which both squares are pullback squares,
  $\cat{X} \to \cat{Y}$ is a flat categorical fibration between
  quasicategories
  and $A \to B$ is a categorical equivalence.  
  Then the map
  \[
    \cat{X} \times_{\cat{Y}} A
    \longrightarrow
    \cat{X} \times_{\cat{Y}} B
  \]
  is a categorical equivalence as well.
\end{lemma}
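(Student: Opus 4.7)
The plan is to reduce the statement to the defining property of flat categorical fibrations via the small object argument. First, I would factor $A \to B$ over $\cat{Y}$ as a Joyal trivial cofibration $A \to A'$ followed by a Joyal trivial fibration $A' \to B$. Joyal trivial fibrations coincide with Kan--Quillen trivial fibrations and are characterised by right lifting against monomorphisms, hence are stable under pullback; since they are also categorical equivalences, $\cat{X} \times_{\cat{Y}} A' \to \cat{X} \times_{\cat{Y}} B$ is a categorical equivalence. By two-out-of-three for categorical equivalences, it therefore suffices to prove the claim when $A \to B$ is a Joyal trivial cofibration.

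Next, observe that the class of maps $f$ in $\sSet_{/\cat{Y}}$ for which the pullback $\cat{X} \times_{\cat{Y}} f$ is a categorical equivalence is weakly saturated. Pullback along $\cat{X} \to \cat{Y}$ preserves all colimits (since $\sSet$ is locally cartesian closed) and preserves monomorphisms, and categorical equivalences are closed under pushouts of monomorphisms, transfinite composition, and retracts. By the small object argument for the Joyal model structure it therefore suffices to verify the conclusion for the generating Joyal trivial cofibrations: the inner horn inclusions $\Lambda^i\ord{k} \hookrightarrow \Delta\ord{k}$ with $0 < i < k$, and the inclusion $\Delta\ord{0} \hookrightarrow E\ord{1}$, taken over an arbitrary map into $\cat{Y}$.

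For an inner horn inclusion with $k = 2$ over a simplex $\sigma : \Delta\ord{2} \to \cat{Y}$, the conclusion is exactly the definition of flatness applied to the pullback $\sigma^* \cat{X} \to \Delta\ord{2}$. For $k > 2$, I would reduce to the case $k = 2$ using the standard decomposition of $\Lambda^i\ord{k} \hookrightarrow \Delta\ord{k}$ as an inner anodyne extension built from two-dimensional inner horn fillers, then appeal to weak saturation of the class above. The case $\Delta\ord{0} \hookrightarrow E\ord{1}$ uses that $\cat{X} \to \cat{Y}$ is a categorical fibration: a lift of the non-identity equivalence in $E\ord{1}$ through $\cat{X}$ produces the homotopy inverse needed to exhibit $\cat{X} \times_{\cat{Y}} \Delta\ord{0} \to \cat{X} \times_{\cat{Y}} E\ord{1}$ as a categorical equivalence.

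The main obstacle is the combinatorial reduction of higher-dimensional inner horns to the two-dimensional flatness condition; this is a standard but delicate fact about inner anodyne maps, and in practice one may simply invoke~\cite[Proposition B.3.14]{higher-algebra}, which packages precisely this statement.
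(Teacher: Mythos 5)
Your outer reduction is sound and matches how Lurie deduces the general statement from the cofibration case: factoring $A \to B$ into a trivial cofibration followed by a trivial fibration, using pullback-stability of trivial fibrations, and invoking two-out-of-three is exactly right, as is the treatment of $\Delta\ord{0} \hookrightarrow E\ord{1}$ (the pullback of a weak equivalence between fibrant objects along a fibration is always a weak equivalence). For comparison, the paper's own proof is a one-line citation of~\cite[Corollary~B.3.15]{higher-algebra}, so your proposal is an attempt to reconstruct Lurie's argument rather than a different route.

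The gap is in the reduction to generators. The inner horn inclusions together with $\Delta\ord{0} \hookrightarrow E\ord{1}$ do \emph{not} generate the trivial cofibrations of the Joyal model structure: no explicit generating set is known, and the weak saturation of this particular set is strictly smaller than the class of all monomorphisms that are categorical equivalences (equivalently, its right lifting class --- the ``naive isofibrations'' --- strictly contains the Joyal fibrations once codomains are allowed to be arbitrary simplicial sets). So the small object argument does not reduce the problem to these maps, and this is precisely why Lurie's proof of~\cite[Proposition~B.3.14]{higher-algebra} proceeds instead by a direct induction over the nondegenerate simplices of $B$ relative to $A$. A second, related slip: the higher inner horns $\Lambda^i\ord{k} \hookrightarrow \Delta\ord{k}$ for $k > 2$ are not built from pushouts of $\Lambda^1\ord{2} \hookrightarrow \Delta\ord{2}$ alone but from the pushout-products of that map with boundary inclusions; these live over $\Delta\ord{2} \times \Delta\ord{n}$ rather than over a $2$-simplex of $\cat{Y}$, so flatness does not apply to them directly --- this is again the nontrivial content of~\cite[Proposition~B.3.14]{higher-algebra}. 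Since your fallback is to invoke that proposition, the argument is salvageable, but at that point it collapses to essentially the same citation the paper makes.
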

\begin{proof}
  This is~\cite[Corollary~B.3.15]{higher-algebra}.
\end{proof}

\begin{lemma}
  Let $f : E \to K$ be an inner fibration.
  Then $f$ is flat if and only if for every $2$-simplex $\sigma : \Delta\ord{2} \to K$
  the map
  \[
    \begin{tikzcd}
      \Fun_{/ K}((\Delta\ord{2}, \sigma), (E, f))
      \ar[r, "{\langle 0, 2 \rangle}^*"] &
      \Fun_{/ K}((\Delta\ord{1}, \sigma \circ \langle 0, 2 \rangle), (E, f))
    \end{tikzcd}
  \]
  has weakly contractible fibres.
\end{lemma}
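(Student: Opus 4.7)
The plan is to reduce to the case $K = \Delta\ord{2}$, $\sigma = \id$, and then compare the categorical equivalence $j : E' := E \times_{\Delta\ord{2}} \Lambda^1\ord{2} \hookrightarrow E$ to the fibre condition via an analysis of mapping spaces.

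Both sides of the claimed equivalence are local in the $2$-simplices of $K$: by definition, $f$ is flat iff every pullback $\sigma^* f : E \times_K \Delta\ord{2} \to \Delta\ord{2}$ is flat, and the fibre of $\langle 0, 2 \rangle^*$ over a $\tau$ lying above $\langle 0, 2 \rangle \circ \sigma$ depends only on $\sigma^* f$. It therefore suffices to assume $K = \Delta\ord{2}$ and $\sigma = \id$, and to show that $j$ is a categorical equivalence iff for every edge $\tau : \Delta\ord{1} \to E$ over $\langle 0, 2 \rangle$, the simplicial set
\[
  F_\tau := \Fun_{/\Delta\ord{2}}(\Delta\ord{2}, E) \times_{\Fun_{/\Delta\ord{2}}(\Delta\ord{1}, E)} \{\tau\}
\]
is weakly contractible.

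Since both $E'$ and $E$ are quasicategories with the same vertices, $j$ is essentially surjective, so I reduce to checking fully-faithfulness on mapping spaces $E'(x, y) \to E(x, y)$. When $x$ and $y$ both lie over $\Delta\{0, 1\}$ or both over $\Delta\{1, 2\}$, any simplex of $E$ relevant to $E(x, y)$ automatically factors through $E'$, so there is nothing to check. The only nontrivial case is $x \in E_0$ and $y \in E_2$, where $E_i$ denotes the fibre of $f$ over the vertex $i$. Here $E(x, y)$ models the Kan complex of edges $x \to y$ lying over $\langle 0, 2 \rangle$, while, using the pushout description $E' = E_{01} \cup_{E_1} E_{12}$ and the join/slice formula for mapping spaces in quasicategories, $E'(x, y)$ is equivalent to the space of composable pairs $(\alpha : x \to m, \beta : m \to y)$ through some $m \in E_1$. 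The map $j$ on mapping spaces sends such a pair to the $02$-edge of the $2$-simplex it spans, which presents $E'(x, y) \to E(x, y)$ as the projection from the space of $2$-simplices of $E$ over $\id_{\Delta\ord{2}}$ with outer vertices $x, y$ down to the space of their $02$-edges. The fibre of this projection over $\tau$ is precisely $F_\tau$, so the map is a Kan equivalence iff each $F_\tau$ is weakly contractible.

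The main obstacle will be the identification of $E'(x, y)$ as the space of composable pairs, and correspondingly of $E'(x, y) \to E(x, y)$ as the $02$-edge projection. This amounts to showing that mapping spaces in the pushout of quasicategories $E_{01} \cup_{E_1} E_{12}$ along the inclusion of the common fibre $E_1$ are computed as a bar-style colimit over $E_1$, which will require verifying that these fibre inclusions are well-behaved with respect to the slice construction. Once this identification is established, both directions of the biconditional follow from the standard fact that a map between Kan complexes is a weak equivalence iff each of its fibres is weakly contractible.
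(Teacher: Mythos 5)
The paper does not actually prove this lemma --- it cites \cite[Remark~B.3.9]{higher-algebra} --- so what you are attempting is a reconstruction of Lurie's argument for the underlying proposition, and your overall strategy (reduce to $K = \Delta\ord{2}$, observe that $j : E' := E \times_{\Delta\ord{2}} \Lambda^1\ord{2} \hookrightarrow E$ is bijective on vertices, and compare mapping spaces from $x \in E_0$ to $y \in E_2$) is the right one. However, there are two genuine gaps. First, $E'$ is \emph{not} a quasicategory: already for $E = \Delta\ord{2}$ and $f = \id$ one has $E' = \Lambda^1\ord{2}$, whose identity inner horn has no filler; more generally, any composable pair running from $E_0$ through $E_1$ to $E_2$ gives an unfillable inner horn in $E'$, since every $2$-simplex of $E'$ lies over a degenerate $2$-simplex of $\Lambda^1\ord{2}$. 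Consequently the criterion ``fully faithful and essentially surjective'' does not apply to $j$ as stated, and the mapping space $E'(x,y)$ is not defined by the constructions available in this paper (which build $\cat{C}(x,y)$ as a pullback of $\Fun(\Delta\ord{1}, \cat{C})$ for $\cat{C}$ a quasicategory). You must either pass to a fibrant replacement of $E'$ in the Joyal model structure and control its mapping spaces, or compute with a cofibrant model (rigidification/necklaces); this is not a cosmetic issue, because the non-fibrancy of $E'$ is exactly why flatness is a nontrivial condition.

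Second, the identification of $E'(x,y)$ with the space of composable pairs through $E_1$ --- which you defer as ``the main obstacle'' --- is not a routine verification to be filled in later: it is the entire mathematical content of the lemma, and it is precisely where Lurie's proof spends its effort. A pushout of quasicategories along a common full subcategory does not in general have mapping spaces computed by such a one-step bar construction, so this step needs a genuine argument, not an appeal to a ``standard'' colimit formula. Two further points are real but fixable: a composable pair does not span a canonical $2$-simplex, so the comparison map should be routed through the restriction $\Fun_{/\Delta\ord{2}}(\Delta\ord{2}, E) \to \Fun_{/\Delta\ord{2}}(\Lambda^1\ord{2}, E)$, which is a trivial fibration because $\Lambda^1\ord{2} \hookrightarrow \Delta\ord{2}$ is inner anodyne and $f$ is an inner fibration; and to conclude in either direction from ``all strict fibres $F_\tau$ are weakly contractible'' you need the $\langle 0,2\rangle$-restriction map to be a fibration in an appropriate sense, so that its strict fibres compute homotopy fibres.
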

\begin{proof}
  Via~\cite[Remark~B.3.9]{higher-algebra}.
\end{proof}

\begin{proposition}\label{prop:b-poly-flat-cat}
  Let $f : \cat{E} \to \cat{B}$ be a flat categorical fibration between quasicategories.
  Then the polynomial functor $\poly{f}{-} : \sSet \to \sSet$ represented by $f$ satisfies:  
  \begin{enumerate}
    \item $\poly{f}{-}$ preserves categorical fibrations.
    \item $\poly{f}{\cat{C}} \to \cat{B}$ is a categorical fibration for any quasicategory $\cat{C}$.
    \item $\poly{f}{-}$ preserves quasicategories.
    \item $\poly{f}{-}$ preserves categorical equivalences between quasicategories.
  \end{enumerate}
\end{proposition}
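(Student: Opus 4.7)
The plan is to translate lifting problems involving $\poly{f}{-}$ into lifting problems for the input map via Lemma~\ref{lem:b-poly-characterise}, then apply standard model-categorical reasoning. Concretely, if $j : A \hookrightarrow A'$ is a monomorphism and $g : X \to Y$ is any map of simplicial sets, then any lifting problem
\[
  \begin{tikzcd}
    A \ar[r] \ar[d, hook, "j"'] & \poly{f}{X} \ar[d, "\poly{f}{g}"] \\
    A' \ar[r] \ar[ur, dashed] & \poly{f}{Y}
  \end{tikzcd}
\]
corresponds by Lemma~\ref{lem:b-poly-characterise} to a choice of map $A' \to \cat{B}$ together with a lifting problem
\[
  \begin{tikzcd}
    \cat{E} \times_{\cat{B}} A \ar[r] \ar[d, hook, "j^*"'] & X \ar[d, "g"] \\
    \cat{E} \times_{\cat{B}} A' \ar[r] \ar[ur, dashed] & Y
  \end{tikzcd}
\]
for $g$, where $j^*$ is again a monomorphism since pullbacks preserve monomorphisms.

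For (1), I would take $j$ to be a trivial cofibration in the Joyal model structure and $g$ a categorical fibration. Lemma~\ref{lem:b-poly-flat-cat-pullback-equiv}, applied to the flat categorical fibration $f$, shows that $j^*$ is a categorical equivalence, hence a trivial cofibration; the transferred lifting problem for $g$ therefore has a solution, proving that $\poly{f}{g}$ is a categorical fibration. Part (2) is then the application of (1) to the terminal projection $\cat{C} \to \terminal$, which is a categorical fibration whenever $\cat{C}$ is a quasicategory, combined with the identification $\poly{f}{\terminal} \cong \cat{B}$ from Observation~\ref{obs:b-poly-base-1}. Part (3) follows immediately, since a categorical fibration (in particular an inner fibration) whose target is a quasicategory has a quasicategorical source.

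For (4), I would first observe, via the same translation, that $\poly{f}{-}$ preserves trivial fibrations without invoking flatness: when $g$ is a trivial fibration and $j$ is a monomorphism, $j^*$ is still a monomorphism, and trivial fibrations have the right lifting property against every monomorphism. In particular $\poly{f}{-}$ sends trivial fibrations between quasicategories to categorical equivalences, so the dual form of Ken Brown's lemma for the Joyal model structure on $\sSet$ yields that $\poly{f}{-}$ preserves categorical equivalences between quasicategories. The hard part of the argument is (1): flatness of $f$ enters precisely through Lemma~\ref{lem:b-poly-flat-cat-pullback-equiv} to ensure that the pullback of a categorical equivalence along $f$ is again a categorical equivalence, without which $j^*$ could fail to be a trivial cofibration and $\poly{f}{-}$ could fail to preserve categorical fibrations.
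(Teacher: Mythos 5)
Your proof is correct, but it routes around the paper's key citation. The paper's proof works one level up: it observes that $\poly{f}{-} = \Sigma_{\cat{B}} \circ \Pi_f \circ (\cat{E} \times -)$, invokes \cite[Proposition~B.4.5]{higher-algebra} to assert that $\Pi_f$ is a right Quillen functor for the Joyal slice model structures precisely because $f$ is flat, and then reads off (1)--(3) from ``right Quillen functors preserve fibrations'' and (4) from Ken Brown's lemma applied to that right Quillen composite. You instead unwind what it means for $f^* \dashv \Pi_f$ to be a Quillen adjunction: via the adjunction description of Lemma~\ref{lem:b-poly-characterise} you reduce lifting against $\poly{f}{g}$ to lifting against $g$ along the pulled-back map $\cat{E} \times_{\cat{B}} j$, and you use Lemma~\ref{lem:b-poly-flat-cat-pullback-equiv} (i.e.\ \cite[Corollary~B.3.15]{higher-algebra}) to see that flatness makes this pullback a trivial cofibration. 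The two arguments have the same mathematical core --- flatness enters only to guarantee that base change along $f$ preserves categorical equivalences --- but yours is more self-contained, needing only the B.3.15-type lemma the paper has already imported rather than the packaged Quillen-adjunction statement B.4.5, at the cost of having to verify the lifting correspondence by hand (including the small point, which you handle implicitly, that any lift $A' \to \poly{f}{X}$ automatically lies over the prescribed map $A' \to \cat{B}$ because $\poly{f}{g}$ commutes with the projections to $\cat{B}$). Your treatment of (4), establishing preservation of trivial fibrations without flatness and then applying the dual Ken Brown lemma, is likewise a direct unwinding of the paper's appeal to Ken Brown for the right Quillen composite.
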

\begin{proof}
  The polynomial functor $\poly{f}{-}$ is the composite
  \[
    \begin{tikzcd}
      {\sSet} \ar[r, "\cat{E}^*"] &
      {\sSet_{/\cat{E}}} \ar[r, "\Pi_f"] &
      {\sSet_{/\cat{B}}} \ar[r, "\Sigma_{\cat{B}}"] &
      {\sSet}
    \end{tikzcd}
  \]
  We equip $\sSet$ with the Joyal model structure and the slice categories
  $\sSet_{/ \cat{E}}$ and $\sSet_{/ \cat{B}}$ with the slice model structures
  over the Joyal model structure.
  The product functor $(\cat{E} \times -)$ is a right Quillen functor because $\cat{E}$ is a quasicategory and therefore fibrant.
  Because $f$ is a flat categorical fibration, the dependent product functor
  $\Pi_f$
  is a right Quillen functor by~\cite[Proposition~B.4.5]{higher-algebra}.
  Therefore, the composite $\Pi_f \circ \cat{E}^*$ is also a right Quillen functor.

  Let $p : X \to Y$ be a categorical fibration.
  Right Quillen functors preserve fibrations, and so
  \[
    \begin{tikzcd}[column sep = small]
      {\poly{f}{X}} \ar[rr] \ar[dr] &
      {} &
      {\poly{f}{Y}} \ar[dl] \\
      {} &
      {\cat{B}}
    \end{tikzcd}
  \]
  is a fibration in the slice model structure $\sSet_{/ \cat{B}}$,
  and so by definition of the slice model structure $\poly{f}{X} \to \poly{f}{Y}$ 
  is a categorical fibration.
  Let $\cat{C}$ be a quasicategory then $\cat{C} \to \terminal$ is a categorical fibration,
  and so $\poly{f}{\cat{C}} \to \poly{f}{\terminal} \cong \cat{B}$ is a categorical fibration as well.
  Moreover since $\cat{B}$ is assumed to be a quasicategory, it follows that $\poly{f}{\cat{C}}$ is also a quasicategory.

  Let $\varphi : \cat{C} \to \cat{D}$ be a categorical equivalence between quasicategories.
  By Ken Brown's Lemma the right Quillen functor $\Pi_f \circ \cat{E}^*$ preserves weak
  equivalences between fibrant objects, and so $\varphi$ induces a weak equivalence
  $\poly{f}{\cat{C}} \to \poly{f}{\cat{D}}$ over $\cat{B}$.
\end{proof}

\begin{lemma}\label{lem:b-poly-functorial-equiv}
  Suppose that we have a pullback square of quasicategories
  \[
    \begin{tikzcd}
      {\cat{E}_0} \ar[r] \ar[d, "f_0"'] \pullbackcorner &
      {\cat{E}_1} \ar[d, "f_1"] \\
      {\cat{B}_0} \ar[r, "\varphi", swap] &
      {\cat{B}_1}
    \end{tikzcd}
  \]
  We suppose further that both $f_0$ and $f_1$ are flat categorical fibrations
  and that $\varphi$ is a categorical equivalence.
  Then for every quasicategory $\cat{C}$ the induced map
  \[
    \poly{f_0}{\cat{C}}
    \longrightarrow
    \poly{f_1}{\cat{C}}
  \]
  from Construction~\ref{con:b-poly-functorial} is a categorical equivalence.
\end{lemma}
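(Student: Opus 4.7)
The plan is to combine Construction~\ref{con:b-poly-functorial} with the Reedy presentation of homotopy pullbacks already recalled in the background. First, I would apply Construction~\ref{con:b-poly-functorial} to the given pullback square to obtain, for every quasicategory $\cat{C}$, the induced map $\poly{f_0}{\cat{C}} \to \poly{f_1}{\cat{C}}$ fitting into a strict pullback square of simplicial sets
\[
\begin{tikzcd}
\poly{f_0}{\cat{C}} \ar[r] \ar[d] \pullbackcorner & \poly{f_1}{\cat{C}} \ar[d] \\
\cat{B}_0 \ar[r, "\varphi"'] & \cat{B}_1
\end{tikzcd}
\]
Because $f_0$ and $f_1$ are flat categorical fibrations, Proposition~\ref{prop:b-poly-flat-cat} tells us that both vertical maps are categorical fibrations between quasicategories.

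Next, I would interpret this square as a limit of a Reedy-fibrant diagram indexed by $\cat{I} = \{0 \to 1 \leftarrow 2\}$ in the Joyal model structure on $\sSet$. The cospan $\cat{B}_0 \xrightarrow{\varphi} \cat{B}_1 \leftarrow \poly{f_1}{\cat{C}}$ is Reedy fibrant since all three objects are quasicategories and the right-hand map is a categorical fibration; therefore, by the discussion of Reedy pullbacks in the background section, its strict pullback $\poly{f_0}{\cat{C}}$ represents the pullback of the cospan in $\CatInfty$. Similarly, the trivial cospan $\cat{B}_1 \xrightarrow{\id} \cat{B}_1 \leftarrow \poly{f_1}{\cat{C}}$ is Reedy fibrant and its strict pullback is just $\poly{f_1}{\cat{C}}$ itself.

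I would then consider the natural transformation from the first cospan to the second given by $\varphi$ on the leftmost component and identities on the other two. Because $\varphi$ is a categorical equivalence, this is a pointwise categorical equivalence, hence a weak equivalence in the Reedy model structure. Since the limit functor $\lim : \Fun(\cat{I}, \sSet) \to \sSet$ is a right Quillen functor for the Reedy model structure, Ken Brown's lemma guarantees that it preserves weak equivalences between Reedy-fibrant objects. The induced map on limits is precisely $\poly{f_0}{\cat{C}} \to \poly{f_1}{\cat{C}}$, which is therefore a categorical equivalence.

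The hard part is picking the right tool: Lemma~\ref{lem:b-poly-flat-cat-pullback-equiv} is not directly applicable because we do not know whether $\poly{f_1}{\cat{C}} \to \cat{B}_1$ is a flat categorical fibration, only that it is a categorical fibration between quasicategories. The Reedy argument above sidesteps this by exploiting only the fibrancy of all four objects together with the categorical fibration property of the right vertical map, which is exactly what Proposition~\ref{prop:b-poly-flat-cat} gives us.
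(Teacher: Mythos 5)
Your proposal is correct and follows essentially the same route as the paper: the paper likewise takes the strict pullback square from Construction~\ref{con:b-poly-functorial}, observes that the vertical maps are categorical fibrations by Proposition~\ref{prop:b-poly-flat-cat}, concludes the square is a homotopy pullback, and deduces the equivalence from that of $\varphi$. Your Reedy-model-structure elaboration is just a more explicit spelling-out of the paper's one-line "the square is a homotopy pullback" step.
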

\begin{proof}
  The map fits into a pullback square
  \[
    \begin{tikzcd}
      {\poly{f_0}{\cat{C}}} \ar[r] \ar[d] \pullbackcorner&
      {\poly{f_1}{\cat{C}}} \ar[d] \\
      {\cat{B}_0} \ar[r, "\varphi", swap] &
      {\cat{B}_1}
    \end{tikzcd}
  \]
  The vertical maps are categorical fibrations by Proposition~\ref{prop:b-poly-flat-cat}
  and therefore the square is a homotopy pullback.
  Because $\varphi$ is a categorical equivalence it then follows that also
  $\poly{f_0}{\cat{C}} \to \poly{f_1}{\cat{C}}$ is a categorical equivalence.
\end{proof}

\begin{lemma}\label{lem:b-poly-retract-equiv}
  Suppose that we have a commutative square of quasicategories
  \[
    \begin{tikzcd}
      {\cat{E}_0} \ar[r, hook, "i'"] \ar[d, "f_0"'] &
      {\cat{E}_1} \ar[d, "f_1"{description}] \ar[r, "r'"] &
      {\cat{E}_0} \ar[d, "f_0"] \\
      {\cat{B}_0} \ar[r, hook, "i"'] &
      {\cat{B}_1} \ar[r, "r", swap] &
      {\cat{B}_0}
    \end{tikzcd}
  \]
  such that the rows compose to the identity.
  We suppose further that both $f_0$ and $f_1$ are flat categorical fibrations
  and that $i$, $i'$ are categorical equivalences.
  % Let $F_0, F_1 : \sSet \to \sSet$ be the polynomial functors represented by $f_0$ and $f_1$.
  Then for every quasicategory $\cat{C}$ the components
  \[
    \begin{tikzcd}
      {\poly{f_0}{\cat{C}}} \ar[r, "I_{\cat{C}}"] &
      {\poly{f_1}{\cat{C}}} \ar[r, "R_{\cat{C}}"] &
      {\poly{f_0}{\cat{C}}}
    \end{tikzcd}
  \]
  of the natural transformations from Construction~\ref{con:b-poly-retract} are categorical equivalences.
\end{lemma}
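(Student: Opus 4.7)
The plan is first to observe that $R_{\cat{C}} \circ I_{\cat{C}} = \id_{\poly{f_0}{\cat{C}}}$ by construction: both natural transformations arise from functorial procedures applied to the retract data, and the rows $r \circ i$ and $r' \circ i'$ are identities, so the two applications of Construction~\ref{con:b-poly-retract} compose to the identity on polynomial functors. By the $2$-out-of-$3$ property it then suffices to show that $I_{\cat{C}}$ is a categorical equivalence; the corresponding statement for $R_{\cat{C}}$ follows immediately.

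To attack $I_{\cat{C}}$, I would form the pullback $P := \cat{B}_0 \times_{\cat{B}_1} \cat{E}_1$ with projections $g \colon P \to \cat{B}_0$ and $\pi \colon P \to \cat{E}_1$. Pullbacks of flat categorical fibrations are flat, so $g$ is a flat categorical fibration between quasicategories, and Lemma~\ref{lem:b-poly-flat-cat-pullback-equiv}, applied to $f_1$ and the categorical equivalence $i$, shows that $\pi$ is a categorical equivalence. The inclusion $i'$ factors canonically as $\cat{E}_0 \xrightarrow{\alpha} P \xrightarrow{\pi} \cat{E}_1$ with $\alpha := (i', f_0)$, so by $2$-out-of-$3$, $\alpha$ is a categorical equivalence over $\cat{B}_0$. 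Dually, since $f_0 \circ r' = r \circ f_1$, the retraction $r'$ restricts to a map $\rho \colon P \to \cat{E}_0$ over $\cat{B}_0$ with $\rho \circ \alpha = \id_{\cat{E}_0}$, so $\rho$ is a categorical equivalence as well. Unfolding Lemma~\ref{lem:b-poly-characterise} on a test map $A \to \poly{f_0}{\cat{C}}$, I expect $I_{\cat{C}}$ to factor as
\[
  \poly{f_0}{\cat{C}} \xrightarrow{\;\Phi\;} \poly{g}{\cat{C}} \xrightarrow{\;\Psi\;} \poly{f_1}{\cat{C}},
\]
where $\Phi$ is the natural transformation of Construction~\ref{eq:b-poly-functorial-base} attached to $\rho$ over $\cat{B}_0$, and $\Psi$ is the natural transformation of Construction~\ref{con:b-poly-functorial} attached to the pullback square with base map $i$. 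Concretely, $\Psi \circ \Phi$ reindexes $A \to \cat{B}_0$ along $i$ and replaces the section $\cat{E}_0 \times_{\cat{B}_0} A \to \cat{C}$ by precomposing its pullback with the canonical isomorphism $\cat{E}_1 \times_{\cat{B}_1} A \cong P \times_{\cat{B}_0} A$ and the map $\rho \times_{\cat{B}_0} A$, which is exactly the description of $I_{\cat{C}}$ in Construction~\ref{con:b-poly-retract}. Lemma~\ref{lem:b-poly-functorial-equiv} then gives that $\Psi$ is a categorical equivalence.

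The remaining — and main — step is to show that $\Phi$ is a categorical equivalence, which is not directly covered by any previously stated lemma since Construction~\ref{eq:b-poly-functorial-base} does not require a pullback square. My approach is a Ken Brown argument using that all the relevant functors are right Quillen. Concretely, $\rho \colon P \to \cat{E}_0$ is a categorical equivalence between the fibrant (and in fact flat over $\cat{B}_0$) objects $P$ and $\cat{E}_0$ in the Joyal slice model structure on $\sSet_{/\cat{B}_0}$, so the adjunction $\rho^* \dashv \Pi_\rho$ on the Joyal slice model structures $\sSet_{/ \cat{E}_0}$ and $\sSet_{/P}$ is a Quillen equivalence; hence $\Pi_\rho$ preserves weak equivalences between fibrant objects. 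The unit map $\cat{E}_0 \times \cat{C} \to \Pi_\rho \rho^*(\cat{E}_0 \times \cat{C}) \cong \Pi_\rho(P \times \cat{C})$ is then a categorical equivalence between fibrant objects of $\sSet_{/\cat{E}_0}$, and applying the right Quillen functor $\Sigma_{\cat{B}_0} \Pi_{f_0}$ — which by Proposition~\ref{prop:b-poly-flat-cat} and Ken Brown preserves categorical equivalences between fibrant objects — produces a categorical equivalence $\Phi \colon \poly{f_0}{\cat{C}} \to \poly{g}{\cat{C}}$ exactly describable as the component of Construction~\ref{eq:b-poly-functorial-base}. Combining with the factorisation $I_{\cat{C}} = \Psi \circ \Phi$, both factors are categorical equivalences, which completes the proof. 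The delicate point is verifying the identification of $\Phi$ with the unit map of $\rho^* \dashv \Pi_\rho$ after applying $\Sigma_{\cat{B}_0} \Pi_{f_0}$; a careful unfolding via Lemma~\ref{lem:b-poly-characterise} is needed, and this is where I expect the bulk of the technical work to lie.
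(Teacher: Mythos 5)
Your overall architecture is sound, and the first two thirds of the argument are correct: the reduction to $I_{\cat{C}}$ by $2$-out-of-$3$, the factorisation of $i'$ as $\cat{E}_0 \xrightarrow{\alpha} P \xrightarrow{\pi} \cat{E}_1$ through the pullback $P = \cat{B}_0 \times_{\cat{B}_1} \cat{E}_1$, the verification that $\alpha$ and $\rho = r' \circ \pi$ are categorical equivalences over $\cat{B}_0$ with $\rho \circ \alpha = \id$, the identification $I_{\cat{C}} = \Psi_{\cat{C}} \circ \Phi_{\cat{C}}$, and the disposal of $\Psi_{\cat{C}}$ via Lemma~\ref{lem:b-poly-functorial-equiv} all check out. (The paper records no proof of this lemma, so there is no argument to compare against.) The gap is in your treatment of $\Phi_{\cat{C}}$. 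You claim that $\rho^* \dashv \Pi_\rho$ is a Quillen adjunction, indeed a Quillen equivalence, for the Joyal slice model structures, merely because $\rho$ is a categorical equivalence between fibrant objects of $\sSet_{/\cat{B}_0}$. That inference is not valid: $\rho = r' \circ \pi$ is the composite of a pullback of the monomorphism $i$ with an arbitrary retraction, so it is in general not even an inner fibration, let alone a flat categorical fibration, and the paper's own Proposition~\ref{prop:b-poly-flat-cat} invokes~\cite[Proposition~B.4.5]{higher-algebra} precisely because flatness is what makes $\Pi_{(-)}$ right Quillen (equivalently, what makes pullback preserve trivial cofibrations). Without this, neither the fibrancy of $\Pi_\rho(P \times \cat{C})$ over $\cat{E}_0$ nor the claim that the unit $\cat{E}_0 \times \cat{C} \to \Pi_\rho \rho^*(\cat{E}_0 \times \cat{C})$ is a categorical equivalence is available, and the Ken Brown step that follows has nothing to apply to.

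The step can be repaired without ever forming $\Pi_\rho$, using the section $\alpha$ instead. The map $\alpha$ induces, again by Construction~\ref{eq:b-poly-functorial-base}, a transformation $\Phi' : \poly{g}{-} \to \poly{f_0}{-}$ with $\Phi' \circ \Phi = \id$ (since $\rho \circ \alpha = \id$), while $\Phi \circ \Phi'$ is the endomorphism of $\poly{g}{-}$ induced by $\alpha \circ \rho$. Because $g$ and $f_0$ are categorical fibrations, $P$ and $\cat{E}_0$ are fibrant and cofibrant in the Joyal slice model structure on $\sSet_{/\cat{B}_0}$, so the weak equivalence $\rho$ is a homotopy equivalence there; combined with the strict identity $\rho\alpha = \id$ this produces a homotopy $H : P \times E\ord{1} \to P$ over $\cat{B}_0$ from $\alpha\rho$ to $\id_P$. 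Construction~\ref{eq:b-poly-functorial-base} is compatible with such fibrewise homotopies: given $(\beta, \psi)$ classifying $A \to \poly{g}{X}$ as in Lemma~\ref{lem:b-poly-characterise}, precomposing $\psi$ with $H \times_{\cat{B}_0} A$ and using the isomorphism $P \times_{\cat{B}_0} (A \times E\ord{1}) \cong (P \times_{\cat{B}_0} A) \times E\ord{1}$ yields, naturally in $A$, a map $\poly{g}{X} \times E\ord{1} \to \poly{g}{X}$ restricting to $\Phi\Phi'$ and $\id$ at the two endpoints. Hence $\Phi\Phi' \simeq \id$ and $\Phi'\Phi = \id$, so $\Phi$ is a categorical equivalence, and with your factorisation so are $I_{\cat{C}}$ and then $R_{\cat{C}}$.
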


% \begin{definition}
%   Let $\cat{C}$ be a locally cartesian closed $1$-category.
%   A \defn{map of polynomials} in $\cat{C}$ from $f : E_0 \to K_0$
%   to $g : E_1 \to K_1$ is a pullback square
%   \[
%     \begin{tikzcd}
%       {E_0} \ar[r] \ar[d, "f", swap] \pullbackcorner &
%       {E_1} \ar[d, "g"] \\
%       {K_0} \ar[r] &
%       {K_1}
%     \end{tikzcd}
%   \] 
%   Polynomials and their maps form a subcategory $\Poly(\cat{C})$ of the arrow
%   category $\Fun(\Delta\ord{1}, \cat{C})$.
% \end{definition}

% \begin{lemma}\label{lem:cats-flat-closure}
%   Flat categorical fibrations have the following closure properties:
%   \begin{enumerate}
%     \item When $f : X \to Y$ is a flat categorical fibration then the map on opposites
%     $f^\op : X^\op \to Y^\op$ is also a flat categorical fibration.
%     \item When $f_1 : X_1 \to Y_1$ and $f_2 : X_2 \to Y_2$ are flat categorical fibrations,
%     then the product
%     \[ f_1 \times f_2 : X_1 \times X_2 \to Y_1 \times Y_1 \]
%     is a flat categorical fibration.
%     \item When $f : X \to Y$ is a flat categorical fibration and $g : Z \to Y$ is any map
%     of simplicial sets, then the pullback
%     \[ X \times_Y Z \to Z \]
%     of $f$ along $g$ is
%     a flat categorical fibration.
%     \item When $f : X \to Y$ and $g : Y \to Z$ are flat categorical fibrations, then
%     $g \circ f : X \to Z$ is a flat categorical fibration.
%   \end{enumerate}
% \end{lemma}

\section{Geometry}\label{sec:b-geo}

\subsection{Topology and Simplices}

\begin{para}
  By \defn{topological space} we will generally mean a $\Delta$-generated topological
  space~\cite{dugger-delta-generated, delta-generated-presentable}. Topological
  spaces and continuous maps then organise into a $1$-category
  $\TopSp$ that is cartesian closed and presentable.
\end{para}

\begin{para}
  The \defn{topological $n$-simplex} $\DeltaTop{n}$ is the subspace
  \[
    \DeltaTop{n} := \{ (x_0, \ldots, x_n) \mid x_0 + \cdots + x_n = 1 \} \subseteq \R^{n + 1}.
  \]
  This convention is referred to as \defn{barycentric coordinates}.
  The topological simplices organise into a cosimplicial object
  $\DeltaTop{\bullet} : \FinOrd \to \TopSp$. Since $\TopSp$ is cocomplete,
  we then have an induced nerve/realisation adjunction
  \[
    \TopReal{-} : \sSet \rightleftarrows \TopSp : \Sing.
  \]
\end{para}

\begin{para}
  A \defn{triangulation} of a topological space $X$ consists of a poset
  $P$ together with an isomorphism of topological spaces $\TopReal{P} \cong X$.
  A topological space $X$ is \defn{triangulable} if there exists a triangulation
  of $X$. A triangulation $\TopReal{P} \cong X$ is \defn{finite} when $P$ is a
  finite poset, and \defn{locally finite} as long as $P_{/p}$ is finite for all $p \in P$.
  A triangulable space is compact if and only if it admits a finite triangulation;
  it is locally compact if and only if it has a locally finite triangulation.
\end{para}

\begin{remark}
  Triangulations are usually defined in terms of simplicial complexes.
  A simplicial complex is a pair $(V, S)$ consisting of a set $V$ and a
  set $S$ of non-empty subsets of $V$ which together satisfy the following
  conditions:
  \begin{enumerate}
    \item When $\sigma \in S$ and $\tau \subseteq \sigma$ then $\tau \in S$.
    \item When $v \in V$ then $\{ v \} \in S$.
  \end{enumerate}
  Elements of $V$ and $S$ are referred to as \defn{vertices} and \defn{simplices}.
  Then in particular $S$ is a poset when ordered by set inclusion.
  The geometric realisation of the simplicial complex $(V, S)$ as commonly
  defined is isomorphic to $\TopReal{S}$.
  Conversely, for every poset $P$ the \defn{flag complex} of $P$ is the simplicial
  complex whose vertices are the elements of $P$ and whose simplices are
  the images of any injective monotone map $\ord{n} \to P$ for any $n \geq 0$.
  Then the geometric realisation of the flag complex of $P$ is isomorphic to $\TopReal{P}$.
\end{remark}

We will use finiteness of triangulations to ensure tameness of topological spaces.
In order for the notion of tameness to also allow for some non-compact topological spaces,
we will use finite relative triangulations.

\begin{definition}
  A \defn{relative triangulation} of a topological space $X$ consists of a poset
  $P$ and a downwards closed subposet $P_0 \subseteq P$ such that $X$ is isomorphic
  to $\TopReal{P} \setminus \TopReal{P_0}$. The relative triangulation is \defn{finite}
  when $P$ is finite.
\end{definition}

\subsection{Stratified Spaces}\label{sec:b-geo-strat}

Stratified spaces are topological spaces that are composed out of distinguished parts,
which are called the strata.
For example, a simplicial complex or CW complex is a stratified space whose strata are the interiors of the simplices or cells, respectively.
Stratified spaces are also commonly used for spaces with singularities, such as algebraic varieties or pseudomanifolds.
We use stratified spaces to define manifold diagrams.
There is substantial literature on stratified spaces, especially in the smooth case.
However, the field is sufficiently fragmented that there is no standard notion
which we can assume to be known or agreed upon.
We therefore give a quick overview over the type of stratified space that we will
use throughout this thesis.
The ideas in this section are due to~\cite{nand-lal-stratified, douteau-waas-links, higher-algebra, waas-stratified}.

\begin{para}
	The \defn{Alexandroff topology} on a poset is the topology
	the underlying set of $P$ in which a subset $U \subseteq P$ is open
	exactly if it is upwards closed, i.e.\ if for every $x \in U$ and $x \leq y$
	also $y \in P$.
  By sending a poset to the topological space with the Alexandroff topology,
  we obtain a fully faithful functor $\Alex : \Pos \hookrightarrow \TopSp$.
\end{para}

\begin{para}
	A \defn{pre-stratified space} $\strat{X}$ is a topological space $X$ together with a poset
	$\stratPos{\strat{X}}$ and a continuous map $\stratMap{\strat{X}}: X \to \stratPos{\strat{X}}$.
	For any $p \in \stratPos{\strat{X}}$ we denote by $\strat{X}_p := \stratMap{\strat{X}}^{-1}(p)$
	the \defn{stratum} of $\strat{X}$ over $p$.
	The category of pre-stratified spaces is the comma category $\TopSp / \Alex$
	for the inclusion functor $\Alex : \Pos \hookrightarrow \TopSp$ which equips a poset with its Alexandroff topology.
\end{para}

\begin{para}
	The \defn{stratified $n$-simplex} $\DeltaStrat{n}$ is the topological $n$-simplex
	\[
		\DeltaTop{n} = \{ (x_0, \ldots, x_n) \mid x_0 + \cdots + x_n = 1 \} \subseteq \R^{n + 1}
	\]
  presented in barycentric coordinates such that $\stratPos{\DeltaStrat{n}} := \ord{n}$ and
  \[ \stratMap{\DeltaStrat{n}}(x) := \max \{ i \in \ord{n} \mid x_i > 0 \}. \]
  A map $\gamma : \DeltaStrat{1} \to \strat{X}$ of pre-stratified spaces is called
  an \defn{exit path} in $\strat{X}$.
\end{para}

% \begin{example}
% 	Let $(V, K)$ be a simplicial complex.
% 	Then the geometric realisation $|(V, K)|$ can be equipped with a stratification
% 	$\| (V, K) \|$ with poset $\stratPos{\| (V, K) \|} = K$ and the stratifying map
% 	that sends a point to the largest simplex in which it is contained.
% 	We say that $\| (V, K) \|$ is the \defn{canonical stratification} of
% 	$| (V, K) |$.
% \end{example}

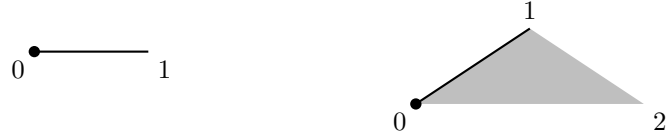
\begin{figure}
  \centering
  \begin{subfigure}[t]{0.3\textwidth}
    \begin{tikzpicture}[scale = 0.5, baseline=(current bounding box.center)]
      \draw[black, thick] (0, 0) -- (3, 0);
      \node[circle, inner sep = 1.5pt, fill = black] at (0, 0) {};
      \node[anchor = north east] at (0, 0) {$0$};
      \node[anchor = north west] at (3, 0) {$1$};
    \end{tikzpicture}
  \end{subfigure}
  \qquad
  \begin{subfigure}[t]{0.3\textwidth}
    \begin{tikzpicture}[scale = 0.5, baseline=(current bounding box.center)]
      \fill[lightgray] (0, 0) -- (3, 2) -- (6, 0);
      \draw[black, thick] (0, 0) -- (3, 2);
      \node[circle, inner sep = 1.5pt, fill = black] at (0, 0) {};
      \node[anchor = north east] at (0, 0) {$0$};
      \node[anchor = south] at (3, 2) {$1$};
      \node[anchor = north west] at (6, 0) {$2$};
    \end{tikzpicture}
  \end{subfigure}
  \caption{The stratified standard simplices $\DeltaStrat{1}$ and $\DeltaStrat{2}$.}
\end{figure}

\begin{para}\label{defn:stratified-space}
  A pre-stratified space $\strat{X}$ \defn{carries the $\Delta_s$-topology}
  when its underlying topological space $\unstrat(\strat{X})$ has the final
  topology with respect to all stratified maps $\DeltaStrat{k} \to \strat{X}$
  for all $k \geq 0$.
  A pre-stratified space $\strat{X}$ is \defn{surjectively stratified} when
  the map $\stratMap{\strat{X}} : X \to \stratPos{\strat{X}}$ surjective.
  In other words, $\strat{X}$ is surjectively stratified when its strata are non-empty.
  A \defn{stratified space} is a pre-stratified space $\strat{X}$ that carries
  the $\Delta_s$-topology, is surjectively stratified and satisfies the following
  condition:
	Whenever $x_0, x_1 \in \strat{X}$ satisfy     
  $\stratMap{\strat{X}}(x_0) \leq \stratMap{\strat{X}}(x_1)$ then there exists a sequence
  of exit paths $\gamma_0, \ldots, \gamma_k : \DeltaStrat{1} \to \strat{X}$ such that
  $\gamma_0(0) = x_0$, $\gamma_k(1) = x_1$ and $\gamma_{i - 1}(1) = \gamma_i(0)$ for all
  $1 \leq i \leq k$.
\end{para}

\begin{para}
	The category of stratified spaces $\Strat$ is the full subcategory of $\TopSp / \Alex$.
  We write $\unstrat(-) : \Strat \to \TopSp$ for the functor which forgets the stratification and sends a stratified space to its \defn{underlying topological space}.
	The category $\Strat$ is a coreflective subcategory of the comma category
  $\TopSp / \Alex$, i.e. the inclusion functor
  $\Strat \hookrightarrow \TopSp \downarrow \Alex$
  has a right adjoint.
	In particular $\Strat$ is presentable, complete and cocomplete.
	% Colimits in $\Strat$ are computed as colimits in the comma category $\TopSp / \Alex$.
	% Limits in $\Strat$ can be computed by first taking the limit in $\TopSp / \Alex$
	% and then restoring the properties of Definition~\ref{defn:stratified-space}.
 %  This typically involves removing empty strata and splitting strata into their path components.
\end{para}

\begin{para}
  Let $f : \strat{X} \to \strat{Y}$ be a map of stratified spaces.
  \begin{enumerate}
    \item The map $f$ is a \defn{refinement map} when its underlying map
    of topological spaces $\unstrat(f) : \unstrat(\strat{X}) \to \unstrat(\strat{Y})$
    is an isomorphism and $\stratPos{f} : \stratPos{\strat{X}} \to \stratPos{\strat{Y}}$
    is surjective. In this case we say that $\strat{X}$ is a \defn{refinement}
    of $\strat{Y}$ or, equivalently, that $\strat{Y}$ is a \defn{coarsening} of $\strat{X}$.
    \item The map $f$ is \defn{open} when $\unstrat(f) : \unstrat(\strat{X}) \to \unstrat(\strat{Y})$ is an open embedding of topological spaces and $f$ restricts to a refinement map onto its image.
    \item The map $f$ is a \defn{constructible embedding} when its map on posets
    $\stratPos{f} : \stratPos{\strat{X}} \to \stratPos{\strat{Y}}$ is injective and there is a pullback
    square of topological spaces
  	\[
  		\begin{tikzcd}
  			{X} \ar[r, "f", hook] \ar[d, "\stratMap{\strat{X}}", swap] \pullbackcorner &
  			{Y} \ar[d, "\stratMap{\strat{Y}}"] \\
  			{\stratPos{\strat{X}}} \ar[r, hook] &
  			{\stratPos{\strat{Y}}}
  		\end{tikzcd}
  	\]
  	In this situation we say that $\strat{X}$ is a \defn{constructible subspace}
  	of $\strat{Y}$.
  \end{enumerate}
\end{para}

\begin{para}
  The stratified $n$-simplices define a cosimplicial object
  \[
    \DeltaStrat{-} : \FinOrd \to \Strat.
  \]
  Since $\Strat$ is cocomplete, we have an induced nerve/realisation adjunction
  \[
    \StratReal{-} : \sSet \rightleftarrows \Strat : \Exit
  \]
  The functor $\StratReal{-} : \Strat \to \sSet$ preserves finite products.
\end{para}

\begin{para}
  For every poset $P$, interpreted as a simplicial set via its nerve, we have
  a stratified space $\StratReal{P}$. The underlying unstratified space of
  $\StratReal{P}$ is the geometric realisation $\TopReal{P}$. The stratified
  space $\StratReal{P}$ is glued together via a colimit in $\Strat$ from stratified simplices $\DeltaStrat{k}$
  for each flag of $P$ given by an injective map $\Delta\ord{k} \hookrightarrow P$.
  A \defn{triangulation} of a stratified space $\strat{X}$ is a poset $S$
  together with a refinement map $\StratReal{S} \to \strat{X}$.
  A stratified space $\strat{X}$ is \defn{triangulable} when it admits
  a triangulation.
\end{para}

\begin{para}
  Concretely, the functor $\Exit$ sends a stratified space $\strat{X}$ to the
  simplicial set $\Exit(\strat{X})$ whose $k$-simplices are the maps of stratified
  spaces $\DeltaStrat{k} \to \strat{X}$.
  We say that the stratified space $\strat{X}$ is \defn{fibrant} if the simplicial set
  $\Exit(\strat{X})$ is a quasicategory; in this case $\Exit(\strat{X})$ represents
  the \defn{$\infty$-category of exit paths} in $\strat{X}$.
  A map $f : \strat{X} \to \strat{Y}$ of stratified spaces is a \defn{stratified weak equivalence}
  when the induced map $\Exit(f) : \Exit(\strat{X}) \to \Exit(\strat{Y})$ is a categorical equivalence.
  The unit $\eta_X : X \to \Exit(\StratReal{X})$ of the adjunction $\StratReal{-} \dashv \Exit$ is a monomorphism and a categorical
  equivalence. In particular $\eta_X$ is an acyclic cofibration in the Joyal
  model structure.
\end{para}

\begin{remark}
  The terminology appears to imply that we could equip $\Strat$ with a model structure.
  Unfortunately this is not possible~\cite{nand-lal-stratified}.
  More recently it has been shown that $\Strat$ admits a left semi-model structure~\cite{waas-stratified},
  but we will not need this result here.
  The localisation of $\Strat$ at the stratified weak equivalences is a reflective
  $\infty$-subcategory of $\CatInfty$ consisting of layered $\infty$-categories:
  An $\infty$-category $\cat{C}$ is \defn{layered} when every endomorphism is an automorphism.
  Equivalently $\cat{C}$ is layered when there exists a poset $P$ and a conservative functor
  $\cat{C} \to P$.
\end{remark}

\begin{para}
  A stratified space $\strat{X}$ satisfies the \defn{frontier condition} when for
  each $p \leq q$ in $\stratPos{\strat{X}}$ the stratum $\strat{X}_p$ is contained
  within the closure of $\strat{X}_q$ within $\strat{X}$.
  Equivalently, a stratified space $\strat{X}$ satisfies the frontier condition
  when for every pair of points $x_0, x_1 \in \strat{X}$ with $\stratPos{\strat{X}}(x_0) \leq \stratPos{\strat{X}}(x_1)$ there exists an exit path $\gamma : \DeltaStrat{1} \to \strat{X}$ with
  $\gamma(0) = 0$ and $\gamma(1) = 1$.
  Every fibrant stratified space satisfies the frontier condition.
\end{para}

\begin{para}
  Let $\strat{L}$ be a compact and Hausdorff stratified space.
  The \defn{stratified cone} $\cone\strat{L}$ is the pushout of stratified spaces
  \[
    \begin{tikzcd}
      {\{ 0 \} \times \strat{L}} \ar[r] \ar[d, hook] \pushoutcorner &
      {\terminal} \ar[d] \\
      {\DeltaStrat{1} \times \strat{L}} \ar[r] &
      {\cone \strat{L}}
    \end{tikzcd}
  \]
  In particular the base poset of $\stratPos{\cone \strat{L}}$
  is the left cone
  $\stratPos{\strat{L}}^{\triangleleft}$, i.e. the union of $\stratPos{\strat{L}}$
  with an additional minimal element $\bot$.
  A stratified space $\strat{X}$ is \defn{conical} if for every point $x \in \strat{X}$
  with $p = \stratMap{\strat{X}}(x)$
  there exists an open neighbourhood $\strat{U}$ of $x$ in $\strat{X}$,
  an open neighbourhood $V$ of $x$ in its stratum $\strat{X}_p$ and a compact
  Hausdorff stratified space $\strat{L}$ so that $\strat{U}$ is isomorphic as a stratified
  space to $\cone \strat{L} \times V$.
  Every conical stratified space is fibrant.
  For every poset $P$ the stratified geometric realisation $\StratReal{P}$ is conical,
  and so in particular $\StratReal{P}$ is fibrant.
\end{para}

\begin{remark}
  The stratified cone $\cone \strat{L}$ can be defined for any stratified space $\strat{L}$
  via the teardrop topology (see for instance~\cite[Definition~A.5.3]{higher-algebra}).
  When $\strat{L}$ is compact and Hausdorff the general definition agrees with the pushout.
\end{remark}

We will make use of an alternative characterisation of triangulations for
stratified spaces. 

\begin{para}
  Let $T$ be a subset of the maps of $\Delta\ord{k}$ which includes all the degenerate
  maps so that $(\Delta\ord{k}, T)$ is a marked simplicial set. Denote by
  $\pi : \ord{k} \to P$ the quotient of the poset $\ord{k}$ which identifies
  $i, j \in \ord{k}$ when the map $i \to j$ is contained within $T$.
  We then let $\StratRealMark{\Delta\ord{k}, T}$ be the stratification of
  $\DeltaTop{k}$ over the poset $P$ with stratifying map
  \[
    \stratMap{\StratRealMark{\Delta\ord{k}, T}} :=
    \pi \circ 
    \stratMap{\DeltaStrat{k}}.
  \]
  This defines a nerve/realisation adjunction
  \[
    \StratRealMark{-} : \sSetM \rightleftarrows \Strat : \ExitMark
  \]
  A stratified space $\strat{X}$ is triangulable if and only if there exists a
  marked poset $P$ together with an isomorphism of stratified spaces
  $\StratRealMark{P} \cong \strat{X}$.
  For every marked poset $P$ the unit $\eta_S : S \to \ExitMark(\StratRealMark{S})$
  of the adjunction $\StratRealMark{-} \dashv \ExitMark$ is a marked equivalence.
\end{para}

\begin{remark}
  Presenting triangulable stratified spaces via marked simplicial sets is unusual.
  A more common approach defines a combinatorial analogue of $\Strat$ via the
  comma category of $\sSet$ over the inclusion $\Pos \hookrightarrow \sSet$.
  We haven chosen to use marked simplicial sets instead for additional flexibility.
  Concretely we make use of a map of marked simplicial sets $S \to \cat{C}^\natural$
  from a marked poset $S$ to a quasicategory with natural marking $\cat{C}^\natural$,
  which can not be expressed in $\sSet / \Pos$ in general.
\end{remark}

\begin{para}
  A map of stratified spaces $p : \strat{E} \to \strat{B}$ is a \defn{(stratified) trivial bundle}
  when there exists a stratified space $\strat{F}$ such that $p$ is isomorphic over $\strat{B}$
  to the projection map $\strat{F} \times \strat{B} \to \strat{B}$.
  A map of stratified spaces $p : \strat{E} \to \strat{B}$ is a
  \defn{stratified fibre bundle} if for every point $b \in \strat{B}$
  with $i = \stratMap{\strat{B}}(b)$
  there exists an open neighbourhood $U \subseteq \strat{B}_i$ within
  the stratum that contains $b$ such that $p$ restricts to a stratified trivial
  bundle $\strat{E} \times_{\strat{B}} U \to U$.
\end{para}

\begin{para}
  Let $p : \strat{X} \to \strat{Y}$ be a stratified fibre bundle such that
  the induced map of simplicial sets $\Exit(p)$ is an inner fibration.
  Then $\Exit(p)$ is an isofibration.
\end{para}

% \begin{proposition}
%   Let $(V, S)$ be a locally finite simplicial complex and
%   $A = (A_i \mid i \in I)$ a locally finite family of closed PL subspaces.
%   Then there exists a coarsest stratification $\strat{X}$ on $\TopReal{(V, S)}$
%   such that $\strat{X}$ is PL conical and $A_i$ is a union of strata for each $i \in I$.
% \end{proposition}
% \begin{proof}
%   This is a standard result in PL geometry; see for instance~\cite{akinMANIFOLDPHENOMENATHEORY}.
% \end{proof}

  % Analogously the \defn{stratified geometric realisation} $\StratReal{(V, S)}$ of $(V, S)$
  % is the stratified geometric realisation $\StratReal{S}$ of the poset $S$.

% \begin{lemma}\label{lem:b-strat-flat-glue}
%   Let $\strat{K}$ is a triangulable stratified space and
%   $p : \strat{E} \to \strat{K}$ a map of stratified spaces.
%   Suppose that $\strat{K}$ has a triangulation such that for each simplex
%   $\sigma : \DeltaStrat{k} \to \strat{K}$ of the triangulation,
%   the map $\Exit(\strat{E} \times_{\strat{K}} \DeltaStrat{k}) \to \Exit(\DeltaStrat{k})$
%   induced by $p$
%   is a flat categorical fibration.
%   Then $\Exit(p)$ is a flat categorical fibration.
% \end{lemma}
% \begin{proof}
%   \color{BrickRed}
%   Using the criterion for flatness over $2$-simplices.
%   Using Lukas Waas' stratified simplicial approximation~\cite[Theorem 1.3.24]{waas-stratified-msc}.
% \end{proof}

\subsection{PL Stratified Pseudomanifolds}\label{sec:b-geo-pseudo}

\begin{definition}\label{def:pl-pseudomanifold}
  A stratified space $\strat{X}$ is a \defn{PL stratified pseudomanifold}
  if $\unstrat(\strat{X})$ is piecewise linear and
  for every $x \in \strat{X}$ there exists a compact PL stratified pseudomanifold $\strat{L}$, called the \defn{link}, and a PL stratified embedding
  $u : \R^i \times \cone \strat{L} \hookrightarrow \strat{X}$
  such that $u(0, *) = x$.
\end{definition}

\begin{remark}
  The definition of a PL stratified pseudomanifold appears circular but is recursive:
  The link $\strat{L}$ at some point of a PL stratified pseudomanifold $\strat{X}$
  has fewer strata than $\strat{X}$.
  Our definition differs slightly from the variants typically found in the literature:
  \begin{enumerate}
    \item PL stratified pseudomanifolds are most often defined via a filtration
    instead of a stratification. With stratifications defined as above,
    this makes no difference.
    \item The literature concentrates on PL stratified $n$-pseudomanfolds $\strat{X}$ for
    some $n \geq 0$, which are restricted to contain only strata of dimension up to $n$
    so that the $n$-dimensional strata in $\strat{X}$ form a dense subspace.
    \item In some sources it is typical to assume that a stratified PL $n$-pseudomanifold
    has no strata of dimension $n - 1$. This is due to technical difficulties that
    arise in intersection homology if such strata were allowed.
    We do not use intersection homology and so this restriction does not make sense for our purposes.
  \end{enumerate}
\end{remark}

\begin{observation}
  Suppose that $\strat{X}$ is a PL stratified pseudomanifold.
  Then each stratum of $\strat{X}$ is a connected PL manifold.
  Since $\strat{X}$ is locally conical it is fibrant.
  Moreover $\strat{X}$ satisfies the frontier condition.
\end{observation}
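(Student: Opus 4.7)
The plan is to verify the three claims in turn, mostly by assembling statements already recorded in the paper together with the local structure supplied by Definition~\ref{def:pl-pseudomanifold}.

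First, to see that each stratum is a PL manifold, I would work pointwise. Given $x \in \strat{X}_p$, the definition provides a PL stratified embedding $u : \R^i \times \cone\strat{L} \hookrightarrow \strat{X}$ with $u(0, \apex) = x$, where $\apex$ denotes the cone point. The stratum of $\cone\strat{L}$ over the added bottom element of $\stratPos{\strat{L}}^{\triangleleft}$ is exactly $\{\apex\}$, so the stratum of $\R^i \times \cone\strat{L}$ over the minimum of its poset is $\R^i \times \{\apex\} \cong \R^i$. Since $u$ is a PL stratified embedding, it restricts on this minimal stratum to a PL open embedding into $\strat{X}_p$. This exhibits $\strat{X}_p$ as locally PL Euclidean of dimension $i$; the dimension is locally constant along $\strat{X}_p$ because the integer $i$ is determined by the local picture at $x$ and $u$ carries an open set of $\strat{X}_p$ onto $\R^i$.

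For connectedness of each stratum, I would combine the above local picture with the exit-path axiom that defines a stratified space (\S\ref{defn:stratified-space}). Given $x_0, x_1 \in \strat{X}_p$, since $\stratMap{\strat{X}}(x_0) = \stratMap{\strat{X}}(x_1) = p$ the axiom yields a sequence of exit paths $\gamma_0, \ldots, \gamma_k$ joining them. Every such $\gamma_j$ has $\gamma_j(0)$ in some stratum $q_j \geq p$ and $\gamma_j((0,1])$ in a stratum $q_j' \geq q_j$, and along the chain of endpoints we must return to $p$. Using the local model $\R^i \times \cone\strat{L}$ around each intermediate endpoint lying strictly above $p$, one can deformation-retract the relevant portion of the path onto a path that stays in $\strat{X}_p$ (since $\R^i \times \cone\strat{L}$ deformation-retracts onto $\R^i \times \{\apex\}$ through PL stratified isomorphisms in the $\R^i$-factor and a cone-line contraction in the second factor). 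Concatenating the resulting paths within $\strat{X}_p$ then connects $x_0$ to $x_1$ inside the stratum.

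For fibrancy, the definition of a PL stratified pseudomanifold is precisely a \emph{locally conical} condition: at every point $x \in \strat{X}_p$ the embedding $u$ provides an open neighbourhood isomorphic to $\cone\strat{L} \times \R^i$, with $\R^i$ an open neighbourhood of $x$ inside $\strat{X}_p$ and $\strat{L}$ compact Hausdorff. Thus $\strat{X}$ is conical in the sense of \S\ref{sec:b-geo-strat}, and the paper has already recorded that every conical stratified space is fibrant. The frontier condition then follows immediately, since every fibrant stratified space satisfies the frontier condition.

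The routine parts are the dimension count for strata, conicality, and the citation of fibrancy $\Rightarrow$ frontier. The main obstacle is the connectedness of strata: it does not follow formally from the stratified-space axiom alone (which allows exit-path sequences to leave and re-enter a stratum), and one must genuinely use the local conical structure to push such excursions back into $\strat{X}_p$.
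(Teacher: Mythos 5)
Your treatment of the manifold structure, of fibrancy, and of the frontier condition is fine and is exactly what the Observation intends: the cone-point stratum $\R^i \times \{\apex\}$ of the local model is carried onto an open PL subset of $\strat{X}_p$, so each stratum is locally PL Euclidean of locally constant dimension; the local models exhibit $\strat{X}$ as conical, conical spaces are fibrant, and fibrant stratified spaces satisfy the frontier condition --- all three of the latter facts are already recorded in \S\ref{sec:b-geo-strat}.

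Your diagnosis of connectedness as ``the main obstacle,'' however, is mistaken, and the patch you propose for it does not quite work as written. An exit path $\gamma : \DeltaStrat{1} \to \strat{X}$ comes with a monotone map of posets $\stratPos{\gamma} : \ord{1} \to \stratPos{\strat{X}}$, so $\stratMap{\strat{X}}(\gamma(0)) \leq \stratMap{\strat{X}}(\gamma(t))$ for $t > 0$. A concatenable sequence $\gamma_0, \ldots, \gamma_k$ from $x_0$ to $x_1$ therefore visits a \emph{weakly increasing} chain of strata $p \leq q_1 \leq \cdots \leq q_k$; it cannot ``leave and re-enter'' $\strat{X}_p$. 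If $x_0, x_1$ both lie in $\strat{X}_p$ then the chain starts and ends at $p$, and antisymmetry of the partial order forces every $q_j = p$, so the concatenation is already a path inside $\strat{X}_p$. Thus connectedness of strata is a formal consequence of the exit-path axiom in \S\ref{defn:stratified-space} and needs no input from the conical structure. Your retraction argument, by contrast, has a real gap: the conical neighbourhood of an intermediate endpoint lying in a stratum $q$ strictly above $p$ retracts onto (an open subset of) $\strat{X}_q$, not onto $\strat{X}_p$, and nothing guarantees that the alleged excursion is contained in a single conical chart centred at a point of $\strat{X}_p$. Replace that paragraph with the monotonicity argument and the proof is complete.
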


\begin{proposition}\label{prop:coarsest-subdivision}
	Let $\strat{X}$ be a stratified space that admits a locally
  finite triangulation and let
  $A = (A_i \mid i \in I)$ be a locally
	finite family of closed PL subspaces of $\unstrat(\strat{X})$.
	Then there exists a coarsest refinement
  $\strat{Y} \to \strat{X}$
	such that $\strat{Y}$ is a PL stratified pseudomanifold
	and $A_i$ is a union of strata in $\strat{Y}$ for each $i \in I$.
\end{proposition}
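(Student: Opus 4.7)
The plan is to prove this in three stages: first establish existence using a compatible triangulation, then describe the coarsest refinement directly via a local equivalence relation on points, and finally verify its properties and universality.

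For existence, I would invoke standard PL topology to find a locally finite triangulation $T$ of $\unstrat(\strat{X})$ that simultaneously refines the stratification of $\strat{X}$ (so that each stratum of $\strat{X}$ is a union of open simplices) and contains each $A_i$ as a subcomplex. Such a triangulation can be built by starting from a locally finite triangulation of $\strat{X}$ (available by hypothesis) and iteratively subdividing along the $A_i$; the local finiteness of $(A_i \mid i \in I)$ guarantees this process is itself locally finite. The resulting refinement $\StratReal{T} \to \strat{X}$ is then a PL stratified pseudomanifold in which every $A_i$ is a union of strata.

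To construct the coarsest refinement $\strat{Y}$, I would define an equivalence relation $\sim$ on $\unstrat(\strat{X})$ by setting $x \sim y$ iff (a) $\stratMap{\strat{X}}(x) = \stratMap{\strat{X}}(y)$; (b) $x \in A_i \iff y \in A_i$ for every $i \in I$; and (c) there exist open PL neighbourhoods $U \ni x$, $V \ni y$ in $\unstrat(\strat{X})$ and a PL isomorphism $U \cong V$ carrying $x$ to $y$ that preserves the strata of $\strat{X}$ and each subspace $A_i$. The strata of $\strat{Y}$ would then be the connected components of the $\sim$-equivalence classes, with $\stratPos{\strat{Y}}$ being the poset of these components ordered by closure containment. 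The refinement map $\strat{Y} \to \strat{X}$ is the identity on underlying spaces, with the induced poset map given by (a).

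Verification that $\strat{Y}$ is a PL stratified pseudomanifold proceeds locally, using $T$ as scaffolding: within the star of a vertex, each $\sim$-equivalence class is a locally finite union of open simplices of identical local type, whose connected components are open PL submanifolds of $\unstrat(\strat{X})$. Local conicality at $x \in \strat{Y}$ comes from the star of $x$ in $T$, where the link of $x$ in $\strat{Y}$ is the coarsening-by-$\sim$ of the link in $\StratReal{T}$, a compact PL stratified pseudomanifold of strictly lower dimension; the induction hypothesis then applies. For coarseness, any refinement $\strat{W} \to \strat{X}$ satisfying the conditions must place $\sim$-equivalent points into the same stratum: (a) and (b) follow because the strata of $\strat{X}$ and the $A_i$ are unions of $\strat{W}$-strata, while (c) follows because points in a single stratum of the PL stratified pseudomanifold $\strat{W}$ share local PL type relative to the containing data. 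Hence $\strat{Y}$ is coarser than every such $\strat{W}$.

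The main obstacle is verifying that the coarsening procedure actually preserves the PL pseudomanifold structure — specifically, that after merging equi-typed adjacent simplices in the star of a point, each resulting stratum inherits a genuine PL manifold structure (rather than only a topological one) and the local cone structure descends cleanly. The key local lemma is that two simplices of $T$ of the same dimension that meet along a common face and whose patterns agree can be merged into a PL-manifold piece, which reduces via an induction on the dimension of links to a statement about PL equivalence of stratified cones. Once this local step is in place, the global conclusion is obtained by patching together across the locally finite triangulation $T$.
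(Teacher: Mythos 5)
Your construction is, at bottom, the same one the paper uses, but packaged differently: the paper stratifies by the \emph{intrinsic dimension} of a point (the largest $k$ for which some triangulation of $\strat{X}$ adapted to $A$ puts the point in the interior of a $k$-simplex) and then splits into path components, citing the theory of intrinsic dimension from \cite{transversality-polyhedra} for the fact that the result is a PL stratified pseudomanifold. You instead stratify by full local PL-isomorphism type relative to the strata of $\strat{X}$ and the $A_i$. By local constancy of the local type along strata of the intrinsic stratification, the two relations have the same connected components, so the outputs agree. What your route buys is a more transparent universality argument (same local type is manifestly an invariant of any valid refinement); what it costs is that you must re-prove, rather than cite, the hard PL input. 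Your ``key local lemma'' --- that merging equi-typed simplices yields genuine PL manifold strata with descending cone structure, via induction on link dimension --- \emph{is} the intrinsic stratification theorem, and you leave it as a reduction rather than a proof. Since the paper discharges exactly this step by citation, I would not call this a gap relative to the paper's own standard, but be aware that it is the entire mathematical content of the proposition.

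Two corrections. First, your coarseness claim is stated with the implication reversed: you write that any valid refinement $\strat{W}$ ``must place $\sim$-equivalent points into the same stratum,'' which would say $\strat{Y}$ refines $\strat{W}$ and is false (take $\strat{X}=\R^2$ trivially stratified and $\strat{W}$ the stratification by a line and two half-planes). What coarseness requires, and what your justifications (a)--(c) actually establish, is the converse: points in a single stratum of $\strat{W}$ are $\sim$-equivalent, so each (connected) $\strat{W}$-stratum lands in a single $\strat{Y}$-stratum and $\strat{W}$ refines $\strat{Y}$. Fix the statement to match the argument. Second, in verifying (c) for points of a common $\strat{W}$-stratum you should say explicitly that the local isomorphisms supplied by the cone charts $\R^i\times\cone\strat{L}$ preserve each $\strat{W}$-stratum setwise, hence preserve every union of $\strat{W}$-strata, which is why they preserve the strata of $\strat{X}$ and each $A_i$.
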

\begin{proof}
  This follows from the theory of intrinsic dimension developed in~\cite{transversality-polyhedra}.
  Concretely we can construct $\strat{Y}$ as follows.
  We say that a triangulation of $\strat{X}$ is adapted to the family $A$ when
  each $A_i$ is a subcomplex of the triangulation.
  The intrinsic dimension $\dim(x)$ of a point $x \in \strat{X}$ is the largest $k \geq 0$ such that there exists an triangulation of $\strat{X}$ adapted to $A$
  for which $x$ is within the interior of a $k$-simplex.
  Then $\dim : \unstrat(\strat{X}) \to \Nat^\op$ defines a pre-stratification
  which induces the desired stratification $\strat{Y}$ by splitting the strata into path-connected components.
\end{proof}

\begin{observation}\label{obs:coarsest-subdivision-local}
  The coarsest stratification is determined locally
  since the intrinsic dimension is a local property.
  In the situation of Proposition~\ref{prop:coarsest-subdivision}
  suppose that $\strat{U} \subseteq \strat{X}$ is an open PL subspace.
  Then the coarsest PL stratified pseudomanifold that refines $\strat{U}$
  and is adapted to
  $A_i \cap \strat{U}$ for all $i \in I$
  agrees with $\strat{Y} \cap \strat{U}$.
\end{observation}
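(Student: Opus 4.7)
The plan is to unpack the construction in the proof of Proposition~\ref{prop:coarsest-subdivision} and verify that each step — defining the intrinsic dimension and then splitting into path-connected components — is compatible with restriction to the open PL subspace $\strat{U}$.

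First I would recall that $\strat{Y}$ was built by equipping $\unstrat(\strat{X})$ with the pre-stratification induced by the intrinsic dimension function $\dim : \unstrat(\strat{X}) \to \Nat^\op$ relative to the family $A$, and then refining by path-connected components. Write $\dim_{\strat{X}, A}$ and $\dim_{\strat{U}, A \cap \strat{U}}$ for the two intrinsic dimension functions under consideration. The key step is to show that for every $x \in \unstrat(\strat{U})$ one has
\[
\dim_{\strat{X}, A}(x) = \dim_{\strat{U}, A \cap \strat{U}}(x).
\]
Given any locally finite triangulation of $\strat{X}$ adapted to $A$ that places $x$ in the interior of a $k$-simplex $\sigma$, one restricts to a sufficiently small open PL neighbourhood of $x$ contained in $\strat{U}$ and obtains an adapted triangulation of that neighbourhood, witnessing $\dim_{\strat{U}, A \cap \strat{U}}(x) \geq k$. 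Conversely, given an adapted triangulation of $\strat{U}$ placing $x$ in the interior of a $k$-simplex, the standard PL extension lemma (passing to the star of $x$ and glueing with any adapted triangulation of $\strat{X}$ outside a smaller neighbourhood, using barycentric subdivision to make the triangulations compatible on the overlap) produces an adapted triangulation of $\strat{X}$ that still places $x$ in the interior of a $k$-simplex. This is the main obstacle: the extension argument requires careful PL bookkeeping so that the glueing simultaneously preserves adaptedness to every member of the locally finite family $A$.

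With the equality of intrinsic dimensions established, the pre-stratification of $\unstrat(\strat{U})$ induced by $\dim_{\strat{X}, A}\restriction_{\strat{U}}$ coincides with the pre-stratification induced by $\dim_{\strat{U}, A \cap \strat{U}}$. It then remains to check that splitting by path-components commutes with restriction to the open set $\strat{U}$: for each level set $F = \dim_{\strat{X}, A}^{-1}(k)$, the path-components of $F \cap \strat{U}$ as a subspace of $\strat{U}$ are exactly obtained by intersecting the path-components of $F$ with $\strat{U}$ and then splitting the results into their own path-components. This is a purely topological fact about path-connected components of an open subset of a locally path-connected space, which applies here since $\unstrat(\strat{X})$ is triangulable and hence locally path-connected.

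Combining these two facts, both stratifications on $\strat{U}$ under comparison agree stratum by stratum, and since both are PL stratified pseudomanifolds by construction they agree as stratified spaces. Thus the coarsest refinement of $\strat{U}$ adapted to $(A_i \cap \strat{U} \mid i \in I)$ is indeed $\strat{Y} \cap \strat{U}$, as claimed.
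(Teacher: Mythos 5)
Your argument is essentially the paper's: the Observation is justified there by the single remark that intrinsic dimension is a local invariant, and your proposal simply supplies the proof of that locality (the two inequalities between $\dim_{\strat{X},A}$ and $\dim_{\strat{U},A\cap\strat{U}}$) together with the path-component bookkeeping, which is the right decomposition of the claim. One small imprecision: in the easy direction you claim that an adapted triangulation of a small neighbourhood of $x$ already witnesses $\dim_{\strat{U},A\cap\strat{U}}(x)\geq k$, but the intrinsic dimension in $\strat{U}$ is defined via (locally finite, adapted) triangulations of all of $\strat{U}$, so this direction also needs the extension-and-glueing step you describe for the converse — performed relative to the star of $x$ so that $x$ stays in the interior of a $k$-simplex; with that, both directions reduce to the same standard PL lemma and the argument is complete.
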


\begin{lemma}\label{lem:strat-frame-sober}
	Let $\strat{X}, \strat{Y}$ be stratified spaces
  with locally finite stratifications
  that satisfy the frontier condition,
	and let $f : \unstrat(\strat{X}) \to \unstrat(\strat{Y})$ be a map of the underlying topological spaces.
	Then $f$ respects the stratification if and only if
	$f^{-1}(\cl(\strat{Y}_p))$ is a union of strata
	for all $p \in \stratPos{\strat{Y}}$.
\end{lemma}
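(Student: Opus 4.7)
The plan is to use the frontier condition to rewrite closures of strata as explicit unions of lower strata, and then translate the preimage hypothesis into a well-defined, monotone map between the stratifying posets.

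The key preliminary observation is that under the frontier condition, for every $p \in \stratPos{\strat{Y}}$ one has the identity
$$\cl(\strat{Y}_p) = \bigcup_{r \leq p} \strat{Y}_r,$$
and analogously inside $\strat{X}$. The inclusion ``$\supseteq$'' is the frontier condition itself; the inclusion ``$\subseteq$'' holds because the right hand side is the preimage under the continuous map $\stratMap{\strat{Y}}$ of the downward closed (hence Alexandroff-closed) set $\{r \leq p\}$, hence is closed and contains $\strat{Y}_p$.

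For the forward implication, if $f$ respects the stratification then there exists a monotone $g : \stratPos{\strat{X}} \to \stratPos{\strat{Y}}$ with $\stratMap{\strat{Y}} \circ f = g \circ \stratMap{\strat{X}}$, so that $f^{-1}(\cl(\strat{Y}_p)) = \bigcup \{\strat{X}_q \mid g(q) \leq p\}$ is visibly a union of strata. For the reverse implication I would first construct $g$ by showing that $f$ sends each stratum of $\strat{X}$ into a single stratum of $\strat{Y}$. Given $x \in \strat{X}_q$, set $p := \stratMap{\strat{Y}}(f(x))$; since $x \in f^{-1}(\cl(\strat{Y}_p))$ and this preimage is a union of strata by hypothesis, we have $f(\strat{X}_q) \subseteq \cl(\strat{Y}_p)$. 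Applying the same argument to another point $x' \in \strat{X}_q$ and the corresponding $p' := \stratMap{\strat{Y}}(f(x'))$ yields $f(\strat{X}_q) \subseteq \cl(\strat{Y}_{p'})$, so $f(x) \in \strat{Y}_p \cap \cl(\strat{Y}_{p'})$ forces $p \leq p'$ by the closure identity, and symmetrically $p' \leq p$. Thus $f$ induces a well-defined function $g : \stratPos{\strat{X}} \to \stratPos{\strat{Y}}$ on stratum indices.

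Monotonicity of $g$ then follows by feeding the frontier condition for $\strat{X}$ into the continuity of $f$: whenever $q \leq q'$ we have $\strat{X}_q \subseteq \cl(\strat{X}_{q'})$, so $f(\strat{X}_q) \subseteq \cl(f(\strat{X}_{q'})) \subseteq \cl(\strat{Y}_{g(q')}) = \bigcup_{r \leq g(q')} \strat{Y}_r$, and combining with $f(\strat{X}_q) \subseteq \strat{Y}_{g(q)}$ we conclude $g(q) \leq g(q')$. The equality $\stratMap{\strat{Y}} \circ f = g \circ \stratMap{\strat{X}}$ is true by construction, and its continuity into $\Alex(\stratPos{\strat{Y}})$ is automatic from monotonicity of $g$, so $f$ is a map of pre-stratified spaces. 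The main subtlety is the well-definedness step: one must invoke the preimage hypothesis at two arbitrary points of a stratum and then pin down the identification of the resulting two indices via the frontier condition on $\strat{Y}$.
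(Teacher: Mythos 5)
Your proof is correct, but it takes a genuinely different route from the paper's. The paper argues abstractly: under the frontier condition the downward-closed subsets of $\stratPos{\strat{Y}}$ correspond to the closed subsets of $\unstrat(\strat{Y})$ that are unions of strata; local finiteness makes the posets sober; and then the preimage hypothesis yields a frame map between the closed-set lattices, which by soberness is the same datum as a monotone map of posets. You instead construct the induced poset map $g$ by hand: the identity $\cl(\strat{Y}_p) = \bigcup_{r \leq p} \strat{Y}_r$ (frontier condition plus continuity of $\stratMap{\strat{Y}}$ into the Alexandroff space), the observation that a union of strata containing a point contains its whole stratum, and disjointness of strata pin down a single target stratum for each $\strat{X}_q$, after which monotonicity follows from the frontier condition on $\strat{X}$ and continuity of $f$. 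Both arguments are sound. What your version buys is elementarity and, interestingly, independence from the local finiteness hypothesis: you use only the frontier condition and the fact that strata are non-empty (surjective stratification, which the paper builds into the definition of a stratified space and which your monotonicity step silently relies on — worth flagging). What the paper's version buys is brevity modulo standard facts about sober spaces, and it makes transparent exactly why local finiteness appears in the hypotheses. Either way the lemma holds; your proof in fact shows the local finiteness assumption is not needed for this particular statement.
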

\begin{proof}
	Because $\strat{X}$ satisfies the frontier condition,
	there is an order-preserving isomorphism between
	the closed sets in $\stratPos{\strat{X}}$
	and the closed subsets of $X$ which are unions of strata.
	The same holds for $\strat{Y}$ as well.
	Because $\stratPos{\strat{X}}$, $\stratPos{\strat{Y}}$
	are locally finite, they are sober.
	But then the claim follows since a map between the posets
	is the same as a frame map.
\end{proof}

\begin{lemma}\label{lem:strat-coarsest-domain}
	Let $\strat{A}$, $\strat{X}$ be stratified spaces that
  both satisfy the frontier condition and admit locally finite
  triangulations.
	Suppose further that $f : \unstrat(\strat{A}) \to \unstrat(\strat{X})$ a PL map between the underlying topological spaces
	such that the fibres of $f$ intersect finitely many strata.
	Then there exists a coarsest refinement $\strat{B} \to \strat{A}$
	such that $f$ becomes a stratified map $\strat{B} \to \strat{X}$
	and $\strat{B}$ is a PL stratified pseudomanifold.
\end{lemma}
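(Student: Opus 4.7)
The plan is to reduce the statement to Proposition~\ref{prop:coarsest-subdivision} by producing an appropriate locally finite family of closed PL subspaces of $\strat{A}$, and then use Lemma~\ref{lem:strat-frame-sober} to translate between ``$f$ is stratified'' and ``certain preimages are unions of strata.''

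First, I would observe via Lemma~\ref{lem:strat-frame-sober} that, once a refinement $\strat{B} \to \strat{A}$ satisfies the frontier condition, the map $f$ becomes stratified as $\strat{B} \to \strat{X}$ if and only if the preimage $f^{-1}(\cl(\strat{Y}_p))$ is a union of strata of $\strat{B}$ for every $p \in \stratPos{\strat{X}}$. This suggests forming the family
\[
  \mathcal{A} \;:=\; \bigl(\, f^{-1}(\cl(\strat{X}_p)) \;\bigm|\; p \in \stratPos{\strat{X}} \,\bigr)
\]
of closed PL subspaces of $\unstrat(\strat{A})$, and applying Proposition~\ref{prop:coarsest-subdivision} to it to obtain the desired coarsest refinement $\strat{B} \to \strat{A}$.

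The main obstacle is to verify that $\mathcal{A}$ is locally finite in $\unstrat(\strat{A})$, which is the hypothesis needed to invoke Proposition~\ref{prop:coarsest-subdivision}. Here I would combine the two local finiteness hypotheses: pick $a \in \strat{A}$ and an open neighbourhood $U$ of $a$ with compact closure in some chosen locally finite triangulation of $\strat{A}$. Since $f$ is PL, $f(\overline{U})$ is compact in $\unstrat(\strat{X})$, and because $\strat{X}$ admits a locally finite triangulation its stratification is locally finite, so $f(\overline{U})$ meets only finitely many closures $\cl(\strat{X}_p)$. The hypothesis that each fibre of $f$ intersects only finitely many strata is used to ensure that when passing from a single compact set $f(\overline{U})$ to finite intersection with closures of strata, no infinite descending chains in $\stratPos{\strat{X}}$ cause trouble; together these yield that $U$ meets only finitely many members of $\mathcal{A}$.

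Having secured local finiteness, Proposition~\ref{prop:coarsest-subdivision} produces a coarsest PL stratified pseudomanifold refinement $\strat{B} \to \strat{A}$ adapted to $\mathcal{A}$. Since each $f^{-1}(\cl(\strat{X}_p))$ is then a union of strata of $\strat{B}$ and $\strat{B}$, being a PL stratified pseudomanifold, satisfies the frontier condition, Lemma~\ref{lem:strat-frame-sober} immediately gives that $f : \strat{B} \to \strat{X}$ is a map of stratified spaces. For coarsestness, if $\strat{B}' \to \strat{A}$ is any other PL stratified pseudomanifold refinement making $f$ stratified, then Lemma~\ref{lem:strat-frame-sober} applied in the other direction shows each $f^{-1}(\cl(\strat{X}_p))$ is a union of strata of $\strat{B}'$, whence $\strat{B}'$ is adapted to $\mathcal{A}$ and therefore refines $\strat{B}$ by the universal property of Proposition~\ref{prop:coarsest-subdivision}.
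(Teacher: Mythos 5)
Your proposal is correct and follows essentially the same route as the paper's proof: both pass through Lemma~\ref{lem:strat-frame-sober} to recast ``$f$ is stratified'' as ``each $f^{-1}(\cl(\strat{X}_p))$ is a union of strata'' and then apply Proposition~\ref{prop:coarsest-subdivision} to exactly that family of closed PL subspaces. The paper leaves the local-finiteness check implicit, so your expansion of it is a welcome addition, though the sentence invoking the finite-fibre hypothesis to rule out ``infinite descending chains'' could be made more precise.
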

\begin{proof}
	By Lemma~\ref{lem:strat-frame-sober} a refinement
	$\strat{B} \to \strat{A}$ makes $f$ into a stratified map
	$\strat{B} \to \strat{X}$ if and only
	if for each $p \in \stratPos{X}$ the preimage
	$f^{-1}(\cl(\strat{X}_p))$ is a union of strata in $\strat{B}$.
	But then the claim follows by Proposition~\ref{prop:coarsest-subdivision}.
\end{proof}

\subsection{Open Mapping Cylinders}\label{sec:b-geo-ocyl}

The mapping cylinder of a map of topological spaces is a common construction 
in algebraic topology. For stratified spaces, there are multiple versions
of this construction, one of which is the open mapping cylinder~\cite{stratified-homotopy-hypothesis}.

\begin{construction}\label{con:b-geo-ocyl}
  Suppose that $\strat{X}_\bullet : \ord{k} \to \Strat$ is a sequence of
  open maps of stratified spaces $f_i : \strat{X}_{i - 1} \to \strat{X}_{i}$
  for $1 \leq i \leq k$.
  Then the \defn{$k$-fold open mapping cylinder} $\CylO(\strat{X}_\bullet)$
  is the stratified space obtained as the quotient of the disjoint union
  \[
    \coprod_{i \in \ord{k}} \strat{X}_i \times \DeltaStrat{k}_{\geq i}
  \]
  by the smallest equivalence relation $\sim$ such that
  $(x, b) \sim (f_i(x), b)$ whenever $1 \leq i \leq k$, $x \in \strat{X}_{i - 1}$
  and $b \in \DeltaStrat{k}_{\geq i}$.
  There is a natural projection map
  \[ \CylO(\strat{X}_\bullet) \longrightarrow \DeltaStrat{k} \]
  so that the $k$-fold open mapping cylinder defines a functor
  \[
    \CylO^k(-) : \Fun(\Delta\ord{k}, \Strat) \longrightarrow \Strat_{/ \DeltaStrat{k}}
  \]
\end{construction}

\begin{observation}\label{obs:b-geo-ocyl-pullback}
  For any order-preserving map $\ord{k} \to \ord{s}$, the $k$-fold and $s$-fold
  open mapping cylinder functors are compatible with the pullback of maps
  of stratified spaces:
  \[
    \begin{tikzcd}[column sep = large]
      {\Fun(\Delta\ord{s}, \Strat)} \ar[r, "\CylO^s(-)"] \ar[d] &
      {\Strat_{/ \Delta\ord{s}}} \ar[d] \\
      {\Fun(\Delta\ord{k}, \Strat)} \ar[r, "\CylO^k(-)"'] &
      {\Strat_{/ \Delta\ord{k}}}
    \end{tikzcd}
  \]
\end{observation}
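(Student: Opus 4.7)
The plan is to exhibit, for any order-preserving map $\alpha : \ord{k} \to \ord{s}$ and any sequence $\strat{X}_\bullet : \Delta\ord{s} \to \Strat$ of composable open maps, a natural isomorphism of stratified spaces over $\DeltaStrat{k}$:
\[
    \CylO^k(\alpha^* \strat{X}_\bullet) \;\cong\; \CylO^s(\strat{X}_\bullet) \times_{\DeltaStrat{s}} \DeltaStrat{k}.
\]
Here $\alpha^* \strat{X}_\bullet$ is the restricted sequence on $\Delta\ord{k}$, whose maps are the composites $f_{\alpha(i)} \circ \cdots \circ f_{\alpha(i-1)+1} : \strat{X}_{\alpha(i-1)} \to \strat{X}_{\alpha(i)}$; note that these composites are open since each factor is open.

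First, I would construct the canonical map. The stratified simplex $\DeltaStrat{k}$ decomposes as the union of the closed subspaces $\DeltaStrat{k}_{\geq i}$ for $i \in \ord{k}$, and the map $\DeltaStrat{\alpha} : \DeltaStrat{k} \to \DeltaStrat{s}$ sends $\DeltaStrat{k}_{\geq i}$ into $\DeltaStrat{s}_{\geq \alpha(i)}$. For each $i \in \ord{k}$ there is therefore a canonical map
\[
    (\alpha^* \strat{X})_i \times \DeltaStrat{k}_{\geq i}
    \;=\;
    \strat{X}_{\alpha(i)} \times \DeltaStrat{k}_{\geq i}
    \;\longrightarrow\;
    \strat{X}_{\alpha(i)} \times \DeltaStrat{s}_{\geq \alpha(i)}
\]
given by $\id \times \DeltaStrat{\alpha}$. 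Summing over $i$ and post-composing with the quotient to $\CylO^s(\strat{X}_\bullet)$ gives a map from the disjoint union defining $\CylO^k(\alpha^* \strat{X}_\bullet)$ that factors through the projection to $\DeltaStrat{k}$, producing a map into the pullback.

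Second, I would verify that this map factors through the defining equivalence relation of $\CylO^k(\alpha^* \strat{X}_\bullet)$. The relation identifies $(x, b) \sim ((\alpha^*f)_i(x), b)$ for $x \in \strat{X}_{\alpha(i-1)}$ and $b \in \DeltaStrat{k}_{\geq i}$, where $(\alpha^*f)_i$ is the composite $f_{\alpha(i)} \circ \cdots \circ f_{\alpha(i-1)+1}$; applying the relation of $\CylO^s$ iteratively one step at a time produces exactly this identification after pulling back along $\DeltaStrat{\alpha}$, so the map descends to the quotient and lands in the pullback.

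The main obstacle is checking that the induced map of quotients is an isomorphism of stratified spaces, i.e.\ both a bijection and a homeomorphism carrying the correct stratification. Bijectivity is straightforward on points: every point of the pullback is represented by a pair $([y, c], b) \in \CylO^s(\strat{X}_\bullet) \times_{\DeltaStrat{s}} \DeltaStrat{k}$, and by choosing the unique minimal index $j \in \ord{s}$ with $c \in \DeltaStrat{s}_{\geq j}$ and the minimal $i \in \ord{k}$ with $b \in \DeltaStrat{k}_{\geq i}$, one identifies a unique preimage using the commutativity of the $f_j$. The topological claim requires matching the quotient topology with the subspace topology from the pullback; this reduces to noting that $\DeltaStrat{\alpha}$ is a proper stratified map so the pullback commutes with the colimit in $\Strat$ computing the quotient on each closed piece $\DeltaStrat{k}_{\geq i}$, and that $\Strat$ is cocomplete with the $\Delta_s$-topology compatible with such colimits. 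The stratification agrees because $\DeltaStrat{\alpha}$ is stratified and each layer is preserved by the projection to $\DeltaStrat{k}$. Naturality in $\strat{X}_\bullet$ and functoriality in $\alpha$ are then immediate from the construction.
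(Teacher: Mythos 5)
Your overall strategy is the right one, and it is the argument the paper leaves implicit: build the comparison map $\CylO^k(\alpha^*\strat{X}_\bullet) \to \CylO^s(\strat{X}_\bullet) \times_{\DeltaStrat{s}} \DeltaStrat{k}$ piecewise from $\id \times \DeltaStrat{\alpha}$, check it descends through the generating relation, and verify it is a stratified homeomorphism by comparing fibres and topologies. Two small slips in the bijectivity step: the canonical representative of a class $[y,c]$ lives in $\strat{X}_m$ for $m$ the \emph{maximal} index with $c \in \DeltaStrat{s}_{\geq m}$, namely $m = \stratMap{\DeltaStrat{s}}(c)$ (the minimal such index is always $0$, and a class need not have a representative in $\strat{X}_0$, since the relation only pushes forward); and $\DeltaStrat{k}_{\geq i}$ is an \emph{open} subspace of $\DeltaStrat{k}$ (the complement of a closed face), not a ``closed piece''.

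The step whose justification does not go through as written is the identification of the quotient topology with the subspace topology on the pullback. Properness of $\DeltaStrat{\alpha}$ is not what makes base change commute with the coequaliser defining $\CylO^s(\strat{X}_\bullet)$: pulling a quotient map back along a proper map does not in general yield a quotient map, and properness says nothing about the colimit being preserved. The fact you actually need is already in your hands, since you observed that the structure maps are open: because each $f_j$ is an open embedding of underlying spaces, the saturation of an open subset of $\coprod_j \strat{X}_j \times \DeltaStrat{s}_{\geq j}$ under the generating relation is again open, so the defining quotient map $q$ onto $\CylO^s(\strat{X}_\bullet)$ is an \emph{open} surjection. Open surjections are universally quotient: the base change of $q$ along $\CylO^s(\strat{X}_\bullet) \times_{\DeltaStrat{s}} \DeltaStrat{k} \to \CylO^s(\strat{X}_\bullet)$ is again an open surjection, hence exhibits the pullback as the quotient of $\coprod_j \strat{X}_j \times \bigl(\DeltaStrat{s}_{\geq j} \times_{\DeltaStrat{s}} \DeltaStrat{k}\bigr)$ by the pulled-back relation, which your fibrewise computation then identifies with $\CylO^k(\alpha^*\strat{X}_\bullet)$ (after the reindexing from $\ord{s}$ to $\ord{k}$ that your construction of the comparison map already encodes). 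With that substitution the argument is complete, and the remaining checks on strata and naturality are as you describe.
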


\begin{observation}\label{obs:b-geo-ocyl-retract}
  Suppose that $\strat{X}_\bullet : \ord{k} \to \Strat$ is a sequence of
  open maps of stratified spaces. Then the inclusion map $\strat{X}_k \to \CylO(\strat{X}_\bullet)$
  has a retraction.
\end{observation}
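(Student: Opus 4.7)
The plan is to write down an explicit retraction $r : \CylO(\strat{X}_\bullet) \to \strat{X}_k$ piecewise on the disjoint union before quotienting and then check that the identifications pass to a well-defined continuous stratified map.

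Concretely, for each $0 \leq i \leq k$ let $g_i := f_k \circ f_{k-1} \circ \cdots \circ f_{i+1} : \strat{X}_i \to \strat{X}_k$, with the convention that $g_k = \id_{\strat{X}_k}$. On the summand $\strat{X}_i \times \DeltaStrat{k}_{\geq i}$ I would define
\[
  r_i : \strat{X}_i \times \DeltaStrat{k}_{\geq i} \longrightarrow \strat{X}_k,
  \qquad
  (x, b) \mapsto g_i(x).
\]
First I would check compatibility with the generating identifications of $\sim$: if $x \in \strat{X}_{i-1}$ and $b \in \DeltaStrat{k}_{\geq i}$, then $(x, b) \sim (f_i(x), b)$, and
\[
  r_{i-1}(x, b) = g_{i-1}(x) = g_i(f_i(x)) = r_i(f_i(x), b),
\]
so the $r_i$ descend through the coequaliser defining $\CylO(\strat{X}_\bullet)$ to a map $r : \CylO(\strat{X}_\bullet) \to \strat{X}_k$. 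Restricted to the summand $i = k$, the map $r$ is just the projection $\strat{X}_k \times \{k\} \to \strat{X}_k$, which is the identity composed with the canonical isomorphism; so $r$ retracts the inclusion $\strat{X}_k \hookrightarrow \CylO(\strat{X}_\bullet)$.

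For continuity and stratification-compatibility I would use the universal property of the quotient in $\Strat$: a map out of $\CylO(\strat{X}_\bullet)$ into a stratified space is the same as a compatible family of stratified maps out of the $\strat{X}_i \times \DeltaStrat{k}_{\geq i}$, so it suffices to check that each $r_i = g_i \circ \pi_{\strat{X}_i}$ is stratified. This is automatic since the $f_j$ are maps of stratified spaces (in particular continuous and poset-compatible), and projections are stratified.

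The only mildly delicate point is confirming that the quotient in the construction of $\CylO(\strat{X}_\bullet)$ is well-behaved enough that such a piecewise definition really does produce a map of stratified spaces, but this is immediate from Construction~\ref{con:b-geo-ocyl} (the open mapping cylinder is defined as the quotient of a disjoint union in $\Strat$, which is presentable and cocomplete, so the universal property applies verbatim). No real obstacle is expected; the content of the observation is the existence of the coherent family $(g_i)_i$, which follows from functoriality of $\strat{X}_\bullet$.
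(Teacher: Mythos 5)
Your construction is correct and is exactly the argument the paper leaves implicit (the statement is given as an observation without proof): the coherent family $g_i = f_k \circ \cdots \circ f_{i+1}$ respects the generating identifications of $\sim$, so it descends to a stratified retraction by the universal property of the colimit in $\Strat$. The only nitpick is that the summand for $i = k$ is $\strat{X}_k \times \DeltaStrat{k}_{\geq k}$ with $\DeltaStrat{k}_{\geq k}$ the open top stratum rather than the single vertex $\{k\}$, but the inclusion of $\strat{X}_k$ lands in it at the vertex $k$ and your $r$ still restricts to the identity there.
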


\begin{lemma}\label{lem:b-geo-ocyl-isofibration}
  Suppose that we have a diagram of stratified spaces
  \[
    \begin{tikzcd}
      {\strat{X}_0} \ar[r, "\varphi_1"] \ar[d, "f_0"] &
      {\strat{X}_1} \ar[r] \ar[d, "f_1"] &
      {\cdots} \ar[r] &
      {\strat{X}_{k - 1}} \ar[r, "\varphi_k"] \ar[d, "f_{k - 1}"]  &
      {\strat{X}_k} \ar[d, "f_k"] \\
      {\strat{B}_0} \ar[r, "\psi_1"'] &
      {\strat{B}_1} \ar[r] &
      {\cdots} \ar[r] &
      {\strat{B}_{k - 1}} \ar[r, "\psi_k"'] &
      {\strat{B}_k}
    \end{tikzcd}
  \]
  such that each $\Exit(f_i)$ is an isofibration
  and the rows consist of open maps of stratified spaces.
  Let $\pi : \CylO^k(\strat{X}_\bullet) \to \CylO^k(\strat{B}_\bullet)$ be the induced map between 
  the $k$-fold open mapping cylinders of the rows.
  Then $\Exit(\pi)$ is an isofibration.
\end{lemma}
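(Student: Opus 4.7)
The plan is to identify $\Exit$ of the open mapping cylinder as a cocartesian fibration over $\Delta\ord{k}$, reducing the isofibration question for $\Exit(\pi)$ to a fibrewise one.

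The central step is to establish that the natural projection $\CylO^k(\strat{X}_\bullet) \to \DeltaStrat{k}$ induces under $\Exit$ a cocartesian fibration $\Exit(\CylO^k(\strat{X}_\bullet)) \to \Delta\ord{k}$ whose fibre over $i \in \ord{k}$ is $\Exit(\strat{X}_i)$ and whose cocartesian lift of the edge $\langle i-1, i\rangle$ implements the transition functor $\Exit(\varphi_i)$; equivalently, this identifies the exit path category of the cylinder with the lax colimit of the diagram $\Exit(\varphi_\bullet) : \ord{k} \to \CatInfty$. I would derive this from the explicit construction of $\CylO^k(\strat{X}_\bullet)$ as a quotient of the pieces $\strat{X}_i \times \DeltaStrat{k}_{\geq i}$: exit paths projecting into a fixed stratum of $\DeltaStrat{k}$ contribute the fibres, while exit paths crossing from level $i-1$ to $i$ must factor through the identification $(x, t) \sim (\varphi_i(x), t)$ and hence through $\varphi_i$. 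The openness of each $\varphi_i$ provides the local triviality needed to lift exit paths through the gluings and to verify the cocartesian universal property of the proposed lifts.

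Once this is in place, $\Exit(\pi)$ becomes a morphism of cocartesian fibrations over $\Delta\ord{k}$. The commutativity $f_i \circ \varphi_i = \psi_i \circ f_{i-1}$ of the original squares ensures that $\Exit(\pi)$ preserves cocartesian edges, and its restriction to the fibre over $i \in \ord{k}$ is precisely $\Exit(f_i)$, which is an isofibration by hypothesis. The conclusion then follows from the standard criterion that a morphism of cocartesian fibrations over a fixed base which preserves cocartesian edges is an isofibration if and only if it is fibrewise an isofibration.

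The hard part will be the first step: identifying the cocartesian fibration structure on $\Exit(\CylO^k(\strat{X}_\bullet)) \to \Delta\ord{k}$. This requires a careful analysis of exit paths through the gluings along each $\varphi_i$ and essentially relies on the openness hypothesis to construct the local product structure needed to verify the cocartesian universal property of the proposed lifts.
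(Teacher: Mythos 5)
Your route is genuinely different from the paper's: the paper solves the lifting problem for $\Exit(\pi)$ directly, reducing by induction on $k$ to the case where the final vertex of the horn lies in the top piece $\strat{X}_k \times \DeltaStrat{k}_{\geq k}$, lifting there using the hypothesis that $\Exit(f_k)$ is an isofibration, and then extending over the whole simplex by the explicit cylinder formula $\sigma(b) := [\rho(b), \tau(b)]$. No cocartesian machinery is invoked.

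The structural reduction you propose has a genuine gap at its foundation. For $\Exit(\CylO^k(\strat{X}_\bullet)) \to \Delta\ord{k}$ to be a cocartesian fibration it must in particular be an inner fibration, so its fibres $\Exit(\strat{X}_i)$ must be quasicategories; but the lemma does not assume the $\strat{X}_i$ (or $\strat{B}_i$) are fibrant --- the only hypothesis is that the \emph{relative} maps $\Exit(f_i)$ are isofibrations over arbitrary stratified bases. This is not a technicality: the paper's own Lemma~\ref{lem:b-geo-ocyl-cocartesian-fibration}, which establishes exactly the cocartesian-fibration statement you want for $k=1$, is stated only for open maps of \emph{fibrant} stratified spaces and uses fibrancy to fill the horns, and in the place where the present lemma is applied (Proposition~\ref{prop:fct-ref-closed-ocyl-mesh}) the bases are general stratified spaces. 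So Step~1 of your plan is unavailable in the stated generality. Separately, the ``standard criterion'' you invoke in the last step --- that a map of cocartesian fibrations over a fixed base preserving cocartesian edges is an isofibration iff it is so fibrewise --- is carrying the entire combinatorial weight of the proof and is not something the paper's background section provides; the equivalence-lifting half is easy over $\Delta\ord{k}$ (where all equivalences live in fibres), but the inner-fibration half is not a fibrewise condition in any obvious way and would need its own horn-filling argument, at which point you are essentially redoing the paper's direct proof. Either add fibrancy hypotheses and supply a proof or reference for the criterion, or argue directly with horns as the paper does.
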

\begin{proof}
  Suppose that we have a lifting problem of the form
  \[
    \begin{tikzcd}
      {\HornStrat{i}{k}} \ar[r, "\sigma'"] \ar[d, hook] &
      {\CylO^k(\strat{X}_\bullet)} \ar[d, "\pi"] \\
      {\DeltaStrat{k}} \ar[r, "\tau"'] \ar[ur, dashed] &
      {\CylO^k(\strat{Y}_\bullet)}
    \end{tikzcd}
  \]
  for some $0 < i < k$.
  By induction on $k \geq 0$ we can reduce to the case that $\sigma'(k)$ 
  is contained within the image of $\strat{X}_k \times \DeltaStrat{k}_{\geq k} \to \CylO^k(\strat{X}_\bullet)$.
  Since $\Exit(f_k)$ is an isofibration, we then have a solution to the lifting problem
  \[
    \begin{tikzcd}
      {\HornStrat{i}{k}} \ar[r, "\sigma'"] \ar[d, hook] &
      {\CylO(\strat{X}_\bullet)} \ar[r] &
      {\strat{X}_k} \ar[d, "f_k"] \\
      {\DeltaStrat{k}} \ar[r, "\tau"'] \ar[urr, dashed, "\rho"{description}] &
      {\CylO(\strat{B}_\bullet)} \ar[r] &
      {\strat{B}_k}
    \end{tikzcd}
  \]
  We then define an extension $\sigma : \DeltaStrat{k} \to \CylO^k(\strat{X}_\bullet)$
  of $\sigma'$ by letting $\sigma(b) := [\rho(b), \tau(b)]$ for all
  $b \in \DeltaStrat{k} \setminus \HornStrat{i}{k}$.
  This defines a solution to our original lifting problem.
\end{proof}

\begin{lemma}\label{lem:b-geo-ocyl-cocartesian-map}
  Suppose that $\varphi : \strat{X} \to \strat{Y}$ is an open map of stratified spaces
  such that $\strat{Y}$ is fibrant.
  Let $\pi : \CylO(\varphi) \to \DeltaStrat{1}$ be the projection map
  from the open mapping cylinder and let $\gamma : \DeltaStrat{1} \to \CylO(\varphi)$ be a section of $\pi$
  such that $\Exit(\CylO(\varphi)) \to \Exit(\strat{Y})$ sends $\gamma$
  to an equivalence.
  Then $\gamma$ is $\Exit(\pi)$-cocartesian.
\end{lemma}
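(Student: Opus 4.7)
My plan is to reduce the cocartesian lifting problem to an outer-horn lifting problem in the quasicategory $\Exit(\strat{Y})$ using the retraction $r : \CylO(\varphi) \to \strat{Y}$ from Observation~\ref{obs:b-geo-ocyl-retract}, and then to reassemble the lift through the open cylinder structure in the spirit of the reassembly step in the proof of Lemma~\ref{lem:b-geo-ocyl-isofibration}.

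First I would fix $k \geq 2$ and a lifting problem consisting of $\sigma' : \HornStrat{0}{k} \to \CylO(\varphi)$ and $\tau : \DeltaStrat{k} \to \DeltaStrat{1}$ with $\sigma' \circ \langle 0, 1\rangle = \gamma$ and $\pi \circ \sigma' = \tau|_{\HornStrat{0}{k}}$. Postcomposing with $r$ produces an outer horn $r \circ \sigma' : \HornStrat{0}{k} \to \strat{Y}$ whose initial edge $r \circ \gamma$ is, by hypothesis, an equivalence in the quasicategory $\Exit(\strat{Y})$ (which is a quasicategory because $\strat{Y}$ is fibrant). By the standard characterization of equivalences via extensions against outer horn inclusions $\Lambda^0\ord{k} \hookrightarrow \Delta\ord{k}$, I can then choose a filler $\rho : \DeltaStrat{k} \to \strat{Y}$ extending $r \circ \sigma'$.

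Next I would assemble the desired lift $\sigma : \DeltaStrat{k} \to \CylO(\varphi)$ by keeping $\sigma = \sigma'$ on the horn and setting $\sigma(b) := [\rho(b), \tau(b)]$ on the interior of the face opposite vertex $0$, where the bracket denotes the class in the quotient defining $\CylO(\varphi)$. Away from vertex $0$ one has $\tau(b) \in (0, 1]$, so $\sigma'$ already takes values in $\strat{Y} \subset \CylO(\varphi)$ and the two formulas agree because $\rho = r \circ \sigma'$ on the horn; the compatibility $\pi \circ \sigma = \tau$ and the equality $\sigma \circ \langle 0, 1 \rangle = \gamma$ are then immediate.

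The hard part will be continuity of $\sigma$ at vertex $0$, where the fiber of $\pi$ jumps from $\strat{Y}$ to $\strat{X}$. I expect this to follow from the open-cylinder identification $(x, t) \sim (\varphi(x), t)$ for $t > 0$ together with the equality $\rho(0) = r(\gamma(0)) = \varphi(\gamma(0))$, which forces $[\rho(b), \tau(b)]$ to approach $\gamma(0)$ in the quotient as $\tau(b) \to 0^+$. Once continuity is confirmed, the reassembled $\sigma$ solves the lifting problem, establishing that $\gamma$ is $\Exit(\pi)$-cocartesian.
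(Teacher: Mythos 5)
Your proposal follows essentially the same route as the paper's proof: push the outer horn down to $\strat{Y}$ via the retraction, fill it there, and reassemble the lift as $\sigma(b) := [\rho(b), \tau(b)]$ on the complement of the horn. You are in fact slightly more careful than the paper, which justifies the filler only by fibrancy of $\strat{Y}$ and leaves implicit that filling a $\Lambda^0\ord{k}$-horn requires the initial edge $r \circ \gamma$ to be an equivalence — exactly the hypothesis you invoke.
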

\begin{proof}
  Suppose that we have a lifting problem
  \[
    \begin{tikzcd}
      {\DeltaStrat{1}} \ar[d, hook, "{\langle 0, 1 \rangle}"'] \ar[dr, "\gamma"] \\
      {\HornStrat{0}{k}} \ar[r, "\sigma'"{description}] \ar[d, hook] &
      {\CylO(\varphi)} \ar[d] \\
      {\DeltaStrat{k}} \ar[r, "\tau"'] \ar[ur, dashed] &
      {\DeltaStrat{1}}
    \end{tikzcd}
  \]
  We then obtain an induced lifting problem
  \[
    \begin{tikzcd}
      {\HornStrat{i}{k}} \ar[r, "\sigma'"] \ar[d, hook] &
      {\CylO(\varphi)} \ar[r] &
      {\strat{Y}} \\
      {\DeltaStrat{k}} \ar[urr, dashed, "\rho"'] &
      {} &
      {}
    \end{tikzcd}
  \]
  which has a solution because $\strat{Y}$ is fibrant.
  We then define an extension $\sigma : \DeltaStrat{k} \to \CylO(\varphi)$
  of $\sigma'$ by letting $\sigma(b) := [\rho(b), \tau(b)]$ for all
  $b \in \DeltaStrat{k} \setminus \HornStrat{i}{k}$.
  This defines a solution to our original lifting problem.
\end{proof}

\begin{lemma}\label{lem:b-geo-ocyl-cocartesian-fibration}
  Suppose that $\varphi : \strat{X} \to \strat{Y}$ is an open map of fibrant stratified spaces. Let $\pi : \CylO(\varphi) \to \DeltaStrat{1}$ be the projection map
  from the open mapping cylinder. Then $\Exit(\pi)$ is a cocartesian fibration.
\end{lemma}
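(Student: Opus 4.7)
The strategy is to leverage the two preceding lemmas: Lemma~\ref{lem:b-geo-ocyl-isofibration} gives us the inner-fibration part, and Lemma~\ref{lem:b-geo-ocyl-cocartesian-map} supplies the cocartesian lifts. The only real work is to identify enough cocartesian lifts.

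First I would apply Lemma~\ref{lem:b-geo-ocyl-isofibration} with $k = 1$, taking $\strat{B}_0 = \strat{B}_1 = \terminal$ and the vertical maps $f_0 : \strat{X} \to \terminal$, $f_1 : \strat{Y} \to \terminal$. Since $\strat{X}$ and $\strat{Y}$ are fibrant, both $\Exit(\strat{X})$ and $\Exit(\strat{Y})$ are quasicategories, so $\Exit(f_0)$ and $\Exit(f_1)$ are isofibrations. The induced open mapping cylinder on the bottom row is $\CylO^1(\terminal \to \terminal) \cong \DeltaStrat{1}$, and the induced map is precisely $\pi$. Hence $\Exit(\pi)$ is an isofibration, and in particular an inner fibration.

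Next I would check that $\Exit(\pi)$ admits enough cocartesian lifts. The quasicategory $\Exit(\DeltaStrat{1}) = \Delta\ord{1}$ has only two objects $0, 1$ and one non-identity map $0 \to 1$. Objects of $\Exit(\CylO(\varphi))$ over $1$ need only identity lifts, which are cocartesian automatically. For an object in the fibre over $0$, i.e.\ a point $x \in \strat{X} \cong \strat{X} \times \{0\} \hookrightarrow \CylO(\varphi)$, the candidate lift is the straight-line section $\gamma_x : \DeltaStrat{1} \to \CylO(\varphi)$ defined by $\gamma_x(t) = [x, t]$, which starts at $x$ and ends at $\varphi(x) \in \strat{Y}$. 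By Observation~\ref{obs:b-geo-ocyl-retract} there is a retraction $r : \CylO(\varphi) \to \strat{Y}$; the composite $r \circ \gamma_x$ is the constant path at $\varphi(x)$, which is a degenerate $1$-simplex of $\Exit(\strat{Y})$ and hence an equivalence. Lemma~\ref{lem:b-geo-ocyl-cocartesian-map} then produces $\gamma_x$ as an $\Exit(\pi)$-cocartesian lift of $0 \to 1$.

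Combining the two ingredients, $\Exit(\pi)$ is an inner fibration with a cocartesian lift of every map in $\Delta\ord{1}$ starting at any object of $\Exit(\CylO(\varphi))$, so it is a cocartesian fibration. I do not anticipate any serious obstacle: the framework is entirely set up by the previous two lemmas, and the only content is the observation that the evident straight-line section is the cocartesian lift, which the hypotheses of Lemma~\ref{lem:b-geo-ocyl-cocartesian-map} are tailored to confirm.
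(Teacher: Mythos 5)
Your proof is correct and follows essentially the same route as the paper: isofibration via Lemma~\ref{lem:b-geo-ocyl-isofibration}, then the straight-line section $\gamma_x(t) = [x, t]$ as the cocartesian lift via Lemma~\ref{lem:b-geo-ocyl-cocartesian-map}. Your explicit verification (using the retraction of Observation~\ref{obs:b-geo-ocyl-retract}) that $\gamma_x$ maps to a constant, hence invertible, path in $\Exit(\strat{Y})$ is a detail the paper leaves implicit, and it is a welcome addition.
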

\begin{proof}
  By Lemma~\ref{lem:b-geo-ocyl-isofibration} the map $\Exit(\pi)$ is an isofibration.
  To show that $\Exit(\pi)$ is cocartesian, it suffices to show that the identity
  exit path $\id : \DeltaStrat{1} \to \DeltaStrat{1}$ has an $\Exit(\pi)$-cocartesian
  lift starting at any $x \in \strat{X}$.
  We let $\gamma : \DeltaStrat{1} \to \CylO(\varphi)$ be the map defined by
  $\gamma(t) = [x, t]$, then $\gamma$ is $\Exit(\pi)$-cocartesian by Lemma~\ref{lem:b-geo-ocyl-cocartesian-map}.
\end{proof}

% \begin{construction}\label{con:b-geo-ocyl-labelled}
%   {\color{Orange} TODO: Extend the labelling to the open mapping cylinder}
%   Let $\cat{C}$ be a quasicategory.
%   Suppose that $\strat{X}_\bullet : \ord{k} \to \Strat$ is a sequence of
%   open maps of stratified spaces $f_i : \strat{X}_{i - 1} \to \strat{X}_{i}$
%   labelled in $\cat{C}$
%   for $1 \leq i \leq k$.
%   Then the \defn{$k$-fold open mapping cylinder} of $\strat{X}_\bullet$
%   is the stratified space $\CylO(\strat{X}_\bullet)$ equipped with the labelling
%   map $\ell : \Exit(\CylO(\strat{X}_\bullet)) \to \cat{C}$ 
%   defined as follows.

%   For each stratified $r$-simplex $\sigma : \DeltaStrat{j} \to \CylO(\strat{X}_\bullet)$
%   there exists a $j \in \ord{k}$ such that $\sigma$ factors as
%   \[
%     \begin{tikzcd}
%       {\DeltaStrat{r}} \ar[dr] \ar[d, dashed] \\
%       {\strat{X}_j \times \DeltaStrat{k}} \ar[r] &
%       {\CylO(\strat{X}_\bullet)}
%     \end{tikzcd}
%   \]
% \end{construction}

\chapter{Framed Combinatorial Topology}\label{sec:fct}

\section{Framed Stratified Spaces and Bundles}\label{sec:fct-framing}

We study stratifications that are embedded into Euclidean space.
In particular, we are interested in the relative arrangement of the strata. 
For this purpose we equip a stratified space $\strat{X}$ with an $n$-framing, which is an embedding of
the underlying topological space $\unstrat(\strat{X})$ into $\R^n$, and allow only those
transformations that preserve the projections.
This allows us to detect and give algebraic significance to the changes in relative arrangement of strata, which will be useful for our theory of manifold diagrams.

\begin{definition}
  For every $0 \leq m \leq n$ there is a projection map
	$\R^{n} \cong \R^{n - m} \times \R^m \to \R^m$
	onto the last $m$ coordinates in the Euclidean standard basis.
  We will denote this map by $\pi^m$ assuming that $n$ will be clear from the context.
  For any $0 \leq m \leq n$ we define an equivalence relation $\sim_m$ on $\R^n$
  so that $x \sim_m y$ for $x, y \in \R^n$ whenever $\pi^m(x) = \pi^m(y)$.
\end{definition}

\begin{definition}
  An $n$-framed stratified space is a stratified space $\strat{X}$ together
  with a continuous embedding of the underlying topological space
  $\framing(\strat{X}) : \unstrat(\strat{X}) \hookrightarrow \R^n$.
  A map of $n$-framed stratified spaces $\varphi : \strat{X} \to \strat{Y}$
  is a map of stratified spaces such that for all $0 \leq m \leq n$ and
  elements $x_1, x_2 \in \strat{X}$ we have
  \[
    \framing(\strat{X})(x_1) \sim_m
    \framing(\strat{X})(x_2)
    \quad
    \Longrightarrow
    \quad
    \framing(\strat{Y})(\varphi(x_1)) \sim_m
    \framing(\strat{Y})(\varphi(x_2))
  \]
  The $n$-framed stratified spaces and their maps form a $1$-category
  $\FrStrat^n$.
\end{definition}

In $2$-dimensional diagrams we adopt the convention that, unless specified
otherwise, the first coordinate axis goes from left to right on the page
while the second coordinate axis goes from bottom to top.
This is consistent with a common convention on how to draw string diagrams.

\begin{example}
  Suppose we have a stratification $\strat{X}$ of the plane $\R^2$ with two $0$-strata:
  \[
    \begin{tikzpicture}[scale = 0.25, baseline=(current bounding box.center)]
      \fill[mesh-background] (0, 0) rectangle (6, 6);
      \node[mesh-vertex-blue] at (2, 2) {};
      \node[mesh-vertex-orange] at (4, 4) {};
    \end{tikzpicture}
  \]
  Then up to framed stratified isomorphism we may move around the $0$-strata,
  as long as the orange stratum remains above the blue one:
  \[
    \begin{tikzpicture}[scale = 0.25, baseline=(current bounding box.center)]
      \fill[mesh-background] (0, 0) rectangle (6, 6);
      \node[mesh-vertex-blue] at (2, 2) {};
      \node[mesh-vertex-orange] at (4, 4) {};
    \end{tikzpicture}
    \quad
    \cong
    \quad
    \begin{tikzpicture}[scale = 0.25, baseline=(current bounding box.center)]
      \fill[mesh-background] (0, 0) rectangle (6, 6);
      \node[mesh-vertex-blue] at (3, 2) {};
      \node[mesh-vertex-orange] at (3, 4) {};
    \end{tikzpicture}
    \quad
    \cong
    \quad
    \begin{tikzpicture}[scale = 0.25, baseline=(current bounding box.center)]
      \fill[mesh-background] (0, 0) rectangle (6, 6);
      \node[mesh-vertex-blue] at (4, 2) {};
      \node[mesh-vertex-orange] at (2, 4) {};
    \end{tikzpicture}
  \]  
  However we may not move the $0$-strata to the same height or past each other:
  \[
    \begin{tikzpicture}[scale = 0.25, baseline=(current bounding box.center)]
      \fill[mesh-background] (0, 0) rectangle (6, 6);
      \node[mesh-vertex-blue] at (2, 2) {};
      \node[mesh-vertex-orange] at (4, 4) {};
    \end{tikzpicture}
    \quad
    \not\cong
    \quad
    \begin{tikzpicture}[scale = 0.25, baseline=(current bounding box.center)]
      \fill[mesh-background] (0, 0) rectangle (6, 6);
      \node[mesh-vertex-blue] at (2, 3) {};
      \node[mesh-vertex-orange] at (4, 3) {};
    \end{tikzpicture}
    \quad
    \not\cong
    \quad
    \begin{tikzpicture}[scale = 0.25, baseline=(current bounding box.center)]
      \fill[mesh-background] (0, 0) rectangle (6, 6);
      \node[mesh-vertex-blue] at (2, 4) {};
      \node[mesh-vertex-orange] at (4, 2) {};
    \end{tikzpicture}
  \]
  % In this sense the isomorphism type of the $2$-framed stratified space
  % $\strat{X}$ detects the relative arrangement 
\end{example}

\begin{example}
  Say we have a stratification $\strat{X}$ of $\R^2$ consisting of the strata
  \[
    \{ (x_1, x_2) \mid x_1 = 0 \},
    \{ (x_1, x_2) \mid x_1 < 0 \},
    \{ (x_1, x_2) \mid x_1 > 0 \} \subset \R^2.
  \]
  Up to $2$-framed isomorphism we may perturb the vertical line horizontally
  \[
    \begin{tikzpicture}[scale = 0.25, baseline=(current bounding box.center)]
      \fill[mesh-background] (0, 0) rectangle (6, 6);
      \draw[mesh-stratum] (3, 0) -- (3, 6);
    \end{tikzpicture}
    \quad
    \cong
    \quad
    \begin{tikzpicture}[scale = 0.25, baseline=(current bounding box.center)]
      \fill[mesh-background] (0, 0) rectangle (6, 6);
      \draw[mesh-stratum] (3, 0) -- (2, 1) -- (3, 3) -- (5, 4) -- (3, 6);
    \end{tikzpicture}
    \quad
    \cong
    \quad
    \begin{tikzpicture}[scale = 0.25, baseline=(current bounding box.center)]
      \fill[mesh-background] (0, 0) rectangle (6, 6);
      \draw[mesh-stratum] (3, 0) .. controls +(0, 1) and +(0, -1) .. 
        (2, 3) .. controls +(0, 1) and +(0, -1) .. (5, 6);
    \end{tikzpicture}
  \]
  We can not make the line turn around or snake:
  \[
    \begin{tikzpicture}[scale = 0.25, baseline=(current bounding box.center)]
      \fill[mesh-background] (0, 0) rectangle (6, 6);
      \draw[mesh-stratum] (3, 0) -- (3, 6);
    \end{tikzpicture}
    \quad
    \not\cong
    \quad
    \begin{tikzpicture}[scale = 0.25, baseline=(current bounding box.center)]
      \fill[mesh-background] (0, 0) rectangle (6, 6);
      \draw[mesh-stratum] (1, 0) -- (1, 4) arc (180:0:1) -- (3, 2) arc (180:360:1) -- (5, 6);
    \end{tikzpicture}
  \]
\end{example}

\begin{definition}
	An \defn{$n$-framed stratified bundle} is a map
	$f : \strat{E} \to \strat{B}$
	of stratified spaces
	together with a continuous embedding
	$\framing(f) : \unstrat(\strat{E}) \hookrightarrow \R^n \times \unstrat(\strat{B})$ over $\unstrat(\strat{B})$.  
  A map of $n$-framed stratified bundles $f$ and $g$ is a diagram
  of stratified spaces
	\[
		\begin{tikzcd}
			\strat{E} \ar[d, "f", swap] \ar[r, "\varphi"] & \strat{D} \ar[d, "g"] \\
			\strat{B} \ar[r, "\psi", swap] & \strat{A}
		\end{tikzcd}
	\]
  such that for every point $b \in \strat{B}$ the map on fibres
  $
    \strat{E} \times_{\strat{B}} \{ b \} \to 
    \strat{D} \times_{\strat{A}} \{ \psi(b) \}
  $
  is a map of $n$-framed stratified spaces with the induced framing.
	The $n$-framed bundles and their maps form a $1$-category $\FrBun^n$.
\end{definition}

\begin{observation}
  We have a fully faithful inclusion functor $\FrStrat^n \hookrightarrow \FrBun^n$
  which sends an $n$-framed stratified space $\strat{X}$ to the $n$-framed
  stratified bundle $\strat{X} \to \DeltaStrat{0}$.
\end{observation}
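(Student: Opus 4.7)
The plan is to verify the statement by unwinding the definitions of the two categories and showing that, on the subcategory of bundles over the terminal stratified space $\DeltaStrat{0}$, all the extra data degenerates. First I would spell out the functor $F : \FrStrat^n \to \FrBun^n$ explicitly: send an $n$-framed stratified space $\strat{X}$ with framing $\framing(\strat{X}) : \unstrat(\strat{X}) \hookrightarrow \R^n$ to the bundle $!_{\strat{X}} : \strat{X} \to \DeltaStrat{0}$, equipped with the framing $\unstrat(\strat{X}) \hookrightarrow \R^n \times \unstrat(\DeltaStrat{0})$ obtained by post-composing $\framing(\strat{X})$ with the canonical isomorphism $\R^n \cong \R^n \times \{\apex\}$. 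On morphisms, send $\varphi : \strat{X} \to \strat{Y}$ to the square whose top row is $\varphi$ and whose bottom row is $\id_{\DeltaStrat{0}}$. Functoriality is immediate since composition in $\FrBun^n$ is computed pasting-wise.

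Next I would check that $F$ lands in $\FrBun^n$, i.e.\ that each $F(\varphi)$ is actually a morphism of $n$-framed stratified bundles. Because $\unstrat(\DeltaStrat{0})$ is a one-point space, the bundle $!_{\strat{X}}$ has a unique fibre, which is isomorphic as an $n$-framed stratified space to $\strat{X}$ itself (the framing $\unstrat(\strat{X}) \hookrightarrow \R^n \times \{\apex\}$ restricts on this fibre exactly to $\framing(\strat{X})$). The fibre-wise condition defining $\FrBun^n$ therefore reduces on $F(\varphi)$ to the coincidence-preserving condition defining $\FrStrat^n$, which $\varphi$ satisfies by assumption.

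For fully faithfulness I would argue directly. Any morphism in $\FrBun^n$ from $F(\strat{X})$ to $F(\strat{Y})$ is a commutative square in $\Strat$ whose bottom row is forced to be $\id_{\DeltaStrat{0}}$, and is therefore determined by its top row $\varphi : \strat{X} \to \strat{Y}$. The fibre-wise framing condition on this square, evaluated at the unique point of $\DeltaStrat{0}$, is exactly the condition that $\varphi$ is a morphism of $\FrStrat^n$. This sets up a bijection between $\FrStrat^n(\strat{X}, \strat{Y})$ and $\FrBun^n(F(\strat{X}), F(\strat{Y}))$, which is clearly inverse to the action of $F$ on morphisms.

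There is no genuine obstacle here; the whole content of the observation is that $\DeltaStrat{0}$ is terminal in $\Strat$, so that bundles over it are equivalent to their total spaces and the fibre-wise framing datum collapses to a framing of the total space. The only care needed is in checking that the chosen isomorphism $\R^n \times \{\apex\} \cong \R^n$ intertwines the two notions of framing and of coincidence-preservation, but this is tautological.
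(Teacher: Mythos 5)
Your proposal is correct, and since the paper states this as an Observation without proof, your unwinding of the definitions is exactly the intended argument: over the terminal stratified space $\DeltaStrat{0}$ the bottom row of any bundle map is forced to be the identity, and the fibre-wise framing condition at the unique basepoint collapses to the coincidence-preserving condition defining maps in $\FrStrat^n$. Nothing further is needed.
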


\begin{construction}
  Suppose that $f : \strat{E} \to \strat{B}$ is an $n$-framed stratified bundle
  and $\psi : \strat{A} \to \strat{B}$ a map of stratified spaces.
  When
  \[
    \begin{tikzcd}
      {\strat{A} \times_{\strat{B}} \strat{E}} \ar[r, "\varphi"] \ar[d, "\psi^* f", swap] &
      {\strat{E}} \ar[d, "f"] \\
      {\strat{A}} \ar[r, "\psi"'] &
      {\strat{B}}
    \end{tikzcd}
  \]
  is the pullback in $\Strat$, we can equip $\psi^* f$ with a canonical framing
  $\framing(\psi^* f)$ induced by $\framing(f)$.
  This is a cartesian lift for the functor $\FrBun^n \to \Strat$ which sends
  an $n$-framed stratified bundle $\varphi : \strat{E} \to \strat{B}$ to the
  stratified base space $\strat{B}$.
\end{construction}

% {\color{Orange}
% TODO: Explicate the induced framing here.
% This is doable if $\unstrat(-) : \Strat \to \TopSp$ preserves pullbacks.
% }

\begin{remark}
  Typically, constructions based on bundles and pullback run into coherence issues, with pullbacks only being defined up to isomorphism.
  Via the framing of an $n$-framed bundle
  $f : \strat{E} \to \strat{B}$ we can canonically treat
  the underlying space $\unstrat(\strat{E})$
  as a subspace of $\R^n \times \unstrat(\strat{B})$.
  We can also present the set of elements of the poset $\stratPos{\strat{E}}$
  of strata as a quotient of the points $\unstrat(\strat{E})$.
  This provides a canonical representative for every isomorphism class and thus a functorial strictification.
  We will use this trick implicitly for any constructions based on framed stratified bundles that we will define below.
\end{remark}

\begin{definition}
  An $n$-framed stratified bundle $f : \strat{E} \to \strat{B}$ is a \defn{submersion}
  when for every point $x \in \strat{E}$ there exists an open embedding 
  of $n$-framed stratified bundles
  \[
    \begin{tikzcd}
      {\strat{U} \times \strat{V}} \ar[r, hook, "u"] \ar[d] &
      {\strat{E}} \ar[d, "f"] \\
      {\strat{V}} \ar[r, hook, "v"'] &
      {\strat{B}}
    \end{tikzcd}
  \]
  such that $x$ is in the image of $u$.
\end{definition}
\begin{lemma}
  Framed stratified submersions are closed under pullback.
\end{lemma}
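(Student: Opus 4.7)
The plan is to show that the submersion property is local on the base and stable under pullback in a very direct way: we construct a local trivialization of $\psi^* f$ at an arbitrary point by pulling back a local trivialization of $f$.

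First I would fix a point $y$ in $\strat{A} \times_{\strat{B}} \strat{E}$, which by definition corresponds to a pair $(a, x)$ with $\psi(a) = f(x) =: b$. Applying the submersion property of $f$ at $x$, I obtain an open embedding of $n$-framed stratified bundles $u : \strat{U} \times \strat{V} \hookrightarrow \strat{E}$ over an open embedding $v : \strat{V} \hookrightarrow \strat{B}$ whose image contains $x$; in particular $b$ lies in the image of $v$.

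Next I would pull back $v$ along $\psi$. Open embeddings of stratified spaces are closed under pullback in $\Strat$, so the induced map $v' : \strat{V}' := \strat{A} \times_{\strat{B}} \strat{V} \hookrightarrow \strat{A}$ is again an open embedding, and $a \in \strat{V}'$. Restricting $\psi^* f$ over $\strat{V}'$ and using that pullback in $\Strat$ commutes with pullback along $v$, the total space becomes
\[
  \strat{V}' \times_{\strat{B}} \strat{E} \;\supseteq\; \strat{V}' \times_{\strat{V}} (\strat{U} \times \strat{V}) \;\cong\; \strat{U} \times \strat{V}',
\]
so I obtain an open embedding $u' : \strat{U} \times \strat{V}' \hookrightarrow \strat{A} \times_{\strat{B}} \strat{E}$ sitting over $v'$ whose image contains $y$. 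This is the candidate local trivialization.

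The only nontrivial check is that $u'$ is a morphism of $n$-framed stratified bundles, and this is where the framing convention on pullbacks does all the work. By construction the framing on $\psi^* f$ is induced from that of $f$, so for each $a' \in \strat{V}'$ the identification of fibres $(\psi^* f)^{-1}(a') \cong f^{-1}(\psi(a'))$ is an isomorphism of $n$-framed stratified spaces, and under this identification the fibre of $u'$ over $a'$ agrees with the fibre of $u$ over $\psi(a')$. Since $u$ is a map of $n$-framed bundles, its fibres preserve framings, and hence so do the fibres of $u'$. I expect the main subtlety to lie precisely here — keeping the pullback framings strictly functorial — but the remark following the definition of $n$-framed bundles explicitly licenses us to treat $\unstrat(\psi^* \strat{E})$ as a subspace of $\R^n \times \unstrat(\strat{A})$ with framing inherited from $f$, which makes the verification immediate.
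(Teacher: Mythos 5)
Your proof is correct; the paper in fact omits a proof of this lemma entirely, and your argument (pull back a local trivialization of $f$ along $\psi$, using stability of open embeddings under pullback and the canonical pullback framing to verify that the restricted square is again an open embedding of $n$-framed stratified bundles) is exactly the standard verification the authors implicitly rely on.
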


\begin{definition}
  Let $f : \strat{X} \to \strat{Y}$ be an $n$-framed stratified bundle
  and $g : \strat{Y} \to \strat{Z}$ an $m$-framed stratified bundle.
  The \defn{composite framing} on $g \circ f : \strat{X} \to \strat{Z}$ is
  the $(n + m)$-framing obtained as the composite of the individual framings
  \[
    \begin{tikzcd}[column sep = huge]
      \unstrat(\strat{X}) \ar[r, "\framing(f)", hook] &
      \R^n \times \unstrat(\strat{Y}) \ar[r, "\R^n \times \framing(g)", hook] &
      \R^n \times \R^m \times \unstrat(\strat{Z})
      =
      \R^{n + m} \times
      \unstrat(\strat{Z}) 
    \end{tikzcd}
  \]
\end{definition}

\begin{construction}
  Let $f : \strat{X} \to \strat{A}$ be an $n$-framed stratified bundle
  and $g : \strat{Y} \to \strat{B}$ an $m$-framed stratified bundle.
  Then the \defn{framed product} $f \frtimes g$ is the map of stratified spaces
  $\strat{X} \times \strat{Y} \to \strat{A} \times \strat{B}$
  equipped with the induced $(n + m)$-framing
  \[
    \begin{tikzcd}[column sep = huge]
      \unstrat(\strat{X} \times \strat{Y})
      \ar[r, hook, "\framing(f) \times \framing(g)"] &
      \R^n \times \unstrat(\strat{A}) \times \R^m \times \unstrat(\strat{B}) \cong
      \R^{n + m} \times \unstrat(\strat{A} \times \strat{B})
    \end{tikzcd}
  \]
  For each $n, m \geq 0$ the framed product defines a functor
  \[
    - \frtimes - : \FrBun^n \times \FrBun^m \longrightarrow \FrBun^{n + m}
  \]
\end{construction}

\begin{remark}
  Since the coordinate order in a framing is significant, the framed product
  is not symmetric up to framed isomorphism. We use the asymmetric symbol
  $\frtimes$ instead of $\times$ to indicate this in the notation.
\end{remark}

\begin{definition}\label{def:fct-framing-tame}
  An $n$-framed stratified bundle
  $f : \strat{E} \to \strat{B}$
  is \defn{tame} when $\strat{E}$ and $\R^n \times \strat{B}$ have 
  finite relative triangulations such that  
  the framing
  $\framing(f) : \unstrat(\strat{E}) \hookrightarrow \R^n \times \unstrat(\strat{B})$
  is a simplicial map.  
  An $n$-framed stratified bundle is \defn{essentially tame}
  when it is isomorphic in $\FrBun^n$ to a tame $n$-framed stratified bundle.
\end{definition}

\begin{example}\label{ex:fct-framing-non-tame}
  Our notion of tameness for bundles is stricter than that which would be imposed by
  o-minimality without additional conditions.
  Consider for example the stratified spaces $\strat{X}$ and $\strat{B}$,
  written as the union of their strata:
  \begin{align*}
    \unstrat(\strat{B}) :=&\ \{ (x, y) \mid 0 \leq x \leq y \leq 1 \} \subset \R^2 \\
    \unstrat(\strat{X}) :=&\ \{ ((1 - y) \tfrac{x}{y}, x, y) \mid 0 \leq x \leq y \leq 1, 0 < y \} \\
    \cup&\ \{ (t, 0, 0) \mid 0 < t < 1 \} \cup \{ (1, 0, 0) \} \cup \{ (0, 0, 0) \}
    \subset \R^3
  \end{align*}
  Then the projection onto the last two coordinates induces a $1$-framed stratified
  bundle $f : \strat{X} \to \strat{B}$ which is not essentially tame, although it is semi-algebraic.  
\end{example}

\begin{example}
  Consider the $3$-framed stratified spaces $\strat{X}, \strat{Y}, \strat{Z}$ which each consist of a single stratum
  given by the following subspaces of $\R^3$:
  \begin{align*}
    \strat{X} &:= \{ (0, 0, t) \mid t \in \R \} \subseteq \R^3, \\
    \strat{Y} &:= \{ (1, t \sin(\tfrac{1}{t}), t) \mid t \in \R \} \subseteq \R^3, \\
    \strat{Z} &:= \{ (1, 0, t) \mid t \in \R \} \subseteq \R^3.
  \end{align*}
  Then $\strat{X}$ and $\strat{Z}$ are tame but $\strat{Y}$ is not. However, as $3$-framed stratified
  spaces we have $\strat{Y} \cong \strat{Z}$ and so $\strat{Y}$ is essentially tame.
  The difference is important since the union $\strat{X} \cup \strat{Z}$ remains tame,
  while the union $\strat{X} \cup \strat{Y}$ is not even essentially tame.
\end{example}

% \begin{definition}
%   Let $f : \strat{X} \to \strat{B}$ be an $n$-framed stratified bundle.
%   \[
%     \begin{tikzcd}
%       {} \ar[r] \ar[d] &
%       {} \ar[d] \\
%       {} \ar[r] &
%       {}
%     \end{tikzcd}
%   \]
% \end{definition}

\begin{definition}
  The \defn{frame order} on $\R^n$ is the smallest partial order $\frleq$
  such that for all points
  $x = (x_1, \ldots, x_n) \in \R^n$ and $y = (y_1, \ldots, y_n) \in \R^n$
  and $0 \leq i < n$ we have
  \[
    x \sim_i y \land x_{n - i} < y_{n - i}
    \quad
    \Rightarrow
    \quad
    x \frleq y
  \]
\end{definition}

\begin{observation}
  The frame order on $\R^n$ is the lexicographic order where elements of
  $\R^n$ are seen as sequences in $(\R, \leq)$ in reverse coordinate order.
\end{observation}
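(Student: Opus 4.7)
The plan is to prove that the frame order $\frleq$ coincides with the reverse lexicographic order $\leq_{\text{lex}}^{\text{rev}}$ on $\R^n$, where by reverse lexicographic we mean the order obtained by comparing coordinates from the last index down to the first. Concretely, $x <_{\text{lex}}^{\text{rev}} y$ iff there exists $k \in \{1, \ldots, n\}$ such that $x_j = y_j$ for all $j > k$ and $x_k < y_k$. Since $\leq_{\text{lex}}^{\text{rev}}$ is a total order, hence a partial order, once both inclusions are established the result follows immediately.

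First I would verify that every generating pair of $\frleq$ sits inside $\leq_{\text{lex}}^{\text{rev}}$. The generator says that if $x \sim_i y$ and $x_{n-i} < y_{n-i}$ for some $0 \leq i < n$, then $x \frleq y$. Unpacking $\sim_i$: the last $i$ coordinates of $x$ and $y$ agree, i.e. $x_j = y_j$ for all $j > n - i$. Setting $k := n - i$, this is exactly the defining condition for $x <_{\text{lex}}^{\text{rev}} y$. Since $\frleq$ is the \emph{smallest} partial order containing these generators, this yields $\frleq \subseteq \leq_{\text{lex}}^{\text{rev}}$.

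For the reverse inclusion, suppose $x <_{\text{lex}}^{\text{rev}} y$ and pick the witnessing index $k$. Setting $i := n - k$ we obtain $x \sim_i y$ and $x_{n-i} = x_k < y_k = y_{n-i}$, so the pair $(x,y)$ is already one of the defining generators of $\frleq$. Hence $x \frleq y$, giving $\leq_{\text{lex}}^{\text{rev}} \subseteq \frleq$ and completing the proof. There is no real obstacle here; the only subtle point is tracking the index conversion between the ``last $i$ coordinates'' convention used in $\sim_i$ and the coordinate position $n - i$ used in the strict inequality.
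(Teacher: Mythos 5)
Your proof is correct: the paper states this as an unproved observation, and your argument is the direct unwinding of the definitions that the paper implicitly has in mind — the generating pairs of $\frleq$ are precisely the strict reverse-lexicographic pairs (via $k = n - i$), and since reverse lexicographic order is itself a partial order containing them, minimality gives equality. The index bookkeeping ($0 \leq i < n$ corresponding to $1 \leq k \leq n$, and $\sim_i$ meaning agreement in coordinates $j > n - i$) is handled correctly.
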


\begin{definition}
  Let $X \subseteq \R^n$ be a subset and $p \in \R^n$ a point.
  We say that $x \in X$ is the \defn{framed projection} of $p$ onto $X$
  when it is the unique point in $X$ that minimises the pointwise absolute value
  $|x - p|$
  in the frame order $\frleq$ on $\R^n$.
  In that case we write $\fproj(X, p) = x$.
\end{definition}

% \begin{para}
%   We denote by $\Rext$ the \defn{extended real numbers} which consist as a set of the union of $\R$ with $\{ -\infty, +\infty \}$.
%   The extended real numbers are topologised so that there is an
%   isomorphism $\intCC{-1, 1} \cong \Rext$ of topological spaces which sends
%   $-1$ to $-\infty$ and $1$ to $\infty$.
%   For instance such an isomorphism might be defined by $x \mapsto \tan(\tfrac{2}{\pi}x)$
% \end{para}

% \begin{definition}
%   An \defn{extended $n$-framed stratified bundle} is a map $f : \strat{E} \to \strat{B}$ of stratified spaces together with a continuous embedding $\framing(f) : \unstrat(\strat{E}) \hookrightarrow \unstrat(\strat{B}) \times \Rext^n$ over $\strat{B}$.
%   {\color{Orange} maps using the isomorphism between $\Rext$ and $\intCC{-1, 1}$.}
% \end{definition}

\section{Meshes}\label{sec:fct-mesh}

Meshes are framed stratified spaces in which all the information that is
detected by framed stratified isomorphisms is visible in the stratification.
Meshes organise into mesh bundles, and in the coordinate order prescribed by
the framing, an $n$-mesh bundle is built inductively as the framed composite
of $1$-mesh bundles that each add a single dimension.

We define two flavours of meshes:
Closed meshes are akin to commutative or pasting diagrams for higher categories,
and are glued together from compatible families of closed cells.
Open meshes are the Poincar\'e duals of closed meshes and model a form of higher-dimensional string diagrams.

We begin with closed meshes, which are technically simpler to manipulate than open ones. In~\S\ref{sec:fct-mesh-closed-1} we define closed $1$-mesh bundles and explore their properties. In particular, we see how closed $1$-mesh bundles are classified by a quasicategory $\BMeshClosed{1}$, equivalent to $\FinOrd^\op$.
Our results on closed $1$-mesh bundles then transfer to closed $n$-mesh bundles
in~\S\ref{sec:fct-mesh-closed-n}, which are classified by a quasicategory $\BMeshClosed{n}$.
We can equip closed $n$-mesh bundles with labels in any quasicategory $\cat{C}$,
leading to a quasicategory $\BMeshClosedL{n}{\cat{C}}$ in~\S\ref{sec:fct-mesh-closed-labelled}.
Via the packing construction of~\S\ref{sec:fct-mesh-closed-pack} we then see that $\BMeshClosedL{n + m}{\cat{C}} \simeq \BMeshClosedL{n}{\BMeshClosedL{m}{\cat{C}}}$.
We define open $n$-mesh bundles in~\S\ref{sec:fct-mesh-open-1} and transfer the relevant results from closed $n$-mesh bundles via a compactification construction. The result is a quasicategory $\BMeshOpenL{n}{\cat{C}}$ which classifies open $n$-mesh bundles with labels in $\cat{C}$.
In~\S\ref{sec:fct-mesh-grid} we explore grids, a special case of open and closed $n$-meshes, to show that the $n$-fold products $\FinOrd^{n, \op}$ and $\FinOrd^n$ are subcategories of $\BMeshClosed{n}$ and $\BMeshOpen{n}$, respectively.

\subsection{Closed $1$-Meshes}\label{sec:fct-mesh-closed-1}

\begin{definition}\label{def:fct-mesh-closed-1}
  A \defn{closed $1$-mesh} is a stratified space
  $\strat{X}$ such that the underlying space is a non-empty closed bounded interval $\unstrat(\strat{X}) \subset \R$
  and every stratum is either an open interval or an isolated point.
  The former strata are called \defn{regular},
  the latter \defn{singular}.
  A point $x \in \strat{X}$ is \defn{regular} or
  \defn{singular} if it is contained in a stratum that
  is regular or singular, respectively.
\end{definition}

We visualise a closed $1$-mesh by a line on which the singular strata are marked with a dot:
\[
  \begin{tikzpicture}[scale = 0.5, baseline=(current bounding box.center)]
    % \fill[mesh-background] (0, 0) rectangle (6, 6);
    \node (sing) at (-4, -0.5) {singular};
    \node (reg) at (10, 0.5) {regular};
    \draw[mesh-stratum] (0, 0) -- (6, 0);
    \node[mesh-vertex] (s0) at (0, 0) {};
    \node[mesh-vertex] (s1) at (2, 0) {};
    \node[mesh-vertex] (s2) at (6, 0) {};
    \draw[->, gray] (sing) -- ($ (s0) + (0, -0.5) $) -- (s0.south);
    \draw[->, gray] (sing) -- ($ (s1) + (0, -0.5) $) -- (s1.south);
    \draw[->, gray] (sing) -- ($ (s2) + (0, -0.5) $) -- (s2.south);
    \draw[->, gray] (reg) -- ($ (1, 0.5) $) -- (1, 0);
    \draw[->, gray] (reg) -- ($ (4, 0.5) $) -- (4, 0);
  \end{tikzpicture}
\]
There is a closed $1$-mesh with any positive number of singular strata:
\[
  \begin{tikzpicture}[scale = 0.5, baseline=(current bounding box.center)]
    \node[mesh-vertex] (s0) at (0, 0) {};
  \end{tikzpicture}
  \quad
  \begin{tikzpicture}[scale = 0.5, baseline=(current bounding box.center)]
    \draw[mesh-stratum] (0, 0) -- (1, 0);
    \node[mesh-vertex] (s0) at (0, 0) {};
    \node[mesh-vertex] (s1) at (1, 0) {};
  \end{tikzpicture}
  \quad
  \begin{tikzpicture}[scale = 0.5, baseline=(current bounding box.center)]
    \draw[mesh-stratum] (0, 0) -- (2, 0);
    \node[mesh-vertex] (s0) at (0, 0) {};
    \node[mesh-vertex] (s1) at (1, 0) {};
    \node[mesh-vertex] (s2) at (2, 0) {};
  \end{tikzpicture}
  \quad
  \begin{tikzpicture}[scale = 0.5, baseline=(current bounding box.center)]
    \draw[mesh-stratum] (0, 0) -- (3, 0);
    \node[mesh-vertex] (s0) at (0, 0) {};
    \node[mesh-vertex] (s1) at (1, 0) {};
    \node[mesh-vertex] (s2) at (2, 0) {};
    \node[mesh-vertex] (s3) at (3, 0) {};
  \end{tikzpicture}
  \quad
  \text{etc.}
\]
Closed $1$-meshes organise into compatible families via closed $1$-mesh bundles.

\begin{definition}\label{def:fct-mesh-closed-1-bundle}
  A \defn{closed $1$-mesh bundle} is a $1$-framed
  stratified bundle $\xi : \strat{X} \to \strat{B}$
  that satisfies the following conditions:
  \begin{enumerate}
    \item $\xi$ is a stratified fibre bundle.
    \item For every $b \in \strat{B}$ the fibre $\xi^{-1}(b)$ is a closed $1$-mesh.
    \item The singular points in all fibres form a closed subset of $\strat{X}$.
    \item The framing map $\framing(\xi) : \unstrat(\strat{X}) \hookrightarrow \R^1 \times \unstrat(\strat{B})$ is a closed embedding.
  \end{enumerate}  
  We denote by $\sing(\strat{X})$ the subspace consisting of the singular points of $\strat{X}$.
\end{definition}

\begin{example}\label{ex:fct-mesh-closed-1-bordisms}
  The following are closed $1$-mesh bundles over $\DeltaStrat{1}$:
  \[
    \begin{tikzpicture}[yscale = 0.5, xscale = 0.25, baseline=(current bounding box.center)]
      \fill[mesh-background] (3, 0) rectangle (6, 2);
      \draw[mesh-stratum] (0, 0) -- (6, 0);
      \draw[mesh-stratum] (3, 0) -- (3, 2);
      \draw[mesh-stratum] (6, 0) -- (6, 2);
      \node[mesh-vertex] at (0, 0) {};
      \node[mesh-vertex] at (3, 0) {};
      \node[mesh-vertex] at (6, 0) {};
    \end{tikzpicture}
    \quad \xrightarrow{\xi_1} \quad
    \begin{tikzpicture}[scale = 0.5, baseline=(current bounding box.center)]
      \draw[mesh-stratum] (0, 0) -- (0, 2);
      \node[mesh-vertex] at (0, 0) {};
    \end{tikzpicture}
    % \qquad\qquad
    % \begin{gathered}
    %   \sing(\xi_1) : \ord{1} \to \ord{2} \\
    %   0 \mapsto 1,
    %   1 \mapsto 2
    % \end{gathered}
    \qquad\qquad
    \begin{tikzpicture}[scale = 0.5, xscale = 0.25, baseline=(current bounding box.center)]
      \fill[mesh-background] (0, 2) -- (6, 2) -- (3, 0) -- cycle;
      \draw[mesh-stratum] (3, 0) -- (0, 2);
      \draw[mesh-stratum] (3, 0) -- (6, 2);
      \node[mesh-vertex] at (3, 0) {};
    \end{tikzpicture}
    \quad \xrightarrow{\xi_2} \quad
    \begin{tikzpicture}[scale = 0.5, baseline=(current bounding box.center)]
      \draw[mesh-stratum] (0, 0) -- (0, 2);
      \node[mesh-vertex] at (0, 0) {};
    \end{tikzpicture}
    % \qquad\qquad
    % \begin{gathered}
    %   \sing(\xi_2) : \ord{1} \to \ord{0} \\
    %   0 \mapsto 0, 
    %   1 \mapsto 0
    % \end{gathered}
    \qquad\qquad
    \begin{tikzpicture}[scale = 0.5, xscale = 0.25, baseline=(current bounding box.center)]
      \fill[mesh-background] (0, 0) rectangle (6, 2);
      \draw[mesh-stratum] (0, 0) -- (6, 0);
      \draw[mesh-stratum] (0, 0) -- (0, 2);
      \draw[mesh-stratum] (6, 0) -- (6, 2);
      \node[mesh-vertex] at (0, 0) {};
      \node[mesh-vertex] at (3, 0) {};
      \node[mesh-vertex] at (6, 0) {};
    \end{tikzpicture}
    \quad \xrightarrow{\xi_3} \quad
    \begin{tikzpicture}[scale = 0.5, baseline=(current bounding box.center)]
      \draw[mesh-stratum] (0, 0) -- (0, 2);
      \node[mesh-vertex] at (0, 0) {};
    \end{tikzpicture}
    % \qquad\qquad
    % \begin{gathered}
    %   \sing(\xi_3) : \ord{1} \to \ord{2} \\
    %   0 \mapsto 0,
    %   1 \mapsto 2
    % \end{gathered}
  \]
\end{example}

\begin{example}
  A closed $1$-mesh bundle over a stratification of the interval
  resembles a $2$-dimensional pasting diagram:
  \[
    \begin{tikzpicture}[scale = 0.5, baseline=(current bounding box.center)]
      \fill[mesh-background] (0, 0) rectangle (6, 2);
      \fill[mesh-background] (2, 2) rectangle (6, 6);
      \draw[mesh-stratum] (0, 0) -- (4, 0) -- (4, 2) -- (0, 2) -- cycle;
      \draw[mesh-stratum] (4, 0) -- (6, 0) -- (6, 2) -- (4, 2) -- cycle;
      \draw[mesh-stratum] (2, 4) -- (2, 2) -- (6, 2) -- (6, 4) -- cycle;
      \draw[mesh-stratum] (2, 4) -- (2, 6) -- (6, 6) -- (4, 4) -- cycle;
      \draw[mesh-stratum] (4, 4) -- (6, 4) -- (6, 6) -- cycle;
      % \draw[mesh-stratum] (0, 0) -- (6, 0);
      % \draw[mesh-stratum] (0, 0) -- (0, 2);
      % \draw[mesh-stratum] (6, 0) -- (6, 2);
      \node[mesh-vertex] at (0, 0) {};
      \node[mesh-vertex] at (4, 0) {};
      \node[mesh-vertex] at (6, 0) {};
      \node[mesh-vertex] at (0, 2) {};
      \node[mesh-vertex] at (2, 2) {};
      \node[mesh-vertex] at (4, 2) {};
      \node[mesh-vertex] at (6, 2) {};
      \node[mesh-vertex] at (2, 4) {};
      \node[mesh-vertex] at (4, 4) {};
      \node[mesh-vertex] at (6, 4) {};
      \node[mesh-vertex] at (2, 6) {};
      \node[mesh-vertex] at (6, 6) {};
      % \node[mesh-vertex] at (6, 0) {};
    \end{tikzpicture}
    \quad
    \longrightarrow
    \quad
    \begin{tikzpicture}[scale = 0.5, baseline=(current bounding box.center)]
      \draw[mesh-stratum] (0, 0) -- (0, 6);
      \node[mesh-vertex] at (0, 0) {};
      \node[mesh-vertex] at (0, 2) {};
      \node[mesh-vertex] at (0, 4) {};
      \node[mesh-vertex] at (0, 6) {};
    \end{tikzpicture}
  \]
\end{example}

\begin{example}
  The $1$-framed stratified bundle $f: \strat{E} \to \strat{B}$ from Example~\ref{ex:fct-framing-non-tame}
  is a stratified fibre bundle and each fibre is a closed $n$-mesh.
  However the subset of singular points is not closed in $\strat{E}$
  and therefore $f$ is \textbf{not} a closed $1$-mesh bundle.
  Meshes are in many respects similar to cell decompositions in o-minimal geometry, but impose more stringent requirements on how the cells
  fit together: the strata of $\strat{E}$ do form a semi-algebraic
  cell decomposition.
\end{example}

\begin{observation}
  Closed $1$-mesh bundles are closed under isomorphism of $1$-framed stratified bundles.
\end{observation}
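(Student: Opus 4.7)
The plan is to verify each of the four defining conditions of a closed $1$-mesh bundle is invariant under isomorphism of $1$-framed stratified bundles. Suppose $\xi : \strat{X} \to \strat{B}$ is a closed $1$-mesh bundle and $(\varphi, \psi) : \xi \to \xi'$ is an isomorphism in $\FrBun^1$, where $\xi' : \strat{X}' \to \strat{B}'$; by assumption $\varphi$ and $\psi$ are isomorphisms of stratified spaces, the framings are compatible, and for each $b \in \strat{B}$ the induced map on fibres $\varphi_b : \xi^{-1}(b) \to \xi'^{-1}(\psi(b))$ is a $1$-framed stratified isomorphism. I would then go through the four conditions in turn.

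For condition (i), stratified fibre bundles are closed under isomorphism: any local trivialisation of $\xi$ over an open $U \subseteq \strat{B}_i$ in a stratum transports along $(\varphi, \psi)$ to a local trivialisation of $\xi'$ over $\psi(U)$, which is open in the corresponding stratum of $\strat{B}'$. For condition (ii), the fibre $\xi'^{-1}(b')$ for $b' \in \strat{B}'$ is $1$-framed stratified isomorphic to the closed $1$-mesh $\xi^{-1}(\psi^{-1}(b'))$ via $\varphi_{\psi^{-1}(b')}^{-1}$; since the properties that define a closed $1$-mesh (Definition~\ref{def:fct-mesh-closed-1}) are all invariant under $1$-framed stratified isomorphism, $\xi'^{-1}(b')$ is a closed $1$-mesh. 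Specifically, $1$-framed stratified maps preserve the frame order on $\R$, hence send the underlying closed bounded interval to a closed bounded interval and preserve the classification of strata as open intervals or isolated points.

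For condition (iii), the isomorphism $\varphi$ respects the stratification, so it sends $\sing(\strat{X})$ to $\sing(\strat{X}')$ bijectively; since $\varphi$ is a homeomorphism of the underlying topological spaces and $\sing(\strat{X})$ is closed in $\strat{X}$, the image $\sing(\strat{X}') = \varphi(\sing(\strat{X}))$ is closed in $\strat{X}'$. For condition (iv), the framing $\framing(\xi')$ factors as
\[
  \unstrat(\strat{X}')
  \xrightarrow{\;\unstrat(\varphi)^{-1}\;}
  \unstrat(\strat{X})
  \xrightarrow{\;\framing(\xi)\;}
  \R \times \unstrat(\strat{B})
  \xrightarrow{\;\R \times \unstrat(\psi)\;}
  \R \times \unstrat(\strat{B}'),
\]
which is a composite of homeomorphisms with a closed embedding, hence is itself a closed embedding.

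There is no real obstacle here; the only point requiring any care is verifying that the defining properties of a closed $1$-mesh in Definition~\ref{def:fct-mesh-closed-1} genuinely transport along a $1$-framed stratified isomorphism, which comes down to the observation that such isomorphisms preserve both the stratification type of each stratum and the relative order of points along the fibre.
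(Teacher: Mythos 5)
Your handling of conditions (i)--(iii) is sound, and for (ii) the conclusion is right, though the stated justification is not quite the one available: for $n=1$ the coincidence relations $\sim_0$ and $\sim_1$ on $\R$ are the total relation and equality respectively, so the fibrewise condition on a map of $1$-framed stratified spaces is essentially vacuous and gives you no "preservation of the frame order". What actually carries (ii) is pure topology: the fibre of $\xi'$ is homeomorphic to that of $\xi$, hence compact, connected and non-empty, so its image under the framing embedding is a non-empty closed bounded interval; single-point strata correspond to single-point strata, and a connected subset of $\R$ homeomorphic to an open interval is an open interval. You should phrase it that way.

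The genuine gap is in (iv). The factorisation $\framing(\xi') = (\R \times \unstrat(\psi)) \circ \framing(\xi) \circ \unstrat(\varphi)^{-1}$ is not part of the data of an isomorphism in $\FrBun^1$. The definition of a map of $n$-framed stratified bundles only requires that, for each $b \in \strat{B}$, the induced map on fibres be a map of $n$-framed stratified spaces for the induced framings --- i.e.\ that it preserve the coincidence relations fibrewise --- and says nothing that identifies the two global framings up to the isomorphism. In particular, two $1$-framed stratified bundles with the same underlying stratified bundle but genuinely different framings are isomorphic via the identity (the fibrewise condition being essentially vacuous in dimension one), and closedness of one framing does not formally transfer to the other: when the total space is disconnected over the closure of a stratum of the base, one framing can embed it as a closed subset of $\R \times \unstrat(\strat{B})$ while another, fibrewise equivalent, framing sends a family of fibres to intervals that converge to a limit interval lying outside the image. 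So as written, the step for (iv) rests on a reading of "isomorphism" that is stronger than the paper's definition. You need either to make that stronger reading explicit (requiring the framings themselves to correspond under $(\varphi,\psi)$, in which case all four conditions transport trivially), or to supply an argument for closedness of $\framing(\xi')$ that uses only the fibrewise compatibility --- and it is not clear that such an argument exists in full generality.
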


\begin{lemma}\label{lem:fct-mesh-closed-1-pl}
  Let $\xi : \strat{X} \to \strat{B}$ be a $1$-framed stratified bundle
  that satisfies:
  \begin{enumerate}[label=(\arabic*)]
    \item $\xi$ is a stratified fibre bundle.
    \item For every $b \in \strat{B}$ the fibre $\xi^{-1}(b)$ is a closed $1$-mesh.
    \item $\xi$ is essentially tame.
    \item $\strat{X}$ satisfies the frontier condition.
    \item The framing map $\framing(\xi) : \unstrat(\strat{X}) \hookrightarrow \R^1 \times \unstrat(\strat{B})$ is a closed embedding.
  \end{enumerate}
  Then $\xi$ is a closed $1$-mesh bundle.
\end{lemma}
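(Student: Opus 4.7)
Since condition (3) is invariant under isomorphism of framed stratified bundles, essential tameness lets me assume $\xi$ is tame: I fix finite relative triangulations of $\strat{X}$ and of $\R \times \strat{B}$ for which $\framing(\xi)$ is simplicial. In these triangulations the composite $\xi|_\sigma$ is linear for every open simplex $\sigma \subseteq \strat{X}$, and because the fibres of $\xi$ are $1$-meshes the kernel of $\xi|_\sigma$ has dimension $0$ or $1$. I will call $\sigma$ \emph{vertical} in the first case and \emph{horizontal} in the second. A face of a vertical simplex is again vertical, since the restriction of an injective linear map to a face is still injective.

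Next I would transfer this dichotomy to the strata of $\strat{X}$. Because $\xi$ is a stratified fibre bundle, each point of $\strat{X}$ has a neighbourhood of the form $\strat{X} \times_{\strat{B}} V \cong \strat{F} \times V$ with $V$ open in its $\strat{B}$-stratum and $\strat{F}$ a closed $1$-mesh. Under this isomorphism the strata of $\strat{X}|_V$ correspond to products $\strat{F}_k \times V$, and each $\strat{F}_k$ is either an isolated point (singular, $0$-dimensional fibre) or an open interval (regular, $1$-dimensional fibre). In particular each stratum of $\strat{X}|_V$ is uniformly built from simplices of a single type. After refining the stratification so that strata become connected (which preserves both tameness and the frontier condition, using Proposition~\ref{prop:coarsest-subdivision}), I can unambiguously declare a stratum $\strat{X}_p$ vertical when its open simplices are vertical and horizontal otherwise. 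By construction $\sing(\strat{X}) = \stratMap{\strat{X}}^{-1}(P_\sing)$, where $P_\sing \subseteq \stratPos{\strat{X}}$ consists of the vertical strata.

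Finally I would show that $P_\sing$ is downward closed, which by continuity of $\stratMap{\strat{X}}$ into the Alexandroff space of $\stratPos{\strat{X}}$ implies that $\sing(\strat{X})$ is closed in $\strat{X}$. Given $p \leq q$ in $\stratPos{\strat{X}}$ with $q \in P_\sing$, the frontier condition yields $\strat{X}_p \subseteq \cl(\strat{X}_q)$; for $x \in \strat{X}_p$ pick a sequence in $\strat{X}_q$ converging to $x$ and use the finiteness of the triangulation to produce an open simplex $\sigma_q \subseteq \strat{X}_q$ whose closure contains $x$, together with the open simplex $\sigma_p \subseteq \strat{X}_p$ with $x \in \sigma_p$. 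Then $\sigma_p$ is a face of $\sigma_q$; since $\sigma_q$ is vertical so is $\sigma_p$, and the stratum-level classification forces $p \in P_\sing$.

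The hard part is the stratum-level classification. A priori a disconnected stratum could mix simplex types between its components, so a clean argument has to either refine the stratification first, or exploit the stratified fibre bundle local trivialisation more carefully to show that the vertical/horizontal dichotomy is in fact constant on each $\strat{X}_p$ to begin with. Ensuring that the refinement one uses still satisfies the hypotheses of the lemma (essential tameness and the frontier condition) is the main technical step that needs attention.
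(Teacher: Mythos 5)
Your overall strategy is the same as the paper's: reduce to a tame (hence fibrewise linear) bundle via condition (3), and then use linearity together with the frontier condition and the fibrewise mesh structure to show that the fibre direction of a regular stratum cannot collapse into the closure of the singular locus. The paper phrases this by taking $x_0 \neq x_1$ in a regular stratum with $\xi(x_0)=\xi(x_1)$ and $x_2$ singular in their common closure, and observing that linearity forces the segments $[x_0,x_2]$ and $[x_1,x_2]$ to have the same image in the base, hence (since singular strata meet each fibre in isolated points) to coincide for $t>0$, whence $x_0=x_1$. Your ``a face of an injective-over-base simplex is injective over the base'' is the same linearity input.

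The genuine gap is the stratum-level classification, and you have misdiagnosed where the difficulty lies. It is not about disconnected strata or about preserving the hypotheses under refinement: the problem is that the kernel-dimension dichotomy is simply not constant across the simplices of a stratum. Every vertex of the triangulation, and more generally every open simplex that happens not to contain a nondegenerate segment of a fibre, has $\ker(\xi|_\sigma)=0$ even when it lies in a \emph{regular} stratum. So ``all simplices of $\strat{X}_p$ are vertical'' does not characterise singular strata, and your concluding inference ``$\sigma_p$ is vertical, hence $p \in P_{\sing}$'' fails exactly when $\sigma_p$ is such a low-dimensional simplex. The repair is to drop the stratum-level classification entirely and argue simplexwise: a regular stratum meets each fibre over its image in a nonempty open interval, and since the triangulation is finite some open simplex $\sigma_p$ of that stratum contains a nondegenerate fibre segment, i.e.\ has one-dimensional kernel. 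Choose $x\in\sigma_p$; by the frontier condition $x\in\cl(\strat{X}_s)$ for the offending singular stratum, so $\sigma_p$ is a face of some open simplex $\sigma_q\subseteq\strat{X}_s$, and $\xi|_{\sigma_q}$ is injective because its fibres are discrete and $\sigma_q$ is convex. Injectivity then passes to the face $\sigma_p$, contradicting that its kernel is one-dimensional. With that substitution no refinement of the stratification is needed and the proof closes.
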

\begin{proof}
  Up to isomorphism of $1$-framed stratified bundles we can assume that
  $\xi$ is tame. Because $\strat{B}$ has a locally finite triangulation,
  we can consider $\strat{B}$ as a stratified subset of some $\R^m$.
  Moreover, we can assume that the underlying map $\unstrat(\xi)$ is the
  restriction of the projection map $\R \times \R^m \to \R^m$ and therefore
  linear.
  We pick a linear triangulation of $\strat{X}$.

  Suppose for the purpose of contradiction that $\sing(\strat{X})$ is not closed.
  Then the must be a regular stratum $r \in \stratPos{\strat{X}}$ and a singular
  stratum $s \in \stratPos{\strat{X}}$ such that $\strat{X}_r$ intersects
  the closure of $\strat{X}_s$.
  By the frontier condition this implies that $\strat{X}_r$ is contained
  within the boundary of $\strat{X}_s$.  
  Because regular strata of a closed $1$-mesh have more than one point,
  there must exist points
  $x_0, x_1 \in \strat{X}_r$ and $x_2 \in \strat{X}_s$ 
  such that $\xi(x_0) = \xi(x_1)$,
  $x_0 \neq x_1$ 
  and $\sigma(t_0, t_1, t_2) = t_0 x_1 + t_1 x_1 + t_2 x_2$  
  defines a stratified $2$-simplex $\sigma : \DeltaStrat{2} \to \strat{X}$
  in the chosen linear triangulation of $\strat{X}$.
  We let $\gamma_{02}, \gamma_{12} : \DeltaStrat{1} \to \strat{X}$
  denote the exit paths obtained from $\sigma$ via
  $\gamma_{02}(t) = \sigma(1 - t, 0, t)$ and $\gamma_{12}(t) = \sigma(0, 1 - t, t)$.
  By linearity of the map $\xi$ we have
  \[
    (\xi \circ \gamma_{02})(t) =
    (1 - t) \xi(x_0) + t \xi(x_2) =
    (1 - t) \xi(x_1) + t \xi(x_2) = 
    (\xi \circ \gamma_{12})(t).
  \]
  Since the stratum containing $\gamma_{02}(1) = \gamma_{12}(1)$ is singular,
  the assumptions (1) and (2) together imply that $\gamma_{02}(t) = \gamma_{12}(t)$
  for all $0 < t \leq 1$.
  But then by continuity we must have $x_0 = \gamma_{02}(0) = \gamma_{12}(0) = x_1$,
  contradicting our choice that $x_0 \neq x_1$.
\end{proof}

\begin{lemma}\label{lem:fct-mesh-closed-1-pullback}
  Closed $1$-mesh bundles are stable under base change.
\end{lemma}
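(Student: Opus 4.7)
The plan is to verify each of the four conditions in Definition~\ref{def:fct-mesh-closed-1-bundle} individually for the pullback bundle. So suppose $\xi : \strat{X} \to \strat{B}$ is a closed $1$-mesh bundle and $\psi : \strat{A} \to \strat{B}$ is any map of stratified spaces; form the pullback
\[
  \begin{tikzcd}
    {\strat{A} \times_{\strat{B}} \strat{X}} \ar[r, "\varphi"] \ar[d, "\psi^* \xi"'] \pullbackcorner &
    {\strat{X}} \ar[d, "\xi"] \\
    {\strat{A}} \ar[r, "\psi"'] &
    {\strat{B}}
  \end{tikzcd}
\]
in $\Strat$, equipped with the canonical $1$-framing induced from $\framing(\xi)$ as described in~\S\ref{sec:fct-framing}.

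First, stratified fibre bundles are closed under pullback: local trivialisations of $\xi$ over open neighbourhoods $U \subseteq \strat{B}_i$ pull back along $\psi$ to local trivialisations of $\psi^* \xi$ over $\psi^{-1}(U) \cap \strat{A}_j$ for each stratum containing a point mapping into $U$. Second, for each $a \in \strat{A}$ the fibre $(\psi^*\xi)^{-1}(a)$ is canonically isomorphic as a $1$-framed stratified space to $\xi^{-1}(\psi(a))$, which is a closed $1$-mesh by assumption; this also shows that regular and singular strata of the two fibres correspond. Third, the singular subspace $\sing(\strat{A} \times_{\strat{B}} \strat{X})$ is exactly the preimage $\varphi^{-1}(\sing(\strat{X}))$, which is closed because $\sing(\strat{X})$ is closed in $\strat{X}$ and $\varphi$ is continuous. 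Fourth, the framing on $\psi^*\xi$ fits into the pullback square
\[
  \begin{tikzcd}
    {\unstrat(\strat{A} \times_{\strat{B}} \strat{X})} \ar[r, hook] \ar[d, hook, "\framing(\psi^*\xi)"'] \pullbackcorner &
    {\unstrat(\strat{X})} \ar[d, hook, "\framing(\xi)"] \\
    {\R \times \unstrat(\strat{A})} \ar[r] &
    {\R \times \unstrat(\strat{B})}
  \end{tikzcd}
\]
of topological spaces, and closed embeddings are preserved under pullback along continuous maps, so $\framing(\psi^*\xi)$ is again a closed embedding.

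I do not expect any serious obstacles: each of the four defining conditions is formulated in a way that is manifestly invariant under pullback, either because it is a local or fibrewise property (conditions (1) and (2)), or because it reduces to the fact that closed subsets and closed embeddings pull back to closed subsets and closed embeddings (conditions (3) and (4)). The only minor care needed is to verify that the canonical framing of $\psi^*\xi$ constructed in~\S\ref{sec:fct-framing} does indeed restrict fibrewise to the framing of $\xi^{-1}(\psi(a))$, but this follows immediately from the construction of cartesian lifts in $\FrBun^1$.
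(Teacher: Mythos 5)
Your proposal is correct and follows essentially the same route as the paper: form the pullback in $\Strat$ with the induced framing, note that stratified fibre bundles are stable under pullback, identify the fibres of $\psi^*\xi$ with those of $\xi$ via the pasting lemma, and observe that the singular locus of the pullback is the preimage of $\sing(\strat{X})$ and hence closed. The only difference is that you also spell out the verification of condition (4) on the framing being a closed embedding, which the paper leaves implicit.
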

\begin{proof}
  Let $\xi : \strat{X} \to \strat{B}$ be a closed $1$-mesh bundle and
  $\varphi : \strat{A} \to \strat{B}$ a map of stratified spaces.
  We then take the pullback of $\xi$ along $\varphi$ in the category of
  stratified spaces:
  \[
    \begin{tikzcd}
      {\strat{X} \times_{\strat{A}} \strat{B}} \ar[r, "\xi^* \varphi"] \ar[d, "\varphi^* \xi"'] \pullbackcorner &
      {\strat{X}} \ar[d, "\xi"] \\
      {\strat{A}} \ar[r, "\varphi"'] &
      {\strat{B}}
    \end{tikzcd}
  \]
  The map $\varphi^* \xi$ is equipped with the induced $1$-framing.
  Since stratified fibre bundles are closed under pullback,
  the map $\varphi^* \xi$ is again a stratified fibre bundle.
  For every point $a \in \strat{A}$, the fibre of $\varphi^* \xi$ over $a$ agrees
  with the fibre of $\xi$ over $\varphi(a)$ by the pullback pasting lemma:
  \[
    \begin{tikzcd}
      {\xi^{-1}(\varphi(a))} \ar[r] \ar[d] \pullbackcorner &
      {\strat{X} \times_{\strat{A}} \strat{B}} \ar[r, "\xi^* \varphi"] \ar[d, "\varphi^* \xi"'] \pullbackcorner &
      {\strat{X}} \ar[d, "\xi"] \\
      {\terminal} \ar[r, "a"'] &
      {\strat{A}} \ar[r, "\varphi"'] &
      {\strat{B}}
    \end{tikzcd}
  \]
  Therefore $(\varphi^* \xi)^{-1}(a) \cong \xi^*(\varphi(a))$ is a closed $1$-mesh
  and a point $x \in \strat{X} \times_{\strat{A}} \strat{B}$ is singular if and only
  if $(\xi^* \varphi)(x) \in \strat{X}$ is singular.
  Hence, the subspace of singular points in $\strat{X} \times_{\strat{A}} \strat{B}$
  is the preimage of the subspace of singular points in $\strat{X}$
  via the continuous map $\xi^*\varphi$ and therefore closed.
\end{proof}

When $\xi : \strat{X} \to \strat{B}$ is a closed $1$-mesh bundle and
$\gamma : \DeltaTop{1} \to \strat{B}$ is a stratum preserving path,
the fibres $\xi^{-1}(\gamma(t))$ for all $0 \leq t \leq 1$ have the same number of
singular strata whose positions in $\R$ vary continuously.
Since the line $\R$ allows for no space for the singular strata to change position,
the restriction of $\xi$ over any stratum of $\strat{B}$ is determined uniquely
by the number of singular strata.
When $\gamma : \DeltaStrat{1} \to \strat{B}$ is an exit path between different
strata of $\strat{B}$, the singular strata of the closed $1$-meshes
$\gamma^{-1}(0)$ and $\gamma^{-1}(1)$ are related as follows.

\begin{lemma}\label{lem:fct-mesh-closed-1-sing-covering}
  Let $\xi : \strat{X} \to \strat{B}$ be a closed $1$-mesh bundle.
  Then the composite map
  \[
    \begin{tikzcd}[column sep = large]
      {\Exit(\sing(\strat{X}))} \ar[r, hook] &
      {\Exit(\strat{X})} \ar[r, "\Exit(\xi)"] &
      {\Exit(\strat{B})}
    \end{tikzcd}
  \]
  is a right covering map.
\end{lemma}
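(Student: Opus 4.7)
The plan is to verify directly that the composite map has the two defining properties of a right covering: discrete fibres, and unique lifts against $\Lambda^i\ord{k} \hookrightarrow \Delta\ord{k}$ for all $0 < i \leq k$. First I would unwind the fibre over a point $b \in \strat{B}$: a $k$-simplex there is a stratified map $\DeltaStrat{k} \to \sing(\strat{X})$ whose composite with $\xi$ is constant at $b$, so it factors through $\sing(\xi^{-1}(b))$. Because $\xi^{-1}(b)$ is a closed $1$-mesh, its singular set is finite and discrete, so the fibre simplicial set is discrete.

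The geometric heart of the proof is the $k = 1$, $i = 1$ lifting. Given an exit path $\gamma : \DeltaStrat{1} \to \strat{B}$ and $s' \in \sing(\xi^{-1}(\gamma(1)))$, I want a unique exit path $\sigma : \DeltaStrat{1} \to \sing(\strat{X})$ lifting $\gamma$ with $\sigma(1) = s'$. By Lemma~\ref{lem:fct-mesh-closed-1-pullback} the pullback $\gamma^*\xi$ is again a closed $1$-mesh bundle. Local triviality of $\gamma^*\xi$ over the open stratum $(0, 1]$, combined with the rigidity of the order on $\R$ (which rules out monodromy on the linearly ordered set of singular points in each fibre), lets me track $s'$ backward as the graph of a continuous function $f : (0, 1] \to \R$ whose value at $t$ is the position of the tracked singular point in the fibre over $t$.

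The main obstacle is establishing that $f$ has a unique limit as $t \to 0^+$ and that this limit lies in $\sing(\xi^{-1}(\gamma(0)))$. Since $\sing(\gamma^*\strat{X})$ is closed in the total space by Definition~\ref{def:fct-mesh-closed-1-bundle}, every accumulation value of $f$ as $t \to 0^+$ produces a singular point of the fibre over $\gamma(0)$, which is a finite discrete subset of $\R$. If $f$ had two distinct accumulation values $a < c$, then by continuity and the intermediate value theorem $f$ would attain any chosen $r \in (a, c)$ on some sequence $t_n \to 0$; choosing $r$ to be a regular point of the fibre (possible because singular points are isolated in $\R$) would produce an accumulation point $(r, 0)$ of $\sing(\gamma^*\strat{X})$ that fails to be singular, contradicting closedness. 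Hence the limit exists, is singular, and determines the unique extension $\sigma$.

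For higher-dimensional lifting problems against $\Lambda^i\ord{k} \hookrightarrow \Delta\ord{k}$ with $k \geq 2$ and $0 < i \leq k$, the horn always contains the terminal vertex $k$ of $\Delta\ord{k}$, so the prescribed data determines $\tilde\sigma(k) \in \sing(\strat{X})$. Applying the $k=1$ case to each edge of $\DeltaStrat{k}$ ending at $k$ then uniquely determines the lift of every vertex, and the whole $k$-simplex is fixed fibrewise inside the pullback $\sigma^*\xi$; agreement with the prescribed partial lift follows from uniqueness. Equivalently, the assignment $b \mapsto \sing(\xi^{-1}(b))$ together with the limit construction defines a functor $\Exit(\strat{B})^{\op} \to \Set$ whose unstraightening is the given composite, exhibiting it as a right covering map.
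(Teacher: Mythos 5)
Your proposal is correct and follows essentially the same route as the paper's proof: reduce to the fibre over the simplex via pullback, track the prescribed singular point over the open top stratum using the trivialisation and the rigidity of the fibrewise order on $\R$, and then use closedness of $\sing(\strat{X})$ together with the fact that singular points are isolated in each fibre to rule out more than one accumulation value of the tracked height function. Your intermediate-value argument is just a rephrasing of the paper's observation that the closure of the tracked graph would otherwise contain a whole fibrewise interval and hence regular points, and your reduction of the higher horns to the terminal vertex plus the $k=1$ case is the same closure-of-the-lift-over-the-top-stratum argument in slightly different packaging.
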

\begin{proof}

  We need to construct lifts for right horns
  \[
    \begin{tikzcd}
      {\Lambda^i\ord{k}} \ar[r] \ar[d] &
      {\Exit(\sing(\strat{X}))} \ar[d] \\
      {\Delta\ord{k}} \ar[r] \ar[ur, dashed] &
      {\Exit(\strat{B})}
    \end{tikzcd}
  \]
  with $0 < i \leq k$ and show that they are unique.
  By adjunction this is equivalent to constructing unique lifts of the form
  \[
    \begin{tikzcd}
      {\HornStrat{i}{k}} \ar[r] \ar[d] &
      {\strat{X}} \ar[d, "\xi"] \\
      {\DeltaStrat{k}} \ar[r, swap] \ar[ur, dashed] &
      {\strat{B}}
    \end{tikzcd}
  \]
  where the image of $\tau$ lies within $\sing(\strat{X})$.  
  Via base change along $\DeltaStrat{k} \to \strat{B}$ we can reduce to the case that
  $\strat{B} = \DeltaStrat{k}$:
  \[
    \begin{tikzcd}
      {\HornStrat{i}{k}} \ar[r, "\tau"] \ar[d] &
      {\strat{X}} \ar[d, "\xi"] \\
      {\DeltaStrat{k}} \ar[r, "\id", swap] \ar[ur, dashed, "\hat{\tau}"{description}] &
      {\DeltaStrat{k}}
    \end{tikzcd}
  \]
  We let 
  $\tau_k : \HornStrat{i}{k}_k \to \strat{X}$ be the restriction to the $k$th
  stratum.
  Using the trivialisation of $\xi$ over the $k$th stratum $\DeltaStrat{k}_k$ there must be a unique
  lift in the diagram
  \[
    \begin{tikzcd}
      {\HornStrat{i}{k}_k} \ar[r, "\tau_k"] \ar[d] &
      {\strat{X}} \ar[d, "\xi"] \\
      {\DeltaStrat{k}_k} \ar[r, hook] \ar[ur, "\hat{\tau}_k"{description}, hook] &
      {\DeltaStrat{k}}
    \end{tikzcd}
  \]
  For every $b \in \DeltaStrat{k}$ there must be at least one point
  $x \in \cl(\im(\hat{\tau}_k))$
  such that $\xi(x) = b$.
  Suppose that $x_1, x_2 \in \cl(\im(\hat{\tau}_k))$ with $\xi(x_1) = \xi(x_2) = b$
  then every point $x \in \xi^{-1}(b)$ with $x_1 \leq x \leq x_2$ in the fibrewise ordering must also be contained in $\cl(\im(\hat{\tau}_k))$.
  If $x_1 \neq x_2$, a regular stratum would intersect the closure of a singular stratum, which would contradict the assumption that the singular strata form a closed subspace of $\strat{X}$.
  Therefore, $\hat{\tau}_k$ uniquely extends to a lift $\hat{\tau}$ as required.
\end{proof}

\begin{construction}\label{con:fct-mesh-closed-1-sing}
  Let $\xi : \strat{E} \to \strat{B}$ be a closed $1$-mesh bundle.
  Lemma~\ref{lem:fct-mesh-closed-1-sing-covering} implies that there is a pullback square of simplicial sets
  \[
    \begin{tikzcd}
      {\Exit(\sing(\strat{E}))} \ar[r] \ar[d] \pullbackcorner &
      {\FinSet_*^\op} \ar[d] \\
      {\Exit(\strat{B})} \ar[r] &
      {\FinSet^\op}
    \end{tikzcd}
  \]
  where $\FinSet_* \to \FinSet$ is the functor from pointed finite sets to finite sets
  which forgets the distinguished point.
  The functor $\Exit(\strat{B}) \to \FinSet^\op$ sends a point $b \in \strat{B}$ in the base space to the set of singular points in the fibre $\xi^{-1}(b)$.  
  The fibres of $\xi$ are each embedded into $\R$ and thus inherit a canonical order.
  Because singular strata can not intersect each other
  and each fibre has at least one singular stratum,
  we obtain a pullback square
  \[
    \begin{tikzcd}
      {\Exit(\sing(\strat{E}))} \ar[r] \ar[d] \pullbackcorner &
      {\FinOrd_*^\op} \ar[d] \\
      {\Exit(\strat{B})} \ar[r] &
      {\FinOrd^\op}
    \end{tikzcd}
  \]  
  We denote the functor $\Exit(\strat{B}) \to \FinOrd^\op$ by $\sing(\xi)$.
\end{construction}

\begin{definition}\label{def:fct-mesh-closed-1-cat}
  We denote by $\BMeshClosed{1}$ the simplicial set whose $k$-simplices
  consist of a closed $1$-mesh bundle $\xi : \strat{X} \to \DeltaStrat{k}$.
  An order-preserving map $\ord{k'} \to \ord{k}$ acts by pullback along the
  induced map $\DeltaStrat{k'} \to \DeltaStrat{k}$.
  There is a map of simplicial sets $\sing : \BMeshClosed{1} \to \FinOrd$
  which sends a $k$-simplex given by $\xi : \strat{X} \to \DeltaStrat{k}$
  to the composite
  \[
    \begin{tikzcd}
      \Delta\ord{k} \ar[r, hook] &
      \Exit(\DeltaStrat{k}) \ar[r, "\sing(\xi)"] &
      \FinOrd^\op 
    \end{tikzcd}
  \]
\end{definition}

\begin{example}
  A $1$-simplex of $\BMeshClosed{1}$ is a closed $1$-mesh bundle $\xi$ over the
  stratified $1$-simplex $\DeltaStrat{1}$.
  The map of simplicial sets $\sing : \BMeshClosed{1} \to \FinOrd^\op$ then sends
  $\xi$ to a $1$-simplex of $\FinOrd^\op$, i.e.\ an order preserving map between
  finite non-empty ordinals.
  For the closed $1$-mesh bundles $\xi_1$, $\xi_2$ and $\xi_3$ from Example~\ref{ex:fct-mesh-closed-1-bordisms} this order-preserving map turns out as follows:
  \begin{align*}
    \sing(\xi_1) &= \langle 1, 2 \rangle : \ord{1} \to \ord{2} \\
    \sing(\xi_2) &= \langle 0, 0 \rangle : \ord{1} \to \ord{0} \\
    \sing(\xi_3) &= \langle 0, 2 \rangle : \ord{1} \to \ord{2}
  \end{align*}
\end{example}

\begin{proposition}\label{prop:fct-mesh-closed-1-sing-trivial}
  The map of simplicial sets
  $\sing : \BMeshClosed{1} \to \FinOrd^\op$ is a trivial fibration.
 %  Moreover when the closed $1$-mesh bundle over $\partial\DeltaStrat{k}$
 %  classified by $\partial\Delta\ord{k} \to \BMeshClosed{1}$ is piecewise linear,
 %  then the lift $\Delta\ord{k} \to \BMeshClosed{1}$ can be chosen to classify
 %  a closed $1$-mesh bundle that is piecewise linear as well.
  In particular $\BMeshClosed{1}$ is a quasicategory so that
  $\BMeshClosed{1} \simeq \FinOrd^\op$.
\end{proposition}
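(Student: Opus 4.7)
The plan is to show that $\sing : \BMeshClosed{1} \to \FinOrd^\op$ has the right lifting property with respect to each boundary inclusion $\partial\Delta\ord{k} \hookrightarrow \Delta\ord{k}$. This makes it a trivial fibration and hence a categorical equivalence; together with the fact that $\FinOrd^\op$ is a quasicategory, this gives that $\BMeshClosed{1}$ is a quasicategory and establishes the claimed equivalence.

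For $k = 0$, given an ordinal $\ord{n}$, I take the standard closed $1$-mesh on $[0, n] \subseteq \R$ with singular strata $\{0, 1, \ldots, n\}$ and the intervening open intervals as regular strata; this lifts $\ord{n}$ to a $0$-simplex of $\BMeshClosed{1}$.

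For $k \geq 1$, suppose I have a lifting problem with bottom map $\sigma : \Delta\ord{k} \to \FinOrd^\op$ and top map corresponding to a closed $1$-mesh bundle $\xi' : \strat{X}' \to \partial\DeltaStrat{k}$ whose $\sing$-image is the restriction of $\sigma$ to $\partial\Delta\ord{k}$. I construct the extension in two steps. First, the combinatorial data is forced: running Construction~\ref{con:fct-mesh-closed-1-sing} in reverse, I pull back $\FinOrd_*^\op \to \FinOrd^\op$ along $\sigma$ to obtain a right covering map $S \to \DeltaStrat{k}$ whose restriction over $\partial\DeltaStrat{k}$ is the singular stratum subspace of $\strat{X}'$. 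Second, I extend the fibrewise-strictly-monotone closed embedding $\sing(\strat{X}') \hookrightarrow \R \times \partial\DeltaStrat{k}$ to a closed embedding $S \hookrightarrow \R \times \DeltaStrat{k}$ that remains strictly monotone on each fibre, using a finite triangulation of $\DeltaStrat{k}$ compatible with the given boundary triangulation and interpolating piecewise linearly. The full bundle $\strat{X} \to \DeltaStrat{k}$ is then recovered by filling in regular strata as the fibrewise open intervals between consecutive singular points. The resulting object is an essentially tame $1$-framed stratified bundle that satisfies the frontier condition and whose framing is a closed embedding, so by Lemma~\ref{lem:fct-mesh-closed-1-pl} it is a closed $1$-mesh bundle, extending $\xi'$ with $\sing$-image $\sigma$.

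The main technical content lies in the extension step: ensuring that the piecewise linear interpolation keeps the fibrewise positions strictly ordered throughout $\DeltaStrat{k}$ and that the result is locally trivial as a stratified fibre bundle. Both are manageable because the ordering of singular strata within each fibre is prescribed by $\sigma$ via the right covering map $S$, and $\R$ has room to accommodate any required fibrewise configuration while matching the specified boundary data; any issues at strata boundaries (where distinct singular points of a neighbouring fibre may come together as dictated by a non-injective transition in $\sigma$) are already accounted for by the covering structure of $S$.
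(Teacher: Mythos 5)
Your proposal is correct and follows essentially the same route as the paper: the $k=0$ case by direct construction, and for $k\geq 1$ the combinatorics of the singular locus forced by pulling back the right covering map $\FinOrd_*^\op \to \FinOrd^\op$ along $\sigma$, followed by (piecewise linear) interpolation of the fibrewise heights over the interior of $\DeltaStrat{k}$. The only cosmetic difference is that you make the final verification explicit by citing Lemma~\ref{lem:fct-mesh-closed-1-pl}, where the paper leaves that check implicit.
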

\begin{proof}
  Suppose we have a lifting problem
	\begin{equation}\label{eq:fct-mesh-closed-1-sing-trivial}
	  \begin{tikzcd}
	    {\partial \Delta\ord{k}} \ar[r] \ar[d] &
	    {\BMeshClosed{1}} \ar[d, "\sing"] \\
	    {\Delta\ord{k}} \ar[r] \ar[ur, dashed] &
	    {\FinOrd^\op}
	  \end{tikzcd}
	\end{equation}
	When $k = 0$ the map $\Delta\ord{0} \to \FinOrd^\op$ is a finite non-empty ordinal $\ord{m}$.
	Then we can construct a closed $1$-mesh bundle
	$\xi$ over $\DeltaStrat{0}$ with $\sing(\xi) = \ord{m}$ by arranging
	$m + 1$ singular strata in a line, providing the lift for (\ref{eq:fct-mesh-closed-1-sing-trivial}).
	When $k > 0$ we are given a $k$-simplex $\sigma : \Delta\ord{k} \to \FinOrd^\op$
  and the map $\partial\Delta\ord{k} \to \BMeshClosed{1}$ corresponds to
	a closed $1$-mesh bundle $\xi' : \strat{X}' \to \partial \DeltaStrat{k}$.
  Moreover, the diagram
  \[
    \begin{tikzcd}
      {\Exit(\partial \DeltaStrat{k})} \ar[dr, "\sing(\xi')", swap] &
      {\partial \Delta\ord{k}} \ar[l, hook'] \ar[r, hook] \ar[d] &
      {\Delta\ord{k}} \ar[dl, "\sigma"] \\
      {} &
      {\FinOrd^\op}
    \end{tikzcd}
  \]
  commutes.  
	To extend $\xi'$ to a closed $1$-mesh bundle $\xi : \strat{X} \to \DeltaStrat{k}$
	we specify the heights of the singular strata over the points of $\DeltaStrat{k}$
  compatible with those in $\xi'$.

  Let $x \in \strat{X}'$ be a singular element with $\xi'(x) = k \in \DeltaStrat{k}$ and let $i \in \sing(\xi')(k)$ be its index.  
  The forgetful map $\FinOrd_*^\op \to \FinOrd^\op$ is a right covering map,
  and so we get a unique lift  
	\[
	  \begin{tikzcd}
	    {\{ k \}} \ar[r, "i"] \ar[d, hook] &
	    {\FinOrd_*^\op} \ar[d] \\
	    {\Delta \ord{k}} \ar[r, "\sigma"'] \ar[ur, dashed, "\hat{\sigma}"{description}] &
	    {\FinOrd^\op}
	  \end{tikzcd}
	\]
  Denote by $\partial \hat{\sigma}$ the restriction of $\hat{\sigma}$ to the boundary $\partial\Delta\ord{k}$.
  Then the universal property of the pullback determines a unique map
	\[
	  \begin{tikzcd}[column sep = large]
	    {\partial\Delta\ord{k}} \ar[r, dashed] \ar[d] \ar[rr, bend left, "\partial\hat{\sigma}"'] &
			{\Exit(\sing(\strat{X}))} \ar[r] \ar[d] \pullbackcorner &
	    {\FinOrd_*^\op} \ar[d] \\
			{\partial\Delta\ord{k}} \ar[r, hook] \ar[rr, bend right, "\sing(\xi')"] &
	    {\Exit(\partial \DeltaStrat{k})} \ar[r] &
	    {\FinOrd^\op}
	  \end{tikzcd}
	\]
  and therefore a section $s' : \partial\DeltaStrat{k} \to \strat{X}'$ of $\xi'$ so that the image of $s'$ consists of singular points and $s'(k) = x$.
  By interpolating the heights in the framing over the interior of $\DeltaStrat{k}$, this provides the height of the $i$th singular stratum.
	% Let $i \in \sing(\xi')(k)$ be the index of a singular stratum.
	% Because $\FinOrd_*^\op \to \FinOrd^\op$ is a right covering map
	% we get a unique lift
	% \[
	%   \begin{tikzcd}
	%     {\{ k \}} \ar[r, "i"] \ar[d, hook] &
	%     {\FinOrd_*^\op} \ar[d] \\
	%     {\Delta \ord{k}} \ar[r] \ar[ur, dashed] &
	%     {\FinOrd^\op}
	%   \end{tikzcd}
	% \]
	% Then the universal property of the pullback determines a unique map
	% \[
	%   \begin{tikzcd}[column sep = large]
	% 		\partial \Delta\ord{k} \ar[drr, bend left = 15] \ar[ddr, bend right = 15] \ar[dr, dashed]
	% 		&[-20pt] \\
	%     &
	% 		{\Exit(\sing(\strat{X}))} \ar[r] \ar[d] \pullbackcorner &
	%     {\FinOrd_*^\op} \ar[d] \\
	% 		&
	%     {\Exit(\partial \DeltaStrat{k})} \ar[r] &
	%     {\FinOrd^\op}
	%   \end{tikzcd}
	% \]
	% We can then extend the map $s: \partial \DeltaStrat{k} \to \sing(\strat{E})$
	% to a map $s' : \DeltaStrat{k} \to \stratForget{B} \times \R$ by linear interpolation.
	% This provides the heights for the singular strata over the interior of $\DeltaStrat{k}$.
\end{proof}

% \begin{cor}
%   Let $\xi : \strat{X} \to \DeltaStrat{k}$ be a closed $1$-mesh bundle.
%   Then $\xi$ is equivalent in $\BMeshClosed{1}$ to a closed $1$-mesh bundle
%   $\xi' : \strat{X}' \to \DeltaStrat{k}$ so that the framing
%   $\framing(\xi') \hookrightarrow \DeltaTop{k} \times \R$ restricts to a
%   piecewise linear map on the subspace of singular points.
%   In particular $\strat{X}$ is triangulable.
% \end{cor}

\begin{remark}
  The simplicial set $\BMeshClosed{1}$ is not the nerve of a $1$-category,
  but by Proposition~\ref{prop:fct-mesh-closed-1-sing-trivial} it is equivalent
  as an $\infty$-category to the $1$-category $\FinOrd^\op$.
  This raises the question if using quasicategories and $\infty$-categories
  at this point is not needlessly extravagant;
  algebraically we could just as well quotient closed $1$-mesh bundles by isotopy
  to obtain an ordinary $1$-category.
  We avoid the quotient to emphasise that the map $\sing : \BMeshClosed{1} \to \FinOrd^\op$
  bridges between geometry and combinatorics, as a $k$-simplex of $\BMeshClosed{1}$ is a concrete geometrical object.
  Over the course of this chapter we will extend this bridge to combinatorially capture higher-dimensional geometry.
  For our theory of manifold diagrams in~\S\ref{sec:d} we will then use meshes
  to explore geometries that are not homotopically trivial.
  We found the framework of quasicategories to be the most convenient to formally
  capture these ideas.
\end{remark}

The simplices of the simplicial set $\BMeshClosed{1}$ are closed $1$-mesh bundles
over the stratified simplices.
In general, we care about closed $1$-mesh bundles over more general stratified base spaces.
Via the following construction, each closed $1$-mesh bundle over an arbitrary stratified space
$\strat{B}$ induces a map from $\Exit(\strat{B})$ to $\BMeshClosed{1}$.

\begin{construction}\label{con:fct-mesh-closed-1-classify-map}
  Suppose that $\strat{B}$ is a stratified space and
  $\xi : \strat{X} \to \strat{B}$ is a closed $1$-mesh bundle.
  Then for any stratified $k$-simplex $\sigma : \DeltaStrat{k} \to \strat{B}$
  we can take the pullback of $\xi$ along $\sigma$ to obtain a closed $1$-mesh bundle $\xi_\sigma$:
  \[
    \begin{tikzcd}
      {\strat{X}_\sigma} \ar[r] \ar[d, "\xi_\sigma", swap] \pullbackcorner &
      {\strat{X}} \ar[d, "\xi"] \\
      {\DeltaStrat{k}} \ar[r, "\sigma", swap] &
      {\strat{B}}
    \end{tikzcd}
  \]
  The closed $1$-mesh bundle $\xi_\sigma$ then defines a $k$-simplex of $\BMeshClosed{1}$.
  By sending any stratified simplex $\sigma$ of $\strat{B}$ to $\xi_\sigma$ 
  the closed $1$-mesh bundle $f$ induces a \defn{classifying map}
  \[ \classify{f} : \Exit(\strat{B}) \longrightarrow \BMeshClosed{1}. \]
\end{construction}

In general not every map $\varphi : \Exit(\strat{B}) \to \BMeshClosed{1}$
arises as the classifying map of a closed $1$-mesh bundle over the stratified
space $\strat{B}$, even when $\strat{B}$ is a stratified $k$-simplex $\DeltaStrat{k}$
with $k > 0$.
However, as long as $\strat{B}$ is triangulable, we can ensure that $\varphi$
is equivalent to a classifying map
in the space of functors $\CatInfty(\Exit(\strat{B}), \BMeshClosed{1})$.
We recall from~\S\ref{sec:b-geo-strat} that a stratified space $\strat{B}$ is triangulable
if there exists a simplicial complex $(V, S)$ and a refinement map
$\StratReal{(V, S)} \to \strat{B}$.
To make our arguments easier we will use the alternative characterisation:
a stratified space $\strat{B}$ is triangulable when there
exists a marked poset $S$ so that $\StratRealMark{S} \cong \strat{B}$.

% \begin{construction}
%   Suppose that $\strat{B}$ is a stratified space and
%   $\xi : \strat{X} \to \strat{B}$ is a closed $1$-mesh bundle.
%   Then for any marked $k$-simplex $(\Delta\ord{k}, T)$
%   and map $\sigma : \StratRealMark{\Delta\ord{k}, T} \to \strat{B}$
%   we have a closed $1$-mesh bundle $\xi_\sigma$ obtained by pullback
%   of $\xi$ along $\sigma$.
%   This defines the \defn{marked classifying map}
%   \[ \classifyMark{f} : \ExitMark(\strat{B}) \longrightarrow (\BMeshClosed{1})^\natural. \]
% \end{construction}

% \begin{lemma}
%   Let $S$ be a marked simplicial set so that its underlying simplicial set $S_\flat = \Delta\ord{k}$
%   is the $k$-simplex.
%   Then a map of marked simplicial sets
%   $\varphi : S \to \BMeshClosed{1}^\natural$
%   uniquely determines a pullback square of closed $1$-mesh bundles  
%   \[
%     \begin{tikzcd}
%       {\strat{X}'} \ar[r] \ar[d, "\xi'", swap] \pullbackcorner &
%       {\strat{X}} \ar[d, "\xi"] \\
%       {\DeltaStrat{k}} \ar[r] &
%       {\StratReal{S}}
%     \end{tikzcd}
%   \]
%   where $\xi'$ is the $k$-simplex represented by $\varphi^\flat : \Delta\ord{k} \to \BMeshClosed{1}$.
% \end{lemma}

\begin{lemma}\label{lem:fct-mesh-closed-1-classify-marked}
  Let $S$ be a marked poset and $\varphi : S \to (\BMeshClosed{1})^\natural$
  a map of marked simplicial sets.
  Then there exists a unique closed $1$-mesh bundle
  $\xi$ over $\StratRealMark{S}$ 
  such that the following diagram commutes:
  \[
    \begin{tikzcd}
      {S} \ar[r] \ar[d, hook] &
      {(\BMeshClosed{1})^\natural} \\
      {\ExitMark(\StratRealMark{S})} \ar[ur, dashed, "\classify{\xi}"'] &
      {}
    \end{tikzcd}
  \]
  % such that the map
  % \[
  %   \begin{tikzcd}
  %     {\CatInfty(\Exit(\StratReal{S}), \BMeshClosed{1})} \ar[r, "\cong"] &
  %     {\RelCatI(\ExitMark(\StratRealMark{S}), (\BMeshClosed{1})^\natural)} \ar[r] &
  %     {\RelCatI(S, (\BMeshClosed{1})^\natural)}
  %   \end{tikzcd}
  % \]
  % sends the classifying map $\classify{\xi}$ to $\varphi$.
\end{lemma}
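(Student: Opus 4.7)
The plan is to construct $\xi$ by gluing together the closed $1$-mesh bundles that $\varphi$ prescribes on each simplex of $S$. For each $k$-simplex $\sigma : \Delta\ord{k} \to S$, the map $\varphi$ selects a $k$-simplex of $\BMeshClosed{1}$, i.e. a closed $1$-mesh bundle $\xi_\sigma : \strat{X}_\sigma \to \DeltaStrat{k}$, and the simplicial identities combined with Lemma \ref{lem:fct-mesh-closed-1-pullback} ensure these bundles are compatible under pullback along face and degeneracy operators. Concretely, I would define the total space of $\xi$ as the colimit
\[
  \strat{E} := \colim_\sigma \strat{X}_\sigma
\]
taken in $\Strat$ over the category of simplices of $S$, with the projection $\xi : \strat{E} \to \StratRealMark{S}$ assembled from the $\xi_\sigma$ and the framing assembled from the individual framings $\framing(\xi_\sigma)$.

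The delicate point is descending the colimit from $\StratReal{S_\flat}$ to $\StratRealMark{S}$, so I would next handle the marked edges. Since $\varphi$ is a map of marked simplicial sets, every marked edge $e$ of $S$ is sent to an equivalence in $\BMeshClosed{1}$. By Proposition \ref{prop:fct-mesh-closed-1-sing-trivial} the quasicategory $\BMeshClosed{1}$ is equivalent to $\FinOrd^\op$, in which the only equivalences are identities; hence $\xi_e$ is forced to be a trivial bundle pulled back from $\DeltaStrat{0}$ along the collapse $\DeltaStrat{1} \to \DeltaStrat{0}$. This triviality is precisely what is needed for the identifications that take $\StratReal{S_\flat}$ to $\StratRealMark{S}$ to lift canonically to $\strat{E}$, so that $\xi$ is well-defined on the marked realisation. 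I would then verify that $\xi$ is a closed $1$-mesh bundle using Lemma \ref{lem:fct-mesh-closed-1-pl}: fibres are closed $1$-meshes by construction; the stratified fibre bundle structure is inherited locally from the $\xi_\sigma$; tameness follows from the finite relative triangulation of $\StratRealMark{S}$ induced by $S$; and the frontier condition together with closedness of the framing are local and transfer from each $\xi_\sigma$.

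Uniqueness is essentially formal: any closed $1$-mesh bundle $\xi'$ over $\StratRealMark{S}$ whose classifying map makes the stated diagram commute must pull back to $\xi_\sigma$ along every simplex $\sigma$ of $S$ by Construction \ref{con:fct-mesh-closed-1-classify-map}, and hence coincides with $\xi$ on the colimit presentation of $\StratRealMark{S}$. The main obstacle I expect is the descent step: although compatibility along face maps is automatic from the functoriality of $\varphi$, one must genuinely check that collapsing along marked edges preserves both the fibrewise closed-mesh structure and the closed-embedding property of the framing. This coherence is precisely where the rigidity of $\BMeshClosed{1}$ established in Proposition \ref{prop:fct-mesh-closed-1-sing-trivial} does the heavy lifting, reducing all marked-edge identifications to strict equalities of trivial bundles whose higher-simplex compatibilities are automatic.
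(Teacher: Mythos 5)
Your overall architecture matches the paper's: handle a single marked simplex first, then glue over the flags of $S$, with the rigidity of $\BMeshClosed{1}$ relative to $\FinOrd^\op$ controlling the marked edges. But there is a genuine gap in the marked-edge step. You claim that because the only isomorphisms of $\FinOrd^\op$ are identities, a marked edge $e$ is sent to a bundle $\xi_e$ that is \emph{literally} a trivial bundle pulled back along $\DeltaStrat{1} \to \DeltaStrat{0}$, and you lean on this ``strict equality'' to make the descent automatic. That is false: an equivalence in $\BMeshClosed{1}$ is a closed $1$-mesh bundle over $\DeltaStrat{1}$ whose image under $\sing$ is an identity, i.e.\ one in which the number of singular strata is constant; the framing heights of those strata may still vary along the edge, so $\xi_e$ is only framed-isomorphic to a product, not equal to one. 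What you actually need --- and what the paper uses --- is Lemma~\ref{lem:fct-mesh-closed-1-sing-covering}: when the singular count is preserved, the singular strata extend continuously and uniquely across the edge, which is exactly the local triviality required for $\xi$ to be a stratified fibre bundle over the \emph{coarser} strata of $\StratRealMark{S}$. Without replacing ``strict triviality'' by this continuity argument, your verification of the fibre-bundle condition in Lemma~\ref{lem:fct-mesh-closed-1-pl} does not go through, since each $\xi_\sigma$ is a fibre bundle only over the finely stratified simplex.

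A secondary confusion: passing from $\StratReal{S}$ to $\StratRealMark{S}$ identifies no points --- the underlying space of $\StratRealMark{S}$ is still $\TopReal{S}$, and only the stratifying poset is quotiented. So there is no topological collapse to lift to the total space; the only issue is that the coarsened $\xi$ must be locally trivial over the merged strata, which is the point above. With that correction your construction and uniqueness argument agree with the paper's. The paper verifies closedness of the singular locus directly from the quotient presentation of the total space as $\coprod_\sigma \strat{X}_\sigma$ rather than via Lemma~\ref{lem:fct-mesh-closed-1-pl}; either route works once local triviality over the coarse strata is in hand, though your route additionally requires checking the frontier condition for the glued total space.
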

\begin{proof}
  We first demonstrate this in the case that $S = (\Delta\ord{k}, T)$.  
  A map $\varphi : S \to (\BMeshClosed{1})^\natural$ then induces a
  $k$-simplex of $\BMeshClosed{1}$ and therefore a closed $1$-mesh bundle
  $\xi' : \strat{X}' \to \DeltaStrat{k}$.
  The map $\varphi$ sends marked edges of $S$ to equivalences in $\BMeshClosed{1}$,
  which preserve the number of singular strata.
  By Lemma~\ref{lem:fct-mesh-closed-1-sing-covering} when the number of singular
  strata is preserved, the heights of those singular strata in the fibre vary
  continuously.
  Therefore, $\xi'$ is the refinement of a closed $1$-mesh bundle
  $\xi : \strat{X} \to \StratRealMark{S}$ so that
  the classifying map $\classify{\xi}$ restricts to $\varphi$.

  A marked poset $S$ is the colimit in $\sSetM$ of its marked flags, i.e.\
  the simplices $\sigma : (\Delta\ord{k}, T) \hookrightarrow S$ arising
  from strictly increasing sequences in $S$ with the induced marking.
  For such a marked flag, the composite map
  $\varphi \circ \sigma : (\Delta\ord{k}, T) \to (\BMeshClosed{1})^\natural$
  corresponds by the first part of the proof to a closed $1$-mesh bundle  
  $\xi_\sigma : \strat{X}_\sigma \to \StratRealMark{(\Delta\ord{k}, T)}$.
  The marked stratified realisation functor $\StratRealMark{-}$ is a left adjoint
  and therefore preserves colimits.
  We glue together the closed $1$-mesh bundles $\xi_\sigma$
  for each proper flag $\sigma$ of $S$ to obtain a $1$-framed stratified bundle
  $\xi : \strat{X} \to \StratRealMark{S}$.

  We claim that $\xi$ is a closed $1$-mesh bundle.
  For each $b \in \StratRealMark{S}$ the fibre $\xi^{-1}(b)$ is the fibre
  of the closed $1$-mesh bundle $\xi_\sigma^{-1}(b)$ for some marked flag $\sigma$.
  The colimit that defines $\strat{X}$ is equivalently a quotient of the disjoint union
  $
    \coprod_{\sigma} \strat{X}_\sigma
  $
  ranging over all marked flags. 
  The preimage via the quotient map
  $\coprod_{\sigma} \strat{X}_\sigma \to \strat{X}$
  of $\sing(\strat{X})$ is the disjoint union
  $\coprod_{\sigma} \sing(\strat{X}_\sigma)$
  and therefore closed.
  Therefore, $\sing(\strat{X})$ is closed in $\strat{X}$ with the quotient topology.
\end{proof}

\begin{proposition}\label{prop:fct-mesh-closed-1-classify}
  Let $\strat{B}$ be a triangulable stratified space and
  $I : \strat{A} \hookrightarrow \strat{B}$
  a closed triangulable stratified subspace.
  Suppose that $\varphi : \Exit(\strat{B}) \to \BMeshClosed{1}$ is a map
  and $\xi'$ a closed $1$-mesh bundle over $\strat{A}$ such that
  $\classify{\xi'} = \varphi \circ I$.
  Then there exists a closed $1$-mesh bundle $\xi : \strat{X} \to \strat{B}$
  together with an equivalence
  $\varphi \simeq \classify{\xi}$ 
  relative to $\classify{\xi'}$.
\end{proposition}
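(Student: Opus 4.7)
The plan is to reduce Proposition~\ref{prop:fct-mesh-closed-1-classify} to the already-established Lemma~\ref{lem:fct-mesh-closed-1-classify-marked} by choosing a compatible triangulation, and then promote the strict agreement on the triangulation to an equivalence of classifying maps via fibrancy of $\BMeshClosed{1}$.

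First I would choose a triangulation adapted to the inclusion: since $\strat{A} \hookrightarrow \strat{B}$ is a closed inclusion of triangulable stratified spaces, pick a marked poset $S$ together with a downward closed sub-marked-poset $S_0 \subseteq S$ and an isomorphism $\StratRealMark{S} \cong \strat{B}$ that restricts to $\StratRealMark{S_0} \cong \strat{A}$. Precomposing $\varphi$ with the unit inclusion $S \hookrightarrow \ExitMark(\StratRealMark{S}) \cong \ExitMark(\strat{B})$ then gives a map of marked simplicial sets $\varphi_S : S \to (\BMeshClosed{1})^\natural$, noting that marked edges of $S$ are exit paths internal to a single stratum and hence land in $\core(\BMeshClosed{1})$.

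Next I would feed $\varphi_S$ into Lemma~\ref{lem:fct-mesh-closed-1-classify-marked} to produce a closed $1$-mesh bundle $\xi : \strat{X} \to \strat{B}$ together with the identity $\classify{\xi}|_S = \varphi_S$. The hypothesis $\classify{\xi'} = \varphi \circ I$ forces $\varphi_S|_{S_0}$ to coincide strictly with the marked classifying map of $\xi'$, so the uniqueness part of Lemma~\ref{lem:fct-mesh-closed-1-classify-marked} applied to $S_0$ gives $I^*\xi = \xi'$ on the nose.

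Finally, I would produce the required equivalence $\varphi \simeq \classify{\xi}$ relative to $\classify{\xi'}$. By construction the two maps $\varphi, \classify{\xi} : \Exit(\strat{B}) \to \BMeshClosed{1}$ agree strictly on $S \cup_{S_0} \Exit(\strat{A}) \hookrightarrow \Exit(\strat{B})$. This inclusion is an acyclic cofibration in the Joyal model structure (it is built by pushout from the acyclic cofibrations $S_0 \hookrightarrow \Exit(\strat{A})$ and $S \hookrightarrow \Exit(\strat{B})$, the units of $\StratRealMark{-} \dashv \ExitMark$), and $\BMeshClosed{1}$ is fibrant by Proposition~\ref{prop:fct-mesh-closed-1-sing-trivial}. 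Hence the lifting problem
\[
  \begin{tikzcd}[column sep=small]
    {(S \cup_{S_0} \Exit(\strat{A})) \times \Delta\ord{1} \,\cup\, \Exit(\strat{B}) \times \partial\Delta\ord{1}} \ar[r] \ar[d, hook] & {\BMeshClosed{1}} \\
    {\Exit(\strat{B}) \times \Delta\ord{1}} \ar[ur, dashed] &
  \end{tikzcd}
\]
has a solution, which yields the desired natural equivalence restricting to the identity on $\Exit(\strat{A})$. The main obstacle I anticipate is bookkeeping around the marking: ensuring that the marked structures on $S$, $S_0$, and the classifying maps are compatible, so that Lemma~\ref{lem:fct-mesh-closed-1-classify-marked} can be invoked with exactly the data required and that the pushout inclusion is genuinely an acyclic cofibration.
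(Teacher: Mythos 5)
Your proposal is correct and follows essentially the same route as the paper: reduce to a marked-poset presentation $R \hookrightarrow S$ of the closed inclusion, apply Lemma~\ref{lem:fct-mesh-closed-1-classify-marked} to the restriction of $\varphi$ along the unit to produce $\xi$ extending $\xi'$, and then use that the inclusion $\ExitMark(\StratRealMark{R}) \sqcup_R S \hookrightarrow \ExitMark(\StratRealMark{S})$ is an acyclic cofibration with fibrant target $\BMeshClosed{1}$ to conclude that $\varphi$ and $\classify{\xi}$, being two lifts of the same data, are equivalent relative to $\classify{\xi'}$. The only cosmetic difference is that you spell out the homotopy-extension lifting square explicitly where the paper simply cites contractibility of the space of lifts.
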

\begin{proof}
  It suffices to address the case where $I = \StratRealMark{i}$ for an
  inclusion $i : R \hookrightarrow S$ of marked posets.  
  By Lemma~\ref{lem:fct-mesh-closed-1-classify-marked}
  there exists a unique closed $1$-mesh bundle $\xi : \strat{X} \to \StratRealMark{S}$
  such that $\classifyMark{\xi}$ fits into the diagram
  \[
    \begin{tikzcd}
    	{\ExitMark(\StratRealMark{R})} &&& {\ExitMark(\StratRealMark{S})} \\
    	& R & S \\
    	{(\BMeshClosed{1})^\natural} &&& {(\BMeshClosed{1})^\natural}
    	\arrow["i"', hook, from=2-2, to=2-3]
    	\arrow["{\eta_R}"{description}, hook', from=2-2, to=1-1]
    	\arrow[from=2-2, to=3-1]
    	\arrow["{\eta_S}"{description}, hook, from=2-3, to=1-4]
    	\arrow["{\varphi \circ \eta_S}"{description}, from=2-3, to=3-4]
    	\arrow["{\ExitMark(I)}", hook, from=1-1, to=1-4]
    	\arrow["{\classifyMark{\xi'}}"', from=1-1, to=3-1]
    	\arrow["{\classifyMark{\xi}}", dashed, from=1-4, to=3-4]
    	\arrow[equal, from=3-1, to=3-4]
    \end{tikzcd}
  \]  
  Since both $\classify{\xi}$ and $\varphi$ are solutions to the lifting problem
  \[
    \begin{tikzcd}
      {\ExitMark(\StratRealMark{R}) \sqcup_R S} \ar[d, hook, "\simeq"'] \ar[r] &
      {(\BMeshClosed{1})^\natural} \\
      {\ExitMark(\StratRealMark{S})} \ar[ur, dashed]
    \end{tikzcd}
  \]
  there exists an equivalence $\classify{\xi} \simeq \varphi$ 
  relative to $\classify{\xi'}$.
\end{proof}

\begin{proposition}\label{prop:fct-mesh-closed-1-pl}
  Let $\strat{B}$ be a triangulable stratified space and
  $I : \strat{A} \hookrightarrow \strat{B}$
  a closed triangulable stratified subspace.
  Suppose that $\xi : \strat{X} \to \strat{B}$ is a closed $1$-mesh bundle
  such that $I^*\xi$ is piecewise linear.
  Then there exists a piecewise linear closed $1$-mesh bundle $\xi'$ over $\strat{B}$, together with an equivalence $\classify{\xi} \simeq \classify{\xi'}$ that restricts to the identity on $\classify{I^*\xi}$.
\end{proposition}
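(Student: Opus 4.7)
The plan is to reduce existence of a PL representative to matching combinatorial invariants, and then build such a representative cell-by-cell. By Proposition~\ref{prop:fct-mesh-closed-1-sing-trivial} the map $\sing : \BMeshClosed{1} \to \FinOrd^\op$ is a trivial fibration, so any two closed $1$-mesh bundles over $\strat{B}$ whose classifying maps descend to the same functor $\Exit(\strat{B}) \to \FinOrd^\op$ have equivalent classifying maps; moreover the equivalence can be chosen relative to any subcomplex on which the two bundles literally coincide, using the relative lifting property of $\sing$ against the cofibration $\Exit(\strat{A}) \hookrightarrow \Exit(\strat{B})$. So it will suffice to construct a PL closed $1$-mesh bundle $\xi'$ over $\strat{B}$ with $I^*\xi' = I^*\xi$ and $\sing(\xi') = \sing(\xi)$.

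For the construction, I would choose a compatible triangulation, writing $\strat{B} \cong \StratRealMark{S}$ and $\strat{A} \cong \StratRealMark{R}$ for marked posets $R \subseteq S$, subdivided finely enough that the right covering of Lemma~\ref{lem:fct-mesh-closed-1-sing-covering} trivialises over each closed cell. Over $\strat{A}$ the bundle $I^*\xi$ already provides a PL height function for every singular stratum of every fibre, and the task becomes to extend these height functions PL-ly across $\strat{B}$. I would work by induction on the skeleton of $S$ relative to $R$: at each new vertex $v \in S \setminus R$ freely pick strictly increasing real values as the heights of the singular strata of the fibre over $v$; on a higher-dimensional cell $\sigma$ of $S$ not contained in $R$, the heights are specified on $\partial\sigma$ by the previous step and can be extended to the interior by independent barycentric linear interpolation along each sheet of the covering.

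Since linear interpolation on a simplex preserves the strict order inherited from its boundary, the heights remain pairwise distinct throughout $\sigma$, and the assembled PL data defines a $1$-framed stratified bundle $\xi'$ whose fibres are closed $1$-meshes of the prescribed size, whose framing is a closed embedding, and which is tame. Each stratum of the total space is PL and connected, so $\strat{X}'$ satisfies the frontier condition, and Lemma~\ref{lem:fct-mesh-closed-1-pl} then certifies that $\xi'$ is a closed $1$-mesh bundle. By construction $I^*\xi' = I^*\xi$ and $\sing(\xi') = \sing(\xi)$, and the trivial-fibration argument above supplies the equivalence $\classify{\xi} \simeq \classify{\xi'}$ restricting to the identity on $\classify{I^*\xi}$.

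The main obstacle I expect is the inductive verification that the PL data actually assembles into a closed $1$-mesh bundle — in particular that the stratified fibre-bundle condition holds across cell faces where the covering index changes, and that the subspace of singular points is closed across such transitions. This should be handled by trivialising $\xi$ along exit paths of $\strat{B}$ to coherently transport the PL heights, using the pullback-compatibility of the right covering from Lemma~\ref{lem:fct-mesh-closed-1-sing-covering} to ensure that the extensions across adjacent cells agree on their common faces and that the closure of a singular stratum at a lower-dimensional face consists of unions of singular strata in the neighbouring fibres.
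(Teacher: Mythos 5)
Your proposal is correct and follows essentially the same route as the paper: both reduce the problem to the trivial fibration $\sing : \BMeshClosed{1} \to \FinOrd^\op$ (so that any PL bundle with the same combinatorial invariant and the same restriction to $\strat{A}$ has an equivalent classifying map, relative to $\classify{I^*\xi}$), and both build the PL representative by choosing a compatible triangulation and extending the singular-strata heights simplex by simplex via linear interpolation, as in the proof of Proposition~\ref{prop:fct-mesh-closed-1-sing-trivial}. The only cosmetic difference is that you construct the bundle geometrically and certify it with the recognition Lemma~\ref{lem:fct-mesh-closed-1-pl}, whereas the paper phrases the construction as a lift of marked simplicial sets and then invokes Lemma~\ref{lem:fct-mesh-closed-1-classify-marked} to produce the bundle.
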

\begin{proof}
  We again restrict to the case that $I = \StratRealMark{i}$ for an inclusion
  $i : R \hookrightarrow S$ of marked posets.
  We then have a diagram of marked simplicial sets
  \[
    \begin{tikzcd}[column sep = large, row sep = large]
      {R} \ar[hook, r, "\eta_R"] \ar[d, "i"', hook] \pullbackcorner &
      {\ExitMark(\StratRealMark{R})}
      \ar[d, "\ExitMark(I)"{description}, hook] \ar[r, "\classify{I^*\xi }"] &
      {(\BMeshClosed{1})^\natural} \ar[d, equal] \\
      {S} \ar[r, "\eta_S"', hook] &
      {\ExitMark(\StratRealMark{S})} \ar[r, "\classify{\xi}"'] &
      {(\BMeshClosed{1})^\natural}
    \end{tikzcd}
  \]
  By the assumption that the restriction $I^* \xi$ is piecewise linear,
  the composite $\classify{I^* \xi} \circ \eta_R : R \to (\BMeshClosed{1})^\natural$
  lands within the simplicial subset of $\BMeshClosed{1}$ containing only piecewise linear
  closed $1$-mesh bundles.
  The map $\classify{\xi} \circ \eta_S$ is a solution to the lifting problem
  \[
    \begin{tikzcd}[column sep = huge]
      {R} \ar[r, "\classify{I^* \xi} \circ \eta_R"] \ar[d, hook, "i"'] &
      {(\BMeshClosed{1})^\natural} \ar[d, "\sing^\natural"] \\
      {S} \ar[r, "\sing \circ \classify{\xi} \circ \eta_S"'] \ar[ur, dashed] &
      {(\FinOrd^\op)^\natural}
    \end{tikzcd}
  \]
  We can construct another solution $\varphi : S \to (\BMeshClosed{1})^\natural$
  to this lifting problem:
  Using the technique used in the proof of Proposition~\ref{prop:fct-mesh-closed-1-sing-trivial},
  the map $R \to (\BMeshClosed{1})^\natural$ can be extended simplex by simplex
  to a map $S \to (\BMeshClosed{1})^\natural$ using linear interpolation of the
  heights of the singular strata.
  Since the space of lifts is contractible, we have an equivalence of lifts
  $\classify{\xi} \circ \eta_S \simeq \varphi$.
  Now using Lemma~\ref{lem:fct-mesh-closed-1-classify-marked}
  we obtain a closed $1$-mesh bundle $\xi'$ and an equivalence $\classify{\xi} \simeq \classify{\xi'}$
  that restricts to the identity on $\classify{I^* \xi}$.
  The closed $1$-mesh bundle $\xi'$ is piecewise linear through the way we constructed
  $\varphi$ via linear interpolation.
\end{proof}

\begin{cor}\label{cor:fct-mesh-closed-1-total-triangulable}
  Let $\xi : \strat{X} \to \strat{B}$ be a closed $1$-mesh bundle so that
  $\strat{B}$ is triangulable. Then $\strat{X}$ is triangulable as well.
\end{cor}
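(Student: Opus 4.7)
The strategy is to reduce to the piecewise linear case via Proposition~\ref{prop:fct-mesh-closed-1-pl} and then transport triangulability back to the original bundle. Applying that proposition with $\strat{A} = \emptyset$ yields a piecewise linear closed $1$-mesh bundle $\xi' : \strat{X}' \to \strat{B}$ together with an equivalence $\classify{\xi} \simeq \classify{\xi'}$ in $\CatInfty(\Exit(\strat{B}), \BMeshClosed{1})$. Fixing a triangulation $\StratRealMark{S} \cong \strat{B}$, the total space $\strat{X}'$ is triangulable: over each simplex of $S$, the PL singular sections together with the top and bottom edges of the mesh partition $\R \times \DeltaStrat{k}$ into finitely many PL cells, which can be simplicially subdivided; the PL compatibility of $\xi'$ ensures these subdivisions glue along face inclusions.

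It remains to transport triangulability back to $\strat{X}$, for which it suffices to produce a framed stratified isomorphism $\strat{X} \cong \strat{X}'$ over $\strat{B}$. Over each non-degenerate simplex $\sigma$ of $S$, the pulled-back bundles $\sigma^*\xi$ and $\sigma^*\xi'$ have the same number and ordering of singular strata, as follows from the equivalence $\classify{\xi} \simeq \classify{\xi'}$ combined with the fact that $\sing$ is a trivial fibration (Proposition~\ref{prop:fct-mesh-closed-1-sing-trivial}) and that singular strata cannot cross (Lemma~\ref{lem:fct-mesh-closed-1-sing-covering}). These strata are given by continuous sections $\DeltaStrat{k} \to \R$, and the equivalence of classifying maps supplies, for each simplex, an isotopy interpolating between the sections of $\xi$ and those of $\xi'$. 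One then assembles these local isotopies into a global framed stratified isomorphism by induction on the skeleta of $S$, using that the space of height interpolations with prescribed boundary on $\partial \DeltaStrat{k}$ is contractible.

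The main obstacle is this final gluing step: converting the coherent homotopy witnessing $\classify{\xi} \simeq \classify{\xi'}$ into a concrete stratified homeomorphism requires rectifying the local isotopies so that they agree along shared faces of the triangulation of $\strat{B}$. The technique is parallel in spirit to the linear-interpolation argument used in the proof of Proposition~\ref{prop:fct-mesh-closed-1-pl}, but applied at the level of bundle isomorphisms rather than classifying maps. Once $\strat{X} \cong \strat{X}'$ is established, the triangulation of $\strat{X}'$ transports to one of $\strat{X}$ and the corollary follows.
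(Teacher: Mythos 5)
Your proposal is correct and matches the route the paper evidently intends: the corollary is stated as an immediate consequence of Proposition~\ref{prop:fct-mesh-closed-1-pl}, and you supply the (genuinely needed) missing detail that the equivalence $\classify{\xi}\simeq\classify{\xi'}$ forces $\sing(\xi)=\sing(\xi')$ (as $\FinOrd^\op$ has no non-identity isomorphisms), so the two total spaces are identified by a fibrewise order-preserving interpolation of singular heights, transporting the triangulation of the PL model back to $\strat{X}$. The skeleton-by-skeleton rectification you describe works, though it can be done in one step globally, since the singular sections vary continuously over all of $\strat{B}$ and the fibrewise piecewise-affine matching of heights is automatically continuous in the base.
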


\begin{lemma}\label{lem:fct-mesh-closed-1-bundle-cat-fib}
  Let $\xi : \strat{X} \to \strat{B}$ be a closed $1$-mesh bundle.
  Then the induced map
  \[
    \Exit(\xi) : \Exit(\strat{X}) \longrightarrow \Exit(\strat{B})
  \]
  is a categorical fibration.
  In particular when $\strat{B}$ is a fibrant stratified space then so is $\strat{X}$.
\end{lemma}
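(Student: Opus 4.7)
The plan is to verify the two defining properties of a categorical fibration in the Joyal model structure separately: that $\Exit(\xi)$ is an inner fibration, and that it has the right lifting property against the inclusion $\Delta\ord{0} \hookrightarrow E\ord{1}$. By the adjunction $\StratReal{-} \dashv \Exit$, each such lifting problem for $\Exit(\xi)$ in $\sSet$ corresponds to a lifting problem in $\Strat$ against $\xi$ with stratified realisations in the place of simplicial sets. Using Lemma~\ref{lem:fct-mesh-closed-1-pullback}, which asserts that closed $1$-mesh bundles are stable under base change, I would pull back $\xi$ along the bottom map of each stratified lifting problem, reducing in each case to the situation where the base of $\xi$ equals the stratified realisation of the target.

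For the inner fibration property, the reduction gives $\xi : \strat{X} \to \DeltaStrat{k}$ with $0 < i < k$, and a section $\tau : \HornStrat{i}{k} \to \strat{X}$ which I need to extend over $\DeltaStrat{k}$. The geometric input is that over each stratum $\DeltaStrat{k}_j$ the bundle is trivial with fibre a closed $1$-mesh, while the transitions across strata are governed by the right covering map of Lemma~\ref{lem:fct-mesh-closed-1-sing-covering} applied to the singular strata. I would build the extension stratum by stratum in $\DeltaStrat{k}$: on each stratum the image of $\tau$ already selects a specific stratum family of $\strat{X}$ that the extension must land in, and on the missing open $i$-face and interior the choice of stratum is forced by compatibility with the surrounding horn data and the combinatorics of $\sing(\xi)$. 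Within each selected stratum family, the precise fibre coordinate is then interpolated continuously, which works because the missing region is contractible and the relevant strata of $\strat{X}$ are connected. This inner fibration argument is the main obstacle, requiring careful bookkeeping because the combinatorial data of $\sing(\xi)$ can force singular strata of neighbouring fibres to merge along transitions between strata of $\DeltaStrat{k}$, and the extension must respect these identifications while remaining continuous across the missing face and interior.

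For lifting against $\Delta\ord{0} \hookrightarrow E\ord{1}$, equivalently for lifting equivalences, I would after reduction consider an exit path $\gamma : \DeltaStrat{1} \to \strat{B}$ representing an equivalence in $\Exit(\strat{B})$ together with a choice of lift $x_0$ of its source. Equivalences in the exit path category are represented by paths staying within a single stratum of $\strat{B}$, so $\gamma$ factors through a single stratum; over any contractible subset of a single stratum the stratified fibre bundle $\xi$ is trivial, so the desired lift is the constant section through $x_0$ in this trivialisation, which itself stays within a single stratum of $\strat{X}$ and hence represents an equivalence in $\Exit(\strat{X})$. The concluding statement, that $\strat{X}$ is fibrant whenever $\strat{B}$ is, then follows from the general fact that categorical fibrations preserve fibrant codomains, since $\strat{X} \to \terminal$ factors through $\xi$.
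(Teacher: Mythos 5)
Your overall strategy coincides with the paper's: reduce by base change to a bundle over $\DeltaStrat{k}$, use the right covering map of Lemma~\ref{lem:fct-mesh-closed-1-sing-covering} to control the singular locus, fill the remaining inner horns by interpolation in the fibre direction, and obtain isofibrancy from the local trivialisations of the stratified fibre bundle over the strata of $\strat{B}$; the concluding deduction that $\strat{X}$ is fibrant when $\strat{B}$ is follows in both cases from closure of fibrations under composition.

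The one place where your argument has a genuine gap is the regular case of the inner horn filling. You assert that once the stratum of $\strat{X}$ in which the extension must land is determined, "the precise fibre coordinate is then interpolated continuously, which works because the missing region is contractible and the relevant strata of $\strat{X}$ are connected." Contractibility and connectedness are not the relevant points: the danger is that a naive interpolation of the framing coordinate crosses a singular stratum whose height varies over the base, so the interpolated section fails to be stratum-preserving. The paper's device is to first apply Lemma~\ref{lem:fct-mesh-closed-1-sing-covering} to the two singular points adjacent from above and below to the regular stratum containing $\tau(k)$, producing singular sections $\sigma_0 \leq \tau \leq \sigma_1$ defined over all of $\DeltaStrat{k}$; one then cuts off everything below $\sigma_0$ and above $\sigma_1$ and rescales, so that the regular stratum in question has closure equal to all of $\strat{X}$ and linear interpolation is automatically stratum-preserving. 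Without this sandwiching-and-rescaling step (or an equivalent normalisation of the framing), "interpolate within the selected stratum" is not justified. The remainder of your proposal --- the unique lift in the singular case via the covering property, and the lifting of equivalences by pulling the bundle back to a path inside a single stratum and taking the constant section --- matches the paper and is fine.
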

\begin{proof}
  To show that the map $\Exit(\xi)$ is an inner fibration,
  we need to construct lifts for inner horns
  \[
    \begin{tikzcd}
      {\Lambda^i\ord{k}} \ar[r] \ar[d] &
      {\Exit(\strat{X})} \ar[d, "\Exit(\xi)"] \\
      {\Delta\ord{k}} \ar[r] \ar[ur, dashed] &
      {\Exit(\strat{B})}
    \end{tikzcd}
  \]
  with $0 < i < k$.
  By adjunction this is equivalent to solving lifting problems of the form:
  \[
    \begin{tikzcd}
      {\HornStrat{i}{k}} \ar[r] \ar[d] &
      {\strat{X}} \ar[d, "\xi"] \\
      {\DeltaStrat{k}} \ar[r, swap] \ar[ur, dashed] &
      {\strat{B}}
    \end{tikzcd}
  \]
  Then by taking pullbacks we can reduce to the case that $\strat{B} = \DeltaStrat{k}$:
  \begin{equation}\label{eq:fct-mesh-closed-1-bundle-cat-fib:reduced}
    \begin{tikzcd}
      {\HornStrat{i}{k}} \ar[r, "\tau"] \ar[d] &
      {\strat{X}} \ar[d, "\xi"] \\
      {\DeltaStrat{k}} \ar[r, "\id", swap] \ar[ur, dashed] &
      {\DeltaStrat{k}}
    \end{tikzcd}
  \end{equation}
  When $\tau(k)$ is singular, then $\tau$ factors through $\sing(\strat{X})$,
  and we get a unique lift by Lemma~\ref{lem:fct-mesh-closed-1-sing-covering}.
  Now suppose instead that $\tau(k)$ is regular.
  There must exist singular points $s_0, s_1 \in \strat{X}$ with $\xi(s_0) = \xi(s_1) = k$
  which are adjacent to the stratum of $\tau(k)$ from above and below.
  By Lemma~\ref{lem:fct-mesh-closed-1-sing-covering} we then get unique lifts
  \[
    \begin{tikzcd}
      {\{ k \}} \ar[r, "s_0"] \ar[d] &
      {\strat{X}} \ar[d, "\xi"] \\
      {\DeltaStrat{k}} \ar[r, "\id", swap] \ar[ur, "\sigma_0"{description}] &
      {\DeltaStrat{k}}
    \end{tikzcd}
    \qquad
    \begin{tikzcd}
      {\{ k \}} \ar[r, "s_1"] \ar[d] &
      {\strat{X}} \ar[d, "\xi"] \\
      {\DeltaStrat{k}} \ar[r, "\id", swap] \ar[ur, "\sigma_1"{description}] &
      {\DeltaStrat{k}}
    \end{tikzcd}
  \]
  We have $\sigma_0(x) \leq \tau(x) \leq \sigma_1(x)$ for all $x \in \HornStrat{i}{k}$
  in the fibrewise ordering.
  By cutting off the strata below $\sigma_0$ and above $\sigma_1$
  followed by rescaling,
  we can reduce to the case that $\unstrat(\strat{X}) = \intCC{0, 1} \times \DeltaTop{k}$
  and that $\strat{X}$ has a single regular stratum over $k \in \DeltaStrat{k}$.
  But then the closure of that stratum must be the entirety of $\strat{X}$.
  We can therefore safely provide the filler for~(\ref{eq:fct-mesh-closed-1-bundle-cat-fib:reduced}) by linear interpolation.

  To see that $\Exit(\xi)$ is a categorical fibration it now remains to show that
  it is an isofibration.
  This followed from $\xi$ being a stratified fibration by using local trivialisations over any stratum of $\strat{B}$.
\end{proof}

\begin{lemma}\label{lem:fct-mesh-closed-1-recognise-fibration}
  Let $\xi : \strat{X} \to \strat{B}$ be a $1$-framed stratified bundle
  that satisfies:
  \begin{enumerate}[label=(\arabic*)]
    \item $\xi$ is a stratified fibre bundle.
    \item For every $b \in \strat{B}$ the fibre $\xi^{-1}(b)$ is a closed $1$-mesh.
    \item $\Exit(\xi)$ is an inner fibration.
  \end{enumerate}
  Then $\xi$ is a closed $1$-mesh bundle.
\end{lemma}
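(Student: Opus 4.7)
The plan is to verify the two conditions of Definition~\ref{def:fct-mesh-closed-1-bundle} that are not already among the hypotheses: (3') that $\sing(\strat{X})$ is closed in $\strat{X}$ and (4') that $\framing(\xi)$ is a closed embedding. The bulk of the work is (3'); condition (4') will then follow from (3') together with the stratified fibre bundle hypothesis, because every fibre is a compact interval in $\R$ whose endpoints are extreme singular points which vary continuously over $\strat{B}$ once the singular locus is known to be closed.

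First I would reduce to the case $\strat{B} = \DeltaStrat{k}$. Closedness of $\sing(\strat{X})$ in $\strat{X}$ is a property that can be tested after pulling back along stratified simplices $\sigma : \DeltaStrat{k} \to \strat{B}$, because $\strat{X}$ carries the $\Delta_s$-topology. The hypotheses~(1), (2) and (3) are all stable under such pullback: stratified fibre bundles and fibre conditions are local, and inner fibrations are closed under pullback of simplicial sets. So I may assume $\strat{B} = \DeltaStrat{k}$ throughout.

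The core step is to show that the composite
\[
  p \,:\, \Exit(\sing(\strat{X})) \hookrightarrow \Exit(\strat{X}) \xrightarrow{\Exit(\xi)} \Exit(\strat{B})
\]
is a right covering map; this is the analogue of Lemma~\ref{lem:fct-mesh-closed-1-sing-covering} obtained from the weaker hypothesis that $\Exit(\xi)$ is merely an inner fibration. Existence of a right horn lift $\Lambda^i\ord{k} \hookrightarrow \Delta\ord{k}$ with $0 < i \leq k$ is built in two stages. For the inner case $0 < i < k$, hypothesis~(3) supplies a lift in $\Exit(\strat{X})$, and I argue that if the horn already lands in $\sing(\strat{X})$ then so does the filler: because each fibre is a closed $1$-mesh, its singular strata are isolated, and the local trivialisation of $\xi$ over a stratum of $\strat{B}$ prevents the lifted path from leaving the singular stratum supplied by the final vertex. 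For the outer right case $i = k$, I use the stratified fibre bundle structure directly: the local trivialisation over the stratum of $\xi(\tau'(k))$ extends the singular section tracked by $\tau'$ uniquely to the top vertex $k$, and this extension agrees with the choice on all other faces by fibrewise isolatedness. Uniqueness of the right horn lift follows in both cases from the same isolatedness: two lifts that agree on the base and on the boundary horn would be fibrewise distinct singular points at bounded distance, which is impossible in a closed $1$-mesh.

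Once $p$ is a right covering map, it identifies $\sing(\strat{X})$ with a disjoint union of sections of $\xi$ over each stratum of $\strat{B}$, with the transitions between strata prescribed by the covering. Translating back to topology, this means that every stratified simplex $\DeltaStrat{k} \to \strat{X}$ meets $\sing(\strat{X})$ in a closed subset, and so the $\Delta_s$-topology forces $\sing(\strat{X})$ to be closed in $\strat{X}$, establishing (3'). Condition (4') then follows: the fibres of $\xi$ are compact intervals whose endpoints vary continuously on each stratum of $\strat{B}$, and compatibility across strata is ensured by the covering property just established, so the framing map is proper onto a closed subset of $\R \times \unstrat(\strat{B})$. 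The main obstacle is the outer right horn $i = k$ which is not covered directly by the inner fibration hypothesis; reconciling the fibre bundle's local trivialisation with the inner fibration lifts at lower dimensions is the step that requires the most care, since one must argue that the canonical extension at the top vertex is consistent with the inner horn fillers produced by~(3).
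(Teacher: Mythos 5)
Your overall strategy---first upgrade $\Exit(\sing(\strat{X})) \to \Exit(\strat{B})$ to a right covering map and then read off closedness of the singular locus---inverts the logical order of the paper's development (Lemma~\ref{lem:fct-mesh-closed-1-sing-covering} is proved \emph{from} closedness, not the other way around), and as written it has two genuine gaps. The first is the outer right horn case $i = k$, which you correctly flag as the hard step but do not actually carry out. Already for $\Lambda^1\ord{1} \hookrightarrow \Delta\ord{1}$, existence of a lift through $\sing(\strat{X})$ says: given an exit path $\gamma$ in $\strat{B}$ and a prescribed singular point over $\gamma(1)$, the singular section obtained from the trivialisation over the top stratum must converge, as one approaches the lower stratum, to a point that is \emph{again singular}. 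That is essentially the statement being proved, and it fails for Example~\ref{ex:fct-framing-non-tame}, which satisfies hypotheses (1) and (2); so any correct argument must invoke hypothesis (3) at exactly this point. Your justification appeals only to the fibre bundle trivialisation and fibrewise isolatedness, neither of which distinguishes the good case from the bad one. The second gap is the final implication. Even granting the covering property, it only constrains exit paths lying entirely inside $\sing(\strat{X})$. Closedness of $\sing(\strat{X})$ is equivalent (via the $\Delta_s$-topology, since a union of strata of $\DeltaStrat{k}$ is closed iff it is downward closed in $\ord{k}$) to the nonexistence of exit paths running from a \emph{regular} point into a singular point---paths that do not live in $\sing(\strat{X})$ and about which the covering map says nothing.

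The idea you are missing is how to convert the inner-horn hypothesis into control over the boundary of the singular locus; the paper does this with a single $\HornStrat{1}{2}$ filler. Suppose $\sing(\strat{X})$ is not closed; then some exit path $\gamma_{12}$ runs from a regular point $x_1$ to a singular point $x_2$. Pick $x_0 \neq x_1$ in the same (open-interval) stratum of the fibre $\xi^{-1}(\xi(x_1))$ and a path $\gamma_{01}$ from $x_0$ to $x_1$ inside that fibre. Together these form a horn $\HornStrat{1}{2} \to \strat{X}$ over a $2$-simplex of $\strat{B}$ whose first edge is constant and whose other two edges are $\xi \circ \gamma_{12}$; hypothesis (3) fills it. Both long edges of the filler end at $x_2$, lie over the same base path, and away from their initial vertex land in the singular stratum of $x_2$, which meets each fibre in a single point; hence they agree there, and continuity forces $x_0 = x_1$, a contradiction. (You are right, incidentally, that condition (4) of Definition~\ref{def:fct-mesh-closed-1-bundle} also needs checking---the paper's own proof is silent on it---and your properness sketch via the compact fibres is the correct idea for that part.)
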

\begin{proof}
  Suppose for the purpose of contradiction that $\sing(\strat{X})$ is not closed.
  Then there must exist an exit path $\gamma_{12} : \DeltaStrat{1} \to \strat{X}$
  such that $x_1 := \gamma_{12}(0)$ is regular and $x_2 := \gamma_{12}(1)$ is singular.
  Since $x_1$ is regular there must be another point $x_0 \in \strat{X}$
  in the same stratum as $x_1$ so that $\xi(x_1) = \xi(x_0)$ but $x_1 \neq x_0$.
  We pick a path $\gamma_{01} : \DeltaStrat{1} \to \strat{X}$ from $x_0$ and $x_1$
  within the fibre $\xi^{-1}(\xi(x_1))$.
  Then $\gamma_{01}$ and $\gamma_{12}$ together define a stratified horn
  $\gamma : \HornStrat{1}{2} \to \strat{X}$.
  Since $\xi \circ \gamma_{01}$ is constant, the horn $\xi \circ \gamma$ extends
  to a stratified $2$-simplex $\sigma : \DeltaStrat{2} \to \strat{B}$ such that
  \[
    \begin{tikzcd}
      {\DeltaStrat{2}} \ar[r, "\sigma"] \ar[d, "{\langle 0, 1 \rangle}_*"'] &
      {\strat{B}} \\
      {\DeltaStrat{1}} \ar[ur, "\xi \circ \gamma_{01}"']
    \end{tikzcd}
  \]
  By assumption (3), we then have a lift
  \[
    \begin{tikzcd}
      {\HornStrat{1}{2}} \ar[r, "\gamma"] \ar[d, hook] &
      {\strat{X}} \ar[d, "\xi"] \\
      {\DeltaStrat{2}} \ar[r, "\sigma"'] \ar[ur, dashed, "\hat{\gamma}"{description}] &
      {\strat{B}}
    \end{tikzcd}
  \]
  Since $\hat{\gamma}(2) = \gamma(2)$ is singular,
  we have that the paths
  $\hat{\gamma} \circ \langle 0, 2 \rangle_*$
  and 
  $\hat{\gamma} \circ \langle 1, 2 \rangle_*$
  agree on $\DeltaStrat{1} \setminus \{ 0 \}$.
  But then by continuity we must have
  \[
    x_0 = \hat{\gamma}(0) =
    (\hat{\gamma} \circ \langle 0, 2 \rangle_*)(0) =
    (\hat{\gamma} \circ \langle 1, 2 \rangle_*)(0) =
    \hat{\gamma}(1) = x_1
  \]
  This contradicts our choice of $x_0 \neq x_1$.
\end{proof}

\begin{lemma}\label{lem:fct-mesh-closed-1-bundle-flat}
  Let $\xi : \strat{X} \to \strat{B}$ be a closed $1$-mesh bundle.
  Then the induced map
  \[
    \Exit(\xi) : \Exit(\strat{X}) \longrightarrow \Exit(\strat{B})
  \]
  is a flat categorical fibration.
\end{lemma}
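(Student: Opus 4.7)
The map $\Exit(\xi)$ is already a categorical fibration by Lemma~\ref{lem:fct-mesh-closed-1-bundle-cat-fib}, so it remains to verify flatness. Via the adjunction $\StratReal{-} \dashv \Exit$, every $2$-simplex $\sigma : \Delta\ord{2} \to \Exit(\strat{B})$ corresponds to a stratified map $\DeltaStrat{2} \to \strat{B}$, and by Lemma~\ref{lem:fct-mesh-closed-1-pullback} the base change of $\xi$ along it remains a closed $1$-mesh bundle. It therefore suffices to prove that for any closed $1$-mesh bundle $\xi : \strat{X} \to \DeltaStrat{2}$ the inclusion
\[
\iota : \strat{X} \times_{\DeltaStrat{2}} \HornStrat{1}{2} \hookrightarrow \strat{X}
\]
induces a categorical equivalence on exit categories.

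The key structural observation is that the complement $\DeltaStrat{2} \setminus \HornStrat{1}{2}$, namely the interior of the triangle together with the open edge $\{0, 2\}$, lies entirely within stratum $2$ of $\DeltaStrat{2}$ (where the coordinate $x_2 > 0$). By the stratified fibre bundle property, $\xi$ is trivialised over this open stratum, while Lemma~\ref{lem:fct-mesh-closed-1-sing-covering} provides the zigzag $\ord{m_0} \leftarrow \ord{m_1} \leftarrow \ord{m_2}$ in $\FinOrd$ classified by $\sing(\xi)$ from Construction~\ref{con:fct-mesh-closed-1-sing}, which records exactly how the singular strata of the fibres fit together across strata of the base.

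The plan is to construct an explicit functor $r : \Exit(\strat{X}) \to \Exit(\strat{X} \times_{\DeltaStrat{2}} \HornStrat{1}{2})$ together with a natural equivalence $\iota \circ r \simeq \id$, which together with the equality $r \circ \iota = \id$ will establish the categorical equivalence. The functor $r$ is to act as the identity on the preimage of $\HornStrat{1}{2}$ and to send a point lying over the complement to its image under the projection $(x_0, x_1, x_2) \mapsto (0, x_0 + x_1, x_2)$ of the base coordinate onto the edge $\{1, 2\}$, with the fibre coordinate transported by the product trivialisation of $\xi$ over stratum $2$. The natural equivalence $\iota \circ r \simeq \id$ is then obtained from the straight-line exit path in $\DeltaStrat{2}$ joining each base point to its projection, lifted via the trivialisation.

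The main obstacle will be ensuring that the lift of this homotopy extends coherently across the interface where the complement of $\HornStrat{1}{2}$ meets the vertex $\{1\}$ and vertex $\{0\}$, where the fibre picks up additional singular strata according to the monotone maps $\ord{m_1} \leftarrow \ord{m_2}$ and $\ord{m_0} \leftarrow \ord{m_1}$. This will be handled by Lemma~\ref{lem:fct-mesh-closed-1-sing-covering}: the right covering property of $\sing(\xi) \to \DeltaStrat{2}$ ensures that every singular exit path in the base admits a unique singular lift prescribed by its starting point, which canonically identifies singular fibre strata across strata of the base and thereby makes the trivialisation-based lift consistent with the projection onto $\{1, 2\}$.
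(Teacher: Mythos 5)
Your setup is fine up to a point: the categorical fibration part, the reduction to a bundle over $\DeltaStrat{2}$ via Lemma~\ref{lem:fct-mesh-closed-1-pullback}, and the observation that the complement of $\HornStrat{1}{2}$ lies in the top stratum are all correct. The gap is the retraction $r$, and it sits exactly where the content of flatness lives. Any stratified retraction of $\DeltaStrat{2}$ onto $\HornStrat{1}{2}$ fixing the horn must act as the identity on the stratifying poset $\ord{2}$, hence fix the open edge $\{0,1\}$ pointwise while sending every point of the interior into the closed edge $\{1,2\}$; such a map is discontinuous at every point of the open edge $\{0,1\}$ other than the vertex $1$, so no continuous stratified retraction (let alone deformation retraction) exists, and your fibrewise formula and straight-line homotopy inherit the same discontinuity along the fibre over that edge. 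Consequently $r$ is not induced by a map of stratified spaces, and you give no recipe for defining it simplex-by-simplex on $\Exit(\strat{X})$: already for an exit path starting over the open edge $\{0,1\}$ and ending over the interior, the pointwise prescription yields two endpoints in $\strat{X}\times_{\DeltaStrat{2}}\HornStrat{1}{2}$ but no exit path between them; producing one forces a passage through the fibre over the vertex $1$, where the singular strata reorganise according to $\sing(\xi)$, and doing this coherently in all simplicial degrees is precisely the hard part. The appeal to Lemma~\ref{lem:fct-mesh-closed-1-sing-covering} does not close this: the right covering property controls only singular points of the fibres, whereas the difficulty is concentrated entirely on the regular strata — which is why the paper's proof splits into the cases where the marked point is singular (trivial, by the covering lemma) and regular (where all the work happens).

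For comparison, the paper does not try to exhibit the horn inclusion as a categorical equivalence directly. It verifies the criterion that the restriction $\Fun_{/\Exit(\strat{B})}(\Delta\ord{2},\Exit(\strat{X}))\to\Fun_{/\Exit(\strat{B})}(\Delta\ord{1},\Exit(\strat{X}))$ has weakly contractible fibres, identifying the fibre over a partial section $s$ with a point when $s(2)$ is singular and with the exit $\infty$-category of a stratified interval when $s(2)$ is regular. If you wish to salvage your route, you need an honest simplicial construction of $r$ and of the natural equivalence near the fibre over vertex $1$ (or an argument via the lifting-up-to-equivalence criterion for categorical equivalences quoted in the background), and you should also address the fact that the definition of flatness involves the pullback $\Exit(\strat{X})\times_{\Exit(\DeltaStrat{2})}\Lambda^1\ord{2}$ rather than $\Exit(\strat{X}\times_{\DeltaStrat{2}}\HornStrat{1}{2})$; these differ by pullback along the unit of $\StratReal{-}\dashv\Exit$, which is not automatically harmless since the Joyal model structure is not right proper.
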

\begin{proof}
  The map is a categorical fibration by Lemma~\ref{lem:fct-mesh-closed-1-bundle-cat-fib}.
  To show that $\Exit(\xi)$ is also flat,
  we demonstrate that for every $2$-simplex
  $\sigma : \Delta\ord{2} \to \Exit(\strat{B})$
  the map 
  \begin{equation}\label{eq:fct-mesh-closed-1-bundle-flat-map}
    \begin{tikzcd}
      \Fun_{/ \Exit(\strat{B})}(\Delta\ord{2}, \Exit(\strat{E}))
      \ar[r, "{\langle 0, 2 \rangle}^*"] &
      \Fun_{/ \Exit(\strat{B})}(\Delta\ord{1}, \Exit(\strat{E}))
    \end{tikzcd}
  \end{equation}
  has weakly contractible fibres
  over any $\tau : \Delta\ord{1} \to \Exit(\strat{E})$ such that
  \[
    \begin{tikzcd}
      {\Delta\ord{1}} \ar[r, "\tau"] \ar[d, "{\langle 0, 2 \rangle}", swap] &
      {\Exit(\strat{E})} \ar[d] \\
      {\Delta\ord{2}} \ar[r, "\sigma", swap] &
      {\Exit(\strat{B})}
    \end{tikzcd}
  \]
  Without loss of generality we can simplify to the situation where
  $\strat{B}$ is the stratified $2$-simplex $\DeltaStrat{2}$.
  Then $\tau$ corresponds to a partial section $s : \DeltaStrat{1} \to \strat{X}$  
  which fits into the diagram
  \[
    \begin{tikzcd}
      {\DeltaStrat{1}} \ar[d, hook, "{\langle 0, 2 \rangle}_*", swap] \ar[r, "s"] &
      {\strat{X}} \ar[d, "\xi"] \\
      {\DeltaStrat{2}} \ar[r, "\id", swap] &
      {\DeltaStrat{2}}
    \end{tikzcd}
  \]
  When $F$ denotes the fibre of (\ref{eq:fct-mesh-closed-1-bundle-flat-map}) over $\tau$,
  a $k$-simplex of $F$
  is a stratified map $h : \DeltaStrat{k} \times \DeltaStrat{2} \to \strat{X}$ such that
  the following diagram commutes:
  \[
    \begin{tikzcd}[column sep = large]
      {\DeltaStrat{k} \times \DeltaStrat{1}} \ar[d, hook, "{\id \times \langle 0, 2 \rangle}_*", swap] \ar[r, "s \circ \pi_2"] &
      {\strat{X}} \ar[d, "\xi"] \\
      {\DeltaStrat{k} \times \DeltaStrat{2}} \ar[r, swap, "\pi_2"] \ar[ur, "{h}"{description}]&
      {\DeltaStrat{2}}
    \end{tikzcd}
  \]
  Here $\pi_2$ is the projection onto the second factor.
  
  Suppose first that $s(2) \in \strat{X}$ is a singular point.
  Then $s : \DeltaStrat{1} \to \strat{X}$ must factor through $\sing(\strat{X})$
  and so by Lemma~\ref{lem:fct-mesh-closed-1-sing-covering}
  the simplicial set $F$ must have a unique $k$-simplex for every $k \geq 0$.
  It is therefore contractible.
  
  Suppose now that $s(2) \in \strat{X}$ is a regular point.
  Let $x_0 \in \xi^{-1}(0)$ be the highest singular point such that $x_0 \leq s(2)$
  in the fibrewise ordering. Similarly, let $x_1 \in \xi^{-1}(2)$ be the lowest
  singular point with $s(2) \leq x_1$.
  By Lemma~\ref{lem:fct-mesh-closed-1-sing-covering} there are unique lifts
  \[
    \begin{tikzcd}
      {\DeltaStrat{0}} \ar[r, "x_0"] \ar[d, "{\langle 2 \rangle}", swap] &
      {\sing(\strat{X})} \ar[d] \\
      {\DeltaStrat{2}} \ar[r, "\id", swap] \ar[ur, dashed, "\tau_0"{description}]&
      {\DeltaStrat{2}}
    \end{tikzcd}
    \qquad
    \begin{tikzcd}
      {\DeltaStrat{0}} \ar[r, "x_1"] \ar[d, "{\langle 2 \rangle}", swap] &
      {\sing(\strat{X})} \ar[d] \\
      {\DeltaStrat{2}} \ar[r, "\id", swap] \ar[ur, dashed, "\tau_1"{description}]&
      {\DeltaStrat{2}}
    \end{tikzcd}
  \]
  By restricting $\strat{X}$ to the points between $\tau_0$ and $\tau_1$
  and rescaling, we can simplify to the case that $\unstrat(\strat{X}) =  \intCC{0, 1} \times \DeltaTop{2}$ and $\strat{X}$ has exactly one regular stratum over $2 \in \DeltaStrat{2}$.
  Let $\strat{E}$ be the subspace of $\strat{X}$ consisting of those points
  $x \in \strat{X}$ such that there exists a map $\gamma : \DeltaStrat{1} \to \strat{X}$
  which fits into the diagram
  \[
    \begin{tikzcd}
      {\DeltaStrat{0}} \ar[r, "x"] \ar[d, "{\langle 2 \rangle}", hook, swap] &
      {\strat{X}} \ar[d, "\xi"] \\
      {\DeltaStrat{1}} \ar[r, "{\langle 1, 2 \rangle}", hook, swap] \ar[ur, "\gamma"{description}]&
      {\DeltaStrat{2}}
    \end{tikzcd}
  \]
  Then there is a map $F \to \Exit(\strat{E})$ which sends a $k$-simplex
  $h : \DeltaStrat{k} \times \DeltaStrat{2} \to \strat{X}$ to the restriction
  $h(-, 1) : \DeltaStrat{k} \to \strat{E}$.
  We see that this map is an equivalence and $\strat{E}$ is a stratified interval.
  It follows that $F$ is weakly contractible.
\end{proof}

\subsection{Closed $n$-Meshes}\label{sec:fct-mesh-closed-n}

\begin{definition}\label{def:fct-mesh-closed-n}
  A \defn{closed $n$-mesh bundle} is an $n$-framed stratified bundle
  $\xi : \strat{X}_n \to \strat{X}_0$ such that $\xi$ is the composite
  of a sequence of closed $1$-mesh bundles  
  \begin{equation}\label{eq:fct-mesh-closed-n}
    \begin{tikzcd}
      {\strat{X}_n} \ar[r, "\xi_n"] &
      {\strat{X}_{n - 1}} \ar[r] &
      \cdots \ar[r] &
      {\strat{X}_1} \ar[r, "\xi_1"] &
      {\strat{X}_0}
    \end{tikzcd}
  \end{equation}
  and $\xi$ is equipped with the composite $n$-framing.
\end{definition}

\begin{observation}\label{obs:fct-mesh-closed-n-composite}
  When we have a closed $n$-mesh bundle $\xi : \strat{X}_n \to \strat{X}_0$
  arising as the composite of a sequence of closed $1$-mesh bundles~(\ref{eq:fct-mesh-closed-n}),
  we can uniquely recover that sequence from $\xi$.
  For any $0 \leq i \leq n$ the stratified space
  $\strat{X}_i$ is the quotient of $\strat{X}_n$ which identifies two points
  when their last $i$ coordinates in the framing agree. 
\end{observation}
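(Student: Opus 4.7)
\begin{para}
My plan is to exhibit $\strat{X}_i$ explicitly as a stratified quotient of $\strat{X}_n$ depending only on the data of $\xi$ together with its composite $n$-framing, which will then also prove uniqueness of the full sequence. First I would write
\[
  \pi_i := \xi_{i+1} \circ \cdots \circ \xi_n : \strat{X}_n \to \strat{X}_i,
  \qquad
  \rho_i := \xi_1 \circ \cdots \circ \xi_i : \strat{X}_i \to \strat{X}_0,
\]
so that $\xi = \rho_i \circ \pi_i$, and unroll the definition of composite framing iteratively to obtain the factorisation
\[
  \framing(\xi) \;=\; \bigl(\id_{\R^{n-i}} \times \framing(\rho_i)\bigr) \circ \framing(\pi_i) :
  \unstrat(\strat{X}_n) \hookrightarrow \R^{n-i} \times \R^i \times \unstrat(\strat{X}_0).
\]
In particular, for each $x \in \strat{X}_n$, the last $i$ framing coordinates of $\framing(\xi)(x)$ together with $\xi(x)$ equal $\framing(\rho_i)(\pi_i(x))$.
\end{para}

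\begin{para}
Next, since each $\xi_k$ is a closed $1$-mesh bundle whose framing is a closed embedding by condition~(4) of Definition~\ref{def:fct-mesh-closed-1-bundle}, and closed embeddings are preserved under product with an identity and under composition, a short induction shows $\framing(\rho_i)$ is itself a closed embedding and in particular injective. Hence for $x, y \in \strat{X}_n$ we have $\pi_i(x) = \pi_i(y)$ if and only if the last $i$ framing coordinates of $\framing(\xi)(x)$ and $\framing(\xi)(y)$ agree. Each $\xi_k$ is moreover a surjective open map since it is a stratified fibre bundle with non-empty fibres, so $\pi_i$ is a surjective stratified fibre bundle and consequently a quotient map in $\TopSp$.
\end{para}

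\begin{para}
The final step is to upgrade the topological quotient to a stratified quotient. Since $\Strat$ is cocomplete (as recalled in~\S\ref{sec:b-geo-strat}) and $\pi_i$ is a surjective stratified fibre bundle, $\pi_i$ is the coequaliser in $\Strat$ of its kernel pair $\strat{X}_n \times_{\strat{X}_i} \strat{X}_n \rightrightarrows \strat{X}_n$. By the preceding paragraph this kernel pair coincides with the sub-stratified space of pairs $(x, y)$ whose last $i$ framing coordinates agree, a subspace that is manufactured from $\xi$ and its $n$-framing alone. Hence $\strat{X}_i$ is uniquely determined as a stratified space by $\xi$, and iterating over $i = 0, \ldots, n$ recovers the entire sequence~(\ref{eq:fct-mesh-closed-n}) together with the intermediate framings. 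The only place where one must be a little careful is keeping track that the quotient is taken in $\Strat$ rather than only in $\TopSp$, since otherwise the stratifications on the intermediate spaces would not be pinned down; but the universal property of the coequaliser together with the fact that $\pi_i$ is a stratified fibre bundle forces the correct stratification automatically.
\end{para}
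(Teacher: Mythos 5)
The paper states this as an Observation with no accompanying argument, so there is no proof to compare against; your first two paragraphs supply the missing content correctly. Unrolling the composite framing to $\framing(\xi) = (\id_{\R^{n-i}} \times \framing(\rho_i)) \circ \framing(\pi_i)$ and invoking injectivity of $\framing(\rho_i)$ (which already follows from framings being embeddings by definition --- the detour through closedness and condition~(4) is harmless but unnecessary) identifies the fibres of $\pi_i$ with the coincidence classes. You are also right to read the relation as comparing the last $i$ real coordinates \emph{together with} the image in $\strat{X}_0$; taken literally over a non-trivial base the statement would be false, so this is a necessary correction rather than pedantry.

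The gap is in your third paragraph. In this paper a stratified fibre bundle is only required to be trivial over open subsets of \emph{individual strata} of the base, not over the whole base, and such a map need not be open: for the bundle $\xi_1$ of Example~\ref{ex:fct-mesh-closed-1-bordisms}, the open subset of the total space consisting of points whose framing coordinate lies strictly below the middle singular stratum sits entirely in the fibre over the vertex, so its image is the non-open singleton $\{0\} \subseteq \DeltaStrat{1}$. Hence ``surjective open, therefore quotient'' fails, and with it your justification that $\pi_i$ is the coequaliser of its kernel pair in $\Strat$. (You also use, without argument, that a composite of stratified fibre bundles is again one.) The conclusion is repairable --- for instance by showing $\unstrat(\pi_i)$ is a closed map, using compactness of the fibres together with closedness of the framing embeddings, or by checking openness of saturated sets one stratified simplex at a time via the $\Delta_s$-topology on $\strat{X}_i$ --- but as written this step does not go through.
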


\begin{construction}
  Let $\xi : \strat{X}_n \to \strat{X}_0$ be a closed $n$-mesh bundle
  that decomposes into a sequence of closed $1$-mesh bundles
  $\xi = \xi_n \circ \cdots \circ \xi_1$.
  Whenever $0 \leq m \leq n$, we define the \defn{$m$-truncation} of $\xi$ to
  be the closed $m$-mesh bundle $\xi_m \circ \cdots \circ \xi_1$.
\end{construction}

\begin{lemma}\label{lem:fct-mesh-closed-n-bundle-flat}
  Let $\xi : \strat{X} \to \strat{B}$ be a closed $n$-mesh bundle. Then the induced map
  \[
    \Exit(\xi) : \Exit(\strat{X}) \longrightarrow \Exit(\strat{B})
  \]
  is a flat categorical fibration.
  In particular when $\strat{B}$ is a fibrant stratified space,
  then so is $\strat{X}$.
\end{lemma}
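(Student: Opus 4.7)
The plan is to reduce the statement directly to the $1$-dimensional case by induction, exploiting the decomposition built into the definition of a closed $n$-mesh bundle. By Definition~\ref{def:fct-mesh-closed-n} the map $\xi : \strat{X} \to \strat{B}$ factors as a composite
\[
  \begin{tikzcd}
    {\strat{X}_n} \ar[r, "\xi_n"] &
    {\strat{X}_{n-1}} \ar[r] &
    \cdots \ar[r] &
    {\strat{X}_1} \ar[r, "\xi_1"] &
    {\strat{X}_0}
  \end{tikzcd}
\]
with $\strat{X}_n = \strat{X}$ and $\strat{X}_0 = \strat{B}$, in which each $\xi_i : \strat{X}_i \to \strat{X}_{i-1}$ is a closed $1$-mesh bundle (equipped with the induced $1$-framing over its base). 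Since $\Exit$ is a functor, we obtain a corresponding factorisation of $\Exit(\xi)$ as the composite of the maps $\Exit(\xi_i)$.

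Next I would invoke Lemma~\ref{lem:fct-mesh-closed-1-bundle-flat} for each constituent map, giving that each $\Exit(\xi_i)$ is a flat categorical fibration. Since flat inner fibrations are closed under composition (as recalled in the background section on flat categorical fibrations) and categorical fibrations are closed under composition in the Joyal model structure, the composite $\Exit(\xi) = \Exit(\xi_n) \circ \cdots \circ \Exit(\xi_1)$ is again a flat categorical fibration. This establishes the main claim; no genuinely hard step is involved, as all the work has been done in the $1$-dimensional base case.

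For the ``in particular'' clause, suppose $\strat{B}$ is fibrant, so that $\Exit(\strat{B})$ is a quasicategory. A categorical fibration whose codomain is a quasicategory has a quasicategory as its domain: given an inner horn in $\Exit(\strat{X})$, its image extends to a $k$-simplex of $\Exit(\strat{B})$ by fibrancy of the base, and then the categorical fibration property supplies the required lift. Hence $\Exit(\strat{X})$ is a quasicategory and $\strat{X}$ is fibrant, completing the proof.
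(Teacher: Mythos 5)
Your proof is correct and follows exactly the paper's argument: decompose $\xi$ into closed $1$-mesh bundles, apply Lemma~\ref{lem:fct-mesh-closed-1-bundle-flat} to each factor, and conclude by closure of flat categorical fibrations under composition. The extra detail you give for the ``in particular'' clause is fine and matches the standard fact the paper leaves implicit.
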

\begin{proof}
  We write $\xi$ as the composite of closed $1$-mesh bundles
  $\xi = \xi_n \circ \cdots \circ \xi_1$.
  Then $\Exit(\xi)$ is the composite of the induced maps
  $\Exit(\xi_n) \circ \cdots \circ \Exit(\xi_1)$
  which are flat categorical fibrations by Lemma~\ref{lem:fct-mesh-closed-1-bundle-flat}.
  Therefore $\Exit(\xi)$ is again a flat categorical fibration.
\end{proof}

\begin{lemma}\label{lem:fct-mesh-closed-n-pullback}
  Closed $n$-mesh bundles are closed under pullback.
\end{lemma}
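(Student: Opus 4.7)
The plan is to reduce to the one-dimensional case already established in Lemma~\ref{lem:fct-mesh-closed-1-pullback}. Let $\xi : \strat{X}_n \to \strat{X}_0$ be a closed $n$-mesh bundle written as a composite of closed $1$-mesh bundles
\[
  \begin{tikzcd}
    {\strat{X}_n} \ar[r, "\xi_n"] &
    {\strat{X}_{n-1}} \ar[r] &
    \cdots \ar[r] &
    {\strat{X}_1} \ar[r, "\xi_1"] &
    {\strat{X}_0}
  \end{tikzcd}
\]
with the composite $n$-framing, and let $\varphi : \strat{A} \to \strat{X}_0$ be a map of stratified spaces. I would build the pullback one layer at a time: setting $\strat{A}_0 := \strat{A}$ and $\strat{A}_i := \strat{A}_{i-1} \times_{\strat{X}_{i-1}} \strat{X}_i$ for $i = 1, \ldots, n$, one obtains a tower whose $i$th map is the pullback $\psi_i^* \xi_i$ of $\xi_i$ along the canonical map $\psi_{i-1} : \strat{A}_{i-1} \to \strat{X}_{i-1}$.

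Next, I would invoke Lemma~\ref{lem:fct-mesh-closed-1-pullback} at each step to conclude that $\psi_i^* \xi_i : \strat{A}_i \to \strat{A}_{i-1}$ is a closed $1$-mesh bundle with the canonical pullback $1$-framing. By the pasting lemma for pullbacks in $\Strat$, the composite $\strat{A}_n \to \strat{A}$ is the pullback $\varphi^* \xi$ in the category of stratified spaces, and the composite $n$-framing on this tower of $1$-framed pullbacks gives $\strat{A}_n \to \strat{A}$ the structure of a composite of $n$ closed $1$-mesh bundles, so by Definition~\ref{def:fct-mesh-closed-n} it is a closed $n$-mesh bundle.

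The only thing requiring a small verification is that the $n$-framing on $\varphi^* \xi$ induced by pulling back the composite framing on $\xi$ agrees with the composite of the individual pullback framings $\framing(\psi_i^* \xi_i)$. This is an unwinding of definitions: both framings sit inside $\R^n \times \unstrat(\strat{A})$ as the pullback of $\framing(\xi) \subseteq \R^n \times \unstrat(\strat{X}_0)$ along $\varphi$, and the composite framing is constructed precisely by iterating the product of $\R$'s with the base, which commutes with the pullback along $\varphi$ in each coordinate. I do not anticipate serious obstacles; the content of the lemma is really that closed $1$-mesh bundles pull back (already known) together with compatibility of composite framings with base change, which is formal once one recalls that the framing of a closed $n$-mesh bundle is determined by the tower of Observation~\ref{obs:fct-mesh-closed-n-composite}.
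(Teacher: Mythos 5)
Your proposal is correct and follows exactly the paper's argument: decompose the closed $n$-mesh bundle into a tower of closed $1$-mesh bundles, pull back layer by layer using Lemma~\ref{lem:fct-mesh-closed-1-pullback}, and reassemble via the pasting lemma for pullbacks. Your extra remark on the compatibility of the composite framing with base change is a detail the paper leaves implicit, but it does not change the substance of the proof.
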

\begin{proof}
  Let $\xi : \strat{X} \to \strat{B}$ be a closed $n$-mesh bundle and let
  $f : \strat{A} \to \strat{B}$ be a map of stratified spaces.
  We can write $\xi$ as the composite of closed $1$-mesh bundles
  $\xi_i : \strat{X}_i \to \strat{X}_{i - 1}$ for $1 \leq i \leq n$.
  In particular $\strat{X}_n = \strat{X}$ and $\strat{X}_0 = \strat{B}$.
  Then the pullback of $\xi$ along $f$ decomposes into a sequence of
  pullbacks of closed $1$-mesh bundles
  \[
    \begin{tikzcd}
      {\strat{Y} = \strat{Y}_n} \ar[r, "\zeta_n"] \ar[d] \pullbackcorner &
      {\strat{Y}_{n - 1}} \ar[r] \ar[d] &
      {\cdots} \ar[r] &
      {\strat{Y}_1} \ar[r, "\zeta_1"] \ar[d] \pullbackcorner &
      {\strat{Y}_0 = \strat{A}} \ar[d, "f"] \\
      {\strat{X} = \strat{X}_n} \ar[r, "\xi_n"'] &
      {\strat{X}_{n - 1}} \ar[r] &
      {\cdots} \ar[r] &
      {\strat{X}_1} \ar[r, "\xi_1"'] &
      {\strat{X}_0 = \strat{B}}
    \end{tikzcd}
  \]
  By Lemma~\ref{lem:fct-mesh-closed-1-pullback} closed $1$-mesh bundles are
  closed under pullback.
  Therefore, the pullback of $\xi$ along $f$ is the closed $n$-mesh bundle
  $\zeta_1 \circ \cdots \circ \zeta_n$.
\end{proof}

\begin{definition}
  We let $\BMeshClosed{n}$ be the simplicial set whose $k$-simplices consist of a
  closed $n$-mesh bundle $\xi : \strat{X} \to \DeltaStrat{k}$.
  An order-preserving map $\ord{k'} \to \ord{k}$ acts by pullback along the
  induced map $\DeltaStrat{k'} \to \DeltaStrat{k}$.
  Truncation of closed $(n + 1)$-mesh bundles defines a map of simplicial sets
  $\tr_n : \BMeshClosed{n + 1} \to \BMeshClosed{n}$.
\end{definition}

\begin{observation}\label{obs:fct-mesh-closed-n-base-case}
  In the case that $n = 0$,
  a $k$-simplex of $\BMeshClosed{0}$ is a closed $0$-mesh bundle
  $\xi : \strat{X} \to \DeltaStrat{k}$.
  This means that $\xi$ is the composite of a length zero sequence of $1$-mesh bundles,
  and so $\xi$ is the identity on $\DeltaStrat{k}$.
  It therefore follows that $\BMeshClosed{0} \cong \terminal$ is the terminal object
  in $\sSet$.
\end{observation}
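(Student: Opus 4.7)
The plan is to unfold the definitions directly, since the statement is essentially a bookkeeping observation about empty composites. First I would recall that, by Definition~\ref{def:fct-mesh-closed-n}, a closed $0$-mesh bundle $\xi : \strat{X} \to \DeltaStrat{k}$ is, by specialisation to $n = 0$, a $0$-framed stratified bundle that arises as the composite of an empty sequence of closed $1$-mesh bundles. An empty composite in a category is an identity morphism, so $\xi$ is forced to be $\id_{\DeltaStrat{k}}$, with $\strat{X}_0 = \strat{X} = \DeltaStrat{k}$.

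Next I would verify that the framing data carries no additional information in this case: a $0$-framing on $\xi$ is an embedding of $\unstrat(\strat{X})$ into $\R^0 \times \unstrat(\DeltaStrat{k}) \cong \unstrat(\DeltaStrat{k})$ over the base, of which there is precisely one, namely the identity. Hence the set of $k$-simplices of $\BMeshClosed{0}$ is a singleton for every $k \geq 0$.

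Finally, I would check that the simplicial structure maps are forced to be the (unique) identities between singletons: given $\alpha : \ord{k'} \to \ord{k}$, the action on $\BMeshClosed{0}$ is by pullback of $\id_{\DeltaStrat{k}}$ along $\DeltaStrat{\alpha}$, which yields $\id_{\DeltaStrat{k'}}$. This displays $\BMeshClosed{0}$ as the constant simplicial set on a point, i.e.\ the terminal object $\terminal$ of $\sSet$. There is no real obstacle here; the only thing to be careful about is confirming that the convention "composite of a sequence of length zero of $1$-mesh bundles" is read as "identity", rather than as something degenerate or undefined, which is standard.
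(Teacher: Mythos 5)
Your argument is exactly the paper's: the observation is proved by noting that the empty composite of closed $1$-mesh bundles forces $\xi = \id_{\DeltaStrat{k}}$, so each simplex set is a singleton and $\BMeshClosed{0} \cong \terminal$. Your extra checks (that the $0$-framing into $\R^0 \times \unstrat(\DeltaStrat{k})$ is unique and that the structure maps act trivially by pullback) are correct and merely make explicit what the paper leaves implicit.
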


\begin{lemma}\label{lem:fct-mesh-closed-n-cat-truncate}
  The truncation map $\tr_n : \BMeshClosed{n + 1} \to \BMeshClosed{n}$ is an inner fibration.
  In particular $\BMeshClosed{n}$ is a quasicategory for each $n \geq 0$.
\end{lemma}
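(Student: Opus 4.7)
The plan is to prove both statements simultaneously by induction on $n \geq 0$. The base case $n = 0$ is immediate: $\BMeshClosed{0} \cong \terminal$ by Observation~\ref{obs:fct-mesh-closed-n-base-case}, and $\BMeshClosed{1}$ is a quasicategory by Proposition~\ref{prop:fct-mesh-closed-1-sing-trivial}, so $\tr_0$ is an inner fibration.

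For the inductive step, suppose $\BMeshClosed{n}$ is a quasicategory. A lifting problem
\[
\begin{tikzcd}
{\Lambda^i\ord{k}} \ar[r, "\eta'"] \ar[d, hook] & {\BMeshClosed{n+1}} \ar[d, "\tr_n"] \\
{\Delta\ord{k}} \ar[r, "\xi"'] & {\BMeshClosed{n}}
\end{tikzcd}
\]
with $0 < i < k$ translates, via Observation~\ref{obs:fct-mesh-closed-n-composite}, into the following geometric extension problem. The $k$-simplex $\xi$ is a closed $n$-mesh bundle $\varpi : \strat{X} \to \DeltaStrat{k}$, and $\eta'$ adds a closed $1$-mesh bundle $\zeta' : \strat{Y} \to \strat{X} \times_{\DeltaStrat{k}} \HornStrat{i}{k}$ on top; a lift amounts to extending $\zeta'$ to a closed $1$-mesh bundle over all of $\strat{X}$ whose restriction along the inclusion $\strat{X} \times_{\DeltaStrat{k}} \HornStrat{i}{k} \hookrightarrow \strat{X}$ equals $\zeta'$ on the nose.

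The central step exploits flatness. Since $\Lambda^i\ord{k} \hookrightarrow \Delta\ord{k}$ is a categorical equivalence and the units $\Lambda^i\ord{k} \hookrightarrow \Exit(\HornStrat{i}{k})$ and $\Delta\ord{k} \hookrightarrow \Exit(\DeltaStrat{k})$ are acyclic cofibrations, two-out-of-three implies that $\Exit(\HornStrat{i}{k}) \hookrightarrow \Exit(\DeltaStrat{k})$ is a categorical equivalence. By Lemma~\ref{lem:fct-mesh-closed-n-bundle-flat} the map $\Exit(\varpi)$ is a flat categorical fibration, so Lemma~\ref{lem:b-poly-flat-cat-pullback-equiv} ensures that the pulled-back inclusion $\Exit(\strat{X} \times_{\DeltaStrat{k}} \HornStrat{i}{k}) \hookrightarrow \Exit(\strat{X})$ is again a categorical equivalence. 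It is manifestly a monomorphism, hence a trivial cofibration in the Joyal model structure. The quasicategory $\BMeshClosed{1}$ is Joyal-fibrant, so the classifying map $\classify{\zeta'}$ extends to some $\varphi : \Exit(\strat{X}) \to \BMeshClosed{1}$.

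By Corollary~\ref{cor:fct-mesh-closed-1-total-triangulable} the space $\strat{X}$ is triangulable, so Proposition~\ref{prop:fct-mesh-closed-1-classify} yields a closed $1$-mesh bundle $\zeta$ over $\strat{X}$ together with an equivalence $\varphi \simeq \classify{\zeta}$ relative to $\classify{\zeta'}$. The relative clause forces the classifying maps to agree strictly on $\strat{X} \times_{\DeltaStrat{k}} \HornStrat{i}{k}$, and the uniqueness in Lemma~\ref{lem:fct-mesh-closed-1-classify-marked} upgrades this to a strict equality $\zeta|_{\strat{X} \times_{\DeltaStrat{k}} \HornStrat{i}{k}} = \zeta'$ of bundles. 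Composing $\zeta$ with $\varpi$ produces the required closed $(n+1)$-mesh bundle over $\DeltaStrat{k}$, finishing the lift. For the ``in particular'' part, since $\BMeshClosed{n}$ is a quasicategory and $\tr_n$ is an inner fibration, closure of right lifting properties under composition makes $\BMeshClosed{n+1} \to \terminal$ an inner fibration, so $\BMeshClosed{n+1}$ is itself a quasicategory. The main delicate point is converting the up-to-equivalence extension of classifying maps into a strict extension of bundles, which is exactly what the ``relative to'' formulation of Proposition~\ref{prop:fct-mesh-closed-1-classify} is designed to support.
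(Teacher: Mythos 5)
Your proof is correct and follows essentially the same route as the paper's: decompose the $(n+1)$-mesh bundle as a closed $1$-mesh bundle on top of the given closed $n$-mesh bundle, use flatness of $\Exit$ of the latter together with Lemma~\ref{lem:b-poly-flat-cat-pullback-equiv} to see the pulled-back horn inclusion is a categorical equivalence, extend the classifying map into the quasicategory $\BMeshClosed{1}$, and rigidify via triangulability and Proposition~\ref{prop:fct-mesh-closed-1-classify}. The only cosmetic difference is that you phrase the whole thing as an induction, whereas the paper only needs that $\BMeshClosed{1}$ is a quasicategory and obtains the ``in particular'' by composing the truncation fibrations down to $\BMeshClosed{0} \cong \terminal$.
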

\begin{proof}
  Suppose we have a lifting problem
  \[
    \begin{tikzcd}
      {\Lambda^i\ord{k}} \ar[r] \ar[d] &
      {\BMeshClosed{n + 1}} \ar[d, "\tr_n"] \\
      {\Delta\ord{k}} \ar[r] \ar[ur, dashed] &
      {\BMeshClosed{n}}
    \end{tikzcd}
  \]
  for some $0 < i < k$.
  Unpacking the definitions, we then have a diagram
  \[
    \begin{tikzcd}
      {\strat{X}'} \ar[d, "\xi'", swap] \\
      {\strat{Y}'} \ar[r, hook] \ar[d, "\zeta'", swap] \pullbackcorner &
      {\strat{Y}} \ar[d, "\zeta"] \\
      {\HornStrat{i}{k}} \ar[r, hook] &
      {\DeltaStrat{k}}
    \end{tikzcd}
  \]
  where $\xi$ is a closed $1$-mesh bundle and $\zeta$, $\zeta'$ are closed $n$-mesh bundles.
  By Corollary~\ref{cor:fct-mesh-closed-1-total-triangulable} we have that $\strat{Y}$ is triangulable.
  The pullback square of stratified spaces induces a pullback square of simplicial sets
  \[
    \begin{tikzcd}
      {\Exit(\strat{Y}')} \ar[r, hook] \ar[d, "\Exit(\zeta')", swap] \pullbackcorner &
      {\Exit(\strat{Y})} \ar[d, "\Exit(\zeta)"] \\
      {\Exit(\HornStrat{i}{k})} \ar[r, hook] &
      {\Exit(\DeltaStrat{k})}
    \end{tikzcd}
  \]
  By Lemma~\ref{lem:fct-mesh-closed-n-bundle-flat} the map $\Exit(\zeta)$ is a flat categorical fibration.
  Since $\DeltaStrat{k}$ is fibrant and the stratified inner horn inclusion
  is a stratified weak equivalence, it follows by Lemma~\ref{lem:b-poly-flat-cat-pullback-equiv} that
  the inclusion $\strat{Y}' \hookrightarrow \strat{Y}$ is a stratified weak equivalence.
  Since $\BMeshClosed{1}$ is a quasicategory, we can therefore find an extension
  \[
    \begin{tikzcd}
      {\Exit(\strat{Y}')} \ar[r, "\classify{\xi'}"] \ar[d, hook, "\simeq"'] &
      {\BMeshClosed{1}} \\
      {\Exit(\strat{Y})} \ar[ur, dashed, "\varphi"'] &
      {}
    \end{tikzcd}
  \]
  Then by Proposition~\ref{prop:fct-mesh-closed-1-classify}
  there exists a closed $1$-mesh bundle $\xi : \strat{X} \to \strat{Y}$
  so that $\xi$ restricts to $\xi'$ along the inclusion $\strat{Y}' \to \strat{Y}$.
  The composite $\zeta \circ \xi$ is a closed $(n + 1)$-mesh bundle over
  $\DeltaStrat{k}$ that represents a solution to the lifting problem.
\end{proof}

\begin{construction}
  Suppose that $\xi : \strat{X}_n \to \strat{X}_0$ is a closed $n$-mesh bundle.
  Then for every stratified $k$-simplex $\sigma : \DeltaStrat{k} \to \strat{X}_0$
  we have a closed $n$-mesh bundle $\sigma^* \xi$ over $\DeltaStrat{k}$ by pullback of $\xi$ along $\sigma$.
  This defines the \defn{classifying map} $\classify{\xi} : \Exit(\strat{X}_0) \to \BMeshClosed{n}$.
\end{construction}

\begin{lemma}\label{lem:fct-mesh-closed-n-classify-marked}
  Let $S$ be a marked poset and $\varphi : S \to (\BMeshClosed{n})^\natural$
  a map of marked simplicial sets.
  Then there exists a unique closed $n$-mesh bundle
  $\xi$ over $\StratRealMark{S}$ 
  such that the following diagram commutes:
  \[
    \begin{tikzcd}
      {S} \ar[r] \ar[d, hook] &
      {(\BMeshClosed{n})^\natural} \\
      {\ExitMark(\StratRealMark{S})} \ar[ur, dashed, "\classify{\xi}"'] &
      {}
    \end{tikzcd}
  \]
\end{lemma}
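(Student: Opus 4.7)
The plan is to proceed by induction on $n \geq 0$. The base case $n = 0$ is trivial since $\BMeshClosed{0}$ is terminal by Observation~\ref{obs:fct-mesh-closed-n-base-case}, so the identity bundle is the only closed $0$-mesh bundle over any base. The case $n = 1$ is precisely Lemma~\ref{lem:fct-mesh-closed-1-classify-marked}. For the inductive step, fix $n \geq 2$ and assume the result for $n - 1$.

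First I would compose $\varphi$ with the truncation $\tr_{n-1} : \BMeshClosed{n} \to \BMeshClosed{n-1}$ and apply the inductive hypothesis to obtain a unique closed $(n-1)$-mesh bundle $\zeta : \strat{X}_{n-1} \to \StratRealMark{S}$ classifying $\tr_{n-1} \circ \varphi$. Iterated application of Corollary~\ref{cor:fct-mesh-closed-1-total-triangulable} along the decomposition of $\zeta$ into closed $1$-mesh bundles shows that $\strat{X}_{n-1}$ is triangulable, so I would choose a marked poset $T$ and an isomorphism $\StratRealMark{T} \cong \strat{X}_{n-1}$ refining the triangulation of $\StratRealMark{S}$ pulled back along $\zeta$. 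Using Observation~\ref{obs:fct-mesh-closed-n-composite}, which canonically extracts the top closed $1$-mesh bundle from any closed $n$-mesh bundle, I would assemble a map of marked simplicial sets $\psi : T \to (\BMeshClosed{1})^\natural$: for a simplex $\tau$ of $T$ whose image under $\zeta$ lies in a simplex $\sigma$ of $S$, the closed $n$-mesh bundle $\varphi(\sigma)$ restricts over $\tau$ to a $k$-simplex of $\BMeshClosed{1}$. Next I would apply Lemma~\ref{lem:fct-mesh-closed-1-classify-marked} to $\psi$ to obtain a unique closed $1$-mesh bundle $\xi_n : \strat{X}_n \to \strat{X}_{n-1}$. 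The desired bundle is $\xi := \zeta \circ \xi_n$, equipped with the composite $n$-framing; this is a closed $n$-mesh bundle by Definition~\ref{def:fct-mesh-closed-n}, and its classifying map restricts to $\varphi$ on $S$ by construction. Uniqueness is automatic: any lift must have $(n-1)$-truncation equal to $\zeta$ by the inductive uniqueness, and top $1$-mesh bundle equal to $\xi_n$ by the $n = 1$ uniqueness.

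The hard part will be constructing $\psi$ with the correct marking, specifically verifying that a marked edge of $T$ is sent by $\psi$ to an equivalence in $\BMeshClosed{1}$. By the choice of refinement, a marked edge of $T$ sits over a marked edge of $S$, which corresponds to an equivalence in $\BMeshClosed{n}$. One must then show that such equivalences descend to equivalences on the fibrewise top $1$-mesh bundle. This can be established inductively, using that equivalences in $\BMeshClosed{n}$ induce equivalences in $\BMeshClosed{n-1}$ under $\tr_{n-1}$ (handling the lower strata by induction) together with Lemma~\ref{lem:fct-mesh-closed-1-sing-covering}, which forces the singular strata of the top $1$-mesh bundle to vary continuously along any path that the map $\sing$ sends to an equivalence in $\FinOrd^\op$. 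A subsidiary technical point will be arranging that the chosen triangulation $T$ of $\strat{X}_{n-1}$ is sufficiently fine that every simplex in $T$ lies over a single simplex of $S$ and every marked edge of $T$ lies over a marked edge of $S$; this refinement exists because $\zeta$ is simplicial with respect to some triangulation of the total space by Corollary~\ref{cor:fct-mesh-closed-1-total-triangulable} and we may subdivide further as needed.
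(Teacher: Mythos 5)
Your proposal is correct and follows essentially the same strategy as the paper: induct on $n$, truncate $\varphi$ to produce the bottom closed $(n-1)$-mesh bundle $\zeta$ over $\StratRealMark{S}$, and then reattach the top closed $1$-mesh bundle using the $n=1$ case of the lemma. The only packaging difference is that the paper glues the top $1$-mesh bundles flag-by-flag over the marked flags of $S$ rather than choosing a global marked-poset presentation $T$ of the total space $\strat{X}_{n-1}$, and the marking check you flag as the hard part in fact goes through more easily than your sketch suggests: a marked edge of $T$ realises to a stratum-preserving path in $\strat{X}_{n-1}$, over which the top $1$-mesh bundle is a trivial stratified fibre bundle, so $\sing$ of the corresponding $1$-simplex of $\BMeshClosed{1}$ is an identity in the skeletal category $\FinOrd^\op$ and the edge is an equivalence.
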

\begin{proof}
  By induction assume that the claim holds for some $n \geq 0$.
  Let $S$ be a marked poset and $\varphi : S \to (\BMeshClosed{n + 1})^\natural$
  a map of marked simplicial sets.
  By truncation the map $\varphi$ induces a map $\psi : S \to (\BMeshClosed{n})^\natural$.
  Then by induction there exists a unique closed $n$-mesh bundle
  $\zeta : \strat{Y} \to \StratRealMark{S}$ such that $\classify{\zeta}$
  fits into the diagram
  \[
    \begin{tikzcd}
      {S} \ar[r] \ar[d, hook] &
      {(\BMeshClosed{n})^\natural} \\
      {\ExitMark(\StratRealMark{S})} \ar[ur, dashed, "\classify{\zeta}"'] &
      {}
    \end{tikzcd}
  \]
  For every marked flag $\sigma$ in $S$ the map $\varphi$ holds a closed $n$-mesh
  bundle $\zeta_\sigma$ and a closed $1$-mesh bundle $\xi_\sigma$  
  which together with $\zeta$ fit into the diagram
  \[
    \begin{tikzcd}
      {\strat{X}_\sigma} \ar[d, "\xi_\sigma"'] \\
      {\strat{Y}_\sigma} \ar[r, hook] \ar[d, "\zeta_\sigma"'] \pullbackcorner &
      {\strat{Y}} \ar[d, "\zeta"] \\
      {\StratRealMark{(\DeltaTop{k}, T)}} \ar[r, hook, "\sigma"'] &
      {\StratRealMark{S}}
    \end{tikzcd}
  \]
  The stratified space $\strat{Y}_\sigma$ is a closed triangulable subspace
  of $\strat{Y}$ and the family of these subspaces for all marked flags $\sigma$ of $S$
  covers $\strat{Y}$.
  Using Lemma~\ref{lem:fct-mesh-closed-1-classify-marked} we may therefore glue together the closed $1$-mesh bundles
  to obtain a closed $1$-mesh bundle $\xi : \strat{X} \to \strat{Y}$.
  Then for each $\sigma$ the composite $\zeta \circ \xi$ restricts to the
  closed $(n + 1)$-mesh bundle $\zeta_\sigma \circ \xi_\sigma$:
  \[
    \begin{tikzcd}
      {\strat{X}_\sigma} \ar[d, "\xi_\sigma"'] \pullbackcorner \ar[r, hook] &
      {\strat{X}} \ar[d, "\xi"] \\
      {\strat{Y}_\sigma} \ar[r, hook] \ar[d, "\zeta_\sigma"'] \pullbackcorner &
      {\strat{Y}} \ar[d, "\zeta"] \\
      {\StratRealMark{(\DeltaTop{k}, T)}} \ar[r, hook, "\sigma"'] &
      {\StratRealMark{S}}
    \end{tikzcd}
  \]
  It follows that $\zeta \circ \xi$ is the unique closed $(n + 1)$-mesh bundle
  induced by $\varphi$.
\end{proof}

\begin{proposition}\label{prop:fct-mesh-closed-n-classify}
  Let $\strat{B}$ be a triangulable stratified space and
  $I : \strat{A} \hookrightarrow \strat{B}$
  a closed triangulable stratified subspace.
  Suppose that $\varphi : \Exit(\strat{B}) \to \BMeshClosed{n}$ is a map
  and $\xi'$ a closed $n$-mesh bundle over $\strat{A}$ such that
  $\classify{\xi'} = \varphi \circ I$.
  Then there is a closed $n$-mesh bundle $\xi$ over $\strat{B}$ together
  with an equivalence   
  $\varphi \simeq \classify{\xi}$ 
  relative to $\classify{\xi'}$.
\end{proposition}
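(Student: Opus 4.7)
The plan is to mirror the proof of Proposition~\ref{prop:fct-mesh-closed-1-classify}, with Lemma~\ref{lem:fct-mesh-closed-n-classify-marked} playing the role that Lemma~\ref{lem:fct-mesh-closed-1-classify-marked} played in the $n = 1$ case. First I would reduce to the situation where $I = \StratRealMark{i}$ for an inclusion $i : R \hookrightarrow S$ of marked posets, which we may do since $\strat{B}$ is triangulable and $\strat{A}$ is a closed triangulable subspace. Under this reduction, $\varphi$ restricts along $\eta_S : S \hookrightarrow \ExitMark(\StratRealMark{S})$ to a map of simplicial sets $\varphi \circ \eta_S : S \to \BMeshClosed{n}$, and this is automatically a map of marked simplicial sets with codomain $(\BMeshClosed{n})^\natural$ because $\eta_S$ sends marked edges of $S$ to equivalences in $\Exit(\StratRealMark{S})$ and $\varphi$ preserves equivalences.

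Applying Lemma~\ref{lem:fct-mesh-closed-n-classify-marked} to $\varphi \circ \eta_S$ produces a unique closed $n$-mesh bundle $\xi : \strat{X} \to \StratRealMark{S}$ such that $\classify{\xi} \circ \eta_S = \varphi \circ \eta_S$. Restricting along $i$, the uniqueness clause of the same lemma applied to $\varphi \circ \eta_S \circ i = \classify{\xi'} \circ \eta_R$ forces $I^* \xi = \xi'$, and hence $\classify{\xi} \circ \ExitMark(I) = \classify{\xi'}$. Thus the two maps
\[
\classify{\xi},\ \varphi : \ExitMark(\StratRealMark{S}) \longrightarrow (\BMeshClosed{n})^\natural
\]
agree on the pushout $\ExitMark(\StratRealMark{R}) \sqcup_R S$ of their restrictions to $S$ and to $\ExitMark(\StratRealMark{R})$.

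Finally, I would invoke that the units $\eta_R$ and $\eta_S$ are marked equivalences to conclude that the canonical map
\[
\ExitMark(\StratRealMark{R}) \sqcup_R S \hookrightarrow \ExitMark(\StratRealMark{S})
\]
is an acyclic cofibration in the marked model structure, so that both $\classify{\xi}$ and $\varphi$ are solutions to the same lifting problem into the fibrant marked quasicategory $(\BMeshClosed{n})^\natural$ (fibrant by Lemma~\ref{lem:fct-mesh-closed-n-cat-truncate}). The space of such lifts is contractible, yielding the desired equivalence $\classify{\xi} \simeq \varphi$ relative to $\classify{\xi'}$.

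The main technical subtlety — and the only step that requires any care beyond transcription — is verifying that the restriction of $\xi$ along $I$ coincides on the nose with the given $\xi'$; this hinges on the uniqueness in Lemma~\ref{lem:fct-mesh-closed-n-classify-marked} and on checking that all markings and framings on $S$ and $R$ are compatibly encoded by the triangulation, but no new geometric argument beyond the $n = 1$ case is needed.
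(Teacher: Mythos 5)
Your proposal is correct and follows essentially the same route as the paper, whose own proof is exactly the one-line reduction "analogous to Proposition~\ref{prop:fct-mesh-closed-1-classify}, using the marked classification lemma for closed $n$-mesh bundles." Your more detailed write-out of the reduction to marked posets, the application of Lemma~\ref{lem:fct-mesh-closed-n-classify-marked}, and the lifting argument along the acyclic cofibration $\ExitMark(\StratRealMark{R}) \sqcup_R S \hookrightarrow \ExitMark(\StratRealMark{S})$ matches the template proof step for step.
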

\begin{proof}
  Analogous to the proof of Proposition~\ref{prop:fct-mesh-closed-1-classify}
  by using Lemma~\ref{lem:fct-mesh-closed-1-classify-marked}.
\end{proof}

\subsection{Closed $n$-Meshes with Labels}\label{sec:fct-mesh-closed-labelled}

Suppose that $\xi : \strat{X} \to \strat{B}$ is a closed $n$-mesh bundle and $C$ a simplicial set.
We say that $\xi$ is labelled in $C$ when it is equipped with a map of simplicial sets
$L : \Exit(\strat{X}) \to C$ which we will call a \defn{labelling map}.
This notion will be particularly well-behaved when $C = \cat{C}$ is a quasicategory.

\begin{observation}\label{obs:basic-pullback-section}
	Let $\cat{C}$ be a $1$-category and $f : E \to B$ a map in $\cat{C}$
	together with a section, i.e.\ a map $s : B \to E$ such that $\id = f \circ s$.
	When the pullback of $f$ along a map $g : A \to B$ exists in $\cat{C}$,
	then the universal property induces a unique section of the pulled back map
	\[
		\begin{tikzcd}
			{A} \ar[drr, bend left, "s \circ g", swap] \ar[ddr, bend right, "\id"] \ar[dr, dashed] \\
			&
			{E \times_B A} \ar[r] \ar[d] \pullbackcorner &
			{E} \ar[d, "f"] \\
			&
			{A} \ar[r, "g"] &
			{B}
		\end{tikzcd}
	\]
\end{observation}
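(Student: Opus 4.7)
The plan is to invoke the universal property of the pullback directly. Given the pullback square
\[
\begin{tikzcd}
{E \times_B A} \ar[r, "p_E"] \ar[d, "p_A"'] \pullbackcorner & {E} \ar[d, "f"] \\
{A} \ar[r, "g"'] & {B}
\end{tikzcd}
\]
a map $h : A \to E \times_B A$ is uniquely determined by a pair of maps $(u : A \to E, v : A \to A)$ satisfying $f \circ u = g \circ v$.

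First I would specify the desired pair: take $u := s \circ g$ and $v := \id_A$. The compatibility condition is immediate, since $f \circ s \circ g = \id_B \circ g = g = g \circ \id_A$ by the section property of $s$. The universal property of the pullback then yields a unique map $\tilde{s} : A \to E \times_B A$ with $p_E \circ \tilde{s} = s \circ g$ and $p_A \circ \tilde{s} = \id_A$. The second equation is precisely the assertion that $\tilde{s}$ is a section of the pulled back map $p_A$.

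Uniqueness of $\tilde{s}$ as a section follows from the same universal property: any section $\tilde{s}'$ of $p_A$ satisfies $p_A \circ \tilde{s}' = \id_A$, and then $p_E \circ \tilde{s}'$ must equal $s \circ g$ because $f \circ (p_E \circ \tilde{s}') = g \circ (p_A \circ \tilde{s}') = g$ and $s \circ g$ is characterised (on the image of $g$) by this property — more cleanly, $p_E \circ \tilde{s}' = s \circ g$ is forced by composing with $s$ on the relevant factor, or one observes directly that the pair $(s \circ g, \id_A)$ is the unique such pair since $s$ is itself determined by $f \circ s = \id_B$ only up to a choice, but in the pullback the pair is prescribed.

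There is no real obstacle here; the statement is a direct consequence of the universal property, and the only care needed is verifying the commutativity condition $f \circ s \circ g = g$, which uses the sectionhood of $s$ in exactly one place.
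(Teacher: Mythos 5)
Your first paragraph is exactly the intended argument, and it is the same one the paper encodes in its diagram (the paper offers no further proof): the cone $(s \circ g, \id_A)$ over the cospan is compatible because $f \circ s \circ g = g = g \circ \id_A$, so the universal property produces a unique map $\tilde{s} : A \to E \times_B A$ with $p_A \circ \tilde{s} = \id_A$, i.e.\ a section of the pulled back map. The word ``unique'' in the observation refers precisely to this: the map induced by \emph{that particular cone} is unique. Nothing more is claimed, and nothing more is needed.

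Your second paragraph, however, tries to prove something stronger --- that \emph{every} section of $p_A$ equals $\tilde{s}$ --- and the argument there does not work. A section $\tilde{s}'$ of $p_A$ corresponds under the universal property to an arbitrary map $u := p_E \circ \tilde{s}' : A \to E$ satisfying $f \circ u = g$; there is no reason such a $u$ must equal $s \circ g$. (Take $f$ a trivial bundle $F \times B \to B$ with $F$ having two points and $g = \id_B$: the pulled back map is $f$ itself and has two distinct sections.) You seem to sense the problem mid-sentence (``$s$ is itself determined \ldots only up to a choice''), but the conclusion you draw is false. Simply delete that paragraph; the statement as written is fully established by your first one.
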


\begin{definition}\label{def:fct-mesh-closed-labelled-pointed}
  We denote by $\EMeshClosed{n}$ the simplicial set
  whose $k$-simplices consist of a closed $n$-mesh bundle $\xi : \strat{X} \to \DeltaStrat{k}$
  together with a section $s : \DeltaStrat{k} \to \strat{X}$ with $\id = f \circ s$.
  An order-preserving map $\ord{k'} \to \ord{k}$ acts by pullback
  along the induced map $\DeltaStrat{k'} \to \DeltaStrat{k}$
  which induces a new section as in Observation~\ref{obs:basic-pullback-section}.
  We write $\MeshClosed{n} : \EMeshClosed{n} \to \BMeshClosed{n}$ for the map of simplicial sets
  which forgets the section.
  The map $\MeshClosed{n}$ induces a polynomial functor
  $\BMeshClosedL{n}{-} : \sSet \to \sSet$.
\end{definition}

\begin{observation}\label{obs:fct-mesh-closed-labelled-base-case}
  In Observation~\ref{obs:fct-mesh-closed-labelled-base-case} we have seen
  that $\BMeshClosed{0} \cong \terminal$.
  A $k$-simplex of $\EMeshClosed{0}$ is a closed $0$-mesh bundle
  $\xi : \strat{X} \to \DeltaStrat{k}$ together with a section
  $s : \DeltaStrat{k} \to \strat{X}$.
  Because $\xi$ is the identity on $\DeltaStrat{k}$,
  the section must also be the identity.
  Therefore, $\EMeshClosed{0} \cong \terminal$ is the terminal object
  of $\sSet$ and $\EMeshClosed{0} \to \BMeshClosed{0}$ is the identity on
  $\terminal$.
  It follows that for any simplicial set $C$ there is a natural isomorphism
  of simplicial sets $\BMeshClosedL{0}{C} \cong C$.
\end{observation}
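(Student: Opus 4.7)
The plan is to unwind the definitions mechanically, leveraging the prior Observation~\ref{obs:fct-mesh-closed-n-base-case} that $\BMeshClosed{0} \cong \terminal$.

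First I would re-derive, briefly, that a $k$-simplex of $\BMeshClosed{0}$ is a closed $0$-mesh bundle $\xi : \strat{X} \to \DeltaStrat{k}$, and that by Definition~\ref{def:fct-mesh-closed-n} such a bundle is the composite of a length-zero sequence of closed $1$-mesh bundles; hence $\xi = \id_{\DeltaStrat{k}}$ and $\strat{X} = \DeltaStrat{k}$. This gives $\BMeshClosed{0}(\ord{k}) \cong \terminal(\ord{k})$ naturally in $\ord{k}$.

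Next I would apply the definition of $\EMeshClosed{n}$ (Definition~\ref{def:fct-mesh-closed-labelled-pointed}) in the case $n = 0$. A $k$-simplex is a pair $(\xi, s)$ with $\xi$ as above and $s : \DeltaStrat{k} \to \strat{X}$ a section of $\xi$. Since $\xi = \id_{\DeltaStrat{k}}$, the equation $\id = \xi \circ s$ forces $s = \id_{\DeltaStrat{k}}$. Thus there is a unique $k$-simplex, so $\EMeshClosed{0}(\ord{k}) \cong \terminal(\ord{k})$ naturally, and the forgetful map $\MeshClosed{0} : \EMeshClosed{0} \to \BMeshClosed{0}$ is the identity on $\terminal$.

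Finally for the polynomial functor, I would invoke Lemma~\ref{lem:b-poly-characterise}: maps $A \to \BMeshClosedL{0}{C}$ correspond naturally to diagrams
\[
  \begin{tikzcd}
    {C} &
    {\EMeshClosed{0} \times_{\BMeshClosed{0}} A} \ar[r] \ar[d] \ar[l] \pullbackcorner &
    {\EMeshClosed{0}} \ar[d, "\MeshClosed{0}"] \\
    {} &
    {A} \ar[r] &
    {\BMeshClosed{0}}
  \end{tikzcd}
\]
Since both $\EMeshClosed{0}$ and $\BMeshClosed{0}$ are terminal and $\MeshClosed{0}$ is the identity, the map $A \to \BMeshClosed{0}$ is uniquely determined, the pullback is $A$, and the datum reduces to a single map $A \to C$. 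By the Yoneda lemma this gives a natural isomorphism $\BMeshClosedL{0}{C} \cong C$. There is no real obstacle here; the only mild subtlety is keeping track of where the naturality in $C$ comes from, which is immediate from Lemma~\ref{lem:b-poly-characterise}.
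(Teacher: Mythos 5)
Your argument is correct and matches the paper's own reasoning: unwind the definition to see that $\xi$ and hence its section are forced to be the identity on $\DeltaStrat{k}$, so $\MeshClosed{0}$ is the identity on $\terminal$, and then the polynomial functor it induces is naturally isomorphic to the identity. Your explicit appeal to Lemma~\ref{lem:b-poly-characterise} and Yoneda simply spells out the final step that the paper leaves implicit, so there is nothing to add.
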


\begin{lemma}\label{lem:fct-mesh-closed-labelled-pointed-cat-fib}
  The map
  $\MeshClosed{n} : \EMeshClosed{n} \to \BMeshClosed{n}$
  is a categorical fibration.
  In particular the simplicial set $\EMeshClosed{n}$ is a quasicategory.
\end{lemma}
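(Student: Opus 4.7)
The plan is to establish the categorical fibration property by separately checking that $\MeshClosed{n}$ is an inner fibration and an isofibration, and then to deduce the quasicategory structure on $\EMeshClosed{n}$ from the fact that $\BMeshClosed{n}$ is a quasicategory (Lemma~\ref{lem:fct-mesh-closed-n-cat-truncate}).

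For the inner fibration, I would first unpack a lifting problem for an inner horn $\Lambda^i\ord{k} \hookrightarrow \Delta\ord{k}$ with $0 < i < k$ via the adjunction $\StratReal{-} \dashv \Exit$ and the definition of $\EMeshClosed{n}$ as the simplicial set of pointed closed $n$-mesh bundles. It translates to the following concrete data: a closed $n$-mesh bundle $\xi : \strat{X} \to \DeltaStrat{k}$ together with a section $s' : \HornStrat{i}{k} \to \strat{X}$ of $\xi|_{\HornStrat{i}{k}}$, and the task is to extend $s'$ to a section $s : \DeltaStrat{k} \to \strat{X}$. The key device is the pullback
\[
  \begin{tikzcd}
    P \ar[r] \ar[d] \pullbackcorner & \Exit(\strat{X}) \ar[d, "\Exit(\xi)"] \\
    \Delta\ord{k} \ar[r, hook] & \Exit(\DeltaStrat{k})
  \end{tikzcd}
\]
where the bottom map is the unit of $\StratReal{-} \dashv \Exit$, classifying the identity $k$-simplex. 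By Lemma~\ref{lem:fct-mesh-closed-n-bundle-flat}, $\Exit(\xi)$ is a flat categorical fibration, so its pullback $P \to \Delta\ord{k}$ is again a categorical fibration and in particular an inner fibration. Sections of $\xi$ (resp.\ over the stratified horn) correspond exactly to sections of $P \to \Delta\ord{k}$ (resp.\ over $\Lambda^i\ord{k}$), and so the inner fibration property of $P \to \Delta\ord{k}$ yields the desired filler, which unpacks to the section $s$ we need.

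For the isofibration property I would give an equivalence $e : \xi_0 \to \xi_1$ in $\BMeshClosed{n}$, represented by a closed $n$-mesh bundle $\xi : \strat{X} \to \DeltaStrat{1}$ that is an equivalence in the quasicategory, together with a $0$-simplex $(\xi_0, s_0)$ of $\EMeshClosed{n}$ over $\xi_0$. The strategy is to show that any such $\xi$ trivialises: the equivalence $\BMeshClosed{1} \simeq \FinOrd^\op$ of Proposition~\ref{prop:fct-mesh-closed-1-sing-trivial} forces the composite $\sing \circ \classify{\xi}$ to be an identity in each truncation, so that the number of singular strata at each layer is preserved continuously. Combining this with the inductive decomposition of a closed $n$-mesh bundle into a tower of $1$-mesh bundles yields an iso $\xi \cong \pi_1 : \strat{F} \times \DeltaStrat{1} \to \DeltaStrat{1}$ in $\FrBun^n$. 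Under this trivialisation the section $s_0$ extends to the constant section $\tilde s(t) := (s_0, t)$, which transports back to a section $s$ of $\xi$ through the iso. The resulting edge $(\xi, s)$ of $\EMeshClosed{n}$ corresponds to a degenerate $1$-simplex modulo the iso in $\FrBun^n$, and so is an equivalence in $\EMeshClosed{n}$; its image under $\MeshClosed{n}$ is $e$ as required. Finally, since $\BMeshClosed{n}$ is a quasicategory and $\MeshClosed{n}$ is an inner fibration, $\EMeshClosed{n}$ is itself a quasicategory.

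The hard part will be the isofibration step and specifically the claim that equivalences in $\BMeshClosed{n}$ geometrically trivialise in $\FrBun^n$. Making this precise requires induction on $n$ and some care about how the trivialisations glue across the iterated $1$-mesh bundle decomposition; one also needs to verify that the resulting edge $(\xi, s)$ is literally equivalent to a degenerate $1$-simplex in $\EMeshClosed{n}$, rather than merely isomorphic in the bundle category. An alternative route that avoids this classification is to verify the right lifting property against $\Delta\ord{0} \hookrightarrow E\ord{1}$ directly: a map $E\ord{1} \to \BMeshClosed{n}$ can, via Proposition~\ref{prop:fct-mesh-closed-n-classify}, be rectified to the classifying map of a closed $n$-mesh bundle over a triangulable stratified realisation of $E\ord{1}$, and the same pullback argument used for inner horns then produces the required section extension using the flat categorical fibration property of the exit map.
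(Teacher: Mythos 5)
Your inner-fibration argument is the paper's argument: the lifting problem unpacks to extending a partial section of a closed $n$-mesh bundle $\xi$ over a stratified inner horn, and the extension exists because $\Exit(\xi)$ is an inner fibration by Lemma~\ref{lem:fct-mesh-closed-n-bundle-flat}. Your pullback $P = \Delta\ord{k} \times_{\Exit(\DeltaStrat{k})} \Exit(\strat{X})$ is a cosmetic repackaging of the same lift (it is exactly the pullback appearing in Lemma~\ref{lem:fct-mesh-closed-labelled-universal}), so this half matches.

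For the isofibration the paper offers only ``it follows similarly,'' so the real question is whether your argument closes the gap you yourself flag, and it does not quite. The simplices of $\EMeshClosed{n}$ and $\BMeshClosed{n}$ are literal framed bundles, i.e.\ subspaces of $\R^n \times \DeltaTop{1}$; an equivalence edge $e$ represented by $\xi : \strat{X} \to \DeltaStrat{1}$ that is merely \emph{isomorphic in $\FrBun^n$} to a product bundle is in general a different $1$-simplex from the degenerate one (the singular heights may still move), so ``corresponds to a degenerate simplex modulo the iso'' does not by itself exhibit $(\xi, s)$ as an equivalence in the quasicategory $\EMeshClosed{n}$ --- you would still need to produce the $2$-simplices witnessing invertibility, and the strictification remark in \S\ref{sec:fct-framing} does not do this for you. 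The efficient repair stays entirely inside the machinery you already used: since $\MeshClosed{n}$ is an inner fibration, it suffices to produce a lift $\hat e$ of the equivalence $e$ with domain $(\xi_0, s_0)$ that is $\MeshClosed{n}$-cocartesian, because a cocartesian edge over an equivalence is an equivalence. By Lemma~\ref{lem:fct-mesh-closed-labelled-universal} this reduces to choosing the extending section $s : \DeltaStrat{1} \to \strat{X}$ so that, as an edge of $\Exit(\strat{X})$, it is $\Exit(\xi)$-cocartesian: take the unique singular lift of Lemma~\ref{lem:fct-mesh-closed-1-sing-covering} when $s_0$ is singular, and a stratum-preserving lift through the regular stratum containing $s_0$ otherwise. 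This avoids the global trivialisation of $\xi$ altogether, and also sidesteps your fallback route via $E\ord{1}$, which is awkward because $E\ord{1}$ is not presented as the exit category of a marked poset, so Proposition~\ref{prop:fct-mesh-closed-n-classify} does not apply to it directly.
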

\begin{proof}
  We first show that the map is an inner fibration.
  Suppose we have a lifting problem
  \[
    \begin{tikzcd}
      {\Lambda^i \ord{k}} \ar[r] \ar[d] &
      {\EMeshClosed{n}} \ar[d, "\MeshClosed{n}"] \\
      {\Delta\ord{k}} \ar[r] \ar[ur, dashed] &
      {\BMeshClosed{n}}
    \end{tikzcd}
  \]
  for some $0 < i < k$.
  The given data corresponds to a closed $n$-mesh bundle $\xi$ together with a partial section
  $s$ which fit into the diagram
  \[
    \begin{tikzcd}
      {\HornStrat{i}{k}} \ar[r, "s'"] \ar[d, hook] &
      {\strat{M}} \ar[d, "f"] \\
      {\DeltaStrat{k}} \ar[r] &
      {\DeltaStrat{k}}
    \end{tikzcd}
  \]
  Using the $\StratReal{-} \dashv \Exit$ adjunction this corresponds to a diagram
  \[
    \begin{tikzcd}
      {\Lambda^i\ord{k}} \ar[r] \ar[d, hook] &
      {\Exit(\strat{X})} \ar[d, "\Exit(\xi)"] \\
      {\Delta\ord{k}} \ar[r] &
      {\Exit(\DeltaStrat{k})}
    \end{tikzcd}
  \]
  By Lemma~\ref{lem:fct-mesh-closed-n-bundle-flat} the map $\Exit(\xi)$ is in particular
  an inner fibration, so there is a lift
  \[
    \begin{tikzcd}
      {\Lambda^i\ord{k}} \ar[r] \ar[d, hook] &
      {\Exit(\strat{X})} \ar[d, "\Exit(\xi)"] \\
      {\Delta\ord{k}} \ar[r] \ar[ur, dashed] &
      {\Exit(\DeltaStrat{k})}
    \end{tikzcd}
  \]
  which classifies an extension of the partial section $s'$ to a full section
  of $\xi$.
  Then $\xi$ together with the full section is a solution to the lifting problem.
  We have therefore shown that $\MeshClosed{n} : \EMeshClosed{n} \to \BMeshClosed{n}$ is an inner fibration. It follows similarly that $\MeshClosed{n}$ is an isofibration.
  % We now show that $\MeshClosed{n}$ is an isofibration.
  % Suppose we have a lifting problem
  % \[
  %   \begin{tikzcd}
  %     {\Delta\ord{0}} \ar[r] \ar[d, "\langle 1 \rangle", swap] &
  %     {\EMeshClosed{n}} \ar[d, "\MeshClosed{n}"] \\
  %     {\Delta\ord{1}} \ar[r] \ar[ur, dashed] &
  %     {\BMeshClosed{n}}
  %   \end{tikzcd}
  % \]
  % where $\Delta\ord{1} \to \BMeshClosed{n}$ is an invertible map in $\BMeshClosed{n}$.
  % {\color{BrickRed}
  % TODO: Using that invertible maps in $\BMeshClosed{n}$ unpack to $n$-mesh bundles over $\DeltaTop{1}$.
  % }
\end{proof}

\begin{lemma}\label{lem:fct-mesh-closed-labelled-universal}
  Let $\xi : \strat{X} \to \strat{B}$ be a closed $n$-mesh bundle.
  Then there is a pullback square
  \begin{equation}\label{eq:fct-mesh-closed-labelled-universal}
    \begin{tikzcd}
      {\Exit(\strat{X})} \ar[r] \ar[d, "\Exit(\xi)", swap] \pullbackcorner &
      {\EMeshClosed{n}} \ar[d, "\MeshClosed{n}"] \\
      {\Exit(\strat{B})} \ar[r, "\classify{\xi}", swap] &
      {\BMeshClosed{n}}
    \end{tikzcd}
  \end{equation}
  of simplicial sets.
  In particular, when $\strat{B}$ is a fibrant stratified space,
  the pullback square is a homotopy pullback of quasicategories.
\end{lemma}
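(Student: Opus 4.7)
The plan is to verify the pullback property at the level of $k$-simplices by unfolding the definitions, and then upgrade the strict pullback of simplicial sets to a homotopy pullback of quasicategories using that $\MeshClosed{n}$ is a categorical fibration.

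First I would describe the top map $\Exit(\strat{X}) \to \EMeshClosed{n}$ in the square. A $k$-simplex of $\Exit(\strat{X})$ is a stratified map $\tau : \DeltaStrat{k} \to \strat{X}$. Setting $\sigma := \xi \circ \tau : \DeltaStrat{k} \to \strat{B}$, the universal property of the pullback defining $\sigma^*\xi$ turns $\tau$ into a canonical section $s_\tau : \DeltaStrat{k} \to \sigma^*\xi$ of the closed $n$-mesh bundle $\sigma^*\xi$. The assignment $\tau \mapsto (\sigma^*\xi, s_\tau)$ defines the desired map and is manifestly natural in $\ord{k}$, so the square commutes strictly.

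Next I would check that the square is cartesian in $\sSet$ fibrewise over each $\ord{k}$. A $k$-simplex of the pullback $\Exit(\strat{B}) \times_{\BMeshClosed{n}} \EMeshClosed{n}$ consists of a stratified $k$-simplex $\sigma : \DeltaStrat{k} \to \strat{B}$ together with a section $s : \DeltaStrat{k} \to \sigma^*\xi$ of the pullback mesh bundle. Composing $s$ with the canonical projection $\sigma^*\xi \to \strat{X}$ produces a stratified map $\tau : \DeltaStrat{k} \to \strat{X}$ lying over $\sigma$, and this operation is inverse to $\tau \mapsto (\sigma, s_\tau)$ by the universal property of the pullback $\sigma^*\xi = \DeltaStrat{k} \times_{\strat{B}} \strat{X}$. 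Since these bijections are natural in $\ord{k}$, they assemble into the required isomorphism of simplicial sets.

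Finally, for the homotopy pullback claim, assume $\strat{B}$ is fibrant. By Lemma~\ref{lem:fct-mesh-closed-labelled-pointed-cat-fib} the right-hand map $\MeshClosed{n}$ is a categorical fibration between quasicategories, so the pullback computed above is also a homotopy pullback in the Joyal model structure (categorical fibrations are the fibrations between fibrant objects). Since $\strat{B}$ is fibrant, $\Exit(\strat{B})$ is a quasicategory, and by Lemma~\ref{lem:fct-mesh-closed-n-bundle-flat} the map $\Exit(\xi)$ is a flat categorical fibration, so $\Exit(\strat{X})$ is a quasicategory as well. No step looks like a real obstacle; the only care needed is in writing down the canonical section $s_\tau$ and checking naturality of the bijection, which are straightforward consequences of the universal property of the pullback.
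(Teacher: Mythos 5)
Your proof is correct and follows essentially the same route as the paper's: both construct the top map by sending a stratified simplex $\tau:\DeltaStrat{k}\to\strat{X}$ to the pulled-back bundle together with the section induced by the universal property, and both verify the strict pullback by exhibiting the inverse bijection $(\sigma,s)\mapsto \xi^*\sigma\circ s$ levelwise. You additionally spell out the homotopy-pullback upgrade via the categorical fibration $\MeshClosed{n}$, which the paper leaves implicit; that step is correct as stated.
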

\begin{proof}
  We begin by describing the map $\Exit(\strat{X}) \to \EMeshClosed{n}$ explicitly.
  A $k$-simplex of $\Exit(\strat{X})$ is a stratified $k$-simplex $\sigma : \DeltaStrat{k} \to \strat{X}$.
  We need to send $\sigma$ to a $k$-simplex of $\EMeshClosed{n}$, representing a closed $n$-mesh bundle over $\DeltaStrat{k}$ together with a section.
  To make the square (\ref{eq:fct-mesh-closed-labelled-universal}) commute,
  this closed $n$-mesh bundle needs to be the pullback
  \[
    \begin{tikzcd}
      {\strat{X} \times_{\DeltaStrat{k}} \strat{B}} \ar[r] \ar[d, "(\xi \circ \sigma)^* \xi"'] \pullbackcorner &
      {\strat{X} \times_{\strat{B}} \strat{X}} \ar[r] \ar[d, "\xi^* \xi"] &
      {\strat{X}} \ar[d, "\xi"] \\
      % {} & 
      {\DeltaStrat{k}} \ar[r, "\sigma"'] &
      {\strat{X}} \ar[r, "\xi"'] &
      {\strat{B}}
    \end{tikzcd}
  \]
  Via the universal property of the pullback the map $\sigma : \DeltaStrat{k} \to \strat{X}$ induces a section of $(\xi \circ \sigma)^* \xi$ and therefore a $k$-simplex of
  $\EMeshClosed{n}$ as needed.

  It remains to show that the square~(\ref{eq:fct-mesh-closed-labelled-universal}) is a
  pullback square of simplicial sets.
  A $k$-simplex of $\Exit(\strat{B}) \times_{\BMeshClosed{n}} \EMeshClosed{n}$
  consists of a diagram
  \begin{equation}\label{eq:fct-mesh-closed-labelled-universal:pullback}
    \begin{tikzcd}
      {} &
      {\DeltaStrat{k} \times_{\strat{B}} \strat{X}} \ar[r, "\xi^* \sigma"] \ar[d] \pullbackcorner &
      {\strat{X}} \ar[d, "\xi"] \\
      {\DeltaStrat{k}} \ar[r, equal] \ar[ur, "s"] & 
      {\DeltaStrat{k}} \ar[r, "\sigma"'] &
      {\strat{B}}
    \end{tikzcd}
  \end{equation}
  The induced map
  $\Exit(\strat{B}) \times_{\BMeshClosed{n}} \EMeshClosed{n} \to \Exit(\strat{X})$
  sends the diagram (\ref{eq:fct-mesh-closed-labelled-universal:pullback}) to the stratified $k$-simplex
  $\xi^* \sigma \circ s : \DeltaStrat{k} \to \strat{X}$.
  Conversely, by the universal property of the pullback of stratified spaces in (\ref{eq:fct-mesh-closed-labelled-universal:pullback}),
  the stratified $k$-simplex of $\strat{X}$ uniquely determines the section $s$.
  Therefore, the square (\ref{eq:fct-mesh-closed-labelled-universal}) is a pullback square of simplicial sets.
\end{proof}

\begin{construction}
  Let $\xi : \strat{X} \to \strat{B}$ be a closed $n$-mesh bundle together
  with a labelling map $\ell : \Exit(\strat{X}) \to C$ for some simplicial set $C$.
  For any stratified $k$-simplex $\sigma : \DeltaStrat{k} \to \strat{B}$
  we have an induced diagram of simplicial sets  
  \[
    \begin{tikzcd}[row sep = large]
      {E} \ar[r, hook] \ar[d] \pullbackcorner &
      {\Exit(\strat{X}_{\sigma})} \ar[d, "\Exit(\sigma^* \xi)"{description}] \ar[r] \pullbackcorner &
      {\Exit(\strat{X})} \ar[r] \ar[d, "\Exit(\xi)"{description}] \pullbackcorner &
      {\EMeshClosed{n}} \ar[d] \\
      {\Delta\ord{k}} \ar[r, hook] &
      {\Exit(\DeltaStrat{k})} \ar[r] &
      {\Exit(\strat{B})} \ar[r, "\classify{\xi}"'] &
      {\BMeshClosed{n}}
    \end{tikzcd}
  \]
  Then $\classify{\xi}$ together with the composite
  \[
    \begin{tikzcd}
      {E} \ar[r] &
      {\Exit(\strat{X}_\sigma)} \ar[r] &
      {\Exit(\strat{X})} \ar[r, "\ell"] &
      {C}
    \end{tikzcd}
  \]
  determine a $k$-simplex of $\BMeshClosedL{n}{C}$.
  This defines the \defn{classifying map} $\classify{\xi; \ell} : \Exit(\strat{B}) \to \BMeshClosedL{n}{C}$ and is compatible with  
  the unlabelled case by making the following diagram commute:
  \[
    \begin{tikzcd}
      {} &
      {\BMeshClosedL{n}{C}} \ar[d] \\
      {\Exit(\strat{B})} \ar[r, "\classify{\xi}"'] \ar[ur, "\classify{\xi; \ell}"] &
      {\BMeshClosed{n}}
    \end{tikzcd}
  \]
\end{construction}

\begin{lemma}\label{lem:fct-mesh-closed-labelled-pointed-flat}
  The forgetful map
  $\MeshClosed{n} : \EMeshClosed{n} \to \BMeshClosed{n}$
  is a flat categorical fibration.
\end{lemma}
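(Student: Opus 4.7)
The plan is to reduce flatness of $\MeshClosed{n}$ to the flatness of $\Exit(\xi)$ for the closed $n$-mesh bundle $\xi$ classified by an arbitrary $2$-simplex of $\BMeshClosed{n}$, which is already available from Lemma~\ref{lem:fct-mesh-closed-n-bundle-flat}. Since Lemma~\ref{lem:fct-mesh-closed-labelled-pointed-cat-fib} tells us that $\MeshClosed{n}$ is a categorical fibration, it is in particular an inner fibration, so only flatness remains to be verified.

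First I would fix a $2$-simplex $\sigma : \Delta\ord{2} \to \BMeshClosed{n}$. By definition of $\BMeshClosed{n}$, this is a closed $n$-mesh bundle $\xi : \strat{X} \to \DeltaStrat{2}$, and by construction of the classifying map $\sigma$ factors through the unit $\eta : \Delta\ord{2} \hookrightarrow \Exit(\DeltaStrat{2})$ of the $\StratReal{-} \dashv \Exit$ adjunction as
\[
  \sigma \;=\; \bigl(\Delta\ord{2} \xrightarrow{\eta} \Exit(\DeltaStrat{2}) \xrightarrow{\classify{\xi}} \BMeshClosed{n}\bigr).
\]

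Next I would apply Lemma~\ref{lem:fct-mesh-closed-labelled-universal} to identify the pullback of $\MeshClosed{n}$ along $\classify{\xi}$ with $\Exit(\xi) : \Exit(\strat{X}) \to \Exit(\DeltaStrat{2})$. The pullback pasting lemma then gives natural isomorphisms
\[
  \EMeshClosed{n} \times_{\BMeshClosed{n}} \Delta\ord{2}
  \;\cong\; \Exit(\strat{X}) \times_{\Exit(\DeltaStrat{2})} \Delta\ord{2},
\]
and likewise with $\Lambda^1\ord{2}$ in place of $\Delta\ord{2}$, where the map into $\Exit(\DeltaStrat{2})$ on the right is the composite of the horn inclusion with $\eta$. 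Under these identifications, the inclusion required by the flatness condition for $\sigma^*\MeshClosed{n}$ matches the inclusion required by flatness of $\Exit(\xi)$ applied to the $2$-simplex $\eta$.

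Finally, Lemma~\ref{lem:fct-mesh-closed-n-bundle-flat} tells us that $\Exit(\xi)$ is a flat categorical fibration, so the latter inclusion is a categorical equivalence; hence so is the former, proving that $\sigma^* \MeshClosed{n}$ is flat. Since $\sigma$ was arbitrary, $\MeshClosed{n}$ is a flat categorical fibration. There is no significant obstacle here: the argument is essentially a bookkeeping exercise, transferring the geometric flatness statement already proved for mesh bundles themselves along the universal pullback square identifying $\MeshClosed{n}$ with the classifying bundle.
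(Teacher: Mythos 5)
Your proposal is correct and follows essentially the same route as the paper: both reduce to the $2$-simplex $\sigma = \classify{\xi}\circ\eta$, invoke Lemma~\ref{lem:fct-mesh-closed-labelled-universal} to identify the pullback of $\MeshClosed{n}$ with $\Exit(\xi)$, and conclude from flatness of $\Exit(\xi)$ together with stability of flat fibrations under pullback. Your citation of the $n$-bundle flatness lemma is in fact the more accurate reference here.
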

\begin{proof}
  The map $\EMeshClosed{n} \to \BMeshClosed{n}$ is already a categorical fibration by
  Lemma~\ref{lem:fct-mesh-closed-labelled-pointed-cat-fib}.  
  The map is flat if for every $2$-simplex
  $\sigma : \Delta\ord{2} \to \BMeshClosed{n}$ the pulled back map
  \begin{equation}\label{eq:fct-mesh-labelled-pointed-flat:pullback}
    \begin{tikzcd}
      {\Delta\ord{2} \times_{\BMesh{n}} \EMesh{n}} \ar[r] \ar[d, "\varphi", swap] \pullbackcorner &
      {\EMesh{n}} \ar[d, "\Mesh{n}"] \\
      {\Delta\ord{2}} \ar[r, "\sigma", swap] &
      {\BMesh{n}}
    \end{tikzcd}
  \end{equation}
  is flat.
  The $2$-simplex $\sigma$ determines a closed $n$-mesh bundle
  $\xi : \strat{X} \to \DeltaStrat{2}$ by the definition of $\BMeshClosed{n}$.
  % In Observation~[] we have seen that $\sigma$ factors as the composite
  % \[
  %   \begin{tikzcd}
  %     {\Delta\ord{2}} \ar[r] &
  %     {\Exit(\DeltaStrat{2})} \ar[r, "\classify{f}"] &
  %     {\BMeshClosed{n}}
  %   \end{tikzcd}
  % \]
  % where the first map is the unit of the adjunction $\DeltaStrat{-} \dashv \Exit$.
  Using Lemma~\ref{lem:fct-mesh-closed-labelled-universal} the square
  (\ref{eq:fct-mesh-labelled-pointed-flat:pullback}) factors as the composite
  of the pullback squares
  \[
    \begin{tikzcd}
      {\Delta\ord{2} \times_{\BMeshClosed{n}} \EMeshClosed{n}} \ar[r] \ar[d, "\varphi", swap] \pullbackcorner &
      {\Exit(\strat{X})} \ar[r] \ar[d, "\Exit(\xi)", swap] \pullbackcorner &
      {\EMeshClosed{n}} \ar[d, "\Mesh{n}"] \\
      {\Delta\ord{2}} \ar[r] &
      {\Exit(\DeltaStrat{2})} \ar[r, "\classify{\xi}", swap] &
      {\BMeshClosed{n}}
    \end{tikzcd}
  \]
  By Lemma~\ref{lem:fct-mesh-closed-1-bundle-flat} the map $\Exit(\xi)$ is a flat categorical fibration.
  Because $\varphi$ is the pullback of $\Exit(\xi)$, it follows that it is a flat categorical fibration as well.
\end{proof}

% \begin{lemma}\label{lem:fct-mesh-closed-total-equivalence}
%   Consider a pullback square of closed $n$-mesh bundles
%   \[
%     \begin{tikzcd}
%       {\strat{N}} \ar[r] \ar[d, "f'", swap] \pullbackcorner &
%       {\strat{M}} \ar[d, "f"] \\
%       {\strat{A}} \ar[r] &
%       {\strat{B}}
%     \end{tikzcd}
%   \]
%   where the map $\strat{A} \to \strat{B}$ is a stratified weak equivalence.
%   Then also the map $\strat{N} \to \strat{M}$ is a stratified weak equivalence.
% \end{lemma}
% \begin{proof}
%   We have a diagram of simplicial sets
%   \[
%     \begin{tikzcd}
%       {\Exit(\strat{N})} \ar[r] \ar[d] \pullbackcorner &
%       {\Exit(\strat{M})} \ar[r] \ar[d] \pullbackcorner &
%       {\EMesh{n}} \ar[d] \\
%       {\Exit(\strat{A})} \ar[r, "\simeq", swap] &
%       {\Exit(\strat{B})} \ar[r, "\classify{f}", swap] &
%       {\BMesh{n}}
%     \end{tikzcd}
%   \]
%   in which both squares are pullbacks.
%   By Lemma~\ref{lem:fct-mesh-labelled-pointed-flat} the map
%   $\EMesh{n} \to \BMesh{n}$ is a flat categorical fibration.
%   Therefore we can apply Lemma~\ref{lem:b-poly-flat-cat-pullback-equiv} to the diagram to see that
%   $\Exit(\strat{N}) \to \Exit(\strat{M})$ is a categorical equivalence.
% \end{proof}

\begin{proposition}\label{prop:fct-mesh-closed-labelled-cat}
  The functor $\BMeshClosedL{n}{-} : \sSet \to \sSet$ satisfies the following properties:
  \begin{enumerate}
    \item $\BMeshClosedL{n}{\cat{C}} \to \BMesh{n}$ is a categorical fibration when $\cat{C}$ is a quasicategory.
    \item $\BMeshClosedL{n}{-}$ preserves quasicategories.
    \item $\BMeshClosedL{n}{-}$ preserves categorical equivalences between quasicategories.
  \end{enumerate}
\end{proposition}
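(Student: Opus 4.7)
The plan is to recognise this proposition as a direct application of the general machinery for polynomial functors built from flat categorical fibrations, which was developed in Proposition~\ref{prop:b-poly-flat-cat}. By Definition~\ref{def:fct-mesh-closed-labelled-pointed} the functor $\BMeshClosedL{n}{-}$ is the polynomial functor associated to the map $\MeshClosed{n} : \EMeshClosed{n} \to \BMeshClosed{n}$, so it suffices to verify the hypotheses of Proposition~\ref{prop:b-poly-flat-cat} for $\MeshClosed{n}$.

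First I would collect the required inputs. That $\BMeshClosed{n}$ is a quasicategory is Lemma~\ref{lem:fct-mesh-closed-n-cat-truncate}, and that $\EMeshClosed{n}$ is a quasicategory with $\MeshClosed{n}$ a categorical fibration is Lemma~\ref{lem:fct-mesh-closed-labelled-pointed-cat-fib}. Flatness of $\MeshClosed{n}$ is precisely Lemma~\ref{lem:fct-mesh-closed-labelled-pointed-flat}. Thus $\MeshClosed{n}$ is a flat categorical fibration between quasicategories.

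With these hypotheses in place, Proposition~\ref{prop:b-poly-flat-cat} yields immediately that $\BMeshClosedL{n}{-}$ (i) sends any quasicategory $\cat{C}$ to a simplicial set $\BMeshClosedL{n}{\cat{C}}$ for which the structure map $\BMeshClosedL{n}{\cat{C}} \to \BMeshClosed{n}$ is a categorical fibration, (ii) sends quasicategories to quasicategories, and (iii) preserves categorical equivalences between quasicategories. Since these are exactly the three statements to be proved, no further argument is needed.

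There is no real obstacle here: the work has already been done in assembling the basic geometric properties of $\MeshClosed{n}$. The only minor thing to note in the write-up is to cite the correct prior lemmas and to make explicit the identification of $\BMeshClosedL{n}{-}$ as the polynomial functor of $\MeshClosed{n}$, so that the invocation of Proposition~\ref{prop:b-poly-flat-cat} is unambiguous.
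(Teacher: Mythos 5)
Your proposal is correct and matches the paper's own proof exactly: the paper likewise cites Lemma~\ref{lem:fct-mesh-closed-labelled-pointed-flat} to establish that $\MeshClosed{n}$ is a flat categorical fibration and then concludes by Proposition~\ref{prop:b-poly-flat-cat}. Your additional citations of the supporting lemmas for the quasicategory hypotheses are a harmless elaboration of the same argument.
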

\begin{proof}
  By Lemma~\ref{lem:fct-mesh-closed-labelled-pointed-flat} the map
  $\EMeshClosed{n} \to \BMeshClosed{n}$ is a flat categorical fibration.
  Therefore, the claims follow by Proposition~\ref{prop:b-poly-flat-cat}.
\end{proof}

\begin{proposition}\label{prop:fct-mesh-closed-labelled-classify}
  Let $\strat{B}$ be a triangulable stratified space and
  $I : \strat{A} \hookrightarrow \strat{B}$
  a closed triangulable stratified subspace.
  Let $\cat{C}$ be a quasicategory.
  Suppose that $\varphi : \Exit(\strat{B}) \to \BMeshClosedL{n}{\cat{C}}$
  is a map and $\xi' : \strat{X}' \to \strat{A}$ a closed $n$-mesh bundle
  with a labelling map $\ell' : \Exit(\strat{X}') \to \cat{C}$
  such that $\classify{\xi'; \ell'} = \varphi \circ \Exit(I)$.
  Then there is a closed $n$-mesh bundle $\xi : \strat{X} \to \strat{B}$ 
  with a labelling map $\ell : \Exit(\strat{X}) \to \cat{C}$
  together with an equivalence   
  $\varphi \simeq \classify{\xi; \ell}$ 
  relative to $\classify{\xi'; \ell'}$.
\end{proposition}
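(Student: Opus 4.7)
The plan is to reduce to the unlabelled case established by Proposition~\ref{prop:fct-mesh-closed-n-classify}, then use that $\BMeshClosedL{n}{\cat{C}} \to \BMeshClosed{n}$ is a categorical fibration (Proposition~\ref{prop:fct-mesh-closed-labelled-cat}) to lift the resulting equivalence into the labelled setting, and finally extract the labelling map from the polynomial-functor characterisation of Lemma~\ref{lem:b-poly-characterise}.

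Postcomposing $\varphi$ with the forgetful map $\BMeshClosedL{n}{\cat{C}} \to \BMeshClosed{n}$ yields a map $\varphi_0 : \Exit(\strat{B}) \to \BMeshClosed{n}$ whose restriction along $\Exit(I)$ is $\classify{\xi'}$. Proposition~\ref{prop:fct-mesh-closed-n-classify} then produces a closed $n$-mesh bundle $\xi : \strat{X} \to \strat{B}$ that restricts to $\xi'$ on $\strat{A}$, together with an equivalence $\alpha : \varphi_0 \simeq \classify{\xi}$ relative to $\classify{\xi'}$, i.e.\ a map $\alpha : \Exit(\strat{B}) \times E\ord{1} \to \BMeshClosed{n}$ that is constant along $\Exit(\strat{A})$.

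Since $\Exit(I)$ is a cofibration of simplicial sets and $\{0\} \hookrightarrow E\ord{1}$ is an acyclic cofibration in the Joyal model structure, their pushout-product
\[
  \bigl(\Exit(\strat{B}) \times \{0\}\bigr)
  \sqcup_{\Exit(\strat{A}) \times \{0\}}
  \bigl(\Exit(\strat{A}) \times E\ord{1}\bigr)
  \hookrightarrow
  \Exit(\strat{B}) \times E\ord{1}
\]
is again an acyclic cofibration. Combined with the categorical fibration $\BMeshClosedL{n}{\cat{C}} \to \BMeshClosed{n}$, we can solve the lifting problem whose top map is $\varphi$ on $\Exit(\strat{B}) \times \{0\}$ and the constant equivalence at $\classify{\xi'; \ell'}$ on $\Exit(\strat{A}) \times E\ord{1}$, and whose bottom map is $\alpha$. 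The square commutes on the intersection by the hypothesis $\classify{\xi'; \ell'} = \varphi \circ \Exit(I)$, and the commutativity as a whole holds because $\alpha$ is relative to $\classify{\xi'}$. Evaluating the resulting lift at $1 \in E\ord{1}$ produces $\tilde\varphi : \Exit(\strat{B}) \to \BMeshClosedL{n}{\cat{C}}$ that lies strictly over $\classify{\xi}$, restricts to $\classify{\xi'; \ell'}$ on $\strat{A}$, and is equivalent to $\varphi$ relative to $\classify{\xi'; \ell'}$.

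Finally, by Lemma~\ref{lem:b-poly-characterise} a map $\tilde\varphi : \Exit(\strat{B}) \to \BMeshClosedL{n}{\cat{C}}$ lying strictly over $\classify{\xi}$ corresponds precisely to a map out of the pullback $\EMeshClosed{n} \times_{\BMeshClosed{n}} \Exit(\strat{B})$ into $\cat{C}$, and by Lemma~\ref{lem:fct-mesh-closed-labelled-universal} this pullback is canonically $\Exit(\strat{X})$. We thereby obtain a labelling map $\ell : \Exit(\strat{X}) \to \cat{C}$ with $\tilde\varphi = \classify{\xi; \ell}$, and the relative condition forces $\ell$ to restrict to $\ell'$ on $\Exit(\strat{X}')$ via the pullback square over $\Exit(I)$. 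The main delicate point is organising the relative homotopy lift in the third step so that the equivalence it produces is both relative to $\strat{A}$ and lands strictly over $\classify{\xi}$; this is exactly what the pushout-product formulation delivers.
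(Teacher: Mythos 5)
Your proposal is correct and follows essentially the same route as the paper: reduce to the unlabelled case via Proposition~\ref{prop:fct-mesh-closed-n-classify}, then lift the resulting equivalence through the categorical fibration $\BMeshClosedL{n}{\cat{C}} \to \BMeshClosed{n}$. You simply make explicit the pushout-product argument and the extraction of $\ell$ via Lemma~\ref{lem:b-poly-characterise} and Lemma~\ref{lem:fct-mesh-closed-labelled-universal}, which the paper's proof leaves implicit.
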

\begin{proof}
  By restriction along the map that forgets the labelling,
  we obtain a map $\psi$ which together with the other given data fits into the diagram:
  \[
    \begin{tikzcd}
    	{\Exit(\strat{A})} &&& {\Exit(\strat{A})} \\
    	& {\BMeshClosedL{n}{\cat{C}}} & {\BMeshClosed{n}} \\
    	{\Exit(\strat{B})} &&& {\Exit(\strat{B})}
    	\arrow[from=2-2, to=2-3]
    	\arrow[equal, from=1-1, to=1-4]
    	\arrow[equal, from=3-1, to=3-4]
    	\arrow[hook, from=1-4, to=3-4]
    	\arrow[hook', from=1-1, to=3-1]
    	\arrow["\varphi"{description}, from=3-1, to=2-2]
    	\arrow["{\classify{\xi';\ell'}}"{description}, from=1-1, to=2-2]
    	\arrow["{\classify{\xi'}}"{description}, from=1-4, to=2-3]
    	\arrow["\psi"{description}, from=3-4, to=2-3]
    \end{tikzcd}
  \]
  The closed $n$-mesh bundle $\xi'$ together with the map $\psi$
  satisfy the conditions of Proposition~\ref{prop:fct-mesh-closed-n-classify},
  and so we obtain a closed $n$-mesh bundle $\xi : \strat{X} \to \strat{B}$
  together with an equivalence $h : \psi \simeq \classify{\xi}$ relative to
  $\classify{\xi'}$.
  Because $\cat{C}$ is a quasicategory,
  by Proposition~\ref{prop:fct-mesh-closed-labelled-cat} the forgetful map
  $\BMeshClosedL{n}{\cat{C}} \to \BMeshClosedL{n}{\cat{C}}$ is a categorical
  fibration.
  Therefore we can lift the equivalence $h$ to an equivalence
  $\varphi \simeq \classify{\xi; \ell}$ relative to $\classify{\xi'; \ell'}$
  where $\ell : \Exit(\strat{X}) \to \cat{C}$ is a labelling functor for
  $\xi$.
\end{proof}

\subsection{Packing for Closed $n$-Meshes}\label{sec:fct-mesh-closed-pack}

\begin{construction}
  We construct a \defn{packing map}
  $\BMeshClosed{n + 1} \to \BMeshClosedL{n}{\BMeshClosed{1}}$.
  A $k$-simplex of $\BMeshClosed{n + 1}$ is a closed $(n + 1)$-mesh bundle
  over $\DeltaStrat{k}$ which we write as the composite $\chi \circ \xi$
  of a closed $1$-mesh bundle $\xi : \strat{X}_{n + 1} \to \strat{X}_n$
  and a closed $n$-mesh bundle $\zeta : \strat{X}_n \to \DeltaStrat{k}$.  
  Then the classifying maps $\classify{\xi}$ and $\classify{\zeta}$
  of the closed mesh bundles together fit into a diagram
  \[
    \begin{tikzcd}[column sep = large, row sep = large]
      {\BMeshClosed{1}} \ar[r, equal] &
      {\BMeshClosed{1}}
      \\
      {\cat{E}} \ar[r, hook, "\simeq"] \ar[d] \pullbackcorner \ar[u] &
      {\Exit(\strat{X}_n)} \ar[r] \ar[d, "\Exit(\zeta)"{description}] \pullbackcorner \ar[u, "\classify{\xi}"'] &
      {\EMeshClosed{n}} \ar[d, "\MeshClosed{n}"] \\
      {\Delta\ord{k}} \ar[r, hook, "\simeq"'] &
      {\Exit(\DeltaStrat{k})} \ar[r, "\classify{\zeta}"'] &
      {\BMeshClosed{n}}
    \end{tikzcd}
  \]
  This diagram then determines a $k$-simplex of $\BMeshClosedL{n}{\BMeshClosed{1}}$.
\end{construction}

\begin{observation}
  Let $\xi : \strat{X} \to \strat{Y}$ be a closed $1$-mesh bundle and
  $\zeta : \strat{Y} \to \strat{B}$ a closed $n$-mesh bundle. Then the packing map
  sends the classifying map of the composite closed $(n + 1)$-mesh bundle $\classify{\zeta \circ \xi} : \Exit(\strat{B}) \to \BMeshClosed{n + 1}$
  to the classifying map $\classify{\zeta; \classify{\xi}}$.
\end{observation}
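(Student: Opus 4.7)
The plan is to compare both maps $\Exit(\strat{B}) \to \BMeshClosedL{n}{\BMeshClosed{1}}$ simplex by simplex and show they agree by unpacking definitions and invoking the uniqueness of the decomposition of a closed $(n+1)$-mesh bundle.

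Fix a stratified $k$-simplex $\sigma : \DeltaStrat{k} \to \strat{B}$. First I would compute $\pack \circ \classify{\zeta \circ \xi}$ applied to $\sigma$: by definition of $\classify{\zeta\circ\xi}$ this is the closed $(n+1)$-mesh bundle obtained as the pullback $\sigma^*(\zeta \circ \xi)$ over $\DeltaStrat{k}$. Using the pullback pasting lemma, this pullback factors as the composite $\sigma^*\zeta \circ (\sigma^*\zeta)^*\xi$ of a closed $n$-mesh bundle with a closed $1$-mesh bundle. By Observation~\ref{obs:fct-mesh-closed-n-composite} the decomposition used by the packing map is uniquely determined, and it must agree with this one since truncation to the first $n$ coordinates in the framing commutes with base change. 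Hence the image under $\pack$ is described by the classifying map $\classify{\sigma^*\zeta}$ of the $n$-truncation, together with the classifying map of the top $1$-mesh bundle $(\sigma^*\zeta)^*\xi$ over $\Exit(\sigma^*\strat{Y})$.

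Next I would compute $\classify{\zeta; \classify{\xi}}$ applied to $\sigma$. By construction this is the closed $n$-mesh bundle $\sigma^*\zeta$ together with the labelling map obtained as the composite
\[
  \Exit(\sigma^*\strat{Y}) \longrightarrow \Exit(\strat{Y}) \xrightarrow{\;\classify{\xi}\;} \BMeshClosed{1}.
\]
By definition of the classifying map for $\xi$ and another application of the pullback pasting lemma, this labelling sends a stratified simplex $\tau : \DeltaStrat{j} \to \sigma^*\strat{Y}$ to the pullback of $\xi$ along the composite $\sigma^*\strat{Y} \to \strat{Y}$, which is precisely $\tau^*((\sigma^*\zeta)^*\xi)$. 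This matches the $\BMeshClosed{1}$-labelling produced by $\pack$ in the preceding paragraph.

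Both maps therefore agree on each $k$-simplex $\sigma$ of $\Exit(\strat{B})$, and naturality in $\sigma$ is automatic since all the constructions involved are compatible with pullback. The only real bookkeeping is making sure that the canonical identifications of iterated pullbacks used on both sides are the same, which is where I expect to spend the most care; this is ensured by the canonical strictification of pullbacks of framed stratified bundles discussed in \S\ref{sec:fct-framing}, so no coherence data is lost.
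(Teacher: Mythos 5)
Your proposal is correct and is essentially the intended argument: the paper states this as an \emph{Observation} with no written proof, treating it as an immediate unwinding of the packing construction, the definition of $\classify{\zeta;\classify{\xi}}$, and the uniqueness of the decomposition of a closed $(n+1)$-mesh bundle, which is exactly what you carry out simplex by simplex. Your appeal to the pullback pasting lemma and to the canonical strictification of pullbacks of framed bundles from \S\ref{sec:fct-framing} correctly handles the only delicate point (compatibility of the iterated pullback identifications on both sides).
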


\begin{lemma}\label{lem:fct-mesh-closed-n-packing-unlabelled}
  The packing map
  $\BMeshClosed{n + 1} \to \BMeshClosedL{n}{\BMeshClosed{1}}$
  is a categorical equivalence.
\end{lemma}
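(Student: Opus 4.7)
The plan is to apply the lifting-up-to-equivalence criterion from~\cite[Lemma~4.3]{quasicat-of-frames-cofibration} recalled just before Lemma~\ref{lem:b-inner-fib-1-cat}. Both source and target of the packing map are quasicategories by Lemma~\ref{lem:fct-mesh-closed-n-cat-truncate} and Proposition~\ref{prop:fct-mesh-closed-labelled-cat}, so it suffices to show that every lifting problem
\[
\begin{tikzcd}
\partial\Delta\ord{k} \ar[r, "u'"] \ar[d, hook] & \BMeshClosed{n+1} \ar[d, "P"] \\
\Delta\ord{k} \ar[r, "v"'] \ar[ur, dashed, "w"{description}] & \BMeshClosedL{n}{\BMeshClosed{1}}
\end{tikzcd}
\]
admits a $k$-simplex $w$ of $\BMeshClosed{n+1}$ that strictly extends $u'$ and satisfies $P \circ w \simeq v$.

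First I would realise $u'$ as a genuine closed $(n+1)$-mesh bundle $\zeta' \circ \xi'$ over $\partial\DeltaStrat{k}$ via Lemma~\ref{lem:fct-mesh-closed-n-classify-marked} and Observation~\ref{obs:fct-mesh-closed-n-composite}, where $\zeta' : \strat{X}_n' \to \partial\DeltaStrat{k}$ is an $n$-mesh bundle and $\xi' : \strat{X}_{n+1}' \to \strat{X}_n'$ is a $1$-mesh bundle. Strict commutativity $P \circ u' = v|_{\partial\Delta\ord{k}}$ then means that the packed classifying map $\classify{\zeta'; \classify{\xi'}} : \Exit(\partial\DeltaStrat{k}) \to \BMeshClosedL{n}{\BMeshClosed{1}}$ agrees with $v$ on $\partial\Delta\ord{k}$, so the two glue into a map on the pushout $\Delta\ord{k} \cup_{\partial\Delta\ord{k}} \Exit(\partial\DeltaStrat{k})$.

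Next I would extend this glued map to a map $\varphi : \Exit(\DeltaStrat{k}) \to \BMeshClosedL{n}{\BMeshClosed{1}}$. The inclusion $\Delta\ord{k} \cup_{\partial\Delta\ord{k}} \Exit(\partial\DeltaStrat{k}) \hookrightarrow \Exit(\DeltaStrat{k})$ is an acyclic cofibration in the Joyal model structure: the map $\Delta\ord{k} \hookrightarrow \Delta\ord{k} \cup_{\partial\Delta\ord{k}} \Exit(\partial\DeltaStrat{k})$ is a pushout of the acyclic cofibration $\partial\Delta\ord{k} \hookrightarrow \Exit(\partial\DeltaStrat{k})$ (the unit of the $\StratReal{-} \dashv \Exit$ adjunction, known to be an acyclic cofibration), the composite $\Delta\ord{k} \hookrightarrow \Exit(\DeltaStrat{k})$ is likewise an acyclic cofibration, and two-out-of-three yields the claim. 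Since $\BMeshClosedL{n}{\BMeshClosed{1}}$ is a quasicategory the extension $\varphi$ exists. Proposition~\ref{prop:fct-mesh-closed-labelled-classify} applied with $\cat{C} = \BMeshClosed{1}$, $\strat{B} = \DeltaStrat{k}$, $\strat{A} = \partial\DeltaStrat{k}$, the map $\varphi$, and the labelled bundle $(\zeta', \classify{\xi'})$ — which satisfies the strict hypothesis of the proposition by the construction of $\varphi$ — then produces a closed $n$-mesh bundle $\zeta : \strat{X}_n \to \DeltaStrat{k}$ with a labelling $\ell : \Exit(\strat{X}_n) \to \BMeshClosed{1}$ strictly extending $(\zeta', \classify{\xi'})$, together with an equivalence $\varphi \simeq \classify{\zeta; \ell}$ relative to the boundary data. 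Proposition~\ref{prop:fct-mesh-closed-1-classify} applied to $\strat{X}_n$ (triangulable by Corollary~\ref{cor:fct-mesh-closed-1-total-triangulable}), the closed subspace $\strat{X}_n'$, the map $\ell$ and the bundle $\xi'$ then produces a closed $1$-mesh bundle $\xi : \strat{X}_{n+1} \to \strat{X}_n$ with $\classify{\xi} \simeq \ell$ relative to $\classify{\xi'}$; the uniqueness in Lemma~\ref{lem:fct-mesh-closed-1-classify-marked} forces $\xi$ to strictly restrict to $\xi'$.

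Setting $w := \zeta \circ \xi$ yields a closed $(n+1)$-mesh bundle whose restriction to $\partial\DeltaStrat{k}$ is $\zeta' \circ \xi' = u'$ strictly, and the chain of equivalences $P(w) \simeq (\zeta, \classify{\xi}|_E) \simeq (\zeta, \ell|_E) = \classify{\zeta; \ell}|_{\Delta\ord{k}} \simeq \varphi|_{\Delta\ord{k}} = v$ (with $E := \Delta\ord{k} \times_{\BMeshClosed{n}} \EMeshClosed{n}$, using the characterisation of Lemma~\ref{lem:b-poly-characterise}) provides the required natural equivalence. The main delicate point will be ensuring that all of these equivalences can be chosen relative to the boundary $\partial\Delta\ord{k}$, so that the strict upper triangle in the criterion is genuinely compatible with the equivalence on the lower triangle; this should follow from the relative clauses of Proposition~\ref{prop:fct-mesh-closed-labelled-classify} and Proposition~\ref{prop:fct-mesh-closed-1-classify} together with the observation that the packing map $P$ commutes with boundary restriction by construction, but it will require some bookkeeping to string the equivalences together without disturbing the boundary.
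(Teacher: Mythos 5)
Your argument is correct and its skeleton matches the paper's: reduce to the lifting-up-to-equivalence criterion, produce a labelling map into $\BMeshClosed{1}$ over the total space of a closed $n$-mesh bundle on $\DeltaStrat{k}$ extending the boundary data, geometrically realise it with Proposition~\ref{prop:fct-mesh-closed-1-classify}, and compose. The one place you diverge is the middle step. By Lemma~\ref{lem:b-poly-characterise} the given $k$-simplex $v$ of $\BMeshClosedL{n}{\BMeshClosed{1}}$ already \emph{is} a closed $n$-mesh bundle $\zeta$ over $\DeltaStrat{k}$ together with a labelling $\cat{E} \to \BMeshClosed{1}$ of the pullback $\cat{E} = \Delta\ord{k}\times_{\BMeshClosed{n}}\EMeshClosed{n}$, restricting strictly on the boundary to $\zeta'$ and $\classify{\xi'}$; the paper keeps this $\zeta$ and only has to extend the labelling from $\Exit(\strat{X}_n')\sqcup_{\cat{E}'}\cat{E}$ to $\Exit(\strat{X}_n)$, for which it invokes flatness of $\MeshClosed{n}$ (Lemma~\ref{lem:fct-mesh-closed-labelled-pointed-flat}) together with Lemma~\ref{lem:b-poly-flat-cat-pullback-equiv} to see that $\cat{E}\hookrightarrow\Exit(\strat{X}_n)$ is an acyclic cofibration. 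You instead discard $\zeta$ and rebuild an equivalent bundle from the abstract simplex via Proposition~\ref{prop:fct-mesh-closed-labelled-classify}. That is legitimate --- the criterion only demands $P\circ w\simeq v$, not equality on the interior --- but it is a detour: you replace a bundle you already hold by an equivalent one, and you move the flatness input inside the black box of Proposition~\ref{prop:fct-mesh-closed-labelled-classify} instead of applying it directly. Your closing caveat about keeping every equivalence relative to the boundary is exactly the right point to flag; the paper discharges it the way you anticipate, through the ``relative to'' clauses of the classification propositions.
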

\begin{proof}
  Suppose we have a lifting problem of the form
  \begin{equation}\label{eq:fct-mesh-closed-n-packing-unlabelled}
    \begin{tikzcd}
      {\partial\Delta\ord{k}} \ar[r] \ar[d] &
      {\BMeshClosed{n + 1}} \ar[d] \\
      {\Delta\ord{k}} \ar[r] \ar[ur, dashed] &
      {\BMeshClosedL{n}{\BMeshClosed{1}}}
    \end{tikzcd}
  \end{equation}
  Unpacking the definitions, we are given closed $n$-mesh bundles $\zeta$, $\zeta'$ and a closed $1$-mesh bundle $\xi'$ which together fit into the diagram
  \[
    \begin{tikzcd}
      {X_{n + 1}'} \ar[d, "\xi'"'] \\
      {X_n'} \ar[r] \ar[d, "\zeta'"'] \pullbackcorner &
      {X_n} \ar[d, "\zeta"] \\
      {\partial \DeltaStrat{k}} \ar[r] &
      {\DeltaStrat{k}}
    \end{tikzcd}
  \]
  We have pullback squares
  \[
    \begin{tikzcd}
      {\cat{E}'} \ar[r, hook] \ar[d] \pullbackcorner &
      {\Exit(\strat{X}_n')} \ar[d] \ar[r] \pullbackcorner &
      {\EMeshClosed{n}} \ar[d] \\
      {\partial\Delta\ord{k}} \ar[r, "\simeq"', hook] &
      {\Exit(\partial\DeltaStrat{k})} \ar[r, "\classify{\zeta'}"'] &
      {\BMeshClosed{n}}
    \end{tikzcd}
    \qquad
    \begin{tikzcd}
      {\cat{E}} \ar[r, hook] \ar[d] \pullbackcorner &
      {\Exit(\strat{X}_n)} \ar[d, "\simeq"] \ar[r] \ar[d] \pullbackcorner &
      {\EMeshClosed{n}} \ar[d] \\
      {\Delta\ord{k}} \ar[r, "\simeq"', hook] &
      {\Exit(\DeltaStrat{k})} \ar[r, "\classify{\zeta}"'] &
      {\BMeshClosed{n}}
    \end{tikzcd}
  \]
  By Lemma~\ref{lem:fct-mesh-closed-labelled-pointed-flat} the map $\EMeshClosed{n} \to \BMeshClosed{n}$
  is a flat categorical fibration.
  Since the inclusions $\partial \Delta\ord{k} \hookrightarrow \Exit(\partial\DeltaStrat{k})$
  and $\Delta\ord{k} \to \Exit(\DeltaStrat{k})$ are categorical equivalences,
  it then follows by Lemma~\ref{lem:b-poly-flat-cat-pullback-equiv} that the
  inclusions
  $\cat{E}' \hookrightarrow \Exit(\strat{X}_n')$ and
  $\cat{E} \hookrightarrow \Exit(\strat{X}_n)$ are categorical equivalences as well.
  We have a labelling functor $\ell : \cat{E} \to \BMeshClosed{1}$ which fits
  into the diagram
  \[
    \begin{tikzcd}
      {\cat{E}'} \ar[r, hook] \ar[d, hook] &
      {\cat{E}} \ar[d, "\ell"] \\
      {\Exit(\strat{X}')} \ar[r, "\classify{\xi'}"'] &
      {\BMeshClosed{1}}
    \end{tikzcd}
  \]
  Since $\BMeshClosed{1}$ is a quasicategory, we can then find a lift
  \[
    \begin{tikzcd}
      {\Exit(\strat{X}') \sqcup_{\cat{E}'} \cat{E}}  \ar[r] \ar[d, hook, "\simeq"'] &
      {\BMeshClosed{1}} \\
      {\Exit(\strat{X})} \ar[ur, dashed, "\varphi"'] &
      {}
    \end{tikzcd}
  \]
  By Corollary~\ref{cor:fct-mesh-closed-1-total-triangulable} we have that $\strat{X}_n$ is triangulable and
  $\strat{X}_n' \subseteq \strat{X}_n$ a closed subspace.
  Then by Proposition~\ref{prop:fct-mesh-closed-1-classify}
  we can find a closed $1$-mesh bundle
  $\xi : \strat{X}_{n + 1} \to \strat{X}_n$ extending $\xi'$ such that
  $\classify{\xi} \simeq \varphi$ while preserving the restriction
  $\classify{\xi'}$.
  Then the closed $(n + 1)$-mesh bundle $\zeta \circ \xi$ determines a lift
  $\Delta\ord{k} \to \BMeshClosed{n + 1}$ which makes the upper left
  triangle of the lifting problem~\ref{eq:fct-mesh-closed-n-packing-unlabelled} commute.
  The lower right triangle commutes up to homotopy given by the path $\classify{\xi} \simeq \varphi$.
\end{proof}

\begin{cor}
  The packing map $\pack_n : \BMeshClosed{n + 1} \to \BMeshClosedL{n}{\BMeshClosed{1}}$
  is a split monomorphism.
\end{cor}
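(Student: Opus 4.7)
The plan is to show that $\pack_n$ is both a monomorphism and a categorical equivalence, which by the Joyal model structure on $\sSet$ makes it an acyclic cofibration. Since $\BMeshClosed{n+1}$ is a quasicategory by Lemma~\ref{lem:fct-mesh-closed-n-cat-truncate}, it is fibrant in $\MSJoyal$, so the identity on $\BMeshClosed{n+1}$ lifts along $\pack_n$ by the left lifting property of acyclic cofibrations against fibrations, yielding a retraction $r : \BMeshClosedL{n}{\BMeshClosed{1}} \to \BMeshClosed{n+1}$ with $r \circ \pack_n = \id$. The categorical equivalence is already Lemma~\ref{lem:fct-mesh-closed-n-packing-unlabelled}, so the task reduces to proving the monomorphism property.

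To verify that $\pack_n$ is levelwise injective, I would analyse its action on a $k$-simplex explicitly. A $k$-simplex of $\BMeshClosed{n+1}$ is a closed $(n+1)$-mesh bundle $\chi$ with a unique decomposition $\chi = \zeta \circ \xi$ into $\zeta : \strat{X}_n \to \DeltaStrat{k}$ and $\xi : \strat{X}_{n+1} \to \strat{X}_n$, and $\pack_n(\chi)$ is the pair $(\zeta, \ell_\xi)$, where $\ell_\xi$ is the restriction of $\classify{\xi} : \Exit(\strat{X}_n) \to \BMeshClosed{1}$ along the canonical map $\EMeshClosed{n} \times_{\BMeshClosed{n}} \Delta\ord{k} \to \Exit(\strat{X}_n)$. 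Injectivity in the first component is immediate from Observation~\ref{obs:fct-mesh-closed-n-composite}, which recovers $\zeta = \tr_n \chi$. For the second component, suppose $\xi_0, \xi_1$ are closed $1$-mesh bundles over $\strat{X}_n$ with $\ell_{\xi_0} = \ell_{\xi_1}$. Then $\sigma^* \xi_0 = \sigma^* \xi_1$ for every stratified simplex $\sigma : \DeltaStrat{j} \to \strat{X}_n$ whose projection $\zeta \circ \sigma$ is simplicially linear in $\DeltaStrat{k}$. For any $x \in \strat{X}_n$ with $\zeta(x)$ in the interior of a $j$-stratum of $\DeltaStrat{k}$, the simplicially linear inclusion $\langle 0, \ldots, j \rangle : \DeltaStrat{j} \hookrightarrow \DeltaStrat{k}$ lifts through $x$ using a local trivialisation of $\zeta$ over that stratum, producing such a $\sigma$. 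Hence the fibres of $\xi_0$ and $\xi_1$ agree pointwise, and so $\xi_0 = \xi_1$ as $1$-framed stratified bundles.

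With both properties established, the lifting problem
\[
\begin{tikzcd}
\BMeshClosed{n+1} \ar[r, equal] \ar[d, hook, "\pack_n"'] & \BMeshClosed{n+1} \ar[d] \\
\BMeshClosedL{n}{\BMeshClosed{1}} \ar[r] \ar[ur, dashed, "r"] & \Delta\ord{0}
\end{tikzcd}
\]
admits a solution $r$, furnishing the splitting. The main obstacle is the monomorphism step: making the lift of the simplicially linear $j$-face through an arbitrary $x \in \strat{X}_n$ genuinely extend from the open stratum to the whole simplex $\DeltaStrat{j}$. One can address this by invoking Corollary~\ref{cor:fct-mesh-closed-1-total-triangulable} to triangulate $\strat{X}_n$ and then refining the triangulation so that each flag projects to a flag of $\ord{k}$; Lemma~\ref{lem:fct-mesh-closed-1-classify-marked} then recovers $\xi$ uniquely from the data of $\ell_\xi$ on this triangulation.
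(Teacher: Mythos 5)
Your proof follows the paper's argument exactly: the paper likewise observes that $\pack_n$ is a categorical equivalence and a cofibration and that $\BMeshClosed{n+1}$ is a quasicategory, then solves the same lifting problem to produce the retraction. The only difference is that the paper asserts the monomorphism (cofibration) property without elaboration, whereas you spell out the levelwise injectivity; your overall route is the same.
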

\begin{proof}
  The map $\pack_n$ is a categorical equivalence and a cofibration,
  and $\BMeshClosed{n + 1}$ is a quasicategory. Therefore there exists a lift in the diagram
  \[
    \begin{tikzcd}
      {\BMeshClosed{n + 1}} \ar[r, "\id"] \ar[d, "\pack_n"'] &
      {\BMeshClosed{n + 1}}  \\
      {\BMeshClosedL{n}{\BMeshClosed{1}}} \ar[ur, dashed] &
      {}
    \end{tikzcd}
  \]
  and therefore a retraction for $\pack_n$.
\end{proof}

\begin{proposition}\label{prop:fct-mesh-closed-n-pack}
  For every quasicategory $\cat{C}$ there is a categorical equivalence
  \[
    \begin{tikzcd}
      {\BMeshClosedL{n + 1}{\cat{C}}} \ar[r, "\simeq", dashed] \ar[d] &
      {\BMeshClosedL{n}{\BMeshClosedL{1}{\cat{C}}}} \ar[d] \\
      {\BMeshClosed{n + 1}} \ar[r, "\pack_n"'] &
      {\BMeshClosedL{n}{\BMeshClosed{1}}}
    \end{tikzcd}
  \]
\end{proposition}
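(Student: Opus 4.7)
The plan is to exhibit the desired categorical equivalence as an instance of Lemma~\ref{lem:b-poly-functorial-equiv}, applied to a strict pullback square that identifies $\MeshClosed{n+1}$ with the composite polynomial representation of $\BMeshClosedL{n}{\BMeshClosedL{1}{-}}$.

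First I would invoke Lemma~\ref{lem:b-poly-composite} to see that the composite polynomial functor $\BMeshClosedL{n}{\BMeshClosedL{1}{-}} = \poly{\MeshClosed{n}}{\poly{\MeshClosed{1}}{-}}$ is itself a polynomial functor, induced by the map
\[
  q \colon \epoly{\MeshClosed{n}}{\BMeshClosed{1}} \times_{\BMeshClosed{1}} \EMeshClosed{1} \longrightarrow \BMeshClosedL{n}{\BMeshClosed{1}}.
\]
Here I use the canonical identifications $\poly{\MeshClosed{1}}{\terminal} \cong \BMeshClosed{1}$ and $\epoly{\MeshClosed{1}}{\terminal} \cong \EMeshClosed{1}$ from the unlabelled base case together with the definition of $\epoly{-}{-}$.

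Next I would exhibit a strict pullback square of simplicial sets
\[
  \begin{tikzcd}
    {\EMeshClosed{n+1}} \ar[r] \ar[d, "\MeshClosed{n+1}"'] \pullbackcorner &
    {\epoly{\MeshClosed{n}}{\BMeshClosed{1}} \times_{\BMeshClosed{1}} \EMeshClosed{1}} \ar[d, "q"] \\
    {\BMeshClosed{n+1}} \ar[r, "\pack_n"'] &
    {\BMeshClosedL{n}{\BMeshClosed{1}}}
  \end{tikzcd}
\]
The check is essentially unpacking definitions: a $k$-simplex of $\EMeshClosed{n+1}$ is a closed $(n+1)$-mesh bundle $\eta : \strat{X} \to \DeltaStrat{k}$ together with a section $s$, and writing $\eta = \zeta \circ \xi$ the section $s$ decomposes uniquely as a section $s_n := \xi \circ s$ of the closed $n$-mesh bundle $\zeta$ together with a lift of $s_n$ to a section of $\xi$. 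On the other hand, a $k$-simplex of the strict pullback consists of $(\zeta, \classify{\xi}, s_n, \sigma)$ where $\sigma$ is a section of the closed $1$-mesh bundle $\classify{\xi}(s_n)$; the two data are manifestly in natural bijection.

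Now I would check that the two vertical maps are flat categorical fibrations between quasicategories. For $\MeshClosed{n+1}$ this is Lemma~\ref{lem:fct-mesh-closed-labelled-pointed-flat}. For $q$ I would write it as the composite of the projection $\epoly{\MeshClosed{n}}{\BMeshClosed{1}} \times_{\BMeshClosed{1}} \EMeshClosed{1} \to \epoly{\MeshClosed{n}}{\BMeshClosed{1}}$, which is a pullback of $\MeshClosed{1}$, followed by the projection $\epoly{\MeshClosed{n}}{\BMeshClosed{1}} \to \BMeshClosedL{n}{\BMeshClosed{1}}$, which is a pullback of $\MeshClosed{n}$; both are flat categorical fibrations, and flat categorical fibrations are closed under composition. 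Finally, $\pack_n$ is a categorical equivalence between quasicategories by Lemma~\ref{lem:fct-mesh-closed-n-packing-unlabelled}. Applying Lemma~\ref{lem:b-poly-functorial-equiv} to this pullback square yields, for every quasicategory $\cat{C}$, a categorical equivalence $\poly{\MeshClosed{n+1}}{\cat{C}} \xrightarrow{\simeq} \poly{q}{\cat{C}}$, and the diagram it fits into is exactly the one in the statement. The only subtle point is the identification of the strict pullback in the second step, and it is straightforward once the decomposition of sections is spelled out; there is no serious obstacle.
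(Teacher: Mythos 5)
Your proof is correct and takes essentially the same route as the paper: the same strict pullback square over $\pack_n$, identified with the composite polynomial functor via Lemma~\ref{lem:b-poly-composite}, followed by Lemma~\ref{lem:b-poly-functorial-equiv} and Lemma~\ref{lem:fct-mesh-closed-n-packing-unlabelled}. You additionally verify the flatness of the right-hand vertical map and sketch the unrolling of the pullback square, both of which the paper's proof leaves implicit.
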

\begin{proof}
  Let us write $\EMeshClosedL{n}{\BMeshClosed{1}}$ for the simplicial set obtained by the pullback
  \[
    \begin{tikzcd}
      {\EMeshClosedL{n}{\BMeshClosed{1}}} \ar[r] \ar[d] \pullbackcorner &
      {\BMeshClosedL{n}{\BMeshClosed{1}}} \ar[d] \\
      {\EMeshClosed{n}} \ar[r] &
      {\BMeshClosed{n}}
    \end{tikzcd}
  \]
  The packing map fits into a pullback square
  \begin{equation}\label{eq:fct-mesh-n-pack:pullback}
    \begin{tikzcd}
      {\EMeshClosed{n + 1}} \ar[r, hook] \ar[d] \pullbackcorner &
      {\EMeshClosedL{n}{\BMeshClosed{1}} \times_{\BMeshClosed{1}} \EMeshClosed{1}} \ar[d] \\
      {\BMeshClosed{n + 1}} \ar[r, hook] &
      {\BMeshClosedL{n}{\BMeshClosed{1}}}
    \end{tikzcd}
  \end{equation}
  which can be verified by unrolling the definitions.
  Then via Construction~\ref{con:b-poly-functorial}
  the pullback square~(\ref{eq:fct-mesh-n-pack:pullback})
  induces a map between the induced polynomial functors.
  By Lemma~\ref{lem:b-poly-composite} the polynomial functor
  induced by the map on the right of~(\ref{eq:fct-mesh-n-pack:pullback}) is
  the composite $\BMeshL{n}{\BMeshL{1}{-}}$.
  Then by Lemma~\ref{lem:b-poly-functorial-equiv} the map is a categorical equivalence.
\end{proof}

\begin{cor}\label{cor:fct-mesh-closed-n-pack-iterate}
  Let $\cat{C}$ be a quasicategory. Then there is an equivalence
  \[
    \BMeshClosedL{n}{\cat{C}} \simeq
    \underbrace{(\BMeshClosed{1} \circ \cdots \circ \BMeshClosed{1})}_{\textup{$n$ times}}(\cat{C})
  \]
\end{cor}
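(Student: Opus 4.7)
The plan is a straightforward induction on $n \geq 0$, iterating the equivalence from Proposition~\ref{prop:fct-mesh-closed-n-pack}. The only subtlety is that each stage of the iteration requires us to know the intermediate expression is a quasicategory before we may apply Proposition~\ref{prop:fct-mesh-closed-n-pack} or Proposition~\ref{prop:fct-mesh-closed-labelled-cat} again.

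For the base case $n = 0$, Observation~\ref{obs:fct-mesh-closed-labelled-base-case} gives a natural isomorphism $\BMeshClosedL{0}{\cat{C}} \cong \cat{C}$, which agrees with the empty composite of $\BMeshClosed{1}$ evaluated at $\cat{C}$.

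For the inductive step, assume the equivalence holds for some $n \geq 0$ and every quasicategory. Given a quasicategory $\cat{C}$, Proposition~\ref{prop:fct-mesh-closed-labelled-cat} asserts that $\BMeshClosedL{1}{\cat{C}}$ is again a quasicategory, so the inductive hypothesis applies to it and yields an equivalence
\[
  \BMeshClosedL{n}{\BMeshClosedL{1}{\cat{C}}}
  \simeq
  \underbrace{(\BMeshClosed{1} \circ \cdots \circ \BMeshClosed{1})}_{n \textup{ times}}(\BMeshClosedL{1}{\cat{C}})
  =
  \underbrace{(\BMeshClosed{1} \circ \cdots \circ \BMeshClosed{1})}_{n + 1 \textup{ times}}(\cat{C}).
\]
Composing this with the equivalence $\BMeshClosedL{n + 1}{\cat{C}} \simeq \BMeshClosedL{n}{\BMeshClosedL{1}{\cat{C}}}$ from Proposition~\ref{prop:fct-mesh-closed-n-pack} produces the desired equivalence in dimension $n + 1$.

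There is no significant obstacle; the only thing to be careful about is that each application of Proposition~\ref{prop:fct-mesh-closed-n-pack} and of the inductive hypothesis is valid because Proposition~\ref{prop:fct-mesh-closed-labelled-cat} ensures that $\BMeshClosedL{1}{-}$ preserves quasicategories, so the inner argument of the composite remains a quasicategory at every step.
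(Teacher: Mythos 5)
Your proof is correct and is exactly the intended argument: the paper leaves this corollary without an explicit proof precisely because it follows by the induction you describe, iterating Proposition~\ref{prop:fct-mesh-closed-n-pack} with the base case from Observation~\ref{obs:fct-mesh-closed-labelled-base-case}. Your care in invoking Proposition~\ref{prop:fct-mesh-closed-labelled-cat} to keep the inner argument a quasicategory at each stage is the right (and only) point that needs checking.
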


% \begin{proposition}
%   Let $\strat{B}$ be a triangulable stratified space and
%   $I : \strat{A} \hookrightarrow \strat{B}$
%   a closed triangulable stratified subspace.
%   Let $\cat{C}$ be a quasicategory.
%   Suppose that $\varphi : \Exit(\strat{B}) \to \BMeshClosedL{n}{\cat{C}}$ is a map
%   and $\xi' : \strat{X}' \to \strat{A}$ a closed $n$-mesh bundle with a label
%   functor $\ell' : \Exit(\strat{X}') \to \cat{C}$
%   such that
%   $\classify{\xi'; \ell'} = \varphi \circ I$.
%   Then there is a closed $n$-mesh bundle $\xi : \strat{X} \to \strat{B}$ 
%   with a label functor $\ell : \Exit(\strat{X}) \to \cat{C}$
%   together
%   with an equivalence
%   $\varphi \simeq \classify{\xi; \ell}$ 
%   relative to $\classify{\xi'; \ell'}$.
%   Moreover when $\xi'$ is piecewise linear, then $\xi$ can be chosen to be piecewise linear itself.
% \end{proposition}
% \begin{proof}
%   {\color{BrickRed} TODO. Probably does not even need the full packing equivalence
%   but can build upon the result for unlabelled $n$-mesh bundles.}
% \end{proof}

\subsection{Open Meshes}\label{sec:fct-mesh-open-1}

Open meshes are the dual of closed meshes, both geometrically and algebraically.
Where closed meshes model stratifications that are generalised pasting diagrams,
open meshes represent generalised string diagrams.

\begin{definition}
  An \defn{open $1$-mesh} $\strat{M}$ is a finite stratification of $\R$ whose strata are
  either \defn{singular}, consisting of just a point,
  or \defn{regular}, consisting of an open interval.
  A point in $\strat{M}$ is singular or regular if it is contained in a stratum that is
  singular or regular, respectively.
\end{definition}

\begin{definition}\label{def:fct-mesh-open-1-bundle}
  An \defn{open $1$-mesh bundle} is a $1$-framed
  stratified bundle $f : \strat{M} \to \strat{B}$
  that satisfies the following conditions:
  \begin{enumerate}
    \item $f$ is a stratified fibre bundle.
    \item For every $b \in \strat{B}$ the fibre $f^{-1}(b)$ is an open $1$-mesh.
    \item The regular points in all fibres form an open subset of $\strat{M}$.
  \end{enumerate}  
\end{definition}

\begin{construction}
  We pick any continuous and order-preserving isomorphism between the open interval
  $\intOO{-1, -1}$ and $\R$,
  for instance $x \mapsto \tan(\tfrac{\pi}{2} x)$ or any piecewise linear approximation of that map.
  From this isomorphism we can derive an induced embedding $\R \hookrightarrow \intCC{-1, 1}$ whose
  image consists of the open interval $\intOO{-1, 1} \subseteq \intCC{-1, 1}$.
  Suppose that $f : \strat{M} \to \strat{B}$ is an open $1$-mesh bundle.
  Then there exists a closed $1$-mesh bundle $\xi : \strat{X} \to \strat{B}$ such that
  $\unstrat(\xi)$ is the projection $\intCC{-1, 1} \times \unstrat(\strat{B}) \to \unstrat(\strat{B})$,
  and we have a pullback square of stratified spaces
  \[
    \begin{tikzcd}
      {\strat{M}} \ar[r, hook] \ar[d] \pullbackcorner &
      {\strat{X}} \ar[d] \\
      {\R \times \unstrat(\strat{B})} \ar[r, hook] &
      {\intCC{-1, 1} \times \unstrat(\strat{B})}
    \end{tikzcd}
  \]
  We call $\xi$ the \defn{compactification} of $f$.
  Any different choice of continuous order-preserving isomorphism $\intOO{-1, 1} \cong \R$
  yields a compactification that is framed isomorphic.
\end{construction}

We use the compactification of open $1$-mesh bundles to transfer most of
the theory of closed meshes to open meshes.

\begin{lemma}\label{lem:fct-mesh-open-1-bundle-flat}
  Let $f : \strat{M} \to \strat{B}$ be an open $1$-mesh bundle.
  Then the induced map $\Exit(f)$ is a flat categorical fibration.
\end{lemma}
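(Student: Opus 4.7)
The plan is to use the compactification construction to reduce the claim to its closed counterpart. The compactification of $f$ is a closed $1$-mesh bundle $\xi : \strat{X} \to \strat{B}$ inside which $\strat{M}$ sits as an open stratified subspace with $f = \xi|_{\strat{M}}$, and Lemma~\ref{lem:fct-mesh-closed-1-bundle-flat} already asserts that $\Exit(\xi)$ is a flat categorical fibration. It therefore suffices to transfer each defining property from $\Exit(\xi)$ to $\Exit(f)$.

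The key geometric observation is that $\stratPos{\strat{M}}$ is an upward-closed subposet of $\stratPos{\strat{X}}$: the complement $\strat{X} \setminus \strat{M}$ is the union of the two boundary sections at $\pm 1$, and in any local trivialisation $U \times F$ of $\xi$ the $\pm 1$ singular points of the closed $1$-mesh fibre $F$ are minimal in $\stratPos{F}$, so in the product poset $\stratPos{U} \times \stratPos{F}$ nothing outside the boundary sits above them. As a corollary, any stratified map $h$ out of $\DeltaStrat{n}$ or out of a product $\DeltaStrat{k} \times \DeltaStrat{n}$ lands entirely in $\strat{M}$ as soon as its image at the bottom vertex (respectively $v_0$ or $(v_0, v_0)$) lies in $\strat{M}$: the induced monotone map on stratification posets attains its minimum at the bottom vertex, and upward-closedness propagates membership in $\stratPos{\strat{M}}$ to every stratum in the image.

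With this in hand, each defining property transfers directly. For inner fibration, any inner horn $\HornStrat{i}{k} \to \strat{M}$ with $0 < i < k$ contains all $k + 1$ vertices of $\DeltaStrat{k}$, so the filler $\DeltaStrat{k} \to \strat{X}$ supplied by Lemma~\ref{lem:fct-mesh-closed-1-bundle-cat-fib} has $v_0 \mapsto \strat{M}$ and hence lands in $\strat{M}$ by the bottom-vertex principle. Isofibration is immediate because equivalences in $\Exit(\strat{X})$ are stratum-preserving and therefore a lift starting in $\strat{M}$ remains in its starting stratum. For flatness, fix a $2$-simplex $\sigma$ in $\Exit(\strat{B})$ and a partial section $\tau : \DeltaStrat{1} \to \strat{M}$ over $\sigma \circ \langle 0, 2 \rangle_*$; any $k$-simplex $h : \DeltaStrat{k} \times \DeltaStrat{2} \to \strat{X}$ in the closed fibre $F^{\strat{X}}_\tau$ satisfies $h(v_0, 0) = \tau(0) \in \strat{M}$ by the section constraint, so $h$ lands in $\strat{M}$. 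Hence the open and closed fibres agree, $F_\tau = F^{\strat{X}}_\tau$, and weak contractibility of $F_\tau$ follows from flatness of $\Exit(\xi)$.

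The main obstacle will be the local-to-global verification of upward-closedness of $\stratPos{\strat{M}}$ in $\stratPos{\strat{X}}$; once this is secured via the fibre bundle structure of the compactification, every remaining step is a mechanical transfer from the closed case.
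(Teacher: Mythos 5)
Your proof is correct and rests on exactly the same two pillars as the paper's: the compactification $\xi$ together with Lemma~\ref{lem:fct-mesh-closed-1-bundle-flat}, and the fact that no exit path of $\strat{X}$ leaves $\strat{M}$ and enters the boundary $\strat{X}\setminus\strat{M}$ (your upward-closedness of $\stratPos{\strat{M}}$). The packaging differs: the paper observes that the constructible embedding $i:\strat{M}\hookrightarrow\strat{X}$ induces a flat categorical fibration $\Exit(i)$ (flatness of $\Exit(i)$ being precisely the no-exit-paths-into-the-boundary fact applied to the long edge of a $2$-simplex) and then concludes by closure of flat categorical fibrations under composition, $\Exit(f)=\Exit(\xi)\circ\Exit(i)$; you instead transfer the three defining properties one at a time via your bottom-vertex principle. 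Both routes work, and yours is no harder, though the paper's factorisation avoids re-running the lifting arguments. Two small points: in your justification of the key fact you wrote that nothing outside the boundary sits \emph{above} the $\pm1$ strata — you mean \emph{below} (the boundary singular strata are minimal, so the boundary is downward closed, equivalently $\stratPos{\strat{M}}$ is upward closed; regular strata of $\strat{M}$ certainly do sit above them); and a single local trivialisation only sees exit paths lying over one stratum of $\strat{B}$, so for paths crossing between strata of $\strat{B}$ one should additionally invoke Lemma~\ref{lem:fct-mesh-closed-1-sing-covering} together with the fact that the compactification is continuously bounded, i.e.\ its structure maps are endpoint-preserving — though the paper asserts this fact without further justification as well.
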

\begin{proof}
  When $\xi : \strat{X} \to \strat{B}$ is the compactification of $f$
  we have constructible embedding $i : \strat{M} \hookrightarrow \strat{X}$,
  and so $\Exit(i)$ must be a categorical fibration.
  To see that $\Exit(i)$ is also flat, we consider a diagram
  \[
    \begin{tikzcd}
      {\Delta\ord{1}} \ar[r] \ar[d, hook, "{\langle 0, 2 \rangle}"'] &
      {\Exit(\strat{M})} \ar[hook, d, "\Exit(i)"] \\
      {\Delta\ord{2}} \ar[r, "\sigma"'] \ar[ur, dashed] &
      {\Exit(\strat{X})}
    \end{tikzcd}
  \]  
  There are no exit paths in $\strat{X}$ which start in $\strat{M}$ but end in the
  boundary $\strat{X} \setminus \strat{M}$.
  Therefore $\sigma(0) \in \strat{X}$ being in $\strat{M}$ implies that also
  $\sigma(1)$ is in $\strat{M}$.
  Since $\strat{M}$ is a constructible subspace of $\strat{X}$,
  the stratified $2$-simplex $\sigma$ must then factor uniquely through $\strat{M}$.
  Hence, $\Exit(i)$ is flat.
  The induced map $\Exit(\xi)$ is a flat categorical fibration by Lemma~\ref{lem:fct-mesh-closed-1-bundle-flat}, and so
  the composite $\Exit(f) = \Exit(\xi) \circ \Exit(i)$ is a flat categorical fibration as well.
\end{proof}

\begin{definition}
  We denote by $\BMeshOpen{1}$ the simplicial set whose $k$-simplices are
  the open $1$-mesh bundles $f : \strat{M} \to \DeltaStrat{k}$.
  An order-preserving map $\ord{k'} \to \ord{k}$ acts by pullback along the induced
  map $\DeltaStrat{k'} \to \DeltaStrat{k}$.
  Compactification of open $1$-mesh bundles defines a map of simplicial sets
  $\BMeshOpen{1} \to \BMeshClosed{1}$.
\end{definition}

\begin{definition}\label{def:basic-strict-int}
  The category $\StrictInt$ of \defn{strict intervals} is the subcategory of
  $\FinOrd$ whose objects are finite ordinals $\ord{n}$ with $n > 0$ and whose maps
  are order-preserving maps $\alpha : \ord{n} \to \ord{m}$ that are endpoint preserving,
  i.e. satisfy $\alpha(0) = 0$ and $\alpha(n) = m$.
\end{definition}

\begin{construction}
  We let $\BMeshClosedCB{1}$ be the simplicial subset of $\BMeshClosed{1}$
  of \defn{continuously bounded} closed $1$-meshes, defined by the pullback
  of simplicial sets
  \[
    \begin{tikzcd}
      {\BMeshClosedCB{1}} \ar[r, hook] \ar[d] \pullbackcorner &
      {\BMeshClosed{1}} \ar[d, "\sing"] \\
      {\StrictInt^\op} \ar[r, hook] &
      {\FinOrd^\op}
    \end{tikzcd}
  \]
  The induced map $\BMeshClosedCB{1} \to \StrictInt^\op$ is again a trivial fibration and
  so $\BMeshClosedCB{1}$ is a quasicategory.
  The compactification map $\BMeshOpen{1} \to \BMeshClosed{1}$ factors through
  the inclusion $\BMeshClosedCB{1} \hookrightarrow \BMeshClosed{1}$.
\end{construction}

\begin{lemma}
  The compactification map $\BMeshOpen{1} \to \BMeshClosedCB{1}$ is a split monomorphism.
  In particular $\BMeshOpen{1}$ is a retract of $\BMeshClosedCB{1}$ and therefore quasicategory.
\end{lemma}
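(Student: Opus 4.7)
The plan is to construct an explicit retraction $r : \BMeshClosedCB{1} \to \BMeshOpen{1}$ by observing that a continuously bounded closed $1$-mesh bundle canonically contains an open $1$-mesh bundle as a constructible subspace, obtained by deleting its two extremal singular sections.

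First, I would unpack the continuous boundedness condition: for a $k$-simplex $\xi : \strat{X} \to \DeltaStrat{k}$ of $\BMeshClosedCB{1}$, the classifying map $\sing(\xi) : \Delta\ord{k} \to \FinOrd^\op$ factors through $\StrictInt^\op$. Because endpoint-preserving maps send $0$ to $0$ and the top element to the top element, Lemma~\ref{lem:fct-mesh-closed-1-sing-covering} applied to the right covering map $\Exit(\sing(\strat{X})) \to \Exit(\DeltaStrat{k})$ shows that the assignments $b \mapsto \min \sing(\xi^{-1}(b))$ and $b \mapsto \max \sing(\xi^{-1}(b))$ lift uniquely to continuous sections $s_-, s_+ : \DeltaStrat{k} \to \strat{X}$ landing in $\sing(\strat{X})$. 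The framing heights of $s_-(b)$ and $s_+(b)$ are continuous functions $h_-, h_+ : \DeltaTop{k} \to \R$ with $h_-(b) < h_+(b)$ pointwise.

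Next, I would define $r(\xi)$ as the open constructible subspace $\strat{M} := \strat{X} \setminus (\im s_- \cup \im s_+)$, post-composed with the fiberwise affine rescaling $(x, b) \mapsto (\tfrac{2 x - h_+(b) - h_-(b)}{h_+(b) - h_-(b)}, b)$ which embeds $\strat{M}$ into $\intOO{-1, 1} \times \DeltaTop{k}$, followed by the fixed order-preserving isomorphism $\intOO{-1, 1} \cong \R$ used to define the compactification map. The resulting $1$-framed stratified bundle is a fibrewise open $1$-mesh, is a stratified fibre bundle because $\xi$ is one, and its regular points form an open subset of $\strat{M}$ because removing the extremal singular sections turns the boundary singular strata into regular open-ended intervals while keeping the interior singular strata closed. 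Hence $r(\xi) \in \BMeshOpen{1}$. This construction is manifestly natural under pullback along $\ord{k'} \to \ord{k}$, since the sections $s_\pm$ and the framing heights $h_\pm$ pullback compatibly, so $r$ is a well-defined map of simplicial sets.

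Finally, I would verify that $r$ is a retraction of the compactification map $c : \BMeshOpen{1} \to \BMeshClosedCB{1}$. By construction, the compactification $c(f)$ of an open $1$-mesh bundle $f : \strat{M} \to \DeltaStrat{k}$ sits in $\intCC{-1, 1} \times \DeltaTop{k}$, where $\strat{M}$ is identified with $\intOO{-1, 1} \times \DeltaTop{k}$ via the same chosen isomorphism $\R \cong \intOO{-1, 1}$, and the complement consists of the two constant sections at heights $\pm 1$. These are precisely the extremal singular sections of $c(f)$, so $s_\pm$ have constant heights $\pm 1$ and the affine rescaling in the definition of $r$ is the identity; composing with $\intOO{-1, 1} \cong \R$ recovers $f$. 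Thus $r \circ c = \id_{\BMeshOpen{1}}$. The main subtlety to watch for is not topological but coherential: the rescaling formula above must be defined using the same chosen isomorphism $\intOO{-1, 1} \cong \R$ as used in the construction of $c$, so that the retraction property holds strictly and not just up to framed isomorphism. Since $\BMeshClosedCB{1}$ is a quasicategory and quasicategories are closed under retracts in $\sSet$, the in particular follows immediately.
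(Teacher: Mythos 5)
Your proof is correct and takes essentially the same route as the paper: the paper's retraction likewise rescales the fibres to $\intCC{-1,1}$, deletes the first and last singular stratum in each fibre, and applies the fixed isomorphism $\intOO{-1,1} \cong \R$, checking strict compatibility with compactification exactly as you do. Your version merely reorders the rescaling steps and supplies extra (welcome) detail on why continuous boundedness makes the extremal singular strata assemble into continuous sections.
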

\begin{proof}
  We define the retraction $\BMeshClosedCB{1} \to \BMeshOpen{1}$ as follows:
  Given a continuously bounded closed $1$-mesh bundle $\xi : \strat{X} \to \DeltaStrat{k}$,
  we perform the following three steps:
  \begin{enumerate}
    \item Linearly rescale the framing to be fibrewise supported on the closed interval $\intCC{-1, 1}$.
    \item Remove the first and last singular stratum in each fibre.
    \item Rescale using the chosen continuous order-preserving isomorphism $\intOO{-1, 1} \cong \R$.
  \end{enumerate}
  The result is an open $1$-mesh bundle over $\DeltaStrat{k}$ and therefore a $k$-simplex of $\BMeshOpen{1}$.
  
  When we start with an open $1$-mesh bundle and compactify, the resulting closed $1$-mesh bundle is already supported on $\intCC{-1, 1}$ in each fibre and so the rescaling does not affect it.
  We then remove the strata that compactification has added, and undo the rescaling from $\R$ to $\intOO{-1, 1}$ performed by compactification. Therefore, we end up with the open $1$-mesh bundle that we started with.
\end{proof}

\begin{lemma}\label{lem:fct-mesh-open-1-reg-left}
  Let $f : \strat{M} \to \strat{B}$ be an open $1$-mesh bundle.
  Then the restriction
  \[
    \begin{tikzcd}[column sep = large]
      {\Exit(\reg(\strat{M}))} \ar[r, hook] &
      {\Exit(\strat{M})} \ar[r, "\Exit(f)"] &
      {\Exit(\strat{B})}
    \end{tikzcd}
  \]
  is a left fibration.
\end{lemma}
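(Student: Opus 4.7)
The plan is to verify the right lifting property of the composite against all left horn inclusions $\Lambda^i\ord{k} \hookrightarrow \Delta\ord{k}$ for $0 \leq i < k$. Via the $\StratReal{-} \dashv \Exit$ adjunction and the fact that $\reg(\strat{M}) \hookrightarrow \strat{M}$ is a constructible embedding, each such lifting problem corresponds to a stratified lifting problem of the form
\[
\begin{tikzcd}
  \HornStrat{i}{k} \ar[r, "\tau"] \ar[d, hook] & \strat{M} \ar[d, "f"] \\
  \DeltaStrat{k} \ar[r, "\sigma"'] \ar[ur, dashed, "\hat{\tau}"{description}] & \strat{B}
\end{tikzcd}
\]
in which $\tau$ factors through $\reg(\strat{M})$ and the sought lift $\hat{\tau}$ must do the same. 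Pulling $f$ back along $\sigma$ reduces to the case $\strat{B} = \DeltaStrat{k}$ with $\sigma = \id$, so that $\hat{\tau}$ becomes a section of $f$ landing in $\reg(\strat{M})$.

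For inner horns ($0 < i < k$) I pass to the compactification $\xi : \strat{X} \to \DeltaStrat{k}$ of $f$: by Lemma~\ref{lem:fct-mesh-closed-1-bundle-cat-fib} the map $\Exit(\xi)$ is a categorical fibration, and so produces an inner horn filler $\hat{\tau} : \DeltaStrat{k} \to \strat{X}$. Since $\tau$ lands in the open subspace $\reg(\strat{X}) = \reg(\strat{M})$, which is a union of strata whose complement $\sing(\strat{X})$ is closed, a stratum-by-stratum argument on $\DeltaStrat{k}$ using the local triviality of $\xi$ adjusts $\hat{\tau}$ to lie entirely in $\reg(\strat{M})$ while keeping the horn fixed.

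The essentially new case is the outermost left horn $k = 1, i = 0$: given $x \in \reg(\strat{M})$ with $f(x) = 0$, I must produce an exit path $\hat{\tau} : \DeltaStrat{1} \to \reg(\strat{M})$ lifting $\id_{\DeltaStrat{1}}$ and starting at $x$. Because $\sing(\strat{M})$ is closed in $\strat{M}$, the point $x$ admits an open neighbourhood $V \subseteq \strat{M}$ contained in $\reg(\strat{M})$; because $f$ is a stratified fibre bundle, its underlying map is open, so $f(V) \subseteq \DeltaStrat{1}$ is open and contains some $[0, \varepsilon)$. A continuous local section $\gamma_0 : [0, \varepsilon) \to V$ of $f$ with $\gamma_0(0) = x$ then exists. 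Over the top stratum $(0,1]$ the bundle $f$ is locally trivial over a contractible base and hence trivial, and I extend $\gamma_0$ along this trivialisation to a section $\hat{\tau}$ on all of $\DeltaStrat{1}$ that remains in $\reg(\strat{M})$ throughout.

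For the remaining cases $\Lambda^0\ord{k} \hookrightarrow \Delta\ord{k}$ with $k \geq 2$, I combine the $k = 1$ path lift along the initial edge $\langle 0, 1 \rangle$ with the inner horn fillers already established, using the standard factorisation of this outer horn inclusion through inner-anodyne extensions of $\Lambda^0\ord{1} \hookrightarrow \Delta\ord{1}$. The main obstacle throughout is the control needed to ensure the lift remains within the open locus $\reg(\strat{M})$; this is handled in each case by combining openness of $\reg(\strat{M})$ in $\strat{M}$ with the local triviality of $f$ over strata of the base.
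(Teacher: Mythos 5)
Your reduction to stratified lifting problems, your treatment of the inner horns via the compactification, and your construction of the edge lift in the case $k=1$, $i=0$ all match the paper's argument in substance (the paper keeps the height coordinate of the framing constant near $t=0$, which is the clean way to justify your local section $\gamma_0$ --- openness of $f(V)$ alone does not produce a continuous section). The adjustment step you worry about for inner horns is in fact automatic: since $\sing(\strat{M})$ is closed, the regular locus is upward closed in $\stratPos{\strat{M}}$, so any exit simplex whose initial vertex is regular lies entirely in $\reg(\strat{M})$.

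The genuine gap is in your final step, the outer horns $\Lambda^0\ord{k} \hookrightarrow \Delta\ord{k}$ for $k \geq 2$. There is no factorisation of these inclusions through inner anodynes and pushouts of $\Lambda^0\ord{1} \hookrightarrow \Delta\ord{1}$: if there were, every inner fibration admitting lifts of edges with prescribed source would be a left fibration, which is false (e.g.\ $\dom : \Fun(\Delta\ord{1}, \cat{C}) \to \cat{C}$). What solvability of the $k \geq 2$ outer horn problems actually requires is that the edge you lifted be $\Exit(f)$-cocartesian, equivalently (via the pushout-product generators $(\Lambda^0\ord{1} \hookrightarrow \Delta\ord{1}) \mathbin{\hat\times} (\partial\Delta\ord{n} \hookrightarrow \Delta\ord{n})$ of the left anodynes) that the \emph{space} of lifts of an exit path with fixed starting point be contractible, not merely nonempty. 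This is exactly the ingredient the paper supplies: any two lifts of a given exit path starting at the same regular point must traverse the same regular strata, pinned down by the trivialisation of $f$ over the strata of $\strat{B}$, so the lift space is contractible. Without an argument of this kind your proof establishes only that the composite is an inner fibration with enough edge lifts, which is strictly weaker than being a left fibration.
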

\begin{proof}
  By Lemma~\ref{lem:fct-mesh-open-1-bundle-flat}
  we have that $\Exit(f)$ is a categorical fibration.
  Because $\reg(\strat{M})$ is a constructible subspace of $\strat{M}$,
  the inclusion map induces a categorical fibration $\Exit(\reg(\strat{M})) \to \Exit(\strat{M})$.
  Therefore, the composite
  $\Exit(\reg(\strat{M})) \to \Exit(\strat{B})$
  is a categorical fibration as well.

  Via the framing we interpret $\unstrat(\strat{M})$ as an open subspace of $\R \times \unstrat(\strat{B})$ and $\unstrat(f)$ as the projection.
  Let $\gamma : \DeltaStrat{1} \to \strat{B}$ be an exit path and $(h, b) \in \strat{M}$ a regular point with $f(h, b) = b = \gamma(0)$.
  The subspace $\reg(\strat{M})$ of regular points is open in $\strat{M}$,
  and so it follows that there exists an $\eps > 0$ so that the map $\hat{\gamma} : \DeltaStratR{1} \cap \intCO{0, \eps} \to \strat{M}$ defined by $\hat{\gamma}(t) = (h, \gamma(t))$ is a well-defined stratified map.
  Then via the trivialisation of $f$ over the stratum of $\strat{B}$ containing $\gamma(1)$, we can extend $\hat{\gamma}$ to a lift $\hat{\gamma} : \DeltaStrat{1} \to \strat{M}$ of $\gamma$ starting at $(b, h)$.
  The space of such lifts is contractible since they necessarily all land within the same regular stratum.
  Therefore, it follows that $\Exit(\reg(\strat{M})) \to \Exit(\strat{B})$ is a left fibration.
\end{proof}

\begin{construction}
  Suppose that $f : \strat{M} \to \strat{B}$ is an open $1$-mesh bundle,
  then by Lemma~\ref{lem:fct-mesh-open-1-reg-left} there is a diagram
  \[
    \begin{tikzcd}
      {\Exit(\reg(\strat{M}))} \ar[r] \ar[d] &
      {\Space_*} \ar[d] \\
      {\Exit(\strat{B})} \ar[r] &
      {\Space}
    \end{tikzcd}
  \]
  where $\Space_*$ is the $\infty$-category of pointed spaces.
  The map $\Exit(\strat{B}) \to \Space$ sends a point $b \in \strat{B}$ to the
  subspace of regular points in the fibre $f^{-1}(b)$.
  Using the functor $\pi_0 : \Space \to \Set$ which sends a space to the set of its connected components, we then get a diagram
  \[
    \begin{tikzcd}
      {\Exit(\reg(\strat{M}))} \ar[r] \ar[d] &
      {\Set_*} \ar[d] \\
      {\Exit(\strat{B})} \ar[r] &
      {\Set}
    \end{tikzcd}
  \]
  The number of regular strata over any point $b \in \strat{B}$ is finite but never zero,
  and the regular strata in $f^{-1}(b)$ inherit an ordering from the $1$-framing of $f$.
  Therefore, we obtain an induced diagram
  \[
    \begin{tikzcd}
      {\Exit(\reg(\strat{M}))} \ar[r] \ar[d] &
      {\FinOrd_*} \ar[d] \\
      {\Exit(\strat{B})} \ar[r, "\reg(f)", swap] &
      {\FinOrd}
    \end{tikzcd}
  \]
\end{construction}

\begin{observation}
  For any finite non-empty ordinal $\ord{n}$ we can equip the set of order-preserving
  maps $\ord{n} \to \ord{1}$ with the pointwise order; the result is a finite total
  order with at least two elements.
  An order-preserving map $\ord{n} \to \ord{m}$ acts upon a map $\ord{m} \to \ord{1}$
  by precomposition, preserving the pointwise order as well as the maximum and minimum element.
  This defines a functor $\FinOrd \to \StrictInt^\op$ which moreover is an isomorphism.
  % Since $\StrictInt$ is also a subcategory of $\FinOrd$, we get a subcategory inclusion
  % $\FinOrd \hookrightarrow \FinOrd^\op$ as the composite of $\FinOrd \cong \StrictInt^\op$
  % followed by $\StrictInt^\op \hookrightarrow \FinOrd^\op$.
\end{observation}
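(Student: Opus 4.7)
The plan is to construct an explicit inverse functor $G : \StrictInt^\op \to \FinOrd$ and verify that the two composites with $F$ (the functor defined in the observation) are naturally isomorphic to the identity, yielding an isomorphism of $1$-categories.

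First I would define $G$ symmetrically by $G(\ord{m}) := \StrictInt(\ord{m}, \ord{1})$ with the pointwise order, acting on morphisms by precomposition. An endpoint-preserving monotone map $\ord{m} \to \ord{1}$ is uniquely determined by the smallest index $1 \leq k \leq m$ at which it takes value $1$, so $G(\ord{m})$ is a finite total order of size $m$ and hence a bona fide object of $\FinOrd$; functoriality is immediate from the same bookkeeping as for $F$. A count on both sides gives $F(\ord{n}) \cong \ord{n+1}$ and $G(\ord{m}) \cong \ord{m-1}$, so at the level of objects $F$ and $G$ are mutually inverse bijections.

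Next I would construct a unit $\eta : \id_{\FinOrd} \Rightarrow G \circ F$ via evaluation: for $i \in \ord{n}$, set $\eta_{\ord{n}}(i) := \ev_i$, where $\ev_i(\varphi) = \varphi(i)$ for $\varphi \in F(\ord{n}) = \FinOrd(\ord{n}, \ord{1})$. This $\ev_i$ is endpoint-preserving because the minimum and maximum of $F(\ord{n})$ are the constant maps $\const_0$ and $\const_1$, which $\ev_i$ sends to $0$ and $1$. The assignment $i \mapsto \ev_i$ is itself order-preserving, and naturality in $\alpha : \ord{n} \to \ord{m}$ unpacks to the identity $\ev_{\alpha(i)}(\psi) = \psi(\alpha(i)) = \ev_i(\psi \circ \alpha)$. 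Since $|\ord{n}| = n + 1 = |GF(\ord{n})|$, the order-preserving injection $\eta_{\ord{n}}$ (injectivity comes from evaluating against the ``first jump at $k$'' maps) is automatically a bijection and hence an isomorphism of finite total orders. A symmetric construction yields the counit $\eps : F \circ G \Rightarrow \id_{\StrictInt^\op}$, with the same verifications.

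The argument is entirely combinatorial, so there is no real conceptual obstacle; the only point requiring attention is ensuring that the evaluation maps land in $\StrictInt$ rather than just $\FinOrd$, which is exactly where endpoint preservation (and hence the requirement that $F$ and $G$ take values in $\StrictInt^\op$ and $\FinOrd$ respectively) plays its essential role. With $\eta$ and $\eps$ both natural isomorphisms, $F$ and $G$ are mutually inverse functors, and therefore $F$ is the asserted isomorphism of $1$-categories.
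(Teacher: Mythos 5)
Your argument is correct, and it fills in details the paper leaves implicit: the statement is given as an unproved observation (the standard interval duality $\FinOrd \cong \StrictInt^{\op}$, which the paper only illustrates via the ``gaps'' picture), and your construction of the inverse $G$ together with the evaluation maps $i \mapsto \ev_i$ is the standard way to verify it. You correctly identify the one delicate point, namely that the counit's evaluation maps are endpoint-preserving precisely because every $f \in \StrictInt(\ord{m}, \ord{1})$ satisfies $f(0)=0$ and $f(m)=1$. The only refinement worth noting is that natural isomorphisms $\eta$ and $\eps$ a priori give an equivalence rather than an isomorphism of categories; since $\FinOrd$ and $\StrictInt$ are gaunt (all isomorphisms are identities), the equivalence automatically upgrades to an isomorphism --- equivalently, after the canonical relabelling $F(\ord{n}) = \ord{n+1}$, $G(\ord{m}) = \ord{m-1}$, your $\eta$ and $\eps$ become identities.
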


\begin{example}
  We can interpret the functor $\FinOrd \to \StrictInt^\op$ as sending an ordinal $\ord{n}$
  to the totally ordered set of gaps in between the elements of $\ord{n}$, including a gap at the beginning and end.
  For example, the functor sends $\ord{3}$ to the ordinal $\ord{4}$:
  \[
    \begin{tikzpicture}[scale = 0.5]
      \node at (1, 0) {0};
      \node at (3, 0) {1};
      \node at (5, 0) {2};
      \node at (7, 0) {3};

      \draw[BrickRed, thick] (0, -0.5) -- +(0, 1) node[anchor=south]{0};
      \draw[BrickRed, thick] (2, -0.5) -- +(0, 1) node[anchor=south]{1};
      \draw[BrickRed, thick] (4, -0.5) -- +(0, 1) node[anchor=south]{2};
      \draw[BrickRed, thick] (6, -0.5) -- +(0, 1) node[anchor=south]{3};
      \draw[BrickRed, thick] (8, -0.5) -- +(0, 1) node[anchor=south]{4};
    \end{tikzpicture}
  \]
\end{example}

\begin{lemma}\label{lem:fct-mesh-open-closed-reg-sing}
  The map $\reg : \BMeshOpen{1} \to \FinOrd$ fits into the diagram
  \[
    \begin{tikzcd}
      {\BMeshOpen{1}} \ar[r, hook] \ar[d, "\reg"'] &
      {\BMeshClosedCB{1}} \ar[r, hook] \ar[d] \pullbackcorner &
      {\BMeshClosed{1}} \ar[d, "\sing^\op"] \\
      {\FinOrd} \ar[r, "\cong"'] &
      {\StrictInt^\op} \ar[r, hook]  &
      {\FinOrd^\op}
    \end{tikzcd}
  \]
  In particular the map $\reg : \BMeshOpen{1} \to \FinOrd$ is a retract of the map
  $\BMeshClosedCB{1} \to \StrictInt^\op$ and therefore a trivial fibration.
\end{lemma}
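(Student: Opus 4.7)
The plan is to verify the two squares of the diagram and then extract the retract structure from them together with the previously constructed splitting of compactification.

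First, I would check that the right square is a pullback essentially by unwinding notation. The continuously bounded meshes $\BMeshClosedCB{1}$ were \emph{defined} as the pullback of $\sing : \BMeshClosed{1} \to \FinOrd^\op$ along the inclusion $\StrictInt^\op \hookrightarrow \FinOrd^\op$, so this half of the diagram requires no argument beyond noting the definition.

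Next I would verify that the left square commutes. This amounts to a fibre-wise count. Given an open $1$-mesh bundle $f : \strat{M} \to \DeltaStrat{k}$ whose fibres have $n + 1$ regular strata (and hence $n$ interlaced singular ones), its compactification $\xi : \strat{X} \to \DeltaStrat{k}$ adds one singular stratum at each of the two endpoints, giving fibres with $n + 2$ singular strata. Under $\sing$ (read, via the comment before the statement, as the functor sending a fibre to its ordered set of singular strata) this produces the ordinal $\ord{n + 1}$, which is the image of $\ord{n}$ under the gap functor $\FinOrd \xrightarrow{\cong} \StrictInt^\op$. This identification is natural in the simplices of the base, because the compactification adds its two extra strata in a way that is continuous in the parameter and uniquely determined by the fibre, so naturality reduces to the bijection already observed between regular strata of $\strat{M}$ and internal gaps of $\sing(\xi)$.

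Finally, the retraction. The preceding lemma exhibits compactification as a split monomorphism $\BMeshOpen{1} \hookrightarrow \BMeshClosedCB{1}$ with retraction $r$ defined by rescaling and deleting the two outer singular strata in each fibre. Both the section and $r$ respect the two vertical maps: the section sends $n + 1$ regular strata to $n + 2$ singular strata, while $r$ deletes the two endpoint singular strata, which under the gap isomorphism is exactly the identification $\FinOrd \cong \StrictInt^\op$ reversed. Hence we obtain a retract in the arrow category between $\reg : \BMeshOpen{1} \to \FinOrd$ and $\BMeshClosedCB{1} \to \StrictInt^\op$. The latter is a trivial fibration since it is the pullback of the trivial fibration $\sing : \BMeshClosed{1} \to \FinOrd^\op$ from Proposition~\ref{prop:fct-mesh-closed-1-sing-trivial}, and trivial fibrations are stable under retracts, so $\reg$ is a trivial fibration as claimed.

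The only subtle step is checking commutativity of the left square carefully, since one has to confirm that the identification $\FinOrd \cong \StrictInt^\op$ used in the statement really matches the combinatorics of adding endpoint strata under compactification; everything else is either by definition or a direct application of the earlier splitting lemma together with stability of trivial fibrations under retract.
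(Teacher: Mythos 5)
Your proposal is correct and follows essentially the same route as the paper: the right square is a pullback by the definition of $\BMeshClosedCB{1}$, the left square is checked fibrewise against the gap isomorphism $\FinOrd \cong \StrictInt^\op$, and the retract structure comes from the splitting of compactification together with stability of trivial fibrations under retract. The paper's verification of the left square is only slightly more explicit, identifying each singular stratum of the compactification with the order-preserving map $\reg(f)(i) \to \ord{1}$ determined by the section cutting the mesh in two, which is exactly the description of the duality isomorphism you invoke.
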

\begin{proof}
  Suppose that $f : \strat{M} \to \DeltaStrat{k}$ is an open $1$-mesh bundle
  and $\xi : \strat{X} \to \DeltaStrat{k}$ the compactification of $f$.
  Let $i \in \ord{k}$ then $\sing(\xi)(i)$ is the set of singular strata of
  $\xi$ over $i \in \DeltaStrat{k}$, ordered via the framing.
  Since the regular strata of $\xi$ correspond exactly to the regular strata of
  $f$, we have that $\reg(f)(i)$ classifies the set of regular strata of $\xi$
  over $i \in \DeltaStrat{k}$ ordered via the framing.

  For every singular stratum $s \in \sing(\xi)(i)$ we have an order-preserving
  map $\alpha^i_s : \reg(f)(i) \to \ord{1}$ which sends a regular stratum to
  $0$ exactly when it is below the singular stratum $s$ in the fibrewise ordering.
  The assignment $s \mapsto \alpha^i_s$ is an order-preserving isomorphism between
  $\sing(\xi)(i)$ and the poset of order-preserving maps $\reg(f)(i) \to \ord{1}$.

  Any section of $\xi$ which only intersects singular strata cuts $\strat{X}$
  into two parts, above and below the section.
  Together with the previous observation, it follows that the duality isomorphism
  $\FinOrd \cong \StrictInt^\op$ sends $\reg(f)$ to $\sing(\xi)$.
\end{proof}

\begin{observation}
  We have a span of equivalences
  \[
    \begin{tikzcd}[column sep = large]
      {\BMeshOpen{1}} \ar[r, "\reg"] &
      {\FinOrd} &
      {(\BMeshClosed{1})^\op} \ar[l, "\sing^\op"']
    \end{tikzcd}
  \]
  which induces an equivalence between $\BMeshOpen{1}$
  and the opposite $(\BMeshClosed{1})^\op$.
  This equivalence implements a form of Poincar\'e duality between open and closed $1$-meshes.
  We will see that this equivalence extends to higher-dimensional meshes as well.
  % To justify that comparison further, we will see in~\S\ref{} that the duality functor for $n$-dimensional meshes swaps $k$-dimensional strata of an open $n$-mesh with $(n - k)$-dimensional strata of its dual closed $n$-mesh.
\end{observation}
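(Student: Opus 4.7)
The plan is to reduce this observation to two equivalences already established earlier in the excerpt. First I would invoke Lemma~\ref{lem:fct-mesh-open-closed-reg-sing}, which asserts that $\reg : \BMeshOpen{1} \to \FinOrd$ is a trivial fibration; in particular it is a categorical equivalence. For the right leg of the span, I would apply Proposition~\ref{prop:fct-mesh-closed-1-sing-trivial}, which states that $\sing : \BMeshClosed{1} \to \FinOrd^\op$ is a trivial fibration. Taking opposites preserves the Joyal model structure (equivalently, sends categorical equivalences to categorical equivalences), so $\sing^\op : (\BMeshClosed{1})^\op \to \FinOrd$ is again a categorical equivalence.

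Given that both legs of the span are categorical equivalences, two-out-of-three in $\CatInfty$ immediately yields an equivalence $\BMeshOpen{1} \simeq (\BMeshClosed{1})^\op$. To exhibit a comparison functor concretely, I would pick a section $s : \FinOrd \to (\BMeshClosed{1})^\op$ of the trivial fibration $\sing^\op$ (which exists because $\BMeshOpen{1}$ and $(\BMeshClosed{1})^\op$ are quasicategories, and trivial fibrations admit sections against any cofibration), and then take the composite $s \circ \reg : \BMeshOpen{1} \to (\BMeshClosed{1})^\op$. By two-out-of-three this composite is itself a categorical equivalence, and it is uniquely determined up to equivalence since the space of sections of a trivial fibration is contractible.

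For the Poincar\'e-duality reading, I would point back to the explicit description used inside the proof of Lemma~\ref{lem:fct-mesh-open-closed-reg-sing}: the self-dual identification $\FinOrd \cong \StrictInt^\op$ matches the ordered set of singular strata of a closed $1$-mesh $\xi^{-1}(b)$ with the ordered set of ``gaps'' cut out by them, which are precisely the regular strata of the corresponding open $1$-mesh. The equivalence therefore exchanges singular and regular strata fibrewise, which is exactly the combinatorial shadow one expects from Poincar\'e duality on a $1$-dimensional stratified interval.

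I do not anticipate a substantial obstacle: the technical content has already been packaged into the two trivial-fibration statements being combined, and the only subtlety is checking that opposites behave well, which is standard. The last sentence of the observation (that duality extends to higher $n$) is a forward reference and so is not part of what needs to be proved here.
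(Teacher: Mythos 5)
Your proposal is correct and matches the paper's intended justification: the observation is stated immediately after Proposition~\ref{prop:fct-mesh-closed-1-sing-trivial} and Lemma~\ref{lem:fct-mesh-open-closed-reg-sing} precisely so that both legs of the span are known trivial fibrations, and the equivalence $\BMeshOpen{1} \simeq (\BMeshClosed{1})^\op$ follows by composing one leg with a section of the other, exactly as you describe. Your reading of the duality via $\FinOrd \cong \StrictInt^\op$ (singular strata versus the gaps they cut out) is also the one the paper uses.
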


At this point the rest of the theory of closed meshes transfers entirely analogously.
Instead of individually reproving each step, we summarise the results as follows.

\begin{definition}\label{def:fct-mesh-open-n}
  An open $n$-mesh bundle is a sequence of open $1$-mesh bundles
  \begin{equation}\label{eq:fct-mesh-open-n}
    \begin{tikzcd}
      {\strat{M}_n} \ar[r] &
      {\strat{M}_{n - 1}} \ar[r] &
      \cdots \ar[r] &
      {\strat{M}_1} \ar[r] &
      {\strat{M}_0}
    \end{tikzcd}
  \end{equation}
  We denote by $\BMeshOpen{n}$ the simplicial set whose $k$-simplices are open
  $n$-mesh bundles over $\DeltaStrat{k}$.
  The $k$-simplices of the simplicial set $\EMeshOpen{n}$ are open $n$-mesh bundles
  together with a section.
  There is a projection map $\EMeshOpen{n} \to \BMeshOpen{n}$ which forgets the section
  and a truncation map $\BMeshOpen{n + 1} \to \BMeshOpen{n}$ which cuts off the last $1$-mesh bundle.
  We write $\BMeshOpenL{n}{-}$ for the polynomial functor on simplicial sets induced by
  the map $\EMeshOpen{n} \to \BMeshOpen{n}$.
\end{definition}

\begin{observation}
  The truncation map $\BMeshOpen{n + 1} \to \BMeshOpen{n}$ is a categorical fibration,
  and therefore for all $n \geq 0$ the simplicial set $\BMeshOpen{n}$ is a quasicategory.
  The forgetful map $\EMeshOpen{n} \to \BMeshOpen{n}$ is a flat categorical fibration,
  and so the simplicial set $\EMeshOpen{n}$ is a quasicategory as well.
  For each open $n$-mesh bundle $f : \strat{M} \to \strat{B}$ there is a 
  classifying map $\classify{f} : \Exit(\strat{B}) \to \BMeshOpen{n}$
  which fits into the pullback square
  \[
    \begin{tikzcd}
      {\Exit(\strat{M})} \ar[r] \ar[d, "\Exit(f)"'] \pullbackcorner &
      {\EMeshOpen{n}} \ar[d] \\
      {\Exit(\strat{B})} \ar[r, "\classify{f}"'] &
      {\BMeshOpen{n}}
    \end{tikzcd}
  \]
  Then for every quasicategory $\cat{C}$ the map
  $\BMeshOpenL{n}{\cat{C}} \to \BMeshOpen{n}$ is a categorical fibration,
  and $\BMeshOpen{n}(-)$ preserves categorical equivalences between quasicategories.
  For any simplicial set $C$ there is a packing map
  $\BMeshOpenL{n + 1}{C} \to \BMeshOpenL{n}{\BMeshOpenL{1}{C}}$
  which is a categorical equivalence when $C = \cat{C}$ is a quasicategory.
\end{observation}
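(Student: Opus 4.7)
The plan is to mirror the development of \S\ref{sec:fct-mesh-closed-n}--\S\ref{sec:fct-mesh-closed-pack} for closed meshes, transferring each step via the compactification construction. The fundamental observation is that compactification exhibits the theory of open $1$-meshes as a retract of the theory of continuously bounded closed $1$-meshes, and every property we need of closed meshes is stable under retract.

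First I would establish an open-mesh analogue of Proposition~\ref{prop:fct-mesh-closed-1-classify}: given a triangulable stratified base $\strat{B}$, a closed triangulable subspace $\strat{A} \hookrightarrow \strat{B}$, a map $\varphi : \Exit(\strat{B}) \to \BMeshOpen{1}$, and an open $1$-mesh bundle $f'$ over $\strat{A}$ refining $\varphi$, produce an open $1$-mesh bundle over $\strat{B}$ realising $\varphi$ up to equivalence relative to $f'$. This follows from Proposition~\ref{prop:fct-mesh-closed-1-classify} by compactifying $f'$, extending the compactification with the help of the classifying map, and applying the retraction $\BMeshClosedCB{1} \to \BMeshOpen{1}$. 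With this classification in hand, the proof of Lemma~\ref{lem:fct-mesh-closed-n-cat-truncate} carries over verbatim (using Lemma~\ref{lem:fct-mesh-open-1-bundle-flat} in place of Lemma~\ref{lem:fct-mesh-closed-1-bundle-flat} to apply Lemma~\ref{lem:b-poly-flat-cat-pullback-equiv}) to show that the truncation $\BMeshOpen{n+1} \to \BMeshOpen{n}$ is an inner fibration, and hence inductively that each $\BMeshOpen{n}$ is a quasicategory. Isofibrancy is checked on trivialisations over strata exactly as in Lemma~\ref{lem:fct-mesh-closed-1-bundle-cat-fib}.

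Next I would construct the universal pullback square. For an open $1$-mesh bundle $f : \strat{M} \to \strat{B}$ the classifying map $\classify{f}$ is defined by pullback of $f$ along each stratified simplex of $\strat{B}$, and the pointed simplicial set $\EMeshOpen{n}$ is defined exactly so that the argument of Lemma~\ref{lem:fct-mesh-closed-labelled-universal} produces the pullback square
\[
  \begin{tikzcd}
    {\Exit(\strat{M})} \ar[r] \ar[d, "\Exit(f)"'] \pullbackcorner &
    {\EMeshOpen{n}} \ar[d] \\
    {\Exit(\strat{B})} \ar[r, "\classify{f}"'] &
    {\BMeshOpen{n}}
  \end{tikzcd}
\]
via the universal property of the pullback of stratified spaces. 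Flatness of the forgetful map $\EMeshOpen{n} \to \BMeshOpen{n}$ is then proved as in Lemma~\ref{lem:fct-mesh-closed-labelled-pointed-flat}, since any $2$-simplex in $\BMeshOpen{n}$ determines an open $n$-mesh bundle $f$ over $\DeltaStrat{2}$, and $\Exit(f)$ is a flat categorical fibration by applying Lemma~\ref{lem:fct-mesh-open-1-bundle-flat} successively to its decomposition. With flatness established, Proposition~\ref{prop:b-poly-flat-cat} immediately yields that $\BMeshOpenL{n}{\cat{C}} \to \BMeshOpen{n}$ is a categorical fibration, that $\BMeshOpenL{n}{-}$ preserves quasicategories and categorical equivalences between quasicategories, and that $\EMeshOpen{n}$ is itself a quasicategory.

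Finally, for the packing map $\BMeshOpenL{n+1}{C} \to \BMeshOpenL{n}{\BMeshOpenL{1}{C}}$, I would replay the construction preceding Lemma~\ref{lem:fct-mesh-closed-n-packing-unlabelled}: a $k$-simplex of $\BMeshOpen{n+1}$ factors as an open $1$-mesh bundle over an open $n$-mesh bundle, producing the classifying data of a $k$-simplex of $\BMeshOpenL{n}{\BMeshOpen{1}}$. That this is a categorical equivalence is proved exactly as in Lemma~\ref{lem:fct-mesh-closed-n-packing-unlabelled}, solving the relevant lifting problem via the open-mesh version of Proposition~\ref{prop:fct-mesh-closed-1-classify} and the fact that $\BMeshOpen{1}$ is a quasicategory. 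Proposition~\ref{prop:fct-mesh-closed-n-pack} then transfers verbatim, using Lemma~\ref{lem:b-poly-composite} and Lemma~\ref{lem:b-poly-functorial-equiv} applied to the pullback square expressing $\EMeshOpen{n+1}$ as $\EMeshOpenL{n}{\BMeshOpen{1}} \times_{\BMeshOpen{1}} \EMeshOpen{1}$.

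The main obstacle is the open-mesh version of Proposition~\ref{prop:fct-mesh-closed-1-classify}, because we must verify that the extension produced via compactification and the retraction $\BMeshClosedCB{1} \to \BMeshOpen{1}$ is compatible with the restriction to $\strat{A}$ up to equivalence rather than only as a classifying map. Once this compatibility is confirmed, everything else is a mechanical replay of the closed-mesh arguments with the word ``closed'' replaced by ``open''.
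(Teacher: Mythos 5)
Your plan is correct and is precisely what the paper intends: the Observation is stated after the remark that ``the rest of the theory of closed meshes transfers entirely analogously,'' and your transfer via compactification and the retraction $\BMeshClosedCB{1} \to \BMeshOpen{1}$, followed by replaying the closed-mesh arguments for the truncation fibration, the universal pullback square, flatness of $\EMeshOpen{n} \to \BMeshOpen{n}$, and the packing equivalence, is exactly that strategy. You have also correctly isolated the only step requiring genuine care, namely the relative version of the open-mesh classification statement.
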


\subsection{Meshed Grids}\label{sec:fct-mesh-grid}

The functor $\sing : \BMeshClosed{1} \to \FinOrd^\op$ is a trivial fibration
and therefore has a section $\gridMeshClosed^1 : \FinOrd^\op \to \BMeshClosed{1}$
which sends an ordinal $\ord{i}$ to the closed $1$-mesh with $i + 1$ singular
strata. Similarly, for open meshes, the functor $\reg : \BMeshOpen{1} \to \FinOrd$
has an essentially unique section $\gridMeshOpen^1 : \FinOrd \to \BMeshOpen{1}$.
Using the stacked product of framed stratified bundles, defined below, we can
use these sections to obtain the grid mesh functors $\gridMeshClosed^n : \FinOrd^{n, \op} \to \BMeshClosed{n}$ and $\gridMeshOpen^n : \FinOrd^n \to \BMeshOpen{n}$.

\begin{construction}\label{con:fct-stacked-product}
  Suppose that $p_0 : \strat{E}_0 \to \strat{B}$ is an $n$-framed stratified bundle
  and $p_1 : \strat{E}_1 \to \strat{B}$ is an $m$-framed stratified bundle.
  Let $p_1^* p_0 : \strat{E}_0 \times_{\strat{B}} \strat{E}_1 \to \strat{E}_1$ be the $n$-framed
  stratified bundle defined by the pullback
  \[
    \begin{tikzcd}
      {\strat{E}_0 \times_{\strat{B}} \strat{E}_1} \ar[r] \ar[d, "p_1^* p_0"'] \pullbackcorner &
      {\strat{E}_0} \ar[d, "p_0"] \\
      {\strat{E}_1} \ar[r, "p_1"'] &
      {\strat{B}}
    \end{tikzcd}
  \]
  Then the \defn{stacked product} $p_0 \triangleleft p_1$ of $p_0$ and $p_1$ is the composite
  \[
    \begin{tikzcd}
      {\strat{E}_0 \times_{\strat{B}} \strat{E}_1} \ar[r, "p_1^* p_0"] &
      {\strat{E}_1} \ar[r, "p_1"] &
      {\strat{B}}
    \end{tikzcd}
  \]
  with the composite $(n + m)$-framing.
\end{construction}

% \begin{construction}
%   Let $p_0 : \strat{E}_0 \to \strat{B}$ be an $n$-framed stratified bundle
%   and $p_1 : \strat{E}_1 \to \strat{B}$ an $m$-framed stratified bundle.
%   Then the \defn{stacked product} of $p_0$ and $p_1$ is the $(n + m)$-framed
%   stratified bundle
%   $
%     p_0 \triangleleft p_1 : \strat{E}_0 \times_\strat{B} \strat{E}_1 \longrightarrow \strat{B}
%   $
%   % \[
%   %   \begin{tikzcd}
%   %     {\strat{E}_0 \times_{\strat{B}} \strat{E}_1} \ar[r] \ar[d] \pullbackcorner &
%   %     {\strat{E}_0} \ar[d] \\
%   %     {\strat{E}_1} \ar[r] &
%   %     {\strat{B}}
%   %   \end{tikzcd}
%   % \]
%   whose framing is the induced map between the pullback squares:
%   \[
%     \begin{tikzcd}[column sep = {4em, between origins}, row sep = {4em, between origins}]
%     	& {\R^n \times \R^m \times U(B)} && {\R^n \times U(B)} \\
%     	{U(E_0 \times_B E_1)} && {U(E_0)} \\
%     	& {\R^m \times U(B)} && {U(B)} \\
%     	{U(E_1)} && {U(B)}
%     	\arrow[from=2-1, to=4-1]
%     	\arrow[from=4-1, to=4-3]
%     	\arrow[hook, from=2-1, to=1-2, dashed]
%     	\arrow[hook, from=2-3, to=1-4]
%     	\arrow[hook, from=4-1, to=3-2]
%     	\arrow[from=3-2, to=3-4]
%     	\arrow[from=1-2, to=3-2]
%     	\arrow[from=1-4, to=3-4]
%     	\arrow[from=1-2, to=1-4]
%     	\arrow[hook, from=4-3, to=3-4]
%     	\arrow[from=2-1, to=2-3, crossing over]
%     	\arrow[from=2-3, to=4-3, crossing over]
%     \end{tikzcd}
%   \]
% \end{construction}

\begin{lemma}
  Let $p_0 : \strat{E}_0 \to \strat{B}$ be a closed $n$-mesh bundle
  and $p_1 : \strat{E}_1 \to \strat{B}$ a closed $m$-mesh bundle.
  Then the stacked product $p_0 \triangleleft p_1$ is a closed $(n + m)$-mesh bundle.
  Moreover the analogous claim holds for open mesh bundles.
\end{lemma}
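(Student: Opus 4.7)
The plan is to unfold both mesh bundles into their constituent $1$-mesh-bundle sequences and realise the stacked product as the concatenation of a pulled-back sequence with the original sequence for $p_1$.

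First I would write $p_0$ as a composite of closed $1$-mesh bundles $q_i : \strat{X}_i \to \strat{X}_{i-1}$ for $1 \leq i \leq n$, with $\strat{X}_0 = \strat{B}$ and $\strat{X}_n = \strat{E}_0$, and similarly write $p_1$ as a composite of closed $1$-mesh bundles $r_j : \strat{Y}_j \to \strat{Y}_{j-1}$ for $1 \leq j \leq m$, with $\strat{Y}_0 = \strat{B}$ and $\strat{Y}_m = \strat{E}_1$. Iteratively pulling back along $p_1$ and applying Lemma~\ref{lem:fct-mesh-closed-1-pullback} at each stage, I obtain closed $1$-mesh bundles
\[
  \tilde{q}_i : \strat{X}_i \times_{\strat{B}} \strat{E}_1 \longrightarrow \strat{X}_{i-1} \times_{\strat{B}} \strat{E}_1.
\]
The pullback pasting lemma then shows that the composite $\tilde{q}_1 \circ \cdots \circ \tilde{q}_n$ is $p_1^* p_0$. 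Concatenating with $r_m, \ldots, r_1$ exhibits the stacked product $p_0 \triangleleft p_1$ as a sequence of $n + m$ closed $1$-mesh bundles, which by Definition~\ref{def:fct-mesh-closed-n} is exactly the data of a closed $(n+m)$-mesh bundle.

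The only point that needs real checking is that the $(n+m)$-framing obtained by taking composite $1$-framings along this sequence coincides with the composite $(n+m)$-framing specified in Construction~\ref{con:fct-stacked-product}. This reduces to unwinding conventions: the framing on $p_1^* p_0$ inherited from $p_0$ via the cartesian lift in $\FrBun^n$ is, by construction, the composite of the $1$-framings on the $\tilde{q}_i$, and stacking this $n$-framing on top of the $m$-framing on $p_1$ yields the same sequence of coordinates as iterating the composite-framing rule over all $n + m$ stages. I expect this bookkeeping to be the main (and essentially only) obstacle; it is routine once one fixes the convention that in a decomposition $\strat{X}_n \to \cdots \to \strat{X}_0$ the $1$-mesh bundle $\strat{X}_i \to \strat{X}_{i-1}$ contributes the $i$-th coordinate.

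For the open case the argument is identical once one observes that open $1$-mesh bundles are closed under pullback: this follows directly from Definition~\ref{def:fct-mesh-open-1-bundle}, since the three conditions (stratified fibre bundle, open $1$-mesh fibres, regular locus open) are all preserved by base change, either by invoking the analogous stability of stratified fibre bundles or by transporting Lemma~\ref{lem:fct-mesh-closed-1-pullback} across the compactification construction. The same concatenation-of-pullbacks argument then exhibits $p_0 \triangleleft p_1$ as an open $(n+m)$-mesh bundle.
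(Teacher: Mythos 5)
Your proof is correct and follows essentially the same route as the paper: the paper simply cites Lemma~\ref{lem:fct-mesh-closed-n-pullback} (pullback stability of closed $n$-mesh bundles) and then observes that the framed composite of a closed $n$-mesh bundle followed by a closed $m$-mesh bundle is a closed $(n+m)$-mesh bundle by definition, whereas you inline the proof of that pullback lemma by pulling back the constituent $1$-mesh bundles one stage at a time. The extra care you take with the framing bookkeeping and with the open case is fine but not needed beyond what the paper records.
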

\begin{proof}
  By Lemma~\ref{lem:fct-mesh-closed-n-pullback} the pullback
  $p_1^* p_0$ with the induced $n$-framing is again a closed $n$-mesh bundle.
  Then the result follows since the framed composite of a closed $n$-mesh bundle
  followed by a closed $m$-mesh bundle is a closed $(n + m)$-mesh bundle.  
\end{proof}

\begin{construction}\label{con:fct-mesh-grid-closed}
  Suppose that $\gridMeshClosed^1 : \FinOrd^\op \to \BMeshClosed{1}$ is a section
  of the equivalence $\sing : \BMeshClosed{n} \to \FinOrd^\op$.
  Then via the stacked product of closed $n$-meshes, we can define
  the \defn{closed grid mesh} functor
  $\gridMeshClosed^n : \FinOrd^{n, \op} \to \BMeshClosed{n}$.
  A $k$-simplex $\sigma : \Delta\ord{k} \to \FinOrd^{n, \op}$ of the $n$-fold
  product of $\FinOrd^\op$ with itself
  consists of a
  family of $k$-simplices $\sigma_i : \Delta\ord{k} \to \FinOrd^\op$ for
  all $1 \leq i \leq k$.
  Then $\gridMeshClosed^n$ sends $\sigma$ to the closed $n$-mesh bundle
  over $\DeltaStrat{k}$
  obtained as the stacked product
  \[
    \gridMeshClosed^n(\sigma_1, \ldots, \sigma_n) :=
    \gridMeshClosed^n(\sigma_1) \triangleleft
    \cdots \triangleleft
    \gridMeshClosed^n(\sigma_n)
  \]
\end{construction}

\begin{example}
  The name \textit{grid mesh} is justified by the following examples:
  \begin{align*}
    \gridMeshClosed^2 \ord{0, 0} &\simeq \quad
    \begin{tikzpicture}[baseline=(current bounding box.center)]
      % \draw[mesh-stratum] (0, 0) -- (0, 1);
      \node[mesh-vertex] at (0, 0) {};
      % \node[mesh-vertex] at (0, 1) {};
    \end{tikzpicture}
    &
    \gridMeshClosed^2 \ord{1, 0} &\simeq \quad
    \begin{tikzpicture}[baseline=(current bounding box.center)]
      \draw[mesh-stratum] (0, 0) -- (1, 0);
      \node[mesh-vertex] at (0, 0) {};
      \node[mesh-vertex] at (1, 0) {};
    \end{tikzpicture}
    &
    \gridMeshClosed^2 \ord{2, 0} &\simeq \quad
    \begin{tikzpicture}[baseline=(current bounding box.center)]
      \draw[mesh-stratum] (0, 0) -- (2, 0);
      \node[mesh-vertex] at (0, 0) {};
      \node[mesh-vertex] at (2, 0) {};
      \node[mesh-vertex] at (1, 0) {};
    \end{tikzpicture} 
    \\[5mm]
    \gridMeshClosed^2 \ord{0, 1} &\simeq \quad
    \begin{tikzpicture}[baseline=(current bounding box.center)]
      \draw[mesh-stratum] (0, 0) -- (0, 1);
      \node[mesh-vertex] at (0, 0) {};
      \node[mesh-vertex] at (0, 1) {};
    \end{tikzpicture}
    &
    \gridMeshClosed^2 \ord{1, 1} &\simeq \quad
    \begin{tikzpicture}[baseline=(current bounding box.center)]
      \fill[mesh-background] (0, 0) rectangle (1, 1);
      \draw[mesh-stratum] (0, 0) rectangle +(1, 1);
      \node[mesh-vertex] at (0, 0) {};
      \node[mesh-vertex] at (0, 1) {};
      \node[mesh-vertex] at (1, 1) {};
      \node[mesh-vertex] at (1, 0) {};
    \end{tikzpicture}
    &
    \gridMeshClosed^2 \ord{2, 1} &\simeq \quad
    \begin{tikzpicture}[baseline=(current bounding box.center)]
      \fill[mesh-background] (0, 0) rectangle (2, 1);
      \draw[mesh-stratum] (0, 0) rectangle +(1, 1);
      \draw[mesh-stratum] (1, 0) rectangle +(1, 1);
      \node[mesh-vertex] at (0, 0) {};
      \node[mesh-vertex] at (0, 1) {};
      \node[mesh-vertex] at (1, 1) {};
      \node[mesh-vertex] at (2, 1) {};
      \node[mesh-vertex] at (2, 0) {};
      \node[mesh-vertex] at (1, 0) {};
    \end{tikzpicture} 
    \\[5mm]
    \gridMeshClosed^2 \ord{0, 2} &\simeq \quad
    \begin{tikzpicture}[baseline=(current bounding box.center)]
      \draw[mesh-stratum] (0, 0) -- (0, 2);
      \node[mesh-vertex] at (0, 0) {};
      \node[mesh-vertex] at (0, 1) {};
      \node[mesh-vertex] at (0, 2) {};
    \end{tikzpicture}
    &
    \gridMeshClosed^2 \ord{1, 2} &\simeq \quad
    \begin{tikzpicture}[baseline=(current bounding box.center)]
      \fill[mesh-background] (0, 0) rectangle (1, 2);
      \draw[mesh-stratum] (0, 0) rectangle +(1, 1);
      \draw[mesh-stratum] (0, 1) rectangle +(1, 1);
      \node[mesh-vertex] at (0, 0) {};
      \node[mesh-vertex] at (0, 1) {};
      \node[mesh-vertex] at (1, 1) {};
      \node[mesh-vertex] at (1, 0) {};
      \node[mesh-vertex] at (0, 2) {};
      \node[mesh-vertex] at (1, 2) {};
    \end{tikzpicture}
    &
    \gridMeshClosed^2 \ord{2, 2} &\simeq \quad
    \begin{tikzpicture}[baseline=(current bounding box.center)]
      \fill[mesh-background] (0, 0) rectangle (2, 2);
      \draw[mesh-stratum] (0, 0) rectangle +(1, 1);
      \draw[mesh-stratum] (1, 0) rectangle +(1, 1);
      \draw[mesh-stratum] (0, 1) rectangle +(1, 1);
      \draw[mesh-stratum] (1, 1) rectangle +(1, 1);
      \node[mesh-vertex] at (0, 0) {};
      \node[mesh-vertex] at (0, 1) {};
      \node[mesh-vertex] at (1, 1) {};
      \node[mesh-vertex] at (2, 1) {};
      \node[mesh-vertex] at (2, 0) {};
      \node[mesh-vertex] at (1, 0) {};
      \node[mesh-vertex] at (0, 2) {};
      \node[mesh-vertex] at (1, 2) {};
      \node[mesh-vertex] at (2, 2) {};
    \end{tikzpicture} 
  \end{align*}
\end{example}

\begin{construction}\label{con:fct-mesh-grid-open}
  Suppose that $\gridMeshOpen^1 : \FinOrd^\op \to \BMeshClosed{1}$ is a section
  of the equivalence $\reg : \BMeshOpen{n} \to \FinOrd$.
  Then via the stacked product of open $n$-meshes, we can define
  the \defn{open grid mesh} functor
  $\gridMeshOpen^n : \FinOrd^{n} \to \BMeshOpen{n}$.
  A $k$-simplex $\sigma : \Delta\ord{k} \to \FinOrd^{n}$ of the $n$-fold
  product of $\FinOrd$ with itself
  consists of a
  family of $k$-simplices $\sigma_i : \Delta\ord{k} \to \FinOrd$ for
  all $1 \leq i \leq k$.
  Then $\gridMeshOpen^n$ sends $\sigma$ to the open $n$-mesh bundle
  over $\DeltaStrat{k}$
  obtained as the stacked product
  \[
    \gridMeshOpen^n(\sigma_1, \ldots, \sigma_n) :=
    \gridMeshOpen^n(\sigma_1) \triangleleft
    \cdots \triangleleft
    \gridMeshOpen^n(\sigma_n)
  \]
\end{construction}
\begin{example}
  The open grid meshes look like this:
  \begin{align*}
    \gridMeshOpen^2 \ord{0, 0} &\simeq \quad
    \begin{tikzpicture}[scale = 0.5, baseline=(current bounding box.center)]
      \fill[mesh-background] (0, 0) rectangle (3, 3);
    \end{tikzpicture}
    &
    \gridMeshOpen^2 \ord{1, 0} &\simeq \quad
    \begin{tikzpicture}[scale = 0.5, baseline=(current bounding box.center)]
      \fill[mesh-background] (0, 0) rectangle (3, 3);
      \draw[mesh-stratum] (1.5, 0) -- +(0, 3);
    \end{tikzpicture}
    &
    \gridMeshOpen^2 \ord{2, 0} &\simeq \quad
    \begin{tikzpicture}[scale = 0.5, baseline=(current bounding box.center)]
      \fill[mesh-background] (0, 0) rectangle (3, 3);
      \draw[mesh-stratum] (1, 0) -- +(0, 3);
      \draw[mesh-stratum] (2, 0) -- +(0, 3);
    \end{tikzpicture}
    \\[5mm]
    \gridMeshOpen^2 \ord{0, 1} &\simeq \quad
    \begin{tikzpicture}[scale = 0.5, baseline=(current bounding box.center)]
      \fill[mesh-background] (0, 0) rectangle (3, 3);
      \draw[mesh-stratum] (0, 1.5) -- +(3, 0);
    \end{tikzpicture}
    &
    \gridMeshOpen^2 \ord{1, 1} &\simeq \quad
    \begin{tikzpicture}[scale = 0.5, baseline=(current bounding box.center)]
      \fill[mesh-background] (0, 0) rectangle (3, 3);
      \draw[mesh-stratum] (1.5, 0) -- +(0, 3);
      \draw[mesh-stratum] (0, 1.5) -- +(3, 0);
      \node[mesh-vertex] at (1.5, 1.5) {};
    \end{tikzpicture}
    &
    \gridMeshOpen^2 \ord{2, 1} &\simeq \quad
    \begin{tikzpicture}[scale = 0.5, baseline=(current bounding box.center)]
      \fill[mesh-background] (0, 0) rectangle (3, 3);
      \draw[mesh-stratum] (1, 0) -- +(0, 3);
      \draw[mesh-stratum] (2, 0) -- +(0, 3);
      \draw[mesh-stratum] (0, 1.5) -- +(3, 0);
      \node[mesh-vertex] at (1, 1.5) {};
      \node[mesh-vertex] at (2, 1.5) {};
    \end{tikzpicture}
    \\[5mm]
    \gridMeshOpen^2 \ord{0, 1} &\simeq \quad
    \begin{tikzpicture}[scale = 0.5, baseline=(current bounding box.center)]
      \fill[mesh-background] (0, 0) rectangle (3, 3);
      \draw[mesh-stratum] (0, 1) -- +(3, 0);
      \draw[mesh-stratum] (0, 2) -- +(3, 0);
    \end{tikzpicture}
    &
    \gridMeshOpen^2 \ord{1, 1} &\simeq \quad
    \begin{tikzpicture}[scale = 0.5, baseline=(current bounding box.center)]
      \fill[mesh-background] (0, 0) rectangle (3, 3);
      \draw[mesh-stratum] (1.5, 0) -- +(0, 3);
      \draw[mesh-stratum] (0, 1) -- +(3, 0);
      \draw[mesh-stratum] (0, 2) -- +(3, 0);
      \node[mesh-vertex] at (1.5, 1) {};
      \node[mesh-vertex] at (1.5, 2) {};
    \end{tikzpicture}
    &
    \gridMeshOpen^2 \ord{2, 1} &\simeq \quad
    \begin{tikzpicture}[scale = 0.5, baseline=(current bounding box.center)]
      \fill[mesh-background] (0, 0) rectangle (3, 3);
      \draw[mesh-stratum] (1, 0) -- +(0, 3);
      \draw[mesh-stratum] (2, 0) -- +(0, 3);
      \draw[mesh-stratum] (0, 1) -- +(3, 0);
      \draw[mesh-stratum] (0, 2) -- +(3, 0);
      \node[mesh-vertex] at (1, 1) {};
      \node[mesh-vertex] at (1, 2) {};
      \node[mesh-vertex] at (2, 1) {};
      \node[mesh-vertex] at (2, 2) {};
    \end{tikzpicture}
  \end{align*}
\end{example}

\begin{observation}
  For $n \leq 1$ the grid mesh functors $\gridMeshClosed^1$ and $\gridMeshOpen^1$
  are equivalences. For $n > 1$ the functors are fully faithful, exhibiting
  $\FinOrd^{n, \op}$ and $\FinOrd^n$ as full subcategories of $\BMeshClosed{n}$
  and $\BMeshOpen{n}$.
\end{observation}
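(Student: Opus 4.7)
The claim has two distinct parts, and I would address them separately.

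For $n \leq 1$, the case $n=0$ is trivial because both sides are the terminal simplicial set (Observation 2.1.23 and its open analogue). For $n=1$, by Proposition 2.1.7 the map $\sing \colon \BMeshClosed{1} \to \FinOrd^\op$ is a trivial fibration, and by Lemma 2.1.33 the map $\reg \colon \BMeshOpen{1} \to \FinOrd$ is also a trivial fibration. In both cases $\gridMeshClosed^1$ and $\gridMeshOpen^1$ are chosen as sections of these trivial fibrations, so the 2-out-of-3 property for categorical equivalences (applied to the composite $\id = \sing \circ \gridMeshClosed^1$, and similarly for the open case) immediately yields that the grid functors are themselves categorical equivalences.

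For $n > 1$, I would proceed by induction on $n$ using the packing equivalence of Proposition 2.1.44, handling the closed case first. Assume inductively that $\gridMeshClosed^n$ is fully faithful. Under the equivalence $\BMeshClosed{n+1} \simeq \BMeshClosedL{n}{\BMeshClosed{1}}$, the grid mesh $\gridMeshClosed^{n+1}(\ord{k_1, \ldots, k_{n+1}})$ corresponds to the pair consisting of the base mesh $\gridMeshClosed^n(\ord{k_1, \ldots, k_n})$ together with the constant labelling whose value everywhere is $\gridMeshClosed^1(\ord{k_{n+1}})$. The plan is then to compute the mapping spaces in $\BMeshClosedL{n}{\BMeshClosed{1}}$ between two such constantly labelled grid meshes directly via the polynomial functor description of Lemma 2.0.26: a $1$-simplex is a closed $n$-mesh bundle $\zeta$ over $\DeltaStrat{1}$ together with a labelling $\ell \colon \Exit(\mathrm{tot}(\zeta)) \to \BMeshClosed{1}$ extending the prescribed constant labellings on the endpoints.

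The inductive hypothesis handles the space of bundles $\zeta$: it is the mapping space $\FinOrd^{n,\op}(\ord{\vec{k}}, \ord{\vec{m}})$. The main obstacle, and the heart of the argument, is to show that the space of extensions $\ell$ is equivalent to $\FinOrd^\op(\ord{k_{n+1}}, \ord{m_{n+1}})$. For this I would exploit the product structure of grid meshes: the exit path category of a grid mesh bundle over $\DeltaStrat{1}$ between two grid fibres factors (up to categorical equivalence) as the product of the exit path category of the base bundle with that of a single closed $1$-mesh bundle lying transversely to it. A constant labelling on each endpoint then forces the extension to factor through the transverse $1$-mesh direction, reducing the space of extensions to a single map of $\BMeshClosed{1}$. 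Combining the two factors yields the desired equivalence with $\FinOrd^{n+1,\op}(\ord{\vec{k}, k_{n+1}}, \ord{\vec{m}, m_{n+1}})$.

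For the open case I would either run the entirely parallel induction using the open packing equivalence (Definition 2.1.49 and the remarks following it) and Lemma 2.1.33 in place of Proposition 2.1.7, or invoke the Poincaré duality discussed after Lemma 2.1.33 together with the isomorphism $\FinOrd^n \cong (\StrictInt^n)^\op$ to transport the closed result. I expect the technical step on analysing extensions of constant labellings along a grid bundle to be the principal obstacle; once that is isolated as a lemma, the induction glues together cleanly through the packing equivalence.
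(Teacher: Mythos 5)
Your proposal is essentially sound, but note that the paper states this as an unproved observation, so there is no proof of record to compare against; I will judge the argument on its own terms. The $n \leq 1$ case is exactly right: the grid functors are by construction sections of the trivial fibrations $\sing$ and $\reg$, hence equivalences by $2$-out-of-$3$. For $n > 1$ your inductive strategy through the packing equivalence $\BMeshClosed{n+1} \simeq \BMeshClosedL{n}{\BMeshClosed{1}}$ is the right one, and you correctly isolate the crux: showing that a labelling $\Exit(\strat{Y}) \to \BMeshClosed{1}$ of a bordism $\strat{Y}$ between two grid meshes, constrained to be constant on both fibres, is determined up to contractible choice by a single map of $\BMeshClosed{1}$. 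Two caveats. First, a small indexing slip: the packing map peels off the \emph{innermost} closed $1$-mesh bundle, so the label component of $\gridMeshClosed^{n+1}\ord{k_1,\ldots,k_{n+1}}$ is $\gridMeshClosed^1\ord{k_1}$ over the base $\gridMeshClosed^n\ord{k_2,\ldots,k_{n+1}}$, not the last coordinate; this does not affect the induction. Second, the justification you offer for the key lemma --- that $\Exit$ of the bundle over $\DeltaStrat{1}$ ``factors as a product'' --- is imprecise: a bordism between two \emph{different} grid meshes is not a product, so this cannot be taken literally.

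The cleanest way to discharge the key lemma, and the one most consistent with the paper's machinery, is to transport the whole computation across $\BMeshClosed{n} \simeq \BTrussClosed{n}$ and work with trusses. There $\BTrussClosed{n}$ is an honest $1$-category, so discreteness of mapping spaces is automatic, and the lemma becomes an elementary poset calculation: a functor from the total poset $E_1$ of a closed $1$-truss bundle over $\ord{1}$ into $\FinOrd^\op$ that is constant on both fibres sends all ``crossing'' maps to a single common map, because any two crossing maps are linked through composites with intra-fibre maps (which go to identities) and $E_1$ is connected in the requisite sense via the right-covering property of its singular part. Iterating this through the levels of the truss tower gives exactly $\FinOrd^{n,\op}(\ord{\vec{k}}, \ord{\vec{m}})$ as the hom-set between grid trusses. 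With that substitution for your product-structure claim, the argument closes; the open case then follows either by the parallel argument or by the duality $\BMeshOpen{n} \simeq \BMeshClosed{n,\op}$ and the compatibility of the grid functors with duality recorded in Construction 2.2.25.
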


\section{Trusses}\label{sec:fct-truss}

In~\S\ref{sec:fct-mesh-closed-1} we have seen that 
the $\infty$-category $\BMeshClosed{1}$ that classifies
closed $1$-mesh bundles is equivalent to the $1$-category $\FinOrd^\op$.
The geometric theory of closed $1$-mesh bundles is therefore captured,
up to equivalence,
by the combinatorics of finite non-empty ordinals and order-preserving maps.
Closed $n$-trusses are the combinatorial objects that extend this correspondence to closed $n$-meshes.
Dually, open $n$-trusses are the combinatorial objects associated with open $n$-meshes.

Open trusses are a generalisation of the zigzag construction of~\cite{zigzag-contraction, zigzag-normalisation},
which admits a computable representation and serves as the core of a graphical proof assistant for higher categories.
The data structures and algorithms for the zigzag construction straightforwardly
extend to trusses.
To keep the scope of this thesis manageable, we will however not explore the computational aspects further.

\subsection{Closed Trusses}\label{sec:fct-truss-closed}

We begin with closed trusses.
With the intention in mind that the $1$-category $\BTrussClosed{1}$ that classifies closed $1$-trusses should be equivalent to $\BMeshClosed{1}$
while being purely combinatorial in nature,
the equivalence $\sing : \BMeshClosed{1} \to \FinOrd^\op$
allows us to simply define $\BTrussClosed{1} := \FinOrd^\op$.
We can then define the counterpart $\ETrussClosed{1}$ of $\EMeshClosed{1}$ as follows.

\begin{definition}\label{def:truss-closed-1}
	The objects of the $1$-category $\ETrussClosed{1}$ are \textit{singular objects} of
	the form $\trussCS{i} \ord{n}$ for natural numbers $0 \leq i \leq n$ and \textit{regular objects} $\trussCR{i}
		\ord{n}$ for $0 \leq i < n$.
	The morphisms of $\ETrussClosed{1}$ are defined as
	\begin{align*}
		\ETrussClosed{1}(\trussCS{j} \ord{m}, \trussCS{i} \ord{n}) & = \{ \alpha : \ord{n} \to \ord{m} \mid \alpha(i) = j \}                       \\
		\ETrussClosed{1}(\trussCR{j} \ord{m}, \trussCR{i} \ord{n}) & = \{ \alpha : \ord{n} \to \ord{m} \mid \alpha(i) \leq j < \alpha(i + 1) \}    \\
		\ETrussClosed{1}(\trussCS{j} \ord{m}, \trussCR{i} \ord{n}) & = \{ \alpha : \ord{n} \to \ord{m} \mid \alpha(i) \leq j \leq \alpha(i + 1) \} \\
		\ETrussClosed{1}(\trussCR{j} \ord{m}, \trussCS{i} \ord{n}) & = \varnothing
	\end{align*}
	and compose as the opposite of the underlying order-preserving maps. There is a
	canonical forgetful functor $\TrussClosed{1} : \ETrussClosed{1} \to \FinOrd^\op$ called the
	\textit{universal closed $1$-truss bundle} which sends both
	$\trussCS{i}\ord{n}$ and $\trussCR{i}\ord{n}$ to $\ord{n}$.
	A \textit{closed $1$-truss bundle} is a map $f : E \to B$ of simplicial sets
	together with a pullback square
	\[
		\begin{tikzcd}
			E \arrow[d, "f", swap] \ar[r] \pullbackcorner & \ETrussClosed{1} \ar[d, "\TrussClosed{1}"] \\
			B \ar[r] & \FinOrd^\op
		\end{tikzcd}
	\]
  The map $\TrussClosed{1}$ induces a polynomial functor $\BTrussClosedL{1}{-} : \sSet \to \sSet$.
\end{definition}

\begin{example}\label{ex:fct-truss-closed-1-fibre-object}
  The fibre of $\TrussClosed{1} : \ETrussClosed{1} \to \FinOrd^\op$ over a finite ordinal $\ord{n}$
  is a sequence of $n$ consecutive cospans between $n$ regular and $n + 1$ singular elements
  \[
    \begin{tikzcd}
      \trussCS{0}\ord{n} \ar[r] &
      \trussCR{0}\ord{n} &
      \trussCS{1}\ord{n} \ar[l] \ar[r] &
      \cdots &
      \trussCS{n - 1}\ord{n} \ar[r] \ar[l] &
      \trussCR{n - 1}\ord{n} &
      \trussCS{n}\ord{n} \ar[l]
    \end{tikzcd}
  \]
  In particular the fibre over $\ord{0}$ consists of a single singular element $\trussCS{0}\ord{0}$.
  We observe that these fibres are equivalent to the exit path $\infty$-categories of closed $1$-meshes with the requisite number of singular strata.
\end{example}

\begin{example}\label{ex:fct-truss-closed-1-fibre-map}
  The fibre of $\TrussClosed{1} : \ETrussClosed{1} \to \FinOrd^\op$ over the opposite of an order
  preserving map $\alpha : \ord{n} \to \ord{m}$ consists of two sequences of cospans
  with $m + 1$ and $n + 1$ singular elements, respectively, connected together
  to form a planar diagram.
  The maps between the singular elements are determined directly by $\alpha$,
  while the remaining maps follow.
  \begin{enumerate}
    \item The opposite of the map $\ord{1} \to \ord{2}$ defined by $0 \mapsto 1$ and $1 \mapsto 2$
    restricts a sequence of two consecutive cospans to the second cospan.
    We interpret this as the restriction to a subdiagram.
    \[
      \begin{tikzcd}
        \trussCS{0}\ord{2} \ar[r] &
        \trussCR{0}\ord{2} &
        \trussCS{1}\ord{2} \ar[l] \ar[r] \ar[d] &
        \trussCR{1}\ord{2} \ar[d] &
        \trussCS{2}\ord{2} \ar[l] \ar[d] \\
        {} &
        {} &
        \trussCS{0}\ord{1} \ar[r] &
        \trussCR{0}\ord{1} &
        \trussCS{1}\ord{1} \ar[l]
      \end{tikzcd}
    \]

    \item The opposite of the unique map $\ord{1} \to \ord{0}$ splits a singular element by inserting a regular element between them.
    We interpret this as inserting an identity level into a diagram.
    \[
      \begin{tikzcd}
        {} &
        \trussCS{0}\ord{0} \ar[dl] \ar[dr] \\
        \trussCS{0}\ord{1} \ar[r] &
        \trussCR{0}\ord{1} &
        \trussCS{1}\ord{1} \ar[l]
      \end{tikzcd}
    \]

    \item The opposite of the map $\ord{1} \to \ord{2}$ defined by $0 \mapsto 0$ and $1 \mapsto 2$
    merges two regular levels.
    We interpret this as a composition operation.
    \[
      \begin{tikzcd}
        \trussCS{0}\ord{2} \ar[r] \ar[d] &
        \trussCR{0}\ord{2} \ar[dr] &
        \trussCS{1}\ord{2} \ar[l] \ar[r] &
        \trussCR{1}\ord{2} \ar[dl]&
        \trussCS{2}\ord{2} \ar[l] \ar[d] \\
        \trussCS{0}\ord{1} \ar[rr] &&
        \trussCR{0}\ord{1} &&
        \trussCS{1}\ord{1} \ar[ll]
      \end{tikzcd}
    \]
  \end{enumerate}
\end{example}

\begin{lemma}\label{lem:fct-truss-closed-1-categorical}
  The map $\TrussClosed{1} : \ETrussClosed{1} \to \FinOrd^\op$ is a categorical fibration.
\end{lemma}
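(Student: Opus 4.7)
My plan is to reduce the claim to two standard checks: that $\TrussClosed{1}$ is an inner fibration, and that it is an isofibration.

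First, I would note that $\ETrussClosed{1}$ is defined as (the nerve of) an ordinary $1$-category, and $\FinOrd^\op$ is likewise the nerve of a $1$-category. Therefore Lemma~\ref{lem:b-inner-fib-1-cat} applies directly to the map $\TrussClosed{1} : \ETrussClosed{1} \to \FinOrd^\op$ and gives that it is an inner fibration. No work is required for this part beyond citing the lemma.

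Next, I would verify the isofibration property. Let $E \in \ETrussClosed{1}$ be an object over $B = \ord{n} \in \FinOrd^\op$, and suppose $f : B \to B'$ is an equivalence in $\FinOrd^\op$. Isomorphisms in $\FinOrd$ are precisely the identity maps, since any order-preserving bijection between finite non-empty ordinals must be the identity; dualising, the same holds in $\FinOrd^\op$. So $f = \id_{\ord{n}}$, and I can lift it to $\id_E \in \ETrussClosed{1}$, which is manifestly an equivalence (indeed an isomorphism) covering $f$. This establishes the isofibration condition.

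Combining these two observations, $\TrussClosed{1}$ is both an inner fibration and an isofibration between quasicategories, hence a categorical fibration in the Joyal model structure. There is no substantive obstacle here; the entire argument is a bookkeeping exercise that exploits the fact that both the total category and the base category are $1$-categorical, together with the triviality of the automorphism groups in $\FinOrd$. The proof should fit in a few lines.
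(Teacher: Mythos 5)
Your proof is correct and follows the same route as the paper: cite Lemma~\ref{lem:b-inner-fib-1-cat} for the inner fibration property (both simplicial sets being nerves of $1$-categories), and then observe that $\FinOrd^\op$ is skeletal with only identity automorphisms, so equivalences in the base are identities and lift trivially. Nothing is missing.
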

\begin{proof}
  Being a functor between $1$-categories, the map is automatically an inner fibration
  by Lemma~\ref{lem:b-inner-fib-1-cat}.
  Because $\FinOrd^\op$ is skeletal, i.e.\ the isomorphisms in $\FinOrd^\op$ are exactly the
  identity maps, the map $\TrussClosed{1} : \ETrussClosed{1} \to \FinOrd^\op$ is trivially an isofibration.
\end{proof}

\begin{lemma}\label{lem:truss-closed-1-poset}
  Let $P$ be a poset, $\varphi : P \to \FinOrd^\op$ a functor and
  \[
    \begin{tikzcd}
      {\ETrussClosed{1} \times_{\FinOrd^\op} P} \ar[r] \ar[d] \pullbackcorner &
      {\ETrussClosed{1}} \ar[d] \\
      {P} \ar[r, "\varphi", swap] &
      {\FinOrd^\op}
    \end{tikzcd}
  \]
  the pullback of simplicial sets.
  Then $\ETrussClosed{1} \times_{\FinOrd^\op} P$ is a poset.  
\end{lemma}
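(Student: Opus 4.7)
The plan is to verify the two defining properties of a poset for the pullback category $\ETrussClosed{1} \times_{\FinOrd^\op} P$: namely, (a) between any two objects there is at most one morphism, and (b) the only isomorphisms are identities. Since both $\ETrussClosed{1}$ and $P$ are ordinary $1$-categories and $\TrussClosed{1}$ is a categorical fibration (by Lemma~\ref{lem:fct-truss-closed-1-categorical}), the pullback is computed strictly as a $1$-category whose objects are pairs $(p, x)$ with $\varphi(p) = \TrussClosed{1}(x)$ and whose morphisms $(p, x) \to (p', x')$ are pairs of a (necessarily unique) relation $p \leq p'$ together with an $\ETrussClosed{1}$-morphism $f : x \to x'$ satisfying $\TrussClosed{1}(f) = \varphi(p \leq p')$.

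For (a), I would observe directly from Definition~\ref{def:truss-closed-1} that each Hom-set $\ETrussClosed{1}(x, x')$ is defined as a \emph{subset} of $\FinOrd(\TrussClosed{1}(x'), \TrussClosed{1}(x))$ (interpreting $\TrussClosed{1}$ as landing in $\FinOrd^\op$). Hence a morphism in $\ETrussClosed{1}$ is uniquely determined by its source, its target, and its image under $\TrussClosed{1}$. Combined with the fact that $P$ has at most one morphism between any two objects, this immediately forces the pullback to be a preorder.

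For (b), suppose $(p, x)$ and $(p', x')$ are isomorphic. Antisymmetry in $P$ gives $p = p'$, so the underlying $\FinOrd$-map in question is the identity on $\ord{n} := \TrussClosed{1}(x) = \TrussClosed{1}(x')$. I would then go through the four morphism-set cases in Definition~\ref{def:truss-closed-1} and enumerate the lifts of $\id_{\ord{n}}$: a lift of $\id$ between two singular objects $\trussCS{j}\ord{n} \to \trussCS{i}\ord{n}$ forces $i = j$; between regular objects $\trussCR{j}\ord{n} \to \trussCR{i}\ord{n}$ the constraint $i \leq j < i+1$ forces $i = j$; a lift $\trussCS{j}\ord{n} \to \trussCR{i}\ord{n}$ exists for $j \in \{i, i+1\}$ but admits no reverse morphism (the set $\ETrussClosed{1}(\trussCR{i}\ord{n}, \trussCS{j}\ord{n})$ is empty), so no such mixed morphism is invertible. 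The only invertible lifts of $\id$ are therefore true identities, so $x = x'$ and the isomorphism is an identity.

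The main (and only real) step is the case analysis in (b), which is entirely mechanical given the explicit definition of $\ETrussClosed{1}$; I expect no genuine obstacle. Part (a) is essentially a formal consequence of the shape of the definitions together with the fact that $P$ is a poset.
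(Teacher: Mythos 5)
Your proof is correct; the paper states this lemma without proof, and your direct verification (thinness from the fact that $\TrussClosed{1}$ is injective on each hom-set together with $P$ being thin, antisymmetry from the case analysis of lifts of $\id_{\ord{n}}$ and the emptiness of $\ETrussClosed{1}(\trussCR{i}\ord{n}, \trussCS{j}\ord{n})$) is exactly the argument one would supply. One minor remark: you do not need $\TrussClosed{1}$ to be a categorical fibration to identify the simplicial-set pullback with the nerve of the $1$-categorical pullback --- the nerve functor preserves limits, so this holds for any functor of $1$-categories.
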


The stratified geometric realisation functor $\StratReal{-} : \sSet \to \Strat$
gives us a method to geometrically realise a closed $1$-truss bundle
as a map of stratified spaces.
Equipped with the appropriate $1$-framing, the result is a closed $1$-mesh bundle.
Remember that when $P$ is a poset, the stratified geometric realisation $\StratReal{P}$ is glued together from a stratified $k$-simplex $\DeltaStrat{k}$ for every flag $\ord{k} \hookrightarrow P$.

\begin{construction}\label{con:fct-truss-closed-1-realise}
  We construct a geometric realisation functor $\FinOrd^\op \to \BMeshClosed{1}$
  that sends a $k$-simplex $\alpha : \ord{k} \to \FinOrd^\op$ to a closed $1$-mesh
  bundle $\xi : \strat{X} \to \DeltaStrat{k}$ with $\sing(\xi) = \alpha$.  
  We first obtain a closed $1$-truss bundle
  \[
    \begin{tikzcd}
      {\cat{E}} \ar[r, "\alpha'"] \ar[d, "F", swap] \pullbackcorner &
      {\ETrussClosed{1}} \ar[d] \\
      {\ord{k}} \ar[r, "\alpha", swap] &
      {\FinOrd^\op}
    \end{tikzcd}
  \]
  by pullback, where $\cat{E}$ is a poset by Lemma~\ref{lem:truss-closed-1-poset}.
  To every element $e \in \cat{E}$ we assign a coordinate
  $R(e) \in \DeltaTop{k} \times \R$ with $R(e) = (F(e), 2i)$ when
  $\alpha'(e) = \trussCS{i}\ord{n}$ and $R(e) = (F(e), 2i + 1)$ when
  $\alpha'(e) = \trussCR{i}\ord{n}$.
  This assignment extends to an embedding $\TopReal{\cat{E}} \to \DeltaTop{k} \times \R$ by linear interpolation, defining a framing for the stratified bundle
  $\StratReal{F} : \StratReal{\cat{E}} \to \DeltaStrat{k}$.
  Then $\xi := \StratReal{F}$ is a closed $1$-mesh bundle with $\sing(\xi) = \alpha$.
\end{construction}

\begin{remark}
  There are other possible coordinate assignments   
  for Construction~\ref{con:fct-truss-closed-1-realise}
  that result in the geometric realisation being a closed $1$-mesh bundle.  
  In particular any coordinate assignment which respects the frame order  
  (see~\cite{framed-combinatorial-topology}) will lead to an equivalent result.
\end{remark}

\begin{lemma}\label{lem:fct-truss-closed-1-realise}
  The geometric realisation functor $\FinOrd^\op \to \BMeshClosed{1}$ 
  from Construction~\ref{con:fct-truss-closed-1-realise}
  is a homotopy inverse to the equivalence
  $\sing : \BMeshClosed{1} \to \FinOrd^\op$.
\end{lemma}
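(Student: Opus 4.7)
The plan is to reduce the claim to two simple pieces: strict equality $\sing \circ R = \id_{\FinOrd^\op}$, and abstract nonsense about sections of trivial fibrations. I will first verify that the construction of $R$ actually defines a functor of simplicial sets (not just a map on $k$-simplices), which is routine: the pullback truss bundle $\cat{E} \to \ord{k}$ in Construction~\ref{con:fct-truss-closed-1-realise} depends naturally on $\alpha : \ord{k} \to \FinOrd^\op$, and the coordinate assignment $R(e) \in \DeltaTop{k} \times \R$ is natural in $\alpha$, so the resulting closed $1$-mesh bundle is functorial in $\alpha$.

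Next I would verify $\sing \circ R = \id_{\FinOrd^\op}$ by direct inspection. For any $k$-simplex $\alpha : \ord{k} \to \FinOrd^\op$, the fibre of the pullback truss bundle $\cat{E} \to \ord{k}$ over $i \in \ord{k}$ has singular elements indexed exactly by $\alpha(i)$ (see Example~\ref{ex:fct-truss-closed-1-fibre-object}). By design, the coordinate assignment sends these singular elements to the odd-height points of the fibre, which are precisely the singular strata of $\StratReal{F}$. Therefore $\sing(R(\alpha))$ agrees with $\alpha$ as a functor $\ord{k} \to \FinOrd^\op$, so $\sing \circ R = \id_{\FinOrd^\op}$ on the nose.

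It now remains to exhibit a natural equivalence $R \circ \sing \simeq \id_{\BMeshClosed{1}}$. By Proposition~\ref{prop:fct-mesh-closed-1-sing-trivial}, $\sing$ is a trivial fibration in the Joyal model structure, hence a categorical equivalence. Pick any homotopy inverse $S : \FinOrd^\op \to \BMeshClosed{1}$ with natural equivalences $\eta : \sing \circ S \simeq \id_{\FinOrd^\op}$ and $\eps : S \circ \sing \simeq \id_{\BMeshClosed{1}}$. From the identity $\sing \circ R = \id_{\FinOrd^\op}$ we obtain a chain of natural equivalences
\[
  R \;=\; \id_{\BMeshClosed{1}} \circ\, R \;\simeq\; S \circ \sing \circ R \;=\; S \circ \id_{\FinOrd^\op} \;=\; S
\]
in $\Fun(\FinOrd^\op, \BMeshClosed{1})$, witnessed by $\eps \circ \id_R$. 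Whiskering with $\sing$ on the right then yields $R \circ \sing \simeq S \circ \sing \simeq \id_{\BMeshClosed{1}}$, as required.

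The only step that requires real care is verifying $\sing \circ R = \id$ at the level of simplicial sets, since this is where the concrete choice of coordinate assignment interacts with the definition of $\sing$ from Construction~\ref{con:fct-mesh-closed-1-sing}; everything else is formal. No obstacles are expected, as the remaining argument is a standard 2-out-of-3 / homotopy-inverse manipulation in the Joyal model structure.
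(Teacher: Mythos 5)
Your proof is correct and takes essentially the same route as the paper: the paper's own argument is the one-line observation that the realisation functor is a section of the trivial fibration $\sing$ and hence a homotopy inverse. You merely spell out the standard "a section of an equivalence is a homotopy inverse" manipulation that the paper leaves implicit.
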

\begin{proof}
  By Proposition~\ref{prop:fct-mesh-closed-1-sing-trivial} the functor
  $\sing$ is a trivial fibration and therefore
  and equivalence of quasicategories.
  The functor $\FinOrd^\op \to \BMeshClosed{1}$ is a section of the equivalence
  $\sing$ and therefore a homotopy inverse.
\end{proof}

\begin{construction}\label{con:fct-truss-closed-1-pointed-realise}
  We construct a map $\ETrussClosed{1} \to \EMeshClosed{1}$ which fits into the diagram
  \[
    \begin{tikzcd}
      {\ETrussClosed{1}} \ar[r] \ar[d, swap, "\TrussClosed{1}"] &
      {\EMeshClosed{1}} \ar[d, "\MeshClosed{1}"] \\
      {\FinOrd^\op} \ar[r] &
      {\BMeshClosed{1}}
    \end{tikzcd}
  \]
  where $\FinOrd^\op \to \BMeshClosed{1}$ is the geometric realisation functor
  from Construction~\ref{con:fct-truss-closed-1-realise}.
  Let $\alpha' : \ord{k} \to \ETrussClosed{1}$ be a $k$-simplex.
  Then by composition with $\ETrussClosed{1} \to \FinOrd^\op$, we get a $k$-simplex
  $\alpha : \ord{k} \to \FinOrd^\op$.
  By taking the pullback and by the universal property of that pullback,
  we then have a diagram
  \[
    \begin{tikzcd}
      {} &
      {\cat{E}} \ar[r] \ar[d, "F", swap] \pullbackcorner &
      {\ETrussClosed{1}} \ar[d] \\
      {\ord{k}} \ar[r, "\id", swap] \ar[ur, "S"] &
      {\ord{k}} \ar[r, "\alpha", swap] &
      {\FinOrd^\op}
    \end{tikzcd}
  \]
  The $k$-simplex $\alpha' : \ord{k} \to \ETrussClosed{1}$ therefore determines
  a $1$-truss bundle together with a section.  
  We can then take the stratified geometric realisation
  \[
    \begin{tikzcd}
      {} &
      {\StratReal{\cat{E}}} \ar[d, "\StratReal{F}"] \\
      {\DeltaStrat{k}} \ar[r, "\id", swap] \ar[ur, "\StratReal{S}"] &
      {\DeltaStrat{k}}
    \end{tikzcd}
  \]
  and equip the stratified bundle $\StratReal{F}$ with a $1$-framing as in
  Construction~\ref{con:fct-truss-closed-1-realise}.
  The closed $1$-mesh bundle $\StratReal{F}$ and its section $\StratReal{S}$
  together determine a $k$-simplex of $\EMeshClosed{1}$.
\end{construction}

\begin{lemma}\label{lem:fct-truss-closed-1-pointed-realise}
  The geometric realisation map
  $
    \ETrussClosed{1} \to \EMeshClosed{1}
  $
  from Construction~\ref{con:fct-truss-closed-1-pointed-realise}
  is a categorical equivalence.
  % There is an equivalenceThe map $\ETruss^1 \to \EMeshClosed^1$ from Lemma~\ref{} $\EMeshClosed^1 \to \ETruss^1$
  % which fits into the diagram
  % \[
  %   \begin{tikzcd}
  %     {\EMeshClosed^1} \ar[r] \ar[d] &
  %     {\ETruss^1} \ar[d] \\
  %     {\BMeshClosed^1} \ar[r, "\sing", swap] &
  %     {\FinOrd}
  %   \end{tikzcd}
  % \]
\end{lemma}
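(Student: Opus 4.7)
The plan is to prove the claim by a fibrewise argument over the equivalence $r : \FinOrd^\op \to \BMeshClosed{1}$ produced by Lemma~\ref{lem:fct-truss-closed-1-realise}. First I would arrange the data of Construction~\ref{con:fct-truss-closed-1-pointed-realise} into the commutative square
\[
  \begin{tikzcd}
    {\ETrussClosed{1}} \ar[r, "R"] \ar[d, "\TrussClosed{1}"'] &
    {\EMeshClosed{1}} \ar[d, "\MeshClosed{1}"] \\
    {\FinOrd^\op} \ar[r, "r"', "\simeq"] &
    {\BMeshClosed{1}}
  \end{tikzcd}
\]
Both vertical maps are categorical fibrations: the left by Lemma~\ref{lem:fct-truss-closed-1-categorical} and the right by Lemma~\ref{lem:fct-mesh-closed-labelled-pointed-cat-fib}. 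Since the bottom map is a categorical equivalence, it will suffice to show that this square is a homotopy pullback, and this in turn reduces to verifying that the induced map on the strict fibres over each object $\ord{n} \in \FinOrd^\op$ is a categorical equivalence.

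Next I would compute both fibres explicitly. The fibre of $\TrussClosed{1}$ over $\ord{n}$ is, by definition and Example~\ref{ex:fct-truss-closed-1-fibre-object}, the zigzag poset
\[
  P_n \;=\; \bigl( \trussCS{0}\ord{n} \to \trussCR{0}\ord{n} \leftarrow \trussCS{1}\ord{n} \to \cdots \leftarrow \trussCS{n}\ord{n} \bigr).
\]
The fibre of $\MeshClosed{1}$ over $r(\ord{n})$, by Lemma~\ref{lem:fct-mesh-closed-labelled-universal} applied to the closed $1$-mesh bundle $\strat{C}_n \to \DeltaStrat{0}$ that $r(\ord{n})$ classifies, is $\Exit(\strat{C}_n)$. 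By construction of $r$ in Construction~\ref{con:fct-truss-closed-1-realise} we have $\strat{C}_n \cong \StratReal{P_n}$. Unwinding Construction~\ref{con:fct-truss-closed-1-pointed-realise}, the map on fibres induced by $R$ is precisely the unit $P_n \to \Exit(\StratReal{P_n})$ of the adjunction $\StratReal{-} \dashv \Exit$, which was already observed to be an acyclic cofibration in the Joyal model structure, hence a categorical equivalence.

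Finally, from fibrewise equivalence and the fact that both vertical maps are categorical fibrations, I would conclude that the square is a homotopy pullback of quasicategories. Combined with the bottom map being a categorical equivalence, this forces $R : \ETrussClosed{1} \to \EMeshClosed{1}$ to be a categorical equivalence as well.

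The main obstacle I anticipate is the careful bookkeeping needed to identify the map between fibres with the unit of the $\StratReal{-} \dashv \Exit$ adjunction. One has to trace the pointed $k$-simplex $\alpha' : \ord{k} \to \ETrussClosed{1}$ through the pullback square defining $(\cat{E}, S)$ and then through the coordinate assignment $R(e)$, checking that the section $\StratReal{S}$ picks out exactly the image of $P_n$ inside $\Exit(\StratReal{P_n})$ under the unit. Once this identification is in place, the remaining steps are formal consequences of the fibration/equivalence framework already developed in~\S\ref{sec:b} and~\S\ref{sec:fct-mesh-closed-1}.
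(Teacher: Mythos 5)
Your reduction at the end of the first paragraph is where the argument breaks. For a map over $\FinOrd^\op$ between two categorical fibrations, being an equivalence on every strict fibre does \emph{not} imply being a categorical equivalence (equivalently, does not imply the square is a homotopy pullback). Fibrewise detection of equivalences is available for left/right fibrations, or for (co)cartesian fibrations together with maps preserving (co)cartesian edges; but $\TrussClosed{1}$ and $\MeshClosed{1}$ are only flat categorical fibrations whose fibres vary by correspondences — see Example~\ref{ex:fct-truss-closed-1-fibre-map}, where the map $\ord{1}\to\ord{0}$ acts on fibres by inserting a regular element, which is neither a covariant nor a contravariant transport. Concretely, fibrewise essential surjectivity does transfer (both vertical maps are isofibrations), but fully faithfulness for a pair of objects lying over \emph{different} objects of $\FinOrd^\op$ is exactly what your fibre computation never sees. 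A minimal counterexample to the general principle you invoke is the inclusion $\partial\Delta\ord{1}\hookrightarrow\Delta\ord{1}$ regarded as a map over $\Delta\ord{1}$: both projections are categorical fibrations, the map is an isomorphism on each fibre, yet it is not a categorical equivalence. So as written the proposal does not close; you would need an additional argument identifying the mapping spaces of $\ETrussClosed{1}$ and $\EMeshClosed{1}$ between objects in distinct fibres, which is essentially the whole content of the lemma.

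Your identification of the fibre map with the unit $P_n\to\Exit(\StratReal{P_n})$ is correct and worth keeping, but note that the paper avoids the fibrewise reduction entirely: it constructs an explicit map $\EMeshClosed{1}\to\ETrussClosed{1}$ on $k$-simplices, using that a closed $1$-mesh bundle over $\DeltaStrat{k}$ is a stratified fibre bundle whose stratification poset is canonically isomorphic to the poset $\cat{E}$ of the associated closed $1$-truss bundle (the comparison being furnished by a bundle over $\DeltaTop{1}\times\DeltaStrat{k}$ obtained from the trivial fibration $\sing$), and shows this exhibits $\ETrussClosed{1}$ as a deformation retract of $\EMeshClosed{1}$. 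If you want to keep a fibrewise flavour, you would at minimum have to upgrade your argument to compare mapping spaces over each \emph{morphism} of $\FinOrd^\op$, not just the fibres over objects.
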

\begin{proof}
  We construct a map $\EMeshClosed{1} \to \ETrussClosed{1}$ and demonstrate that it
  is a deformation retraction of $\ETrussClosed{1} \to \EMeshClosed{1}$.
  A $k$-simplex of $\EMeshClosed{1}$ is represented by
  a closed $1$-mesh bundle $\xi : \strat{X} \to \DeltaStrat{k}$
  together with a section $s : \DeltaStrat{k} \to \strat{X}$.
  The map $\sing(\xi) : \ord{k} \to \FinOrd^\op$ induces a closed $1$-truss bundle
  \[
    \begin{tikzcd}[column sep = large]
      {\cat{E}} \ar[r] \ar[d, "F", swap] \pullbackcorner &
      {\ETrussClosed{1}} \ar[d, "\TrussClosed{1}"] \\
      {\ord{k}} \ar[r, "\sing(\xi)", swap] &
      {\FinOrd^\op}
    \end{tikzcd}
  \]
  and by Construction~\ref{con:fct-truss-closed-1-realise} a closed $1$-mesh bundle $\StratReal{F} : \StratReal{\cat{E}} \to \DeltaStrat{k}$ with $\sing(\StratReal{F}) = \sing(\xi)$.
  We have by Proposition~\ref{prop:fct-mesh-closed-1-sing-trivial} that
  $\sing : \BMeshClosed{1} \to \FinOrd^\op$ is a trivial fibration;
  therefore there exists a closed $1$-mesh bundle
  $h : \strat{H} \to \DeltaTop{1} \times \DeltaStrat{k}$  
  which restricts to $\xi$ and $\StratReal{F}$ 
  \[
    \begin{tikzcd}[row sep = large]
      {\strat{X}} \ar[r] \ar[d, "\xi", swap] \pullbackcorner &
      {\strat{H}} \ar[d, "h"{description}] &
      {\StratReal{\cat{E}}} \ar[d, "\StratReal{F}"] \ar[l] \pullbackdl \\
      {\{ 0 \} \times \DeltaStrat{k}} \ar[r] &
      {\DeltaTop{1} \times \DeltaStrat{k}} &
      {\{ 1 \} \times \DeltaStrat{k}} \ar[l]
    \end{tikzcd}
  \]
  As a closed $1$-mesh bundle $h$ is a stratified fibre bundle,
  and therefore we must have isomorphisms of posets
  $\stratPos{\strat{X}} \cong \stratPos{H} \cong \cat{E}$.
  Consequently, we have a diagram  
  \[
    \begin{tikzcd}[column sep = large]
      {} &
      {\stratPos{\strat{X}}} \ar[r, "\cong"] \ar[d, "\stratPos{\xi}"] &
      {\cat{E}} \ar[d, "F"] \ar[r] \pullbackcorner &
      {\ETrussClosed{1}} \ar[d, "\TrussClosed{1}"] \\
      {\ord{k}} \ar[r, "\id", swap] \ar[ur, "\stratPos{s}"] &
      {\ord{k}} \ar[r, "\id", swap] &
      {\ord{k}} \ar[r, "\sing(\xi)", swap] &
      {\FinOrd^\op}
    \end{tikzcd}
  \]
  The composite $\ord{k} \to \ETrussClosed{1}$ then defines a $k$-simplex of $\ETrussClosed{1}$
  as desired.
  This defines the deformation retraction $\EMeshClosed{1} \to \ETrussClosed{1}$.
\end{proof}

\begin{lemma}
  The map $\TrussClosed{1} : \ETrussClosed{1} \to \FinOrd^\op$ is a flat categorical fibration.
\end{lemma}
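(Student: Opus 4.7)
The plan is to leverage the equivalences already established between $\ETrussClosed{1}$, $\EMeshClosed{1}$ and between $\FinOrd^\op$, $\BMeshClosed{1}$, together with the fact (stated in the background section on flat fibrations) that flat inner fibrations are closed under equivalence in the arrow category. The paper already tells us that $\TrussClosed{1}$ is a categorical fibration (Lemma \ref{lem:fct-truss-closed-1-categorical}), so only flatness needs to be proven.

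First, I would record the commutative square
\[
  \begin{tikzcd}
    {\ETrussClosed{1}} \ar[r] \ar[d, "\TrussClosed{1}"'] &
    {\EMeshClosed{1}} \ar[d, "\MeshClosed{1}"] \\
    {\FinOrd^\op} \ar[r] &
    {\BMeshClosed{1}}
  \end{tikzcd}
\]
from Construction \ref{con:fct-truss-closed-1-pointed-realise}. Next, I would observe that both horizontal maps are categorical equivalences: the bottom arrow is a section of the trivial fibration $\sing$ and is thus an equivalence by Lemma \ref{lem:fct-truss-closed-1-realise}; the top arrow is a categorical equivalence by Lemma \ref{lem:fct-truss-closed-1-pointed-realise}. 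Consequently, viewed as objects of the arrow category $\Fun(\Delta\ord{1}, \sSet)$, the maps $\TrussClosed{1}$ and $\MeshClosed{1}$ are equivalent in the Joyal model structure.

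Now I would invoke Lemma \ref{lem:fct-mesh-closed-labelled-pointed-flat}, which gives that $\MeshClosed{1}$ is a flat categorical fibration. Since flat inner fibrations are closed under equivalence in the arrow category, and $\TrussClosed{1}$ is already known to be a categorical fibration and is equivalent to $\MeshClosed{1}$, it follows that $\TrussClosed{1}$ is flat as well.

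The only subtlety worth pinning down carefully in the write-up is that the equivalence in the arrow category really does transport flatness; the cleanest way is to note that the definition of flatness is invariant under replacing the total space and base by categorically equivalent quasicategories over compatible equivalences, which is exactly the content of the closure property cited in the background. There is no substantial obstacle here — the heavy lifting is done by the geometric realisation equivalence of Lemma \ref{lem:fct-truss-closed-1-pointed-realise} and by Lemma \ref{lem:fct-mesh-closed-labelled-pointed-flat}.
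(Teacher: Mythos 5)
Your proof is correct and follows essentially the same route as the paper: both use the commutative square of geometric realisation equivalences relating $\TrussClosed{1}$ to $\MeshClosed{1}$, the flatness of $\MeshClosed{1}$ from Lemma~\ref{lem:fct-mesh-closed-labelled-pointed-flat}, and the closure of flat inner fibrations under equivalence in the arrow category. Your write-up merely makes explicit the appeal to that closure property, which the paper leaves implicit.
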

\begin{proof}
  The map already is a categorical fibration by Lemma~\ref{lem:fct-truss-closed-1-categorical}.
  By Lemma~\ref{lem:fct-mesh-closed-labelled-pointed-flat} the map
  $\MeshClosed{1} : \EMeshClosed{1} \to \BMeshClosed{1}$ is a flat categorical fibration.
  The geometric realisation functors fit into the diagram
  \[
    \begin{tikzcd}
      {\ETrussClosed{1}} \ar[r, "\simeq"] \ar[d, swap, "\TrussClosed{1}"] &
      {\EMeshClosed{1}} \ar[d, "\MeshClosed{1}"] \\
      {\FinOrd^\op} \ar[r, "\simeq", swap] &
      {\BMeshClosed{1}}
    \end{tikzcd}
  \]
  in which by the horizontal maps are equivalences
  by Lemma~\ref{lem:fct-truss-closed-1-realise} and Lemma~\ref{lem:fct-truss-closed-1-pointed-realise}.
  Therefore the map $\TrussClosed{1} : \ETrussClosed{1} \to \FinOrd^\op$
  must be flat as well.
\end{proof}

\begin{proposition}\label{prop:fct-truss-closed-1-labelled-cat}
  The polynomial functor
  $
    \BTrussClosedL{1}{-} : \sSet \to \sSet
  $
  induced by the universal closed $1$-truss bundle
  $\TrussClosed{1} : \ETrussClosed{1} \to \FinOrd^\op$
  satisfies the following properties:
  \begin{enumerate}
    \item $\BTrussClosedL{1}{\cat{C}} \to \BTrussClosed{1}$ is a categorical fibration when $\cat{C}$ is a quasicategory.
    \item $\BTrussClosedL{1}{-}$ preserves quasicategories.
    \item $\BTrussClosedL{1}{-}$ preserves $1$-categories.
    \item $\BTrussClosedL{1}{-}$ preserves categorical equivalences between quasicategories.
  \end{enumerate}
\end{proposition}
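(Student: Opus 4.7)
The plan is to prove items (1), (2) and (4) uniformly as instances of Proposition~\ref{prop:b-poly-flat-cat}, and to handle item (3) by a separate combinatorial argument using the poset structure of the fibres of $\TrussClosed{1}$.

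For items (1), (2) and (4) I would simply invoke the preceding lemma, which states that $\TrussClosed{1} : \ETrussClosed{1} \to \FinOrd^\op$ is a flat categorical fibration between quasicategories. Applied to this map, Proposition~\ref{prop:b-poly-flat-cat} immediately yields that $\BTrussClosedL{1}{\cat{C}} \to \FinOrd^\op = \BTrussClosed{1}$ is a categorical fibration whenever $\cat{C}$ is a quasicategory, that $\BTrussClosedL{1}{-}$ preserves quasicategories, and that it preserves categorical equivalences between quasicategories. This part of the argument is exactly parallel to Proposition~\ref{prop:fct-mesh-closed-labelled-cat}.

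The real work is in item (3). The plan is to show that when $\cat{C}$ is the nerve of a $1$-category, every inner horn in $\BTrussClosedL{1}{\cat{C}}$ has a \emph{unique} filler. By Lemma~\ref{lem:b-poly-characterise}, a $k$-simplex of $\BTrussClosedL{1}{\cat{C}}$ is equivalent data to a pair $(\alpha, L)$ consisting of $\alpha : \Delta\ord{k} \to \FinOrd^\op$ and a labelling $L : \ETrussClosed{1} \times_{\FinOrd^\op} \Delta\ord{k} \to \cat{C}$, naturally in $\Delta\ord{k}$; a horn corresponds to the restriction of this data along an inner horn inclusion. Unique extension of the $\alpha$-component is immediate because $\FinOrd^\op$ is itself a $1$-category. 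For the $L$-component I would argue as follows: by Lemma~\ref{lem:truss-closed-1-poset} the simplicial set $\ETrussClosed{1} \times_{\FinOrd^\op} \Delta\ord{k}$ is (the nerve of) a poset, and by $\TrussClosed{1}$ being a flat categorical fibration together with Lemma~\ref{lem:b-poly-flat-cat-pullback-equiv} the inclusion
\[
  \ETrussClosed{1} \times_{\FinOrd^\op} \Lambda^i\ord{k}
  \hookrightarrow
  \ETrussClosed{1} \times_{\FinOrd^\op} \Delta\ord{k}
\]
is a categorical equivalence. Since $\Lambda^i\ord{k}$ contains every vertex of $\Delta\ord{k}$, the inclusion is bijective on $0$-simplices, so the induced functor on homotopy categories is essentially surjective, bijective on objects, and fully faithful; hence it is an isomorphism of $1$-categories. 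Using the $\Ho \dashv N$ adjunction and the assumption that $\cat{C} = N(C)$, the restriction map
\[
  \Hom_{\sSet}(\ETrussClosed{1} \times_{\FinOrd^\op} \Delta\ord{k}, \cat{C})
  \longrightarrow
  \Hom_{\sSet}(\ETrussClosed{1} \times_{\FinOrd^\op} \Lambda^i\ord{k}, \cat{C})
\]
is then a bijection, giving unique extension of $L$. Combining both extensions yields the unique filler in $\BTrussClosedL{1}{\cat{C}}$.

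I expect the main obstacle to be the uniqueness of the $L$-extension: existence follows easily from item (2), but to upgrade it to uniqueness I need the inclusion of pullbacks to induce a genuine isomorphism (not merely an equivalence) on homotopy categories. The key ingredient that makes this work is that $\ETrussClosed{1} \times_{\FinOrd^\op} \Delta\ord{k}$ is a poset, so that a categorical equivalence out of a $1$-category that is bijective on objects is automatically an isomorphism. Everything else should be a straightforward invocation of Proposition~\ref{prop:b-poly-flat-cat} and the nerve adjunction.
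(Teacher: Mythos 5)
Your proposal is correct. The paper in fact states this proposition without proof, so there is nothing to compare against directly; for items (1), (2) and (4) your argument is exactly the intended one, namely the combination of the preceding lemma (flatness of $\TrussClosed{1}$) with Proposition~\ref{prop:b-poly-flat-cat}, in parallel with Proposition~\ref{prop:fct-mesh-closed-labelled-cat}. Your argument for item (3) is also sound: the decomposition of a simplex of $\BTrussClosedL{1}{\cat{C}}$ into $(\alpha, L)$ via Lemma~\ref{lem:b-poly-characterise}, unique extension of $\alpha$ because $\FinOrd^\op$ is a $1$-category, and unique extension of $L$ because $\Ho$ carries the categorical equivalence $\ETrussClosed{1} \times_{\FinOrd^\op} \Lambda^i\ord{k} \hookrightarrow \ETrussClosed{1} \times_{\FinOrd^\op} \Delta\ord{k}$ (Lemma~\ref{lem:b-poly-flat-cat-pullback-equiv}) to an equivalence of $1$-categories that is bijective on objects, hence an isomorphism, after which the $\Ho \dashv \Nerve$ adjunction gives the bijection on labellings. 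One cosmetic remark: your closing sentence describes the key step as "a categorical equivalence out of a $1$-category", but the domain $\ETrussClosed{1} \times_{\FinOrd^\op} \Lambda^i\ord{k}$ is only a simplicial subset of a nerve, not itself a nerve; the argument you actually run, through homotopy categories, does not need the poset structure of either side — only bijectivity on vertices and the adjunction — so Lemma~\ref{lem:truss-closed-1-poset} is not strictly required.
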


\begin{construction}
  The geometric realisation maps from
  Construction~\ref{con:fct-truss-closed-1-realise} and
  Construction~\ref{con:fct-truss-closed-1-pointed-realise}
  arrange into a diagram
  \[
    \begin{tikzcd}
      {\ETrussClosed{1}} \ar[r, hook, "\simeq"] \ar[d, "\TrussClosed{1}"'] &
      {\EMeshClosed{1}} \ar[d, "\MeshClosed{1}"] \\
      {\BTrussClosed{1}} \ar[r, hook, "\simeq"'] &
      {\BMeshClosed{1}}
    \end{tikzcd}
  \]
  and are equivalences by
  Lemma~\ref{lem:fct-truss-closed-1-realise} and
  Lemma~\ref{lem:fct-truss-closed-1-pointed-realise}.
  Since the maps are also monomorphisms
  they are deformation retractions.
  Then for any simplicial set $C$
  we obtain by Construction~\ref{con:b-poly-retract}
  a geometric realisation functor for closed labelled $1$-truss bundles
  \[
    \begin{tikzcd}
      {\BTrussClosedL{1}{C}} \ar[r, hook] \ar[d] &
      {\BMeshClosedL{1}{C}} \ar[d] \\
      {\BTrussClosed{1}} \ar[r, hook] &
      {\BMeshClosed{1}}
    \end{tikzcd}
  \]
  When $\cat{C}$ is a quasicategory,
  the geometric realisation map $\BTrussClosedL{1}{\cat{C}} \to \BMeshClosedL{1}{\cat{C}}$
  is a categorical equivalence by Lemma~\ref{lem:b-poly-retract-equiv}.
\end{construction}

We recall from~\S\ref{sec:fct-mesh-closed-pack} that for any quasicategory
$\cat{C}$ there is an equivalence between the quasicategory $\BMeshClosedL{n}{\cat{C}}$,
which classifies closed $n$-meshes with labels in $\cat{C}$, and the $n$-fold application
$(\BMeshClosed{1} \circ \cdots \circ \BMeshClosed{1})(\cat{C})$
of the endofunctor $\BMeshClosedL{1}{-} : \sSet \to \sSet$ to $\cat{C}$.
In the case of closed trusses, we define $\BTrussOpenL{n}{-}$ from $\BTrussOpenL{1}{-}$
in this way.

\begin{construction}
  We write $\BTrussClosedL{n}{-} : \sSet \to \sSet$ for the $n$-fold composite
  \[
    \BTrussClosedL{n}{C} := \underbrace{(\BTrussClosed{1} \circ \cdots \circ \BTrussClosed{1})}_{\textup{$n$ times}}(C)
  \]
  We abbreviate $\BTrussClosed{n} := \BTrussClosedL{n}{\terminal}$ in the unlabelled case.
  Applying Proposition~\ref{prop:fct-truss-closed-1-labelled-cat} inductively,
  the simplicial set $\BTrussClosedL{n}{\cat{C}}$ is a quasicategory for every quasicategory
  of labels $\cat{C}$.
  Similarly, $\BTrussClosedL{n}{\cat{C}}$ is a $1$-category whenever $\cat{C}$ is.
\end{construction}

% there is an equivalence
% between the quas$\BMeshClosedL{n}{\cat{C}}$

\begin{observation}
  Suppose that $A$, $C$ are simplicial sets.
  Using the characterisation of maps into polynomial functors from Lemma~\ref{lem:b-poly-characterise},
  we see that a map of simplicial sets $A \to \BTrussClosedL{n}{C}$ is determined
  by a diagram of simplicial sets
  \[
    \begin{tikzcd}
      {C} &
      {E_n} \ar[r, "F_n"] \ar[l, "\ell"'] &
      {E_{n - 1}} \ar[r] &
      {\cdots} \ar[r] &
      {E_1} \ar[r, "F_1"] & 
      {E_0}
    \end{tikzcd}
  \]
  together with a pullback square of simplicial sets
  \[
    \begin{tikzcd}
      {E_i} \ar[r] \ar[d, "F_i"] \pullbackcorner &
      {\ETrussOpen{1}} \ar[d] \\
      {E_{i - 1}} \ar[r] &
      {\FinOrd^\op}
    \end{tikzcd}
  \]
  for each $1 \leq i \leq n$ that exhibits $F_i$ as a closed $1$-truss bundle.
  We call this data a \defn{closed $n$-truss bundle} on $A$ with labels in $C$.
\end{observation}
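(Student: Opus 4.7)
The plan is to unpack the inductive definition $\BTrussClosedL{n}{C} = \BTrussClosedL{1}{\BTrussClosedL{n-1}{C}}$ and apply Lemma~\ref{lem:b-poly-characterise} one dimension at a time. The base case $n = 1$ is essentially a direct translation of Lemma~\ref{lem:b-poly-characterise}: the polynomial functor $\BTrussClosedL{1}{-}$ is induced by the universal closed $1$-truss bundle $\TrussClosed{1} : \ETrussClosed{1} \to \FinOrd^\op$, so a map $A \to \BTrussClosedL{1}{C}$ corresponds naturally to a pullback square classifying a closed $1$-truss bundle $F_1 : E_1 \to E_0 = A$ over $\FinOrd^\op$ together with a map $\ell : E_1 \to C$.

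For the inductive step, I would apply Lemma~\ref{lem:b-poly-characterise} once more to the outermost application of $\BTrussClosedL{1}{-}$. A map $A \to \BTrussClosedL{n}{C} = \BTrussClosedL{1}{\BTrussClosedL{n-1}{C}}$ then corresponds to a closed $1$-truss bundle $F_1 : E_1 \to E_0 = A$ (with its classifying map $E_0 \to \FinOrd^\op$) together with a ``labelling'' map $\ell_1 : E_1 \to \BTrussClosedL{n-1}{C}$. By the inductive hypothesis applied to $\ell_1$, the map $\ell_1$ is in turn determined by a sequence of closed $1$-truss bundles $F_2 : E_2 \to E_1$, $\ldots$, $F_n : E_n \to E_{n-1}$ together with a labelling map $\ell : E_n \to C$. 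Concatenating with $F_1$ yields the desired diagram.

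For naturality of this correspondence in $A$ and $C$, I would invoke the naturality statement of Lemma~\ref{lem:b-poly-characterise} at each step; the composition of naturality in both variables for each of the $n$ applications of that lemma gives naturality of the overall bijection. There is no real obstacle here beyond bookkeeping: the only thing to check is that the labelling map $\ell_1 : E_1 \to \BTrussClosedL{n-1}{C}$ produced at the outer step is correctly interpreted as classifying data over $E_1$ (rather than over $A$) when passed to the inductive hypothesis, which follows from the fact that in the inductive hypothesis the base simplicial set $A$ is arbitrary.
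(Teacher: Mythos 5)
Your proposal is correct and matches the intended argument: the paper's observation is precisely the iterated application of Lemma~\ref{lem:b-poly-characterise} to the $n$-fold composite defining $\BTrussClosedL{n}{-}$, exactly as you describe. Your remark that the inductive hypothesis applies with $E_1$ as the (arbitrary) base is the only point that needs checking, and you handle it correctly.
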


\begin{construction}
  Let $\cat{C}$ be a quasicategory.
  The geometric realisation functor for closed $1$-trusses
  $\BTrussClosedL{1}{-} \to \BMeshClosedL{1}{-}$
  together with the packing equivalence of Corollary~\ref{cor:fct-mesh-closed-n-pack-iterate}
  induces a geometric realisation functor for closed $n$-trusses
  \[
    \BTrussClosedL{n}{\cat{C}}
    =
    \underbrace{(\BTrussClosed{1} \circ \cdots \circ \BTrussClosed{1})}_{\textup{$n$ times}}(\cat{C})
    \simeq
    \underbrace{(\BMeshClosed{1} \circ \cdots \circ \BMeshClosed{1})}_{\textup{$n$ times}}(\cat{C}) \simeq
    \BMeshClosedL{n}{\cat{C}}
  \]
  For the case $\cat{C} = \terminal$ we denote the geometric realisation functor
  by $\TrussClosedReal{-} : \BTrussClosed{n} \to \BMeshClosed{n}$ and its
  inverse, the truss nerve, by   
  $
    \TrussClosedNerve : \BMeshClosed{n} \to \BTrussClosed{n}
  $.
\end{construction}

\begin{construction}\label{con:fct-truss-grid-closed}
  Via the geometric realisation equivalence for closed trusses and meshes,
  the closed grid mesh functor from Construction~\ref{con:fct-mesh-grid-closed}
  induces an analogous functor for closed trusses:
  \[
    \begin{tikzcd}
      {\FinOrd^{n, \op}} \ar[r, equal] \ar[d, dashed, "\gridTrussClosed^n"'] &
      {\FinOrd^{n, \op}} \ar[d, "\gridMeshClosed^n"] \\
      {\BTrussClosed{n}} \ar[r, "\simeq"'] &
      {\BMeshClosed{n}}
    \end{tikzcd}
  \]
\end{construction}

\subsection{Open Trusses}

We have a duality equivalence between closed and open $1$-meshes
\[
  \begin{tikzcd}
    {\BMeshOpen{1}} \ar[r, "\simeq", dashed] \ar[d, "\reg"'] &
    {(\BMeshClosed{1})^\op} \ar[d, "\sing^\op"] \\
    {\FinOrd} \ar[r] &
    {\FinOrd}
  \end{tikzcd}
\]
using that $\reg$ and $\sing$ are equivalences.
In the case of trusses we want the duality equivalence to be an isomorphism of $1$-categories,
leading us to define open $1$-trusses as follows.

% For closed and open $1$-trusses we want the duality to be an isomorphism of $1$-categories,
% leading us to define the universal open $1$-truss bundle
% $\TrussOpen{1} : \ETrussOpen{1} \to \FinOrd$ as the opposite of
% $\TrussClosed{1} : \ETrussClosed{1} \to \FinOrd^\op$.

\begin{definition}\label{def:fct-truss-open-1}
	The objects of the $1$-category $\ETrussOpen{1}$ are \textit{singular objects} of
	the form $\trussOS{i} \ord{n}$ for natural numbers $0 \leq i < n$ and \textit{regular objects} $\trussOR{i}
		\ord{n}$ for $0 \leq i \leq n$.
	The morphisms of $\ETrussOpen{1}$ are defined as
	\begin{align*}
		\ETrussOpen{1}(\trussOR{i} \ord{n}, \trussOR{j} \ord{m}) & = \{ \alpha : \ord{n} \to \ord{m} \mid \alpha(i) = j \}                       \\
		\ETrussOpen{1}(\trussOS{i} \ord{n}, \trussOS{j} \ord{m}) & = \{ \alpha : \ord{n} \to \ord{m} \mid \alpha(i) \leq j < \alpha(i + 1) \}    \\
		\ETrussOpen{1}(\trussOS{i} \ord{n}, \trussOR{j} \ord{m}) & = \{ \alpha : \ord{n} \to \ord{m} \mid \alpha(i) \leq j \leq \alpha(i + 1) \} \\
		\ETrussOpen{1}(\trussOR{i} \ord{n}, \trussOS{j} \ord{m}) & = \varnothing
	\end{align*}
	and compose as the underlying order-preserving maps. There is a
	canonical forgetful functor $\ETrussOpen{1} \to \FinOrd$ called the
	\textit{universal open $1$-truss bundle} which sends both
	$\trussOS{i}\ord{n}$ and $\trussOR{i}\ord{n}$ to $\ord{n}$.
	An \textit{open $1$-truss bundle} is a map of simplicial sets
  $f : E \to B$
	together with a pullback square
	\[
		\begin{tikzcd}
			E \arrow[d, "f", swap] \ar[r] \pullbackcorner & \ETrussOpen{1} \ar[d] \\
			B \ar[r] & \FinOrd
		\end{tikzcd}
	\]
\end{definition}

\begin{observation}
  There is a unique isomorphism $(-)^\dagger : \ETrussOpen{1} \to (\ETrussClosed{1})^\op$
  over $\FinOrd$ which sends a regular object $\trussOR{i}\ord{n}$ to
  the singular object $\trussCS{i}\ord{n}$ and a singular object
  $\trussOS{i}\ord{n}$ to the regular object $\trussCR{i}\ord{n}$.
  This is the \defn{duality isomorphism} between open and closed $1$-trusses.
\end{observation}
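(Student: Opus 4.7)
The plan is to exhibit the functor $(-)^\dagger$ directly on objects and morphisms, verify that it is well-defined and functorial by a short case check, and then argue uniqueness by pinning down the assignment on objects via the projection to $\FinOrd$ combined with the fibre structure.

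First I would define $(-)^\dagger$ on objects as stated in the observation, and on a morphism $\alpha : X \to Y$ of $\ETrussOpen{1}$ (which by definition is an order-preserving map of the underlying ordinals) I would send it to the same order-preserving map, interpreted as a morphism $X^\dagger \to Y^\dagger$ in $(\ETrussClosed{1})^\op$, i.e. as a morphism $Y^\dagger \to X^\dagger$ in $\ETrussClosed{1}$. The core verification is that the four hom-set formulas in Definition~\ref{def:fct-truss-open-1} match the four hom-set formulas in Definition~\ref{def:truss-closed-1} after swapping regular and singular and reversing the direction. For example, $\ETrussOpen{1}(\trussOS{i}\ord{n}, \trussOR{j}\ord{m})$ and $\ETrussClosed{1}(\trussCR{j}\ord{m}, \trussCS{i}\ord{n})$ are both cut out by the condition $\alpha(i) \leq j \leq \alpha(i+1)$, and the other three cases are equally routine; the two "empty" cases line up so that the functor is well-defined.

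Next I would check functoriality: composition in $\ETrussOpen{1}$ is composition of underlying order-preserving maps, whereas composition in $\ETrussClosed{1}$ is the opposite of composition of underlying maps, so composition in $(\ETrussClosed{1})^\op$ is once again composition of underlying maps. Hence $(-)^\dagger$ strictly preserves composition and identities. Compatibility over $\FinOrd$ is immediate from the object definition, since both $\trussOR{i}\ord{n}$ and $\trussOS{i}\ord{n}$ project to $\ord{n}$, as do $\trussCS{i}\ord{n}$ and $\trussCR{i}\ord{n}$ under the projection $(\ETrussClosed{1})^\op \to (\FinOrd^\op)^\op = \FinOrd$. Bijectivity on objects and on each hom-set then follows tautologically from the matching of the formulas above, giving the isomorphism.

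For uniqueness I would fix the base $\ord{n}$ and note that any functor $F$ over $\FinOrd$ must send the four types of objects to objects over the same ordinal. Working inside the fibre over $\ord{n}$, the morphisms in the fibre of $\ETrussOpen{1}$ form a zigzag of spans $\trussOR{0}\ord{n} \leftarrow \trussOS{0}\ord{n} \rightarrow \trussOR{1}\ord{n} \leftarrow \cdots$ (read off by restricting the hom-set formulas to $\alpha = \id_{\ord{n}}$), while in $(\ETrussClosed{1})^\op$ the fibre is the reverse of the cospan zigzag of Example~\ref{ex:fct-truss-closed-1-fibre-object}, which is again a zigzag of spans with regular and singular swapped. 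A fibre-preserving functor between these two oriented zigzag posets is rigid: the regular objects are precisely the "outer" vertices (those with two outgoing arrows in the fibre, or equivalently none at the boundary positions $0$ and $n$ singular), so $F$ is forced to send $\trussOR{i}\ord{n}$ to $\trussCS{i}\ord{n}$ and $\trussOS{i}\ord{n}$ to $\trussCR{i}\ord{n}$. Once the object assignment is fixed, the morphism assignment is forced since each hom-set of $\ETrussOpen{1}$ is identified (via the projection to $\FinOrd$) with a subset of order-preserving maps $\ord{n} \to \ord{m}$, and $F$ must commute with this identification. The only mildly delicate point is detecting regular versus singular categorically in the fibre; but since the fibre is a finite poset with no non-trivial automorphisms, the enumeration above suffices.
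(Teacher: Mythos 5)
Your overall strategy --- read off the four hom-set formulas from Definitions~\ref{def:fct-truss-open-1} and~\ref{def:truss-closed-1}, check that they match after swapping regular with singular and reversing direction, and note that both composition laws become ``compose the underlying maps'' after passing to the opposite --- is exactly the intended argument; the paper records this as an observation because it is a definition chase. However, your one worked example of the key verification names the wrong hom-set. The image of $\ETrussOpen{1}(\trussOS{i}\ord{n}, \trussOR{j}\ord{m})$ under $(-)^\dagger$ lands in $(\ETrussClosed{1})^\op(\trussCR{i}\ord{n}, \trussCS{j}\ord{m}) = \ETrussClosed{1}(\trussCS{j}\ord{m}, \trussCR{i}\ord{n})$, which is indeed the set of $\alpha$ with $\alpha(i) \leq j \leq \alpha(i+1)$. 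The set you wrote down, $\ETrussClosed{1}(\trussCR{j}\ord{m}, \trussCS{i}\ord{n})$, is empty by the fourth clause of Definition~\ref{def:truss-closed-1}; taken literally, your identification would match a generally nonempty hom-set with $\varnothing$. Since the passage to the opposite category permutes both the indices and the source/target order, this bookkeeping is the entire content of the check: the four cases pair up as (open regular-to-regular) with (closed singular-to-singular), (open singular-to-singular) with (closed regular-to-regular), (open singular-to-regular) with (closed singular-to-regular, in the reversed order), and the two empty cases with each other.

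On uniqueness you prove more than is asked, and the extra argument contains a false step. The fibre of $\ETrussOpen{1} \to \FinOrd$ over $\ord{n}$ is the span zigzag $\trussOR{0}\ord{n} \leftarrow \trussOS{0}\ord{n} \rightarrow \trussOR{1}\ord{n} \leftarrow \cdots \rightarrow \trussOR{n}\ord{n}$; for $n \geq 1$ this poset has a nontrivial reflection automorphism, so it is not true that the fibre ``has no non-trivial automorphisms'', and the fibre alone does not rigidify the object assignment (ruling out the reflection requires morphisms lying over non-identity maps of $\FinOrd$). Your parenthetical also has the roles inverted: in the open fibre the regular objects are the sinks and the singular objects are the sources with two outgoing arrows, and it is only after passing to $(\ETrussClosed{1})^\op$ that the regular objects become the sources. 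None of this is needed, because the statement fixes the object assignment as part of the data; once the objects are fixed, the morphism assignment is forced, since both projections to $\FinOrd$ are injective on every hom-set --- a morphism in either category literally is its underlying order-preserving map --- so any functor over $\FinOrd$ with this object assignment must act as the identity on underlying maps. Your verifications of functoriality and of compatibility with the projections are correct as written.
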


\begin{construction}
  The \defn{compactification functor} for $1$-trusses
  is the inclusion $\ETrussOpen{1} \hookrightarrow \ETrussClosed{1}$ 
  that is defined on objects by
  $\trussOR{i}\ord{n} \mapsto \trussCR{i}\ord{n + 1}$ and $\trussOS{i}\ord{n} \mapsto \trussCS{i + 1}\ord{n + 1}$
  and fits into the diagram
  \[
    \begin{tikzcd}
      {\ETrussOpen{1}} \ar[r, hook] \ar[d] &
      {\ETrussClosed{1}} \ar[d] \\
      {\FinOrd} \ar[r, hook] &
      {\FinOrd^\op}
    \end{tikzcd}
  \]
  A fibre of $\ETrussOpen{1} \to \FinOrd$ over some ordinal is a zigzag of maps
  between regular and singular elements, ending with regular elements on both sides.
  The functor $\ETrussOpen{1} \to \ETrussClosed{1}$ includes these zigzags into the
  middle of the zigzag that is padded with singular elements on the ends.
\end{construction}

\begin{example}
  The compactification functor
  $\ETrussOpen{1} \hookrightarrow \ETrussClosed{1}$ embeds
  the fibre of $\ETrussOpen{1} \to \FinOrd$ over $\ord{1}$
  into the fibre of $\ETrussClosed{1} \to \FinOrd^\op$ over $\ord{2}$ as follows:
  \[
    \begin{tikzcd}
      {} &
      {\trussOR{0} \ord{1}} \ar[d, mapsto] &
      {\trussOS{0} \ord{1}} \ar[d, mapsto] \ar[l] \ar[r] &
      {\trussOR{1} \ord{1}} \ar[d, mapsto] &
      {} \\
      {\trussCS{0} \ord{2}} \ar[r] &
      {\trussCR{0} \ord{2}} &
      {\trussCS{1} \ord{2}} \ar[l] \ar[r] &
      {\trussCR{1} \ord{2}} &
      {\trussCS{2} \ord{2}} \ar[l]
    \end{tikzcd}
  \]
\end{example}

% {\color{Orange} TODO: Geometric realisation of open meshes}

% \[
%   \begin{tikzcd}
%     {\EMeshOpen{1}} \ar[r] \ar[d] &
%     {\EMeshClosedCB{1}} \ar[r, hook] \ar[d] \pullbackcorner  &
%     {\EMeshClosed{1}} \ar[d] \\
%     {\BMeshOpen{1}} \ar[r] &
%     {\BMeshClosedCB{1}} \ar[r, hook] &
%     {\BMeshClosed{1}}
%   \end{tikzcd}
% \]

We know that the map $\reg : \BMeshOpen{1} \to \FinOrd$
is a trivial fibration and therefore admits a homotopy inverse
which geometrically realises a finite non-empty ordinal $\ord{i}$ as an
open $n$-truss with $i + 1$ regular strata.
Just as with closed $1$-trusses, we can find an explicit construction
for such a geometric realisation functor that extends also to a functor
$\ETrussOpen{1} \to \EMeshOpen{1}$ of pointed trusses and meshes.

However, in contrast to closed $1$-trusses, the geometric realisation
of an open $1$-truss bundle $f : E \to B$ is \textbf{not}
$\StratReal{f} : \StratReal{E} \to \StratReal{B}$ regardless of
the framing we might put on $\StratReal{f}$.
For any $b \in B$ the fibre $f^{-1}(b)$ is necessarily finite,
and therefore its geometric realisation as a topological space $\TopReal{f^{-1}(b)}$
is compact. The underlying space of an open $1$-mesh is $\R$,
which is not compact, and so $\StratReal{f}$ can not be an open $1$-mesh bundle.
Non-compact stratified spaces in general do not have finite triangulations, but they can have finite
relative triangulations, i.e.\ be presented as the complement
$\StratReal{B} \setminus \StratReal{B'}$ where $B' \subseteq B$ is a downwards closed subposet.
For the geometric realisation of open $1$-trusses, we make us of this idea as follows: We first compactify the open $1$-truss to obtain a closed $1$-truss,
which we can then geometrically realise to a closed $1$-mesh. We can then
remove the first and last singular strata in each fibre, resulting in
the open $1$-mesh we need.

% \begin{construction}\label{con:fct-truss-open-1-realise}
%   Let $\FinOrd^\op \to \BMeshClosed{1}$ be the geometric realisation
%   functor from Construction~\ref{con:fct-truss-closed-1-realise}.
%   We can then construct a geometric realisation functor
%   $\FinOrd \to \BMeshOpen{1}$ which fits into the diagram:
%   \[
%     \begin{tikzcd}[row sep = large]
%       {\FinOrd} \ar[r, "\cong"] \ar[d, "\simeq"', dashed] &
%       {\StrictInt^\op} \ar[r, hook] \ar[d, "\simeq"{description}] &
%       {\FinOrd^\op} \ar[d, "\simeq"] \\
%       {\BMeshOpen{1}} \ar[r, "\simeq"] \ar[d, "\reg"'] &
%       {\BMeshClosedCB{1}} \ar[r, hook] \ar[d, "\simeq"{description}] \pullbackcorner &
%       {\BMeshClosed{1}} \ar[d, "\sing"] \\
%       {\FinOrd} \ar[r, "\cong"'] &
%       {\StrictInt^\op} \ar[r, hook] &
%       {\FinOrd^\op}
%     \end{tikzcd}
%   \]
% \end{construction}
\begin{construction}
  We can construct geometric realisation functors for open $1$-trusses via
  compactification and the geometric realisation functors of closed $1$-trusses
  from Construction~\ref{con:fct-truss-closed-1-realise} and
  Construction~\ref{con:fct-truss-closed-1-pointed-realise}.
  In particular we have a diagram
  \[
    \begin{tikzcd}[column sep = {3.5em, between origins}, row sep = {3.5em, between origins}]
    	& {\ETrussClosed{1}} && {\EMeshClosed{1}} && {\ETrussClosed{1}} \\
    	{\ETrussOpen{1}} && {\EMeshOpen{1}} && {\ETrussOpen{1}} \\
    	& {\FinOrd^\op} && {\BMeshClosed{1}} && {\FinOrd^\op} \\
    	{\FinOrd} && {\BMeshOpen{1}} && {\FinOrd}
    	\arrow[from=2-1, to=4-1]
    	\arrow[dashed, from=4-1, to=4-3]
    	\arrow[from=4-3, to=4-5]
    	\arrow[hook, from=2-1, to=1-2]
    	\arrow[hook, from=2-3, to=1-4]
    	\arrow[hook, from=2-5, to=1-6]
    	\arrow[hook, from=4-1, to=3-2]
    	\arrow[hook, from=4-3, to=3-4]
    	\arrow[hook, from=4-5, to=3-6]
    	\arrow[from=1-6, to=3-6]
    	\arrow[from=1-4, to=3-4]
    	\arrow[from=1-2, to=3-2]
    	\arrow[from=3-2, to=3-4]
    	\arrow[from=3-4, to=3-6]
    	\arrow[from=1-4, to=1-6]
    	\arrow[from=1-2, to=1-4]
    	\arrow[crossing over, from=2-5, to=4-5]
    	\arrow[crossing over, from=2-3, to=4-3]
    	\arrow[crossing over, dashed, from=2-1, to=2-3]
    	\arrow[crossing over, dashed, from=2-3, to=2-5]
    \end{tikzcd}
  \]
  in which all rows consist of equivalences and compose to the identity.
  Concretely, the functor $\FinOrd \to \BMeshOpen{1}$ is the section
  of $\reg : \BMeshOpen{1} \to \FinOrd$ which sends $\ord{i} \in \FinOrd$
  to the geometric realisation of $\ord{i + 1} \in \FinOrd^\op$
  as a closed $1$-truss. It then removes the first and last singular
  strata of the resulting closed $1$-mesh and rescales to $\R$.  
  The functor $\ETrussOpen{1} \to \EMeshOpen{1}$ proceeds similarly,
  keeping track of the section.
\end{construction}

\begin{observation}
  At this point we can again proceed analogously to our discussion
  of closed $n$-trusses from~\S\ref{sec:fct-truss-closed}.
  We quickly state the corresponding results.
  The universal open $1$-truss bundle
  $\TrussOpen{1} : \ETrussOpen{1} \to \FinOrd$
  is a flat categorical fibration since
  it is equivalent to the flat categorical fibration $\MeshOpen{1} : \EMeshOpen{1} \to \BMeshOpen{1}$.
  Therefore, the induced polynomial functor $\BTrussOpenL{1}{-} : \sSet \to \sSet$ preserves quasicategories, $1$-categories, and categorical equivalences between quasicategories.
  Moreover, via geometric realisation we obtain a natural equivalence
  $\BTrussOpenL{1}{\cat{C}} \simeq \BMeshOpenL{1}{\cat{C}}$
  for any quasicategory $\cat{C}$.
  We can then define $\BTrussOpenL{n}{C}$ for any $n \geq 0$ and simplicial set
  $C$ by iterating the functor $\BTrussOpenL{1}{-}$:
  \[
    \BTrussOpenL{n}{C} := \underbrace{(\BTrussOpen{1} \circ \cdots \circ \BTrussOpen{1})}_{\textup{$n$ times}}(C)
  \]
  In particular we then have a $1$-category $\BTrussOpen{n} := \BTrussOpenL{n}{*}$ which classifies unlabelled open $n$-truss bundles.
  Using the packing construction for open $n$-meshes, 
  we have a natural equivalence $\BTrussOpenL{n}{\cat{C}} \simeq \BMeshOpenL{n}{\cat{C}}$ for every quasicategory $\cat{C}$.  
\end{observation}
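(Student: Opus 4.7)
My plan is to transport the closed-truss development of \S\ref{sec:fct-truss-closed} to the open setting using the commuting square
\[
\begin{tikzcd}
\ETrussOpen{1} \ar[r, hook, "\simeq"] \ar[d, "\TrussOpen{1}"'] & \EMeshOpen{1} \ar[d, "\MeshOpen{1}"] \\
\FinOrd \ar[r, hook, "\simeq"'] & \BMeshOpen{1}
\end{tikzcd}
\]
just constructed, whose horizontal maps are monic categorical equivalences between quasicategories and therefore admit deformation retractions.

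First I would establish that $\TrussOpen{1}$ is a flat categorical fibration. Being a functor into the nerve of a $1$-category, $\TrussOpen{1}$ is automatically an inner fibration by Lemma~\ref{lem:b-inner-fib-1-cat}, and it is an isofibration since $\FinOrd$ is skeletal, so only flatness needs argument. Since $\MeshOpen{1}$ is a flat categorical fibration (the open counterpart of Lemma~\ref{lem:fct-mesh-closed-labelled-pointed-flat}, established in \S\ref{sec:fct-mesh-open-1}) and flatness is invariant under equivalence in the arrow quasicategory, the square above transports flatness to $\TrussOpen{1}$ exactly as in the proof for $\TrussClosed{1}$.

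From flatness, Proposition~\ref{prop:b-poly-flat-cat} yields that $\BTrussOpenL{1}{-}$ preserves quasicategories and categorical equivalences between quasicategories. Preservation of $1$-categories follows since the defining polynomial $\Sigma_{\FinOrd} \Pi_{\TrussOpen{1}} (\ETrussOpen{1} \times -)$ is assembled from functors between nerves of $1$-categories, each of which preserves the unique-inner-horn-filler property characterising $1$-categories among quasicategories. For the level-$1$ equivalence $\BTrussOpenL{1}{\cat{C}} \simeq \BMeshOpenL{1}{\cat{C}}$, I would combine the square above with the chosen deformation retractions to form a retract diagram as in Construction~\ref{con:b-poly-retract}, and apply Lemma~\ref{lem:b-poly-retract-equiv}: flatness of both vertical maps has just been verified, the horizontal inclusions are categorical equivalences between quasicategories, and the row composites are the identity by construction of the retractions.

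For the iterated statement I would induct on $n$, using preservation of quasicategories and $1$-categories to ensure that each $\BTrussOpenL{n}{\cat{C}}$ is a well-defined quasicategory (or $1$-category when $\cat{C}$ is) and that the iterated pullback description characterising open $n$-truss bundles propagates from the level-$1$ case. The equivalence $\BTrussOpenL{n}{\cat{C}} \simeq \BMeshOpenL{n}{\cat{C}}$ then follows by composing $n$ iterates of the level-$1$ equivalence with the open packing equivalence stated in \S\ref{sec:fct-mesh-open-1}. The main obstacle is the explicit construction of the geometric realisation maps $\ETrussOpen{1} \to \EMeshOpen{1}$ and $\FinOrd \to \BMeshOpen{1}$ underlying the square above, since open mesh bundles are non-compact; the cleanest route factors through the compactification inclusions $\ETrussOpen{1} \hookrightarrow \ETrussClosed{1}$ and $\BMeshOpen{1} \hookrightarrow \BMeshClosedCB{1}$, reusing Constructions~\ref{con:fct-truss-closed-1-realise} and~\ref{con:fct-truss-closed-1-pointed-realise} and then excising the boundary singular strata.
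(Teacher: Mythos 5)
Your proposal is correct and follows essentially the same route as the paper: flatness of $\TrussOpen{1}$ is transported from $\MeshOpen{1}$ along the geometric realisation square (itself built by compactification from the closed-truss realisation constructions), Proposition~\ref{prop:b-poly-flat-cat} and the retract argument of Construction~\ref{con:b-poly-retract} give the level-$1$ statements, and the level-$n$ equivalence is assembled by iterating and invoking the open packing equivalence. The paper treats all of this as an observation transferred verbatim from the closed case, so no further detail is needed.
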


\begin{construction}
  The universal closed $1$-truss bundle $\TrussClosed{1}$ is (isomorphic to) the categorical opposite of the universal open $1$-truss bundle $\TrussOpen{1}$:
  \[
    \begin{tikzcd}
      {\ETrussOpen{1}} \ar[r, "\cong"] \ar[d, "\TrussOpen{1}"'] &
      {\ETrussClosed{1, \op}} \ar[d, "\TrussClosed{1, \op}"] \\
      {\FinOrd} \ar[r, "\cong"'] &
      {\FinOrd}
    \end{tikzcd}
  \]
  We therefore have a natural isomorphism between the induced polynomial functors
  \[ \BTrussOpenL{1}{C^\op} \cong \BTrussClosedL{1}{C}^\op \]
  for all simplicial sets of labels $C$. This induces a natural isomorphism
  \[
    \BTrussOpenL{n}{C} =
    \underbrace{(\BTrussOpen{1} \circ \cdots \circ \BTrussOpen{1})}_{\textup{$n$ times}}(C^\op) \cong
    \underbrace{(\BTrussClosed{1} \circ \cdots \circ \BTrussClosed{1})}_{\textup{$n$ times}}(C)^\op =
    \BTrussClosedL{n}{C}^\op
  \]
  for all $n \geq 0$ and simplicial sets $C$.
  Together with the geometric realisation equivalences for open and closed
  trusses, we obtain a duality equivalence between open and closed $n$-meshes
  with labels in any quasicategory $\cat{C}$:
  \[
    \begin{tikzcd}
      {\BTrussOpenL{n}{\cat{C}^\op}} \ar[r, "\cong"] \ar[d, "\simeq"'] &
      {\BTrussClosedL{n}{\cat{C}}^\op} \ar[d, "\simeq"] \\
      {\BMeshOpenL{n}{\cat{C}^\op}} \ar[r, dashed, "\simeq"'] &
      {\BMeshClosedL{n}{\cat{C}}^\op}
    \end{tikzcd}
  \]
\end{construction}

\begin{construction}\label{con:fct-truss-grid-open}
  Via the geometric realisation equivalence for open trusses and meshes,
  the open grid mesh functor from Construction~\ref{con:fct-mesh-grid-open}
  induces an analogous functor for open trusses:
  \[
    \begin{tikzcd}
      {\FinOrd^{n}} \ar[r, equal] \ar[d, hook, dashed, "\gridTrussOpen^n"'] &
      {\FinOrd^{n}} \ar[d, "\gridMeshOpen^n", hook] \\
      {\BTrussOpen{n}} \ar[r, "\simeq"'] &
      {\BMeshOpen{n}}
    \end{tikzcd}
  \]
  Moreover the grid functors are compatible with duality:
  \[
    \begin{tikzcd}[row sep = large]
      {\FinOrd^{n}} \ar[r, equal] \ar[d, hook, "\gridMeshOpen^n"'] &
      {\FinOrd^{n}} \ar[d, "\gridTrussOpen^n"{description}, hook] \ar[r, equal] &
      {\FinOrd^{n}} \ar[d, "{\gridTrussClosed^{n, \op}}"{description}, hook] \ar[r, equal] &
      {\FinOrd^{n}} \ar[d, "{\gridMeshClosed^{n, \op}}", hook] \\
      {\BMeshOpen{n}} \ar[r, "\simeq"'] &
      {\BTrussOpen{n}} \ar[r, "\simeq"'] &
      {\BTrussClosed{n, \op}} \ar[r, "\simeq"'] &
      {\BMeshClosed{n, \op}} 
    \end{tikzcd}
  \]
\end{construction}

% {\color{Orange} TODO: 
% Make an entire section out of this the grids.
% Define four versions $\gridTrussOpen, \gridTrussClosed, \gridMeshOpen, \gridMeshClosed$
% and describe the functors explicitly.
% }

% \begin{construction}
%   Let $X$ be a simplicial set.
%   Consider the diagram
%   \[
%     \begin{tikzcd}
%       {X} &
%       {\ETrussOpen{1} \times X} \ar[r] \ar[d] \pullbackcorner \ar[l] &
%       {\ETrussOpen{1}} \ar[d] \\
%       {} &
%       {\FinOrd \times X} \ar[r] &
%       {\FinOrd}
%     \end{tikzcd}
%   \]
%   in which the horizontal maps are the product projections.
%   This diagram then determines a map
%   $\FinOrd \times X \hookrightarrow \BTrussOpenL{1}{X}$
%   that is natural in $X$.
%   By iteratively applying this map, we obtain a map
%   $\FinOrd^n \times X \hookrightarrow \BTrussOpenL{n}{X}$
%   for any $n \geq 0$.
%   Letting $X = \terminal$ then induces an inclusion map
%   $\grid : \FinOrd^n \hookrightarrow \BTrussOpen{n}$
%   which is fully faithful as a functor of $1$-categories.
% \end{construction}

\section{Refinements}\label{sec:fct-ref}

Open and closed n-meshes are both n-framed stratified spaces and therefore objects of $\FrStrat^n$.
Here we study the interaction of meshes with refinement maps between n-framed stratified spaces.
In~\S\ref{sec:fct-ref-closed} we take a framed refinement map $\varphi : \strat{X} \to \strat{Y}$
of closed $n$-meshes and view it as a map $\strat{X} \pto \strat{Y}$ in $\BMeshClosed{n}$, called a degeneracy bordism. By taking compactifications, we get an analogous theory of
degeneracy bordisms of open $n$-meshes in~\S\ref{sec:fct-ref-open}.
In~\S\ref{sec:fct-ref-coarsest} we explore how to find the coarsest mesh that refines a tame stratified space,
which we then use in~\S\ref{sec:fct-ref-nf} to define normal forms of meshes.

\subsection{Degeneracy Bordisms of Closed Meshes}\label{sec:fct-ref-closed}

\begin{example}
  Suppose we have a $1$-framed refinement map between closed $1$-meshes
  \[
    \begin{tikzpicture}[scale = 0.5, rotate = -90, baseline=(current bounding box.center)]
      \node[] at (0, -1) {$\strat{X}_0$};
      \node[] at (2, -1) {$\strat{X}_1$};
      \draw[mesh-stratum] (2, 0) -- (2, 6);
      \draw[mesh-stratum] (0, 0) -- (0, 6);
      \node[mesh-vertex] at (0, 0) {};
      \node[mesh-vertex] at (0, 1) {};
      \node[mesh-vertex] at (0, 2) {};
      \node[mesh-vertex] at (0, 3) {};
      \node[mesh-vertex] at (0, 4) {};
      \node[mesh-vertex] at (0, 5) {};
      \node[mesh-vertex] at (0, 6) {};
      \node[mesh-vertex] at (2, 0) {};
      \node[mesh-vertex] at (2, 2) {};
      \node[mesh-vertex] at (2, 4) {};
      \node[mesh-vertex] at (2, 6) {};
      \draw[->>] (0.5, 3) -- (1.5, 3) node [midway, left] {$\varphi$};
    \end{tikzpicture}
  \]
  Then the projection map $\CylO(\varphi) \to \DeltaStrat{1}$ from the open
  mapping cylinder of $\varphi$ (see~\S\ref{sec:b-geo-ocyl}) can be equipped with a $1$-framing so
  that it becomes the closed $1$-mesh bundle  
  \[
    \begin{tikzpicture}[scale = 0.5, rotate = -90, baseline=(current bounding box.center)]
      \fill[mesh-background] (0, 0) -- (0, 6) -- (2, 6) -- (2, 0) -- cycle;
      \draw[mesh-stratum] (0, 0) -- (0, 6);
      \node[mesh-vertex] at (0, 0) {};
      \node[mesh-vertex] at (0, 1) {};
      \node[mesh-vertex] at (0, 2) {};
      \node[mesh-vertex] at (0, 3) {};
      \node[mesh-vertex] at (0, 4) {};
      \node[mesh-vertex] at (0, 5) {};
      \node[mesh-vertex] at (0, 6) {};
      \draw[mesh-stratum] (0, 0) -- (2, 0);
      \draw[mesh-stratum] (0, 2) -- (2, 2);
      \draw[mesh-stratum] (0, 4) -- (2, 4);
      \draw[mesh-stratum] (0, 6) -- (2, 6);
    \end{tikzpicture}
    \quad
    \longrightarrow
    \quad
    \begin{tikzpicture}[scale = 0.5, baseline=(current bounding box.center)]
      \draw[mesh-stratum] (0, 0) -- (0, 2);
      \node[mesh-vertex] at (0, 2) {};
    \end{tikzpicture}
  \]
  This closed $1$-mesh bundle in turn defines a bordism of closed $1$-meshes
  $\degMeshClosed(\varphi) : \strat{X} \pto \strat{Y}$ which we call the
  degeneracy bordism induced by $\varphi$.
\end{example}

Via the degeneracy bordisms arising from open mapping cylinders,
we can represent $n$-framed refinement maps between closed $n$-meshes as maps
within the $\infty$-category $\BMeshClosed{n}$. 
To make this work in general, we first need to equip open mapping cylinders of $n$-framed stratified
bundles with an induced $n$-framing.

\begin{construction}\label{con:fct-ref-ocyl-framing}
  Suppose that we have a sequence of $n$-framed refinement maps
  \[
    \begin{tikzcd}
      {\strat{X}_0} \ar[r, "\varphi_1"] \ar[d, "f_0"] &
      {\strat{X}_1} \ar[r] \ar[d, "\xi_1"] &
      {\cdots} \ar[r] &
      {\strat{X}_{k - 1}} \ar[r, "\varphi_k"] \ar[d, "f_{k - 1}"]  &
      {\strat{X}_k} \ar[d, "f_k"] \\
      {\strat{B}_0} \ar[r, "\psi_1"'] &
      {\strat{B}_1} \ar[r] &
      {\cdots} \ar[r] &
      {\strat{B}_{k - 1}} \ar[r, "\psi_k"'] &
      {\strat{B}_k}
    \end{tikzcd}
  \]
  between $n$-framed stratified bundles and let
  $F : \CylO^k(\strat{X}_\bullet) \to \CylO^k(\strat{B}_\bullet)$
  be the induced map between the open mapping cylinders.
  Then we can equip $F$ with an $n$-framing, defined for $x \in \strat{X}_0$
  and $b \in \DeltaStrat{k}$ by linear interpolation
  \[
    \framing(F)(x, b) := \sum_{i \in \ord{k}} b_i (\framing(f_i) \circ \varphi_i \circ \cdots \circ \varphi_1)(x)
  \]
  This is well-defined because each $\varphi_i$ is a refinement map and therefore
  the underlying map of topological spaces $\unstrat(\varphi_i)$ is an isomorphism. 
\end{construction}

\begin{proposition}\label{prop:fct-ref-closed-ocyl-mesh}
  Suppose that we have a sequence of $n$-framed refinement maps
  \[
    \begin{tikzcd}
      {\strat{X}_0} \ar[r, "\varphi_1"] \ar[d, "\xi_0"] &
      {\strat{X}_1} \ar[r] \ar[d, "\xi_1"] &
      {\cdots} \ar[r] &
      {\strat{X}_{k - 1}} \ar[r, "\varphi_k"] \ar[d, "\xi_{k - 1}"]  &
      {\strat{X}_k} \ar[d, "\xi_k"] \\
      {\strat{B}_0} \ar[r, "\psi_1"'] &
      {\strat{B}_1} \ar[r] &
      {\cdots} \ar[r] &
      {\strat{B}_{k - 1}} \ar[r, "\psi_k"'] &
      {\strat{B}_k}
    \end{tikzcd}
  \]
  between closed $n$-mesh bundles.
  Then the map $\Xi : \CylO^k(\strat{X}_\bullet) \to \CylO^k(\strat{B}_\bullet)$
  between the $k$-fold open mapping cylinders, equipped with the $n$-framing
  from Construction~\ref{con:fct-ref-ocyl-framing}, is a closed $n$-mesh bundle.
\end{proposition}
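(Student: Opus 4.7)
My plan is to prove the proposition by induction on $n$, with the base case $n = 0$ being trivial (all bundles are identities on $\strat{B}_\bullet$, and the refinement maps must be identities on underlying spaces, so $\Xi$ is the identity closed $0$-mesh bundle). For the inductive step, I will use Observation~\ref{obs:fct-mesh-closed-n-composite} to decompose each $\xi_i$ canonically as $\zeta_i \circ \chi_i$ where $\chi_i : \strat{X}_i \to \strat{Y}_i$ is a closed $1$-mesh bundle and $\zeta_i : \strat{Y}_i \to \strat{B}_i$ is a closed $(n-1)$-mesh bundle; here $\strat{Y}_i$ is the quotient of $\strat{X}_i$ identifying points whose last $n - 1$ framing coordinates coincide. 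Because $\varphi_i$ is a map of $n$-framed bundles it preserves the equivalence relation cutting out $\strat{Y}_i$, so it descends uniquely to a refinement map $\varphi_i^Y : \strat{Y}_{i-1} \to \strat{Y}_i$, compatible with $\psi_i$ and carrying the residual $(n-1)$-framing. Since $\CylO^k(-)$ is functorial and commutes with composition of layers in the base direction, $\Xi$ factors as
\[
  \CylO^k(\strat{X}_\bullet) \longrightarrow \CylO^k(\strat{Y}_\bullet) \longrightarrow \CylO^k(\strat{B}_\bullet),
\]
the second factor being a closed $(n-1)$-mesh bundle by the induction hypothesis and the first being a closed $1$-mesh bundle by the $n = 1$ case below. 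Composing these under the composite framing of Construction~\ref{con:fct-ref-ocyl-framing} then yields a closed $n$-mesh bundle as required.

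For the base case $n = 1$, I will apply Lemma~\ref{lem:fct-mesh-closed-1-recognise-fibration}, which reduces the claim to three checks on the $1$-framed stratified bundle $\Xi$: that it is a stratified fibre bundle, that every fibre is a closed $1$-mesh, and that $\Exit(\Xi)$ is an inner fibration. The third item is immediate from Lemma~\ref{lem:b-geo-ocyl-isofibration}, since each $\Exit(\xi_i)$ is a categorical fibration by Lemma~\ref{lem:fct-mesh-closed-1-bundle-cat-fib}. For the fibre analysis, a point of $\CylO^k(\strat{B}_\bullet)$ has the form $[b, t]$ with $b \in \strat{B}_j$ and $t \in \DeltaStrat{k}_{\geq j}$; chasing through the quotient defining $\CylO^k(\strat{X}_\bullet)$ identifies the fibre of $\Xi$ over $[b, t]$ with the fibre of $\xi_j$ over $b$, which is a closed $1$-mesh, and shows that the framing from Construction~\ref{con:fct-ref-ocyl-framing} restricts on this fibre to $\framing(\xi_j)$ precomposed with the relevant $\varphi$'s (well-defined because each $\varphi_i$ is an underlying-space homeomorphism).

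The stratified fibre bundle property is the main obstacle and requires opening a point $[b, t]$ of $\CylO^k(\strat{B}_\bullet)$ and producing a local trivialisation. Strata of $\CylO^k(\strat{B}_\bullet)$ correspond to pairs consisting of a stratum of $\strat{B}_j$ and an open stratum of $\DeltaStrat{k}$ contained in $\DeltaStrat{k}_{\geq j}$, and over such a neighbourhood we can combine a local trivialisation of $\xi_j$ around $b$ with the product structure on $\DeltaStrat{k}_{\geq j}$; the refinement maps then allow us to spread this trivialisation across the adjacent cylinders because each $\varphi_i$ is an underlying homeomorphism commuting strictly with the projections. Tameness and the closed embedding condition on the framing follow from the corresponding properties of the $\xi_i$ together with the linear interpolation formula on the compact simplex $\DeltaTop{k}$. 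Once the $n = 1$ case is established, the induction closes, and the composite framing on $\Xi$ agrees with the one obtained by stacking the framings of the two factors in the decomposition, completing the proof.
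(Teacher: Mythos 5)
Your proposal follows essentially the same route as the paper: induct on $n$, split each $\xi_i$ into a closed $1$-mesh bundle sitting over a closed $(n-1)$-mesh bundle, apply $\CylO^k(-)$ layerwise, handle the bottom layer by the induction hypothesis and the top layer via Lemma~\ref{lem:b-geo-ocyl-isofibration} together with Lemma~\ref{lem:fct-mesh-closed-1-recognise-fibration}; your explicit fibre and local-trivialisation checks are details the paper leaves implicit. One small correction: on the fibre over a point $[b,t]$ with $t$ in the $j$th stratum of $\DeltaStrat{k}$, the framing of Construction~\ref{con:fct-ref-ocyl-framing} is the barycentric combination $\sum_{i \leq j} t_i\,(\framing(\xi_i)\circ\varphi_i\circ\cdots\circ\varphi_1)$ rather than the $j$th term alone, although this combination still exhibits the fibre as a closed $1$-mesh since the refinement maps preserve the framed structure.
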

\begin{proof}
  When $n = 0$ the claim is trivial, so suppose that $n > 0$.
  By writing the closed $n$-mesh bundles as composites
  of closed $1$-mesh bundles followed by closed $(n - 1)$-mesh bundles, 
  we obtain a diagram
  \[
    \begin{tikzcd}
      {\strat{X}_0} \ar[r, "\varphi_1"] \ar[d, "\chi_0"] &
      {\strat{X}_1} \ar[r] \ar[d, "\chi_1"] &
      {\cdots} \ar[r] &
      {\strat{X}_{k - 1}} \ar[r, "\varphi_k"] \ar[d, "\chi_{k - 1}"]  &
      {\strat{X}_k} \ar[d, "\chi_k"] \\
      {\strat{Y}_0} \ar[r, "\rho_1"] \ar[d, "\zeta_0"] &
      {\strat{Y}_1} \ar[r] \ar[d, "\zeta_1"] &
      {\cdots} \ar[r] &
      {\strat{Y}_{k - 1}} \ar[r, "\rho_k"] \ar[d, "\zeta_{k - 1}"]  &
      {\strat{Y}_k} \ar[d, "\zeta_k"] \\
      {\strat{B}_0} \ar[r, "\psi_1"'] &
      {\strat{B}_1} \ar[r] &
      {\cdots} \ar[r] &
      {\strat{B}_{k - 1}} \ar[r, "\psi_k"'] &
      {\strat{B}_k}
    \end{tikzcd}
  \]
  By induction, the map between open $k$-fold mapping cylinders
  $G : \CylO^k(\strat{Y}_\bullet) \to \CylO^k(\strat{B}_\bullet)$
  induced by the lower half of this diagram
  is a closed $(n - 1)$-mesh bundle when equipped with the framing
  from Construction~\ref{con:fct-ref-ocyl-framing}.
  Let $F : \CylO^k(\strat{X}_\bullet) \to \CylO^k(\strat{Y}_\bullet)$
  be the map between open $k$-fold mapping cylinders induced by the
  upper half of the diagram and equip $F$ with the $1$-framing 
  from Construction~\ref{con:fct-ref-ocyl-framing}.
  Then the $1$-framing map $\framing(F)$ is closed, and by Lemma~\ref{lem:b-geo-ocyl-isofibration}
  the induced map $\Exit(F)$ is an isofibration.
  Therefore, by Lemma~\ref{lem:fct-mesh-closed-1-recognise-fibration}
  the $1$-framed stratified bundle $F$ is a closed $1$-mesh bundle.
  Therefore, the composite $G \circ F : \CylO^k(\strat{X}_\bullet) \to \CylO^k(\strat{B}_\bullet)$
  is a closed $n$-mesh bundle.
\end{proof}

\begin{definition}
  Let $\varphi: \strat{X} \to \strat{Y}$ be an $n$-framed refinement map 
  between closed $n$-meshes. Then the \defn{degeneracy bordism induced by
  $\varphi$} is the bordism of closed $n$-meshes
  $\degMeshClosed(\varphi) : \strat{X} \pto \strat{Y}$
  represented by the closed $n$-mesh bundle $\CylO(\varphi) \to \DeltaStrat{1}$.
\end{definition}

\begin{definition}
  Let $f : \strat{M} \pto \strat{N}$ be a bordism of closed $n$-meshes
  represented by a closed $n$-mesh bundle $\Xi : \strat{E} \to \DeltaStrat{1}$.
  Then $f$ is a \defn{degeneracy bordism} if $\stratPos{\Xi} : \stratPos{\strat{E}} \to \ord{1}$
  is the cocartesian fibration associated to a surjective map of posets
  $\stratPos{\strat{X}} \to \stratPos{\strat{Y}}$.
\end{definition}

\begin{proposition}\label{prop:fct-ref-closed-ocyl-bordism}
  Let $f : \strat{X} \pto \strat{Y}$ be a bordism of closed $n$-meshes.
  Then $f$ is a degeneracy bordism if and only if it is equivalent to
  $\degMeshClosed(\varphi)$ for some
  $n$-framed refinement map
  $\varphi : \strat{X} \to \strat{Y}$.
  In that case we have that $\stratPos{\varphi} : \stratPos{\strat{M}} \to \stratPos{\strat{N}}$ is the surjective map of posets associated to the degeneracy bordism $f$.
\end{proposition}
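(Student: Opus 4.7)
\begin{para}
My plan is to handle the two directions separately. For the ``if'' direction, given a framed refinement $\varphi : \strat{X} \to \strat{Y}$, Proposition~\ref{prop:fct-ref-closed-ocyl-mesh} already establishes that $\CylO(\varphi) \to \DeltaStrat{1}$ is a closed $n$-mesh bundle, so it remains only to identify the stratum poset. Unwinding Construction~\ref{con:b-geo-ocyl}, the identification $(x, b) \sim (\varphi(x), b)$ for $b \in (0, 1]$ makes the preimage of the open stratum $(0, 1] \subset \DeltaStrat{1}$ isomorphic to $\strat{Y} \times (0, 1]$, with strata indexed by $\stratPos{\strat{Y}}$, while the preimage of $\{0\}$ is $\strat{X}$ with strata indexed by $\stratPos{\strat{X}}$. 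The closure relation forced by the limit as $t \to 0$ places $\strat{X}_p \times \{0\}$ in the boundary of $\strat{Y}_{\varphi(p)} \times (0, 1]$, reproducing the unique cocartesian lift of $0 \to 1$ in $\ord{1}$ starting at $p$; hence $\stratPos{\CylO(\varphi)}$ is the Grothendieck construction of $\stratPos{\varphi}$, which is surjective since $\varphi$ is a refinement.
\end{para}

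\begin{para}
For the ``only if'' direction, suppose $\xi : \strat{E} \to \DeltaStrat{1}$ represents a degeneracy bordism with associated surjection $\pi : \stratPos{\strat{X}} \to \stratPos{\strat{Y}}$. The fibre bundle condition over the connected open stratum $(0, 1] \subset \DeltaStrat{1}$ trivialises $\xi|_{(0, 1]}$ as $\strat{Y} \times (0, 1]$. I would use the closed embedding property of the framing $\framing(\xi) : \unstrat(\strat{E}) \hookrightarrow \R^n \times \DeltaTop{1}$ together with the cocartesian structure of $\stratPos{\strat{E}}$---which pins down that each stratum $\strat{Y}_q \times (0, 1]$ closes at $t = 0$ onto the union $\bigcup_{p \in \pi^{-1}(q)} \strat{X}_p$---to extend this trivialisation continuously to $t = 0$, yielding a global homeomorphism $\unstrat(\strat{E}) \cong \unstrat(\strat{Y}) \times \DeltaTop{1}$. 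Restricting at $t = 0$ gives a homeomorphism $\unstrat(\strat{Y}) \cong \unstrat(\strat{X})$, whose inverse I take as the underlying map of $\varphi : \strat{X} \to \strat{Y}$; the stratification map $\stratPos{\varphi}$ is then $\pi$ by construction, and is surjective, so $\varphi$ is a refinement. Since the framings of the fibres $\strat{X}$ and $\strat{Y}$ are related by the continuous family of framings of $\strat{E}$, the map $\varphi$ satisfies the $n$-framed coincidence condition for each $0 \leq i \leq n$.
\end{para}

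\begin{para}
To close the argument, I would produce an explicit framed isomorphism $\CylO(\varphi) \cong \strat{E}$ of closed $n$-mesh bundles over $\DeltaStrat{1}$: both have underlying space $\unstrat(\strat{Y}) \times \DeltaTop{1}$ via the global trivialisation, the stratifications match (finer $\strat{X}$ over $\{0\}$, coarser $\strat{Y}$ over $(0, 1]$), and the framings coincide by construction. This gives $f \simeq \degMeshClosed(\varphi)$ in $\BMeshClosed{n}$, and the ``moreover'' clause of the statement is automatic since $\stratPos{\varphi} = \pi$ by the way we defined $\varphi$.
\end{para}

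\begin{para}
The main obstacle is constructing the continuous extension of the trivialisation to $t = 0$ in a way that preserves all $n$ coordinates of the framing simultaneously. A natural strategy is to induct on $n$ via Observation~\ref{obs:fct-mesh-closed-n-composite}, decomposing $\xi$ into a composite of a closed $1$-mesh bundle over a closed $(n-1)$-mesh bundle; the base case $n = 1$ rests on Lemma~\ref{lem:fct-mesh-closed-1-sing-covering}, which controls how singular strata of the fibre over $(0, 1]$ converge to singular strata of $\strat{X}$ at $t = 0$, together with the fact that the closed embedding forces the endpoints of each fibre interval to vary continuously. The cocartesian structure of the stratum poset is what makes the combinatorics work out: regular strata of $\strat{Y}$ may absorb several singular strata of $\strat{X}$ as isolated points in $\strat{E}$, and this is exactly what the Grothendieck construction of $\pi$ encodes.
\end{para}
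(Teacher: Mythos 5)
Your ``if'' direction is fine and essentially matches what the paper needs (the paper cites Lemma~\ref{lem:b-geo-ocyl-cocartesian-fibration}, while you identify $\stratPos{\CylO(\varphi)}$ directly as the Grothendieck construction of $\stratPos{\varphi}$; both work). The ``only if'' direction, however, takes a genuinely different and much harder route than the paper, and the difficulty is concentrated exactly in the step you defer. The paper exploits the fact that the statement only asks for $f$ to be \emph{equivalent} to some $\degMeshClosed(\varphi)$: via $\BMeshClosed{n} \simeq \BTrussClosed{n}$ it replaces $\strat{X}$, $\strat{Y}$ and the representing bundle $\Xi$ by geometric realisations of closed $n$-truss bundles as in Construction~\ref{con:fct-truss-closed-1-realise}. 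In that model the surjection $s : \stratPos{\strat{X}} \to \stratPos{\strat{Y}}$ induces the refinement map $\varphi$ purely combinatorially, and the realised bundle is visibly the open mapping cylinder of $\varphi$ with its interpolated framing. No trivialisation has to be extended to $t = 0$.

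You instead try to prove the stronger claim that the \emph{given} representative $\strat{E}$ is isomorphic to $\CylO(\varphi)$, and the load-bearing step --- extending the stratified trivialisation over $(0,1]$ to a global homeomorphism $\unstrat(\strat{E}) \cong \unstrat(\strat{Y}) \times \DeltaTop{1}$ that is simultaneously compatible with the stratification at $t = 0$ and with all $n$ framing projections --- is only announced (``I would use \ldots'', ``a natural strategy is to induct \ldots''). This is not a routine verification: the fibre-bundle structure gives only an \emph{unframed} local trivialisation over strata of the base, the closure relation $\strat{X}_p \subseteq \cl(\strat{E}_q)$ you invoke requires a frontier-type argument that is not part of the definition of a closed $n$-mesh bundle, and the trivialisation must be chosen so that its limit at $t = 0$ carries strata to strata (a na\"ive rescaling of fibres does not). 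Your closing claim that ``the framings coincide by construction'' is also false as stated: $\CylO(\varphi)$ carries the linearly interpolated framing of Construction~\ref{con:fct-ref-ocyl-framing}, whereas the heights in $\strat{E}$ vary arbitrarily continuously, so at best you obtain an equivalence in $\BMeshClosed{n}$ rather than an isomorphism of bundles. That weaker conclusion is all the proposition requires, but reaching it by your route still demands the extension argument you have not supplied; either carry out that induction in detail (the $n=1$ case via Lemma~\ref{lem:fct-mesh-closed-1-sing-covering} is plausible but must address the stratified compatibility at $t=0$), or adopt the paper's combinatorial replacement, which makes the whole issue disappear.
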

\begin{proof}
  Let $\Xi : \strat{E} \to \DeltaStrat{1}$ be the closed $1$-mesh bundle that
  represents $f$. Suppose first that $f$ is a degeneracy bordism and let
  $s : \stratPos{\strat{X}} \to \stratPos{\strat{Y}}$ be the surjective map
  of posets which unstraightens to the cocartesian fibration
  $\stratPos{\Xi} : \stratPos{\strat{E}} \to \ord{1}$.
  Via the equivalence $\BMeshClosed{n} \simeq \BTrussClosed{n}$
  between closed $n$-meshes and $n$-trusses, we may assume that $\strat{X}$, $\strat{Y}$ and $\Xi$ arise
  as the geometric realisation of closed $n$-truss bundles
  \[
    \begin{tikzcd}
      {\stratPos{\strat{Y}}} \ar[r] \ar[d] \pullbackcorner &
      {\stratPos{\strat{E}}} \ar[d] &
      {\stratPos{\strat{X}}} \ar[l] \ar[d] \pullbackdl \\
      {\ord{0}} \ar[r, hook, "\langle 0 \rangle"'] &
      {\ord{1}} &
      {\ord{0}} \ar[l, hook', "\langle 1 \rangle"]
    \end{tikzcd}
  \]  
  as described in Construction~\ref{con:fct-truss-closed-1-realise}.
  The surjective map of posets
  $s : \stratPos{\strat{X}} \to \stratPos{\strat{Y}}$
  induces a refinement map 
  $\varphi : \StratReal{\stratPos{\strat{X}}} \to \StratReal{\stratPos{Y}}$.
  Unpacking the details of Construction~\ref{con:fct-truss-closed-1-realise}, we can see that
  $\varphi$ preserves the $n$-framing and that $\Xi$ is the open mapping cylinder
  of $\varphi$.

  For the implication in the other direction,
  let $\varphi : \strat{X} \to \strat{Y}$ be an $n$-framed refinement map
  such that $\Xi : \CylO(\varphi) \to \DeltaStrat{1}$ represents the bordism
  $f : \strat{X} \pto \strat{Y}$.
  Then Lemma~\ref{lem:b-geo-ocyl-cocartesian-fibration} implies that $f$ is a degeneracy bordism as required.
\end{proof}

\begin{lemma}\label{lem:fct-ref-closed-composite}
  Degeneracy bordisms of closed $n$-meshes are closed under composition in the
  $\infty$-category $\BMeshClosed{n}$.
\end{lemma}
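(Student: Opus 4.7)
The plan is to reduce the composition problem to Proposition~\ref{prop:fct-ref-closed-ocyl-bordism} by building an explicit $2$-simplex in $\BMeshClosed{n}$ from a $2$-fold open mapping cylinder. Suppose that $f : \strat{X} \pto \strat{Y}$ and $g : \strat{Y} \pto \strat{Z}$ are degeneracy bordisms of closed $n$-meshes whose composite exists in $\BMeshClosed{n}$. By Proposition~\ref{prop:fct-ref-closed-ocyl-bordism} there exist $n$-framed refinement maps $\varphi : \strat{X} \to \strat{Y}$ and $\psi : \strat{Y} \to \strat{Z}$ such that $f \simeq \degMeshClosed(\varphi)$ and $g \simeq \degMeshClosed(\psi)$ in $\BMeshClosed{n}$. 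I will show that the composite $g \circ f$ is equivalent to $\degMeshClosed(\psi \circ \varphi)$; since $\psi \circ \varphi$ is again an $n$-framed refinement map, Proposition~\ref{prop:fct-ref-closed-ocyl-bordism} then supplies the conclusion.

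To construct the required witness, I would apply Construction~\ref{con:b-geo-ocyl} to the sequence $\strat{X}_\bullet = (\strat{X} \xrightarrow{\varphi} \strat{Y} \xrightarrow{\psi} \strat{Z})$ to obtain the $2$-fold open mapping cylinder $\Xi : \CylO^2(\strat{X}_\bullet) \to \DeltaStrat{2}$, and equip $\Xi$ with the $n$-framing of Construction~\ref{con:fct-ref-ocyl-framing} (applied with trivial base sequence, i.e. $\strat{B}_i = \DeltaStrat{0}$). By Proposition~\ref{prop:fct-ref-closed-ocyl-mesh} the framed bundle $\Xi$ is a closed $n$-mesh bundle over $\DeltaStrat{2}$ and therefore represents a $2$-simplex of $\BMeshClosed{n}$.

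Next I would identify the three edges of this $2$-simplex with the required degeneracy bordisms by using the pullback compatibility of open mapping cylinders recorded in Observation~\ref{obs:b-geo-ocyl-pullback}. Pulling back $\Xi$ along the three face maps of $\DeltaStrat{2}$ yields: along $\langle 0, 1 \rangle_*$ the cylinder $\CylO(\varphi)$, representing $\degMeshClosed(\varphi)$; along $\langle 1, 2 \rangle_*$ the cylinder $\CylO(\psi)$, representing $\degMeshClosed(\psi)$; and along $\langle 0, 2 \rangle_*$, since this edge meets the interior of $\DeltaStrat{2}$ (where the gluing in the $2$-fold cylinder has already identified $x \in \strat{X}$ with $\psi(\varphi(x)) \in \strat{Z}$), one obtains exactly $\CylO(\psi \circ \varphi)$, representing $\degMeshClosed(\psi \circ \varphi)$. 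The framing formula in Construction~\ref{con:fct-ref-ocyl-framing} simplifies to the framing of the corresponding $1$-fold cylinder on each edge, because on $\langle i, j \rangle_*$ only the barycentric coordinates $b_i$ and $b_j$ are non-zero. Hence the $2$-simplex $\Xi$ exhibits $\degMeshClosed(\psi \circ \varphi)$ as a composite of $\degMeshClosed(\varphi)$ followed by $\degMeshClosed(\psi)$ in $\BMeshClosed{n}$, so $g \circ f \simeq \degMeshClosed(\psi \circ \varphi)$, and the proof concludes.

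The main obstacle is the verification on the long edge $\langle 0, 2 \rangle_*$: one must carefully unfold the quotient defining $\CylO^2(\strat{X}_\bullet)$ to see that restricting to this edge collapses the intermediate layer $\strat{Y}$ and produces the cylinder of the composite map rather than some more elaborate bundle. This is essentially the calculation that under Observation~\ref{obs:b-geo-ocyl-pullback} the pullback of a $k$-fold open mapping cylinder along a face map $\langle 0, k \rangle_*$ reproduces the $1$-fold cylinder of the composite $\varphi_k \circ \cdots \circ \varphi_1$, and once this is established the rest of the argument is formal.
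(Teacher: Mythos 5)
Your proposal is correct and follows essentially the same route as the paper: replace the two degeneracy bordisms by refinement maps via Proposition~\ref{prop:fct-ref-closed-ocyl-bordism}, form the $2$-fold open mapping cylinder, invoke Proposition~\ref{prop:fct-ref-closed-ocyl-mesh} to see it is a closed $n$-mesh bundle over $\DeltaStrat{2}$, and read off the long edge as $\CylO(\psi \circ \varphi)$. Your explicit verification of the three edges via Observation~\ref{obs:b-geo-ocyl-pullback} fills in a step the paper leaves implicit, but the argument is the same.
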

\begin{proof}
  We use that by Proposition~\ref{prop:fct-ref-closed-ocyl-bordism} each degeneracy bordism of closed $n$-meshes
  is induced by $n$-framed refinement maps.
  Let $\varphi_{01} : \strat{X}_0 \to \strat{X}_1$ and $\varphi_{12} : \strat{X}_1 \to \strat{X}_2$
  be $n$-framed refinement maps between closed $n$-mesh bundles.
  Then projection map $\CylO(\strat{X}_\bullet) \to \DeltaStrat{2}$ of the $2$-fold open mapping cylinder 
  is a closed $n$-mesh bundle by Proposition~\ref{prop:fct-ref-closed-ocyl-mesh} that represents the composition
  of the induced degeneracy bordisms $\degMeshClosed(\varphi_{01})$ followed by
  $\degMeshClosed(\varphi_{12})$. In particular the composite is
  $\degMeshClosed(\varphi_{12} \circ \varphi_{01})$ and therefore itself a degeneracy bordism
  by Proposition~\ref{prop:fct-ref-closed-ocyl-bordism}.
\end{proof}

We can broaden the concept of a degeneracy map of closed $n$-meshes to incorporate
labels within any quasicategory $\cat{C}$. The labelling of a degeneracy bordism
of labelled closed $n$-meshes is determined completely (up to contractible choice) by the labelling of
the codomain.

\begin{definition}
  Suppose that $\cat{C}$ is a quasicategory and $\pi : \BMeshClosedL{n}{\cat{C}} \to \BMeshClosed{n}$ the functor which forgets the labels.
  Then a bordism $f$ of closed $n$-meshes with labels in $\cat{C}$ is a \defn{degeneracy bordism}
  if $\pi(f)$ is a degeneracy bordism and $f$ is $\pi$-cartesian.
\end{definition}

\begin{lemma}\label{lem:fct-ref-closed-deg-map-lift}
  Let $\cat{C}$ be a quasicategory and $\pi : \BMeshClosedL{n}{\cat{C}} \to \BMeshClosed{n}$
  the functor which forgets the labels.
  Then $\pi$ admits cartesian lifts of degeneracy maps.
\end{lemma}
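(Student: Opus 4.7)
The plan is to construct an explicit cartesian lift using the retraction structure of the open mapping cylinder, and then to verify the horn-filling criterion directly. Throughout, I will exploit the duality between the geometric data of closed $n$-meshes and the combinatorial data of closed $n$-trusses only implicitly, working primarily with the geometry.

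First, by Proposition~\ref{prop:fct-ref-closed-ocyl-bordism}, any degeneracy bordism $f : \strat{X} \pto \strat{Y}$ in $\BMeshClosed{n}$ is equivalent to $\degMeshClosed(\varphi)$ for some $n$-framed refinement map $\varphi : \strat{X} \to \strat{Y}$, and is represented by the closed $n$-mesh bundle $\CylO(\varphi) \to \DeltaStrat{1}$ with framing from Construction~\ref{con:fct-ref-ocyl-framing}. Given a labelled lift $(\strat{Y}, \ell)$ of the codomain, with $\ell : \Exit(\strat{Y}) \to \cat{C}$, Observation~\ref{obs:b-geo-ocyl-retract} supplies a retraction $r : \CylO(\varphi) \to \strat{Y}$ of the inclusion of the right end. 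Define $\tilde{\ell} := \ell \circ \Exit(r)$. The pair $(\CylO(\varphi), \tilde{\ell})$ represents a morphism $\tilde{f}$ in $\BMeshClosedL{n}{\cat{C}}$ with $\pi(\tilde{f}) = f$, whose restriction over $1 \in \DeltaStrat{1}$ recovers $\ell$ (because $r|_{\strat{Y}} = \id$) and whose restriction over $0$ gives the labelling $\ell \circ \Exit(\varphi)$ on $\strat{X}$.

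To show that $\tilde{f}$ is $\pi$-cartesian, I would unpack the horn-filling criterion: for every $k \geq 2$, every closed $n$-mesh bundle $\Xi : \strat{E} \to \DeltaStrat{k}$ whose restriction over $\DeltaStrat{\{k-1,k\}}$ is $\CylO(\varphi'')$ for some $n$-framed refinement $\varphi''$, and every labelling $L' : \Exit(\strat{E}|_{\HornStrat{k}{k}}) \to \cat{C}$ whose restriction over that edge factors as $\ell_k \circ \Exit(r'')$ through the retraction $r''$ of $\CylO(\varphi'')$, I must extend $L'$ to a full labelling $L : \Exit(\strat{E}) \to \cat{C}$. The core of the argument is that the retraction $r''$ can be amalgamated with the horn data into a global structure. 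Concretely, I would construct a refinement map $R : \strat{E} \to \strat{E}'$ onto a closed stratified subspace $\strat{E}' \subseteq \strat{E}$ containing both $\strat{E}|_{d_k\DeltaStrat{k}}$ and $\strat{E}|_{\{k\}}$, so that $R$ restricts to $r''$ over $\DeltaStrat{\{k-1,k\}}$ and to the identity on $\strat{E}'$; this uses the iterated mapping cylinder structure supplied by Proposition~\ref{prop:fct-ref-closed-ocyl-mesh} after reorganising the simplex via inner horn contractions.

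Using $R$, the desired labelling $L$ factors as the composite $L = L'' \circ \Exit(R)$, where $L''$ is determined by the horn data on the faces containing vertex $k$ together with the piece over $d_k\DeltaStrat{k}$ — and this latter piece exists by applying the flat categorical fibration property of $\Exit(\Xi)$ from Lemma~\ref{lem:fct-mesh-closed-n-bundle-flat}, which, together with Lemma~\ref{lem:b-poly-flat-cat-pullback-equiv}, reduces the remaining lifting problem to an inner lifting problem in the quasicategory $\cat{C}$. The inductive step on $k$ then proceeds by observing that all auxiliary lifting problems introduced are inner horn fillers in $\BMeshClosed{n}$, which are available by Lemma~\ref{lem:fct-mesh-closed-n-cat-truncate}.

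The hard part will be the explicit construction of the global retraction $R$ in step three. The retraction on the single edge $\DeltaStrat{\{k-1,k\}}$ is given a priori, but extending it over all of $\DeltaStrat{k}$ requires reconciling it with the arbitrary geometry of $\Xi$ over the remaining faces. I expect the resolution to proceed by decomposing $\Xi$ along its frame filtration, so that the retraction is built one codimension at a time; at each stage the flatness of the projection and the fibre-preserving structure of the mapping cylinder will ensure compatibility, and the final labelling extension follows from applying Proposition~\ref{prop:fct-mesh-closed-labelled-classify} to glue local labellings into a global one.
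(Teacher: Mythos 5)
Your construction of the candidate lift is correct and agrees with the paper's: the labelling $\ell \circ \Exit(r)$ obtained from the retraction of Observation~\ref{obs:b-geo-ocyl-retract} is exactly the labelling the paper produces. The gap is in the verification that this lift is $\pi$-cartesian. Cartesianness is defined by lifting against the \emph{outer} horns $\Lambda^k\ord{k} \hookrightarrow \Delta\ord{k}$, so your closing claim that all auxiliary lifting problems reduce to inner horn fillers is precisely what has to be proved, and the burden falls entirely on the global retraction $R$ that you only sketch. That construction is not available in the stated generality: the closed $n$-mesh bundle $\Xi : \strat{E} \to \DeltaStrat{k}$ filling the base is arbitrary away from the edge $\langle k-1, k\rangle$, so there is no reason for the mapping-cylinder retraction on that edge to extend to a retraction of $\strat{E}$ onto a subspace containing the missing face of the horn. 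Worse, the residual problem you are left with is extending a labelling from the boundary of that missing face to the whole face --- a boundary-filling problem in $\cat{C}$, not an inner horn --- which a general quasicategory does not solve. As written, the argument does not close.

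The paper sidesteps horn filling altogether: a $\pi$-cartesian lift of $\degMeshClosed(\varphi)$ ending at $(\strat{Y}, \ell_{\strat{Y}})$ is the same datum as a right Kan extension of $\ell_{\strat{Y}}$ along $\Exit(\strat{Y}) \hookrightarrow \Exit(\CylO(\varphi))$, and this Kan extension is computed pointwise because every $e \in \CylO(\varphi)$ admits an initial object in the coslice $(\Exit(\strat{Y}))_{e/}$ --- for $e \in \strat{X}$ via the $\Exit(\pi)$-cocartesian exit path of Lemma~\ref{lem:b-geo-ocyl-cocartesian-map}, and for the remaining points via the trivialisation over the open stratum of $\DeltaStrat{1}$. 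The pointwise limits therefore exist (they are just evaluations of $\ell_{\strat{Y}}$ at the endpoints of these paths), so the cartesian lift exists and coincides with your $\ell \circ \Exit(r)$. If you want to keep your explicit labelling, the efficient repair is to prove it satisfies this universal property rather than to attack the outer-horn condition directly.
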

\begin{proof}
  Let $\varphi : \strat{X} \to \strat{Y}$ be an $n$-framed refinement map
  between closed $n$-mesh bundles and let $\ell_{\strat{Y}} : \Exit(\strat{Y}) \to \cat{C}$
  be a labelling map.
  Denote by $\pi : \CylO(\strat{X}) \to \DeltaStrat{1}$ the projection from the
  open mapping cylinder that defines the degeneracy bordism $\degMeshClosed(\varphi)$.
  A $\pi$-cartesian lift of $\degMeshClosed(\varphi)$ that ends in $(\strat{Y}, \ell_{\strat{Y}})$
  then corresponds to a right Kan extension $\ell : \Exit(\CylO(\strat{X})) \to \cat{C}$
  of $\ell_{\strat{Y}}$ along the inclusion map
  $\Exit(\strat{Y}) \hookrightarrow \Exit(\CylO(\strat{X}))$.
  As long as the requisite limits exist in $\cat{C}$, we can therefore
  compute $\ell$ as a pointwise Kan extension defined on
  $e \in \CylO(\strat{X})$ by
  \begin{equation}\label{eq:fct-ref-closed-deg-map-lift:limit}
    \ell(e) \simeq
    \lim(
    \begin{tikzcd}[cramped]
      (\Exit(\strat{Y}))_{e/} \ar[r] &
      {\Exit(\strat{Y})} \ar[r, "\ell_{\strat{Y}}"] &
      \cat{C}
    \end{tikzcd}
    )
  \end{equation}

  When $e$ is contained within $\strat{X} \subseteq \CylO(\varphi)$, then by Lemma~\ref{lem:b-geo-ocyl-cocartesian-map} the identity exit path
  $\id : \DeltaStrat{1} \to \DeltaStrat{1}$ has an $\Exit(\pi)$-cocartesian
  lift to an exit path $\gamma : \DeltaStrat{1} \to \CylO(\varphi)$ starting
  at $e$ and ending at $\varphi(e)$.
  Since the lift is $\Exit(\pi)$-cocartesian,
  this path is an initial object in the slice $\Exit(\strat{Y})_{e/}$
  and so the limit~(\ref{eq:fct-ref-closed-deg-map-lift:limit}) exists and evaluates to $\ell(e) \simeq \ell_{\strat{Y}}(\varphi(e))$.

  When $e$ is not contained within $\strat{X}$, then via the trivialisation
  of $\pi$ over the stratum $\intOC{0, 1} \subseteq \DeltaStrat{1}$ there exists
  a stratum-preserving path $\gamma : \DeltaTop{1} \to \CylO(\varphi)$ from $e$ to
  an element $y \in \strat{Y} \subseteq \CylO(\varphi)$.
  Therefore, the limit~(\ref{eq:fct-ref-closed-deg-map-lift:limit}) exists and evaluates on $e$ to $\ell(e) \simeq \ell_{\strat{Y}}(y)$.
\end{proof}

\subsection{Degeneracy Bordisms of Open Meshes}\label{sec:fct-ref-open}

\begin{example}
  Suppose we have a refinement map of open $1$-meshes
  \[
    \begin{tikzpicture}[scale = 0.5, rotate = -90, baseline=(current bounding box.center)]
      \node[] at (0, -1) {$M_0$};
      \node[] at (2, -1) {$M_1$};
      \draw[mesh-stratum] (2, 0) -- (2, 6);
      \draw[mesh-stratum] (0, 0) -- (0, 6);
      \node[mesh-vertex] at (0, 1) {};
      \node[mesh-vertex] at (0, 2) {};
      \node[mesh-vertex] at (0, 3) {};
      \node[mesh-vertex] at (0, 4) {};
      \node[mesh-vertex] at (0, 5) {};
      \node[mesh-vertex] at (2, 2) {};
      \node[mesh-vertex] at (2, 4) {};
      \draw[->>] (0.5, 3) -- (1.5, 3) node [midway, left] {$\varphi$};
    \end{tikzpicture}
  \]
  Then the projection map $\CylO(\varphi) \to \DeltaStrat{1}$ from the open mapping
  cylinder of $\varphi$ can be equipped with a $1$-framing so that it becomes
  the open $1$-mesh bundle
  \[
    \begin{tikzpicture}[scale = 0.5, rotate = -90, baseline=(current bounding box.center)]
      \fill[mesh-background] (0, 0) -- (0, 6) -- (2, 6) -- (2, 0) -- cycle;
      \draw[mesh-stratum] (0, 0) -- (0, 6);
      \node[mesh-vertex] at (0, 1) {};
      \node[mesh-vertex] at (0, 2) {};
      \node[mesh-vertex] at (0, 3) {};
      \node[mesh-vertex] at (0, 4) {};
      \node[mesh-vertex] at (0, 5) {};
      \draw[mesh-stratum] (0, 2) -- (2, 2);
      \draw[mesh-stratum] (0, 4) -- (2, 4);
    \end{tikzpicture}
    \quad
    \longrightarrow
    \quad
    \begin{tikzpicture}[scale = 0.5, baseline=(current bounding box.center)]
      \draw[mesh-stratum] (0, 0) -- (0, 2);
      \node[mesh-vertex] at (0, 2) {};
    \end{tikzpicture}
  \]
\end{example}

\begin{proposition}
  Suppose that we have a sequence of $n$-framed refinement maps
  \[
    \begin{tikzcd}
      {\strat{M}_0} \ar[r, "\varphi_1"] \ar[d, "f_0"] &
      {\strat{M}_1} \ar[r] \ar[d, "f_1"] &
      {\cdots} \ar[r] &
      {\strat{M}_{k - 1}} \ar[r, "\varphi_k"] \ar[d, "f_{k - 1}"]  &
      {\strat{M}_k} \ar[d, "f_k"] \\
      {\strat{B}_0} \ar[r, "\psi_1"'] &
      {\strat{B}_1} \ar[r] &
      {\cdots} \ar[r] &
      {\strat{B}_{k - 1}} \ar[r, "\psi_k"'] &
      {\strat{B}_k}
    \end{tikzcd}
  \]
  between open $n$-mesh bundles.
  Then the map $F : \CylO^k(\strat{X}_\bullet) \to \CylO^k(\strat{B}_\bullet)$
  between the $k$-fold open mapping cylinders, equipped with the $n$-framing
  from Construction~\ref{con:fct-ref-ocyl-framing}, is an open $n$-mesh bundle.
\end{proposition}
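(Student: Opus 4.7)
The plan is to deduce this from the closed case via compactification. First, I would compactify each open $n$-mesh bundle $f_i : \strat{M}_i \to \strat{B}_i$ to a closed $n$-mesh bundle $\xi_i : \strat{X}_i \to \strat{B}_i$, in which $\strat{M}_i$ sits as the open constructible subspace obtained by discarding, fibrewise and in each coordinate direction, the leading and trailing singular strata of each $1$-mesh factor. A refinement map $\varphi_i : \strat{M}_{i-1} \to \strat{M}_i$ then extends uniquely to an $n$-framed refinement map $\tilde\varphi_i : \strat{X}_{i-1} \to \strat{X}_i$, since the compactification inserts only boundary singular strata that are forced by the closed-mesh structure and the framing.

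Next, I would apply Proposition~\ref{prop:fct-ref-closed-ocyl-mesh} to the sequence of compactifications to obtain that the induced map $\tilde F : \CylO^k(\strat{X}_\bullet) \to \CylO^k(\strat{B}_\bullet)$, equipped with the $n$-framing of Construction~\ref{con:fct-ref-ocyl-framing}, is a closed $n$-mesh bundle. The open mapping cylinder is functorial in open inclusions over a common base, and therefore we obtain a commutative square
\[
\begin{tikzcd}
\CylO^k(\strat{M}_\bullet) \ar[r, hook] \ar[d, "F"'] & \CylO^k(\strat{X}_\bullet) \ar[d, "\tilde F"] \\
\CylO^k(\strat{B}_\bullet) \ar[r, equal] & \CylO^k(\strat{B}_\bullet)
\end{tikzcd}
\]
in which the top inclusion is open and fibrewise has image exactly the complement of the boundary singular strata that compactification adjoins.

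Then I would verify, working inductively on $n$, that $F$ is an open $n$-mesh bundle. Writing each $f_i$ as a composite of an open $1$-mesh bundle over an open $(n-1)$-mesh bundle, and similarly for its compactification, reduces the claim to the case $n=1$: here I would invoke Lemma~\ref{lem:b-geo-ocyl-isofibration} to see that $\Exit(F)$ is an isofibration, note that each fibre of $F$ is an open $1$-mesh by construction, and use that the regular points in the fibres of $F$ form the preimage in $\CylO^k(\strat{M}_\bullet)$ of the regular points of $\tilde F$ and hence an open subset. The $n$-framing on $F$ is a restriction of the $n$-framing on $\tilde F$ along the open inclusion, so the bundle conditions of Definition~\ref{def:fct-mesh-open-1-bundle} transfer immediately.

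The main obstacle will be checking that the compactification construction lifts coherently to refinement maps, i.e.\ that $\tilde\varphi_i$ is indeed a well-defined $n$-framed refinement map of closed $n$-mesh bundles; this amounts to matching boundary singular strata across refinements and verifying that the framing on the open mapping cylinder restricts correctly after removing the compactification boundary. Everything else is formal once the closed case is in hand.
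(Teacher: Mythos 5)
Your proposal is correct and follows essentially the same route as the paper: compactify the sequence of open mesh bundles, apply Proposition~\ref{prop:fct-ref-closed-ocyl-mesh} to the resulting sequence of closed mesh bundles, and then restrict the closed $n$-mesh bundle on the mapping cylinders back to the constructible open subbundle. The one subtlety you correctly flag at the end is also the one the paper records: the framing obtained by restriction is not literally the framing of Construction~\ref{con:fct-ref-ocyl-framing} but only isomorphic to it as an $n$-framed stratified bundle, which suffices since open mesh bundles are closed under such isomorphisms.
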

\begin{proof}
  The sequence of $n$-framed refinement maps from the claim gives rise to a sequence
  of $n$-framed refinement maps between the compactifications of the open $n$-mesh
  bundles:
  \[
    \begin{tikzcd}
      {\bar{\strat{M}}_0} \ar[r, "\varphi_1"] \ar[d, "\bar{f}_0"] &
      {\bar{\strat{M}}_1} \ar[r] \ar[d, "\bar{f}_1"] &
      {\cdots} \ar[r] &
      {\bar{\strat{M}}_{k - 1}} \ar[r, "\varphi_k"] \ar[d, "\bar{f}_{k - 1}"]  &
      {\bar{\strat{M}}_k} \ar[d, "\bar{f}_k"] \\
      {\strat{B}_0} \ar[r, "\psi_1"'] &
      {\strat{B}_1} \ar[r] &
      {\cdots} \ar[r] &
      {\strat{B}_{k - 1}} \ar[r, "\psi_k"'] &
      {\strat{B}_k}
    \end{tikzcd}
  \]
  We can then apply Proposition~\ref{prop:fct-ref-closed-ocyl-mesh} to see that the induced $n$-framed
  stratified bundle $\bar{F} : \CylO^k(\bar{\strat{M}}_\bullet) \to \CylO^k(\strat{B}_\bullet)$
  is a closed $n$-mesh bundle.
  The restriction of $\bar{F}$ to the constructible $n$-framed stratified subbundle $\CylO^k(\strat{M}_\bullet) \to \CylO^k(\strat{B}_\bullet)$
  then is an open $n$-mesh bundle, which is not identical with but isomorphic
  as an $n$-framed stratified bundle to
  the map $F$ equipped with the framing from Construction~\ref{con:fct-ref-ocyl-framing}.
\end{proof}

\begin{definition}
  Let $\varphi: \strat{M} \to \strat{N}$ be an $n$-framed refinement map 
  between open $n$-meshes. Then the \defn{degeneracy bordism induced by
  $\varphi$} is the bordism of open $n$-meshes
  $\degMeshOpen(\varphi) : \strat{M} \pto \strat{N}$
  represented by the open $n$-mesh bundle $\CylO(\varphi) \to \DeltaStrat{1}$.
\end{definition}

\begin{definition}
  Let $f : \strat{M} \pto \strat{N}$ be a bordism of open $n$-meshes
  represented by an open $n$-mesh bundle $F : \strat{E} \to \DeltaStrat{1}$.
  Then $f$ is a \defn{degeneracy bordism} if $\stratPos{F} : \stratPos{\strat{E}} \to \ord{1}$
  is the cocartesian fibration associated to a surjective map of posets
  $\stratPos{\strat{M}} \to \stratPos{\strat{N}}$.
\end{definition}

\begin{observation}\label{obs:fct-ref-open-compactify}
  Let $f : \strat{M} \pto \strat{N}$ be a bordism of open $n$-meshes.
  Then $f$ is a degeneracy bordism if and only if its compactification
  is a degeneracy bordism of closed $n$-mesh bundles.
\end{observation}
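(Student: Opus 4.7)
My plan is to unpack the definition of compactification in terms of the stratification poset and show that the resulting map on posets only differs from the original by a trivial extension along the boundary strata introduced by compactification.

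First I would recall the construction: if $F : \strat{E} \to \DeltaStrat{1}$ is an open $n$-mesh bundle representing $f$, then its compactification $\bar{F} : \bar{\strat{E}} \to \DeltaStrat{1}$ contains $\strat{E}$ as a constructible open subbundle, and the complement $\bar{\strat{E}} \setminus \strat{E}$ consists entirely of the singular strata added in each of the $n$ framing directions during compactification. The key geometric fact I would establish is that compactification commutes with pullback along the inclusions $\langle 0 \rangle, \langle 1 \rangle : \DeltaStrat{0} \hookrightarrow \DeltaStrat{1}$, so that $\bar{F}$ restricts on endpoints to the compactifications $\bar{\strat{M}}$ and $\bar{\strat{N}}$ of the domain and codomain of $f$. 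This lets us identify $\stratPos{\bar{F}} : \stratPos{\bar{\strat{E}}} \to \ord{1}$ as the unique extension of $\stratPos{F}$ whose restrictions to $0$ and $1$ are $\stratPos{\bar{\strat{M}}}$ and $\stratPos{\bar{\strat{N}}}$.

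Next, I would identify the boundary strata. Writing the open $n$-mesh bundle as a composite of open $1$-mesh bundles, at each level compactification adds precisely two singular strata per fibre (top and bottom). Over $\DeltaStrat{1}$, these top-and-bottom strata assemble into a sub-bundle whose stratification poset maps to $\ord{1}$ as the identity-on-$\ord{1}$-component of a cocartesian fibration. Concretely, the compactification poset decomposes as a coproduct (in the Grothendieck sense over $\ord{1}$) of the original stratification poset $\stratPos{\strat{E}}$ with a ``boundary'' poset $\stratPos{B}$, where $\stratPos{B} \to \ord{1}$ unstraightens a bijection between the boundary strata of $\bar{\strat{M}}$ and those of $\bar{\strat{N}}$.

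From this decomposition the equivalence follows immediately: the surjective map of posets $s : \stratPos{\bar{\strat{M}}} \to \stratPos{\bar{\strat{N}}}$ associated to $\stratPos{\bar{F}}$ decomposes as $s_{\strat{M}} \sqcup s_B$, where $s_{\strat{M}} : \stratPos{\strat{M}} \to \stratPos{\strat{N}}$ is the map associated to $\stratPos{F}$ and $s_B$ is a bijection on the boundary component. Hence $s$ is surjective if and only if $s_{\strat{M}}$ is surjective, i.e.\ $\bar{F}$ is a degeneracy bordism of closed $n$-mesh bundles precisely when $F$ is a degeneracy bordism of open $n$-mesh bundles.

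The main obstacle is the bookkeeping around iterated compactification for $n > 1$: one has to check that the boundary strata introduced at each level of the $1$-mesh tower indeed assemble into a trivial sub-bundle over $\DeltaStrat{1}$ and that this decomposition of the stratification poset as $\stratPos{\strat{E}} \sqcup_{\ord{1}} \stratPos{B}$ respects the cocartesian fibration structure. This is essentially a straightforward induction using that compactification is applied fibrewise and commutes with the $1$-mesh bundle projections, but it is the only place where genuine care is required.
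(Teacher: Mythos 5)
Your argument is correct and fills in exactly the reasoning the paper leaves implicit: the statement is labelled an Observation and carries no proof, the intended justification being precisely that compactification only adjoins boundary strata on which the bordism restricts to a bijection, so the associated poset map is surjective for $\bar{F}$ if and only if it is for $F$. One minor caveat: $\stratPos{\bar{\strat{E}}}$ is not literally a coproduct of posets with the boundary part (the added singular strata lie in the closure of adjacent regular strata), but since exit paths starting in the open constructible subbundle $\strat{E}$ remain in $\strat{E}$, the cocartesian transport still decomposes as $s_{\strat{M}} \sqcup s_B$ exactly as you claim, so the conclusion stands.
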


\begin{proposition}
  Let $f : \strat{M} \pto \strat{N}$ be a bordism of open $n$-meshes.
  Then $f$ is a degeneracy bordism if and only if it is equivalent to
  $\degMeshOpen(\varphi)$ for some
  $n$-framed refinement map
  $\varphi : \strat{M} \to \strat{N}$.
  In that case we have that $\stratPos{\varphi} : \stratPos{\strat{M}} \to \stratPos{\strat{N}}$ is the surjective map of posets associated to the degeneracy bordism $f$.
\end{proposition}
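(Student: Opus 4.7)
The plan is to reduce the claim to its closed counterpart via compactification, exploiting Observation~\ref{obs:fct-ref-open-compactify} together with Proposition~\ref{prop:fct-ref-closed-ocyl-bordism}. The first step is to set up the dictionary between open refinement maps and closed ones. Given an $n$-framed refinement map $\varphi : \strat{M} \to \strat{N}$ between open $n$-meshes, I want to produce an $n$-framed refinement map $\bar{\varphi} : \bar{\strat{M}} \to \bar{\strat{N}}$ between their compactifications whose restriction to the open parts is $\varphi$. Because a refinement map is an isomorphism on underlying topological spaces, and because the compactification adds the same collar strata on both sides according to the first coordinate of the framing, $\varphi$ uniquely extends to such a $\bar{\varphi}$. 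Conversely, if $\bar{\varphi}$ is a refinement map between the compactifications that respects the constructible subspaces $\strat{M} \subseteq \bar{\strat{M}}$ and $\strat{N} \subseteq \bar{\strat{N}}$, then it restricts to an open refinement map $\varphi$.

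For the forward direction, suppose $f : \strat{M} \pto \strat{N}$ is a degeneracy bordism of open $n$-meshes, represented by an open $n$-mesh bundle $F : \strat{E} \to \DeltaStrat{1}$. Its compactification $\bar{F}$ is a closed $n$-mesh bundle and, by Observation~\ref{obs:fct-ref-open-compactify}, represents a closed degeneracy bordism $\bar{f} : \bar{\strat{M}} \pto \bar{\strat{N}}$. Applying Proposition~\ref{prop:fct-ref-closed-ocyl-bordism} we obtain an $n$-framed refinement map $\bar{\varphi} : \bar{\strat{M}} \to \bar{\strat{N}}$ and an equivalence $\bar{f} \simeq \degMeshClosed(\bar{\varphi})$. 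The equivalence is a map in $\BMeshClosed{n}$ preserving the endpoints, so it respects the collar strata added by compactification; hence $\bar{\varphi}$ sends $\strat{M}$ into $\strat{N}$ and restricts to an $n$-framed refinement $\varphi : \strat{M} \to \strat{N}$. The open mapping cylinder of $\bar{\varphi}$ is the compactification of the open mapping cylinder of $\varphi$, so $\degMeshClosed(\bar{\varphi})$ is the compactification of $\degMeshOpen(\varphi)$; the equivalence $\bar{f} \simeq \degMeshClosed(\bar{\varphi})$ descends to an equivalence $f \simeq \degMeshOpen(\varphi)$.

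For the reverse direction, given an $n$-framed refinement map $\varphi : \strat{M} \to \strat{N}$, the bundle $\CylO(\varphi) \to \DeltaStrat{1}$ is the open part of $\CylO(\bar{\varphi}) \to \DeltaStrat{1}$, and the latter represents the closed degeneracy bordism $\degMeshClosed(\bar{\varphi})$ by Proposition~\ref{prop:fct-ref-closed-ocyl-bordism}. By Observation~\ref{obs:fct-ref-open-compactify} it then follows that $\degMeshOpen(\varphi)$ is an open degeneracy bordism. Finally, the identification of posets $\stratPos{\varphi} : \stratPos{\strat{M}} \to \stratPos{\strat{N}}$ with the surjective map associated to the degeneracy bordism follows by restricting the corresponding claim from the closed case, since the poset of strata of an open $n$-mesh is the subposet of the poset of strata of its compactification obtained by removing the collar strata.

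The main technical point to verify carefully is the compatibility between compactification and the open mapping cylinder: that $\overline{\CylO(\varphi)} \cong \CylO(\bar{\varphi})$ as $n$-framed stratified bundles over $\DeltaStrat{1}$, with the framing from Construction~\ref{con:fct-ref-ocyl-framing}. This is a direct check using that both constructions are defined fibrewise in the first coordinate and that $\bar{\varphi}$ was chosen to extend $\varphi$ by the identity on the added collar, but it is the step where one could slip up on coherence of framings. Everything else is bookkeeping on top of the closed statement.
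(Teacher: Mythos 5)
Your proof is correct and follows exactly the route the paper takes: reduce to the closed case via compactification, using Observation~\ref{obs:fct-ref-open-compactify} to transfer the degeneracy property and Proposition~\ref{prop:fct-ref-closed-ocyl-bordism} to produce or recognise the refinement map. The paper states this reduction in one line; you have supplied the details, including the key compatibility $\overline{\CylO(\varphi)} \cong \CylO(\bar{\varphi})$, which is indeed the only point requiring care.
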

\begin{proof}
  Follows from Proposition~\ref{prop:fct-ref-closed-ocyl-bordism}
  by using Observation~\ref{obs:fct-ref-open-compactify}.
\end{proof}

\begin{lemma}
  Degeneracy bordisms of open $n$-meshes are closed under composition in the
  $\infty$-category $\BMeshOpen{n}$.
\end{lemma}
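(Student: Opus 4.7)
The plan is to mirror the proof of Lemma~\ref{lem:fct-ref-closed-composite} almost verbatim, using the open-mesh analogue of Proposition~\ref{prop:fct-ref-closed-ocyl-mesh} that was stated just above the claim. By the preceding proposition, every degeneracy bordism of open $n$-meshes is of the form $\degMeshOpen(\varphi)$ for some $n$-framed refinement map $\varphi$, so it suffices to show that the composite (in $\BMeshOpen{n}$) of two such bordisms is again of this form.

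Concretely, given composable degeneracy bordisms represented by $n$-framed refinement maps
\[
  \begin{tikzcd}
    {\strat{M}_0} \ar[r, "\varphi_{01}"] &
    {\strat{M}_1} \ar[r, "\varphi_{12}"] &
    {\strat{M}_2},
  \end{tikzcd}
\]
I would consider the $2$-fold open mapping cylinder $\CylO^2(\strat{M}_\bullet) \to \DeltaStrat{2}$, equipped with the $n$-framing from Construction~\ref{con:fct-ref-ocyl-framing}. By the open-mesh version of Proposition~\ref{prop:fct-ref-closed-ocyl-mesh}, this is an open $n$-mesh bundle over $\DeltaStrat{2}$. Its restriction over $\langle 0, 1 \rangle$ and $\langle 1, 2 \rangle$ yield the $1$-fold open mapping cylinders of $\varphi_{01}$ and $\varphi_{12}$, while the restriction over $\langle 0, 2 \rangle$ yields the $1$-fold open mapping cylinder of the composite refinement map $\varphi_{12} \circ \varphi_{01}$. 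This therefore witnesses the composite as $\degMeshOpen(\varphi_{12} \circ \varphi_{01})$, which is a degeneracy bordism.

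An alternative, essentially equivalent route would go through compactification: the compactification construction is functorial and sends the open-mesh composite to the closed-mesh composite, and by Observation~\ref{obs:fct-ref-open-compactify} a bordism of open $n$-meshes is degenerate iff its compactification is. Combined with Lemma~\ref{lem:fct-ref-closed-composite} for closed meshes, this immediately yields the claim. I expect the only subtle point in either approach to be checking that the framing from Construction~\ref{con:fct-ref-ocyl-framing} on the $2$-fold open mapping cylinder really does restrict, on each edge of $\DeltaStrat{2}$, to the framing used to define the individual degeneracy bordisms; this is straightforward from the linear-interpolation formula but is the one computation that has to be unpacked rather than cited.
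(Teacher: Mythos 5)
Your proposal is correct, and it actually contains the paper's own argument as your ``alternative'' route: the proof in the text is literally the one-line deduction from the closed-mesh case, i.e.\ compactify, observe via Observation~\ref{obs:fct-ref-open-compactify} that degeneracy is detected on compactifications, and invoke Lemma~\ref{lem:fct-ref-closed-composite}. Your primary route --- running the $2$-fold open mapping cylinder argument directly for open meshes, in parallel with the proof of Lemma~\ref{lem:fct-ref-closed-composite} --- is a genuinely different and equally valid presentation, given that the open-mesh version of Proposition~\ref{prop:fct-ref-closed-ocyl-mesh} is available. The trade-off is minor: the direct route makes the witnessing open $n$-mesh bundle over $\DeltaStrat{2}$ explicit, whereas the compactification route is shorter but leans on the fact that composition in $\BMeshOpen{n}$ is computed compatibly with compactification. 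One small caveat on your ``subtle point'': in the open case the paper's own proof of the mapping-cylinder proposition only produces an open $n$-mesh bundle \emph{isomorphic} (as an $n$-framed stratified bundle) to $\CylO^2(\strat{M}_\bullet)$ with the interpolated framing, not literally equal to it; this is harmless since bordisms in $\BMeshOpen{n}$ are only ever considered up to framed isomorphism of the representing bundles, but it is the place where ``restricts on each edge to the framing used to define the individual degeneracy bordisms'' should be read as ``up to framed isomorphism over $\DeltaStrat{1}$''.
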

\begin{proof}
  Follows from Lemma~\ref{lem:fct-ref-closed-composite}
\end{proof}

\begin{definition}
  Suppose that $\cat{C}$ is a quasicategory and $\pi : \BMeshOpenL{n}{\cat{C}} \to \BMeshOpen{n}$ the functor which forgets the labels.
  Then a bordism $f$ of open $n$-meshes with labels in $\cat{C}$ is a \defn{degeneracy bordism}
  when $\pi(f)$ is a degeneracy bordism and $f$ is $\pi$-cartesian.
\end{definition}

\begin{lemma}\label{lem:fct-ref-open-deg-map-lift}
  Let $\cat{C}$ be a quasicategory and $\pi : \BMeshOpenL{n}{\cat{C}} \to \BMeshOpen{n}$
  the functor which forgets the labels.
  Then $\pi$ admits cartesian lifts of degeneracy maps.
\end{lemma}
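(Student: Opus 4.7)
The plan is to mirror the argument for closed meshes in Lemma~\ref{lem:fct-ref-closed-deg-map-lift} step by step, using the open analogues of the structural results already established in~\S\ref{sec:fct-ref-closed} and~\S\ref{sec:fct-ref-open}. First I would reduce to showing that for every $n$-framed refinement map $\varphi : \strat{M} \to \strat{N}$ between open $n$-mesh bundles and every labelling map $\ell_{\strat{N}} : \Exit(\strat{N}) \to \cat{C}$, there is a labelling $\ell$ of $\CylO(\varphi)$ that extends $\ell_{\strat{N}}$ and corresponds to a $\pi$-cartesian bordism. Here $\pi : \CylO(\varphi) \to \DeltaStrat{1}$ is the projection from the open mapping cylinder, which is an open $n$-mesh bundle by the open analogue of Proposition~\ref{prop:fct-ref-closed-ocyl-mesh}, and therefore represents the degeneracy bordism $\degMeshOpen(\varphi) : \strat{M} \pto \strat{N}$.

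Next I would identify such a $\pi$-cartesian lift with a right Kan extension $\ell : \Exit(\CylO(\varphi)) \to \cat{C}$ of $\ell_{\strat{N}}$ along the inclusion $\Exit(\strat{N}) \hookrightarrow \Exit(\CylO(\varphi))$, and compute it as a pointwise limit
\[
  \ell(e) \simeq \lim\bigl(
    \Exit(\strat{N})_{e/} \longrightarrow \Exit(\strat{N}) \xrightarrow{\;\ell_{\strat{N}}\;} \cat{C}
  \bigr).
\]
Split into two cases by the position of $e \in \CylO(\varphi)$. If $e$ lies in the copy of $\strat{M}$ at height $0$, then because open $n$-mesh bundles have fibrant total spaces (by the open analogue of Lemma~\ref{lem:fct-mesh-closed-n-bundle-flat}; alternatively by viewing $\strat{N}$ as a constructible subspace of its compactification, which is fibrant), Lemma~\ref{lem:b-geo-ocyl-cocartesian-map} provides an $\Exit(\pi)$-cocartesian lift $\gamma$ of the identity exit path starting at $e$ and ending at $\varphi(e)$. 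This $\gamma$ is an initial object of $\Exit(\strat{N})_{e/}$, so the limit exists and equals $\ell_{\strat{N}}(\varphi(e))$. If $e$ lies over the stratum $\intOC{0,1} \subseteq \DeltaStrat{1}$, the local trivialisation of $\pi$ over that stratum provides a stratum-preserving path from $e$ to a point $y \in \strat{N}$, exhibiting $y$ as an initial object of $\Exit(\strat{N})_{e/}$ and giving $\ell(e) \simeq \ell_{\strat{N}}(y)$.

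The main obstacle will be verifying that Lemma~\ref{lem:b-geo-ocyl-cocartesian-map} applies verbatim in the open setting, which hinges on fibrancy of $\strat{N}$. I would handle this either by transferring fibrancy across the compactification $\strat{N} \hookrightarrow \bar{\strat{N}}$, using that $\strat{N}$ is an open constructible subbundle and therefore locally conical wherever $\bar{\strat{N}}$ is, or by directly invoking the open counterpart of Lemma~\ref{lem:fct-mesh-closed-n-bundle-flat} obtained via compactification in~\S\ref{sec:fct-mesh-open-1}. Once fibrancy is in hand, both cases above are literal transcriptions of the closed argument and the pointwise limits are computed by initial objects, so the right Kan extension exists unconditionally and delivers the desired cartesian lift.
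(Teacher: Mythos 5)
Your proposal is correct and is exactly the route the paper takes: the paper's proof is simply "analogous to Lemma~\ref{lem:fct-ref-closed-deg-map-lift}," and you have carried out that analogy faithfully, including the right Kan extension computed pointwise via initial objects in the two cases. Your care over fibrancy of $\strat{N}$ is well placed and correctly resolved, since $\Exit$ of any open $n$-mesh bundle is a (flat) categorical fibration, so open $n$-meshes over a point are fibrant.
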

\begin{proof}
  Analogous to the proof of Lemma~\ref{lem:fct-ref-closed-deg-map-lift}.
\end{proof}

\subsection{Coarsest Refining Meshes}\label{sec:fct-ref-coarsest}

When $\varphi : \strat{E} \to \strat{K}$ is an $n$-framed stratified bundle and $f$ is an open or closed $n$-mesh bundle that refines $\varphi$,
the singularities in $f$ detect the moments at which the arrangement of the strata in $\strat{E}$ changes qualitatively with respect to the framing.
It is in this sense that we may see framed combinatorial topology as a form of Morse theory.
Not every $n$-framed stratified bundle can be refined by an $n$-mesh bundle.
Obstructions to meshability include an infinite number of strata and various wild phenomena such as infinitely oscillating curves.
This is less a defect of meshes but a feature,
since meshability provides a criterion for tameness.

\begin{definition}
  We say that an $n$-framed stratified bundle $\varphi$ is \defn{closed meshable} when there exists a closed $n$-mesh bundle
  $\xi$ together with a refinement map $\xi \to \varphi$ of $n$-framed stratified bundles.
  Similarly, an $n$-framed stratified bundle $\varphi$ is \defn{open meshable} when it is refined by an open $n$-mesh bundle.
\end{definition}

\begin{lemma}\label{lem:coarsest-mesh-refinement-1-cube}
	Let $\varphi : \strat{E} \to \strat{K}$ be a tame $1$-framed stratified bundle
  such that $\unstrat(\varphi)$ is the projection $\intCC{-1, 1} \times \unstrat(\strat{K}) \to \unstrat(\strat{K})$.
  % which satisfies:
  % \begin{enumerate}
  %   \item $\strat{E}$ and $\strat{K}$ have locally finite triangulations.
  %   \item Each fibre of $\varphi$ intersects finitely many strata of $\strat{E}$.
  %   \item $\unstrat(\varphi)$ is the projection $\intCC{-1, 1} \times \unstrat(\strat{K}) \to \unstrat(\strat{K})$.
  %   \item The framing of $\varphi$ is piecewise linear.
  % \end{enumerate}
	Then there exists a coarsest closed $1$-mesh bundle $\xi : \strat{X}_1 \to \strat{X}_0$
	that refines $\varphi$ as a $1$-framed stratified bundle so that $\strat{M}_0$ is a PL stratified pseudomanifold.
\end{lemma}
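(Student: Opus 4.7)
The plan is to build $\xi$ in two stages: first refine the base $\strat{K}$ to a PL stratified pseudomanifold $\strat{X}_0$ that detects every critical event of $\varphi$, and then refine the total space to $\strat{X}_1$ so that the projection becomes a stratified map whose fibres are closed $1$-meshes. Finally I will check the mesh-bundle axioms via the recognition criterion of Lemma~\ref{lem:fct-mesh-closed-1-pl}.

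First I would fix, using tameness of $\varphi$, a finite PL triangulation of $\strat{E}$ and of $\intCC{-1,1}\times\strat{K}$ with respect to which the framing embedding is simplicial. For each stratum $s$ of $\strat{E}$ let $\bar s$ denote its closure in $\intCC{-1,1}\times\strat{K}$, and let $\pi \colon \intCC{-1,1}\times\strat{K}\to\strat{K}$ be the projection. I would then form the finite locally finite family $A$ of closed PL subspaces of $\strat{K}$ consisting of the images $\pi(\bar s)$ for every stratum $s$ of $\strat{E}$, together with the images of $\pi$ restricted to those strata on which $\pi$ fails to be a local PL homeomorphism (detected combinatorially as simplices of $\strat{E}$ whose projection drops dimension), together with $\pi(\{\pm 1\}\times \strat{K})=\strat{K}$. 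Applying Proposition~\ref{prop:coarsest-subdivision} to $\strat{K}$ and $A$ yields a coarsest refinement $\strat{X}_0\to\strat{K}$ by a PL stratified pseudomanifold adapted to $A$.

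Next I would pull $\varphi$ back along $\strat{X}_0\to\strat{K}$ (preserving the framing) and apply Lemma~\ref{lem:strat-coarsest-domain} to obtain the coarsest refinement $\strat{X}_1$ of the total space that makes $\xi \colon \strat{X}_1\to\strat{X}_0$ stratified and $\strat{X}_1$ a PL stratified pseudomanifold; note the fibres of $\unstrat(\xi)$ intersect only finitely many strata by tameness, so the hypothesis of that lemma is satisfied. By construction the strata of $\strat{X}_1$ are obtained by splitting the pulled-back strata of $\strat{E}$ into connected components that respect the closures $\bar s\cap(\intCC{-1,1}\times \strat{X}_0)$ for each $s$.

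The main work is then to verify that $\xi$ is a closed $1$-mesh bundle. By Lemma~\ref{lem:fct-mesh-closed-1-pl} it suffices to check that $\xi$ is a stratified fibre bundle, that each fibre is a closed $1$-mesh, that the framing is a closed embedding (which is automatic from $\unstrat(\varphi)$ being the projection on $\intCC{-1,1}\times\strat{K}$), that $\strat{X}_1$ satisfies the frontier condition (which holds because it is a PL stratified pseudomanifold), and that $\xi$ is essentially tame. The interesting clauses are the first two: over each open stratum $U$ of $\strat{X}_0$, the adaptedness of the family $A$ ensures that the relative arrangement of strata in the fibres of $\xi$ over $U$ does not change combinatorially, and local triviality then follows from PL triviality of stratified maps over open PL cells; each fibre is a closed interval stratified into points (where projections of singular strata lie) and open intervals between them, hence a closed $1$-mesh. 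Here the main obstacle is to show that the splitting forced by Proposition~\ref{prop:coarsest-subdivision} and Lemma~\ref{lem:strat-coarsest-domain} is exactly fine enough to guarantee local triviality yet still coarse enough to be universal; I would argue this using Observation~\ref{obs:coarsest-subdivision-local}, which localises the construction to small PL neighbourhoods, together with a direct combinatorial check on a PL model. For coarseness, any other closed $1$-mesh bundle $\xi'\colon\strat{X}_1'\to\strat{X}_0'$ refining $\varphi$ produces, through its singular subspace and Lemma~\ref{lem:fct-mesh-closed-1-sing-covering}, a family of closed PL subspaces of $\strat{K}$ containing $A$; by the universal property in Proposition~\ref{prop:coarsest-subdivision} and Lemma~\ref{lem:strat-coarsest-domain} this forces $\xi'$ to factor through $\xi$, giving the required universal property.
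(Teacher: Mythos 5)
Your proposal follows the paper's proof essentially verbatim: refine the base with Proposition~\ref{prop:coarsest-subdivision} applied to the closed images of the strata of $\strat{E}$, refine the total space with Lemma~\ref{lem:strat-coarsest-domain}, recognise the result as a closed $1$-mesh bundle via Lemma~\ref{lem:fct-mesh-closed-1-pl}, and deduce coarseness from the fact that any refining closed $1$-mesh bundle must satisfy the same constraints. The only deviation is that you enlarge the adapted family $A$ with the loci where the projection degenerates on a stratum; this does not endanger universality (such loci are forced to be singular in any refining mesh bundle, so their heights appear in the base of any competitor), and it in fact addresses the one place where the paper's argument is terse, namely that constructibility of the closed images alone need not detect interior turning points of a stratum.
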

\begin{proof}
	We first apply Proposition~\ref{prop:coarsest-subdivision} to obtain the coarsest
	refinement $\strat{X}_0 \to \strat{K}$ so that $\strat{X}_0$ is a PL stratified pseudomanifold
	and for each stratum $p \in \stratPos{\strat{E}}$ the closed image $\cl(f(\strat{E}_p))$
	is a constructible subspace of $\strat{X}_0$.
	Then we apply Lemma~\ref{lem:strat-coarsest-domain}
  to obtain the coarsest refinement
	$\strat{X}_1 \to \strat{E}$ such that $\strat{X}_1$ is a PL stratified pseudomanifold
  and $\varphi$ induces a stratified map $\xi : \strat{X}_1 \to \strat{X}_0$.

	The stratified map $\xi : \strat{X}_1 \to \strat{X}_0$ inherits the $1$-framing from $\varphi$.
  The strata in the fibres of $\xi$ consist of
	isolated points and open intervals; half open or closed intervals in the interior
	are excluded since $\strat{X}_1$ is a PL stratified pseudomanifold by construction.  
	The number and type of strata in the fibres is constant over every stratum
	of $\strat{X}_0$ because the closure of the image $\cl(\varphi(\strat{E}_p))$ is a constructible subspace for each stratum $p \in \stratPos{\strat{E}}$.
	Non-intersecting strata embedded in $\R$ can not change their positions relative to each other and so $\xi$ must be a stratified fibre bundle.
  Since $\strat{X}_1$ is a PL stratified pseudomanifold, it satisfies the frontier condition,
  and so by Lemma~\ref{lem:fct-mesh-closed-1-pl} the $1$-framed bundle
	$\xi : \strat{X}_1 \to \strat{X}_0$ must be a closed $1$-mesh bundle.
	Any other mesh bundle that refines $\varphi$ must also satisfy the criteria
	that we enforced and so must refine $\xi$.
\end{proof}

\begin{lemma}\label{lem:coarsest-mesh-refinement-n-cube}
	Let $\varphi : \strat{E} \to \strat{B}$ be a tame $n$-framed stratified bundle
  such that $\unstrat(\varphi)$ is the projection $\intCC{-1, 1}^n \times \unstrat(\strat{K}) \to \unstrat(\strat{B})$.  
	Then there exists a coarsest closed $n$-mesh bundle
  $\xi : \strat{X}_n \to \strat{X}_0$
  so that $\xi$ refines $\varphi$ as an $n$-framed stratified bundle
  and $\strat{X}_0$ is a PL stratified pseudomanifold.  
\end{lemma}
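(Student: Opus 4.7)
The plan is to proceed by induction on $n \geq 1$, with the base case $n = 1$ being exactly Lemma~\ref{lem:coarsest-mesh-refinement-1-cube}. The strategy for the inductive step is to peel off the outermost closed $1$-mesh bundle of the target mesh (which, by the composite framing convention and Observation~\ref{obs:fct-mesh-closed-n-composite}, corresponds to the first coordinate of the $n$-framing), then recurse on the remaining $(n-1)$-framed structure.

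Concretely, for the inductive step I would view $\varphi$ as the composite of a $1$-framed bundle $\varphi' : \strat{E} \to \strat{K}$ in the first coordinate, where $\strat{K}$ has underlying space $\intCC{-1, 1}^{n-1} \times \unstrat(\strat{B})$ equipped with the product stratification (the first factor trivially stratified), followed by the projection $\strat{K} \to \strat{B}$ carrying the inherited $(n-1)$-framing in the remaining coordinates. Tameness of $\varphi$ restricts to tameness of $\varphi'$. Applying Lemma~\ref{lem:coarsest-mesh-refinement-1-cube} to $\varphi'$ yields a coarsest closed $1$-mesh bundle $\xi_n : \strat{X}_n \to \strat{X}_{n-1}$ with $\strat{X}_{n-1}$ a PL stratified pseudomanifold refining $\strat{K}$. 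The ambient $(n-1)$-framing descends to a tame $(n-1)$-framed bundle $\psi : \strat{X}_{n-1} \to \strat{B}$ with underlying projection $\intCC{-1, 1}^{n-1} \times \unstrat(\strat{B}) \to \unstrat(\strat{B})$, using the finite triangulation of the PL stratified pseudomanifold $\strat{X}_{n-1}$ compatible with projection to $\strat{B}$. By the inductive hypothesis, $\psi$ admits a coarsest closed $(n-1)$-mesh bundle $\zeta : \strat{X}'_{n-1} \to \strat{X}_0$ refining it with $\strat{X}_0$ a PL stratified pseudomanifold. Pulling back $\xi_n$ along the refinement $\strat{X}'_{n-1} \to \strat{X}_{n-1}$ produces, by Lemma~\ref{lem:fct-mesh-closed-1-pullback}, a closed $1$-mesh bundle $\xi'_n : \strat{X}'_n \to \strat{X}'_{n-1}$, and the composite of $\xi'_n$ with $\zeta$ is the sought closed $n$-mesh bundle refining $\varphi$.

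The main obstacle is verifying the coarsest property. Given a competing closed $n$-mesh bundle $\eta_n \circ \cdots \circ \eta_1 : \strat{Y}_n \to \strat{Y}_0$ refining $\varphi$ with $\strat{Y}_0$ a PL stratified pseudomanifold, the outer piece $\eta_n : \strat{Y}_n \to \strat{Y}_{n-1}$ refines $\varphi'$ once $\strat{Y}_{n-1}$ is viewed as a refinement of $\strat{K}$ via the lower portion $\eta_{n-1} \circ \cdots \circ \eta_1$, so coarsestness of $\xi_n$ yields a refinement $\strat{Y}_{n-1} \to \strat{X}_{n-1}$ through which $\eta_n$ factors. The remaining $(n-1)$-mesh bundle refines $\psi$, so the inductive hypothesis produces a factorization through $\zeta$, and the universal property of the pullback defining $\xi'_n$ then yields the required refinement of $\eta_n \circ \cdots \circ \eta_1$ through $\xi'_n \circ \zeta$. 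The delicate point is ensuring that the coordinate ordering of the composite framing is respected across both inductive steps, so that the factorizations produced by the $1$-dimensional lemma and by the inductive hypothesis combine compatibly; here the canonical description of intermediate spaces as quotients of $\strat{X}_n$ by their last coordinates from Observation~\ref{obs:fct-mesh-closed-n-composite} pins down the intermediate stratifications uniquely and eliminates the ambiguity.
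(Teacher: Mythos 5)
Your proposal is correct and follows essentially the same route as the paper: induct on $n$, peel off the first framing coordinate as the coarsest refining closed $1$-mesh bundle via Lemma~\ref{lem:coarsest-mesh-refinement-1-cube}, apply the inductive hypothesis to the resulting $(n-1)$-framed base, and reassemble by pulling back the $1$-mesh bundle (Lemma~\ref{lem:fct-mesh-closed-1-pullback}), with the universal property argued by factoring a competitor through each stage. The only difference is bookkeeping — the paper explicitly pre-refines the intermediate base and total space to PL stratified pseudomanifolds before invoking the $1$-dimensional lemma, whereas you fold that step into the invocation itself — and your coarsestness argument is, if anything, spelled out slightly more fully than the paper's.
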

\begin{proof}
	The claim is trivial for $n = 0$ and covered by Lemma~\ref{lem:coarsest-mesh-refinement-1-cube}
	for $n = 1$. Suppose for induction that $n \geq 2$.
	Using Proposition~\ref{prop:coarsest-subdivision} and
	Lemma~\ref{lem:strat-coarsest-domain}
	we can find the coarsest PL stratified pseudomanifold $\strat{D}$
  with $\unstrat(\strat{D}) = \intCC{-1, 1}^{n - 1} \times \unstrat(\strat{K})$  
	so that
	$\pi^{\geq n} : \unstrat(\strat{D}) \to \unstrat(\strat{K})$
	induces a stratified map $\strat{D} \to \strat{K}$.
	Then we find the coarsest PL stratified pseudomanifold $\strat{E}'$ with a refinement
	$\strat{E}' \to \strat{E}$ so that the projection $\pi^{\geq n - 1} : \stratForget{E}
		\to D$ becomes a stratified map $\strat{E}' \to \strat{D}$.
	At this point we have constructed the diagram of stratified spaces
	\begin{equation}\label{eq:coarsest-d-mesh-split}
		\begin{tikzcd}
			{\strat{E}'} \ar[r] \ar[d] &
			{\strat{D}} \ar[r] &
			{\strat{K}} \ar[d, "\id"] \\
			{\strat{E}} \ar[rr, "\varphi"'] &
			{} &
			{\strat{K}}
		\end{tikzcd}
	\end{equation}
	which consists of the coarsest refinements with PL stratified pseudomanifolds that make $\varphi : \strat{E} \to \strat{K}$
	factor through the projection.
	The map $\strat{E}' \to \strat{D}$ inherits a $1$-framing
  and satisfies the conditions of Proposition~\ref{lem:coarsest-mesh-refinement-1-cube}.
  We may therefore refine the $1$-framed bundle
	$\strat{E}' \to \strat{D}$ with the coarsest refining closed $1$-mesh bundle
	$\strat{X}_n \to \strat{X}_{n - 1}$ over a PL stratified pseudomanifold
  $\strat{X}_{n - 1}$.
	This results in the diagram
	\begin{equation}\label{eq:coarsest-d-mesh-left}
		\begin{tikzcd}
			{\strat{X}_n} \ar[r] \ar[d] &
			{\strat{X}_{n - 1}} \ar[r] \ar[d] &
			{\strat{K}} \ar[d, "\id"] \\
			{\strat{E}'} \ar[r] &
			{\strat{D}} \ar[r] &
			{\strat{K}}
		\end{tikzcd}
	\end{equation}
	The map $\strat{X}_{n - 1} \to \strat{K}$ is an $n$-framed bundle
  that satisfies the conditions of this proposition,
	and so by induction we can obtain the coarsest refining closed $(n - 1)$-mesh bundle
	$\strat{X}_{n - 1}' \to \strat{X}_0'$.
	By Lemma~\ref{lem:fct-mesh-closed-1-pullback} closed $1$-mesh bundles are closed under pullback, which we use to fill
	in the left square in the diagram
	\begin{equation}\label{eq:coarsest-d-mesh-right}
		\begin{tikzcd}
			{\strat{X}'_n} \ar[r] \ar[d] \pullbackcorner &
			{\strat{X}'_{n - 1}} \ar[r] \ar[d] &
			{\strat{X}'_0} \ar[d] \\
			{\strat{X}_n} \ar[r] &
			{\strat{X}_{n - 1}} \ar[r] &
			{\strat{K}}
		\end{tikzcd}
	\end{equation}
	This finishes the construction of a closed $n$-mesh bundle
  $\strat{X}_n' \to \strat{X}_0$
	that refines the given $n$-framed bundle $\varphi$.
	Our construction guarantees that this closed $n$-mesh bundle is also the
	coarsest one over a base space that is a PL stratified pseudomanifold, which can be checked by observing how any other such closed $n$-mesh bundle
  is consecutively refined by the maps in the diagrams
	(\ref{eq:coarsest-d-mesh-split}), (\ref{eq:coarsest-d-mesh-left}) and (\ref{eq:coarsest-d-mesh-right}).
\end{proof}

% \begin{proposition}
% 	Let $\varphi : \strat{E} \to \strat{B}$ be a $n$-framed stratified bundle that satisfies:
%   \begin{enumerate}
%     \item $\strat{E}$ and $\strat{K}$ have a finite triangulation.
%     \item There exists a closed $n$-mesh bundle $\hat{\xi}: \hat{X}_n \to \hat{X}_0$ with $\unstrat(\hat{\xi}) = \unstrat(\varphi)$.
%     \item The framing of $\varphi$ is piecewise linear.
%   \end{enumerate}
% 	Then there exists a coarsest closed $n$-mesh bundle
%   $\xi : \strat{X}_n \to \strat{X}_0$
%   so that $\xi$ refines $\varphi$ and $\strat{X}_0$ is a PL CS set.  
% \end{proposition}

\begin{proposition}\label{prop:coarsest-mesh-refinement-open-pl}
  Let $\varphi : \strat{E} \to \strat{K}$ be an essentially tame $n$-framed stratified bundle such that $\unstrat(\varphi)$ is the projection $\R^n \times \unstrat(\strat{K}) \to \unstrat(\strat{K})$.  
	Then there exists a coarsest open $n$-mesh bundle
  $f : \strat{M}_n \to \strat{M}_0$
  so that $f$ refines $\varphi$ as an $n$-framed stratified bundle
  and $\strat{M}_0$ is a PL stratified pseudomanifold.  
\end{proposition}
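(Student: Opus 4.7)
The plan is to reduce to the closed case established in Lemma~\ref{lem:coarsest-mesh-refinement-n-cube} via compactification in all $n$ framing directions. Concretely, fix once and for all a piecewise linear order preserving homeomorphism $\R \cong \intOO{-1,1}$; this induces an $n$-framed open embedding $\R^n \hookrightarrow \intCC{-1,1}^n$. Replacing $\varphi$ by a tame representative in its $n$-framed isomorphism class, transport $\unstrat(\strat{E}) \hookrightarrow \R^n \times \unstrat(\strat{K})$ across this embedding and then form a tame $n$-framed stratified bundle $\bar{\varphi} : \bar{\strat{E}} \to \strat{K}$ with underlying space $\intCC{-1,1}^n \times \unstrat(\strat{K})$ by adjoining the boundary $\bigl(\intCC{-1,1}^n \setminus \intOO{-1,1}^n\bigr) \times \unstrat(\strat{K})$, stratified compatibly with the framing (e.g.\ as the underlying stratification of $\gridMeshClosed^n \ord{0,\ldots,0} \triangleleft (\id_\strat{K})$ relative to $\strat{K}$). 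Because the open embedding is piecewise linear and $\varphi$ is tame, this extension can be chosen tame.

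Next, apply Lemma~\ref{lem:coarsest-mesh-refinement-n-cube} to obtain the coarsest closed $n$-mesh bundle $\bar{\xi} : \bar{\strat{X}}_n \to \bar{\strat{X}}_0$ that refines $\bar{\varphi}$ over a PL stratified pseudomanifold. By construction, the adjoined boundary hyperfaces $\{-1\}, \{+1\} \subset \intCC{-1,1}$ in each framing direction are already unions of strata of $\bar{\varphi}$, so they are unions of strata of $\bar{\xi}$ as well; consequently, in every fibre of $\bar{\xi}$ the outermost singular strata in each of the $n$ coordinate directions correspond precisely to these boundary hyperfaces. Removing these outermost singular strata fibrewise, and rescaling back via the fixed isomorphism $\intOO{-1,1} \cong \R$, yields an open $n$-mesh bundle $f : \strat{M}_n \to \strat{M}_0$ (using Definition~\ref{def:fct-mesh-open-1-bundle} and the inductive structure of open $n$-mesh bundles) refining $\varphi$, with $\strat{M}_0$ an open constructible subspace of $\bar{\strat{X}}_0$ and hence a PL stratified pseudomanifold by Observation~\ref{obs:coarsest-subdivision-local}.

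For coarsestness, suppose $g : \strat{N}_n \to \strat{N}_0$ is another open $n$-mesh bundle refining $\varphi$ with $\strat{N}_0$ a PL stratified pseudomanifold. Compactify $g$ fibrewise in each of the $n$ directions by the construction dual to the one above — i.e.\ adjoin the boundary hyperfaces as new outermost singular strata — to obtain a closed $n$-mesh bundle $\bar{g}$ over $\strat{K}$ that refines $\bar{\varphi}$ with base a PL stratified pseudomanifold. By universality of $\bar{\xi}$, we get a refinement $\bar{\xi} \to \bar{g}$; restricting to the interior $\intOO{-1,1}^n \times \strat{K}$ produces the desired refinement $f \to g$. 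The main obstacle is ensuring that this compactification-and-restriction operation actually behaves as a bijection between open $n$-mesh refinements of $\varphi$ and those closed $n$-mesh refinements of $\bar{\varphi}$ that carry the adjoined boundary as outermost strata; the subtle point is checking that the boundary stratification we adjoin is \emph{itself} coarsest possible, so that passing to $\bar{\xi}$ does not inadvertently force refinements of $\strat{K}$ beyond those demanded by $g$. This is where the inductive structure of Lemma~\ref{lem:coarsest-mesh-refinement-n-cube} and Observation~\ref{obs:coarsest-subdivision-local} do the essential work, localising the coarsest refinement away from the added boundary.
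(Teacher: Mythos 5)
Your proposal follows essentially the same route as the paper: replace $\varphi$ by a tame representative, transport it across a fixed PL order-preserving isomorphism $\R^n \cong \intOO{-1,1}^n$, extend to a tame $n$-framed bundle over the closed cube $\intCC{-1,1}^n \times \unstrat(\strat{K})$, apply Lemma~\ref{lem:coarsest-mesh-refinement-n-cube}, and restrict the resulting coarsest closed $n$-mesh bundle back to the open interior (the paper takes the coarsest extension of the stratification to the closed cube rather than adjoining a fixed boundary grid, but this difference is washed out on the interior). The one thing to correct is the orientation of your refinement maps in the coarsestness step: since $\bar{\xi}$ is coarsest, universality yields a refinement map $\bar{g} \to \bar{\xi}$ (i.e.\ $\bar{g}$ refines $\bar{\xi}$), which restricts over the interior to $g \to f$ and exhibits $f$ as coarsest, whereas your arrows $\bar{\xi} \to \bar{g}$ and $f \to g$ point the wrong way under the paper's convention that refinement maps go from the finer to the coarser object.
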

\begin{proof}
  We pick any order-preserving and piecewise linear isomorphism $\intOO{-1, 1} \to \R$.
  Then the $n$-framed stratified bundle $\varphi$ is isomorphic
  to a tame $n$-framed stratified bundle $\varphi' : \strat{E}' \to \strat{K}$
  where $\unstrat(\strat{E}') = \intOO{-1, 1}^n \times \unstrat(\strat{K})$.
  Then we can construct the coarsest stratification $\strat{E}''$ of
  $\intCC{-1, 1}^n \times \unstrat(\strat{K})$ so that $\strat{E}'$ is a constructible
  subspace of $\strat{E}''$.
  The projection map $\varphi'' : \strat{E}'' \to \strat{K}$ with the induced
  $n$-framing then satisfies the conditions of Lemma~\ref{lem:coarsest-mesh-refinement-n-cube}.
  Hence there exists a coarsest closed $n$-mesh bundle $\xi : \strat{X}_n \to \strat{X}_0$
  that refines $\varphi''$ so that $\strat{X}_0$ is a PL stratified pseudomanifold.
  We write $\strat{M}_0 := \strat{X}_0$ and let $\strat{M}_n$ be the intersection
  $\strat{X}_n \cap (\intOO{-1, 1}^n \times \unstrat(\strat{K}))$.
  Rescaling the framing then induces the open $n$-mesh bundle $f: \strat{M}_n \to \strat{M}_0$ as desired.
\end{proof}

\begin{observation}\label{obs:coarsest-mesh-refinement-local}
  By Observation~\ref{obs:coarsest-subdivision-local} the coarsest subdivisions
  obtained via Proposition~\ref{prop:coarsest-subdivision} are determined locally.
  Going through the constructions in Lemma~\ref{lem:coarsest-mesh-refinement-1-cube}
  and Lemma~\ref{lem:coarsest-mesh-refinement-n-cube} we see that the coarsest
  open $n$-mesh obtained in Proposition~\ref{prop:coarsest-mesh-refinement-open-pl}
  is also determined locally.
  Concretely, suppose that $\varphi : \strat{E} \to \strat{K}$ is an $n$-framed stratified
  bundle that satisfies the conditions of Proposition~\ref{prop:coarsest-mesh-refinement-open-pl} and let
  \[
    \begin{tikzcd}
      {\strat{E}'} \ar[r, hook] \ar[d, "\varphi'", swap] \pullbackcorner &
      {\strat{E}} \ar[d, "\varphi"] \\
      {\strat{K}'} \ar[r, hook] &
      {\strat{K}}
    \end{tikzcd}
  \]
  be the restriction of $\varphi$ over an open subspace $\strat{K}' \subseteq \strat{K}$.
  Let $f$, $f'$ are the coarsest open $n$-mesh bundles over PL stratified pseudomanfolds
  which refine $\varphi$ and $\varphi'$, respectively,
  as produced by Proposition~\ref{prop:coarsest-mesh-refinement-open-pl}. 
  Then $f$ restricts to $f'$:
  \[
    \begin{tikzcd}
      {\strat{M}'} \ar[r, hook] \ar[d, "f'", swap] \pullbackcorner &
      {\strat{M}} \ar[d, "f"] \\
      {\strat{B}'} \ar[r, hook] &
      {\strat{B}}
    \end{tikzcd}
  \]
\end{observation}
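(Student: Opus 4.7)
The plan is to unfold the construction of the coarsest refining mesh from Proposition~\ref{prop:coarsest-mesh-refinement-open-pl} step by step and verify that each construction commutes with restriction along the open inclusion $I : \strat{K}' \hookrightarrow \strat{K}$. The key input is Observation~\ref{obs:coarsest-subdivision-local}, which asserts that the coarsest PL refinement produced by Proposition~\ref{prop:coarsest-subdivision} is determined locally. From this, I will first deduce locality for Lemma~\ref{lem:strat-coarsest-domain}: since that coarsest refinement is characterised in its proof via Proposition~\ref{prop:coarsest-subdivision} applied to the closed preimages $f^{-1}(\cl(\strat{X}_p))$, and preimages commute with pullback along open embeddings, locality transfers directly. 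Thus both building blocks of Lemma~\ref{lem:coarsest-mesh-refinement-1-cube} and Lemma~\ref{lem:coarsest-mesh-refinement-n-cube} are local.

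Next, I will prove the analogue of the stated observation for the closed case of Lemma~\ref{lem:coarsest-mesh-refinement-n-cube} by induction on $n$. The base case $n \leq 1$ is immediate from locality of Proposition~\ref{prop:coarsest-subdivision} and Lemma~\ref{lem:strat-coarsest-domain} together with the uniqueness characterisation in Lemma~\ref{lem:coarsest-mesh-refinement-1-cube}: the coarsest closed $1$-mesh bundle refining the restriction $\varphi |_{\strat{K}'}$ must agree with the restriction of the coarsest refining closed $1$-mesh bundle of $\varphi$, since both are determined by the local refinement data. For the inductive step, I unwind the three-stage construction from the proof of Lemma~\ref{lem:coarsest-mesh-refinement-n-cube}: produce the coarsest refinement $\strat{D}$ of the projection onto the last $n-1$ coordinates (local by Observation~\ref{obs:coarsest-subdivision-local}), factor $\varphi$ through the projection via Lemma~\ref{lem:strat-coarsest-domain} (local by the argument above), apply the $1$-dimensional case to obtain the coarsest closed $1$-mesh bundle $\strat{X}_n \to \strat{X}_{n-1}$ (local by the base case), apply the inductive hypothesis to the resulting $(n-1)$-framed bundle $\strat{X}_{n-1} \to \strat{K}$, and take a pullback. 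Each of these operations commutes with restriction along $I$; the first three by the already established local statements, and the last since pullbacks of stratified spaces commute with pullback along further base change.

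Finally, I transfer the result to the open case in Proposition~\ref{prop:coarsest-mesh-refinement-open-pl}. The construction there proceeds by replacing $\varphi$ with a tame bundle $\varphi'$ over $\intOO{-1,1}^n \times \unstrat(\strat{K})$, extending to $\varphi''$ over $\intCC{-1,1}^n \times \unstrat(\strat{K})$ (again using Proposition~\ref{prop:coarsest-subdivision}, hence local), applying the closed case to obtain $\xi : \strat{X}_n \to \strat{X}_0$, and then intersecting with the open cube. All four operations are compatible with pullback along $I$: the rescaling step is fibrewise; the extension to the closed cube is local by Observation~\ref{obs:coarsest-subdivision-local}; the closed case is local by the induction above; and the intersection with $\intOO{-1,1}^n \times \unstrat(\strat{K})$ is a pullback. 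Composing these compatibilities yields the required pullback square $f' \cong I^* f$.

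The main obstacle will be the bookkeeping around the extension step from the open to the closed cube, since the coarsest PL stratification of the closed extension could in principle introduce strata near the boundary $\partial \intCC{-1,1}^n \times \unstrat(\strat{K})$ that behave non-locally in $\strat{K}$. However, the only new strata introduced by this extension lie above the coordinate boundary, so their intersection with $\intOO{-1,1}^n \times \unstrat(\strat{K}')$ is empty and the restriction $I^* f$ depends only on the interior behaviour, where Observation~\ref{obs:coarsest-subdivision-local} applies directly.
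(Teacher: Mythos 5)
Your proposal is correct and follows exactly the route the paper intends: the Observation gives no detailed argument beyond "going through the constructions", and your step-by-step verification that each stage of Lemma~\ref{lem:coarsest-mesh-refinement-1-cube}, Lemma~\ref{lem:coarsest-mesh-refinement-n-cube} and Proposition~\ref{prop:coarsest-mesh-refinement-open-pl} commutes with restriction along an open inclusion, anchored in Observation~\ref{obs:coarsest-subdivision-local}, is precisely the intended justification. Your closing remark about the boundary strata introduced by the compactification step correctly identifies and disposes of the only point where locality in $\strat{K}$ could conceivably fail.
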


\begin{remark}
  Coarsest refining meshes have been discussed before in~\cite{framed-combinatorial-topology}
  using a different approach.
  While we have constructed a coarsest refining mesh for PL framed stratified spaces
  directly, Dorn and Douglas show that there exists some refining mesh
  that is not necessarily the coarsest.
  They then demonstrate that every meshable framed stratified space admits 
  a coarsest mesh refinement by calculating the join of all refining meshes.
  The techniques of~\cite{framed-combinatorial-topology} likely generalise
  to bundles as well, but this is not discussed in detail.
\end{remark}

\subsection{Normal Forms}\label{sec:fct-ref-nf}

For an unlabelled open $n$-mesh $\strat{M}$ there always exists a refinement
map $\strat{M} \to \R^n$ which forgets all strata of $\strat{M}$, and therefore
a degeneracy bordism $\strat{M} \pto \R^n$. However, when $\strat{M}$ is equipped
with labels in some quasicategory $\cat{C}$, not every refinement map out of
$\strat{M}$ is compatible with the labelling.

\begin{example}
  Consider the following $1$-mesh $\strat{M}$ with labels in $\ord{1}$:
  \[
    \begin{tikzpicture}
      \draw[mesh-stratum] (0, 0) -- (4, 0);
      \node[mesh-vertex] at (1, 0) {};
      \node[mesh-vertex] at (2, 0) {};
      \node[mesh-vertex] at (3, 0) {};
      \node[] at (0.5, 0.5) {1}; 
      \node[] at (1, 0.5) {0}; 
      \node[] at (1.5, 0.5) {1}; 
      \node[] at (2, 0.5) {1}; 
      \node[] at (2.5, 0.5) {1}; 
      \node[] at (3, 0.5) {0};
      \node[] at (3.5, 0.5) {1}; 
    \end{tikzpicture}
  \]
  Because the singular strata of $\strat{M}$ on the left and right are labelled differently from their surrounding regular strata, we can not coarsen $\strat{M}$ in any way which would forget these two singular strata
  while still preserving the labelling.
  This is not an issue for the middle singular stratum, and so we
  can coarsen $\strat{M}$ by merging that singular stratum with its
  surrounding regular strata as follows:
  \[
    \begin{tikzpicture}
      \draw[mesh-stratum] (0, 0) -- (4, 0);
      \node[mesh-vertex] at (1, 0) {};
      \node[mesh-vertex] at (3, 0) {};
      \node[] at (0.5, 0.5) {1}; 
      \node[] at (1, 0.5) {0}; 
      \node[] at (2, 0.5) {1}; 
      \node[] at (3, 0.5) {0};
      \node[] at (3.5, 0.5) {1}; 
    \end{tikzpicture}
  \]
  This open $1$-mesh now can not be coarsened further, and therefore
  is the normal form $\nf(\strat{M})$ of the open labelled $1$-mesh $\strat{M}$.
\end{example}

% \begin{definition}
%   Let $\cat{C}$ be a quasicategory.  
%   An $n$-mesh $\strat{M}$ with labels in $\cat{C}$ is \defn{non-degenerate} when every
%   degeneracy bordism $\strat{M} \pto \strat{N}$ in $\BMeshOpenL{n}{\cat{C}}$
%   is an equivalence.
% \end{definition}
\begin{proposition}\label{prop:fct-ref-nf-open}
  Let $\cat{C}$ be a quasicategory and $\strat{M} \in \BMeshOpenL{n}{\cat{C}}$ an open $n$-mesh with labels
  in $\cat{C}$. Then the $\infty$-subcategory of the slice $\BMeshOpenL{n}{\cat{C}}_{\strat{M}/}$ consisting
  of degeneracy maps out of $\strat{M}$ has a terminal object
  $\strat{M} \pto \nf(\strat{M})$.
\end{proposition}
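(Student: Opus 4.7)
My plan is to prove this by induction on $n$, after first characterising the degeneracy subcategory explicitly. By Lemma~\ref{lem:fct-ref-open-deg-map-lift}, every labelled degeneracy bordism in $\BMeshOpenL{n}{\cat{C}}$ arises as a $\pi$-cartesian lift of an unlabelled degeneracy, where $\pi : \BMeshOpenL{n}{\cat{C}} \to \BMeshOpen{n}$ forgets the labels. Combined with the characterisation of unlabelled degeneracies as arising from $n$-framed refinement maps, a bordism $(\strat{M}, \ell) \pto (\strat{N}, \ell_{\strat{N}})$ in the degeneracy subcategory corresponds to an $n$-framed refinement $\varphi : \strat{M} \to \strat{N}$ such that $\ell$ factors through $\Exit(\varphi)$ up to equivalence -- equivalently, such that $\ell$ sends every exit path in $\strat{M}$ collapsed by $\varphi$ to an equivalence in $\cat{C}$. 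The degeneracy subcategory is thus essentially a poset, and finding a terminal object amounts to finding the coarsest compatible refinement.

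For the base case $n = 1$, I would call a singular stratum $s$ of $\strat{M}$ \emph{$\ell$-collapsible} if both incident exit paths from $s$ to its two adjacent regular strata are sent to equivalences by $\ell$. Define $\nf(\strat{M})$ as the open labelled $1$-mesh obtained by collapsing all $\ell$-collapsible singular strata at once and merging each pair of flanking regular strata; the induced labelling on a merged regular stratum is well-defined up to equivalence by the 2-out-of-3 property for equivalences in $\cat{C}$. One then verifies this is a valid open labelled $1$-mesh equipped with a canonical degeneracy $\strat{M} \pto \nf(\strat{M})$. For universality, any degeneracy $(\strat{M}, \ell) \pto (\strat{N}', \ell')$ collapses some subset of the $\ell$-collapsible singular strata, and the remaining collapsible ones stay $\ell'$-collapsible in $\strat{N}'$ since their adjacent labels are equivalent to those in $\strat{M}$; this yields a unique further degeneracy $\strat{N}' \pto \nf(\strat{M})$.

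For the inductive step, I would use the packing equivalence (the open analogue of Proposition~\ref{prop:fct-mesh-closed-n-pack}), $\BMeshOpenL{n}{\cat{C}} \simeq \BMeshOpenL{n-1}{\BMeshOpenL{1}{\cat{C}}}$, to view $\strat{M}$ as a labelled open $(n-1)$-mesh whose label at a point $b$ is the labelled fibre of the top $1$-mesh bundle over $b$. I would first apply the 1D case fibrewise to normalise each fibre; this yields a new labelling of the underlying open $(n-1)$-mesh in $\BMeshOpenL{1}{\cat{C}}$. Then I would apply the inductive hypothesis to this labelled open $(n-1)$-mesh to produce the base-level normal form. The composite degeneracy realises $\nf(\strat{M})$, and universality follows because any degeneracy of $\strat{M}$ decomposes under packing into a fibrewise top-dimensional degeneracy and a base degeneracy, each terminalised by one of the two steps.

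The main obstacle is verifying that the fibrewise 1D normalisation in the inductive step assembles into a legitimate open $1$-mesh bundle. The number of $\ell$-collapsible singular strata in a fibre may vary with the base point, so the bundle structure must be rebuilt carefully -- possibly by first refining the base $(n-1)$-mesh along loci where the collapsibility pattern jumps, so that the collapse is uniform on each stratum of the refined base. A cleaner formulation may work directly in the truss model $\BTrussOpenL{n}{\cat{C}} \simeq \BMeshOpenL{n}{\cat{C}}$, where refinements of $n$-truss bundles admit explicit combinatorial descriptions as compatible families of surjective $1$-truss maps, and the collapsibility condition becomes a straightforward combinatorial criterion whose compatibility with the bundle structure can be checked stratum by stratum.
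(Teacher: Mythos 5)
Your strategy is genuinely different from the paper's. The paper does not induct on $n$ at all: it first forms the coarsest $n$-framed stratified space $\strat{E}$ admitting a refinement $\varphi : \strat{M} \to \strat{E}$ through which the labelling factors (this $\strat{E}$ need not be a mesh), and then invokes Proposition~\ref{prop:coarsest-mesh-refinement-open-pl} to take the coarsest open $n$-mesh $\strat{N}$ refining $\strat{E}$; since $\strat{M}$ is itself a mesh refining $\strat{E}$, it refines $\strat{N}$, and the induced degeneracy $\strat{M} \pto \strat{N}$ is terminal. The point of the two-stage construction is precisely to handle the phenomenon you identify as your ``main obstacle'': collapsing everything the labels permit may destroy the mesh structure, and the second stage reinstates exactly those singular strata that bundle coherence forces you to keep.

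Your base case $n=1$ is fine, but the inductive step has a real gap, and your proposed repair makes it worse rather than better. Fibrewise $1$-dimensional normalisation does not produce a $1$-mesh bundle when the set of $\ell$-collapsible singular strata varies over the base, and the correct resolution is \emph{not} to refine the base: a degeneracy bordism out of $\strat{M}$ corresponds to a refinement map $\strat{M} \to \strat{N}$, i.e.\ $\strat{N}$ must be \emph{coarser} than $\strat{M}$, so any construction that subdivides the base produces an object that $\strat{M}$ does not refine and hence admits no degeneracy out of $\strat{M}$ at all. Concretely: if a fibre singular stratum over a singular point $b$ of the base is collapsible but the corresponding stratum over an adjacent regular base stratum is not, then neither may be collapsed in $\nf(\strat{M})$ — the condition for collapsing a singular stratum of the top $1$-mesh bundle is global over the (possibly coarsened) base, not fibrewise. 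This also undermines your universality argument, which assumes every degeneracy of $\strat{M}$ factors as a fibrewise top-level degeneracy followed by a base degeneracy; that factorisation is exactly what fails when collapsibility is non-uniform. To salvage the inductive route you would need to replace ``normalise each fibre'' with ``collapse the maximal sub-bundle of singular strata whose collapse is compatible with the labelling over the entire base'', and then show this interleaves correctly with base normalisation — at which point you have essentially rederived the paper's coarsest-refining-mesh argument. (Your closing suggestion to work combinatorially in $\BTrussOpenL{n}{\cat{C}}$ is viable — the paper's remark points to~\cite{zigzag-normalisation} for such a proof — but as written it is a gesture, not an argument.)
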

\begin{proof}
  Up to equivalence in $\BMeshOpenL{n}{\cat{C}}$ we can assume that $\strat{M}$ is tame.
  We then let $\strat{M}$ be the coarsest $n$-framed stratified space with an $n$-framed refinement
  $\varphi : \strat{M} \to \strat{E}$ such that the labelling map $\ell_{\strat{M}} : \Exit(\strat{M}) \to \cat{C}$
  factors through $\Exit(\varphi)$.
  While $\strat{E}$ itself is not necessarily an open $n$-mesh, 
  by Proposition~\ref{prop:coarsest-mesh-refinement-open-pl} there exists a coarsest open $n$-mesh $\strat{N}$
  with a refinement map $\strat{N} \to \strat{E}$.
  Because $\strat{M}$ is an open $n$-mesh which also refines $\psi$,
  there then must be a refinement map $\psi : \strat{M} \to \strat{N}$.
  We then define the labelling map $\ell_{\strat{N}} : \Exit(\strat{N}) \to \cat{C}$
  via the induced labelling of $\strat{E}$:
  \[
    \begin{tikzcd}
      {\Exit(\strat{M})} \ar[r] \ar[dr, "\ell_{\strat{M}}"'] &
      {\Exit(\strat{N})} \ar[r] \ar[d, dashed] &
      {\Exit(\strat{E})} \ar[dl] \\
      {} &
      {\cat{C}}
    \end{tikzcd}
  \]  
  The refinement map $\psi : \strat{M} \to \strat{N}$ then induces
  a degeneracy bordism $\strat{M} \pto \strat{N}$ of $n$-meshes with
  labels in $\cat{C}$. By construction of $\strat{N}$ as the
  coarsest open $n$-mesh that refines $\strat{E}$ this
  degeneracy bordism is the claimed terminal object.
\end{proof}

\begin{remark}
  When $\cat{C}$ is a Kan complex and $\strat{M}$ is an open $n$-mesh with
  labels in $\cat{C}$, then the normal form of $\strat{M}$ is the open $n$-mesh $\R^n$
  with the induced labels. 
\end{remark}

\begin{remark}
  When $\strat{M} \pto \strat{N}$ is a bordism of open $n$-meshes
  with labels in some quasicategory $\cat{C}$, we do \textbf{not} in general
  have an induced bordism $\nf(\strat{M}) \pto \nf(\strat{N})$ between
  the normal forms.
\end{remark}

\begin{remark}
  Via the equivalence $\BTrussOpenL{n}{\cat{C}} \simeq \BMeshOpenL{n}{\cat{C}}$
  the concept of degeneracy maps transfers from open meshes to trusses.
  The degeneracy maps of open trusses then agree (up to taking opposites)
  with the degeneracy maps in the zigzag category that were discussed in~\cite{zigzag-normalisation}.
  We refer to there for an alternative combinatorial proof of~\ref{prop:fct-ref-nf-open} as well as an algorithm that efficiently computes the normal form.
\end{remark}

\section{Embeddings}\label{sec:fct-embed}

Similar to framed refinement maps, also constructible embeddings between open or closed $n$-meshes induce bordisms in the respective classifying $\infty$-categories $\BMeshOpen{n}$ and $\BMeshClosed{n}$.
We begin in~\S\ref{sec:fct-embed-closed} by demonstrating how constructible embeddings between closed $n$-meshes correspond to inert bordisms in $\BMeshClosed{n}$, using a reversed mapping cylinder construction.
Then in~\S\ref{sec:fct-embed-open} we use the duality between open and closed $n$-meshes to discuss constructible embeddings and inert bordisms for open $n$-meshes.
In~\S\ref{sec:fct-embed-active} we show that inert bordisms are part of an orthogonal factorisation system on $\BMeshOpen{n}$ and $\BMeshClosed{n}$.
Open and closed meshes are covered by atoms and cells, respectively,
which we discuss in~\S\ref{sec:fct-embed-atoms-cells}.
Finally, in~\S\ref{sec:fct-embed-stype} we characterise atoms and cells
by their singularity type.

% In~\S\ref{sec:fct-embed-open} we show that open constructible embeddings
% $\strat{M}_0 \hookrightarrow \strat{M}_1$ of $n$-framed stratified spaces
% between two open $n$-meshes induce a bordism $\strat{M}_0 \pto \strat{M}_1$
% via the open mapping cylinder.
% Dually, we construct a bordism $\strat{X}_1 \pto \strat{X}_0$ of closed $n$-meshes
% for every closed constructible embedding $\strat{X}_0 \hookrightarrow \strat{X}_1$
% in~\S\ref{sec:fct-embed-closed} via a reverse mapping cylinder construction.
% We call a bordism of open or closed $n$-meshes \defn{inert} when it is induced
% by an embedding in this way.
% The inert bordisms of open $n$-meshes form the right set of an orthogonal factorisation
% system on $\BMeshOpen{n}$ that we characterise in~\S\ref{sec:fct-embed-active-inert}.

\subsection{Inert Bordisms of Closed Meshes}\label{sec:fct-embed-closed}

\begin{example}
  % The inert bordism of closed $1$-meshes from Example~\ref{} is obtained via
  % Construction~\ref{} as follows.
  % We start with the constructible embedding of closed $1$-meshes
  Suppose we have a closed $1$-mesh $\strat{X}$ and a constructible
  subspace $\strat{Y}$ which is again a closed $1$-mesh, as below:  
  \[
    \begin{tikzpicture}[scale = 0.5, baseline=(current bounding box.center)]
      % \fill[mesh-background] (1, 0) rectangle (3, 2);
      \node[mesh-vertex] at (0, 0) {};
      \node[mesh-vertex] at (1, 0) {};
      \node[mesh-vertex] at (2, 0) {};
      \node[mesh-vertex] at (3, 0) {};
      \node[mesh-vertex] at (4, 0) {};
      \node[mesh-vertex] at (5, 0) {};
      \node[mesh-vertex] at (1, 2) {};
      \node[mesh-vertex] at (2, 2) {};
      \node[mesh-vertex] at (3, 2) {};
      \draw[mesh-stratum] (0, 0) -- (5, 0);
      \draw[mesh-stratum] (1, 2) -- (3, 2);
      \draw[->] (2, 1.5) -- (2, 0.5);
    \end{tikzpicture}
  \]
  We can then represent the embedding of $\strat{Y}$ into $\strat{X}$
  in $\BMeshClosed{1}$ as a bordism of closed $1$-meshes
  $\strat{X} \pto \strat{Y}$ by constructing the reversed mapping cylinder:
  \[
    \begin{tikzpicture}[scale = 0.5, baseline=(current bounding box.center)]
      \fill[mesh-background] (1, 0) rectangle (3, 2);
      \node[mesh-vertex] at (0, 0) {};
      \node[mesh-vertex] at (1, 0) {};
      \node[mesh-vertex] at (2, 0) {};
      \node[mesh-vertex] at (3, 0) {};
      \node[mesh-vertex] at (4, 0) {};
      \node[mesh-vertex] at (5, 0) {};
      \draw[mesh-stratum] (1, 0) -- +(0, 2);
      \draw[mesh-stratum] (2, 0) -- +(0, 2);
      \draw[mesh-stratum] (3, 0) -- +(0, 2);
      \draw[mesh-stratum] (0, 0) -- (5, 0);
    \end{tikzpicture}
    \quad
    \longrightarrow
    \quad
    \begin{tikzpicture}[scale = 0.5, baseline=(current bounding box.center)]
      \node[mesh-vertex] at (0, 0) {};
      \draw[mesh-stratum] (0, 0) -- +(0, 2);
    \end{tikzpicture}
  \]
\end{example}

\begin{construction}\label{con:fct-emb-closed}
  Let $\strat{X}$ be a closed $n$-mesh and $\strat{Y} \subseteq \strat{X}$
  a closed constructible $n$-submesh.
  The \defn{reversed mapping cylinder} of the inclusion $\strat{Y} \subseteq \strat{X}$
  is the constructible subbundle
  $\strat{E} \to \DeltaStrat{1}$
  of $\strat{X} \times \DeltaStrat{1} \to \DeltaStrat{1}$
  where
  \[ \strat{E} = (\strat{X} \times \{ 0 \}) \cup (\strat{Y} \times \intOC{0, 1}) \subseteq \strat{X} \times \DeltaStrat{1}. \]
\end{construction}

To show that the reversed mapping cylinder of any constructible embedding
between closed $n$-meshes is a closed $n$-mesh bundle itself, we use an auxiliary
result that lets us restrict closed $n$-mesh bundles to constructible subbundles.

\begin{lemma}\label{lem:fct-mesh-closed-subbundle}
  Let $\xi : \strat{X} \to \strat{B}$ be a closed $n$-mesh bundle and
  $\xi' : \strat{X}' \to \strat{B}'$ a closed constructible subbundle
  such that each fibre of $\xi'$ is a closed $n$-mesh.
  Then $\xi'$ is a closed $n$-mesh bundle itself.
\end{lemma}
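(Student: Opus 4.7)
The plan is to prove the lemma by induction on $n$. The case $n = 0$ is vacuous.

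For the base case $n = 1$, I would apply Lemma~\ref{lem:fct-mesh-closed-1-recognise-fibration}, which requires three conditions: that $\xi'$ is a stratified fibre bundle, that each fibre of $\xi'$ is a closed $1$-mesh, and that $\Exit(\xi')$ is an inner fibration. The fibre condition is given. For the stratified fibre bundle property, I would work locally: given $b' \in \strat{B}'$, pick a trivialization $\xi^{-1}(\strat{U}) \cong \strat{F} \times \strat{U}$ of $\xi$ over an open neighborhood $\strat{U}$ of $b'$ in its stratum in $\strat{B}$. Because $\strat{X}'$ is constructible in $\strat{X}$ and each fibre of $\xi'$ over a stratum of $\strat{B}'$ is a fixed closed $1$-mesh, the intersection of $\strat{X}'$ with $\strat{F} \times (\strat{U} \cap \strat{B}')$ takes the product form $\strat{F}' \times (\strat{U} \cap \strat{B}')$ where $\strat{F}'$ is a closed $1$-submesh of $\strat{F}$. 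For the inner fibration condition, I would translate a lifting problem for $\Exit(\xi')$ through adjunction into a stratified lifting problem $\HornStrat{i}{k} \to \strat{X}'$ over $\DeltaStrat{k} \to \strat{B}'$, compose with the constructible embedding to obtain a lifting problem over $\strat{X} \to \strat{B}$, and apply Lemma~\ref{lem:fct-mesh-closed-1-bundle-cat-fib} to obtain a filler in $\strat{X}$.

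For the inductive step $n \geq 2$, I would decompose $\xi = \zeta \circ \chi$ via Observation~\ref{obs:fct-mesh-closed-n-composite}, where $\chi : \strat{X} \to \strat{Y}$ is a closed $1$-mesh bundle and $\zeta : \strat{Y} \to \strat{B}$ is a closed $(n-1)$-mesh bundle with $\strat{Y}$ the canonical quotient identifying points that agree in the last $n-1$ framing coordinates. Let $\strat{Y}' := \chi(\strat{X}') \subseteq \strat{Y}$. Applying the fibrewise version of Observation~\ref{obs:fct-mesh-closed-n-composite} to each fibre of $\xi'$, which is by hypothesis a closed $n$-mesh, shows that each fibre of the induced map $\chi' : \strat{X}' \to \strat{Y}'$ is a closed $1$-mesh and each fibre of $\zeta' : \strat{Y}' \to \strat{B}'$ is a closed $(n-1)$-mesh. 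Since $\strat{X}'$ is constructible in $\strat{X}$ and the quotient map $\chi$ respects strata, both $\strat{Y}' \subseteq \strat{Y}$ and the induced subbundles are closed constructible. Applying the base case to $\chi'$ and the induction hypothesis to $\zeta'$, the composite $\xi' = \zeta' \circ \chi'$ equipped with the composite framing inherited from $\xi$ is a closed $n$-mesh bundle.

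The main obstacle is verifying the inner fibration condition in the base case: the inner horn filler produced in $\strat{X}$ must be shown to actually land in the proper subspace $\strat{X}'$. Since $\strat{X}' \subseteq \xi^{-1}(\strat{B}')$ but need not equal it (the fibres of $\xi'$ are generally proper closed $1$-submeshes of the corresponding fibres of $\xi$), this does not follow from formal base-change considerations. The resolution should use that each fibre of $\xi'$ is a \emph{contiguous} interval of strata in the corresponding fibre of $\xi$ (since a closed $1$-submesh of a closed $1$-mesh is exactly such a contiguous subinterval), combined with the fact that for $0 < i < k$ the horn $\HornStrat{i}{k}$ contains both endpoints $0$ and $k$. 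Taking the pullback $\sigma^*\xi$ along the stratified $k$-simplex $\sigma : \DeltaStrat{k} \to \strat{B}'$, the filler becomes a section of the closed $1$-mesh bundle $\sigma^*\xi$ that already lies in $\sigma^*\strat{X}'$ on the horn; using the stratification-preserving property of the section together with the contiguous-subinterval description of the fibres of $\sigma^*\strat{X}'$, the filler is forced to remain in $\sigma^*\strat{X}'$ throughout.
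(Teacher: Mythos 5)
Your proof is correct, and its inductive skeleton — factoring $\xi$ as a closed $1$-mesh bundle $\chi$ followed by a closed $(n-1)$-mesh bundle $\zeta$, factoring the subbundle accordingly, and handling the two pieces separately — is exactly the paper's. Where you diverge is in how the $1$-mesh bundle piece is certified. The paper verifies the four conditions of Definition~\ref{def:fct-mesh-closed-1-bundle} directly, and for the two nontrivial ones this is nearly immediate: the singular points of $\strat{X}'$ are the preimage of the closed set $\sing(\strat{X})$ under the embedding $\strat{X}' \hookrightarrow \strat{X}$, hence closed, and the framing map of $\chi'$ is a composite of closed embeddings, hence a closed embedding. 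You instead route through Lemma~\ref{lem:fct-mesh-closed-1-recognise-fibration}, which trades those two conditions for the requirement that $\Exit(\xi')$ be an inner fibration; this forces you to confront the genuine subtlety that a horn filler produced inside $\strat{X}$ must be shown to land in the proper constructible subspace $\strat{X}'$. Your resolution of that subtlety is sound — since $\strat{X}'$ is constructible its strata are strata of $\strat{X}$, the filler sends each (connected) stratum of $\DeltaStrat{k}$ into a single stratum of $\strat{X}$, and for $0 < i < k$ every stratum of $\DeltaStrat{k}$ meets the horn, so each such target stratum is already known to lie in $\strat{X}'$ — but it is strictly more work than the paper's direct check, and it relies on a recognition lemma whose own proof does not obviously address the closed-framing condition. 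The direct verification is the more economical choice here; your approach buys nothing extra, though it does correctly isolate and solve the one point where a naive lifting argument would fail.
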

\begin{proof}
  We begin by writing $\xi : \strat{X} \to \strat{B}$ as the composite of
  a closed $1$-mesh bundle $\chi : \strat{X} \to \strat{Y}$ followed by a
  closed $(n - 1)$-mesh bundle $\zeta : \strat{Y} \to \strat{B}$.
  We can then also factor the closed constructible subbundle $\xi'$ to obtain
  the diagram
  \[
    \begin{tikzcd}
      {\strat{X}'} \ar[r, hook] \ar[d, "\chi'"] \ar[dd, bend right, "\xi'"'] &
      {\strat{X}} \ar[d, "\chi"'] \ar[dd, bend left, "\xi"] \\
      {\strat{Y}'} \ar[r, hook] \ar[d, "\zeta'"] &
      {\strat{Y}} \ar[d, "\zeta"'] \\
      {\strat{B}'} \ar[r, hook] &
      {\strat{B}}
    \end{tikzcd}
  \]
  The two squares are closed constructible embeddings of framed stratified bundles.
  For every $b \in \strat{B}$ the fibre of $\xi'(b)$ is a closed $n$-mesh.
  By the pasting lemma for pullbacks, the fibre of $\xi' = \zeta' \circ \chi'$ over $b$ can be
  computed in two stages:
  \[
    \begin{tikzcd}
      {\strat{X}''} \ar[r, hook] \ar[d] \pullbackcorner &
      {\strat{X}'} \ar[d, "\chi'"] \\
      {\strat{Y}''} \ar[r, hook] \ar[d] \pullbackcorner &
      {\strat{Y}'} \ar[d, "\zeta'"] \\
      {\{ b \}} \ar[r, hook] &
      {\strat{B}}
    \end{tikzcd}
  \]
  Hence the fibre of $\zeta'$ over $b$ is the closed $(n - 1)$-mesh $\strat{Y}''$.  
  Therefore, by induction on $n \geq 0$ the $(n - 1)$-framed stratified bundle $\zeta'$
  is a closed $(n - 1)$-mesh bundle.

  It remains to show that $\chi'$ is a closed $1$-mesh bundle.
  Since the closed $1$-mesh bundle $\chi$ is a stratified fibre bundle
  and $\chi'$ is a constructible subbundle, it follows that $\chi'$ is
  also a stratified fibre bundle.
  Let $y \in \strat{Y}'$ be any point, then the fibre of $\chi'$ over
  $y$ is the closed $1$-mesh $\strat{X}'''$ constructed by taking the pullbacks
  \[
    \begin{tikzcd}
      {\strat{X}'''} \ar[r, hook] \ar[d] \pullbackcorner &
      {\strat{X}''} \ar[r, hook] \ar[d] \pullbackcorner &
      {\strat{X}'} \ar[d, "\chi'"] \\
      {\{ y \}} \ar[r, hook] &
      {\strat{Y}''} \ar[r, hook] \ar[d] \pullbackcorner &
      {\strat{Y}'} \ar[d, "\zeta'"] \\
      {} &
      {\{ \chi'(y) \}} \ar[r, hook] &
      {\strat{B}}
    \end{tikzcd}
  \]
  The subspace of singular points $\sing(\strat{X}') \subseteq \strat{X}'$ is
  the preimage of the closed subset of singular points $\sing(\strat{X}) \subseteq \strat{X}'$ via the embedding $\strat{X}' \hookrightarrow \strat{X}$.
  Therefore, $\sing(\strat{X}')$ is closed in $\strat{X}'$.
  Moreover the framing map of $\chi'$ is the composite of the closed embeddings
  \[
    \begin{tikzcd}
      \unstrat(\strat{X}') \ar[r, hook] &
      \unstrat(\strat{X}) \ar[r, hook, "\framing(\chi)"] &
      \R \times \unstrat(\strat{Y}')
    \end{tikzcd}
  \]
  and is therefore closed itself.
\end{proof}

\begin{definition}
  Let $f : \strat{X} \pto \strat{Y}$ be a bordism of closed $n$-meshes
  represented by a closed $n$-mesh bundle $\xi : \strat{E} \to \DeltaStrat{1}$.
  Then $f$ is \defn{inert} if $\stratPos{\xi} : \stratPos{\strat{E}} \to \ord{1}$
  is the cartesian fibration associated to an inclusion of posets
  $\stratPos{\strat{Y}} \hookrightarrow \stratPos{\strat{X}}$.
\end{definition}

\begin{proposition}\label{prop:fct-emb-closed-rcyl-bordism}
  Let $f : \strat{X} \pto \strat{Y}$ be a bordism of closed $n$-meshes.
  Then $f$ is inert if and only if it is equivalent to the bordism
  induced by the reversed mapping cylinder of a constructible embedding $\strat{X} \hookrightarrow \strat{Y}$.
\end{proposition}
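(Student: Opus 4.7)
The plan is to prove both directions separately. For the ``if'' direction, let $\iota : \strat{Y} \hookrightarrow \strat{X}$ be the constructible embedding giving rise to the reversed mapping cylinder $\strat{E} := (\strat{X} \times \{0\}) \cup (\iota(\strat{Y}) \times \intOC{0,1})$. I view $\strat{E}$ as a closed constructible subbundle of the product closed $n$-mesh bundle $\strat{X} \times \DeltaStrat{1} \to \DeltaStrat{1}$. The fibre of $\xi : \strat{E} \to \DeltaStrat{1}$ at $0$ is $\strat{X}$, and at every $t \in \intOC{0,1}$ it is $\iota(\strat{Y}) \cong \strat{Y}$; both are closed $n$-meshes. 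Lemma~\ref{lem:fct-mesh-closed-subbundle} then forces $\xi$ itself to be a closed $n$-mesh bundle. A direct computation identifies $\stratPos{\strat{E}}$ with the disjoint union $\stratPos{\strat{X}} \sqcup \stratPos{\strat{Y}}$, with cross-fibre relation $p \leq q$ (for $p \in \stratPos{\strat{X}}$ and $q \in \stratPos{\strat{Y}}$) exactly when $p \leq \stratPos{\iota}(q)$. This realises $\stratPos{\xi}$ as the cartesian fibration associated to $\stratPos{\iota}$, so the induced bordism is inert.

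For the ``only if'' direction, let $\xi : \strat{E} \to \DeltaStrat{1}$ represent an inert bordism $f$. I transport $\xi$ to the combinatorial side via $\TrussClosedNerve : \BMeshClosed{n} \xrightarrow{\simeq} \BTrussClosed{n}$, obtaining a closed $n$-truss bundle $T$ over $\ord{1}$. The cartesian condition on $\stratPos{\xi}$ says that the total poset of $T$ is cartesian over $\ord{1}$ associated to a given inclusion $i : \stratPos{\strat{Y}} \hookrightarrow \stratPos{\strat{X}}$. Applying Definition~\ref{def:truss-closed-1} iteratively across the $n$ truss-bundle levels, I read off from the cartesian lifts that the fibre truss of $T$ over $1$ is a subtruss of the fibre over $0$, and that the connecting truss data over the non-identity arrow of $\ord{1}$ is pinned down uniquely as the combinatorial reversed mapping cylinder of this subtruss inclusion. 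Geometrically realising via $\TrussClosedReal{-}$ produces a constructible embedding $\iota : \strat{Y} \hookrightarrow \strat{X}$, and since the geometric realisation of the combinatorial reversed mapping cylinder agrees with the reversed mapping cylinder of the realised embedding, the resulting bordism is equivalent to $\xi$ in $\BMeshClosed{n}$.

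The main obstacle is the backward direction: verifying that the cartesian poset fibration structure on the total truss determines all the truss data uniquely as a reversed mapping cylinder. At each of the $n$ truss-bundle levels this reduces to a combinatorial observation about $\ETrussClosed{1}$, namely that a $1$-truss morphism whose underlying poset map is a cartesian embedding admits only the presentation given by a consecutive inclusion of singular and regular elements, which agrees with the reversed mapping cylinder prescription. Once this combinatorial uniqueness is in hand, the geometric statement follows by realisation.
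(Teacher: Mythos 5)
Your proof is correct and takes essentially the same route as the paper: the paper's proof is simply ``analogous to Proposition~\ref{prop:fct-ref-closed-ocyl-bordism}'', whose argument likewise verifies the poset-fibration condition directly for the mapping cylinder in one direction and passes through the truss equivalence to reconstruct the embedding from the (co)cartesian structure in the other. Your appeal to Lemma~\ref{lem:fct-mesh-closed-subbundle} in the forward direction is exactly the auxiliary result the paper introduces for this purpose.
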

\begin{proof}
  Analogous to Proposition~\ref{prop:fct-ref-closed-ocyl-bordism}.
\end{proof}

\begin{lemma}\label{lem:fct-emb-closed-composite}
  Inert bordisms of closed $n$-meshes are closed under composition in $\BMeshClosed{n}$.
\end{lemma}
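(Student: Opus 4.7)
The plan is to mimic the strategy used for degeneracy bordisms in Lemma~\ref{lem:fct-ref-closed-composite}, but with the reversed mapping cylinder of Construction~\ref{con:fct-emb-closed} in place of the open mapping cylinder. Given two composable inert bordisms $f : \strat{X} \pto \strat{Y}$ and $g : \strat{Y} \pto \strat{Z}$, Proposition~\ref{prop:fct-emb-closed-rcyl-bordism} lets me assume they arise from constructible embeddings $\iota_1 : \strat{Y} \hookrightarrow \strat{X}$ and $\iota_2 : \strat{Z} \hookrightarrow \strat{Y}$. It then suffices to exhibit a $2$-simplex in $\BMeshClosed{n}$ whose three faces are $f$, $g$, and the bordism associated to the composite embedding $\iota_1 \circ \iota_2 : \strat{Z} \hookrightarrow \strat{X}$, since the latter is inert by construction.

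To build that $2$-simplex I would define a $2$-fold reversed mapping cylinder as the constructible subbundle
\[
  \strat{E} \;:=\; (\strat{X} \times \strat{D}_0) \,\cup\, (\iota_1(\strat{Y}) \times \strat{D}_1) \,\cup\, (\iota_1(\iota_2(\strat{Z})) \times \strat{D}_2) \;\subseteq\; \strat{X} \times \DeltaStrat{2},
\]
where $\strat{D}_i$ denotes the stratum of $\DeltaStrat{2}$ indexed by $i \in \ord{2}$, equipped with the projection onto $\DeltaStrat{2}$ and the $n$-framing induced from that of $\strat{X}$. The trivial bundle $\strat{X} \times \DeltaStrat{2} \to \DeltaStrat{2}$ is a closed $n$-mesh bundle (it is the pullback of $\strat{X} \to \terminal$ along $\DeltaStrat{2} \to \terminal$, and closed $n$-mesh bundles are stable under pullback by Lemma~\ref{lem:fct-mesh-closed-n-pullback}). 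The fibre of $\strat{E} \to \DeltaStrat{2}$ over a point in $\strat{D}_i$ is $\strat{X}$, $\iota_1(\strat{Y})$, or $\iota_1(\iota_2(\strat{Z}))$, each of which is a closed $n$-mesh. Lemma~\ref{lem:fct-mesh-closed-subbundle} therefore identifies $\strat{E} \to \DeltaStrat{2}$ as a closed $n$-mesh bundle, hence as a $2$-simplex of $\BMeshClosed{n}$.

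Pulling this $2$-simplex back along the three edges $\langle 0, 1 \rangle$, $\langle 1, 2 \rangle$, $\langle 0, 2 \rangle : \DeltaStrat{1} \to \DeltaStrat{2}$ recovers the reversed mapping cylinders of $\iota_1$, $\iota_2$ and $\iota_1 \circ \iota_2$ respectively, so the faces of this $2$-simplex are precisely $f$, $g$, and the inert bordism $\strat{X} \pto \strat{Z}$ associated to the composite embedding. Since $\BMeshClosed{n}$ is a quasicategory (Lemma~\ref{lem:fct-mesh-closed-n-cat-truncate}), this exhibits the composite $g \circ f$ as equivalent to an inert bordism, completing the proof.

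The main obstacle I anticipate is the bookkeeping around Lemma~\ref{lem:fct-mesh-closed-subbundle}: one must check that the candidate subspace $\strat{E}$ is genuinely a closed constructible subbundle, in particular that the poset-level map $\stratPos{\strat{E}} \to \ord{2}$ is the expected cartesian fibration, so that identifying the restrictions to the faces is unambiguous and the claim that the $\langle 0,2 \rangle$-face is again inert follows directly from the definition. Everything else, including the compatibility of the induced framing, is immediate from the framing on $\strat{X}$ being transported along the constructible inclusions.
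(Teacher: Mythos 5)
Your proof is correct and is essentially the paper's own argument: both reduce to reversed mapping cylinders via Proposition~\ref{prop:fct-emb-closed-rcyl-bordism}, build the witnessing $2$-simplex as the constructible subbundle of $\strat{X} \times \DeltaStrat{2}$ whose fibres over the three strata are the nested images of the meshes, and invoke Lemma~\ref{lem:fct-mesh-closed-subbundle} to see it is a closed $n$-mesh bundle restricting to the three reversed mapping cylinders on its faces. The only cosmetic difference is that the paper specifies $\strat{E}$ by its stratifying subposet of $\stratPos{\strat{X}} \times \ord{2}$ while you give the equivalent point-set decomposition.
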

\begin{proof}
  Suppose that we have inert bordisms $f_{01} : \strat{X}_1 \pto \strat{X}_0$ and $f_{12} : \strat{X}_2 \pto \strat{X}_1$ of closed $n$-meshes.
  By Proposition~\ref{prop:fct-emb-closed-rcyl-bordism} we may assume that $f_{01}$ and $f_{12}$ arise
  as the reversed mapping cylinders of constructible embeddings
  $\varphi_{01} : \strat{X}_0 \hookrightarrow \strat{X}_1$
  and $\varphi_{12} : \strat{X}_1 \hookrightarrow \strat{X}_2$.
  We then let $\varphi_{02} : \strat{X}_0 \hookrightarrow \strat{X}_2$
  be the composite $\varphi_{02} = \varphi_{12} \circ \varphi_{01}$.
  Consider the closed $n$-mesh bundle
  $\Xi : \strat{E} \to \DeltaStrat{2}$
  where $\strat{E}$ is the constructible subspace
  of $\strat{X}_2 \times \DeltaStrat{2}$ where
  $\stratPos{\strat{E}}$ is the subposet of $\stratPos{\strat{X}_2} \times \ord{2}$ defined by
  \begin{align*}
    \stratPos{\strat{E}} := &\
    \{
      (p, 0) \mid p \in \stratPos{\strat{X}_2}
    \} \\
    \cup &\
    \{ 
      (\varphi_{01}(p), 1) \mid p \in \stratPos{\strat{X}_1}
    \} \\
    \cup &\
    \{ 
      (\varphi_{02}(p), 2) \mid p \in \stratPos{\strat{X}_0}
    \}
  \end{align*}
  By Lemma~\ref{lem:fct-mesh-closed-subbundle} we have that $\Xi$ is a closed $n$-mesh bundle.
  Moreover, $\Xi$ restricts to the reversed mapping cylinders of the
  inclusions $\varphi_{01}$, $\varphi_{02}$ and $\varphi_{12}$.
  Therefore, $\Xi$ witnesses the composition of the inert bordisms
  $f_{01}$ and $f_{12}$.
\end{proof}

% \begin{proof}
%   {\color{BrickRed} TODO: Ultimately correct idea, but the closure
%   property of cartesian fibrations that we rely on here does not hold
%   in general. I suspect it holds for exponentiable fibrations.}
%   Suppose that we have a commutative triangle of closed $n$-mesh bordisms
%   \[
%     \begin{tikzcd}[column sep = small]
%       {} &
%       {\strat{Y}} \ar[dr, mid vert, "g"] \\
%       {\strat{X}} \ar[rr, mid vert, "h"'] \ar[ur, mid vert, "f"] &
%       {} &
%       {\strat{Z}}
%     \end{tikzcd}
%   \]
%   such that both $f$ and $g$ are inert.
%   This composition is represented by a closed $n$-mesh bundle
%   $\xi : \strat{X} \to \DeltaStrat{2}$.
%   Because $f$ and $g$ are inert, the map $\stratPos{\xi} : \stratPos{\strat{X}} \to \ord{2}$
%   restricts to cartesian fibrations over $\langle 0, 1 \rangle : \ord{1} \hookrightarrow \ord{2}$
%   and $\langle 1, 2 \rangle : \ord{1} \hookrightarrow \ord{2}$ that both represent
%   an inclusion of posets.
%   It then follows that also the restriction of $\stratPos{\xi}$ over
%   $\langle 0, 2 \rangle : \ord{1} \hookrightarrow \ord{2}$ is cartesian,
%   representing an inclusion of posets.
%   Therefore $h$ is an inert bordism of closed $n$-meshes.
% \end{proof}

\begin{definition}
  Let $\cat{C}$ be a quasicategory.
  A bordism $f$ of closed $n$-meshes with labels in $\cat{C}$ is \defn{inert}
  if the functor $\pi : \BMeshClosedL{n}{\cat{C}} \to \BMeshClosed{n}$ which forgets the labels
  sends $f$ to an inert bordism $\pi(f)$ and $f$ is $\pi$-cocartesian.
\end{definition}

When $\strat{Y} \hookrightarrow \strat{X}$ is a constructible embedding between
closed $n$-meshes and we have a label functor $\ell : \Exit(\strat{X}) \to \cat{C}$
in some quasicategory $\cat{C}$, we can restrict $\ell$ to a label functor for
$\strat{Y}$. Expressed in the language of inert bordisms, this restriction is
realised by a cocartesian lift of the inert bordism $\strat{X} \pto \strat{Y}$
associated to the constructible embedding.

\begin{lemma}\label{lem:fct-emb-closed-inert-map-lift}
  Let $\cat{C}$ be a quasicategory.
  Then the functor $\BMeshClosedL{n}{\cat{C}} \to \BMeshClosed{n}$
  which forgets the labels has cocartesian lifts of inert maps.
\end{lemma}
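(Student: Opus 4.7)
The plan is to dualise Lemma~\ref{lem:fct-ref-closed-deg-map-lift}: cartesian lifts of degeneracy maps there were obtained via pointwise right Kan extensions along inclusions into open mapping cylinders; here cocartesian lifts of inert maps should be obtained by pulling back labels along the canonical retraction out of a reversed mapping cylinder, which is the trivial case of a pointwise left Kan extension.

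By Proposition~\ref{prop:fct-emb-closed-rcyl-bordism}, the inert bordism $f : \strat{X} \pto \strat{Y}$ is represented by the reversed mapping cylinder $\Xi : \strat{E} \to \DeltaStrat{1}$ of a constructible embedding $\strat{Y} \hookrightarrow \strat{X}$, where $\strat{E} = (\strat{X} \times \{0\}) \cup (\strat{Y} \times \intOC{0, 1})$ sits inside $\strat{X} \times \DeltaStrat{1}$ and the first projection $\pi_{\strat{X}} : \strat{E} \to \strat{X}$ is a stratified retraction onto the fibre over $0$. Given a labelling $\ell_{\strat{X}} : \Exit(\strat{X}) \to \cat{C}$, I would take the candidate cocartesian lift $\hat{f}$ to be $\Xi$ equipped with the pulled-back labelling $L := \ell_{\strat{X}} \circ \Exit(\pi_{\strat{X}})$; it ends at $(\strat{Y}, \ell_{\strat{Y}})$ where $\ell_{\strat{Y}}$ is the restriction of $\ell_{\strat{X}}$ along the embedding.

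To verify that $\hat{f}$ is $\pi$-cocartesian, I would unpack the lifting criterion into the task of extending a labelling $\ell' : \Exit(\xi^{-1}(\HornStrat{0}{k})) \to \cat{C}$ that restricts to $L$ over the edge $\langle 0, 1 \rangle$ to a full labelling $\ell : \Exit(\strat{M}) \to \cat{C}$, where $\xi : \strat{M} \to \DeltaStrat{k}$ is the closed $n$-mesh bundle representing the $k$-simplex of $\BMeshClosed{n}$ being lifted. I propose to compute $\ell$ as a pointwise left Kan extension of $\ell'$ along $\Exit(\xi^{-1}(\HornStrat{0}{k})) \hookrightarrow \Exit(\strat{M})$. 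For $m \in \xi^{-1}(\HornStrat{0}{k})$ the identity on $m$ is terminal in the relevant slice, so $\ell$ agrees with $\ell'$ there. For $m$ with $\xi(m) \in \DeltaStrat{k} \setminus \HornStrat{0}{k}$, I would exhibit the terminal object by a canonical cartesian lift of the straight-line exit path from $0$ to $\xi(m)$ in $\DeltaStrat{k}$, dually to Lemma~\ref{lem:b-geo-ocyl-cocartesian-map}, ending at $m$ and starting in the fibre $\xi^{-1}(0) = \strat{X}$.

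The main obstacle is producing these cartesian exit-path lifts coherently over the interior of $\DeltaStrat{k}$. The key input is that the bordism over $\langle 0, 1 \rangle$ is the reversed mapping cylinder, which provides the cartesian lifts along that edge via a dual of Lemma~\ref{lem:b-geo-ocyl-cocartesian-fibration}. I anticipate propagating this structure to the rest of $\DeltaStrat{k}$ by induction on $n$, factoring $\xi$ as the composite of a closed $(n-1)$-mesh bundle with a closed $1$-mesh bundle and combining the inductive step with local trivialisations of $\xi$ as a stratified fibre bundle. Once the terminal objects are in place, evaluation of $\ell'$ at their sources assembles into a labelling $\ell$ which factors through a stratified retraction $\strat{M} \to \strat{X}$ extending $\pi_{\strat{X}}$, and which provides the required lift.
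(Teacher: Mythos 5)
Your construction of the candidate lift coincides with the paper's: the proof there is just the dualisation of Lemma~\ref{lem:fct-ref-closed-deg-map-lift}, replacing the open mapping cylinder by the reversed mapping cylinder $\strat{E} = (\strat{X} \times \{0\}) \cup (\strat{Y} \times \intOC{0, 1})$ and the right Kan extension by a left Kan extension. The point of that route is that it confines all the geometry to the edge itself: a lift of $f$ starting at $(\strat{X}, \ell_{\strat{X}})$ is $\pi$-cocartesian precisely when its labelling exhibits a left Kan extension of $\ell_{\strat{X}}$ along $\Exit(\strat{X}) \hookrightarrow \Exit(\strat{E})$, and the pointwise colimit over $\Exit(\strat{X})_{/e}$ is trivial because for $e = (y, t)$ with $t > 0$ the vertical section path into $e$ is $\Exit(\Xi)$-cartesian, hence terminal in that slice, so the colimit evaluates to $\ell_{\strat{X}}(y)$ --- the pullback along the retraction, exactly as you construct it.

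Your verification, however, attacks the horn-filling definition of cocartesianness directly, and this is where it breaks. For $k \geq 2$ the slice $(\Exit(\xi^{-1}(\HornStrat{0}{k})))_{/m}$ contains exit paths into $m$ starting in the fibres over every vertex of the horn, not only over the vertex $0$. A path $(t, \tau)$ with $t \in \xi^{-1}(0)$ cannot be terminal in this slice: an object $(d, \sigma)$ with $d$ in the fibre over the vertex $j$, $1 \leq j \leq k$, admits no map to it, since such a map would require an exit path $d \to t$ lying over an exit path in $\DeltaStrat{k}$ from the stratum $j$ down to the stratum $0$, which does not exist. Relatedly, a genuine left Kan extension of $\ell'$ along the whole horn inclusion is constrained by the labels over the entire horn (in particular over the faces not containing the edge $\langle 0, 1 \rangle$, which are arbitrary data of the lifting problem), so no formula of the shape $\ell(m) \simeq \ell_{\strat{X}}(\,\cdot\,)$ can be correct in general; producing cartesian exit-path lifts, which you identify as the main obstacle, would not repair this. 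The fix is not to fill horns by hand at all: invoke the correspondence between cocartesian edges of $\BMeshClosedL{n}{\cat{C}} \to \BMeshClosed{n}$ and pointwise left Kan extensions along the edge bundle alone (the fact the paper uses implicitly in Lemma~\ref{lem:fct-ref-closed-deg-map-lift}), after which only the reversed mapping cylinder itself needs to be analysed, and your first two paragraphs already do that.
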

\begin{proof}
  Analogous to Lemma~\ref{lem:fct-ref-open-deg-map-lift}.
\end{proof}

\begin{lemma}
  Let $\cat{C}$ be a quasicategory.
  Then the inert bordisms of closed $n$-meshes with labels in $\cat{C}$
  are closed under composition in $\BMeshClosedL{n}{\cat{C}}$.
\end{lemma}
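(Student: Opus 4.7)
The plan is to unpack the definition of an inert bordism of labelled closed $n$-meshes and verify the two defining properties pass to composites, using that $\pi$-cocartesian maps always compose and that inert bordisms of unlabelled closed $n$-meshes already compose.

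More concretely, suppose $f : \strat{X} \pto \strat{Y}$ and $g : \strat{Y} \pto \strat{Z}$ are composable inert bordisms in $\BMeshClosedL{n}{\cat{C}}$, and let $\pi : \BMeshClosedL{n}{\cat{C}} \to \BMeshClosed{n}$ be the label-forgetting functor. By Proposition~\ref{prop:fct-mesh-closed-labelled-cat} the functor $\pi$ is a categorical fibration, hence in particular an inner fibration, so the general theory of cocartesian morphisms over inner fibrations applies and the class of $\pi$-cocartesian maps is closed under composition (this is the standard fact recalled in the background section on cartesian/cocartesian fibrations). Hence any composite $g \circ f$ in $\BMeshClosedL{n}{\cat{C}}$ remains $\pi$-cocartesian.

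It then remains to check that the image $\pi(g \circ f) \simeq \pi(g) \circ \pi(f)$ is an inert bordism in $\BMeshClosed{n}$. By the assumption that $f$ and $g$ are inert in $\BMeshClosedL{n}{\cat{C}}$, both $\pi(f)$ and $\pi(g)$ are inert bordisms of unlabelled closed $n$-meshes, and Lemma~\ref{lem:fct-emb-closed-composite} gives that their composite $\pi(g) \circ \pi(f)$ is again inert. Combining these two observations, $g \circ f$ is $\pi$-cocartesian and maps to an inert bordism under $\pi$, so by definition it is an inert bordism of closed $n$-meshes with labels in $\cat{C}$.

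There is no serious obstacle: the argument is an immediate assembly of the already-established Lemma~\ref{lem:fct-emb-closed-composite} (composition of inert bordisms downstairs), Proposition~\ref{prop:fct-mesh-closed-labelled-cat} (so that $\pi$ is an inner fibration and the notion of cocartesian morphism behaves well), and the general closure of cocartesian morphisms under composition.
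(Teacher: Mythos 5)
Your argument is correct and matches the paper's proof exactly: both reduce the claim to Lemma~\ref{lem:fct-emb-closed-composite} for the unlabelled inert bordisms together with the general closure of $\pi$-cocartesian maps under composition recalled in the background. Nothing further is needed.
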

\begin{proof}
  Inert bordisms of closed unlabelled $n$-meshes are closed under composition
  by Lemma~\ref{lem:fct-emb-closed-composite}. The claim then follows since cocartesian maps are also
  closed under composition.
\end{proof}

\subsection{Inert Bordisms of Open Meshes}\label{sec:fct-embed-open}

Open $n$-meshes also admit a notion of inert bordisms that represents
constructible embeddings. The inert bordisms of open meshes are dual to those
of closed meshes.

\begin{example}\label{ex:fct-emb-open}
  We consider the embedding of open $1$-meshes
  \[
    \begin{tikzpicture}[rotate = -90, scale = 0.5, baseline=(current bounding box.center)]
      \node at (0, -1) {$M_0$};
      \node at (2, -1) {$M_1$};
      \draw[mesh-stratum-orange] (0, 0) -- (0, 6);
      \draw[mesh-stratum] (2, 0) -- (2, 6);
      \draw[mesh-stratum-orange] (2, 1) -- (2, 4);
      \node[mesh-vertex] at (2, 1) {};
      \node[mesh-vertex-orange] at (2, 2) {};
      \node[mesh-vertex-orange] at (2, 3) {};
      \node[mesh-vertex] at (2, 4) {};
      \node[mesh-vertex] at (2, 5) {};
      \node[mesh-vertex-orange] at (0, 2) {};
      \node[mesh-vertex-orange] at (0, 3) {};
      \draw[right hook->] (0.5, 2.5) -- (1.5, 2.5) node [midway, left] {$\varphi$};
    \end{tikzpicture}
  \]
  Then the associated inert bordism $\strat{M}_0 \pto \strat{M}_1$ of open $1$-meshes looks like this:
  \[
    \begin{tikzpicture}[rotate = -90, scale = 0.5, baseline=(current bounding box.center)]
      \fill[mesh-background] (0, 0) -- (0, 6) -- (2, 6) -- (2, 0) -- cycle;
      \draw[mesh-stratum] (0, 0) -- (0, 6);
      \node[mesh-vertex] at (0, 2) {};
      \node[mesh-vertex] at (0, 3) {};
      \draw[mesh-stratum] (0, 0) -- (2, 1);
      \draw[mesh-stratum] (0, 2) -- (2, 2);
      \draw[mesh-stratum] (0, 3) -- (2, 3);
      \draw[mesh-stratum] (0, 6) -- (2, 4);
      \draw[mesh-stratum] (0, 6) -- (2, 5);
    \end{tikzpicture}
    \quad
    \longrightarrow
    \quad
    \begin{tikzpicture}[scale = 0.5, baseline=(current bounding box.center)]
      \draw[mesh-stratum] (0, 0) -- (0, 2);
      \node[mesh-vertex] at (0, 2) {};
    \end{tikzpicture}
  \]
  Here we use that the framing on an open $n$-mesh bundle is open, so that
  we can gradually move the strata of $\strat{M}_1$ which are not in the image
  of $\varphi$ to infinity on either side.
\end{example}

\begin{definition}
  Let $f : \strat{M} \pto \strat{N}$ be a bordism of open $n$-meshes
  represented by an open $n$-mesh bundle $F : \strat{E} \to \DeltaStrat{1}$.
  Then $f$ is \defn{inert} if $\stratPos{F} : \stratPos{\strat{E}} \to \ord{1}$
  is the cocartesian fibration associated to an inclusion of posets
  $\stratPos{\strat{M}} \hookrightarrow \stratPos{\strat{N}}$.
\end{definition}

\begin{lemma}\label{lem:fct-emb-open-dual}
  A bordism of open $n$-meshes is inert if and only if it is sent to an inert
  bordism of closed $n$-meshes by the duality equivalence $\BMeshOpen{n} \simeq \BMeshClosed{n, \op}$.
\end{lemma}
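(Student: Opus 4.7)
The plan is to trace an inert open bordism through the composite duality equivalence
\[
  \BMeshOpen{n} \simeq \BTrussOpen{n} \xrightarrow{(-)^\dagger} \BTrussClosed{n,\op} \simeq \BMeshClosed{n,\op}
\]
and check that at the level of stratifying posets the ``inertness'' data is preserved.

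First I would reduce to the combinatorial statement at the level of trusses. Using the geometric realisation equivalence $\BTrussOpen{n} \simeq \BMeshOpen{n}$ (and its closed counterpart), any bordism in $\BMeshOpen{n}$ is equivalent to the realisation of an open $n$-truss bundle $F' : \cat{E}' \to \ord{1}$; the induced stratifying poset map $\stratPos{F}$ of the geometric realisation $F = \StratReal{F'}$ agrees with $F'$ itself, since geometric realisation of a truss preserves the poset of strata. Hence it suffices to prove the corresponding statement for $n$-truss bundles under the isomorphism $(-)^\dagger : \BTrussOpen{n} \xrightarrow{\cong} \BTrussClosed{n,\op}$.

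Next I would unpack how $(-)^\dagger$ acts on a $1$-simplex. By Definition~\ref{def:fct-truss-open-1} and the construction of the duality isomorphism, the functor $(-)^\dagger$ on fibres sends regular to singular elements and vice versa while reversing all arrows; on the total $1$-truss bundle $F' : \cat{E}' \to \ord{1}$ it produces a closed $1$-truss bundle whose underlying poset map is $F^{\prime,\op} : \cat{E}^{\prime,\op} \to \ord{1}^\op \cong \ord{1}$. Iterating over the $n$ layers of the bundle decomposition, the dual closed $n$-truss bundle has stratifying poset map $\stratPos{F}^\op$, and the source and target objects (the fibres over $0$ and $1 \in \ord{1}$) are interchanged. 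So the open $1$-simplex $\strat{M} \pto \strat{N}$ is sent to a closed $1$-simplex $\strat{N}^\dagger \pto \strat{M}^\dagger$ whose underlying poset fibration is $\stratPos{F}^\op$.

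Now I combine two elementary facts: (a) a functor $P \to \ord{1}$ is a cocartesian fibration classifying a map $P_0 \to P_1$ if and only if $P^\op \to \ord{1}^\op \cong \ord{1}$ is a cartesian fibration classifying the same map $P_0^\op \to P_1^\op$; (b) $P_0 \to P_1$ is an inclusion (monomorphism) of posets iff $P_0^\op \to P_1^\op$ is. By the definition of inert open bordism, $\stratPos{F}$ is cocartesian for the inclusion $\stratPos{\strat{M}} \hookrightarrow \stratPos{\strat{N}}$ iff $\stratPos{F}^\op$ is cartesian for the inclusion $\stratPos{\strat{M}}^\op \hookrightarrow \stratPos{\strat{N}}^\op$. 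Identifying this with the stratifying data $\stratPos{\strat{M}^\dagger} \hookrightarrow \stratPos{\strat{N}^\dagger}$ of the dual bordism (via the poset duality induced by $(-)^\dagger$), this says exactly that the dual closed bordism $\strat{N}^\dagger \pto \strat{M}^\dagger$ is inert.

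The only real obstacle is bookkeeping: one must carefully match the source/target conventions in the definitions of inert open and inert closed bordisms (cocartesian going ``forward'' for open meshes, cartesian going ``backward'' for closed meshes) with the swap of endpoints produced by $(-)^\dagger$, and confirm that under $(-)^\dagger$ the stratifying poset of a mesh is identified with its opposite in a way that turns inclusions into inclusions. Once this bookkeeping is carried out at the level of $1$-truss bundles, the general $n$-truss case follows by induction from the defining decomposition $\BTrussOpen{n} = \BTrussOpen{1} \circ \cdots \circ \BTrussOpen{1}$, since both inertness conditions are preserved by composition (Lemma~\ref{lem:fct-emb-closed-composite} and its open counterpart) and by the layered structure of mesh bundles.
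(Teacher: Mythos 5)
Your argument is correct and is essentially the paper's own proof: the paper likewise observes that the dual closed $n$-mesh bundle over $\StratReal{\Delta\ord{1}^\op}$ has the same stratifying poset data as the open bundle, so that the cocartesian-fibration-for-an-inclusion condition on one side is literally the cartesian-fibration-for-an-inclusion condition on the other. Your extra bookkeeping with $(-)^\op$ and the endpoint swap is a more careful rendering of the paper's terse ``$\stratPos{F} = \stratPos{\Xi}$''; the concluding induction over layers and appeal to closure under composition is not needed, since inertness is defined directly on the total stratifying poset of the $n$-mesh bundle.
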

\begin{proof}
  Suppose that $F : \strat{E} \to \DeltaStrat{1}$ is an open $n$-mesh bundle.
  Let $\Xi : \strat{X} \to \StratReal{\Delta\ord{1}^\op}$ be the closed $n$-mesh bundle
  that is dual to $F$ via the equivalence $\BMeshOpen{n} \simeq \BMeshClosed{n, \op}$.
  We then have $\stratPos{F} = \stratPos{\Xi}$.
  Therefore, $F$ represents an inert bordism of open $n$-meshes precisely when
  $\Xi$ is an inert bordism of closed $n$-meshes.
\end{proof}

\begin{cor}
  Inert bordisms of open $n$-meshes are closed under composition in $\BMeshOpen{n}$.
\end{cor}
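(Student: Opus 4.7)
The plan is to deduce this corollary directly from the duality equivalence $\BMeshOpen{n} \simeq \BMeshClosed{n,\op}$ together with the closed-mesh analogue Lemma~\ref{lem:fct-emb-closed-composite}. The main observation is that closure under composition is a purely categorical property that is transported along equivalences of $\infty$-categories, and Lemma~\ref{lem:fct-emb-open-dual} already tells us that the class of inert bordisms is the preimage under duality of the class of inert bordisms of closed $n$-meshes.

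Concretely, suppose $f_{01} : \strat{M}_0 \pto \strat{M}_1$ and $f_{12} : \strat{M}_1 \pto \strat{M}_2$ are inert bordisms of open $n$-meshes, and let $g := f_{12} \circ f_{01}$ denote their composite in $\BMeshOpen{n}$. Let $D : \BMeshOpen{n} \xrightarrow{\simeq} \BMeshClosed{n,\op}$ be the duality equivalence. Since $D$ is a functor it preserves composition, so $D(g) = D(f_{12}) \circ D(f_{01})$ in $\BMeshClosed{n,\op}$, which is the same as $D(f_{01}) \circ D(f_{12})$ computed in $\BMeshClosed{n}$. By Lemma~\ref{lem:fct-emb-open-dual}, both $D(f_{01})$ and $D(f_{12})$ are inert bordisms of closed $n$-meshes, and by Lemma~\ref{lem:fct-emb-closed-composite} their composite is again inert. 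Applying Lemma~\ref{lem:fct-emb-open-dual} once more, $g$ is inert in $\BMeshOpen{n}$.

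There is no real obstacle here — the work has already been done in establishing the duality equivalence and in Lemma~\ref{lem:fct-emb-closed-composite}. The only minor point to check, if one wants to be careful, is that the notion of inertness used to define $D^{-1}$ of an inert closed bordism is the same as the one given in the preceding definition; this is immediate from the proof of Lemma~\ref{lem:fct-emb-open-dual}, which shows that the defining data $\stratPos{F} = \stratPos{\Xi}$ agrees on the nose between a bundle and its dual.
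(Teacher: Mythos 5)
Your proof is correct and is exactly the paper's argument: the corollary is deduced by duality (Lemma~\ref{lem:fct-emb-open-dual}) from the closed-mesh case (Lemma~\ref{lem:fct-emb-closed-composite}). You have simply spelled out the transport along the equivalence in more detail than the paper does.
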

\begin{proof}
  Via Lemma~\ref{lem:fct-emb-open-dual} this follows by duality from Lemma~\ref{lem:fct-emb-closed-composite}.
\end{proof}

We can use also use duality to construct open mapping cylinders for every constructible
inclusion between open $n$-meshes, which then represent the appropriate inert bordisms.
Using duality for this purpose is convenient since the resulting open $n$-mesh bundle
does not have the $n$-framing from Construction~\ref{con:fct-ref-ocyl-framing}.
Rather, as we have seen in Example~\ref{ex:fct-emb-open} above, the $n$-framing
for an inert bordism of open $n$-meshes sends the parts that are not in the image
of the embedding off to infinity on the sides.

\begin{construction}
  Suppose that $\strat{M}$ is an open $n$-mesh and $\strat{N} \subseteq \strat{M}$
  a constructible subspace that is also an open $n$-mesh.
  We can then construct an inert map $\strat{N} \pto \strat{M}$ as follows.
  Via duality between open and closed $n$-meshes, the dual $\strat{N}^\dagger$
  is a constructible subspace of $\strat{M}^\dagger$.
  Then by Construction~\ref{con:fct-emb-closed} we obtain an inert bordism of closed $n$-meshes
  $\strat{M}^\dagger \pto \strat{N}^\dagger$.
  Applying duality again then yields the inert bordism of open $n$-meshes
  $\strat{N} \pto \strat{M}$.
\end{construction}

\subsection{Active Bordisms and Factorisations}\label{sec:fct-embed-active}

\begin{definition}
  Let $f : \strat{X} \pto \strat{Y}$ be a bordism of closed $n$-meshes
  represented by a closed $n$-mesh bundle $\xi : \strat{E} \to \DeltaStrat{1}$.
  Then $f$ is \defn{active} if for every $p \in \stratPos{\strat{X}}$
  there exists a $q \in \stratPos{\strat{Y}}$ such that $p \leq q$ in
  $\stratPos{\strat{E}}$.
  A bordism of closed $n$-meshes with labels in a quasicategory $\cat{C}$
  is \defn{active} if it is active after forgetting the labels.
\end{definition}

\begin{lemma}\label{lem:fct-emb-active-compose-closed}
  Let $\cat{C}$ be a quasicategory.
  Active bordisms of closed $n$-meshes with labels in $\cat{C}$
  are closed under composition in
  the $\infty$-category $\BMeshClosedL{n}{\cat{C}}$.
\end{lemma}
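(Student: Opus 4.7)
The plan is to reduce the claim to the unlabelled case and then argue geometrically using the composition witness for bordisms (analogous to the one built in Lemma~\ref{lem:fct-emb-closed-composite}), together with transitivity in the total stratum poset.

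First I would observe that a bordism of labelled closed $n$-meshes is active iff its image under the forgetful functor $\pi : \BMeshClosedL{n}{\cat{C}} \to \BMeshClosed{n}$ is active, and $\pi$ preserves composition, so it suffices to prove the statement for unlabelled closed $n$-meshes in $\BMeshClosed{n}$. Let $f_{01} : \strat{X}_0 \pto \strat{X}_1$ and $f_{12} : \strat{X}_1 \pto \strat{X}_2$ be two composable active bordisms, witnessed respectively by closed $n$-mesh bundles $\Xi_{01} : \strat{E}_{01} \to \DeltaStrat{1}$ and $\Xi_{12} : \strat{E}_{12} \to \DeltaStrat{1}$. Since $\BMeshClosed{n}$ is a quasicategory, the composite $f_{02} := f_{12} \circ f_{01}$ is represented by the edge $\langle 0, 2 \rangle : \Delta\ord{1} \to \Delta\ord{2}$ of some $2$-simplex, i.e. of a closed $n$-mesh bundle $\Xi : \strat{E} \to \DeltaStrat{2}$ whose restrictions over $\langle 0, 1\rangle$ and $\langle 1, 2\rangle$ are $\Xi_{01}$ and $\Xi_{12}$.

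Next I would show directly that $f_{02}$ is active. Given any $p \in \stratPos{\strat{X}_0}$, activeness of $f_{01}$ supplies some $q \in \stratPos{\strat{X}_1}$ with $p \leq q$ in $\stratPos{\strat{E}_{01}}$; identifying $\stratPos{\strat{E}_{01}}$ with the subposet of $\stratPos{\strat{E}}$ lying over $\{0,1\} \subset \ord{2}$, this gives $p \leq q$ in $\stratPos{\strat{E}}$. Then activeness of $f_{12}$ produces $r \in \stratPos{\strat{X}_2}$ with $q \leq r$ in $\stratPos{\strat{E}_{12}}$, hence also in $\stratPos{\strat{E}}$. By transitivity, $p \leq r$ in $\stratPos{\strat{E}}$, and since $\stratPos{\strat{E}}$ restricted over $\langle 0, 2 \rangle$ is exactly $\stratPos{\strat{E}_{02}}$ (the poset underlying the representing bordism of $f_{02}$), this yields the desired witness for activeness of $f_{02}$.

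The only real subtlety, which I expect to be the main obstacle, is justifying the identification of $\stratPos{\strat{E}_{01}}$ and $\stratPos{\strat{E}_{12}}$ with the corresponding subposets of $\stratPos{\strat{E}}$, and in particular checking that the partial order relations used to witness activeness survive this identification. To handle this I would appeal to the construction of a $2$-simplex filler by an open mapping cylinder argument (as in Lemma~\ref{lem:fct-emb-closed-composite}) which builds $\strat{E}$ as a constructible subbundle of $\strat{X}_0 \times \DeltaStrat{2}$ glued from the edgewise cylinders; in such a model the poset $\stratPos{\strat{E}}$ is a direct sum of its restrictions over the three edges of $\Delta\ord{2}$, and the inequalities $p \leq q$, $q \leq r$ lift tautologically and compose by the transitivity of the poset ordering. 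This gives the required $p \leq r$ in $\stratPos{\strat{E}_{02}}$ and concludes the proof.
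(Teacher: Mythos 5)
Your proposal is correct and follows essentially the same route as the paper: represent the composite by a closed $n$-mesh bundle over $\DeltaStrat{2}$ and chain the two activeness witnesses by transitivity in the total stratum poset. The paper's proof is the same argument stated more tersely (it does not even spell out the reduction from labelled to unlabelled bordisms, which is immediate from the definition, nor the identification of the edge-restrictions' posets with subposets of $\stratPos{\strat{E}}$, which you rightly flag as the only point needing care).
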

\begin{proof}
  Suppose that $\xi : \strat{E} \to \DeltaStrat{2}$ is a closed $n$-mesh bundle
  which represents the composition of two active bordisms:
  \[
    \begin{tikzcd}[column sep = small]
      {} &
      {\strat{X}_1} \ar[dr, mid vert, "f_{12}"] \\
      {\strat{X}_0} \ar[rr, mid vert, "f_{02}"] \ar[ur, mid vert, "f_{01}"] &
      {} &
      {\strat{X}_2}
    \end{tikzcd}
  \]
  Let $p_0 \in \stratPos{\strat{X}_0}$ be an element.
  Because $f_{01}$ is active, there exists an element $p_1 \in \stratPos{\strat{X}_1}$
  such that $p_0 \leq p_1$ in $\stratPos{\strat{E}}$.
  Further because $f_{12}$ is active, there is an element $p_2 \in \stratPos{\strat{X}_2}$
  such that $p_1 \leq p_2$ in $\stratPos{\strat{E}}$.
  Therefore, $p_0 \leq p_2$ and $f_{02}$ is active.
\end{proof}

  % Since $\xi$ is a constructible subbundle of itself, there exists at least
  % one such subbundle.  
  % Suppose now that there are two constructible subbundles 
  % $f_0 : \strat{X}_0 \to \strat{B}$ and $f_1 : \strat{X}_1 \to \strat{B}$
  % of $\xi$ such that both $f_0$, $f_1$ are closed $n$-mesh bundles
  % and $\stratPos{\strat{X}_0}$, $\stratPos{\strat{X}_1}$ both contain $Q$.
  % We can then take
  % \[
  %   \begin{tikzcd}
  %   	& {X_0 \cap X_1} && {X_1} \\
  %   	{X_0} && X \\
  %   	& {B_0 \cap B_1} && {B_1} \\
  %   	{B_0} && B
  %   	\arrow[from=2-3, to=4-3]
  %   	\arrow[hook, from=2-1, to=2-3]
  %   	\arrow[from=2-1, to=4-1]
  %   	\arrow[hook, from=4-1, to=4-3]
  %   	\arrow[hook', from=1-4, to=2-3]
  %   	\arrow[hook', from=3-4, to=4-3]
  %   	\arrow[hook', from=1-2, to=2-1]
  %   	\arrow[hook', from=3-2, to=4-1]
  %   	\arrow[hook, from=3-2, to=3-4]
  %   	\arrow[hook, from=1-2, to=1-4]
  %   	\arrow[from=1-4, to=3-4]
  %   	\arrow[from=1-2, to=3-2]
  %   \end{tikzcd}
  % \]

\begin{lemma}\label{lem:fct-emb-closed-smallest-subbundle}
  Let $\xi : \strat{X} \to \strat{B}$ be a closed $n$-mesh bundle
  and $Q \subseteq \stratPos{\strat{X}}$ a subposet.
  Then there exists a smallest closed $n$-mesh bundle $\xi' : \strat{X}' \to \strat{B}'$ with a
  closed constructible embedding into $\xi$ such that $Q \subseteq \stratPos{\strat{X}'} \subseteq \stratPos{\strat{X}}$.
\end{lemma}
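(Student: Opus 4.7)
The plan is to proceed by induction on $n \geq 0$. The base case $n = 0$ is immediate: $\xi$ is the identity on $\strat{B}$, so I take $\strat{B}'$ to be the closed constructible subspace of $\strat{B}$ corresponding to the downward closure of $Q$ in $\stratPos{\strat{B}}$, with $\xi' = \id_{\strat{B}'}$. For the inductive step, I would factor $\xi$ as the composite $\zeta \circ \chi$ where $\chi : \strat{X} \to \strat{Y}$ is a closed $1$-mesh bundle and $\zeta : \strat{Y} \to \strat{B}$ is a closed $(n-1)$-mesh bundle, and treat the outer layer $\chi$ before invoking the inductive hypothesis on $\zeta$.

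The heart of the argument is the $1$-dimensional construction: given $\chi : \strat{X} \to \strat{Y}$ and $Q \subseteq \stratPos{\strat{X}}$, I would define $\bar{Q} \subseteq \stratPos{\strat{X}}$ as the smallest subposet containing $Q$ that is (i) downward closed in $\stratPos{\strat{X}}$, and (ii) fibrewise-complete, meaning that for each stratum $s \in \stratPos{\strat{Y}}$ the intersection $\bar{Q} \cap F_s$ with the subposet $F_s$ of strata of $\strat{X}$ over $s$ is either empty or forms the strata of a closed $1$-sub-mesh of the closed $1$-mesh $F_s$. Such a smallest $\bar{Q}$ exists as the intersection of all subposets satisfying (i) and (ii): downward closure is preserved under intersection, and for (ii) one uses that the intersection of two sub-intervals of a closed interval is again a sub-interval when non-empty, together with the observation that the downward closure of any stratum of $Q$ forces a non-empty intersection in every fibre over a lower stratum. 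Letting $\strat{X}_1 \subseteq \strat{X}$ and $\strat{Y}_1 \subseteq \strat{Y}$ be the closed constructible subspaces determined by $\bar{Q}$ and its downward-closed image $\stratPos{\chi}(\bar{Q})$, the induced map $\bar\chi : \strat{X}_1 \to \strat{Y}_1$ is a closed constructible subbundle of $\chi$ whose fibres are closed $1$-meshes by (ii), so Lemma~\ref{lem:fct-mesh-closed-subbundle} gives that it is a closed $1$-mesh bundle, manifestly the smallest containing $Q$.

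The induction then alternates the $1$-dimensional construction with the inductive hypothesis. Applying the latter to $\zeta$ with subposet $\stratPos{\strat{Y}_1}$ yields a smallest closed $(n-1)$-mesh subbundle $\zeta' : \strat{Y}' \to \strat{B}'$ containing $\strat{Y}_1$. If $\strat{Y}'$ strictly enlarges $\strat{Y}_1$, the newly added base strata pull back via $\chi$ to new strata in the total space that must in turn be fibrewise-completed, so we repeat the $1$-dimensional construction over the enlarged base. This is a monotone enlargement of subposets of $\stratPos{\strat{X}}$ and $\stratPos{\strat{Y}}$, which stabilises by the local finiteness of mesh stratifications; the stable pair $(\strat{X}', \strat{Y}')$ defines the closed $n$-mesh subbundle $\xi' : \strat{X}' \to \strat{B}'$. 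Minimality follows because any closed $n$-mesh subbundle containing $Q$ must satisfy downward-closure and fibrewise-completeness at every level of the mesh tower, and is therefore captured by the iterative construction. The main obstacle is ensuring termination of the alternation and verifying that the stable configuration leaves no gap in the bundle structure; I would address this either by bounding the iteration in a local neighbourhood of $Q$ (using local finiteness of the triangulations underlying closed mesh bundles) or, more uniformly, by defining $\xi'$ directly as the intersection of all closed $n$-mesh subbundles of $\xi$ containing $Q$ and extending the closure-under-intersection argument to $n$ dimensions by the same local combinatorics.
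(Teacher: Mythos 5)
Your proposal is correct in substance and rests on the same two ingredients as the paper's proof: induction along the tower decomposition $\xi = \zeta \circ \chi$ with $\chi : \strat{X} \to \strat{Y}$ a closed $1$-mesh bundle and $\zeta : \strat{Y} \to \strat{B}$ a closed $(n-1)$-mesh bundle, together with a fibrewise convex (interval) closure to repair the fibres of the restricted subbundle, with Lemma~\ref{lem:fct-mesh-closed-subbundle} certifying the result. The genuine difference is the order of operations. You complete the fibres first and then apply the inductive hypothesis to the base, which is what forces you into the alternating enlargement and the termination discussion. The paper does it the other way around: it first applies the inductive hypothesis to $\zeta$ with the image subposet $Q' = \stratPos{\chi}(Q)$, obtaining the smallest $(n-1)$-mesh subbundle $\zeta' : \strat{Y}' \to \strat{B}'$ once and for all, and only then takes the fibrewise convex closure of the restriction of $\chi$ over $\strat{Y}'$. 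Since completing fibres over a fixed base cannot enlarge the base (the poset image of the completed total space still lands in $\stratPos{\strat{Y}'}$), a single pass suffices and no termination argument is needed; in fact your alternation also stabilises after one extra round for exactly this reason, so the appeal to local finiteness is heavier machinery than required. One point both arguments should treat with care, and which you might make explicit: over base strata added by the inductive step (those in $\stratPos{\strat{Y}'}$ but not in the image of $Q$) the subspace determined by $Q$ has empty fibre, so the fibrewise closure there must be read as forming the smallest closed $1$-mesh subbundle of $\chi$ over $\strat{Y}'$ containing it, with the missing fibres forced by closedness of the singular locus and the frontier condition rather than computed pointwise from a nonempty seed.
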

\begin{proof}
  Without loss of generality we can assume that $Q$ is downwards closed.
  When $n = 0$ then $\xi$ is the identity map of $\strat{X} = \strat{B}$
  and $\xi'$ is the identity map of the constructible subspace $\strat{B}' \subseteq \strat{B}$
  with $\stratPos{\strat{B}'} = Q \subseteq \stratPos{\strat{B}}$.
  Now suppose that we have $n > 0$.
  We can then write $\xi$ as the composite of a closed $1$-mesh bundle
  $\chi : \strat{X} \to \strat{Y}$ followed by a closed $(n - 1)$-mesh bundle
  $\zeta : \strat{Y} \to \strat{X}$.  
  By induction on $n$ there exists a smallest closed $(n - 1)$-mesh bundle
  $\zeta' : \strat{Y}' \to \strat{B}'$ together with a closed constructible embedding
  \[
    \begin{tikzcd}
      % {} &
      % {\strat{X}} \ar[d, "\chi"'] \ar[dd, bend left, "\xi"] \\
      {\strat{Y}'} \ar[r, hook] \ar[d, "\zeta'"] &
      {\strat{Y}} \ar[d, "\zeta"'] \\
      {\strat{B}'} \ar[r, hook] &
      {\strat{B}}
    \end{tikzcd}
  \]
  such that $\stratPos{\strat{Y}'}$ contains the subposet
  $Q' = \stratPos{\chi}(Q) \subseteq \stratPos{\strat{Y}}$.
  Let $\strat{X}'' \subseteq \strat{X}$ be the closed constructible subspace
  such that $\stratPos{\strat{X}''} = Q \subseteq \stratPos{\strat{X}}$.
  Then the closed $1$-mesh bundle $\chi$ restricts to the $1$-framed stratified
  subbundle $\chi''$ in the diagram
  \[
    \begin{tikzcd}
      {\strat{X}''} \ar[d, "\chi''"] \ar[r, hook] &
      {\strat{X}} \ar[d, "\chi"'] \ar[dd, bend left, "\xi"] \\
      {\strat{Y}'} \ar[r, hook] \ar[d, "\zeta'"] &
      {\strat{Y}} \ar[d, "\zeta"'] \\
      {\strat{B}'} \ar[r, hook] &
      {\strat{B}}
    \end{tikzcd}
  \]
  The $1$-framed stratified bundle $\chi''$ is not a closed $1$-mesh bundle
  in general: The fibres of $\chi''$ are closed constructible subspaces
  of closed $1$-meshes but may fail to be closed $1$-meshes themselves
  by being disconnected. We therefore let $\chi' : \strat{X}' \to \strat{Y}'$
  be the fibrewise convex closure of $\chi'' : \strat{X}' \to \strat{Y}'$.  
  Then $\chi'$ is a closed $1$-mesh bundle we can let $\xi' = \zeta' \circ \chi'$.
\end{proof}

\begin{lemma}\label{lem:fct-emb-active-factorisation-closed}
  The inert and active bordisms of closed $n$-meshes form an orthogonal
  factorisation system on $\BMeshClosed{n}$.
\end{lemma}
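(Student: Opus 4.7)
My plan is to establish the three requirements of an orthogonal factorisation system: closure under composition for each class, which is already settled by Lemma~\ref{lem:fct-emb-closed-composite} and Lemma~\ref{lem:fct-emb-active-compose-closed}; existence of an active-then-inert factorisation of every bordism; and the unique diagonal lifting property against a square with an active leg on one side and an inert leg on the other.

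For the factorisation step, given a bordism $f : \strat{X} \pto \strat{Y}$ represented by a closed $n$-mesh bundle $\xi : \strat{E} \to \DeltaStrat{1}$, I will construct a closed $n$-mesh bundle $\Xi : \strat{F} \to \DeltaStrat{2}$ whose restriction to $\langle 0, 2 \rangle$ recovers $\xi$, whose restriction to $\langle 1, 2 \rangle$ is an inert bordism arising from a constructible inclusion $\strat{Y} \hookrightarrow \strat{Z}$, and whose restriction to $\langle 0, 1 \rangle$ is an active bordism $\strat{X} \pto \strat{Z}$. The intermediate closed $n$-mesh $\strat{Z}$ is obtained from the downward-closed subposet $Q \subseteq \stratPos{\strat{E}}$ consisting of strata $p$ with $p \leq q$ in $\stratPos{\strat{E}}$ for some $q \in \stratPos{\strat{Y}}$: applying Lemma~\ref{lem:fct-emb-closed-smallest-subbundle} to this $Q$ yields a smallest closed $n$-mesh subbundle $\xi_Q$ of $\xi$ whose stratifying poset contains $Q$, and $\strat{Z}$ arises as a thickening of $\strat{Y}$ that includes exactly those additional strata needed to ensure every stratum of $\strat{X}$ lies below some stratum of $\strat{Z}$.

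With $\strat{Z}$ in hand, the $2$-simplex bundle $\Xi$ is assembled by gluing along common boundary fibres the three pieces: the original $\xi$, the reversed mapping cylinder of $\strat{Y} \hookrightarrow \strat{Z}$, and the active bundle representing $\strat{X} \pto \strat{Z}$. Closure of the resulting total space as a closed $n$-mesh bundle will follow from Lemma~\ref{lem:fct-mesh-closed-subbundle} applied to the constructible subbundle structure. For orthogonality, any commutative square in $\BMeshClosed{n}$ with an active leg opposite an inert leg admits a unique diagonal lift, obtained by factoring the composite bordism across the square and invoking the universal minimality of $\strat{Z}$ from Lemma~\ref{lem:fct-emb-closed-smallest-subbundle} to identify the lift and verify its uniqueness up to equivalence.

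The main obstacle will be producing $\strat{Z}$ as a genuine closed $n$-mesh rather than merely the combinatorial data of a subposet of $\stratPos{\strat{E}}$, and verifying that the composite of the two legs recovers $\xi$ up to equivalence in $\BMeshClosed{n}$. The poset-level factorisation of $\stratPos{\strat{E}} \to \ord{1}$ generalises the classical inert-then-active factorisation in $\FinOrd$ (the base case $n = 1$ via the equivalence $\BTrussClosed{1} \simeq \FinOrd^\op$), but at higher $n$ the intermediate poset must carry the structure of a closed $n$-truss. Passing through the combinatorial truss picture $\BTrussClosed{n}$ and inducting along the sequence of closed $1$-truss bundles constituting a closed $n$-truss, together with Lemma~\ref{lem:fct-emb-closed-smallest-subbundle} to realise each intermediate layer, will be the essential technical tool.
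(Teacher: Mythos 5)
Your factorisation is in the wrong order, and this is not a labelling slip but a substantive error. You propose to factor $f : \strat{X} \pto \strat{Y}$ as an active bordism $\strat{X} \pto \strat{Z}$ followed by an inert bordism $\strat{Z} \pto \strat{Y}$ coming from a constructible inclusion $\strat{Y} \hookrightarrow \strat{Z}$. The factorisation system on $\BMeshClosed{n}$ has the inert class on the \emph{left}: every bordism factors essentially uniquely as an inert bordism followed by an active one (this is how Proposition~\ref{prop:fct-emb-active-factorisation-closed} uses the lemma). The two orders are not interchangeable. Already for $n = 1$, where $\BMeshClosed{1} \simeq \FinOrd^\op$ with inert bordisms dual to subinterval inclusions and active bordisms dual to endpoint-preserving maps, an active-then-inert factorisation fails to be unique: the inert bordism $\strat{X} \pto \strat{Y}$ collapsing $\ord{1}$-many singular strata to the leftmost one admits many inequivalent decompositions of your proposed form (dually, $\langle 0 \rangle : \ord{0} \to \ord{1}$ in $\FinOrd$ factors as inert-then-active through $\ord{1}$, through $\ord{2}$, etc.). So the existence-and-uniqueness you would need to verify simply does not hold for your order, and the orthogonality square you would have to lift against has its legs swapped.

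Your description of the intermediate object is also internally inconsistent, and resolving the inconsistency points at the correct proof. The subposet $Q = \{\, p \mid \exists q \in \stratPos{\strat{Y}}.\ p \leq q \,\}$ that you feed into Lemma~\ref{lem:fct-emb-closed-smallest-subbundle} is a \emph{restriction} of the domain, not a thickening of the codomain: it carves out the part of $\strat{X}$ lying below $\strat{Y}$. This is exactly what the paper does, but applied to the degenerate $2$-simplex bundle $\xi : \strat{E} \to \DeltaStrat{2}$ witnessing $f = f \circ \id$ rather than to the $1$-simplex bundle, with the subposet enlarged to also contain the whole fibre over the vertex $0$. The middle fibre of the resulting smallest subbundle is then a constructible submesh $\strat{Z} \hookrightarrow \strat{X}$, so the first leg $\strat{X} \pto \strat{Z}$ is inert, and every stratum of $\strat{Z}$ lies below some stratum of $\strat{Y}$ by construction of $Q$, so the second leg is active. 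Uniqueness follows by mapping any other inert-active factorisation triangle into the degenerate one and observing that activity of the second leg forces its underlying subposet to coincide with $Q$. Your suggestion to induct through the truss picture layer by layer is unnecessary: Lemma~\ref{lem:fct-emb-closed-smallest-subbundle} already does the geometric work in one step.
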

\begin{proof}
  Let $f : \strat{X} \pto \strat{Y}$ be a bordism of closed $n$-meshes.
  We let $\xi : \strat{E} \to \DeltaStrat{2}$ be the closed $n$-mesh bundle
  that corresponds to the commutative triangle of bordisms
  \[
    \begin{tikzcd}[column sep = small]
      {} &
      {\strat{X}} \ar[dr, mid vert, "f"] \\
      {\strat{X}} \ar[rr, mid vert, "f"'] \ar[ur, mid vert, "\id"] &
      {} &
      {\strat{Y}}
    \end{tikzcd}
  \]
  We then use Lemma~\ref{lem:fct-emb-closed-smallest-subbundle} and
  let $\xi' : \strat{E}' \to \DeltaStrat{2}$ be the smallest
  closed $n$-mesh bundle contained within $\xi$ such that $\stratPos{\strat{E}'} \subseteq \stratPos{\strat{E}}$
  contains the subposet
  \begin{equation}\label{eq:fct-emb-active-factorisation-closed:subposet}
    \{ p \mid \exists q \in \stratPos{\strat{Y}}.\ \stratPos{\xi}(q) = 2 \land p \leq q \}
    \cup
    \{ p \mid \stratPos{\xi}(p) = 0 \} \subseteq \stratPos{\strat{E}}.
  \end{equation}
  The closed $n$-mesh bundle $\xi'$ then represents a diagram of closed $n$-mesh bordisms
  \[
    \begin{tikzcd}[column sep = small]
      {} &
      {\strat{Z}} \ar[dr, mid vert, "f_1"] \\
      {\strat{X}} \ar[rr, mid vert, "f"'] \ar[ur, mid vert, "f_0"] &
      {} &
      {\strat{Y}}
    \end{tikzcd}
  \]
  where $f_0$ is inert and $f_1$ is active by construction.

  Now suppose that we have another factorisation of $f$ into an inert map
  $f_0' : \strat{X} \pto \strat{Z}'$ followed by an active map
  $f_1' : \strat{Z}' \pto \strat{Y}$.
  Let $\xi'' : \strat{E}'' \to \DeltaStrat{2}$ be the closed $2$-mesh bundle
  which represents this factorisation.
  Then there is a natural inert map between the commutative triangles
  of closed $n$-mesh bordisms
  \[
    \begin{tikzcd}
    	{\strat{X}} &&& {\strat{X}} \\
    	& {\strat{Z}'} & {\strat{X}} \\
    	{\strat{Y}} &&& {\strat{Y}}
    	\arrow["f"', mid vert, from=1-1, to=3-1]
    	\arrow["{f_0'}", mid vert, from=1-1, to=2-2]
    	\arrow["{f_1'}", mid vert, from=2-2, to=3-1]
    	\arrow[equal, from=1-4, to=2-3]
    	\arrow["f", mid vert, from=1-4, to=3-4]
    	\arrow["f"', mid vert, from=2-3, to=3-4]
    	\arrow[from=3-4, to=3-1, equal]
    	\arrow[from=1-4, to=1-1, equal]
    	\arrow["{f_0'}", mid vert, from=2-3, to=2-2]
    \end{tikzcd}
  \]
  and therefore $\xi''$ is a constructible subbundle of $\xi$.
  Because $f_1'$ is required to be active, we see that the subposet
  $\stratPos{\strat{E}''} \subseteq \stratPos{\strat{E}}$ must agree with
  the subposet~(\ref{eq:fct-emb-active-factorisation-closed:subposet}).
  Therefore, $\xi''$ and $\xi'$ must agree by construction of $\xi'$.  
\end{proof}

\begin{proposition}\label{prop:fct-emb-active-factorisation-closed}
  Let $\cat{C}$ be a quasicategory.
  Then the inert and active bordisms of closed $n$-meshes with labels in $\cat{C}$
  form an orthogonal factorisation system on the $\infty$-category $\BMeshClosedL{n}{\cat{C}}$.
\end{proposition}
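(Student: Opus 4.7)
The plan is to bootstrap the labelled factorisation system from the unlabelled one of Lemma~\ref{lem:fct-emb-active-factorisation-closed}, using the forgetful functor $\pi : \BMeshClosedL{n}{\cat{C}} \to \BMeshClosed{n}$ and its cocartesian lifts of inert bordisms from Lemma~\ref{lem:fct-emb-closed-inert-map-lift}. Closure under composition of the two classes is already established: Lemma~\ref{lem:fct-emb-active-compose-closed} handles the active maps, and for inert maps this follows because $\pi$-cocartesian maps over inert bordisms of the base compose, combined with Lemma~\ref{lem:fct-emb-closed-composite}.

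For existence of factorisations, I would take a labelled bordism $f : (\strat{X}, \ell_X) \pto (\strat{Y}, \ell_Y)$, apply Lemma~\ref{lem:fct-emb-active-factorisation-closed} to $\pi(f)$ to obtain a factorisation $\pi(f) \simeq \bar{a} \circ \bar{i}$ with $\bar{i} : \strat{X} \pto \strat{Z}$ inert and $\bar{a} : \strat{Z} \pto \strat{Y}$ active, and then lift $\bar{i}$ via Lemma~\ref{lem:fct-emb-closed-inert-map-lift} to a $\pi$-cocartesian bordism $i : (\strat{X}, \ell_X) \pto (\strat{Z}, \ell_Z)$. The universal property of the cocartesian lift produces an essentially unique $a : (\strat{Z}, \ell_Z) \pto (\strat{Y}, \ell_Y)$ with $\pi(a) \simeq \bar{a}$ and $a \circ i \simeq f$. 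By construction $i$ is inert and $a$ is active in $\BMeshClosedL{n}{\cat{C}}$.

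For orthogonality, I would consider a commutative square
\[
  \begin{tikzcd}
    {(\strat{A}, \ell_A)} \ar[r, mid vert] \ar[d, mid vert, "i"'] &
    {(\strat{B}, \ell_B)} \ar[d, mid vert, "a"] \\
    {(\strat{C}, \ell_C)} \ar[r, mid vert] &
    {(\strat{D}, \ell_D)}
  \end{tikzcd}
\]
in $\BMeshClosedL{n}{\cat{C}}$ with $i$ inert and $a$ active, and analyse the space of diagonal fillers as the appropriate fibre of a mapping space. The cocartesian property of $i$ (defining inertness in the labelled setting) gives a pullback square of mapping spaces
\[
  \begin{tikzcd}[column sep = large]
    {\BMeshClosedL{n}{\cat{C}}((\strat{C}, \ell_C), (\strat{B}, \ell_B))} \ar[r] \ar[d] \pullbackcorner &
    {\BMeshClosedL{n}{\cat{C}}((\strat{A}, \ell_A), (\strat{B}, \ell_B))} \ar[d] \\
    {\BMeshClosed{n}(\strat{C}, \strat{B})} \ar[r] &
    {\BMeshClosed{n}(\strat{A}, \strat{B})}
  \end{tikzcd}
\]
fibred over the corresponding base mapping space. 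Taking the fibre over the given data identifies the space of labelled fillers with the space of unlabelled fillers for $\pi$ applied to the square, which is contractible by the unlabelled orthogonality of Lemma~\ref{lem:fct-emb-active-factorisation-closed}.

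The main technical point will be setting up the cocartesian orthogonality rigorously as quasicategorical mapping space identity, making sure the relevant squares are genuine pullbacks of Kan complexes; once that is in place, the result reduces cleanly to the unlabelled case. A minor subtlety is that the labelling $\ell_Z$ produced by the cocartesian lift in the existence step is canonical only up to contractible choice, so both the existence and uniqueness arguments must be phrased in terms of spaces of factorisations rather than strict equalities, but this is standard for factorisation systems in $\infty$-categories.
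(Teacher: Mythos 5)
Your proposal is correct and follows essentially the same route as the paper: factor the underlying unlabelled bordism via Lemma~\ref{lem:fct-emb-active-factorisation-closed}, lift the inert part $\pi$-cocartesianly using Lemma~\ref{lem:fct-emb-closed-inert-map-lift}, and obtain the active part from the universal property of the cocartesian lift. Your mapping-space argument for orthogonality is a more explicit justification of a step the paper leaves implicit, but it is not a different method.
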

\begin{proof}
  Let $f : \strat{X} \pto \strat{Y}$ be a bordism of closed $n$-meshes
  with labels in $\cat{C}$ and let $\pi : \BMeshClosedL{n}{\cat{C}} \to \BMeshClosed{n}$
  denote the forgetful functor.
  By Proposition~\ref{lem:fct-emb-active-factorisation-closed} the unlabelled bordism
  $\pi(f)$ factors essentially uniquely into an inert bordism
  $f_0 : \pi(\strat{X}) \pto \strat{Z}$ followed by an active bordism
  $f_1 : \strat{Z} \pto \pi(\strat{X})$.
  By Lemma~\ref{lem:fct-emb-closed-inert-map-lift} the forgetful functor
  $\pi$ has cocartesian lifts of inert bordisms and so we can find
  an essentially unique inert bordism $\hat{f}_0 : \strat{X} \pto \hat{\strat{Z}}$
  with $\pi(\hat{f}_0) = f_0$. By the universal property of cocartesian
  maps, we then also have an essentially unique bordism $\hat{f}_1 : \hat{\strat{Z}} \pto \strat{Y}$ such that $\pi(\hat{f}_1) = f_1$ and $f$ is the composite
  of $\hat{f}_0$ followed by $\hat{f}_1$.
  This is the essentially unique inert/active factorisation of $f$.
\end{proof}

The inert bordisms of open $n$-meshes are precisely the duals of the inert bordisms
of closed $n$-meshes. We can therefore use the duality equivalence $\BMeshOpenL{n}{\cat{C}} \simeq \BMeshClosedL{n}{\cat{C}^\op}^\op$ to obtain an active/inert factorisation system on $\BMeshOpenL{n}{\cat{C}}$ for each quasicategory $\cat{C}$.

\begin{definition}
  Let $f : \strat{M} \pto \strat{N}$ be a bordism of open $n$-meshes
  represented by an open $n$-mesh bundle $F : \strat{E} \to \DeltaStrat{1}$.
  Then $f$ is \defn{active} if for every $q \in \stratPos{\strat{N}}$
  there exists a $p \in \stratPos{\strat{M}}$ such that $p \leq q$ in
  $\stratPos{\strat{E}}$.
  A bordism of open $n$-meshes with labels in a quasicategory $\cat{C}$
  is \defn{active} if it is active after forgetting the labels.
\end{definition}

\begin{lemma}\label{lem:fct-emb-active-dual}
  Let $\cat{C}$ be a quasicategory.
  A bordism of open $n$-meshes with labels in $\cat{C}$ is active if and only if it is sent to an active
  bordism of closed $n$-meshes by the duality equivalence $\BMeshOpenL{n}{\cat{C}} \simeq \BMeshClosedL{n}{\cat{C}^\op}^\op$.
\end{lemma}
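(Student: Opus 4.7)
The plan is to reduce to the unlabelled case and then unpack the two active conditions at the level of stratifying posets, observing that the order-reversals induced by duality conspire to match them up.

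First, I would reduce to the case $\cat{C} = \terminal$. By definition, a labelled bordism is active iff its image under the label-forgetting functor is active, and these forgetful functors $\BMeshOpenL{n}{\cat{C}} \to \BMeshOpen{n}$ and $\BMeshClosedL{n}{\cat{C}^\op} \to \BMeshClosed{n}$ are intertwined by the duality equivalence. Therefore it suffices to establish the claim on unlabelled bordisms.

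Second, I would pass to trusses and unpack how duality acts on a $1$-simplex. Via the geometric realisation equivalences $\BTrussOpen{n} \simeq \BMeshOpen{n}$ and $\BTrussClosed{n} \simeq \BMeshClosed{n}$, an unlabelled open bordism corresponds to an open $n$-truss bundle, whose stratifying map is a cocartesian fibration of posets $P \to \ord{1}$; the iterated duality isomorphism $\BTrussOpenL{n}{C} \cong \BTrussClosedL{n}{C^\op}^\op$ then identifies its dual with the cartesian fibration $P^\op \to \ord{1}^\op$. To interpret this latter as a bordism in $\BMeshClosed{n}$ (i.e.\ as a $1$-simplex over $\DeltaStrat{1}$ rather than $\StratReal{\Delta\ord{1}^\op}$), one further reverses the base, which swaps the roles of source and target. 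This is the same reasoning as used in the proof of Lemma~\ref{lem:fct-emb-open-dual} for inert bordisms.

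Third, I would unpack the two active conditions side by side. The open bordism $\strat{M} \pto \strat{N}$ is active iff every $q \in P_1 = \stratPos{\strat{N}}$ admits some $p \in P_0 = \stratPos{\strat{M}}$ with $p \leq q$ in $P$. For the dual closed bordism $\strat{N}^\dagger \pto \strat{M}^\dagger$, the stratifying poset is $P^\op$, with source-fibre $\stratPos{\strat{N}^\dagger}$ corresponding (as a set) to $P_1$ and target-fibre $\stratPos{\strat{M}^\dagger}$ corresponding to $P_0$ under the source/target swap above; active then reads: every $q \in P_1$ admits some $p \in P_0$ with $q \leq p$ in $P^\op$, i.e.\ $p \leq q$ in $P$. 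The two conditions are verbatim identical.

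The main obstacle will be bookkeeping the three order-reversals that duality applies simultaneously — the base direction $\ord{1} \mapsto \ord{1}^\op$, the total stratifying poset $P \mapsto P^\op$, and the induced swap of source and target — and checking that they compose to an identity at the level of the active condition. Once this is in place, the verification is purely formal.
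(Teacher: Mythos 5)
Your proposal is correct and follows essentially the same route as the paper, which simply unwinds the duality at the level of stratifying posets (as in the inert case, Lemma~\ref{lem:fct-emb-open-dual}) and observes that the order-reversals on the total poset, the base, and the source/target cancel to identify the two active conditions. Your bookkeeping of the three reversals is in fact slightly more careful than the paper's own one-line reduction.
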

\begin{proof}
  Analogous to Lemma~\ref{lem:fct-emb-open-dual}.
\end{proof}

\begin{cor}
  Let $\cat{C}$ be a quasicategory.
  Active bordisms of open $n$-meshes with labels in $\cat{C}$ are closed under composition in $\BMeshOpenL{n}{\cat{C}}$.
\end{cor}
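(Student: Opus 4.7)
The plan is to reduce the statement to the already established analogue for closed $n$-meshes via the duality equivalence. Recall that we have an equivalence of $\infty$-categories
\[
  \BMeshOpenL{n}{\cat{C}} \simeq \BMeshClosedL{n}{\cat{C}^\op}^\op
\]
and that by Lemma~\ref{lem:fct-emb-active-dual} this equivalence identifies the collection of active bordisms of open $n$-meshes with labels in $\cat{C}$ with the collection of active bordisms of closed $n$-meshes with labels in $\cat{C}^\op$.

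First I would observe that the property of a subcollection of bordisms being closed under composition is preserved by equivalences of $\infty$-categories as well as by passage to the opposite $\infty$-category (since a composite $g \circ f$ in $\cat{D}^\op$ is the composite $f \circ g$ in $\cat{D}$, just with sources and targets swapped). Since $\cat{C}^\op$ is again a quasicategory whenever $\cat{C}$ is, Lemma~\ref{lem:fct-emb-active-compose-closed} applies to $\BMeshClosedL{n}{\cat{C}^\op}$ and tells us that active bordisms there are closed under composition.

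Concretely, given two composable active bordisms $f : \strat{M}_0 \pto \strat{M}_1$ and $g : \strat{M}_1 \pto \strat{M}_2$ in $\BMeshOpenL{n}{\cat{C}}$, I would transport them across the duality equivalence to obtain composable active bordisms $g^\dagger : \strat{M}_2^\dagger \pto \strat{M}_1^\dagger$ and $f^\dagger : \strat{M}_1^\dagger \pto \strat{M}_0^\dagger$ in $\BMeshClosedL{n}{\cat{C}^\op}$. Their composite $f^\dagger \circ g^\dagger$ is active by Lemma~\ref{lem:fct-emb-active-compose-closed}, and under the duality equivalence this composite is sent to the composite $g \circ f$ in $\BMeshOpenL{n}{\cat{C}}$. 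Applying Lemma~\ref{lem:fct-emb-active-dual} in the other direction then shows $g \circ f$ is active.

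There is essentially no obstacle: the entire content is packaged into Lemma~\ref{lem:fct-emb-active-dual} and Lemma~\ref{lem:fct-emb-active-compose-closed}, and the only bookkeeping required is to verify that the duality equivalence sends composites to composites (in reverse order), which is automatic from it being an equivalence of $\infty$-categories combined with the definition of composition in an opposite $\infty$-category.
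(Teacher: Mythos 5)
Your proof is correct and matches the paper's argument exactly: the paper also deduces the corollary from Lemma~\ref{lem:fct-emb-active-dual} and Lemma~\ref{lem:fct-emb-active-compose-closed} via the duality equivalence. Your additional bookkeeping about composites reversing order in the opposite category is a fine elaboration of what the paper leaves implicit.
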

\begin{proof}
  Using Lemma~\ref{lem:fct-emb-active-dual} and Lemma~\ref{lem:fct-emb-active-compose-closed}.
\end{proof}

\begin{cor}\label{cor:fct-emb-active-factorisation-open}
  Let $\cat{C}$ be a quasicategory.
  The active and inert bordisms of open $n$-meshes with labels in $\cat{C}$ form an orthogonal factorisation system
  on the $\infty$-category $\BMeshOpenL{n}{\cat{C}}$.
\end{cor}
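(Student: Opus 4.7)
The plan is to derive this directly by duality from Proposition~\ref{prop:fct-emb-active-factorisation-closed}. The paper has already established a duality equivalence of $\infty$-categories
\[
  \BMeshOpenL{n}{\cat{C}} \simeq \BMeshClosedL{n}{\cat{C}^\op}^\op
\]
for every quasicategory $\cat{C}$, obtained by iterating the $1$-dimensional duality $\BMeshOpen{1} \simeq \BMeshClosed{1, \op}$ via the packing equivalence. Under this equivalence, Lemma~\ref{lem:fct-emb-open-dual} identifies the inert bordisms of open $n$-meshes with the inert bordisms of closed $n$-meshes, and Lemma~\ref{lem:fct-emb-active-dual} identifies the active bordisms of open $n$-meshes with the active bordisms of closed $n$-meshes.

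The strategy is then the following. By Proposition~\ref{prop:fct-emb-active-factorisation-closed}, the pair (inert, active) forms an orthogonal factorisation system on $\BMeshClosedL{n}{\cat{C}^\op}$. Orthogonal factorisation systems are preserved under passing to the opposite $\infty$-category, with the two classes of maps swapping roles: if $(L, R)$ is an OFS on $\cat{D}$, then $(R^\op, L^\op)$ is an OFS on $\cat{D}^\op$. Applying this to $\BMeshClosedL{n}{\cat{C}^\op}$ yields that (active, inert) is an OFS on $\BMeshClosedL{n}{\cat{C}^\op}^\op$. Transporting along the duality equivalence $\BMeshOpenL{n}{\cat{C}} \simeq \BMeshClosedL{n}{\cat{C}^\op}^\op$ and using Lemmas~\ref{lem:fct-emb-open-dual} and~\ref{lem:fct-emb-active-dual} to match up the two classes gives the claim: every bordism in $\BMeshOpenL{n}{\cat{C}}$ factors essentially uniquely as an active bordism followed by an inert bordism.

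I do not anticipate a genuine obstacle here, since all the heavy lifting was done in the closed case: the construction of the inert/active factorisation via the smallest closed $n$-mesh subbundle spanned by a prescribed subposet (Lemma~\ref{lem:fct-emb-closed-smallest-subbundle}), and the extension from unlabelled to labelled bordisms via cocartesian lifts (Lemma~\ref{lem:fct-emb-closed-inert-map-lift}). The only small point to check is that the two ordered pairs line up correctly under the opposite-plus-duality translation, so that active bordisms become the left class and inert bordisms the right class in $\BMeshOpenL{n}{\cat{C}}$. This matches the intended form of the corollary, and in particular means that for an open mesh bordism $f : \strat{M} \pto \strat{N}$ the factorisation $\strat{M} \pto \strat{Z} \pto \strat{N}$ has $\strat{Z}$ corresponding under duality to the intermediate closed mesh of the dual factorisation, with the inert embedding $\strat{Z} \pto \strat{N}$ realised geometrically by the reversed-mapping-cylinder construction of~\S\ref{sec:fct-embed-open}.
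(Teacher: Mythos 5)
Your proposal is correct and takes essentially the same route as the paper, which proves the corollary by combining Lemma~\ref{lem:fct-emb-active-dual} with Proposition~\ref{prop:fct-emb-active-factorisation-closed}, i.e.\ transporting the inert/active factorisation system on $\BMeshClosedL{n}{\cat{C}^\op}$ across the duality equivalence and the class-swap under taking opposites. Your additional remarks on matching the orientation of the two classes are exactly the point the paper leaves implicit.
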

\begin{proof}
  Using Lemma~\ref{lem:fct-emb-active-dual} and Proposition~\ref{prop:fct-emb-active-factorisation-closed}.
\end{proof}

\subsection{Atoms and Cells}\label{sec:fct-embed-atoms-cells}

\begin{definition}
  An open $n$-mesh $\strat{M}$ is an \defn{atom} when $\stratPos{\strat{M}}$
  has a minimal element. A closed $n$-mesh $\strat{X}$ is a \defn{cell} when
  $\stratPos{\strat{X}}$ has a maximal element.
\end{definition}

\begin{example}
  An atom and its dual cell for $n = 2$:
  \[
    \begin{tikzpicture}[scale = 0.5, baseline=(current bounding box.center)]
      \fill[mesh-background] (0, 0) -- (0, 4) -- (4, 4) -- (4, 0) -- cycle;
      \node[mesh-vertex] at (2, 2) {};
      \draw[mesh-stratum] (0, 2) -- (4, 2);
      \draw[mesh-stratum] (2, 2) -- (2, 4);
      \draw[mesh-stratum] (2, 2) -- (1, 0);
      \draw[mesh-stratum] (2, 2) -- (3, 0);
    \end{tikzpicture}
    \qquad
    \overset{\dagger}{\mapsto}
    \qquad
    \begin{tikzpicture}[scale = 0.5, baseline=(current bounding box.center)]
      \fill[mesh-background] (0, 0) -- (0, 4) -- (4, 4) -- (4, 0) -- cycle;
      \draw[mesh-stratum] (0, 0) -- (0, 4) -- (4, 4) -- (4, 0) -- cycle;
      \node[mesh-vertex] at (0, 0) {};
      \node[mesh-vertex] at (0, 4) {};
      \node[mesh-vertex] at (4, 4) {};
      \node[mesh-vertex] at (4, 0) {};
      \node[mesh-vertex] at (2, 0) {};
    \end{tikzpicture}
  \]
\end{example}

\begin{example}  
  Cells do not need to be square shaped:
  \[
    \begin{tikzpicture}[scale = 0.5, baseline=(current bounding box.center)]
      \fill[mesh-background] (0, 0) -- (0, 4) -- (4, 4) -- (4, 0) -- cycle;
      \node[mesh-vertex] at (2, 2) {};
      \draw[mesh-stratum] (0, 2) -- (4, 2);
      \draw[mesh-stratum] (2, 2) -- (2, 4);
    \end{tikzpicture}
    \qquad
    \overset{\dagger}{\mapsto}
    \qquad
    \begin{tikzpicture}[scale = 0.5, baseline=(current bounding box.center)]
      \fill[mesh-background] (2, 0) -- (0, 4) -- (4, 4) -- cycle;
      \draw[mesh-stratum] (2, 0) -- (0, 4) -- (4, 4) -- cycle;
      \node[mesh-vertex] at (0, 4) {};
      \node[mesh-vertex] at (4, 4) {};
      \node[mesh-vertex] at (2, 0) {};
    \end{tikzpicture}
  \]
\end{example}

\begin{example}
  Via the inclusion $\gridMeshOpen : \FinOrd^n \hookrightarrow \BMeshOpen{n}$
  from Construction~\ref{con:fct-mesh-grid-open}, every object $\ord{k_1, \ldots, k_n} \in \FinOrd^\op$ induces an open $n$-mesh.
  This open $n$-mesh is an atom exactly when $0 \leq k_i \leq 1$ for all
  $1 \leq i \leq n$.
  For $n = 2$ these atoms look as follows:
  \begin{align*}
    \gridMeshOpen\ord{1, 1} \quad&\simeq\quad
    \begin{tikzpicture}[scale = 0.5, baseline=(current bounding box.center)]
      \fill[mesh-background] (0, 0) -- (0, 4) -- (4, 4) -- (4, 0) -- cycle;
      \node[mesh-vertex] at (2, 2) {};
      \draw[mesh-stratum] (0, 2) -- (4, 2);
      \draw[mesh-stratum] (2, 2) -- (2, 4);
      \draw[mesh-stratum] (2, 2) -- (2, 0);
    \end{tikzpicture}
    &
    \gridMeshOpen\ord{0, 1} \quad&\simeq\quad
    \begin{tikzpicture}[scale = 0.5, baseline=(current bounding box.center)]
      \fill[mesh-background] (0, 0) -- (0, 4) -- (4, 4) -- (4, 0) -- cycle;
      \draw[mesh-stratum] (0, 2) -- (4, 2);
    \end{tikzpicture}
    \\
    \gridMeshOpen\ord{1, 0} \quad&\simeq\quad
    \begin{tikzpicture}[scale = 0.5, baseline=(current bounding box.center)]
      \fill[mesh-background] (0, 0) -- (0, 4) -- (4, 4) -- (4, 0) -- cycle;
      \draw[mesh-stratum] (2, 0) -- (2, 4);
    \end{tikzpicture}
    &
    \gridMeshOpen\ord{0, 0} \quad&\simeq\quad
    \begin{tikzpicture}[scale = 0.5, baseline=(current bounding box.center)]
      \fill[mesh-background] (0, 0) -- (0, 4) -- (4, 4) -- (4, 0) -- cycle;
    \end{tikzpicture}
  \end{align*}
  Via Construction~\ref{con:fct-mesh-grid-closed} we also have an inclusion
  $\gridMeshClosed : \FinOrd^{n, \op} \hookrightarrow \BMeshClosed{n}$.
  For any $\ord{k_1, \ldots, k_n} \in \FinOrd^n$ the closed $n$-mesh
  $\gridMeshClosed\ord{k_1, \ldots, k_n}$ is a cell if and only if 
  $0 \leq k_i \leq 1$ for all $1 \leq i \leq n$.
  For $n = 2$ we have the following cells:
  \begin{align*}
    \gridMeshClosed\ord{1, 1} \quad&\simeq\quad
    \begin{tikzpicture}[scale = 0.5, baseline=(current bounding box.center)]
      \fill[mesh-background] (0, 0) -- (0, 4) -- (4, 4) -- (4, 0) -- cycle;
      \draw[mesh-stratum] (0, 0) -- (0, 4) -- (4, 4) -- (4, 0) -- cycle;
      \node[mesh-vertex] at (0, 0) {};
      \node[mesh-vertex] at (4, 0) {};
      \node[mesh-vertex] at (0, 4) {};
      \node[mesh-vertex] at (4, 4) {};
    \end{tikzpicture}
    &
    \gridMeshClosed\ord{0, 1} \quad&\simeq\quad
    \begin{tikzpicture}[scale = 0.5, baseline=(current bounding box.center)]
      \path [use as bounding box] (0, 0) rectangle (4, 4);
      \draw[mesh-stratum] (2, 0.5) -- (2, 3.5);
      \node[mesh-vertex] at (2, 0.5) {};
      \node[mesh-vertex] at (2, 3.5) {};
    \end{tikzpicture}
    \\
    \gridMeshClosed\ord{1, 0} \quad&\simeq\quad
    \begin{tikzpicture}[scale = 0.5, baseline=(current bounding box.center)]
      \path [use as bounding box] (0, 0) rectangle (4, 4);
      \draw[mesh-stratum] (0.5, 2) -- (3.5, 2);
      \node[mesh-vertex] at (0.5, 2) {};
      \node[mesh-vertex] at (3.5, 2) {};
    \end{tikzpicture}
    &
    \gridMeshClosed\ord{0, 0} \quad&\simeq\quad
    \begin{tikzpicture}[scale = 0.5, baseline=(current bounding box.center)]
      \path [use as bounding box] (0, 0) rectangle (4, 4);
      \node[mesh-vertex] at (2, 2) {};
    \end{tikzpicture}
  \end{align*}
\end{example}

\begin{lemma}
  The duality equivalence $\BMeshOpen{n} \simeq (\BMeshClosed{n})^\op$ 
  between open and closed $n$-meshes restricts
  to an equivalence between the full subcategories of atoms and cells.
\end{lemma}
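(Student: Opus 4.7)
The plan is to reduce the claim to a poset-theoretic statement: since being an atom or a cell is a property of the underlying stratification poset, it suffices to show that the duality equivalence sends an open $n$-mesh $\strat{M}$ to a closed $n$-mesh $\strat{M}^\dagger$ whose poset is isomorphic to the opposite $\stratPos{\strat{M}}^\op$. Then $\stratPos{\strat{M}}$ has a minimum if and only if $\stratPos{\strat{M}^\dagger}$ has a maximum, which is precisely the equivalence between being an atom and being a cell.

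First I would transfer the problem to trusses via the geometric realisation equivalences $\BMeshOpen{n} \simeq \BTrussOpen{n}$ and $\BMeshClosed{n} \simeq \BTrussClosed{n}$, under which the duality equivalence corresponds to the natural isomorphism $\BTrussOpen{n} \cong \BTrussClosed{n,\op}$ obtained by iterating the level-$1$ duality $(-)^\dagger : \ETrussOpen{1} \to (\ETrussClosed{1})^\op$. For $n = 1$ I would inspect the definitions directly: in the fibre of $\TrussOpen{1}$ over $\ord{n}$, the morphisms $\trussOS{i} \to \trussOR{i}$ and $\trussOS{i} \to \trussOR{i+1}$ make singular elements minimal and regular elements maximal, whereas the dagger swaps regular and singular while reversing direction, producing precisely the opposite poset once the result is read inside $\ETrussClosed{1}$ rather than $(\ETrussClosed{1})^\op$.

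For general $n$, I would proceed by induction, writing an open $n$-truss as an open $1$-truss bundle over an open $(n-1)$-truss; its underlying poset is the Grothendieck total space of the induced fibration of posets. Applying the level-$1$ duality in the fibre and the inductive duality on the base, and observing that the Grothendieck construction commutes with taking opposites on both the base and the fibres, yields the required poset isomorphism $\stratPos{T^\dagger} \cong \stratPos{T}^\op$ for every open $n$-truss $T$. Combined with the trivial observation that a poset has a minimum if and only if its opposite has a maximum, this completes the argument.

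The main obstacle I expect is bookkeeping the iteration of the dualities across the $n$ coordinate directions carefully, in particular verifying that the poset of the iterated total space of $\ETrussOpen{1} \to \FinOrd$ genuinely dualises level by level into the poset of the iterated total space of $\ETrussClosed{1} \to \FinOrd^\op$, so that the final poset isomorphism is an opposite and not some more exotic reshuffling of dimensions.
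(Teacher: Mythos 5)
Your proposal is correct and matches the paper's argument: the paper's proof simply observes that $\strat{M}^\dagger \simeq \strat{X}$ implies $\stratPos{\strat{M}} \cong \stratPos{\strat{X}}^\op$, so a minimal element on one side corresponds to a maximal element on the other. The extra work you do to justify the poset-opposite property via the truss duality is exactly the content the paper takes as already established from the construction of $\BTrussOpenL{n}{C} \cong \BTrussClosedL{n}{C}^\op$.
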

\begin{proof}
  When $\strat{M}$ is an open $n$-mesh and $\strat{X}$ a closed $n$-mesh
  such that $\strat{M}^\dagger \simeq \strat{X}$, we have that
  $\stratPos{\strat{M}} \cong \stratPos{\strat{X}}^\op$.
  In particular $\stratPos{\strat{M}}$ has a minimal element if and only if
  $\stratPos{\strat{X}}$ has a maximal element.
\end{proof}

\begin{remark}
  When $\strat{M}$ is an open $n$-mesh, the map $\Exit(\strat{M}) \to \stratPos{\strat{M}}$
  is an equivalence. Therefore, $\strat{M}$ is an atom if and only if $\Exit(\strat{M})$
  has an initial object.
  Analogously, a closed $n$-mesh $\strat{X}$ is a cell if and only if $\Exit(\strat{X})$
  has a terminal object.
\end{remark}

Whenever $\strat{X}$ is a closed $n$-mesh, every element of the poset
$\stratPos{\strat{X}}$ corresponds to a cell that is embedded in $\strat{X}$.
Keeping in mind that the inert bordisms of closed $n$-meshes are oriented
in the opposite direction to embeddings, the assignment from $\stratPos{\strat{X}}$
to the corresponding cell of $\strat{X}$ induces a functor into the slice category
$\stratPos{\strat{X}} \to \BMeshClosed{n}_{\strat{X}/}$,
consisting of inert bordisms out of $\strat{X}$ and in between the cells of $\strat{X}$.

\begin{construction}
  Let $\strat{X}$ be a closed $n$-mesh.
  For any element $p \in \stratPos{\strat{X}}$ we can find the smallest
  closed constructible $n$-submesh $\cell{\strat{X}}(p)$ of $\strat{X}$ such
  that $\stratPos{\cell{\strat{X}}(p)} \subseteq \stratPos{\strat{X}}$ 
  contains $p$.
  By varying $p \in \stratPos{\strat{X}}^\op$ and sending the initial object
  of the left cone $\bot \in \stratPos{\strat{X}}^{\op, \triangleleft}$
  to $\strat{X}$ itself we obtain the \defn{cellular covering functor}
  \[ \cell{\strat{X}}(-) : \stratPos{\strat{X}}^{\op, \triangleleft} \longrightarrow \BMeshClosed{n}. \]
  Explicitly this functor is the classifying map of the closed $n$-mesh bundle
  $\strat{E} \to \StratReal{\stratPos{\strat{X}}^{\op, \triangleleft}}$
  where $\strat{E}$ is the constructible subspace
  of $\strat{X} \times \StratReal{\stratPos{\strat{X}}^{\op, \triangleleft}}$
  given by the subposet
  \[
    \stratPos{\strat{E}} :=
    \{
      (p, q) \mid
      p \leq q
    \}
    \cup
    \{
      (p, \bot)
    \}
    \subseteq
    \stratPos{\strat{X}} \times
    \stratPos{\strat{X}}^{\op, \triangleleft}.
  \]
\end{construction}

\begin{lemma}\label{lem:fct-cell-covering-limit}
  Let $\strat{X}$ be a closed $n$-mesh. Then cellular covering functor
  \[
    \cell{\strat{X}}(-) : \stratPos{\strat{M}}^{\op, \triangleleft} \longrightarrow \BMeshClosed{n}
  \]
  is a limit diagram in $\BMeshClosed{n}$.
\end{lemma}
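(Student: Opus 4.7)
The plan is to verify the universal property of the limit by testing against arbitrary closed $n$-meshes. By definition of a limit diagram in a quasicategory, it suffices to show that for every $\strat{Y} \in \BMeshClosed{n}$, the map
\[
  \BMeshClosed{n}(\strat{Y}, \strat{X}) \longrightarrow \lim_{p \in \stratPos{\strat{X}}^\op} \BMeshClosed{n}(\strat{Y}, \cell{\strat{X}}(p))
\]
induced by post-composition with the inert bordisms $\strat{X} \pto \cell{\strat{X}}(p)$ is an equivalence of spaces.

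I would establish this via the (inert, active) factorisation system on $\BMeshClosed{n}$ from Proposition~\ref{prop:fct-emb-active-factorisation-closed}. Each bordism $f_p : \strat{Y} \pto \cell{\strat{X}}(p)$ in a compatible cone factors essentially uniquely as an inert bordism $\strat{Y} \pto \strat{Z}_p$ followed by an active bordism $\strat{Z}_p \to \cell{\strat{X}}(p)$, where $\strat{Z}_p \subseteq \strat{Y}$ is a closed constructible subspace. Compatibility with the inert transitions $\cell{\strat{X}}(p') \pto \cell{\strat{X}}(p)$ (for $p \leq p'$ in $\stratPos{\strat{X}}$) forces the inclusion $\strat{Z}_p \subseteq \strat{Z}_{p'}$, so the family $\{\strat{Z}_p\}$ is a compatible system of closed constructible subspaces indexed exactly by the cells of the union $\strat{Z} := \bigcup_p \strat{Z}_p \subseteq \strat{Y}$. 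The union is itself a closed constructible subspace since $\stratPos{\strat{X}}$ is finite, so it yields an inert bordism $\strat{Y} \pto \strat{Z}$. The active parts $\strat{Z}_p \to \cell{\strat{X}}(p)$ are represented by closed $n$-mesh bundles over $\DeltaStrat{1}$, and these must then be glued (via Lemma~\ref{lem:fct-mesh-closed-subbundle}) into a single closed $n$-mesh bundle $\strat{E} \to \DeltaStrat{1}$ with fibres $\strat{Z}$ and $\strat{X}$, producing the desired bordism $\strat{Y} \to \strat{X}$. Uniqueness of this bordism up to contractible choice follows from the uniqueness of the factorisations above.

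The main obstacle is the gluing step: showing that a compatible family of active bordisms $\strat{Z}_p \to \cell{\strat{X}}(p)$ coherently glues to a single bordism $\strat{Z} \to \strat{X}$. The geometric content is that the cells $\cell{\strat{X}}(p)$ form a closed cover of $\strat{X}$ whose pairwise intersections decompose as unions of smaller cells, so the cone data can be reassembled from its local pieces. One approach is to pull back the cellular covering bundle $\Xi : \strat{E} \to \StratReal{\stratPos{\strat{X}}^{\op, \triangleleft}}$ constructed prior to the statement along a suitably chosen map; since this bundle is already a closed $n$-mesh bundle by construction, the required global object exists and is unique up to equivalence. Alternatively, one could transport the problem to trusses via the equivalence $\BTrussClosed{n} \simeq \BMeshClosed{n}$ and verify the universal property combinatorially, where the gluing becomes a descent statement for truss bundles over the finite poset $\stratPos{\strat{X}}^{\op, \triangleleft}$ — likely the cleaner route once the cellular substructures are organised as a Grothendieck covering of $\stratPos{\strat{X}}$.
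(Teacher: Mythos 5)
Your overall strategy---test the universal property against an arbitrary closed $n$-mesh $\strat{Y}$, organise the cone legs into a nested family, and glue the representing bundles over $\DeltaStrat{1}$ into a single bordism $\strat{Y} \pto \strat{X}$---is the same as the paper's, but you route it through the inert/active factorisation system of Proposition~\ref{prop:fct-emb-active-factorisation-closed}, which the paper's proof does not invoke. The paper instead works directly with the cone legs as closed $n$-mesh bundles $f_x : \strat{F}_x \to \DeltaStrat{1}$, adjusts them up to equivalence by induction on $\stratPos{\strat{X}}$ so that $f_x$ embeds into $f_y$ whenever $x \leq y$, and then glues the nested family. Your detour buys a cleaner separation of the data (the inert parts record which constructible subspace of $\strat{Y}$ each cell sees, the active parts record the collapse) and makes the uniqueness claim more transparent via essential uniqueness of factorisations; the paper's version avoids having to reassemble a global bordism from separately factored pieces. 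Two caveats on your write-up. First, the union $\strat{Z} = \bigcup_p \strat{Z}_p$ of closed constructible submeshes need not itself be a closed $n$-mesh: its fibres can fail to be connected/convex, which is exactly why Lemma~\ref{lem:fct-emb-closed-smallest-subbundle} takes a fibrewise convex closure. You should either pass to that closure or argue that the cone compatibilities force convexity. Second, Lemma~\ref{lem:fct-mesh-closed-subbundle} is a restriction statement, not a gluing statement, so it cannot carry the step you cite it for; the gluing is the genuine crux, and the paper's own proof leaves it at essentially the same level of detail as you do, so this is a shared gap rather than a defect relative to the paper.
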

\begin{proof}
  Suppose that we have a closed $n$-mesh $\strat{Y}$ and a compatible family
  of maps $\strat{Y} \pto \cell{\strat{X}}(x)$ for all $x \in \stratPos{\strat{X}}$,
  represented by closed $n$-mesh bundles $f_x : \strat{F}_x \to \DeltaStrat{1}$.
  By induction on $\stratPos{\strat{X}}$ we can adjust these closed $n$-mesh bundles
  up to equivalence so that $f_x$ embeds into $f_y$ for all elements $x \leq y$ in $\stratPos{\strat{X}}$.
  Then we obtain a closed $n$-mesh bundle $f : \strat{F} \to \DeltaStrat{1}$
  representing a map $\strat{Y} \pto \strat{X}$ which is compatible with the maps
  $\strat{Y} \pto \cell{\strat{X}}(x)$.
  Since $f$ is unique up to equivalence this is the universal map into the limit.
\end{proof}

\begin{construction}
  Let $\strat{M}$ be an open $n$-mesh.
  For any element $p \in \stratPos{\strat{M}}$ we can find the smallest
  open constructible $n$-submesh $\atom{\strat{M}}(p)$ of $\strat{M}$ such
  that $\stratPos{\atom{\strat{M}}(p)} \subseteq \stratPos{\strat{M}}$ 
  contains $p$.
  By varying $p \in \stratPos{\strat{M}}$ and sending the terminal object
  of the right cone $\bot \in \stratPos{\strat{M}}^{\triangleright}$
  to $\strat{M}$ itself we obtain the \defn{atomic covering functor}
  \[ \atom{\strat{M}}(-) : \stratPos{\strat{M}}^{\triangleright} \longrightarrow \BMeshOpen{n}. \]
  Explicitly this functor is the classifying map of the open $n$-mesh bundle
  $\strat{E} \to \StratReal{\stratPos{\strat{M}}^{\triangleright}}$
  where $\strat{E}$ is the constructible subspace
  of $\strat{X} \times \StratReal{\stratPos{\strat{M}}^{\triangleright}}$
  given by the subposet
  \[
    \stratPos{\strat{E}} :=
    \{
      (p, q) \mid
      p \geq q
    \}
    \cup
    \{
      (p, \top)
    \}
    \subseteq
    \stratPos{\strat{M}} \times
    \stratPos{\strat{M}}^{\triangleright}.
  \]
\end{construction}

\begin{observation}
  When $\strat{M}$ is an open $n$-mesh and $\strat{X} = \strat{M}^\dagger$ its dual
  closed $n$-mesh, then the atomic covering diagram of $\strat{X}$ and the cellular
  covering diagram of $\strat{M}$ are dual to each other as well:
  \[
    \begin{tikzcd}
      {\stratPos{\strat{M}}^{\triangleright, \op}} \ar[d, "\atom{\strat{M}}"'] \ar[r, "\cong"] &
      {\stratPos{\strat{X}}^{\op, \triangleleft}} \ar[d, "\cell{\strat{X}}"] \\
      {\BMeshOpen{n, \op}} \ar[r, "-^\dagger"'] &
      {\BMeshClosed{n}}
    \end{tikzcd}
  \]
\end{observation}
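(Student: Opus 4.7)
The plan is to unpack the two covering functors explicitly and check that they match under duality essentially by inspection.

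First, I would record the basic dictionary. Since $\strat{M}^\dagger \simeq \strat{X}$, the duality equivalence $\BMeshOpen{n} \simeq \BMeshClosed{n,\op}$ identifies $\stratPos{\strat{M}} \cong \stratPos{\strat{X}}^\op$ (this was noted in the discussion of atoms and cells). Taking opposites exchanges right and left cones, so
\[
  \stratPos{\strat{M}}^{\triangleright, \op} \;\cong\; (\stratPos{\strat{X}}^\op)^{\triangleright, \op} \;\cong\; \stratPos{\strat{X}}^{\op, \triangleleft},
\]
which provides the top horizontal isomorphism in the square, sending the cone point $\top$ to the cone point $\bot$.

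Second, I would compare the two explicit descriptions of the classifying mesh bundles. The atomic covering $\atom{\strat{M}}$ classifies the open $n$-mesh bundle over $\StratReal{\stratPos{\strat{M}}^{\triangleright}}$ whose total space has poset of strata
\[
  \{ (p,q) \mid p \geq q \} \,\cup\, \{ (p, \top) \} \;\subseteq\; \stratPos{\strat{M}} \times \stratPos{\strat{M}}^{\triangleright}.
\]
The cellular covering $\cell{\strat{X}}$ classifies the closed $n$-mesh bundle over $\StratReal{\stratPos{\strat{X}}^{\op, \triangleleft}}$ whose total space has poset of strata
\[
  \{ (p,q) \mid p \leq q \} \,\cup\, \{ (p, \bot) \} \;\subseteq\; \stratPos{\strat{X}} \times \stratPos{\strat{X}}^{\op, \triangleleft}.
\]
Under the identification $\stratPos{\strat{M}} \cong \stratPos{\strat{X}}^\op$, the relation $p \geq q$ on the left becomes $p \leq q$ on the right, and the two cone points correspond. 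Hence the two total posets are canonically isomorphic (with opposite orientation, as required by duality).

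Third, I would invoke the construction of the duality equivalence $-^\dagger$ on mesh bundles: it acts on the underlying posets by taking opposites, and on a constructible subbundle of a product $\strat{X} \times \strat{B}$ by the corresponding constructible subbundle of $\strat{X}^\dagger \times \strat{B}^\dagger$ defined by the opposite subposet. Applied to the atomic covering bundle this produces exactly the cellular covering bundle of $\strat{X}$, proving that the square commutes up to canonical equivalence. The only mild subtlety, and the one point to double-check, is that an inert bordism of open meshes runs in the same direction as its underlying inclusion, whereas an inert bordism of closed meshes runs opposite to the inclusion; this asymmetry is precisely what forces the cones to swap sides ($\triangleright$ versus $\triangleleft$) and ensures the orientations of the arrows out of the covering functors agree after duality.
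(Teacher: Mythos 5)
Your argument is correct, and it is essentially the only reasonable justification for this observation, which the paper leaves unproved: identify $\stratPos{\strat{M}} \cong \stratPos{\strat{X}}^{\op}$, note that taking opposites swaps the two cones, and then match the explicit subposets $\{(p,q) \mid p \geq q\} \cup \{(p,\top)\}$ and $\{(p,q) \mid p \leq q\} \cup \{(p,\bot)\}$ defining the two classifying bundles, with the reversal of inert-bordism direction between open and closed meshes accounting for the swap of $\triangleright$ and $\triangleleft$. One bookkeeping point to double-check: computed literally, $(\stratPos{\strat{X}}^{\op})^{\triangleright,\op} \cong \stratPos{\strat{X}}^{\triangleleft}$ rather than $\stratPos{\strat{X}}^{\op,\triangleleft}$, so your first display does not quite close up as written; this discrepancy is inherited from an inconsistency already present between the construction of $\atom{\strat{M}}$ (stated with domain $\stratPos{\strat{M}}^{\triangleright}$) and Lemma~\ref{lem:fct-atom-covering-colimit} (stated with domain $\stratPos{\strat{M}}^{\op,\triangleright}$). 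Your second step, the comparison of the two subposets, is what actually pins down the correct variance, and that part is right.
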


\begin{lemma}\label{lem:fct-atom-covering-colimit}
  Let $\strat{M}$ be an open $n$-mesh. Then the atomic covering functor
  \[
    \atom{\strat{M}}(-) : \stratPos{\strat{M}}^{\op, \triangleright} \longrightarrow \BMeshOpen{n}
  \]
  is a colimit diagram in $\BMeshOpen{n}$.
\end{lemma}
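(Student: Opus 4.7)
The plan is to deduce this result from its closed-mesh counterpart, Lemma~\ref{lem:fct-cell-covering-limit}, via the duality equivalence between open and closed $n$-meshes. Recall the equivalence $(-)^\dagger : \BMeshOpen{n, \op} \xrightarrow{\simeq} \BMeshClosed{n}$ and, for any open $n$-mesh $\strat{M}$ with dual cell complex $\strat{X} := \strat{M}^\dagger$, the commutative square from the observation preceding the lemma:
\[
  \begin{tikzcd}
    {\stratPos{\strat{M}}^{\triangleright, \op}} \ar[d, "\atom{\strat{M}}"'] \ar[r, "\cong"] &
    {\stratPos{\strat{X}}^{\op, \triangleleft}} \ar[d, "\cell{\strat{X}}"] \\
    {\BMeshOpen{n, \op}} \ar[r, "-^\dagger"'] &
    {\BMeshClosed{n}}
  \end{tikzcd}
\]
The key step is to observe that under an equivalence of $\infty$-categories $\cat{D}^\op \simeq \cat{C}$, a diagram $F : K^\triangleright \to \cat{D}$ is a colimit diagram if and only if the corresponding diagram $K^{\triangleright, \op} = (K^\op)^\triangleleft \to \cat{D}^\op \simeq \cat{C}$ is a limit diagram. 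Applying this general principle to the square above identifies the atomic covering functor $\atom{\strat{M}}(-)$ being a colimit diagram in $\BMeshOpen{n}$ with the cellular covering functor $\cell{\strat{X}}(-)$ being a limit diagram in $\BMeshClosed{n}$. The latter is precisely the content of Lemma~\ref{lem:fct-cell-covering-limit}.

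The only non-routine point to verify carefully is that the displayed square genuinely commutes as a diagram of functors, i.e.\ that the explicit descriptions of $\atom{\strat{M}}$ and $\cell{\strat{X}}$ as classifying maps of constructible subbundles of $\strat{M} \times \StratReal{\stratPos{\strat{M}}^{\triangleright}}$ and $\strat{X} \times \StratReal{\stratPos{\strat{X}}^{\op, \triangleleft}}$ match under duality. This follows from unwinding the definitions: the subposets $\{(p, q) \mid p \geq q\}$ and $\{(p, q) \mid p \leq q\}$ defining the two total spaces are exchanged under the order-reversing isomorphism $\stratPos{\strat{M}} \cong \stratPos{\strat{X}}^\op$, and the open-versus-closed character of the resulting mesh bundles is precisely the open/closed duality of $n$-meshes. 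With this compatibility in hand the result is immediate from Lemma~\ref{lem:fct-cell-covering-limit}, so no separate combinatorial argument mirroring its proof is required.
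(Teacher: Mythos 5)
Your proposal is correct and matches the paper's own argument, which reads simply ``Using Lemma~\ref{lem:fct-cell-covering-limit} and duality''; you have merely filled in the details that the paper leaves implicit, namely the compatibility square from the preceding observation and the fact that equivalences exchange limits in $\cat{C}$ with colimits in $\cat{C}^\op$. No further comment is needed.
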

\begin{proof}
  Using Lemma~\ref{lem:fct-cell-covering-limit} and duality.
\end{proof}

\begin{lemma}\label{lem:fct-degeneracy-preserve-atom}
  Let $f : \strat{A} \pto \strat{M}$ be a degeneracy map of open $n$-meshes
  so that $\strat{A}$ is an atom. Then $\strat{M}$ is again an atom.
  Moreover when $\varphi : \strat{A} \to \strat{M}$ is a refinement map so
  that $f$ is the open mappying cylinder of $\varphi$, then the induced map
  $\stratPos{\varphi}$
  sends the minimal element of $\stratPos{\strat{A}}$ to minimal element of
  $\stratPos{\strat{M}}$.
\end{lemma}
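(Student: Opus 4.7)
The plan is to reduce to the presentation of $f$ via a refinement map and then use the elementary order-theoretic fact that an order-preserving surjection between posets sends a minimum to a minimum.

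First I would invoke the open-mesh analogue of Proposition~\ref{prop:fct-ref-closed-ocyl-bordism} (stated after Observation~\ref{obs:fct-ref-open-compactify}) to realise $f$, up to equivalence in $\BMeshOpen{n}$, as the degeneracy bordism $\degMeshOpen(\varphi)$ for some $n$-framed refinement map $\varphi : \strat{A} \to \strat{M}$. That proposition further identifies $\stratPos{\varphi} : \stratPos{\strat{A}} \to \stratPos{\strat{M}}$ with the surjective map of posets associated to the degeneracy bordism. Since both conclusions of the lemma are invariant under equivalence of $f$, this reduction suffices.

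Next I would fix the minimal element $p_0 \in \stratPos{\strat{A}}$ (which exists by the hypothesis that $\strat{A}$ is an atom) and set $q_0 := \stratPos{\varphi}(p_0)$. For any $q \in \stratPos{\strat{M}}$, surjectivity of $\stratPos{\varphi}$ supplies a $p \in \stratPos{\strat{A}}$ with $\stratPos{\varphi}(p) = q$; by minimality of $p_0$ we have $p_0 \leq p$, and since $\stratPos{\varphi}$ is order-preserving we obtain $q_0 = \stratPos{\varphi}(p_0) \leq \stratPos{\varphi}(p) = q$. Thus $q_0$ is a minimum of $\stratPos{\strat{M}}$, which simultaneously proves both that $\strat{M}$ is an atom and that the image of the minimum is the minimum.

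There is essentially no obstacle here: the only subtle point is the appeal to the characterisation of degeneracy bordisms as those bordisms whose associated stratum poset over $\ord{1}$ unstraightens to a surjective map of posets, which is needed to guarantee both that $\stratPos{\varphi}$ is surjective and that the minimum/minimum correspondence is intrinsic to $f$ rather than an artefact of the chosen $\varphi$.
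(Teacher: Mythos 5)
Your proof is correct and follows essentially the same route as the paper: reduce to the presentation of $f$ as the open mapping cylinder of a refinement map $\varphi$, note that $\stratPos{\varphi}$ is then a surjective order-preserving map, and conclude via the elementary fact that such maps send a minimum to a minimum. The paper's proof is just a terser statement of exactly this argument.
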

\begin{proof}
  Since $\varphi$ is a refinement map
  the induced map $\stratPos{\varphi}$ must be surjective.
  The claim then follows since surjective order-preserving maps must preserve the minimal element.  
\end{proof}

\subsection{Singularity Types}\label{sec:fct-embed-stype}

\begin{definition}
  % Let $\strat{X}$ be an $n$-framed stratified space and $\strat{X}_p \subseteq \strat{X}$ a
  % stratum of $\strat{X}$. The \defn{singularity type} of the stratum $\strat{X}_p$ is
  % sequence $\ord{k_1, \ldots, k_n} \in \FinOrd^n$ such that $0 \leq k_i \leq 1$
  Let $\strat{X}$ be an $n$-framed stratified space that is either an atom or a cell.
  We then have a sequence of projection maps
  \[
    \begin{tikzcd}
      \strat{X} = \strat{X}_n \ar[r, "f_n"] &
      \strat{X}_{n - 1} \ar[r] &
      \cdots \ar[r] &
      \strat{X}_i \ar[r, "f_1"] & 
      \strat{X}_0 = \DeltaStrat{0}
    \end{tikzcd}
  \]
  which are each $1$-framed stratified bundles.
  The \defn{singularity type} of $\strat{X}$ is the object
  \[ \stype(\strat{X}) := \ord{k_1, \ldots, k_n} \in \FinOrd^n \]
  such that $k_i = 0$ if 
  $f_i : \strat{X}_i \to \strat{X}_{i - 1}$
  is an $n$-framed stratified trivial bundle
  and $k_i = 1$ otherwise.
  The \defn{singular depth} of $\strat{X}$ is the largest $\sdepth(\strat{X}) := d \in \ord{k}$
  such that $f_i : \strat{X}_i \to \strat{X}_{i - 1}$ is an $n$-framed stratified
  trivial bundle for all $1 \leq i \leq d$.
\end{definition}

\begin{observation}\label{obs:fct-embed-stype-depth}
  Suppose that $\strat{A}$ is an atom.
  Then the singular depth of $\strat{A}$ is equivalently the largest $d \geq 0$
  such that there exists an $(n - d)$-framed stratified space $\strat{B}$
  and an isomorphism $\strat{B} \frtimes \R^d \cong \strat{A}$ of $n$-framed
  stratified spaces.
\end{observation}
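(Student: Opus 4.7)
The plan is to split the equivalence into two inequalities. The straightforward direction observes that any factorization $\strat{A} \cong \strat{B} \frtimes \R^d$ forces $\sdepth(\strat{A}) \geq d$; the harder direction constructs such a factorization by induction on $d$, stripping off one $\R$-factor at each step.

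For the easy direction I would argue as follows. Given $\strat{A} \cong \strat{B} \frtimes \R^d$ with $\strat{B}$ an $(n-d)$-framed stratified space, the composite $n$-framing from the framed product assigns $\strat{B}$'s framing to the first $n-d$ coordinates and the identity framing of $\R^d$ to the last $d$. Unpacking the composition into the sequence of open $1$-mesh bundles that compose $\strat{A}$, the final $d$ projections $\xi_1, \ldots, \xi_d$ are identified with the standard projections $\R^d \to \R^{d-1} \to \cdots \to \R \to \terminal$, each a trivial $1$-framed bundle with fibre $\R$ carrying no singular strata. This shows $\sdepth(\strat{A}) \geq d$.

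For the harder direction, I would induct on $d$. The base $d=0$ is immediate by taking $\strat{B} := \strat{A}$. For the inductive step, assume $\sdepth(\strat{A}) = d \geq 1$. Triviality of $\xi_1 : \strat{X}_1 \to \terminal$ with fibre $\R$ identifies $\strat{X}_1$ with $\R$ carrying a single regular stratum. The composite $\xi_n \circ \cdots \circ \xi_2 : \strat{A} \to \strat{X}_1 \cong \R$ is then an open $(n-1)$-mesh bundle over a contractible, single-stratum base, which I would globally trivialize as a framed stratified fibre bundle to obtain an isomorphism $\strat{A} \cong \strat{A}' \frtimes \R$ for an $(n-1)$-framed stratified space $\strat{A}'$. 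The pullback of the framing gives $\stratPos{\strat{A}} \cong \stratPos{\strat{A}'}$, so $\strat{A}'$ is again an atom, and comparing sequences of projections yields $\sdepth(\strat{A}') = d-1$. The inductive hypothesis applied to $\strat{A}'$ then provides $\strat{A}' \cong \strat{B} \frtimes \R^{d-1}$ with $\strat{B}$ an $(n-d)$-framed stratified space, and composing gives the required $\strat{A} \cong \strat{B} \frtimes \R^d$.

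The principal obstacle in this plan is the global triviality of the framed stratified fibre bundle $\strat{A} \to \R$. Local trivializations exist because the base has only a single stratum, but their transition maps take values in the group of framed stratified automorphisms of the fibre, which is more restricted than stratified automorphisms alone. The key point is that since $\R$ is contractible and carries no singular strata, no nontrivial monodromy can arise, so the local trivializations patch into a global one. Alternatively, one may lift the canonical translation vector field on $\R$ to a stratum-preserving flow on $\strat{A}$ via the framing and use its time-$t$ maps to identify all fibres with a chosen one.
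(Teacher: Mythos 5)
The paper states this as an observation with no proof, so I am judging your argument on its own terms. Your overall structure is right: the easy inequality from a splitting $\strat{A} \cong \strat{B} \frtimes \R^d$, and an induction that peels off one trailing $\R$-factor at a time, with the crux correctly located in the framed triviality of the residual bundle $q = \xi_2 \circ \cdots \circ \xi_n : \strat{A} \to \strat{X}_1 \cong \R$. You also implicitly (and correctly) read ``$f_i$ is trivial'' as ``framed product with fibre $\R$ carrying a single regular stratum''; under the literal reading of ``stratified trivial bundle'' the condition on $f_1 : \strat{X}_1 \to \terminal$ would be vacuous and the statement false, so yours is the only workable interpretation.

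The gap is in how you justify the crux. Your patching argument presupposes \emph{framed} local trivializations: the definition of a mesh bundle only supplies stratified local trivializations over strata of the base, and a stratified trivialization of a fibre need not respect its $(n-1)$-framing, so the claim that the transition maps land in framed automorphisms is precisely what has to be proved rather than assumed. Your alternative via lifting the translation vector field is not available either, since the setting is tame/PL rather than smooth and no such lifting exists for stratified spaces in this generality. The argument that does work is combinatorial and already in the paper's toolkit: the classifying map $\classify{q} : \Exit(\strat{X}_1) \to \BMeshOpen{n - 1}$ has contractible domain, so its truss nerve $\Exit(\strat{X}_1) \to \BTrussOpen{n - 1}$ is constant (the truss categories, built by iterating polynomial constructions over $\FinOrd$, have no non-identity isomorphisms); hence $q$ has constant combinatorial type, and the framed isomorphism to the constant bundle is assembled stage by stage by straightening the heights of the singular strata, exactly as in the proof of Proposition~\ref{prop:fct-mesh-closed-1-sing-trivial}. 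With that substitution your induction closes.
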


\begin{remark}
  Suppose that $\strat{M}$ is an open $n$-mesh whose strata are smooth
  submanifolds of $\R^n$ via the $n$-framing.   
  Let $p \in \stratPos{\strat{M}}$ be a stratum and $\tau : \strat{M}_p \to \textup{Gr}_k(\R^n)$
  the continuous map into the Grassmannian which classifies the tangent bundle of
  $\strat{M}_p$ as embedded into $\R^n$.
  Then the singularity type of the atom $\atom{\strat{M}}(p)$ corresponds to the
  Schubert cell of $\textup{Gr}_k(\R^n)$ through which $\tau$ factors.
\end{remark}

\begin{construction}
  The \defn{singular shape} of an atom $\strat{A}$ is the atom
  \[ \shape{\strat{A}} := \gridMeshOpen(\stype(\strat{A})). \]
  Analogously the \defn{singular shape} of a cell $\strat{X}$ is the cell
  \[ \shape{\strat{X}} := \gridMeshClosed(\stype(\strat{X})). \]
\end{construction}

\begin{construction}
  Let $\strat{A}$ be an $n$-mesh atom such that $0 \in \Exit(\strat{A})$.
  We construct an active map of $n$-mesh atoms $\shape{\strat{A}} \pto \strat{A}$
  such that $\shape{\strat{A}}$ is the geometric realisation of the open $n$-truss $\stype(\strat{A})$.
  We first equip the set $\{ -1, 0, +1 \}$ with the smallest partial order $\preceq$
  such that $0 \preceq -1$ and $0 \preceq +1$.
  For every $1 \leq i \leq n$ we define a poset $P_i$ by
  \begin{alignat*}{2}
    P_i & := (\{ -1, 0, +1 \}, \preceq) && \qquad\text{(if $\stype(A)_i = \ord{1}$)} \\
    P_i & := \{ 0 \} && \qquad\text{(if $\stype(A)_i = \ord{0}$)}
  \end{alignat*}
  Then we realise $\shape{\strat{A}}$ as the stratification of $\R^n$
  with poset $\stratPos{\shape{\strat{A}}} = P_1 \times \cdots \times P_n$
  so that the stratifying map is defined in components by
  \begin{alignat*}{2}
    \stratMap{\shape{\strat{A}}}(x)_i & := \sgn(x_i) && \qquad\text{(if $\stype(A)_i = \ord{1}$)} \\
    \stratMap{\shape{\strat{A}}}(x)_i & := 0 && \qquad\text{(if $\stype(A)_i = \ord{0}$)}
  \end{alignat*}
  for every $1 \leq i \leq n$.
  To construct the map $\shape{\strat{A}} \pto \strat{A}$,
  let $\strat{M}$
  be the unique stratification of $\R^n \times \DeltaTop{1}$
  so that $\stratPos{\strat{M}}$ contains $\stratPos{\strat{A}} \sqcup \stratPos{\shape{\strat{A}}}$ and
  \begin{alignat*}{2}
    \stratMap{\strat{M}}(x, t) & := \stratMap{\strat{A}}(\tfrac{1}{t} x) &&
    \qquad \text{(when $0 < t \leq 1$)} \\
    \stratMap{\strat{M}}(x, t) & := \stratMap{\shape{\strat{A}}}(x) &&
  \end{alignat*}
  Then the projection map $\strat{M} \to \DeltaStrat{1}$ with the canonical
  $n$-framing defines an open $n$-mesh bundle that induces the map
  $\shape{\strat{A}} \pto \strat{A}$.
\end{construction}

\begin{example}
  \[
    \shape{\strat{A}_0} \quad = \quad
    \begin{tikzpicture}[scale = 0.5, baseline=(current bounding box.center)]
      \fill[mesh-background] (0, 0) -- (0, 4) -- (4, 4) -- (4, 0) -- cycle;
      \node[mesh-vertex] at (2, 2) {};
      \draw[mesh-stratum] (0, 2) -- (4, 2);
      \draw[mesh-stratum] (2, 2) -- (2, 4);
      \draw[mesh-stratum] (2, 2) -- (2, 0);
    \end{tikzpicture}
    \quad
    \pto
    \quad
    \begin{tikzpicture}[scale = 0.5, baseline=(current bounding box.center)]
      \fill[mesh-background] (0, 0) -- (0, 4) -- (4, 4) -- (4, 0) -- cycle;
      \node[mesh-vertex] at (2, 2) {};
      \draw[mesh-stratum] (0, 2) -- (4, 2);
      \draw[mesh-stratum] (2, 2) -- (1, 0);
      \draw[mesh-stratum] (2, 2) -- (3, 0);
    \end{tikzpicture}
    \quad = \quad \strat{A}_0
  \]

  \[
    \shape{\strat{A}_1} \quad = \quad
    \begin{tikzpicture}[scale = 0.5, baseline=(current bounding box.center)]
      \fill[mesh-background] (0, 0) -- (0, 4) -- (4, 4) -- (4, 0) -- cycle;
      \draw[mesh-stratum] (2, 0) -- (2, 4);
    \end{tikzpicture}
    \quad
    \pto
    \quad
    \begin{tikzpicture}[scale = 0.5, baseline=(current bounding box.center)]
      \fill[mesh-background] (0, 0) -- (0, 4) -- (4, 4) -- (4, 0) -- cycle;
      \draw[mesh-stratum] (2, 0) -- (1, 1) -- (3, 3) -- (2, 4);
    \end{tikzpicture}
    \quad = \quad \strat{A}_1
  \]
\end{example}

\begin{construction}\label{con:fct-embed-stype-active-closed}
  Let $\strat{C}$ be an $n$-mesh cell such that $0 \in \Exit(\strat{C})$
  is terminal and $\unstrat(\strat{C}) \subset \intOO{-1, 1}^n$.
  We can then construct a bordism of closed $n$-meshes
  $\strat{C} \pto \shape{\strat{C}}$ as follows.
  We first equip the stratified projection map $\xi' : \shape{\strat{C}} \times \intOC{0, 1} \to \intOC{0, 1}$
  with the $n$-framing induced by linear interpolation between the framings of $\shape{\strat{C}}$ and
  $\strat{C}$ via the framed projection:
  \[
    \framing(\xi')(x, t) := t \framing(\shape{\strat{C}})(x) + (1 - t) \framing(\strat{C})(\fproj(\strat{C}, x))
  \]
  Then $\xi'$ extends to an $n$-framed stratified bundle $\xi : \strat{X} \to \DeltaStrat{1}$
  which represents the closed $n$-mesh bordism $\strat{C} \pto \shape{\strat{C}}$.  
\end{construction}

\begin{example}
  \[
    \strat{C}_0 \quad = \quad
    \quad
    \begin{tikzpicture}[scale = 0.5, baseline=(current bounding box.center)]
      \fill[mesh-background] (0, 0) -- (2, 0) -- (4, 0) -- (2, 4) -- cycle;
      \node[mesh-vertex] at (0, 0) {};
      \node[mesh-vertex] at (4, 0) {};
      \node[mesh-vertex] at (2, 4) {};
      \node[mesh-vertex] at (2, 0) {};
      \draw[mesh-stratum] (0, 0) -- (2, 0) -- (4, 0) -- (2, 4) -- cycle;
    \end{tikzpicture}
    \pto
    \quad
    \begin{tikzpicture}[scale = 0.5, baseline=(current bounding box.center)]
      \fill[mesh-background] (0, 0) -- (0, 4) -- (4, 4) -- (4, 0) -- cycle;
      \node[mesh-vertex] at (0, 0) {};
      \node[mesh-vertex] at (0, 4) {};
      \node[mesh-vertex] at (4, 0) {};
      \node[mesh-vertex] at (4, 4) {};
      \draw[mesh-stratum] (0, 0) rectangle (4, 4);
    \end{tikzpicture}
    \quad = \quad \shape{\strat{C}_0}
  \]

  \[
    \strat{C}_1 \quad = \quad
    \begin{tikzpicture}[scale = 0.5, baseline=(current bounding box.center)]
      % \fill[mesh-background] (0, 0) -- (0, 4) -- (4, 4) -- (4, 0) -- cycle;
      \path [use as bounding box] (0, 0) rectangle (4, 4);
      \draw[mesh-stratum] (2, 0) -- (1, 1) -- (3, 3) -- (2, 4);
      \node[mesh-vertex] at (2, 0) {};
      \node[mesh-vertex] at (2, 4) {};
    \end{tikzpicture}
    \quad
    \pto
    \quad
    \begin{tikzpicture}[scale = 0.5, baseline=(current bounding box.center)]
      % \fill[mesh-background] (0, 0) -- (0, 4) -- (4, 4) -- (4, 0) -- cycle;
      \path [use as bounding box] (0, 0) rectangle (4, 4);
      \draw[mesh-stratum] (2, 0) -- (2, 4);
      \node[mesh-vertex] at (2, 0) {};
      \node[mesh-vertex] at (2, 4) {};
    \end{tikzpicture}
    \quad = \quad \shape{\strat{C}_1}
  \]
\end{example}

\section{Transverse Configurations}\label{sec:fct-conf}

\begin{definition}
  Let $\strat{B}$ be a stratified space and $(\xi, f)$ a pair consisting of a
  closed $1$-mesh bundle $\xi : \strat{X} \to \strat{B}$ together with an
  open $1$-mesh bundle $f : \strat{M} \to \strat{B}$.
  The pair $(\xi, f)$ is a \defn{transverse configuration of $1$-meshes}
  when every point $e \in \strat{X} \cap \strat{M}$
  is singular in at most one of $\strat{X}$ and $\strat{M}$.  
  In this case we write $\xi \pitchfork f$.
\end{definition}

\begin{example}
  The following is a transverse configuration of an open $1$-mesh $\strat{M}$ and a closed $1$-mesh $\strat{X}$.
  By convention we will always render the open mesh in black and the closed mesh in red.
  \[
    \begin{tikzpicture}
      \node at (-0.5, 0) {$\strat{M}$};
      \node[text=BrickRed] at (8.5, -0.25) {$\strat{X}$};
      \draw[mesh-stratum] (0, 0) -- (9, 0);
      \draw[mesh-stratum-dual] (2, -0.25) -- (8, -0.25);
      \node[mesh-vertex] at (1, 0) {};
      \node[mesh-vertex] at (3, 0) {};
      \node[mesh-vertex] at (4, 0) {};
      \node[mesh-vertex] at (7, 0) {};
      \node[mesh-vertex-dual] at (2, -0.25) {};
      \node[mesh-vertex-dual] at (5, -0.25) {};
      \node[mesh-vertex-dual] at (6, -0.25) {};
      \node[mesh-vertex-dual] at (8, -0.25) {};
      % \node[mesh-vertex, label=above:$0$] at (1, 0) {};
      % \node[mesh-vertex, label=above:$1$] at (3, 0) {};
      % \node[mesh-vertex, label=above:$2$] at (4, 0) {};
      % \node[mesh-vertex, label=above:$3$] at (7, 0) {};
      % \node[mesh-vertex-dual, label={[text=BrickRed]below:$0$}] at (2, -0.25) {};
      % \node[mesh-vertex-dual, label={[text=BrickRed]below:$1$}] at (5, -0.25) {};
      % \node[mesh-vertex-dual, label={[text=BrickRed]below:$2$}] at (6, -0.25) {};
      % \node[mesh-vertex-dual, label={[text=BrickRed]below:$3$}] at (8, -0.25) {};
    \end{tikzpicture}
  \]
  We can interpret the singular strata of the closed mesh $\strat{X}$ as parentheses
  which collect the the singular strata of the open mesh $\strat{M}$ into groups:
  \[
    \begin{tikzpicture}
      \draw[mesh-stratum] (0, 0) -- (9, 0);
      \node[mesh-vertex] at (1, 0) {};
      \node[mesh-vertex] at (3, 0) {};
      \node[mesh-vertex] at (4, 0) {};
      \node[mesh-vertex] at (7, 0) {};
      \node[text=BrickRed, font=\bfseries] at (2, 0) {(};
      \node[text=BrickRed, font=\bfseries] at (5, 0) {)(};
      \node[text=BrickRed, font=\bfseries] at (6, 0) {)(};
      \node[text=BrickRed, font=\bfseries] at (8, 0) {)};
    \end{tikzpicture}
  \]
  We have that $\sing(\strat{X}) = \ord{3}$ is the finite ordinal with one element
  for each singular stratum of $\strat{X}$. Dually we have $\reg(\strat{M}) = \ord{4}$
  corresponding to the regular strata of $\strat{M}$.
  We number the singular and regular strata of $\strat{X}$ and $\strat{M}$, respectively:
  \[
    \begin{tikzpicture}
      \node at (-0.5, 0) {$\strat{M}$};
      \node[text=BrickRed] at (8.5, -0.25) {$\strat{X}$};
      \draw[mesh-stratum] (0, 0) -- (9, 0);
      \draw[mesh-stratum-dual] (2, -0.25) -- (8, -0.25);
      \node[mesh-vertex] at (1, 0) {};
      \node[mesh-vertex] at (3, 0) {};
      \node[mesh-vertex] at (4, 0) {};
      \node[mesh-vertex] at (7, 0) {};
      \node[label=above:$0$] at (0.5, 0) {};
      \node[label=above:$1$] at (2, 0) {};
      \node[label=above:$2$] at (3.5, 0) {};
      \node[label=above:$3$] at (5.5, 0) {};
      \node[label=above:$4$] at (8, 0) {};
      \node[mesh-vertex-dual, label={[text=BrickRed]below:$0$}] at (2, -0.25) {};
      \node[mesh-vertex-dual, label={[text=BrickRed]below:$1$}] at (5, -0.25) {};
      \node[mesh-vertex-dual, label={[text=BrickRed]below:$2$}] at (6, -0.25) {};
      \node[mesh-vertex-dual, label={[text=BrickRed]below:$3$}] at (8, -0.25) {};
    \end{tikzpicture}
  \]
  A singular stratum of $\strat{X}$ contains a single point $x \in \strat{X}$.
  Transversality then guarantees that $x$ is contained in a regular stratum of $\strat{M}$.
  The transverse configuration $(\strat{X}, \strat{M})$ induces an order-preserving
  map $\alpha : \sing(\strat{X}) \to \reg(\strat{M})$ which sends the index of a singular stratum
  of $\strat{X}$ to the index of the regular stratum of $\strat{M}$ that contains its point.
  In our example, this is the map $\alpha = \langle 1, 3, 3, 4 \rangle$.
\end{example}

\begin{definition}
  Let $\strat{B}$ be a stratified space and $(\xi, f)$ a pair consisting of a
  closed $n$-mesh bundle $\xi : \strat{X}_n \to \strat{B}$ together with an
  open $n$-mesh bundle $f : \strat{M}_n \to \strat{B}$.
  We write $\xi$ and $f$ as a composite of $1$-mesh bundles
  \[
    \begin{tikzcd}[row sep = tiny]
      {\xi : \strat{X}_n} \ar[r] &
      {\strat{X}_{n - 1}} \ar[r] &
      {\cdots} \ar[r] &
      {\strat{X}_1} \ar[r] &
      {\strat{X}_0 = \strat{B}} \\
      {f : \strat{M}_n} \ar[r] &
      {\strat{M}_{n - 1}} \ar[r] &
      {\cdots} \ar[r] &
      {\strat{M}_1} \ar[r] &
      {\strat{M}_0 = \strat{B}}
    \end{tikzcd}
  \]
  The pair $(\xi, f)$ is a \defn{transverse configuration of $n$-meshes}
  when every point
  $e \in \strat{X}_i \cap \strat{M}_i$ 
  is singular in at most one of $\strat{X}_i$ and $\strat{M}_i$
  for every $1 \leq i \leq n$.
  In this case we write $\xi \pitchfork f$.
\end{definition}

\begin{example}
  This is an example of a transverse configuration of a closed $2$-mesh $\strat{X}$,
  by convention drawn in red, and an open $2$-mesh $\strat{M}$, drawn in black:
  \[
    \begin{tikzpicture}[scale = 0.6, baseline=(current bounding box.center)]
      \fill[mesh-background] (0, 0) rectangle (8, 7);
      \node[mesh-vertex-dual] at (1, 1) {};
      \node[mesh-vertex-dual] at (3, 1) {};
      \node[mesh-vertex-dual] at (6, 1) {};
      \node[mesh-vertex-dual] at (1, 3) {};
      \node[mesh-vertex-dual] at (3, 3) {};
      \node[mesh-vertex-dual] at (6, 3) {};
      \node[mesh-vertex-dual] at (1, 5) {};
      \node[mesh-vertex-dual] at (6, 5) {};
      \draw[mesh-stratum-dual] (1, 1) rectangle ++(2, 2);
      \draw[mesh-stratum-dual] (3, 1) rectangle ++(3, 2);
      \draw[mesh-stratum-dual] (1, 3) rectangle ++(5, 2);
      \draw[mesh-stratum] (1.5, 0) -- (1.5, 1) .. controls +(0, 0.2) and +(-0.5, 0) .. (2, 2);
      \draw[mesh-stratum] (2.5, 0) -- (2.5, 1) .. controls +(0, 0.2) and +(0.5, 0) .. (2, 2);
      \draw[mesh-stratum] (2, 2) -- (2, 3) .. controls +(0, 0.2) and +(-1, 0) .. (3, 4);
      \draw[mesh-stratum] (4, 2) -- (4, 3) .. controls +(0, 0.2) and +(1, 0) .. (3, 4);
      \draw[mesh-stratum] (5, 4) -- (5, 5) .. controls +(0, 0.2) and +(-1, 0) .. (6, 6);
      \draw[mesh-stratum] (7, 0) -- (7, 5) .. controls +(0, 0.2) and +(1, 0) .. (6, 6);
      \draw[mesh-stratum] (6, 6) -- (6, 7);
      \draw[mesh-stratum] (0, 2) -- (8, 2);
      \draw[mesh-stratum] (0, 4) -- (8, 4);
      \draw[mesh-stratum] (0, 6) -- (8, 6);
      \node[mesh-vertex] at (4, 2) {};
      \node[mesh-vertex] at (3, 4) {};
      \node[mesh-vertex] at (5, 4) {};
      \node[mesh-vertex] at (5, 4) {};
      \node[mesh-vertex] at (2, 2) {};
      \node[mesh-vertex] at (6, 6) {};
      \node[mesh-vertex] at (7, 2) {};
      \node[mesh-vertex] at (7, 4) {};
    \end{tikzpicture}
  \]
  The closed mesh $\strat{X}$ can be thought of as generalised higher-dimensional
  parentheses which are placed around the elements of the open mesh $\strat{M}$.
\end{example}

\begin{example}
  We illustrate a few examples of pairs of $2$-meshes which do \textbf{not} form
  transverse configurations.
  \[
    \begin{tikzpicture}[scale = 0.5, baseline=(current bounding box.center)]
      \fill[mesh-background] (0, 0) rectangle (6, 6);
      \draw[mesh-stratum-dual] (1, 1) -- (3, 5) -- (5, 1) -- cycle;
      \draw[mesh-stratum] (0, 1) -- (6, 1);
      \node[mesh-vertex] at (3, 1) {};
      \node[mesh-vertex-dual] at (1, 1) {};
      \node[mesh-vertex-dual] at (3, 5) {};
      \node[mesh-vertex-dual] at (5, 1) {};
    \end{tikzpicture}
    \qquad
    \begin{tikzpicture}[scale = 0.5, baseline=(current bounding box.center)]
      \fill[mesh-background] (0, 0) rectangle (6, 6);
      \draw[mesh-stratum-dual] (1, 1) -- (3, 5) -- (5, 1) -- cycle;
      \draw[mesh-stratum] (0, 3) -- (6, 3);
      \node[mesh-vertex] at (2, 3) {};
      \node[mesh-vertex-dual] at (1, 1) {};
      \node[mesh-vertex-dual] at (3, 5) {};
      \node[mesh-vertex-dual] at (5, 1) {};
    \end{tikzpicture}
    \qquad
    \begin{tikzpicture}[scale = 0.5, baseline=(current bounding box.center)]
      \fill[mesh-background] (0, 0) rectangle (6, 6);
      \draw[mesh-stratum-dual] (1, 1) -- (3, 5) -- (5, 1) -- cycle;
      \draw[mesh-stratum] (4, 0) -- (4, 6);
      \node[mesh-vertex-dual] at (1, 1) {};
      \node[mesh-vertex-dual] at (3, 5) {};
      \node[mesh-vertex-dual] at (5, 1) {};
    \end{tikzpicture}
  \]
\end{example}

In this section we show that every transverse configuration of $n$-meshes
$\strat{X} \pitchfork \strat{M}$ determines a bordism of open $n$-trusses
between the truss nerves
$\TrussClosedNerve(\strat{X})^\dagger \pto \TrussOpenNerve(\strat{M})$
and, conversely, that every open $n$-truss bordism arises in this way.
Moreover, the bordism of open $n$-trusses arising from a transverse configuration
$\strat{X} \pitchfork \strat{M}$ is natural in both $\strat{X}$ and $\strat{M}$.
This is realised by constructing a quasicategory $\BConf{n}$ which classifies
transverse configurations of $n$-meshes, and then demonstrating that it is
equivalent to the twisted arrow category $\Tw(\BTrussOpen{n})$:
\[
  \begin{tikzcd}
    {\BConf{n}} \ar[r, "\simeq"] \ar[d] &
    {\Tw(\BTrussOpen{n})} \ar[d] \\
    {\BMeshClosed{n} \times \BMeshOpen{n}} \ar[r, "\simeq"'] &
    {\BTrussClosed{n} \times \BTrussOpen{n}}
  \end{tikzcd}
\]

\subsection{Classifying Category for Transverse Configurations}

\begin{lemma}\label{lem:fct-conf-pullback}
  Let $(\xi, f)$ be a transverse configuration of $n$-mesh bundles over
  a stratified space $\strat{B}$.
  When $\varphi : \strat{A} \to \strat{B}$ is a map of stratified spaces, then
  the pulled back $n$-mesh bundles $(\varphi^* \xi, \varphi^* f)$
  form a transverse configuration of $n$-mesh bundles over $\strat{A}$.
\end{lemma}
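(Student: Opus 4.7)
The plan is to reduce the statement to a fibrewise property that follows from how pullbacks interact with the framing and with singular loci. First I would invoke Lemma~\ref{lem:fct-mesh-closed-n-pullback} and its open analogue to see that $\varphi^*\xi$ and $\varphi^*f$ are again closed and open $n$-mesh bundles over $\strat{A}$ respectively. Writing $\xi$ as the composite $\xi_n \circ \cdots \circ \xi_1$ with $\xi_i : \strat{X}_i \to \strat{X}_{i-1}$ and similarly $f = f_n \circ \cdots \circ f_1$ with $f_i : \strat{M}_i \to \strat{M}_{i-1}$, the pullback along $\varphi$ commutes with the factorisation, giving
\[
  \varphi^*\xi \;=\; (\varphi^*\xi_n) \circ \cdots \circ (\varphi^*\xi_1),
  \qquad
  \varphi^*f \;=\; (\varphi^*f_n) \circ \cdots \circ (\varphi^*f_1),
\]
with $(\varphi^*\xi_i) : \varphi^*\strat{X}_i \to \varphi^*\strat{X}_{i-1}$ and $(\varphi^*f_i) : \varphi^*\strat{M}_i \to \varphi^*\strat{M}_{i-1}$.

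Next I would track what happens to singular points and to the framings. Because the pullback of a closed (or open) $1$-mesh bundle is again a $1$-mesh bundle whose fibres agree with the corresponding fibres of the original, a point of $\varphi^*\strat{X}_i$ is singular for $\varphi^*\xi_i$ if and only if its image in $\strat{X}_i$ is singular for $\xi_i$; the same holds for $\varphi^*\strat{M}_i$. Moreover the framings are induced by the framings of the original bundles: the embedding $\framing(\varphi^*\xi_i \circ \cdots \circ \varphi^*\xi_1) : \varphi^*\strat{X}_i \hookrightarrow \R^i \times \strat{A}$ fits into a commutative square with $\framing(\xi_i \circ \cdots \circ \xi_1)$ via $\id_{\R^i} \times \varphi$, and similarly on the open side.

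The final step is then pointwise transversality. A point $e \in \varphi^*\strat{X}_i \cap \varphi^*\strat{M}_i$ inside $\R^i \times \strat{A}$ corresponds, under $\id_{\R^i} \times \varphi$, to a point $e' \in \strat{X}_i \cap \strat{M}_i$ inside $\R^i \times \strat{B}$. By the assumption $\xi \pitchfork f$, the point $e'$ is singular in at most one of $\strat{X}_i, \strat{M}_i$; by the correspondence of singular points established above, the same is then true for $e$ with respect to $\varphi^*\strat{X}_i$ and $\varphi^*\strat{M}_i$. Varying $1 \leq i \leq n$ yields $\varphi^*\xi \pitchfork \varphi^*f$.

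There is no real obstacle; the main thing that needs care is keeping straight that the composite framing of the pullback bundle is literally the pullback of the composite framing along $\id_{\R^i} \times \varphi$, so that intersections inside $\R^i \times \strat{A}$ are in bijection with intersections inside $\R^i \times \strat{B}$ lying above $\varphi(\strat{A})$.
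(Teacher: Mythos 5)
Your argument is correct and is the standard (essentially the only) verification: the paper states this lemma without proof, treating it as immediate from the facts you cite, namely that pullback commutes with the factorisation into $1$-mesh bundles, that singular points of the pulled-back bundles correspond exactly to singular points of the originals (as in the proof of Lemma~\ref{lem:fct-mesh-closed-1-pullback}), and that the induced framings identify intersections over $\strat{A}$ with intersections over $\strat{B}$. Your write-up supplies precisely the routine details the paper omits.
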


\begin{definition}
  We let $\BConf{n}$ denote the simplicial set whose $k$-simplices
  consist of a transverse configuration $(\xi, f)$ of a closed
  $n$-mesh bundle $\xi$ and an open $n$-mesh bundle $f$ over
  the stratified $k$-simplex $\DeltaStrat{k}$.
  An order-preserving map $\ord{k'} \to \ord{k}$ acts via pullback
  of $\xi$ and $f$ along the induced map $\DeltaStrat{k'} \to \DeltaStrat{k}$.
\end{definition}

\begin{observation}\label{obs:fct-conf-base-case}
  A $k$-simplex of $\BConf{0}$ is a transverse configuration of
  a closed $0$-mesh bundle $\xi : \strat{X} \to \DeltaStrat{k}$ and an open
  $0$-mesh bundle $f : \strat{M} \to \DeltaStrat{k}$. 
  Both $\xi$ and $f$ are the identity map on $\DeltaStrat{k}$.
  Therefore $\BConf{0} \cong \terminal$.
\end{observation}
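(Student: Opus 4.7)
The plan is to simply unpack the definitions and verify that each simplex set of $\BConf{0}$ is a singleton, with face and degeneracy maps acting trivially. First I would recall that by Observation~\ref{obs:fct-mesh-closed-n-base-case}, a closed $0$-mesh bundle $\xi : \strat{X} \to \DeltaStrat{k}$ is, by Definition~\ref{def:fct-mesh-closed-n}, the composite of an empty sequence of closed $1$-mesh bundles, hence the identity map on $\DeltaStrat{k}$. An entirely analogous observation for open $n$-mesh bundles, which follows from Definition~\ref{def:fct-mesh-open-n} in the case $n = 0$, shows that an open $0$-mesh bundle $f : \strat{M} \to \DeltaStrat{k}$ is also the identity on $\DeltaStrat{k}$.

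Next I would check that the transversality condition imposes no extra content in this case. The condition in the definition of a transverse configuration of $n$-mesh bundles quantifies over $1 \leq i \leq n$, which is vacuous when $n = 0$. Therefore any pair $(\xi, f)$ of a closed and an open $0$-mesh bundle over $\DeltaStrat{k}$ automatically forms a transverse configuration. Combining this with the previous paragraph, the set of $k$-simplices of $\BConf{0}$ has exactly one element, namely the pair $(\id_{\DeltaStrat{k}}, \id_{\DeltaStrat{k}})$.

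Finally, I would verify compatibility with the simplicial structure: for any order-preserving map $\alpha : \ord{k'} \to \ord{k}$, the action on $\BConf{0}$ pulls back both $\xi$ and $f$ along the induced map $\DeltaStrat{k'} \to \DeltaStrat{k}$, and since both are identities on $\DeltaStrat{k}$ their pullbacks are the identities on $\DeltaStrat{k'}$. Thus the unique $k$-simplex maps to the unique $k'$-simplex, which exhibits the desired isomorphism $\BConf{0} \cong \terminal$ of simplicial sets. There is no significant obstacle here; the only real content is identifying where in the earlier development the base case for open $0$-mesh bundles is implicit, and noting that the transversality clause is quantified over an empty range.
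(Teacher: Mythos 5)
Your proposal is correct and matches the paper's reasoning: the observation is justified exactly by noting that closed and open $0$-mesh bundles are empty composites and hence identities (as in Observation~\ref{obs:fct-mesh-closed-n-base-case}), that transversality is vacuous for $n=0$, and that the simplicial structure acts trivially.
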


\begin{para}
  A $k$-simplex of $\BConf{n}$ contains both a closed an an open $n$-mesh
  bundle over the stratified $k$-simplex $\DeltaStrat{k}$.
  These mesh bundles individually represent a $k$-simplex of
  $\BMeshClosed{n}$ and $\BMeshOpen{n}$ respectively.
  We therefore have projection maps
  \[
    \begin{tikzcd}[row sep = tiny]
      {\BMeshClosed{n}} &
      {\BConf{n}} \ar[l] \ar[r] &
      {\BMeshOpen{n}} \\
      {\xi} &
      {(\xi, f)} \ar[l, mapsto] \ar[r, mapsto] &
      {f}
    \end{tikzcd}
  \]
\end{para}

\begin{lemma}\label{lem:fct-conf-cocartesian}
  The projection map $\BConf{n} \to \BMeshClosed{n}$
  is a cocartesian fibration
  so that a map is cocartesian
  when it is sent to an equivalence
  by the other projection
  $\BConf{n} \to \BMeshOpen{n}$.
  In particular $\BConf{n}$ is a quasicategory.
  Moreover the dual claim holds for 
  $\BConf{n} \to \BMeshOpen{n}$.
\end{lemma}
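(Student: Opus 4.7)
The plan is to verify the cocartesian fibration property of $\pi_c : \BConf{n} \to \BMeshClosed{n}$ directly, exploiting the flexibility of open $n$-mesh bundles to construct lifts that remain transverse to a given closed bundle.

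First I would show $\pi_c$ is an inner fibration. Given a lifting problem for an inner horn $\HornStrat{i}{k} \to \BConf{n}$ (with $0 < i < k$) and a closed $n$-mesh bundle $\Xi : \strat{X} \to \DeltaStrat{k}$ extending the closed projection, we have transverse data $F'$ on the horn and must extend it to an open $n$-mesh bundle $F$ over $\DeltaStrat{k}$ with $\Xi \pitchfork F$. Using the open-mesh analogue of Proposition~\ref{prop:fct-mesh-closed-n-classify} I can produce an initial extension, and then apply a fibrewise framed isotopy, supported away from the horn, to resolve any singular intersections with $\Xi$ over the interior.

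Next I would construct cocartesian lifts and verify the characterisation. For a transverse pair $(\xi_0, f_0) \in \BConf{n}$ and a bordism $\Xi : \xi_0 \pto \xi_1$ in $\BMeshClosed{n}$, I take the candidate open bundle $F$ to be the constant family $f_0 \times \DeltaStrat{1}$, perturbed by a small fibrewise framed isotopy so that $\Xi \pitchfork F$. Since this perturbation does not change the combinatorial structure of the open $n$-truss, the classifying map of $F$ sends the generating edge of $\DeltaStrat{1}$ to an equivalence in $\BMeshOpen{n}$. To see that $(\Xi, F)$ is cocartesian I solve lifting problems for outer horns $\Lambda^0\ord{k}$ with $k \geq 2$; the fillers exist by the same perturbation argument as in the inner-fibration step, using that the open projection is pinned to its equivalence class by $F$. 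A $1$-simplex of $\BConf{n}$ whose open projection is an equivalence differs from the perturbed constant family only by a framed isotopy, hence is itself cocartesian, and conversely any cocartesian lift is equivalent by uniqueness to one of the constructed form. The dual claim for $\pi_o : \BConf{n} \to \BMeshOpen{n}$ follows by the symmetric argument: given an open bordism, a perturbation of the constant closed family $\xi_0 \times \DeltaStrat{1}$ provides a cocartesian lift, and the roles of open and closed meshes are symmetric in the definition of transversality.

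The main obstacle is making the perturbation arguments rigorous in families. We need transversality to be a generic condition in the space of framings of an $n$-mesh bundle, and we need to choose perturbations consistently over simplices while fixing pre-existing data on sub-horns. This rests on the tameness of $n$-mesh bundles and a parametrised genericity argument, similar in spirit to the intrinsic-dimension argument underlying Proposition~\ref{prop:coarsest-mesh-refinement-open-pl}.
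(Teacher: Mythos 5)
Your proposal is correct and follows essentially the same route as the paper: both reduce the horn-filling problems to horn filling in the quasicategory $\BMeshOpen{n}$ (inner horns in general, and special outer horns whose first edge is an equivalence for the cocartesian property), and then restore transversality to the given closed bundle by adjusting the open bundle over the interior using the trivialisation of $\xi$ there. The only cosmetic difference is that you build the cocartesian lift explicitly as a perturbed constant family where the paper simply picks an equivalence filler in $\BMeshOpen{n}$; the genericity machinery you worry about at the end is not needed, since transversality already holds over a neighbourhood of the horn and the adjustment is supported over the top stratum.
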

\begin{proof}
  We prove the claim for the projection onto the closed $n$-mesh bundle;
  the other part follows similarly.
  Suppose that we have a lifting problem
  \begin{equation}\label{eq:fct-conf-cococartesian:lift}
    \begin{tikzcd}
      {\Lambda^i\ord{k}} \ar[r, "\tau"] \ar[d] &
      {\BConf{n}} \ar[d] \\
      {\Delta\ord{k}} \ar[r] \ar[ur, dashed] &
      {\BMeshClosed{n}}
    \end{tikzcd}
  \end{equation}
  % such that either $0 < i < k$ or the composite  
  % classifies an equivalence.
  Unpacking the definitions, we are given transverse configuration
  $(\xi', f')$ of $n$-mesh bundles over $\HornStrat{i}{k}$ as well as a
  closed $n$-mesh bundle $\xi$ over $\DeltaStrat{k}$ so that
  $\xi$ restricts to $\xi'$.
  The open $n$-mesh bundle $f'$ classifies a horn in $\BMeshOpen{n}$:
  \begin{equation}\label{eq:fct-conf-cocartesian:lift-open}
    \begin{tikzcd}
      {\Lambda^i\ord{k}} \ar[r] \ar[d] &
      {\BMeshOpen{n}} \\
      {\Delta\ord{k}} \ar[ur, dashed] &
      {}
    \end{tikzcd}
  \end{equation}
  We can find a lift for~(\ref{eq:fct-conf-cocartesian:lift-open})
  in the following cases since $\BMeshOpen{n}$ is a quasicategory:
  \begin{enumerate}
    \item $0 < i < k$.
    \item $k = 1$ and $i = 0$. We can pick the filler to be an equivalence.
    \item $k \geq 1$ and the first edge of $\Lambda^i\ord{k} \to \BMeshOpen{n}$ is an equivalence.
  \end{enumerate}
  A solution to the lifting problem~(\ref{eq:fct-conf-cocartesian:lift-open})
  classifies an open $n$-mesh bundle $f$ over $\DeltaStrat{k}$ which extends $f'$.
  The pair $(\xi, f)$ is not a transverse configuration in general;
  however there must exist a small open neighbourhood
  $\HornStrat{i}{k} \subset \strat{U} \subset \DeltaStrat{k}$
  such that $\xi$ and $f$ are transverse after being restricted to $\strat{U}$.
  Using the trivialisation of $\xi$ over the interior,
  we can then adjust $f$ to an open $n$-mesh bundle $\hat{f}$
  over $\DeltaStrat{k}$ which agrees with $f$ over $\strat{U}$ and 
  is transverse to $\xi$.
  The pair $(\xi, \hat{f})$ is a solution to the lifting problem.
  % {\color{Orange} Perhaps explain why we can find this open neighbourhood.
  % This is due to the positions of the singular strata varying continuously
  % while the regular strata of the open mesh bundle are open.}
\end{proof}

\begin{remark}
  As a simplicial set $\BConf{n}$ is a subobject of the product
  $\BMeshClosed{n} \times \BMeshOpen{n}$.
  However the inclusion map
  $
    \BConf{n} \hookrightarrow \BMeshClosed{n} \times \BMeshOpen{n}
  $
  is \textbf{not} an inner fibration so we can not see $\BConf{n}$ as a subcategory of the product.
\end{remark}

\begin{construction}
  Suppose that $(\xi, f)$ is a transverse configuration of
  $n$-mesh bundles over a stratified space $\strat{B}$.
  Individually, we have classifying maps
  \[
    \classify{\xi} : \Exit(\strat{B}) \to \BMeshClosed{n}
    \qquad
    \classify{f} : \Exit(\strat{B}) \to \BMeshOpen{n}
  \]
  Since by Lemma~\ref{lem:fct-conf-pullback} transversality is preserved by pullbacks, the map
  \[
    \begin{tikzcd}[column sep = large]
      {\Exit(\strat{B})} \ar[r] &
      {\Exit(\strat{B}) \times \Exit(\strat{B})} \ar[r, "\classify{\xi} \times \classify{f}"] &
      {\BMeshClosed{n} \times \BMeshOpen{n}}
    \end{tikzcd}
  \]
  factors through $\BConf{n}$. This defines the \defn{classifying map}
  \[
    \classify{\xi, f} : \Exit(\strat{B})
    \longrightarrow
    \BConf{n}.
  \]
\end{construction}

\begin{proposition}\label{prop:fct-conf-classify}
  Let $\strat{B}$ be a triangulable stratified space and $I : \strat{A} \hookrightarrow \strat{B}$ a closed triangulable subspace.
  Suppose that $\varphi : \Exit(\strat{B}) \to \BConf{n}$ is a map and
  $\xi' \pitchfork f'$ a transverse configuration of $n$-mesh bundles over
  $\strat{A}$ such that $\classify{\xi', f'} = \varphi \circ \Exit(I)$.
  Then there exists a transverse configuration $\xi \pitchfork f$ of $n$-mesh bundles 
  over $\strat{B}$ together with an equivalence $\varphi \simeq \classify{\xi, f}$
  relative to $\classify{\xi', f'}$.
\end{proposition}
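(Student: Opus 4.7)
The plan is to mirror the strategy of Proposition~\ref{prop:fct-mesh-closed-n-classify} but to extract the closed and open components of the transverse configuration in sequence, using the cocartesian fibration structure of $\BConf{n} \to \BMeshClosed{n}$ from Lemma~\ref{lem:fct-conf-cocartesian} to synchronise the two pieces, and finally to enforce transversality by a generic perturbation argument. As a first step, I would reduce to the case where $I = \StratRealMark{i}$ arises from an inclusion $i : R \hookrightarrow S$ of marked posets, so that the problem becomes a lifting problem
\[
  \begin{tikzcd}
    \ExitMark(\StratRealMark{R}) \sqcup_R S \ar[d, hook, "\simeq"'] \ar[r] & (\BConf{n})^\natural \\
    \ExitMark(\StratRealMark{S}) \ar[ur, dashed]
  \end{tikzcd}
\]
with the desired equivalence taken relative to $\classify{\xi', f'}$. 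Here we use that $\BConf{n}$ is a quasicategory by Lemma~\ref{lem:fct-conf-cocartesian}.

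Next, I would project $\varphi$ along $\BConf{n} \to \BMeshClosed{n}$ and apply Proposition~\ref{prop:fct-mesh-closed-n-classify} to extend $\xi'$ to a closed $n$-mesh bundle $\xi$ over $\strat{B}$ together with an equivalence $\varphi_c \simeq \classify{\xi}$ relative to $\classify{\xi'}$. Since $\BConf{n} \to \BMeshClosed{n}$ is a cocartesian fibration, hence an isofibration, this equivalence lifts to $\BConf{n}$; replacing $\varphi$ with an equivalent map, we may assume that the closed-mesh projection of $\varphi$ is literally the classifying map $\classify{\xi}$. The dual argument via $\BConf{n} \to \BMeshOpen{n}$ and the open-mesh analogue of Proposition~\ref{prop:fct-mesh-closed-n-classify} then supplies an open $n$-mesh bundle $f$ over $\strat{B}$ extending $f'$ whose classifying map matches the open projection of $\varphi$ up to equivalence. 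The remaining issue is that the extensions $\xi$ and $f$ produced in this way need not yet be transverse away from $\strat{A}$.

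The heart of the proof, and the main obstacle, is the enforcement of transversality. With $\xi$ fixed, I would revisit the interpolation construction underlying the marked-poset extension lemma (compare the proof of Proposition~\ref{prop:fct-mesh-closed-1-sing-trivial} and Lemma~\ref{lem:fct-mesh-closed-1-classify-marked}): on the interior of each marked flag of $S$, the singular strata of $f$ are determined by linearly interpolating heights prescribed on the boundary. For fixed $\xi$, the locus of height configurations where $f$ fails to be transverse to $\xi$ is a closed subset of positive codimension in the affine space of admissible interpolations, so a generic choice yields an $f$ that is transverse to $\xi$ on the interior of the flag. Performing this perturbation flag by flag, working outwards from the boundary $R$ where transversality already holds, the local choices glue to an open $n$-mesh bundle $f$ transverse to $\xi$.

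Finally, I would argue that the perturbed $f$ has a classifying map still equivalent to the original one rel $\classify{f'}$: the space of admissible interpolations is convex and therefore contractible, so any two choices are connected by a path within the admissible locus, and this path is witnessed by a closed $n$-mesh bundle over $\DeltaStrat{1} \times \strat{B}$ that restricts to the classifying maps at both endpoints. Combining this with the equivalence already produced for $\xi$, one gets the required equivalence $\varphi \simeq \classify{\xi, f}$ relative to $\classify{\xi', f'}$. The subtle points are the existence of a genericity-of-transversality statement in the framed/stratified setting, where the number of singular strata is locally constant but their positions can be perturbed, and the coherent gluing of local perturbations across simplices of differing dimension in $S$; both of these issues reduce, via the local triviality of closed and open $n$-mesh bundles over any stratum, to standard genericity statements for finitely many points in $\R$.
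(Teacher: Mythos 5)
Your route is genuinely different from the paper's, and it contains a gap at the transversality step. The paper does not extend $\xi$ and $f$ separately up to equivalence and then perturb: it restricts the two projections of $\varphi$ to the marked poset $S \hookrightarrow \ExitMark(\StratRealMark{S})$ and applies the \emph{strict} extension result (Lemma~\ref{lem:fct-mesh-closed-n-classify-marked} and its open analogue), which produces unique bundles $\xi$ and $f$ whose restrictions to each marked flag $\sigma$ are exactly the pair $(\xi_\sigma, f_\sigma)$ that $\varphi \circ \sigma$ already classifies as a simplex of $\BConf{n}$. Since every simplex of $\BConf{n}$ is by definition a transverse configuration, $\xi \pitchfork f$ holds flag by flag with no perturbation, and the equivalence rel the boundary follows from both $\varphi$ and $\classify{\xi, f}$ solving the same lifting problem against the acyclic cofibration $\ExitMark(\StratRealMark{R}) \sqcup_R S \hookrightarrow \ExitMark(\StratRealMark{S})$.

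The gap in your version is that transversality of a configuration is not merely an open, generic condition that a perturbation can restore: a transverse configuration carries discrete combinatorial data, namely which regular stratum of the open mesh contains each singular stratum of the closed mesh at every level of the tower (this is precisely the data realising $\BConf{n} \simeq \Tw(\BTrussOpen{n})$), and this data is prescribed by $\varphi$. A generic perturbation of $f$ rel $\strat{A}$ yields \emph{some} transverse configuration, but not necessarily one whose classifying map is equivalent to $\varphi$ in $\BConf{n}$; an equivalence in $\BConf{n}$ is a homotopy through transverse configurations, which preserves this discrete invariant. Moreover, your contractibility claim for the admissible locus fails: the non-transverse locus in the affine space of height interpolations is a union of codimension-one walls (a singular height of $f$ crossing a singular height of $\xi$), so it disconnects the space of choices, and two transverse extensions with different combinatorial data cannot be joined within the transverse locus. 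To repair the argument you would have to perturb $f$ so as to realise the specific twisted-arrow data carried by $\varphi$ on each simplex, at which point you are essentially reconstructing the fibrewise information that the paper's appeal to Lemma~\ref{lem:fct-mesh-closed-n-classify-marked} extracts directly (and which the paper handles explicitly in the $1$-dimensional case in Proposition~\ref{prop:fct-conf-bconf-to-tw}).
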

\begin{proof}
  It suffices to address the case where $I = \StratRealMark{i}$ for an inclusion
  of marked simplicial sets $i : R \hookrightarrow S$.
  We define maps $\bar{\varphi}$ and $\dot{\varphi}$ from $\varphi$ via
  \[
    \begin{tikzcd}
      {} &
      {\Exit(\strat{B})} \ar[dr, "\dot{\varphi}"] \ar[dl, "\bar{\varphi}"'] \ar[d, "\varphi"{description}] &
      {} \\
      {\BMeshClosed{n}} &
      {\BConf{n}} \ar[l] \ar[r] &
      {\BMeshOpen{n}}
    \end{tikzcd}
  \]
  Then Lemma~\ref{lem:fct-mesh-closed-n-classify-marked} produces a unique
  closed $n$-mesh bundle $\xi$ over $\StratRealMark{S}$ which makes the following diagram commute:
  \[
    \begin{tikzcd}
    	{\ExitMark(\StratRealMark{R})} &&& {\ExitMark(\StratRealMark{S})} \\
    	& R & S \\
    	{(\BMeshClosed{n})^\natural} &&& {(\BMeshClosed{n})^\natural}
    	\arrow["i"', hook, from=2-2, to=2-3]
    	\arrow["{\eta_R}"{description}, hook', from=2-2, to=1-1]
    	\arrow[from=2-2, to=3-1]
    	\arrow["{\eta_S}"{description}, hook, from=2-3, to=1-4]
    	\arrow["{\bar{\varphi} \circ \eta_S}"{description}, from=2-3, to=3-4]
    	\arrow["{\ExitMark(I)}", hook, from=1-1, to=1-4]
    	\arrow["{\classifyMark{\xi'}}"', from=1-1, to=3-1]
    	\arrow["{\classifyMark{\xi}}", dashed, from=1-4, to=3-4]
    	\arrow[equal, from=3-1, to=3-4]
    \end{tikzcd}
  \]
  Similarly via the map $\dot{\varphi}$ there is a unique open $n$-mesh bundle
  $f$ over $\StratRealMark{B}$ which fits the analogous diagram.
  For any marked flag $\sigma$ of $B$ the bundles $\xi$ and $f$ restrict to
  \[
    \begin{tikzcd}
      {\strat{X}_\sigma} \ar[r, hook] \ar[d, "\xi_\sigma"'] \pullbackcorner &
      {\strat{X}} \ar[d, "\xi"] \\
      {\StratRealMark{(\Delta\ord{k}, T)}} \ar[r, "\sigma"', hook] &
      {\StratRealMark{S}}
    \end{tikzcd}
    \qquad
    \begin{tikzcd}
      {\strat{M}_\sigma} \ar[r, hook] \ar[d, "f_\sigma"'] \pullbackcorner &
      {\strat{M}} \ar[d, "f"] \\
      {\StratRealMark{(\Delta\ord{k}, T)}} \ar[r, "f"', hook] &
      {\StratRealMark{S}}
    \end{tikzcd}
  \]
  where $\xi_\sigma$ and $f_\sigma$ are the $n$-mesh bundles represented by
  $\bar{\varphi} \circ \sigma$ and $\dot{\varphi} \circ \sigma$.
  Since $\bar{\varphi}$ and $\dot{\varphi}$ together arise from $\varphi$,
  which classifies a transverse configuration, we have $\xi_\sigma \pitchfork f_\sigma$.
  Because this holds for all marked flags of $S$, we have $\xi \pitchfork f$.
  
  Now both $\classify{\xi, f}$ and $\varphi$ are solutions to the lifting problem  
  \[
    \begin{tikzcd}
      {\ExitMark(\StratRealMark{R}) \sqcup_R S} \ar[d, hook, "\simeq"'] \ar[r] &
      {(\BMeshClosed{1})^\natural} \\
      {\ExitMark(\StratRealMark{S})} \ar[ur, dashed]
    \end{tikzcd}
  \]
  Therefore there exists an equivalence $\classify{\xi, f} \simeq \varphi$
  relative to $\classify{\xi', f'}$.  
\end{proof}

\subsection{Attaching Labels to Transverse Configurations}

\begin{construction}
  Suppose that $\xi \pitchfork f$ is a transverse configuration of a closed $n$-mesh bundle
  $\xi : \strat{X} \to \strat{B}$
  and an open $n$-mesh bundle
  $f : \strat{M} \to \strat{B}$.
  Via the maps induced by the framing, we define the intersection $\strat{X} \cap \strat{M}$
  as the pullback of stratified spaces  
  \[
    \begin{tikzcd}
      {\strat{X} \cap \strat{M}} \ar[r] \ar[d] \pullbackcorner &
      {\strat{M}} \ar[d] \\
      {\strat{X}} \ar[r] &
      {\R^n \times \unstrat(\strat{B})}
    \end{tikzcd}
  \]
  The underlying unstratified space $\unstrat(\strat{X} \cap \strat{M})$ is
  isomorphic to $\strat{X}$, but the stratification of $\strat{X} \cap \strat{M}$
  is finer than that of $\strat{X}$.
  We let $\xi \cap f : \strat{X} \cap \strat{M}$ denote the induced $n$-framed stratified bundle which fits into the diagrams
  \[
    \begin{tikzcd}
      {\strat{X} \cap \strat{M}} \ar[r] \ar[d, "\xi \cap f"'] &
      {\strat{X}} \ar[d, "\xi"] \\
      {\strat{B}} \ar[r, equal] &
      {\strat{B}}
    \end{tikzcd}
    \qquad
    \begin{tikzcd}
      {\strat{X} \cap \strat{M}} \ar[r] \ar[d, "\xi \cap f"'] &
      {\strat{M}} \ar[d, "f"] \\
      {\strat{B}} \ar[r, equal] &
      {\strat{B}}
    \end{tikzcd}
  \]
  The $n$-framed stratified bundle $\xi \cap f : \strat{X} \cap \strat{M} \to \strat{B}$
  then is a closed $n$-mesh bundle refining $\xi$.
  A section $s : \strat{B} \to \strat{X} \cap \strat{M}$ of $\xi \cap f$
  simultaneously determines
  a section of $\xi$ and $f$; conversely any pair of sections of $\xi$ and $f$
  that agree in framing coordinates arises from a section of the closed $n$-mesh bundle $\xi \cap f$.
\end{construction}

\begin{definition}\label{def:fct-conf-label}
  A labelling of a transverse configuration $\xi \pitchfork f$ of
  a closed $n$-mesh
  $\xi : \strat{X} \to \strat{B}$ and an open $n$-mesh
  $f : \strat{M} \to \strat{B}$
  with a simplicial set of labels $X$
  consists of maps $(L_0, L, L_1)$ which together fit into the diagram
  \[
    \begin{tikzcd}
      {\Exit(\strat{X})} \ar[d, "L_0"'] &
      {\Exit(\strat{X} \cap \strat{M})} \ar[r] \ar[d, "L"{description}] \ar[l] &
      {\Exit(\strat{M})} \ar[d, "L_1"] \\
      {X^\op} &
      {\Tw(X)} \ar[r] \ar[l] &
      {X}
    \end{tikzcd}
  \]
\end{definition}

\begin{observation}
  In the situation of Definition~\ref{def:fct-conf-label},
  the map $L_0$ determines a labelling of the closed $n$-mesh $\xi$
  in $X^\op$ while the map $L_1$ is a labelling of the open $n$-mesh in $X$.
  The map $L$ assigns to every point $x \in \strat{X} \cap \strat{M}$
  that is common to both meshes a map $L_0(x) \to L_1(x)$ in $X$.
  This assignment is functorial in $x$ in that an exit path in $\strat{X} \cap \strat{M}$
  acts contravariantly on the domain and covariantly on the codomain.
  This twist in the direction of the labelling is consistent with
  the duality equivalence
  $\BMeshClosedL{n}{\cat{C}^\op} \simeq (\BMeshOpenL{n}{\cat{C}})^\op$.
\end{observation}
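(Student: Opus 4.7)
The plan is to verify the three structural claims in the observation by unpacking the defining diagram against the preliminaries on the twisted arrow construction. First, I would observe that a labelling of a closed (resp.\ open) $n$-mesh bundle $\xi : \strat{X} \to \strat{B}$ (resp.\ $f : \strat{M} \to \strat{B}$) in a simplicial set $Y$ is by definition just a map $\Exit(\strat{X}) \to Y$ (resp.\ $\Exit(\strat{M}) \to Y$). Since $L_0$ lands in $X^\op$ and $L_1$ lands in $X$, the content of (a) and (b) is immediate; the only point to note is that the bundles $\xi \cap f \to \strat{B}$ factors both $\xi$ and $f$, so restrictions via the horizontal legs of the diagram of Definition~\ref{def:fct-conf-label} give exit path functors whose values agree with $L_0$ and $L_1$ on the respective fibres.

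Next, for claim (c), I would recall from the preliminaries on $(\infty,1)$-categories that the projection $\Tw(X) \to X^\op \times X$ is a left fibration representing the mapping space functor $X(-,-) : X^\op \times X \to \Space$, so that the fibre over $(a,b)$ is the Kan complex $X(a, b)$ of maps from $a$ to $b$. Applying this to the diagram of Definition~\ref{def:fct-conf-label}, the combined data $(L_0, L_1) \circ (\text{inclusions})$ yields a point $(L_0(x), L_1(x)) \in X^\op \times X$ for each $x \in \strat{X} \cap \strat{M}$, and $L$ provides a lift to $\Tw(X)$, which is precisely an element of the mapping space $X(L_0(x), L_1(x))$, i.e.\ a map $L_0(x) \to L_1(x)$ in $X$.

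For claim (d), I would use that the straightening of $\Tw(X) \to X^\op \times X$ is the mapping space functor, which is contravariant in the first variable and covariant in the second. Since $L$ is a functor from $\Exit(\strat{X} \cap \strat{M})$, any exit path in $\strat{X} \cap \strat{M}$ induces an edge of $\Tw(X)$ whose image in $X^\op \times X$ acts as claimed: the $L_0$-component varies along $X^\op$ (hence contravariantly in $X$) and the $L_1$-component varies along $X$.

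Finally, claim (e) is comparative rather than an assertion requiring a self-contained argument; I would make it precise by noting that under the equivalence $\BMeshClosedL{n}{\cat{C}^\op} \simeq \BMeshOpenL{n}{\cat{C}}^\op$, a label of a closed $n$-mesh in $\cat{C}^\op$ transports to a label of the dual open $n$-mesh in $\cat{C}$, so that $L_0$ landing in $X^\op$ is forced by the duality direction. The main delicate step, in my view, is (c)--(d): getting the functoriality correct requires being honest about the direction conventions baked into $\Tw(X)$ and confirming that the inclusion $\Exit(\strat{X} \cap \strat{M}) \to \Exit(\strat{X}) \times \Exit(\strat{M})$ lands in the image of $\Tw(X) \to X^\op \times X$ exactly as $\ord{k}^\op \sqcup \ord{k} \hookrightarrow \ord{k}^\op \join \ord{k}$ prescribes.
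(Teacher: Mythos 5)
Your proposal is correct and matches the intended (unwritten) justification: the observation is just an unpacking of Definition~\ref{def:fct-conf-label} against the background description of $\Tw(X)$, and your reading of the fibre of $\Tw(X) \to X^\op \times X$ and of the variance is exactly right. The only stylistic remark is that invoking straightening is slightly more than needed --- the explicit description of a $k$-simplex of $\Tw(X)$ as a map $\ord{k}^\op \join \ord{k} \to X$ already exhibits the contravariant/covariant twist directly.
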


In \S\ref{sec:fct-mesh-closed-labelled} we defined the simplicial set $\BMeshClosedL{n}{X}$
of closed $n$-mesh bundles with labels in some simplicial set $X$ as a polynomial functor.
This polynomial functor was induced by a forgetful map
$\MeshClosed{n} : \EMeshClosed{n} \to \BMeshClosed{n}$
from the simplicial set $\EMeshClosed{n}$ that classifies closed $n$-mesh bundles that are additionally equipped with a section.
While we do not expect that we can define a simplicial set $\BConfL{n}{X}$ classifying transverse configurations of $n$-meshes with labels in $X$ as a polynomial functor directly, we rely on a similar idea.
 We define three different polynomial functors, based on transverse configurations
$\xi \pitchfork f$ equipped with sections of $\xi$, $\xi \cap f$ and $f$, respectively.

\begin{definition}\label{def:fct-conf-cat-pointed}
  We let $\EConf{n}$ denote the simplicial set whose $k$-simplices
  consist of a transverse configuration $\xi \pitchfork f$ of a closed
  $n$-mesh bundle $\xi$ and an open $n$-mesh bundle $f$ over
  the stratified $k$-simplex $\DeltaStrat{k}$
  together with a section $s$
  of $\xi \cap f : \strat{X} \cap \strat{M} \to \DeltaStrat{k}$.
  An order-preserving map $\ord{k'} \to \ord{k}$ acts via pullback
  of $\xi$ and $f$ along the induced map $\DeltaStrat{k'} \to \DeltaStrat{k}$
  and adjusts the section accordingly.
\end{definition}

\begin{para}
  We get a family of projection maps which together fit into the diagram
  \[
    \begin{tikzcd}
      {\EMeshClosed{n}} \ar[d, "\MeshClosed{n}"'] &
      {\EConf{n}} \ar[l] \ar[r] \ar[d, "\EConfMap{n}"{description}] &
      {\EMeshOpen{n}} \ar[d, "\MeshOpen{n}"] \\
      {\BMeshClosed{n}} &
      {\BConf{n}} \ar[l] \ar[r] &
      {\BMeshOpen{n}}
    \end{tikzcd}
  \]
  The left and right squares then factor through the pullbacks:
  \begin{equation}\label{eq:fct-conf-label-l-lr-r}
    \begin{tikzcd}[row sep = large]
      {\EMeshClosed{n}} \ar[d, "\MeshClosed{n}"'] &
      {\LConf{n}} \ar[l] \ar[d, "\LConfMap{n}"{description}] \pullbackdl &
      {\EConf{n}} \ar[l] \ar[r] \ar[d, "\EConfMap{n}"{description}] &
      {\RConf{n}} \ar[r] \ar[d, "\RConfMap{n}"{description}] \pullbackcorner &
      {\EMeshOpen{n}} \ar[d, "\MeshOpen{n}"] \\
      {\BMeshClosed{n}} &
      {\BConf{n}} \ar[l] &
      {\BConf{n}} \ar[l, equal] \ar[r, equal] &
      {\BConf{n}} \ar[r] &
      {\BMeshOpen{n}}
    \end{tikzcd}
  \end{equation}
  Unwrapping the definitions, a $k$-simplex of the simplicial set
  $\LConf{n}$ is a transverse configuration
  of $k$-mesh bundles over $\DeltaStrat{k}$ together with a section of the closed $n$-mesh bundle. The map $\LConfMap{n} : \LConf{n} \to \BConf{n}$ forgets the section.
  Analogously, a $k$-simplex of $\RConf{n}$ is a transverse configuration with a section
  of the open $n$-mesh bundle with $\RConfMap{n} : \RConf{n} \to \BConf{n}$ forgetting the section.
\end{para}

\begin{observation}\label{obs:fct-conf-intersect-closed}
  When $\xi \pitchfork f$ is a transverse configuration of $n$-mesh bundles
  over the same stratified space $\strat{B}$,
  the intersection $\xi \cap f$ is a closed $n$-mesh bundle over $\strat{B}$
  and so by Lemma~\ref{lem:fct-mesh-closed-n-bundle-flat}
  the forgetful map $\EConfMap{n} : \EConf{n} \to \BConf{n}$ is a categorical fibration.
  Since the maps $\MeshClosed{n} : \EMeshClosed{n} \to \BMeshClosed{n}$ and
  $\MeshOpen{n} : \EMeshOpen{n} \to \BMeshOpen{n}$ are categorical fibrations,
  so are the maps $\LConfMap{n} : \LConf{n} \to \BConf{n}$ and 
  $\RConfMap{n} : \RConf{n} \to \BConf{n}$ defined by the pullbacks.
\end{observation}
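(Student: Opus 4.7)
The statement packages together three claims whose proofs rest on the single substantive assertion that the intersection $\xi \cap f$ is itself a closed $n$-mesh bundle over $\strat{B}$. Once this is established, the second claim follows by applying Lemma~\ref{lem:fct-mesh-closed-n-bundle-flat} to $\xi \cap f$ and then reducing inner-horn lifting problems for $\EConfMap{n}$ to inner-horn lifting problems for $\Exit(\xi \cap f)$ via pullback and the $\StratReal{-} \dashv \Exit$ adjunction. The third claim follows from the pullback squares in diagram (\ref{eq:fct-conf-label-l-lr-r}): since $\MeshClosed{n}$ and $\MeshOpen{n}$ are categorical fibrations (Lemma~\ref{lem:fct-mesh-closed-labelled-pointed-cat-fib} and its open analogue), so are their pullbacks $\LConfMap{n}$ and $\RConfMap{n}$.

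To prove that $\xi \cap f$ is a closed $n$-mesh bundle I would induct on $n$. The case $n = 0$ is immediate from Observation~\ref{obs:fct-mesh-closed-n-base-case}. For the inductive step I would decompose both bundles one level at a time, writing $\xi = \xi_n \circ \bar\xi$ and $f = f_n \circ \bar f$ where $\bar\xi$ and $\bar f$ are the $(n-1)$-truncations. The definition of transversality of $n$-mesh bundles is per-level, so $\bar\xi \pitchfork \bar f$ is a transverse configuration of $(n-1)$-mesh bundles over $\strat{B}$, and by the inductive hypothesis $\bar\xi \cap \bar f$ is a closed $(n-1)$-mesh bundle over $\strat{B}$. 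The $1$-mesh bundles $\xi_n$ and $f_n$ pull back along the canonical refinement maps from $\strat{X}_{n-1} \cap \strat{M}_{n-1}$ to $\strat{X}_{n-1}$ and $\strat{M}_{n-1}$ respectively, giving transverse $1$-mesh bundles over $\strat{X}_{n-1} \cap \strat{M}_{n-1}$ (using Lemma~\ref{lem:fct-conf-pullback}). The base case $n = 1$ applied to this transverse pair then yields a closed $1$-mesh bundle whose composite with $\bar\xi \cap \bar f$ realises $\xi \cap f$ as a closed $n$-mesh bundle, once one verifies that the induced $n$-framing matches the composite framing of the two stages.

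The main obstacle is the base case $n = 1$. Here the intersection $\strat{X} \cap \strat{M}$ has underlying topological space $\unstrat(\strat{X})$ (the framing of $f$ identifies $\unstrat(\strat{M})$ with $\R \times \unstrat(\strat{B})$), with a refinement of the stratification of $\strat{X}$ obtained by cutting each regular stratum of $\xi$ along the singular strata of $f$ that cross it. I would then verify the four conditions of Definition~\ref{def:fct-mesh-closed-1-bundle} directly. The stratified-fibre-bundle property (1) is the delicate part and is where transversality is essential: at any $e \in \strat{X} \cap \strat{M}$, singularity on exactly one side lets us build a local product chart from the individual trivialisations of $\xi$ and $f$ without the two singularity loci colliding. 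Condition (2) on fibres follows because transversality excludes singular-singular overlap, so $(\xi \cap f)^{-1}(b)$ has as singular strata precisely the singular points of $\xi^{-1}(b)$ together with the finitely many points of $\sing(f^{-1}(b))$ lying in regular strata of $\xi^{-1}(b)$; this is manifestly a closed $1$-mesh. Condition (3), closedness of the singular locus, holds because it is the union of the closed set $\sing(\strat{X})$ and the intersection of $\strat{X}$ with the closed subset $\sing(\strat{M}) \subseteq \R \times \unstrat(\strat{B})$. Condition (4) on closedness of the framing embedding is inherited from $\xi$. Finally, for the isofibration part of the second claim I would use that the fibre-bundle structure of $\xi \cap f$ provides trivialisations over any stratum of $\strat{B}$, along which equivalences in $\Exit(\BConf{n})$ can be lifted to equivalences of sections.
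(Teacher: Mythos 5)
Your proposal is correct, and it supplies a verification that the paper itself omits: the statement is labelled an \emph{Observation} and the key assertion --- that $\xi \cap f$ is a closed $n$-mesh bundle --- is simply asserted in the Construction preceding Definition~\ref{def:fct-conf-label} without argument. Your derivations of the two fibration claims from that assertion match the paper's implicit reasoning exactly: the lifting argument for $\EConfMap{n}$ mirrors the proof of Lemma~\ref{lem:fct-mesh-closed-labelled-pointed-cat-fib} (using the pullback square of Lemma~\ref{lem:fct-conf-universal} to convert horns into partial sections of $\xi \cap f$), and the claims for $\LConfMap{n}$ and $\RConfMap{n}$ are indeed immediate from pullback-stability of categorical fibrations applied to diagram~(\ref{eq:fct-conf-label-l-lr-r}). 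Your induction for the key fact, reducing to a fibrewise check of the four conditions of Definition~\ref{def:fct-mesh-closed-1-bundle} in the base case, is sound; the identification of the level-wise intersection with the paper's $\strat{X} \cap \strat{M}$ works because the strata of $\strat{M}_n$ refine the pulled-back strata of $\strat{M}_{n-1}$.

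One spot deserves more care than your sketch gives it. The stratified-fibre-bundle condition is a condition on neighbourhoods $U$ of points in a \emph{stratum of the base}, not a pointwise product-chart condition on the total space, so ``singularity on at most one side at $e$'' is not by itself enough. What you actually need is that, over a connected neighbourhood $U$ in a stratum of $\strat{B}$ where both $\xi$ and $f$ trivialise, the relative combinatorial arrangement of the two singular loci is constant: no singular point of $f$ can cross a singular point of $\xi$ (including the endpoints of the closed fibre, which are singular in $\strat{X}$) as $b$ varies in $U$, since by continuity of the heights such a crossing would produce a point singular in both, contradicting transversality at that $b$. Only then can the two separate trivialisations be interpolated into a single trivialisation of $\xi \cap f$ over $U$. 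This is the content your phrase ``without the two singularity loci colliding'' is gesturing at, but the locally-constant-combinatorics step over the whole fibre is the actual load-bearing part and should be made explicit.
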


\begin{lemma}\label{lem:fct-conf-universal}
  Let $\xi \pitchfork f$ be a transverse configuration of a closed $n$-mesh bundle
  $\xi : \strat{X} \to \strat{B}$ and an open $n$-mesh bundle
  $f : \strat{M} \to \strat{B}$. Then there are pullback squares of simplicial sets
  \[
    \begin{tikzcd}
      {\Exit(\strat{X})} \ar[r] \ar[d, "\xi_*"'] \pullbackcorner &
      {\LConf{n}} \ar[d] \\
      {\Exit(\strat{B})} \ar[r, "\classify{\xi, f}", swap] &
      {\BConf{n}}
    \end{tikzcd}
    \qquad
    \begin{tikzcd}
      {\Exit(\strat{X} \cap \strat{M})} \ar[r] \ar[d, "(\xi \cap f)_*"'] \pullbackcorner &
      {\EConf{n}} \ar[d] \\
      {\Exit(\strat{B})} \ar[r, "\classify{\xi, f}", swap] &
      {\BConf{n}}
    \end{tikzcd}
    \qquad
    \begin{tikzcd}
      {\Exit(\strat{M})} \ar[r] \ar[d, "f_*"'] \pullbackcorner &
      {\RConf{n}} \ar[d] \\
      {\Exit(\strat{B})} \ar[r, "\classify{\xi, f}", swap] &
      {\BConf{n}}
    \end{tikzcd}
  \]
\end{lemma}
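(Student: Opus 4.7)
The plan is to handle the outer two squares together by exploiting the definitional pullbacks of diagram~(\ref{eq:fct-conf-label-l-lr-r}), and then verify the middle square by direct inspection of $k$-simplices, in the style of the proof of Lemma~\ref{lem:fct-mesh-closed-labelled-universal}.

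For the left square, I would start from the pullback definition
\[
  \LConf{n} = \EMeshClosed{n} \times_{\BMeshClosed{n}} \BConf{n}
\]
given in~(\ref{eq:fct-conf-label-l-lr-r}). Applying the pasting lemma for pullbacks,
\[
  \Exit(\strat{B}) \times_{\BConf{n}} \LConf{n}
  \ \cong\
  \Exit(\strat{B}) \times_{\BMeshClosed{n}} \EMeshClosed{n},
\]
where the map $\Exit(\strat{B}) \to \BMeshClosed{n}$ is precisely the composite of $\classify{\xi, f}$ with the projection $\BConf{n} \to \BMeshClosed{n}$, which by construction is $\classify{\xi}$. Lemma~\ref{lem:fct-mesh-closed-labelled-universal} then identifies this iterated pullback with $\Exit(\strat{X})$, giving the left square. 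The right square is entirely analogous, using the open counterpart of Lemma~\ref{lem:fct-mesh-closed-labelled-universal} applied to $\classify{f}$ and the defining pullback $\RConf{n} = \EMeshOpen{n} \times_{\BMeshOpen{n}} \BConf{n}$.

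For the middle square I would argue directly on $k$-simplices. A $k$-simplex of $\Exit(\strat{B}) \times_{\BConf{n}} \EConf{n}$ consists of a stratified $k$-simplex $\sigma : \DeltaStrat{k} \to \strat{B}$ together with a transverse configuration over $\DeltaStrat{k}$ equal (on the nose, by our strictification convention for framed pullbacks) to $(\sigma^*\xi, \sigma^*f)$, plus a section $s : \DeltaStrat{k} \to \sigma^*\strat{X} \cap \sigma^*\strat{M}$ of $\sigma^*\xi \cap \sigma^*f$. Since intersection commutes with pullback, $\sigma^*\strat{X} \cap \sigma^*\strat{M} \cong \DeltaStrat{k} \times_{\strat{B}} (\strat{X} \cap \strat{M})$, and by the universal property of this pullback the section $s$ corresponds bijectively to a stratified map $\DeltaStrat{k} \to \strat{X} \cap \strat{M}$ whose composite with $\xi \cap f$ is $\sigma$, i.e.\ a $k$-simplex of $\Exit(\strat{X} \cap \strat{M})$ over $\sigma$. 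This bijection is clearly natural in $\ord{k}$, yielding the middle pullback square.

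The only delicate point is ensuring that the middle square really is a strict pullback of simplicial sets rather than merely a homotopy pullback; this rests on the same strictification convention for framed stratified bundles that underlies Definition~\ref{def:fct-conf-cat-pointed} and the proof of Lemma~\ref{lem:fct-mesh-closed-labelled-universal}. With that convention in place, all three identifications are on the nose and no further argument is needed.
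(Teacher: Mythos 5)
Your proof is correct and matches the paper's intent: the paper's own proof is simply ``analogous to the proof of Lemma~\ref{lem:fct-mesh-closed-labelled-universal}'', i.e.\ the direct simplex-level verification you carry out for the middle square. Your treatment of the outer two squares by pasting the definitional pullbacks of~(\ref{eq:fct-conf-label-l-lr-r}) against Lemma~\ref{lem:fct-mesh-closed-labelled-universal} and its open analogue is a mild streamlining of the same argument, and it is valid since all the squares involved are strict pullbacks of simplicial sets and the composite $\Exit(\strat{B}) \to \BConf{n} \to \BMeshClosed{n}$ is $\classify{\xi}$ on the nose.
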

\begin{proof}
  Analogous to the proof of Lemma~\ref{lem:fct-mesh-closed-labelled-universal}.
\end{proof}

\begin{construction}
  The maps $\LConfMap{n}$, $\EConfMap{n}$ and $\RConfMap{n}$ define polynomial
  functors
  \begin{align*}
    \LConfL{n}{-} &: \sSet \to \sSet \\
    \EConfL{n}{-} &: \sSet \to \sSet \\
    \RConfL{n}{-} &: \sSet \to \sSet
  \end{align*}
  Moreover Construction~\ref{eq:b-poly-functorial-base} defines natural transformations
  \[
    \begin{tikzcd}
      {\LConfL{n}{-}} \ar[d] & 
      {\EConfL{n}{-}} \ar[r] \ar[d] \ar[l] &
      {\RConfL{n}{-}} \ar[d] \\
      {\BConf{n}} &
      {\BConf{n}} \ar[r, equal] \ar[l, equal] &
      {\BConf{n}}
    \end{tikzcd}
  \]  
  % When $\xi \pitchfork f$ a transverse configuration
  % of $n$-mesh bundles $\xi : \strat{X} \to \strat{B}$ and $f : \strat{M} \to \strat{B}$
  % together with a labelling map
  % $L : \Exit(\strat{X} \cap \strat{M}) \to X$ some simplicial set $X$,
  % then by Lemma~\ref{lem:fct-conf-universal} the diagram
  % \[
  %   \begin{tikzcd}
  %     {X} &
  %     {\Exit(\strat{X} \cap \strat{M})} \ar[r] \ar[d, "\Exit(\xi \cap f)"'] \ar[l, "L"'] \pullbackcorner &
  %     {\EConf{n}} \ar[d, "\EConfMap{n}"] \\
  %     {} &
  %     {\Exit(\strat{B})} \ar[r, "\classify{\xi, f}"'] &
  %     {\BConf{n}}
  %   \end{tikzcd}
  % \]
  % corresponds to a map $\Exit(\strat{B}) \to \poly{\EConfMap{n}}{X}$ such that
  % \[
  %   \begin{tikzcd}
  %     {} &
  %     {\poly{\EConfMap{n}}{X}} \ar[d] \\
  %     {\Exit(\strat{B})} \ar[r, "\classify{\xi, f}"'] \ar[ur] &
  %     {\BConf{n}}
  %   \end{tikzcd}
  % \]
  % Analogously, the simplicial sets $\poly{\LConfMap{n}}{X}$ and
  % $\poly{\RConfMap{n}}{X}$ classify transverse configurations of $n$-mesh bundles
  % so that the closed or open $n$-mesh is labelled in $X$, respectively.
  % A labelling $L : \Exit(\strat{X} \cap \strat{M}) \to X$ of $\xi \cap f$ induces
  % labellings of $\xi$ and $f$ by composition:
  % \[
  %   \begin{tikzcd}
  %     {\Exit(\strat{X})} \ar[dr, dashed] &
  %     {\Exit(\strat{X} \cap \strat{M})} \ar[r] \ar[l] \ar[d, "L"{description}] &
  %     {\Exit(\strat{M})} \ar[dl, dashed] \\
  %     {} &
  %     {X} &
  %     {}
  %   \end{tikzcd}
  % \]
  % In the polynomial functor formalism 
  % this is reflected in the natural transformations  
\end{construction}

\begin{lemma}\label{lem:fct-conf-pointed-lifts}
  Let $\xi \pitchfork f$ be a transverse configuration of $n$-mesh bundles
  $\xi : \strat{X} \to \strat{B}$ and $f : \strat{M} \to \strat{B}$
  and let $C$ be a simplicial set. We then have:
  \begin{enumerate}
    \item There is a bijection between the set of maps of simplicial sets
    $\Exit(\strat{X}) \to C$
    and the set of lifts in the diagram
    \[
      \begin{tikzcd}
        {} &
        {\LConfL{n}{C}} \ar[d] \\
        {\Exit(\strat{B})} \ar[r, "\classify{\xi, f}"'] \ar[ur, dashed] &
        {\BConf{n}}
      \end{tikzcd}
    \]
    \item There is a bijection between the set of maps of simplicial sets
    $\Exit(\strat{X} \cap \strat{M}) \to C$
    and the set of lifts in the diagram
    \[
      \begin{tikzcd}
        {} &
        {\EConfL{n}{C}} \ar[d] \\
        {\Exit(\strat{B})} \ar[r, "\classify{\xi, f}"'] \ar[ur, dashed] &
        {\BConf{n}}
      \end{tikzcd}
    \]
    \item There is a bijection between the set of maps of simplicial sets
    $\Exit(\strat{M}) \to C$
    and the set of lifts in the diagram
    \[
      \begin{tikzcd}
        {} &
        {\RConfL{n}{C}} \ar[d] \\
        {\Exit(\strat{B})} \ar[r, "\classify{\xi, f}"'] \ar[ur, dashed] &
        {\BConf{n}}
      \end{tikzcd}
    \]
  \end{enumerate}
\end{lemma}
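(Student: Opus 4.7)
The plan is to derive all three bijections as a direct application of Lemma~\ref{lem:b-poly-characterise} combined with the three pullback identifications provided by Lemma~\ref{lem:fct-conf-universal}. The three parts are entirely parallel, so I would prove them simultaneously by treating the three forgetful maps $\LConfMap{n}$, $\EConfMap{n}$, $\RConfMap{n}$ uniformly.

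First, recall that $\LConfL{n}{-}$, $\EConfL{n}{-}$, $\RConfL{n}{-}$ are defined as the polynomial functors on $\sSet$ induced by the maps $\LConfMap{n}: \LConf{n} \to \BConf{n}$, $\EConfMap{n}: \EConf{n} \to \BConf{n}$, $\RConfMap{n}: \RConf{n} \to \BConf{n}$. Lemma~\ref{lem:b-poly-characterise}, applied to the polynomial functor $\poly{\LConfMap{n}}{-}$ with $A = \Exit(\strat{B})$ and $X = C$, gives a natural bijection between maps $\Exit(\strat{B}) \to \LConfL{n}{C}$ and diagrams of the shape
\[
\begin{tikzcd}
{C} &
{\LConf{n} \times_{\BConf{n}} \Exit(\strat{B})} \ar[r] \ar[d] \ar[l] \pullbackcorner &
{\LConf{n}} \ar[d, "\LConfMap{n}"] \\
{} &
{\Exit(\strat{B})} \ar[r, swap] &
{\BConf{n}}
\end{tikzcd}
\]
Since the data of such a diagram consists of the base map $\Exit(\strat{B}) \to \BConf{n}$ together with a map from the pullback into $C$, restricting to those maps whose composite with the projection $\LConfL{n}{C} \to \BConf{n}$ equals $\classify{\xi,f}$ corresponds precisely to fixing the base map to be $\classify{\xi,f}$. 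Hence lifts of $\classify{\xi,f}$ along $\LConfL{n}{C} \to \BConf{n}$ are in natural bijection with maps $\LConf{n} \times_{\BConf{n}} \Exit(\strat{B}) \to C$.

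Now I would invoke Lemma~\ref{lem:fct-conf-universal} to identify the pullback. The first square there says that the pullback of $\LConf{n} \to \BConf{n}$ along $\classify{\xi,f}$ is canonically isomorphic to $\Exit(\strat{X})$. By the uniqueness of pullbacks, we can identify $\LConf{n} \times_{\BConf{n}} \Exit(\strat{B}) \cong \Exit(\strat{X})$, so lifts of $\classify{\xi,f}$ into $\LConfL{n}{C}$ correspond to maps $\Exit(\strat{X}) \to C$. This proves (1), and parts (2) and (3) are obtained identically using the second and third pullback squares of Lemma~\ref{lem:fct-conf-universal}, which identify $\EConf{n} \times_{\BConf{n}} \Exit(\strat{B}) \cong \Exit(\strat{X} \cap \strat{M})$ and $\RConf{n} \times_{\BConf{n}} \Exit(\strat{B}) \cong \Exit(\strat{M})$ respectively.

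There is no substantial obstacle here: the argument is a bookkeeping combination of two lemmas that have already been established. The only point where one needs to be careful is to check that the restriction from arbitrary maps $\Exit(\strat{B}) \to \LConfL{n}{C}$ to lifts of $\classify{\xi,f}$ translates correctly under the bijection of Lemma~\ref{lem:b-poly-characterise}. This is immediate from inspecting the diagram, since the natural map $\poly{f}{X} \to B$ in the polynomial construction corresponds exactly to the bottom edge $A \to B$ of the characterizing square.
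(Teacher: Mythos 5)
Your argument is correct and is essentially the paper's own proof spelled out: the paper simply cites Lemma~\ref{lem:fct-conf-universal}, the implicit mechanism being exactly the combination with Lemma~\ref{lem:b-poly-characterise} that you make explicit (fixing the base map to $\classify{\xi,f}$ and identifying the resulting pullback with $\Exit(\strat{X})$, $\Exit(\strat{X}\cap\strat{M})$, $\Exit(\strat{M})$ respectively).
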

\begin{proof}
  This follows from Lemma~\ref{lem:fct-conf-universal}.
\end{proof}

\begin{definition}\label{def:fct-conf-label-cat}
  When $C$ is a simplicial set, we write $\BConfL{n}{C}$
  for the limit of the following diagram of simplicial sets:
  \begin{equation}\label{eq:fct-conf-labelled}
    \begin{tikzcd}[column sep = small]
      {\LConfL{n}{C^\op}} \ar[r] &
      {\EConfL{n}{C^\op}} &
      {\EConfL{n}{\Tw(C)}} \ar[l] \ar[r] &
      {\EConfL{n}{C}} &
      {\RConfL{n}{C}} \ar[l]
    \end{tikzcd}
  \end{equation}
  The two maps on the inside are induced via functoriality by the forgetful
  maps $\Tw(C) \to C^\op$ and $\Tw(C) \to C$ that send a twisted arrow to its
  domain and codomain. The two outer maps are the components at $C^\op$ and $C$
  of the natural transformations
  $\EConfL{n}{-} \to \LConfL{n}{-}$ and
  $\EConfL{n}{-} \to \RConfL{n}{-}$, respectively.
  By naturality of the limit this defines a functor
  $\BConfL{n}{-} : \sSet \to \sSet$ together with a natural transformation
  $\BConfL{n}{-} \to \BConf{n}$.
\end{definition}

\begin{observation}\label{obs:fct-conf-labelled-base-case}
  Similar to Observation~\ref{obs:fct-mesh-closed-labelled-base-case} we have
  \[ \LConf{0} \cong \EConf{0} \cong \RConf{0} \cong \terminal. \]
  Therefore for any simplicial set $C$ we have natural isomorphisms of
  simplicial sets
  \[
    \LConfL{0}{C} \cong \EConfL{0}{C} \cong \RConfL{0}{C} \cong C
  \]
  Then the diagram~(\ref{eq:fct-conf-labelled}) specialises to
  \[
    \begin{tikzcd}[column sep = small, row sep = large]
      {\LConfL{0}{C^\op}} \ar[r] \ar[d, "\cong"'] &
      {\EConfL{0}{C^\op}} \ar[d, "\cong"{description}] &
      {\EConfL{0}{\Tw(C)}} \ar[l] \ar[r] \ar[d, "\cong"{description}]  &
      {\EConfL{0}{C}} \ar[d, "\cong"{description}] &
      {\RConfL{0}{C}} \ar[l] \ar[d, "\cong"] \\
      {C^\op} \ar[r, equal] &
      {C^\op} &
      {\Tw(C)} \ar[l] \ar[r] & 
      {C} &
      {C} \ar[l, equal]
    \end{tikzcd}
  \]
  Hence we have a natural isomorphism $\BConfL{0}{C} \cong \Tw(C)$.
\end{observation}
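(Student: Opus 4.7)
The plan is to unpack the definitions in three successive stages and verify each of the three claimed isomorphisms, ending with a routine limit computation.

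First I would establish the base-case identifications $\LConf{0} \cong \EConf{0} \cong \RConf{0} \cong \terminal$. By Observation~\ref{obs:fct-conf-base-case} both the closed and the open $0$-mesh bundle underlying any $k$-simplex are forced to be $\id_{\DeltaStrat{k}}$, so the transversality condition is vacuous and the intersection $\xi \cap f$ again equals $\id_{\DeltaStrat{k}}$. Any $k$-simplex of $\EConf{0}$ therefore carries a unique (identity) section, and analogously for $\LConf{0}$ and $\RConf{0}$. Each of these simplicial sets has exactly one $k$-simplex for every $k$, so they coincide with $\terminal$, and the forgetful maps $\LConfMap{0}$, $\EConfMap{0}$, $\RConfMap{0}$ all reduce to $\id_\terminal$.

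Next I would identify the induced polynomial functors. The polynomial endofunctor $\poly{\id_\terminal}{-} : \sSet \to \sSet$ unfolds as $\Sigma_\terminal \circ \Pi_{\id_\terminal} \circ (\terminal \times -)$; since $\terminal \times X \cong X$, $\Pi_{\id_\terminal}$ is the identity on $\sSet_{/\terminal}$, and $\Sigma_\terminal$ is the canonical isomorphism $\sSet_{/\terminal} \cong \sSet$, the whole composite is naturally isomorphic to the identity functor (this is the evident strengthening of Observation~\ref{obs:b-poly-base-1}). Applied to each of $\LConfMap{0}$, $\EConfMap{0}$, $\RConfMap{0}$, this gives natural isomorphisms $\LConfL{0}{C} \cong \EConfL{0}{C} \cong \RConfL{0}{C} \cong C$. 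I would then check that under these identifications the two inner maps in the diagram defining $\BConfL{0}{C}$ are exactly $\dom : \Tw(C) \to C^\op$ and $\cod : \Tw(C) \to C$, as they arise by naturality in the labels via the functor $\EConfL{0}{-}$ which is now the identity; while the two outer maps become identity morphisms, since the natural transformations of Construction~\ref{con:b-poly-functorial-base} between identical polynomial endofunctors must themselves collapse to the identity in the degenerate case $E_0 = E_1 = \terminal$.

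With all arrows identified, the defining diagram of $\BConfL{0}{C}$ reduces to
\[
  \begin{tikzcd}[column sep = small]
    {C^\op} \ar[r, equal] &
    {C^\op} &
    {\Tw(C)} \ar[l, "\dom"'] \ar[r, "\cod"] &
    {C} &
    {C} \ar[l, equal]
  \end{tikzcd}
\]
The two equalities make the outer two factors redundant, leaving the cospan $C^\op \xleftarrow{\dom} \Tw(C) \xrightarrow{\cod} C$, whose limit is tautologically $\Tw(C)$ itself. This gives the claimed natural isomorphism $\BConfL{0}{C} \cong \Tw(C)$.

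The main obstacle is the bookkeeping in the second stage: one must chase through Constructions~\ref{eq:b-poly-functorial-base} and~\ref{con:b-poly-functorial} to confirm, under the chain of canonical identifications $\LConfL{0}{-} \cong \id \cong \EConfL{0}{-} \cong \RConfL{0}{-}$, that the inner maps of the defining diagram do indeed become $\dom$ and $\cod$ while the outer maps become identities. Once this orientation is pinned down the remaining limit computation is essentially trivial.
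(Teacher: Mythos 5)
Your proposal is correct and follows essentially the same route as the paper's (terse) observation: identify $\LConf{0}$, $\EConf{0}$, $\RConf{0}$ with $\terminal$ via the base case for $0$-mesh bundles, conclude that the associated polynomial functors are naturally isomorphic to the identity, check that the inner maps become $\dom$ and $\cod$ and the outer maps become identities, and read off the limit of the resulting cospan as $\Tw(C)$. The only caveat is cosmetic: your citation of the construction for maps of polynomials over a fixed base uses a label that does not match the paper's, but the mathematical content you invoke is the right one.
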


\begin{observation}
  Suppose that $\xi \pitchfork f$ is a transverse configuration of $n$-mesh bundles
  $\xi : \strat{X} \to \strat{B}$ and $f : \strat{M} \to \strat{B}$.
  Let $C$ be any simplicial set.
  Via the characterisation of lifts from Lemma~\ref{lem:fct-conf-pointed-lifts},
  we see that a lift in the diagram
  \[
    \begin{tikzcd}
      {} &
      {\BConfL{n}{C}} \ar[d] \\
      {\Exit(\strat{B})} \ar[r, "\classify{\xi, f}"'] \ar[ur, dashed] &
      {\BConf{n}}
    \end{tikzcd}
  \]
  corresponds exactly to a diagram
  \[
    \begin{tikzcd}
      {\Exit(\strat{X})} \ar[d, "L_0"'] &
      {\Exit(\strat{X} \cap \strat{M})} \ar[r] \ar[d, "L"{description}] \ar[l]
      \ar[dl, "L_0'"{description}] \ar[dr, "L_1'"{description}] &
      {\Exit(\strat{M})} \ar[d, "L_1"] \\
      {C^\op} &
      {\Tw(C)} \ar[r] \ar[l] &
      {C}
    \end{tikzcd}
  \]
  The five maps $L_0$, $L_0'$, $L$, $L_1'$ and $L_1$ are each induced by one
  of the five simplicial sets in the limit diagram (\ref{eq:fct-conf-labelled})
  which defines $\BConfL{n}{C}$.
  Therefore the simplicial set $\BConfL{n}{C}$ classifies transverse configurations
  of $n$-meshes with labels in $C$, as we outlined earlier in Definition~\ref{def:fct-conf-label}.
  Moreover there is a diagram of forgetful maps
  \[
    \begin{tikzcd}
      {\BMeshClosedL{n}{C^\op}} \ar[d] &
      {\BConfL{n}{C}} \ar[r] \ar[d] \ar[l] &
      {\BMeshOpenL{n}{C}} \ar[d] \\
      {\BMeshClosed{n}} &
      {\BConf{n}} \ar[r] \ar[l] &
      {\BMeshOpen{n}}
    \end{tikzcd}
  \]
\end{observation}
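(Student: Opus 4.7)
The plan is to mirror the proof of Lemma~\ref{lem:fct-mesh-closed-labelled-universal} three times, once for each of the three pullback squares. The strategy in each case is to first construct the comparison map from $\Exit(-)$ to the universal classifier explicitly on $k$-simplices, then observe that the square commutes by construction, and finally verify the pullback property by exhibiting a natural bijection on $k$-simplices using the universal property of pullbacks in $\Strat$.

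First I would construct the map $\Exit(\strat{X}) \to \LConf{n}$ over $\BConf{n}$. A $k$-simplex of $\Exit(\strat{X})$ is a stratified map $\sigma : \DeltaStrat{k} \to \strat{X}$; composing with $\xi$ yields a stratified $k$-simplex of $\strat{B}$. Pulling back $(\xi, f)$ along $\xi \circ \sigma$ produces a transverse configuration over $\DeltaStrat{k}$ (transversality is preserved by Lemma~\ref{lem:fct-conf-pullback}), while $\sigma$ itself factors canonically through the pullback $\strat{X} \times_{\strat{B}} \DeltaStrat{k}$ to provide a section of the pulled-back closed $n$-mesh bundle. This data assembles into the required $k$-simplex of $\LConf{n}$. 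The construction for $\Exit(\strat{M}) \to \RConf{n}$ is entirely analogous, using $f \circ \sigma$ in place of $\xi \circ \sigma$.

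The middle case requires slightly more care: for $\Exit(\strat{X} \cap \strat{M}) \to \EConf{n}$, a $k$-simplex is a stratified map $\sigma : \DeltaStrat{k} \to \strat{X} \cap \strat{M}$, which yields a stratified $k$-simplex of $\strat{B}$ (either projection to $\strat{X}$ or $\strat{M}$ gives the same result after applying $\xi$ or $f$). The transverse configuration pulls back as before, and $\sigma$ provides a section of $\sigma^*(\xi \cap f)$, which is a closed $n$-mesh bundle by Observation~\ref{obs:fct-conf-intersect-closed}. This gives the required $k$-simplex of $\EConf{n}$.

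To check that each square is a pullback, I would unpack $k$-simplices of the pullback: for instance, a $k$-simplex of $\Exit(\strat{B}) \times_{\BConf{n}} \LConf{n}$ consists of a stratified $k$-simplex $\sigma_{\strat{B}} : \DeltaStrat{k} \to \strat{B}$ together with a section of $\sigma_{\strat{B}}^* \xi$; by the universal property of pullback in $\Strat$, such a section is precisely a stratified $k$-simplex of $\strat{X}$ lifting $\sigma_{\strat{B}}$ through $\xi$, hence a $k$-simplex of $\Exit(\strat{X})$. The analogous argument works for the other two squares. The main technical point is confirming that the intersection case is genuinely controlled by a stratified pullback: this is exactly the content of Observation~\ref{obs:fct-conf-intersect-closed}, which ensures that sections of $\xi \cap f$ correspond to points of $\strat{X} \cap \strat{M}$ via the defining pullback square, and this is what makes the middle square a pullback of simplicial sets rather than merely a commutative one.
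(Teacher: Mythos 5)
Your proposal establishes, in outline, the three pullback squares
\[
  \Exit(\strat{X}) \cong \Exit(\strat{B}) \times_{\BConf{n}} \LConf{n}, \qquad
  \Exit(\strat{X} \cap \strat{M}) \cong \Exit(\strat{B}) \times_{\BConf{n}} \EConf{n}, \qquad
  \Exit(\strat{M}) \cong \Exit(\strat{B}) \times_{\BConf{n}} \RConf{n},
\]
but that is the content of Lemma~\ref{lem:fct-conf-universal}, which the paper has already proved by precisely the argument you sketch (mirroring Lemma~\ref{lem:fct-mesh-closed-labelled-universal}). The observation you were asked to justify sits one level above this and is not reached by your argument. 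The first missing step is the passage from those pullback squares to the bijection between lifts of $\classify{\xi, f}$ into the polynomial functors $\LConfL{n}{D}$, $\EConfL{n}{D}$, $\RConfL{n}{D}$ and maps $\Exit(\strat{X}) \to D$, $\Exit(\strat{X} \cap \strat{M}) \to D$, $\Exit(\strat{M}) \to D$: this is Lemma~\ref{lem:fct-conf-pointed-lifts}, which the statement explicitly routes through, and it relies on the polynomial-functor characterisation of Lemma~\ref{lem:b-poly-characterise}, not merely on the pullback property of the universal bundles.

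The second missing step is the actual content of the observation. By Definition~\ref{def:fct-conf-label-cat}, $\BConfL{n}{C}$ is the limit of the five-term diagram (\ref{eq:fct-conf-labelled}), so a lift of $\classify{\xi, f}$ into $\BConfL{n}{C}$ is a compatible family of five lifts, one into each of $\LConfL{n}{C^\op}$, $\EConfL{n}{C^\op}$, $\EConfL{n}{\Tw(C)}$, $\EConfL{n}{C}$ and $\RConfL{n}{C}$. Applying the first step termwise produces the five maps $L_0$, $L_0'$, $L$, $L_1'$, $L_1$, and one must then check that the compatibilities imposed by the arrows of (\ref{eq:fct-conf-labelled}) — functoriality of $\EConfL{n}{-}$ in the label applied to $\Tw(C) \to C^\op$ and $\Tw(C) \to C$, together with the natural transformations $\EConfL{n}{-} \to \LConfL{n}{-}$ and $\EConfL{n}{-} \to \RConfL{n}{-}$ over $\BConf{n}$ — translate exactly into the commutativity of the displayed labelling diagram. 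The ``moreover'' clause also needs an argument you do not supply: the forgetful maps $\BConfL{n}{C} \to \BMeshClosedL{n}{C^\op}$ and $\BConfL{n}{C} \to \BMeshOpenL{n}{C}$ come from composing the limit projections onto $\LConfL{n}{C^\op}$ and $\RConfL{n}{C}$ with the maps of polynomial functors induced via Construction~\ref{con:b-poly-functorial} by the pullback squares defining $\LConf{n}$ and $\RConf{n}$ from $\EMeshClosed{n} \to \BMeshClosed{n}$ and $\EMeshOpen{n} \to \BMeshOpen{n}$. None of this limit-assembly appears in your proposal.
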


\begin{lemma}\label{lem:fct-conf-flat}
  The projection maps
  \begin{align*}
    \LConfMap{n} &: \LConf{n} \to \BConf{n} \\
    \EConfMap{n} &: \EConf{n} \to \BConf{n} \\
    \RConfMap{n} &: \RConf{n} \to \BConf{n}
  \end{align*}
  are flat categorical fibrations.
\end{lemma}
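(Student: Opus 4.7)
The plan is to exhibit each of the three maps as a pullback of a known flat categorical fibration, and then invoke the stability of flat categorical fibrations under pullback. The two relevant flat fibrations are $\MeshClosed{n} : \EMeshClosed{n} \to \BMeshClosed{n}$, shown to be flat in Lemma~\ref{lem:fct-mesh-closed-labelled-pointed-flat}, and its open analogue $\MeshOpen{n} : \EMeshOpen{n} \to \BMeshOpen{n}$, stated in the summary observation following Definition~\ref{def:fct-mesh-open-n}.

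For $\LConfMap{n}$ and $\RConfMap{n}$ the required pullback squares are already built into the construction: diagram~(\ref{eq:fct-conf-label-l-lr-r}) displays $\LConf{n}$ as the pullback of $\MeshClosed{n}$ along the forgetful map $\BConf{n} \to \BMeshClosed{n}$, and $\RConf{n}$ as the pullback of $\MeshOpen{n}$ along $\BConf{n} \to \BMeshOpen{n}$. Stability of flat categorical fibrations under pullback then yields the result for these two cases without further work.

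The substantive case is $\EConfMap{n}$. Here the plan is to construct an ``intersection'' map $\cap : \BConf{n} \to \BMeshClosed{n}$ sending a $k$-simplex $(\xi, f)$ to the closed $n$-mesh bundle $\xi \cap f : \strat{X} \cap \strat{M} \to \DeltaStrat{k}$; functoriality under base change along $\DeltaStrat{k'} \to \DeltaStrat{k}$ is immediate from the fact that forming the framed intersection commutes with pullback of framed stratified bundles. I would then show the pullback square
\[
  \begin{tikzcd}
    {\EConf{n}} \ar[r] \ar[d, "\EConfMap{n}"'] \pullbackcorner &
    {\EMeshClosed{n}} \ar[d, "\MeshClosed{n}"] \\
    {\BConf{n}} \ar[r, "\cap"'] &
    {\BMeshClosed{n}}
  \end{tikzcd}
\]
exists, which reduces to unpacking definitions: a $k$-simplex of $\EMeshClosed{n}$ lifting $\cap(\xi,f)$ is, by Lemma~\ref{lem:fct-mesh-closed-labelled-universal}, precisely a section of $\xi \cap f$, which is exactly the extra datum carried by a $k$-simplex of $\EConf{n}$ over $\BConf{n}$. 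Stability under pullback then gives that $\EConfMap{n}$ is a flat categorical fibration.

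The main (minor) obstacle is producing the intersection functor $\cap$ cleanly at the level of $k$-simplices and checking the claimed square is a pullback as simplicial sets rather than merely on underlying objects; this is essentially bookkeeping, since Observation~\ref{obs:fct-conf-intersect-closed} already records that $\xi \cap f$ is a closed $n$-mesh bundle and Lemma~\ref{lem:fct-conf-pullback} guarantees compatibility with base change, so no new geometric input is needed.
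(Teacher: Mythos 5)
Your argument is correct, and for $\LConfMap{n}$ and $\RConfMap{n}$ it is word-for-word the paper's argument: both are pullbacks of the flat categorical fibrations $\MeshClosed{n}$ and $\MeshOpen{n}$ exhibited in diagram~(\ref{eq:fct-conf-label-l-lr-r}), and flat categorical fibrations are stable under pullback.

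For $\EConfMap{n}$ you take a mildly different route. The paper does not build a global intersection functor $\cap : \BConf{n} \to \BMeshClosed{n}$; instead it checks flatness one $2$-simplex at a time, using Lemma~\ref{lem:fct-conf-universal} to identify the pullback of $\EConfMap{n}$ over a $2$-simplex $\sigma$ classifying $(\xi, f)$ with a pullback of $\Exit(\xi \cap f) : \Exit(\strat{X} \cap \strat{M}) \to \Exit(\DeltaStrat{2})$, which is flat by Lemma~\ref{lem:fct-mesh-closed-n-bundle-flat} since $\xi \cap f$ is a closed $n$-mesh bundle (Observation~\ref{obs:fct-conf-intersect-closed}); this is the same template as the proof of Lemma~\ref{lem:fct-mesh-closed-labelled-pointed-flat}. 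You globalise: the map $\cap$ is indeed a well-defined map of simplicial sets, because the canonical-subspace convention for framed bundles makes $\xi \cap f$ commute strictly with base change along $\DeltaStrat{k'} \to \DeltaStrat{k}$, and your identification of $\EConf{n}$ with the strict pullback of $\MeshClosed{n}$ along $\cap$ is exactly Definition~\ref{def:fct-conf-cat-pointed} unwound. What your version buys is uniformity -- all three maps become instances of pullback stability, with no per-simplex argument -- at the small cost of having to verify functoriality of $\cap$; what the paper's version buys is that it never needs $\cap$ to be simplicially coherent, only the fibrewise statement that each $\xi \cap f$ is a closed $n$-mesh bundle. Both proofs rest on the same geometric input, so either is acceptable.
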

\begin{proof}
  The map $\LConfMap{n}$ is a pullback of $\MeshClosed{n} : \EMeshClosed{n} \to \BMeshClosed{n}$ which is a flat
  categorical fibration by Lemma~\ref{lem:fct-mesh-closed-labelled-pointed-flat}.
  Similarly the map $\RConfMap{n}$ is a pullback of the flat categorical fibration
  $\MeshOpen{n} : \EMeshOpen{n} \to \BMeshOpen{n}$.
  Since the intersection $\xi \cap f$ is a closed $n$-mesh bundle,
  by Lemma~\ref{lem:fct-mesh-closed-n-bundle-flat} the induced map
  $\Exit(\xi \cap f)$ is a flat categorical fibration.
  Then we can proceed analogously as in the proof of Lemma~\ref{lem:fct-mesh-closed-labelled-pointed-flat} to see that $\EConfMap{n}$ must be flat as well.
\end{proof}

\begin{proposition}
  Let $\cat{C}$ be a quasicategory.
  Then the projection $\BConfL{n}{\cat{C}} \to \BConf{n}$ is a categorical
  fibration.
  In particular $\BConfL{n}{\cat{C}}$ is a quasicategory.
\end{proposition}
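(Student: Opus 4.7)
The plan is to exhibit $\BConfL{n}{\cat{C}}$ as an iterated pullback of categorical fibrations, and then invoke closure of categorical fibrations under pullback and composition. The key inputs are Lemma~\ref{lem:fct-conf-flat}, which tells us that $\LConfMap{n}$, $\EConfMap{n}$, $\RConfMap{n}$ are flat categorical fibrations, together with Proposition~\ref{prop:b-poly-flat-cat}, which says that the induced polynomial functors preserve both quasicategories and categorical fibrations.

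First I would check that the two maps $\Tw(\cat{C}) \to \cat{C}^\op$ and $\Tw(\cat{C}) \to \cat{C}$ used in Definition~\ref{def:fct-conf-label-cat} are categorical fibrations. This is immediate: the projection $\Tw(\cat{C}) \to \cat{C}^\op \times \cat{C}$ is a left fibration, hence a categorical fibration (to a quasicategory), and composing with the first or second projection (which is a pullback of $\cat{C}^\op \to \terminal$ or $\cat{C} \to \terminal$) again yields a categorical fibration. Applying $\EConfL{n}{-}$, which preserves categorical fibrations by Proposition~\ref{prop:b-poly-flat-cat}, I obtain that the maps $\EConfL{n}{\Tw(\cat{C})} \to \EConfL{n}{\cat{C}^\op}$ and $\EConfL{n}{\Tw(\cat{C})} \to \EConfL{n}{\cat{C}}$ appearing in the limit diagram are categorical fibrations.

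Next I would compute the limit as a sequence of two pullbacks. Set
\[
  P := \LConfL{n}{\cat{C}^\op} \times_{\EConfL{n}{\cat{C}^\op}} \EConfL{n}{\Tw(\cat{C})}.
\]
Since $\EConfL{n}{\Tw(\cat{C})} \to \EConfL{n}{\cat{C}^\op}$ is a categorical fibration, the map $P \to \LConfL{n}{\cat{C}^\op}$ is its pullback and therefore a categorical fibration; composing with the categorical fibration $\LConfL{n}{\cat{C}^\op} \to \BConf{n}$ (from Proposition~\ref{prop:b-poly-flat-cat} applied to Lemma~\ref{lem:fct-conf-flat}) shows $P \to \BConf{n}$ is a categorical fibration. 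Similarly, the projection $P \to \EConfL{n}{\cat{C}}$ factors as $P \to \EConfL{n}{\Tw(\cat{C})} \to \EConfL{n}{\cat{C}}$, both of which are categorical fibrations, so the composite is too. Now form
\[
  \BConfL{n}{\cat{C}} \;=\; P \times_{\EConfL{n}{\cat{C}}} \RConfL{n}{\cat{C}}.
\]
Pulling back the categorical fibration $P \to \EConfL{n}{\cat{C}}$ along the map from $\RConfL{n}{\cat{C}}$ gives a categorical fibration $\BConfL{n}{\cat{C}} \to \RConfL{n}{\cat{C}}$; composing with the categorical fibration $\RConfL{n}{\cat{C}} \to \BConf{n}$ yields the desired categorical fibration $\BConfL{n}{\cat{C}} \to \BConf{n}$. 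The quasicategory statement then follows because $\BConf{n}$ is a quasicategory by Lemma~\ref{lem:fct-conf-cocartesian}, and the domain of a categorical fibration into a quasicategory is a quasicategory.

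The only potential subtlety is verifying that $\Tw(\cat{C}) \to \cat{C}^\op$ and $\Tw(\cat{C}) \to \cat{C}$ are categorical (not merely inner) fibrations, which is where one must use that left fibrations lift equivalences uniquely. Once this is in hand, the remainder is a routine application of the polynomial functor machinery of~\S\ref{sec:b} combined with closure properties of categorical fibrations under pullback and composition.
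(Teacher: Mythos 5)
Your reduction of the limit to two iterated pullbacks is correct, and your first step goes through: $P \to \LConfL{n}{\cat{C}^\op}$ is a base change of $\EConfL{n}{\Tw(\cat{C})} \to \EConfL{n}{\cat{C}^\op}$, which is a categorical fibration because it is $\EConfL{n}{-}$ applied to the categorical fibration $\Tw(\cat{C}) \to \cat{C}^\op$. The gap is in the second step, at the claim that $P \to \EConfL{n}{\Tw(\cat{C})}$ is a categorical fibration. That map is the base change of the \emph{outer} map $\LConfL{n}{\cat{C}^\op} \to \EConfL{n}{\cat{C}^\op}$ of the limit diagram, and nothing establishes that this outer map is a fibration. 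It is not of the form ``$\EConfL{n}{-}$ applied to a fibration'': it is the component at $\cat{C}^\op$ of the natural transformation induced via Construction~\ref{eq:b-poly-functorial-base} by the map $\EConf{n} \to \LConf{n}$ over $\BConf{n}$, which fibrewise over a transverse configuration of a closed $n$-mesh $\strat{X}$ and an open $n$-mesh $\strat{M}$ is restriction of labellings along the refinement $\Exit(\strat{X} \cap \strat{M}) \to \Exit(\strat{X})$. Restriction along an arbitrary map of simplicial sets is not a categorical fibration (already restriction along $\Delta\ord{1} \to \Delta\ord{0}$, i.e.\ the constant-diagram map $\cat{C} \to \Fun(\Delta\ord{1}, \cat{C})$, fails to be an isofibration). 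The same objection blocks the other leg of your second pullback, since the other outer map $\RConfL{n}{\cat{C}} \to \EConfL{n}{\cat{C}}$ is of the same restriction type; so neither projection of the second pullback square is known to be a fibration as written.

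The repair is to combine the two inner maps into one. Polynomial functors preserve products over the base, so $\EConfL{n}{\cat{C}^\op} \times_{\BConf{n}} \EConfL{n}{\cat{C}} \cong \EConfL{n}{\cat{C}^\op \times \cat{C}}$, and under this identification the pair of inner maps becomes $\EConfL{n}{-}$ applied to $\Tw(\cat{C}) \to \cat{C}^\op \times \cat{C}$, a left fibration over a quasicategory and hence a categorical fibration; Proposition~\ref{prop:b-poly-flat-cat} then makes the joint map $\EConfL{n}{\Tw(\cat{C})} \to \EConfL{n}{\cat{C}^\op} \times_{\BConf{n}} \EConfL{n}{\cat{C}}$ a categorical fibration. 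Now $\BConfL{n}{\cat{C}}$ is the pullback of this single fibration along $\LConfL{n}{\cat{C}^\op} \times_{\BConf{n}} \RConfL{n}{\cat{C}} \to \EConfL{n}{\cat{C}^\op} \times_{\BConf{n}} \EConfL{n}{\cat{C}}$, so $\BConfL{n}{\cat{C}} \to \LConfL{n}{\cat{C}^\op} \times_{\BConf{n}} \RConfL{n}{\cat{C}}$ is a categorical fibration, and the target is a categorical fibration over $\BConf{n}$ by the closure properties you already invoked. This is in substance the paper's argument, which asserts fibrancy of the diagram in the slice model structure over $\BConf{n}$: the point is that only the maps \emph{out of} $\EConfL{n}{\Tw(\cat{C})}$, taken jointly, are fibrations, and one must route the pullback through them rather than through the outer restriction maps.
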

\begin{proof}
  By Lemma~\ref{lem:fct-conf-flat} the maps
  $\LConfMap{n}$, $\EConfMap{n}$ and $\RConfMap{n}$ are flat categorical
  fibrations.
  The projections $\Tw(\cat{C}) \to \cat{C}^\op$ and
  $\Tw(\cat{C}) \to \cat{C}$ are categorical fibrations and $\Tw(\cat{C})$
  a quasicategory.
  When we equip the diagram~(\ref{eq:fct-conf-labelled}) with the forgetful
  maps into $\BConf{n}$, it follows by Proposition~\ref{prop:b-poly-flat-cat} that
  the resulting diagram in the slice category $\sSet_{/ \BConf{n}}$
  is fibrant in the slice model structure. Moreover the maps  
  \[
    \begin{tikzcd}
      {\EConfL{n}{\cat{C}^\op}} \ar[dr] &
      {\EConfL{n}{\Tw(\cat{C})}} \ar[l] \ar[r] \ar[d] &
      {\EConfL{n}{\cat{C}}} \ar[dl] \\
      {} &
      {\BConf{n}} &
      {}
    \end{tikzcd}
  \]
  are fibrations in the slice model category.
  Therefore the limit $\BConfL{n}{\cat{C}} \to \BConf{n}$ is fibrant as well
  and thus a categorical fibration.
\end{proof}

\subsection{Characterising Configurations of $1$-Meshes}

Let us for now restrict our attention to transverse configurations of $1$-meshes,
and build up to the result that for any $1$-category $\cat{C}$ there is a natural
equivalence $\BConfL{1}{\cat{C}} \simeq \Tw(\BTrussOpenL{1}{\cat{C}})$.
We start with the unlabelled case and construct a map $\BConf{1} \to \Tw(\FinOrd)$.

\begin{observation}\label{obs:fct-conf-1-object}
  Let $\xi \pitchfork f$ be a transverse configuration of a closed $1$-mesh bundle
  $\xi : \strat{X} \to \DeltaStrat{0}$ and an open $1$-mesh bundle
  $f : \strat{M} \to \DeltaStrat{0}$.
  We have a finite ordinal $\sing(\xi)$ whose elements correspond
  to the singular strata of $\strat{X}$, and a finite ordinal
  $\reg(f)$ whose elements are the regular strata of $\strat{M}$.
  Then we can define a map
  \[
    \sing(\xi) \longrightarrow \reg(f)
  \]
  which sends a singular stratum to the regular stratum that it is contained in.
  This is well-defined by the transversality condition and is order-preserving.
  We can therefore assign to each object of $\BConf{1}$ an object of
  the twisted arrow category $\Tw(\FinOrd)$.
\end{observation}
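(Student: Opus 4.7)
The plan is to construct the map $\sigma : \sing(\xi) \to \reg(f)$ explicitly and then verify the two claims (well-definedness and order preservation) as direct consequences of transversality and the fact that both meshes live in $\R$ via their framings.

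First I would unpack what the source and target posets are. By Definition~\ref{def:fct-mesh-closed-1} each singular stratum of $\strat{X}$ is a single point, so every element $s \in \sing(\xi)$ corresponds, via $\framing(\xi)$, to a unique point $x_s \in \R$. The open $1$-mesh $\strat{M}$ is a finite stratification of $\R$ whose regular strata are pairwise disjoint open intervals. To define $\sigma(s)$, I need to locate the regular stratum of $\strat{M}$ containing $x_s$. By the transversality condition $\xi \pitchfork f$, the point $x_s$ cannot be singular in $\strat{M}$ (since it is already singular in $\strat{X}$), so it must lie in some regular stratum; this stratum is unique because regular strata are pairwise disjoint. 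This both defines $\sigma$ on elements and establishes well-definedness.

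Next I would check order preservation. By Construction~\ref{con:fct-mesh-closed-1-sing}, the ordering on $\sing(\xi)$ is the one induced by the real-line positions of the singular points via the framing; and by Lemma~\ref{lem:fct-mesh-open-closed-reg-sing} the ordering on $\reg(f)$ is the ordering of the regular open intervals along $\R$. If $s_1 \leq s_2$ in $\sing(\xi)$ then $x_{s_1} \leq x_{s_2}$ in $\R$, and since distinct regular intervals of $\strat{M}$ are ordered along $\R$, the enclosing regular strata satisfy $\sigma(s_1) \leq \sigma(s_2)$ (with equality precisely when both points lie in the same interval).

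Finally I would identify the output with an object of $\Tw(\FinOrd)$. An object of $\Tw(\FinOrd)$ is a $0$-simplex, i.e.\ a map $\ord{0}^\op \join \ord{0} \to \FinOrd$, which is exactly a single morphism of $\FinOrd$; the order-preserving map $\sigma : \sing(\xi) \to \reg(f)$ is such a morphism, and the assignment $(\xi \pitchfork f) \mapsto \sigma$ therefore lands in the object-set of $\Tw(\FinOrd)$ as claimed. There is no real obstacle here: the whole statement is essentially bookkeeping, and the only thing to be careful about is matching the conventions for the orderings on $\sing(\xi)$ and $\reg(f)$ with the real-line order induced by the framings, which is handled by the cited constructions.
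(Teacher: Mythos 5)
Your proposal is correct and follows essentially the same reasoning the paper uses: transversality forces each singular point of $\strat{X}$ into a (unique) regular stratum of $\strat{M}$, and order preservation is immediate from the fact that both orderings are induced by the real line via the framings. The identification of an object of $\Tw(\FinOrd)$ with a single morphism of $\FinOrd$ is also exactly as in the paper's definition of the twisted arrow construction.
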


\begin{lemma}
  Observation~\ref{obs:fct-conf-1-object} extends uniquely to a functor
  in the diagram
  \[
    \begin{tikzcd}
      {\BMeshClosed{1}} \ar[d, "\sing"'] &
      {\BConf{1}} \ar[d, dashed] \ar[l] \ar[r] &
      {\BMeshOpen{1}} \ar[d, "\reg"] \\
      {\FinOrd^\op} &
      {\Tw(\FinOrd)} \ar[l] \ar[r] &
      {\FinOrd}
    \end{tikzcd}
  \]
\end{lemma}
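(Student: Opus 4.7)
The plan is to recognise the dashed functor as the unique lift of the natural map $(\sing, \reg) : \BConf{1} \to \FinOrd^\op \times \FinOrd$ (induced by the two projections of $\BConf{1}$) along the projection $\Tw(\FinOrd) \to \FinOrd^\op \times \FinOrd$. This last map is a left fibration classifying the Hom-functor $\FinOrd(-,-) : \FinOrd^\op \times \FinOrd \to \Space$, which takes values in discrete spaces because $\FinOrd$ is a $1$-category. It is therefore a left covering map, and any lift of $(\sing, \reg)$ through it is determined uniquely by its action on $0$-simplices; this immediately gives uniqueness, since the object-level assignment is fixed by Observation~\ref{obs:fct-conf-1-object}.

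To exhibit the lift explicitly, consider a $k$-simplex $(\xi, f)$ of $\BConf{1}$ with $\xi : \strat{X} \to \DeltaStrat{k}$ and $f : \strat{M} \to \DeltaStrat{k}$. A $k$-simplex of $\Tw(\FinOrd)$ is a functor $\ord{k}^\op \join \ord{k} \to \FinOrd$, equivalently a pair of zigzags in $\FinOrd$ linked by vertical morphisms $\varphi_i$ at each level $i \in \ord{k}$. I take the two zigzags to be $\sing(\xi)$ and $\reg(f)$, and the vertical map $\varphi_i : \sing(\xi)(i) \to \reg(f)(i)$ to be the order-preserving map produced by Observation~\ref{obs:fct-conf-1-object} applied to the fibre transverse configuration $(\xi^{-1}(i), f^{-1}(i))$.

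The main obstacle is to verify the naturality squares required for the data to define a functor out of $\ord{k}^\op \join \ord{k}$: for each consecutive pair $i < i+1$ in $\ord{k}$, the outer loop
\begin{equation*}
  \begin{tikzcd}
    \sing(\xi)(i+1) \ar[r, "\varphi_{i+1}"] \ar[d] & \reg(f)(i+1) \\
    \sing(\xi)(i) \ar[r, "\varphi_i"'] & \reg(f)(i) \ar[u]
  \end{tikzcd}
\end{equation*}
must commute in $\FinOrd$, where the left vertical arrow is given by Lemma~\ref{lem:fct-mesh-closed-1-sing-covering} (right covering of singular strata along exit paths) and the right vertical arrow by the analogous left-covering property of regular strata extracted from Lemma~\ref{lem:fct-mesh-open-1-reg-left}. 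Given a singular stratum at level $i+1$, its unique lift back to level $i$ traces out a continuous section of $\sing(\strat{X})$ along the exit path $i \to i+1$; transversality forces each point of this section to lie in some regular stratum of $f$, and by continuity the assignment of containing regular stratum is constant along the covering lift. Equivalently, the regular stratum at $i$ containing the lifted singular stratum is precisely the one which flows forward to contain the original singular stratum at $i+1$, which is the required commutativity. Simplicial naturality in $k$ is then routine, since $\sing$, $\reg$ and the fibrewise Observation are all compatible with pullback along the face and degeneracy maps $\DeltaStrat{k'} \to \DeltaStrat{k}$.
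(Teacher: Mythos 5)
Your proof is correct, and its geometric heart is the same as the paper's: the commutativity of the twisted square is established by lifting a singular stratum backwards along the exit path via the right covering property of Lemma~\ref{lem:fct-mesh-closed-1-sing-covering}, and then using transversality to see that this lift stays inside regular strata of the open mesh, so that $\reg(f)$ carries the regular stratum containing the lifted point to the one containing the original point. The packaging differs in a worthwhile way. The paper uses that $\Tw(\FinOrd)$ is a $1$-category to reduce to constructing a functor out of $\Ho(\BConf{1})$, and consequently must check that the twisted square is invariant under equivalences of transverse configurations (the quotient defining maps in the homotopy category); you instead define the lift simplex-by-simplex and extract uniqueness from the fact that $\Tw(\FinOrd) \to \FinOrd^\op \times \FinOrd$ is a left covering map, so any lift of $(\sing,\reg)$ is pinned down by its values on objects. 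This makes the uniqueness claim, which the paper leaves implicit, explicit and clean, at the cost of having to verify compatibility with all simplicial operators rather than well-definedness on homotopy classes of $1$-simplices; both verifications are routine for the same reason, namely that $\sing$, $\reg$ and the fibrewise assignment of Observation~\ref{obs:fct-conf-1-object} are all defined by pullback. One small imprecision: Lemma~\ref{lem:fct-mesh-open-1-reg-left} gives a left fibration rather than a left covering map on regular strata; what you actually use is that all lifts of an exit path starting in a given regular stratum land in the same regular stratum, which is exactly what the construction of $\reg$ records after applying $\pi_0$.
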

\begin{proof}
  Since $\Tw(\FinOrd)$ is a $1$-category, it suffices to construct a functor
  of $1$-categories
  \[ \Ho(\BConf{1}) \longrightarrow \Tw(\FinOrd). \]
  Observation~\ref{obs:fct-conf-1-object} specifies the action on objects.  
  A map of $\Ho(\BConf{1})$ is represented by 
  a transverse configuration $\xi \pitchfork f$ of a closed $1$-mesh bundle
  $\xi : \strat{X} \to \DeltaStrat{1}$ and an open $1$-mesh bundle
  $f : \strat{M} \to \DeltaStrat{1}$.
  Let us denote respectively by $\xi_0$, $\xi_1$ and by $f_0$, $f_1$ the
  restrictions of the $1$-mesh bundles $\xi$ and $f$ to the endpoints of the stratified interval $\DeltaStrat{1}$.
  We then obtain a twisted square of finite ordinals
  \begin{equation}\label{eq:fct-conf-1-map:twisted}
    \begin{tikzcd}[column sep = large]
      {\sing(\xi_0)} \ar[d] &
      {\sing(\xi_1)} \ar[d] \ar[l, "\sing(\xi)"'] \\
      {\reg(f_0)} \ar[r, "\reg(f)"'] &
      {\reg(f_1)}
    \end{tikzcd}
  \end{equation}

  We argue that this twisted square commutes by following the action
  of the maps on an element $i_1 \in \sing(\xi_1)$.
  The element $i_1$ corresponds to a singular point $x \in \strat{X}$.  
  Since by Lemma~\ref{lem:fct-mesh-closed-1-sing-covering} the
  map $\Exit(\xi)$ is a right covering map, there exists a unique filler for
  \[
    \begin{tikzcd}
      {\DeltaStrat{0}} \ar[r, "x"] \ar[hook, d, "\langle 1 \rangle_*"'] &
      {\strat{X}} \ar[d, "\xi"] \\
      {\DeltaStrat{1}} \ar[r, equal] \ar[ur, dashed, "\gamma"{description}] &
      {\DeltaStrat{1}}
    \end{tikzcd}
  \]
  where $\gamma$ may only intersect singular strata of $\strat{X}$.
  When we let $i_0 \in \sing(\xi_0)$ be the index of the singular point $\gamma(0)$,
  the exit path $\gamma$ witnesses that $\sing(\xi)(i_1) = i_0$.
  By transversality the exit path $\gamma$ that traverses only singular strata of
  $\strat{X}$ can intersect only regular strata of $\strat{M}$.
  Let $j_0 \in \reg(f_0)$ and $j_1 \in \reg(f_1)$ be the indices of the regular
  strata of $f_0$ and $f_1$ that contain the points $\gamma(0)$ and $\gamma(1)$.
  When we interpret $\gamma$ as an exit path in the regular subspace of $\strat{M}$,
  by construction of $\reg$ we have that $\reg(f)(i_0) = i_1$.
  We have thus shown that on the element $i_1 \in \sing(\xi_1)$ the twisted
  square of maps (\ref{eq:fct-conf-1-map:twisted}) acts by
  \[
    \begin{tikzcd}
      {i_0} \ar[d, mapsto] &
      {i_1} \ar[d, mapsto] \ar[l, mapsto] \\
      {j_0} \ar[r, mapsto] &
      {j_1}
    \end{tikzcd}
  \] 
  It follows that (\ref{eq:fct-conf-1-map:twisted}) commutes.
  Moreover $\sing$ and $\reg$ are invariant under equivalences of the $1$-mesh bundles.
  The maps $\sing(\xi_0) \to \reg(\xi_0)$ and $\sing(\xi_1) \to \reg(\xi_1)$
  respect equivalences of transverse configurations, since changing these maps
  would require moving a singular stratum of the closed mesh through a singular stratum of the open one, violating transversality on the way.
  Therefore the construction of the twisted square~(\ref{eq:fct-conf-1-map:twisted})
  respects the quotient defining the set of maps in $\Ho(\BConf{1})$.
  Functoriality of $\Ho(\BConf{1}) \to \Tw(\FinOrd)$ requires that the
  twisted squares of the form~(\ref{eq:fct-conf-1-map:twisted}) composes
  horizontally, which follows directly from functoriality of $\sing$ and $\reg$
  individually.
\end{proof}

\begin{proposition}\label{prop:fct-conf-bconf-to-tw}
  The map $\BConf{1} \to \Tw(\FinOrd)$ is a trivial fibration.
\end{proposition}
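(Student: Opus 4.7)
I would verify that the map has the right lifting property against every boundary inclusion $\partial\Delta\ord{k} \hookrightarrow \Delta\ord{k}$ for $k \geq 0$. The overall strategy mirrors the proof of Proposition~\ref{prop:fct-mesh-closed-1-sing-trivial}, but applied simultaneously to the closed and open mesh components of a configuration, with transversality handled via the twisted arrow data.

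In the base case $k = 0$, a $0$-simplex of $\Tw(\FinOrd)$ is an order-preserving map $\alpha : \ord{n} \to \ord{m}$. I would construct an explicit transverse configuration realising $\alpha$: take $\strat{M}$ to be the open $1$-mesh on $\R$ with singular strata at the integer points $1, 2, \ldots, m$, and take $\strat{X}$ to be a closed $1$-mesh with $n+1$ singular points positioned at heights $\alpha(0) + \tfrac12, \alpha(1) + \tfrac12, \ldots, \alpha(n) + \tfrac12$, chosen within the frame $\intCC{0,m}$. Transversality is immediate since the singular strata of $\strat{X}$ lie strictly in the interior of regular strata of $\strat{M}$, and by construction the induced order-preserving map $\sing(\xi) \to \reg(f)$ is exactly $\alpha$.

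In the inductive case $k \geq 1$, I am given a transverse configuration $(\xi', f')$ over $\partial \DeltaStrat{k}$ and a $k$-simplex $\sigma : \Delta\ord{k} \to \Tw(\FinOrd)$ whose projections to $\FinOrd^{\op}$ and $\FinOrd$ are compatible with $\classify{\xi'}$ and $\classify{f'}$, respectively. Running the lifting argument of Proposition~\ref{prop:fct-mesh-closed-1-sing-trivial} twice, once for $\sing$ and once for $\reg$, produces extensions $\xi : \strat{X} \to \DeltaStrat{k}$ and $f : \strat{M} \to \DeltaStrat{k}$ of the given boundary data: each singular stratum of $\xi'$ extends to a section via the right covering $\Exit(\sing(\strat{X}')) \to \Exit(\partial\DeltaStrat{k})$ of Lemma~\ref{lem:fct-mesh-closed-1-sing-covering}, and similarly for $f'$, with the heights over the interior supplied by linear interpolation. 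This is where I have freedom, and it is also where transversality might initially fail.

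The key step, and the only real obstacle, is to make the height choices for the singular strata of $\xi$ and $f$ over the interior of $\DeltaStrat{k}$ so that $(\xi, f)$ is transverse and induces $\sigma$ under the classifying map constructed in Observation~\ref{obs:fct-conf-1-object}. The twisted arrow $\sigma$ evaluated at the vertex $k \in \Delta\ord{k}$ specifies, for each singular stratum $s_i$ of $\xi$ over that vertex, the index $\sigma(i)$ of the regular stratum of $f$ in which $s_i$ must lie; and the functoriality of $\sigma$ along the edges of $\Delta\ord{k}$ is already compatible with the boundary data $(\xi', f')$. I would first interpolate the heights of the singular strata of $f$ over $\DeltaStrat{k}$ linearly as in Proposition~\ref{prop:fct-mesh-closed-1-sing-trivial}, producing $f$ as a fibrewise partition of $\R$. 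Then, for each singular stratum of $\xi$ over the interior, I would place its height at the fibrewise midpoint of the regular stratum of $f$ prescribed by $\sigma$ (with suitable truncations for the unbounded end regular strata of $f$). Because the chosen regular stratum depends continuously on the base point once the heights of $f$ are chosen, the resulting section is continuous, does not cross any singular stratum of $f$, and does not collide with any other singular stratum of $\xi$ in the same fibre — so $(\xi, f)$ is transverse — and by construction its classifying map in $\Tw(\FinOrd)$ is exactly $\sigma$, giving the required filler.
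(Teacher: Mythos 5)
Your base case and the overall strategy (extend the open mesh first using that $\reg$ is a trivial fibration, then route the singular strata of the closed mesh through the regular strata of $f$ prescribed by the twisted arrow) match the paper's proof in spirit, but the interior construction has two concrete problems.

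First, placing each singular stratum of $\xi$ at the \emph{fibrewise midpoint} of its prescribed regular stratum of $f$ fails whenever two indices $i < i'$ satisfy $\sigma(i) = \sigma(i')$: both sections would coincide, collapsing distinct singular strata of a closed $1$-mesh. You must separate them within the shared regular stratum in the order forced by the framing, which is why the paper constructs the heights $h_i$ inductively and \emph{nudges} each new section above the ones already built. Second, and more seriously, a lift in a trivial-fibration problem must restrict \emph{on the nose} to the given boundary datum $(\xi', f')$, and the midpoint section over the interior does not converge to the prescribed boundary heights: over $\partial\DeltaStrat{k}$ the singular points of $\xi'$ sit at arbitrary positions inside their regular strata, and near the boundary the regular strata of $f$ can split (the preimage of the regular stratum $\sigma(i)$ under $\reg(f)$ along an exit path is a union of several regular and singular strata), so your section need not even limit into the correct regular stratum, let alone onto the given point. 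Bridging this is exactly the delicate step: for $k = 1$ the paper uses the compatibility $\beta(v_0(\alpha(i))) = v_1(i)$ to produce a section of $f$ from $x_0$ to $x_1$ traversing only regular strata and then nudges it; for $k \geq 2$ it instead extends $\xi'$ without regard to transversality and repairs it on a connected collar of the boundary, using that all interior singular strata near a connected boundary must lie in a single regular stratum of $f$. Your write-up asserts continuity and non-collision where the real work lies, so as it stands the key step is not established.
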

\begin{proof}
  Suppose we are given a lifting problem  
  \[
    \begin{tikzcd}
      {\partial \Delta \ord{k}} \ar[r] \ar[d] &
      {\BConf{1}} \ar[d] \\
      {\Delta \ord{k}} \ar[r] \ar[ur, dashed] &
      {\Tw(\FinOrd)}
    \end{tikzcd}
  \]
  We construct a lift by a case distinction on $k \geq 0$.
  
  \begin{enumerate}
    \item 
    When $k = 0$ the $0$-simplex $\Delta\ord{0} \to \Tw(\FinOrd)$ classifies an order-preserving map
    $v : \ord{a} \to \ord{b}$.
    Let $f : \strat{M} \to \DeltaStrat{0}$ be an open mesh bundle so that $\reg(f) = \ord{b}$.
    Then we can construct a closed mesh bundle $\xi : \strat{X} \to \DeltaStrat{0}$
    with $\sing(\xi) = \ord{a}$ such that the $i$th singular stratum is contained in the
    $v(i)$th regular stratum of $f$.
    The pair $\xi \pitchfork f$ is a transverse configuration and classifies a lift $\Delta\ord{0} \to \BConf{1}$.

    \item     
    For $k = 1$ we are given a twisted square of finite non-empty ordinals
    \[
      \begin{tikzcd}
        {\ord{a_0}} \ar[d, "v_0"'] &
        {\ord{a_1}} \ar[d, "v_1"] \ar[l, "\alpha"'] \\
        {\ord{b_0}} \ar[r, "\beta"'] &
        {\ord{b_1}}
      \end{tikzcd}
    \]
    and a transverse configuration $\xi' \pitchfork f'$ of a closed $1$-mesh bundle
    $\xi' : \strat{X}' \to \partial\DeltaStrat{1}$ and an open $1$-mesh bundle
    $f' : \strat{M}' \to \partial\DeltaStrat{1}$.
    We write $\xi_0$, $\xi_1$ and $f_0$, $f_1$ for the restrictions of $\xi'$ and $f'$
    to the two points of $\partial\DeltaStrat{1} = \{ 0 \} \sqcup \{ 1 \}$.
    We then have that $v_0$ and $v_1$ are the maps $\sing(\xi_0) \to \reg(f_0)$
    and $\sing(\xi_1) \to \reg(f_1)$ induced by $\xi_0 \pitchfork f_0$ and
    $\xi_1 \pitchfork f_1$.
    We use that $\reg : \BMeshOpen{1} \to \FinOrd$ is a trivial fibration and extend $f'$ to an open $1$-mesh bundle $f : \strat{M} \to \DeltaStrat{k}$ with $\reg(f) = \beta$.

    We construct an extension of $\xi'$ to a closed $1$-mesh bundle $\xi : \strat{X} \to \DeltaStrat{1}$ by specifying a sequence $h_i : \DeltaTop{1} \to \R$ determining the heights of the singular strata of $\xi$ for each $i \in \ord{a_1} = \sing(\xi_1)$.    
    Let $i \in \ord{a_1}$ be the index of such a singular stratum and suppose by induction that we have already determined $h_j$ for all $0 \leq j < i$.    
    Let $x_0, x_1 \in \strat{X}'$ be the singular points of $\xi'$ over $0, 1 \in \DeltaStrat{1}$ corresponding to the indices $\alpha(i) \in \ord{a_0}$ and $i \in \ord{a_1}$.    
    By transversality of $\xi'$ and $f'$ we have that $x_0$ and $x_1$ must lie in regular strata of $f'$, with indices $v_0(\alpha(i))$ and $v_1(i)$.
    Because $\reg(f) = \beta$ and $\beta(v_0(\alpha(i))) = v_1(i)$,
    there must exist a section $s : \DeltaStrat{1} \to \strat{M}$ of $f$
    with $s(0) = x_0$ and $s(1) = x_1$, traversing only regular strata of $\strat{M}$.
    We can nudge this section on the interior of $\DeltaStrat{1}$ to be higher in the framing than the singular strata we have constructed so far,
    and denote the result by $h_i$.

    The resulting closed $1$-mesh bundle $\xi$ must be transverse to $f$ by construction and the transverse configuration $\xi \pitchfork f$ 
    determines a solution of the lifting problem.
        
    \item When $k \geq 2$ we are given a map $v : \ord{k}^\op \join \ord{k} \to \FinOrd$
    together with a transverse configuration $\xi' \pitchfork f'$ of
    a closed $1$-mesh bundle $\xi' : \strat{X}' \to \partial\DeltaStrat{k}$
    and an open $1$-mesh bundle $f' : \strat{M}' \to \partial\DeltaStrat{k}$.
    We let $\alpha : \ord{k}^\op \to \FinOrd$ and $\beta : \ord{k} \to \FinOrd$
    denote the restrictions of $v$ to the two components of the join
    $\ord{k}^\op \join \ord{k}$.

    We use that the map $\reg : \BMeshOpen{1} \to \FinOrd$ is a trivial fibration and extend $f'$ to an open $1$-mesh bundle $f : \strat{M} \to \DeltaStrat{k}$ with $\reg(f) = \beta$.
    Analogously, we use that by Proposition~\ref{prop:fct-mesh-closed-1-sing-trivial} the map $\sing : \BMeshClosed{1} \to \FinOrd^\op$ is a trivial fibration to extend $\xi'$ to a closed $1$-mesh bundle $\xi'' : \strat{X}'' \to \DeltaStrat{k}$ with $\sing(\xi'') = \beta$.

    The $1$-mesh bundles $\xi''$ and $f$ are not transverse in general, but
    there is an open collar neighbourhood $\strat{U}$ of the boundary $\partial\DeltaStrat{k}$ so that $\xi''$ and $f$ are transverse over $\strat{U}$.
    Since we assumed that $k \geq 2$ we have that $\strat{U}$ is connected.
    Therefore the singular strata of $\xi''$ over $\strat{U} \setminus \partial\DeltaStrat{k}$ must lie within the same regular stratum of $f$.    
    Using the trivialisation of $f$ over the $k$th stratum of $\DeltaStrat{k}$ we then adjust $\xi''$ to an equivalent closed $1$-mesh bundle $\xi : \strat{X} \to \DeltaStrat{k}$ that agrees with $\xi''$ over $\strat{U}$ and is transverse to $f$.
    Then $\xi \pitchfork f$ represents a solution to the lifting problem.\qedhere
  \end{enumerate}
\end{proof}

\begin{lemma}\label{lem:fct-conf-econf-to-tw}
  There is a unique functor which fits into the diagram
  \begin{equation}\label{eq:fct-conf-econf-to-tw}
    \begin{tikzcd}[column sep={5em,between origins}, row sep={3em,between origins}]
    	& \ETrussClosed{1} && {\Tw(\ETrussOpen{1})} && \ETrussOpen{1} \\
    	\EMeshClosed{1} && \EConf{1} && \EMeshOpen{1} \\
    	& \BTrussClosed{1} && {\Tw(\BTrussOpen{1})} && \BTrussOpen{1} \\
    	\BMeshClosed{1} && \BConf{1} && \BMeshOpen{1}
    	\arrow["\simeq"{description}, from=4-1, to=3-2]
    	\arrow["\simeq"{description}, from=4-3, to=3-4]
    	\arrow["\simeq"{description}, from=4-5, to=3-6]
    	\arrow[from=3-4, to=3-6]
    	\arrow[from=4-3, to=4-5]
    	\arrow[from=2-1, to=4-1]
    	\arrow[from=1-4, to=3-4]
    	\arrow[from=1-6, to=3-6]
    	\arrow[from=1-2, to=3-2]
    	\arrow["\simeq"{description}, from=2-1, to=1-2]
    	\arrow[dashed, from=2-3, to=1-4]
    	\arrow["\simeq"{description}, from=2-5, to=1-6]
    	\arrow[from=1-4, to=1-6]
    	\arrow[from=4-3, to=4-1]
    	\arrow[from=3-4, to=3-2]
    	\arrow[from=1-4, to=1-2]
    	\arrow[crossing over, from=2-3, to=2-5]
    	\arrow[crossing over, from=2-5, to=4-5]
    	\arrow[crossing over, from=2-3, to=4-3]
    	\arrow[crossing over, from=2-3, to=2-1]
    \end{tikzcd}
  \end{equation}
\end{lemma}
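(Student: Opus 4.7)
The plan is to exploit the observation that morphisms in $\ETrussOpen{1}$ are entirely determined by source, target, and underlying map in $\BTrussOpen{1} = \FinOrd$ (by inspection of Definition~\ref{def:fct-truss-open-1}). Hence the forgetful functor $\ETrussOpen{1} \to \BTrussOpen{1}$ is faithful, and since $\Tw$ is a right adjoint preserving pullbacks, the induced map $\Tw(\ETrussOpen{1}) \to \Tw(\BTrussOpen{1})$ is faithful as well. Combining this with the domain and codomain projections of $\Tw$ and the duality isomorphism $(-)^\dagger : \ETrussOpen{1} \to (\ETrussClosed{1})^\op$, this exhibits $\Tw(\ETrussOpen{1})$ as a subcategory of the iterated pullback
\[
  P := \ETrussClosed{1} \times_{\BTrussClosed{1}} \Tw(\BTrussOpen{1}) \times_{\BTrussOpen{1}} \ETrussOpen{1}.
\]

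Given this, uniqueness of the dashed functor is immediate: any two candidates fitting into the cube must agree after composition with the three projections from $\Tw(\ETrussOpen{1})$ onto $\ETrussClosed{1}$, $\Tw(\BTrussOpen{1})$, and $\ETrussOpen{1}$, since those three composites are pinned down by the commutativity of the four surrounding faces. For existence, the already-established commutativity of the front, left, and right faces (via the geometric realisation equivalences of Lemma~\ref{lem:fct-truss-closed-1-pointed-realise} and its open counterpart, together with Proposition~\ref{prop:fct-conf-bconf-to-tw}) produces, for every $k$-simplex $\sigma$ of $\EConf{1}$, a compatible $k$-simplex of $P$. I then need to verify that this $k$-simplex factors through the subcategory $\Tw(\ETrussOpen{1}) \subseteq P$ — equivalently, that every commutative square in $\Tw(\BTrussOpen{1})$ appearing in the image of $\sigma$ lifts to a commutative square in $\Tw(\ETrussOpen{1})$ with the prescribed vertex lifts coming from the closed and open truss data. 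By reducing to $k \leq 1$, this amounts to checking that at each vertex $i \in \ord{k}$ the map $\sing(\xi_i) \to \reg(f_i)$ induced by $\BConf{1} \to \Tw(\FinOrd)$ sends the index of the closed-mesh stratum picked out by the section $s(i)$ to the index of the open-mesh stratum it is contained in, with the inequality constraints matching the appropriate hom-set of $\ETrussOpen{1}$ in Definition~\ref{def:fct-truss-open-1}. Transversality rules out the forbidden singular–singular pairing at $s(i)$, and in each of the three remaining cases the required equality or inequality follows directly from the fibrewise ordering of the strata relative to the framing coordinate of $s(i)$.

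The hard part will be this final verification of the inequality constraints: although it is essentially a refinement of the argument underlying Proposition~\ref{prop:fct-conf-bconf-to-tw}, it now requires tracking the type (singular or regular) of each vertex lift in $\ETrussOpen{1}$ and distinguishing the three hom-set cases separately, rather than only keeping track of the underlying map in $\FinOrd$. Once this case analysis is carried out, the desired $k$-simplex factors uniquely through $\Tw(\ETrussOpen{1}) \subseteq P$, yielding the dashed functor in the diagram.
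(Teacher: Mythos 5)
Your proposal is correct and follows essentially the same route as the paper: the paper phrases the lift as a section of the open $1$-truss bundle over $\ord{k}^\op \join \ord{k}$ that is forced on objects by the restrictions to the two join components (your mono $\Tw(\ETrussOpen{1}) \hookrightarrow \ETrussClosed{1} \times_{\BTrussClosed{1}} \Tw(\BTrussOpen{1}) \times_{\BTrussOpen{1}} \ETrussOpen{1}$ is just a repackaging of this), giving uniqueness, and then establishes existence by exactly the four-case analysis you describe at each vertex $t$ — singular/regular type of the section point in the closed and open mesh, with transversality excluding the singular–singular case and the fibrewise ordering yielding the required (in)equalities against the hom-sets of Definition~\ref{def:fct-truss-open-1}.
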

\begin{proof}
  A $k$-simplex of $\EConf{1}$ consists of a transverse configuration
  $(\xi, f)$ of a closed $1$-mesh bundle $\xi : \strat{X} \to \DeltaStrat{k}$ and
  an open $1$-mesh bundle $f : \strat{M} \to \DeltaStrat{k}$
  together with a common section $s : \DeltaStrat{k} \to \strat{X} \cap \strat{M}$.
  The functor $\BConf{1} \to \Tw(\BTrussOpen{1})$ sends the transverse configuration
  $(\xi, f)$ to an open $1$-truss bundle
  \[
    \begin{tikzcd}
      {E} \ar[r, "\Phi"] \ar[d, "F"'] \pullbackcorner &
      {\ETrussOpen{1}} \ar[d] \\
      {\ord{k}^\op \join \ord{k}} \ar[r, "\varphi", swap] &
      {\FinOrd}
    \end{tikzcd}
  \]  
  We then need to construct a section $S$ of the open $1$-truss bundle $F$.
  The maps $\EMeshClosed{1} \to \ETrussClosed{1}$ and $\EMeshOpen{1} \to \ETrussOpen{1}$
  already determine the restrictions $S_0$, $S_1$ of $S$ to the two components
  $\ord{k}^\op$ and $\ord{k}$ of the join $\ord{k}^\op \join \ord{k}$:
  We therefore look for an $S$ which fits into the diagram
  \[
    \begin{tikzcd}
      {\ord{k}^\op \sqcup \ord{k}} \ar[r, "S_0 \sqcup S_1"] \ar[d, hook] &
      {E} \ar[d, "F"] \\
      {\ord{k}^\op \join \ord{k}} \ar[r, equal] \ar[ur, dashed, "S"{description}]&
      {\ord{k}^\op \join \ord{k}}
    \end{tikzcd}
  \]
  Since the join $\ord{k}^\op \join \ord{k} \cong \ord{2k + 1}$ is a poset,
  $E$ is a poset as well.
  The inclusion of $\ord{k}^\op \sqcup \ord{k}$ into
  $\ord{k}^\op \join \ord{k}$
  is a bijection on objects, and so $S$ is already uniquely determines as a map of sets.
  It remains to show that it is a well-defined map of posets.

  Suppose that $t \in \ord{k}$ and
  let us denote by $\varphi_t$ the order-preserving map $\varphi(t^\circ) \to \varphi(t)$.
  We verify by case distinction that there is a unique map
  $\Phi(S_0(t)) \to \Phi(S_1(t))$ in $\ETrussOpen{1}$ over $\varphi_t$,
  which implies that $S(t^\circ) \leq S(t)$.
  For reference, we refer back to the Definition~\ref{def:fct-truss-open-1} which describes the maps
  in $\ETrussOpen{1}$.
  \begin{enumerate}
    \item If $\Phi(S_0(t)) = \trussCS{i}\ord{p} = \trussOR{i}\ord{p}$ and $\Phi(S_1(t)) = \trussOR{j}\ord{q}$
    then the $i$th singular stratum of $\strat{X}$ over $t \in \DeltaStrat{k}$
    is contained in the $j$th regular stratum of $\strat{M}$ and so $\varphi_t(i) = j$.
    Therefore we have $\varphi_t(i) = j$.

    \item If $\Phi(S_0(t)) = \trussCR{i}\ord{p} = \trussOS{i}\ord{p}$ and
    $\Phi(S_1(t)) = \trussOR{j}\ord{q}$ then $s(t) \in \strat{X} \cap \strat{M}$
    is contained in the $i$th regular stratum of $\strat{X}$ and the $j$th regular
    stratum of $\strat{M}$ over $t \in \DeltaStrat{k}$.
    Because $\xi$ is a closed $1$-mesh bundle, the $i$th regular stratum in
    $\strat{X}$ over $t$ must be surrounded by the $i$th and $(i + 1)$st singular
    strata. By transversality these singular strata must lie within regular strata
    of $\strat{M}$ over $t$.
    When $j_0$ and $j_1$ are the indices of those regular strata, we must therefore have
    $\varphi_t(i) = j_0 \leq j \leq j_1 = \varphi_t(i + 1)$.

    \item If $\Phi(S_0(t)) = \trussCR{i}\ord{p} = \trussOS{i}\ord{p}$ and
    $\Phi(S_1(t)) = \trussOS{j}\ord{q}$ then the $i$th regular stratum of
    $\strat{X}$ contains the $j$th singular stratum of $\strat{M}$ over $t$.
    The regular stratum of $\strat{X}$ is again surrounded by the $i$th and
    $(i + 1)$st singular strata of $\strat{X}$ over $t$ of because $\xi$ is closed.
    By transversality these singular strata must lie within some regular strata
    of $\strat{M}$ with indices $j_0$ and $j_1$.
    Then we must have $\varphi_t(i) = j_0 \leq j < j_1 = \varphi_t(i + 1)$.

    \item Finally the case that $\Phi(S_0(t)) = \trussCS{i}\ord{p}$ and
    $\Phi(S_1(t)) = \trussOS{j}\ord{q}$ is impossible since the transversality
    condition prevents the point $s(t)$ to be singular in both $\strat{X}$ and $\strat{M}$.
    \qedhere
  \end{enumerate}
\end{proof}

\begin{lemma}\label{lem:fct-conf-econf-to-tw-trivial}
  The map $\EConf{1} \to \Tw(\ETrussOpen{1})$ is a trivial fibration.
\end{lemma}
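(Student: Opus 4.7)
The strategy is to verify the right lifting property against boundary inclusions $\partial\Delta\ord{k} \hookrightarrow \Delta\ord{k}$ directly. Given such a lifting problem
\[
  \begin{tikzcd}
    {\partial \Delta\ord{k}} \ar[r, "\varphi'"] \ar[d, hook] &
    {\EConf{1}} \ar[d] \\
    {\Delta\ord{k}} \ar[r, "\sigma"'] \ar[ur, dashed] &
    {\Tw(\ETrussOpen{1})}
  \end{tikzcd}
\]
first project $\sigma$ via the forgetful functor $\Tw(\ETrussOpen{1}) \to \Tw(\FinOrd)$ to obtain a compatible $\sigma' : \Delta\ord{k} \to \Tw(\FinOrd)$. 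By Proposition~\ref{prop:fct-conf-bconf-to-tw}, the map $\BConf{1} \to \Tw(\FinOrd)$ is a trivial fibration, so the transverse configuration $(\xi', f')$ determined by $\varphi'$ extends to a transverse configuration $(\xi, f)$ over $\DeltaStrat{k}$ lifting $\sigma'$.

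It remains to extend the partial section given on $\partial\DeltaStrat{k}$ to a section $s$ of $\xi \cap f$ over $\DeltaStrat{k}$ whose extracted twisted arrow in $\ETrussOpen{1}$ coincides with $\sigma$. The simplex $\sigma$ specifies, via its $\ord{k}^\op$ and $\ord{k}$ components combined with the duality $\ETrussOpen{1} \cong (\ETrussClosed{1})^\op$, a stratum of $\xi_t$ and a stratum of $f_t$ at each vertex $t \in \ord{k}$, together with a morphism $\phi_t$ in $\ETrussOpen{1}$ lying over the transversality map. Following the case analysis in the proof of Lemma~\ref{lem:fct-conf-econf-to-tw} together with Definition~\ref{def:fct-truss-open-1}, the existence of $\phi_t$ guarantees that the two prescribed strata have non-empty intersection in the fibre $(\xi \cap f)_t$, so there is at least one candidate point for $s(t)$.

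To assemble these pointwise choices into a section over $\DeltaStrat{k}$ compatible with the boundary, we use two covering properties: the singular strata of $\xi$ form a right covering over $\Exit(\DeltaStrat{k})$ by Lemma~\ref{lem:fct-mesh-closed-1-sing-covering}, and dually the regular strata of $f$ form a left fibration over $\Exit(\DeltaStrat{k})$ by Lemma~\ref{lem:fct-mesh-open-1-reg-left}. Hence values of the section prescribed to lie in singular strata (of $\xi$ or $f$) propagate uniquely along exit paths, while values in regular-regular components can be interpolated via the local trivialisation of the stratified fibre bundle $\xi \cap f$ over each stratum of $\DeltaStrat{k}$. The main obstacle is to combine these choices so that the resulting $s$ is simultaneously compatible with the prescribed boundary and with the combinatorial data carried by $\sigma$ at the remaining vertex; this is resolved by building $s$ inductively on the skeleton of $\DeltaStrat{k}$, extending from $\partial\DeltaStrat{k}$ into the top stratum using the trivialisation there so that none of the combinatorial constraints recorded in Definition~\ref{def:fct-truss-open-1} is violated.
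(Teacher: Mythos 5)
Your proof is correct in strategy but takes a genuinely different route from the paper. The paper does not verify the lifting property directly: it observes that the map is automatically an inner fibration because $\Tw(\ETrussOpen{1})$ is the nerve of a $1$-category (Lemma~\ref{lem:b-inner-fib-1-cat}), and then reduces to showing the map is a categorical equivalence, checking essential surjectivity, fullness and faithfulness each by a case distinction on the four types of morphisms in $\ETrussOpen{1}$. Your approach instead fills $\partial\Delta\ord{k} \hookrightarrow \Delta\ord{k}$ by hand, first extending the underlying transverse configuration via the trivial fibration $\BConf{1} \to \Tw(\FinOrd)$ of Proposition~\ref{prop:fct-conf-bconf-to-tw} and then extending the section of $\xi \cap f$. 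The two arguments share their combinatorial core: your $k = 0$ case is exactly the paper's essential surjectivity (given a morphism $\hat\alpha$ of $\ETrussOpen{1}$ over the transversality map, produce an intersection point realising it), and your $k = 1$ case is exactly the paper's fullness (realise a prescribed morphism of $\Tw(\ETrussOpen{1})$ by an actual exit path in $\strat{X}\cap\strat{M}$); note that what you need here is the \emph{converse} of the case analysis in the proof of Lemma~\ref{lem:fct-conf-econf-to-tw}, so you must actually run the four cases in the other direction rather than cite that lemma. What your route buys is that faithfulness never enters (a trivial fibration requires only existence of lifts), and you avoid the implicit appeal to the fact that an inner fibration into the gaunt $1$-category $\Tw(\ETrussOpen{1})$ is automatically an isofibration, which the paper's reduction to a categorical equivalence silently uses. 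What it costs is that the real work for $k \geq 1$ is concentrated in the section-extension step, which you only sketch: you should note that all vertices of $\Delta\ord{k}$ lie in $\partial\Delta\ord{k}$, so the stratum of $\strat{X}\cap\strat{M}$ over each stratum of $\DeltaStrat{k}$ is already forced by the boundary data and by $\sigma$ being an honest simplex of the nerve of a $1$-category; the remaining filling of the height function over the top stratum then goes through by the interpolation argument of Lemma~\ref{lem:fct-mesh-closed-1-bundle-cat-fib} together with the unique-lifting property of Lemma~\ref{lem:fct-mesh-closed-1-sing-covering} on the singular part.
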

% \begin{proof}
%   \[
%     \begin{tikzcd}
%       {\partial\Delta\ord{k}} \ar[r] \ar[d] &
%       {\EConf{1}} \ar[d] \\
%       {\Delta\ord{k}} \ar[r] \ar[ur, dashed] &
%       {\Tw(\ETrussOpen{1})}
%     \end{tikzcd}
%   \]

%   $\xi' : \strat{X}' \to \partial \DeltaStrat{k}$,
%   $f' : \strat{M}' \to \partial \DeltaStrat{k}$
% \end{proof}

% \begin{proof}
%   We have that $\BConf{1} \to \Tw(\BTrussOpen{1})$ is a trivial fibration
%   between quasicategories and therefore it admits a section
%   $G : \Tw(\BTrussOpen{1}) \to \BConf{1}$.

%   A $k$-simplex $\ord{k} \to \Tw(\ETrussOpen{n})$ corresponds to an open $1$-truss bundle
%   together with a section
%   \[
%     \begin{tikzcd}
%       {} &
%       {E} \ar[r] \ar[d, "F"] \pullbackcorner&
%       {\ETrussOpen{1}} \ar[d] \\
%       {\ord{k}^\op \join \ord{k}} \ar[r, "\id"'] \ar[ur, "S"] &
%       {\ord{k}^\op \join \ord{k}} \ar[r, "\varphi"'] &
%       {\FinOrd}
%     \end{tikzcd}
%   \]
%   Using $G$ we can construct a closed $n$-mesh bundle $\xi : \strat{X} \to \DeltaStrat{k}$
%   together with an open $n$-mesh bundle $f : \strat{M} \to \DeltaStrat{k}$ such that
%   $\xi \pitchfork f$. Then $\BConf{1} \to \Tw(\BTrussOpen{1})$ sends $(\xi, f)$ to
%   the open $1$-truss bundle $F$.
%   \[
    
%   \]  
% \end{proof}

\begin{proof}
  Since $\Tw(\BTrussOpen{1})$ is a $1$-category and $\EConf{1}$ is a quasicategory by Observation~\ref{obs:fct-conf-intersect-closed},
  the map must be an inner fibration already by Lemma~\ref{lem:b-inner-fib-1-cat}.
  It therefore suffices to show that the map is a categorical equivalence.

  To see that the map is essentially surjective,
  let $\hat{\alpha}$ be a map in $\ETrussOpen{n}$ which is sent to
  the order-preserving map $\alpha : \ord{n} \to \ord{m}$ by $\ETrussOpen{1} \to \BTrussOpen{1}$.
  By Proposition~\ref{prop:fct-conf-bconf-to-tw} we can construct a transverse configuration $(\xi, f)$ of
  a closed $1$-mesh bundle $\xi : \strat{X} \to \DeltaStrat{0}$ and an open
  $1$-mesh bundle $f : \strat{M} \to \DeltaStrat{0}$ which is sent to $\alpha$
  by $\BConf{1} \to \Tw(\FinOrd)$.
  It remains to show that there exists a point $x \in \strat{X} \cap \strat{M}$
  such that $\EConf{1} \to \Tw(\ETrussOpen{1})$ sends $(\xi, f)$ together with $x$
  to $\hat{\alpha}$.
  We verify this with a case distinction.  
  \begin{enumerate}
    \item Suppose that $\hat{\alpha} : \trussOR{i} \ord{n} \to \trussOR{j} \ord{n}$.
    We then have $\alpha(i) = j$ and so the $i$th singular stratum of $\strat{X}$
    must intersect the $j$th regular stratum of $\strat{M}$.
    Then the point $x \in \strat{X} \cap \strat{M}$ corresponding to the $i$th singular
    stratum of $\strat{X}$ is sent to $\hat{\alpha}$.
    
    \item Suppose that $\hat{\alpha} : \trussOS{i} \ord{n} \to \trussOR{j} \ord{n}$. Then
    we have $\alpha(i) \leq j \leq \alpha(i + 1)$.
    This entails that the $i$th and $(i + 1)$st singular strata of $\strat{X}$ must intersect
    regular strata of $\strat{M}$ that are above and below, respectively, of the $j$th
    regular stratum of $\strat{M}$.
    In particular the $i$th regular stratum of $\strat{X}$ intersects the $j$th regular stratum of $\strat{M}$ non-trivially, and any point in this intersection is sent to $\hat{\alpha}$.

    \item Suppose that $\hat{\alpha} : \trussOS{i} \ord{n} \to \trussOS{j} \ord{n}$.
    Then $\alpha(i) \leq j < \alpha(i + 1)$.
    The $j$th singular stratum of $\strat{M}$ is surrounded by the $j$th and $(j + 1)$st regular strata.    
    The $i$th singular stratum of $\strat{X}$ intersects a regular stratum of $\strat{M}$ below the $j$th regular stratum, and the $(i + 1)$st singular stratum of $\strat{X}$ intersects a regular stratum that is above the $(j + 1)$st regular stratum.
    Therefore the $i$th regular stratum of $\strat{X}$ must cross the $j$th singular stratum of $\strat{M}$.
    This intersection point is then sent to $\hat{\alpha}$.

    \item The case $\hat{\alpha} : \trussOR{i} \ord{n} \to \trussOS{j} \ord{n}$ is impossible since there are no maps from regular to singular elements in $\ETrussOpen{1}$.
  \end{enumerate}

  For faithfulness it suffices to observe that an equivalence of
  transverse configurations can not change which strata intersect each other
  since this would require passing singular strata of the closed and open mesh across each other.
  Finally fullness can again be verified via a case distinction similar to one ones above.
\end{proof}

\begin{lemma}\label{lem:fct-conf-tw-truss}
  Let $\cat{C}$ be a $1$-category.
  Then there exists an equivalence
  \[
    \begin{tikzcd}    
      {\BMeshClosedL{1}{\cat{C}}} \ar[d] &
      {\BConfL{1}{\cat{C}}} \ar[l] \ar[r] \ar[d, dashed] &
      {\BMeshOpenL{1}{\cat{C}}} \ar[d] \\
      {\BTrussClosedL{1}{\cat{C}}} &
      {\Tw(\BTrussOpenL{1}{\cat{C}})} \ar[l] \ar[r] &
      {\BTrussOpenL{1}{\cat{C}}}
    \end{tikzcd}
  \]
  which is natural in $\cat{C}$.
\end{lemma}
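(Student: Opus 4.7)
The plan is to upgrade the unlabelled equivalences $\BConf{1} \simeq \Tw(\BTrussOpen{1})$ from Proposition~\ref{prop:fct-conf-bconf-to-tw} and $\EConf{1} \simeq \Tw(\ETrussOpen{1})$ from Lemma~\ref{lem:fct-conf-econf-to-tw-trivial} to the labelled setting by reinterpreting the defining limit of $\BConfL{1}{\cat{C}}$ in terms of polynomial functors on the truss side and showing the result computes $\Tw(\BTrussOpenL{1}{\cat{C}})$. First I would use the commutative cube of Lemma~\ref{lem:fct-conf-econf-to-tw} to identify $\EConfMap{1} : \EConf{1} \to \BConf{1}$ with the map $\Tw(\ETrussOpen{1}) \to \Tw(\BTrussOpen{1})$ as equivalent flat categorical fibrations over $\BConf{1} \simeq \Tw(\BTrussOpen{1})$. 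By Lemma~\ref{lem:b-poly-functorial-equiv} this yields a natural equivalence of polynomial functors between $\EConfL{1}{-}$ and the functor induced by $\Tw(\ETrussOpen{1}) \to \Tw(\BTrussOpen{1})$. Analogously I would establish equivalences for the polynomial functors $\LConfL{1}{-}$ and $\RConfL{1}{-}$ by restricting attention to the source and target ends of $\Tw$, using the mesh/truss equivalences for $\EMeshClosed{1}$ and $\EMeshOpen{1}$ together with the duality isomorphism $\ETrussClosed{1} \cong (\ETrussOpen{1})^\op$ over $\FinOrd$.

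Next I would compute $\Tw(\BTrussOpenL{1}{\cat{C}})$ directly by Lemma~\ref{lem:b-poly-characterise}: an object is a map $\alpha$ in $\BTrussOpen{1} \simeq \FinOrd$ together with a functor $\hat{\ell} : \cat{E}_\alpha \to \cat{C}$ out of the pullback category
\[
  \begin{tikzcd}
    {\cat{E}_\alpha} \ar[r] \ar[d] \pullbackcorner & {\ETrussOpen{1}} \ar[d] \\
    {\Delta\ord{1}} \ar[r, "\alpha"'] & {\FinOrd}
  \end{tikzcd}
\]
Via the case analysis on morphisms in $\ETrussOpen{1}$ already used in the proof of Lemma~\ref{lem:fct-conf-econf-to-tw}, I would check that $\hat{\ell}$ decomposes into exactly the three labelling maps $L_0$, $L$, $L_1$ of Definition~\ref{def:fct-conf-label}: its restriction to the fibre at the source of $\alpha$ gives $L_0$ (via duality, turning a $\cat{C}$-valued truss labelling into a $\cat{C}^\op$-valued mesh labelling), its restriction to the target fibre gives $L_1$ directly, and the connecting data over $\alpha$ yields a functor to $\Tw(\cat{C})$ whose source-projection is $L_0$ and target-projection is $L_1$. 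This matches precisely the compatibility data imposed by the limit definition of $\BConfL{1}{\cat{C}}$, giving the equivalence together with its commutativity with the projections to $\BTrussClosedL{1}{\cat{C}}$ and $\BTrussOpenL{1}{\cat{C}}$.

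The hard part is tracking the direction of labellings under the duality between open and closed trusses: the $\cat{C}^\op$ factor of $\Tw(\cat{C})$ on the configuration side must be matched with the $\cat{C}$-valued restriction of $\hat{\ell}$ to the source fibre, which requires extending the object-by-object analysis from Lemma~\ref{lem:fct-conf-econf-to-tw} to record how labels transform across each of the four morphism types in $\ETrussOpen{1}$. Once this identification is in place, naturality in $\cat{C}$ follows automatically, since both sides are built out of polynomial functors $\EConfL{1}{-}$, $\LConfL{1}{-}$, $\RConfL{1}{-}$ and the twisted arrow construction, each of which is strictly functorial in its quasicategory argument.
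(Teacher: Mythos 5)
Your overall strategy coincides with the paper's: transfer the three polynomial functors $\LConfL{1}{-}$, $\EConfL{1}{-}$, $\RConfL{1}{-}$ to their truss-side counterparts using the unlabelled equivalences, then identify the resulting limit with $\Tw(\BTrussOpenL{1}{\cat{C}})$ by unwinding the labelling data. One small correction to the first step: Lemma~\ref{lem:b-poly-functorial-equiv} requires an actual pullback square of flat categorical fibrations, and the squares relating $\EConf{1} \to \BConf{1}$ to $\Tw(\ETrussOpen{1}) \to \Tw(\BTrussOpen{1})$ are merely commutative. Since the comparison maps are trivial fibrations, the correct tool is the retract construction (Construction~\ref{con:b-poly-retract} together with Lemma~\ref{lem:b-poly-retract-equiv}), which is what the paper uses; this is fixable but not the lemma you cite.

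The genuine gap is in the final identification. Your decomposition of $\hat{\ell} : \cat{E}_\alpha \to \cat{C}$ into $(L_0, L, L_1)$ only treats objects of the two categories, i.e.\ $0$-simplices, where the correspondence is indeed a direct matching of data. To obtain an equivalence you must also match morphisms, and for a $1$-simplex of $\Tw(\BTrussOpenL{1}{\cat{C}})$ the relevant poset $E$ lives over $\ord{1}^\op \join \ord{1}$ and contains order relations $a \leq b$ with $a$ over $0^\circ$ and $b$ over $1$. These ``long'' relations are \emph{not} recorded anywhere in the configuration-side data: $L$ only labels twisted arrows lying over the unit $\ord{1} \hookrightarrow \Tw(\ord{1}^\op \join \ord{1})$, i.e.\ pairs over $(0^\circ,0)$ and $(1^\circ,1)$. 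Reconstructing $\hat{\ell}(a \leq b)$ therefore forces a choice of factorisation $a \leq e \leq b$ through an intermediate element, and one must prove the composite $L_1(e \leq b) \circ L([a,e])$ is independent of that choice. This is exactly where flatness of the universal open $1$-truss bundle enters (existence of factorisations plus weak contractibility of the category of factorisations, so any two choices are linked by a zigzag along which the composites agree). Without this argument the map from configuration-side data to functors $\cat{E}_\alpha \to \cat{C}$ is not well defined on morphisms, so the claimed bijection, and hence the equivalence, is not established. The assertion that the data ``matches precisely'' conceals the crux of the proof.
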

\begin{proof}
  Let us begin by constructing the analogue of the diagram~(\ref{eq:fct-conf-label-l-lr-r})
  for trusses:
  \[
    \begin{tikzcd}[row sep = large]
      {\ETrussClosed{1}} \ar[d, "\TrussClosed{1}"'] &
      {\Tw(\ETrussOpen{1}) \times_{\BTrussClosed{1}} \ETrussClosed{1}} \ar[l] \ar[d, "\pi_0^*\Tw(\TrussOpen{1})"{description}] \pullbackdl &
      {\Tw(\ETrussOpen{1})} \ar[l] \ar[r] \ar[d, "\Tw(\TrussOpen{1})"{description}] &
      {\Tw(\ETrussOpen{1}) \times_{\BTrussOpen{1}} \ETrussOpen{1}} \ar[r] \ar[d, "\pi_1^* \Tw(\TrussOpen{1})"{description}] \pullbackcorner &
      {\ETrussOpen{1}} \ar[d, "\TrussOpen{1}"] \\
      {\BTrussClosed{1}} &
      {\Tw(\BTrussOpen{1})} \ar[l, "\pi_0"] &
      {\Tw(\BTrussOpen{1})} \ar[l, equal] \ar[r, equal] &
      {\Tw(\BTrussOpen{1})} \ar[r, "\pi_1", swap] &
      {\BTrussOpen{1}}
    \end{tikzcd}
  \]
  The diagonal maps in the diagram (\ref{eq:fct-conf-econf-to-tw}) are trivial fibrations.
  Therefore, by Construction~\ref{con:b-poly-retract} we obtain maps between the induced polynomial functors
  \[
    \begin{tikzcd}[column sep = small]
      {\poly{\LConfMap{1}}{\cat{C}^\op}} \ar[r] \ar[d] &
      {\poly{\EConfMap{1}}{\cat{C}^\op}} \ar[d] &
      {\poly{\EConfMap{1}}{\Tw(\cat{C})}} \ar[l] \ar[r] \ar[d] &
      {\poly{\EConfMap{1}}{\cat{C}}} \ar[d] &
      {\poly{\RConfMap{1}}{\cat{C}}} \ar[l] \ar[d] \\      
      {\poly{\pi_0^* \Tw(\TrussOpen{1})}{\cat{C}^\op}} \ar[r] &
      {\poly{\Tw(\TrussOpen{1})}{\cat{C}^\op}} &
      {\poly{\Tw(\TrussOpen{1})}{\Tw(\cat{C})}} \ar[l] \ar[r] &
      {\poly{\Tw(\TrussOpen{1})}{\cat{C}}} &
      {\poly{\pi_1^* \Tw(\TrussOpen{1})}{\cat{C}}} \ar[l] &
    \end{tikzcd}
  \]
  Let us denote the limit of the lower row by $\hat{\textup{T}}(\cat{C})$.
  We obtain an induced map between the limits $\BConfL{1}{\cat{C}} \to \hat{\textup{T}}(\cat{C})$.
  Since $\cat{C}$ is a $1$-category, the limits are homotopy limits and
  so the induced map $\BConfL{1}{\cat{C}} \to \hat{\textup{T}}(\cat{C})$ is an equivalence as well.
  
  We now show that there is an isomorphism of simplicial sets
  \[
    \begin{tikzcd}[column sep = small]
      {\hat{\textup{T}}(\cat{C})} \ar[rr, "\cong"] \ar[dr] &
      {} &
      {\Tw(\BTrussOpenL{1}{\cat{C}})} \ar[dl] \\
      {} &
      {\Tw(\BTrussOpen{1})}
    \end{tikzcd}
  \]  
  We first make some auxiliary constructions derived from a $k$-simplex
  $\tau : \ord{k} \to \Tw(\BTrussOpen{1})$.
  There is a diagram of simplicial sets
  \begin{equation}\label{eq:fct-conf-truss-tw}
    \begin{tikzcd}[column sep={4em,between origins}, row sep={3.5em,between origins}]
    	& {E_0^\op} &&
      {\hat{E}} &&
      {E_1} \\
    	{\ETrussClosed{1}} && {\Tw(\ETrussOpen{1})} && {\ETrussOpen{1}} \\
    	& {\ord{k}} && {\ord{k}} && {\ord{k}} \\
    	{\BTrussClosed{1}} && {\Tw(\BTrussOpen{1})} && {\BTrussOpen{1}}
    	\arrow[from=1-2, to=2-1]
    	\arrow[from=1-4, to=2-3]
    	\arrow[from=1-6, to=2-5]
    	\arrow[from=1-4, to=1-2]
    	\arrow[from=1-4, to=1-6]
    	\arrow[from=2-1, to=4-1]
    	\arrow[from=1-6, to=3-6]
    	\arrow[from=1-4, to=3-4]
    	\arrow[from=1-2, to=3-2]
    	\arrow[from=3-6, to=4-5]
    	\arrow[from=3-4, to=4-3, "\tau"{description}]
    	\arrow[from=3-2, to=4-1]
    	\arrow[from=4-3, to=4-1]
    	\arrow[from=4-3, to=4-5]
    	\arrow[equals, from=3-4, to=3-2]
    	\arrow[equals, from=3-4, to=3-6]
    	\arrow[crossing over, from=2-3, to=2-1]
    	\arrow[crossing over, from=2-3, to=2-5]
    	\arrow[crossing over, from=2-3, to=4-3]
    	\arrow[crossing over, from=2-5, to=4-5]
    \end{tikzcd}
  \end{equation}
  in which the diagonal squares are pullback squares and
  $E_0$, $\hat{E}$ and $E_1$ are posets.
  By the definition of $\Tw$ the $k$-simplex $\tau$ corresponds uniquely to a map
  $\sigma : \ord{k}^\op \join \ord{k} \to \BTrussOpen{1}$.
  This map then determines a diagram
  \begin{equation}\label{eq:fct-conf-tw-truss:2}
    \begin{tikzcd}[column sep={4em,between origins}, row sep={3.5em,between origins}]
    	& {E_0} && E && {E_1} \\
    	{\ETrussOpen{1}} && {\ETrussOpen{1}} && {\ETrussOpen{1}} \\
    	& {\ord{k}^\op} && {\ord{k}^\op * \ord{k}} && {\ord{k}} \\
    	{\BTrussOpen{1}} && {\BTrussOpen{1}} && {\BTrussOpen{1}}
    	\arrow[hook, from=3-2, to=3-4]
    	\arrow[hook', from=3-6, to=3-4]
    	\arrow[equal, from=4-1, to=4-3]
    	\arrow[equal, from=4-5, to=4-3]
    	\arrow[from=1-2, to=3-2]
    	\arrow[from=2-1, to=4-1]
    	\arrow[from=1-4, to=3-4, "F"{pos=0.3}]
    	\arrow[from=1-6, to=3-6]
    	\arrow[hook, from=1-2, to=1-4]
    	\arrow[hook', from=1-6, to=1-4]
    	\arrow[from=1-2, to=2-1]
    	\arrow[from=3-2, to=4-1]
    	\arrow["\sigma"{description}, from=3-4, to=4-3]
    	\arrow[from=3-6, to=4-5]
    	\arrow[from=1-6, to=2-5]
    	\arrow[from=1-4, to=2-3]
    	\arrow[crossing over, from=2-5, to=2-3]
    	\arrow[crossing over, from=2-5, to=4-5]
    	\arrow[crossing over, from=2-3, to=4-3]
    	\arrow[crossing over, from=2-1, to=2-3]
    \end{tikzcd}
  \end{equation}
  in which the diagonal squares are pullback squares and $E$ is a poset.
  The posets $E_0$ and $E_1$ agree with their earlier construction.
  The twisted arrow construction $\Tw(-) : \sSet \to \sSet$ is a right
  adjoint and therefore preserves limits.
  We apply $\Tw$ to the middle pullback square in~(\ref{eq:fct-conf-tw-truss:2}) and the further
  take the pullback along the unit $\eta$ of the adjunction
  to obtain the diagram
  \begin{equation}\label{eq:fct-conf-tw-truss:decompose}
    \begin{tikzcd}
      {\hat{E}} \ar[r, hook, "I"] \ar[d] \pullbackcorner &
      {\Tw(E)} \ar[d] \ar[r] \pullbackcorner &
      {\Tw(\ETrussOpen{1})} \ar[d] \\
      {\ord{k}} \ar[r, hook, "\eta"'] &
      {\Tw(\ord{k}^\op \join \ord{k})} \ar[r, "\Tw(\sigma)"'] &
      {\Tw(\BTrussOpen{1})}
    \end{tikzcd}
  \end{equation}
  The two squares in this diagram compose to the middle square of the diagram~\ref{eq:fct-conf-truss-tw}.

  A $k$-simplex of $\Tw(\BTrussOpenL{1}{X})$ over $\tau$ now is determined
  by a diagram
  \begin{equation}\label{eq:fct-conf-tw-truss:label-1}
    \begin{tikzcd}
      {E_0} \ar[r, hook] \ar[dr, "L_0"'] &
      {E} \ar[d, "R"{description}] &
      {E_1} \ar[l, hook'] \ar[dl, "L_1"] \\
      {} &
      {\cat{C}}
    \end{tikzcd}
  \end{equation}
  while a $k$-simplex of $\hat{\textup{T}}(X)$ over $\tau$ corresponds to a diagram
  \begin{equation}\label{eq:fct-conf-tw-truss:label-2}
    \begin{tikzcd}
      {E_0^\op} \ar[d, "L_0^\op"'] &
      {\hat{E}} \ar[d, "L"{description}] \ar[l] \ar[r] &
      {E_1} \ar[d, "L_1"] \\
      {\cat{C}^\op} &
      {\Tw(\cat{C})} \ar[l] \ar[r] &
      {\cat{C}}
    \end{tikzcd}
  \end{equation}
  The maps in~(\ref{eq:fct-conf-tw-truss:label-1}) determine the maps in~(\ref{eq:fct-conf-tw-truss:label-2}) by letting $L = \Tw(R) \circ I$.
  This defines the map $\hat{\textup{T}}(\cat{C}) \to \Tw(\BTrussOpenL{1}{\cat{C}}$.
  For the other direction, we first use that $\cat{C}$ is a $1$-category
  and $F$ a flat categorical fibration to reduce to the case that $k = 1$.
  We then construct a functor of $1$-categories $R : E \to \cat{C}$ as follows.
  On objects we let $R(e_0) = L_0(e_0)$ when $e_0 \in E_0 \subseteq E$ and
  $R(e_1) = L_1(e_1)$ for $e_1 \in E_1 \subseteq E$.
  To define the action of $R$ on the maps of $E$,
  suppose that $a, b \in E$ with $a \leq b$. We then proceed by a case distinction:
  \begin{enumerate}
    \item When $a, b \in E_0$ we let $R(a \leq b) := L_0(a \leq b)$.
    \item When $a, b \in E_1$ we let $R(a \leq b) := L_1(a \leq b)$.    

    \item Suppose that $F(a) = i^\circ$ and $F(b) = i$ for some $i \in \ord{1}$.
    Then the interval $[a, b]$ is an element of the poset $\hat{E}$ and so $L$ sends
    $[a, b]$ to an object of the twisted arrow category $\Tw(\cat{C})$.
    By commutativity of~(\ref{eq:fct-conf-tw-truss:label-2}) this object of
    $\Tw(\cat{C})$ is a map $L_0(a) \to L_1(b)$ in $\cat{C}$. We then let
    $R(a \leq b) := L([a, b])$ be this map.

    \item Suppose that $F(a) = 0^\circ$ and $F(b) = 1$.
    Since $F$ is flat we can find an element $e \in E$ such that $a \leq e \leq b$
    and $F(e) = 0$. We then let
    \[
      R(a \leq b) := L_1(e \leq b) \circ L([a, e]).
    \]
    The element $e$ is not unique and so we need to verify that the definition
    of $R(a \leq b)$ does not depend on the chosen factorisation.
    Flatness of $F$ not only guarantees existence of the factorisation but also
    weak contractibility of the category of factorisations.
    In particular any two factorisations are connected by a zigzag of maps.
    Considering one step of the zigzag at a time, suppose that we have two factorisations
    \[
      \begin{tikzcd}[row sep = small]
        {} &
        {e_0} \ar[dd] \ar[dr] &
        {} \\
        {a} \ar[ur] \ar[dr] & 
        {} &
        {b} \\
        {} &
        {e_1} \ar[ur]
      \end{tikzcd}
    \]
    where $F(e_0) = F(e_1) = 0$.
    We then have a diagram
    \[
      \begin{tikzcd}[row sep = small]
        {} &
        {L_1(e_0)} \ar[dd, "\varphi"{description}] \ar[dr, "L_1(e_0 \leq b)"] &
        {} \\
        {L_0(a)} \ar[ur, "{L([a, e_0])}"] \ar[dr, "{L([a, e_1])}"'] & 
        {} &
        {L_1(b)} \\
        {} &
        {L_1(e_1)} \ar[ur, "L_1(e_1 \leq b)"']
      \end{tikzcd}
    \]
    which is rendered commutative by $\varphi = L([a, e_0] \leq [a, e_1]) = L_1(e_0 \leq e_1)$.
    \item The remaining case is $F(a) = 1^\circ$ and $F(b) = 0$.
    Analogously to the previous case we pick a factorisation
    $a \leq e \leq b$ with $F(e) = 0^\circ$ and let
    \[
      R(a \leq b) := L([e, b]) \circ L_0(a \leq e).
    \]
  \end{enumerate}
  Functoriality of $R$ follows from its construction. We see that
  the diagrams~(\ref{eq:fct-conf-tw-truss:label-1}) and (\ref{eq:fct-conf-tw-truss:label-2})
  uniquely determine each other.
\end{proof}

\begin{remark}
  We have shown Lemma~\ref{lem:fct-conf-tw-truss} for the case that $\cat{C}$
  is a $1$-category to simplify the proof. We expect that it also holds
  in the more general case when $\cat{C}$ is a quasicategory, but this would
  require a more sophisticated proof. Because $\BTrussOpen{n}$ is a $1$-category
  for all $n \geq 0$ the current form of Lemma~\ref{lem:fct-conf-tw-truss} is
  sufficient for our purpose of demonstrating an equivalence
  $\BConf{n} \simeq \Tw(\BTrussOpen{n})$ via the packing construction below.
\end{remark}

\subsection{Packing for Configurations of $n$-Meshes}

The goal in this section is to construct an equivalence $\BConf{n} \simeq \Tw(\BTrussOpen{n})$.
We do so inductively using the ability to label transverse configurations,
transferring one dimension at a time into the open truss by a packing
construction similar to that for closed and open meshes.

\begin{lemma}\label{lem:fct-conf-pack}
  Let $\cat{C}$ be a $1$-category.
  Then there exists an equivalence
  \[
    \begin{tikzcd}    
      {\BMeshClosedL{n + 1}{\cat{C}^\op}} \ar[d, "\simeq"'] &
      {\BConfL{n + 1}{\cat{C}}} \ar[l] \ar[r] \ar[d, dashed] &
      {\BMeshOpenL{n + 1}{\cat{C}}} \ar[d, "\simeq"] \\
      {\BMeshClosedL{n}{\BTrussClosedL{1}{\cat{C}^\op}}} &
      {\BConfL{n}{\BTrussOpenL{1}{\cat{C}}}} \ar[l] \ar[r] &
      {\BMeshOpenL{n}{\BTrussOpenL{1}{\cat{C}}}}
    \end{tikzcd}
  \]
  which is natural in $\cat{C}$.
\end{lemma}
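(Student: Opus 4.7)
The plan is to mimic the proof of Proposition~\ref{prop:fct-mesh-closed-n-pack}: identify pullback squares that exhibit the polynomial fibrations $\LConfMap{n+1}$, $\EConfMap{n+1}$, $\RConfMap{n+1}$ as arising from the corresponding $n$-level fibrations with additional $1$-dimensional data stacked on top, then apply Lemma~\ref{lem:b-poly-functorial-equiv} to get equivalences of induced polynomial functors and conclude by taking the limit that defines $\BConfL{-}{-}$.

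First I would unpack how a transverse configuration of $(n+1)$-meshes decomposes. Given $\xi : \strat{X}_{n+1} \to \strat{X}_0$ and $f : \strat{M}_{n+1} \to \strat{X}_0$ with $\xi \pitchfork f$, Observation~\ref{obs:fct-mesh-closed-n-composite} produces canonical $n$-truncations $\xi_n : \strat{X}_n \to \strat{X}_0$ and $f_n : \strat{M}_n \to \strat{X}_0$, which still form a transverse configuration. The remaining data is a closed $1$-mesh bundle $\chi : \strat{X}_{n+1} \to \strat{X}_n$ together with an open $1$-mesh bundle $g : \strat{M}_{n+1} \to \strat{M}_n$, and fibrewise over the intersection $\strat{X}_n \cap \strat{M}_n$ these assemble into a transverse configuration of labelled $1$-meshes. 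Using the classifying maps and the equivalence $\BMeshClosedL{1}{-} \simeq \BTrussClosedL{1}{-}$, $\BMeshOpenL{1}{-} \simeq \BTrussOpenL{1}{-}$, this assigns an $n$-configuration labelled in $\BTrussOpenL{1}{\cat{C}}$. This is the packing map.

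Concretely I would write down pullback squares analogous to (\ref{eq:fct-mesh-n-pack:pullback}) in the proof of Proposition~\ref{prop:fct-mesh-closed-n-pack}: for the left, middle and right polynomial respectively one has
\begin{align*}
\LConf{n+1} &\simeq \LConfL{n}{\BTrussClosed{1}} \times_{\BTrussClosed{1}} \ETrussClosed{1}, \\
\EConf{n+1} &\simeq \EConfL{n}{\Tw(\BTrussOpen{1})} \times_{\Tw(\BTrussOpen{1})} \Tw(\ETrussOpen{1}), \\
\RConf{n+1} &\simeq \RConfL{n}{\BTrussOpen{1}} \times_{\BTrussOpen{1}} \ETrussOpen{1},
\end{align*}
all fibred over $\BConf{n+1}$, where the middle equivalence uses Lemma~\ref{lem:fct-conf-econf-to-tw-trivial} identifying $\EConf{1}$ with $\Tw(\ETrussOpen{1})$ plus the $n$-level packing. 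Then Construction~\ref{con:b-poly-functorial} and Lemma~\ref{lem:b-poly-composite} identify the induced polynomial functors as the composites appearing on the right-hand side of the statement, and Lemma~\ref{lem:b-poly-functorial-equiv} and Lemma~\ref{lem:b-poly-retract-equiv} promote these to equivalences when evaluated on a $1$-category. Taking the five-term limit defining $\BConfL{n+1}{\cat{C}}$ versus $\BConfL{n}{\BTrussOpenL{1}{\cat{C}}}$ (Definition~\ref{def:fct-conf-label-cat}) then yields the desired equivalence, since each leg of the diagram is a categorical fibration by Lemma~\ref{lem:fct-conf-flat} (so the limits are homotopy limits), and naturality in $\cat{C}$ is inherited from naturality of each building block.

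The main obstacle will be the middle term, which mixes closed and open data through the intersection $\strat{X}_{n+1} \cap \strat{M}_{n+1}$. To identify $\EConf{n+1}$ with the fibre product involving $\Tw(\ETrussOpen{1})$, one needs to verify that a section of the intersection $(\xi \cap f)$ at level $n+1$ corresponds precisely to a section at level $n$ together with a section of the induced $1$-dimensional transverse configuration over that section, which is exactly where Lemma~\ref{lem:fct-conf-econf-to-tw} and the twisted arrow construction come in. Verifying this compatibility at the level of simplicial sets will require a careful diagram chase analogous to Observation~\ref{obs:fct-conf-intersect-closed}, but once established the rest of the argument follows the pattern of Proposition~\ref{prop:fct-mesh-closed-n-pack} essentially verbatim.
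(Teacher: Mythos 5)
Your construction of the packing map is the same as the paper's: decompose the closed and open $(n+1)$-mesh bundles into their $n$-truncations plus a fibrewise transverse configuration of $1$-meshes over the intersection $\strat{Y} \cap \strat{N}$, and read the latter as a labelling of the $n$-level configuration in $\BTrussOpenL{1}{\cat{C}}$ via $\BConfL{1}{\cat{C}} \simeq \Tw(\BTrussOpenL{1}{\cat{C}})$. The divergence, and the gap, is in how you prove this map is an equivalence.

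Your term-by-term comparison of the two five-term limits does not work as stated. Granting your decomposition $\EConf{n+1} \simeq \EConfL{n}{\Tw(\BTrussOpen{1})} \times_{\Tw(\BTrussOpen{1})} \Tw(\ETrussOpen{1})$, Lemma~\ref{lem:b-poly-composite} gives $\EConfL{n+1}{D} \simeq \EConfL{n}{\poly{\Tw(\TrussOpen{1})}{D}}$ for each label $D$. But the second term of the limit defining $\BConfL{n}{\BTrussOpenL{1}{\cat{C}}}$ is $\EConfL{n}{\BTrussClosedL{1}{\cat{C}^\op}}$, whereas the second term on the $(n+1)$-side becomes $\EConfL{n}{\poly{\Tw(\TrussOpen{1})}{\cat{C}^\op}}$; these are polynomial functors over different bases ($\BTrussClosed{1}$ versus $\Tw(\BTrussOpen{1})$) and are not equivalent. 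A section of the $(n+1)$-fold intersection restricts over the $n$-fold intersection to a section of the fibrewise \emph{intersection} $\xi' \cap f'$, not of $\xi'$ alone, so $\EConfL{n+1}{\cat{C}^\op}$ genuinely factors through the level-$1$ intersection polynomial. The same mismatch occurs for the middle and fourth terms (note $\poly{\Tw(\TrussOpen{1})}{\Tw(\cat{C})}$ is only one of the five terms whose limit is $\Tw(\BTrussOpenL{1}{\cat{C}})$). So the comparison is a $5 \times 5$ limit interchange, not a levelwise equivalence, and you have not addressed it; nor have you supplied the inverse. The paper avoids this entirely: it checks the equivalence directly on simplices, using Lemma~\ref{lem:fct-conf-tw-truss} to identify the packed labelling with a single map $R : \Exit(\strat{Y} \cap \strat{N}) \to \BConfL{1}{\cat{C}}$ (packaging all five label maps at once) and Proposition~\ref{prop:fct-conf-classify} to reconstruct the $1$-mesh bundles $\xi$ and $f$ from that data. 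You should either adopt that reconstruction argument or supply the missing Fubini-type rearrangement of the iterated limits together with the fibrancy needed to make it a homotopy limit interchange.
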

\begin{proof}
  A $k$-simplex of $\BConfL{n + 1}{\cat{C}}$ is a labelled transverse configuration
  of $(n + 1)$-mesh bundles over $\DeltaStrat{k}$. We present the closed $(n + 1)$-mesh bundle as the composite of a closed $1$-mesh bundle $\xi : \strat{X} \to \strat{Y}$
  followed by a closed $n$-mesh bundle $\zeta : \strat{Y} \to \DeltaStrat{k}$.
  Similarly the open $(n + 1)$-mesh bundle is the composite of an open $1$-mesh bundle
  $f : \strat{M} \to \strat{N}$ and an open $n$-mesh bundle $g : \strat{N} \to \DeltaStrat{k}$.
  The labelling of the transverse configuration $(\zeta \circ \xi) \pitchfork (g \circ f)$
  is a diagram
  \begin{equation}\label{eq:fct-conf-pack:labels}
    \begin{tikzcd}[row sep = large]
      {\Exit(\strat{X})} \ar[d, "L_0"'] &
      {\Exit(\strat{X} \cap \strat{M})} \ar[r] \ar[d, "L"{description}] \ar[l] &
      {\Exit(\strat{M})} \ar[d, "L_1"] \\
      {\cat{C}^\op} &
      {\Tw(\cat{C})} \ar[r] \ar[l] &
      {\cat{C}}
    \end{tikzcd}
  \end{equation}

  By taking the pullbacks of $1$-framed stratified bundles
  \[
    \begin{tikzcd}
      {\strat{X}'} \ar[r, "p"] \ar[d, "\xi'"'] \pullbackcorner &
      {\strat{X}} \ar[d, "\xi"] \\
      {\strat{Y} \cap \strat{N}} \ar[r] &
      {\strat{Y}}
    \end{tikzcd}
    \qquad
    \qquad
    \begin{tikzcd}
      {\strat{M}'} \ar[r, "q"] \ar[d, "f'"'] \pullbackcorner &
      {\strat{M}} \ar[d, "f"] \\
      {\strat{Y} \cap \strat{N}} \ar[r] &
      {\strat{N}}
    \end{tikzcd}
  \]
  we obtain a transverse configuration $\xi' \pitchfork f'$ 
  of $1$-mesh bundles over $\strat{Y} \cap \strat{N}$.
  The label maps from (\ref{eq:fct-conf-pack:labels}) restrict to labels
  \begin{equation}\label{eq:fct-conf-pack:labels-restrict}
    \begin{tikzcd}[row sep = large]
      {\Exit(\strat{X}')} \ar[d, "L_0 \circ p_*"'] &
      {\Exit(\strat{X}' \cap \strat{M}')} \ar[r] \ar[d, "L \circ (p \cap q)_*"{description}] \ar[l] &
      {\Exit(\strat{M}')} \ar[d, "L_1 \circ q_*"] \\
      {\cat{C}^\op} &
      {\Tw(\cat{C})} \ar[r] \ar[l] &
      {\cat{C}}
    \end{tikzcd}
  \end{equation}
  Then $\xi' \pitchfork f'$ together with~(\ref{eq:fct-conf-pack:labels-restrict})
  determine a map $R : \Exit(\strat{Y} \cap \strat{N}) \to \BConfL{1}{\cat{C}}$
  which fits into the diagram
  \begin{equation}\label{eq:fct-conf-pack:packed}
    \begin{tikzcd}[column sep = huge]
      % {\Exit(\strat{Y})} \ar[dr, "\classify{\xi, L_0}"'] &
      {\Exit(\strat{Y})} \ar[d, "\classify{\xi; L_0}"']
      % \ar[d, "\classify{\xi'; L_0 \circ p_*}"] \ar[l, "p_*"']
      &
      {\Exit(\strat{Y} \cap \strat{N})} \ar[l, "p_*"'] \ar[r, "q_*"] \ar[d, "R"{description}]
      \ar[dl, "{\classify{\xi'; L_0 \circ p_*}}"{description}]
      \ar[dr, "{\classify{f'; L_1 \circ q_*}}"{description}] 
      &
      {\Exit(\strat{N})} \ar[d, "\classify{f; L_1}"] 
      % \ar[d, "\classify{f'; L_1 \circ q_*}"'] \ar[r, "q_*"]
      \\
      % {\Exit(\strat{X})} \ar[dl, "\classify{f; L_1}"] \\
      {\BMeshClosedL{1}{C^\op}} \ar[d] &
      {\BConfL{1}{\cat{C}}} \ar[l] \ar[r] \ar[d, "\simeq"{description}] &
      {\BMeshOpenL{1}{\cat{C}}} \ar[d] \\
      {\BTrussClosedL{1}{\cat{C}}} &
      {\Tw(\BTrussOpenL{1}{\cat{C}})} \ar[l] \ar[r] &
      {\BTrussOpenL{1}{\cat{C}}}
    \end{tikzcd}
  \end{equation}
  This diagram is a labelling of the transverse configuration $\zeta \pitchfork g$
  in $\BTrussOpenL{1}{\cat{C}}$, and therefore determines a $k$-simplex
  in $\BConfL{n}{\BTrussOpenL{1}{\cat{C}}}$.

  This concludes our construction of the functor
  $\BConfL{n + 1}{\cat{C}} \to \BConfL{n}{\BTrussOpenL{1}{\cat{C}}}$.
  To see that it is an equivalence, we use that the map
  $\BConfL{1}{\cat{C}} \to \Tw(\ETrussOpenL{1}{\cat{C}})$ from Lemma~\ref{lem:fct-conf-tw-truss}
  is an equivalence.
  We can then reconstruct the $1$-mesh bundles $\xi : \strat{X} \to \strat{Y}$
  and $f : \strat{M} \to \strat{N}$ from the data in the diagram~(\ref{eq:fct-conf-pack:packed}) via Proposition~\ref{prop:fct-conf-classify}.
\end{proof}

\begin{thm}
  Let $\cat{C}$ be a $1$-category.
  Then there exists an equivalence
  \[
    \begin{tikzcd}
      {\BMeshClosedL{n}{\cat{C}^\op}} \ar[d, "\simeq"'] &
      {\BConfL{n}{\cat{C}}} \ar[d, dashed] \ar[l] \ar[r] &
      {\BMeshOpenL{n}{\cat{C}}} \ar[d, "\simeq"] \\
      {\BTrussClosedL{n}{\cat{C}^\op}} &
      {\Tw(\BTrussOpenL{n}{\cat{C}})} \ar[l] \ar[r] &
      {\BTrussOpenL{n}{\cat{C}}}
    \end{tikzcd}
  \]
\end{thm}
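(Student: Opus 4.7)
The plan is to proceed by induction on $n$, using the packing equivalence of Lemma~\ref{lem:fct-conf-pack} to strip off one dimension at each step, and terminating at the base case $n=0$ which is handled by Observation~\ref{obs:fct-conf-labelled-base-case}.

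For the base case $n = 0$, Observation~\ref{obs:fct-conf-labelled-base-case} gives a natural isomorphism $\BConfL{0}{\cat{C}} \cong \Tw(\cat{C})$, and the projections to $\BMeshClosedL{0}{\cat{C}^\op} \cong \cat{C}^\op$ and $\BMeshOpenL{0}{\cat{C}} \cong \cat{C}$ coincide with the standard projections $\dom$ and $\cod$ from $\Tw(\cat{C})$. Since $\BTrussOpenL{0}{\cat{C}} \cong \cat{C}$ and likewise in the closed case, this provides the required equivalence of cospans.

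For the inductive step, suppose the equivalence is established for dimension $n$ and all $1$-categories. Given a $1$-category $\cat{C}$, observe that $\BTrussOpenL{1}{\cat{C}}$ is again a $1$-category, since the polynomial functor $\BTrussOpenL{1}{-}$ preserves $1$-categories (the open analogue of Proposition~\ref{prop:fct-truss-closed-1-labelled-cat}). Applying the induction hypothesis to the $1$-category $\BTrussOpenL{1}{\cat{C}}$ yields an equivalence
\[
  \BConfL{n}{\BTrussOpenL{1}{\cat{C}}} \simeq \Tw(\BTrussOpenL{n}{\BTrussOpenL{1}{\cat{C}}}) = \Tw(\BTrussOpenL{n+1}{\cat{C}}),
\]
compatibly with the projections to $\BMeshClosedL{n}{\BTrussClosedL{1}{\cat{C}^\op}}$ and $\BMeshOpenL{n}{\BTrussOpenL{1}{\cat{C}}}$. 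Now composing with Lemma~\ref{lem:fct-conf-pack} gives $\BConfL{n+1}{\cat{C}} \simeq \BConfL{n}{\BTrussOpenL{1}{\cat{C}}} \simeq \Tw(\BTrussOpenL{n+1}{\cat{C}})$, and the two-out-of-three property for the packing equivalences in the mesh columns (Proposition~\ref{prop:fct-mesh-closed-n-pack} and its open analogue, composed with the geometric realisation equivalences for trusses) identifies the result with the claimed cospan.

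The main subtlety to verify is that the composite equivalence is compatible with both projections simultaneously. This amounts to checking that the diagram of Lemma~\ref{lem:fct-conf-pack}, the induction hypothesis, and the mesh/truss packing equivalences from~\S\ref{sec:fct-mesh-closed-pack} assemble into a commutative (up to equivalence) diagram of cospans; this compatibility is forced because each individual square was constructed from the same underlying decomposition of an $(n+1)$-mesh bundle into a $1$-mesh bundle stacked over an $n$-mesh bundle, and the labelling data in~(\ref{eq:fct-conf-pack:packed}) was explicitly built to match the labelling data classified by $\BTrussOpenL{1}{\cat{C}}$ on both the closed and open sides. Naturality in $\cat{C}$ is preserved since each intermediate equivalence (Lemma~\ref{lem:fct-conf-tw-truss}, Lemma~\ref{lem:fct-conf-pack}, and the packing equivalences for meshes) is itself natural.
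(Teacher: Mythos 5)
Your proposal is correct and is essentially the paper's own argument: the paper likewise obtains the result by applying the packing equivalence of Lemma~\ref{lem:fct-conf-pack} a total of $n$ times (using that $\BTrussOpenL{1}{\cat{C}}$ is again a $1$-category) and then invoking the base-case isomorphisms $\BConfL{0}{X} \cong \Tw(X)$, $\BMeshClosedL{0}{X} \cong X$ and $\BMeshOpenL{0}{X} \cong X$ from Observations~\ref{obs:fct-mesh-closed-labelled-base-case} and~\ref{obs:fct-conf-labelled-base-case}. Packaging the iteration as an induction on $n$ is only a cosmetic difference.
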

\begin{proof}
  When $\cat{C}$ is a $1$-category then so are $\BTrussOpenL{n}{\cat{C}}$
  and $\BTrussClosedL{n}{\cat{C}}$.
  By applying the packing equivalence of Lemma~\ref{lem:fct-conf-pack}
  in sequence $n$ times, we obtain an equivalence of the spans
  \[
    \begin{tikzcd}
      {\BMeshClosedL{n}{\cat{C}^\op}} \ar[d, "\simeq"'] &
      {\BConfL{n}{\cat{C}}} \ar[d, "\simeq"{description}] \ar[l] \ar[r] &
      {\BMeshOpenL{n}{\cat{C}}} \ar[d, "\simeq"] \\
      {\BMeshClosedL{0}{\BTrussClosedL{n}{\cat{C}^\op}}} &
      {\BConfL{0}{\BTrussOpenL{n}{\cat{C}}}} \ar[l] \ar[r] &
      {\BMeshOpenL{0}{\BTrussOpenL{n}{\cat{C}}}}
    \end{tikzcd}
  \]
  The claim then follows by applying the isomorphisms
  \[
    \begin{tikzcd}
      {\BMeshClosedL{0}{\BTrussClosedL{n}{\cat{C}^\op}}} \ar[d, "\cong"'] &
      {\BConfL{0}{\BTrussOpenL{n}{\cat{C}}}} \ar[d, "\cong"{description}] \ar[l] \ar[r] &
      {\BMeshOpenL{0}{\BTrussOpenL{n}{\cat{C}}}} \ar[d, "\cong"] \\ 
      {\BTrussClosedL{n}{\cat{C}^\op}} &
      {\Tw(\BTrussOpenL{n}{\cat{C}})} \ar[l] \ar[r] &
      {\BTrussOpenL{n}{\cat{C}}}
    \end{tikzcd}
  \]
  from Observation~\ref{obs:fct-mesh-closed-labelled-base-case} and
  Observation~\ref{obs:fct-conf-labelled-base-case}.
\end{proof}

\begin{cor}
  There exists an equivalence of $\infty$-categories
  \[
    \begin{tikzcd}
      {\BConf{n}} \ar[r, dashed, "\simeq"] \ar[d] &
      {\Tw(\BTrussOpen{n})} \ar[d] \\
      {\BMeshClosed{n} \times \BMeshOpen{n}} \ar[r, "\simeq"'] &
      {\BTrussClosed{n} \times \BTrussOpen{n}}
    \end{tikzcd}
  \]
\end{cor}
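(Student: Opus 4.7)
The plan is to deduce this corollary directly from the preceding theorem by specialising the system of labels to the terminal category $\terminal$. The principal observation is that the polynomial functors unwind to the unlabelled classifying simplicial sets in this case: by Observation~\ref{obs:fct-mesh-closed-labelled-base-case} and its open counterpart we have $\BMeshClosedL{n}{\terminal} \cong \BMeshClosed{n}$ and $\BMeshOpenL{n}{\terminal} \cong \BMeshOpen{n}$, while by the construction of $\BTrussClosedL{n}{-}$ and $\BTrussOpenL{n}{-}$ as iterates of $\BTrussClosedL{1}{-}$ and $\BTrussOpenL{1}{-}$ the same identification yields $\BTrussClosedL{n}{\terminal} \cong \BTrussClosed{n}$ and $\BTrussOpenL{n}{\terminal} \cong \BTrussOpen{n}$. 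Finally, from Observation~\ref{obs:fct-conf-labelled-base-case} we have $\BConfL{n}{\terminal}$ naturally identified with $\BConf{n}$, since $\Tw(\terminal) \cong \terminal$ forces the limiting diagram~(\ref{eq:fct-conf-labelled}) to collapse to the span $\LConf{n} \leftarrow \EConf{n} \rightarrow \RConf{n}$ of pullbacks over $\BConf{n}$, each of whose component maps forgets data valued in $\terminal$.

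Under these identifications the span equivalence supplied by the theorem at $\cat{C} = \terminal$ takes the form
\[
  \begin{tikzcd}
    {\BMeshClosed{n}} \ar[d, "\simeq"'] &
    {\BConf{n}} \ar[d, "\simeq"{description}] \ar[l] \ar[r] &
    {\BMeshOpen{n}} \ar[d, "\simeq"] \\
    {\BTrussClosed{n}} &
    {\Tw(\BTrussOpen{n})} \ar[l] \ar[r] &
    {\BTrussOpen{n}}
  \end{tikzcd}
\]
The outer vertical equivalences are just the geometric realisation equivalences for closed and open trusses and meshes. Taking the product of these two and combining with the middle equivalence yields the claimed square, the commutativity of which is exactly the outer commutativity of the span equivalence above.

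I expect no substantial obstacle here; the entire content of this corollary is bookkeeping on top of the main theorem. The only thing that needs mild care is to ensure that, when specialising the theorem to $\cat{C} = \terminal$, the comparison functor obtained from the packing argument of Lemma~\ref{lem:fct-conf-pack} and the base case identifications genuinely agrees with the functor $\BConf{n} \to \Tw(\BTrussOpen{n})$ that was built in Observation~\ref{obs:fct-conf-1-object} and Lemma~\ref{lem:fct-conf-econf-to-tw} (iterated). This is immediate from the naturality statement in the theorem together with the uniqueness of the forgetful projections $\BConfL{n}{\cat{C}} \to \BMeshClosedL{n}{\cat{C}^\op} \times \BMeshOpenL{n}{\cat{C}}$, so no further work is required.
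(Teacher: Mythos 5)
Your proposal is correct and is exactly the paper's intended derivation: the corollary is the specialisation of the preceding theorem to the terminal $1$-category $\terminal$, with the labelled constructions collapsing to their unlabelled counterparts. The only nitpick is that the identification $\BConfL{n}{\terminal}\cong\BConf{n}$ follows from Observation~\ref{obs:b-poly-base-1} (polynomial functors evaluated at the terminal object return the base) applied to each term of the limit diagram~(\ref{eq:fct-conf-labelled}), rather than from Observation~\ref{obs:fct-conf-labelled-base-case}, which concerns the case $n=0$.
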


\subsection{Orthogonality}\label{sec:fct-conf-ortho}

We wish to extend our concept of transversality to pairs consisting of a closed $n$-mesh
$\strat{X}$ and any stratification $\strat{E}$ of $\R^n$ that does not need to be
an open $n$-mesh itself.
In the cases that $\strat{E}$ is meshable we would want that transversality of
$\strat{X}$ and $\strat{E}$ implies transversality of $\strat{X}$ with the coarsest
refining mesh of $\strat{E}$.
However, as illustrated below, there is no local
criterion that we can impose on the intersection of the strata in $\strat{X}$ and $\strat{E}$ which would guarantee this property.

\begin{example}
  Consider the $3$-dimensional diagram, presented in projection as follows:
  \[
    \begin{tikzpicture}[scale = 0.5]
      \fill[mesh-background] (0, 0) rectangle (6, 5);
      \draw[mesh-stratum] (1, 0) -- (1, 1) .. controls +(0, 1) and +(0, -1) .. (2, 3) -- (2, 5);
      \draw[mesh-stratum-over] (2, 1) .. controls +(0, 1) and +(0, -1) .. (1, 3);
      \draw[mesh-stratum] (2, 0) -- (2, 1) .. controls +(0, 1) and +(0, -1) .. (1, 3) -- (1, 5);
      \draw[mesh-stratum] (4, 0) -- (4, 5);
      \node[mesh-vertex] at (4, 3) {};
      \draw[mesh-stratum-dual] (3, 2) -- (5, 2) -- (5, 4) -- (3, 4) -- cycle;
      \node[mesh-vertex-dual] at (3, 2) {};
      \node[mesh-vertex-dual] at (5, 2) {};
      \node[mesh-vertex-dual] at (5, 4) {};
      \node[mesh-vertex-dual] at (3, 4) {};
    \end{tikzpicture}
  \]
  We have an essentially tame $3$-framed stratification $\strat{E}$, drawn in black, and
  a closed $3$-mesh $\strat{X}$, drawn in red.
  When we take the coarsest
  refining mesh $\strat{M}$ of $\strat{E}$,
  the result is not a transverse configuration of $\strat{X}$ and $\strat{M}$.
  \[  
    \begin{tikzpicture}[scale = 0.5]
      \fill[mesh-background] (0, 0) rectangle (6, 5);
      \draw[mesh-stratum] (1, 0) -- (1, 1) .. controls +(0, 1) and +(0, -1) .. (2, 3) -- (2, 5);
      \draw[mesh-stratum] (2, 0) -- (2, 1) .. controls +(0, 1) and +(0, -1) .. (1, 3) -- (1, 5);
      \draw[mesh-stratum] (0, 2) -- (6, 2);
      \draw[mesh-stratum] (0, 3) -- (6, 3);
      \draw[mesh-stratum] (4, 0) -- (4, 5);
      \node[mesh-vertex] at (4, 3) {};
      \node[mesh-vertex] at (1.5, 2) {};
      \node[mesh-vertex] at (1, 3) {};
      \node[mesh-vertex] at (2, 3) {};
      \draw[mesh-stratum-dual] (3, 2) -- (5, 2) -- (5, 4) -- (3, 4) -- cycle;
      \node[mesh-vertex-dual] at (3, 2) {};
      \node[mesh-vertex-dual] at (5, 2) {};
      \node[mesh-vertex-dual] at (5, 4) {};
      \node[mesh-vertex-dual] at (3, 4) {};
    \end{tikzpicture}
  \]
  The issue arises from the braid in $\strat{E}$ that induces a
  stratum in $\strat{M}$ intersecting $\strat{X}$ non-tranversally.
  We can not place a local condition on $\strat{E}$ and $\strat{X}$
  which excludes this situation since the braid is not within
  any arbitrarily small neigbourhood of $\strat{X}$.
\end{example}

We resolve this issue by imposing a stronger compatibility condition on the pair
$(\strat{X}, \strat{E})$ which is preserved by replacing $\strat{E}$ with
its coarsest refining mesh. We call this condition orthogonality and define
it inductively as follows.

\begin{definition}
  A closed $(n + 1)$-mesh $\strat{X}$ and a stratification
  $\strat{E}$ of $\R^{n + 1}$ are \defn{$\varepsilon$-orthogonal}
  for some $\varepsilon > 0$ when they satisfy:
  \begin{enumerate}
    \item For each singular height $h$ of $\strat{X}$, $x \in \R^n$ and
    $t \in \intOO{-\varepsilon, \varepsilon}$ we have
    \[
      \stratMap{\strat{M}}(x_1, \ldots, x_n, h)
      =
      \stratMap{\strat{M}}(x_1, \ldots, x_n, h + t)
    \]
    \item For each height $h \in \R$ the slices
    $\strat{X} \cap (\R^n \times \{ h \})$
    and
    $\strat{E} \cap (\R^n \times \{ h \})$
    are $\varepsilon$-orthogonal.
  \end{enumerate}
  In the base case $n = 0$ the unique closed $0$-mesh is $\varepsilon$-orthogonal to the
  unique stratification of $\R^0$ for any $\varepsilon > 0$.
  When $\strat{X}$ and $\strat{M}$ are $\varepsilon$-orthogonal we write $\strat{X} \perp_{\varepsilon} \strat{M}$
  and call the pair $(\strat{X}, \strat{M})$ an $\varepsilon$-orthogonal configuration.
\end{definition}

\begin{definition}
  Let $\xi : \strat{X} \to \strat{B}$ be a closed $n$-mesh bundle and
  $\varphi : \strat{E} \to \strat{B}$ an $n$-framed stratified bundle so that
  $\unstrat(\strat{E}) = \unstrat(\strat{B}) \times \R^n$.
  We say that $\xi$ and $\varphi$ are \defn{$\varepsilon$-orthogonal} 
  for some $\varepsilon > 0$ when for every
  $b \in \strat{B}$ the fibres $\xi^{-1}(b)$ and $\varphi^{-1}(b)$
  are $\varepsilon$-orthogonal.
  In this case we write $\xi \perp_\varepsilon \varphi$ and call the pair $(\xi, \varphi)$
  an $\varepsilon$-orthogonal configuration.
\end{definition}

\begin{definition}
  Let $\xi : \strat{X} \to \strat{B}$ be a closed $n$-mesh bundle and
  $\varphi : \strat{E} \to \strat{B}$ an $n$-framed stratified bundle so that
  $\unstrat(\strat{E}) = \unstrat(\strat{B}) \times \R^n$.
  We say $\xi$ and $\varphi$ are \defn{orthogonal} when there exists a
  $\varepsilon > 0$ such that $\xi$ and $\varphi$ are $\varepsilon$-orthogonal.
  We then write $\xi \perp \varphi$ and call the pair $(\xi, \varphi)$ an
  orthogonal configuration.
\end{definition}

\begin{observation}\label{obs:fct-conf-ortho-coarsest}
  Suppose that $\strat{X}$ is a closed $n$-mesh and $p : \strat{E} \to \strat{K}$ an open meshable $n$-framed stratified bundle such that $\strat{X} \perp p^{-1}(b)$ for every $b \in \strat{K}$.
  Let $f : \strat{M} \to \strat{B}$ be the coarsest open $n$-mesh bundle
  over a PL stratified pseudomanifold which refines $p$.
  Then we also have $\strat{X} \perp f^{-1}(b)$ for all $b \in \strat{B}$,
  since else we could coarsen $f$ further.
\end{observation}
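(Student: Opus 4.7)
The plan is to argue by contradiction, directly along the lines suggested in the statement itself: assume some $b_0 \in \strat{B}$ has $\strat{X} \not\perp f^{-1}(b_0)$, and build an open $n$-mesh bundle $f'$ over a PL stratified pseudomanifold that still refines $p$ but is strictly coarser than $f$ on some open neighborhood of $b_0$. This contradicts the minimality guaranteed by Proposition~\ref{prop:coarsest-mesh-refinement-open-pl}. To reduce to a local problem, I would invoke Observation~\ref{obs:coarsest-mesh-refinement-local}: since the coarsest refining mesh is determined locally, it suffices to produce a coarsening over a small open neighborhood $\strat{K}' \ni b_0$ of the restriction $f|_{\strat{K}'}$ that still refines $p|_{\strat{K}'}$.

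I would then proceed by induction on $n$, decomposing $f$ and $p$ as stacked products (Construction~\ref{con:fct-stacked-product}) of a last-coordinate $1$-mesh bundle on top of an $(n-1)$-dimensional base bundle, mirroring the recursive definition of $\varepsilon$-orthogonality. Failure of $\strat{X} \perp f^{-1}(b_0)$ breaks into the two clauses of that definition: either (i) there is a singular height $h$ of $\strat{X}$ and a point $x$ at which $f^{-1}(b_0)$ is not locally constant in the last coordinate within the $\varepsilon$-slab around $h$, or (ii) the slice $\strat{X} \cap (\R^n \times \{h\})$ is not orthogonal to the corresponding slice of $f^{-1}(b_0)$. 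Case (ii) is handled by the inductive hypothesis applied to the sliced $(n-1)$-bundle, which the construction in Lemma~\ref{lem:coarsest-mesh-refinement-n-cube} lifts to a coarsening of $f$ near $b_0$. For case (i), the hypothesis $\strat{X} \perp p^{-1}(b_0)$ provides some $\varepsilon > 0$ for which $p^{-1}(b_0)$ is itself constant in the last coordinate throughout the slab $\R^n \times (h - \varepsilon, h + \varepsilon)$; hence any singular levels that $f^{-1}(b_0)$ introduces inside this slab are not forced by $p$ and can be merged with the adjacent regular levels, yielding a coarser closed $1$-mesh structure in that coordinate that still refines $p^{-1}(b_0)$.

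The main obstacle will be the patching step: ensuring that the fibrewise coarsening at $b_0$ extends continuously to an open neighborhood in the base, so that the result is still an open $n$-mesh bundle over a PL stratified pseudomanifold. This uses that the singular heights of $\strat{X}$ are fixed, that any $\varepsilon$ witnessing orthogonality at $b_0$ persists on a neighborhood by continuity of the framing of $p$, and that local trivializations of the $1$-mesh factors of $f$ allow the merging operation to be performed uniformly across nearby fibres. Once these compatibilities are in place, the resulting $f'$ is an open $n$-mesh bundle refining $p$ with strictly fewer strata than $f$ over $\strat{K}'$, contradicting the construction of $f$ as the coarsest such refinement.
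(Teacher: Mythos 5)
Your argument is exactly the paper's: the statement is an observation whose entire justification is the clause ``since else we could coarsen $f$ further,'' i.e.\ a singular stratum of $f$ violating orthogonality would be unforced by $p$ (which is constant in the relevant slab by hypothesis) and could be merged away, contradicting that $f$ is the coarsest refining mesh bundle. Your elaboration via locality of the coarsest refinement and induction along the recursive definition of $\perp_{\varepsilon}$ is a correct filling-in of that one-line argument.
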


\begin{observation}
  Let $(\xi, f)$ be an orthogonal configuration of a closed
  $n$-mesh bundle $\xi : \strat{X} \to \strat{B}$ and an open
  $n$-mesh bundle $f : \strat{M} \to \strat{B}$. Then $(\xi, f)$ is a transverse
  configuration.
\end{observation}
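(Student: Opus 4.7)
The plan is to prove the claim by induction on $n$, reducing first to the case where the base $\strat{B}$ is a point (since both orthogonality and transversality for mesh bundles are checked fibrewise). For $n = 0$ there is nothing to show: the transversality condition quantifies over $1 \leq i \leq 0$ and is vacuous. So suppose the result holds in all dimensions less than $n+1$, let $\strat{X}$ be a closed $(n+1)$-mesh, $\strat{M}$ an open $(n+1)$-mesh with $\strat{X} \perp_{\varepsilon} \strat{M}$ for some $\varepsilon > 0$, and decompose them as sequences of $1$-mesh bundles
\[
  \strat{X} = \strat{X}_{n+1} \xrightarrow{\chi_{n+1}} \strat{X}_n \to \cdots \to \strat{X}_0 = *,
  \qquad
  \strat{M} = \strat{M}_{n+1} \xrightarrow{h_{n+1}} \strat{M}_n \to \cdots \to \strat{M}_0 = *.
\]
We need to show that for every $1 \leq i \leq n+1$, no point of $\strat{X}_i \cap \strat{M}_i \subseteq \R^i$ is singular in both the top $1$-mesh bundle $\chi_i$ of $\strat{X}_i$ and the top $1$-mesh bundle $h_i$ of $\strat{M}_i$.

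For the top level $i = n+1$, I will apply condition (1) of $\varepsilon$-orthogonality directly. If $e = (x_1, \dots, x_n, h) \in \strat{X}_{n+1} \cap \strat{M}_{n+1}$ is singular under $\chi_{n+1}$, then the last coordinate $h$ is by definition a singular height of $\strat{X}$, so condition (1) gives $\stratMap{\strat{M}}(x_1, \dots, x_n, h) = \stratMap{\strat{M}}(x_1, \dots, x_n, h+t)$ for all $t \in \intOO{-\varepsilon, \varepsilon}$. This local constancy in the last coordinate rules out $e$ being a singular point of the top $1$-mesh bundle $h_{n+1}$, since singular strata of a $1$-mesh bundle are precisely the isolated points of their fibres and would force the last-coordinate stratifying behaviour to change. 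For the lower levels $i \leq n$, I will use condition (2): for every height $h \in \R$, the slices $\strat{X} \cap (\R^n \times \{h\})$ and $\strat{M} \cap (\R^n \times \{h\})$ form an $\varepsilon$-orthogonal configuration of $n$-meshes, and the induction hypothesis makes them transverse. The $i$-th level of these slices naturally identifies with (an open subset of) the $i$-th level of the original meshes after projecting away the last coordinate, so transversality at level $i$ of the slices transfers to transversality at level $i$ of $(\strat{X}, \strat{M})$.

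The main obstacle I anticipate is making the "slicing" identification at the lower levels rigorous: I need to check that a point $e \in \strat{X}_i \cap \strat{M}_i$ singular in $\chi_i$ (respectively $h_i$) corresponds, under passing to the slice at an appropriate height $h$, to a point singular at level $i$ of the sliced $n$-meshes. This amounts to unravelling that the $1$-mesh bundle decomposition of a slice of $\strat{X}$ is obtained from the $1$-mesh bundle decomposition of $\strat{X}$ by taking fibrewise slices at each level, and analogously for $\strat{M}$. Once this identification is in place, the induction closes cleanly and combines with the top-level argument to give transversality at every $1 \leq i \leq n+1$.
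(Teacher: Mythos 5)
Your overall strategy --- reduce to fibres, induct on $n$, and let condition (1) of $\varepsilon$-orthogonality handle one level of the $1$-mesh decomposition while condition (2) plus the inductive hypothesis handles the remaining levels --- has the right shape, and since the paper records this claim as an observation without proof there is no argument to compare against. However, you have attached condition (1) to the wrong level. In the composite framing the top $1$-mesh bundle $\chi_{n+1} : \strat{X}_{n+1} \to \strat{X}_n$ contributes the \emph{first} coordinate of $\R^{n+1}$, and $\strat{X}_i$ is the quotient retaining the \emph{last} $i$ coordinates; hence the ``singular heights'' of $\strat{X}$ appearing in condition (1), which occupy the last slot in $\stratMap{\strat{M}}(x_1, \ldots, x_n, h)$, are the singular strata of the \emph{bottom} bundle $\chi_1 : \strat{X}_1 \to \strat{X}_0$. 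Your claim that a point $e$ singular under $\chi_{n+1}$ has its last coordinate equal to a singular height of $\strat{X}$ is therefore false: singularity under $\chi_{n+1}$ is a statement about the first framing coordinate and is independent of whether $h$ is a singular height. Likewise, local constancy of $\stratMap{\strat{M}}$ in the last coordinate says nothing about whether $e$ is singular for the top bundle $h_{n+1}$ of $\strat{M}$, whose fibres also run in the first coordinate direction. So the argument you give for the level you call $i = n+1$ does not go through.

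The repair is to swap the roles. Condition (1) proves transversality at level $i = 1$: if $h \in \strat{X}_1 \cap \strat{M}_1 \subseteq \R$ is singular in $\strat{X}_1$, then $h$ is a singular height, condition (1) makes $\stratMap{\strat{M}}(x, -)$ constant on $\intOO{h - \varepsilon, h + \varepsilon}$ for every $x$, hence the quotient stratification $\strat{M}_1$ of the last coordinate line is constant near $h$ and $h$ lies in a regular (open interval) stratum of $\strat{M}_1$. Condition (2) and the inductive hypothesis then handle levels $2 \leq i \leq n+1$, via the identification of level $j$ of the slice $\strat{X} \cap (\R^n \times \{h\})$ --- which is the fibre of $\strat{X}_{n+1} \to \strat{X}_1$ over $h$, obtained by \emph{fixing} the last coordinate rather than projecting it away as you wrote --- with the restriction of level $j+1$ of $\strat{X}$; note the shift of index by one. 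With these corrections your induction closes.
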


The orthogonal configurations of closed and open $n$-mesh bundles therefore
form a simplicial subset $\BConfOrtho{n}$ of the quasicategory $\BConf{n}$.
We show that $\BConfOrtho{n}$ is itself a quasicategory and equivalent
to $\BConf{n}$ itself.
In particular the equivalence $\BConf{n} \simeq \Tw(\BTrussOpen{n})$
restricts to an equivalence $\BConfOrtho{n} \simeq \Tw(\BTrussOpen{n})$.

\begin{definition}
  We denote by $\BConfOrtho{n}$ the simplicial subset of $\BConf{n}$ whose
  $k$-simplices consist of a pair $(\xi, f)$ of a closed $n$-mesh bundle
  $\xi : \strat{X} \to \DeltaStrat{k}$ and an open $n$-mesh bundle
  $f : \strat{M} \to \DeltaStrat{k}$ such that $\xi \perp f$.
\end{definition}

\begin{lemma}\label{lem:fct-conf-ortho-cocartesian}
  The projection $\BConfOrtho{n} \to \BMeshClosed{n}$ is a cocartesian
  fibration. Moreover a map in $\BConfOrtho{n}$ is cocartesian for the
  projection map $\BConfOrtho{n} \to \BMeshClosed{n}$ if and only if it
  is sent to an equivalence by the other projection $\BConfOrtho{n} \to \BMeshOpen{n}$.
\end{lemma}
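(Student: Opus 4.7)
The plan is to reduce both claims to Lemma~\ref{lem:fct-conf-cocartesian}, which establishes the analogous statement for $\BConf{n}$, by showing that $\BConfOrtho{n}$ inherits the relevant fibration structure through the inclusion $\BConfOrtho{n} \hookrightarrow \BConf{n}$. Since orthogonality is a geometric refinement of transversality that should not affect the underlying homotopy type, the cocartesian lifts constructed in $\BConf{n}$ should admit orthogonal replacements.

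To construct cocartesian lifts of $\BConfOrtho{n} \to \BMeshClosed{n}$, I would start with an orthogonal configuration $(\xi_0, f_0)$ over $\{0\}$ and a closed $n$-mesh bordism $\xi : \strat{X} \to \DeltaStrat{1}$ extending $\xi_0$. I want an orthogonal lift $(\xi, f)$ whose open part is an equivalence in $\BMeshOpen{n}$, as Lemma~\ref{lem:fct-conf-cocartesian} characterizes cocartesian maps in $\BConf{n}$ this way. First I build an auxiliary $n$-framed stratified bundle $p : \strat{E} \to \DeltaStrat{1}$ whose underlying topological bundle is trivial, $\unstrat(\strat{E}) = \unstrat(\strat{M}_0) \times \DeltaTop{1}$, but whose framing varies continuously with $t$ so that each fibre remains orthogonal to $\xi_t$. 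This is possible because the singular heights of $\xi_t$ form a closed locally finite family varying continuously in $t$, and the singular heights of $f_0$ can be shifted away from them by an arbitrary perturbation within each fibre. Then I apply Proposition~\ref{prop:coarsest-mesh-refinement-open-pl} to replace $p$ by its coarsest refining open $n$-mesh bundle $f$, which remains orthogonal to $\xi$ by Observation~\ref{obs:fct-conf-ortho-coarsest}. Since the underlying topological bundle of $f$ is trivial over $\DeltaStrat{1}$, the bordism $f_0 \pto f_1$ is an equivalence in $\BMeshOpen{n}$, confirming the resulting configuration is cocartesian.

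For the inner fibration property, given a lifting problem $\Lambda^i\ord{k} \to \BConfOrtho{n}$ with compatible extension $\Delta\ord{k} \to \BMeshClosed{n}$, Lemma~\ref{lem:fct-conf-cocartesian} supplies an extension $(\xi, f) \in \BConf{n}$ whose restriction to $\HornStrat{i}{k}$ agrees with the given orthogonal configuration. I would then modify $f$ over a neighbourhood of the interior stratum of $\DeltaStrat{k}$ using the trivialization of $\xi$ over the top stratum, again by applying Proposition~\ref{prop:coarsest-mesh-refinement-open-pl} to a framing perturbation whose singular heights avoid those of $\xi$, matching $f$ itself on a collar of $\HornStrat{i}{k}$. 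This produces an orthogonal configuration solving the lifting problem. A similar argument handles isofibration lifts of equivalences in $\BMeshClosed{n}$.

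Finally, the characterization of cocartesian maps follows by comparison: a map in $\BConfOrtho{n}$ is cocartesian for $\BConfOrtho{n} \to \BMeshClosed{n}$ if and only if its image in $\BConf{n}$ is cocartesian for $\BConf{n} \to \BMeshClosed{n}$, which by Lemma~\ref{lem:fct-conf-cocartesian} is equivalent to being sent to an equivalence by $\BConf{n} \to \BMeshOpen{n}$, which by commutativity is equivalent to the map being sent to an equivalence by $\BConfOrtho{n} \to \BMeshOpen{n}$. The main obstacle will be the geometric perturbation argument: ensuring that the coarsest refining mesh of the auxiliary bundle $p$ realizes the prescribed topological homotopy class (in particular, that it gives an equivalence in $\BMeshOpen{n}$ rather than introducing new critical points), which relies on the locality statement of Observation~\ref{obs:coarsest-mesh-refinement-local} applied to the framing perturbation being constructed.
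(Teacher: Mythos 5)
Your proposal is correct and follows essentially the same route as the paper, whose entire proof is "Analogous to Lemma~\ref{lem:fct-conf-cocartesian}": you replay the transverse-case argument, restoring orthogonality (rather than mere transversality) of the extended open mesh bundle by a perturbation/stretching near the singular strata of the closed mesh, which is exactly the adjustment the paper itself invokes. The only point worth tightening is the final comparison step — rather than asserting that cocartesianness is detected by the inclusion into $\BConf{n}$, it is cleaner to note that once enough cocartesian lifts sent to equivalences in $\BMeshOpen{n}$ have been constructed directly, the full characterization follows from uniqueness of cocartesian lifts up to equivalence.
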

\begin{proof}
  Analogous to Lemma~\ref{lem:fct-conf-cocartesian}.
\end{proof}

\begin{lemma}
  The inclusion map $\BConfOrtho{n} \hookrightarrow \BConf{n}$
  is a categorical equivalence.
\end{lemma}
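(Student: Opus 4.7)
The plan is to exploit the cocartesian fibration structure established for both categories over $\BMeshClosed{n}$. By Lemma~\ref{lem:fct-conf-cocartesian} and Lemma~\ref{lem:fct-conf-ortho-cocartesian}, both projections $\pi : \BConf{n} \to \BMeshClosed{n}$ and $\pi' : \BConfOrtho{n} \to \BMeshClosed{n}$ are cocartesian fibrations, and in both a map is cocartesian exactly when it is sent to an equivalence by the projection to $\BMeshOpen{n}$. The inclusion $i : \BConfOrtho{n} \hookrightarrow \BConf{n}$ sits strictly over $\BMeshClosed{n}$ and intertwines these characterizations, so $i$ is a map of cocartesian fibrations preserving cocartesian edges. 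By the standard fibrewise criterion for equivalences between cocartesian fibrations, it suffices to show that for each closed $n$-mesh $\strat{X}$ the induced map on fibres
\[
    i_{\strat{X}} : \BConfOrtho{n} \times_{\BMeshClosed{n}} \{\strat{X}\}
    \longrightarrow
    \BConf{n} \times_{\BMeshClosed{n}} \{\strat{X}\}
\]
is a categorical equivalence.

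For the fibrewise statement I would exhibit $i_{\strat{X}}$ as a deformation retract via a straightening isotopy. Given an open $n$-mesh bundle $f : \strat{M} \to \DeltaStrat{k}$ transverse to the constant family on $\strat{X}$, the goal is to isotope $f$ relative to its equivalence class into an orthogonal bundle $f'$. The key geometric input is that at each singular coordinate value of $\strat{X}$, transversality forces the corresponding slice of $\strat{M}$ to meet only regular strata of $\strat{X}$, so the local trivialization of the open mesh bundle is available in a tube around the relevant coordinate hyperplane. Within such a tube we may linearly interpolate the framing so that strata of $\strat{M}$ become constant in the relevant coordinate, achieving orthogonality. The space of choices (tube radius, interpolation profile) is convex, hence contractible.

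The induction is organized dimensionally. For $n=1$, Lemma~\ref{lem:fct-mesh-closed-1-sing-covering} provides canonical right-covering lifts of the singular strata of $\strat{X}$, and the straightening in the tube around each such lift is performed directly via the trivialization of $f$. For $n > 1$, decompose $\xi : \strat{X} \to \DeltaStrat{k}$ as the composite of a closed $1$-mesh bundle followed by a closed $(n-1)$-mesh bundle and do likewise for $f$. Apply the $n=1$ straightening in the top framing coordinate, then invoke the inductive hypothesis on the remaining $(n-1)$ dimensions using the refined open mesh over the intermediate stratified space. Continuity and simplicial naturality across the parameter $\DeltaStrat{k}$ follow because at each stage the straightening is a canonical convex combination, and Proposition~\ref{prop:fct-conf-classify} lets us assemble the pointwise constructions into a bona fide classifying map.

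The main obstacle is organizing the straightening deformation so that it respects the simplicial structure of $\BConf{n}$, i.e.\ so that the retraction commutes with pullback along face and degeneracy maps. This is handled by ensuring the tube neighborhoods and straightening profiles depend only on the singular locus data of $\xi$, which by Lemma~\ref{lem:fct-mesh-closed-1-sing-covering} is classified by a right fibration pulled back functorially. As a reassuring sanity check, one may alternatively apply the $2$-out-of-$3$ property to the triangle $\BConfOrtho{n} \hookrightarrow \BConf{n} \xrightarrow{\simeq} \Tw(\BTrussOpen{n})$: the grid constructions $\gridMeshClosed^n$ and $\gridMeshOpen^n$ of Constructions~\ref{con:fct-mesh-grid-closed} and~\ref{con:fct-mesh-grid-open} produce configurations that are orthogonal by construction, so the composite realizes every twisted arrow in $\BTrussOpen{n}$ by an orthogonal configuration, hence is essentially surjective; full faithfulness then follows from the corresponding statement for $\BConf{n}$ together with the contractibility of orthogonalization choices.
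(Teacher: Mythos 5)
Your proposal follows the paper's own argument: both projections to $\BMeshClosed{n}$ are cocartesian fibrations with the same characterisation of cocartesian edges, so the inclusion is checked fibrewise, and each fibre inclusion is an equivalence because a transverse configuration can be stretched/straightened in a neighbourhood of the singular strata of $\strat{X}$ until it becomes orthogonal, with a contractible space of such adjustments. The extra inductive bookkeeping and the $2$-out-of-$3$ sanity check are elaborations of the same idea (the latter is not independent, since the restriction of $\BConf{n}\simeq\Tw(\BTrussOpen{n})$ to $\BConfOrtho{n}$ is itself a consequence of this lemma), but the core approach matches the paper's.
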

\begin{proof}
  Both projection maps $\BConfOrtho{n} \to \BMeshClosed{n}$
  and $\BConf{n} \to \BMeshClosed{n}$ are cocartesian fibrations.
  It therefore suffices to show that for every closed $n$-mesh $\strat{X}$,
  the induced map on fibres
  \[
    \BConfOrtho{n} \times_{\BMeshClosed{n}} \{ \strat{X} \}
    \longrightarrow
    \BConf{n} \times_{\BMeshClosed{n}} \{ \strat{X} \}
  \]
  is a categorical equivalence.
  This can be demonstrated by stretching out transverse configurations
  within a neighbourhood of the singular strata of $\strat{X}$
  so that they become orthogonal.
\end{proof}

\begin{remark}
  We also have a projection map $\BConfOrtho{n} \to \BMeshOpen{n}$
  which sends an orthogonal configuration $\xi \perp f$ to the
  open $n$-mesh bundle $f$. This map is not an inner fibration,
  in contrast to the corresponding
  projection map $\BConf{n} \to \BMeshOpen{n}$.
\end{remark}

\chapter{Diagrammatic Higher Categories}\label{sec:d}

\section{Diagrammatic Spaces}\label{sec:d-diag-space}

The $n$-fold Segal space model for $(\infty, n)$-categories uses the $n$-fold product
$\FinOrd^n$ of the simplex category as its category of shapes.
A diagram in an $n$-fold Segal space is glued together from little cubes
with shape $\ord{k_1, \ldots, k_n}$ where $0 \leq k_i \leq 1$
that represent a morphism of dimension $k_1 + \cdots + k_n$.
For instance a $2$-morphism $\alpha$ in a $2$-fold Segal space $\cat{C}$ is a point
in the space of maps $\ord{1, 1} \to \cat{C}$.
The source and target $1$-morphisms of $\alpha$ are obtained by restricting
$\ord{1, 1} \to \cat{C}$ along the two inclusions $\ord{1, 0} \hookrightarrow \ord{1, 1}$.
The collection of shapes offered by $\FinOrd^n$ is therefore quite limited,
and does not allow us to ask directly for $n$-morphisms whose source or target is a composite
or even includes an isotopy.
In this section we show how we can use open $n$-trusses to generalise the shapes
with which $(\infty, n)$-categories can be probed.

\begin{observation}
  By Construction~\ref{con:fct-truss-grid-open} we have a fully faithful functor $\grid : \FinOrd^n \hookrightarrow \BTrussOpen{n}$
  from the $n$-fold product of $\FinOrd$ with itself.
  For brevity of notation we omit the dimension $n$ from the name of this functor.
  We then get an adjunction of $\infty$-categories of space-valued presheaves
  \[
    \begin{tikzcd}[column sep = large]
      {\PSh(\BTrussOpen{n})}
      \ar[r, shift left = 1.5, "(\grid)^*"{name=0}, anchor=center]
      &
      {\PSh(\FinOrd^n)}
      \ar[l, shift left = 1.5, "(\grid)_{*}"{name=1}, anchor=center]
  	  \arrow["{\scriptscriptstyle\dashv}"{anchor=center, rotate=-90}, draw=none, from=0, to=1]
    \end{tikzcd}
  \]
  given by restriction $(\grid)^*$ and right Kan extension $(\grid)_*$.
  Because $\grid$ is fully faithful, so is the right adjoint $(\grid)_*$.
  The $n$-simplicial spaces therefore embed into presheaves on $\BTrussOpen{n}$.
\end{observation}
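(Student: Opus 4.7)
The plan is to assemble this observation from three ingredients already available in the paper, with essentially no new content to prove. First, I would invoke that the grid functor $\grid : \FinOrd^n \to \BTrussOpen{n}$ is fully faithful; this is exactly the content of the observation immediately following Construction~\ref{con:fct-truss-grid-open}, where fully faithfulness of $\gridMeshOpen^n : \FinOrd^n \hookrightarrow \BMeshOpen{n}$ is transported along the equivalence $\BTrussOpen{n} \simeq \BMeshOpen{n}$. (The cases $n = 0, 1$ are even equivalences, which is only stronger than fully faithful, so no special-casing is required.)

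Second, I would appeal to the standard adjoint triple on presheaf $\infty$-categories recalled in the Background. For any functor $f : \cat{C} \to \cat{D}$ of $\infty$-categories we have $f_! \dashv f^* \dashv f_*$ on $\PSh(-) = \Fun(-^{\op}, \Space)$, where $f^*$ is restriction and $f_*$ is right Kan extension along $f$. Specialising to $f = \grid$ yields the claimed adjunction $(\grid)^* \dashv (\grid)_*$.

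Third, I would invoke the standard principle that when $f$ is fully faithful, the right adjoint $f_*$ of $f^*$ is again fully faithful; equivalently, the unit $\id \Rightarrow f^* f_*$ is a natural equivalence. Concretely, for $F \in \PSh(\FinOrd^n)$ and $c \in \FinOrd^n$, the pointwise formula for the right Kan extension gives
\[
  (f^* f_* F)(c) \;\simeq\; \lim_{(d, f(c) \to f(d)) \in c/f}\; F(d),
\]
and fully faithfulness of $f$ implies that the comma category $c/f$ has $(c, \id_{f(c)})$ as an initial object, so the limit evaluates to $F(c)$ and the unit is an equivalence.

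There is no genuine obstacle here; the only thing to be careful about is that the two general facts used (the existence of the adjoint triple, and fully faithfulness of Kan extensions along fully faithful functors) are applied in the $\infty$-categorical rather than $1$-categorical setting, which is why the Background chapter was set up in terms of quasicategories. The final sentence, that $n$-simplicial spaces embed into $\PSh(\BTrussOpen{n})$, is then immediate from the identification $\PSh(\FinOrd^n) = \Fun(\FinOrd^{n,\op}, \Space)$ together with the fully faithful $(\grid)_*$.
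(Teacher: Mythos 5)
Your proposal is correct and matches the paper's approach: the paper treats this as an immediate consequence of the fully faithfulness of $\gridMeshOpen^n$ recorded after Construction~\ref{con:fct-mesh-grid-open} (transported along $\BTrussOpen{n} \simeq \BMeshOpen{n}$) together with the facts about the adjoint triple $f_! \dashv f^* \dashv f_*$ and fully faithfulness of $f_*$ for fully faithful $f$ that are stated in the Background chapter, and your pointwise Kan extension computation is exactly the standard justification of the latter. One trivial slip: for the adjunction $(\grid)^* \dashv (\grid)_*$ the natural transformation you compute is the counit $(\grid)^*(\grid)_* \to \id$, not a unit $\id \Rightarrow (\grid)^*(\grid)_*$, but the computation itself is the right one.
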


\begin{example}
  Suppose that $\cat{C}$ is a $2$-uple Segal space and that
  $A$ is the $2$-dimensional atom which realises to the open mesh
  \[
    \begin{tikzpicture}[scale = 0.5, baseline=(current bounding box.center)]
      \fill[mesh-background] (0, 0) rectangle (4, 4);
      \node[mesh-vertex] at (2, 2) {};
      \draw[mesh-stratum] (1.5, 0) -- (1.5, 1) .. controls +(0, 0.2) and +(-0.5, 0) .. (2, 2);
      \draw[mesh-stratum] (2.5, 0) -- (2.5, 1) .. controls +(0, 0.2) and +(0.5, 0) .. (2, 2);
      \draw[mesh-stratum] (2, 2) -- (2, 4);
      \draw[mesh-stratum] (0, 2) -- (4, 2);
    \end{tikzpicture}
  \]
  % \[
  % \tikzfig{cat-to-string-clear}
  % \]
  Then a point in $(\grid)_* \cat{C}(A)$ is a compatible family of cells in $\cat{C}$,
  consisting of a map $\ord{k_1, k_2} \to \cat{C}$ for every map $\ord{k_1, k_2} \to A$.
  We illustrate this geometrically by using transverse configurations,
  rendering $A$ in black and the closed mesh associated to some $\ord{k_1, k_2} \in \FinOrd^2$
  in red.
  We only list the elements which are non-degenerate.
  \begin{itemize}
    \item Objects $\ord{0, 0} \to \cat{C}$ for each
      \[
        \begin{tikzpicture}[scale = 0.5, baseline=(current bounding box.center)]
          \fill[mesh-background] (0, 0) rectangle (4, 4);
          \node[mesh-vertex] at (2, 2) {};
          \draw[mesh-stratum] (1.5, 0) -- (1.5, 1) .. controls +(0, 0.2) and +(-0.5, 0) .. (2, 2);
          \draw[mesh-stratum] (2.5, 0) -- (2.5, 1) .. controls +(0, 0.2) and +(0.5, 0) .. (2, 2);
          \draw[mesh-stratum] (2, 2) -- (2, 4);
          \draw[mesh-stratum] (0, 2) -- (4, 2);
          \node[mesh-vertex-dual] at (1, 1) {};
        \end{tikzpicture}
        \qquad
        \begin{tikzpicture}[scale = 0.5, baseline=(current bounding box.center)]
          \fill[mesh-background] (0, 0) rectangle (4, 4);
          \node[mesh-vertex] at (2, 2) {};
          \draw[mesh-stratum] (1.5, 0) -- (1.5, 1) .. controls +(0, 0.2) and +(-0.5, 0) .. (2, 2);
          \draw[mesh-stratum] (2.5, 0) -- (2.5, 1) .. controls +(0, 0.2) and +(0.5, 0) .. (2, 2);
          \draw[mesh-stratum] (2, 2) -- (2, 4);
          \draw[mesh-stratum] (0, 2) -- (4, 2);
          \node[mesh-vertex-dual] at (3, 1) {};
        \end{tikzpicture}
        \qquad
        \begin{tikzpicture}[scale = 0.5, baseline=(current bounding box.center)]
          \fill[mesh-background] (0, 0) rectangle (4, 4);
          \node[mesh-vertex] at (2, 2) {};
          \draw[mesh-stratum] (1.5, 0) -- (1.5, 1) .. controls +(0, 0.2) and +(-0.5, 0) .. (2, 2);
          \draw[mesh-stratum] (2.5, 0) -- (2.5, 1) .. controls +(0, 0.2) and +(0.5, 0) .. (2, 2);
          \draw[mesh-stratum] (2, 2) -- (2, 4);
          \draw[mesh-stratum] (0, 2) -- (4, 2);
          \node[mesh-vertex-dual] at (1, 3) {};
        \end{tikzpicture}
        \qquad
        \begin{tikzpicture}[scale = 0.5, baseline=(current bounding box.center)]
          \fill[mesh-background] (0, 0) rectangle (4, 4);
          \node[mesh-vertex] at (2, 2) {};
          \draw[mesh-stratum] (1.5, 0) -- (1.5, 1) .. controls +(0, 0.2) and +(-0.5, 0) .. (2, 2);
          \draw[mesh-stratum] (2.5, 0) -- (2.5, 1) .. controls +(0, 0.2) and +(0.5, 0) .. (2, 2);
          \draw[mesh-stratum] (2, 2) -- (2, 4);
          \draw[mesh-stratum] (0, 2) -- (4, 2);
          \node[mesh-vertex-dual] at (3, 3) {};
        \end{tikzpicture}
        \qquad
        \begin{tikzpicture}[scale = 0.5, baseline=(current bounding box.center)]
          \fill[mesh-background] (0, 0) rectangle (4, 4);
          \node[mesh-vertex] at (2, 2) {};
          \draw[mesh-stratum] (1.5, 0) -- (1.5, 1) .. controls +(0, 0.2) and +(-0.5, 0) .. (2, 2);
          \draw[mesh-stratum] (2.5, 0) -- (2.5, 1) .. controls +(0, 0.2) and +(0.5, 0) .. (2, 2);
          \draw[mesh-stratum] (2, 2) -- (2, 4);
          \draw[mesh-stratum] (0, 2) -- (4, 2);
          \node[mesh-vertex-dual] at (2, 1) {};
        \end{tikzpicture}
      \]
    \item Horizontal $1$-morphisms $\ord{1, 0} \to \cat{C}$ between the objects for each
      \[
        \begin{tikzpicture}[scale = 0.5, baseline=(current bounding box.center)]
          \fill[mesh-background] (0, 0) rectangle (4, 4);
          \node[mesh-vertex] at (2, 2) {};
          \draw[mesh-stratum] (1.5, 0) -- (1.5, 1) .. controls +(0, 0.2) and +(-0.5, 0) .. (2, 2);
          \draw[mesh-stratum] (2.5, 0) -- (2.5, 1) .. controls +(0, 0.2) and +(0.5, 0) .. (2, 2);
          \draw[mesh-stratum] (2, 2) -- (2, 4);
          \draw[mesh-stratum] (0, 2) -- (4, 2);
          \draw[mesh-stratum-dual] (1, 1) -- (2, 1);
          \node[mesh-vertex-dual] at (1, 1) {};
          \node[mesh-vertex-dual] at (2, 1) {};
        \end{tikzpicture}
        \qquad
        \begin{tikzpicture}[scale = 0.5, baseline=(current bounding box.center)]
          \fill[mesh-background] (0, 0) rectangle (4, 4);
          \draw[mesh-stratum-dual] (2, 1) -- (3, 1);
          \node[mesh-vertex-dual] at (2, 1) {};
          \node[mesh-vertex-dual] at (3, 1) {};
          \node[mesh-vertex] at (2, 2) {};
          \draw[mesh-stratum] (1.5, 0) -- (1.5, 1) .. controls +(0, 0.2) and +(-0.5, 0) .. (2, 2);
          \draw[mesh-stratum] (2.5, 0) -- (2.5, 1) .. controls +(0, 0.2) and +(0.5, 0) .. (2, 2);
          \draw[mesh-stratum] (2, 2) -- (2, 4);
          \draw[mesh-stratum] (0, 2) -- (4, 2);
        \end{tikzpicture}
        \qquad
        \begin{tikzpicture}[scale = 0.5, baseline=(current bounding box.center)]
          \fill[mesh-background] (0, 0) rectangle (4, 4);
          \draw[mesh-stratum-dual] (1, 3) -- (3, 3);
          \node[mesh-vertex-dual] at (1, 3) {};
          \node[mesh-vertex-dual] at (3, 3) {};
          \node[mesh-vertex] at (2, 2) {};
          \draw[mesh-stratum] (0, 2) -- (4, 2);
          \draw[mesh-stratum] (1.5, 0) -- (1.5, 1) .. controls +(0, 0.2) and +(-0.5, 0) .. (2, 2);
          \draw[mesh-stratum] (2.5, 0) -- (2.5, 1) .. controls +(0, 0.2) and +(0.5, 0) .. (2, 2);
          \draw[mesh-stratum] (2, 2) -- (2, 4);
        \end{tikzpicture}
        \qquad
        \begin{tikzpicture}[scale = 0.5, baseline=(current bounding box.center)]
          \fill[mesh-background] (0, 0) rectangle (4, 4);
          \draw[mesh-stratum-dual] (1, 1) -- (3, 1);
          \node[mesh-vertex-dual] at (1, 1) {};
          \node[mesh-vertex-dual] at (3, 1) {};
          \node[mesh-vertex] at (2, 2) {};
          \draw[mesh-stratum] (0, 2) -- (4, 2);
          \draw[mesh-stratum] (1.5, 0) -- (1.5, 1) .. controls +(0, 0.2) and +(-0.5, 0) .. (2, 2);
          \draw[mesh-stratum] (2.5, 0) -- (2.5, 1) .. controls +(0, 0.2) and +(0.5, 0) .. (2, 2);
          \draw[mesh-stratum] (2, 2) -- (2, 4);
        \end{tikzpicture}
      \]
    \item Vertical $1$-morphisms $\ord{0, 1} \to \cat{C}$ between the objects for each
      \[
        \begin{tikzpicture}[scale = 0.5, baseline=(current bounding box.center)]
          \fill[mesh-background] (0, 0) rectangle (4, 4);
          \draw[mesh-stratum-dual] (1, 1) -- (1, 3);
          \node[mesh-vertex-dual] at (1, 1) {};
          \node[mesh-vertex-dual] at (1, 3) {};
          \node[mesh-vertex] at (2, 2) {};
          \draw[mesh-stratum] (0, 2) -- (4, 2);
          \draw[mesh-stratum] (1.5, 0) -- (1.5, 1) .. controls +(0, 0.2) and +(-0.5, 0) .. (2, 2);
          \draw[mesh-stratum] (2.5, 0) -- (2.5, 1) .. controls +(0, 0.2) and +(0.5, 0) .. (2, 2);
          \draw[mesh-stratum] (2, 2) -- (2, 4);
        \end{tikzpicture}
        \qquad
        \begin{tikzpicture}[scale = 0.5, baseline=(current bounding box.center)]
          \fill[mesh-background] (0, 0) rectangle (4, 4);
          \draw[mesh-stratum-dual] (3, 1) -- (3, 3);
          \node[mesh-vertex-dual] at (3, 1) {};
          \node[mesh-vertex-dual] at (3, 3) {};
          \node[mesh-vertex] at (2, 2) {};
          \draw[mesh-stratum] (0, 2) -- (4, 2);
          \draw[mesh-stratum] (1.5, 0) -- (1.5, 1) .. controls +(0, 0.2) and +(-0.5, 0) .. (2, 2);
          \draw[mesh-stratum] (2.5, 0) -- (2.5, 1) .. controls +(0, 0.2) and +(0.5, 0) .. (2, 2);
          \draw[mesh-stratum] (2, 2) -- (2, 4);
        \end{tikzpicture}
      \]
    \item A $2$-morphism $\ord{1, 1} \to \cat{C}$ for
      \[
        \begin{tikzpicture}[scale = 0.5, baseline=(current bounding box.center)]
          \fill[mesh-background] (0, 0) rectangle (4, 4);
          \draw[mesh-stratum-dual] (1, 1) rectangle (3, 3);
          \node[mesh-vertex-dual] at (1, 1) {};
          \node[mesh-vertex-dual] at (1, 3) {};
          \node[mesh-vertex-dual] at (3, 1) {};
          \node[mesh-vertex-dual] at (3, 3) {};
          \node[mesh-vertex] at (2, 2) {};
          \draw[mesh-stratum] (0, 2) -- (4, 2);
          \draw[mesh-stratum] (1.5, 0) -- (1.5, 1) .. controls +(0, 0.2) and +(-0.5, 0) .. (2, 2);
          \draw[mesh-stratum] (2.5, 0) -- (2.5, 1) .. controls +(0, 0.2) and +(0.5, 0) .. (2, 2);
          \draw[mesh-stratum] (2, 2) -- (2, 4);
        \end{tikzpicture}
      \]
    \item A witness of composition of horizontal $1$-morphisms $\ord{2, 0} \to \cat{C}$
      which composes to the chosen $1$-morphism $\ord{1, 0} \to \cat{C}$ from above
      \[
        \begin{tikzpicture}[scale = 0.5, baseline=(current bounding box.center)]
          \fill[mesh-background] (0, 0) rectangle (4, 4);
          \draw[mesh-stratum-dual] (1, 1) -- (3, 1);
          \node[mesh-vertex-dual] at (1, 1) {};
          \node[mesh-vertex-dual] at (2, 1) {};
          \node[mesh-vertex-dual] at (3, 1) {};
          \node[mesh-vertex] at (2, 2) {};
          \draw[mesh-stratum] (1.5, 0) -- (1.5, 1) .. controls +(0, 0.2) and +(-0.5, 0) .. (2, 2);
          \draw[mesh-stratum] (2.5, 0) -- (2.5, 1) .. controls +(0, 0.2) and +(0.5, 0) .. (2, 2);
          \draw[mesh-stratum] (2, 2) -- (2, 4);
          \draw[mesh-stratum] (0, 2) -- (4, 2);
        \end{tikzpicture}
        \quad
        \longrightarrow
        \quad
        \begin{tikzpicture}[scale = 0.5, baseline=(current bounding box.center)]
          \fill[mesh-background] (0, 0) rectangle (4, 4);
          \draw[mesh-stratum-dual] (1, 1) -- (3, 1);
          \node[mesh-vertex-dual] at (1, 1) {};
          \node[mesh-vertex-dual] at (3, 1) {};
          \node[mesh-vertex] at (2, 2) {};
          \draw[mesh-stratum] (1.5, 0) -- (1.5, 1) .. controls +(0, 0.2) and +(-0.5, 0) .. (2, 2);
          \draw[mesh-stratum] (2.5, 0) -- (2.5, 1) .. controls +(0, 0.2) and +(0.5, 0) .. (2, 2);
          \draw[mesh-stratum] (2, 2) -- (2, 4);
          \draw[mesh-stratum] (0, 2) -- (4, 2);
        \end{tikzpicture}
      \]
  \end{itemize}
  A point in $(\grid)_* \cat{C}(A)$ therefore is a $2$-morphism of $\cat{C}$
  whose domain factors as the composite of two chosen $1$-morphisms.
  In this way the presheaf $(\grid)_* \cat{C}$ lets us produce compatible families
  of morphisms in $\cat{C}$ according to each truss $T \in \BTrussOpen{2}$ interpreted
  as a string diagram.
  
  % Compatibility between these elements of the family guarantees that each element
  % restricts to its parts  

  %   \[
  %   \tikzfig{cat-to-string-left-right-1}
  %   % \quad
  %   % \longrightarrow
  %   % \quad
  %   % \tikzfig{cat-to-string-bottom-1}
  %   \]
  
  % For every $\ord{0, 0} \to A$ illustrated by the transverse configurations
  % the family includes an object $\ord{0, 0} \to \cat{C}$.
  % Connecting these there are horizontal $1$-morphisms $\ord{0, 1} \to \cat{C}$
  % as well as vertical $1$-morphisms $\ord{1, 0} \to \cat{C}$
  % The family includes a $2$-morphism $\ord{1, 1} \to \cat{C}$
  % Then 

  % \[
  % \tikzfig{cat-to-string-left-1}
  % \quad
  % \longleftarrow
  % \quad
  % \tikzfig{cat-to-string-left-right-1}
  % \quad
  % \longrightarrow
  % \quad
  % \tikzfig{cat-to-string-right-1}
  % \]
\end{example}

\begin{definition}
  A presheaf $\psh{F} \in \PSh(\BTrussOpen{n})$ satisfies the
  \defn{Segal condition} when for every open $n$-truss $T$ the atomic covering
  diagram $U : \Exit(T)^{\op, \triangleright} \to \BTrussOpen{n}$ is sent to a
  limit diagram $U \circ \psh{F}$.
  We denote by $\Seg(\BTrussOpen{n})$ the reflective subcategory of $\PSh(\BTrussOpen{n})$
  consisting of the presheaves that satisfy the Segal condition.
\end{definition}

The Segal condition on $\PSh(\BTrussOpen{n})$ restricts to the Segal condition
on $\PSh(\FinOrd^n)$ for $n$-uple Segal spaces,
wherefore the restriction functor $\grid^*$ induces a functor
$\Seg(\BTrussOpen{n}) \to \Seg(\FinOrd^n)$.
To see that the functor $(\grid)_*$ also preserves the Segal condition,
we make the following observation.

\begin{definition}
  Let $E \in \BTrussOpen{n}$ be an open $n$-truss. We then write
  \[
    \Grid(E) := \BTrussOpen{n}(\grid(-), E).
  \]
  for the restricted representable functor.
\end{definition}

\begin{lemma}\label{lem:grid-freely-generated}
  Let $E \in \BTrussOpen{n}$ be an open $n$-truss.
  Then $\Grid(E)$ is the free $n$-uple Segal space with one generator
  for each atom $A \hookrightarrow E$.
\end{lemma}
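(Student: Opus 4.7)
The plan is to establish a universal property via the adjunction $\grid^* \dashv \grid_*$ on presheaves, showing that mapping out of $\Grid(E)$ in $n$-uple Segal spaces corresponds to a limit indexed by the atoms $A \hookrightarrow E$, each contributing one generator of shape $A$. First I would verify that the representable presheaf $y(E) \in \PSh(\BTrussOpen{n})$ satisfies the Segal condition on $\BTrussOpen{n}$: by Lemma~\ref{lem:fct-atom-covering-colimit} the atomic covering realises any open $n$-truss $T$ as a colimit $T \simeq \colim_A A$ in $\BTrussOpen{n}$, so
\[
y(E)(T) = \BTrussOpen{n}(T, E) \simeq \lim_A \BTrussOpen{n}(A, E).
\]
Combining this with Yoneda and the adjunction yields, for any $n$-uple Segal space $\cat{C}$,
\[
\Map_{\Seg(\FinOrd^n)}(\Grid(E), \cat{C}) \simeq \Map_{\PSh(\BTrussOpen{n})}(y(E), \grid_* \cat{C}) \simeq (\grid_* \cat{C})(E),
\]
using $\grid^* y(E) = \Grid(E)$ and that $\cat{C}$ is local for Segal equivalences; testing the same computation against grids also shows that $\Grid(E)$ itself lies in $\Seg(\FinOrd^n)$.

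To exhibit $\Grid(E)$ as freely generated it remains to show that $\grid_* \cat{C}$ satisfies the Segal condition on $\BTrussOpen{n}$, so that
\[
(\grid_* \cat{C})(E) \simeq \lim_{A \textup{ atom of } E} (\grid_* \cat{C})(A).
\]
A morphism from $\Grid(E)$ into $\cat{C}$ is then exactly a compatible choice of a generator in $(\grid_*\cat{C})(A)$ for each atom $A \hookrightarrow E$, which is the precise meaning of freeness. By the pointwise formula for right Kan extension the left side is a limit over the comma category of maps $\grid\ord{\vec k} \to E$, while the right side is a nested limit first over atoms of $E$ and then over grids mapping into each atom. To compare the two I would invoke the active/inert factorisation system of Corollary~\ref{cor:fct-emb-active-factorisation-open}: every map $\grid\ord{\vec k} \to E$ factors essentially uniquely as an active bordism $\grid\ord{\vec k} \pto B$ followed by an inert embedding $B \hookrightarrow E$, and $B$ is then contained in a unique smallest atom of $E$.

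The main obstacle is this cofinality step: one must check that assigning to each grid-to-$E$ map its enclosing atom is homotopically well-behaved, and that the active/inert factorisation behaves functorially under natural transformations of grids. I expect this to follow by combining Lemma~\ref{lem:fct-emb-closed-smallest-subbundle} (to produce the smallest enclosing constructible submesh, transported to the open setting via duality) with Lemma~\ref{lem:fct-degeneracy-preserve-atom} (to ensure that active refinements preserve atomicity and respect the distinguished minimal element of an atom). Once the Segal condition on $\grid_* \cat{C}$ is established, the universal property displayed above reads exactly as freeness of $\Grid(E)$ on one generator per atom.
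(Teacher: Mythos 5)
There is a genuine gap, and it comes in two parts. First, your route inverts the paper's logical order: the fact that $\grid_*\cat{C}$ satisfies the Segal condition on $\BTrussOpen{n}$ is Proposition~\ref{prop:diagrammatic-segal-to-segal}, whose proof rests on Corollary~\ref{cor:grid-segal-colimit}, which in turn is deduced \emph{from} Lemma~\ref{lem:grid-freely-generated}. You correctly recognise that you must therefore prove the Segal condition for $\grid_*\cat{C}$ independently, but this means the entire content of the lemma is concentrated in the ``cofinality step'' you defer, so nothing has actually been reduced. Second, the reduction you sketch for that step rests on a false claim: for a map $\grid\ord{k_1,\ldots,k_n} \to E$ with some $k_i \geq 2$, the inert part $B \hookrightarrow E$ of its active/inert factorisation is in general \emph{not} contained in any single atom of $E$ (already for $n=1$, a map $\ord{2} \to E$ hitting two distinct singular points has inert image spanning two atoms glued along a vertex). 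So the assignment ``grid map $\mapsto$ enclosing atom'' is not even defined on the comma category $(\grid \downarrow E)$, and Lemmas~\ref{lem:fct-emb-closed-smallest-subbundle} and~\ref{lem:fct-degeneracy-preserve-atom} will not repair this: decomposing the non-atomic grid maps into atomic pieces is precisely the Segal condition you are trying to establish.

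For contrast, the paper avoids the comma-category analysis entirely by inducting on $n$: using $\BTrussOpen{n} \cong \BTrussOpenL{1}{\BTrussOpen{n-1}}$ it writes $E$ as a zigzag of regular and singular slices $R_0 \leftarrow S_0 \to R_1 \leftarrow \cdots \to R_k$, observes by induction that each $\Grid(R_i)$ and $\Grid(S_i)$ is free on its atoms, exhibits $\Grid(E)$ as the free Segal object on the resulting graph in $\Seg(\FinOrd^{n-1})$, and then matches atoms of $E$ to generators according to whether $\stype(A)_n$ is $0$ (a generator in a vertex $R_i$) or $1$ (a generator in an edge $S_i$). If you want to salvage your universal-property framing, you would need to first carry out an analogue of this inductive decomposition to show that the atomic covering diagram of $E$ is sent to a colimit in $\Seg(\FinOrd^n)$ — at which point you have essentially reproduced the paper's argument.
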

\begin{proof}
  The claim is trivial for $n = 0$.
  When $n > 0$ we use the isomorphism $\BTrussOpen{n} \cong \BTrussOpenL{1}{\BTrussOpen{n - 1}}$ to write $E$ as an open $1$-truss labelled in $\BTrussOpen{n - 1}$
  and therefore a sequence of spans
  \begin{equation}\label{eq:grid-freely-generated:spans}
    \begin{tikzcd}
      {R_0} &
      {S_0} \ar[l] \ar[r] &
      {R_1} &
      {S_1} \ar[l] \ar[r] &
      {R_2} &
      \cdots \ar[l] \ar[r] &
      {R_k}
    \end{tikzcd}
  \end{equation}
  By induction we have that $\Grid(R_i)$ is the free $(n - 1)$-uple Segal space
  generated by the atoms in $R_i$ for all $0 \leq i \leq k$,
  and analogously $\Grid(S_i)$ is the free $(n - 1)$-uple Segal space
  generated by the atoms in $S_i$ for all $0 \leq i < k$.
  Via the maps of~(\ref{eq:grid-freely-generated:spans}) we then get a graph
  in the $\infty$-category of $(n - 1)$-fold Segal spaces:
  \[
    \begin{tikzcd}
      {G_1 := \coprod_{0 \leq i < k} \Grid(S_i)} \ar[r, shift left = 1] \ar[r, shift right = 1] &
      {\coprod_{0 \leq i \leq k} \Grid(R_i) =: G_0}
    \end{tikzcd}
  \]
  Then $\Grid(E)$ is the free Segal object in $\Seg(\FinOrd^{n - 1})$ generated
  by this graph, and the generators of $\Grid(E)$ are the generators of the
  free $(n - 1)$-fold Segal spaces of vertices $G_0$ and edges $G_1$.  
  When $A \hookrightarrow E$ is an atom with $\stype(A)_n = 0$,
  then $A \hookrightarrow E$ factors through a regular slice $R_i \hookrightarrow E$ for some
  $0 \leq i \leq k$. Therefore, the atom induces a generator in $G_0$.
  When $A \hookrightarrow E$ is an atom with $\stype(A)_n = 1$,
  then there is some $0 \leq i < k$ and a diagram of open $(n - 1)$-trusses
  \[
    \begin{tikzcd}
      {A_0} \ar[d] &
      {A_1} \ar[d] \ar[l] \ar[r] &
      {A_2} \ar[d] \\
      {R_i} &
      {S_i} \ar[l] \ar[r] &
      {R_{i + 1}}
    \end{tikzcd}
  \]
  where the vertical maps are inert and $A_1$ is an atom.
  Therefore, $A \hookrightarrow E$ corresponds exactly to the atom
  $A_1 \hookrightarrow S_i$ and therefore to a generator in $G_1$.
\end{proof}

\begin{cor}\label{cor:grid-segal-colimit}
  Let $E \in \BTrussOpen{n}$ be an open $n$-truss.
  When
  $U : \Exit(E)^{\op, \triangleright} \to \BTrussOpen{n}$
  is the atomic covering diagram of $E$,
  the induced diagram
  $\Grid(U(-))$
  is a colimit diagram in the $\infty$-category $\Seg(\FinOrd^n)$
  of $n$-uple Segal spaces.
\end{cor}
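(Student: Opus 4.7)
The plan is to verify the universal property directly, since $\Grid$ is not in general cocontinuous and we cannot simply transport Lemma~\ref{lem:fct-atom-covering-colimit} through it. However, Lemma~\ref{lem:grid-freely-generated} gives a very concrete description of $\Grid(T)$ for any open $n$-truss $T$ as freely generated by the atoms of $T$, which is exactly what is needed to control mapping spaces into an arbitrary Segal space.

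For any $n$-uple Segal space $\psh{F}$, I would show that the canonical comparison
\[
  \Map(\Grid(E), \psh{F}) \longrightarrow \lim_{p \in \Exit(E)^{\op}} \Map(\Grid(\atom{E}(p)), \psh{F})
\]
is an equivalence; by Yoneda this suffices. By Lemma~\ref{lem:grid-freely-generated}, the left hand side parametrises coherent families of cells of $\psh{F}$, one of shape $\stype(A)$ for each atom $A \hookrightarrow E$, subject to the Segal compatibilities encoded by inclusions of atoms. Since atoms of $E$ biject with $\Exit(E)$ via $p \mapsto \atom{E}(p)$, this data is naturally indexed by $\Exit(E)$.

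The right hand side is, by the same lemma applied to each $\atom{E}(p)$, a limit of analogous coherent families now indexed by the atoms of $\atom{E}(p)$ as $p$ varies over $\Exit(E)^{\op}$. The key combinatorial observation I would use is that the atom construction is local in the sense that $\atom{\atom{E}(p)}(q) = \atom{E}(q)$ for every $q \in \stratPos{\atom{E}(p)}$, so that the atoms of $\atom{E}(p)$ are precisely the $\atom{E}(q)$ for $q$ in the upward closure of $p$. This matches a coherent choice over all of $\Exit(E)$ (on the left) with a cone of coherent choices over the upward closures (on the right), with the constructible inclusions $\atom{E}(p') \hookrightarrow \atom{E}(p)$ for $p \leq p'$ providing the restriction maps.

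The hard part will be bookkeeping: making precise the matching between the Segal compatibilities internal to each $\Grid(\atom{E}(p))$ on the right and the global Segal compatibilities for $\Grid(E)$ on the left. Concretely, one must check that for nested atoms $A \hookrightarrow B \hookrightarrow E$, the Segal identification built into the free presentation of $\Grid(B)$ corresponds under Lemma~\ref{lem:grid-freely-generated} to the restriction map $\Grid(B) \to \Grid(A)$ induced by the constructible inclusion. This naturality should be transparent from the inductive graph-based construction in the proof of Lemma~\ref{lem:grid-freely-generated}: that proof builds $\Grid(E)$ layer by layer from the spans of regular and singular slices, and the restriction to $\atom{E}(p)$ simply picks out the subgraph of generators that lie above $p$.
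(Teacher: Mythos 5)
Your proposal is correct and takes essentially the same route as the paper: the paper's proof is a one-line appeal to Lemma~\ref{lem:grid-freely-generated}, observing that $\Grid(E)$ is freely generated with one generator per atom of $E$, and your argument is simply the explicit unwinding of why that free presentation yields the colimit property (via mapping spaces and the locality of the atom construction).
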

\begin{proof}
  This follows from Lemma~\ref{lem:grid-freely-generated} since
  $\Grid(E)$ is freely generated as an $n$-uple Segal space with one generator
  for every atom of $E$.
\end{proof}

\begin{example}
  Suppose we have an open $2$-truss $E \in \BTrussOpen{2}$ such that
  \[
    \TrussOpenReal{E}\quad\simeq\quad
    \begin{tikzpicture}[scale = 0.5, baseline=(current bounding box.center)]
      \fill[mesh-background] (0, -1) rectangle (9, 7);
      \draw[mesh-stratum] (1, -1) -- (1, 1) .. controls +(0, 0.2) and +(-1, 0) .. (2, 2);
      \draw[mesh-stratum] (3, -1) -- (3, 1) .. controls +(0, 0.2) and +(1, 0) .. (2, 2);
      \draw[mesh-stratum] (2, 2) -- (2, 5) .. controls +(0, 0.2) and +(-1, 0) .. (3, 6);
      \draw[mesh-stratum] (5, 0) -- (5, 4);
      \draw[mesh-stratum] (5, 4) .. controls +(-1, 0) and +(0, -0.2) .. (4, 5) .. controls +(0, 0.2) and +(1, 0) .. (3, 6);
      \draw[mesh-stratum] (5, 4) .. controls +(1, 0) and +(0, -0.2) .. (6, 5) .. controls +(0, 0.2) and +(-1, 0) .. (7, 6);
      \draw[mesh-stratum] (8, -1) -- (8, 5) .. controls +(0, 0.2) and +(1, 0) .. (7, 6);
      \draw[mesh-stratum] (7, 6) -- (7, 7);
      \node[mesh-vertex] at (5, 0) {};
      \node[mesh-vertex] at (5, 4) {};
      \node[mesh-vertex] at (2, 4) {};
      \node[mesh-vertex] at (8, 4) {};
      \node[mesh-vertex] at (2, 2) {};
      \node[mesh-vertex] at (5, 2) {};
      \node[mesh-vertex] at (8, 2) {};
      \node[mesh-vertex] at (7, 6) {};
      \node[mesh-vertex] at (3, 6) {};
      \node[mesh-vertex] at (1, 0) {};
      \node[mesh-vertex] at (3, 0) {};
      \node[mesh-vertex] at (8, 0) {};

      \draw[mesh-stratum] (0, 0) -- (9, 0);
      \draw[mesh-stratum] (0, 2) -- (9, 2);
      \draw[mesh-stratum] (0, 4) -- (9, 4);
      \draw[mesh-stratum] (0, 6) -- (9, 6);
    \end{tikzpicture}
  \]
  Then there is a $\ord{1, 1}$-cell in the $2$-uple Segal space $\Grid(E)$,
  corresponding to the unique active map $\ord{1, 1} \pto E$.
  This $\ord{1, 1}$-cell is the composite of the four $\ord{1, 1}$-cells: 
  \begin{align*}
    &\quad
    \begin{tikzpicture}[scale = 0.5, baseline=(current bounding box.center)]
      \fill[mesh-background] (0, 0) rectangle (9, 2);
      \draw[mesh-stratum] (0, 1) -- (9, 1);
      \draw[mesh-stratum] (2, 0) .. controls +(0, 0.2) and +(-1, 0) .. (3, 1);
      \draw[mesh-stratum] (4, 0) .. controls +(0, 0.2) and +(1, 0) .. (3, 1);
      \draw[mesh-stratum] (6, 0) .. controls +(0, 0.2) and +(-1, 0) .. (7, 1);
      \draw[mesh-stratum] (8, 0) .. controls +(0, 0.2) and +(1, 0) .. (7, 1);
      \draw[mesh-stratum] (7, 1) -- (7, 2);
      \node[mesh-vertex] at (3, 1) {};
      \node[mesh-vertex] at (7, 1) {};
    \end{tikzpicture}
    \\
    \circ_1 &\quad
    \begin{tikzpicture}[scale = 0.5, baseline=(current bounding box.center)]
      \fill[mesh-background] (0, 0) rectangle (9, 2);
      \draw[mesh-stratum] (0, 1) -- (9, 1);
      \draw[mesh-stratum] (5, 1) .. controls +(-1, 0) and +(0, -0.2) .. (4, 2);
      \draw[mesh-stratum] (5, 1) .. controls +(1, 0) and +(0, -0.2) .. (6, 2);
      \draw[mesh-stratum] (2, 0) -- (2, 2);
      \draw[mesh-stratum] (5, 0) -- (5, 1);
      \draw[mesh-stratum] (8, 0) -- (8, 2);
      \node[mesh-vertex] at (2, 1) {};
      \node[mesh-vertex] at (5, 1) {};
      \node[mesh-vertex] at (8, 1) {};
    \end{tikzpicture}
    \\
    \circ_1 &\quad
    \begin{tikzpicture}[scale = 0.5, baseline=(current bounding box.center)]
      \fill[mesh-background] (0, 0) rectangle (9, 2);
      \draw[mesh-stratum] (0, 1) -- (9, 1);
      \draw[mesh-stratum] (1, 0) .. controls +(0, 0.2) and +(-1, 0) .. (2, 1);
      \draw[mesh-stratum] (3, 0) .. controls +(0, 0.2) and +(1, 0) .. (2, 1);
      \draw[mesh-stratum] (2, 1) -- (2, 2);
      \draw[mesh-stratum] (5, 0) -- (5, 2);
      \draw[mesh-stratum] (8, 0) -- (8, 2);
      \node[mesh-vertex] at (2, 1) {};
      \node[mesh-vertex] at (5, 1) {};
      \node[mesh-vertex] at (8, 1) {};
    \end{tikzpicture}
    \\
    \circ_1 &\quad
    \begin{tikzpicture}[scale = 0.5, baseline=(current bounding box.center)]
      \fill[mesh-background] (0, 0) rectangle (9, 2);
      \draw[mesh-stratum] (0, 1) -- (9, 1);
      \draw[mesh-stratum] (1, 0) -- (1, 2);
      \draw[mesh-stratum] (3, 0) -- (3, 2);
      \draw[mesh-stratum] (5, 1) -- (5, 2);
      \draw[mesh-stratum] (8, 0) -- (8, 2);
      \node[mesh-vertex] at (1, 1) {};
      \node[mesh-vertex] at (3, 1) {};
      \node[mesh-vertex] at (5, 1) {};
      \node[mesh-vertex] at (8, 1) {};
    \end{tikzpicture}
  \end{align*}
  Each of these layers is itself the composite of its atoms:  
  \begin{align*}
    &\quad
    \left(
    \begin{tikzpicture}[scale = 0.5, baseline=(current bounding box.center)]
      \fill[mesh-background] (0, 0) rectangle (4, 2);
      \node[mesh-vertex] at (2, 1) {};
      \draw[mesh-stratum] (0, 1) -- (4, 1);
      \draw[mesh-stratum] (1, 0) .. controls +(0, 0.2) and +(-1, 0) .. (2, 1);
      \draw[mesh-stratum] (3, 0) .. controls +(0, 0.2) and +(1, 0) .. (2, 1);
    \end{tikzpicture}
    \quad\circ_2\quad
    \begin{tikzpicture}[scale = 0.5, baseline=(current bounding box.center)]
      \fill[mesh-background] (0, 0) rectangle (4, 2);
      \node[mesh-vertex] at (2, 1) {};
      \draw[mesh-stratum] (0, 1) -- (4, 1);
      \draw[mesh-stratum] (1, 0) .. controls +(0, 0.2) and +(-1, 0) .. (2, 1);
      \draw[mesh-stratum] (3, 0) .. controls +(0, 0.2) and +(1, 0) .. (2, 1);
      \draw[mesh-stratum] (2, 1) -- (2, 2);
    \end{tikzpicture}
    \right)
    \\
    \circ_1 &\quad
    \left(
    \begin{tikzpicture}[scale = 0.5, baseline=(current bounding box.center)]
      \fill[mesh-background] (0, 0) rectangle (2, 2);
      \node[mesh-vertex] at (1, 1) {};
      \draw[mesh-stratum] (0, 1) -- (2, 1);
      \draw[mesh-stratum] (1, 0) -- (1, 2);
    \end{tikzpicture}
    \quad\circ_2\quad
    \begin{tikzpicture}[scale = 0.5, baseline=(current bounding box.center)]
      \fill[mesh-background] (0, 0) rectangle (4, 2);
      \node[mesh-vertex] at (2, 1) {};
      \draw[mesh-stratum] (0, 1) -- (4, 1);
      \draw[mesh-stratum] (2, 1) .. controls +(-1, 0) and +(0, -0.2) .. (1, 2);
      \draw[mesh-stratum] (2, 1) .. controls +(1, 0) and +(0, -0.2) .. (3, 2);
      \draw[mesh-stratum] (2, 0) -- (2, 1);
    \end{tikzpicture}
    \quad\circ_2\quad
    \begin{tikzpicture}[scale = 0.5, baseline=(current bounding box.center)]
      \fill[mesh-background] (0, 0) rectangle (2, 2);
      \node[mesh-vertex] at (1, 1) {};
      \draw[mesh-stratum] (0, 1) -- (2, 1);
      \draw[mesh-stratum] (1, 0) -- (1, 2);
    \end{tikzpicture}
    \right)
    \\
    \circ_1 &\quad
    \left(
    \begin{tikzpicture}[scale = 0.5, baseline=(current bounding box.center)]
      \fill[mesh-background] (0, 0) rectangle (4, 2);
      \node[mesh-vertex] at (2, 1) {};
      \draw[mesh-stratum] (0, 1) -- (4, 1);
      \draw[mesh-stratum] (1, 0) .. controls +(0, 0.2) and +(-1, 0) .. (2, 1);
      \draw[mesh-stratum] (3, 0) .. controls +(0, 0.2) and +(1, 0) .. (2, 1);
      \draw[mesh-stratum] (2, 1) -- (2, 2);
    \end{tikzpicture}
    \quad\circ_2\quad
    \begin{tikzpicture}[scale = 0.5, baseline=(current bounding box.center)]
      \fill[mesh-background] (0, 0) rectangle (2, 2);
      \node[mesh-vertex] at (1, 1) {};
      \draw[mesh-stratum] (0, 1) -- (2, 1);
      \draw[mesh-stratum] (1, 0) -- (1, 2);
    \end{tikzpicture}
    \quad\circ_2\quad
    \begin{tikzpicture}[scale = 0.5, baseline=(current bounding box.center)]
      \fill[mesh-background] (0, 0) rectangle (2, 2);
      \node[mesh-vertex] at (1, 1) {};
      \draw[mesh-stratum] (0, 1) -- (2, 1);
      \draw[mesh-stratum] (1, 0) -- (1, 2);
    \end{tikzpicture}
    \right)
    \\
    \circ_1 &\quad
    \left(
    \begin{tikzpicture}[scale = 0.5, baseline=(current bounding box.center)]
      \fill[mesh-background] (0, 0) rectangle (2, 2);
      \node[mesh-vertex] at (1, 1) {};
      \draw[mesh-stratum] (0, 1) -- (2, 1);
      \draw[mesh-stratum] (1, 0) -- (1, 2);
    \end{tikzpicture}
    \quad\circ_2\quad
    \begin{tikzpicture}[scale = 0.5, baseline=(current bounding box.center)]
      \fill[mesh-background] (0, 0) rectangle (2, 2);
      \node[mesh-vertex] at (1, 1) {};
      \draw[mesh-stratum] (0, 1) -- (2, 1);
      \draw[mesh-stratum] (1, 0) -- (1, 2);
    \end{tikzpicture}
    \quad\circ_2\quad
    \begin{tikzpicture}[scale = 0.5, baseline=(current bounding box.center)]
      \fill[mesh-background] (0, 0) rectangle (2, 2);
      \node[mesh-vertex] at (1, 1) {};
      \draw[mesh-stratum] (0, 1) -- (2, 1);
      \draw[mesh-stratum] (1, 1) -- (1, 2);
    \end{tikzpicture}
    \quad\circ_2\quad
    \begin{tikzpicture}[scale = 0.5, baseline=(current bounding box.center)]
      \fill[mesh-background] (0, 0) rectangle (2, 2);
      \node[mesh-vertex] at (1, 1) {};
      \draw[mesh-stratum] (0, 1) -- (2, 1);
      \draw[mesh-stratum] (1, 0) -- (1, 2);
    \end{tikzpicture}
    \right)
  \end{align*}
  These atoms freely generate the $2$-uple Segal space $\Grid(E)$.
\end{example}

\begin{proposition}\label{prop:diagrammatic-segal-to-segal}
  The adjunction of $\infty$-categories of space-valued presheaves
  \[
    \begin{tikzcd}[column sep = large]
      {\PSh(\BTrussOpen{n})}
      \ar[r, shift left = 1.5, "(\grid)^*"{name=0}, anchor=center]
      &
      {\PSh(\FinOrd^n)}
      \ar[l, shift left = 1.5, "(\grid)_{*}"{name=1}, anchor=center, hook]
  	  \arrow["{\scriptscriptstyle\dashv}"{anchor=center, rotate=-90}, draw=none, from=0, to=1]
    \end{tikzcd}
  \]
  induced by the inclusion functor $\grid : \FinOrd^n \to \BTrussOpen{n}$
  restricts to an adjunction
  \[
    \begin{tikzcd}[column sep = large]
      {\Seg(\BTrussOpen{n})}
      \ar[r, shift left = 1.5, ""{name=0}, anchor=center]
      &
      {\Seg(\FinOrd^n)}
      \ar[l, shift left = 1.5, ""{name=1}, anchor=center, hook]
  	  \arrow["{\scriptscriptstyle\dashv}"{anchor=center, rotate=-90}, draw=none, from=0, to=1]
    \end{tikzcd}
  \]
\end{proposition}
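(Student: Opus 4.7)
The plan is to show that both functors $(\grid)^*$ and $(\grid)_*$ preserve the Segal property. Because the inclusions $\Seg(\BTrussOpen{n}) \hookrightarrow \PSh(\BTrussOpen{n})$ and $\Seg(\FinOrd^n) \hookrightarrow \PSh(\FinOrd^n)$ are full subcategories, once both functors restrict to these subcategories the unit and counit restrict as well, yielding the desired adjunction automatically.

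For $(\grid)^*$ preserving Segal, the key point is to identify the $n$-uple Segal decomposition of $\ord{k_1, \ldots, k_n} \in \FinOrd^n$ with the atomic covering diagram of the open $n$-truss $\grid\ord{k_1, \ldots, k_n} \in \BTrussOpen{n}$. The atoms of $\grid\ord{k_1, \ldots, k_n}$ are precisely the sub-grids $\grid\ord{k_1', \ldots, k_n'}$ with $k_i' \in \{0, 1\}$ corresponding to the subcube structure, and the inert bordisms between them come from the inert maps of $\FinOrd^n$. I would verify that the evident functor from $\FinOrd^n_{\leq 1} \times_{\FinOrd^n} \FinOrd^{\inert, n}_{/\ord{k_1, \ldots, k_n}}$ into $\Exit(\grid\ord{k_1,\ldots,k_n})^\op$ is cofinal (or even an equivalence), so that limits along both diagrams agree. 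Consequently, the Segal condition for $\psh{F}$ evaluated on grids yields the $n$-uple Segal condition for $(\grid)^*\psh{F}$.

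For $(\grid)_*$ preserving Segal, the starting point is the standard identity that, since $\grid$ is fully faithful, for any open $n$-truss $T$
\[
    ((\grid)_* \psh{G})(T) \;\simeq\; \Map_{\PSh(\FinOrd^n)}(\Grid(T),\, \psh{G}),
\]
where $\Grid(T) = (\grid)^* \yon T = \BTrussOpen{n}(\grid(-), T)$. Given the atomic covering diagram $U : \Exit(T)^{\op, \triangleright} \to \BTrussOpen{n}$, I need
\[
    \Map_{\PSh(\FinOrd^n)}(\Grid(T), \psh{G}) \;\simeq\; \lim_{p \in \Exit(T)^\op} \Map_{\PSh(\FinOrd^n)}(\Grid(\atom{T}(p)), \psh{G}).
\]
By Corollary~\ref{cor:grid-segal-colimit}, the composite $\Grid \circ U$ is a colimit diagram in $\Seg(\FinOrd^n)$. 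Since $\psh{G}$ is Segal-local, mapping into $\psh{G}$ out of a presheaf is equivalent to mapping into $\psh{G}$ out of its Segal localisation, so the colimit in $\Seg(\FinOrd^n)$ is sent to a limit of mapping spaces. By Lemma~\ref{lem:grid-freely-generated} each $\Grid(\atom{T}(p))$ is already Segal, so the computation is internal to $\Seg(\FinOrd^n)$, giving the required limit formula.

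I expect the main obstacle to be the careful cofinality/identification argument in the first step: showing that the atomic covering diagram of $\grid\ord{k_1,\ldots,k_n}$, which a priori lives in the whole of $\BTrussOpen{n}$, factors through the image of $\grid$ and there matches the indexing category $\FinOrd^n_{\leq 1} \times_{\FinOrd^n} \FinOrd^{\inert, n}_{/\ord{k_1, \ldots, k_n}}$ up to cofinal equivalence. Once this identification is made, the second step is essentially a formal consequence of Corollary~\ref{cor:grid-segal-colimit} together with Segal-locality of $\psh{G}$.
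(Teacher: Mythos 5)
Your proposal is correct and follows essentially the same route as the paper: the restriction functor preserves the Segal condition by matching the $n$-uple Segal decomposition with the atomic cover of a grid truss, and $(\grid)_*$ preserves it via $((\grid)_*\psh{G})(T) \simeq \Map(\Grid(T), \psh{G})$ together with Corollary~\ref{cor:grid-segal-colimit} and Segal-locality. The only difference is that you spell out the cofinality identification for the $(\grid)^*$ direction, which the paper treats as immediate.
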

\begin{proof}
  The left adjoint $(\grid)^*$ trivially preserves Segal objects,
  so it remains to show that $(\grid)_*$ does so as well.
  Let $\psh{F} \in \PSh(\FinOrd^n)$ be an $n$-uple Segal space.
  Then the value of the presheaf $(\grid)_* \psh{F} \in \PSh(\BTrussOpen{n})$
  on some $E \in \BTrussOpen{n}$ is the space of maps $\Grid(E) \to \psh{F}$
  in $\PSh(\FinOrd^n)$.
  Let $U : \Exit(E)^\triangleright \to \BTrussOpen{n}$ be the atomic covering
  diagram of $E$, then by Corollary~\ref{cor:grid-segal-colimit} we have that
  $\Grid(U(-))$ is a colimit diagram in $\Seg(\FinOrd^n)$.
  It therefore follows that 
  \[
    (\grid)_* \psh{F}(U(-)) \simeq
    \PSh(\FinOrd^n)(\Grid(U(-)), \psh{F}) \simeq
    \Seg(\FinOrd^n)(\Grid(U(-)), \psh{F})
  \]
  is a limit diagram. Hence, $(\grid)_* \psh{F}$ satisfies the Segal condition.
\end{proof}

\begin{example}
  While the induced functor $\Seg(\FinOrd^n) \to \Seg(\BTrussOpen{n})$ is fully faithful,
  it is not essentially surjective.
  Let us illustrate this by giving an example of a Segal object $\psh{F} \in \Seg(\BTrussOpen{2})$ that is not in the essential image.
  Suppose that $A$ is the atom
  \[
    A \quad = \quad
    \begin{tikzpicture}[scale = 0.5, baseline=(current bounding box.center)]
      \fill[mesh-background] (0, 0) rectangle (4, 4);
      \node[mesh-vertex] at (2, 2) {};
      \draw[mesh-stratum] (1, 0) -- (1, 1) .. controls +(0, 0.2) and +(-1, 0) .. (2, 2);
      \draw[mesh-stratum] (3, 0) -- (3, 1) .. controls +(0, 0.2) and +(1, 0) .. (2, 2);
      \draw[mesh-stratum] (2, 2) -- (2, 4);
      \draw[mesh-stratum] (0, 2) -- (4, 2);
    \end{tikzpicture}
  \]
  We let $\psh{C} \in \Seg(\FinOrd^2)$ be the $2$-fold Segal space that
  is generated by one $0$-cell $x$, three $1$-cells $f, g, h : x \to x$ and a $2$-cell
  $m : g \circ f \to h$.  
  We let $\psh{F} \hookrightarrow (\grid)_* \psh{C}$ be the smallest subobject
  in $\Seg(\BTrussOpen{2})$
  which contains the generators $\grid\ord{1, 0} \to (\grid)_*\psh{F}$ corresponding to $f$, $g$, $h$ and the generator $\grid\ord{1, 1} \to (\grid)_*\psh{F}$ corresponding to $m$.
  Then $\psh{F}(A)$ contains no point representing $m$ whose source restricts to the formal composite of $f$ and $g$:
  \[
    % \begin{tikzpicture}[scale = 0.5, baseline=(current bounding box.center)]
    %   \fill[mesh-background] (0, 0) rectangle (4, 4);
    %   \node[mesh-vertex] at (2, 2) {};
    %   \draw[mesh-stratum] (1, 0) -- (1, 1) .. controls +(0, 0.2) and +(-1, 0) .. (2, 2);
    %   \draw[mesh-stratum] (3, 0) -- (3, 1) .. controls +(0, 0.2) and +(1, 0) .. (2, 2);
    %   \draw[mesh-stratum] (2, 2) -- (2, 4);
    %   \draw[mesh-stratum] (0, 2) -- (4, 2);
    %   \node at (1, -0.5) {$g \circ f$};
    %   \node at (3, -0.5) {$\id_x$};
    %   \node at (2, 4.5) {$h$};
    %   \node at (2.5, 2.5) {$m$};
    % \end{tikzpicture}
    % \in \psh{F}(A)
    \begin{tikzpicture}[scale = 0.5, baseline=(current bounding box.center)]
      \fill[mesh-background] (0, 0) rectangle (4, 4);
      \draw[mesh-stratum] (1, 0) -- +(0, 4);
      \draw[mesh-stratum] (3, 0) -- +(0, 4);
      \node at (1, -0.5) {$f$};
      \node at (3, -0.5) {$g$};
      \node at (1, 4.5) {$f$};
      \node at (3, 4.5) {$g$};
    \end{tikzpicture}
    \in \psh{F}(\ord{2, 0})
    \qquad
    \begin{tikzpicture}[scale = 0.5, baseline=(current bounding box.center)]
      \fill[mesh-background] (0, 0) rectangle (4, 4);
      \node[mesh-vertex] at (2, 2) {};
      \draw[mesh-stratum] (2, 0) -- (2, 2);
      \draw[mesh-stratum] (2, 2) -- (2, 4);
      \draw[mesh-stratum] (0, 2) -- (4, 2);
      \node at (2, -0.5) {$g \circ f$};
      \node at (2, 4.5) {$h$};
      \node at (2.5, 2.5) {$m$};
    \end{tikzpicture}
    \in \psh{F}(\ord{1, 1})
    \qquad
    \begin{tikzpicture}[scale = 0.5, baseline=(current bounding box.center)]
      \fill[mesh-background] (0, 0) rectangle (4, 4);
      \node[mesh-vertex] at (2, 2) {};
      \draw[mesh-stratum] (1, 0) -- (1, 1) .. controls +(0, 0.2) and +(-1, 0) .. (2, 2);
      \draw[mesh-stratum] (3, 0) -- (3, 1) .. controls +(0, 0.2) and +(1, 0) .. (2, 2);
      \draw[mesh-stratum] (2, 2) -- (2, 4);
      \draw[mesh-stratum] (0, 2) -- (4, 2);
      \node at (1, -0.5) {$f$};
      \node at (3, -0.5) {$g$};
      \node at (2, 4.5) {$h$};
      \node at (2.5, 2.5) {$m$};
    \end{tikzpicture}
    \not\in \psh{F}(A)
  \]
  As a result $\psh{F}$ is not in the essential image of $(\grid)_*$.
\end{example}

\begin{construction}
  For an atom $A \in \BTrussOpen{n}$ we write $\partial A$ for the presheaf
  on $\BTrussOpen{n}$ that is the colimit of the representable presheaves
  $\BTrussOpen{n}(-, B)$ for all embeddings $B \hookrightarrow A$ except for the identity.
  When $\psh{F}$ is a presheaf on $\BTrussOpen{n}$, we write $\psh{F}(\partial A)$
  for the space of maps of presheaves $\partial A \to \psh{F}$, i.e.\
  the limit of $\psh{F}(B)$ for all non-trivial embeddings $B \hookrightarrow A$.
\end{construction}

We also recall the singular shape construction from~\S\ref{sec:fct-embed-stype} which assigns
to every mesh atom $\strat{A}$ the mesh atom $\shape{\strat{A}} := \gridMeshOpen^n(\stype(\strat{A}))$ together with an active bordism $\shape{\strat{A}} \pto \strat{A}$.
Via the equivalence between open meshes and open trusses, the singular shape
construction transfers to truss atoms. We note that for every truss atom $A$,
the singular shape $\shape{A}$ is within the image of $\gridTrussOpen^n : \FinOrd^n \hookrightarrow \BTrussOpen{n}$.

\begin{definition}
  A presheaf $\psh{F} \in \PSh(\BTrussOpen{n})$ is \defn{regular}
  when for every atom $A \in \BTrussOpen{n}$ the induced diagram of spaces
  \[
    \begin{tikzcd}
      {\psh{F}(A)} \ar[r] \ar[d] &
      {\psh{F}(\shape{A})} \ar[d] \\
      {\psh{F}(\partial A)} \ar[r] &
      {\psh{F}(\partial \shape{A})}
    \end{tikzcd}
  \]
  is a pullback square. The presheaf $\psh{F}$ is an
  \defn{$n$-diagrammatic space} when it is regular and satisfies the Segal condition.
  We denote by $\Seg^\regular(\BTrussOpen{n})$ the full subcategory of
  $\PSh(\BTrussOpen{n})$ consisting of the $n$-diagrammatic spaces.
\end{definition}

\begin{lemma}\label{lem:grid-regular-colimit}
  Let $0 \leq n \leq \infty$ and $A \in \BTrussOpen{n}$ an atom. Then
  \[
    \begin{tikzcd}
      {\Grid(\partial \shape{A})} \ar[r] \ar[d] &
      {\Grid(\partial A)} \ar[d] \\
      {\Grid(\shape{A})} \ar[r] &
      {\Grid(A)}
    \end{tikzcd}
  \]
  is a pushout square in the $\infty$-category $\Seg(\FinOrd^n)$
  of $n$-uple Segal spaces.
\end{lemma}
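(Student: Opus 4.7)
The plan is to interpret the square as a cell-attachment pushout in the $\infty$-category of free $n$-uple Segal spaces. By Lemma~\ref{lem:grid-freely-generated}, each vertex is freely generated: $\Grid(E)$ has one generator per atom embedded in $E$, with attaching data of a generator determined by its sub-atom embeddings. For an atom $A$, the generators of $\Grid(A)$ decompose as the top generator $g_A$ (of shape $\stype(A)$) together with the generators indexed by the atoms of $\partial A$, which are exactly the generators of $\Grid(\partial A)$. The same decomposition applies to $\shape{A}$, and since $\stype(\shape{A}) = \stype(A)$ the two top generators are ``the same cell'', differing only in how they attach.

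The first step is to show that $\Grid(\shape{A})$ is freely obtained from $\Grid(\partial\shape{A})$ by adjoining a single top generator of shape $\stype(A)$ attached along the standard boundary inclusion $\partial\shape{A} \hookrightarrow \shape{A}$; this is immediate from the explicit presentation of $\Grid$ on grid trusses in Lemma~\ref{lem:grid-freely-generated}. Consequently, the pushout $\Grid(\shape{A}) \sqcup_{\Grid(\partial\shape{A})} \Grid(\partial A)$ is the free $n$-uple Segal space obtained from $\Grid(\partial A)$ by adjoining one top generator of shape $\stype(A)$, now attached via the composite $\partial\shape{A} \to \partial A$ coming from the active bordism $\shape{A} \pto A$. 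It then remains to verify that $\Grid(A)$ admits exactly the same description: its top generator has shape $\stype(A)$ and is attached to $\Grid(\partial A)$ via the map $\partial\shape{A} \to \partial A$ induced by $\shape{A} \pto A$. I would carry this out by induction on $n$ via the equivalence $\BTrussOpen{n} \simeq \BTrussOpenL{1}{\BTrussOpen{n-1}}$: when $\stype(A)_n = 1$, the atom $A$ is presented as a span $R_0 \leftarrow S_0 \to R_1$ of atoms in $\BTrussOpen{n-1}$, and the inductive hypothesis yields cell-attachment pushouts for the $\Grid(R_i)$ and $\Grid(S_0)$ that assemble into the required pushout in dimension $n$ via the free Segal construction for graphs; the case $\stype(A)_n = 0$ reduces directly to the $(n-1)$-dimensional statement.

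The main obstacle is verifying that the attaching map of the top generator of $\Grid(A)$ relative to $\Grid(\partial A)$ coincides with the map coming from $\shape{A} \pto A$. This requires unpacking how $\Grid$ transports the active bordism, together with the fact that active maps between atoms preserve the distinguished top atom (as in Lemma~\ref{lem:fct-degeneracy-preserve-atom} and its active analogue), so that the comparison $\shape{A} \to A$ reduces to a map of boundaries that agrees with the canonical attaching datum of the top generator in $\Grid(A)$.
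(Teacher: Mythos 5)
Your proposal is correct and follows essentially the same route as the paper: both rest on Lemma~\ref{lem:grid-freely-generated} to see each corner as freely generated with one generator per embedded atom, so that the pushout simply adjoins the missing top generator of shape $\stype(A)$ to $\Grid(\partial A)$. The paper's proof is in fact terser than yours --- it does not explicitly verify the compatibility of attaching maps that you flag as the main obstacle --- so your extra care there is a refinement of, not a departure from, the argument in the text.
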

\begin{proof}
  By Lemma~\ref{lem:grid-freely-generated}
  the $n$-uple Segal space $\Grid(A)$ is freely generated
  with one generator for each atom embedded in $A$, including the
  identity embedding $\id : A \to A$.
  The boundary $\Grid(\partial A)$ is also freely generated
  as an $n$-uple Segal space with the same generators except $A$ itself.
  The pushout adds the missing generator.
\end{proof}

\begin{thm}
  The adjunction of $\infty$-categories of space-valued presheaves
  \[
    \begin{tikzcd}[column sep = large]
      {\PSh(\BTrussOpen{n})}
      \ar[r, shift left = 1.5, "(\grid)^*"{name=0}, anchor=center]
      &
      {\PSh(\FinOrd^n)}
      \ar[l, shift left = 1.5, "(\grid)_{*}"{name=1}, anchor=center]
  	  \arrow["{\scriptscriptstyle\dashv}"{anchor=center, rotate=-90}, draw=none, from=0, to=1]
    \end{tikzcd}
  \]
  induced by the inclusion functor $\grid : \FinOrd^n \hookrightarrow \BTrussOpen{n}$
  restricts to an adjoint equivalence
  \[
    \begin{tikzcd}[column sep = large]
      {\Seg^\regular(\BTruss{n})}
      \ar[r, shift left = 1.5, ""{name=0}, anchor=center]
      &
      {\Seg(\FinOrd^n)}
      \ar[l, shift left = 1.5, ""{name=1}, anchor=center]
  	  \arrow["{\scriptscriptstyle\simeq}"{anchor=center}, draw=none, from=0, to=1]
    \end{tikzcd}
  \]
  between the $\infty$-categories of $n$-diagrammatic spaces and $n$-uple Segal spaces.
\end{thm}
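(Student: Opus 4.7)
The plan is to reduce the statement to two claims: first, that the right adjoint $(\grid)_*$ factors through $\Seg^\regular(\BTrussOpen{n})$; and second, that every $n$-diagrammatic presheaf lies in its essential image. Since $\grid : \FinOrd^n \hookrightarrow \BTrussOpen{n}$ is fully faithful, the right Kan extension $(\grid)_*$ is automatically fully faithful as a functor $\PSh(\FinOrd^n) \to \PSh(\BTrussOpen{n})$ (the counit $(\grid)^*(\grid)_* \to \id$ is an equivalence). Combined with Proposition~\ref{prop:diagrammatic-segal-to-segal}, which already restricts the adjunction to Segal objects, establishing these two claims will yield the desired equivalence onto $\Seg^\regular(\BTrussOpen{n})$.

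For the first claim, let $\psh{F}$ be an $n$-uple Segal space and $A \in \BTrussOpen{n}$ an atom. By the defining property of right Kan extension along $\grid$, together with Proposition~\ref{prop:diagrammatic-segal-to-segal}, the value $(\grid)_*\psh{F}(E)$ is naturally equivalent to the mapping space $\Seg(\FinOrd^n)(\Grid(E), \psh{F})$ for any $E \in \BTrussOpen{n}$. Lemma~\ref{lem:grid-regular-colimit} asserts that the square relating $\Grid(\partial\shape{A})$, $\Grid(\partial A)$, $\Grid(\shape{A})$ and $\Grid(A)$ is a pushout in $\Seg(\FinOrd^n)$, so mapping it into $\psh{F}$ produces a pullback of spaces. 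This pullback is precisely the regularity condition applied to $(\grid)_*\psh{F}$.

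For the second claim, let $\psh{G} \in \Seg^\regular(\BTrussOpen{n})$. Both $\psh{G}$ and $(\grid)_*(\grid)^*\psh{G}$ are $n$-diagrammatic by the first claim, so the Segal condition reduces the problem of checking that the unit $\eta : \psh{G} \to (\grid)_*(\grid)^*\psh{G}$ is an equivalence to checking it on atoms. Fix an atom $A$. Applying regularity to both sides and invoking the first claim for $(\grid)_*(\grid)^*\psh{G}$, the component $\eta_A$ is the induced map between two pullback squares whose other vertices are $\shape{A}$, $\partial A$, and $\partial\shape{A}$. Since $\shape{A}$ lies in the essential image of $\grid$, the unit is an equivalence on $\shape{A}$ by full faithfulness of $\grid$. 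Now I would proceed by induction on the number of strata of $A$: the base case is when $A$ has a single stratum, so $A = \shape{A}$, which is already handled. In the inductive step, $\partial A$ and $\partial\shape{A}$ are colimits of representables on proper sub-trusses $B \hookrightarrow A$; applying the Segal condition inside each such value reduces their evaluation to limits over atoms properly contained in $A$, each of which has strictly fewer strata. The inductive hypothesis then gives the equivalence on $\partial A$ and $\partial\shape{A}$, and the pullback comparison completes the step.

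The main obstacle is ensuring the induction interacts cleanly with the Segal condition when evaluating presheaves on boundaries: $\partial A$ is a colimit of representables on \emph{all} proper embeddings $B \hookrightarrow A$, not only atomic ones, so one must use Segal inside each $\psh{G}(B)$ to further decompose the limit into atoms before the induction on stratum count bites. A secondary subtlety is verifying that the comparison map between the two pullback squares built out of regularity is genuinely the component $\eta_A$ of the unit, which amounts to unwinding the naturality of the unit against the inclusion $\partial A \hookrightarrow A$ and the active map $\shape{A} \pto A$; this should be a formal check using the explicit description of $(\grid)_*\psh{F}$ as the presheaf of Segal maps out of $\Grid(-)$.
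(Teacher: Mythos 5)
Your proposal is correct and follows essentially the same route as the paper's proof: the counit is an equivalence by full faithfulness of $\grid$, regularity of $(\grid)_*\psh{F}$ follows from Lemma~\ref{lem:grid-regular-colimit}, and the unit is handled by an induction that combines the Segal decomposition with the regularity pullback and the fact that $\shape{A}$ (and its subatoms) lie in the image of $\grid$. The only difference is organizational — the paper inducts on the structure of a general truss $E$ with separate atom/non-atom cases, whereas you first reduce to atoms via Segal and then induct on stratum count — and this does not affect correctness.
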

\begin{proof}
  The restriction functor $(\grid)^*$ preserves the Segal condition.
  For the other direction, suppose that $\psh{F} \in \Seg(\FinOrd^n)$ is an $n$-uple
  Segal space. By Proposition~\ref{prop:diagrammatic-segal-to-segal} we already know
  that $(\grid)_* \psh{F}$ also satisfies the Segal condition.
  Regularity of $(\grid)_* \psh{F}$ then follows from Lemma~\ref{lem:grid-regular-colimit}.
   
  Therefore, the adjunction $(\grid)^* \dashv (\grid)_*$ restricts to an adjunction
  $\Seg^\regular(\BTrussOpen{n}) \rightleftarrows \Seg(\FinOrd^n)$;
  it remains to show that this adjunction is an equivalence.
  Because $\grid$ is fully faithful, the counit $(\grid)^* (\grid)_{*} \to \id$
  is a natural equivalence and therefore $(\grid)^* (\grid)_{*} \psh{F} \to \psh{F}$
  is an equivalence for each $\psh{F} \in \Seg(\FinOrd^n)$.
  Now let $\psh{G} \in \Seg^\regular(\BTrussOpen{n})$.
  We show that for every open $n$-truss $E \in \BTrussOpen{n}$ the value of
  the unit $\psh{G}(E) \to (\grid)_{*} (\grid)^* \psh{G}(E)$ is an equivalence,
  using induction on the structure of $E$.
  The base case of $E = \ord{0}^n$ is clear.  
  If $E$ is an atom, then we get an induced diagram
  \[
  	\begin{tikzcd}[row sep = small, column sep = small]
  		% Top back row
  		& {(\grid)_{*} (\grid)^* \psh{G}(E)} && {(\grid)_{*} (\grid)^* \psh{G}(\shape{E})} \\
  		% Top front row
  		{\psh{G}(E)} && {\psh{G}(\shape{E})} \\
  		% Bottom back row
  		& {(\grid)_{*} \grid^* \psh{G}(\partial E)} && {(\grid)_{*} (\grid)^* \psh{G}(\partial \shape{E})} \\
  		% Bottom front row
  		{\psh{G}(\partial E)} && {\psh{G}(\partial \shape{E})}
  		% Front square
  		\arrow[from=2-1, to=4-1]
  		\arrow[from=4-1, to=4-3]
  		\arrow[from=1-4, to=3-4]
  		% Back square
  		\arrow[from=1-2, to=3-2]
  		\arrow[from=3-2, to=3-4]
  		\arrow[from=1-2, to=1-4]
  		% Diagonals
  		\arrow[from=2-1, to=1-2]
  		\arrow[from=4-1, to=3-2, "\simeq"]
  		\arrow[from=4-3, to=3-4, "\simeq"]
  		\arrow[from=2-3, to=1-4, "\simeq"]
  		\arrow[crossing over, from=2-1, to=2-3]
  		\arrow[crossing over, from=2-3, to=4-3]
  	\end{tikzcd}
  \]
  The front square is a pullback square because $\psh{G}$ is regular by assumption,
  and the back square is a pullback square because $(\grid)_{*}$ sends any
  presheaf on $\FinOrd^n$ to a regular presheaf on $\BTrussOpen{n}$ as shown above.
  The diagonal maps on the right are equivalences since $\shape{E}$ is within the
  image of $\grid$.
  The diagonal map on the bottom right is an equivalence by induction.
  Therefore, it follows that the diagonal map on the top left must be an equivalence.

  When $E$ is not an atom, then there is a covering diagram
  $U : \Entr(E)^\triangleright \to \BTrussOpen{n}$ which covers $E$ with atoms.
  By induction on the number of atoms in the open truss we have that $\psh{G}(U(i)) \to (\grid)_{*}(\grid)^* \psh{G}(U(i))$
  is an equivalence for each $i \in \Entr(E)$.
  We have that $\psh{G}$ is Segal by assumption
  and $(\grid)_{*} (\grid)^* \psh{G}$ is Segal because both functors preserve
  Segal objects, as shown above.
  Therefore, it follows that $\psh{G}(E) \to (\grid)_{*} (\grid)^* \psh{G}(E)$
  must be an equivalence.
  This concludes the proof that the unit $\psh{G} \to (\grid)_{*} (\grid)^* \psh{G}$
  is an equivalence.
\end{proof}

% \begin{definition}
%   An $n$-diagrammatic space $\psh{F} \in \PSh(\BTrussOpen{n})$ is \defn{globular}
%   when for every embedding of atoms $A \hookrightarrow B$ preserving
%   singular depth $\sdepth(A) = \sdepth(B)$, the induced map
%   $\psh{F}(B) \to \psh{F}(A)$ is an equivalence.
%   The globular $n$-diagrammatic spaces form a full $\infty$-subcategory
%   $\Seg^{\regular, \glob}(\BTrussOpen{n})$ of the $\infty$-category of
%   space-valued presheaves $\PSh(\BTrussOpen{n})$.
% \end{definition}

\begin{para}
  Suppose that $\psh{F}$ is an $n$-diagrammatic space.
  We then say that $\psh{F}$ is globular when the $n$-uple Segal space $(\grid)^* \psh{F}$
  is globular, i.e.\ an $n$-fold Segal space.
  Similarly, we say that $\psh{F}$ is complete when the $n$-uple Segal space
  $(\grid)^* \psh{F}$ is complete.
  Then the $\infty$-category $\CatN{n}$ of $(\infty, n)$-categories is the reflective
  subcategory of $\Seg^\regular(\BTrussOpen{n})$ consisting of the globular and complete
  objects.
\end{para}

\begin{para}
  We have so far defined $n$-diagrammatic spaces as presheaves on $\BTrussOpen{n}$
  satisfying the regularity and the sheaf condition.
  The sequence of equivalences
  \[
    \BMeshOpen{n} \simeq
    \BTrussOpen{n} \simeq
    \BTrussClosed{n, \op} \simeq
    \BMeshClosed{n, \op}
  \]
  induces a sequence of equivalences between the $\infty$-categories of presheaves
  \[
    \PSh(\BMeshOpen{n}) \simeq
    \PSh(\BTrussOpen{n}) \simeq
    \PSh(\BTrussClosed{n, \op}) \simeq
    \PSh(\BMeshClosed{n, \op}).
  \]
  We say that a presheaf in any of these $\infty$-categories is an $n$-diagrammatic
  space if it becomes an $n$-diagrammatic space as a presheaf on $\BTrussOpen{n}$
  by applying the equivalences.  
  In particular in \S\ref{sec:d-mfld}, we will construct an $n$-diagrammatic space of manifold diagrams
  as a presheaf on $\BMeshClosed{n, \op}$, i.e. as a functor $\BMeshClosed{n} \to \Space$.
\end{para}

\section{Manifold Diagrams}\label{sec:d-mfld}

Open $n$-meshes capture all the shapes that we would expect from an $n$-manifold diagram
calculus and, via diagrammatic spaces, we can probe any $(\infty, n)$-category with
open $n$-meshes. However, the space of automorphisms of any open $n$-mesh is contractible.
This conflicts with our goal of an $n$-dimensional graphical calculus in which the weak interchange
law of higher categories is captured by the space of isotopies.
Below we describe a notion of $n$-manifold diagrams that generalises string diagrams and
surface diagrams and admits enough isotopies.

% We begin in~\S\ref{sec:d-mfld-geo} by defining $n$-manifold diagrams through
% a local framed conicality condition together with orthogonality to a closed $n$-mesh $\strat{X}$.
% By extending the framed conicality condition to bundles,
% we define isotopies of $n$-manifold diagrams on $\strat{X}$ in~\S\ref{sec:d-mfld-isotopy}
% organising into a space
% $\MfldDiag^n(\strat{X})$.
% We show that this space is functorial in the closed $n$-mesh in~\S\ref{sec:d-mfld-functorial}
% which induces a functor $\MfldDiag^n(-) : \BMeshClosed{n} \to \Space$.
% In~\S\ref{sec:d-mfld-diag-space} we prove that this functor defines an $n$-diagrammatic space.
% Finally in~\S\ref{sec:d-mfld-extended} we discuss extended $n$-manifold diagrams
% and show that they organise into an $n$-diagrammatic space
% $\MfldDiagExt^n(-) : \BMeshClosed{n} \to \Space$ that is equivalent to
% $\MfldDiag^n(-)$.

% We will define $n$-manifold diagrams formally in Definition~\ref{}.
% Until that point we will use the term $n$-manifold diagram to refer to 
% the ideal that we want our definition to capture.
% When we say that an $n$-manifold diagram \textit{should} have 

% Therefore, when we say that an $n$-manifold diagram \textit{should} have
% a particular property, we do not mean to express uncertainty about the consequences
% our definition, but intend to motivate the choices that went into this definition.

In \S\ref{sec:d-mfld-local} we characterise the local properties of $n$-manifold
diagrams so that their strata correspond to shapes of cells in $(\infty, n)$-categories.
In particular the strata are manifolds of varying dimension and satisfy a generalisation
of the progressivity condition of string diagrams.
In~\S\ref{sec:d-mfld-global} we define $n$-manifold diagrams supported on any closed $n$-mesh $\strat{X}$. When $\strat{X}$ is a closed $n$-cube, this specialises to a globular $n$-manifold diagram. Other choices of $\strat{X}$ lead to a natural definition of a pasting diagram of $n$-manifold diagrams.
Moving beyond individual diagrams, we define isotopies of $n$-manifold diagrams
in~\S\ref{sec:d-mfld-isotopy}. We show how $n$-manifold diagrams on a closed $n$-mesh
$\strat{X}$ together with their isotopies define a space $\MfldDiag^n(\strat{X})$.
In~\S\ref{sec:d-mfld-functorial} we see that these spaces organise into a functor
$\MfldDiag^n(-) : \BMeshClosed{n} \to \Space$ which we show to be an $n$-diagrammatic
space in~\S\ref{sec:d-mfld-diag-space}.
Finally, we explore an alternative but equivalent presentation of $n$-manifold diagrams in~\S\ref{sec:d-mfld-extended}
that will be useful when combinatorialising manifold diagrams in~\S\ref{sec:d-comb}.

\subsection{The Local Geometry of Manifold Diagrams}\label{sec:d-mfld-local}

As the first step towards characterising $n$-manifold diagrams
we study the local conditions that such a diagram should satisfy.
To give justice to the name, every stratum of an $n$-manifold diagram should be a $k$-manifold
for some $k \in \ord{n}$.
We will work with $n$-framed stratified spaces $\strat{M}$
that are equipped with a labelling map $\lbl{\strat{M}} : \Exit(\strat{M}) \to \ord{n}$.
A priori this map is arbitrary, but once we have placed the appropriate local conditions
on $\strat{M}$ the labelling map will indicate the dimension of the strata
of $\strat{M}$.

A $k$-dimensional stratum of an $n$-manifold diagram should correspond to an
$(n - k)$-dimensional cell of an $(\infty, n)$-category.
Via our discussion of $n$-diagrammatic spaces in \S\ref{sec:d-diag-space} we have identified
atoms as appropriate shapes for such cells.
We therefore require an $n$-manifold diagram to be covered by neighbourhoods
$\strat{U}$ that each admit a refinement map $r : \strat{A} \to \strat{U}$ from
some atom $\strat{A}$.

String diagrams satisfy the progressivity condition which guarantees that
the edges progress upwards, in the direction from the source to the target boundary,
without turning around or having sections that are horizontal.
Equivalently, the height function is injective when restricted to any edge of
the string diagram.
An $n$-manifold diagram should satisfy a generalisation of the progressivity
condition for strata of all dimensions.
We achieve this by placing a condition on the atoms that refine the local
neighbourhoods of the diagram.
Recall from Observation~\ref{obs:fct-embed-stype-depth} that the singular depth of an atom
$\strat{A} \in \BMeshOpenL{n}{\ord{n}}$ is the largest $d \geq 0$ such that
there is an atom $\strat{B} \in \BMeshOpenL{n - d}{\ord{n}}$ and an isomorphism
$\strat{A} \cong \strat{B} \times \R^d$
of labelled $n$-framed stratified spaces.

\begin{definition}
  An atom $\strat{A} \in \BMeshOpenL{n}{\ord{n}}$ is \defn{progressive} when
  its singular depth $\sdepth(\strat{A})$ agrees with its label $\lbl{\strat{A}}(\bot_{\strat{A}})$ and every proper subatom of $\strat{A}$ is progressive.
\end{definition}

\begin{definition}
  An \defn{$n$-framed basic} is an $n$-framed stratified space $\strat{U}$
  with a conservative labelling $\lbl{\strat{U}} : \Exit(\strat{U}) \to \ord{n}$
  such that there exists a progressive atom $\strat{A} \in \BMeshOpenL{n}{\ord{n}}$
  together with a refinement map $r : \strat{A} \to \strat{U}$ of labelled
  $n$-framed stratified spaces.
\end{definition}

\begin{example}
  Consider string diagrams for $2$-categories.
  The objects and $1$-cells are presented by $2$-dimensional and $1$-dimensional
  strata, respectively, that locally and up to isomorphism are of the following
  shape:
  \[
    \begin{tikzpicture}[scale = 0.5, baseline=(current bounding box.center)]
      \fill[mesh-background] (0, 0) rectangle (4, 4);
      \draw[mesh-stratum] (2, 0) -- (2, 4);
    \end{tikzpicture}
    \qquad
    \begin{tikzpicture}[scale = 0.5, baseline=(current bounding box.center)]
      \fill[mesh-background] (0, 0) rectangle (4, 4);
    \end{tikzpicture}
  \]
  Both of these are already mesh atoms.
  The $2$-cells are rendered in the string diagram as $0$-dimensional strata,
  whose domain and codomain is signified by the $1$-strata that approach
  the $0$-stratum from above and below.
  For any number of $1$-cells in the domain and codomain, we can find an atom
  that refines the local neighbourhood around the $0$-stratum:
  \[
    \begin{tikzpicture}[scale = 0.5, baseline=(current bounding box.center)]
      \fill[mesh-background] (0, 0) rectangle (4, 4);
      \node[mesh-vertex] at (2, 2) {};
      \draw[mesh-stratum] (1, 0) .. controls +(0, 0.4) and +(-1, 0) .. (2, 2);
      \draw[mesh-stratum] (3, 0) .. controls +(0, 0.4) and +(1, 0) .. (2, 2);
      \draw[mesh-stratum] (2, 2) -- (2, 4);
      \draw[mesh-stratum] (0, 2) -- +(4, 0);
    \end{tikzpicture}
    \quad
    \longrightarrow
    \quad
    \begin{tikzpicture}[scale = 0.5, baseline=(current bounding box.center)]
      \fill[mesh-background] (0, 0) rectangle (4, 4);
      \node[mesh-vertex] at (2, 2) {};
      \draw[mesh-stratum] (1, 0) .. controls +(0, 0.4) and +(-1, 0) .. (2, 2);
      \draw[mesh-stratum] (3, 0) .. controls +(0, 0.4) and +(1, 0) .. (2, 2);
      \draw[mesh-stratum] (2, 2) -- (2, 4);
    \end{tikzpicture}
  \]
\end{example}

\begin{example}
  Suppose that we have a map $f : X \otimes Y \to Z$ in a braided monoidal
  category $\cat{C}$. Colouring $X$, $Y$, $Z$ in blue, orange and black, respectively,
  we would render the map $f$ in a string diagram for $\cat{C}$ like this:
  \[
    \begin{tikzpicture}[scale = 0.5, baseline=(current bounding box.center)]
      \fill[mesh-background] (0, 0) rectangle (6, 6);
      \draw[mesh-stratum-blue] (2, 0) -- (3, 3);
      \draw[mesh-stratum-orange] (4, 0) -- (3, 3);
      \draw[mesh-stratum] (3, 6) -- (3, 3);
      \node[mesh-vertex] at (3, 3) {};
    \end{tikzpicture}
  \]
  Because $\cat{C}$ is braided monoidal the string diagrams for $\cat{C}$
  are projections of diagrams in $\R^3$. In particular the string diagram 
  for $f$ would be the projection of the following $3$-framed basic to the last
  two coordinate directions:
  \[
    \begin{tikzpicture}[scale = 0.3, baseline=(current bounding box.center)]
      \begin{scope}[shift={(0, 0, 7)}]
        \draw[->] (0,0,0) -- (1,0,0) node[anchor=west]{\tiny{3}};
        \draw[->] (0,0,0) -- (0,1,0) node[anchor=south]{\tiny{2}};
        \draw[->] (0,0,0) -- (0,0,1) node[anchor=north east]{\tiny{1}};
      \end{scope}
      
      \meshBoundingBox{6}{6}{6}
      
      \begin{scope}[canvas is zy plane at x = 0]
        \coordinate (s0-0-a) at (3, 2);
        \coordinate (s0-0-b) at (3, 4);
      \end{scope}

      \begin{scope}[canvas is zy plane at x = 3]
        \coordinate (s1-0-center) at (3, 3);
      \end{scope}

      \begin{scope}[canvas is zy plane at x = 6]
        \coordinate (s2-0-a) at (3, 3);
      \end{scope}

      \draw[mesh3-stratum-blue] (s0-0-a) -- (s1-0-center);
      \draw[mesh3-stratum-orange] (s0-0-b) -- (s1-0-center);
      \draw[mesh3-stratum] (s2-0-a) -- (s1-0-center);
      \node[mesh3-vertex] at (s1-0-center) {};
    \end{tikzpicture}
  \]
\end{example}

\begin{example}
  The following is a framed basic that represents a $3$-dimensional cell
  for a (directed) associativity law of a binary operation:
  \[
    \begin{tikzpicture}[scale = 0.5, baseline=(current bounding box.center)]
      \begin{scope}[shift={(0, 0, 7)}]
        \draw[->] (0,0,0) -- (1,0,0) node[anchor=west]{\tiny{3}};
        \draw[->] (0,0,0) -- (0,1,0) node[anchor=south]{\tiny{2}};
        \draw[->] (0,0,0) -- (0,0,1) node[anchor=north east]{\tiny{1}};
      \end{scope}
      
      \meshBoundingBox{6}{6}{6}
      
      \begin{scope}[canvas is zy plane at x = 0]
        \coordinate (s0-0-a) at (2, 2);
        \coordinate (s0-0-b) at (3.5, 4);

        \coordinate (s0-1-src-0) at (1, 0);
        \coordinate (s0-1-src-1) at (3, 0);
        \coordinate (s0-1-src-2) at (5, 0);
        \coordinate (s0-1-tgt-0) at (3.5, 6);
      \end{scope}

      \begin{scope}[canvas is zy plane at x = 3]
        \coordinate (s1-0-center) at (3, 3);

        \coordinate (s1-1-src-0) at (1, 0);
        \coordinate (s1-1-src-1) at (3, 0);
        \coordinate (s1-1-src-2) at (5, 0);
        \coordinate (s1-1-tgt-0) at (3, 6);
      \end{scope}

      \begin{scope}[canvas is zy plane at x = 6]
        \coordinate (s2-0-a) at (4, 2);
        \coordinate (s2-0-b) at (2.5, 4);

        \coordinate (s2-1-src-0) at (1, 0);
        \coordinate (s2-1-src-1) at (3, 0);
        \coordinate (s2-1-src-2) at (5, 0);
        \coordinate (s2-1-tgt-0) at (2.5, 6);
      \end{scope}

      % The surfaces that connect to the source of the string diagrams
      \draw[mesh3-surface]
        (s0-1-src-0)
        -- (s0-0-a)
        -- (s1-0-center)
        -- (s2-0-b)
        -- (s2-1-src-0)
        -- (s1-1-src-0)
        -- cycle;

      \draw[mesh3-surface]
        (s0-1-src-1)
        -- (s0-0-a)
        -- (s1-0-center)
        -- (s2-0-a)
        -- (s2-1-src-1)
        -- (s1-1-src-1)
        -- cycle;

      \draw[mesh3-surface]
        (s0-1-src-2)
        -- (s0-0-b)
        -- (s1-0-center)
        -- (s2-0-a)
        -- (s2-1-src-2)
        -- (s1-1-src-2)
        -- cycle;

      % The surfaces that connect to the target of the string diagrams
      \draw[mesh3-surface]
        (s0-1-tgt-0)
        -- (s0-0-b)
        -- (s1-0-center)
        -- (s2-0-b)
        -- (s2-1-tgt-0)
        -- (s1-1-tgt-0)
        -- cycle;

      % The surfaces in between
      \draw[mesh3-surface]
        (s0-0-a)
        -- (s0-0-b)
        -- (s1-0-center);

      \draw[mesh3-surface]
        (s2-0-a)
        -- (s2-0-b)
        -- (s1-0-center);
      
      \draw[mesh3-stratum] (s0-0-a) -- (s1-0-center);
      \draw[mesh3-stratum] (s0-0-b) -- (s1-0-center);
      \draw[mesh3-stratum] (s2-0-a) -- (s1-0-center);
      \draw[mesh3-stratum] (s2-0-b) -- (s1-0-center);

      \node[mesh3-vertex] at (s1-0-center) {};
    \end{tikzpicture}
  \]  
\end{example}

This definition of a framed basic implies that the strata are manifolds
whose dimension agrees with the label in $\ord{n}$. Moreover, each stratum
is progressive in a sense that generalises the progressivity condition of
string diagrams.
Because both of these properties are local conditions, they are then 
automatically satisfied by any $n$-manifold diagram that is covered with framed basics.

\begin{observation}
  Let $\strat{U}$ be an $n$-framed basic and $i \in \stratPos{\strat{U}}$ a stratum 
  labelled with $k \in \ord{n}$.  
  Then the projection $\R^n \to \R^k$ to the last $k$ dimensions
  restricts to an open immersion $\strat{U}_i \to \R^k$.
  In particular $\strat{U}_i$ is a topological $k$-manifold.
\end{observation}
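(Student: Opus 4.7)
The plan is to use the refinement $r : \strat{A} \to \strat{U}$ from a progressive atom to reduce the problem to $\strat{A}$, and then argue stratum by stratum using the subatom structure. Since $r$ is a refinement of labelled $n$-framed stratified spaces, $\unstrat(r)$ is a homeomorphism that preserves the framing, and each stratum $\strat{U}_i$ with label $k$ is the $r$-image of a disjoint union of strata of $\strat{A}$ each carrying label $k$. Being an open immersion into $\R^k$ is a local property, so it would suffice to show that for every stratum $\strat{A}_j$ of $\strat{A}$ with $\lbl{\strat{A}}(j) = k$, the restriction of the projection $\pi^k : \R^n \to \R^k$ to $\strat{A}_j$ is an open immersion.

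I would then fix a point $x \in \strat{A}_j$ and consider the smallest open constructible $n$-submesh $\strat{A}' := \atom{\strat{A}}(j)$ of $\strat{A}$ containing $j$ in its poset. By construction $j$ is the minimum of $\strat{A}'$, and because every proper subatom of the progressive atom $\strat{A}$ is progressive (with the edge case $\strat{A}' = \strat{A}$ being progressive by assumption), $\strat{A}'$ is itself progressive. Hence $\sdepth(\strat{A}') = \lbl{\strat{A}'}(\bot_{\strat{A}'}) = k$, so by Observation~\ref{obs:fct-embed-stype-depth} I obtain an isomorphism $\strat{A}' \cong \strat{B} \frtimes \R^k$ of labelled $n$-framed stratified spaces, where $\strat{B}$ is an $(n - k)$-framed atom of singular depth zero.

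The key geometric input, which I expect to be the main technical obstacle, is that the minimal stratum of an $(n-k)$-framed atom $\strat{B}$ of singular depth zero is a single point. I would prove this by induction on $n - k$, decomposing $\strat{B}$ as the composite of a bottom open $1$-mesh bundle $g : \strat{B} \to \strat{C}$ over an $(n - k - 1)$-framed atom $\strat{C}$: vanishing of singular depth forces the fibre of $g$ over the minimum of $\strat{C}$ to be singular at its minimum (otherwise a regular interval would appear and one could factor off another $\R$-direction), which reduces the claim to the inductive hypothesis applied to $\strat{C}$. Given this, the stratum through $x$ in $\strat{A}'$ is identified with $\{p\} \times \R^k \subseteq \strat{B} \frtimes \R^k$, and the projection $\pi^k$ restricts to a homeomorphism from it onto $\R^k$.

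Since $\strat{A}'$ is an open neighbourhood of $x$ in $\strat{A}$, this produces a neighbourhood of $x$ in $\strat{A}_j$ that maps homeomorphically onto an open subset of $\R^k$ under $\pi^k$. Letting $x$ vary, the map $\pi^k|_{\strat{A}_j}$ is a local homeomorphism; combined with openness of $\pi^k$ this gives the open immersion property. Transporting through the framing-preserving homeomorphism $\unstrat(r)$ yields the desired open immersion $\strat{U}_i \to \R^k$, and $\strat{U}_i$ then inherits the structure of a topological $k$-manifold.
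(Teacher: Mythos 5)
The paper records this statement as an observation and supplies no proof, so there is nothing to compare your route against; judged on its own terms, your proposal correctly isolates the crux but does not establish it. The step that fails is your ``key geometric input'' --- that the minimal stratum of an $(n-k)$-framed atom $\strat{B}$ of singular depth zero is a single point --- together with the induction you sketch for it. By Observation~\ref{obs:fct-embed-stype-depth} and the convention for the framed product $\frtimes$, singular depth measures how many copies of $\R$ can be split off in the \emph{last} coordinate directions. Your induction instead decomposes $\strat{B}$ along the $1$-mesh bundle $g \colon \strat{B} \to \strat{C}$ whose fibre direction is the \emph{first} coordinate; a regular interval in a fibre of $g$ only lets you factor an $\R$ out of the first coordinate, which is irrelevant to singular depth, so no contradiction with $\sdepth(\strat{B}) = 0$ arises. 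Concretely, let $\strat{B}$ be $\R^2$ stratified by the horizontal line $\R \times \{0\}$ and the two open half-planes: this is an open $2$-mesh atom, its singular depth is $0$ (the stratification is not constant in the last coordinate), yet its minimal stratum is a line, not a point. Labelling the line $0$ and the half-planes $2$ even makes this atom progressive in the sense of the definition, so the lemma you rely on is false as stated; what is actually needed is something forcing the depth-zero factor to be conical with a point apex (as in the framed conicality of Dorn--Douglas), and your argument does not supply it.

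There are also two gaps in the reduction step. First, a stratum $\strat{U}_i$ is a union of strata of $\strat{A}$, and the non-maximal ones are not open in $\strat{U}_i$; verifying that $\pi^k$ is a local homeomorphism on each stratum of the refinement separately does not give the statement on $\strat{U}_i$ at a point of a lower stratum, where every neighbourhood in $\strat{U}_i$ straddles several strata of $\strat{A}$, so locality does not reduce the problem in the way you claim. Second, a refinement map of $n$-framed stratified spaces is only required to preserve the coincidence relations $\sim_m$ in one direction; $\unstrat(r)$ is not literally framing-preserving, so transporting the local-homeomorphism property from $\strat{A}$ to $\strat{U}$ requires showing that the induced maps on the $\pi^m$-images are themselves open embeddings rather than appealing to a ``framing-preserving homeomorphism''.
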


% \begin{example}
%   The following $2$-framed stratified spaces do not satisfy progressivity
%   and are thefore not $2$-manifold diagrams
%   \[
%     \begin{tikzpicture}[scale = 0.5, baseline=(current bounding box.center)]
%       \fill[mesh-background] (0, 0) rectangle (4, 4);
%       \draw[mesh-stratum] (1, 0) .. controls +(0, 0.4) and +(-1, 0) .. (2, 2);
%       \draw[mesh-stratum] (3, 0) .. controls +(0, 0.4) and +(1, 0) .. (2, 2);
%     \end{tikzpicture}
%     \qquad
%     \begin{tikzpicture}[scale = 0.5, baseline=(current bounding box.center)]
%       \fill[mesh-background] (0, 0) rectangle (4, 4);
%       \draw[mesh-stratum] (0, 2) -- (4, 2);
%     \end{tikzpicture}
%   \]
% \end{example}

% \begin{lemma}
%   Let $\strat{U}$ be an $n$-framed basic with distinguished point $x \in \strat{U}$.
%   Then the label $\lbl{\strat{U}}(x) \in \ord{n}$ is the largest $d \in \ord{n}$
%   such that there is an $(n - d)$-framed stratified space $\strat{V}$ and an
%   isomorphism of $n$-framed stratified spaces $\strat{U} \cong \strat{V} \times \R^d$.
% \end{lemma}

% \begin{observation}
%   We recall from Definition~\ref{} that the singular depth of an $n$-atom $\strat{A}$
%   is the largest $d \geq 0$ such that there exists an $(n - d)$-atom $\strat{B}$
%   and an $n$-framed stratified isomorphism $\strat{A} \cong \strat{B} \times \R^d$.
%   In the context of an $n$-framed basic $\strat{U}$ this means that when the
%   distinguished point of $\strat{U}$ is labelled with $d \in \ord{n}$,
% \end{observation}

\begin{example}\label{ex:d-mfld-local-unstable}
  There are framed basics that do not interact well with projection.  
  Consider for example the following $3$-framed basic in which the
  $1$-strata are arranged to lie within the same $(3, 1)$-plane:
  \[
    \begin{tikzpicture}[scale = 0.3, baseline=(current bounding box.center)]
      \begin{scope}[shift={(0, 0, 7)}]
        \draw[->] (0,0,0) -- (1,0,0) node[anchor=west]{\tiny{3}};
        \draw[->] (0,0,0) -- (0,1,0) node[anchor=south]{\tiny{2}};
        \draw[->] (0,0,0) -- (0,0,1) node[anchor=north east]{\tiny{1}};
      \end{scope}
      
      \meshBoundingBox{6}{6}{6}
      
      \begin{scope}[canvas is zy plane at x = 0]
        \coordinate (s0-0-a) at (2, 3);
        \coordinate (s0-0-b) at (4, 3);
      \end{scope}

      \begin{scope}[canvas is zy plane at x = 3]
        \coordinate (s1-0-center) at (3, 3);
      \end{scope}

      \begin{scope}[canvas is zy plane at x = 6]
        \coordinate (s2-0-a) at (3, 3);
      \end{scope}

      \draw[mesh3-stratum-blue] (s0-0-a) -- (s1-0-center);
      \draw[mesh3-stratum-orange] (s0-0-b) -- (s1-0-center);
      \draw[mesh3-stratum] (s2-0-a) -- (s1-0-center);
      \node[mesh3-vertex] at (s1-0-center) {};
    \end{tikzpicture}
  \]
  Taking the projection to the last two coordinate directions
  produces a diagram in which the orange $1$-stratum completely obscures the blue one:
  \[
    \begin{tikzpicture}[scale = 0.5, baseline=(current bounding box.center)]
      \fill[mesh-background] (0, 0) rectangle (6, 6);
      \draw[mesh-stratum-orange] (3, 0) -- (3, 3);
      \draw[mesh-stratum] (3, 6) -- (3, 3);
      \node[mesh-vertex] at (3, 3) {};
    \end{tikzpicture}
  \]
  In the string diagram calculus for braided monoidal categories, such arrangements
  are prohibited in order to avoid confusion.
  Similarly, we want to avoid framed basics like this from appearing within 
  manifold diagrams.
\end{example}

We restrict manifold diagrams to only contain framed basics whose framed stratified isomorphism type is stable under small perturbations.
This is not the case for Example~\ref{ex:d-mfld-local-unstable} since the two overlapping $1$-strata may be slightly nudged so that they can be distinguished in the projection.

\begin{definition}
  A \defn{perturbation} of an $n$-framed stratified space
  $\strat{M}$ is an $n$-framed stratified bundle
  $f : \strat{E} \to \DeltaTop{1}$ that satisfies the following conditions:
  \begin{enumerate}
    \item $\strat{M}$ is the fibre of $f$ over $0 \in \DeltaTop{1}$.
    \item $f$ is a stratified trivial bundle after forgetting the framing.
    \item $f$ is an $n$-framed stratified submersion.
  \end{enumerate}
  A compact $n$-framed stratified space $\strat{M}$ is \defn{stable}
  when for every perturbation $f : \strat{E} \to \DeltaTop{1}$ of
  $\strat{M}$ there exists an $\eps > 0$ such that $f$ restricts to 
  an $n$-framed stratified trivial bundle over the subinterval
  $\intCO{0, \eps} \subset \DeltaTop{1}$: 
  \[
    \begin{tikzcd}
      {\strat{M} \times \intCO{0, \eps}} \ar[r, hook] \ar[d] \pullbackcorner &
      {\strat{E}} \ar[d] \\
      {\intCO{0, \eps}} \ar[r, hook] &
      {\DeltaTop{1}}
    \end{tikzcd}
  \]
  An $n$-framed stratified space $\strat{M}$ is \defn{stable} if
  every compact subspace $\strat{K} \subseteq \strat{M}$ is stable.
\end{definition}

% \begin{definition}
%   An \defn{$n$-manifold diagram} is a tame $n$-framed stratified space $\strat{M}$
%   with labels in $\ord{n}$ such that for every point $x \in \strat{M}$ there
%   exists a stable $n$-framed basic $\strat{U}$ and an open embedding
%   $u : \strat{U} \to \strat{M}$ of labelled $n$-framed stratified spaces
%   that sends the distinguished point of $\strat{U}$ to $x$.
% \end{definition}

% \begin{observation}
%   Every tame open subspace is again an $n$-manifold diagram
% \end{observation}

% \begin{observation}
%   Every stratum is a $k$-manifold and the labelling map of $\strat{M}$ sends
%   a point to the dimension.
% \end{observation}

\subsection{The Global Geometry of Manifold Diagrams}\label{sec:d-mfld-global}

We have characterised the local properties that an $n$-framed stratified
space $\strat{M}$ should satisfy in order to be an $n$-manifold diagram.
We now turn to the global conditions.
An $n$-manifold diagram contains information that is not directly detected by just the stratification, but by the relative arrangement of the strata in the ambient space.
This arrangement determines which coherence structure of an $(\infty, n)$-category $\cat{C}$
should be used when interpreting the diagram in $\cat{C}$.
To keep this information finite, we require an $n$-manifold diagram to be essentially tame
as a $n$-framed stratified space according to Definition~\ref{def:fct-framing-tame}.

\begin{remark}
  Framed basics are refined by atoms and are therefore always essentially tame.
  Therefore, when an $n$-framed stratified space $\strat{M}$ is covered by framed basics,
  every point in $\strat{M}$ has an open neighbourhood that is essentially tame.
  However, this is not sufficient for $\strat{M}$ to be essentially tame itself.
  For instance the $3$-framed stratification containing two $1$-strata
  given by the subsets
  \[
    \{ (-1, 0, t) \mid t \in \R \},
    \{ (1, t\sin(t^{-1}), t) \mid t \in \R \} \subset \R^3
  \]
  is covered by framed basics but is not essentially tame because the two $1$-strata 
  cross over each other an infinite number of times.
  Such a diagram would be impossible to interpret in an arbitrary
  $(\infty, 3)$-category without assuming additional structure.
\end{remark}

String diagrams are often defined as stratifications of a closed $2$-cube $\intCC{0, 1}^2$.
The surface diagrams for Gray categories of~\cite{gray-category-diagrams} similarly
are defined as stratifications of the closed $3$-cube $\intCC{0, 1}^3$.
A natural continuation would be to define $n$-manifold diagrams as stratifications
of the closed $n$-cube $\intCC{0, 1}^n$.
That choice, however, would prevent us from defining a semi-strict composition
operation on $n$-manifold diagrams and therefore nullify one of their major
technical advantages.

Manifold diagrams are composed by juxtaposition or, in other words, by placing diagrams 
next to each other along shared boundaries.
The composition of two $1$-manifold diagrams on $\intCC{0, 1}$ should be a $1$-manifold diagram on the longer interval $\intCC{0, 2}$.
This appears insubstantial at first, since $\intCC{0, 1}$ and $\intCC{0, 2}$ are
isomorphic, but there is no way in which we can rescale the composite diagram to fit into
the unit interval
$\intCC{0, 1}$ and simultaneously retain associativity of the composition.
Similarly, strict unitality of diagram composition can only be achieved when we allow diagrams that have a length of zero in the direction of composition.

We therefore do not pick any particular subspace of $\R^n$ on which to define
$n$-manifold diagrams but allow their underlying space to vary.
In particular, we have a concept of $n$-manifold diagrams supported on any
closed $n$-mesh $\strat{X}$.
Notably this includes the closed subspaces
$
  \intCC{a_1, b_1} \times \cdots \times \intCC{a_n, b_n} \subset \R^n
$
for all choices of $a_i \leq b_i$, enabling semistrict composition.
Other choices of closed $n$-meshes will turn out to be meaningful as well.

For technical convenience we let an $n$-manifold diagram $\strat{M}$
that is supported on a closed $n$-mesh $\strat{X}$ overshoot the boundary
of $\strat{X}$ so that the underlying unstratified space $\unstrat(\strat{M})$ is an open subspace of $\R^n$ that contains
$\unstrat(\strat{X})$.
With that convention every point of an $n$-manifold diagram can have an open
neighbourhood that is a framed basic.
We identify two $n$-manifold diagrams on $\strat{X}$ when they agree on
their intersection with $\strat{X}$.

The globularity condition of a $2$-dimensional string diagram controls the interaction of
the diagram's strata with the boundary. In particular, it ensures that the
vertices of the diagram must lie within the interior and the edges may only
intersect the top and bottom boundaries.
For a $3$-dimensional string diagram, the globularity condition also guarantees
that all braids must occur within the diagram's interior.
For an $n$-manifold diagram $\strat{M}$ on a closed $n$-mesh $\strat{X}$ we ensure globularity
by requiring that $\strat{M}$ is orthogonal to $\strat{X}$ in the sense of
\S\ref{sec:fct-conf-ortho}.

% The orthogonality condition guarantees that the stratification of
% $\strat{M}$ within a sufficiently small open neighbourhood of $\unstrat(\strat{X})$
% is fully determined by the intersection $\strat{M} \cap \unstrat(\strat{X})$.

\begin{definition}
  An \defn{$n$-manifold diagram} is an essentially tame $n$-framed stratified space
  $\strat{M}$ with labels in $\ord{n}$ that is covered with stable framed basics.
  When $\strat{X}$ is a closed $n$-mesh, an \defn{$n$-manifold diagram on $\strat{X}$}
  is an $n$-manifold diagram $\strat{M}$ that is orthogonal to $\strat{X}$.
  When $\strat{M}_0$, $\strat{M}_1$ are two $n$-manifold diagrams on $\strat{X}$
  we write $\strat{M}_0 \sim_\strat{X} \strat{M}_1$ whenever
  $
    \lbl{\strat{M}_0}(x) = \lbl{\strat{M}_1}(x)
  $
  for all $x \in \unstrat(\strat{X})$.
  We denote the equivalence classes for this relation by $[-]_{\strat{X}}$.
\end{definition}

\begin{observation}
  For the closed $n$-mesh $\StratIntLR^n$ we can characterise the orthogonality
  condition in a more direct way. 
  Suppose that $\strat{M}$ is a stratified space such that
  $\intCC{-1, 1}^n \subseteq \unstrat(\strat{M}) \subseteq \R^n$.
  Then $\strat{M}$ is orthogonal to $\StratIntLR^n$ exactly if there
  is an $\eps > 0$
  such that for every $x \in \strat{M}$ with $\| x \|_\infty \in \intOO{1 - \eps, 1 + \eps}$
  the stratification of $\strat{M}$ satisfies
  $\stratMap{\strat{M}}(x) = \stratMap{\strat{M}}(\| x \|_\infty^{-1} x)$.
\end{observation}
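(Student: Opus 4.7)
The plan is to prove both directions by first reformulating $\varepsilon$-orthogonality as an axis-aligned ``slab condition'' and then comparing this to the radial scaling identity. My first step will be to unroll the inductive definition: condition~(1) at the outermost level of the recursion states that $\stratMap{\strat{M}}(x)$ does not depend on $x_n$ as long as $x_n \in (\sigma - \varepsilon, \sigma + \varepsilon)$ for some $\sigma \in \{\pm 1\}$, while condition~(2) recursively slices at each height $h \in [-1, 1]$ and passes to $\StratIntLR^{n-1}$. By induction on $n$ this gives the reformulation: $\strat{M}$ is $\varepsilon$-orthogonal to $\StratIntLR^n$ if and only if for every $k \in \{1, \ldots, n\}$ and every $\sigma \in \{\pm 1\}$, the stratum $\stratMap{\strat{M}}(x)$ is independent of $x_k$ when $x_k$ varies in the open slab $(\sigma - \varepsilon, \sigma + \varepsilon)$ with the other coordinates held fixed.

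For the forward direction, given the slab reformulation, I would verify the radial identity with parameter $\varepsilon' < \varepsilon$. Fix $x \in \strat{M}$ with $s := \|x\|_\infty \in (1 - \varepsilon', 1 + \varepsilon')$ and pick an index $k_0$ with $|x_{k_0}| = s$; such a $k_0$ exists since $s$ is the sup norm. The rescaling $x_{k_0} \mapsto \pm 1 = s^{-1} x_{k_0}$ takes place entirely inside the slab $(\pm 1 - \varepsilon, \pm 1 + \varepsilon)$ and hence preserves the stratum by the slab condition. After this anchoring the problem restricts to the slice $\{x_{k_0} = \pm 1\}$, whose stratification inherits $\varepsilon$-orthogonality with respect to $\StratIntLR^{n-1}$ in the remaining coordinates, so I would close the argument by induction on $n$ applied to the residual scaling $(x_k)_{k \neq k_0} \mapsto (s^{-1} x_k)_{k \neq k_0}$.

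For the backward direction, assuming the radial identity holds with some parameter $\varepsilon$, I would verify the slab reformulation with a strictly smaller $\varepsilon' < \varepsilon$. Given $k$, $\sigma$, and two points $y, y'$ that differ only in the $k$-th coordinate with $y_k, y'_k \in (\sigma - \varepsilon', \sigma + \varepsilon')$, both $y$ and $y'$ lie in the outer shell of $\StratIntLR^n$, and their radial projections $\|y\|_\infty^{-1} y$ and $\|y'\|_\infty^{-1} y'$ normalise $y_k, y'_k$ to the common boundary value $\sigma$. Two applications of the radial identity reduce the comparison of $\stratMap{\strat{M}}(y)$ and $\stratMap{\strat{M}}(y')$ to a controlled comparison of their boundary images, which I would then close by iterating the radial identity along a path that traverses the boundary. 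Condition~(2) of orthogonality follows by restricting the radial identity to each slice at height $h \in [-1, 1]$ and invoking the inductive hypothesis on $n$; the base case $n = 0$ is vacuous.

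The hard part will be the forward direction's inductive step: after anchoring $x_{k_0}$ at $\pm 1$, the residual coordinates $(x_k)_{k \neq k_0}$ have sup norm strictly less than $s$, possibly far outside the slab range $(1 - \varepsilon, 1 + \varepsilon)$, so the remaining rescaling by $s^{-1}$ is not obviously covered by the slab condition. The resolution I have in mind is that the residual rescaling is itself the radial scaling in the lower-dimensional slice, with the \emph{same} sup-norm ratio $s$ relative to the induced cube $\StratIntLR^{n-1}$; exploiting this self-similarity, the induction reduces the full radial scaling to a cascade of axis-aligned slab moves, each individually preserving the stratum.
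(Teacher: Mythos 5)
The gap you flag in your final paragraph is real, and the ``self-similarity'' resolution you propose does not close it. After anchoring $x_{k_0}$ at $\sigma = \pm 1$, the residual move sends $(x_k)_{k \neq k_0}$ to $(s^{-1}x_k)_{k \neq k_0}$: this is a homothety applied to \emph{every} remaining coordinate, including coordinates lying deep inside $\intOO{-1,1}$, where neither the slab conditions nor any inductive hypothesis constrain the stratification. The inductive hypothesis in dimension $n-1$ only applies to points whose residual sup norm lies in $\intOO{1-\eps, 1+\eps}$, and the residual point generally does not. In fact the forward implication is false as literally stated: take $n = 2$ and let $\strat{M}$ be $\R^2$ stratified by a single vertical strand $\{0.25\} \times \R$ and its two complementary regions. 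This is $\eps$-orthogonal to $\StratIntLR^2$ in the recursive sense (the stratification is constant in $x_2$ everywhere, and constant in $x_1$ near $x_1 = \pm 1$), yet $\stratMap{\strat{M}}(0.25, 1+\delta)$ is the strand while $\stratMap{\strat{M}}\bigl((1+\delta)^{-1}(0.25, 1+\delta)\bigr)$ is not, for every $\delta > 0$. Your backward direction has a parallel defect: condition (1) of $\eps$-orthogonality quantifies over all $x \in \R^{n-1}$, including points whose sup norm exceeds $1 + \eps$ and which the radial identity therefore never touches. Note also that your slab reformulation silently drops the side condition, forced by the recursion, that the coordinates $x_j$ with $j > k$ already lie in $\intCC{-1,1}$ before the level-$k$ constraint applies.

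The paper states this as an Observation without proof, so there is nothing to compare your argument against; your attempt is useful precisely because it exposes that the literal statement is not provable. What does go through in the forward direction is the variant in which $\|x\|_\infty^{-1}x$ is replaced by the framed projection $\fproj(\StratIntLR^n, x)$, i.e.\ coordinatewise clamping to $\intCC{-1,1}$: clamping only alters coordinates lying within $\eps$ of $\pm 1$ and moves each to $\pm 1$, so (performed from the last coordinate inward, which restores the side conditions at each stage) it decomposes into exactly the axis-aligned slab moves your reformulation supplies. I would either prove that corrected statement, restricted to a neighbourhood of the cube so that the far-away points do not obstruct the converse, or explicitly flag the discrepancy rather than assert the equivalence as written.
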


\begin{example}
  The $2$-manifold diagrams on the $2$-cube $\StratIntLR^2$ are string diagrams:
  \[
    \begin{tikzpicture}[scale = 0.5, baseline=(current bounding box.center)]
      \fill[mesh-background] (0, 0) rectangle (7, 6);
      \node[mesh-vertex-dual] at (1, 1) {};
      \node[mesh-vertex-dual] at (6, 1) {};
      \node[mesh-vertex-dual] at (1, 5) {};
      \node[mesh-vertex-dual] at (6, 5) {};
      \draw[mesh-stratum-dual] (1, 1) rectangle ++(5, 4);
      \draw[mesh-stratum] (1.5, 0) -- (1.5, 1) .. controls +(0, 0.2) and +(-0.5, 0) .. (2, 2);
      \draw[mesh-stratum] (2.5, 0) -- (2.5, 1) .. controls +(0, 0.2) and +(0.5, 0) .. (2, 2);
      \draw[mesh-stratum] (2, 2) -- (2, 3) .. controls +(0, 0.2) and +(-1, 0) .. (3, 4);
      \draw[mesh-stratum] (4, 2) -- (4, 3) .. controls +(0, 0.2) and +(1, 0) .. (3, 4);
      \draw[mesh-stratum] (5, 2) -- (5, 6);
      \node[mesh-vertex] at (4, 2) {};
      \node[mesh-vertex] at (3, 4) {};
      \node[mesh-vertex] at (5, 2) {};
      \node[mesh-vertex] at (2, 2) {};
    \end{tikzpicture}
  \]
\end{example}

\begin{example}
  We can pick a more exotic closed $2$-mesh than the cube:
  \[
    \begin{tikzpicture}[scale = 0.5, baseline=(current bounding box.center)]
      \fill[mesh-background] (0, 0) rectangle (6, 6);

      \draw[mesh-stratum-dual] (1, 1) rectangle (5, 5);
      \node[mesh-vertex-dual] at (1, 1) {};
      \node[mesh-vertex-dual] at (3, 1) {};
      \node[mesh-vertex-dual] at (5, 1) {};
      \node[mesh-vertex-dual] at (1, 5) {};
      \node[mesh-vertex-dual] at (5, 5) {};

      \draw[mesh-stratum] (1.5, 0) -- (1.5, 1) .. controls +(0, 0.2) and +(-0.5, 0) .. (2, 2);
      \draw[mesh-stratum] (2.5, 0) -- (2.5, 1) .. controls +(0, 0.2) and +(0.5, 0) .. (2, 2);
      \draw[mesh-stratum] (4, 0) -- (4, 3) .. controls +(0, 0.2) and +(1, 0) .. (3, 4);
      \draw[mesh-stratum] (2, 2) -- (2, 3) .. controls +(0, 0.2) and +(-1, 0) .. (3, 4);
      \draw[mesh-stratum] (3, 4) .. controls +(-0.5, 0) and +(0, -0.2) .. (2.5, 5) -- (2.5, 6);
      \draw[mesh-stratum] (3, 4) .. controls +(0.5, 0) and +(0, -0.2) .. (3.5, 5) -- (3.5, 6);

      \node[mesh-vertex] at (2, 2) {};
      \node[mesh-vertex] at (3, 4) {};
    \end{tikzpicture}
  \]
  This is a $2$-manifold diagram whose source boundary is a composite of two $1$-manifold diagrams.
  In the notation of~\cite{modular-categories-internal-diagrams}
  this would correspond to the internal string diagram:
  \[
    \begin{tikzpicture}[scale = 0.5, baseline=(current bounding box.center)]
      % \fill[mesh-background] (0, 0) rectangle (6, 6);
      % \draw[mesh-stratum-dual] (1, 1) rectangle (5, 5);
      % \node[mesh-vertex-dual] at (1, 1) {};
      % \node[mesh-vertex-dual] at (3, 1) {};
      % \node[mesh-vertex-dual] at (5, 1) {};
      % \node[mesh-vertex-dual] at (1, 5) {};
      % \node[mesh-vertex-dual] at (5, 5) {};
      \fill[mesh-background] (1, 0) .. controls +(0, 2) and +(0, -2) .. (2, 6)
        -- (4, 6) .. controls +(0, -2) and +(0, 2) .. (5, 0)
        -- (3.5, 0) -- (3.5, 2.5) arc (0:180:0.25) -- (3, 0) -- cycle;

      \draw[mesh-stratum-dual] (1, 0) .. controls +(0, 2) and +(0, -2) .. (2, 6);
      \draw[mesh-stratum-dual] (5, 0) .. controls +(0, 2) and +(0, -2) .. (4, 6);
      \draw[mesh-stratum-dual] (3, 0) -- (3, 2.5) arc (180:0:0.25) -- (3.5, 0);

      \draw[mesh-stratum] (1.5, 0) -- (1.5, 1) .. controls +(0, 0.2) and +(-0.5, 0) .. (2, 2);
      \draw[mesh-stratum] (2.5, 0) -- (2.5, 1) .. controls +(0, 0.2) and +(0.5, 0) .. (2, 2);
      \draw[mesh-stratum] (4, 0) -- (4, 3) .. controls +(0, 0.2) and +(1, 0) .. (3, 4);
      \draw[mesh-stratum] (2, 2) -- (2, 3) .. controls +(0, 0.2) and +(-1, 0) .. (3, 4);
      \draw[mesh-stratum] (3, 4) .. controls +(-0.5, 0) and +(0, -0.2) .. (2.5, 5) -- (2.5, 6);
      \draw[mesh-stratum] (3, 4) .. controls +(0.5, 0) and +(0, -0.2) .. (3.5, 5) -- (3.5, 6);

      \node[mesh-vertex] at (2, 2) {};
      \node[mesh-vertex] at (3, 4) {};
    \end{tikzpicture}
  \]
\end{example}

\begin{example}
  When the closed $2$-mesh $\strat{X}$ is not a cell, we can interpret a
  $2$-manifold diagram on $\strat{X}$ as a pasting diagram of the individual $2$-manifold
  diagrams on the cells of $\strat{X}$:
  \[
    \begin{tikzpicture}[scale = 0.5, baseline=(current bounding box.center)]
      \fill[mesh-background] (0, 0) rectangle (7, 6);
      \node[mesh-vertex-dual] at (1, 1) {};
      \node[mesh-vertex-dual] at (3, 1) {};
      \node[mesh-vertex-dual] at (6, 1) {};
      \node[mesh-vertex-dual] at (1, 3) {};
      \node[mesh-vertex-dual] at (3, 3) {};
      \node[mesh-vertex-dual] at (6, 3) {};
      \node[mesh-vertex-dual] at (1, 5) {};
      \node[mesh-vertex-dual] at (6, 5) {};
      \draw[mesh-stratum-dual] (1, 1) rectangle ++(2, 2);
      \draw[mesh-stratum-dual] (3, 1) rectangle ++(3, 2);
      \draw[mesh-stratum-dual] (1, 3) rectangle ++(5, 2);
      \draw[mesh-stratum] (1.5, 0) -- (1.5, 1) .. controls +(0, 0.2) and +(-0.5, 0) .. (2, 2);
      \draw[mesh-stratum] (2.5, 0) -- (2.5, 1) .. controls +(0, 0.2) and +(0.5, 0) .. (2, 2);
      \draw[mesh-stratum] (2, 2) -- (2, 3) .. controls +(0, 0.2) and +(-1, 0) .. (3, 4);
      \draw[mesh-stratum] (4, 2) -- (4, 3) .. controls +(0, 0.2) and +(1, 0) .. (3, 4);
      \draw[mesh-stratum] (5, 4) -- (5, 6);
      \node[mesh-vertex] at (4, 2) {};
      \node[mesh-vertex] at (3, 4) {};
      \node[mesh-vertex] at (5, 4) {};
      \node[mesh-vertex] at (5, 4) {};
      \node[mesh-vertex] at (2, 2) {};
    \end{tikzpicture}
  \]
\end{example}

\subsection{Isotopies of Manifold Diagrams}\label{sec:d-mfld-isotopy}

The interpretation of a string diagram in a monoidal category is invariant under isotopy: however we might deform a string diagram, it represents the same morphism.
This is the content of the coherence theorems of~\cite{geometry-tensor-calculus} and is implicitly relied on wherever string diagrams are used.
Isotopy invariance becomes algebraically significant for string diagrams in $\R^3$ and $\R^4$ for braided and symmetric monoidal categories,
in which the braiding operation together with the coherence axioms are encoded into geometric arrangements.
In higher dimensions, the isotopies of $n$-manifold diagrams should capture all the coherence information that is part of having a weak interchange law.

Isotopies of braided and symmetric monoidal categories are more restricted than general isotopies of the diagrams interpreted as stratified spaces.
The order of edges entering a vertex matters,
but a stratified isotopy can permute these edges freely.
If this was valid in any symmetric monoidal category, every monoid would have to be automatically commutative.
For braided and symmetric monoidal categories the solution is relatively straightforward:
We require an isotopy to preserve the order of edges as seen in a projection.

\begin{example}
  Trimble's notes on surface diagrams~\cite{trimble-surface} described an example of a deformation of surface diagrams that should be excluded from the valid deformations.
  Taking artistic liberties in the presentation, this illegal deformation looks as follows:
  \[
    \begin{tikzpicture}[scale = 0.5, baseline=(current bounding box.center)]
      \fill[mesh-background] (0, 0) rectangle (6, 6);
      \draw[mesh-stratum]
        (2, 0)
        -- (2, 1)
        .. controls +(0, 1) and +(0, -1) .. (4, 3)
        .. controls +(0, 0.2) and +(1, 0) .. (3, 4);
      \draw[mesh-stratum-over]
        (4, 0)
        -- (4, 1)
        .. controls +(0, 1) and +(0, -1) .. (2, 3)
        .. controls +(0, 0.2) and +(-1, 0) .. (3, 4);
      \draw[mesh-stratum]
        (4, 0)
        -- (4, 1)
        .. controls +(0, 1) and +(0, -1) .. (2, 3)
        .. controls +(0, 0.2) and +(-1, 0) .. (3, 4);
      \node[mesh-vertex] at (3, 4) {};
    \end{tikzpicture}
    \quad
    \sim
    \quad
    \begin{tikzpicture}[scale = 0.5, baseline=(current bounding box.center)]
      \fill[mesh-background] (0, 0) rectangle (6, 6);
      \draw[mesh-stratum]
        (2, 0)
        -- (2, 2)
        .. controls +(0, 1) and +(0, -1) .. (4, 3.5)
        .. controls +(0, 0.2) and +(1, 0) .. (3, 4);
      \draw[mesh-stratum-over]
        (4, 0)
        -- (4, 2)
        .. controls +(0, 1) and +(0, -1) .. (2, 3.5)
        .. controls +(0, 0.2) and +(-1, 0) .. (3, 4);
      \draw[mesh-stratum]
        (4, 0)
        -- (4, 2)
        .. controls +(0, 1) and +(0, -1) .. (2, 3.5)
        .. controls +(0, 0.2) and +(-1, 0) .. (3, 4);
      \node[mesh-vertex] at (3, 4) {};
    \end{tikzpicture}
    \quad
    \sim
    \quad
    \begin{tikzpicture}[scale = 0.5, baseline=(current bounding box.center)]
      \fill[mesh-background] (0, 0) rectangle (6, 6);
      \draw[mesh-stratum]
        (2, 0)
        -- (2, 3)
        .. controls +(0, 0.2) and +(-1, 0) .. (3, 4);
      \draw[mesh-stratum]
        (4, 0)
        -- (4, 3)
        .. controls +(0, 0.2) and +(1, 0) .. (3, 4);
      \node[mesh-vertex] at (3, 4) {};
    \end{tikzpicture}
  \]
  Starting with two braided $1$-strata that end in a $0$-stratum,
  we continuously push the height at which the braiding occurs upwards.
  In the limit we reach the diagram in which the two $1$-strata are not braided
  any longer.
  The notes do not mention how to avoid this situation in general, but do hint at a solution involving coincidences where two strata agree in some coordinates.
  % We believe that our solution to this problem using framed spaces implements the idea that Trimble had in mind:
  % the points on the two edges that lie directly on top of each other signify that a braid is occuring.
  % Requiring that the isotopy is locally framed conical then prevents us from merging the braid with the $0$-stratum.
\end{example}

Manifold diagrams can have strata of any dimension and arrangements that are more complicated than just their order. 
Requiring isotopies of manifold diagrams to preserve the framing would prevent this problem but would also be too strict, as it would prohibit even braids.
Instead, we demand that an isotopy preserves the framing locally by being
a framed stratified submersion.

\begin{definition}
  An \defn{$n$-manifold diagram bundle} is an essentially tame $n$-framed stratified submersion
  $p : \strat{E} \to B$ with labels in $\ord{n}$ where $B$ is an unstratified space and every fibre of $p$ is an $n$-manifold diagram.
  An \defn{isotopy of $n$-manifold diagrams} is an $n$-manifold diagram bundle
  over the interval $\DeltaTop{1}$. 
\end{definition}

By combining the requirements that each fibre must be an $n$-manifold diagram
and that the bundle is a framed stratified submersion, we have the following
more direct characterisation of $n$-manifold diagram bundles after unpacking the definitions.

\begin{observation}
  Suppose that $p : \strat{E} \to B$ is an essentially tame $n$-framed stratified bundle
  with labels in $\ord{n}$
  such that $B$ is an unstratified space.
  Then $p$ is an $n$-manifold diagram bundle if and only if for every point
  $x \in \strat{E}$ there exists a framed basic $\strat{U}$, an open
  neighbourhood $V \subseteq B$ of $f(x)$ and an open embedding of
  labelled $n$-framed stratified spaces
  \[
    \begin{tikzcd}
      {\strat{U} \times \strat{V}} \ar[r, hook, "I"] \ar[d] &
      {\strat{E}} \ar[d, "p"] \\
      {V} \ar[r, hook] &
      {B}
    \end{tikzcd}
  \]
  such that $I(\bot_{\strat{U}}, f(x)) = x$.
\end{observation}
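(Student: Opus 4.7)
The plan is to show each direction by unpacking the two clauses in the definition of an $n$-manifold diagram bundle (namely, being a framed stratified submersion and having every fibre be an $n$-manifold diagram) and recognising that they fuse into the single local product condition stated in the observation.

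For the reverse implication, suppose the stated local condition holds at every $x \in \strat{E}$. The open embedding $\strat{U} \times V \hookrightarrow \strat{E}$ over $V \hookrightarrow B$ is, by construction, a witness that $p$ is an $n$-framed stratified submersion at $x$. Moreover, for any $b \in V$ the preimage in the product is $\strat{U} \times \{ b \} \cong \strat{U}$, which is a framed basic and therefore covers a neighbourhood of $I(\bot_{\strat{U}}, b)$ in the fibre $p^{-1}(b)$ by a framed basic. Together with the hypothesised essential tameness of $p$ (which restricts to essential tameness of every fibre), this shows each fibre is covered by framed basics. The stability requirement on these fibrewise framed basics is automatic: any perturbation within the bundle $p$ restricts within $\strat{U} \times V$ to a framed stratified trivial bundle over a small subinterval, so $\strat{U}$ together with its product structure is perturbation-stable.

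For the forward implication, fix $x \in \strat{E}$ and let $b_0 = p(x)$. Since $p$ is a framed stratified submersion, there is an open embedding of $n$-framed stratified bundles $J : \strat{U}' \times V' \hookrightarrow \strat{E}$ over some $V' \hookrightarrow B$ containing $b_0$, with $x = J(u_0, b_0)$ for some $u_0 \in \strat{U}'$. The restriction of $J$ to $\strat{U}' \times \{ b_0 \}$ identifies $\strat{U}'$ with an open $n$-framed stratified subspace of the fibre $p^{-1}(b_0)$. Because this fibre is an $n$-manifold diagram, it is covered by stable framed basics, so we can find a framed basic $\strat{U}$ around $u_0$ that lies inside $\strat{U}'$ and whose cone point $\bot_{\strat{U}}$ is $u_0$ (translating in the framing if necessary, using that framed basics are closed under framed translation of coordinates). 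Restricting $J$ to $\strat{U} \times V$ for a sufficiently small $V \subseteq V'$ then yields the required open embedding $I$ with $I(\bot_{\strat{U}}, b_0) = x$.

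The only real obstacle is bookkeeping the basepoint and the shrinking: we must ensure that the framed basic $\strat{U}$ chosen within the fibre agrees, as an $n$-framed stratified space, with the fibrewise slice of the product neighbourhood produced by the submersion. This is a matter of first choosing the submersion chart $J$, then shrinking $\strat{U}'$ so that $\strat{U}' \cap p^{-1}(b_0)$ is itself a framed basic with $\bot = u_0$. Given that the statement is presented as an observation ``after unpacking the definitions'', I expect no further subtleties beyond this compatibility check.
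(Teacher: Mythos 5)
The paper states this as an unproved observation, so there is no official argument to compare against; your forward direction and the submersion-plus-fibrewise-covering part of your reverse direction are exactly the intended unpacking of the definitions. The genuine gap is your claim that stability of the fibrewise framed basics is ``automatic''. Stability, as defined in \S\ref{sec:d-mfld-local}, quantifies over \emph{all} perturbations of $\strat{U}$ as a standalone $n$-framed stratified space, not only over the one-parameter families obtained by restricting the given bundle $p$; exhibiting one product chart inside $p$ says nothing about how $\strat{U}$ responds to an arbitrary perturbation. Worse, the statement you invoke --- that a perturbation restricts to a trivial framed bundle over a small subinterval --- is precisely the \emph{conclusion} of stability, so the argument is circular. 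Concretely, take $B$ a point and $\strat{E}$ the non-stable framed basic of Example~\ref{ex:d-mfld-local-unstable}: the local product condition holds verbatim (with $V = B$ and $\strat{U} = \strat{E}$), yet the fibre is not covered by stable framed basics, hence is not a manifold diagram, and $p$ is not a manifold diagram bundle.

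The repair is not to manufacture a stability proof but to read the observation's ``framed basic'' as ``stable framed basic'', which is how the local condition is actually used downstream (the proof of Proposition~\ref{prop:d-comb-meshes-iff-submersive} explicitly appeals to stability of the local basic, and the remark following it notes the characterisation depends essentially on it). With that reading the reverse direction is immediate, since each fibre is covered by the stable basics $\strat{U} \times \{ b \}$; and your forward direction goes through provided you extract the chart's fibre slice from the \emph{stable} basics covering $p^{-1}(p(x))$. The remaining bookkeeping you flag --- recentring at $x$ and shrinking the submersion chart so that its fibre slice is itself a framed basic compatible with the product structure --- is acceptable at the level of detail of an observation.
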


% \begin{lemma}
%   A tame $n$-framed stratified bundle $p : \strat{E} \to \strat{B}$ is an
%   $n$-manifold diagram bundle if and only if for every $x \in \strat{E}$
%   there exists a stable $n$-framed basic $\strat{U}$ and an open embedding
%   of $n$-framed stratified bundles
%   \[
%     \begin{tikzcd}
%       {\strat{U} \times V} \ar[r, hook, "u"] \ar[d] &
%       {\strat{E}} \ar[d, "p"] \\
%       {V} \ar[r, hook, "v"'] &
%       {B}
%     \end{tikzcd}
%   \]
%   such that 
% \end{lemma}

% \begin{observation}
%   Since $p$ is an $n$-framed stratified submersion,
%   there exists an open embedding of $n$-framed stratified bundles
%   such that $x$ is in the image of $u$.
%   The fibre of $p$ that contains $x$ is an $n$-manifold diagram and so there
%   exists a stable $n$-framed basic $\strat{U}'$ around $x$ in that fibre.

%   \color{Orange}
%   Recall the definition of an $n$-framed stratified submersion,
%   and specialise it to diagrammatic bundles by observing that we can pick
%   the neighbourhood to be a framed basic.
% \end{observation}

\begin{definition}
  Let $\strat{X}$ be a closed $n$-mesh.
  An \defn{$n$-manifold diagram bundle on $\strat{X}$} is an $n$-manifold
  diagram bundle $p : \strat{E} \to B$ that is fibrewise orthogonal to $\strat{X}$.  
  When $p_0: \strat{E}_0 \to B$ and $p_1 : \strat{E}_1 \to B$ are two
  $n$-manifold diagram bundles on $\strat{X}$ we write $p_0 \sim_\strat{X} p_1$ whenever
  $
    \lbl{\strat{E}_0}(x, b) = \lbl{\strat{E}_1}(x, b)
  $
  for all $x \in \unstrat(\strat{X})$ and $b \in B$.  
  We write $[-]_{\strat{X}}$ for the equivalence classes.
\end{definition}

\begin{definition}\label{def:d-mfld-isotopy-space}
  Let $\strat{X}$ be a closed $n$-mesh.
  The \defn{space of $n$-manifold diagrams on $\strat{X}$}
  is the simplicial set $\MfldDiag^n(\strat{X})$ whose $k$-simplices
  consist of an equivalence class $[p]_{\strat{X}}$ of an $n$-manifold
  diagram bundle $p : \strat{E} \to \DeltaTop{k}$ on $\strat{X}$.
\end{definition}

\begin{lemma}
  Let $\strat{X}$ be a closed $n$-mesh.
  Then $\MfldDiag^n(\strat{X})$ is a Kan complex.
\end{lemma}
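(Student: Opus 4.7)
My plan is to mimic the standard argument that the singular complex of a topological space is a Kan complex, adapted to our fibred setting. Recall that, as noted earlier in the background section, every topological horn inclusion $\HornTop{i}{k} \hookrightarrow \DeltaTop{k}$ admits a retraction $r : \DeltaTop{k} \to \HornTop{i}{k}$, and moreover such a retraction can be chosen to be piecewise linear. I will use such an $r$ to build a filler by pulling back the horn data along it.

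Concretely, unpacking Definition~\ref{def:d-mfld-isotopy-space}, a horn $\Lambda^i\ord{k} \to \MfldDiag^n(\strat{X})$ consists of a compatible family of equivalence classes of $n$-manifold diagram bundles on the faces of $\HornTop{i}{k}$. By orthogonality of the pieces to $\strat{X}$ and compatibility along intersections, this family assembles to a single equivalence class $[p']_{\strat{X}}$ of an $n$-manifold diagram bundle $p' : \strat{E}' \to \HornTop{i}{k}$ on $\strat{X}$. Picking a piecewise-linear retraction $r$ as above, I form the pullback
\[
  \begin{tikzcd}
    {\strat{E}} \ar[r] \ar[d, "p"'] \pullbackcorner &
    {\strat{E}'} \ar[d, "p'"] \\
    {\DeltaTop{k}} \ar[r, "r"'] &
    {\HornTop{i}{k}}
  \end{tikzcd}
\]
in $\FrBun^n$ and claim that $[p]_{\strat{X}}$ is the desired filler. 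The retraction property $r|_{\HornTop{i}{k}} = \id$ gives back $p'$ on the horn, so compatibility with the given faces is automatic.

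It remains to verify that $p$ really is an $n$-manifold diagram bundle on $\strat{X}$. Since $n$-framed stratified submersions are closed under pullback, $p$ is again an $n$-framed stratified submersion, and its fibres are canonically identified with fibres of $p'$, hence are $n$-manifold diagrams. Orthogonality to $\strat{X}$ is a fibrewise condition and is therefore preserved. The main point that needs genuine care is essential tameness: pulling back along an arbitrary continuous map could destroy tameness, but because $r$ is piecewise linear and $p'$ was essentially tame, we can, up to $n$-framed isomorphism, realise $p'$ by a finite relative triangulation compatible with the simplicial structure of $\HornTop{i}{k}$, and then pull back the triangulation along $r$ to obtain a finite relative triangulation of $\strat{E}$ over $\DeltaTop{k}$ compatible with the framing. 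This is the step I expect to be the most delicate; the argument is essentially local and combinatorial, using that a piecewise-linear retraction subdivides $\DeltaTop{k}$ into finitely many closed simplices on each of which $r$ is linear, and finite triangulations behave well under pullback along such maps. With essential tameness in hand, $p$ is an $n$-manifold diagram bundle on $\strat{X}$, and $[p]_{\strat{X}}$ fills the horn.
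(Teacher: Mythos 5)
Your proposal is correct and follows essentially the same route as the paper: assemble the horn data into a single bundle over $\HornTop{i}{k}$, choose a piecewise-linear retraction $\DeltaTop{k} \to \HornTop{i}{k}$, and pull back to obtain the filler. Your extra care about essential tameness under pullback along the PL retraction is a reasonable elaboration of a point the paper leaves implicit.
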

\begin{proof}
  Suppose we have a lifting problem
  \[
    \begin{tikzcd}
      {\Lambda^i\ord{k}} \ar[r] \ar[d, hook] &
      {\MfldDiag^n} \\
      {\Delta\ord{k}} \ar[ur, dashed] &
      {}
    \end{tikzcd}
  \]
  By unpacking the definitions and picking a representative,
  we have an $\eps > 0$ and an $n$-manifold diagram
  bundle $f_0 : \strat{E}_0 \to \HornTop{i}{k}$ that is fibrewise $\eps$-orthogonal to
  $\strat{X}$ and defined on
  \[
    \unstrat(\strat{E}_0) = \{ (e, b) \mid \exists x \in \strat{X}.\ \| e - x \|_\infty < \eps \} \subset \R^n \times \HornTop{i}{k}.
  \]
  We then pick any piecewise linear retraction $R : \DeltaTop{k} \to \HornTop{i}{k}$
  and let $f_1 : \strat{E}_1 \to \DeltaTop{k}$ be the pullback of $f_0$ along
  $R$.
  Then $f_1$ represents a solution to the lifting problem.
\end{proof}

\begin{example}
  Paths in the space $\MfldDiag^2(\StratIntLR^2)$ are isotopies of string diagrams:
  \[
    \begin{tikzpicture}[scale = 0.5, baseline=(current bounding box.center)]
      \fill[mesh-background] (0, 0) rectangle (7, 6);
      \node[mesh-vertex-dual] at (1, 1) {};
      \node[mesh-vertex-dual] at (6, 1) {};
      \node[mesh-vertex-dual] at (1, 5) {};
      \node[mesh-vertex-dual] at (6, 5) {};
      \draw[mesh-stratum-dual] (1, 1) rectangle ++(5, 4);
      \draw[mesh-stratum] (1.5, 0) -- (1.5, 1) .. controls +(0, 0.2) and +(-0.5, 0) .. (2, 2);
      \draw[mesh-stratum] (2.5, 0) -- (2.5, 1) .. controls +(0, 0.2) and +(0.5, 0) .. (2, 2);
      \draw[mesh-stratum] (2, 2) -- (2, 3) .. controls +(0, 0.2) and +(-1, 0) .. (3, 4);
      \draw[mesh-stratum] (4, 2) -- (4, 3) .. controls +(0, 0.2) and +(1, 0) .. (3, 4);
      \draw[mesh-stratum] (5, 2) -- (5, 6);
      \node[mesh-vertex] at (4, 2) {};
      \node[mesh-vertex] at (3, 4) {};
      \node[mesh-vertex] at (5, 2) {};
      \node[mesh-vertex] at (2, 2) {};
    \end{tikzpicture}
    \quad
    \sim
    \quad
    \begin{tikzpicture}[scale = 0.5, baseline=(current bounding box.center)]
      \fill[mesh-background] (0, 0) rectangle (7, 6);
      \node[mesh-vertex-dual] at (1, 1) {};
      \node[mesh-vertex-dual] at (6, 1) {};
      \node[mesh-vertex-dual] at (1, 5) {};
      \node[mesh-vertex-dual] at (6, 5) {};
      \draw[mesh-stratum-dual] (1, 1) rectangle ++(5, 4);
      \draw[mesh-stratum] (1.5, 0) -- (1.5, 1) .. controls +(0, 0.2) and +(-0.5, 0) .. (2, 2);
      \draw[mesh-stratum] (2.5, 0) -- (2.5, 1) .. controls +(0, 0.2) and +(0.5, 0) .. (2, 2);
      \draw[mesh-stratum] (2, 2) -- (2, 3) .. controls +(0, 0.2) and +(-1, 0) .. (3, 4);
      \draw[mesh-stratum] (4, 2) -- (4, 3) .. controls +(0, 0.2) and +(1, 0) .. (3, 4);
      \draw[mesh-stratum] (5, 4) -- (5, 6);
      \node[mesh-vertex] at (4, 2) {};
      \node[mesh-vertex] at (3, 4) {};
      \node[mesh-vertex] at (5, 4) {};
      \node[mesh-vertex] at (2, 2) {};
    \end{tikzpicture}
    \quad
    \sim
    \quad
    \begin{tikzpicture}[scale = 0.5, baseline=(current bounding box.center)]
      \fill[mesh-background] (0, 0) rectangle (7, 6);
      \node[mesh-vertex-dual] at (1, 1) {};
      \node[mesh-vertex-dual] at (6, 1) {};
      \node[mesh-vertex-dual] at (1, 5) {};
      \node[mesh-vertex-dual] at (6, 5) {};
      \draw[mesh-stratum-dual] (1, 1) rectangle ++(5, 4);
      \draw[mesh-stratum] (2.5, 0) -- (2.5, 1) .. controls +(0, 0.2) and +(-0.5, 0) .. (3, 2);
      \draw[mesh-stratum] (3.5, 0) -- (3.5, 1) .. controls +(0, 0.2) and +(0.5, 0) .. (3, 2);
      \draw[mesh-stratum] (3, 2) -- (3, 3) .. controls +(0, 0.2) and +(-1, 0) .. (4, 4);
      \draw[mesh-stratum] (5, 2) -- (5, 3) .. controls +(0, 0.2) and +(1, 0) .. (4, 4);
      \draw[mesh-stratum] (2, 2) -- (2, 6);
      \node[mesh-vertex] at (5, 2) {};
      \node[mesh-vertex] at (4, 4) {};
      \node[mesh-vertex] at (3, 2) {};
      \node[mesh-vertex] at (2, 2) {};
    \end{tikzpicture}
  \]
  For any $m \geq 0$ the space $\MfldDiag^2(\StratIntLR^2)$ contains as
  a connected component the unordered configuration space of $m$ points in the plane.  
\end{example}

\begin{definition}
  Let $\strat{X}$ be a closed $n$-mesh and $\eps > 0$.
  We let $\MfldDiag^{n, \eps}(\strat{X})$ be the simplicial set
  whose $k$-simplices consist of an $n$-manifold diagram bundle
  $p : \strat{E} \to \DeltaTop{k}$
  that is fibrewise $\eps$-orthogonal to $\strat{X}$ and defined on the open subset
  \[
    \unstrat(\strat{E}) = \{ (e, b) \mid \exists x \in \strat{X}.\ \| e - x \|_\infty < \eps \} \subset \R^n \times \DeltaTop{k}.
  \]
  Whenever $\eps_0 > \eps_1 > 0$ we have an induced monomorphism of simplicial sets
  \[
    \begin{tikzcd}
      \MfldDiag^{n, \eps_0}(\strat{X}) \ar[r, hook] &
      \MfldDiag^{n, \eps_1}(\strat{X}).
    \end{tikzcd}
  \]
\end{definition}

\begin{observation}\label{obs:d-mfld-space-eps}  
  For every closed $n$-mesh $\strat{X}$ the simplicial set
  $\MfldDiag^n(\strat{X})$
  is the filtered colimit of
  $\MfldDiag^{n, \eps}(\strat{X})$
  for all $\eps > 0$.
  In particular for every $\eps > 0$ there is a map
  $\MfldDiag^{n, \eps}(\strat{X}) \to \MfldDiag^n(\strat{X})$
  which sends an $n$-manifold diagram bundle $p$ to its equivalence class
  $[p]_{\strat{X}}$.
  Moreover, for any sufficiently small $\eps > 0$ the map
  $\MfldDiag^{n, \eps}(\strat{X}) \to \MfldDiag^n(\strat{X})$
  is the inclusion of a deformation retract.
\end{observation}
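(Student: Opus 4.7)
The plan is to work directly with the fibrewise orthogonality of the bundles involved and to exploit the compactness of each $\DeltaTop{k}$ to convert fibrewise conditions into uniform ones. The central technical input will be an orthogonality-propagation principle: if $p : \strat{E} \to B$ is fibrewise $\eps$-orthogonal to $\strat{X}$, then the stratifying labelling of $p$ on the entire $\eps$-tube is determined by its restriction to $\unstrat(\strat{X}) \times B$. I would prove this by induction on $n$ directly from the hierarchical definition of orthogonality in $\S\ref{sec:fct-conf-ortho}$: every point $(e,b)$ in the $\eps$-tube can be connected to a point of $\strat{X} \times B$ by a finite sequence of moves, each of which alters a single framing coordinate within an $\eps$-neighbourhood of a singular height of $\strat{X}$ at the appropriate level, and across each move the labelling is constant by the corresponding orthogonality condition applied to the relevant slice.

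Given the propagation principle, I would first dispatch the restriction monomorphisms $\MfldDiag^{n,\eps_0}(\strat{X}) \hookrightarrow \MfldDiag^{n,\eps_1}(\strat{X})$ for $\eps_0 > \eps_1$ that were asserted in the definition: restriction to the smaller tube is well defined and inherits orthogonality, while injectivity on simplices reduces to propagation. For the filtered colimit claim, I would then construct the comparison cocone sending an $\eps$-tube bundle to its class $[p]_{\strat{X}}$ and check levelwise bijectivity of the induced map from $\colim_\eps \MfldDiag^{n,\eps}(\strat{X})$ to $\MfldDiag^n(\strat{X})$. Surjectivity on $k$-simplices follows because the fibrewise orthogonality radius of any representative $p : \strat{E} \to \DeltaTop{k}$ is a positive lower semi-continuous function on the compact base and therefore bounded below by some $\eps > 0$, yielding an $\eps$-tube restriction of $p$ in the required class. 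Injectivity is again propagation, noting that two $\eps$-tube representatives in the same class agree on $\unstrat(\strat{X}) \times \DeltaTop{k}$ and hence on the whole tube.

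Finally, for the deformation retract claim, fix $\eps > 0$. The map $\iota : \MfldDiag^{n,\eps}(\strat{X}) \to \MfldDiag^n(\strat{X})$ is a monomorphism of Kan complexes by the injectivity just established, and its image is the sub-Kan-complex consisting of those classes that admit an $\eps$-tube representative. On this image the strict identity $\rho \iota = \id$ holds, where $\rho$ is restriction to the $\eps$-tube. For sufficiently small $\eps$, any given simplex of $\MfldDiag^n(\strat{X})$ admits an $\eps$-tube representative by the compactness argument above, so $\rho$ extends; the deformation $\iota \rho \simeq \id$ is realised as a straight-line linear shrinking of the framing coordinates normal to $\strat{X}$, organised as a manifold diagram bundle on $\strat{X} \times \DeltaTop{1}$ whose framed submersion property is inherited from the endpoints. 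The main obstacle I anticipate is the precise formulation of the orthogonality-propagation principle: while intuitively transparent in the case of a cube, its inductive statement must be compatible with the composite structure of an arbitrary closed $n$-mesh, and must guarantee that the sequence of orthogonality moves connecting any point of the $\eps$-tube to $\strat{X}$ stays within the respective $\eps$-windows at every level.
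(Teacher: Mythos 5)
Your treatment of the filtered-colimit statement is sound: the orthogonality-propagation principle you isolate (the stratification on the $\eps$-tube is forced, window by window around the singular heights of $\strat{X}$ at each level of the recursion, by its values on $\unstrat(\strat{X}) \times B$) is exactly what makes the restriction maps injective and the comparison map out of the colimit bijective on simplices, and the tube-lemma/compactness argument correctly produces, for each individual representative over $\DeltaTop{k}$, \emph{some} $\eps > 0$ for which it restricts to an $\eps$-tube bundle.

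The gap is in the deformation-retract step, concentrated in the sentence ``for sufficiently small $\eps$, any given simplex of $\MfldDiag^n(\strat{X})$ admits an $\eps$-tube representative by the compactness argument above, so $\rho$ extends.'' The compactness argument yields an $\eps$ \emph{depending on the simplex}, and no single $\eps > 0$ works for all simplices: if a diagram on $\strat{X} = \StratIntLR^2$ has a $0$-stratum at height $\eps/2$ above the bottom edge of the square, then condition (1) of $\eps$-orthogonality — constancy of the stratification on the $\eps$-band around each singular height of $\strat{X}$ — fails for \emph{every} representative of its class, because that $0$-stratum lies on $\strat{X}$ itself and is therefore dictated by the labelling that defines the class. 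Hence $\MfldDiag^{n,\eps}(\strat{X}) \to \MfldDiag^n(\strat{X})$ is not surjective on $0$-simplices for any fixed $\eps$, its image is a proper subcomplex, and the retraction cannot be ``restriction to the $\eps$-tube''. The retraction must instead be built from the compression isotopy you only invoke afterwards: first isotope the diagram towards the interior of $\strat{X}$, rescaling the coordinates normal to the singular heights of $\strat{X}$ (at every level of the recursion) so that the stratification becomes constant on the $\eps$-bands, and only then restrict; the homotopy $\iota\rho \simeq \id$ is that isotopy. Correspondingly, ``sufficiently small'' is a condition on $\strat{X}$ alone — $\eps$ must be small relative to the minimal gaps between adjacent singular heights of $\strat{X}$, so that the compression does not collapse distinct strata of $\strat{X}$ — and is not the output of a compactness argument over representatives, whose infimum over all classes is $0$.
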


\subsection{Functoriality in the Closed Mesh}\label{sec:d-mfld-functorial}

For any equivalence of closed $n$-meshes $\strat{X} \simeq \strat{Y}$ 
we obtain an equivalence between the spaces of $n$-manifold diagrams
$\MfldDiag^n(\strat{X}) \simeq \MfldDiag^n(\strat{Y})$.
More generally, the assignment of $\MfldDiag^n(\strat{X})$ to any closed $n$-mesh $\strat{X}$
extends to a functor $\MfldDiag^n(-) : \BMeshClosed{n} \to \Space$.
We construct this functor as a left fibration of simplicial sets.

\begin{definition}
  Suppose that $\xi : \strat{X} \to \strat{B}$ is a closed $n$-mesh bundle.
  Let $p_0 : \strat{E}_0 \to \unstrat(\strat{B})$ and
  $p_1 : \strat{E}_1 \to \unstrat(\strat{B})$ are $n$-manifold diagram bundles
  that are orthogonal to $\xi$.
  We write $p_0 \sim_\xi p_1$ whenever  
  $
    \lbl{\strat{E}_0}(x, b) = \lbl{\strat{E}_1}(x, b)
  $
  for all $(x, b) \in \unstrat(\strat{X})$.
  We denote the equivalence classes by $[-]_\xi$.
\end{definition}

\begin{definition}
  We let $\MfldDiag^n$ denote the simplicial set whose $k$-simplices consist of
  a pair $(\xi, [p]_\xi)$ where
  $\xi : \strat{X} \to \DeltaStrat{k}$ is a closed $n$-mesh bundle and  
  $p : \strat{E} \to \DeltaTop{k}$ an $n$-manifold diagram bundle orthogonal to $\xi$.
  There is a canonical projection map $\MfldDiag^n \to \BMeshClosed{n}$ which sends
  the pair $(\xi, [p]_\xi)$ to the closed $n$-mesh bundle $\xi$.
\end{definition}

\begin{proposition}\label{prop:d-mfld-functorial-left}
  The projection map $\MfldDiag^n \to \BMeshClosed{n}$ is a left fibration.
\end{proposition}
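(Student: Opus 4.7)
The plan is to verify the right lifting property against each horn inclusion $\Lambda^i\ord{k} \hookrightarrow \Delta\ord{k}$ with $0 \leq i < k$. Unpacking, I have a closed $n$-mesh bundle $\xi : \strat{X} \to \DeltaStrat{k}$ together with a partial $n$-manifold diagram bundle $p' : \strat{E}' \to \HornTop{i}{k}$ fibrewise orthogonal to $\xi|_{\HornStrat{i}{k}}$, and must construct an extension $p : \strat{E} \to \DeltaTop{k}$ fibrewise orthogonal to $\xi$ with $p|_{\HornTop{i}{k}} \sim_{\xi|_{\HornStrat{i}{k}}} p'$.

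The strategy is, first, to replace $p'$ by an $\eps$-orthogonal representative for some $\eps > 0$ via Observation~\ref{obs:d-mfld-space-eps}, then to pull it back along a suitable piecewise linear retraction $R : \DeltaTop{k} \to \HornTop{i}{k}$ of the topological horn. Because $R$ is PL, the pullback $R^*p'$ automatically inherits from $p'$ the properties required to be an $n$-manifold diagram bundle: essential tameness, the framed stratified submersion property, and the cover by stable framed basics. Moreover $R^*p'$ restricts to $p'$ on the horn exactly, taking care of the equivalence condition.

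The heart of the argument is checking that $R^*p'$ remains orthogonal to $\xi$ over $\DeltaTop{k} \setminus \HornTop{i}{k}$. The key use of the hypothesis $i < k$ is that every point in the complement of the horn lies in the top ($k$-th) stratum of $\DeltaStrat{k}$: both the interior of $\DeltaTop{k}$ and the interior of the $i$-th face have maximum nonzero coordinate $k$. Over the $k$-stratum the closed $n$-mesh bundle $\xi$ is a locally trivial framed stratified bundle, so the fibres $\xi^{-1}(b)$ for $b$ in the $k$-stratum are identified as framed subspaces of $\R^n$ with a fixed closed $n$-mesh $F$. When $R$ sends a $k$-stratum point $b \notin \HornTop{i}{k}$ to another $k$-stratum point $R(b) \in \HornTop{i}{k}$, the orthogonality of $p'^{-1}(R(b))$ to $\xi^{-1}(R(b)) = F$ is the same condition as orthogonality of $(R^*p')^{-1}(b) = p'^{-1}(R(b))$ to $\xi^{-1}(b) = F$, so orthogonality transports directly.

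The main obstacle is the boundary case where the $k$-stratum of $\DeltaStrat{k}$ does not meet the horn, which occurs most severely for $k = 1, i = 0$: there $\HornTop{0}{1} = \{\langle 0 \rangle\}$ lies in the $0$-stratum while the complement $(0,1]$ is the $1$-stratum, and $R$ is forced to be constant at $\langle 0 \rangle$. The fibres $F_0 = \xi^{-1}(\langle 0 \rangle)$ and $F_1 = \xi^{-1}(t)$ for $t>0$ are then distinct closed $n$-meshes. The resolution uses that the singular points of $\xi$ form a closed subset of $\strat{X}$, so the singular heights of $F_1$ converge to those of $F_0$ as $t \to 0$; consequently, if $\strat{M}' = p'^{-1}(\langle 0 \rangle)$ is $\eps$-orthogonal to $F_0$ then for $t$ small enough the constant fibre $\strat{M}'$ is $\eps/2$-orthogonal to $F_1$, and this constant extension, combined with the trivialization of $\xi$ over the rest of $(0,1]$, yields the required $p$. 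For general $k$, one reduces to this case by choosing $R$ so that outside the horn it maps into a neighbourhood of the vertex $\langle i \rangle$ and then combining the two mechanisms: inside the $k$-stratum $R$ stays within the $k$-stratum of the horn, and the limiting argument handles the remaining part analogously to the $k=1$ case, using essential tameness and stability to uniformize $\eps$.
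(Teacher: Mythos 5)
Your proof follows essentially the same route as the paper's: pull the given bundle back along a retraction of the topological horn, observe that orthogonality to $\xi$ is automatic near the horn, and then use the trivialisation of $\xi$ over the top stratum of $\DeltaStrat{k}$ to correct the pullback into a genuinely orthogonal $n$-manifold diagram bundle. Your analysis of the stratification of the complement of the horn and of the $k=1$, $i=0$ case fills in detail the paper leaves implicit; the only caveat is that fibres of $\xi$ over the $k$-stratum are framed isomorphic rather than literally equal, so even in the ``good'' case the transport of orthogonality already goes through the trivialisation you invoke at the end.
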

\begin{proof}
  Suppose we have a lifting problem
  \[
    \begin{tikzcd}
      {\Lambda^i\ord{k}} \ar[r] \ar[d] &
      {\MfldDiag^n} \ar[d] \\
      {\Delta\ord{k}} \ar[r] \ar[ur, dashed] &
      {\BMeshClosed{n}}
    \end{tikzcd}
  \]
  for some $0 \leq i < k$.
  After unpacking the definitions,
  we have an $n$-manifold diagram bundle $f' : \strat{M}' \to \HornTop{i}{k}$
  and a closed $n$-mesh bundle $\xi : \strat{X} \to \DeltaStrat{k}$ such that
  $f'$ is orthogonal to the restriction of $\xi$ over $\HornStrat{i}{k}$.
  We pick a retraction $R : \DeltaTop{k} \to \HornTop{i}{k}$ of the inclusion
  $\HornTop{i}{k} \hookrightarrow \DeltaTop{k}$ and let $f''$ be the pullback
  of $f'$ along $R$:
  \[
    \begin{tikzcd}
      {\strat{M}''} \ar[r] \ar[d, "f''"'] \pullbackcorner &
      {\strat{M}'} \ar[d, "f'"] \\
      {\DeltaTop{k}} \ar[r] &
      {\HornTop{i}{k}}
    \end{tikzcd}
  \]
  The $n$-manifold diagram bundle $f''$ will in general not be orthogonal to $\xi$.
  However, there exists an open neighbourhood $\strat{U}$ of $\HornStrat{i}{k}$
  in $\DeltaStrat{k}$ such that $f''$ and $\xi$ are orthogonal over $\strat{U}$.
  We can then use the trivialisation of $\xi$ over the $k$th stratum of
  $\DeltaStrat{k}$ to modify $f''$ into an $n$-manifold diagram bundle
  $f : \strat{M} \to \DeltaTop{k}$ such that $f \perp \xi$.
  Then $(\xi, [f]_\xi)$ is a solution to the lifting problem.
\end{proof}

\begin{observation}
  As a left fibration, the map
  $\MfldDiag^n \to \BMeshClosed{n}$
  induces a functor
  \[ \MfldDiag^n(-) : \BMeshClosed{n} \to \Space \]
  and therefore a space $\MfldDiag^n(\strat{X})$ for any closed $n$-mesh $\strat{X}$.
  The space $\MfldDiag^n(\strat{X})$ can be computed explicitly as a Kan complex by taking the
  pullback of simplicial sets  
  \[
    \begin{tikzcd}
      {\MfldDiag^n(\strat{X})} \ar[r] \ar[d] \pullbackcorner &
      {\MfldDiag^n} \ar[d] \\
      {\Delta\ord{0}} \ar[r, "\strat{X}"'] &
      {\BMeshClosed{n}}
    \end{tikzcd}
  \]
  We therefore see that the space $\MfldDiag^n(\strat{X})$ is modelled by the Kan
  complex of $n$-manifold diagrams supported on $\strat{X}$ which we defined earlier in
  Definition~\ref{def:d-mfld-isotopy-space}.
\end{observation}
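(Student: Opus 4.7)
The plan is to verify the identification of the pullback with the Kan complex of Definition~\ref{def:d-mfld-isotopy-space} by direct unpacking of the definitions at the level of $k$-simplices, using that the left fibration structure was already established in Proposition~\ref{prop:d-mfld-functorial-left}.

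First, I would observe that the functor $\strat{X} : \Delta\ord{0} \to \BMeshClosed{n}$ corresponds to a $0$-simplex of $\BMeshClosed{n}$, i.e.\ a closed $n$-mesh bundle over $\DeltaStrat{0}$, which is the closed $n$-mesh $\strat{X}$ itself. For each $k \geq 0$, the constant $k$-simplex of $\BMeshClosed{n}$ at $\strat{X}$ is obtained by pulling $\strat{X}$ back along the unique map $\DeltaStrat{k} \to \DeltaStrat{0}$; this yields the trivial closed $n$-mesh bundle $\strat{X} \times \DeltaStrat{k} \to \DeltaStrat{k}$ with the product $n$-framing. I would then unpack the pullback square: a $k$-simplex of $\MfldDiag^n(\strat{X})$ (understood as the pullback) is a pair $(\xi, [p]_\xi)$ in $\MfldDiag^n$ whose first component coincides with this trivial bundle.

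Next, I would verify that such data is the same as a $k$-simplex of the Kan complex from Definition~\ref{def:d-mfld-isotopy-space}. Concretely, $p : \strat{E} \to \DeltaTop{k}$ is an $n$-manifold diagram bundle, and the orthogonality condition $p \perp (\strat{X} \times \DeltaStrat{k})$ is fibrewise orthogonality to a constant family of closed $n$-meshes, which unpacks exactly to the requirement that the fibre $p^{-1}(b)$ is $\eps$-orthogonal to $\strat{X}$ for some uniform $\eps > 0$ and every $b \in \DeltaTop{k}$. This is the definition of a $k$-simplex of the space $\MfldDiag^n(\strat{X})$ from~\S\ref{sec:d-mfld-isotopy}. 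I would then check that the two equivalence relations agree: for the trivial bundle $\xi = \strat{X} \times \DeltaStrat{k} \to \DeltaStrat{k}$ we have $\unstrat(\strat{X} \times \DeltaStrat{k}) = \unstrat(\strat{X}) \times \DeltaTop{k}$, so the relation $\sim_\xi$ from~\S\ref{sec:d-mfld-functorial} matches $\sim_{\strat{X}}$ from Definition~\ref{def:d-mfld-isotopy-space} on the nose. Compatibility with face and degeneracy maps is immediate since both sides act by pullback along $\DeltaTop{k'} \to \DeltaTop{k}$.

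The only mild subtlety I anticipate is bookkeeping: keeping straight whether orthogonality is defined fibrewise or globally, and ensuring that the uniform $\eps > 0$ appearing in Observation~\ref{obs:d-mfld-space-eps} is compatible with the parametrised version of orthogonality used for bundles. Since the space $\MfldDiag^n(\strat{X})$ was defined as the filtered colimit of the $\MfldDiag^{n,\eps}(\strat{X})$, and every $n$-manifold diagram bundle over a compact base $\DeltaTop{k}$ admits a uniform $\eps > 0$ witnessing fibrewise orthogonality, this matching poses no real obstacle. No further model-categorical input is required: the identification is a strict isomorphism of simplicial sets, and the fact that both sides are Kan complexes is automatic, from the pullback side because left fibrations have Kan fibres, and from the direct side as already shown in~\S\ref{sec:d-mfld-isotopy}.
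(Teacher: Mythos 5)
Your proposal is correct and matches what the paper does: the paper treats this as an observation, relying on the background fact that the fibre of a left fibration over an object computes the value of the straightened functor as a Kan complex, together with exactly the kind of definition-unpacking you describe (the constant simplex at $\strat{X}$ pulls back to the trivial bundle $\strat{X} \times \DeltaStrat{k} \to \DeltaStrat{k}$, orthogonality to it is fibrewise orthogonality to $\strat{X}$, and $\sim_\xi$ coincides with $\sim_{\strat{X}}$). Your handling of the uniform-$\eps$ bookkeeping is consistent with the paper's conventions, so no gap remains.
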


\begin{example}
  Suppose that we have an embedding of closed $2$-meshes
  \[
    \strat{X}
    \quad
    =
    \quad
    \begin{tikzpicture}[scale = 0.5, baseline=(current bounding box.center)]
      \fill[mesh-background] (1, 1) rectangle ++(2, 2);
      \node[mesh-vertex-dual] at (1, 1) {};
      \node[mesh-vertex-dual] at (3, 1) {};
      \node[mesh-vertex-dual] at (1, 3) {};
      \node[mesh-vertex-dual] at (3, 3) {};
      \draw[mesh-stratum-dual] (1, 1) rectangle ++(2, 2);
    \end{tikzpicture}
    \quad
    \overset{e}{\hookrightarrow}
    % \hookrightarrow
    \quad
    \begin{tikzpicture}[scale = 0.5, baseline=(current bounding box.center)]
      \fill[mesh-background] (1, 1) rectangle ++(5, 4);
      \node[mesh-vertex-dual] at (1, 1) {};
      \node[mesh-vertex-dual] at (3, 1) {};
      \node[mesh-vertex-dual] at (6, 1) {};
      \node[mesh-vertex-dual] at (1, 3) {};
      \node[mesh-vertex-dual] at (3, 3) {};
      \node[mesh-vertex-dual] at (6, 3) {};
      \node[mesh-vertex-dual] at (1, 5) {};
      \node[mesh-vertex-dual] at (6, 5) {};
      \draw[mesh-stratum-dual] (1, 1) rectangle ++(2, 2);
      \draw[mesh-stratum-dual] (3, 1) rectangle ++(3, 2);
      \draw[mesh-stratum-dual] (1, 3) rectangle ++(5, 2);
    \end{tikzpicture}
    \quad
    =
    \quad
    \strat{Y}
  \]
  The image of $e$ is the lower left corner in $\strat{Y}$.
  The functor $\MfldDiag^2(\strat{Y}) \to \MfldDiag^2(\strat{X})$,
  induced by the inert map $\strat{Y} \pto \strat{X}$ associated to $e$,
  restricts a string diagram on $\strat{Y}$ to its subdiagram on $\strat{X}$.
  On an example string diagram this looks like:
  \[
    \begin{tikzpicture}[scale = 0.5, baseline=(current bounding box.center)]
      \fill[mesh-background] (0, 0) rectangle (7, 6);
      \node[mesh-vertex-dual] at (1, 1) {};
      \node[mesh-vertex-dual] at (3, 1) {};
      \node[mesh-vertex-dual] at (6, 1) {};
      \node[mesh-vertex-dual] at (1, 3) {};
      \node[mesh-vertex-dual] at (3, 3) {};
      \node[mesh-vertex-dual] at (6, 3) {};
      \node[mesh-vertex-dual] at (1, 5) {};
      \node[mesh-vertex-dual] at (6, 5) {};
      \draw[mesh-stratum-dual] (1, 1) rectangle ++(2, 2);
      \draw[mesh-stratum-dual] (3, 1) rectangle ++(3, 2);
      \draw[mesh-stratum-dual] (1, 3) rectangle ++(5, 2);
      \draw[mesh-stratum] (1.5, 0) -- (1.5, 1) .. controls +(0, 0.2) and +(-0.5, 0) .. (2, 2);
      \draw[mesh-stratum] (2.5, 0) -- (2.5, 1) .. controls +(0, 0.2) and +(0.5, 0) .. (2, 2);
      \draw[mesh-stratum] (2, 2) -- (2, 3) .. controls +(0, 0.2) and +(-1, 0) .. (3, 4);
      \draw[mesh-stratum] (4, 2) -- (4, 3) .. controls +(0, 0.2) and +(1, 0) .. (3, 4);
      \draw[mesh-stratum] (5, 4) -- (5, 6);
      \node[mesh-vertex] at (4, 2) {};
      \node[mesh-vertex] at (3, 4) {};
      \node[mesh-vertex] at (5, 4) {};
      \node[mesh-vertex] at (5, 4) {};
      \node[mesh-vertex] at (2, 2) {};
    \end{tikzpicture}
    \quad
    \mapsto
    \quad
    \begin{tikzpicture}[scale = 0.5, baseline=(current bounding box.center)]
      \fill[mesh-background] (0, 0) rectangle (4, 4);
      \node[mesh-vertex-dual] at (1, 1) {};
      \node[mesh-vertex-dual] at (3, 1) {};
      \node[mesh-vertex-dual] at (1, 3) {};
      \node[mesh-vertex-dual] at (3, 3) {};
      \draw[mesh-stratum-dual] (1, 1) rectangle ++(2, 2);
      \draw[mesh-stratum] (1.5, 0) -- (1.5, 1) .. controls +(0, 0.2) and +(-0.5, 0) .. (2, 2);
      \draw[mesh-stratum] (2.5, 0) -- (2.5, 1) .. controls +(0, 0.2) and +(0.5, 0) .. (2, 2);
      \draw[mesh-stratum] (2, 2) -- (2, 4);
      \node[mesh-vertex] at (2, 2) {};
    \end{tikzpicture}
  \]
\end{example}

\begin{example}
  Suppose we have a bordism of closed $2$-meshes $\strat{Y} \pto \strat{X}$ in $\BMeshClosed{2}$ given by
  \[
    \begin{tikzpicture}[scale = 0.5, baseline=(current bounding box.center)]
      \fill[mesh-background] (1, 1) rectangle ++(5, 4);
      \node[mesh-vertex-dual] at (1, 1) {};
      \node[mesh-vertex-dual] at (3, 1) {};
      \node[mesh-vertex-dual] at (6, 1) {};
      \node[mesh-vertex-dual] at (1, 3) {};
      \node[mesh-vertex-dual] at (3, 3) {};
      \node[mesh-vertex-dual] at (6, 3) {};
      \node[mesh-vertex-dual] at (1, 5) {};
      \node[mesh-vertex-dual] at (6, 5) {};
      \draw[mesh-stratum-dual] (1, 1) rectangle ++(2, 2);
      \draw[mesh-stratum-dual] (3, 1) rectangle ++(3, 2);
      \draw[mesh-stratum-dual] (1, 3) rectangle ++(5, 2);
    \end{tikzpicture}
    \quad
    \pto
    \quad
    \begin{tikzpicture}[scale = 0.5, baseline=(current bounding box.center)]
      \fill[mesh-background] (1, 1) rectangle ++(5, 4);
      \node[mesh-vertex-dual] at (1, 1) {};
      \node[mesh-vertex-dual] at (6, 1) {};
      \node[mesh-vertex-dual] at (1, 5) {};
      \node[mesh-vertex-dual] at (6, 5) {};
      \draw[mesh-stratum-dual] (1, 1) rectangle ++(5, 4);
    \end{tikzpicture}
  \]
  Then we have an induced map of the spaces
  $\MfldDiag^2(\strat{Y}) \to \MfldDiag^2(\strat{X})$
  which on an example string diagram acts as follows.
  \[
    \begin{tikzpicture}[scale = 0.5, baseline=(current bounding box.center)]
      \fill[mesh-background] (0, 0) rectangle (7, 6);
      \node[mesh-vertex-dual] at (1, 1) {};
      \node[mesh-vertex-dual] at (3, 1) {};
      \node[mesh-vertex-dual] at (6, 1) {};
      \node[mesh-vertex-dual] at (1, 3) {};
      \node[mesh-vertex-dual] at (3, 3) {};
      \node[mesh-vertex-dual] at (6, 3) {};
      \node[mesh-vertex-dual] at (1, 5) {};
      \node[mesh-vertex-dual] at (6, 5) {};
      \draw[mesh-stratum-dual] (1, 1) rectangle ++(2, 2);
      \draw[mesh-stratum-dual] (3, 1) rectangle ++(3, 2);
      \draw[mesh-stratum-dual] (1, 3) rectangle ++(5, 2);
      \draw[mesh-stratum] (1.5, 0) -- (1.5, 1) .. controls +(0, 0.2) and +(-0.5, 0) .. (2, 2);
      \draw[mesh-stratum] (2.5, 0) -- (2.5, 1) .. controls +(0, 0.2) and +(0.5, 0) .. (2, 2);
      \draw[mesh-stratum] (2, 2) -- (2, 3) .. controls +(0, 0.2) and +(-1, 0) .. (3, 4);
      \draw[mesh-stratum] (4, 2) -- (4, 3) .. controls +(0, 0.2) and +(1, 0) .. (3, 4);
      \draw[mesh-stratum] (5, 4) -- (5, 6);
      \node[mesh-vertex] at (4, 2) {};
      \node[mesh-vertex] at (3, 4) {};
      \node[mesh-vertex] at (5, 4) {};
      \node[mesh-vertex] at (5, 4) {};
      \node[mesh-vertex] at (2, 2) {};
    \end{tikzpicture}
    \quad
    \mapsto
    \quad
    \begin{tikzpicture}[scale = 0.5, baseline=(current bounding box.center)]
      \fill[mesh-background] (0, 0) rectangle (7, 6);
      \node[mesh-vertex-dual] at (1, 1) {};
      \node[mesh-vertex-dual] at (6, 1) {};
      \node[mesh-vertex-dual] at (1, 5) {};
      \node[mesh-vertex-dual] at (6, 5) {};
      \draw[mesh-stratum-dual] (1, 1) rectangle ++(5, 4);
      \draw[mesh-stratum] (1.5, 0) -- (1.5, 1) .. controls +(0, 0.2) and +(-0.5, 0) .. (2, 2);
      \draw[mesh-stratum] (2.5, 0) -- (2.5, 1) .. controls +(0, 0.2) and +(0.5, 0) .. (2, 2);
      \draw[mesh-stratum] (2, 2) -- (2, 3) .. controls +(0, 0.2) and +(-1, 0) .. (3, 4);
      \draw[mesh-stratum] (4, 2) -- (4, 3) .. controls +(0, 0.2) and +(1, 0) .. (3, 4);
      \draw[mesh-stratum] (5, 4) -- (5, 6);
      \node[mesh-vertex] at (4, 2) {};
      \node[mesh-vertex] at (3, 4) {};
      \node[mesh-vertex] at (5, 4) {};
      \node[mesh-vertex] at (5, 4) {};
      \node[mesh-vertex] at (2, 2) {};
    \end{tikzpicture}
  \]
  We see this as composition: On the left we have a pasting scheme of the
  parts of the string diagram for each subcell of $\strat{Y}$.  
  On the right have removed the interior divisions of the closed $2$-mesh,
  leaving a single cell $\strat{X}$ which contains the composite string diagram.
\end{example}

\begin{example}
  By partially composing two diagrams containing identities, we can produce
  a formal composition operation:
  \[
    \begin{tikzpicture}[scale = 0.5, baseline=(current bounding box.center)]
      \fill[mesh-background] (0, 0) rectangle (6, 6);
      \draw[mesh-stratum-dual] (1, 1) rectangle +(2, 4);
      \draw[mesh-stratum-dual] (3, 1) rectangle +(2, 4);
      \node[mesh-vertex-dual] at (1, 1) {};
      \node[mesh-vertex-dual] at (1, 5) {};
      \node[mesh-vertex-dual] at (5, 1) {};
      \node[mesh-vertex-dual] at (5, 5) {};
      \node[mesh-vertex-dual] at (3, 5) {};
      \node[mesh-vertex-dual] at (3, 1) {};
      \draw[mesh-stratum] (2, 0) -- +(0, 6);
      \draw[mesh-stratum] (4, 0) -- +(0, 6);
    \end{tikzpicture}
    \quad
    \mapsto
    \quad
    \begin{tikzpicture}[scale = 0.5, baseline=(current bounding box.center)]
      \fill[mesh-background] (0, 0) rectangle (6, 6);
      \draw[mesh-stratum-dual] (1, 1) rectangle +(4, 4);
      \node[mesh-vertex-dual] at (1, 1) {};
      \node[mesh-vertex-dual] at (1, 5) {};
      \node[mesh-vertex-dual] at (5, 1) {};
      \node[mesh-vertex-dual] at (5, 5) {};
      \node[mesh-vertex-dual] at (3, 1) {};
      \draw[mesh-stratum] (2, 0) -- +(0, 6);
      \draw[mesh-stratum] (4, 0) -- +(0, 6);
    \end{tikzpicture}
  \]
\end{example}

% \begin{example}
%   \color{Orange} TODO: Example how to degenerate a string diagram
% \end{example}

\subsection{The Diagrammatic Space of Manifold Diagrams}\label{sec:d-mfld-diag-space}

So far we have constructed a functor $\MfldDiag^n(-) : \BMeshClosed{n} \to \Space$
which sends a closed $n$-mesh $\strat{X}$ to the space $\MfldDiag^n(\strat{X})$ of $n$-manifold diagrams on $\strat{X}$.
Via functoriality in $\strat{X}$ we have seen how to restrict and compose manifold diagrams.
In this section we prove the following result.

\begin{thm}\label{thm:d-mfld-diag-space}
  The functor $\MfldDiag^n(-) : \BMeshClosed{n} \to \Space$ is an $n$-diagrammatic space.
\end{thm}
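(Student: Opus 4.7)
The plan is to verify both conditions in the definition of an $n$-diagrammatic space for the functor $\MfldDiag^n(-) : \BMeshClosed{n} \to \Space$, viewed via the duality $\BMeshClosed{n} \simeq \BMeshOpen{n, \op} \simeq \BTrussOpen{n, \op}$ as a presheaf on $\BTrussOpen{n}$. Under this duality an atomic covering diagram on an open truss $T$ corresponds to the cellular covering diagram $\cell{\strat{X}} : \stratPos{\strat{X}}^{\op,\triangleleft} \to \BMeshClosed{n}$ of the dual closed mesh $\strat{X} = T^\dagger$, which is already a limit diagram in $\BMeshClosed{n}$ by Lemma~\ref{lem:fct-cell-covering-limit}. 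Similarly an atom $A \in \BTrussOpen{n}$ corresponds to a cell $\strat{C} = A^\dagger \in \BMeshClosed{n}$ with cubical shape $\shape{\strat{C}}$ and boundary $\partial\strat{C}$.

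For the Segal condition, I will show that $\MfldDiag^n \circ \cell{\strat{X}}$ is a limit diagram in $\Space$, i.e.\ $\MfldDiag^n(\strat{X}) \simeq \lim_p \MfldDiag^n(\cell{\strat{X}}(p))$. The natural restriction map into the limit is induced by the inert bordisms $\strat{X} \pto \cell{\strat{X}}(p)$. To exhibit a homotopy inverse I will use the $\eps$-orthogonal model $\MfldDiag^{n,\eps}(\strat{X})$ from Observation~\ref{obs:d-mfld-space-eps}: any $n$-manifold diagram $\strat{M}$ orthogonal to $\strat{X}$ has labels that are constant along each singular stratum of $\strat{X}$ in an $\eps$-tubular collar, so that $\strat{M}$ is determined, up to the equivalence relation $\sim_\strat{X}$, by its restrictions to the family of cells of $\strat{X}$. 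A compatible family of manifold diagrams on the cells can be glued across these collars to produce a global manifold diagram; the gluing is well defined because the collars on adjacent cells match up by construction of orthogonality. This argument extends from $0$-simplices to $k$-simplices by carrying $\eps$ uniformly in the parameter.

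For the regularity condition I will use the active bordism $\strat{C} \pto \shape{\strat{C}}$ of Construction~\ref{con:fct-embed-stype-active-closed}, together with the shared boundary structure of $\strat{C}$ and $\shape{\strat{C}}$ (the posets $\stratPos{\strat{C}}\setminus\{\top\}$ and $\stratPos{\shape{\strat{C}}}\setminus\{\top\}$ are canonically identified, and the active bordism restricts to an equivalence between their boundaries by framed-projection interpolation). This gives the commutative square
\[
  \begin{tikzcd}
    \MfldDiag^n(\strat{C}) \ar[r] \ar[d] & \MfldDiag^n(\shape{\strat{C}}) \ar[d] \\
    \MfldDiag^n(\partial \strat{C}) \ar[r] & \MfldDiag^n(\partial \shape{\strat{C}})
  \end{tikzcd}
\]
whose bottom row is an equivalence (by the Segal condition established above, applied to $\partial\strat{C}$ and $\partial\shape{\strat{C}}$, and the fact that the cellular covering diagrams of their proper subcells are canonically identified). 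To conclude that the square is a pullback it then suffices to show that the top row is an equivalence, which follows by constructing an explicit homotopy inverse: given $\strat{N} \in \MfldDiag^n(\shape{\strat{C}})$, reshape the framing inside $\strat{C}$ using the framed-projection interpolation inverse to the one defining Construction~\ref{con:fct-embed-stype-active-closed}; stability of framed basics and tameness ensure that this reshaping produces a valid $n$-manifold diagram on $\strat{C}$ that is inverse to the active-bordism map up to isotopy.

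The main obstacle will be running these gluing and reshaping arguments in families, i.e.\ at the level of $k$-simplices of $\MfldDiag^n$ and not merely pointwise. The essential tools are the $\eps$-uniform model $\MfldDiag^{n,\eps}$, which converts ``for each $b \in \DeltaTop{k}$ there exists a collar'' into a uniform collar across the parameter space, and Lemma~\ref{lem:b-geo-ocyl-isofibration} together with the flatness results of~\S\ref{sec:fct-mesh-closed-1} which provide the necessary lifting to make both the restriction and the inverse gluing/reshaping continuous in parameters. A secondary subtlety is that the boundary $\partial\strat{C}$ and $\partial\shape{\strat{C}}$ must be matched via the active bordism; the framed-projection formula ensures compatibility on $\eps$-collars, which is what is needed to align the $\sim_\strat{X}$-equivalence classes on both sides of the square.
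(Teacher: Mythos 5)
Your regularity argument rests on a false premise. You assert that $\stratPos{\strat{C}}\setminus\{\top\}$ and $\stratPos{\shape{\strat{C}}}\setminus\{\top\}$ are canonically identified and hence that the bottom row $\MfldDiag^n(\partial\strat{C}) \to \MfldDiag^n(\partial\shape{\strat{C}})$ is an equivalence. This fails already for $n = 2$: take $\strat{C}$ to be the cell $\strat{C}_0$ of the paper's examples, a quadrilateral whose bottom edge carries an interior vertex. Then $\partial\strat{C}$ has five $0$-cells and a subdivided bottom edge while $\partial\shape{\strat{C}}$ is the boundary of the square; the corresponding spaces differ (a $1$-manifold diagram on a subdivided interval records on which side of the subdivision point each $0$-stratum lies, so the map is not injective on $\pi_0$). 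Neither row of the square is an equivalence in general — the entire content of regularity is that the \emph{difference} between the two rows agrees, which is why the paper instead shows (Lemma~\ref{lem:d-mfld-diag-regular-approx}) that for each $\eps$ there is a $t$ making the $\eps$-restricted square a literal pullback of simplicial sets: $\eps$-orthogonality to $\strat{C}$ is exactly $\eps$-orthogonality to $\shapeBord{\strat{C}}{t}$ together with the finer orthogonality constraint on the boundary. Your proposed ``reshaping'' inverse to the top row cannot exist, so this part of the argument must be replaced rather than repaired.

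For the Segal condition there is a subtler but still genuine gap. Your gluing argument produces, from a \emph{strictly} compatible family of diagrams on the cells of $\strat{X}$, a diagram on $\strat{X}$; this identifies $\MfldDiag^n(\strat{X})$ with the limit of the diagram in $\sSet$. But the limit you must compute is the limit in $\Space$, i.e.\ the homotopy limit, whose points are only coherently homotopy-compatible families. To conclude you need the diagram $\MfldDiag^n(\cell{\strat{X}}(-))$ to be Reedy fibrant, which amounts to showing that each restriction $\MfldDiag^n(\strat{C}) \to \MfldDiag^n(\partial\strat{C})$ is a Kan fibration. The tools you cite for running the argument in families (Lemma~\ref{lem:b-geo-ocyl-isofibration} and the flatness results of~\S\ref{sec:fct-mesh-closed-1}) concern exit-path quasicategories of mesh bundles, not the Kan complexes $\MfldDiag^n(-)$, and do not supply this. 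The paper's route is to prove the Kan fibration property by explicit extension of manifold diagram bundles, first for the cube, then for degenerate cubes and singular shapes, and finally for arbitrary cells via the same $\eps$--$t$ approximation used for regularity; some version of that chain of extension lemmas is unavoidable here.
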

\begin{proof}
  The functor satisfies the Segal condition by
  Lemma~\ref{lem:d-mfld-diag-space-segal}
  and the regularity condition by Lemma~\ref{lem:d-mfld-diag-space-regular},
  which we state and prove below.
\end{proof}

\begin{cor}
  The functor $\MfldDiag^n(-)$ restricts to an $n$-uple Segal space
  \[
    \begin{tikzcd}[column sep = large]
      {\FinOrd^{n, \op}} \ar[r, hook, "\gridMeshClosed^n"] &
      {\BMeshClosed{n}} \ar[r, "\MfldDiag^n(-)"] &
      \Space.
    \end{tikzcd}
  \]
\end{cor}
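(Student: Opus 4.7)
The plan is to deduce this corollary directly from Theorem~\ref{thm:d-mfld-diag-space} together with the equivalence $\Seg^\regular(\BTrussOpen{n}) \simeq \Seg(\FinOrd^n)$ established earlier in~\S\ref{sec:d-diag-space}. By definition, an $n$-diagrammatic space is a presheaf on $\BTrussOpen{n}$ that is both Segal and regular, and the restriction functor along the inclusion $\grid : \FinOrd^n \hookrightarrow \BTrussOpen{n}$ sends any such presheaf to an $n$-uple Segal space on $\FinOrd^n$. So once we know $\MfldDiag^n(-)$ is diagrammatic, being an $n$-uple Segal space after restriction is automatic, provided we match up the two restriction functors correctly.

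First I would transport $\MfldDiag^n(-)$ from a functor $\BMeshClosed{n} \to \Space$ to a presheaf $\psh{F} \in \PSh(\BTrussOpen{n})$ via the duality chain of equivalences
\[
  \BMeshClosed{n} \simeq \BTrussClosed{n} \simeq \BTrussOpen{n, \op}
\]
from~\S\ref{sec:fct-truss}. By Theorem~\ref{thm:d-mfld-diag-space} we have $\psh{F} \in \Seg^\regular(\BTrussOpen{n})$, and hence $(\grid)^* \psh{F} \in \Seg(\FinOrd^n)$ by the equivalence. This is by definition an $n$-uple Segal space.

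The remaining step is to identify $(\grid)^* \psh{F}$ with the stated composite $\MfldDiag^n(-) \circ \gridMeshClosed^n$ as a functor $\FinOrd^{n, \op} \to \Space$. For this I would invoke the compatibility square of Construction~\ref{con:fct-truss-grid-open},
\[
  \begin{tikzcd}
    {\FinOrd^n} \ar[r, equal] \ar[d, hook, "\gridTrussOpen^n"'] &
    {\FinOrd^n} \ar[d, hook, "\gridMeshClosed^{n, \op}"] \\
    {\BTrussOpen{n}} \ar[r, "\simeq"'] &
    {\BMeshClosed{n, \op}}
  \end{tikzcd}
\]
which says that the duality equivalence intertwines the open truss grid functor $\gridTrussOpen^n = \grid$ with the closed mesh grid functor $\gridMeshClosed^n$ (after taking opposites). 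Pulling back the presheaf $\psh{F}$ along the left-hand column therefore matches pulling back $\MfldDiag^n(-)$ along the right-hand column, yielding the required equivalence $(\grid)^* \psh{F} \simeq \MfldDiag^n(-) \circ \gridMeshClosed^n$.

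The proposal is essentially bookkeeping: the real content is already in Theorem~\ref{thm:d-mfld-diag-space} and in the equivalence of~\S\ref{sec:d-diag-space}. The only point requiring care is the handedness of the duality, distinguishing $\gridMeshClosed^n : \FinOrd^{n, \op} \hookrightarrow \BMeshClosed{n}$ from $\gridTrussOpen^n : \FinOrd^n \hookrightarrow \BTrussOpen{n}$ and keeping track of which variance lands on which side, but Construction~\ref{con:fct-truss-grid-open} pins this down precisely so no further verification is needed.
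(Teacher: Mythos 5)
Your proposal is correct and is exactly the (implicit) argument the paper intends: the corollary is deduced from Theorem~\ref{thm:d-mfld-diag-space} together with the fact that diagrammatic spaces restrict along the grid inclusion to $n$-uple Segal spaces, with the duality compatibility of $\gridTrussOpen^n$ and $\gridMeshClosed^n$ from Construction~\ref{con:fct-truss-grid-open} matching up the two restriction functors. No gaps; the variance bookkeeping you flag is the only point needing care, and you handle it as the paper does.
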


The Segal condition of an $n$-diagrammatic space requires us to demonstrate that for any closed $n$-mesh $\strat{X}$, the space
$\MfldDiag^n(\strat{X})$ of $n$-manifold diagrams on $\strat{X}$ is the homotopy limit of the spaces of $n$-manifold
diagrams on every cell of $\strat{X}$ individually.
This guarantees that by gluing together manifold diagrams along common boundaries,
we have a composition operation that interacts well with isotopies of diagrams.
For any $n$-cell $\strat{X}$ we write $\MfldDiag^n(\partial \strat{X})$ for
the limit of the simplicial sets $\MfldDiag^n(\strat{C})$ for each cell
$\strat{C} \hookrightarrow \strat{X}$, except for $\strat{X}$
itself.
To help with calculating the homotopy limit as an ordinary limit of simplicial sets, we show then show that the restriction to the boundary
$\MfldDiag^n(\strat{X}) \to \MfldDiag^n(\partial \strat{X})$
is a Kan fibration.
We gradually build up to this result, starting with the $n$-cube $\strat{X} = \StratIntLR^n$ for which we can
write down the stratifications of lifts directly.
The regularity condition for $\MfldDiag^n(-)$ follows from intermediate results.

\begin{lemma}\label{lem:d-mfld-diag-boundary-kan-cube-regular}
  The map $\MfldDiag^n(\StratIntLR^n) \to \MfldDiag^n(\partial\StratIntLR^n)$ is a Kan fibration.
\end{lemma}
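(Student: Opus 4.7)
The plan is to construct the filler for every horn inclusion $\Lambda^i\ord{k} \hookrightarrow \Delta\ord{k}$ by decomposing the cube into an interior region, a transition annulus, and a boundary collar, combining a retraction of the horn on the interior with the orthogonality of the boundary data on the collar. By Observation~\ref{obs:d-mfld-space-eps}, a horn filling problem is represented by an $\varepsilon$-orthogonal $n$-manifold diagram bundle $p \colon \strat{E} \to \HornTop{i}{k}$ on the $\varepsilon$-neighbourhood of $\StratIntLR^n$, together with an $\varepsilon$-orthogonal $n$-manifold diagram bundle $q \colon \strat{F} \to \DeltaTop{k}$ on the $\varepsilon$-neighbourhood of $\partial \StratIntLR^n$, whose restrictions to $\partial\StratIntLR^n \times \HornTop{i}{k}$ agree modulo $\sim$; we may shrink $\varepsilon$ freely.

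Fix a piecewise linear retraction $R \colon \DeltaTop{k} \to \HornTop{i}{k}$ and a small $\delta \in \intOO{0, \varepsilon/3}$, and decompose $\StratIntLR^n$ into the interior $I_\delta = \{\|x\|_\infty \leq 1 - 2\delta\}$, the transition annulus $T_\delta = \{1 - 2\delta \leq \|x\|_\infty \leq 1 - \delta\}$, and the boundary collar $B_\delta = \{1 - \delta \leq \|x\|_\infty\}$ (including the $\varepsilon$-overshoot). I would define the candidate filler $\hat f$ on $\StratIntLR^n \times \DeltaTop{k}$ piecewise: as $R^* p$ on $I_\delta \times \DeltaTop{k}$; as the radial extension of $q$ provided by the $\varepsilon$-orthogonality of $q$ on $B_\delta \times \DeltaTop{k}$; and as an interpolation on $T_\delta \times \DeltaTop{k}$.

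The hard part will be to construct the interpolation on $T_\delta \times \DeltaTop{k}$. The two boundary values to be matched are both radial extensions of bundles on $\partial \StratIntLR^n$ over $\DeltaTop{k}$: on the inner face $\{\|x\|_\infty = 1 - 2\delta\}$ the bundle from $I_\delta$ equals, by the $\varepsilon$-orthogonality of $R^* p$, the radial extension of $R^*(p|_{\partial \StratIntLR^n})$, while on the outer face $\{\|x\|_\infty = 1 - \delta\}$ the bundle from $B_\delta$ equals the radial extension of $q$. These two stratifications coincide over $\partial\StratIntLR^n \times \HornTop{i}{k}$, because $R$ is the identity on the horn and $p$, $q$ agree there, but they generically differ elsewhere. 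The interpolation must realise, as the radial coordinate of $T_\delta$ varies, an isotopy between the two stratifications over $\DeltaTop{k}$ that is constant over the horn. I would construct it by using essential tameness to refine both $R^*(p|_\partial)$ and $q$ to a common $n$-mesh on $\partial\StratIntLR^n \times \DeltaTop{k}$, chosen to agree with both prescribed bundles on the horn, and then to interpolate the positions of the strata linearly in the radial coordinate on this common refinement. The main obstacle is then to ensure that the assembled bundle $\hat f$ is covered by stable framed basics and is a framed stratified submersion; this is verified locally, cell by cell in the common refinement, using that each framed basic remains a stable framed basic under taking a product with the interval factor supplied by the radial coordinate, so that the required local product structure is preserved.

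Finally I would check that $\hat f$ is essentially tame, a framed stratified submersion, and $\delta'$-orthogonal to $\StratIntLR^n$ for some $\delta' \in \intOO{0, \delta}$; the only region within $\delta'$ of $\partial \StratIntLR^n$ is contained in $B_\delta$, where $\hat f$ is by definition the radial extension of $q$. The equivalence class $[\hat f]_{\StratIntLR^n}$ then provides the required lift.
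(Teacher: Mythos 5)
Your overall architecture --- retract the horn on the interior, keep the given boundary data on a collar, and bridge the two across a transition annulus controlled by a radial cutoff --- is exactly the shape of the paper's argument, and you have correctly identified that the crux is the transition annulus. The gap is in how you bridge it. On the annulus you must pass from the fibre of the boundary bundle $q$ over $R(b)$ (what the interior region hands you) to its fibre over $b$ (what the collar hands you), and you propose to do this by refining both stratifications by ``a common $n$-mesh'' and linearly interpolating the positions of the strata. This does not work: away from the horn the fibres $q_{R(b)}$ and $q_{b}$ are in general combinatorially different stratifications near $\partial\StratIntLR^n$ (the bundle $q$ may pass through braidings or other critical transitions along the segment from $R(b)$ to $b$), so no single mesh refines both, there is no canonical correspondence between their strata, and a linear interpolation of positions would produce coincidences and stratum collisions rather than a framed stratified submersion. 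Your stability check also presupposes that the local model on the annulus is a product with the radial interval, which can only hold if the interpolation is locally constant --- precisely what it cannot be when the two ends differ.

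The repair is to notice that the isotopy you are trying to manufacture is already contained in the given data: $q$ itself, restricted along the straight-line segment $t \mapsto (1-t)R(b) + t\,b$ in $\DeltaTop{k}$, is an isotopy from $q_{R(b)}$ to $q_{b}$ that is constant over the horn and varies continuously in $b$. Concretely, assembling the horn and boundary data into a single bundle $\strat{E}_0$ and writing $\tau$ for your radial cutoff, the filler is defined by $\stratMap{\hat{\strat{E}}}(e,b) := \stratMap{\strat{E}_0}\bigl(e,\ \tau(e)\,b + (1-\tau(e))\,R(b)\bigr)$; this is the paper's construction. One only needs to place the transition region of $\tau$ inside the collar where the boundary data is defined over all of $\DeltaTop{k}$, so that the reparametrised base point always lands where $\strat{E}_0$ is defined. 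No new isotopy, common mesh refinement, or stability argument for an interpolated stratification is required.
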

\begin{proof}
  Suppose that we have a lifting problem
  \[
    \begin{tikzcd}
      {\Lambda^i \ord{k}} \ar[r] \ar[d, hook] &
      {\MfldDiag^n(\StratIntLR^n)} \ar[d] \\
      {\Delta \ord{k}} \ar[r] \ar[ur, dashed] &
      {\MfldDiag^n(\partial\StratIntLR^n)}
    \end{tikzcd}
  \]
  Unpacking the given data and picking representatives,
  we have an $\eps > 0$ and an $n$-manifold diagram bundle
  $f_0 : \strat{E}_0 \to \DeltaTop{k}$ that is fibrewise orthogonal to 
  $\StratIntLR^n$ and defined on
  \begin{align*}
    \unstrat(\strat{E}_0) =&\
    \{ (e, b) \mid 1 - \eps < \| e \|_\infty < 1 + \eps \} \\
    \cup&\ 
    \{ (e, b) \mid \| e \|_\infty < 1 + \eps \land b \in \HornTop{i}{k} \}.
  \end{align*}
  
  We define $\tau : \R^n \to \intCC{0, 1}$ be the piecewise linear function:
  \begin{alignat*}{2}
    \tau(e) &:= 0 &&\qquad\text{(when $\| e \|_\infty < 1 - 2\eps$)} \\
    \tau(e) &:= \tfrac{1}{\eps}(\| e \|_\infty - (1 - 2\eps)) &&\qquad\text{(when $1 - 2\eps \leq \| e \|_\infty \leq 1 - \eps$)} \\
    \tau(e) &:= 1 &&\qquad\text{(when $1 - \eps < \| e \|_\infty$)}
  \end{alignat*}

  Let $R : \DeltaTop{k} \to \HornTop{i}{k}$ be a piecewise linear retraction of the inclusion $\HornTop{i}{k} \hookrightarrow \DeltaTop{k}$.
  We then define a retraction
  $P : \intCC{-1 - \eps, 1 + \eps}^n \times \DeltaTop{k} \to \unstrat(\strat{E}_0)$
  as follows:
  \[
    P(e, b) := (e, \tau(e) b + (1 - \tau(e)) R(b))
  \]
  We let $\strat{E}_1$ be the stratification of $\intCC{-1 - \eps, 1 + \eps}^n \times \DeltaTop{k}$
  defined by $\stratMap{\strat{E}_1} = \stratMap{\strat{E}_0} \circ P$
  and let $f_1 : \strat{E}_1 \to \DeltaTop{k}$ be the projection.
  Then $f_1$ is an $n$-manifold diagram bundle which extends $f_0$ and
  is orthogonal to $\StratIntLR^n$.
  Therefore, the equivalence class $[f_1]_{\StratIntLR^n}$ is a solution to the lifting problem.
\end{proof}

\begin{lemma}\label{lem:d-mfld-diag-boundary-kan-cube-irregular}
  Let $s_1, \ldots, s_n \in \ord{1}$ and let
  \[
    \strat{X} := \{ (x_1, \ldots, x_n) \mid \forall 1 \leq i \leq n.\ s_i = 0 \Rightarrow x_i = 0 \} \subseteq \StratIntLR^n
  \]
  be the $n$-framed stratified subspace.
  Then $\MfldDiag^n(\strat{X}) \to \MfldDiag^n(\partial \strat{X})$
  is a Kan fibration.
\end{lemma}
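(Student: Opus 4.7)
The approach is to adapt the proof of Lemma~\ref{lem:d-mfld-diag-boundary-kan-cube-regular} by distinguishing the degenerate and non-degenerate coordinate directions of $\strat{X}$. Let $D := \{i \mid s_i = 0\}$ and $N := \{i \mid s_i = 1\}$, and set
\[
  U_\eps := \{e \in \R^n \mid |e_i| < 1 + \eps \text{ for } i \in N,\ |e_i| < \eps \text{ for } i \in D\},
\]
which is the $\eps$-neighbourhood of $\strat{X}$ in $\R^n$. Given a lifting problem against $\Lambda^i\ord{k} \hookrightarrow \Delta\ord{k}$, I would use Observation~\ref{obs:d-mfld-space-eps} together with the compatibility of the boundary data (indexed by the proper cells of $\strat{X}$) to choose a small $\eps > 0$ and a representative $n$-manifold diagram bundle $f_0 : \strat{E}_0 \to \DeltaTop{k}$ that is fibrewise $\eps$-orthogonal to $\strat{X}$, defined on all of $U_\eps$ over $b \in \HornTop{i}{k}$ together with a collar of $\partial \strat{X}$ in $U_\eps$ over all $b \in \DeltaTop{k}$.

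Next, I would define a distance function $\tau : \R^n \to \intCC{0, 1}$ which depends only on the non-degenerate coordinates, by setting $M_N(e) := \max_{i \in N} |e_i|$ and letting $\tau$ be the piecewise linear function of $M_N(e)$ that is $0$ on the deep interior ($M_N(e) \leq 1 - 2\eps$), linear on $1 - 2\eps \leq M_N(e) \leq 1 - \eps$, and $1$ on $M_N(e) \geq 1 - \eps$. Picking a piecewise linear retraction $R : \DeltaTop{k} \to \HornTop{i}{k}$, I then define $P : U_\eps \times \DeltaTop{k} \to \unstrat(\strat{E}_0)$ by
\[
  P(e, b) := (e,\, \tau(e) b + (1 - \tau(e)) R(b))
\]
and let $f_1 : \strat{E}_1 \to \DeltaTop{k}$ be the projection with stratification $\stratMap{\strat{E}_1} := \stratMap{\strat{E}_0} \circ P$. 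As in the cube case, $f_1$ extends $f_0$ and represents a solution to the lifting problem.

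The main obstacle is twofold. First, one must combine the boundary data into a single representative bundle defined on a collar of $\partial \strat{X}$ in $U_\eps$; this is a gluing argument exploiting that each component of the boundary data, by its own $\eps$-orthogonality, is strictly constant in the normal coordinate to its cell, so that after possibly shrinking $\eps$ the chosen representatives agree strictly on overlaps. Second, and more crucial, one must verify that $f_1$ remains fibrewise $\eps$-orthogonal to $\strat{X}$. For $i \in N$, constancy of $\stratMap{\strat{E}_1}(e, b)$ near $|e_i| = 1$ follows exactly as in the cube case since $\tau(e) = 1$ there and $f_0$ is already orthogonal in that coordinate. The crucial new point making the construction work is that $\tau(e)$ does not depend on $e_i$ for $i \in D$, so the required constancy of $\stratMap{\strat{E}_1}$ in $e_i$ near $e_i = 0$ (for $i \in D$) is inherited directly from the corresponding constancy of $\stratMap{\strat{E}_0}$. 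The remaining verification that $f_1$ is an $n$-framed stratified submersion proceeds along the lines of the previous lemma.
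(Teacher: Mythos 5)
Your proof is correct, but it takes a genuinely different route from the paper's. The paper does not redo the cutoff construction at all: it reduces to Lemma~\ref{lem:d-mfld-diag-boundary-kan-cube-regular} by a coordinate substitution. Concretely, it stretches the given data in the degenerate directions via $\stratMap{\strat{D}_0}(e,b) := \stratMap{\strat{E}_0}(s_1e_1,\ldots,s_ne_n,b)$ to obtain a lifting problem for the full cube $\StratIntLR^n$, solves that by the previous lemma, and then pulls the solution back via $e_i \mapsto s_ie_i + (1-s_i)$, so that in each degenerate direction the solution is sampled on the boundary face of the cube, where orthogonality forces it to be constant. This buys two things: the previous lemma is used as a black box, and orthogonality of the result to $\strat{X}$ in the degenerate directions is automatic (the stratification literally does not depend on $e_i$ for $s_i = 0$). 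Your approach instead re-runs the cube argument with the cutoff $\tau$ computed from $M_N(e) = \max_{i\in N}|e_i|$ only; the price is that you must re-verify orthogonality, and you correctly isolate the one point where this could fail, namely that $\tau$ must be independent of the degenerate coordinates so that the constancy windows of $\stratMap{\strat{E}_0}$ are compared within a single fibre. You also make explicit the gluing of the compatible boundary family into a single bundle on a collar of $\partial\strat{X}$, which the paper uses silently in both lemmas. One small quantitative point, inherited from the template you are adapting: the interpolation window for $\tau$ should be placed \emph{inside} the collar where $f_0$ is defined over all of $\DeltaTop{k}$ (e.g.\ within $(1-\eps,1)$, claiming only $\eps'$-orthogonality of the result for a smaller $\eps'$), since for $M_N(e)$ in $[1-2\eps,1-\eps]$ and $b$ outside the horn the point $P(e,b)$ need not lie in $\unstrat(\strat{E}_0)$.
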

\begin{proof}
  Suppose that we have a lifting problem
  \[
    \begin{tikzcd}
      {\Lambda^i \ord{k}} \ar[r] \ar[d, hook] &
      {\MfldDiag^n(\strat{X})} \ar[d] \\
      {\Delta \ord{k}} \ar[r] \ar[ur, dashed] &
      {\MfldDiag^n(\partial\strat{X})}
    \end{tikzcd}
  \]
  Unpacking the definitions, we have some $0 < \eps < 1$ and an $n$-manifold diagram bundle
  $f_0 : \strat{E}_0 \to \DeltaTop{k}$ that is $\eps$-orthogonal to $\strat{X}$ and defined on the subspace
  \begin{align*}
    \unstrat(\strat{E}_0) =&\
    \{ (e_1, \ldots, e_n, b) \mid \forall 1 \leq i \leq n.\ s_i - \eps < |e_i| < s_i + \eps \} \\
    \cup &\
    \{ (e_1, \ldots, e_n, b) \mid \forall 1 \leq i \leq n.\ |e_i| < s_i + \eps \land b \in \HornTop{i}{k} \}.
  \end{align*}

  We let $\strat{D}_0$ be a stratification with underlying space
  \begin{align*}
    \unstrat(\strat{D}_0) =&\
    \{ (e, b) \mid 1 - \eps < \| e \|_\infty < 1 + \eps \} \\
    \cup &\
    \{ (e, b) \mid \| e \|_\infty < 1 + \eps \land b \in \HornTop{i}{k} \}.
  \end{align*}
  The stratifying map of $\strat{D}_0$ is derived from $\strat{E}_0$ by stretching
  $\strat{E}_0$ into the coordinate directions where $s_i = 0$ so that the result
  is defined around the $n$-cube:
  \[
    \stratMap{\strat{D}_0}(e_1, \ldots, e_n, b) =
    \stratMap{\strat{E}_0}(s_1 e_1, \ldots, s_n e_n, b).
  \]
  Then the projection $g_0 : \strat{D}_0 \to \DeltaTop{k}$ is an
  $n$-manifold diagram bundle  
  and fibrewise orthogonal to $\StratIntLR^n$.
  It classifies a lifting problem  
  \[
    \begin{tikzcd}
      {\Lambda^i \ord{k}} \ar[r] \ar[d, hook] &
      {\MfldDiag^n(\StratIntLR^n)} \ar[d] \\
      {\Delta \ord{k}} \ar[r] \ar[ur, dashed] &
      {\MfldDiag^n(\partial\StratIntLR^n)}
    \end{tikzcd}
  \]
  which has a solution by Lemma~\ref{lem:d-mfld-diag-boundary-kan-cube-regular}.
  Such a solution is given by an $n$-manifold diagram bundle  
  $g_1 : \strat{D}_1 \to \DeltaTop{k}$ that is orthogonal to $\StratIntLR^n$
  and extends $g_0$.

  We then let $\strat{E}_1$ be the extension of $\strat{E}_0$ so that
  \[
    \unstrat(\strat{E}_1) = \{ (e_1, \ldots, e_n, b) \mid \forall 1 \leq i \leq n.\ | e_i | < s_i + \eps \}
  \]
  and the stratification is defined by
  \[
    \stratMap{\strat{E}_1}(e_1, \ldots, e_n, b) =
    \stratMap{\strat{D}_1}(s_1 e_1 + (1 - s_1), \ldots, s_n e_n + (1 - s_n), b).
  \]
  The term $1 - s_i$ in each coordinate ensures that when $s_i = 0$,
  we sample the stratification of $\strat{D}_1$ along the boundary where
  it is required to be orthogonal to $\StratIntLR^n$.
  The projection $f_1 : \strat{E}_1 \to \DeltaTop{k}$ then is an
  $n$-manifold diagram bundle  
  that is orthogonal to $\strat{X}$ and extends $f_0$.
  Therefore, $[f_1]_{\strat{X}}$ is a solution to the lifting problem.
\end{proof}

\begin{lemma}\label{lem:d-mfld-diag-boundary-kan-shape}
  Let $\strat{X}$ be an $n$-cell. Then $\MfldDiag^n(\shape{\strat{X}}) \to \MfldDiag^n(\partial \shape{\strat{X}})$ is a Kan fibration.
\end{lemma}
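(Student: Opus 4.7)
The plan is to reduce this statement directly to Lemma~\ref{lem:d-mfld-diag-boundary-kan-cube-irregular}. By construction $\shape{\strat{X}} = \gridMeshClosed^n(\stype(\strat{X}))$, and since $\strat{X}$ is a cell its singularity type has the form $\ord{s_1, \ldots, s_n}$ with each $s_i \in \{0, 1\}$. Unwinding Construction~\ref{con:fct-mesh-grid-closed}, I expect that the iterated stacked product produces, up to canonical isomorphism of $n$-framed stratified spaces, precisely the subspace
\[
\{(x_1, \ldots, x_n) \in \StratIntLR^n \mid s_i = 0 \Rightarrow x_i = 0 \text{ for all } i\}
\]
with its induced stratification and framing, which is exactly the closed $n$-mesh appearing in Lemma~\ref{lem:d-mfld-diag-boundary-kan-cube-irregular}.

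First I would verify this identification by induction on $n$. For $n = 1$ the factor $\gridMeshClosed^1(\ord{0})$ is a single singular point while $\gridMeshClosed^1(\ord{1})$ is $\intCC{-1, 1}$ with two singular endpoints and a regular interior, matching the two possible one-dimensional shapes appearing in the subspace description. The inductive step uses Construction~\ref{con:fct-stacked-product} together with the fact that a trivial grid factor ($s_i = 0$) acts fibre-wise as a single point in the $i$th coordinate while a non-trivial factor ($s_i = 1$) contributes a full interval; composing these stacked products in the prescribed coordinate order reproduces exactly the pinning condition defining the subspace.

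Under this identification, the cellular decomposition of $\shape{\strat{X}}$ matches the face decomposition of the subspace, so $\partial \shape{\strat{X}}$ agrees with the boundary $\partial \strat{X}$ appearing in Lemma~\ref{lem:d-mfld-diag-boundary-kan-cube-irregular}, and likewise for the restriction maps. Consequently the map $\MfldDiag^n(\shape{\strat{X}}) \to \MfldDiag^n(\partial \shape{\strat{X}})$ coincides with the Kan fibration established there, and we are done. The only step that requires genuine care is the inductive identification of the grid cell with the degenerate subcube; this is bookkeeping once the conventions of the stacked product and its induced framing from Construction~\ref{con:fct-stacked-product} are unwound, and no new geometric argument is needed.
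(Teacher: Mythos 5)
Your proposal is correct and follows exactly the route the paper takes: the paper's proof is the one-line observation that $\shape{\strat{X}}$ "is of the required shape" for Lemma~\ref{lem:d-mfld-diag-boundary-kan-cube-irregular} to apply. You simply spell out the identification of $\gridMeshClosed^n(\stype(\strat{X}))$ with the pinned subcube in more detail, which is harmless but not a different argument.
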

\begin{proof}
  This follows directly from Lemma~\ref{lem:d-mfld-diag-boundary-kan-cube-irregular} since $\shape{\strat{X}}$ is of the required shape.
\end{proof}

We wish to use the result of Lemma~\ref{lem:d-mfld-diag-boundary-kan-shape} to imply that also
$\MfldDiag^n(\strat{X}) \to \MfldDiag^n(\partial \strat{X})$ is a Kan fibration.
For that we recall the bordism of closed $n$-meshes $\strat{X} \pto \shape{\strat{X}}$
from Construction~\ref{con:fct-embed-stype-active-closed}. For any $t \in \DeltaStrat{1}$ we denote by $\shapeBord{\strat{X}}{t}$
the fibre of the active bordism $\strat{X} \pto \shape{\strat{X}}$ over $t$.

\begin{example}
  The active bordism $\strat{X} \pto \shape{\strat{X}}$ takes the cubically
  shaped $\shape{\strat{X}}$ and makes it approach the shape of $\strat{X}$
  as follows:
  \[
    \begin{gathered}
      \begin{tikzpicture}[scale = 0.5, baseline=(current bounding box.center)]
        \useasboundingbox (0, 0) rectangle (6, 6);
        \fill[mesh-background] (3, 2) -- (2, 4) -- (4, 4) -- cycle;
        \draw[mesh-stratum-dual] (3, 2) -- (2, 4) -- (4, 4) -- cycle;
        \node[mesh-vertex-dual] at (3, 2) {};
        \node[mesh-vertex-dual] at (2, 4) {};
        \node[mesh-vertex-dual] at (4, 4) {};
      \end{tikzpicture}
      \\
      \strat{X} = \shapeBord{\strat{X}}{0}
    \end{gathered}
    \qquad
    \begin{gathered}
      \begin{tikzpicture}[scale = 0.5, baseline=(current bounding box.center)]
        \useasboundingbox (0, 0) rectangle (6, 6);
        \fill[mesh-background] (2.5, 1.5) -- (3.5, 1.5) -- (4.5, 4.5) -- (1.5, 4.5) -- cycle;
        \draw[mesh-stratum-dual] (2.5, 1.5) -- (3.5, 1.5) -- (4.5, 4.5) -- (1.5, 4.5) -- cycle;
        \node[mesh-vertex-dual] at (2.5, 1.5) {};
        \node[mesh-vertex-dual] at (3.5, 1.5) {};
        \node[mesh-vertex-dual] at (4.5, 4.5) {};
        \node[mesh-vertex-dual] at (1.5, 4.5) {};
      \end{tikzpicture}
      \\
      \shapeBord{\strat{X}}{0.5}
    \end{gathered}
    \qquad
    \begin{gathered}
      \begin{tikzpicture}[scale = 0.5, baseline=(current bounding box.center)]
        \useasboundingbox (0, 0) rectangle (6, 6);
        \fill[mesh-background] (1, 1) rectangle (5, 5);
        \draw[mesh-stratum-dual] (1, 1) rectangle (5, 5);
        \node[mesh-vertex-dual] at (1, 1) {};
        \node[mesh-vertex-dual] at (1, 5) {};
        \node[mesh-vertex-dual] at (5, 1) {};
        \node[mesh-vertex-dual] at (5, 5) {};
      \end{tikzpicture}
      \\
      \shape{\strat{X}} = \shapeBord{\strat{X}}{1}
    \end{gathered}
  \]
\end{example}

\begin{lemma}\label{lem:d-mfld-diag-regular-approx}
  Let $\strat{X}$ be an $n$-cell and $\eps > 0$.
  Then there exists a $0 < t \leq 1$ so that
  \[
    \begin{tikzcd}
      {\MfldDiag^{n}(\strat{X})} \ar[r] \ar[d] &
      {\MfldDiag^{n}(\shapeBord{\strat{X}}{t})} \ar[d] \\
      {\MfldDiag^{n}(\partial \strat{X})} \ar[r] &
      {\MfldDiag^{n}(\partial \shapeBord{\strat{X}}{t})}
    \end{tikzcd}
  \]
  restricts along the natural inclusion $\MfldDiag^{n, \eps}(-) \hookrightarrow \MfldDiag^{n}(-)$
  to a pullback square of simplicial sets  
  \[
    \begin{tikzcd}
      {\MfldDiag^{n, \eps}(\strat{X})} \ar[r] \ar[d] \pullbackcorner &
      {\MfldDiag^{n, \eps}(\shapeBord{\strat{X}}{t})} \ar[d] \\
      {\MfldDiag^{n, \eps}(\partial \strat{X})} \ar[r] &
      {\MfldDiag^{n, \eps}(\partial \shapeBord{\strat{X}}{t})}
    \end{tikzcd}
  \]
\end{lemma}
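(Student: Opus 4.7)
The plan is to choose $t > 0$ small enough, depending on $\eps$, and to produce an explicit inverse to the canonical comparison map
\[
  \MfldDiag^{n, \eps}(\strat{X})
  \longrightarrow
  \MfldDiag^{n, \eps}(\shapeBord{\strat{X}}{t})
  \times_{\MfldDiag^{n, \eps}(\partial \shapeBord{\strat{X}}{t})}
  \MfldDiag^{n, \eps}(\partial \strat{X}).
\]

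First I would analyse the geometry of the bordism $\strat{X} \pto \shape{\strat{X}}$ from Construction~\ref{con:fct-embed-stype-active-closed}. The interpolation formula $\framing(\xi')(x, t) = t\,\framing(\shape{\strat{X}})(x) + (1-t)\,\framing(\strat{X})(\fproj(\strat{X}, x))$ shows that the framed image of $\shapeBord{\strat{X}}{t}$ lies within Hausdorff distance $O(t)$ of $\strat{X}$, and that on the sublocus where $\fproj(\strat{X}, -)$ is the identity the interpolation is independent of $t$. Hence, as $t \to 0^+$, the disagreement between $\strat{X}$ and $\shapeBord{\strat{X}}{t}$ as framed stratified subspaces of $\R^n$ is confined to a collar of $\partial \strat{X}$ whose width tends to zero. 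I can therefore pick $t$ small enough that this disagreement collar is contained in the open $\eps/2$-neighbourhood of $\partial \strat{X}$, so that the $\eps$-neighbourhoods of $\shapeBord{\strat{X}}{t}$ and of $\partial \strat{X}$ together cover the $\eps$-neighbourhood of $\strat{X}$.

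Next I would construct the inverse map at the level of $k$-simplices by gluing. Given a compatible pair $([p_1]_{\shapeBord{\strat{X}}{t}}, [p_2]_{\partial \strat{X}})$, representatives $p_1, p_2$ can be chosen so that they literally agree on the overlap of their domains, since the equivalence relations $\sim_{\shapeBord{\strat{X}}{t}}$ and $\sim_{\partial \strat{X}}$ only quotient by values outside the respective $\eps$-neighbourhoods and the compatibility on $\partial \shapeBord{\strat{X}}{t}$ pins down both representatives on their intersection. Gluing then produces an $n$-framed stratified submersion $p$ on the $\eps$-neighbourhood of $\strat{X}$, which is automatically tame and covered by stable framed basics since these properties are local. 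The $\eps$-orthogonality of $p$ with respect to $\strat{X}$ follows from the choice of $t$: on the interior region $p$ agrees with $p_1$ where $\strat{X}$ and $\shapeBord{\strat{X}}{t}$ coincide as framed stratified subspaces, so orthogonality transfers directly; near the boundary $p$ agrees with $p_2$, and $\eps$-orthogonality to $\partial \strat{X}$ implies $\eps$-orthogonality to $\strat{X}$ because the two meshes share singular heights along $\partial \strat{X}$. That the gluing is inverse to the canonical map is then a formal consequence of the decomposition of the $\eps$-neighbourhood of $\strat{X}$ into the two open pieces: both composites visibly reduce to the identity on representatives.

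The main obstacle is controlling the $\eps$-orthogonality across the transition region. Specifically, I need to verify that on the locus where the framings of $\strat{X}$ and $\shapeBord{\strat{X}}{t}$ agree, orthogonality of $p_1$ relative to $\shapeBord{\strat{X}}{t}$ transports \emph{exactly} to orthogonality relative to $\strat{X}$, and that no singular height of $\strat{X}$ beyond those on $\partial \strat{X}$ is perturbed in $\shapeBord{\strat{X}}{t}$ by more than a negligible amount. Both rest on the careful observation that $\fproj(\strat{X}, -)$ is the identity on $\strat{X}$ itself, which via the interpolation formula gives an exact framed identification of the interiors of $\strat{X}$ and $\shapeBord{\strat{X}}{t}$ rather than merely an approximate one.
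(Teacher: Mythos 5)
Your proof takes essentially the same route as the paper: the paper likewise picks $t$ small enough that $\partial\strat{X}$ lies in the $\eps$-neighbourhood of $\partial\shapeBord{\strat{X}}{t}$ and then asserts the pullback property ``due to $\eps$-orthogonality''. Your write-up supplies the gluing and orthogonality-transfer details that the paper compresses into that phrase, resting on the same key observation that the interpolation defining $\shapeBord{\strat{X}}{\bullet}$ fixes $\strat{X}$ pointwise, so the two meshes genuinely coincide away from an arbitrarily thin boundary collar.
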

\begin{proof}
  The bordism of closed $n$-meshes $\strat{X} \to \shape{\strat{X}}$ is represented
  by the closed $n$-mesh bundle $\shapeBord{\strat{X}}{\bullet} \to \DeltaStrat{1}$.
  The fibre $\shapeBord{\strat{X}}{t}$ over any $0 < t \leq 1$ is framed isomorphic to $\shape{\strat{X}}$,
  but as $t \to 0$ it converges to the shape of $\strat{X}$.
  In particular there must be a $0 < t \leq 1$ such that
  \[
    \unstrat(\partial \strat{X}) \subseteq
    \{
      e \mid \exists x \in \partial\shapeBord{\strat{X}}{t}.\ \| e - x \|_\infty < \eps
    \}.
  \]  
  Then the square must be a pullback square due to $\eps$-orthogonality.
\end{proof}

\begin{lemma}\label{lem:d-mfld-diag-boundary-kan}
  Let $\strat{X}$ be an $n$-cell.
  Then $\MfldDiag^n(\strat{X}) \to \MfldDiag^n(\partial \strat{X})$ is a Kan fibration.
\end{lemma}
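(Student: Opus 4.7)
The plan is to reduce the claim to Lemma~\ref{lem:d-mfld-diag-boundary-kan-shape} by using the active bordism $\strat{X} \pto \shape{\strat{X}}$ from Construction~\ref{con:fct-embed-stype-active-closed} together with the pullback comparison provided by Lemma~\ref{lem:d-mfld-diag-regular-approx}.

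First I would consider an arbitrary lifting problem
\[
  \begin{tikzcd}
    {\Lambda^i\ord{k}} \ar[r] \ar[d, hook] &
    {\MfldDiag^n(\strat{X})} \ar[d] \\
    {\Delta\ord{k}} \ar[r] \ar[ur, dashed] &
    {\MfldDiag^n(\partial\strat{X})}
  \end{tikzcd}
\]
and pick representatives for the horn and the simplex. Since both representatives are concrete $n$-manifold diagram bundles that are $\eps_0$-orthogonal and $\eps_1$-orthogonal to $\strat{X}$ and $\partial\strat{X}$ respectively, we can set $\eps := \min(\eps_0, \eps_1) > 0$ so that the entire lifting problem factors through the inclusions $\MfldDiag^{n, \eps}(-) \hookrightarrow \MfldDiag^n(-)$ of Observation~\ref{obs:d-mfld-space-eps}.

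Next I would invoke Lemma~\ref{lem:d-mfld-diag-regular-approx} to obtain a $0 < t \leq 1$ such that the square of $\MfldDiag^{n, \eps}$-versions relating $\strat{X}$ to $\shapeBord{\strat{X}}{t}$ is a pullback square of simplicial sets. Since $t > 0$, the fibre $\shapeBord{\strat{X}}{t}$ is framed isomorphic to $\shape{\strat{X}}$, so the right vertical map in this pullback square is the restriction-to-boundary map for the singular shape. Transporting the lifting problem across the pullback, it then suffices to solve the corresponding lifting problem against $\MfldDiag^{n, \eps}(\shape{\strat{X}}) \to \MfldDiag^{n, \eps}(\partial\shape{\strat{X}})$ and pull the solution back.

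To solve that transported problem, I would appeal to Lemma~\ref{lem:d-mfld-diag-boundary-kan-shape}, which gives the Kan fibration property at the $\MfldDiag^n$ level. A careful inspection of the construction in Lemma~\ref{lem:d-mfld-diag-boundary-kan-cube-regular} (on which Lemma~\ref{lem:d-mfld-diag-boundary-kan-shape} ultimately rests) shows that the explicit lift produced via the piecewise linear retraction $P$ is defined only in terms of coordinates $\| e \|_\infty < 1 + \eps$ and preserves $\eps$-orthogonality. Thus the Kan fibration property already holds at each $\MfldDiag^{n, \eps}$ level, which delivers a lift on the $\shape{\strat{X}}$-side. By the pullback property this lift uniquely determines a lift on the $\strat{X}$-side, which composed with $\MfldDiag^{n, \eps}(\strat{X}) \hookrightarrow \MfldDiag^n(\strat{X})$ solves the original problem.

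The main obstacle I expect is the level-wise refinement just discussed: verifying that the Kan fibration property of Lemma~\ref{lem:d-mfld-diag-boundary-kan-shape} compatibly restricts to every $\MfldDiag^{n, \eps}$-stratum, rather than merely holding after passage to the filtered colimit. If a direct verification proves delicate, a safe fallback is to first shrink $\eps$ to a small enough $\eps' > 0$ (using the deformation retract of Observation~\ref{obs:d-mfld-space-eps}) so that both the representatives and the constructions inherent to Lemma~\ref{lem:d-mfld-diag-boundary-kan-cube-regular} stay comfortably within the $\eps'$-orthogonality tube, and then apply the pullback argument of Lemma~\ref{lem:d-mfld-diag-regular-approx} at this smaller scale.
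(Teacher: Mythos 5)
Your reduction is exactly the paper's: both arguments pass to the $\eps$-level, use Lemma~\ref{lem:d-mfld-diag-regular-approx} to obtain a pullback square against $\shapeBord{\strat{X}}{t} \cong \shape{\strat{X}}$, and then quote Lemma~\ref{lem:d-mfld-diag-boundary-kan-shape}. The only divergence is the step you flag as the main obstacle: rather than re-inspecting the explicit lift of Lemma~\ref{lem:d-mfld-diag-boundary-kan-cube-regular} to check that it preserves $\eps$-orthogonality, the paper notes that $\MfldDiag^{n,\eps}(\shapeBord{\strat{X}}{t}) \to \MfldDiag^{n,\eps}(\partial\shapeBord{\strat{X}}{t})$ is a retract of the unrestricted map (via the deformation retract of Observation~\ref{obs:d-mfld-space-eps}), so the Kan-fibration property transfers formally --- which is precisely your fallback. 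The paper also concludes by closure of Kan fibrations under filtered colimits rather than by factoring an individual lifting problem through a finite stage, but these are interchangeable.
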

\begin{proof}
  By Lemma~\ref{lem:d-mfld-diag-regular-approx} for any $\eps > 0$ there exists a $t > 0$ such that
  we have a diagram
  \[
    \begin{tikzcd}
      {\MfldDiag^{n, \eps}(\strat{X})} \ar[r] \ar[d] \pullbackcorner &
      {\MfldDiag^{n, \eps}(\shapeBord{\strat{X}}{t})} \ar[d] \ar[r] &
      {\MfldDiag^n(\shapeBord{\strat{X}}{t})} \ar[d] \ar[r, "\cong"] &
      {\MfldDiag^n(\shape{\strat{X}})} \ar[d] \\
      {\MfldDiag^{n, \eps}(\partial \strat{X})} \ar[r] &
      {\MfldDiag^{n, \eps}(\partial \shapeBord{\strat{X}}{t})} \ar[r] &
      {\MfldDiag^n(\partial \shapeBord{\strat{X}}{t})} \ar[r, "\cong"'] &
      {\MfldDiag^n(\partial \shape{\strat{X}})}
    \end{tikzcd}
  \]
  where the first square is a pullback square, the second a retract and the third an isomorphism of maps.
  By Lemma~\ref{lem:d-mfld-diag-boundary-kan-shape} the map on the right $\MfldDiag^n(\shape{\strat{X}}) \to \MfldDiag^n(\partial \shape{\strat{X}})$
  is a Kan fibration, and therefore so is the map on the left
  $\MfldDiag^{n, \eps}(\strat{X}) \to \MfldDiag^{n, \eps}(\partial \strat{X})$.
  Since Kan fibrations are closed under filtered colimits, it follows that
  $\MfldDiag^n(\strat{X}) \to \MfldDiag^n(\partial \strat{X})$
  is a Kan fibration.
\end{proof}

\begin{lemma}\label{lem:d-mfld-diag-space-segal}
  Let $\strat{X}$ be a closed $n$-mesh and let
  $U : \stratPos{\strat{X}}^{\op, \triangleleft} \to \BMeshClosed{n}$
  be its cellular covering diagram.
  Then the diagram $\MfldDiag^n(U(-)) : \stratPos{\strat{X}}^{\op, \triangleleft} \to \sSet$
  induces a limit diagram of spaces.
  In particular the functor $\MfldDiag^n(-) : \BMeshClosed{n} \to \Space$ satisfies the Segal condition.
\end{lemma}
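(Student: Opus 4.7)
The plan is to reduce the homotopy limit to an ordinary limit of Kan complexes by exhibiting the diagram $\MfldDiag^n(U(-))$ as Reedy fibrant, and then identify that ordinary limit with $\MfldDiag^n(\strat{X})$ directly by gluing. First I would equip the indexing category $\stratPos{\strat{X}}^{\op, \triangleleft}$ with a Reedy structure using the dimension of strata: non-identity maps go from lower-dimensional cells to higher-dimensional ones (after taking opposites), with the cone point $\bot$ sitting at the top. At each non-cone object $\strat{C} = U(p)$ the Reedy matching object is the limit of $\MfldDiag^n(U(q))$ over $p < q$, and at the cone point $\bot$ it is the limit over all proper cells of $\strat{X}$. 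In both cases this matching object is precisely $\MfldDiag^n(\partial \strat{C})$ in the sense of the excerpt, and the canonical map from $\MfldDiag^n(\strat{C})$ to its matching object is the boundary restriction.

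Next I would invoke Lemma~\ref{lem:d-mfld-diag-boundary-kan} to conclude that each such boundary restriction is a Kan fibration. This gives Reedy fibrancy of $\MfldDiag^n(U(-))$ in the Kan--Quillen model structure, so the $1$-categorical limit $L := \lim \MfldDiag^n(U(-))$ in $\sSet$ is a Kan complex and represents the homotopy limit in $\Space$. Concretely, a $k$-simplex of $L$ is a compatible family $([p_{\strat{C}}]_{\strat{C}})_{\strat{C}}$ where $p_{\strat{C}}$ is an $n$-manifold diagram bundle on $\strat{C}$ and the restriction to any face $\strat{C}' \hookrightarrow \strat{C}$ agrees up to $\sim_{\strat{C}'}$.

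To identify $L$ with $\MfldDiag^n(\strat{X})$, I would construct a map $\MfldDiag^n(\strat{X}) \to L$ via the functoriality of $\MfldDiag^n(-)$ along the inert bordisms $\strat{X} \pto \strat{C}$ in the cone, and then exhibit an inverse by gluing. Given a compatible family, the local conditions defining an $n$-manifold diagram bundle (coverability by stable framed basics, essential tameness, fibrewise orthogonality to $\strat{X}$) are all preserved under restriction and are detected on any open neighbourhood; since the interiors of the cells $\strat{C}$ together with a collar of the singular locus of $\strat{X}$ cover an open neighbourhood of $\unstrat(\strat{X})$, the equivalence classes glue uniquely to an equivalence class $[p]_{\strat{X}}$. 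Compatibility between the $\sim_{\strat{C}'}$-equivalences on faces is exactly what is needed to produce a well-defined total stratifying map on $\unstrat(\strat{X})$, and orthogonality to $\strat{X}$ along each face propagates to orthogonality to all of $\strat{X}$ since the orthogonality condition is a fibrewise statement about neighbourhoods of the singular heights of $\strat{X}$.

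The main obstacle will be the gluing step, specifically verifying that a compatible family glues to a single equivalence class of bundles rather than merely to a bundle defined on overlapping patches. For this I would note that two bundles whose strata agree pointwise on the intersection of $\unstrat(\strat{X})$ with their common domain represent the same class $[-]_{\strat{X}}$, so the gluing happens at the level of equivalence classes rather than strict bundles, which makes the patching unambiguous and eliminates the usual cocycle obstructions. For the ``in particular'' clause, I would then invoke the duality equivalence $\BMeshOpen{n} \simeq \BMeshClosed{n, \op}$, under which cellular covering diagrams of closed $n$-meshes correspond to atomic covering diagrams of open $n$-meshes (as noted in the discussion following the covering constructions); the just-established limit property therefore translates into the Segal condition for the presheaf on $\BTrussOpen{n}$ associated to $\MfldDiag^n(-)$ via the further equivalence $\BMeshOpen{n} \simeq \BTrussOpen{n}$.
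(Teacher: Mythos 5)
Your overall strategy matches the paper's: establish Reedy fibrancy of the diagram using Lemma~\ref{lem:d-mfld-diag-boundary-kan}, compute the homotopy limit as the $1$-categorical limit of simplicial sets, and identify that limit with $\MfldDiag^n(\strat{X})$ by gluing. There is, however, a genuine gap in your gluing step. You assert that essential tameness is ``detected on any open neighbourhood'' and therefore propagates from the cells to the glued diagram. This is false, and the paper warns against exactly this point: essential tameness means framed-isomorphic to a tame space, and the witnessing isomorphisms on different pieces need not be compatible. The example of the curve $\{(1, t\sin(t^{-1}), t)\}$ in \S\ref{sec:fct-framing}, which is essentially tame on its own but whose union with a tame vertical line is not even essentially tame (see also the remark in \S\ref{sec:d-mfld-global} that a space covered by framed basics need not be essentially tame), shows that a compatible family of essentially tame bundles on the cells of $\strat{X}$ need not glue to an essentially tame bundle on $\strat{X}$. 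Since a $k$-simplex of $\MfldDiag^n(\strat{X})$ is by definition an equivalence class of an actual essentially tame manifold diagram bundle, your candidate inverse map from the limit to $\MfldDiag^n(\strat{X})$ is not well defined as stated; gluing at the level of $\sim_{\strat{X}}$-classes resolves the cocycle issue but not the tameness one.

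The paper repairs this by first arranging, up to equivalence, that $\strat{X}$ and all of its cells are tame, and then replacing each $\MfldDiag^n(U(p))$ by the deformation retract $\MfldDiagTame^n(U(p))$ of honestly tame diagrams. Honest tameness, being a finite relative triangulation with simplicial framing, does glue along the finitely many cells of $\strat{X}$, so the tame diagram is a strict limit of simplicial sets; Reedy fibrancy then follows because the boundary restrictions of the tame subcomplexes are retracts of the Kan fibrations of Lemma~\ref{lem:d-mfld-diag-boundary-kan}. Inserting this reduction before your gluing argument makes the rest of your proof go through. Your treatment of the ``in particular'' clause via the duality $\BMeshOpen{n} \simeq \BMeshClosed{n,\op}$ is consistent with the paper's conventions.
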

\begin{proof}
  Every closed $n$-mesh is essentially tame, and so up
  to equivalence we can ensure that $\strat{X}$ and each of its
  cells is tame.
  For every tame closed $n$-mesh $\strat{Y}$ we have a deformation retract
  $\MfldDiagTame^n(\strat{Y})$ of $\MfldDiag^n(\strat{Y})$ consisting
  only of those $n$-manifold diagrams on $\strat{Y}$ that are tame as
  $n$-framed stratified spaces.
  It therefore suffices to show that $\MfldDiagTame^n(U(-))$ induces a limit
  diagram of spaces.
  By gluing together the $n$-manifold diagrams on the cells of $\strat{X}$,
  the diagram $\MfldDiagTame^n(U(-))$ is a limit diagram of simplicial sets.
  Kan fibrations are closed under retract, and so 
  by Lemma~\ref{lem:d-mfld-diag-boundary-kan} the diagram is Reedy fibrant.
  Therefore, the limit diagram of simplicial sets $\MfldDiagTame^n(U(-))$  
  represents a limit diagram of spaces.  
\end{proof}

\begin{lemma}\label{lem:d-mfld-diag-space-regular}
  Let $\strat{X}$ be an $n$-cell. Then
  \[
    \begin{tikzcd}
      {\MfldDiag^{n}(\strat{X})} \ar[r] \ar[d] \pullbackcorner &
      {\MfldDiag^{n}(\shape{\strat{X}})} \ar[d] \\
      {\MfldDiag^{n}(\partial \strat{X})} \ar[r] &
      {\MfldDiag^{n}(\partial \shape{\strat{X}})}
    \end{tikzcd}
  \]
  is a pullback square of spaces.
  In particular the functor $\MfldDiag^n(-) : \BMeshClosed{n} \to \Space$ satisfies the regularity condition.
\end{lemma}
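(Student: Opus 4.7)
The plan is to reduce to Lemma~\ref{lem:d-mfld-diag-regular-approx} via the filtered colimit description of $\MfldDiag^n(-)$ from Observation~\ref{obs:d-mfld-space-eps}, combined with the Kan fibration property established in Lemma~\ref{lem:d-mfld-diag-boundary-kan}.

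First I would observe that both $\MfldDiag^n(\strat{X}) \to \MfldDiag^n(\partial \strat{X})$ and $\MfldDiag^n(\shape{\strat{X}}) \to \MfldDiag^n(\partial\shape{\strat{X}})$ are Kan fibrations between Kan complexes by Lemma~\ref{lem:d-mfld-diag-boundary-kan} (noting that $\shape{\strat{X}}$ is itself an $n$-cell). Since the right vertical map is a fibration of fibrant objects in the Kan--Quillen model structure, the given square is a pullback of spaces if and only if it is a strict pullback of simplicial sets. It therefore suffices to verify the strict pullback property in $\sSet$.

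Next I would use that Observation~\ref{obs:d-mfld-space-eps} exhibits each of the four corners as the filtered colimit of its $\MfldDiag^{n,\eps}(-)$ versions as $\eps \to 0$. Since filtered colimits in $\sSet$ commute with finite limits, the strict pullback property for the whole square reduces to verifying that, for every sufficiently small $\eps > 0$, the $\eps$-level square
\[
  \begin{tikzcd}
    {\MfldDiag^{n,\eps}(\strat{X})} \ar[r] \ar[d] &
    {\MfldDiag^{n,\eps}(\shape{\strat{X}})} \ar[d] \\
    {\MfldDiag^{n,\eps}(\partial \strat{X})} \ar[r] &
    {\MfldDiag^{n,\eps}(\partial \shape{\strat{X}})}
  \end{tikzcd}
\]
is a strict pullback of simplicial sets.

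To prove this, I would invoke Lemma~\ref{lem:d-mfld-diag-regular-approx} applied to $\eps$ to obtain some $0 < t \leq 1$ for which the analogous square with $\shapeBord{\strat{X}}{t}$ in place of $\shape{\strat{X}}$ is a strict pullback. The closed $n$-mesh bundle $\shapeBord{\strat{X}}{\bullet} \to \DeltaStrat{1}$ from Construction~\ref{con:fct-embed-stype-active-closed} is fibrewise framed isomorphic to $\shape{\strat{X}}$ on the open stratum $\intOC{0,1}$; hence $\shapeBord{\strat{X}}{t} \cong \shape{\strat{X}}$ as framed stratified spaces for every $t > 0$, and the bordism $\strat{X} \pto \shape{\strat{X}}$ factors as $\strat{X} \pto \shapeBord{\strat{X}}{t} \simeq \shape{\strat{X}}$ in $\BMeshClosed{n}$. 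Transporting the pullback square of Lemma~\ref{lem:d-mfld-diag-regular-approx} along this isomorphism yields the required $\eps$-level pullback square. Taking the filtered colimit over $\eps \to 0$ then yields the desired pullback of spaces. The final statement about regularity is immediate from the definition once the square is established.

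The only real subtlety is ensuring the compatibility under the isomorphism $\shapeBord{\strat{X}}{t(\eps)} \cong \shape{\strat{X}}$: the parameter $t$ depends on $\eps$, so one must check that as $\eps$ shrinks the corresponding pullback squares assemble coherently along the inclusions $\MfldDiag^{n,\eps_0}(-) \hookrightarrow \MfldDiag^{n,\eps_1}(-)$. This is a routine verification since the isomorphisms $\shapeBord{\strat{X}}{t} \cong \shape{\strat{X}}$ are induced by a single closed $n$-mesh bundle and the squares are functorial in $\eps$, so no incompatibility can arise.
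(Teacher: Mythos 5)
Your proposal is correct and takes essentially the same approach as the paper: the paper's own proof consists precisely of citing Lemma~\ref{lem:d-mfld-diag-regular-approx} and Observation~\ref{obs:d-mfld-space-eps}, and your filtered-colimit assembly, together with the use of Lemma~\ref{lem:d-mfld-diag-boundary-kan} to upgrade the strict pullbacks to homotopy pullbacks, is a faithful filling-in of that outline.
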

\begin{proof}
  This follows from Lemma~\ref{lem:d-mfld-diag-regular-approx}
  and Observation~\ref{obs:d-mfld-space-eps}.
  % By Observation~\ref{} there is an $\eps > 0$ such that~(\ref{eq:d-mfld-diag-space-regular:square-0}) is equivalent to  
  % \begin{equation}\label{eq:d-mfld-diag-space-regular:square-1}
  %   \begin{tikzcd}
  %     {\MfldDiag^{n, \eps}(\strat{X})} \ar[r] \ar[d] &
  %     {\MfldDiag^{n, \eps}(\shape{\strat{X}})} \ar[d] \\
  %     {\MfldDiag^{n, \eps}(\partial \strat{X})} \ar[r] &
  %     {\MfldDiag^{n, \eps}(\partial \shape{\strat{X}})}
  %   \end{tikzcd}
  % \end{equation}
  % Moreover by Lemma~\ref{lem:d-mfld-diag-regular-approx} we can find a
  % $0 < t \leq 1$ so that~(\ref{eq:d-mfld-diag-space-regular:square-1})
  % is equivalent the pullback square of simplicial sets
  % \begin{equation}\label{eq:d-mfld-diag-space-regular:square-2}
  %   \begin{tikzcd}
  %     {\MfldDiag^{n, \eps}(\strat{X})} \ar[r] \ar[d] &
  %     {\MfldDiag^{n, \eps}(\shapeBord{\strat{X}}{t})} \ar[d] \\
  %     {\MfldDiag^{n, \eps}(\partial \strat{X})} \ar[r] &
  %     {\MfldDiag^{n, \eps}(\partial \shapeBord{\strat{X}}{t})}
  %   \end{tikzcd}
  % \end{equation}
  % Because $\MfldDiag^{n, \eps}(\shapeBord{\strat{X}}{t}) \to \MfldDiag^{n, \eps}(\partial \shapeBord{\strat{X}}{t})$
\end{proof}

\subsection{Extended Manifold Diagrams}\label{sec:d-mfld-extended}

An $n$-manifold diagram on a closed $n$-mesh $\strat{X}$ is defined within an arbitrary small open neighbourhood of $\strat{X}$.
Extended $n$-manifold diagrams $\strat{M}$ on $\strat{X}$ are an alternative presentation
for which we require $\strat{M}$ to be defined everywhere on $\R^n$.
Extended $n$-manifold diagrams organise into an $n$-diagrammatic space
$\MfldDiagExt^n(-) : \BMeshClosed{n} \to \Space$ that is equivalent to
the $n$-diagrammatic space of $n$-manifold diagrams $\MfldDiag^n(-)$.

\begin{definition}
  We write $\MfldDiagExt^n$ for the simplicial set of \defn{extended $n$-manifold diagrams}
  whose $k$-simplices consist
  of a pair $(\xi, p)$ where $\xi : \strat{X} \to \DeltaStrat{k}$ is a closed
  $n$-mesh bundle and $p : \strat{E} \to \DeltaTop{k}$ is an $n$-manifold diagram
  bundle orthogonal to $\xi$ such that  
  $\unstrat(\strat{E}) = \R^n \times \DeltaTop{k}$.
  There is a projection map $\MfldDiagExt^n \to \BMeshClosed{n}$ which sends
  a pair $(\xi, p)$ to the closed $n$-mesh bundle $\xi$.
\end{definition}

\begin{proposition}
  The map $\MfldDiagExt^n \to \BMeshClosed{n}$ is a left fibration.
\end{proposition}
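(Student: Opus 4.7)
The plan is to mirror the proof of Proposition~\ref{prop:d-mfld-functorial-left}, noting that all the constructions used there preserve the property of being defined on all of $\R^n \times \DeltaTop{k}$, so that no passage to equivalence classes is needed.

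Concretely, suppose we are given a lifting problem
\[
  \begin{tikzcd}
    {\Lambda^i\ord{k}} \ar[r] \ar[d] &
    {\MfldDiagExt^n} \ar[d] \\
    {\Delta\ord{k}} \ar[r] \ar[ur, dashed] &
    {\BMeshClosed{n}}
  \end{tikzcd}
\]
for some $0 \leq i < k$. Unpacking definitions, the data amounts to a closed $n$-mesh bundle $\xi : \strat{X} \to \DeltaStrat{k}$ together with an extended $n$-manifold diagram bundle $p' : \strat{E}' \to \HornTop{i}{k}$ with $\unstrat(\strat{E}') = \R^n \times \HornTop{i}{k}$, fibrewise orthogonal to the restriction of $\xi$ over $\HornStrat{i}{k}$.

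First, I choose a piecewise linear retraction $R : \DeltaTop{k} \to \HornTop{i}{k}$ of the inclusion, and form the pullback $p'' : \strat{E}'' \to \DeltaTop{k}$ of $p'$ along $R$. Since pullback along $R$ simply reparametrises the base, we have $\unstrat(\strat{E}'') = \R^n \times \DeltaTop{k}$, and $p''$ is again an essentially tame $n$-framed stratified submersion with labels in $\ord{n}$ whose fibres are $n$-manifold diagrams, because each of these properties is local on the base and stable under pullback. Hence $p''$ is an extended $n$-manifold diagram bundle.

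The remaining issue is that $p''$ need not be orthogonal to $\xi$ away from $\HornStrat{i}{k}$. However, orthogonality is an open condition and holds on $\HornStrat{i}{k}$, so it holds on some open neighbourhood $\strat{U} \subseteq \DeltaStrat{k}$ of $\HornStrat{i}{k}$. The complement $\DeltaStrat{k} \setminus \HornStrat{i}{k}$ lies in the interior of the top stratum of $\DeltaStrat{k}$, over which $\xi$ admits a stratified trivialisation. Using this trivialisation I modify $p''$ outside $\strat{U}$, leaving it unchanged on a smaller neighbourhood of $\HornStrat{i}{k}$, to obtain an extended $n$-manifold diagram bundle $p : \strat{E} \to \DeltaTop{k}$ defined on all of $\R^n \times \DeltaTop{k}$ and orthogonal to $\xi$ everywhere. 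Then the pair $(\xi, p)$ solves the lifting problem.

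The step that requires care is the modification producing $p$ from $p''$: we must ensure it preserves essential tameness, the framed submersion property, and the fibrewise manifold-diagram structure, while achieving orthogonality globally. This is handled exactly as in the proof of Proposition~\ref{prop:d-mfld-functorial-left}, by performing the deformation within the trivialisation of $\xi$ over the top stratum so that it acts locally as a framed stratified isomorphism on each fibre; the only point of genuine novelty is checking that we do not need to restrict to a neighbourhood of $\strat{X}$, which follows because the trivialisation of $\xi$ propagates across the entire $\R^n$-fibres of $p''$.
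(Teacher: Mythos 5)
Your proposal is correct and follows essentially the same route as the paper, which simply observes that the argument for Proposition~\ref{prop:d-mfld-functorial-left} carries over verbatim once one notes that no equivalence classes need to be taken because the pullback and the subsequent orthogonality correction keep the total space equal to $\R^n \times \DeltaTop{k}$. Your explicit check that the retraction pullback and the trivialisation-based deformation preserve the extended (everywhere-defined) structure is exactly the point the paper leaves implicit.
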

\begin{proof}
  The proof is analogous to that of Proposition~\ref{prop:d-mfld-functorial-left},
  except that this time we do not take equivalence classes.
\end{proof}

\begin{observation}
  Since $\MfldDiagExt^n \to \BMeshClosed{n}$ is a left fibration it induces a
  functor of $\infty$-categories
  \[ \MfldDiagExt^n(-) : \BMeshClosed{n} \longrightarrow \Space.\]
  The space $\MfldDiagExt^n(\strat{X})$ for any closed $n$-mesh $\strat{X}$ is presented
  as a Kan complex via the pullback of simplicial sets
  \[
    \begin{tikzcd}
      {\MfldDiagExt^n(\strat{X})} \ar[r] \ar[d] \pullbackcorner &
      {\MfldDiagExt^n} \ar[d] \\
      {\Delta\ord{0}} \ar[r, "\strat{X}"'] &
      {\BMeshClosed{n}}
    \end{tikzcd}
  \]
  In particular a $k$-simplex of $\MfldDiagExt^n(\strat{X})$ is an
  $n$-manifold diagram bundle $p : \strat{E} \to \DeltaTop{k}$ with  
  $\unstrat(\strat{E}) = \R^n \times \DeltaTop{k}$
  that is fibrewise orthogonal to $\strat{X}$.
\end{observation}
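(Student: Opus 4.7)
The plan is to mimic the proof of Proposition~\ref{prop:d-mfld-functorial-left} essentially verbatim, with the single adjustment that we preserve actual $n$-manifold diagram bundles everywhere on $\R^n \times \DeltaTop{k}$ rather than equivalence classes $[-]_\xi$. Concretely, I would start with a lifting problem
\[
  \begin{tikzcd}
    {\Lambda^i\ord{k}} \ar[r] \ar[d, hook] &
    {\MfldDiagExt^n} \ar[d] \\
    {\Delta\ord{k}} \ar[r] \ar[ur, dashed] &
    {\BMeshClosed{n}}
  \end{tikzcd}
\]
for $0 \leq i < k$. Unpacking this data gives a closed $n$-mesh bundle $\xi : \strat{X} \to \DeltaStrat{k}$ together with an $n$-manifold diagram bundle $p' : \strat{E}' \to \HornTop{i}{k}$ defined on all of $\unstrat(\strat{E}') = \R^n \times \HornTop{i}{k}$ that is orthogonal to the restriction $\xi|_{\HornStrat{i}{k}}$.

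Next I would pick a piecewise linear retraction $R : \DeltaTop{k} \to \HornTop{i}{k}$ of the inclusion $\HornTop{i}{k} \hookrightarrow \DeltaTop{k}$ and form the pullback
\[
  \begin{tikzcd}
    {\strat{E}''} \ar[r] \ar[d, "p''"'] \pullbackcorner &
    {\strat{E}'} \ar[d, "p'"] \\
    {\DeltaTop{k}} \ar[r, "R"'] &
    {\HornTop{i}{k}}
  \end{tikzcd}
\]
Because $p'$ is defined on all of $\R^n \times \HornTop{i}{k}$, the pullback $p''$ is automatically defined on $\unstrat(\strat{E}'') = \R^n \times \DeltaTop{k}$, so $p''$ already lies in the support required by $\MfldDiagExt^n$; it is, however, generally not orthogonal to $\xi$ on the interior of $\DeltaStrat{k}$.

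The remaining step is to adjust $p''$ to an orthogonal $n$-manifold diagram bundle $p$ while keeping $p|_{\HornTop{i}{k}} = p'$ on the nose. Here I reuse the construction from the proof of Proposition~\ref{prop:d-mfld-functorial-left}: there exists an open collar neighbourhood $\strat{U}$ of $\HornStrat{i}{k}$ in $\DeltaStrat{k}$ over which $p''$ and $\xi$ are already orthogonal, and the trivialisation of $\xi$ over the top stratum of $\DeltaStrat{k}$ lets me reparametrise $p''$ on the complement of this collar into an orthogonal $p : \strat{E} \to \DeltaTop{k}$ that coincides with $p''$ on $\strat{U}$. Then $(\xi, p)$ is the required lift.

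The one point requiring mild care, and the main obstacle to a mechanical copy of the earlier argument, is that we no longer have the freedom afforded by $[-]_\xi$ to make incidental changes: the reparametrisation must be carried out with a bump function supported strictly inside the interior of the top stratum so that $p$ is literally equal to $p''$, and hence to the pullback of $p'$, over all of $\HornTop{i}{k}$. This is straightforward since the trivialisation used to modify $p''$ can be taken to restrict to the identity on the collar $\strat{U}$, ensuring the lifted simplex restricts to the given horn on the nose rather than up to equivalence.
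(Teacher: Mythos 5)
Your proposal is correct and takes essentially the same route as the paper: the substantive content here is the left-fibration property of $\MfldDiagExt^n \to \BMeshClosed{n}$, which the paper establishes by declaring the argument ``analogous to that of Proposition~\ref{prop:d-mfld-functorial-left}, except that this time we do not take equivalence classes,'' and that is precisely what you carry out — correctly noting that the pullback along the retraction already restricts to the given horn on the nose and that the orthogonality correction can be supported away from the horn. The remaining assertions of the observation (the induced functor $\BMeshClosed{n} \to \Space$, the Kan-complex fibre computed by the pullback, and the identification of its $k$-simplices) are routine consequences of the left-fibration property via the straightening facts recorded in the background section.
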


\begin{example}\label{ex:d-mfld-diag-off-to-infinity}
  An extended $2$-manifold diagram on the $2$-cube $\StratIntLR^2$ resembles a string diagram
  but can have non-trivial geometry outside the bounds of $\StratIntLR^2$:
  \[
    % \strat{X} := \quad
    % \begin{tikzpicture}[scale = 0.5, baseline=(current bounding box.center)]
    %   \fill[mesh-background] (1, 1) rectangle ++(2, 2);
    %   \node[mesh-vertex-dual] at (1, 1) {};
    %   \node[mesh-vertex-dual] at (3, 1) {};
    %   \node[mesh-vertex-dual] at (1, 3) {};
    %   \node[mesh-vertex-dual] at (3, 3) {};
    %   \draw[mesh-stratum-dual] (1, 1) rectangle ++(2, 2);
    % \end{tikzpicture}
    % \qquad
    % \qquad
    \begin{tikzpicture}[scale = 0.5, baseline=(current bounding box.center)]
      \fill[mesh-background] (0, 0) rectangle (7, 6);
      \node[mesh-vertex-dual] at (1, 1) {};
      \node[mesh-vertex-dual] at (3, 1) {};
      \node[mesh-vertex-dual] at (1, 3) {};
      \node[mesh-vertex-dual] at (3, 3) {};
      \draw[mesh-stratum-dual] (1, 1) rectangle ++(2, 2);
      \draw[mesh-stratum] (1.5, 0) -- (1.5, 1) .. controls +(0, 0.2) and +(-0.5, 0) .. (2, 2);
      \draw[mesh-stratum] (2.5, 0) -- (2.5, 1) .. controls +(0, 0.2) and +(0.5, 0) .. (2, 2);
      \draw[mesh-stratum] (2, 2) -- (2, 3) .. controls +(0, 0.2) and +(-1, 0) .. (3, 4);
      \draw[mesh-stratum] (4, 2) -- (4, 3) .. controls +(0, 0.2) and +(1, 0) .. (3, 4);
      \draw[mesh-stratum] (5, 4) -- (5, 6);
      \node[mesh-vertex] at (4, 2) {};
      \node[mesh-vertex] at (3, 4) {};
      \node[mesh-vertex] at (5, 4) {};
      \node[mesh-vertex] at (5, 4) {};
      \node[mesh-vertex] at (2, 2) {};
    \end{tikzpicture}
  \]
  Up to isotopy the diagram is fully determined by its intersection with $\StratIntLR^2$.  
  To see this we first move the parts of the string diagram that are above
  $\StratIntLR^2$ to infinity and off the top of the diagram, using that $\R^2$ is open:
  \[
    \begin{tikzpicture}[scale = 0.5, baseline=(current bounding box.center)]
      % \draw[gray] (0, 0) grid (7, 6);
      \fill[mesh-background] (0, 0) rectangle (7, 6);
      \node[mesh-vertex-dual] at (1, 1) {};
      \node[mesh-vertex-dual] at (3, 1) {};
      \node[mesh-vertex-dual] at (1, 3) {};
      \node[mesh-vertex-dual] at (3, 3) {};
      \draw[mesh-stratum-dual] (1, 1) rectangle ++(2, 2);
      \draw[mesh-stratum] (1.5, 0) -- (1.5, 1) .. controls +(0, 0.2) and +(-0.5, 0) .. (2, 2);
      \draw[mesh-stratum] (2.5, 0) -- (2.5, 1) .. controls +(0, 0.2) and +(0.5, 0) .. (2, 2);
      \draw[mesh-stratum] (2, 2) -- (2, 3) .. controls +(0, 0.2) and +(-1, 0) .. (3, 4);
      \draw[mesh-stratum] (4, 2) -- (4, 3) .. controls +(0, 0.2) and +(1, 0) .. (3, 4);
      \draw[mesh-stratum] (5, 4) -- (5, 6);
      \node[mesh-vertex] at (4, 2) {};
      \node[mesh-vertex] at (3, 4) {};
      \node[mesh-vertex] at (5, 4) {};
      \node[mesh-vertex] at (5, 4) {};
      \node[mesh-vertex] at (2, 2) {};
    \end{tikzpicture}
    \quad
    \sim
    \quad
    \begin{tikzpicture}[scale = 0.5, baseline=(current bounding box.center)]
      % \draw[gray] (0, 0) grid (7, 6);
      \fill[mesh-background] (0, 0) rectangle (7, 6);
      \node[mesh-vertex-dual] at (1, 1) {};
      \node[mesh-vertex-dual] at (3, 1) {};
      \node[mesh-vertex-dual] at (1, 3) {};
      \node[mesh-vertex-dual] at (3, 3) {};
      \draw[mesh-stratum-dual] (1, 1) rectangle ++(2, 2);
      \draw[mesh-stratum] (1.5, 0) -- (1.5, 1) .. controls +(0, 0.2) and +(-0.5, 0) .. (2, 2);
      \draw[mesh-stratum] (2.5, 0) -- (2.5, 1) .. controls +(0, 0.2) and +(0.5, 0) .. (2, 2);
      \draw[mesh-stratum] (2, 2) -- (2, 4) .. controls +(0, 0.2) and +(-1, 0) .. (3, 5);
      \draw[mesh-stratum] (4, 2) -- (4, 4) .. controls +(0, 0.2) and +(1, 0) .. (3, 5);
      \draw[mesh-stratum] (5, 5) -- (5, 6);
      \node[mesh-vertex] at (4, 2) {};
      \node[mesh-vertex] at (3, 5) {};
      \node[mesh-vertex] at (5, 5) {};
      \node[mesh-vertex] at (5, 5) {};
      \node[mesh-vertex] at (2, 2) {};
    \end{tikzpicture}
    \quad
    \sim
    \quad
    \begin{tikzpicture}[scale = 0.5, baseline=(current bounding box.center)]
      \fill[mesh-background] (0, 0) rectangle (7, 6);
      \node[mesh-vertex-dual] at (1, 1) {};
      \node[mesh-vertex-dual] at (3, 1) {};
      \node[mesh-vertex-dual] at (1, 3) {};
      \node[mesh-vertex-dual] at (3, 3) {};
      \draw[mesh-stratum-dual] (1, 1) rectangle ++(2, 2);
      \draw[mesh-stratum] (1.5, 0) -- (1.5, 1) .. controls +(0, 0.2) and +(-0.5, 0) .. (2, 2);
      \draw[mesh-stratum] (2.5, 0) -- (2.5, 1) .. controls +(0, 0.2) and +(0.5, 0) .. (2, 2);
      \draw[mesh-stratum] (2, 2) -- (2, 6);
      \draw[mesh-stratum] (4, 2) -- (4, 6);
      \node[mesh-vertex] at (4, 2) {};
      \node[mesh-vertex] at (2, 2) {};
    \end{tikzpicture}
  \]
  We then move everything that is to the right of $\StratIntLR^2$ off the boundary
  on the right-hand side, so that we end up with just the diagram contained
  within $\StratIntLR^2$:
  \[
    \begin{tikzpicture}[scale = 0.5, baseline=(current bounding box.center)]
      \fill[mesh-background] (0, 0) rectangle (7, 6);
      \node[mesh-vertex-dual] at (1, 1) {};
      \node[mesh-vertex-dual] at (3, 1) {};
      \node[mesh-vertex-dual] at (1, 3) {};
      \node[mesh-vertex-dual] at (3, 3) {};
      \draw[mesh-stratum-dual] (1, 1) rectangle ++(2, 2);
      \draw[mesh-stratum] (1.5, 0) -- (1.5, 1) .. controls +(0, 0.2) and +(-0.5, 0) .. (2, 2);
      \draw[mesh-stratum] (2.5, 0) -- (2.5, 1) .. controls +(0, 0.2) and +(0.5, 0) .. (2, 2);
      \draw[mesh-stratum] (2, 2) -- (2, 6);
      \draw[mesh-stratum] (4, 2) -- (4, 6);
      \node[mesh-vertex] at (4, 2) {};
      \node[mesh-vertex] at (2, 2) {};
    \end{tikzpicture}
    \quad
    \sim
    \quad
    \begin{tikzpicture}[scale = 0.5, baseline=(current bounding box.center)]
      \fill[mesh-background] (0, 0) rectangle (7, 6);
      \node[mesh-vertex-dual] at (1, 1) {};
      \node[mesh-vertex-dual] at (3, 1) {};
      \node[mesh-vertex-dual] at (1, 3) {};
      \node[mesh-vertex-dual] at (3, 3) {};
      \draw[mesh-stratum-dual] (1, 1) rectangle ++(2, 2);
      \draw[mesh-stratum] (1.5, 0) -- (1.5, 1) .. controls +(0, 0.2) and +(-0.5, 0) .. (2, 2);
      \draw[mesh-stratum] (2.5, 0) -- (2.5, 1) .. controls +(0, 0.2) and +(0.5, 0) .. (2, 2);
      \draw[mesh-stratum] (2, 2) -- (2, 6);
      \draw[mesh-stratum] (5.5, 2) -- (5.5, 6);
      \node[mesh-vertex] at (5.5, 2) {};
      \node[mesh-vertex] at (2, 2) {};
    \end{tikzpicture}
    \quad
    \sim
    \quad
    \begin{tikzpicture}[scale = 0.5, baseline=(current bounding box.center)]
      \fill[mesh-background] (0, 0) rectangle (7, 6);
      \node[mesh-vertex-dual] at (1, 1) {};
      \node[mesh-vertex-dual] at (3, 1) {};
      \node[mesh-vertex-dual] at (1, 3) {};
      \node[mesh-vertex-dual] at (3, 3) {};
      \draw[mesh-stratum-dual] (1, 1) rectangle ++(2, 2);
      \draw[mesh-stratum] (1.5, 0) -- (1.5, 1) .. controls +(0, 0.2) and +(-0.5, 0) .. (2, 2);
      \draw[mesh-stratum] (2.5, 0) -- (2.5, 1) .. controls +(0, 0.2) and +(0.5, 0) .. (2, 2);
      \draw[mesh-stratum] (2, 2) -- (2, 6);
      \node[mesh-vertex] at (2, 2) {};
    \end{tikzpicture}
  \]
\end{example}

\begin{example}\label{ex:d-mfld-diag-restrict-2}
  Suppose that we have an embedding of closed $2$-meshes
  \[
    \strat{X}
    \quad
    =
    \quad
    \begin{tikzpicture}[scale = 0.5, baseline=(current bounding box.center)]
      \fill[mesh-background] (1, 1) rectangle ++(2, 2);
      \node[mesh-vertex-dual] at (1, 1) {};
      \node[mesh-vertex-dual] at (3, 1) {};
      \node[mesh-vertex-dual] at (1, 3) {};
      \node[mesh-vertex-dual] at (3, 3) {};
      \draw[mesh-stratum-dual] (1, 1) rectangle ++(2, 2);
    \end{tikzpicture}
    \quad
    \overset{e}{\hookrightarrow}
    % \hookrightarrow
    \quad
    \begin{tikzpicture}[scale = 0.5, baseline=(current bounding box.center)]
      \fill[mesh-background] (1, 1) rectangle ++(5, 4);
      \node[mesh-vertex-dual] at (1, 1) {};
      \node[mesh-vertex-dual] at (3, 1) {};
      \node[mesh-vertex-dual] at (6, 1) {};
      \node[mesh-vertex-dual] at (1, 3) {};
      \node[mesh-vertex-dual] at (3, 3) {};
      \node[mesh-vertex-dual] at (6, 3) {};
      \node[mesh-vertex-dual] at (1, 5) {};
      \node[mesh-vertex-dual] at (6, 5) {};
      \draw[mesh-stratum-dual] (1, 1) rectangle ++(2, 2);
      \draw[mesh-stratum-dual] (3, 1) rectangle ++(3, 2);
      \draw[mesh-stratum-dual] (1, 3) rectangle ++(5, 2);
    \end{tikzpicture}
    \quad
    =
    \quad
    \strat{Y}
  \]
  The image of $e$ is the lower left corner in $\strat{Y}$.
  We can illustrate the action of $e : \strat{X} \hookrightarrow \strat{Y}$
  on an extended $n$-manifold diagram on $\strat{X}$ as follows:
  \[
    \begin{tikzpicture}[scale = 0.5, baseline=(current bounding box.center)]
      \fill[mesh-background] (0, 0) rectangle (7, 6);
      \node[mesh-vertex-dual] at (1, 1) {};
      \node[mesh-vertex-dual] at (3, 1) {};
      \node[mesh-vertex-dual] at (6, 1) {};
      \node[mesh-vertex-dual] at (1, 3) {};
      \node[mesh-vertex-dual] at (3, 3) {};
      \node[mesh-vertex-dual] at (6, 3) {};
      \node[mesh-vertex-dual] at (1, 5) {};
      \node[mesh-vertex-dual] at (6, 5) {};
      \draw[mesh-stratum-dual] (1, 1) rectangle ++(2, 2);
      \draw[mesh-stratum-dual] (3, 1) rectangle ++(3, 2);
      \draw[mesh-stratum-dual] (1, 3) rectangle ++(5, 2);
      \draw[mesh-stratum] (1.5, 0) -- (1.5, 1) .. controls +(0, 0.2) and +(-0.5, 0) .. (2, 2);
      \draw[mesh-stratum] (2.5, 0) -- (2.5, 1) .. controls +(0, 0.2) and +(0.5, 0) .. (2, 2);
      \draw[mesh-stratum] (2, 2) -- (2, 3) .. controls +(0, 0.2) and +(-1, 0) .. (3, 4);
      \draw[mesh-stratum] (4, 2) -- (4, 3) .. controls +(0, 0.2) and +(1, 0) .. (3, 4);
      \draw[mesh-stratum] (5, 4) -- (5, 6);
      \node[mesh-vertex] at (4, 2) {};
      \node[mesh-vertex] at (3, 4) {};
      \node[mesh-vertex] at (5, 4) {};
      \node[mesh-vertex] at (5, 4) {};
      \node[mesh-vertex] at (2, 2) {};
    \end{tikzpicture}
    \quad
    \mapsto
    \quad
    \begin{tikzpicture}[scale = 0.5, baseline=(current bounding box.center)]
      \fill[mesh-background] (0, 0) rectangle (7, 6);
      \node[mesh-vertex-dual] at (1, 1) {};
      \node[mesh-vertex-dual] at (3, 1) {};
      \node[mesh-vertex-dual] at (1, 3) {};
      \node[mesh-vertex-dual] at (3, 3) {};
      \draw[mesh-stratum-dual] (1, 1) rectangle ++(2, 2);
      \draw[mesh-stratum] (1.5, 0) -- (1.5, 1) .. controls +(0, 0.2) and +(-0.5, 0) .. (2, 2);
      \draw[mesh-stratum] (2.5, 0) -- (2.5, 1) .. controls +(0, 0.2) and +(0.5, 0) .. (2, 2);
      \draw[mesh-stratum] (2, 2) -- (2, 3) .. controls +(0, 0.2) and +(-1, 0) .. (3, 4);
      \draw[mesh-stratum] (4, 2) -- (4, 3) .. controls +(0, 0.2) and +(1, 0) .. (3, 4);
      \draw[mesh-stratum] (5, 4) -- (5, 6);
      \node[mesh-vertex] at (4, 2) {};
      \node[mesh-vertex] at (3, 4) {};
      \node[mesh-vertex] at (5, 4) {};
      \node[mesh-vertex] at (5, 4) {};
      \node[mesh-vertex] at (2, 2) {};
    \end{tikzpicture}
  \]
  The closed $n$-mesh changes while the diagram itself remains untouched.
  However, we can still interpret this as a restriction of the string diagram
  to the subcell.
  While the rest of the string diagram is still there, it is now no longer
  contained within the closed $2$-mesh, and we are therefore free to move it aside,
  as we did in Example~\ref{ex:d-mfld-diag-off-to-infinity}.
\end{example}

\begin{example}
  Even when the closed $n$-mesh $\strat{X}$ has more exotic shapes, we can
  move aside the parts of the diagram that are not contained within $\strat{X}$.
  \[
    \begin{tikzpicture}[scale = 0.5, baseline=(current bounding box.center)]
      \fill[mesh-background] (0, 0) rectangle (6, 6);
      \draw[mesh-stratum-dual] (5, 1) -- (1, 1) -- (1, 5) -- (5, 5);
      \node[mesh-vertex-dual] at (5, 1) {};
      \node[mesh-vertex-dual] at (1, 1) {};
      \node[mesh-vertex-dual] at (1, 5) {};
      \node[mesh-vertex-dual] at (5, 5) {};
      \draw[mesh-stratum] (2, 0) -- +(0, 2) .. controls +(0, 0.2) and +(-1, 0) .. (3, 3);
      \draw[mesh-stratum] (4, 0) -- +(0, 2) .. controls +(0, 0.2) and +(1, 0) .. (3, 3);
      \draw[mesh-stratum] (3, 3) -- (3, 6);
      \node[mesh-vertex] at (3, 3) {};
    \end{tikzpicture}
    \quad
    \sim
    \quad
    \begin{tikzpicture}[scale = 0.5, baseline=(current bounding box.center)]
      \fill[mesh-background] (0, 0) rectangle (6, 6);
      \draw[mesh-stratum-dual] (5, 1) -- (1, 1) -- (1, 5) -- (5, 5);
      \node[mesh-vertex-dual] at (5, 1) {};
      \node[mesh-vertex-dual] at (1, 1) {};
      \node[mesh-vertex-dual] at (1, 5) {};
      \node[mesh-vertex-dual] at (5, 5) {};
      \draw[mesh-stratum] (2, 0) -- +(0, 2) .. controls +(0, 1) and +(-1, 0) .. (6, 3);
      \draw[mesh-stratum] (4, 0) -- +(0, 2) .. controls +(0, 1) and +(-1, 0) .. (6, 3);
      \draw[mesh-stratum] (3, 6) -- (3, 4) .. controls +(0, -1) and +(-1, 0) .. (6, 3);
    \end{tikzpicture}
  \]
  
\end{example}

\begin{observation}
  There is a map of simplicial sets $\MfldDiagExt^n \to \MfldDiag^n$
  which restricts extended $n$-manifold diagrams to an open neighbourhood around
  the closed $n$-mesh.
  This map commutes with the projections onto $\BMeshClosed{n}$ and therefore
  induces a natural transformation $\MfldDiagExt^n(-) \to \MfldDiag^n(-)$ of
  functors $\BMeshClosed{n} \to \Space$.
\end{observation}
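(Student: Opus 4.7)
The plan is to exhibit the map at the level of simplicial sets and then appeal to left-fibration formalism to upgrade it to a natural transformation of $\Space$-valued functors.

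First, I would construct the map $\MfldDiagExt^n \to \MfldDiag^n$ explicitly on $k$-simplices. A $k$-simplex of $\MfldDiagExt^n$ is a pair $(\xi,p)$ with $\xi:\strat{X}\to\DeltaStrat{k}$ a closed $n$-mesh bundle and $p:\strat{E}\to\DeltaTop{k}$ an $n$-manifold diagram bundle orthogonal to $\xi$, with $\unstrat(\strat{E})=\R^n\times\DeltaTop{k}$. Since $\unstrat(\strat{X})\subseteq\R^n\times\DeltaTop{k}$, for every $\eps>0$ the restriction $p|_{\strat{E}_\eps}$, where $\strat{E}_\eps$ denotes the open sub-bundle over $\{(e,b):\exists x\in\unstrat(\strat{X}),\|e-x\|_\infty<\eps\}$, is itself an $n$-manifold diagram bundle (an open sub-bundle of a framed stratified submersion is again a framed stratified submersion, and orthogonality is inherited fibrewise from $p$). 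By Observation~\ref{obs:d-mfld-space-eps} the classes $[p|_{\strat{E}_\eps}]_\xi$ for varying $\eps$ all coincide inside $\MfldDiag^n(\strat{X})$, so sending $(\xi,p)\mapsto (\xi,[p|_{\strat{E}_\eps}]_\xi)$ yields a well-defined $k$-simplex of $\MfldDiag^n$ independent of the choice of $\eps$.

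Second, I would check functoriality in the simplicial direction. Both $\MfldDiagExt^n$ and $\MfldDiag^n$ have their face and degeneracy operators defined by pulling back $\xi$ along the induced map $\DeltaStrat{k'}\to\DeltaStrat{k}$ and simultaneously pulling back $p$ along $\DeltaTop{k'}\to\DeltaTop{k}$. Since pullback commutes with restriction to open sub-bundles, and since the equivalence relation $\sim_\xi$ is stable under pullback (two diagrams whose labels agree over $\unstrat(\strat{X})$ continue to agree after pulling back both $\xi$ and the diagram), the assignment above is compatible with all face and degeneracy maps. This gives a genuine map of simplicial sets. Moreover, by construction it sends the pair $(\xi,p)$ to a simplex with the same underlying $\xi$, hence commutes with the two projections to $\BMeshClosed{n}$.

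Third, I would conclude the functorial statement. Both projections $\MfldDiagExt^n\to\BMeshClosed{n}$ and $\MfldDiag^n\to\BMeshClosed{n}$ are left fibrations by Proposition~\ref{prop:d-mfld-functorial-left} (and the analogous proposition proved immediately above for the extended version). Via the straightening equivalence $\LFib{\BMeshClosed{n}}\simeq\Fun(\BMeshClosed{n},\Space)$, any map of left fibrations over $\BMeshClosed{n}$ corresponds to a natural transformation between the classified functors; applying this to the map just constructed yields the desired natural transformation $\MfldDiagExt^n(-)\to\MfldDiag^n(-)$.

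There is no real obstacle here: the content of the observation is essentially a bookkeeping exercise, and the only mildly subtle point is that the equivalence class $[p|_{\strat{E}_\eps}]_\xi$ does not depend on $\eps$, which is handled by Observation~\ref{obs:d-mfld-space-eps}. If anything required care it would be verifying that for a tame extended diagram, restricting to arbitrarily small $\eps$-neighbourhoods does not change the $\sim_\xi$-class, but this is immediate from the definition of $\sim_\xi$ as agreement of labels over $\unstrat(\strat{X})$.
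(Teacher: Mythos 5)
Your proposal is correct and follows essentially the argument the paper intends (the paper states this as an Observation without proof): restrict the extended diagram bundle over a neighbourhood of the mesh, pass to the $\sim_\xi$-equivalence class, note compatibility with pullback along simplicial operators and with the projections to $\BMeshClosed{n}$, and invoke straightening of left fibrations. The only remark worth adding is that the $\eps$-restriction step is not strictly necessary: since a $k$-simplex of $\MfldDiag^n$ is already an equivalence class $[p]_\xi$ depending only on the labels over $\unstrat(\strat{X})$, one may simply send $(\xi,p)\mapsto(\xi,[p]_\xi)$ directly, which agrees with your construction for every $\eps$.
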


% We show that the map $\MfldDiagExt^n \to \MfldDiag^n$ is a natural equivalence
% and therefore that the induced natural transformation
% $\MfldDiagExt^n(-) \to \MfldDiag^n(-)$
% is a natural equivalence.
% In this way, extended $n$-manifold diagrams induce an equivalent
% $n$-diagrammatic space to $n$-manifold diagrams.

\begin{proposition}\label{prop:d-mfld-ext-equiv}
  The natural transformation $\MfldDiagExt^n(-) \to \MfldDiag^n(-)$ is an equivalence
  of functors $\BMeshClosed{n} \to \Space$.
  In particular $\MfldDiagExt^n(-)$ is an $n$-diagrammatic space.
\end{proposition}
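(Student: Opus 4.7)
The plan is to reduce to a fiberwise comparison and then construct an explicit homotopy inverse. Both maps $\MfldDiagExt^n \to \BMeshClosed{n}$ and $\MfldDiag^n \to \BMeshClosed{n}$ are left fibrations (the second by Proposition~\ref{prop:d-mfld-functorial-left}; the first follows by the same argument used there, since we can extend bundles across horn fillers using retractions $\DeltaTop{k} \to \HornTop{i}{k}$ without disturbing the condition $\unstrat(\strat{E}) = \R^n \times \DeltaTop{k}$). The natural transformation respects these projections, so it is an equivalence of functors into $\Space$ precisely if for every closed $n$-mesh $\strat{X}$ the induced map of Kan complexes
\[
  R_\strat{X} : \MfldDiagExt^n(\strat{X}) \longrightarrow \MfldDiag^n(\strat{X})
\]
is a Kan equivalence. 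The ``in particular'' claim then follows from Theorem~\ref{thm:d-mfld-diag-space}, since being an $n$-diagrammatic space is a property of a functor $\BMeshClosed{n} \to \Space$ that is preserved under natural equivalence.

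To show $R_\strat{X}$ is a Kan equivalence, I would construct an explicit homotopy inverse $S_\strat{X}$ by a ``canonical extension to infinity,'' in the spirit of Example~\ref{ex:d-mfld-diag-off-to-infinity}. Fix $\strat{X}$ and choose a small $\eps > 0$; using Observation~\ref{obs:d-mfld-space-eps} we may work with $\MfldDiag^{n,\eps}(\strat{X})$ in place of $\MfldDiag^n(\strat{X})$. Pick a piecewise linear retraction $\rho : \R^n \to \{ e : \exists x \in \strat{X},\ \|e - x\|_\infty < \eps \}$ that is the identity on a smaller neighborhood of $\strat{X}$ and collapses radially toward $\strat{X}$ outside a slightly larger one. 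Given an $n$-manifold diagram bundle $p : \strat{E} \to \DeltaTop{k}$ representing a $k$-simplex of $\MfldDiag^{n,\eps}(\strat{X})$, define $S_\strat{X}(p)$ to be the pullback stratification on $\R^n \times \DeltaTop{k}$ along $\rho \times \id$. Because $p$ is fibrewise $\eps$-orthogonal to $\strat{X}$, this pullback remains an $n$-manifold diagram bundle: orthogonality exactly says that the stratification is constant along the directions in which $\rho$ is non-injective, so the framed basic covers lift through $\rho$. The resulting $S_\strat{X}$ is a well-defined map of simplicial sets into $\MfldDiagExt^n(\strat{X})$ and satisfies $R_\strat{X} \circ S_\strat{X} = \id$ after passing to equivalence classes $[-]_\strat{X}$.

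It remains to produce an isotopy $S_\strat{X} \circ R_\strat{X} \simeq \id$. Given an extended manifold diagram $q : \strat{F} \to \DeltaTop{k}$ with $\unstrat(\strat{F}) = \R^n \times \DeltaTop{k}$, consider the family $q_t$ obtained by pulling $q$ back along $\rho_t \times \id$, where $\rho_t$ is the straight-line homotopy $\rho_t = t \cdot \rho + (1-t)\cdot \id_{\R^n}$ (composed coordinatewise with a fixed piecewise linear interpolation so that it remains PL and monotone in each coordinate direction). At $t=0$ this yields $q$; at $t=1$ it yields $S_\strat{X}(R_\strat{X}(q))$. Fibrewise $\eps$-orthogonality of $q$ with $\strat{X}$ and the fact that $\rho_t$ equals the identity on a neighborhood of $\strat{X}$ guarantee that $q_t$ is again an $n$-manifold diagram bundle orthogonal to $\strat{X}$ for all $t \in \DeltaTop{1}$ (the required framed basic covers transport along $\rho_t$ as before). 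This assembles into a simplicial homotopy $\MfldDiagExt^n(\strat{X}) \times \Delta\ord{1} \to \MfldDiagExt^n(\strat{X})$, establishing that $S_\strat{X}$ is a homotopy inverse.

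The main obstacle I anticipate is verifying that the pullback along $\rho_t$ preserves the local framed-basic structure and the tameness condition: tameness requires finite relative PL triangulations compatible with the framing, and while $\rho_t$ is PL by construction, one has to check that the composite with an essentially tame diagram remains essentially tame, and that the singular-depth bookkeeping that makes atoms progressive is not disturbed. Both are local conditions inside the $\eps$-neighborhood where $\rho_t = \id$, so the argument should go through uniformly, but this requires some care in the collapse region where strata are being pushed to infinity. Once this is verified, combining the natural equivalence $\MfldDiagExt^n(-) \simeq \MfldDiag^n(-)$ with Theorem~\ref{thm:d-mfld-diag-space} concludes the proof.
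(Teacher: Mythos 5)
Your opening reduction is sound: both projections to $\BMeshClosed{n}$ are left fibrations, so it suffices to check that each fibre map $R_{\strat{X}}$ is a Kan equivalence, and the ``in particular'' clause does follow from Theorem~\ref{thm:d-mfld-diag-space}. Moreover, for $\strat{X} = \StratIntLR^n$ your construction of $S_{\strat{X}}$ by pulling back along a sup-norm radial collapse is essentially the paper's Lemma~\ref{lem:d-mfld-ext-equiv-cube} (the map $P$ there is your $\rho$, interpolated over a collar of $\partial\DeltaTop{k}$ so as to solve the relative lifting problem directly). The gap is in passing from the cube to an arbitrary closed $n$-mesh. For a general $\strat{X}$ the cells are not axis-aligned: $\eps$-orthogonality only forces the stratification to be locally constant in one framing direction at a time, near the singular heights of the iterated $1$-mesh bundles, whereas a ``radial collapse'' toward a slanted cell boundary mixes coordinates. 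Pulling a diagram back along such a $\rho$ produces germs in the collapse region that need not be refined by progressive atoms, and the claim that ``orthogonality exactly says the stratification is constant along the directions in which $\rho$ is non-injective'' is false away from the axis-aligned case. This is precisely why the paper's Lemma~\ref{lem:d-mfld-ext-equiv} does not argue by a one-shot retraction but by a double induction on $n$ and on the cylinder degree $m$, extending the diagram first in neighbourhoods of the singular slices of the closed mesh (invoking the result one dimension down), then between singular slices, then filling the gaps---so that strata are only ever pushed to infinity in framing directions where this is legitimate.

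A second, related gap is the homotopy $S_{\strat{X}} \circ R_{\strat{X}} \simeq \id$. A path in $\MfldDiagExt^n(\strat{X})$ is by definition an $n$-manifold diagram bundle over $\DeltaTop{1}$, hence an $n$-framed stratified \emph{submersion}; it is not enough that each fibre $\rho_t^{*}q$ be a manifold diagram. Along the straight-line interpolation the strata outside the $\eps$-neighbourhood must escape to infinity rather than degenerate at a finite point, and at intermediate $t$ the map $\rho_t$ compresses the diagram in ways that can bring strata into coincidence in a projection and so destroy stability of the local framed basics. Checking local triviality of the total family is not a check ``inside the $\eps$-neighbourhood where $\rho_t = \id$''---all the nontrivial behaviour happens outside it. The obstacle you flag in your final paragraph is therefore the actual mathematical content of the statement, and your argument asserts rather than discharges it.
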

\begin{proof}
  This follows directly from Lemma~\ref{lem:d-mfld-ext-equiv} below.
\end{proof}

We first demonstrate the equivalence between extended and non-extended $n$-manifold
diagrams on the closed $n$-cube $\StratIntLR^n$.

\begin{lemma}\label{lem:d-mfld-ext-equiv-cube}
  The map $\MfldDiagExt^n(\StratIntLR^n) \to \MfldDiag^n(\StratIntLR^n)$ is a trivial fibration.
\end{lemma}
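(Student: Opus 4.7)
The plan is to verify the right lifting property against boundary inclusions $\partial\Delta\ord{k} \hookrightarrow \Delta\ord{k}$. An arbitrary lifting problem provides an extended $n$-manifold diagram bundle $q : \strat{F} \to \partial\DeltaTop{k}$ with $\unstrat(\strat{F}) = \R^n \times \partial\DeltaTop{k}$, fibrewise orthogonal to $\StratIntLR^n$, together with an equivalence class $[p]_{\StratIntLR^n}$ over all of $\DeltaTop{k}$ whose restriction to $\partial\DeltaTop{k}$ recovers $[q]_{\StratIntLR^n}$. The goal is to produce an extended bundle $\hat{p} : \hat{\strat{E}} \to \DeltaTop{k}$ whose equivalence class is $[p]_{\StratIntLR^n}$ and which strictly restricts to $q$ on the boundary.

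First I would pick representatives uniformly. By Observation~\ref{obs:d-mfld-space-eps} I choose $\eps > 0$ small enough and a representative $p \in \MfldDiag^{n,\eps}(\StratIntLR^n)$ defined on the $\eps$-neighbourhood $U_\eps$ of $\StratIntLR^n$ and fibrewise $\eps$-orthogonal. Shrinking $\eps$ and modifying $p$ within its equivalence class, I arrange that $p$ coincides with the restriction of $q$ to $U_\eps \times \partial\DeltaTop{k}$; this is possible because $p|_{\partial\DeltaTop{k}}$ and $q|_{U_\eps \times \partial\DeltaTop{k}}$ represent the same class in $\MfldDiag^{n}(\StratIntLR^n)$, so a stratum-preserving isotopy supported on $U_\eps$ identifies them.

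Next I would extend $p$ radially to all of $\R^n$ using a clamping map. Define $\sigma : \R^n \to U_{\eps}$ by $\sigma(e)_i = \max(-1-\eps/2, \min(1+\eps/2, e_i))$ in each coordinate, and set $\stratMap{\hat{\strat{E}}}(e,b) := \stratMap{\strat{E}}(\sigma(e), b)$ for $(e,b)$ outside $U_{\eps/2} \times \DeltaTop{k}$, agreeing with $\strat{E}$ inside. Here $\eps$-orthogonality of $p$ to $\StratIntLR^n$ is exactly what ensures that the stratification of $\strat{E}$ is constant in the $i$th coordinate whenever the $i$th coordinate lies in $(1-\eps, 1+\eps) \cup (-1-\eps, -1+\eps)$, so the two definitions match on the overlap and the extension $\hat{p}$ is continuous, essentially tame, still fibrewise orthogonal to $\StratIntLR^n$, and still covered by framed basics (since $\sigma$ is locally an open embedding away from the clamping locus, and on the clamping locus the basics extend trivially by the product structure forced by orthogonality). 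This construction defines a section of the restriction map up to equivalence.

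The main obstacle is then matching the boundary data strictly: the canonical extension $\hat{p}|_{\partial\DeltaTop{k}}$ agrees with $q$ on $U_\eps \times \partial\DeltaTop{k}$ by construction, but may disagree on the exterior. The hard part is to blend $\hat{p}$ with $q$ outside $U_\eps$ while preserving orthogonality, tameness, and the framed submersion property. I would handle this by the same Reedy-style collar argument used in Lemma~\ref{lem:d-mfld-diag-boundary-kan-cube-regular}: choose a piecewise linear retraction $R : \DeltaTop{k} \to \partial\DeltaTop{k}$ and a bump function $\tau$ supported outside $U_{\eps/2}$, then define the final lift by interpolating between $\hat{p}$ and $q \circ R$ in the exterior region using $\tau$ as a convex parameter for the stratifying maps; orthogonality on a slightly larger band guarantees the interpolated stratification is again constant in the clamping direction there, so the result is a valid extended $n$-manifold diagram bundle agreeing with $q$ on $\partial\DeltaTop{k}$ and lying in the class $[p]_{\StratIntLR^n}$.
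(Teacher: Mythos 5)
Your first two steps are essentially sound and close to the paper's own argument. In fact step one is easier than you make it: since both representatives are $\eps$-orthogonal to $\StratIntLR^n$ and agree on $\intCC{-1,1}^n$, orthogonality already forces them to agree on the entire $\eps$-neighbourhood of the cube, so no auxiliary isotopy is needed. Your coordinate-wise clamp onto the box of radius $1+\eps/2$ is a harmless variant of the paper's projection $e \mapsto \| e \|_\infty^{-1} e$ onto the boundary of the cube; in both cases $\eps$-orthogonality is exactly what makes the extension agree with the given bundle on the overlap. The proof breaks at the final blending step, for two reasons. First, there is no retraction $R : \DeltaTop{k} \to \partial\DeltaTop{k}$, piecewise linear or otherwise: the boundary of a simplex is not a retract of the simplex. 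The retraction you are importing from Lemma~\ref{lem:d-mfld-diag-boundary-kan-cube-regular} is a retraction onto a \emph{horn} $\HornTop{i}{k}$, which exists precisely because that lemma proves a Kan fibration (lifting against horn inclusions); here you are proving a trivial fibration (lifting against boundary inclusions), and that tool is simply unavailable. Second, even granted something to blend against, ``using $\tau$ as a convex parameter for the stratifying maps'' is not meaningful: stratifying maps take values in a poset, and one cannot form convex combinations of strata.

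The repair is to interpolate the \emph{argument} of the stratifying map rather than its value, using a collar $N : \partial\DeltaTop{k} \times \intCO{0,1} \hookrightarrow \DeltaTop{k}$ of the boundary (which, unlike a retraction, does exist). One defines a continuous map $P : \R^n \times \DeltaTop{k} \to \R^n$ that is the identity in $e$ over $\partial\DeltaTop{k}$, collapses $e$ onto (a neighbourhood of) the cube over points outside the collar, and moves linearly between these two behaviours along the collar coordinate; the lift is then declared to have stratification at $(e,b)$ equal to that of the given data at $(P(e,b), b)$. Over the boundary this recovers $q$ strictly, in the deep interior it is your clamped extension, and in the collar the two are reconciled because orthogonality makes the stratification constant along the directions being collapsed. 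This is the route the paper's proof takes; your outline is missing this device and substitutes a map that does not exist.
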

\begin{proof}
  Suppose we have a lifting problem
  \[
    \begin{tikzcd}
      {\partial\Delta\ord{k}} \ar[r] \ar[d] &
      {\MfldDiagExt^n(\StratIntLR^n)} \ar[d] \\
      {\Delta\ord{k}} \ar[r] \ar[ur, dashed] &
      {\MfldDiag^n(\StratIntLR^n)}
    \end{tikzcd}
  \]
  Unpacking the given data and choosing representatives, we have an $\eps > 0$
  and an $n$-manifold diagram bundle $f_0 : \strat{E}_0 \to \DeltaTop{k}$ that
  is fibrewise $\eps$-orthogonal to $\StratIntLR^n$ and defined on
  \[
    \unstrat(\strat{E}_0) = \{ (e, b) \mid \| e \|_\infty < 1 + \eps \} \cup (\R^n \times \partial \DeltaTop{k}) \subseteq \R^n \times \DeltaTop{k}.
  \]
  Let $N : \partial\DeltaTop{k} \times \intCO{0, 1} \hookrightarrow \DeltaTop{k}$
  be a collar neighbourhood of the boundary $\partial\DeltaTop{k}$.
  We then define a continuous function $P : \R^n \times \DeltaTop{k} \to \R^n$ as follows:
  \begin{alignat*}{2}
    P(e, b) &:= (1 - t) e + t \| e \|_\infty^{-1} e &&\qquad\text{(when $\| e \|_\infty > 1$ and $N(b', t) = b$)} \\
    P(e, b) &:= \| e \|_\infty^{-1} e &&\qquad\text{(when $\| e \|_\infty > 1$ and $b \not \in \im(N)$)} \\
    P(e, b) &:= e &&\qquad\text{(when $\| e \|_\infty \leq 1$)}
  \end{alignat*}
  In particular we have $P(e, b) = e$ whenever $b \in \partial\DeltaTop{k}$.
  We let $\strat{E}_1$ be the stratification of $\R^n \times \DeltaTop{k}$
  such that
  $\stratMap{\strat{E}_1}(e, b) = \stratMap{\strat{E}_0}(P(e, b), b)$.
  Then the projection $f_1 : \strat{E}_1 \to \DeltaTop{k}$ is an $n$-manifold diagram bundle
  that extends $f_0$ and is fibrewise orthogonal to $\StratIntLR^n$.
\end{proof}

\begin{construction}
  Let $\xi : \strat{X} \to \strat{B}$ be a closed $n$-mesh bundle.
  The \defn{$m$-fold cylinder} of $\xi$ is the closed $(n + m)$-mesh bundle
  $\CylMeshClosed^m(\xi) : \StratIntLR^m \times \strat{X} \to \strat{B}$
  that arises as the stacked product (see Definition~\ref{con:fct-stacked-product})
  of $\xi$ followed by the closed $m$-mesh bundle $\StratIntLR^m \to \DeltaStrat{0}$.
\end{construction}

\begin{lemma}\label{lem:d-mfld-ext-equiv}
  The map $\core(\MfldDiagExt^{n}) \to \core(\MfldDiag^{n})$ is a trivial fibration.
\end{lemma}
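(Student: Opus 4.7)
The plan is to verify directly that $\core(\MfldDiagExt^n) \to \core(\MfldDiag^n)$ has the right lifting property against every boundary inclusion $\partial\Delta\ord{k} \hookrightarrow \Delta\ord{k}$. Both simplicial sets are Kan complexes, since they are the cores of the left fibrations over $\BMeshClosed{n}$ constructed as in Proposition~\ref{prop:d-mfld-functorial-left}, so verifying this lifting property is enough to conclude that the map is a trivial fibration.

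Unpacking such a lifting problem produces a closed $n$-mesh bundle $\xi : \strat{X} \to \DeltaStrat{k}$ whose classifying simplex $\Delta\ord{k} \to \BMeshClosed{n}$ factors through $\core(\BMeshClosed{n})$, an equivalence class $[p_0]_\xi$ of an orthogonal $n$-manifold diagram bundle $p_0 : \strat{E}_0 \to \DeltaTop{k}$ defined on an $\eps$-neighbourhood of $\unstrat(\strat{X})$, and, on the boundary, a full extension $p' : \strat{E}' \to \partial\DeltaTop{k}$ on $\R^n \times \partial\DeltaTop{k}$ whose restriction to the $\eps$-neighbourhood agrees with $p_0$ up to the equivalence relation $\sim_{\xi}$ restricted to the boundary. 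First I would exploit Observation~\ref{obs:d-mfld-space-eps} together with a collar deformation of the boundary data to replace $p_0$ within its equivalence class by a representative that coincides strictly with $p'$ in the $\eps$-neighbourhood over a collar of $\partial\DeltaTop{k}$. Then, mirroring the argument of Lemma~\ref{lem:d-mfld-ext-equiv-cube}, I would construct a continuous map $P : \R^n \times \DeltaTop{k} \to \unstrat(\strat{E}_0) \cup (\R^n \times \partial\DeltaTop{k})$ which is the identity on $\R^n \times \partial\DeltaTop{k}$ and which on the interior sends any point lying outside the $\eps$-neighbourhood of $\strat{X}_b$ onto $\strat{X}_b$ via the framed projection $\fproj(\strat{X}_b, -)$, interpolating through a collar neighbourhood. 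Pulling the combined stratification of $p_0$ and $p'$ back along $P$ yields the candidate extended lift.

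The main obstacle is to show that the pullback stratification is genuinely continuous and remains orthogonal to $\xi$. The framed projection $\fproj(\strat{X}_b, -)$ is not continuous in $e$ in general, because it is defined using the lexicographic frame order, and moreover the fibre $\strat{X}_b$ varies with $b$. The orthogonality hypothesis $p_0 \perp_\eps \xi$ is the key resource: it forces $\stratMap{\strat{E}_0}$ to be invariant under perturbations in the directions transverse to each singular height of $\strat{X}_b$, so that the discontinuities of $\fproj$ are absorbed into loci on which the pulled back stratification is constant. Because the simplex $\tau$ factors through $\core(\BMeshClosed{n})$, the truss type of $\xi_b$ is independent of $b$, so the family of framed projections can be chosen continuously in $b$ and orthogonality of the resulting bundle to $\xi$ follows from the construction. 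Once these checks are completed, the resulting bundle on $\R^n \times \DeltaTop{k}$ supplies a simplex of $\core(\MfldDiagExt^n)$ solving the lifting problem, completing the argument.
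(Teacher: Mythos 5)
Your strategy is genuinely different from the paper's, and the difference sits exactly where your argument has a gap. The paper only uses a direct projection argument for the cube $\StratIntLR^n$ (Lemma~\ref{lem:d-mfld-ext-equiv-cube}); the general case is handled by a double induction on the dimension $n$ and on the cylinder depth $m$, writing the closed $n$-mesh bundle as $\CylMeshClosed^m(\zeta\circ\xi)$ for a closed $1$-mesh bundle $\zeta$ over $\DeltaTop{k}$, extending the diagram first in a neighbourhood of each singular slice of $\zeta$ (the case $n-1$), then between consecutive singular slices (the case $m+1$, where the mesh bundle becomes a cylinder in one more direction), then filling the $\eps$-gaps between these two partial extensions, and finally extending beyond the extremal heights. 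Your proposal compresses all of this into a single pullback along $\fproj(\strat{X}_b,-)$, and the step you defer --- ``show that the pullback stratification is genuinely continuous and remains orthogonal to $\xi$'', justified by the assertion that orthogonality absorbs the discontinuities of $\fproj$ --- is the entire content of the lemma, not a check to be completed afterwards.

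The reason the projection works for the cube is that orthogonality to $\StratIntLR^n$ has the direct characterisation $\stratMap{\strat{M}}(x)=\stratMap{\strat{M}}(\|x\|_\infty^{-1}x)$ for $\|x\|_\infty\in\intOO{1-\eps,1+\eps}$: the diagram is literally constant along the fibres of the radial projection in a full annular neighbourhood of the boundary, so the pullback creates no new stratified structure. For a general closed $n$-mesh the orthogonality condition is the inductive, slice-wise one: it gives constancy in the last coordinate near the singular heights of the outermost $1$-mesh, and slice-wise constancy near slice-wise singular strata, but these loci do not coincide with the fibres of the global framed projection. In particular $\fproj(\strat{X}_b,-)$ collapses entire half-spaces onto lower-dimensional faces and vertices of $\strat{X}_b$, and between (and at) the singular heights of the outermost $1$-mesh the slice $\strat{X}_b\cap\{x_n=h\}$ changes shape with $h$, so in the exterior region the pulled-back stratification is the composite of the fixed diagram with a moving family of projections. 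Whether the resulting strata are manifolds of the right dimension, progressive, covered by \emph{stable} framed basics, essentially tame, and assemble into a framed stratified \emph{submersion} over $\DeltaTop{k}$ that agrees strictly with the prescribed extension $p'$ on $\R^n\times\partial\DeltaTop{k}$ is precisely what must be proved, and none of it follows from the one-line appeal to orthogonality; the slice-wise constancy regions do not even cover the domain of a representative $p_0$ away from the singular heights. Organising these verifications in the order dictated by the tower structure of $\strat{X}_b$ is what the paper's four-stage induction accomplishes, so to repair your argument you would either have to prove a global ``constancy along projection fibres'' consequence of orthogonality (which I do not believe holds without first normalising the diagram by exactly this kind of slicing) or fall back on the reduction to the cube case.
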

\begin{proof}
  We need to show that every lifting problem of the form
  \[
    \begin{tikzcd}
      {\partial\Delta\ord{k}} \ar[r] \ar[d] &
      {\core(\MfldDiagExt^{n})} \ar[d] \\
      {\Delta\ord{k}} \ar[r] \ar[ur, dashed] &
      {\core(\MfldDiag^{n})}
    \end{tikzcd}
  \]
  has a solution.
  We do this by induction on $n, m \geq 0$ such that $0 \leq n \leq m$
  for the cases where the closed $n$-mesh bundle induced by the $k$-simplex
  $\Delta\ord{k} \to \MfldDiag^{n, \simeq}$ is framed isomorphic to
  the $m$-fold cylinder $\CylMeshClosed^m(\xi)$ of a closed $(n - m)$-mesh bundle over $\DeltaTop{k}$.
  
  The case $n = 0$ is trivial and the case that $n = m$ follows from Lemma~\ref{lem:d-mfld-ext-equiv-cube}.
  Therefore, for now we assume that $0 \leq m < n$.
  Unpacking the definitions and making convenient choices of representatives
  the lifting problem provides the following data:
  We are given a closed $(n - m - 1)$-mesh bundle
  $\xi : \strat{X} \to \strat{Y}$
  and a closed $1$-mesh bundle
  $\zeta : \strat{Y} \to \DeltaTop{k}$.
  Without loss of generality we can assume that the closed $n$-mesh bundle
  is equal and not just framed isomorphic to the $m$-fold cylinder
  $\CylMeshClosed^m(\zeta \circ \xi) : \StratIntLR^m \times \strat{X} \to \DeltaTop{k}$.
  
  Let $s + 1$ be the number of singular strata of $\strat{Y}$ and for every $i \in \ord{s}$ let $h_i : \DeltaTop{k} \to \R$ be the height function of the $i$th singular stratum.
  We further have an $\eps > 0$ and an $n$-manifold diagram bundle  
  $f_0 : \strat{E}_0 \to \DeltaTop{k}$
  which is $2\eps$-orthogonal to the closed $n$-mesh bundle $\CylMeshClosed^m(\zeta \circ \xi) : \StratIntLR^m \times \strat{X} \to \DeltaTop{k}$ and defined on  
  \[
    \unstrat(\strat{E}_0) = \{
      (e, b) \in \R^n \times \DeltaTop{k} \mid
      \exists x \in \StratIntLR^m \times \strat{X}.\ 
      \| e - x \|_\infty < \eps
    \}
    \cup
    (\R^n \times \partial\DeltaTop{k})
  \]
  Shrinking $\strat{E}_0$ if necessary we also ensure that $4\eps < h_{i + 1}(b) - h_i(b)$ for all $i \in \ord{s}$ with $i < s$ and $b \in \DeltaTop{k}$.
  We then gradually extend $f_0$ as follows.

  \begin{enumerate}
    \item 
    We extend $f_0$ in a neighbourhood around the singular slices of $\strat{Y}$.
    Let $i \in \ord{s}$ be the index of a singular slice and
    $\sigma_i : \strat{S}_i \to \DeltaTop{k}$ be the closed $(n - m - 1)$-mesh bundle obtained via the pullback
    \[
      \begin{tikzcd}
        {\strat{S}_i} \ar[r, hook] \ar[d] \pullbackcorner &
        {\strat{X}} \ar[d, "\xi"] \\
        {\DeltaTop{k}} \ar[r, "h_i"', hook] &
        {\strat{Y}}
      \end{tikzcd}
    \]
    Moreover we let $\strat{D}_i$ be the stratified subspace of $\strat{E}_0$
    given by
    \[
      \strat{D}_i :=
      \strat{E}_0 \cap
      \{ (e_1, \ldots, e_n, b) \in \R^n \times \DeltaTop{k} \mid e_{n - m} = h_i(b) \}
    \]
    We interpret $\unstrat(\strat{D}_i)$ as a subspace of $\R^{n - 1} \times \DeltaTop{k}$,
    such that the projection map $g_i : \strat{D}_i \to \DeltaTop{k}$ is an
    $(n - 1)$-manifold diagram bundle that is fibrewise $2\eps$-orthogonal to
    the closed $(n - 1)$-mesh bundle $\CylMeshClosed^m(\sigma_i)$.
    Then $g_i$ induces a lifting problem
    \[
      \begin{tikzcd}
        {\partial\Delta\ord{k}} \ar[r] \ar[d] &
        {\core(\MfldDiagExt^{n - 1})} \ar[d] \\
        {\Delta\ord{k}} \ar[r] \ar[ur, dashed] &
        {\core(\MfldDiag^{n - 1})}
      \end{tikzcd}
    \]
    which by induction on $n$ has a solution.
    By assembling these solutions for all singular slices $i \in \ord{s}$
    and using orthogonality of $f_0$ and $\CylMeshClosed^m(\zeta \circ \xi)$
    we can extend $f_0$ to an $n$-manifold diagram bundle
    $f_1 : \strat{E}_1 \to \DeltaTop{k}$ that is defined within a neighbourhood
    of the singular slices of $\strat{X}$:    
    \[
      \unstrat(\strat{E}_1) = \unstrat(\strat{E}_0)
      \cup
      \{
        (e_1, \ldots, e_n, b) \mid
        \exists i \in \ord{s}.\ 
        | e_{n - m} - h_i(b) | < \eps_0
      \}
    \]

    \item
    We now extend $f_1$ between the singular slices, but leaving a gap to the extension around the singular slices.
    Let $i \in \ord{s}$ be the index of a singular slice of $\strat{Y}$ such that $i < s$. We define a stratified space $\strat{B}_i$ whose underlying space is
    \[ \unstrat(\strat{B}_i) := \{ (t, b) \in \strat{Y} \mid h_i(b) + 2\eps \leq  t \leq h_{i + 1}(b) - 2\eps \} \subseteq \unstrat(\strat{Y}) \]
    and whose three strata are the subspaces where one or none of the inequalities is tight.
    There is a canonical map $\strat{B}_i \to \strat{Y}$ which is the inclusion on the underlying topological spaces.
    We construct a closed $(n - m - 1)$-mesh bundle $\rho_i$ via the pullback
    along this map:
    \[
      \begin{tikzcd}
        {\strat{R}_i} \ar[r] \ar[d, "\rho_i"'] \pullbackcorner &
        {\strat{X}} \ar[d, "\xi"] \\
        {\strat{B}_i} \ar[r] &
        {\strat{Y}}
      \end{tikzcd}
    \]

    We note that the projection map $\beta_i : \strat{B}_i \to \DeltaTop{k}$ is a closed $1$-mesh bundle and isomorphic as such to the projection $\StratIntLR \times \DeltaTop{k} \to \DeltaTop{k}$.
    We therefore have that the $m$-fold cylinder $\CylMeshClosed^m(\beta_i \circ \rho_i)$ is isomorphic as an $n$-framed stratified bundle to the $(m + 1)$-fold cylinder $\CylMeshClosed^{m + 1}(\rho_i)$.

    We let $\strat{D}_i$ be the stratified subspace of $\strat{E}_1$ such that
    \[
      \unstrat(\strat{D}_i) = \{
        (e_1, \ldots, e_n, b) \mid
        h_i(b) + \eps < e_{n - m} < h_{i + 1}(b) - \eps
      \}
    \]
    and let $g_i : \strat{D}_i \to \DeltaTop{k}$ denote the projection map.
    Then $g_i$ is $\eps$-orthogonal to the $(m + 1)$-fold cylinder
    $\CylMeshClosed^{m + 1}(\rho_i)$,
    defining a lifting problem
    \[
      \begin{tikzcd}
        {\partial\Delta\ord{k}} \ar[r] \ar[d] &
        {\core(\MfldDiagExt^{n})} \ar[d] \\
        {\Delta\ord{k}} \ar[r] \ar[ur, dashed] &
        {\core(\MfldDiag^{n})}
      \end{tikzcd}
    \]
    By induction on $m$ this lifting problem has a solution.
    These solutions for all $i \in \ord{s}$ with $i < s$
    then assemble into an extension
    $f_2 : \strat{E}_2 \to \DeltaTop{k}$ of $f_1$ defined on
    \[
      \unstrat(\strat{E}_2) = \unstrat(\strat{E}_1)
      \cup
      \{
        (e_1, \ldots, e_n, b) \mid
        \exists i \in \ord{s}.\ 
        i < s \land
        h_i + \eps < e_{n - m} < h_{i + 1} - \eps
      \}
    \]

    \item 
    Since $\eps_0 < \eps$ there is an $(\eps - \eps_0)$-sized gap
    between the extensions constructed around the singular slices
    in step 1
    and the extensions constructed between the singular slices in step 2.
    Since $f_0$ is $2\eps$-orthogonal to $\CylMeshClosed^m(\zeta \circ \xi)$
    we can fill these gaps by solving lifting problems of the form
    \[
      \begin{tikzcd}
        {\Delta\ord{1} \times \partial\Delta\ord{k}} \ar[r] \ar[d] &
        {\core(\MfldDiagExt^{n - 1})} \ar[d] \\
        {\Delta\ord{1} \times \Delta\ord{k}} \ar[r] \ar[ur, dashed] &
        {\core(\MfldDiag^{n - 1})}
      \end{tikzcd}
    \]
    It is always possible to find a solution since by induction on $n$ the map
    $\core(\MfldDiagExt^{n - 1}) \to \core(\MfldDiag^{n - 1})$ is a trivial fibration.
    We therefore have an extension $f_2 : \strat{E}_2 \to \DeltaTop{k}$
    of $f_1$ defined on the space
    \[
      \unstrat(\strat{E}_2) = \unstrat(\strat{E}_1)
      \cup
      \{
        (e_1, \ldots, e_n, b) \mid
        h_0 - \eps_0 < 
        e_{n - m} <
        h_s + \eps_0
      \}
    \]

    \item
    Finally, it suffices to extend the diagram above the largest singular height
    $h_m$ and below the smallest singular height $h_0$.
    This can be done analogously as in the proof of Lemma~\ref{lem:d-mfld-ext-equiv}.\qedhere
  \end{enumerate}
\end{proof}

\section{Combinatorial Manifold Diagrams}\label{sec:d-comb}

Meshes are designed to bridge between geometric and combinatorial descriptions of framed stratified spaces.
Any extended $n$-manifold diagram is essentially tame, and therefore can be refined by an open $n$-mesh with labels
in $\ord{n}$.
In this section we show how to use this in order to derive a combinatorial description
of the $n$-diagrammatic space of $n$-manifold diagrams $\MfldDiag^n(-)$.
In particular, we define functors $\MeshDiagExt^n(-)$ and $\TrussDiagExt^n(-)$ based on
open $n$-meshes and trusses that present $\MfldDiag^n(-)$ by taking classifying spaces:
\[
  \begin{tikzcd}[row sep = large]
    {\BTrussClosed{n}} \ar[r, "\simeq"] \ar[d, "\TrussDiagExt^n(-)"{description}] & 
    {\BMeshClosed{n}} \ar[r, equal] \ar[d, "\MeshDiagExt^n(-)"{description}] &
    {\BMeshClosed{n}} \ar[r, equal] \ar[d, "\MfldDiagExt^n(-)"{description}] &
    {\BMeshClosed{n}} \ar[d, "\MfldDiag^n(-)"{description}] \\
    {\Cat} \ar[r, hook] &
    {\CatInfty} \ar[r, "\CatToSpace"'] &
    {\Space} \ar[r, equal] &
    {\Space}
  \end{tikzcd}
\]

In \S\ref{sec:d-comb-meshes} we characterise those open $n$-mesh bundles
that refine $n$-manifold diagram bundles and show that they form an $\infty$-subcategory
$\BMeshDiag{n}$ of $\BMeshOpen{n}{\ord{n}}$.
Via orthogonal configurations between these diagrammatic $n$-mesh bundles and
closed $n$-mesh bundles, we then introduce the functor $\MeshDiagExt^n(-)$
in \S\ref{sec:d-comb-meshed-diag} classifying extended meshed $n$-manifold diagrams.
For every closed $n$-mesh $\strat{X}$ the space $\MfldDiagExt^n(\strat{X})$
is the classifying space $\CatToSpace(\MeshDiagExt^n(\strat{X}))$.
Finally, in \S\ref{sec:d-comb-truss} we use the equivalence $\BConfOrtho{n} \simeq \Tw(\BTrussOpen{n})$
between orthogonal configurations of $n$-meshes and the twisted arrow category
from \S\ref{sec:fct-conf-ortho} to define the functor $\TrussDiagExt^n(-)$.

\subsection{Diagrammatic Meshes}\label{sec:d-comb-meshes}

\begin{definition}
  Suppose that $\strat{M}$ is an open $n$-mesh with labels in $\ord{n}$.
  We say that $\strat{M}$ is a \defn{diagrammatic $n$-mesh} when there exists
  an extended $n$-manifold diagram $\strat{E}$ and a refinement map of labelled $n$-framed stratified
  spaces $r : \strat{M} \to \strat{E}$.
\end{definition}

\begin{definition}
  Suppose that $f : \strat{M} \to \strat{B}$ is an open $n$-mesh bundle
  with labels in $\ord{n}$.  
  We say that $f$ is a \defn{diagrammatic $n$-mesh bundle} when there
  exists an $n$-manifold diagram bundle $p : \strat{E} \to \unstrat(\strat{B})$ 
  with a refinement map of labelled $n$-framed stratified bundles
  \[
    \begin{tikzcd}
      {\strat{M}} \ar[r] \ar[d, "f"'] &
      {\strat{E}} \ar[d, "p"] \\
      {\strat{B}} \ar[r] &
      {\unstrat(\strat{B})}
    \end{tikzcd}
  \]
  The diagrammatic $n$-mesh bundles form a simplicial subset
  $\BMeshDiag{n}$ of $\BMeshOpenL{n}{\ord{n}}$.
\end{definition}

\begin{definition}
  Let $\cat{C}$ be a quasicategory and
  $f : \strat{S} \pto \strat{T}$
  a bordism of open $n$-meshes with labels in $\cat{C}$.
  We say that $f$ is \defn{submersive} when, for every submesh  
  $i : \strat{S}' \hookrightarrow \strat{S}$
  whose normal form $\nf(\strat{S}')$ is an atom,
  the active part of the active/inert factorisation of $f \circ i$ normalises to an equivalence:
  \[
    \begin{tikzcd}
      {\nf(\strat{S}')} \ar[d, dashed, "\simeq"'] &
      {\strat{S}'} \ar[r, mid vert, hook, "i"] \ar[d, mid vert] \ar[l, mid vert, two heads] &
      {\strat{S}} \ar[d, mid vert, "f"] \\
      {\nf(\strat{T}')} &
      {\strat{T}'} \ar[r, mid vert, hook] \ar[l, mid vert, two heads] &
      {\strat{T}}
    \end{tikzcd}
  \]
\end{definition}

\begin{proposition}\label{prop:d-comb-meshes-iff-submersive}
  Let $f : \strat{S} \pto \strat{T}$ be a bordism of
  open $n$-meshes with labels in $\ord{n}$ such that both 
  $\strat{S}$, $\strat{T}$ are diagrammatic $n$-meshes.
  Then the bordism $f$ is in $\BMeshDiag{n}$ if and only if it is submersive.
\end{proposition}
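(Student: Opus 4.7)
The plan is to unpack the definitions carefully and then address the two implications separately. A bordism $f : \strat{S} \pto \strat{T}$ in $\BMeshOpen{n}$ is presented by an open $n$-mesh bundle $F : \strat{M} \to \DeltaStrat{1}$. Membership in $\BMeshDiag{n}$ is defined fibrewise over $\DeltaStrat{1}$, but because it must refine an $n$-manifold diagram bundle over $\DeltaTop{1}$ (which is \emph{un}stratified), the content of the statement is essentially that $F$ can be coarsened in the bordism direction to obtain an $n$-framed stratified submersion. The submersivity condition is a local criterion on $f$ designed to be precisely this coarsening data.

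For the forward direction, suppose $F$ refines an $n$-manifold diagram bundle $p : \strat{E} \to \DeltaTop{1}$. Given a subatom $i : \strat{S}' \hookrightarrow \strat{S}$ whose normal form $\nf(\strat{S}')$ is an atom $\strat{A}$, the composite $f \circ i$ picks out an atom in $\strat{T}$ together with its active/inert factorisation through some $\strat{T}'$. I would argue that, under the refinement to $p$, the atom $\strat{A}$ maps into a framed basic neighbourhood of some stratum $x$ of $\strat{E}_0$, and that the framed stratified submersion structure of $p$ provides a local product decomposition $\strat{U} \times \DeltaTop{1}$ around $x$. The image in $\strat{T}$ therefore lies in the corresponding framed basic around the extended stratum, and the normalised active part of $f \circ i$ collapses the label information onto this framed basic. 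Since the labelled normal form of a framed basic atom is determined by the stratum label, the active part must normalise to an equivalence.

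For the reverse direction, assume $f$ is submersive. Since $\strat{S}$ and $\strat{T}$ are diagrammatic, fix refinements $r_S : \strat{S} \to \strat{E}_S$ and $r_T : \strat{T} \to \strat{E}_T$ onto $n$-manifold diagrams. I would build an $n$-manifold diagram bundle $p : \strat{E} \to \DeltaTop{1}$ whose fibres over $0$ and $1$ are $\strat{E}_S$ and $\strat{E}_T$, by using the mesh bundle $F$ to specify the isotopy and then collapsing each fibre via the refinements $r_S$, $r_T$ extended across the interior. Concretely, for each stratum $p$ of $\strat{E}_S$ (corresponding to a subatom $\strat{A}_p$ of $\strat{S}$ in normal form), submersivity says the bordism induces an equivalence between the normalisation of $\strat{A}_p$ and the normalisation of the corresponding atom in $\strat{T}$, which by Proposition~\ref{prop:fct-ref-nf-open} gives a canonical identification of the framed basic neighbourhood data at the two endpoints. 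Patching these local trivialisations together across $\DeltaTop{1}$ produces an $n$-framed stratified bundle with the required submersion property, which $F$ refines by construction.

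The main obstacle will be the gluing in the reverse direction: making sure that the local trivialisations around distinct atoms of $\strat{S}$ are compatible on their overlaps, so that the resulting coarsening is a well-defined stratified space rather than merely a collection of local framed basics. I expect to handle this by working atom by atom along the stratification poset of $\strat{E}_S$, using the fact that submersivity restricts to every subatom, and invoking the orthogonality and framed basic covering conditions developed in \S\ref{sec:d-mfld-local}--\S\ref{sec:d-mfld-global} to ensure that the local pieces assemble into an essentially tame $n$-framed stratified bundle. The essential tameness and stability of framed basics then imply that the result is an $n$-manifold diagram bundle, completing the construction.
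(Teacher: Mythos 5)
Your forward direction (diagrammatic $\Rightarrow$ submersive) has two genuine gaps, and it is in fact the direction the paper proves in detail. First, the submersion property of $p$ only gives an open product neighbourhood $\strat{U}'' \times V$ of the point $x$ for \emph{some} open $V \ni 0$ and \emph{some} framed basic $\strat{U}''$ around $x$; it does not give a product decomposition $\strat{U} \times \DeltaTop{1}$ over the whole interval, nor does it give a uniform trivialisation of the entire tube over the given subatom $\strat{S}'$ (whose supporting framed basic $\strat{U}$ may be larger than $\strat{U}''$ and is non-compact). The paper's proof instead invokes \emph{stability} of the framed basic $\strat{U}$ to get an $\eps>0$ over which the restricted bundle $p'$ is trivial on $\intCO{0,\eps}$, and then uses that the inclusion $\DeltaStrat{1}\cap\intCO{0,\eps}\hookrightarrow\DeltaStrat{1}$ is a stratified homotopy equivalence (the bordism is already a trivial bundle over the open stratum $\intOC{0,1}$) to transport the conclusion to the fibre at $1$. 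The authors explicitly remark that the characterisation ``depends essentially on the framed basics to be stable,'' so substituting the submersion property for stability loses the key ingredient. Second, your closing step --- ``the labelled normal form of a framed basic atom is determined by the stratum label'' --- is false: $\nf(\strat{S}')$ is the coarsest labelled mesh refining the framed basic and depends on its entire local geometry (e.g.\ two vertices in a $2$-manifold diagram with different numbers of incoming and outgoing edges both have minimal stratum label $0$ but inequivalent normal forms), so the claimed equivalence of normal forms does not follow from your stated reasons.

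On the reverse direction you have inverted the difficulty: the paper dismisses ``submersive $\Rightarrow$ diagrammatic'' as clear, and your anticipated gluing obstruction largely dissolves. You do not need to patch local trivialisations into a global one, because being an $n$-framed stratified submersion is a pointwise condition; and the total space $\strat{E}$ should not be assembled from local pieces but obtained globally as the coarsening of $\strat{M}$ determined by the labelling (the normal-form/coarsest-refinement machinery of \S\ref{sec:fct-ref}), after which submersivity of $f$ is exactly what certifies the local product neighbourhoods and the fibrewise manifold-diagram condition. Spelling this out is fine, but the effort you budget for it should go into the forward direction instead.
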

\begin{proof}
  The backwards implication is clear.
  Suppose therefore that $f$ is in $\BMeshDiag{n}$ and let
  $i : \strat{S}' \hookrightarrow \strat{S}$
  such that the normal form $\nf(\strat{S}')$ is an atom.
  We need to show that there exists an equivalence
  \[
    \begin{tikzcd}
      {\nf(\strat{S}')} \ar[d, dashed, "\simeq"'] &
      {\strat{S}'} \ar[r, mid vert, hook, "i"] \ar[d, mid vert, "f'"] \ar[l, mid vert, two heads] &
      {\strat{S}} \ar[d, mid vert, "f"] \\
      {\nf(\strat{T}')} &
      {\strat{T}'} \ar[r, mid vert, hook] \ar[l, mid vert, two heads] &
      {\strat{T}}
    \end{tikzcd}
  \]
  where the right square is the active/inert factorisation of $f \circ i$.
  We rephrase the problem in terms of bundles as follows.
  
  Let $F : \strat{M} \to \DeltaStrat{1}$ be the open $n$-mesh bundle that
  represents the bordism $f$ and analogously let $F' : \strat{M}' \to \DeltaStrat{1}$
  be the open $n$-mesh bundle representing the active bordism $f'$.
  By the definition of $\BMeshDiag{n}$ there exists an $n$-diagrammatic bundle
  $p : \strat{E} \to \DeltaTop{1}$ determined by the dimension labelling
  so that $f$ refines $p$.
  The $n$-diagrammatic bundle $p$ restricts to an $n$-diagrammatic bundle
  $p' : \strat{E}' \to \DeltaTop{1}$ that refines $f'$.
  The submesh $\strat{S}'$ refines a stable framed basic $\strat{U}$.
  All of this data organises into the following diagram of $n$-framed stratified bundles:
  \[
    \begin{tikzcd}[column sep = {4em, between origins}, row sep = {3em, between origins}]
    	& {\strat{U}} && {\strat{E}'} && {\strat{E}} \\
    	{\strat{S}'} && {\strat{M}'} && {\strat{M}} \\
    	& {\DeltaTop{0}} && {\DeltaTop{1}} && {\DeltaTop{1}} \\
    	{\DeltaStrat{0}} && {\DeltaStrat{1}} && {\DeltaStrat{1}}
    	\arrow[from=2-5, to=1-6]
    	\arrow[hook, from=1-4, to=1-6]
    	\arrow[hook, from=1-2, to=1-4]
    	\arrow[from=2-1, to=1-2]
    	\arrow[from=2-3, to=1-4]
    	\arrow[from=2-1, to=4-1]
    	\arrow[from=1-2, to=3-2]
    	\arrow["{p'}"{pos=0.2}, from=1-4, to=3-4]
    	\arrow["p"{pos=0.2}, from=1-6, to=3-6]
    	\arrow[equal, from=3-4, to=3-6]
    	\arrow[equal, from=4-3, to=4-5]
    	\arrow[hook, from=4-1, to=4-3]
    	\arrow[hook, from=3-2, to=3-4]
    	\arrow[from=4-1, to=3-2]
    	\arrow[from=4-3, to=3-4]
    	\arrow[from=4-5, to=3-6]
    	\arrow[crossing over, hook, from=2-3, to=2-5]
    	\arrow[crossing over, hook, from=2-1, to=2-3]
    	\arrow["{F'}"{pos=0.2}, from=2-3, to=4-3, crossing over]
    	\arrow["F"{pos=0.2}, from=2-5, to=4-5, crossing over]
    \end{tikzcd}
  \]

  To construct the equivalence between the normal forms, it suffices to show
  that $F'$ refines a trivial $n$-diagrammatic $n$-mesh bundle.
  Because the framed basic $\strat{U}$ is stable, there exists an $\eps > 0$
  such that the $n$-diagrammatic bundle $p'$ restricts to a trivial $n$-framed
  stratified bundle over $\intCO{0, \eps}$.
  When we restrict the open $n$-mesh bundle $F'$ accordingly, we obtain the diagram
  \[
    \begin{tikzcd}[column sep = {4em, between origins}, row sep = {3em, between origins}]
    	& {\strat{U}} && {\strat{U} \times \intCO{0, \eps}} && {\strat{E}'} \\
    	{\strat{S}'} && {\strat{M}''} && {\strat{M}'} \\
    	& {\DeltaTop{0}} && {\intCO{0, \eps}} && {\DeltaStrat{1}} \\
    	{\DeltaStrat{0}} && {\DeltaStrat{1} \cap \intCO{0, \eps}} && {\DeltaStrat{1}}
    	\arrow[from=2-5, to=1-6]
    	\arrow[hook, from=1-4, to=1-6]
    	\arrow[hook, from=1-2, to=1-4]
    	\arrow[from=2-1, to=1-2]
    	\arrow[from=2-3, to=1-4]
    	\arrow[from=2-1, to=4-1]
    	\arrow[from=1-2, to=3-2]
    	\arrow["{p''}"{pos=0.2}, from=1-4, to=3-4]
    	\arrow["p'"{pos=0.2}, from=1-6, to=3-6]
    	\arrow[hook, from=3-4, to=3-6]
    	\arrow[hook, from=4-3, to=4-5]
    	\arrow[hook, from=4-1, to=4-3]
    	\arrow[hook, from=3-2, to=3-4]
    	\arrow[from=4-1, to=3-2]
    	\arrow[from=4-3, to=3-4]
    	\arrow[from=4-5, to=3-6]
    	\arrow[crossing over, hook, from=2-3, to=2-5]
    	\arrow[crossing over, hook, from=2-1, to=2-3]
    	\arrow[crossing over, "{F'}"{pos=0.2}, from=2-5, to=4-5]
    	\arrow[crossing over, "{F''}"{pos=0.2}, from=2-3, to=4-3]
    \end{tikzcd}
  \]
  In this diagram the top, back and bottom squares of the cube are pullback
  squares. It follows that the front squares are pullback squares as well,
  and therefore $F''$ is an open $n$-mesh bundle.
  Because $F''$ refines the trivial $n$-diagrammatic bundle $p''$,
  it refines a trivial diagrammatic $n$-mesh bundle when equipped with the induced labelling.
  The inclusion $\DeltaStrat{1} \cap \intCO{0, \eps} \hookrightarrow \DeltaStrat{1}$ is a
  stratified homotopy equivalence. Therefore, $F'$ also refines a trivial diagrammatic $n$-mesh bundle.
\end{proof}

\begin{remark}
  The characterisation of maps in $\BMeshDiag{n}$ as submersive mesh bordisms
  depends essentially on the framed basics to be stable.  
\end{remark}

\begin{lemma}\label{lem:d-comb-meshes-submersive-composite}
  Let $\cat{C}$ be a quasicategory.
  Then the submersive bordisms of open $n$-meshes with labels in $\cat{C}$
  are closed under composition in $\BMeshOpenL{n}{\cat{C}}$.
\end{lemma}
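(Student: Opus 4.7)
The plan is to use the orthogonal factorisation system of Corollary~\ref{cor:fct-emb-active-factorisation-open} together with the submersiveness hypothesis applied in two stages. Suppose we have composable submersive bordisms $f : \strat{S} \pto \strat{T}$ and $g : \strat{T} \pto \strat{U}$, and let $i : \strat{S}' \hookrightarrow \strat{S}$ be any submesh for which $\nf(\strat{S}')$ is an atom. We want to show that the active part of the active/inert factorisation of $g \circ f \circ i$ normalises to an equivalence.

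First I would apply the active/inert factorisation system to $f \circ i$, obtaining an active bordism $a_1 : \strat{S}' \pto \strat{T}'$ followed by an inert bordism $j : \strat{T}' \pto \strat{T}$. By the submersiveness of $f$ applied to the submesh $i$, the induced map $\nf(\strat{S}') \to \nf(\strat{T}')$ is an equivalence in $\BMeshOpenL{n}{\cat{C}}$. Since $\nf(\strat{S}')$ is an atom by assumption and the property of being an atom (i.e.\ having a minimal stratum) is invariant under equivalence in $\BMeshOpenL{n}{\cat{C}}$, it follows that $\nf(\strat{T}')$ is again an atom.

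Next I would apply the active/inert factorisation to $g \circ j$, obtaining an active bordism $a_2 : \strat{T}' \pto \strat{U}'$ followed by an inert bordism $k : \strat{U}' \pto \strat{U}$. By the previous paragraph the normal form $\nf(\strat{T}')$ is an atom, so the submersiveness hypothesis on $g$ applies to the submesh $j \circ ?$ — more precisely, I would consider $\strat{T}'$ itself as the submesh (via the inert bordism $j$, which corresponds to a constructible embedding of open meshes) and use submersiveness of $g$ to deduce that the induced map $\nf(\strat{T}') \to \nf(\strat{U}')$ is an equivalence. Composing gives an equivalence $\nf(\strat{S}') \to \nf(\strat{U}')$ in $\BMeshOpenL{n}{\cat{C}}$.

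It remains to observe that the composite factorisation $g \circ f \circ i = k \circ (a_2 \circ a_1)$ is the active/inert factorisation of $g \circ f \circ i$: the composite $a_2 \circ a_1$ is active by the closure of active bordisms under composition (the analogue for open meshes of Lemma~\ref{lem:fct-emb-active-compose-closed}, obtained via duality as in Lemma~\ref{lem:fct-emb-active-dual}), and $k$ is inert. By uniqueness of the active/inert factorisation, this is the required factorisation, and its active part normalises to the equivalence constructed above. The main obstacle will be checking carefully that the submersiveness of $g$ can indeed be invoked at the inert bordism $j : \strat{T}' \pto \strat{T}$ — i.e.\ translating between the presentation of inert bordisms as constructible submesh embeddings and the formulation of submersiveness in terms of such submeshes — but this is exactly the content of Proposition~\ref{prop:fct-emb-closed-rcyl-bordism} transported to the open setting via duality.
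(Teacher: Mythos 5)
Your proof is correct and follows essentially the same route as the paper: factor the first bordism restricted to the submesh, use submersiveness of $f$ to get an equivalence of normal forms (hence that the intermediate normal form is again an atom), apply submersiveness of $g$ to the resulting inert submesh, and conclude by functoriality/uniqueness of the active/inert factorisation. The extra care you take in justifying that $\nf(\strat{T}')$ is an atom and that the composite of the two active parts is the active part of the composite is exactly what the paper's appeal to "functoriality of the active/inert factorisation" packages up.
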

\begin{proof}
  Suppose we have two composable mesh bordisms 
  \[
    \begin{tikzcd}
      {\strat{R}} \ar[r, "f", mid vert] &
      {\strat{S}} \ar[r, "g", mid vert] &
      {\strat{T}}
    \end{tikzcd}
  \]
  in $\BMeshOpenL{n}{\cat{C}}$
  such that both $f$ and $g$ are submersive.
  Let $\strat{R}'$ be a submesh of $\strat{R}$ that normalises
  to an atom.
  We then have a diagram
  \[
    \begin{tikzcd}
      {\nf(\strat{R}')} \ar[d, dashed, "\simeq"'] &
      {\strat{R}'} \ar[r, mid vert, hook] \ar[d, mid vert, "f'"] \ar[l, mid vert, two heads] &
      {\strat{R}} \ar[d, mid vert, "f"] \\
      {\nf(\strat{S}')} \ar[d, dashed, "\simeq"'] &
      {\strat{S}'} \ar[r, mid vert, hook] \ar[d, mid vert, "g'"] \ar[l, mid vert, two heads] &
      {\strat{S}} \ar[d, mid vert, "g"] \\
      {\nf(\strat{T}')} &
      {\strat{T}'} \ar[r, mid vert, hook] \ar[l, mid vert, two heads] &
      {\strat{T}}
    \end{tikzcd}
  \]
  where $f'$ and $g'$ are produced by the active/inert factorisation.
  The first equivalence between normal forms exists since $f$ is submersive
  and $\strat{R}'$ normalises to an atom.
  But then $\strat{S}'$ also normalises to an atom and so
  the second equivalence between normal forms exists because $g$ is submersive.
  By functoriality of the active/inert factorisation it then follows
  that the composite $g \circ f$ is submersive.
\end{proof}

\begin{cor}\label{cor:d-comb-meshes-subcategory}
  The simplicial set of diagrammatic $n$-meshes $\BMeshDiag{n}$ is a quasicategory.
  Moreover, the inclusion map 
  $\BMeshDiag{n} \hookrightarrow \BMeshOpenL{n}{\ord{n}}$ is a categorical fibration
  that exhibits $\BMeshDiag{n}$ as an $\infty$-subcategory of $\BMeshOpenL{n}{\ord{n}}$.
\end{cor}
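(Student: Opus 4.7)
The plan is to verify the two conditions that jointly imply the claim: closure of $\BMeshDiag{n}$ under composition in $\BMeshOpenL{n}{\ord{n}}$, which by the subcategory criterion recalled in the background section (right lifting against $\Lambda^1\ord{2}\hookrightarrow\Delta\ord{2}$) simultaneously makes $\BMeshDiag{n}$ a subcategory, a quasicategory, and exhibits the inclusion as an inner fibration; and the isofibration property, which upgrades the inner fibration to a categorical fibration.

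For closure under composition, take composable bordisms $f:\strat{R}\pto\strat{S}$ and $g:\strat{S}\pto\strat{T}$ in $\BMeshDiag{n}$. By definition all three endpoints are diagrammatic $n$-meshes, so $g\circ f$ is a bordism between diagrammatic $n$-meshes. Proposition~\ref{prop:d-comb-meshes-iff-submersive} reduces membership in $\BMeshDiag{n}$ to submersivity, and Lemma~\ref{lem:d-comb-meshes-submersive-composite} supplies exactly this closure. For the isofibration property, let $f:\strat{M}\pto\strat{N}$ be an equivalence in $\BMeshOpenL{n}{\ord{n}}$ with $\strat{M}\in\BMeshDiag{n}$. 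Via the geometric realisation equivalence $\BMeshOpenL{n}{\ord{n}}\simeq\BTrussOpenL{n}{\ord{n}}$, equivalences correspond to isomorphisms in the $1$-category of labelled open $n$-trusses. Such an isomorphism can be used to transport any refinement $\strat{M}\to\strat{E}$ witnessing the diagrammatic structure to a refinement $\strat{N}\to\strat{E}$, so $\strat{N}$ is diagrammatic as well. That $f$ lies in $\BMeshDiag{n}$ then follows by checking submersivity via Proposition~\ref{prop:d-comb-meshes-iff-submersive}: for any submesh $i:\strat{M}'\hookrightarrow\strat{M}$ normalising to an atom, the active/inert factorisation of $f\circ i$ has active part equivalent (through $f$) to the inclusion of $\strat{M}'$ into its image, whose normalisation is again the atom $\nf(\strat{M}')$.

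The main obstacle will be the isofibration step, and specifically handling the active/inert factorisation through an equivalence. We rely on the fact that the active/inert factorisation system of Corollary~\ref{cor:fct-emb-active-factorisation-open} is orthogonal, so equivalences factor essentially uniquely, and that inert bordisms arising from an iso are themselves isos after restriction. Care is needed to ensure that the normalisation functor on atoms respects this structure so that the required equivalence $\nf(\strat{M}')\simeq\nf(\strat{N}')$ between atoms is produced canonically, rather than merely up to some higher coherence.
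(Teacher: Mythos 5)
Your proposal is correct and follows essentially the same route as the paper, whose entire proof is the one-line combination of Proposition~\ref{prop:d-comb-meshes-iff-submersive} with Lemma~\ref{lem:d-comb-meshes-submersive-composite}. Your additional discussion of the isofibration property (transporting the refinement witness along an equivalence, which corresponds to an isomorphism of labelled trusses) fills in a step the paper leaves implicit, and is consistent with its argument.
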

\begin{proof}
  Combining Proposition~\ref{prop:d-comb-meshes-iff-submersive} with Lemma~\ref{lem:d-comb-meshes-submersive-composite}.
\end{proof}

% \begin{lemma}
%   Suppose that $f : \strat{S} \pto \strat{T}$ is an inert bordism of open $n$-meshes 
%   with labels in $\ord{n}$ such that $\strat{T}$ is a diagrammatic $n$-mesh.
%   Then there exists a lift in the diagram
%   \[
%     \begin{tikzcd}
%       {\Delta\ord{0}} \ar[r, "\strat{T}"] \ar[d, hook, "\langle 1 \rangle"'] &
%       {\BMeshDiag{n}} \ar[d, hook, "I"] \\
%       {\Delta\ord{1}} \ar[r, "f"'] \ar[ur, dashed, "\hat{f}"{description}] &
%       {\BMeshOpenL{n}{\ord{n}}}
%     \end{tikzcd}
%   \]
%   such that $\hat{f}$ classifies an $I$-cartesian map.
% \end{lemma}
\begin{observation}
  Let $f : \strat{S} \to \strat{T}$ be a bordism of diagrammatic $n$-meshes.
  Suppose that $f$ fits into a diagram of bordisms of open $n$-meshes with labels
  in $\ord{n}$ of the form
  \[
    \begin{tikzcd}
      {} &
      {\strat{R}} \ar[dr, "f_1", mid vert, hook] \\
      {\strat{S}} \ar[rr, mid vert, "f"'] \ar[ur, "f_0", mid vert] &
      {} &
      {\strat{T}}
    \end{tikzcd}
  \]
  where $f_1$ is an inert bordism.
  Then $f_0$ and $f_1$ are both bordisms of diagrammatic $n$-meshes.
  This closure property allows us to lift the active/inert factorisation from $\BMeshOpenL{n}{\ord{n}}$
  to the $\infty$-category $\BMeshDiag{n}$ of diagrammatic $n$-meshes.
\end{observation}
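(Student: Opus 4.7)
The plan is to establish the three claims in order: first that $\strat{R}$ is a diagrammatic $n$-mesh, then that $f_1$ and $f_0$ are both bordisms in $\BMeshDiag{n}$. Throughout, the main tool will be Proposition~\ref{prop:d-comb-meshes-iff-submersive}, which identifies bordisms of $\BMeshDiag{n}$ between diagrammatic meshes with the submersive bordisms, together with the active/inert factorisation system on $\BMeshOpenL{n}{\ord{n}}$ from Corollary~\ref{cor:fct-emb-active-factorisation-open}.

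For the first claim, let $r : \strat{T} \to \strat{E}$ be a refinement of $\strat{T}$ to an extended $n$-manifold diagram, furnished by the hypothesis that $\strat{T}$ is diagrammatic. Choose a representing open $n$-mesh bundle $F_1 : \strat{M} \to \DeltaStrat{1}$ for the inert bordism $f_1$, so that $\stratPos{F_1}$ is the cocartesian fibration of an inclusion $\stratPos{\strat{R}} \hookrightarrow \stratPos{\strat{T}}$. The geometric content is that $\strat{R}$ is obtained from $\strat{T}$ by letting the strata in $\stratPos{\strat{T}} \setminus \stratPos{\strat{R}}$ escape to infinity. To witness $\strat{R}$ as diagrammatic, I would construct an extended $n$-manifold diagram $\strat{E}'$ by coarsening $\strat{E}$ along the images under $r$ of the strata not belonging to $\strat{R}$. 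The induced dimension labelling on $\strat{R}$ agrees with the dimensions of strata in $\strat{E}'$, yielding the required refinement $\strat{R} \to \strat{E}'$.

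For $f_1$: any inert inclusion $i' : \strat{R}' \hookrightarrow \strat{R}$ composes with $f_1$ to an inert bordism, by the open-mesh analogue of Lemma~\ref{lem:fct-emb-closed-composite}. Its essentially unique active/inert factorisation has the identity on $\strat{R}'$ as its active part, and the identity normalises to an equivalence; hence $f_1$ is submersive. For $f_0$: given any submesh $i : \strat{S}' \hookrightarrow \strat{S}$ with $\nf(\strat{S}')$ an atom, factor $f_0 \circ i = j_0 \circ a_0$ in $\BMeshOpenL{n}{\ord{n}}$, with $a_0 : \strat{S}' \to \strat{R}'$ active and $j_0 : \strat{R}' \hookrightarrow \strat{R}$ inert. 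Then $f \circ i = (f_1 \circ j_0) \circ a_0$, and since $f_1 \circ j_0$ is a composite of two inert bordisms it is inert. By uniqueness of the active/inert factorisation, $a_0$ agrees up to equivalence with the active part of $f \circ i$. Because $f$ is submersive by hypothesis, $\nf(a_0)$ is an equivalence, so $f_0$ is submersive.

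The main obstacle is the first step: producing a specific extended $n$-manifold diagram refined by $\strat{R}$ from the one refined by $\strat{T}$. Although the idea is geometrically intuitive, care is required to verify that the resulting stratification of $\R^n$ still satisfies the conditions of an extended $n$-manifold diagram, namely being essentially tame and covered by stable framed basics with compatible labels. Once this is handled, Steps 2 and 3 follow by clean manipulations of the active/inert factorisation system and the submersivity criterion, and together they supply the lift of the factorisation system to $\BMeshDiag{n}$ claimed at the end of the observation.
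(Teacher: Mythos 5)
Your overall strategy is the right one for this observation (the paper states it without proof): reduce everything to the submersivity criterion of Proposition~\ref{prop:d-comb-meshes-iff-submersive}, which forces you to first check that the intermediate object $\strat{R}$ is itself a diagrammatic $n$-mesh. Your Steps 2 and 3 are correct and essentially optimal: an inert bordism is submersive because its active/inert factorisation after precomposition with any inert inclusion has an identity as active part, and $f_0$ inherits submersivity from $f$ because postcomposing the active/inert factorisation of $f_0 \circ i$ with the inert $f_1$ produces, by essential uniqueness of factorisations, the active/inert factorisation of $f \circ i$.

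The gap is in Step 1, and it is in the construction, not merely in the verification you flag. ``Coarsening $\strat{E}$ along the images of the strata not in $\strat{R}$'' is the wrong operation and would fail: a coarsening keeps the underlying space $\R^n$ and merges strata, so (i) merging away, say, a $0$-stratum with several incident $1$-strata produces a stratum that is not a manifold, destroying the framed-basic structure, and (ii) the result could not be refined by $\strat{R}$ anyway, since a refinement map is an isomorphism on underlying spaces while $\strat{R}$ only ``sees'' the open subspace of $\strat{T}$ in the image of the embedding underlying $f_1$ --- the image of $r$ applied to a stratum outside $\strat{R}$ need not even be disjoint from the images of strata inside $\strat{R}$. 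The correct construction is restriction rather than coarsening: the inert bordism $f_1$ identifies $\strat{R}$ with a constructible open subspace $U \subseteq \R^n$ of $\strat{T}$ via a framed open embedding $e \colon \R^n \cong U$; pull back the stratification and labelling of $\strat{E}$ along $r$ composed with $e$ to obtain $\strat{E}'$ on $\R^n$. Since $U$ is open, every point of $\strat{E}'$ still has a stable framed basic neighbourhood, essential tameness is inherited by the restriction to a constructible subspace, the strata remain manifolds (open subsets of manifolds), and $\strat{R} \to \strat{E}'$ is a refinement compatible with the $\ord{n}$-labelling that $\strat{R}$ carries as the cocartesian restriction of the labelling of $\strat{T}$. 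With this replacement your argument goes through.
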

% \begin{proof}
%   Because $f_1$ is an inert bordism, there exists an open embedding
%   $e : \strat{R} \hookrightarrow \strat{S}$ of labelled $n$-framed
%   stratified spaces such that $f_1$ is the open mapping cylinder of $e$.
  
%   \color{Orange} TODO: For $f_0$ look at an open mapping cylinder of an embedding.
%   For $f_1$ notice that the open $n$-mesh bundle for $f_1$ is an open subbundle of the one for $f$.
% \end{proof}

% \begin{lemma}
%   The active and inert bordisms of diagrammatic $n$-meshes form an orthogonal factorisation system
%   on $\BMeshDiag{n}$.
% \end{lemma}
% \begin{proof}
%   Let $f : \strat{S} \pto \strat{T}$ be a bordism of diagrammatic $n$-meshes.
%   Using the active/inert factorisation system of $\BMeshOpenL{n}{\ord{n}}$
%   we obtain a factorisation of $f$ as a bordism of open $n$-meshes with labels in $\ord{n}$:
%   \[
%     \begin{tikzcd}
%       {} &
%       {\strat{R}} \ar[dr, "f_1", mid vert, hook] \\
%       {\strat{S}} \ar[rr, mid vert, "f"'] \ar[ur, "f_0", mid vert] &
%       {} &
%       {\strat{T}}
%     \end{tikzcd}
%   \]
%   It then follows from Lemma~\ref{} that this diagram is also
%   an active/inert factorisation of $f$ within $\BMeshDiag{n}$.
%   The essential uniqueness of the factorisation in $\BMeshDiag{n}$ follows from uniqueness of the factorisation
%   in $\BMeshOpenL{n}{\ord{n}}$.
% \end{proof}

\subsection{Meshed Manifold Diagrams}\label{sec:d-comb-meshed-diag}

We can find an alternative construction of the $n$-diagrammatic space of
manifold diagrams by using $n$-diagrammatic meshes.

\begin{definition}
  We write $\meshFC : \BMeshDiag{n} \to \BMeshOpen{n}$ for the composite
  of the inclusion map $\BMeshDiag{n} \hookrightarrow \BMeshOpenL{n}{\ord{n}}$
  followed by the forgetful map $\BMeshOpenL{n}{\ord{n}} \to \BMeshOpen{n}$.
  Consider the diagram of simplicial sets
  \begin{equation}\label{eq:d-mesh-diag}
    \begin{tikzcd}
      {\MeshDiagExt^n} \ar[r] \ar[d] \pullbackcorner &
      {\BConfOrtho{n}} \ar[d] \ar[r] &
      {\BMeshClosed{n}} \\
      {\BMeshDiag{n}} \ar[r, "\meshFC"'] &
      {\BMeshOpen{n}}
    \end{tikzcd}
  \end{equation}
  in which the square is a pullback square.
  Then $\MeshDiagExt^n$ is the simplicial set of \defn{extended meshed $n$-manifold diagrams}.
  The top row of the diagram composes into a map $\MeshDiagExt^n \to \BMeshClosed{n}$.  
\end{definition}

\begin{observation}
  Concretely a $k$-simplex of $\MeshDiagExt^n$ consists of a closed $n$-mesh
  bundle $\xi : \strat{X} \to \DeltaStrat{k}$ together with a diagrammatic $n$-mesh bundle
  $f : \strat{M} \to \DeltaStrat{k}$ such that $\xi$ and $f$ are orthogonal.
  The map $\MeshDiagExt^n$ sends the pair $(\xi, f)$ to $\xi$.
\end{observation}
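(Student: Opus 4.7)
The claim is essentially an unpacking of the pullback square defining $\MeshDiagExt^n$, together with a check that the composite to $\BMeshClosed{n}$ does what we expect. My plan is to peel the two layers of the definition apart in sequence.

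First I would invoke the fact that for the pullback of simplicial sets in diagram~(\ref{eq:d-mesh-diag}), a $k$-simplex of $\MeshDiagExt^n$ is exactly a pair consisting of a $k$-simplex of $\BMeshDiag{n}$ and a $k$-simplex of $\BConfOrtho{n}$ whose images in $\BMeshOpen{n}$ coincide. I would then unfold each of these two pieces of data using its definition. By the definition of $\BMeshDiag{n}$ as a simplicial subset of $\BMeshOpenL{n}{\ord{n}}$, a $k$-simplex is a diagrammatic $n$-mesh bundle $f : \strat{M} \to \DeltaStrat{k}$ (with the labelling in $\ord{n}$ suppressed from the notation), and the map $\meshFC$ sends $f$ to its underlying unlabelled open $n$-mesh bundle. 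By definition of $\BConfOrtho{n}$ from \S\ref{sec:fct-conf-ortho}, a $k$-simplex is an orthogonal configuration $(\xi, g)$ of a closed $n$-mesh bundle $\xi : \strat{X} \to \DeltaStrat{k}$ and an open $n$-mesh bundle $g : \strat{N} \to \DeltaStrat{k}$, and the projection to $\BMeshOpen{n}$ forgets $\xi$ and retains $g$.

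Next I would observe that the pullback compatibility condition forces $g = \meshFC(f)$, so that the open $n$-mesh bundle in the orthogonal configuration is precisely the open $n$-mesh bundle underlying $f$. Thus a $k$-simplex of $\MeshDiagExt^n$ is determined by a pair $(\xi, f)$ consisting of a closed $n$-mesh bundle $\xi$ and a diagrammatic $n$-mesh bundle $f$ over the same stratified $k$-simplex such that $\xi \perp \meshFC(f)$, which is exactly the orthogonality condition stated in the observation. To conclude, I would trace a $k$-simplex through the top row of~(\ref{eq:d-mesh-diag}): the pullback projection sends $(\xi, f)$ to the orthogonal configuration $(\xi, \meshFC(f))$ in $\BConfOrtho{n}$, and the second map then projects onto $\xi \in \BMeshClosed{n}$.

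There is no real obstacle here; the only subtle point is notational, namely to keep straight that $f$ carries a dimension labelling in $\ord{n}$ whereas the open $n$-mesh bundle appearing in the orthogonal configuration is the unlabelled one obtained via $\meshFC$, and that this distinction disappears once one identifies the two along the pullback. Everything else is immediate from the definitions already in place.
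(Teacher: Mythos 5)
Your unpacking is correct and matches exactly what the paper intends: the observation is a direct unwinding of the pullback square defining $\MeshDiagExt^n$, and your identification of the open $n$-mesh bundle in the orthogonal configuration with $\meshFC(f)$ via the pullback compatibility is precisely the point. The paper offers no separate argument for this observation, so your proposal is essentially the same (definitional) route.
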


\begin{lemma}\label{lem:d-comb-meshed-diag-cocartesian}
  The map $\MeshDiagExt^n \to \BMeshClosed{n}$ is a cocartesian fibration.
  Moreover, a map is cocartesian for $\MeshDiagExt^n \to \BMeshClosed{n}$ if and only if it is
  sent to an equivalence by $\MeshDiagExt^n \to \BMeshDiag{n}$.
\end{lemma}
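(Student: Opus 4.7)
The plan is to observe that this statement is a direct application of Lemma~\ref{lem:b-cat-cocartesian-top-row} to the defining pullback square of $\MeshDiagExt^n$. Explicitly I would take $\cat{E} = \BConfOrtho{n}$, $\cat{D} = \BMeshOpen{n}$, $\cat{C} = \BMeshDiag{n}$ and $\cat{B} = \BMeshClosed{n}$, with $p$ the projection $\BConfOrtho{n} \to \BMeshClosed{n}$, $q$ the other projection $\BConfOrtho{n} \to \BMeshOpen{n}$, and $r$ the map $\meshFC$.

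With this identification, three hypotheses need checking. First, the square in~(\ref{eq:d-mesh-diag}) is a pullback square by the very definition of $\MeshDiagExt^n$. Second, $p$ is a cocartesian fibration, with a map of $\BConfOrtho{n}$ being $p$-cocartesian precisely when $q$ sends it to an equivalence; both assertions are the content of Lemma~\ref{lem:fct-conf-ortho-cocartesian}. Third, $r = \meshFC$ must be a categorical fibration, which I would verify by factoring it as the composite of the inclusion $\BMeshDiag{n} \hookrightarrow \BMeshOpenL{n}{\ord{n}}$, a categorical fibration by Corollary~\ref{cor:d-comb-meshes-subcategory}, followed by the forgetful functor $\BMeshOpenL{n}{\ord{n}} \to \BMeshOpen{n}$, which is a categorical fibration by the open analogue of Proposition~\ref{prop:fct-mesh-closed-labelled-cat} (using that $\ord{n}$ is a quasicategory).

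Given these three inputs, Lemma~\ref{lem:b-cat-cocartesian-top-row} immediately delivers both conclusions: the composite map $\MeshDiagExt^n \to \BMeshClosed{n}$ is a cocartesian fibration, and a map in $\MeshDiagExt^n$ is cocartesian precisely when its image in $\BMeshDiag{n}$ is an equivalence. There is essentially no obstacle beyond matching up the notation; the substantive work was done earlier, first in establishing the cocartesian structure on $\BConfOrtho{n}$ via orthogonality in~\S\ref{sec:fct-conf-ortho}, and then in showing via the submersivity characterisation (Proposition~\ref{prop:d-comb-meshes-iff-submersive} and Corollary~\ref{cor:d-comb-meshes-subcategory}) that $\BMeshDiag{n}$ embeds as an $\infty$-subcategory of $\BMeshOpenL{n}{\ord{n}}$ through a categorical fibration.
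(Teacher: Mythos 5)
Your proposal is correct and matches the paper's own argument essentially verbatim: the paper likewise factors $\meshFC$ through the categorical fibration $\BMeshDiag{n} \hookrightarrow \BMeshOpenL{n}{\ord{n}}$ (Corollary~\ref{cor:d-comb-meshes-subcategory}) and the forgetful map, invokes Lemma~\ref{lem:fct-conf-ortho-cocartesian} for the cocartesian structure on $\BConfOrtho{n} \to \BMeshClosed{n}$, and concludes with Lemma~\ref{lem:b-cat-cocartesian-top-row}. Your explicit matching of the objects in that lemma to the pullback square~(\ref{eq:d-mesh-diag}) is exactly the intended reading.
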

\begin{proof}
  The inclusion map $\BMeshDiag{n} \hookrightarrow \BMeshOpenL{n}{\Nat^\op}$ is a categorical fibration by Corollary~\ref{cor:d-comb-meshes-subcategory} and the forgetful map $\BMeshOpenL{n}{\Nat^\op} \to \BMeshOpen{n}$ is a categorical
  fibration. Therefore, the composite map $\BMeshDiag{n} \to \BMeshOpen{n}$
  is again a categorical fibration.
  The map $\BConfOrtho{n} \to \BMeshClosed{n}$ is a cocartesian fibration by Lemma~\ref{lem:fct-conf-ortho-cocartesian} and a map is cocartesian if and only if $\BConfOrtho{n} \to \BMeshOpen{n}$
  sends it to an equivalence.
  The claim then follows Lemma~\ref{lem:b-cat-cocartesian-top-row}.
\end{proof}

\begin{observation}
  The cocartesian fibration $\MeshDiagExt^n \to \BMeshClosed{n}$ induces a functor
  \[
    \MeshDiagExt^n(-) : \BMeshClosed{n} \to \CatInfty
  \]
  sending each closed $n$-mesh $\strat{X}$ to an $\infty$-category
  $\MeshDiagExt^n(\strat{X})$.
  We can compute this $\infty$-category concretely as a quasicategory by taking
  the pullback of simplicial sets
  \[
    \begin{tikzcd}
      {\MeshDiagExt^n(\strat{X})} \ar[r, hook] \ar[d] \pullbackcorner &
      {\MeshDiagExt^n} \ar[d] \\
      {\Delta\ord{0}} \ar[r, "\strat{X}"', hook] &
      {\BMeshClosed{n}}
    \end{tikzcd}
  \]
  More concretely, a $k$-simplex of $\MeshDiagExt^n(\strat{X})$ consists of an
  diagrammatic $n$-mesh bundle $f : \strat{M} \to \DeltaStrat{k}$ that
  is fibrewise orthogonal to $\strat{X}$.
\end{observation}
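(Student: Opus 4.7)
The plan is to verify each of the three assertions in the observation in turn, all of which follow from general facts about cocartesian fibrations that were catalogued in Chapter~\ref{sec:b}.

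First, the induced functor. By Lemma~\ref{lem:d-comb-meshed-diag-cocartesian} the map $\MeshDiagExt^n \to \BMeshClosed{n}$ is a cocartesian fibration, so the straightening/unstraightening equivalence $\coCart{\BMeshClosed{n}} \simeq \Fun(\BMeshClosed{n}, \CatInfty)$ recalled in the background section produces a functor $\MeshDiagExt^n(-) : \BMeshClosed{n} \to \CatInfty$ directly.

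Second, the computation of the fibre as a quasicategory. Every cocartesian fibration is in particular a categorical fibration, so $\MeshDiagExt^n \to \BMeshClosed{n}$ is fibrant in the Joyal slice model structure. The diagram
\[
  \Delta\ord{0} \xrightarrow{\ \strat{X}\ } \BMeshClosed{n} \xleftarrow{\ } \MeshDiagExt^n
\]
is therefore Reedy fibrant in $(\sSet, \MSJoyal)$, so its pullback of simplicial sets represents the homotopy pullback in the Joyal model structure. This is exactly the procedure recorded in the background section for computing the fibre $F(b)$ of a cocartesian fibration classified by $F : \BMeshClosed{n} \to \CatInfty$ at an object $b$, giving $\MeshDiagExt^n(\strat{X})$ as a quasicategory presented by the claimed pullback.

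Third, the explicit description of $k$-simplices. A $k$-simplex of the pullback is a $k$-simplex $\sigma$ of $\MeshDiagExt^n$ whose projection to $\BMeshClosed{n}$ equals the $k$-simplex $\strat{X} \circ !_k$, i.e.\ the constant closed $n$-mesh bundle $\strat{X} \times \DeltaStrat{k} \to \DeltaStrat{k}$. By the definition of $\MeshDiagExt^n$ through the pullback square~(\ref{eq:d-mesh-diag}), the datum $\sigma$ is a pair $(\xi, f)$ with $\xi$ a closed $n$-mesh bundle over $\DeltaStrat{k}$ and $f : \strat{M} \to \DeltaStrat{k}$ a diagrammatic $n$-mesh bundle satisfying $\xi \perp f$. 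Pinning $\xi$ to be the constant bundle on $\strat{X}$ forces fibrewise orthogonality $f \perp \strat{X}$ in each fibre, and leaves exactly the datum of a diagrammatic $n$-mesh bundle $f : \strat{M} \to \DeltaStrat{k}$ fibrewise orthogonal to $\strat{X}$, as claimed.

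The only genuine point to check is the second step; once one recognises that the projection is a categorical fibration the Reedy fibrancy argument is routine, and both the first and third steps are then immediate from straightening and from unpacking the pullback defining $\MeshDiagExt^n$. I do not anticipate any real obstacle here since all the work was done in Lemma~\ref{lem:d-comb-meshed-diag-cocartesian} and in the background setup of cocartesian fibrations.
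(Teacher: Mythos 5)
Your proposal is correct and follows essentially the same route the paper intends: the statement is an observation whose justification is exactly the straightening of the cocartesian fibration from Lemma~\ref{lem:d-comb-meshed-diag-cocartesian}, the fact (recorded in the background) that fibres of such fibrations over a quasicategory are computed by the strict pullback of simplicial sets, and a direct unpacking of the pullback square defining $\MeshDiagExt^n$. Your Reedy-fibrancy remark is just an explicit form of the homotopy-pullback justification the paper already relies on, so there is nothing to add.
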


\begin{observation}
  There is a canonical map of simplicial sets $\MeshDiagExt^n \to \MfldDiagExt^n$ which sends
  a pair $(\xi, f)$ to the pair $(\xi, p)$ where $p$ is the
  $n$-manifold diagram bundle that is refined by the diagrammatic $n$-mesh bundle $f$.
  Because this map preserves the closed $n$-mesh bundle $\xi$, it commutes with the
  projections to $\BMeshClosed{n}$:
  \[
    \begin{tikzcd}[column sep = small]
      {\MeshDiagExt^n} \ar[rr] \ar[dr] &
      {} &
      {\MfldDiagExt^n} \ar[dl] \\
      {} &
      {\BMeshClosed{n}}
    \end{tikzcd}
  \]
  We therefore have a natural transformation
  $\MeshDiagExt^n(-) \to \MfldDiagExt^n(-)$.
  The component of this natural transformation on a closed $n$-mesh $\strat{X}$
  sends the $\infty$-category $\MeshDiagExt^n(\strat{X})$
  of extended diagrammatic $n$-meshes on $\strat{X}$ to the space
  $\MfldDiagExt^n(\strat{X})$
  of extended $n$-manifold diagrams on $\strat{X}$.
\end{observation}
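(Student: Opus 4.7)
The plan is to construct the map $\MeshDiagExt^n \to \MfldDiagExt^n$ level-by-level in the simplicial direction and then appeal to the fibration structure established in Lemma~\ref{lem:d-comb-meshed-diag-cocartesian} and Proposition~\ref{prop:d-mfld-functorial-left} to package the result as a natural transformation of functors into $\CatInfty$.

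First I would fix a canonical recipe for producing $p$ from $f$. Given a $k$-simplex $(\xi, f) \in \MeshDiagExt^n$ with $f : \strat{M} \to \DeltaStrat{k}$ a diagrammatic $n$-mesh bundle labelled in $\ord{n}$, the definition of $\BMeshDiag{n}$ supplies some witness $p' : \strat{E}' \to \unstrat(\DeltaStrat{k})$ refined by $f$. Rather than selecting one such witness, I would define $p$ canonically as the coarsening of $f$ along the labelling: on the total space, replace $\stratPos{\strat{M}}$ by the quotient poset that identifies strata which are connected by exit paths all carrying the same $\ord{n}$-label, and extend to $\unstrat(\strat{E}) := \R^n \times \DeltaTop{k}$ by using the trivialising chart of $f$ near the boundary (which exists because $f$ is diagrammatic and therefore refines an extended manifold diagram bundle, so it is locally a framed basic times a factor from the base). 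Since $f$ is diagrammatic the coarsening is unambiguously a labelled $n$-framed stratified bundle, it is covered by stable framed basics (the labelling collapses each basic of $f$ onto the basic of $p'$ that it refines), and essential tameness is inherited from $f$. Hence $p$ is an extended $n$-manifold diagram bundle orthogonal to $\xi$, and $f$ refines $p$ by construction.

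Next I would verify simpliciality: the coarsening is defined purely in terms of the labelling and the topology of fibres, both of which are preserved under pullback along any simplicial operator $\ord{k'} \to \ord{k}$. So the assignment $(\xi, f) \mapsto (\xi, p)$ is strictly natural in $\ord{k}$ and defines a map $\MeshDiagExt^n \to \MfldDiagExt^n$ of simplicial sets. Commutation with the projections to $\BMeshClosed{n}$ is automatic since the first coordinate $\xi$ is untouched.

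Finally I would upgrade this map of simplicial sets to a natural transformation of functors $\BMeshClosed{n} \to \CatInfty$. The projection $\MeshDiagExt^n \to \BMeshClosed{n}$ is a cocartesian fibration by Lemma~\ref{lem:d-comb-meshed-diag-cocartesian} and classifies $\MeshDiagExt^n(-)$, while $\MfldDiagExt^n \to \BMeshClosed{n}$ is a left fibration by the analogue of Proposition~\ref{prop:d-mfld-functorial-left} and classifies $\MfldDiagExt^n(-)$ valued in $\Space \hookrightarrow \CatInfty$. A map of simplicial sets over $\BMeshClosed{n}$ between these two fibrations straightens to a natural transformation of the corresponding functors $\BMeshClosed{n} \to \CatInfty$, whose component at $\strat{X}$ is computed by pulling back along $\strat{X} : \Delta\ord{0} \to \BMeshClosed{n}$ and therefore takes the stated form.

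The main subtlety I expect is ensuring that the canonical coarsening $p$ is itself an $n$-manifold diagram bundle, i.e.\ locally framed-basic and stable, rather than merely an $n$-framed stratified bundle. This reduces to the observation that, because $f$ is diagrammatic, around each point of $\strat{M}$ there is a chart of the form $\strat{A} \times \strat{V} \hookrightarrow \strat{M}$ where $\strat{A}$ is a labelled atom refining a stable framed basic; coarsening along the labels then produces exactly that basic, verifying the local condition. Naturality in the base and the comparison with the existence-based witness $p'$ from the definition of $\BMeshDiag{n}$ both follow from the uniqueness of the coarsening up to refinement isomorphism, which is where a careful application of Proposition~\ref{prop:d-comb-meshes-iff-submersive} (to identify equivalences in $\BMeshDiag{n}$) is needed.
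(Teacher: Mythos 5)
Your proposal is correct and fills in exactly what the paper's Observation leaves implicit: $p$ is canonically recovered from the dimension labelling of $f$ (your label-wise coarsening agrees with any witness $p'$ because the $\ord{n}$-labelling of a manifold diagram is conservative, so a label-constant exit path stays within a single stratum), and the resulting map over $\BMeshClosed{n}$ straightens to the stated natural transformation since every edge of the left fibration $\MfldDiagExt^n \to \BMeshClosed{n}$ is cocartesian. The only misdirected detail is invoking Proposition~\ref{prop:d-comb-meshes-iff-submersive} for the uniqueness of the coarsening, where what is really used is conservativity of the labelling on framed basics; this does not affect the argument.
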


% Since the closed $n$-mesh bundle is preserved, the map fits into a diagram
% The cocartesian fibration $\MeshDiag^n \to \BMeshClosed{n}$ represents a functor
% $\BMeshClosed{n} \to \CatInfty$ into the $\infty$-category of $\infty$-categories $\CatInfty$
% while the left fibration $\MfldDiag^n \to \BMeshClosed{n}$
% corresponds to a functor $\BMeshClosed{n} \to \Space$ into the $\infty$-category of spaces $\Space$.
% When $\CatInfty \to \Space$ is functor which freely inverts all morphisms,
% the two functors fit together as follows.
% \[
%   \begin{tikzcd}[column sep = small]
%     {} &
%     {\BMeshClosed{n}} \ar[dl, "\MeshDiag^n(-)"'] \ar[dr, "\MfldDiag^n(-)"] \\
%     {\CatInfty} \ar[rr] &
%     {} &
%     {\Space}
%   \end{tikzcd}
% \]

% \begin{lemma}
%   Let $\varphi : \strat{E} \to \DeltaTop{k}$ be an
%   $n$-framed conical, open meshable stratified bundle
%   and let $f' : \strat{M}' \to \partial\StratReal{\sd^i\ord{k}}$
%   be an open $n$-mesh bundle which refines the restriction of
%   $\varphi$ to the boundary $\partial\DeltaTop{k}$.  
%   Then there exists an open $n$-mesh bundle
%   $f : \strat{M} \to \StratReal{\sd^j\ord{k}}$
%   that refines $\varphi$ so that $j \geq i$ and
%   {\color{Orange} agrees with $f'$ modulo increasing the subdivison of the boundary}.
% \end{lemma}

\begin{lemma}\label{lem:mesh-diag-to-mfld-diag}
  The map of cocartesian fibrations
  \[
    \begin{tikzcd}[column sep = small]
      {\MeshDiagExt^n} \ar[rr] \ar[dr] &
      {} &
      {\MfldDiagExt^n} \ar[dl] \\
      {} &
      {\BMeshClosed{n}}
    \end{tikzcd}
  \]
  exhibits $\MfldDiagExt^n \to \BMeshClosed{n}$ as the free left fibration
  induced by $\MeshDiagExt^n \to \BMeshClosed{n}$.
\end{lemma}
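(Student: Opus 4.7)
The plan is to verify the universal property of the free left fibration directly. Since $\MeshDiagExt^n \to \BMeshClosed{n}$ is a cocartesian fibration by Lemma~\ref{lem:d-comb-meshed-diag-cocartesian} and $\MfldDiagExt^n \to \BMeshClosed{n}$ is a left fibration (essentially by the argument of Proposition~\ref{prop:d-mfld-functorial-left}, applied to extended manifold diagrams), the given map of cocartesian fibrations factors uniquely through the free left fibration $L(\MeshDiagExt^n) \to \BMeshClosed{n}$. It is this induced map $L(\MeshDiagExt^n) \to \MfldDiagExt^n$ over $\BMeshClosed{n}$ which I want to show is an equivalence of left fibrations.

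First I would reduce to a fiberwise statement. Equivalences of left fibrations over $\BMeshClosed{n}$ are detected on fibers, and the free left fibration associated to a cocartesian fibration $p$ is computed fiberwise as $\CatToSpace$ of the fibers of $p$. So it suffices to fix a closed $n$-mesh $\strat{X}$ and show that the comparison
\[
  \CatToSpace\bigl(\MeshDiagExt^n(\strat{X})\bigr) \longrightarrow \MfldDiagExt^n(\strat{X})
\]
is an equivalence of spaces. Unwinding the definitions, the left-hand $\infty$-category has a $k$-simplex given by a diagrammatic $n$-mesh bundle orthogonal to $\strat{X}$, while the right-hand Kan complex has a $k$-simplex given by an extended $n$-manifold diagram bundle orthogonal to $\strat{X}$; the forgetful functor $U_\strat{X}$ sends a mesh bundle to the manifold diagram bundle it refines.

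Second, I would establish this equivalence via a Quillen-style cofinality argument. The key input is Proposition~\ref{prop:coarsest-mesh-refinement-open-pl}: every extended $n$-manifold diagram $p$ admits a coarsest diagrammatic mesh refinement $c(p)$, and any other diagrammatic mesh refining $p$ factors through $c(p)$ via a canonical degeneracy bordism in $\BMeshDiag{n}$. Thus $c(p)$ is a (weakly) terminal object in the $\infty$-category of mesh refinements of $p$, which is therefore weakly contractible. Applied fiberwise, this shows that $U_\strat{X}$ has weakly contractible homotopy fibers, from which the desired equivalence follows by Theorem~A.

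The main obstacle is extending this coarsest-refinement argument from a single manifold diagram to a diagram bundle over a higher simplex $\DeltaTop{k}$, so that the cofinality statement holds not just at the level of objects but for all simplices simultaneously. The key technical ingredient would be Observation~\ref{obs:coarsest-mesh-refinement-local}, which ensures that the coarsest refining mesh is determined locally and therefore varies compatibly with restriction to subspaces of the base. Combining this with the stability of framed basics (which controls how the refining mesh of a fibre evolves as we move in the base), I expect the fiberwise coarsest mesh of an $n$-manifold diagram bundle to assemble into a diagrammatic mesh bundle providing a weakly terminal section over each $\DeltaTop{k}$. Making this bundle statement watertight, particularly checking that the resulting assignment is appropriately functorial up to the coherent homotopies required by Theorem~A in the quasicategorical setting, is the delicate part of the argument.
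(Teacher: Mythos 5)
Your reduction to the fibrewise statement is correct and matches the paper's first step: one must show that $\MeshDiagExt^n(\strat{X}) \to \MfldDiagExt^n(\strat{X})$ induces an equivalence $\CatToSpace(\MeshDiagExt^n(\strat{X})) \simeq \MfldDiagExt^n(\strat{X})$ for each closed $n$-mesh $\strat{X}$, and the key geometric inputs you name (Proposition~\ref{prop:coarsest-mesh-refinement-open-pl} and Observation~\ref{obs:coarsest-mesh-refinement-local}) are exactly the ones the paper uses. But your Theorem~A step has a genuine gap. The terminal-object argument you give — the coarsest refining mesh $c(p)$ receives a canonical degeneracy bordism from every other mesh refining $p$ — establishes contractibility only of the \emph{strict} fibre of the forgetful functor over $p$, i.e.\ the category of meshes refining the fixed diagram $p$ with bordisms lying over the constant isotopy. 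What Theorem~A requires is contractibility of the comma category $\MeshDiagExt^n(\strat{X}) \times_{\MfldDiagExt^n(\strat{X})} \MfldDiagExt^n(\strat{X})_{/p}$, whose objects are meshes refining diagrams merely \emph{isotopic} to $p$, together with a choice of isotopy. Since the forgetful map is not a fibration, the strict fibre does not compute this homotopy fibre, and the entire difficulty of the lemma — relating mesh refinements across isotopies — is hidden in the passage between the two. Your closing paragraph gestures at this but frames it as a coherence technicality rather than as the substance of the proof.

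A second, related omission is subdivision. The coarsest mesh bundle refining an $n$-manifold diagram bundle $p : \strat{E} \to \DeltaTop{k}$ lives over some stratified refinement $\strat{B}$ of $\DeltaTop{k}$, not over $\DeltaStrat{k}$ itself; a single isotopy of manifold diagrams is generically refined by a finite composable \emph{sequence} of mesh bordisms, not by one. The paper handles this by working with the Kan fibrant replacement $\Ex^\infty(\MeshDiagExt^n(\strat{X}))$ and solving boundary-filling problems $\partial\Delta\ord{k} \to \Delta\ord{k}$ whose lower triangle commutes only up to equivalence: given $p$ over $\DeltaTop{k}$ with a prescribed mesh refinement over $\partial\StratReal{\sd^i\ord{k}}$, one first replaces $p$ up to equivalence rel boundary by a bundle that is a product on a collar, takes the coarsest refining mesh bundle, uses locality to see it is coarser than the given boundary data, refines to match the boundary, and finally pulls back along a refinement $\StratReal{\sd^j\ord{k}} \to \strat{B}$ for suitable $j \geq i$. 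Any rigorous version of your cofinality argument will need these same moves (collaring, refinement rel boundary, and subdivision of the base), so the Theorem~A packaging does not actually shortcut the hard work; as written, the proposal does not yet contain a proof of the contractibility it needs.
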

\begin{proof}
  We check that for each closed $n$-mesh $\strat{X}$
  the induced map on fibres
  $\MeshDiagExt^n(\strat{X}) \to \MfldDiagExt^n(\strat{X})$
  is a Kan equivalence.
  The map 
  $\MeshDiagExt^n(\strat{X}) \to \MfldDiagExt^n(\strat{X})$
  canonically extends to a map on the Kan fibrant replacement
  \[
    \begin{tikzcd}
      {\MeshDiagExt^n(\strat{X})} \ar[r] \ar[d, hook, "\simeq"'] &
      {\MfldDiagExt^n(\strat{X})} \\
      {\Ex^\infty(\MeshDiagExt^n(\strat{X}))} \ar[ur] &
      {}
    \end{tikzcd}
  \]
  and it suffices to show that $\Ex^\infty(\MeshDiagExt^n(\strat{X})) \to \MfldDiagExt^n(\strat{X})$
  is a Kan equivalence.
  For this purpose we solve lifting problems of the form
  \begin{equation}\label{eq:mesh-diag-to-mfld-diag:lift}
    \begin{tikzcd}
      {\partial \Delta\ord{k}} \ar[r] \ar[d] &
      {\Ex^\infty(\MeshDiagExt^n(\strat{X}))} \ar[d] \\
      {\Delta\ord{k}} \ar[r] \ar[ur, dashed, ""{name=0, anchor=center, inner sep=0}] &
      {\MfldDiagExt^n(\strat{X})}
      \arrow["\simeq"{description, pos = 0.7}, draw=none, from=0, to=2-2]
    \end{tikzcd}
  \end{equation}
  where the lower right square needs to commute up to equivalence.

  When $k = 0$ we have an $n$-manifold diagram $\strat{E}$ which is orthogonal to $\strat{X}$.
  Then by Observation~\ref{obs:fct-conf-ortho-coarsest} the coarsest refining mesh $\strat{M}$ of $\strat{E}$ remains orthogonal to $\strat{X}$.
  Therefore, the pair $(\strat{X}, \strat{M})$ is a $0$-simplex of $\Ex^\infty(\MeshDiag^n(\strat{X}))$ which solves our lifting problem.

  When $k > 0$ we have an $n$-manifold diagram bundle $p : \strat{E} \to \DeltaTop{k}$
  that is fibrewise orthogonal to $\strat{X}$.
  We also have an $i \geq 0$ and an $n$-manifold diagram bundle $f' : \strat{M}' \to \partial\StratReal{\sd^i \ord{k}}$ that is fibrewise orthogonal to $\strat{X}$.
  We write $p' : \strat{E}' \to \partial\DeltaTop{k}$ for the restriction of $p$
  over the boundary. Then $p'$ is refined by $f'$.

  Since the lower right corner of the lifting problem~(\ref{eq:mesh-diag-to-mfld-diag:lift})
  only needs to commute up to equivalence, we are free to replace $p$ up to equivalence
  of $k$-simplices in $\MfldDiagExt^n(\strat{X})$ relative to the boundary.
  We use this affordance to assume without loss of generality that there exists
  an open collar neighbourhood $\partial\DeltaTop{k} \times (\DeltaTop{1} \setminus \{ 1 \}) \hookrightarrow \DeltaTop{k}$ such that
  \[
    \begin{tikzcd}
      {\strat{E}'} \ar[d, "p'"'] &
      {\strat{E}' \times (\DeltaTop{1} \setminus \{ 1 \})} \ar[r, hook] \ar[l] \ar[d] \pullbackcorner \pullbackdl &
      {\strat{E}} \ar[d, "p"] \\
      {\partial \DeltaTop{k}} &
      {\partial \DeltaTop{k} \times (\DeltaTop{1} \setminus \{ 1 \})} \ar[r, hook] \ar[l] &
      {\DeltaTop{k}}
    \end{tikzcd}
  \]
  
  % an open collar neighbourhood $C : \partial\DeltaTop{k} \times \intCO{0, 1} \hookrightarrow \DeltaTop{k}$ such that
  % $\stratMap{\strat{E}}(e, C(b, t)) = \stratMap{\strat{E}}(e, b)$
  % for all $e \in \R^n$, $b \in \partial\DeltaTop{k}$ and $0 \leq t < 1$.

  % We let $\DeltaTopExt{k}$ be the extended topological $k$-simplex defined as the pushout
  % \[
  %   \begin{tikzcd}
  %     {\partial\DeltaTop{k} \times \{ 0 \}} \ar[r] \ar[d] \pushoutcorner &
  %     {\terminal} \ar[d] \\
  %     {\partial\DeltaTop{k} \times \R_{\leq 0}} \ar[r, "\pi_e"'] &
  %     {\DeltaTopExt{k}}
  %   \end{tikzcd}
  % \]
  % which comes with a retraction $r : \DeltaTopExt{k} \hookrightarrow \DeltaTop{k}$.
  % Via the retraction we extend $p$ to a tame $n$-framed conical bundle $q$ over
  % the extended $k$-simplex:
  % \[
  %   \begin{tikzcd}
  %     {\strat{D}} \ar[r] \ar[d, "q"'] \pullbackcorner &
  %     {\strat{E}} \ar[d, "p"] \\
  %     {\DeltaTopExt{k}} \ar[r, "r"'] &
  %     {\DeltaTop{k}}
  %   \end{tikzcd}
  % \]
  By Proposition~\ref{prop:coarsest-mesh-refinement-open-pl} there is a coarsest open $n$-mesh bundle
  $g_0 : \strat{N}_0 \to \strat{B}_0$
  which refines $p$. By our setup the restriction of $p$ to the collar neighbourhood
  is refined by the open $n$-mesh bundle obtained from thickening $f'$ to the collar:  
  \[
    \begin{tikzcd}
      {\strat{M}'} \ar[d, "f'"'] &
      {\strat{M}' \times (\DeltaStrat{1} \setminus \{ 1 \})} \ar[d] \ar[r, dashed] \ar[l] \pullbackdl &
      {\strat{E}' \times (\DeltaTop{1} \setminus \{ 1 \})} \ar[r, hook] \ar[d] \pullbackcorner &
      {\strat{E}} \ar[d, "p"] \\
      {\partial \StratReal{\sd^i \ord{k}}} &
      {\partial \StratReal{\sd^i \ord{k}} \times (\DeltaStrat{1} \setminus \{ 1 \})} \ar[r, dashed] \ar[l] &
      {\partial \DeltaTop{k} \times (\DeltaTop{1} \setminus \{ 1 \})} \ar[r, hook] &
      {\DeltaTop{k}}
    \end{tikzcd}
  \]
  Since by Observation~\ref{obs:coarsest-mesh-refinement-local} the coarsest open $n$-mesh bundles that refine tame stratifications are determined locally, the restriction of the open $n$-mesh bundle $g_0 : \strat{N}_0 \to \strat{B}_0$ to the collar must be coarser than the thickening of $f'$:
  \[
    \begin{tikzcd}
      {\strat{M}'} \ar[d, "f'"'] &
      {\strat{M}' \times (\DeltaStrat{1} \setminus \{ 1 \})} \ar[d] \ar[r, dashed] \ar[l] \pullbackdl &
      {\strat{N}'_0 \times (\DeltaStrat{1} \setminus \{ 1 \})} \ar[r, hook] \ar[d] \pullbackcorner &
      {\strat{N}_0} \ar[d, "g_0"] \\
      {\partial \StratReal{\sd^i \ord{k}}} &
      {\partial \StratReal{\sd^i \ord{k}} \times (\DeltaStrat{1} \setminus \{ 1 \})} \ar[r, dashed] \ar[l] &
      {\partial \strat{B}_0 \times (\DeltaStrat{1} \setminus \{ 1 \})} \ar[r, hook] &
      {\strat{B}_0}
    \end{tikzcd}
  \]
  We can therefore further refine $g_0 : \strat{N}_0 \to \strat{B}_0$ to an open $n$-mesh bundle
  $g_1 : \strat{N}_1 \to \strat{B}_1$
  which restricts to $f' : \strat{M}' \to \partial \StratReal{\sd^i \ord{k}}$
  on the boundary $\partial \strat{B}_1 = \partial \StratReal{\sd^i \ord{k}}$.
  Then there exists a $j \geq i$ and a refinement map $s : \StratReal{\sd^j \ord{k}} \to \strat{B}_1$ that is the identity on the boundary.
  We then take the pullbacks
  \[
    \begin{tikzcd}
      {\strat{M}} \ar[r] \ar[d, "f"'] \pullbackcorner &
      {\strat{N}_1} \ar[d, "g_1"] \\
      {\StratReal{\sd^j \ord{k}}} \ar[r, "s"'] &
      {\strat{B}_1}
    \end{tikzcd}
    \qquad
    \begin{tikzcd}
      {\strat{D}} \ar[r] \ar[d, "q"'] \pullbackcorner &
      {\strat{E}} \ar[d, "p"] \\
      {\DeltaTop{k}} \ar[r, "\unstrat(s)"'] &
      {\DeltaTop{k}}
    \end{tikzcd}
  \]
  The $n$-manifold diagram bundles $p$ and $q$ are equivalent relative to the boundary.
  Therefore, the open $n$-mesh bundle $f : \strat{M} \to \StratReal{\sd^j \ord{k}}$
  is a solution to the lifting problem.
\end{proof}

\begin{example}
  We illustrate the technique used in the proof of Lemma~\ref{lem:mesh-diag-to-mfld-diag}
  on an example. We begin with the $2$-manifold diagram bundle $p : \strat{E} \to \DeltaTop{1}$ over the unstratified
  interval that is fibrewise orthogonal to $\StratIntLR^2$, presenting the isotopy which performs two braids:
  \[
    \begin{tikzpicture}[scale = 0.5, baseline=(current bounding box.center)]
      \fill[mesh-background] (0, 0) rectangle (6, 6);

      \draw[mesh-stratum-dual] (1, 0) -- (1, 6);
      \draw[mesh-stratum-dual] (5, 0) -- (5, 6);

      \draw[mesh-stratum] (2, 0) -- (2, 1) .. controls +(0, 1) and +(0, -1) .. (4, 3);
      \draw[mesh-stratum-over] (4, 0) -- (4, 1) .. controls +(0, 1) and +(0, -1) .. (2, 3);
      \draw[mesh-stratum] (4, 0) -- (4, 1) .. controls +(0, 1) and +(0, -1) .. (2, 3);

      \draw[mesh-stratum] (2, 3) .. controls +(0, 1) and +(0, -1) .. (4, 5) -- (4, 6);
      \draw[mesh-stratum-over] (4, 3) .. controls +(0, 1) and +(0, -1) .. (2, 5) -- (2, 6);
      \draw[mesh-stratum] (4, 3) .. controls +(0, 1) and +(0, -1) .. (2, 5) -- (2, 6);
    \end{tikzpicture}
    \quad
    \longrightarrow
    \quad
    \begin{tikzpicture}[scale = 0.5, baseline=(current bounding box.center)]
      \draw[mesh-stratum] (3, 0) -- (3, 6);
    \end{tikzpicture}
  \]
  We can then find a diagrammatic $2$-mesh bundle $f : \strat{M} \to \strat{B}$ that refines $p$:
  \[
    \begin{tikzpicture}[scale = 0.5, baseline=(current bounding box.center)]
      \fill[mesh-background] (0, 0) rectangle (6, 6);

      \draw[mesh-stratum-dual] (1, 0) -- (1, 6);
      \draw[mesh-stratum-dual] (5, 0) -- (5, 6);

      \draw[mesh-stratum] (2, 0) -- (2, 1) .. controls +(0, 1) and +(0, -1) .. (4, 3);
      \node[mesh-vertex-over] at (3, 2) {};
      \node[mesh-vertex] at (3, 2) {};
      \draw[mesh-stratum] (0, 2) -- +(6, 0);
      \draw[mesh-stratum] (4, 0) -- (4, 1) .. controls +(0, 1) and +(0, -1) .. (2, 3);

      \draw[mesh-stratum] (2, 3) .. controls +(0, 1) and +(0, -1) .. (4, 5) -- (4, 6);
      \node[mesh-vertex-over] at (3, 4) {};
      \node[mesh-vertex] at (3, 4) {};
      \draw[mesh-stratum] (0, 4) -- +(6, 0);
      \draw[mesh-stratum] (4, 3) .. controls +(0, 1) and +(0, -1) .. (2, 5) -- (2, 6);

      \draw[mesh-stratum] (0, 0) -- +(6, 0);
      \node[mesh-vertex] at (2, 0) {};
      \node[mesh-vertex] at (4, 0) {};

      \draw[mesh-stratum] (0, 6) -- +(6, 0);
      \node[mesh-vertex] at (2, 6) {};
      \node[mesh-vertex] at (4, 6) {};
    \end{tikzpicture}
    \quad
    \longrightarrow
    \quad
    \begin{tikzpicture}[scale = 0.5, baseline=(current bounding box.center)]
      \draw[mesh-stratum] (3, 0) -- (3, 6);
      \node[mesh-vertex] at (3, 0) {};
      \node[mesh-vertex] at (3, 2) {};
      \node[mesh-vertex] at (3, 4) {};
      \node[mesh-vertex] at (3, 6) {};
    \end{tikzpicture}
  \]
  Because $\strat{B}$ is a refinement of $\DeltaTop{1}$, there exists a
  $j \geq 0$ and a refinement $\StratReal{\sd^j \ord{1}} \to \strat{B}$.
  In our case $j = 2$ is sufficient.
  Pulling back $f$ along this refinement then yields a diagrammatic $2$-mesh
  bundle over $\StratReal{\sd^2\ord{1}}$, and therefore a $1$-simplex
  of $\Ex^\infty(\MfldDiagExt^2(\StratIntLR^2))$:
  \[
    \begin{tikzpicture}[scale = 0.5, baseline=(current bounding box.center)]
      \fill[mesh-background] (0, 0) rectangle (6, 6);

      \draw[mesh-stratum-dual] (1, 0) -- (1, 6);
      \draw[mesh-stratum-dual] (5, 0) -- (5, 6);

      \draw[mesh-stratum] (2, 0) -- (2, 0.5) .. controls +(0, 1) and +(0, -1) .. (4, 2.5) -- (4, 3.5);
      \node[mesh-vertex-over] at (3, 1.5) {};
      \node[mesh-vertex] at (3, 1.5) {};
      \draw[mesh-stratum] (0, 1.5) -- +(6, 0);
      \draw[mesh-stratum] (4, 0) -- (4, 0.5) .. controls +(0, 1) and +(0, -1) .. (2, 2.5) -- (2, 3.5);

      \draw[mesh-stratum] (2, 3.5) .. controls +(0, 1) and +(0, -1) .. (4, 5.5) -- (4, 6);
      \node[mesh-vertex-over] at (3, 4.5) {};
      \node[mesh-vertex] at (3, 4.5) {};
      \draw[mesh-stratum] (0, 4.5) -- +(6, 0);
      \draw[mesh-stratum] (4, 3.5) .. controls +(0, 1) and +(0, -1) .. (2, 5.5) -- (2, 6);

      \draw[mesh-stratum] (0, 0) -- +(6, 0);
      \node[mesh-vertex] at (2, 0) {};
      \node[mesh-vertex] at (4, 0) {};

      \draw[mesh-stratum] (0, 3) -- +(6, 0);
      \node[mesh-vertex] at (2, 3) {};
      \node[mesh-vertex] at (4, 3) {};

      \draw[mesh-stratum] (0, 6) -- +(6, 0);
      \node[mesh-vertex] at (2, 6) {};
      \node[mesh-vertex] at (4, 6) {};
    \end{tikzpicture}
    \quad
    \longrightarrow
    \quad
    \begin{tikzpicture}[scale = 0.5, baseline=(current bounding box.center)]
      \draw[mesh-stratum] (3, 0) -- (3, 6);
      \node[mesh-vertex] at (3, 0) {};
      \node[mesh-vertex] at (3, 1.5) {};
      \node[mesh-vertex] at (3, 3) {};
      \node[mesh-vertex] at (3, 4.5) {};
      \node[mesh-vertex] at (3, 6) {};
    \end{tikzpicture}
  \]
\end{example}

\subsection{Combinatorial Manifold Diagrams}\label{sec:d-comb-truss}

In the previous two sections we have seen how to express the $n$-diagrammatic space
of manifold diagrams purely in terms of diagrammatic $n$-meshes.
Now we use the equivalence between open meshes and open trusses to derive
a combinatorial description.
In particular we construct a functor $\TrussDiagExt^n(-) : \BTrussClosed{n} \to \Cat$
which fits into the diagram of $\infty$-categories
\[
  \begin{tikzcd}[row sep = large]
    {\BTrussClosed{n}} \ar[r, "\simeq"] \ar[d, "\TrussDiagExt^n(-)"{description}] & 
    {\BMeshClosed{n}} \ar[r, equal] \ar[d, "\MeshDiagExt^n(-)"{description}] &
    {\BMeshClosed{n}} \ar[r, equal] \ar[d, "\MfldDiagExt^n(-)"{description}] &
    {\BMeshClosed{n}} \ar[d, "\MfldDiag^n(-)"{description}] \\
    {\Cat} \ar[r, hook] &
    {\CatInfty} \ar[r, "\CatToSpace"'] &
    {\Space} \ar[r, equal] &
    {\Space}
  \end{tikzcd}
\]

We first need to identify the open $n$-trusses with labels in $\ord{n}$
that are the analogue of diagrammatic $n$-meshes. For that we can use the
geometric realisation functor.

% For this we use the geometric realisation functor
% $\BTrussOpenL{n}{\ord{n}} \to \BMeshOpenL{n}{\ord{n}}$.

% A diagrammatic $n$-truss is an open $n$-truss with labels in $\ord{n}$ that
% is sent to a diagrammatic $n$-mesh by the geometric realisation functor
% The diagrammatic $n$-trusses then form a subcategory of $\BTrussOpenL{n}{\ord{n}}$ as follows.

\begin{definition}\label{def:d-comb-truss}
  We define the $1$-category $\BTrussDiag{n}$ of \defn{diagrammatic $n$-trusses}
  as the subcategory of $\BTrussOpenL{n}{\ord{n}}$ constructed via the
  pullback of simplicial sets
  \[
    \begin{tikzcd}
      {\BTrussDiag{n}} \ar[r, "\simeq"] \ar[d, hook] \pullbackcorner &
      {\BMeshDiag{n}} \ar[d, hook] \\
      {\BTrussOpenL{n}{\ord{n}}} \ar[r, "\simeq"'] &
      {\BMeshOpenL{n}{\ord{n}}}
    \end{tikzcd}
  \]  
  where $\BTrussOpenL{n}{\ord{n}} \to \BMeshOpenL{n}{\ord{n}}$ is the geometric realisation functor.
\end{definition}

\begin{observation}
  We use Proposition~\ref{prop:b-poly-flat-cat} to see that the inclusion map
  $\BMeshDiag{n} \hookrightarrow \BMeshOpenL{n}{\ord{n}}$ is a categorical fibration.  
  Therefore, the pullback square of simplicial sets in Definition~\ref{def:d-comb-truss} is
  also a homotopy pullback square and so the induced map
  $\BTrussDiag{n} \to \BMeshDiag{n}$ is a categorical equivalence.
\end{observation}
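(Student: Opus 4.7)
The plan is to verify each of the three assertions in the observation in turn; none requires new geometry, only repackaging of results from earlier sections.

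For the first assertion, that $\BMeshDiag{n} \hookrightarrow \BMeshOpenL{n}{\ord{n}}$ is a categorical fibration, I would simply invoke Corollary~\ref{cor:d-comb-meshes-subcategory}, which asserts exactly this and was proved as a consequence of Proposition~\ref{prop:d-comb-meshes-iff-submersive} together with Lemma~\ref{lem:d-comb-meshes-submersive-composite}. The reference to Proposition~\ref{prop:b-poly-flat-cat} enters only to ensure that $\BMeshOpenL{n}{\ord{n}}$ is itself a quasicategory, which follows because $\ord{n}$ is a quasicategory and $\MeshOpen{n}\colon\EMeshOpen{n}\to\BMeshOpen{n}$ is a flat categorical fibration (by the open-mesh analogue of Lemma~\ref{lem:fct-mesh-closed-labelled-pointed-flat}).

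For the second assertion, that the defining square of $\BTrussDiag{n}$ is a homotopy pullback, I would use the standard fact (recalled in the background section on the Joyal model structure) that a strict pullback of simplicial sets along a categorical fibration in the Joyal model structure computes the homotopy pullback of quasicategories. Since we have just shown the right-hand vertical arrow is a categorical fibration between quasicategories and the bottom arrow $\BTrussOpenL{n}{\ord{n}} \to \BMeshOpenL{n}{\ord{n}}$ goes between quasicategories, the strict pullback $\BTrussDiag{n}$ is a quasicategory and models the homotopy pullback.

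For the third assertion, I would observe that the bottom arrow $\BTrussOpenL{n}{\ord{n}} \to \BMeshOpenL{n}{\ord{n}}$ is the labelled geometric realisation functor, which was shown to be a categorical equivalence in~\S\ref{sec:fct-truss} for any $1$-category of labels (in particular for $\ord{n}$, regarded as the nerve of a poset). Since homotopy pullbacks preserve categorical equivalences, the induced top arrow $\BTrussDiag{n} \to \BMeshDiag{n}$ is a categorical equivalence as well.

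There is no real obstacle here; the only point requiring modest care is the confirmation that the labelled geometric realisation equivalence $\BTrussOpenL{n}{\cat{C}} \simeq \BMeshOpenL{n}{\cat{C}}$ applies to the specific label object $\cat{C} = \ord{n}$, which is immediate because $\ord{n}$ is a $1$-category and the packing and retract arguments of~\S\ref{sec:fct-truss} apply verbatim to such $\cat{C}$.
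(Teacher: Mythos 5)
Your proposal is correct and follows essentially the same route as the paper's own (very terse) justification: categorical fibration of the right leg via Corollary~\ref{cor:d-comb-meshes-subcategory}, strict pullback along a categorical fibration between quasicategories computes the homotopy pullback, and the bottom geometric-realisation map is a categorical equivalence, hence so is the top map. Your clarification that Proposition~\ref{prop:b-poly-flat-cat} only enters to guarantee that $\BMeshOpenL{n}{\ord{n}}$ is a quasicategory (the fibration claim itself coming from Corollary~\ref{cor:d-comb-meshes-subcategory}) is a fair and accurate reading of the paper's slightly loose citation.
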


\begin{definition}\label{def:d-truss-diag}
  We write $\trussFC : \BTrussDiag{n} \to \BTrussOpen{n}$ for the composite
  of the inclusion map $\BTrussDiag{n} \hookrightarrow \BTrussOpenL{n}{\ord{n}}$
  followed by the forgetful map $\BTrussOpenL{n}{\ord{n}} \to \BTrussOpen{n}$.
  Consider the diagram of simplicial sets
  \begin{equation}\label{eq:d-truss-diag}
    \begin{tikzcd}
      {\TrussDiagExt^n} \ar[r] \ar[d] \pullbackcorner &
      {\Tw(\BTrussOpen{n})} \ar[d] \ar[r] &
      {\BTrussClosed{n}} \\
      {\BTrussDiag{n}} \ar[r, "\trussFC"'] &
      {\BTrussOpen{n}}
    \end{tikzcd}
  \end{equation}
  where the square is a pullback square.
  Then $\TrussDiagExt^n$ is the simplicial set of
  \defn{extended combinatorial $n$-manifold diagrams}.
  The top row of the diagram composes into a projection map
  $\TrussDiagExt^n \to \BTrussClosed{n}$.
\end{definition}

\begin{observation}
  The simplicial set $\TrussDiagExt^n$ is a $1$-category whose objects consist
  of a diagrammatic $n$-truss $S \in \BTrussDiag{n}$, a closed $n$-truss
  $X$ and a bordism of unlabelled open $n$-trusses $X^\dagger \pto S$.
  A map in the $1$-category $\TrussDiagExt^n$
  consists of a bordism $S_0 \pto S_1$ of diagrammatic $n$-trusses
  and a bordism $X_0 \pto X_1$ of closed $n$-trusses
  which together fit into a diagram of unlabelled open $n$-trusses
  \[
    \begin{tikzcd}
      {X_0^\dagger} \ar[d, mid vert] &
      {X_1^\dagger} \ar[d, mid vert] \ar[l, mid vert] \\
      {S_0} \ar[r, mid vert] &
      {S_1}
    \end{tikzcd}
  \]
\end{observation}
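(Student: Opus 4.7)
The plan is to establish this observation by unpacking the defining pullback square in $\sSet$ and using that pullbacks of nerves of $1$-categories in $\sSet$ agree with pullbacks in $\Cat$. First I would observe that $\BTrussOpen{n}$ is a $1$-category by construction (defined inductively via the polynomial functor $\BTrussOpenL{1}{-}$, which preserves $1$-categories by Proposition~\ref{prop:fct-truss-closed-1-labelled-cat}), and that $\BTrussDiag{n}$ is a $1$-category as a (homotopy) pullback of $1$-categories. Next, since $\Tw(-)$ is a right adjoint and its $k$-simplices are functors $\ord{k}^\op \join \ord{k} \to \cat{C}$, the twisted arrow construction $\Tw(\BTrussOpen{n})$ is itself (the nerve of) a $1$-category. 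Since both $\trussFC : \BTrussDiag{n} \to \BTrussOpen{n}$ and $\cod : \Tw(\BTrussOpen{n}) \to \BTrussOpen{n}$ are functors of $1$-categories, the pullback $\TrussDiagExt^n$ computed in $\sSet$ agrees with their pullback in $\Cat$ and is therefore a $1$-category.

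The remaining work is to identify the objects and morphisms explicitly. I would invoke the duality isomorphism $\BTrussClosed{n} \cong \BTrussOpen{n, \op}$ so that the composite projection $\Tw(\BTrussOpen{n}) \to \BTrussClosed{n}$ in diagram~(\ref{eq:d-truss-diag}) is identified with the domain functor $\dom : \Tw(\BTrussOpen{n}) \to \BTrussOpen{n, \op}$. Under this identification, an object of $\Tw(\BTrussOpen{n})$ is a bordism $X^\dagger \pto S$ in $\BTrussOpen{n}$ with $X \in \BTrussClosed{n}$ and $S \in \BTrussOpen{n}$. The pullback along $\trussFC$ then constrains $S$ to lift to a diagrammatic $n$-truss in $\BTrussDiag{n}$, giving exactly the claimed object description.

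For morphisms, I would unpack the definition of $\Tw(\cat{C})$: a morphism from $(X_0^\dagger \pto S_0)$ to $(X_1^\dagger \pto S_1)$ consists of a map $S_0 \to S_1$ and a map $X_1^\dagger \to X_0^\dagger$ in $\BTrussOpen{n}$ making the appropriate twisted square commute. Transporting the latter through duality gives a bordism $X_0 \pto X_1$ of closed $n$-trusses, and taking the pullback with $\BTrussDiag{n}$ promotes $S_0 \pto S_1$ to a bordism of diagrammatic $n$-trusses, reproducing exactly the square in the statement.

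There is no serious obstacle here: the content is entirely bookkeeping given the machinery already established, in particular the duality $(-)^\dagger$ and the explicit form of $k$-simplices in $\Tw$. The only point that deserves a sentence of care is justifying that the $\sSet$-pullback computes the correct $1$-categorical pullback, which follows because $\trussFC$ is an inner fibration (being a functor of $1$-categories, by Lemma~\ref{lem:b-inner-fib-1-cat}) so that the pullback square is a homotopy pullback of $1$-categories.
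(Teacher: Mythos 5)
Your proposal is correct and is essentially the unwinding of definitions that the paper intends for this observation: the nerve preserves pullbacks, $\Tw$ of a $1$-category is a $1$-category, and the object/morphism descriptions follow from the duality $\BTrussOpen{n,\op} \cong \BTrussClosed{n}$ together with the explicit simplices of $\Tw$. The only small imprecision is your justification that $\BTrussDiag{n}$ is a $1$-category "as a pullback of $1$-categories" — the defining pullback involves the quasicategory $\BMeshDiag{n}$, and the correct reason is that $\BTrussDiag{n}$ is a simplicial subset of the $1$-category $\BTrussOpenL{n}{\ord{n}}$ closed under composition, as asserted in Definition~\ref{def:d-comb-truss}.
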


\begin{observation}\label{obs:d-comb-truss-equiv}
  Using the equivalence $\Tw(\BTrussOpen{n}) \simeq \BConfOrtho{n}$ from \S\ref{sec:fct-conf-ortho}
  as well as the geometric realisation functors for open and closed trusses,
  the diagrams~(\ref{eq:d-truss-diag}) and~(\ref{eq:d-mesh-diag}) which define $\TrussDiagExt^n$
  and $\MeshDiagExt^n$ together fit within the diagram
  \[
    \begin{tikzcd}[column sep = {3.5em, between origins}, row sep = {3.5em, between origins}]
    	& {\MeshDiagExt^n} && {\BConfOrtho{n}} && {\BMeshClosed{n}} \\
    	{\TrussDiagExt^n} && {\Tw(\BTrussOpen{n})} && {\BTrussClosed{n}} \\
    	& {\BMeshDiag{n}} && {\BMeshOpen{n}} \\
    	{\BTrussDiag{n}} && {\BTrussOpen{n}}
    	\arrow[from=1-2, to=1-4]
    	\arrow["\simeq"{description}, dashed, from=2-1, to=1-2]
    	\arrow["\simeq"{description}, from=2-3, to=1-4]
    	\arrow[from=2-1, to=4-1]
    	\arrow[from=1-2, to=3-2]
    	\arrow["\simeq"{description}, from=2-5, to=1-6]
    	\arrow[from=1-4, to=3-4]
    	\arrow["\simeq"{description}, from=4-1, to=3-2]
    	\arrow["\simeq"{description}, from=4-3, to=3-4]
    	\arrow[from=3-2, to=3-4]
    	\arrow[from=4-1, to=4-3]
    	\arrow[from=1-4, to=1-6]
    	\arrow[crossing over, from=2-3, to=4-3]
    	\arrow[crossing over, from=2-1, to=2-3]
    	\arrow[crossing over, from=2-3, to=2-5]
    \end{tikzcd}
  \]
  The projection map $\Tw(\BTrussOpen{n}) \to \BTrussOpen{n}$ 
  from the twisted arrow category
  is a categorical fibration and so the pullback
  square in the front of the cube is a homotopy pullback square.
  Therefore, the induced map between the pullbacks
  $\MeshDiagExt^n \to \TrussDiagExt^n$
  is a categorical equivalence.
\end{observation}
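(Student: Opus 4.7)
The plan is to make the proof sketch embedded in the observation fully rigorous by carefully identifying the homotopy-pullback structure on both faces of the cube and then invoking the standard fact that homotopy pullbacks preserve equivalences.

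First, I would verify that the front square (defining $\TrussDiagExt^n$) is not merely a strict pullback but a homotopy pullback of quasicategories. The projection $\Tw(\BTrussOpen{n}) \to \BTrussOpen{n}$ arising from the inclusion $\ord{k} \hookrightarrow \ord{k}^\op \join \ord{k}$ at either endpoint is a left fibration of quasicategories (a standard fact about twisted arrow constructions, recalled in the background section), hence in particular a categorical fibration. Since $\BTrussDiag{n}$, $\BTrussOpen{n}$, and $\Tw(\BTrussOpen{n})$ are all quasicategories, the strict pullback computes the homotopy pullback.

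Next, I would verify the analogous statement for the back face defining $\MeshDiagExt^n$. Here the relevant claim is that $\meshFC : \BMeshDiag{n} \to \BMeshOpen{n}$ is a categorical fibration. This factors as the inclusion $\BMeshDiag{n} \hookrightarrow \BMeshOpenL{n}{\ord{n}}$ (a categorical fibration by Corollary~\ref{cor:d-comb-meshes-subcategory}) followed by the forgetful map $\BMeshOpenL{n}{\ord{n}} \to \BMeshOpen{n}$ (a categorical fibration by the open analogue of Proposition~\ref{prop:fct-mesh-closed-labelled-cat}, which applies since $\ord{n}$ is a quasicategory and $\EMeshOpen{n} \to \BMeshOpen{n}$ is a flat categorical fibration). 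Thus the back square is also a homotopy pullback.

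Then the three diagonal maps in the cube going back-to-front are all categorical equivalences: the map $\BMeshOpen{n} \to \BTrussOpen{n}$ and the map $\BMeshDiag{n} \to \BTrussDiag{n}$ are the geometric realization equivalences from \S\ref{sec:fct-truss} (combined, for the diagrammatic version, with the pullback square used to define $\BTrussDiag{n}$), while $\BConfOrtho{n} \to \Tw(\BTrussOpen{n})$ is the main equivalence of \S\ref{sec:fct-conf}. Because the map between two homotopy pullbacks induced by a pointwise equivalence of cospans is itself an equivalence, the induced map $\TrussDiagExt^n \to \MeshDiagExt^n$ is a categorical equivalence.

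The main obstacle I foresee is only bookkeeping: ensuring that the factorization of $\meshFC$ genuinely lands in the hypotheses of the fibration results cited, and tracking the direction of the various equivalences (the statement asserts a map in one direction but the cube displays it in the other; since categorical equivalences are invertible, this is harmless but worth noting). No new geometric or combinatorial input is required; the entire content is the compatibility of the fibrancy conditions with the equivalences already established.
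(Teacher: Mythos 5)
Your argument is essentially the paper's own: both faces of the cube are strict pullbacks along a categorical fibration, hence homotopy pullbacks, and the pointwise geometric-realisation equivalences of cospans then induce an equivalence between them. One small correction: the projection $\Tw(\BTrussOpen{n}) \to \BTrussOpen{n}$ onto a single factor is not a left fibration (its fibres are slice categories, not spaces) --- it is the cocartesian fibration $\cod$ recalled in the background section; since a cocartesian fibration between quasicategories is still a categorical fibration, your conclusion is unaffected. Your explicit verification that the back face is also a homotopy pullback, via $\meshFC$ being a categorical fibration, is a point the paper leaves implicit (it is supplied by the proof of Lemma~\ref{lem:d-comb-meshed-diag-cocartesian}) and is needed for the argument to close, so it is a worthwhile addition rather than mere bookkeeping.
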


\begin{observation}\label{obs:d-comb-truss-1-cat}
  The projection maps onto the closed mesh and truss form a diagram
  \[
    \begin{tikzcd}
      {\MeshDiagExt^n} \ar[r, "\simeq"] \ar[d] &
      {\TrussDiagExt^n} \ar[d] \\
      {\BMeshClosed{n}} \ar[r, "\simeq"'] &
      {\BTrussClosed{n}}
    \end{tikzcd}
  \]
  and so it follows that the categorical fibration $\TrussDiagExt^n \to \BTrussClosed{n}$
  is also cocartesian.
  Because both $\TrussDiagExt^n$ and $\BTrussClosed{n}$ are $1$-categories,
  the map therefore represents a functor
  \[ 
    \TrussDiagExt^n(-) : \BTrussClosed{n} \longrightarrow \Cat.
  \]
  For any closed $n$-truss $X$ we can compute the $1$-category
  $\TrussDiagExt^n(X)$ by taking the pullback
  of simplicial sets
  \[
    \begin{tikzcd}
      {\TrussDiagExt^n(X)} \ar[r, hook] \ar[d] \pullbackcorner &
      {\TrussDiagExt^n} \ar[d] \\
      {\Delta\ord{0}} \ar[r, hook, "X"'] &
      {\BTrussClosed{n}}
    \end{tikzcd}
  \]
  An object in $\TrussDiagExt^n(X)$ consists of a diagrammatic $n$-truss
  $S$ together with a bordism of unlabelled open $n$-trusses $X^\dagger \pto S$.
  A map in $\TrussDiagExt^n(X)$ is a bordism of diagrammatic $n$-trusses
  $S \pto T$ that fits into a diagram of unlabelled open $n$-trusses
  \[
    \begin{tikzcd}[column sep = small]
      {} &
      {X^\dagger} \ar[dl, mid vert] \ar[dr, mid vert] \\
      {S} \ar[rr, mid vert] &
      {} &
      {T} 
    \end{tikzcd}
  \]
\end{observation}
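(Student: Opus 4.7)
The plan is to unpack the four claims one at a time, with the equivalence from Observation~\ref{obs:d-comb-truss-equiv} doing essentially all the work. First I would verify that $\TrussDiagExt^n$ is in fact a $1$-category. By Definition~\ref{def:d-truss-diag}, $\TrussDiagExt^n$ is the pullback in $\sSet$ of $\BTrussDiag{n} \to \BTrussOpen{n} \leftarrow \Tw(\BTrussOpen{n})$, so it suffices to observe each corner is a $1$-category and that the pullback in $\sSet$ of $1$-categories is a $1$-category. The base $\BTrussOpen{n}$ is a $1$-category by construction, the twisted arrow $\Tw(\BTrussOpen{n})$ is a $1$-category since $\BTrussOpen{n}$ is, and $\BTrussDiag{n}$ sits as a subcategory of $\BTrussOpenL{n}{\ord{n}}$, which is a $1$-category by Proposition~\ref{prop:fct-truss-closed-1-labelled-cat} applied inductively.

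Next I would address the cocartesian fibration claim. The projection $\TrussDiagExt^n \to \BTrussClosed{n}$ factors as $\TrussDiagExt^n \to \Tw(\BTrussOpen{n}) \to \BTrussClosed{n}$, where the first map is the pullback along $\trussFC$ and the second is the projection from the twisted arrow construction; since both $\TrussDiagExt^n$ and $\BTrussClosed{n}$ are $1$-categories, the composite is automatically an inner fibration by Lemma~\ref{lem:b-inner-fib-1-cat}, and an isofibration since $\BTrussClosed{n}$ is skeletal. To upgrade to cocartesian, I would invoke the square
\[
  \begin{tikzcd}
    {\MeshDiagExt^n} \ar[r, "\simeq"] \ar[d] &
    {\TrussDiagExt^n} \ar[d] \\
    {\BMeshClosed{n}} \ar[r, "\simeq"'] &
    {\BTrussClosed{n}}
  \end{tikzcd}
\]
from Observation~\ref{obs:d-comb-truss-equiv}, whose horizontal maps are categorical equivalences. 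The left-hand vertical map is a cocartesian fibration by Lemma~\ref{lem:d-comb-meshed-diag-cocartesian}, and cocartesian fibrations are invariant under equivalence in the arrow $\infty$-category $\Fun(\Delta\ord{1}, \CatInfty)$, so the right-hand vertical map is a cocartesian fibration.

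A cocartesian fibration between $1$-categories straightens to a functor valued in $\Cat \hookrightarrow \CatInfty$ (rather than just $\CatInfty$), giving the functor $\TrussDiagExt^n(-) : \BTrussClosed{n} \to \Cat$. For the fibre description, the square
\[
  \begin{tikzcd}
    {\TrussDiagExt^n(X)} \ar[r, hook] \ar[d] \pullbackcorner &
    {\TrussDiagExt^n} \ar[d] \\
    {\Delta\ord{0}} \ar[r, "X"', hook] &
    {\BTrussClosed{n}}
  \end{tikzcd}
\]
is a pullback in $\sSet$ along a categorical fibration between fibrant objects of the Joyal model structure, hence a homotopy pullback, so it computes the fibre in $\CatInfty$. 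Unwinding the $1$-categorical pullback, an object is a pair $(S, X^\dagger \pto S)$ as claimed, and a morphism over $\id_X$ is exactly a bordism $S \pto T$ of diagrammatic $n$-trusses making the obvious triangle commute.

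The only mild subtlety here — certainly not an obstacle — is checking that the strict $1$-categorical pullback defining $\TrussDiagExt^n(X)$ really does present the $\infty$-categorical fibre of the cocartesian fibration. This is immediate once we have established that the projection is a categorical fibration between $1$-categories, since for maps into a $1$-category (in particular $\Delta\ord{0}$) the strict pullback is already Joyal-fibrant, so it agrees with the homotopy pullback.
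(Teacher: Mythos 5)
Your proposal is correct and follows essentially the same route the paper takes: the commuting square from Observation~\ref{obs:d-comb-truss-equiv} transports cocartesianness from Lemma~\ref{lem:d-comb-meshed-diag-cocartesian} across the horizontal equivalences, the $1$-categoricity of the corners gives the lift to $\Cat$, and the strict pullback computes the fibre because the projection is a categorical fibration. The only cosmetic slip is citing Proposition~\ref{prop:fct-truss-closed-1-labelled-cat} for the open-truss labelling functor preserving $1$-categories; the open analogue is what is needed, and it is recorded in the paper's summary observation on open trusses.
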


\subsection{Active Bordisms and Non-Extended Diagrams}

An $n$-manifold diagram on a closed $n$-mesh $\strat{X}$
% ,as defined in~\S\ref{sec:d-mfld-global},
is completely determined
by its intersection with $\strat{X}$. To facilitate the equivalence
with meshed manifold diagrams we
introduced extended manifold diagrams
that spill over the closed $n$-mesh to fill all of $\R^n$.
Using active bordisms we can find non-extended
variants of $\MeshDiagExt^n(-)$ and $\TrussDiagExt^n(-)$.

% and combinatorial manifold diagrams of~\S\ref{sec:d-comb-meshed-diag} and~\S\ref{sec:d-comb-truss}

% The $n$-diagrammatic space of manifold diagrams $\MfldDiag^n(-)$
% sends each closed $n$-mesh $\strat{X}$ to a space of diagrams that
% is determined completely on the closed $n$-mesh.

% The functors $\MfldDiag^n$

% We now introduce a variant of $\TrussDiagExt^n$ that corre

\begin{construction}
  Let $X$ be a closed $n$-truss.
  We let $\TrussDiag^n(X)$ denote the full subcategory of $\TrussDiagExt^n(X)$
  whose objects consist of a diagrammatic $n$-truss $S$ together
  with an active bordism $T^\dagger \pto S$ of open unlabelled $n$-trusses.
\end{construction}

\begin{lemma}\label{lem:d-truss-diag-canonicalise}
  Let $X$ be a closed $n$-truss.
  The inclusion $i : \TrussDiag^n(X) \hookrightarrow \TrussDiagExt^n(X)$
  admits a retraction $r$ together with a natural transformation
  $\eps : i \circ r \to \id$.
\end{lemma}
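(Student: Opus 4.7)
The plan is to construct $r$ via the active/inert factorisation system on open $n$-trusses from Corollary~\ref{cor:fct-emb-active-factorisation-open} (transferred across the equivalence $\BMeshOpen{n} \simeq \BTrussOpen{n}$). Given an object $(S, \varphi)$ of $\TrussDiagExt^n(X)$, consisting of a diagrammatic $n$-truss $S$ together with a bordism of unlabelled open $n$-trusses $\varphi : X^\dagger \pto S$, I factor $\varphi$ as an active bordism $\varphi_0 : X^\dagger \pto S'$ followed by an inert bordism $\varphi_1 : S' \pto S$. I then define
\[
r(S, \varphi) := (S', \varphi_0).
\]
To see that this makes sense, I need $S'$ to carry the structure of a diagrammatic $n$-truss. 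Since $\varphi_1 : S' \pto S$ is inert, it corresponds to a constructible embedding and pulls back the $\ord{n}$-labelling of $S$ to a labelling of $S'$. By the observation at the end of~\S\ref{sec:d-comb-meshes}, inert bordisms preserve being diagrammatic, so $S'$ is indeed a diagrammatic $n$-truss.

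Next I would verify functoriality. A morphism $(S, \varphi) \to (T, \psi)$ in $\TrussDiagExt^n(X)$ is a bordism $f : S \pto T$ of diagrammatic $n$-trusses satisfying $f \circ \varphi = \psi$ as unlabelled bordisms. Applying the active/inert factorisation to both $\varphi$ and $\psi$ and using orthogonality of the factorisation system (Corollary~\ref{cor:fct-emb-active-factorisation-open}), there is a unique $f' : S' \pto T'$ of unlabelled open $n$-trusses making
\[
\begin{tikzcd}
X^\dagger \ar[r, mid vert, "\varphi_0"] \ar[d, mid vert, equal] & S' \ar[r, mid vert, hook, "\varphi_1"] \ar[d, mid vert, "f'"'] & S \ar[d, mid vert, "f"] \\
X^\dagger \ar[r, mid vert, "\psi_0"'] & T' \ar[r, mid vert, hook, "\psi_1"'] & T
\end{tikzcd}
\]
commute. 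The same argument used for $S'$ (plus the closure of diagrammatic trusses under inert bordisms applied to $T$) shows that $f'$ is a bordism of diagrammatic trusses, so it defines a morphism in $\TrussDiag^n(X)$. I would set $r(f) := f'$; its functoriality follows from the uniqueness clause of the orthogonal factorisation system.

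The equation $r \circ i = \id$ then holds because for an object $(S, \varphi) \in \TrussDiag^n(X)$ the bordism $\varphi$ is already active, so its active/inert factorisation is $\varphi$ followed by the identity; by the uniqueness of the factorisation $r(S, \varphi) = (S, \varphi)$. Finally, the component of $\eps$ at $(S, \varphi)$ is defined to be the inert bordism $\varphi_1 : S' \pto S$; naturality of $\eps$ is exactly the commutativity of the right-hand square of the diagram above.

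The only genuine technical point is to ensure that the factorisation can be chosen strictly functorially, so that $r$ is a bona fide functor of $1$-categories rather than a pseudofunctor. Since $\BTrussOpen{n}$ and $\BTrussDiag{n}$ are ordinary $1$-categories (Observation~\ref{obs:d-comb-truss-1-cat}) and orthogonal factorisations in a $1$-category are unique up to unique isomorphism, I can pick canonical representatives for each factorisation and the uniqueness forces strict functoriality; I anticipate this being the main bookkeeping step, but no new ideas beyond the orthogonality of active and inert bordisms are required.
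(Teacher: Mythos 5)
Your proof follows the same route as the paper's: factor the structure bordism $X^\dagger \pto S$ into active followed by inert, take the active part as $r$, transport the labelling along the inert part to make $S'$ diagrammatic, let the inert parts form the components of $\eps$, and derive functoriality from the (functorial) active/inert factorisation of Corollary~\ref{cor:fct-emb-active-factorisation-open}. The only addition is your explicit remark on choosing factorisations strictly so that $r \circ i = \id$ on the nose, which the paper leaves implicit; the argument is correct.
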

\begin{proof}
  An object of $\TrussDiagExt^n(X)$ consists of a diagrammatic $n$-truss
  $S$ together with a bordism of open unlabelled $n$-trusses $f : X^\dagger \pto S$.
  The bordism $f$ then factors into an active bordism $f_0$ followed by an inert bordism $f_1$ as follows:
  \[
    \begin{tikzcd}
      {} &
      {X^\dagger} \ar[dl, "f_0"', mid vert, hook] \ar[dr, "f", mid vert] \\
      {S'} \ar[rr, "f_1"', mid vert] &
      {} &
      {S}
    \end{tikzcd}
  \]
  Since $f_1 : S' \pto S$ is inert, there is a canonical labelling of $S'$
  such that $f_1$ induces a map of diagrammatic $n$-trusses
  $S' \pto S$.
  We define $r(S, f) := (S', f_0)$ and let $f_1$ be the component
  of the natural transformation $\eps : i \circ r \to \id$ on $(S, f)$. 

  A map in $\TrussDiagExt^n(X)$ consists of a bordism of diagrammatic $n$-trusses $\alpha : S \pto R$ which fits into a diagram of unlabelled open $n$-trusses
  \[
    \begin{tikzcd}[column sep = small]
      {} &
      {X^\dagger} \ar[dl, mid vert, "f"'] \ar[dr, mid vert, "g"] \\
      {S} \ar[rr, mid vert, "\alpha"'] &
      {} &
      {R} 
    \end{tikzcd}
  \]
  When we factor $f$ and $g$ into inert bordisms followed by active bordisms,
  functoriality of the factorisation yields a diagram of unlabelled open $n$-trusses
  \[
    \begin{tikzcd}
    	{S'} &&& {R'} \\
    	& {X^\dagger} & {X^\dagger} \\
    	S &&& R
    	\arrow["{f_1}"', from=1-1, to=3-1, mid vert, hook]
    	\arrow["{f_0}"', from=2-2, to=1-1, mid vert, hook]
    	\arrow["f", from=2-2, to=3-1, mid vert]
    	\arrow["{g_0}", from=2-3, to=1-4, mid vert]
    	\arrow["g"', from=2-3, to=3-4, mid vert]
    	\arrow["{g_1}", from=1-4, to=3-4, mid vert]
    	\arrow[from=1-1, to=1-4, "\alpha'", mid vert]
    	\arrow[from=3-1, to=3-4, "\alpha"', mid vert]
    	\arrow[equal, from=2-2, to=2-3]
    \end{tikzcd}
  \]
  The labels of the bordism of diagrammatic $n$-trusses $\alpha$ then
  restrict to labels for $\alpha'$, making $\alpha'$ itself into a bordism
  of diagrammatic $n$-trusses.
  This is the value of $r$ on the map in $\TrussDiagExt^n(X)$,
  and the inert maps $f_1, g_1$ together form the component of $\eta$ on this map.
  Finally, functoriality of $r$ follows from functoriality of the active/inert factorisation.
\end{proof}

\begin{construction}
  Suppose that $\strat{X}$ is a closed $n$-mesh.
  By Observation~\ref{obs:d-comb-truss-equiv} we have an equivalence
  $\MeshDiagExt^n(\strat{X}) \simeq \TrussDiagExt^n(\TrussClosedNerve(\strat{X}))$.  
  We can then take the pullback of simplicial sets along this equivalence
  to define the meshed version $\MeshDiag^n(\strat{X})$ of $\TrussDiag^n(\strat{X})$:
  \[
    \begin{tikzcd}
      {\MeshDiag^n(\strat{X})} \ar[r, hook, "\simeq"] \ar[d, "\simeq"'] \pullbackcorner &
      {\MeshDiagExt^n(\strat{X})} \ar[d, "\simeq"] \\
      {\TrussDiag^n(\TrussClosedNerve(\strat{X}))} \ar[r, hook, "\simeq"'] &
      {\TrussDiagExt^n(\TrussClosedNerve(\strat{X}))}
    \end{tikzcd}
  \]
  Because the inclusion map $\TrussDiag^n(\TrussClosedNerve(\strat{X})) \hookrightarrow \TrussDiagExt^n(\TrussClosedNerve(\strat{X}))$
  is both a categorical fibration and categorical equivalence,
  it follows that $\MeshDiag^n(\strat{X})$ is a quasicategory
  and equivalent to $\MeshDiagExt^n(\strat{X})$.
\end{construction}

\begin{cor}\label{cor:d-mesh-diag-canonicalise}
  Let $\strat{X}$ be a closed $n$-mesh.
  The inclusion $i : \MeshDiag^n(X) \hookrightarrow \MeshDiagExt^n(X)$
  admits a retraction $r$ together with a natural transformation
  $\eps : i \circ r \to \id$.
\end{cor}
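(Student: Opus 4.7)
The plan is to transport Lemma~\ref{lem:d-truss-diag-canonicalise} across the vertical equivalences of the pullback square that defines $\MeshDiag^n(\strat{X})$. Let $V$ denote the right vertical equivalence $\MeshDiagExt^n(\strat{X}) \to \TrussDiagExt^n(\TrussClosedNerve(\strat{X}))$ and $W$ the left vertical equivalence $\MeshDiag^n(\strat{X}) \to \TrussDiag^n(\TrussClosedNerve(\strat{X}))$, and write $i_T, i_M$ for the horizontal inclusions. The defining square
\[
\begin{tikzcd}
{\MeshDiag^n(\strat{X})} \ar[r, hook, "i_M"] \ar[d, "\simeq"'] \pullbackcorner &
{\MeshDiagExt^n(\strat{X})} \ar[d, "\simeq"] \\
{\TrussDiag^n(\TrussClosedNerve(\strat{X}))} \ar[r, hook, "i_T"'] &
{\TrussDiagExt^n(\TrussClosedNerve(\strat{X}))}
\end{tikzcd}
\]
commutes and its vertical maps are categorical equivalences of quasicategories.

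First I would invoke Lemma~\ref{lem:d-truss-diag-canonicalise} to obtain a retraction $r_T$ of $i_T$ together with a natural transformation $\eps_T : i_T \circ r_T \to \id$, and choose homotopy inverses $V^{-1}$ and $W^{-1}$. I would then define the candidate retraction on the mesh side by $r_M := W^{-1} \circ r_T \circ V$. Commutativity of the square together with the strict identity $r_T \circ i_T = \id$ yields
\[
r_M \circ i_M = W^{-1} \circ r_T \circ V \circ i_M = W^{-1} \circ r_T \circ i_T \circ W = W^{-1} \circ W \simeq \id,
\]
so $r_M$ is a retraction up to equivalence. A standard rectification argument using that $i_M$ is a cofibration of simplicial sets and that $\MeshDiag^n(\strat{X})$ is fibrant in the Joyal model structure then lets me replace $r_M$ within its equivalence class of functors by a strict retraction.

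For the natural transformation $\eps_M : i_M \circ r_M \to \id$, I would exploit that postcomposition with the equivalence $V$ induces an equivalence of functor $\infty$-categories
\[
V \circ (-) : \Fun(\MeshDiagExt^n(\strat{X}), \MeshDiagExt^n(\strat{X})) \xrightarrow{\;\simeq\;} \Fun(\MeshDiagExt^n(\strat{X}), \TrussDiagExt^n(\TrussClosedNerve(\strat{X}))).
\]
Under this equivalence the functor $\id_{\MeshDiagExt^n(\strat{X})}$ corresponds to $V$, while $i_M \circ r_M$ corresponds, by commutativity of the square, to a functor equivalent to $i_T \circ r_T \circ V$. The whiskered natural transformation $\eps_T \ast \id_V : i_T \circ r_T \circ V \to V$ then transports back across this equivalence to the desired $\eps_M$. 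The main subtlety will be the bookkeeping around strict versus weak commutativity when transporting $\eps_T$, since Lemma~\ref{lem:d-truss-diag-canonicalise} is formulated with a strict identity $r_T \circ i_T = \id$ while the pullback square only commutes strictly at the level of the underlying simplicial sets; since the coreflection structure on $\TrussDiag^n \hookrightarrow \TrussDiagExt^n$ is invariant under equivalences of quasicategories, no new geometric content is required beyond what is already in the truss proof.
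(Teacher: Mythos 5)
Your proof is correct and is exactly the route the paper intends: the corollary is stated without proof precisely because $\MeshDiag^n(\strat{X})$ is \emph{defined} by pulling back $\TrussDiag^n \hookrightarrow \TrussDiagExt^n$ along the equivalence $\MeshDiagExt^n(\strat{X}) \simeq \TrussDiagExt^n(\TrussClosedNerve(\strat{X}))$, so the retraction and counit of Lemma~\ref{lem:d-truss-diag-canonicalise} transport across the vertical equivalences of that square. Your extra care about rectifying $r_M$ to a strict retraction (using that $i_M$ is a cofibration with fibrant domain and that the homotopy $r_M i_M \simeq \id$ is a natural equivalence) is a reasonable refinement, since the paper later uses $r$ and $\eps$ as honest simplicial maps to form strict pullbacks.
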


\section{Manifold Diagrams with Labels}\label{sec:d-label}

The $n$-diagrammatic space $\MfldDiag^n(-)$ is freely generated by one generator
for each framed isomorphism class of framed basics.
In this section we explore how to freely generate $n$-diagrammatic spaces,
and therefore $(\infty, n)$-categories,
by attaching labels to manifold diagrams.

\subsection{Diagrammatic Computads}

For manifold diagrams to represent pasting diagrams for higher categories,
we need a way to attach labels to a manifold diagram.
Seeing a manifold diagram $\strat{M}$ as a stratified space, we could ask
for a functor $\Exit(\strat{M}) \to \cat{C}$ into some $\infty$-category
$\cat{C}$, but that approach would not enforce compatibility of the labelling
with the local shape of the diagram;
nothing would stop us, e.g., from assigning a label meant to represent a $1$-morphism
to a surface in a $2$-manifold diagram.
We therefore take a different approach, by making use of our characterisation
of manifold diagrams via diagrammatic meshes.

A label schema should assign to every manifold diagram a collection of ways
in that the diagram can be labelled.
To embed our theory into the wider world of homotopy coherent mathematics
and with a view towards the comparison with $n$-diagrammatic spaces,
this collection of admissible labellings should be a space.
An isotopy of manifold diagrams should act on its labelling;
this is achieved by space-valued presheaves $\Sigma : \BMeshDiag{n, \op} \to \Space$.
The space of labellings should be independent of the mesh that we used to
refine the manifold diagram and invariant under isotopy of manifold diagrams;
the presheaf $\Sigma$ therefore should send active maps to equivalences.
Finally, a labelling should be local in that it is completely determined by assigning labels to the individual parts that make up a manifold diagram.
This requirement translates into the presheaf $\Sigma$ being a sheaf.

\begin{definition}\label{def:d-computad}
  A presheaf $\Sigma : \BMeshDiag{n, \op} \to \Space$ is a
  \defn{diagrammatic $n$-computad}
  when it satisfies the following conditions:
  \begin{enumerate}
    \item \textbf{Isotopy}: Every active bordism of diagrammatic $n$-meshes
    $\strat{M} \pto \strat{N}$
    induces an equivalence of spaces    
    $\Sigma(\strat{N}) \to \Sigma(\strat{M})$.
    \item \textbf{Sheaf:} For every diagrammatic $n$-mesh $\strat{M}$,
    the atomic covering diagram
    \[ U : \stratPos{\strat{M}}^{\op, \triangleright} \longrightarrow \BMeshDiag{n} \]
    induces a limit diagram of spaces
    \[ \Sigma(U(-)) : \stratPos{\strat{M}}^\triangleleft \longrightarrow \Space. \]
  \end{enumerate}
  We denote by $\Cptd{n}$ the reflective $\infty$-subcategory of $\PSh(\BMeshDiag{n})$
  whose objects are the diagrammatic $n$-computads.
\end{definition}

\begin{observation}
  Via the equivalence $\BMeshDiag{n} \simeq \BTrussDiag{n}$ between diagrammatic
  $n$-meshes and diagrammatic $n$-trusses we can equivalently define the
  $\infty$-category of diagrammatic $n$-computads $\Cptd{n}$ as a reflective
  $\infty$-subcategory of $\PSh(\BTrussDiag{n})$:
  \[
    \begin{tikzcd}[column sep = small]
      {\PSh(\BMeshDiag{n})} \ar[rr, "\simeq"] \ar[dr, hook] &
      {} &
      {\PSh(\BTrussDiag{n})} \ar[dl, hook'] \\
      {} &
      {\Cptd{n}}
    \end{tikzcd}
  \]    
  Using $\BTrussDiag{n}$ over $\BMeshDiag{n}$ is preferable in applications
  where we want to produce an $n$-diagrammatic computad from combinatorial data,
  such as a \textsc{homotopy.io} signature.
  Our aim for now is to show how $n$-diagrammatic computads can be used to
  label manifold diagrams, for which meshes are more immediately useful.
\end{observation}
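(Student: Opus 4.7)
The plan is to unpack the equivalence $\BMeshDiag{n} \simeq \BTrussDiag{n}$ from Definition~\ref{def:d-comb-truss} and transport the two defining conditions of $\Cptd{n}$ across it. First I would use that any equivalence $F : \cat{A} \xrightarrow{\simeq} \cat{B}$ of $\infty$-categories induces an equivalence of presheaf $\infty$-categories $F^* : \PSh(\cat{B}) \xrightarrow{\simeq} \PSh(\cat{A})$, with inverse given by left Kan extension (equivalently right Kan extension, since $F$ is fully faithful and essentially surjective). Applied to the geometric realisation $\BTrussDiag{n} \xrightarrow{\simeq} \BMeshDiag{n}$, this yields the dashed equivalence in the displayed diagram.

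Next I would verify that the two conditions that carve out $\Cptd{n}$ inside $\PSh(\BMeshDiag{n})$ correspond under this equivalence to the analogous conditions on $\PSh(\BTrussDiag{n})$. For the isotopy condition, it suffices to note that the classification maps of Corollary~\ref{cor:d-comb-meshes-subcategory} and the construction of $\BTrussDiag{n}$ as a pullback make the geometric realisation functor send active truss bordisms to active mesh bordisms and vice versa (both are characterised in terms of the induced map on posets of strata, which is preserved by geometric realisation). For the sheaf condition, I would observe that the atomic covering diagrams from \S\ref{sec:fct-embed-atoms-cells} are defined directly from the poset of strata, which is shared between a truss and its geometric realisation; hence the covering diagram of a diagrammatic $n$-truss $T$ is sent by geometric realisation to the covering diagram of $\MeshOpen{n}(T)$, and a presheaf satisfies the atomic Segal condition on one side iff its transported presheaf does on the other.

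Combining these two compatibilities, a presheaf $\Sigma \in \PSh(\BMeshDiag{n})$ lies in $\Cptd{n}$ if and only if its restriction $\Sigma \circ \mathrm{real}^{\op} \in \PSh(\BTrussDiag{n})$ satisfies the corresponding isotopy and sheaf conditions. Since the equivalence $\PSh(\BMeshDiag{n}) \simeq \PSh(\BTrussDiag{n})$ preserves reflective subcategories cut out by local conditions, the essential image of $\Cptd{n}$ in $\PSh(\BTrussDiag{n})$ is itself a reflective $\infty$-subcategory, producing the claimed triangle.

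The main technical point, and the step I would be most careful about, is the compatibility of active bordisms with geometric realisation: one needs the characterisation of active maps in $\BMeshDiag{n}$ (via the orthogonal factorisation system of Corollary~\ref{cor:fct-emb-active-factorisation-open} combined with Proposition~\ref{prop:d-comb-meshes-iff-submersive}) to match exactly the analogous factorisation in $\BTrussDiag{n}$ under the geometric realisation equivalence. Everything else (presheaf equivalence, atomic covering) is essentially formal once this is in place.
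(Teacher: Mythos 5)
Your proposal is correct and matches the route the paper intends: the statement is recorded only as an observation, with the implicit argument being exactly the transport of the isotopy and sheaf conditions along the presheaf equivalence induced by $\BMeshDiag{n} \simeq \BTrussDiag{n}$, using that active bordisms and atomic covering diagrams are determined by the underlying posets of strata and hence correspond under geometric realisation. Your added care about the active/inert compatibility is a sensible elaboration but does not diverge from the paper's approach.
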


\begin{example}
  Given the generating data for plain, braided or symmetric monoidal categories,
  we can construct a diagrammatic computad in dimension $2$, $3$ or $4$, respectively,
  so that the labelled manifold diagrams are labelled string diagrams.
  More generally, each signature in the proof assistant \textsc{homotopy.io} with generators
  up to dimension $n$ canonically defines a diagrammatic $n$-computad.
\end{example}

\begin{observation}
  Let us denote by $\Act$ the wide subcategory of $\BMeshDiag{n}$ consisting
  of the active bordisms. The universal property of localisation of $\infty$-categories
  guarantees that we have an equivalence between the $\infty$-categories
  \[
    \PSh(\BMeshDiag{n})[\Act^{-1}] \simeq
    \PSh(\BMeshDiag{n}[\Act^{-1}])
  \]
  Here $\PSh(\BMeshDiag{n})[\Act^{-1}]$ is the reflective localisation of
  $\PSh(\BMeshDiag{n})$ at the maps that are the image of an active bordism
  under the Yoneda embedding. This is precisely the reflective subcategory
  of $\PSh(\BMeshDiag{n})$ whose objects satisfy the isotopy condition.
\end{observation}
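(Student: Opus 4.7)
The plan is to establish both claims by formal universal-property arguments. First, for the equivalence $\PSh(\BMeshDiag{n})[\Act^{-1}] \simeq \PSh(\BMeshDiag{n}[\Act^{-1}])$ I would combine two standard facts. Recall that for any small $\infty$-category $\cat{C}$ and any presentable $\infty$-category $\cat{E}$, the restriction functor along the Yoneda embedding
\[
    \Fun^{\textup{L}}(\PSh(\cat{C}), \cat{E}) \longrightarrow \Fun(\cat{C}, \cat{E})
\]
from colimit-preserving functors to all functors is an equivalence, so that $\PSh(\cat{C})$ is the free cocompletion of $\cat{C}$. I would apply this to both $\cat{C} = \BMeshDiag{n}$ and $\cat{C} = \BMeshDiag{n}[\Act^{-1}]$, combining it with the universal property of the localisation, which identifies functors out of $\cat{C}[W^{-1}]$ with functors out of $\cat{C}$ that invert $W$. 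Both sides of the claimed equivalence then corepresent, on presentable $\infty$-categories $\cat{E}$, the space of colimit-preserving functors $\PSh(\BMeshDiag{n}) \to \cat{E}$ that invert the Yoneda image of every active bordism, equivalently the space of functors $\BMeshDiag{n} \to \cat{E}$ inverting $\Act$. The equivalence then follows by Yoneda.

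For the identification of $\PSh(\BMeshDiag{n})[\Act^{-1}]$ with the full reflective subcategory of presheaves satisfying the isotopy condition from Definition~\ref{def:d-computad}, I would apply the general fact that the reflective localisation at a small set $S$ of maps in a presentable $\infty$-category is the full subcategory of $S$-local objects, i.e.\ those objects $F$ such that for each $f : X \to Y$ in $S$, precomposition with $f$ induces an equivalence on mapping spaces. In our situation this says that a presheaf $F$ lies in $\PSh(\BMeshDiag{n})[\Act^{-1}]$ precisely when for every active bordism $f : \strat{M} \pto \strat{N}$ in $\BMeshDiag{n}$, the map
\[
    \PSh(\BMeshDiag{n})(\BMeshDiag{n}(-, \strat{N}), F) \longrightarrow \PSh(\BMeshDiag{n})(\BMeshDiag{n}(-, \strat{M}), F)
\]
is an equivalence of spaces. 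By the Yoneda lemma this map is identified with $F(\strat{N}) \to F(\strat{M})$, so $\Act$-locality agrees verbatim with the isotopy condition.

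I expect no serious obstacle here: the argument is entirely formal and relies only on standard facts about presentable $\infty$-categories, the universal property of presheaves as free cocompletions, and the Yoneda lemma. The only point worth double-checking is that the Bousfield-style reflective localisation of $\PSh(\BMeshDiag{n})$ at the Yoneda image of $\Act$ exists and agrees with the universal localisation in $\CatInfty$; this follows because $\PSh(\BMeshDiag{n})$ is presentable and the Yoneda image of $\Act$ is a small set of maps, to which standard results on accessible reflective localisations apply.
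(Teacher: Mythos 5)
Your argument is correct and is exactly the formal universal-property reasoning the paper invokes without proof: the paper's Observation cites "the universal property of localisation," and your combination of the free-cocompletion property of presheaf categories with the identification of a reflective localisation with its full subcategory of local objects (plus Yoneda to translate locality into the isotopy condition) is the standard way to make that precise. No gaps.
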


In Definition~\ref{def:d-computad} we have named the property that allows
us to glue along atomic covers of diagrammatic $n$-meshes the sheaf condition.
However, the covers do not form a topology on $\BMeshDiag{n}$ directly:
The reflective subcategory $\PSh(\BMeshDiag{n})$ satisfying
only the sheaf condition is not an $\infty$-category of sheaves.
This is corrected by the isotopy condition: The atomic covering diagrams
form a topology on the localisation $\BMeshDiag{n}[\Act^{-1}]$.

\begin{lemma}\label{lem:d-cptd-covering-forward}
  Let $f : \strat{M} \pto \strat{N}$ be a bordism of diagrammatic
  $n$-meshes and $\strat{A}$ an atom of $\strat{M}$. Then there exists an 
  atom $\strat{B}$ of $\strat{N}$ such that $f$ fits into a diagram of
  diagrammatic $n$-meshes of the form  
  \[
    \begin{tikzcd}
      {\nf(\strat{A})} \ar[r, "\simeq", mid vert] &
      {\nf(\strat{S})} &
      {\nf(\strat{B})} \ar[l, "\simeq"', mid vert] \\
      {\strat{A}} \ar[r, mid vert] \ar[d, hook, mid vert] \ar[u, two heads, mid vert] &
      {\strat{S}} \ar[d, hook, mid vert] \ar[u, two heads, mid vert] & 
      {\strat{B}} \ar[d, hook, mid vert] \ar[l, hook'] \ar[u, two heads, mid vert] \\
      {\strat{M}} \ar[r, mid vert, "f"'] &
      {\strat{N}} &
      {\strat{N}} \ar[l, equal]
    \end{tikzcd}
  \]
  and the middle row consists of active bordisms that normalise to equivalences.
\end{lemma}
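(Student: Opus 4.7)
The plan is to extract $\strat{B}$ and $\strat{S}$ by applying the active/inert factorisation of Corollary~\ref{cor:fct-emb-active-factorisation-open} to a composite built from $\strat{A}$ and $f$, and then to use the submersivity characterisation of $\BMeshDiag{n}$ from Proposition~\ref{prop:d-comb-meshes-iff-submersive} to guarantee that the resulting active parts normalise to equivalences. First, I would view the atom $\strat{A}$ as an inert embedding $\strat{A} \hookrightarrow \strat{M}$ in $\BMeshDiag{n}$ and compose with $f$ to obtain a bordism $\strat{A} \pto \strat{N}$. By Proposition~\ref{prop:fct-emb-active-factorisation-closed} applied in the open/dual setting (together with the closure of $\BMeshDiag{n}$ under active/inert factorisation noted in~\S\ref{sec:d-comb-meshes}), this bordism factors essentially uniquely as an active bordism $\strat{A} \pto \strat{S}$ followed by an inert embedding $\strat{S} \hookrightarrow \strat{N}$, with both pieces lying in $\BMeshDiag{n}$.

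Next, I would observe that $\nf(\strat{A})$ is an atom: since $\strat{A} \pto \nf(\strat{A})$ is induced by a framed refinement map (Proposition~\ref{prop:fct-ref-nf-open}), Lemma~\ref{lem:fct-degeneracy-preserve-atom} gives that $\nf(\strat{A})$ remains an atom. Submersivity of $f$ (Proposition~\ref{prop:d-comb-meshes-iff-submersive}) applied to the submesh $\strat{A} \hookrightarrow \strat{M}$, whose normal form is an atom, then yields that the active part $\strat{A} \pto \strat{S}$ normalises to an equivalence $\nf(\strat{A}) \xrightarrow{\simeq} \nf(\strat{S})$. In particular $\nf(\strat{S})$ is an atom. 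The candidate $\strat{B}$ is then obtained as follows: let $\bot \in \stratPos{\nf(\strat{S})}$ denote the minimum, pick any $q \in \stratPos{\strat{S}}$ with $\stratPos{\strat{S} \pto \nf(\strat{S})}(q) = \bot$, and set $\strat{B} := \atom{\strat{S}}(q)$, equipped with its inherited labelling from $\strat{S}$. This gives $\strat{B}$ as an atom of $\strat{S}$, and hence of $\strat{N}$ via the inert $\strat{S} \hookrightarrow \strat{N}$.

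The main obstacle is verifying that $\nf(\strat{B}) \xrightarrow{\simeq} \nf(\strat{S})$, i.e., that the middle-row bordism $\strat{B} \pto \strat{S}$ normalises to an equivalence. I would argue this as follows: the restriction of the degeneracy $\strat{S} \pto \nf(\strat{S})$ to the sub-atom $\strat{B}$ is itself label-preserving, and its image must be all of $\nf(\strat{S})$ because the image contains the minimum $\bot$ and $\nf(\strat{S})$ is the minimal atom containing $\bot$. By the universal property of the normal form (Proposition~\ref{prop:fct-ref-nf-open}) this factors as $\strat{B} \pto \nf(\strat{B}) \pto \nf(\strat{S})$. To conclude, reapply the submersivity criterion of Proposition~\ref{prop:d-comb-meshes-iff-submersive} either directly to the composite $\strat{B} \hookrightarrow \strat{S} \pto \nf(\strat{S})$ viewed via $\strat{S} \hookrightarrow \strat{N}$, or by invoking that any degeneracy of atoms sending $q$ to $\bot$ between atoms of the same singularity type must be an equivalence, using Lemma~\ref{lem:fct-degeneracy-preserve-atom} and minimality of $\strat{B}$.

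Finally, assembling the diagram is a matter of checking compatibility: the top row consists of the normalisation equivalences, the vertical two-headed arrows are the normalisation degeneracies, and the vertical hooked arrows are the chosen inert embeddings $\strat{A} \hookrightarrow \strat{M}$, $\strat{S} \hookrightarrow \strat{N}$, and $\strat{B} \hookrightarrow \strat{N}$. Commutativity of the squares follows from the construction of $\strat{S}$ as the active/inert factorisation of $f \circ (\strat{A} \hookrightarrow \strat{M})$ and of $\strat{B}$ as a sub-atom of $\strat{S}$, together with naturality of the normal form.
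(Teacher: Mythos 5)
Your proposal is correct and follows essentially the same route as the paper: restrict $f$ along the inert embedding $\strat{A} \hookrightarrow \strat{M}$, take the active/inert factorisation to produce $\strat{S}$, use submersivity (Proposition~\ref{prop:d-comb-meshes-iff-submersive}) to obtain $\nf(\strat{A}) \simeq \nf(\strat{S})$, and choose $\strat{B}$ to be the atom of $\strat{S}$ sitting over the minimum of $\nf(\strat{S})$. The only divergence is the justification of $\nf(\strat{B}) \simeq \nf(\strat{S})$ --- the paper argues that $\strat{B}$ and $\strat{S}$ refine the same framed basic, whereas you observe that the composite $\strat{B} \hookrightarrow \strat{S} \pto \nf(\strat{S})$ is active (because $q$ maps to the minimum) and reapply submersivity to the degeneracy bordism; both work, though your claim that the \emph{image} of $\strat{B}$ is all of $\nf(\strat{S})$ should be phrased as activeness of the composite bordism rather than surjectivity of a point-set image.
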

\begin{proof}
  We construct the diagram piece by piece as follows.
  We begin by using the active/inert factorisation to obtain the square
  \[
    \begin{tikzcd}
      {\strat{A}} \ar[r, mid vert] \ar[d, hook, mid vert] &
      {\strat{S}} \ar[d, hook, mid vert] \\
      {\strat{M}} \ar[r, mid vert, "f"'] &
      {\strat{N}}
    \end{tikzcd}
  \]
  By Proposition~\ref{prop:d-comb-meshes-iff-submersive} the bordisms of diagrammatic $n$-meshes are submersive.
  Because the bordism $\strat{A} \pto \strat{S}$ is active by construction and $\strat{A}$
  is already an atom, it follows that $\strat{A} \pto \strat{S}$ normalises
  to an equivalence
  \[
    \begin{tikzcd}
      {\nf(\strat{A})} \ar[r, "\simeq", mid vert] &
      {\nf(\strat{S})} \\
      {\strat{A}} \ar[r, mid vert] \ar[d, hook, mid vert] \ar[u, two heads, mid vert] &
      {\strat{S}} \ar[d, hook, mid vert] \ar[u, two heads, mid vert] \\
      {\strat{M}} \ar[r, mid vert, "f"'] &
      {\strat{N}}
    \end{tikzcd}
  \]
  We see that $\nf(\strat{S}) \simeq \nf(\strat{A})$ is an atom and therefore
  $\strat{S}$ refines a framed basic.
  It follows that there exists at least one an atom $\strat{B}$ of $\strat{S}$
  such that $\strat{B}$ refines the same framed basic as $\strat{S}$.
  We therefore have an equivalence $\nf(\strat{S}) \simeq \nf(\strat{B})$
  which completes the diagram in the claim.
\end{proof}

% \begin{lemma}
%   {\color{Orange} [TODO: Move this to the section on active maps.]}
%   Let $\cat{C}$ be a quasicategory and let
%   $f : \strat{M} \pto \strat{N}$ be a bordism of diagrammatic $n$-meshes
%   with labels in $\cat{C}$.
%   Then $f$ is active if and only if for every atom $\strat{B}$ of $\strat{N}$
%   there exists an atom $\strat{A}$ 
% \end{lemma}

\begin{lemma}\label{lem:d-cptd-covering-backward}
  Let $f : \strat{M} \pto \strat{N}$ be an active bordism of diagrammatic
  $n$-meshes and $\strat{B}$ an atom in $\strat{N}$.
  Then there exists an atom $\strat{A}$ of $\strat{M}$ such that $f$ fits into
  a diagram of diagrammatic $n$-meshes of the form
  \[
    \begin{tikzcd}
      {\nf(\strat{A})} \ar[r, "\simeq", mid vert] &
      {\nf(\strat{S})} &
      {\nf(\strat{B})} \ar[l, "\simeq"', mid vert] \\
      {\strat{A}} \ar[r, mid vert] \ar[d, hook, mid vert] \ar[u, two heads, mid vert] &
      {\strat{S}} \ar[d, hook, mid vert] \ar[u, two heads, mid vert] & 
      {\strat{B}} \ar[d, hook, mid vert] \ar[l, hook'] \ar[u, two heads, mid vert] \\
      {\strat{M}} \ar[r, mid vert, "f"'] &
      {\strat{N}} &
      {\strat{N}} \ar[l, equal]
    \end{tikzcd}
  \]
  and the middle row consists of active bordisms that normalise to equivalences.
\end{lemma}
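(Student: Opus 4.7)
The plan is to reduce this to Lemma~\ref{lem:d-cptd-covering-forward} by finding an atom of $\strat{M}$ that corresponds to the given $\strat{B}$. Let $\xi : \strat{E} \to \DeltaStrat{1}$ be the open $n$-mesh bundle representing $f$, so that $\strat{M}$ and $\strat{N}$ are its fibres over $0$ and $1$, and set $b := \bot_{\strat{B}} \in \stratPos{\strat{N}}$. Since $f$ is active, there exists some $a \in \stratPos{\strat{M}}$ with $a \leq b$ in $\stratPos{\strat{E}}$; I would choose $a$ to be maximal with this property. Define $\strat{A} := \atom{\strat{M}}(a)$, the atom of $\strat{M}$ generated by $a$.

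I would then apply Lemma~\ref{lem:d-cptd-covering-forward} to this $\strat{A}$, yielding the active/inert factorisation $\strat{A} \pto \strat{S} \hookrightarrow \strat{N}$ of the composite $\strat{A} \hookrightarrow \strat{M} \xrightarrow{f} \strat{N}$, together with an atom $\strat{B}'$ of $\strat{N}$ inside $\strat{S}$ and the normal-form equivalences $\nf(\strat{A}) \simeq \nf(\strat{S}) \simeq \nf(\strat{B}')$. It remains to verify that the given $\strat{B}$ can play the role of $\strat{B}'$. By the open dual of the explicit construction in the proof of Lemma~\ref{lem:fct-emb-active-factorisation-closed}, the poset $\stratPos{\strat{S}}$ consists of those $q \in \stratPos{\strat{N}}$ for which some $p \in \stratPos{\strat{A}}$ satisfies $p \leq q$ in $\stratPos{\strat{E}}$. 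Taking $p = a$ and $q = b$ gives $b \in \stratPos{\strat{S}}$; since $\stratPos{\atom{\strat{N}}(b)} = \{q \mid q \geq b\}$ is closed upward in $\stratPos{\strat{N}}$, this yields the inert inclusion $\strat{B} \hookrightarrow \strat{S}$ required by the diagram.

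The main obstacle will be showing $\nf(\strat{B}) \simeq \nf(\strat{S})$, i.e.\ that $\strat{B}$ refines the same framed basic as $\strat{S}$. Submersiveness of $f$ (Proposition~\ref{prop:d-comb-meshes-iff-submersive}) forces the active bordism $\strat{A} \pto \strat{S}$ to normalise to an equivalence, so $\nf(\strat{S})$ is an atom refining the same stable framed basic $\strat{U}$ as $\strat{A}$. The maximal choice of $a$ guarantees that no element of $\stratPos{\strat{M}}$ strictly between $a$ and $b$ in the bordism can be used, which combined with Lemma~\ref{lem:fct-degeneracy-preserve-atom} forces the normalisation $\strat{S} \pdeg \nf(\strat{S})$ to send $b$ to the unique minimum of the atom $\nf(\strat{S})$. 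Hence $\strat{B} = \atom{\strat{N}}(b)$ also normalises to the atom representing $\strat{U}$, yielding $\nf(\strat{B}) \simeq \nf(\strat{S})$.

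With these verifications in place, the diagram of the lemma is assembled from the diagram produced by Lemma~\ref{lem:d-cptd-covering-forward} applied to $\strat{A}$, together with the inert inclusion $\strat{B} \hookrightarrow \strat{S}$ and the normal-form equivalence just established. All horizontal maps of the middle row are active bordisms normalising to equivalences by construction, and the vertical normalisation arrows are obtained from the submersiveness of $f$.
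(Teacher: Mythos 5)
The paper states this lemma without giving a proof, so the only benchmark is the proof of Lemma~\ref{lem:d-cptd-covering-forward}, and your overall strategy — choose a suitable atom $\strat{A} = \atom{\strat{M}}(a)$ and feed it into that lemma — is the right shape of argument. The first half of your proposal is sound: activeness supplies some $a \leq b$, the description of the active/inert factorisation shows $b \in \stratPos{\strat{S}}$, and upward-closure of atom posets then gives the inert inclusion $\strat{B} \hookrightarrow \strat{S}$.

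The gap is in the step $\nf(\strat{B}) \simeq \nf(\strat{S})$, which is the entire content of the lemma beyond the forward version. What you must show is that $b$ is sent to the \emph{minimum} of $\nf(\strat{S})$ by the normalisation $\strat{S} \pdeg \nf(\strat{S})$ — geometrically, that the stratum of the framed basic refined by $\strat{S}$ which $b$ refines is the minimal one, i.e.\ that the diagram stratum underlying $b$ is the time-$1$ continuation of the diagram stratum underlying $a$. Your justification does not establish this. Lemma~\ref{lem:fct-degeneracy-preserve-atom} only says that a degeneracy out of an atom sends the minimum to the minimum; it says nothing about which \emph{other} elements of $\stratPos{\strat{S}}$ land on $\bot_{\nf(\strat{S})}$, so it cannot be used to locate the image of $b$. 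Likewise, maximality of $a$ only excludes elements of $\stratPos{\strat{M}}$ strictly between $a$ and $b$; there are always elements of $\stratPos{\strat{S}} \subseteq \stratPos{\strat{N}}$ strictly between them (e.g.\ the centre stratum of $\strat{S}$ itself), so "no element strictly between $a$ and $b$ can be used" does not follow and in any case does not imply the conclusion. To close the gap you need a genuinely geometric argument: take an exit path in the total mesh witnessing $a \leq b$, push it into the underlying isotopy, and use the framed-stratified-submersion property (product neighbourhoods $\strat{U} \times V$ along the path) to show that if $a$ is maximal then the isotopy stratum containing the image of $b$ restricts at time $0$ to the diagram stratum refined by $a$ — otherwise one manufactures $a' > a$ with $a' \leq b$, contradicting maximality. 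You should also note that maximal elements below $b$ need not be unique and either show every choice works or specify which one you take. Without this step the proof is incomplete: a priori a maximal $a$ could refine a stratum strictly deeper than the one $b$ tracks back to, making $\nf(\strat{S})$ a strictly more singular atom than $\nf(\strat{B})$.
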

% \begin{proof}
%   % There is a bijection between the atoms of the diagrammatic $n$-mesh $\strat{N}$
%   % and the elements of the poset $\stratPos{\strat{N}}$.
%   % Let $j \in \stratPos{\strat{N}}$ be the element that corresponds to the
%   % atom $\strat{B}$ in $\strat{N}$.
  
% \end{proof}

\begin{proposition}
  The localisation $\BMeshDiag{n}[\Act^{-1}]$ at the active bordisms
  of diagrammatic $n$-meshes has a topology that is generated by the atomic
  covering diagrams
  \[
    \stratPos{\strat{M}}^{\op} \longrightarrow (\BMeshDiag{n})_{/ \strat{M}}
  \]
  for each diagrammatic $n$-mesh $\strat{M} \in \BMeshDiag{n}$.  
  In particular $\Cptd{n}$ is an $\infty$-topos of sheaves.
\end{proposition}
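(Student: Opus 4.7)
The plan is to define a Grothendieck topology on $\cat{L} := \BMeshDiag{n}[\Act^{-1}]$ whose covering sieves are generated by the images in $\cat{L}_{/\strat{M}}$ of the atomic covering diagrams
\[
  U_{\strat{M}} : \stratPos{\strat{M}}^{\op} \longrightarrow (\BMeshDiag{n})_{/ \strat{M}},
\]
and then identify $\Cptd{n}$ with the $\infty$-category of sheaves for this topology. Once this identification is made, the fact that $\Cptd{n}$ is an $\infty$-topos follows from the general theory of sheaves on a small $\infty$-site.

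First I would verify that $U_{\strat{M}}$ presents $\strat{M}$ as a colimit in $\cat{L}$, by upgrading Lemma~\ref{lem:fct-atom-covering-colimit} from $\BMeshOpen{n}$ to $\BMeshDiag{n}[\Act^{-1}]$. The colimit in $\BMeshOpen{n}$ can be transferred along the forgetful/inclusion maps, and Lemma~\ref{lem:d-cptd-covering-backward} guarantees that the diagram behaves well with respect to active bordisms, so that atoms in the codomain of an active map lift compatibly to atoms in the domain. This ensures the maximal sieve is a cover and makes the colimit presentation stable under localisation.

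The main work is in verifying pullback stability. Given a map $f : \strat{M} \pto \strat{N}$ in $\cat{L}$ and the covering sieve on $\strat{N}$ generated by its atoms $\strat{B} \hookrightarrow \strat{N}$, I need to show that $f^* U_{\strat{N}}$ is refined by $U_{\strat{M}}$. This is precisely the content of Lemma~\ref{lem:d-cptd-covering-forward}: for every atom $\strat{A} \hookrightarrow \strat{M}$, there is an atom $\strat{B} \hookrightarrow \strat{N}$ together with a zigzag of active bordisms witnessing that the composite $\strat{A} \hookrightarrow \strat{M} \pto \strat{N}$ factors, after localisation, through $\strat{B}$. Hence the composite $\strat{A} \to \strat{N}$ in $\cat{L}$ lies in the sieve pulled back from $U_{\strat{N}}$, so $U_{\strat{M}}$ refines $f^*U_{\strat{N}}$ and the pullback is covering. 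Transitivity is then easy: if $\strat{A} \hookrightarrow \strat{M}$ is an atom and $\strat{A}' \hookrightarrow \strat{A}$ is an atom of $\strat{A}$, then $\strat{A}' \hookrightarrow \strat{M}$ is again an atom of $\strat{M}$, so refining each $\strat{A} \hookrightarrow \strat{M}$ in a covering sieve by atoms of $\strat{A}$ yields a covering sieve on $\strat{M}$.

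With the topology established, the identification $\Cptd{n} \simeq \Shv(\cat{L})$ is essentially tautological: a presheaf on $\BMeshDiag{n}$ satisfies the isotopy condition of Definition~\ref{def:d-computad} if and only if it factors through the localisation $\cat{L}$, and the sheaf condition of Definition~\ref{def:d-computad} asks precisely that the atomic covering diagrams $U_{\strat{M}}$ become limit diagrams, which is the sheaf condition for the generating covers of our topology. The $\infty$-category of sheaves on a small $\infty$-site is an $\infty$-topos, giving the conclusion. I expect the main technical obstacle to be the first step, namely promoting the colimit of Lemma~\ref{lem:fct-atom-covering-colimit} through the localisation while keeping track of the labellings in $\ord{n}$; this is where Lemma~\ref{lem:d-cptd-covering-backward} is genuinely used, to ensure that the atomic presentation is preserved under active bordisms so that the generating families indeed assemble into a coverage on $\cat{L}$.
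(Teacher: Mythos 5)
Your proposal follows essentially the same route as the paper: both verify that the atomic covers form a coverage on the localisation $\BMeshDiag{n}[\Act^{-1}]$ by combining Lemma~\ref{lem:d-cptd-covering-forward} (for maps induced by genuine bordisms) with Lemma~\ref{lem:d-cptd-covering-backward} (for the inverses of active bordisms), and then identify $\Cptd{n}$ as the resulting topological localisation of presheaves. Two small remarks: the paper first reduces to checking a Grothendieck topology on the homotopy $1$-category, and the role of Lemma~\ref{lem:d-cptd-covering-backward} is precisely pullback stability along the inverted active bordisms appearing in the zigzags that present maps of the localisation, not the preservation of a colimit presentation --- your opening step about upgrading Lemma~\ref{lem:fct-atom-covering-colimit} is not needed to define the topology.
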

\begin{proof}
  By~\cite[where]{htt} a Grothendieck topology on an $\infty$-category $\cat{C}$
  is exactly a Grothendieck topology on the homotopy $1$-category $\Ho(\cat{C})$.
  It therefore suffices to show that the covering diagrams induce a topology
  on $\Ho(\BMeshDiag{n}[\Act^{-1}])$.
  In particular, we show that they define a coverage:
  For every map $\varphi : [\strat{M}] \to [\strat{N}]$ in $\Ho(\BMeshDiag{n}[\Act^{-1}])$
  and every atom $\strat{B}$ in $\strat{N}$ there is an atom $\strat{A}$
  of $\strat{M}$ so that $\varphi$ restricts to a map between the atoms
  \[
    \begin{tikzcd}
      {[\strat{A}]} \ar[r, dashed] \ar[d, hook] &
      {[\strat{B}]} \ar[d, hook] \\
      {[\strat{M}]} \ar[r, "\varphi"'] &
      {[\strat{N}]}
    \end{tikzcd}
  \]

  Any map $\varphi : [\strat{M}] \pto [\strat{N}]$ can be written as a zigzag of maps
  in $\BMeshDiag{n}$ starting at $\strat{M}$ and ending at $\strat{N}$ in
  which the backwards maps are active bordisms.
  We can therefore reduce to the case that $\varphi$ is induced by an individual
  map in $\BMeshDiag{n}$.
  When $\varphi = [f]$ for some bordism of diagrammatic $n$-meshes $f : \strat{M} \pto \strat{N}$,
  the coverage property then follows by Lemma~\ref{lem:d-cptd-covering-forward}.
  In the case that $\varphi = [g]^{-1}$ for some active bordism of diagrammatic
  $n$-meshes $g : \strat{N} \pto \strat{M}$ we can similarly apply Lemma~\ref{lem:d-cptd-covering-backward}.

  Therefore, the atomic covers of diagramamtic $n$-meshes form a topology
  on the homotopy $1$-category $\Ho(\BMeshDiag{n}[\Act^{-1}])$ and consequently
  also a topology on the localisation $\BMeshDiag{n}[\Act^{-1}]$.  
  Then $\Cptd{n}$ is the topological localisation of the $\infty$-category
  of presheaves $\PSh(\BMeshDiag{n}[\Act^{-1}])$ and therefore an $\infty$-topos
  of sheaves.
\end{proof}

\subsection{The Diagrammatic Space of Labelled Manifold Diagrams}

Given any diagrammatic $n$-computad $\Sigma$ we define an $n$-diagrammatic space
$\MfldDiagExt^n(\Sigma)$ consisting of extended manifold diagrams with labels in $\Sigma$.
For the purposes of this construction we represent the
presheaf $\Sigma$ as a right fibration $\Sigma : \cat{C} \to \BMeshDiag{n}$.

% \begin{remark}
%   The right inner fibrations over $\MeshCone^n$ are the fibrant objects in
%   the contravariant model structure on the slice category
%   $\sSet_{/ \MeshCone^n}$ which represents the $\infty$-category $\PSh(\MeshCone^n)$.
%   Then the reflective $\infty$-subcategory $\Cptd{n}$ of $\PSh(\MeshCone^n)$
%   is represented as a left Bousfield localisation of the contravariant model
%   structure in which the fibrant objects are the right inner fibrations that
%   additionaly satisfy the isotopy and sheaf conditions of Definition~\ref{def:d-computad}.
% \end{remark}

\begin{construction}
  Let $\Sigma : \cat{C} \to \BMeshDiag{n}$ be a right fibration
  representing a space-valued presheaf on $\BMeshDiag{n}$.
  Consider the diagram of simplicial sets
  \[
    \begin{tikzcd}
      {\MeshDiagExt^n(\Sigma)} \ar[r] \ar[d] \pullbackcorner &
      {\MeshDiagExt^n} \ar[d] \ar[r] &
      {\BMeshClosed{n}} \\
      {\cat{C}} \ar[r, "\Sigma"'] &
      {\BMeshDiag{n}}
    \end{tikzcd}
  \]
  where the square is a pullback square.
  Then $\MeshDiagExt^n(\Sigma)$ is the simplicial set of \defn{extended meshed $n$-manifold
  diagrams with labels in $\Sigma$}.
  The composite of the top row of the diagram is projection map $\MeshDiagExt^n(\Sigma) \to \BMeshClosed{n}$.
\end{construction}

\begin{lemma}
  Let $\Sigma : \cat{C} \to \BMeshClosed{n}$ be a right fibration.
  Then the projection map $\MeshDiagExt^n(\Sigma) \to \BMeshClosed{n}$
  is a cocartesian fibration.
\end{lemma}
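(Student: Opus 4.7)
The plan is to observe that this is a direct application of Lemma~\ref{lem:b-cat-cocartesian-top-row}, which handles precisely this kind of situation where a cocartesian fibration is pulled back along a categorical fibration that matches the characterisation of cocartesian maps.

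Concretely, the projection $\MeshDiagExt^n(\Sigma) \to \BMeshClosed{n}$ decomposes as
\[
  \MeshDiagExt^n(\Sigma) \longrightarrow \MeshDiagExt^n \longrightarrow \BMeshClosed{n}
\]
where the first map is the pullback of $\Sigma : \cat{C} \to \BMeshDiag{n}$ along the forgetful map $\MeshDiagExt^n \to \BMeshDiag{n}$. To apply Lemma~\ref{lem:b-cat-cocartesian-top-row}, I would need to verify three hypotheses. First, $\Sigma$ is a categorical fibration: this is a standard fact, since any right fibration has the right lifting property against all horns $\Lambda^i\ord{k} \hookrightarrow \Delta\ord{k}$ with $0 < i \leq k$, hence is both an inner fibration and an isofibration. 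Second, $\MeshDiagExt^n \to \BMeshClosed{n}$ is a cocartesian fibration; this is exactly the first part of Lemma~\ref{lem:d-comb-meshed-diag-cocartesian}. Third, a map in $\MeshDiagExt^n$ is cocartesian for this fibration if and only if it is sent to an equivalence by $\MeshDiagExt^n \to \BMeshDiag{n}$; this is the second part of the same lemma.

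With these three hypotheses in place, Lemma~\ref{lem:b-cat-cocartesian-top-row} yields that the composite $\MeshDiagExt^n(\Sigma) \to \BMeshClosed{n}$ is a cocartesian fibration, and moreover that a map in $\MeshDiagExt^n(\Sigma)$ is cocartesian if and only if it is sent to an equivalence by the projection $\MeshDiagExt^n(\Sigma) \to \cat{C}$. There is essentially no obstacle, since all the substantive work has been done in the cited lemmas; the only thing to be careful about is making the identification of $\MeshDiagExt^n(\Sigma) \to \BMeshClosed{n}$ as a top-row composite matching the shape required by Lemma~\ref{lem:b-cat-cocartesian-top-row}, but this is immediate from the defining pullback square.
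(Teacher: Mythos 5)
Your proof is correct and follows essentially the same route as the paper: both decompose $\MeshDiagExt^n(\Sigma) \to \BMeshClosed{n}$ as the pullback of $\Sigma$ along $\MeshDiagExt^n \to \BMeshDiag{n}$ followed by the cocartesian fibration $\MeshDiagExt^n \to \BMeshClosed{n}$, and then invoke Lemma~\ref{lem:d-comb-meshed-diag-cocartesian} together with Lemma~\ref{lem:b-cat-cocartesian-top-row}. The only difference is that you spell out why a right fibration is a categorical fibration, which the paper takes for granted.
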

\begin{proof}
  Since $\Sigma : \cat{C} \to \BMeshClosed{n}$ is a right fibration, it is in particular
  a categorical fibration. The map $\MeshDiagExt^n \to \BMeshClosed{n}$ is a cocartesian
  fibration by Lemma~\ref{lem:d-comb-meshed-diag-cocartesian} and a map is
  cocartesian if and only if $\MeshDiagExt^n \to \BMeshDiag{n}$ sends it to
  an equivalence.  
  The claim then follows Lemma~\ref{lem:b-cat-cocartesian-top-row}.
\end{proof}

\begin{observation}
  Suppose that $\Sigma : \cat{C} \to \BMeshClosed{n}$ is a right fibration.
  Then the cocartesian fibration $\MeshDiagExt^n(\Sigma) \to \BMeshClosed{n}$
  induces a functor of $\infty$-categories
  \[
    \MeshDiagExt^n(\Sigma)(-) : \BMeshClosed{n} \longrightarrow \CatInfty.
  \]
  Given any closed $n$-mesh $\strat{X}$ the $\infty$-category
  $\MeshDiagExt^n(\Sigma)(\strat{X})$ is represented by the quasicategory
  that arises from taking the pullback of simplicial sets
  \[
    \begin{tikzcd}
      {\MeshDiagExt^n(\Sigma)(\strat{X})} \ar[r, hook] \ar[d] &
      {\MeshDiagExt^n(\Sigma)} \ar[d] \\
      {\Delta\ord{0}} \ar[r, hook, "\strat{X}"'] &
      {\BMeshClosed{n}}
    \end{tikzcd}
  \]
\end{observation}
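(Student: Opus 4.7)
The plan is to apply Lemma~\ref{lem:b-cat-cocartesian-top-row} directly, instantiating it with the pullback square that defines $\MeshDiagExt^n(\Sigma)$. Concretely, I would set $\cat{E} = \MeshDiagExt^n$, $\cat{D} = \BMeshDiag{n}$, $\cat{B} = \BMeshClosed{n}$, take $r = \Sigma : \cat{C} \to \BMeshDiag{n}$, let $q : \MeshDiagExt^n \to \BMeshDiag{n}$ be the projection onto the diagrammatic mesh (obtained by composing $\MeshDiagExt^n \hookrightarrow \BConfOrtho{n} \times_{\BMeshOpen{n}} \BMeshDiag{n}$ with the second projection), and let $p : \MeshDiagExt^n \to \BMeshClosed{n}$ be the projection onto the closed mesh. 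The outer rectangle formed by the defining pullback of $\MeshDiagExt^n(\Sigma)$ then matches the shape required by Lemma~\ref{lem:b-cat-cocartesian-top-row}.

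First I would verify the three hypotheses. The map $r = \Sigma$ is a right fibration by assumption, and right fibrations are in particular categorical fibrations, so this is immediate. That $p : \MeshDiagExt^n \to \BMeshClosed{n}$ is a cocartesian fibration is precisely the content of Lemma~\ref{lem:d-comb-meshed-diag-cocartesian}, which also supplies the characterisation we need: a map in $\MeshDiagExt^n$ is $p$-cocartesian if and only if it is sent to an equivalence by $q : \MeshDiagExt^n \to \BMeshDiag{n}$. With all three hypotheses in place, Lemma~\ref{lem:b-cat-cocartesian-top-row} yields that the composite of the top row, namely $\MeshDiagExt^n(\Sigma) \to \MeshDiagExt^n \to \BMeshClosed{n}$, is a cocartesian fibration, which is exactly the claim.

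There is essentially no obstacle here; the whole argument is bookkeeping to match the shape of the pullback square against the hypothesis diagram of Lemma~\ref{lem:b-cat-cocartesian-top-row}. The one point worth a sentence of comment is the identification of $\MeshDiagExt^n(\Sigma) \to \BMeshClosed{n}$ with $p \circ q^* r$: this follows because the defining pullback of $\MeshDiagExt^n(\Sigma)$ along $\Sigma$ is set up precisely so that the outer projection to $\BMeshClosed{n}$ factors through the pullback and then through $p$. As a side benefit, the second clause of Lemma~\ref{lem:b-cat-cocartesian-top-row} tells us that a map in $\MeshDiagExt^n(\Sigma)$ is cocartesian over $\BMeshClosed{n}$ exactly when its image in $\cat{C}$ is an equivalence, which is the description one would want when later unpacking cocartesian transport of labellings along bordisms of closed meshes.
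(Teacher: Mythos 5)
Your proof is correct and takes exactly the paper's route: the cocartesian-fibration claim is obtained by applying Lemma~\ref{lem:b-cat-cocartesian-top-row} to the defining pullback square, with the hypotheses supplied by Lemma~\ref{lem:d-comb-meshed-diag-cocartesian} and the fact that a right fibration over the quasicategory $\BMeshDiag{n}$ is a categorical fibration. The remaining content of the observation --- that a cocartesian fibration of quasicategories straightens to a functor into $\CatInfty$ whose value on $\strat{X}$ is computed by the strict pullback of simplicial sets --- is the standard straightening machinery recalled in the background and needs no further argument.
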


In~\S\ref{sec:d-comb-meshed-diag} we have seen that we have an alternative presentation of the $n$-diagrammatic space of
extended $n$-manifold diagrams $\MfldDiagExt^n(-)$, as defined originally in~\S\ref{sec:d-mfld-extended},
using $n$-diagrammatic meshes: For every closed $n$-mesh $\strat{X}$ there is an equivalence
between the space $\MfldDiagExt^n(\strat{X})$ and the classifying space $\CatToSpace(\MeshDiagExt^n(\strat{X}))$.
By analogy, we define the space $\MfldDiagExt^n(\Sigma)(\strat{X})$ of extended $n$-manifold diagrams
with labels in $\Sigma$ by taking the classifying space of the meshed version.

\begin{definition}
  Let $\Sigma : \BMeshDiag{n, \op} \to \Space$ be a presheaf.
  For every closed $n$-mesh we define the space of \defn{extended $n$-manifold diagrams
  on $\strat{X}$ with labels in $\strat{X}$} as the classifying space
  \[
    \MfldDiagExt^n(\Sigma)(\strat{X}) := \CatToSpace(\MeshDiagExt^n(\Sigma)(\strat{X}))
  \]
  This definition extends to a functor 
  \[
    \MfldDiagExt^n(-)(-) : \PSh(\BMeshDiag{n}) \longrightarrow \Fun(\BMeshClosed{n}, \Space).
  \]
\end{definition}

% \begin{observation}
% The left fibration $\MfldDiag^n(\Sigma) \to \BMeshClosed{n}$ is determined uniquely
% up to equivalence and represents a space-valued presheaf $\MfldDiag^n(\Sigma)(-) \in \PSh(\BMeshClosed{n})$.
% While $\MfldDiag^n(\Sigma)$ would be difficult to describe directly
% as a quasicategory, we have an explicit description of the space
% $\MfldDiag^n(\Sigma)(\strat{X})$
% for any closed $n$-mesh $\strat{X} \in \BMeshClosed{n}$.
% By the characterisation of the free left fibration generated by a cocartesian fibration in Observation~\ref{}, the space
% $\MfldDiag^n(\Sigma)(\strat{X})$ is the classifying space of the 
% quasicategory $\MeshDiag^n(\Sigma)(\strat{X})$ which we can calculate
% as a simplicial set via the Kan fibrant replacement
% $\Ex^\infty(\MeshDiag^n(\Sigma)(\strat{X}))$.
% We interpret this just like in Observation~[]:
% the space $\MfldDiag^n(\Sigma)(\strat{X})$ is the space of $n$-manifold diagrams
% that are orthogonal to $\strat{X}$ and equipped with labels in $\Sigma$,
% with all framed conical isotopies in between them.
% We now show that whenever a presheaf $\Sigma : \BMeshDiag{n, \op} \to \Space$
% is an $n$-diagrammatic computad, the resulting functor 

% The functor $\MfldDiag^n(\Sigma)(-) : \BMeshClosed{n} \to \Space$ is not an $n$-diagrammatic
% set for every presheaf $\Sigma$ on $\BMeshDiag{n}$.

% \end{observation}

\begin{thm}\label{thm:mfld-diag-labelled-cat}
  Let $\Sigma$ be a $n$-diagrammatic computad.
  Then the functor
  \[ \MfldDiagExt^n(\Sigma) : \BMeshClosed{n} \longrightarrow \Space \]
  is an $n$-diagrammatic space.
\end{thm}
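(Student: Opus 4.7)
The plan is to verify the two conditions defining an $n$-diagrammatic space for $\MfldDiagExt^n(\Sigma)$, reducing each to the corresponding unlabelled result together with the sheaf and isotopy conditions on $\Sigma$. As a first step, I would give a concrete description of $\MeshDiagExt^n(\Sigma)(\strat{X})$: its objects are pairs $(\strat{M}, \lambda)$ where $\strat{M}$ is a diagrammatic $n$-mesh orthogonal to $\strat{X}$ and $\lambda$ is an object of the fibre of $\cat{C} \to \BMeshDiag{n}$ over $\strat{M}$. Extending the canonicalisation retraction of Corollary~\ref{cor:d-mesh-diag-canonicalise} along the right fibration representing $\Sigma$ (using that right fibrations are stable under pullback and that $\Sigma$ inverts active bordisms), we obtain an equivalence $\CatToSpace(\MeshDiagExt^n(\Sigma)(\strat{X})) \simeq \CatToSpace(\MeshDiag^n(\Sigma)(\strat{X}))$, where the latter is restricted to pairs whose underlying bordism $\strat{X}^\dagger \pto \strat{M}$ is active.

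For the Segal condition, fix a closed $n$-mesh $\strat{X}$ with cellular covering $U : \stratPos{\strat{X}}^{\op, \triangleleft} \to \BMeshClosed{n}$. A labelled manifold diagram restricts to labelled manifold diagrams on each cell, giving a comparison map from $\MfldDiagExt^n(\Sigma)(\strat{X})$ into the limit of $\MfldDiagExt^n(\Sigma)(U(-))$. I would argue this is an equivalence by assembling data levelwise: by Lemma~\ref{lem:d-mfld-diag-space-segal} combined with Proposition~\ref{prop:d-mfld-ext-equiv}, the underlying meshes glue; the labels then glue by applying the sheaf condition on $\Sigma$ to the atomic cover of the assembled mesh, which is precisely the disjoint union of the atomic covers coming from each cell. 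The regularity condition is handled analogously for an $n$-cell $\strat{X}$: by Lemma~\ref{lem:d-mfld-diag-space-regular} the underlying meshes on $\strat{X}$ correspond to those on $\shape{\strat{X}}$ restricting compatibly to a given boundary, and the sheaf condition on $\Sigma$ transfers the analogous statement to labels.

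The main obstacle is that the classifying space functor $\CatToSpace$ is a left adjoint and does not preserve the relevant limits out of the box. To overcome this, I would prove a labelled analogue of Lemma~\ref{lem:mesh-diag-to-mfld-diag}, showing that the cocartesian fibration $\MeshDiagExt^n(\Sigma) \to \BMeshClosed{n}$ is carried under the free left fibration construction to a left fibration representing the presheaf $\MfldDiagExt^n(\Sigma)$. The core content is that every morphism in $\MeshDiagExt^n(\Sigma)(\strat{X})$ is sent to an equivalence by $\CatToSpace$; on the unlabelled factor this follows from the proof strategy of Lemma~\ref{lem:mesh-diag-to-mfld-diag}, which builds the required zig-zag of coarsest refining meshes using Proposition~\ref{prop:coarsest-mesh-refinement-open-pl}, while on the label factor it relies crucially on the isotopy condition of $\Sigma$ that active bordisms act as equivalences. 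Once this fibration identification is in place, the Segal and regularity conditions for $\MfldDiagExt^n(\Sigma)$ reduce pointwise to the already-established conditions for $\MfldDiagExt^n(-)$ twisted by the sheaf structure of $\Sigma$.
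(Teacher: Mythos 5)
Your overall architecture matches the paper's: reduce to the non-extended version via the canonicalisation retraction, use the unlabelled diagrammatic-space result for the underlying meshes, and use the sheaf condition on $\Sigma$ to glue the labels. However, there is a genuine gap at the point you identify as the "main obstacle", and your proposed resolution does not close it. The labelled analogue of Lemma~\ref{lem:mesh-diag-to-mfld-diag} you propose is vacuous: unlike the unlabelled case, $\MfldDiagExt^n(\Sigma)$ has no independent geometric definition to compare against --- it is \emph{defined} as $\CatToSpace(\MeshDiagExt^n(\Sigma)(-))$, and the free left fibration of a cocartesian fibration always computes the fibrewise classifying space. Likewise, "every morphism is sent to an equivalence by $\CatToSpace$" is true of any quasicategory and carries no content. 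So after this paragraph the real problem --- that $\CatToSpace$ does not commute with the limits appearing in the Segal and regularity conditions --- is still untouched.

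What is actually needed is a fibrewise criterion: a levelwise Kan fibration over a homotopy limit diagram whose fibres form homotopy limit diagrams is itself a homotopy limit diagram (Lemma~\ref{lem:lift-homotopy-limit-kan-fibration}). To apply it you must show that the label-forgetting map $\Ex^\infty(\MeshDiag^n(\Sigma)(\strat{X})) \to \Ex^\infty(\MeshDiag^n(\strat{X}))$ is a Kan fibration. This is false for the extended version (labels cannot be lifted along isotopies in which new cells emerge from outside $\strat{X}$ --- this, not mere convenience, is why one must pass to the non-extended version), and for the non-extended version it holds precisely because $\Sigma$, being a diagrammatic computad, is fibrant in the $\Act$-local contravariant model structure, so that its restriction over the active bordisms is a Kan fibration (Lemma~\ref{lem:d-cptd-forget-label-kan}). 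This is where the isotopy condition does its real work; your proposal invokes it only for the tautological statement about $\CatToSpace$. With the fibrancy in hand, the base diagram is a homotopy limit by the unlabelled result and the fibres are the spaces $\Sigma(V(i))$ over the atomic covering diagram $V$ of a fixed diagrammatic mesh, which form a homotopy limit by the sheaf condition --- note that this cover is a limit over the exit poset of the assembled mesh, not a disjoint union of the covers of the cells, since atoms are shared along boundaries.
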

\begin{proof}
  This follows from Lemma~\ref{lem:d-cptd-segal} and Lemma~\ref{lem:d-cptd-regular} below.
\end{proof}

In order to prove this we build upon the previous result from Theorem~\ref{thm:d-mfld-diag-space} that unlabelled $n$-manifold diagrams form
an $n$-diagrammatic space $\MfldDiag^n$:
We verify the Segal condition for $\MfldDiag^n(\Sigma)$ 
by first gluing together the unlabelled $n$-manifold diagrams and then using the sheaf condition on $\Sigma$ to glue together the labels as well.
The isotopy condition on $\Sigma$ ensures that labels in $\Sigma$ can be transported along isotopies of $n$-manifold diagrams.
We spend the remainder of this section with the technical details.

\begin{observation}
  Suppose that $\Sigma$ is an $n$-diagrammatic computad that is presented by a
  right fibration $\Sigma : \cat{C} \to \BMeshDiag{n}$.
  For every closed $n$-mesh $\strat{X}$  
  we have a forgetful map
  $\MeshDiagExt^n(\Sigma)(\strat{X}) \to \MeshDiagExt^n(\strat{X})$
  that throws away the labels.
  This map does not in general induce a Kan fibration between the Kan fibrant replacements
  \[
    \Ex^\infty(\MeshDiagExt^n(\Sigma)(\strat{X}))
    \longrightarrow
    \Ex^\infty(\MeshDiagExt^n(\strat{X})).
  \]
  In particular, we can not lift all isotopies of extended $n$-manifold diagrams
  that have new cells emerge from infinity, outside the confines of the closed $n$-mesh $\strat{X}$.
\end{observation}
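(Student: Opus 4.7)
The statement is a negative observation, so my plan is to exhibit an explicit counterexample rather than attempt a general structural proof. I will choose the simplest possible setting: take $n = 1$, $\strat{X} = \StratIntLR^1$, and let $\Sigma$ be the diagrammatic $1$-computad freely generated by two distinct labels $R, B$ for the singleton atom. Concretely $\Sigma$ assigns to a diagrammatic $1$-mesh $\strat{M}$ the set of functions from the set of $0$-atoms of $\strat{M}$ to $\{R, B\}$, with active bordisms acting by transport along the induced bijection on point atoms and inert bordisms acting by restriction; both the sheaf condition and the isotopy condition follow by direct inspection.

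Next, I would construct a specific lifting problem for $\Ex^\infty(\MeshDiagExt^1(\Sigma)(\strat{X})) \to \Ex^\infty(\MeshDiagExt^1(\strat{X}))$. Take the underlying extended diagram $\strat{E}$ to consist of a single $0$-stratum at $x_0 = +2$, so $x_0$ sits outside $\strat{X}$; write $(\strat{E}, R)$ and $(\strat{E}, B)$ for its two labellings, and write $\strat{E}_0$ for the empty extended diagram. There are obvious isotopies $\strat{E}_0 \to (\strat{E}, R)$ and $\strat{E}_0 \to (\strat{E}, B)$ in $\MeshDiagExt^1(\Sigma)(\strat{X})$ realised by bringing in a fresh $0$-stratum from $+\infty$ and attaching one of the two labels. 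These form a horn $\Lambda^0\ord{2} \to \MeshDiagExt^1(\Sigma)(\strat{X})$, which passes to $\Ex^\infty$. Forgetting labels, the two edges map to the same unlabelled isotopy, so the image horn in $\Ex^\infty(\MeshDiagExt^1(\strat{X}))$ extends to a degenerate $2$-simplex with $12$-edge the identity.

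The heart of the argument is to show that this $2$-simplex admits no lift. A lift would supply a $1$-cell from $(\strat{E}, R)$ to $(\strat{E}, B)$ in $\MeshDiagExt^1(\Sigma)(\strat{X})$; such a cell is represented by a bordism $\strat{M} \pto \strat{N}$ of diagrammatic $1$-meshes together with an orthogonal closed $1$-mesh bundle over $\DeltaStrat{1}$ whose restriction to the endpoints recovers $(\strat{E}, R)$ and $(\strat{E}, B)$ respectively. Factoring the bordism as active followed by inert, the isotopy condition on $\Sigma$ says the active part acts by a bijection on label-choices for atoms, and the inert part acts by restriction; in either case the label attached to the single $0$-atom present at both endpoints cannot change from $R$ to $B$. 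Hence $(\strat{E}, R)$ and $(\strat{E}, B)$ lie in distinct connected components of the classifying space $\CatToSpace(\MeshDiagExt^1(\Sigma)(\strat{X}))$, while their images coincide below; this contradicts the Kan lifting property for $\Lambda^0\ord{2} \hookrightarrow \Delta\ord{2}$.

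The main technical subtlety is making the passage to $\Ex^\infty$ rigorous, specifically the translation between $1$-cells in $\Ex^\infty(\cat{D})$ and paths in $\CatToSpace(\cat{D})$; I would handle this by using that $\Ex^\infty$ computes the classifying space on $\pi_0$ and that any lift of the horn, possibly after further subdivision, still produces a zigzag in $\MeshDiagExt^1(\Sigma)(\strat{X})$ connecting the two labellings, which by the component analysis above is impossible. The geometric reading promised in the observation then falls out of the construction: the non-liftable isotopies are precisely the two edges of the horn, each of which introduces a fresh $0$-stratum from infinity outside $\strat{X}$, and the failure encodes the fact that the labelling of a cell emerging from outside $\strat{X}$ is a genuine choice with no canonical value dictated by the base isotopy.
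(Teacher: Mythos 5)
Your counterexample collapses at the component argument, and this is a genuine gap rather than a presentational issue. The two edges you use to build the horn, $\strat{E}_0 \to (\strat{E}, R)$ and $\strat{E}_0 \to (\strat{E}, B)$, are themselves a cospan in $\MeshDiagExt^1(\Sigma)(\strat{X})$ through the empty diagram, so they already exhibit a zigzag connecting $(\strat{E}, R)$ and $(\strat{E}, B)$: these two vertices lie in the \emph{same} connected component of the labelled category and of its classifying space. (With your free computad on $\{R,B\}$, bringing a point in from infinity with either label is always an admissible edge out of $\strat{E}_0$, precisely because the restriction map on labels forgets the new cell.) Consequently the final step, ``any lift of the horn produces a zigzag connecting the two labellings, which is impossible,'' is not a contradiction at all, and the argument as written proves nothing. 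The analysis of a single $1$-cell over the identity bordism (labels of a persistent $0$-atom cannot change along one edge) does not imply disconnection, because zigzags may pass through wrong-way edges such as the one through $\strat{E}_0$.

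The argument can be repaired, but only by retaining exactly the datum you discard, namely that the face $d_0$ of a hypothetical lift of $s_1(e)$ lies over the \emph{degenerate} edge at $\strat{E}$ in $\Ex^\infty(\MeshDiagExt^1(\strat{X}))$. Unwinding equality in the colimit defining $\Ex^\infty$, some representative of that face at a finite stage is a map $\Sd^t\Delta\ord{1} \to \MeshDiagExt^1(\Sigma)(\strat{X})$ whose composite with the forgetful map is strictly constant, i.e.\ a zigzag inside the strict fibre of the right fibration over the fixed diagrammatic mesh refining $\strat{E}$; that fibre is the discrete set $\{R, B\}$, so no such zigzag from $R$ to $B$ exists, and only then do you get the contradiction. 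Alternatively, and closer to the geometric reason the observation itself gives, choose a computad for which the emerging cell admits \emph{no} compatible label (for instance one object and no $1$-generators, so $\Sigma$ is empty on any mesh with a $0$-labelled atom, or a generator $f : x \to y$ whose boundary is incompatible with the ambient labelling): then already the $\Lambda^0\ord{1}$-horn given by a single isotopy bringing a cell in from infinity has no lift, even after passing to $\Ex^\infty$, since no vertex of the labelled category lies over the target diagram at all. Either repair is fine; as it stands, your proof has a false key claim and does not establish the observation.
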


We solve this issue by introducing a labelled $n$-manifold diagrams that are not extended,
so that we avoid any non-trivial geometry outside the closed $n$-mesh.

% We solve this issue by avoiding any non-trivial geometry outside of the confines of the closed $n$-mesh. This does not change the homotopy type of the spaces involved.

\begin{construction}
  Let $\Sigma : \cat{C} \to \BMeshDiag{n}$ be a right fibration.
  For any closed $n$-mesh we define the simplicial set  
  $\MeshDiag^n(\Sigma)(\strat{X})$ 
  of \defn{meshed $n$-manifold diagrams on $\strat{X}$ with labels in $\Sigma$}
  by taking the pullback of simplicial sets
  \[
    \begin{tikzcd}
      {\MeshDiag^n(\Sigma)(\strat{X})} \ar[r, hook] \ar[d] \pullbackcorner &
      {\MeshDiagExt^n(\Sigma)(\strat{X})} \ar[d] \\
      {\MeshDiag^n(\strat{X})} \ar[r, hook] &
      {\MeshDiagExt^n(\strat{X})}
    \end{tikzcd}
  \]
  Then the space $\MfldDiag^n(\Sigma)(\strat{X})$
  of \defn{$n$-manifold diagrams on $\strat{X}$ with labels in $\Sigma$}
  is the Kan fibrant replacement $\Ex^\infty(\MeshDiag^n(\Sigma)(\strat{X}))$.
\end{construction}

\begin{lemma}\label{lem:d-cptd-canonical-equiv}
  Let $\Sigma : \cat{C} \to \BMeshDiag{n}$ be a right fibration that
  represents an $n$-diagrammatic computad.
  Then for every closed $n$-mesh $\strat{X}$ the induced inclusion map
  \begin{equation}\label{eq:d-cptd-canonical-equiv}
    \begin{tikzcd}
    {\Ex^\infty(\MeshDiag^n(\Sigma)(\strat{X}))} \ar[r, hook] &
    {\Ex^\infty(\MeshDiagExt^n(\Sigma)(\strat{X}))}
    \end{tikzcd}
  \end{equation}
  is an equivalence of Kan complexes.
\end{lemma}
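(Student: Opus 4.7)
The plan is to promote the explicit retraction in the unlabelled setting supplied by Corollary~\ref{cor:d-mesh-diag-canonicalise} to the labelled setting using only that $\Sigma$ is a presheaf on $\BMeshDiag{n}$, and then pass to Kan fibrant replacements so that the resulting natural transformation becomes a natural equivalence.

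First I would unpack the retraction in the unlabelled case, which transfers from the truss version in Lemma~\ref{lem:d-truss-diag-canonicalise} through the equivalence $\MeshDiagExt^n(\strat{X}) \simeq \TrussDiagExt^n(\TrussClosedNerve(\strat{X}))$. On an object $(S, f)$ of $\MeshDiagExt^n(\strat{X})$ the retraction $r_0$ factors $f : X^\dagger \pto S$ as $f = f_1 \circ f_0$ with $f_0$ active and $f_1$ inert, and returns $(S', f_0)$; the component of $\eps_0 \colon i_0 \circ r_0 \to \id$ at $(S, f)$ is the inert bordism $f_1 \colon (S', f_0) \pto (S, f)$, and $r_0$ extends to morphisms by functoriality of the active/inert factorization system from Corollary~\ref{cor:fct-emb-active-factorisation-open}.

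Next I would lift this construction to the labelled setting. Define $\hat{r}(S, f, \sigma) := (S', f_0, f_1^* \sigma)$, where $\sigma \in \Sigma(S)$ and $f_1^* \sigma \in \Sigma(S')$ is the pullback along the inert bordism $f_1$ of diagrammatic $n$-meshes. For a morphism $\alpha \colon (S, f, \sigma) \pto (T, g, \tau)$, the functorial active/inert factorization produces $\alpha' \colon S' \pto T'$ satisfying $g_1 \circ \alpha' = \alpha \circ f_1$. The label compatibility $\alpha^*\tau = \sigma$ of the original morphism together with contravariance of $\Sigma$ then yields $(\alpha')^*(g_1^*\tau) = f_1^* \sigma$, so $\hat{r}(\alpha) := \alpha'$ is a well-defined morphism in $\MeshDiag^n(\Sigma)(\strat{X})$. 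The bordisms $f_1$ assemble into a natural transformation $\hat{\eps} \colon \hat{i} \circ \hat{r} \to \id$, with naturality squares commuting by the same identity $g_1 \circ \alpha' = \alpha \circ f_1$. When $(S, f, \sigma)$ is already in $\MeshDiag^n(\Sigma)(\strat{X})$ the bordism $f$ is active, hence $f_1 = \id$ and $f_1^*\sigma = \sigma$, giving $\hat{r} \circ \hat{i} = \id$ strictly.

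Finally, since $\Ex^\infty$ turns each quasicategory into a Kan complex in which every morphism is invertible, the natural transformation $\Ex^\infty(\hat{\eps})$ becomes a natural equivalence witnessing $\Ex^\infty(\hat{i}) \circ \Ex^\infty(\hat{r}) \simeq \id$; combined with $\Ex^\infty(\hat{r}) \circ \Ex^\infty(\hat{i}) = \id$ this shows that $\Ex^\infty(\hat{i})$ is a Kan equivalence. The main obstacle is checking that $\hat{r}$ is functorial on morphisms, which reduces to commutativity of the square
\[
  \begin{tikzcd}
    {S'} \ar[r, "\alpha'", mid vert] \ar[d, "f_1"', hook, mid vert] & {T'} \ar[d, "g_1", hook, mid vert] \\
    {S} \ar[r, "\alpha"', mid vert] & {T}
  \end{tikzcd}
\]
supplied by the uniqueness of active/inert factorization, and to the resulting compatibility of pulled-back labels via contravariance of $\Sigma$. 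It is worth observing that neither the isotopy nor the sheaf conditions on the computad $\Sigma$ are invoked; the argument in fact goes through for any presheaf on $\BMeshDiag{n}$.
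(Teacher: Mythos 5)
Your argument is correct and follows essentially the same route as the paper: both proofs lift the retraction and homotopy of Corollary~\ref{cor:d-mesh-diag-canonicalise} to the labelled setting using only that $\Sigma$ is a right fibration (no computad axioms needed), and then apply $\Ex^\infty$ to turn the natural transformation into a homotopy. The paper packages your explicit transport of labels along the inert bordisms $f_1$ as a pullback of simplicial sets along $i$, $r$ and $\eps$, which sidesteps the (harmless, but worth acknowledging) point that the cartesian lifts $f_1^*\sigma$ in the right fibration $\Sigma$ are only unique up to contractible choice, so your $\hat{r}$ is strictly functorial only after fixing such choices coherently.
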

\begin{proof}
  Let $i : \MeshDiag^n(\strat{X}) \hookrightarrow \MeshDiagExt^n(\strat{X})$
  denote the inclusion map. By Corollary~\ref{cor:d-mesh-diag-canonicalise}
  there exists a retraction $r$ of $i$ together with a natural transformation
  $\eps : i \circ r \to \id$.
  By taking the pullbacks along $i$ and $r$
  \[
    \begin{tikzcd}
      {\MeshDiagExt^n(\Sigma)(\strat{X})} \ar[r, "r'"] \ar[d] \pullbackcorner &
      {\MeshDiag^n(\Sigma)(\strat{X})} \ar[d] \ar[hook, r, "i'"] \pullbackcorner &
      {\MeshDiagExt^n(\Sigma)(\strat{X})} \ar[d] \\
      {\MeshDiagExt^n(\strat{X})} \ar[r, "r"'] &
      {\MeshDiag^n(\strat{X})} \ar[hook, r, "i"'] &
      {\MeshDiagExt^n(\strat{X})}
    \end{tikzcd}
  \]
  we then have an inclusion map $i'$ with a retraction $r'$.
  Morever, by taking the pullback
  \[
    \begin{tikzcd}
      {\Delta\ord{1} \times \MeshDiagExt^n(\Sigma)(\strat{X})} \ar[r, "\eps'"] \ar[d] \pullbackcorner &
      {\MeshDiagExt^n(\Sigma)(\strat{X})} \ar[d] \\
      {\Delta\ord{1} \times \MeshDiagExt^n(\strat{X})} \ar[r, "\eps"'] &
      {\MeshDiagExt^n(\strat{X})}
    \end{tikzcd}
  \]
  we have a natural transformation $\eps' : i' \circ r' \to \id$.
  When we then apply $\Ex^\infty$ to $i', r', \eps'$ we see that
  $\Ex^\infty(\MeshDiag^n(\Sigma)(\strat{X}))$ is a deformation
  retract of $\Ex^\infty(\MeshDiagExt^n(\Sigma)(\strat{X}))$.
\end{proof}

\begin{lemma}\label{lem:d-cptd-forget-label-kan}
  Let $\Sigma : \cat{C} \to \BMeshDiag{n}$ be a right fibration that
  represents an $n$-diagrammatic computad.
  Then for every closed $n$-mesh $\strat{X}$ the forgetful map
  \[
    \Ex^\infty(\MeshDiag^n(\Sigma)(\strat{X}))
    \longrightarrow
    \Ex^\infty(\MeshDiag^n(\strat{X}))
  \]
  is a Kan fibration.
\end{lemma}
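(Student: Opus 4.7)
The plan is to identify the map $p := \MeshDiag^n(\Sigma)(\strat{X}) \to \MeshDiag^n(\strat{X})$ as a right fibration of quasicategories whose straightening inverts every morphism of the base; from this it follows that its Kan-fibrant replacement $\Ex^\infty(p)$ is a Kan fibration.

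First, I would observe that $p$ is a right fibration. The right fibration $\Sigma : \cat{C} \to \BMeshDiag{n}$ pulls back along the forgetful functor $\MeshDiagExt^n \to \BMeshDiag{n}$ to yield a right fibration $\MeshDiagExt^n(\Sigma) \to \MeshDiagExt^n$, and the further pullbacks to the fibre over $\strat{X}$ and to the non-extended subobjects preserve this structure.

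Next, I would show that every morphism in $\MeshDiag^n(\strat{X})$ projects to an active bordism in $\BMeshDiag{n}$. A morphism $\phi : \strat{M} \pto \strat{N}$ in $\MeshDiag^n(\strat{X})$ comes equipped with active base bordisms $\alpha : \strat{X}^\dagger \pto \strat{M}$ and $\alpha' : \strat{X}^\dagger \pto \strat{N}$ satisfying $\alpha' = \phi \circ \alpha$. Using the active-then-inert factorisation $\phi = \phi_{\text{inert}} \circ \phi_{\text{active}}$ from Corollary~\ref{cor:fct-emb-active-factorisation-open} together with closure of active bordisms under composition, the composite $\phi \circ \alpha$ factors as $\phi_{\text{inert}} \circ (\phi_{\text{active}} \circ \alpha)$ with active first part. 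Uniqueness of the active-inert factorisation applied to the already-active map $\alpha'$ then forces $\phi_{\text{inert}}$ to be an isomorphism, so $\phi$ is itself active.

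By the isotopy condition in Definition~\ref{def:d-computad}, every active bordism of diagrammatic meshes induces a Kan equivalence between the fibres of $\Sigma$. Combined with the previous step, $p$ is a right fibration whose induced map on Kan-complex fibres is a Kan equivalence over every edge of the base.

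Finally, I would invoke the general fact that such a right fibration, whose straightening inverts every morphism, is bifibered and hence a Kan fibration: the lift against a left horn is obtained by composing a right-cartesian lift with a fibre equivalence inside the quasicategory $\MeshDiag^n(\Sigma)(\strat{X})$. Kan fibrations are preserved by $\Ex^\infty$, since $\Ex$ is right adjoint to $\Sd$ (which preserves acyclic cofibrations) and Kan fibrations are closed under the filtered colimit $\Ex^\infty = \colim_m \Ex^m$. The main delicate point will be the identification of morphisms in $\MeshDiag^n(\strat{X})$ with active bordisms, which rests essentially on the cancellation property of the active-inert factorisation system; the remaining steps are standard $\infty$-categorical facts combined with the isotopy axiom of diagrammatic computads.
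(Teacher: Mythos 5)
Your proposal is correct, but it takes a genuinely different route from the paper. The paper's proof never claims that the underived map $\MeshDiag^n(\Sigma)(\strat{X}) \to \MeshDiag^n(\strat{X})$ is a Kan fibration: it reduces a lifting problem for $\Ex^\infty$ to a finite stage $\Ex^t$, transposes along $\Sd^t \dashv \Ex^t$ to a lifting problem against $\Sd^t(\Lambda^i\ord{k}) \hookrightarrow \Sd^t(\Delta\ord{k})$, observes (without proof) that the classifying map of the base factors through the wide subcategory $\Act$ of active bordisms, and then uses that fibrancy of $\Sigma$ in the $\Act$-local contravariant model structure forces the restriction $\cat{C} \times_{\BMeshDiag{n}} \Act \to \Act$ to be a Kan fibration, against which the subdivided horn inclusion (an acyclic cofibration) lifts. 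You instead prove the stronger statement that the map is already a Kan fibration before applying $\Ex^\infty$: it is a right fibration by pullback-stability, every edge of the base is active by the right-cancellation property of the active/inert orthogonal factorisation system (this fills in the step the paper only asserts, and is the genuinely new content of your argument), the isotopy axiom then makes every contravariant transport a homotopy equivalence, and the standard characterisation of Kan fibrations among right fibrations (which you should cite explicitly, e.g.\ the dual of the statement that a left fibration is a Kan fibration iff all covariant transport maps are homotopy equivalences) finishes the job; preservation of Kan fibrations under $\Ex^\infty$ is routine since $\Sd$ is left Quillen and horns are compact. Both arguments use only the isotopy axiom and not the sheaf axiom, and your $\Act$-locality input is exactly what the paper extracts model-categorically from the cited hypercover-localisation lemma; your version is more self-contained and yields the sharper conclusion, at the price of invoking the right-fibration classification of Kan fibrations. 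One point to make precise when writing this up: the identification of edges of $\MeshDiag^n(\strat{X})$ with strictly commuting triangles under $\strat{X}^\dagger$ should be routed through the $1$-category $\TrussDiag^n(\TrussClosedNerve(\strat{X}))$, where the factorisation system and the cancellation argument live on the nose.
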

\begin{proof}
  Suppose we have a lifting problem
  \[
    \begin{tikzcd}
      {\Lambda^i\ord{k}} \ar[r] \ar[d] &
      {\Ex^\infty(\MeshDiag^n(\Sigma)(\strat{X}))} \ar[d] \\
      {\Delta\ord{k}} \ar[r] \ar[ur, dashed] &
      {\Ex^\infty(\MeshDiag^n(\strat{X}))}
    \end{tikzcd}
  \]
  Then there exists a $t \geq 0$ such that the lifting problem factors
  through the $t$h stage $\Ex^t$ of the filtered colimit that defines the functor $\Ex^\infty$:
  \[
    \begin{tikzcd}
      {\Lambda^i\ord{k}} \ar[r] \ar[d] &
      {\Ex^t(\MeshDiag^n(\Sigma)(\strat{X}))} \ar[d] \\
      {\Delta\ord{k}} \ar[r] \ar[ur, dashed] &
      {\Ex^t(\MeshDiag^n(\strat{X}))}
    \end{tikzcd}
  \]
  By adjunction the lifting problem is then equivalent to
  \[
    \begin{tikzcd}
      {\Sd^t(\Lambda^i\ord{k})} \ar[r] \ar[d] &
      {\MeshDiag^n(\Sigma)(\strat{X})} \ar[d] \\
      {\Sd^t(\Delta\ord{k})} \ar[r] \ar[ur, dashed] &
      {\MeshDiag^n(\strat{X})}
    \end{tikzcd}
  \]
  To solve this lifting problem we must find a filler for the diagram
  \begin{equation}\label{eq:d-cptd-forget-label-kan:lift-1}
    \begin{tikzcd}
      {\Sd^t(\Lambda^i\ord{k})} \ar[rr] \ar[d] &
      {} &
      {\cat{C}} \ar[d] \\
      {\Sd^t(\Delta\ord{k})} \ar[r] \ar[urr, dashed] &
      {\MeshDiag^n(\strat{X})} \ar[r] &
      {\BMeshDiag{n}}
    \end{tikzcd}
  \end{equation}
  which extends the labelling of the meshed $n$-manifold diagram.
    
  Since the right fibration $\Sigma : \cat{C} \to \BMeshDiag{n}$
  represents an $n$-diagrammatic computad,
  it is fibrant in the $\Act$-local contravariant model structure
  on $\sSet_{/ \BMeshDiag{n}}$ representing presheaves on the localisation
  $\PSh(\BMeshDiag{n}[\Act^{-1}])$.
  Then by the dual of~\cite[Lemma 2.5.]{localizing-hypercover} we have
  that the map $\Sigma' : \cat{C}' \to \Act$ obtained by the pullback
  \[
    \begin{tikzcd}
      {\cat{C}'} \ar[r, hook] \ar[d, "\Sigma'"'] \pullbackcorner &
      {\cat{C}} \ar[d, "\Sigma"] \\
      {\Act} \ar[r, hook] &
      {\BMeshDiag{n}}
    \end{tikzcd}
  \]
  is a Kan fibration.
  The map $\MeshDiag^n(\strat{X}) \to \BMeshDiag{n}$ factors through the inclusion
  $\Act \hookrightarrow \BMeshDiag{n}$ and so the lifting problem (\ref{eq:d-cptd-forget-label-kan:lift-1}) factors through the pullback
  \begin{equation}\label{eq:d-cptd-forget-label-kan:lift-2}
    \begin{tikzcd}
      {\Sd^t(\Lambda^i\ord{k})} \ar[rr] \ar[d, hook] &
      {} &
      {\cat{C}'} \ar[d, "\Sigma'"{description}] \ar[r, hook] \pullbackcorner &
      {\cat{C}} \ar[d, "\Sigma"] \\
      {\Sd^t(\Delta\ord{k})} \ar[r] \ar[urr, dashed] &
      {\MeshDiag^n(\strat{X})} \ar[r] &
      {\Act} \ar[r, hook] & 
      {\BMeshDiag{n}}
    \end{tikzcd}
  \end{equation}
  The inclusion $\Sd^i(\Lambda^i\ord{k}) \hookrightarrow \Sd^i(\Delta\ord{k})$
  is a Kan equivalence, and so we have a lift in (\ref{eq:d-cptd-forget-label-kan:lift-2}).  
\end{proof}

\begin{lemma}\label{lem:lift-homotopy-limit-kan-fibration}
  Let $f : \cat{I}^\triangleleft \to \sSet^{\ord{1}}$ be a diagram
  which satisfies the following conditions.
  \begin{enumerate}
    \item For each $i \in \cat{I}$ the map $f(i) : E(i) \to B(i)$ is a Kan fibration between Kan complexes.
    \item The diagram $B : \cat{I}^\triangleleft \to \sSet$ is a homotopy limit diagram.
    \item For any vertex $x \in B(\bot)_0$ let
    $F_x : \cat{I}^\triangleleft \to \sSet$
    be the diagram which sends $i \in \cat{I}^\triangleleft$ to the fibre
    \[
      \begin{tikzcd}
        {F_x(i)} \ar[r] \ar[d] \pullbackcorner &
        {E(i)} \ar[d, "X(i)"] \\
        {\{ x \}} \ar[r] &
        {B(i)}
      \end{tikzcd}
    \]
    Then $F_x$ is a homotopy limit diagram.
  \end{enumerate}
  Then $f$ is a homotopy limit diagram.
\end{lemma}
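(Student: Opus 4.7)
\emph{Proof plan.} Since $\sSet^{\ord{1}}$ is the arrow category of $\sSet$ and limits are computed pointwise, the diagram $f$ is a homotopy limit diagram if and only if both underlying diagrams $E, B : \cat{I}^\triangleleft \to \sSet$ are homotopy limit diagrams. Hypothesis (2) supplies the claim for $B$, so it suffices to show that the natural comparison $\alpha : E(\bot) \to \holim_{\cat{I}} E$ is a weak equivalence, where $\bot$ denotes the cone point of $\cat{I}^\triangleleft$.

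The plan is to show that the square
\[
  \begin{tikzcd}
    E(\bot) \ar[r, "\alpha"] \ar[d, "f(\bot)"'] & \holim_{\cat{I}} E \ar[d, "\beta"] \\
    B(\bot) \ar[r, "\gamma"', "\simeq"] & \holim_{\cat{I}} B
  \end{tikzcd}
\]
is a homotopy pullback. Granting this, the conclusion follows at once: $\gamma$ is a weak equivalence by hypothesis (2), and homotopy pullbacks of weak equivalences are weak equivalences, so $\alpha$ is a weak equivalence too.

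To verify the homotopy pullback condition I would first replace $f$ by a Reedy fibrant model in the Reedy model structure on $\Fun(\cat{I}^{\op}, \sSet^{\ord{1}})$; hypothesis (1) ensures that this can be arranged so that $\beta$ is a Kan fibration between Kan complexes and $\holim$ is computed by the strict limit $\lim$. Then for each vertex $x \in B(\bot)_0$, the strict fibre of $f(\bot)$ over $x$ is the Kan complex $F_x(\bot)$, and the strict fibre of $\beta$ over the image $\gamma(x) \in \holim_{\cat{I}} B$ is $\lim_{\cat{I}} F_x \simeq \holim_{\cat{I}} F_x$, since in $\sSet$ strict limits commute with pullbacks and the Reedy fibrant replacement identifies strict and homotopy limits. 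Hypothesis (3) then asserts exactly that $F_x(\bot) \to \holim_{\cat{I}} F_x$ is a weak equivalence, which is the homotopy pullback condition in the fibre over $x$. As $x$ ranges over all vertices of $B(\bot)$, we cover the components of the base (each point of $B(\bot)$ lies in some component hit by a vertex), so the square is a homotopy pullback.

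The main obstacle is to arrange the Reedy fibrant replacement so that the fibres of $f(\bot)$ remain $F_x(\bot)$ and the induced map $F_x(\bot) \to \holim_{\cat{I}} F_x$ is unchanged, since the argument uses the three layers fibre/total/base simultaneously. This can be handled either by performing the replacement levelwise against fixed models of $B(\bot)$ and $F_x(\bot)$, or, more conceptually, by translating the whole argument into $\Space$ and invoking that limits commute with pullbacks in an $\infty$-topos, which gives the identification of homotopy fibres intrinsically and sidesteps any explicit replacement.
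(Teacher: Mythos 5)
Your strategy is correct but genuinely different from the one in the paper. You reduce to showing that the square comparing $E(\bot)\to\holim_{\cat{I}}E$ with $B(\bot)\to\holim_{\cat{I}}B$ is a homotopy pullback, and you verify this model-categorically: Reedy-replace, identify the strict fibre of $\lim E\to\lim B$ over $\gamma(x)$ with $\lim F_x$, and invoke the fact that a map of Kan fibrations over a Kan complex which is a weak equivalence on fibres over every vertex is a total weak equivalence. The paper instead works directly in $\Space$ with the codomain cartesian fibration $\cod:\Space^{\ord{1}}\to\Space$: it pulls each $E(i)$ back along $B(\bot)\to B(i)$ (a homotopy pullback precisely because each $f(i)$ is a Kan fibration between Kan complexes, which is where hypothesis (1) enters), observes that this replaces $f$ by a $\cod$-cartesian-equivalent diagram landing in the fibre $\Space_{/B(\bot)}$, and then cites the general criteria (Kerodon 02KR, 031R, 031S) saying that limits over a cartesian fibration with complete fibres and limit-preserving transport are computed fibrewise, where hypothesis (3) finishes the job. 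The two arguments have the same skeleton -- transport everything to the fixed base $B(\bot)$, then check fibrewise -- and indeed your second fallback option (``translate into $\Space$ and use that limits commute with pullbacks'') is essentially the paper's proof. What your version buys is elementary self-containedness; what it costs is exactly the obstacle you flag: you must choose the Reedy fibrant replacement of the arrow diagram functorially and compatibly in all three layers so that the replaced fibre diagrams remain levelwise weakly equivalent to the $F_x$ of hypothesis (3), and you must check that these replaced fibre diagrams are themselves Reedy fibrant so that their strict limits compute $\holim F_x$. Both points are standard but are the real content of the model-categorical route, and your sketch leaves them at the level of an acknowledged to-do; the paper's $\infty$-categorical phrasing sidesteps them entirely.
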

\begin{proof}
  Let $L : \sSet \to \Space$ the localisation functor which inverts the Kan equivalences
  and $\cod : \Space^{\ord{1}} \to \Space$ the codomain fibration.
  Since we assumed $B : \cat{I}^\triangleleft \to \sSet$ to be a homotopy limit diagram, the composite $L \circ B$ is a limit diagram in $\Space$.
  Therefore, by~\cite[\href{https://kerodon.net/tag/02KR}{Corollary 02KR}]{kerodon}
  the diagram $L^{\ord{1}} \circ f$ is a limit diagram if and only if
  it is a $\cod$-limit diagram.
  For each $i \in \cat{I}^\triangleleft$ we take the pullback of simplicial sets
  \begin{equation}\label{eq:left-homotopy-limit-Kan-fibration:pullback}
    \begin{tikzcd}
      {E'(i)} \ar[r] \ar[d, "f'(i)", swap] \pullbackcorner &
      {E(i)} \ar[d, "f(i)"] \\
      {B(\bot)} \ar[r] &
      {B(i)}
    \end{tikzcd}
  \end{equation}
  By naturality we obtain a functor $f' : \cat{I}^\triangleleft \to \sSet^{\ord{1}}$
  as well as a natural transformation $f' \to f$.
  This descends to a natural transformation between the localisations
  $L^{\ord{1}} \circ f' \to L^{\ord{1}} \circ f$.
  Because every map $f(i) : E(i) \to B(i)$ is a Kan fibration
  and all involved objects are Kan complexes,
  the square (\ref{eq:left-homotopy-limit-Kan-fibration:pullback}) is a homotopy pullback.
  A map in the arrow category $\Space^{\ord{1}}$ is $\cod$-cartesian if
  it is a pullback square in $\Space$;
  therefore the components of the induced natural transformation $L^{\ord{1}} \circ f' \to L^{\ord{1}} \circ f$ are $\cod$-cartesian maps.
  Moreover the component on the cone point $\bot \in \cat{I}^\triangleleft$ is an equivalence.
  By the dual of~\cite[\href{https://kerodon.net/tag/031R}{Proposition 031R}]{kerodon}
  it follows that $L^{\ord{1}} \circ f$ is a $\cod$-limit diagram if
  $L^{\ord{1}} \circ f'$ is a $\cod$-limit diagram.
  
  Since $\Space$ is complete, the codomain functor $\cod : \Space^{\ord{1}} \to \Space$
  is a cartesian fibration.
  The fibre of $\cod$ over a space $X \in \Space$ is the slice $\infty$-category
  $\Space_{/ X}$ which is also complete.
  Moreover, for any map of spaces $X \to Y$ the contravariant transport functor
  $\Space_{/ Y} \to \Space_{/ X}$ is given by pullback and therefore preserves
  limits.
  Then by 
  \cite[\href{https://kerodon.net/tag/031S}{Proposition 031S}]{kerodon}
  it follows that $L^{\ord{1}} \circ f'$ is a $\cod$-limit diagram
  if and only if $L^{\ord{1}} \circ f'$ is a limit diagram in the slice
  $\infty$-category $\Space_{/ L(B(\bot))}$.
  For each $i \in \cat{I}^\triangleleft$ and vertex $x \in B(\bot)_0$
  the simplicial set $F_x(i)$ is a Kan complex representing the homotopy
  fibre of $f'(i) : E'(i) \to B(\bot)$ over $x$
  because $f'(i)$ is a Kan fibration (being a pullback of the Kan fibration
  $f(i)$).
  Limits in $\Space_{/ L(B(\bot))}$ are computed fibrewise,
  and so $L^{\ord{1}} \circ f'$ is a limit diagram in the slice
  by the assumption that $F_x : \cat{I}^\triangleleft \to \sSet$ is a
  homotopy limit diagram for all $x \in B(\bot)_0$.
\end{proof}

\begin{lemma}\label{lem:d-cptd-segal}
  Let $\Sigma : \cat{C} \to \BMeshDiag{n}$ be a right fibration
  representing an $n$-diagrammatic computad.
  Let $\strat{X}$ be a closed $n$-mesh and
  $U : \stratPos{\strat{X}}^{\triangleleft} \to \BMeshClosed{n}$
  its cell covering diagram.
  Then the following diagrams $\stratPos{\strat{X}}^{\triangleleft} \to \sSet$
  are homotopy limit diagrams:
  \begin{enumerate}[label=(\arabic*)]
    \item $\MfldDiagExt^n(U(-))$
    \item $\Ex^\infty(\MeshDiagExt^n(U(-)))$
    \item $\Ex^\infty(\MeshDiag^n(U(-)))$
    \item $\Ex^\infty(\MeshDiag^n(\Sigma)(U(-)))$
    \item $\Ex^\infty(\MeshDiagExt^n(\Sigma)(U(-)))$
  \end{enumerate}
  In particular $\MfldDiagExt^n(\Sigma)(-)$ satisfies the Segal condition of an $n$-diagrammatic space.
\end{lemma}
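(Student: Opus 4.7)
The plan is to establish the five claims in order, bootstrapping each from the previous ones together with the comparison results already built up in the chapter. Claim (1) follows immediately by combining Lemma~\ref{lem:d-mfld-diag-space-segal} (which is the Segal condition for $\MfldDiag^n(-)$) with the natural equivalence $\MfldDiagExt^n(-) \simeq \MfldDiag^n(-)$ of Proposition~\ref{prop:d-mfld-ext-equiv}. For (2), the proof of Lemma~\ref{lem:mesh-diag-to-mfld-diag} actually showed that the map $\Ex^\infty(\MeshDiagExt^n(\strat{X})) \to \MfldDiagExt^n(\strat{X})$ is a Kan equivalence for every $\strat{X}$, so (1) transports directly to (2). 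For (3), Corollary~\ref{cor:d-mesh-diag-canonicalise} gives a deformation retraction $\MeshDiag^n(\strat{X}) \hookrightarrow \MeshDiagExt^n(\strat{X})$ which survives $\Ex^\infty$ and yields a Kan equivalence, hence (2) implies (3).

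The main work is in (4). I would apply Lemma~\ref{lem:lift-homotopy-limit-kan-fibration} to the diagram of maps
\[
  \Ex^\infty(\MeshDiag^n(\Sigma)(U(-)))
  \longrightarrow
  \Ex^\infty(\MeshDiag^n(U(-))).
\]
Lemma~\ref{lem:d-cptd-forget-label-kan} guarantees that each component is a Kan fibration between Kan complexes, and the base diagram is a homotopy limit diagram by (3). What remains is the third hypothesis of Lemma~\ref{lem:lift-homotopy-limit-kan-fibration}: for every vertex $x$ of the base at the cone point, represented by a meshed $n$-manifold diagram $\strat{M}$ on $\strat{X}$, the diagram of fibres $F_x : \stratPos{\strat{X}}^\triangleleft \to \sSet$ is a homotopy limit diagram. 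The fibre over the restriction $\strat{M}_i$ of $\strat{M}$ to the $i$th cell is (a fibrant model of) the labelling space $\Sigma(\strat{M}_i)$.

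The hard part, and the main obstacle, is matching the cellular cover of $\strat{X}$ with the atomic cover of $\strat{M}$ so that the sheaf condition on $\Sigma$ can be invoked. Because $\strat{M}$ is orthogonal to $\strat{X}$, each atom of $\strat{M}$ is localised to a unique cell of $\strat{X}$; consequently the atomic covering diagram of $\strat{M}$ splits as a fibrewise assembly of the atomic covering diagrams of the restrictions $\strat{M}_i$ over the diagram $\stratPos{\strat{X}}^{\op,\triangleleft}$. Applying the sheaf condition of Definition~\ref{def:d-computad} to $\strat{M}$, together with the isotopy condition to transport $\Sigma$ across the active bordisms produced by refinement of meshes when cells of $\strat{X}$ are assembled, will exhibit $\Sigma(\strat{M})$ as the limit $\lim_i \Sigma(\strat{M}_i)$. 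The only subtle point is naturality: the inert bordisms $\strat{M} \pto \strat{M}_i$ that restrict the labelled mesh to each cell must be compatible with restriction of $\Sigma$, and this follows since $\Sigma$ is a presheaf on $\BMeshDiag{n}$ and the covering diagram is built from inert bordisms.

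For (5), Lemma~\ref{lem:d-cptd-canonical-equiv} provides a Kan equivalence $\Ex^\infty(\MeshDiag^n(\Sigma)(\strat{Y})) \simeq \Ex^\infty(\MeshDiagExt^n(\Sigma)(\strat{Y}))$ natural in $\strat{Y}$, so homotopy limits are preserved and (4) gives (5). Finally, the concluding sentence follows because $\MfldDiagExt^n(\Sigma)(\strat{X}) = \CatToSpace(\MeshDiagExt^n(\Sigma)(\strat{X}))$ is equivalent as a space to $\Ex^\infty(\MeshDiagExt^n(\Sigma)(\strat{X}))$, so (5) delivers the Segal condition for $\MfldDiagExt^n(\Sigma)(-)$ on the cellular covering diagrams $U$.
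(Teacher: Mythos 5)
Your proposal is correct and follows essentially the same route as the paper: the chain of equivalences (1)$\Leftrightarrow$(2)$\Leftrightarrow$(3) via Lemma~\ref{lem:mesh-diag-to-mfld-diag} and the canonicalisation retract, then Lemma~\ref{lem:lift-homotopy-limit-kan-fibration} together with Lemma~\ref{lem:d-cptd-forget-label-kan} for step (4), identifying the fibre diagram over a diagrammatic mesh $\strat{M}$ with $\Sigma$ evaluated on the covering by the restrictions $\strat{M}_i$ and invoking the computad axioms, and finally Lemma~\ref{lem:d-cptd-canonical-equiv} for (5). Your extra care in matching the cellular cover of $\strat{X}$ with the atomic covers of $\strat{M}$ is detail the paper elides but does not change the argument.
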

\begin{proof}
  By Theorem~\ref{thm:d-mfld-diag-space} the functor $\MfldDiagExt^n(-)$ defines an $n$-diagrammatic space.
  Therefore, (1) is a homotopy limit by the Segal condition.
  By Lemma~\ref{lem:mesh-diag-to-mfld-diag} the map $\MfldDiagExt^n \to \BMeshClosed{n}$ is the free left fibration generated by the cocartesian fibration $\MeshDiagExt^n \to \BMeshClosed{n}$.
  This implies that (1) and (2) are equivalent as diagrams of spaces.
  Now by Lemma~\ref{lem:d-cptd-canonical-equiv} the diagrams (2) and (3) are equivalent as well.
  Therefore, (3) is a homotopy limit diagram.
  
  Let $\strat{M}$ be a diagrammatic $n$-mesh that represents a point in
  $\Ex^\infty(\MeshDiag^n(\strat{X}))$.
  For every $i \in \stratPos{\strat{X}}^\triangleleft$ we denote by $\strat{M}_i$
  the image of $\strat{M}$ via the restriction map
  \[
    \Ex^\infty(\MeshDiag^n(\strat{X})) \longrightarrow \Ex^\infty(\MeshDiag^n(U(i)))
  \]
  This defines a covering diagram
  $V : \stratPos{\strat{X}}^{\op, \triangleright} \to \BMeshDiag{n}$
  with $V(i) = \strat{M}_i$ and the induced inert maps in between.
  For any $i \in \stratPos{\strat{X}}^{\triangleright}$
  we have a pullback square of simplicial sets
  \[
    \begin{tikzcd}
      {\Sigma(V(i))} \ar[r] \ar[d] \pullbackcorner &
      {\Ex^\infty(\MeshDiag^n(\Sigma)(U(i)))} \ar[d] \\
      {\Delta\ord{0}} \ar[r, "\strat{M}_i"'] &
      {\Ex^\infty(\MeshDiag^n(U(i)))}
    \end{tikzcd}
  \]
  By Lemma~\ref{lem:d-cptd-forget-label-kan} the maps on the right of this diagram are Kan fibrations, and therefore the pullback is a homotopy pullback as well.
  The diagram $\Sigma(V(-)) : \stratPos{\strat{X}}^\triangleleft \to \sSet$
  is a homotopy limit diagram because $\Sigma$ is an $n$-diagrammatic computad
  and $V(-)$ is a covering diagram.
  Then Lemma~\ref{lem:lift-homotopy-limit-kan-fibration} shows that (4) is a homotopy limit diagram.
  Finally, by Lemma~\ref{lem:d-cptd-canonical-equiv} the diagrams (4) and (5) are equivalent and so (5) is also a homotopy limit diagram.
\end{proof}

\begin{lemma}\label{lem:d-cptd-regular}
  Let $\Sigma : \cat{C} \to \BMeshDiag{n}$ be a left fibration representing a label system.
  Then for any closed $n$-mesh $\strat{X}$ the induced square
  \[
    \begin{tikzcd}
      {\MeshDiagExt(\Sigma)(\strat{X})} \ar[r] \ar[d] &
      {\MeshDiagExt(\Sigma)(\shape{\strat{X}})} \ar[d] \\
      {\MeshDiagExt(\Sigma)(\partial\strat{X})} \ar[r] &
      {\MeshDiagExt(\Sigma)(\partial\shape{\strat{X}})}
    \end{tikzcd}
  \]
  is a pullback square of spaces.
  In particular the map $\MfldDiagExt^n(\Sigma)(-) : \BMeshClosed{n} \to \Space$
  satisfies the regularity condition of an $n$-diagrammatic space.
\end{lemma}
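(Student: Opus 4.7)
The plan is to mirror the argument of Lemma~\ref{lem:d-cptd-segal}, replacing the Segal diagram by the regularity cospan $\cat{I} = \{ \shape{\strat{X}} \to \partial\shape{\strat{X}} \leftarrow \partial\strat{X} \}$. First I would use Lemma~\ref{lem:d-cptd-canonical-equiv} to pass, for each $\strat{Y} \in \{\strat{X},\shape{\strat{X}},\partial\strat{X},\partial\shape{\strat{X}}\}$, from $\MeshDiagExt^n(\Sigma)(\strat{Y})$ to the equivalent Kan complex $\Ex^\infty(\MeshDiag^n(\Sigma)(\strat{Y}))$. The task then reduces to showing that the induced square of these Kan complexes is a homotopy pullback.

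Next I would verify the corresponding unlabelled statement: by the regularity condition for $\MfldDiag^n(-)$, established in Lemma~\ref{lem:d-mfld-diag-space-regular}, together with the equivalences $\MfldDiagExt^n(\strat{Y}) \simeq \MfldDiag^n(\strat{Y}) \simeq \CatToSpace(\MeshDiagExt^n(\strat{Y})) \simeq \Ex^\infty(\MeshDiag^n(\strat{Y}))$ supplied by Proposition~\ref{prop:d-mfld-ext-equiv} and Lemma~\ref{lem:mesh-diag-to-mfld-diag}, the unlabelled square
\[
  \begin{tikzcd}
    {\Ex^\infty(\MeshDiag^n(\strat{X}))} \ar[r] \ar[d] &
    {\Ex^\infty(\MeshDiag^n(\shape{\strat{X}}))} \ar[d] \\
    {\Ex^\infty(\MeshDiag^n(\partial\strat{X}))} \ar[r] &
    {\Ex^\infty(\MeshDiag^n(\partial\shape{\strat{X}}))}
  \end{tikzcd}
\]
is a homotopy pullback. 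By Lemma~\ref{lem:d-cptd-forget-label-kan} the vertical forgetful maps from the labelled Kan complexes to the unlabelled ones are Kan fibrations.

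Thirdly, I would analyse the homotopy fibres over a vertex $\strat{M} \in \Ex^\infty(\MeshDiag^n(\strat{X}))$. The fibre over $\strat{M}$ of the forgetful Kan fibration is $\Sigma(\strat{M})$, and similarly for the other three corners the fibre is the value of $\Sigma$ on the inert restriction of $\strat{M}$ to the corresponding sub-mesh. The atomic cover of $\strat{M}$ decomposes as the union of the atomic covers of $\strat{M}\!\restriction_{\shape{\strat{X}}}$ and $\strat{M}\!\restriction_{\partial\strat{X}}$ along the atomic cover of their common restriction to $\partial\shape{\strat{X}}$; applying the sheaf condition on $\Sigma$ to this decomposition yields a homotopy pullback square of fibres. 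An adaptation of Lemma~\ref{lem:lift-homotopy-limit-kan-fibration} to the cospan shape $\cat{I}$ (its hypotheses then being: Kan fibrations on the right and bottom, unlabelled square is a homotopy pullback, fibrewise square is a homotopy pullback) then yields that the labelled square is a homotopy pullback of Kan complexes, as required.

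The main obstacle will be the third step: justifying that the atoms of $\strat{M}$ really are covered by those of $\strat{M}\!\restriction_{\shape{\strat{X}}}$ together with those of $\strat{M}\!\restriction_{\partial\strat{X}}$ with intersection captured by $\partial\shape{\strat{X}}$, so that the sheaf condition of Definition~\ref{def:d-computad}, which is phrased with respect to the atomic covering diagram of a single mesh, produces the cospan pullback we need. Once this identification of covers is in hand, the assembly into a pullback of spaces is immediate from the sheaf property and from the fact that inert bordisms in $\BMeshDiag{n}$ act on $\Sigma$ by the presheaf structure. I also expect that a minor technical point will be the need to pass through a small-$\eps$ approximation, analogous to Lemma~\ref{lem:d-mfld-diag-regular-approx}, to ensure that the mesh $\strat{M}$ on $\strat{X}$ can indeed be reconstructed from its restrictions to $\shape{\strat{X}}$ and $\partial\strat{X}$; but this already appears in the unlabelled regularity and transfers without essential change to the labelled setting.
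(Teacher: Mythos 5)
Your proposal follows essentially the same route as the paper, whose proof of this lemma is literally the one-line remark that it is shown ``analogously to Lemma~\ref{lem:d-cptd-segal}'' using that $\MfldDiagExt^n(-)$ is an $n$-diagrammatic space; your expansion of that analogy (canonical-form equivalence, unlabelled regularity, the Kan fibration of Lemma~\ref{lem:d-cptd-forget-label-kan}, a cospan-shaped variant of Lemma~\ref{lem:lift-homotopy-limit-kan-fibration}) is exactly the intended argument. One correction to your third step: the fibrewise square is not obtained by gluing atomic covers, since $\shape{\strat{X}}$ is not a constructible submesh of $\strat{X}$ and so the atoms of $\strat{M}$ are not partitioned between $\strat{M}\!\restriction_{\shape{\strat{X}}}$ and $\strat{M}\!\restriction_{\partial\strat{X}}$; instead, the active bordism $\strat{X} \pto \shape{\strat{X}}$ of Construction~\ref{con:fct-embed-stype-active-closed} induces active bordisms of the refining diagrammatic meshes (cell by cell on the boundaries), so the \emph{isotopy} axiom of Definition~\ref{def:d-computad} makes both horizontal maps of the fibre square equivalences, whence that square is automatically a homotopy pullback --- the sheaf axiom is only needed to identify the boundary fibres as the limits over the subcells.
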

\begin{proof}
  This can be shown analogously to Lemma~\ref{lem:d-cptd-segal} using that $\MfldDiagExt^n(-)$ is an $n$-diagrammatic space
  by Proposition~\ref{prop:d-mfld-ext-equiv}.
\end{proof}

\subsection{\texorpdfstring{Free $(\infty, n)$-Categories via Manifold Diagrams}{Free Higher Categories via Manifold Diagrams}}

\begin{construction}
  For every $n$-diagrammatic computad $\Sigma$ we have constructed
  an $n$-diagrammatic space $\MfldDiag^n(\Sigma)$ of $n$-manifold diagrams
  labelled in $\Sigma$.
  By the equivalence $\Seg^\regular(\BTrussOpen{n}) \simeq \Seg(\FinOrd^n)$
  the $n$-diagrammatic space $\MfldDiag^n(\Sigma)$ induces an $n$-uple Segal
  space, and after applying the localisation functor $\Seg(\FinOrd^n) \to \CatN{n}$
  an $(\infty, n)$-category $\DiagFree(\Sigma)$.  
  This process extends to a functor
  $\DiagFree : \Cptd{n} \to \CatN{n}$.
\end{construction}

For an $n$-diagrammatic computad $\Sigma$ we call $\DiagFree(\Sigma)$
the \defn{free $(\infty, n)$-category} generated by $\Sigma$.
To justify this terminology, we use this section to demonstrate that
$\DiagFree$ has a right adjoint $\DiagForget$.

% Our construction of $\MfldDiag^n(\Sigma)$ is functorial in the presheaf
% $\Sigma$ on $\MeshCone^n$.
% Since by Theorem~\ref{thm:mfld-diag-labelled-cat} the presheaf
% $\MfldDiag^n(\Sigma)$ represents an $(\infty, n)$-category whenever
% $\Sigma$ is an $n$-diagrammatic computad,
% the functor $\MfldDiag^n(\cdot)$ factors through the full $\infty$-subcategories

% \[
%   \begin{tikzcd}
%     {\Cptd{n}} \ar[rr] \ar[d, hook] &
%     {} &
%     {\Seg^\regular(\BTrussOpen{n})} \ar[r, "\simeq"] \ar[d, hook] & 
%     {\Seg(\FinOrd^n)} \ar[d, hook] \ar[r] &
%     {\CatN{n}} \\
%     {\PSh(\MeshCone^n)} \ar[r, "\MfldDiag^n(\Sigma)"'] &
%     {\PSh((\BMeshClosed{n})^\op)} \ar[r, "\simeq"'] &
%     {\PSh(\BTrussOpen{n})} \ar[r] &
%     {\PSh(\FinOrd^\op)}
%   \end{tikzcd}
% \]

\begin{lemma}
  The following diagram of $\infty$-categories commutes:
  \[
    \begin{tikzcd}[column sep = huge]
      {\PSh(\BTrussDiag{n})} \ar[d, "\simeq"'] \ar[r, "\trussFC_!"] &
      {\PSh(\BTrussOpen{n})} \ar[d, "\simeq"]  \\
      {\PSh(\BMeshDiag{n})} \ar[r, "\MfldDiag^n(\cdot)", swap] &
      {\PSh(\BMeshClosed{n, \op})}
    \end{tikzcd}
  \]
\end{lemma}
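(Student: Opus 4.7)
The plan is to exploit that both functors in the square preserve small colimits, so the commutativity check reduces by the co-Yoneda lemma to representable presheaves. Colimit-preservation for $\trussFC_!$ is automatic since it is a left adjoint. For the bottom leg, I would use the formula $\MfldDiagExt^n(\Sigma)(\strat{X}) = \CatToSpace(\MeshDiagExt^n(\Sigma)(\strat{X}))$ and observe that $\MeshDiagExt^n(\Sigma)(\strat{X})$ is obtained by pulling back the right fibration representing $\Sigma$ along $\MeshDiagExt^n(\strat{X}) \to \BMeshDiag{n}$; this pullback corresponds on presheaves to precomposition, which is colimit-preserving, and $\CatToSpace$ is itself a left adjoint (to the inclusion $\Space \hookrightarrow \CatInfty$). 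Colimits in the target functor category are computed pointwise, so the bottom leg preserves colimits as well. Since $\MfldDiag^n$ and $\MfldDiagExt^n$ agree on the reflective subcategory of computads by Lemma~\ref{lem:d-cptd-canonical-equiv}, it is harmless to work with the extended version throughout.

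Fix $S \in \BTrussDiag{n}$, let $\strat{M} := \TrussOpenReal{S}$ be the corresponding diagrammatic $n$-mesh, and for a closed $n$-mesh $\strat{X}$ write $X := \TrussClosedNerve(\strat{X})$. The top-right path sends the representable $y(S)$ to $y(\trussFC(S)) \in \PSh(\BTrussOpen{n})$; under the vertical equivalence induced by geometric realisation and the duality $\BMeshOpen{n} \simeq \BMeshClosed{n, \op}$, this becomes the functor
\[ \strat{X} \longmapsto \BTrussOpen{n}(X^\dagger, \trussFC(S)) \simeq \BMeshClosed{n}(\strat{M}^\dagger, \strat{X}). \]
The left-bottom path sends $y(S)$ to $\MfldDiag^n(y(\strat{M}))(-)$. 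By Observation~\ref{obs:d-comb-truss-equiv} combined with Corollary~\ref{cor:d-mesh-diag-canonicalise}, evaluating at $\strat{X}$ yields $\CatToSpace(\TrussDiag^n(y(S))(X))$, the category whose objects are triples $(S', \alpha, \ell)$ consisting of a diagrammatic $n$-truss $S'$, an active bordism $\alpha: X^\dagger \pto S'$ in $\BTrussOpen{n}$, and a map $\ell: S' \to S$ in $\BTrussDiag{n}$.

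The crux is to compare the two via the natural projection
\[ p : \TrussDiag^n(y(S))(X) \longrightarrow \BTrussOpen{n}(X^\dagger, \trussFC(S)), \qquad (S', \alpha, \ell) \longmapsto \trussFC(\ell) \circ \alpha, \]
with discrete target, and show each fibre is weakly contractible by exhibiting a terminal object. Given $f: X^\dagger \to \trussFC(S)$, take its active/inert factorisation $f = \iota \circ \alpha$ in $\BTrussOpen{n}$ supplied by Corollary~\ref{cor:fct-emb-active-factorisation-open}. The inert embedding $\iota$ lifts canonically to an inert map $\hat{\iota}: S^* \hookrightarrow S$ in $\BTrussDiag{n}$ by restricting the labelling of $S$ along $\iota$, producing a candidate terminal object $(S^*, \alpha, \hat{\iota})$ in the fibre over $f$. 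For any other object $(S', \alpha', \ell')$ in the same fibre, factoring $\ell'$ in $\BTrussDiag{n}$ (lifted from $\BTrussOpen{n}$ via the closure property following Corollary~\ref{cor:d-comb-meshes-subcategory}) as $\ell' = \hat{\iota}' \circ \ell'_a$ produces, after applying $\trussFC$, another active/inert factorisation of $f$; essential uniqueness forces $\hat{\iota}' \simeq \hat{\iota}$, and the active part $\ell'_a$ supplies the essentially unique morphism $(S', \alpha', \ell') \to (S^*, \alpha, \hat{\iota})$. Contractibility of fibres then yields the natural equivalence $\CatToSpace(\TrussDiag^n(y(S))(X)) \simeq \BTrussOpen{n}(X^\dagger, \trussFC(S))$, closing the square on representables and hence, by colimit-preservation, on all presheaves.

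The hard part will be the last step: verifying cleanly that the active/inert factorisation lifts to $\BTrussDiag{n}$ with the required universal property and that an inert embedding of underlying open trusses canonically determines a sub-computad $S^* \hookrightarrow S$, while simultaneously tracking the tower of equivalences (between open meshes, trusses, their labelled variants, and $\MeshDiag^n$ versus $\MeshDiagExt^n$) sufficiently uniformly in $\strat{X}$ to conclude naturality of the resulting equivalence of functors $\BMeshClosed{n} \to \Space$.
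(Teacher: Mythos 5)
Your proposal is essentially sound, but it takes a genuinely different route from the paper. The paper never reduces to representables: it works with an arbitrary right fibration $\Sigma : \cat{C} \to \BTrussDiag{n}$ and identifies the two sides of the square as two pullbacks that are manifestly equivalent. Concretely, $\trussFC_!\Sigma$ is computed (by the background material on left Kan extensions via free cocartesian fibrations and the twisted arrow construction) as the free left fibration of $\Tw(\BTrussOpen{n}) \times_{\BTrussOpen{n}} \cat{C} \to \BTrussOpen{n, \op}$, while $\MeshDiagExt^n(\Sigma')$ is by definition the pullback of $\BConfOrtho{n} \to \BMeshOpen{n}$ along $\meshFC \circ \Sigma'$; the equivalence $\BConfOrtho{n} \simeq \Tw(\BTrussOpen{n})$ over $\BMeshOpen{n} \simeq \BTrussOpen{n}$ from the chapter on transverse configurations then identifies the two cocartesian fibrations in a single commuting cube, and taking free left fibrations finishes the proof. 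Your co-Yoneda reduction plus the fibrewise terminal-object argument replaces that global input by a hands-on verification of the pointwise left-Kan-extension formula, resting instead on the active/inert factorisation system of $\BTrussDiag{n}$ and the canonical labelled lift of inert maps (which the paper supplies in the proof of Lemma~\ref{lem:d-truss-diag-canonicalise}); since all categories involved are $1$-categories and a functor to a set splits its source into a disjoint union of fibres, the terminal-object argument does give the pointwise equivalence. What the paper's route buys is precisely the step you flag as hard: because both constructions are carried out as fibrations over the whole of $\BMeshClosed{n}$, functorially in $\Sigma$, naturality in $\strat{X}$ and in $\Sigma$ is automatic, whereas in your approach the pointwise equivalences $\CatToSpace(\TrussDiag^n(y(S))(X)) \simeq \BTrussOpen{n}(X^\dagger, \trussFC(S))$ must still be assembled into a map of fibrations over $\BMeshClosed{n}$, natural in $S$, before co-Yoneda applies; your projection $p$ does extend to such a map, but writing that out is comparable in length to the paper's entire proof.

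Two smaller points. First, you invoke Lemma~\ref{lem:d-cptd-canonical-equiv} to pass between $\MeshDiag^n$ and $\MeshDiagExt^n$, but that lemma is stated for $\Sigma$ an $n$-diagrammatic computad, and a representable $y(S)$ is not one (it does not send active bordisms to equivalences); you should instead pull back Corollary~\ref{cor:d-mesh-diag-canonicalise} directly, whose retraction and homotopy lift along any right fibration. Second, your justification that the bottom leg preserves colimits is too quick as stated ("pullback corresponds to precomposition"): the clean statement is that $\CatToSpace$ of the pullback of the right fibration $\cat{C} \to \BMeshDiag{n}$ along $\MeshDiagExt^n(\strat{X}) \to \BMeshDiag{n}$ computes the colimit of the restricted presheaf over $\MeshDiagExt^n(\strat{X})$, i.e.\ a weighted colimit of $\Sigma$, which is colimit-preserving in $\Sigma$ for each fixed $\strat{X}$. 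Both gaps are fillable, but neither is cosmetic.
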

\begin{proof}
  Let $\Sigma : \cat{C} \to \BTrussDiag{n}$ be a right fibration which represents
  a space-valued presheaf on $\BTrussDiag{n}$.
  The equivalence $\PSh(\BTrussDiag{n}) \to \PSh(\BMeshDiag{n})$ sends $\Sigma$ to
  the right fibration $\Sigma' : \cat{C}' \to \BMeshDiag{n}$ constructed
  as the pullback of simplicial sets  
  \[
    \begin{tikzcd}
      {\cat{C}'} \ar[r, "\simeq"] \ar[d, "\Sigma'"'] \pullbackcorner &
      {\cat{C}} \ar[d, "\Sigma"] \\
      {\BMeshDiag{n}} \ar[r, "\simeq"'] &
      {\BTrussDiag{n}}
    \end{tikzcd}
  \]
  We can then form a commutative cube of simplicial sets
  \[
  	\begin{tikzcd}[
      column sep={4.5em,between origins},
      row sep={4em,between origins},
    ]
  		% Top back row
  		& {\Tw(\BTrussOpen{n}) \times_{\BTrussOpen{n}} \cat{C}} &&
      {\Tw(\BTrussOpen{n})} \\
  		% Top front row
  		{\MeshDiagExt^n(\Sigma')} && {\BConfOrtho{n}} \\
  		% Bottom back row
  		& {\cat{C}} && {\BTrussOpen{n}} \\
  		% Bottom front row
  		{\cat{C}'} && {\BMeshOpen{n}}
  		% Front square
  		\arrow[from=2-1, to=4-1]
  		\arrow[from=4-1, to=4-3, "\meshFC \circ \Sigma'"]
  		\arrow[from=1-4, to=3-4]
  		% Back square
  		\arrow[from=1-2, to=3-2]
  		\arrow[from=3-2, to=3-4, "\trussFC \circ \Sigma"{pos = 0.2}]
  		\arrow[from=1-2, to=1-4]
  		% Diagonals
  		\arrow[from=2-1, to=1-2, dashed]
  		\arrow[from=4-1, to=3-2]
  		\arrow[from=4-3, to=3-4]
  		\arrow[from=2-3, to=1-4]
  		\arrow[crossing over, from=2-1, to=2-3]
  		\arrow[crossing over, from=2-3, to=4-3]
  	\end{tikzcd}
  \]
  in which the front and back square are pullback squares of simplicial sets.
  The dashed diagonal map is induced by the universal property of the pullback.
  Since both $\BConf{n} \to \BMeshOpen{n}$ and $\Tw(\BTrussOpen{n}) \to \BTrussOpen{n}$
  are categorical fibrations, the front and back square are homotopy pullback squares.
  The solid diagonal maps are categorical equivalences, it follows that also the
  dashed map is a categorical equivalence.
  This induced equivalence then is the top horizontal map in the diagram
  \[
    \begin{tikzcd}
      {\MeshDiagExt^n(\Sigma')} \ar[r, "\simeq"] \ar[d] &
      {\Tw(\BTrussOpen{n}) \times_{\BTrussOpen{n}} \cat{C}} \ar[d] \\
      {\BMeshClosed{n}} \ar[r, "\simeq"'] &
      {\BTrussOpen{n, \op}}
    \end{tikzcd}
  \]
  where the vertical maps are cocartesian fibrations.
  The free left fibration associated to $\MeshDiagExt^n(\Sigma') \to \BMeshClosed{n}$ represents the functor $\MfldDiag^n(\Sigma')$.
  Meanwhile, the free left fibration induced by  
  $\Tw(\BTrussOpen{n}) \times_{\BTrussOpen{n}} \cat{C} \to \BTrussOpen{n, \op}$  
  represents the presheaf $\trussFC_!(\Sigma)$.
  It follows that the square in the claim commutes.
\end{proof}

\begin{proposition}\label{prop:d-free-right-adjoint}
  The functor $\DiagFree : \Cptd{n} \to \CatN{n}$ has a right adjoint.
\end{proposition}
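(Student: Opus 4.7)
The plan is to realise $\DiagFree$ as the restriction of a larger adjunction between presheaf categories. First I would use the preceding lemma to identify $\MfldDiag^n(-)$ on presheaves with the left Kan extension $\trussFC_!$, so that
\[
\DiagFree \;\simeq\; L \circ \trussFC_! \circ \iota,
\]
where $\iota : \Cptd{n} \hookrightarrow \PSh(\BTrussDiag{n})$ is the inclusion of the reflective subcategory of computads, and $L$ is the reflective localisation of $\PSh(\BTrussOpen{n})$ onto $\CatN{n}$ (which exists since $\CatN{n}$ is a reflective subcategory, via the equivalence $\Seg^\regular(\BTrussOpen{n}) \simeq \Seg(\FinOrd^n)$ together with the globularity and completeness conditions).

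The functor $\trussFC_!$ has a right adjoint $\trussFC^*$ given by restriction along $\trussFC : \BTrussDiag{n} \to \BTrussOpen{n}$. Composing adjunctions produces an adjunction between presheaf categories whose left adjoint is $L \circ \trussFC_!$ and whose right adjoint is $\trussFC^* \circ \iota'$, where $\iota' : \CatN{n} \hookrightarrow \PSh(\BTrussOpen{n})$ is the inclusion. The main step is to show that this right adjoint factors through $\iota$, i.e.\ that for every $(\infty, n)$-category $\cat{C}$ the presheaf $\trussFC^*(\cat{C})$ on $\BTrussDiag{n}$ is a diagrammatic $n$-computad. Once this is established, the adjunction on presheaf categories automatically restricts to $\DiagFree \dashv \DiagForget$, with $\DiagForget := \trussFC^* \circ \iota'$.

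To verify that $\trussFC^*(\cat{C})$ is a computad I would check the two conditions of Definition~\ref{def:d-computad}. The sheaf condition is essentially immediate: the atomic covering diagram of a diagrammatic $n$-mesh $\strat{M}$ maps under $\trussFC$ to the atomic covering diagram of the underlying open $n$-mesh in $\BTrussOpen{n}$, and the regularity together with the Segal condition on $\cat{C}$ forces $\trussFC^*(\cat{C})$ to send it to a limit. The isotopy condition is the main technical step. By Proposition~\ref{prop:d-comb-meshes-iff-submersive} any active bordism of diagrammatic $n$-meshes $f : \strat{M} \pto \strat{N}$ is submersive, so for every subatom $\strat{A} \hookrightarrow \strat{M}$ the active part of the induced factorisation normalises to an equivalence of atoms. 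I would combine this atom-by-atom comparison with the Segal condition on $\cat{C}$, which expresses $\cat{C}(\trussFC(\strat{M}))$ and $\cat{C}(\trussFC(\strat{N}))$ as limits over their respective atomic coverings, to conclude that $\cat{C}(\trussFC(\strat{N})) \to \cat{C}(\trussFC(\strat{M}))$ is an equivalence.

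The main obstacle will be the isotopy condition, whose proof requires carefully threading together the submersive characterisation of active bordisms in $\BMeshDiag{n}$ with the regularity and Segal conditions on $\cat{C}$, matching atomic covers on the source and target through the induced active/inert factorisations. The remaining ingredients — the base adjunction $\trussFC_! \dashv \trussFC^*$, the composition of reflective localisations, and the formal restriction of an adjunction once one side preserves the subcategory — are essentially bookkeeping once the atom-level comparison is in place.
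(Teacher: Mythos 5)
Your identification of $\DiagFree$ and the base adjunction $\trussFC_! \dashv \trussFC^*$ are correct, and your verification of the sheaf condition for $\trussFC^*(\cat{C})$ agrees with the paper. The gap is the claim that $\trussFC^*(\cat{C})$ also satisfies the isotopy condition: it does not, so $\trussFC^* \circ \iota'$ does not land in $\Cptd{n}$ and cannot be the right adjoint. The functor $\trussFC$ forgets the dimension labels, and those labels record the intended algebraic codimension of each stratum --- information that a general $(\infty, n)$-category does not respect. Concretely, for $n = 1$ let $\strat{A}$ be the diagrammatic $1$-truss with one singular point and two regular strata, all labelled $1$; it refines the trivial $1$-manifold diagram and admits an active degeneracy bordism $\strat{A} \pto \R$ onto the atom with a single regular stratum labelled $1$. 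Under the identification $\BTrussOpen{1} \simeq \FinOrd$ this bordism covers the map $\ord{1} \to \ord{0}$, so the isotopy condition would require the degeneracy $\cat{C}(\ord{0}) \to \cat{C}(\ord{1})$ --- the map sending an object to its identity $1$-morphism --- to be an equivalence, which fails unless $\cat{C}$ is a space.

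Your proposed repair via Proposition~\ref{prop:d-comb-meshes-iff-submersive} does not close this gap: submersivity reduces the atom-by-atom comparison for an active bordism to an equivalence between \emph{normal forms}, but the normalisation maps are themselves degeneracy bordisms of labelled trusses, and after applying $\trussFC$ they become degeneracy bordisms of unlabelled trusses which $\cat{C}$ does not invert --- these are exactly the identity-insertion maps above. The paper instead defines $\DiagForget = L^* \circ L_* \circ \trussFC^* \circ \iota'$, where $L^* \circ L_*$ is the coreflection onto presheaves on $\BTrussDiag{n}$ inverting active bordisms; this replaces $\trussFC^*(\cat{C})$ by its maximal subobject satisfying the isotopy condition (cutting each $\trussFC^*(\cat{C})(A, \ell)$ down to the cells whose algebraic codimension is bounded by the labels). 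The substantive work, absent from your proposal, is then to show that this coreflection preserves the sheaf condition, which the paper does by an induction over active maps between atoms, degeneracy bordisms onto atoms, general active maps out of atoms, and finally arbitrary active bordisms.
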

\begin{proof}
  We write $\Act$ for the wide subcategory of active maps in $\BTrussDiag{n}$ and denote the
  localisation map by $L : \BTrussDiag{n} \to \BTrussDiag{n}[\Act^{-1}]$.
  We then consider the chain of adjunctions
  \begin{equation}\label{eq:d-free-forget-long-chain}
    \begin{tikzcd}
    	{\PSh(\BTrussDiag{n})} & {\PSh(\BTrussDiag{n}[\Act^{-1}])} & {\PSh(\BTrussDiag{n})} & {\PSh(\BTrussOpen{n})} & {\CatN{n}}
    	\arrow[""{name=0, anchor=center, inner sep=0}, "{L_!}", shift left=2, from=1-1, to=1-2]
    	\arrow[""{name=1, anchor=center, inner sep=0}, "{L^*}", shift left=2, hook, from=1-2, to=1-3]
    	\arrow[""{name=2, anchor=center, inner sep=0}, "{L_*}", shift left=2, from=1-3, to=1-2]
    	\arrow[""{name=3, anchor=center, inner sep=0}, "{L^*}", shift left=2, hook, from=1-2, to=1-1]
    	\arrow[""{name=4, anchor=center, inner sep=0}, "{\trussFC_!}", shift left=2, from=1-3, to=1-4]
    	\arrow[""{name=5, anchor=center, inner sep=0}, "{\trussFC^*}", shift left=2, from=1-4, to=1-3]
    	\arrow[""{name=6, anchor=center, inner sep=0}, "{}", shift left=2, from=1-4, to=1-5]
    	\arrow[""{name=7, anchor=center, inner sep=0}, hook, "{}", shift left=2, from=1-5, to=1-4]
    	\arrow["\dashv"{anchor=center, rotate=-90}, draw=none, from=0, to=3]
    	\arrow["\dashv"{anchor=center, rotate=-90}, draw=none, from=1, to=2]
    	\arrow["\dashv"{anchor=center, rotate=-90}, draw=none, from=4, to=5]
    	\arrow["\dashv"{anchor=center, rotate=-90}, draw=none, from=6, to=7]
    \end{tikzcd}
  \end{equation}

  Suppose that $\Sigma$ is an $n$-diagrammatic computad,
  then $\Sigma$ sends active maps to equivalences by the isotopy axiom.
  Therefore, the adjunction unit $\Sigma \to (L^* \circ L_!)(\Sigma)$ is an equivalence.
  In particular the top row of~(\ref{eq:d-free-forget-long-chain}) restricts to
  the functor $\DiagFree : \Cptd{n} \to \CatN{n}$.

  It remains to show that the composite $L^* \circ L_* \circ \trussFC^*$  
  sends $n$-diagrammatic spaces to $n$-diagrammatic computads;
  then the restriction defines the right adjoint $\DiagForget$ of $\DiagFree$.
  Let $\psh{F}$ be a presheaf on $\BTrussOpen{n}$ that is an $n$-diagrammatic space.
  Then the Segal condition directly implies the sheaf condition for $\trussFC^* \psh{F}$.
  However, $\trussFC^* \psh{F}$ does not in general satisfy the isotopy condition
  and therefore fails to be a diagrammatic computad.

  We rectify this by applying the coreflector $L^* \circ L_*$
  which sends $\trussFC^* \psh{F}$ to its maximal subobject
  $\Sigma \hookrightarrow \trussFC^* \psh{F}$ in $\PSh(\BTrussDiag{n})$
  so that $\Sigma$ satisfies the isotopy condition.
  We verify that $\Sigma$ still satisfies the sheaf condition and therefore
  is a diagrammatic computad.

  Let $\Sigma'$ be the smallest subobject of $\trussFC^* \psh{F}$ that contains
  $\Sigma$ so that $\Sigma'$ satisfies the sheaf condition.
  When we show that $\Sigma'$ also satisfies the isotopy condition, we must have
  $\Sigma \simeq \Sigma'$ because $\Sigma$ was chosen to be the maximal
  such subobject of $\trussFC^* \psh{F}$.
  We proceed to show that $\Sigma'$ sends active bordisms to equivalences
  in increasing generality.
  \begin{enumerate}
    \item When $A \in \BTrussDiag{n}$ is an atom, the inclusion
    $\Sigma(A) \hookrightarrow \Sigma'(A)$
    must already be an equivalence. Whenever $\alpha : A \pto B$ is an active bordism between atoms, the induced map $\Sigma(\alpha) : \Sigma(B) \to \Sigma(A)$ is an equivalence
    since $\Sigma$ satisfies the isotopy condition.
    It then follows that $\Sigma'(\alpha)$ must also be an equivalence since it fits into the diagram
    \[
      \begin{tikzcd}
        {\Sigma(B)} \ar[r, "\Sigma(\alpha)"] \ar[d, "\simeq"'] &
        {\Sigma(A)} \ar[d, "\simeq"] \\
        {\Sigma'(B)} \ar[r, "\Sigma'(\alpha)"'] &
        {\Sigma'(A)}
      \end{tikzcd}
    \]
    \item Let $\alpha : S \pto A$ be a degeneracy bordism in $\BTrussDiag{n}$
    where $A$ is an atom. We cover $S$ with a family of atoms $\{ B_i \}_i$
    and take the active/inert factorisation
    \[
      \begin{tikzcd}
        {B_i} \ar[r, hook, mid vert] \ar[d, "\alpha_i"', mid vert] &
        {S} \ar[d, "\alpha", mid vert] \\
        {A_i} \ar[r, hook, mid vert] &
        {A}
      \end{tikzcd}
    \]
    The bordism $\alpha_i : B_i \pto A_i$ is again a degeneracy
    bordism and so the diagrammatic $n$-truss $A_i$ is an atom.
    Therefore, $\alpha_i$ is an active map between atoms and so by the previous
    step we know that the induced map $\Sigma'(\alpha_i) : \Sigma'(A_i) \to \Sigma'(B_i)$
    is an equivalence.
    Since $\Sigma'$ satisfies the sheaf condition by construction, the map $\Sigma'(\alpha)$
    decomposes as the limit
    \[
      \begin{tikzcd}[column sep = huge]
        {\Sigma'(A)} \ar[r, "\Sigma'(\alpha)"] \ar[d, "\simeq"'] &
        {\Sigma'(S)} \ar[d, "\simeq"] \\
        {\lim_i \Sigma'(A_i)} \ar[r, "\lim_i \Sigma'(\alpha_i)"'] &
        {\lim_i \Sigma'(B_i)}
      \end{tikzcd}
    \]
    A limit of equivalences is again an equivalence and so $\Sigma'(\alpha)$
    is an equivalence as well.

    \item Now let $\alpha : A \pto S$ be any active map in $\BTrussDiag{n}$
    so that $A$ is an atom. By the definition of $\BTrussDiag{n}$ we then have
    an equivalence between the normal forms    
    \[
      \begin{tikzcd}
        {A} \ar[r, "\alpha", mid vert] \ar[d, mid vert] &
        {S} \ar[d, mid vert] \\
        {\nf(A)} \ar[r, "\equiv"', mid vert, dashed] &
        {\nf(S)}
      \end{tikzcd}
    \]
    In particular $\nf(S)$ is also an atom and the vertical maps
    in the diagram are degeneracy bordisms into an atom.
    Applying the functor $\Sigma'$ we obtain a diagram of spaces
    \[
      \begin{tikzcd}
        {\Sigma'(\nf(S))} \ar[r, "\simeq"] \ar[d, "\simeq"'] &
        {\Sigma'(\nf(A))} \ar[d, "\simeq"] \\
        {\Sigma'(S)} \ar[r] &
        {\Sigma'(A)}
      \end{tikzcd}
    \]
    where the vertical maps are equivalences by the previous step.
    Therefore, $\Sigma'(\alpha)$ is an equivalence.

    \item Finally, let $\alpha : R \pto S$ be any active bordism in $\BTrussDiag{n}$.
    Then the atoms $\{ A_i \pto R \}_i$ which cover $R$ induce a covering
    $\{ S_i \pto S \}_i$ via the active/inert factorisation:
    \[
      \begin{tikzcd}
        {A_i} \ar[r, hook, mid vert] \ar[d, "\alpha_i"', mid vert] &
        {R} \ar[d, "\alpha", mid vert] \\
        {S_i} \ar[r, hook, mid vert] &
        {S}
      \end{tikzcd}
    \]
    Because $\Sigma'$ satisfies the sheaf condition, the map
    $\Sigma'(\alpha)$ is equivalent to its decomposition for the
    chosen covers 
    \[
      \begin{tikzcd}[column sep = large]
        {\Sigma'(S)} \ar[r, "\Sigma'(\alpha)"] \ar[d, "\simeq"'] &
        {\Sigma'(R)} \ar[d, "\simeq"] \\
        {\lim_{i} \Sigma'(S_i)} \ar[r, "\lim_i \Sigma'(\alpha_i)"'] &
        {\lim_{i} \Sigma'(A_i)}
      \end{tikzcd}
    \]
    By the previous step each $\Sigma'(\alpha_i)$ is an equivalence.
    It then follows that $\Sigma'(\alpha)$ must be an equivalence as well.
  \end{enumerate}
  The presheaf $\Sigma$ on $\BTrussDiag{n}$ resulted from applying the composite
  functor $L^* \circ L_* \circ \trussFC^*$ to the presheaf $\psh{F}$
  representing an $n$-diagrammatic space.
  Therefore, the bottom row of~(\ref{eq:d-free-forget-long-chain}) restricts
  to a functor $\DiagForget : \CatN{n} \to \Cptd{n}$ that is right adjoint
  to $\DiagFree$.
\end{proof}

% \begin{observation}
%   Let us unpack the action of the functor $\DiagForget : \CatN{2} \to \Cptd{2}$
%   on some $(\infty, n)$-category $\cat{C}$ presented as an $n$-

% \end{observation}

\begin{observation}
  Let us unpack what the functor $\DiagForget : \CatN{n} \to \Cptd{n}$ does
  on some $(\infty, n)$-category $\cat{C}$ presented as an $n$-diagrammatic space.
  Suppose that $A \in \BTrussOpen{n}$ is an (unlabelled) atom and
  $a : A \to \cat{C}$ a cell of shape $A$ in $\cat{C}$.
  The \defn{algebraic codimension} of the cell $a$ is the maximum
  $0 \leq d \leq n$ such that $a$ factors through a degeneracy bordism 
  \[
    \begin{tikzcd}
      {A} \ar[d, mid vert] \ar[r] & {\cat{C}} \\
      {B} \ar[ur, dashed]
    \end{tikzcd}
  \]
  where $B$ is an atom of singular depth $d$.
  When $A$ is equipped with labels $\ell$ in $\ord{n}$ so that $(A, \ell)$
  is a diagrammatic $n$-truss, then the maps $(A, \ell) \to \DiagForget(\cat{C})$
  correspond to maps $A \to \cat{C}$ with the restriction that each for each
  subatom $B \pto A$ the algebraic codimension of the induced subcell in $\cat{C}$
  is bounded above by the label.
  This restriction is enforced by the coreflector $L^* \circ L_*$ applied
  in Proposition~\ref{prop:d-free-right-adjoint}.
  In particular, every cell in $\cat{C}$ has multiple versions as a generator
  in the diagrammatic computad $\unstrat(\cat{C})$.
  For instance, an object $x$ of $\cat{C}$ is represented as a $0$-cell,
  as the identity $1$-cell $\id : x \to x$, as an identity $2$-cell
  $\id : \id_x \to \id_x$, etc.
\end{observation}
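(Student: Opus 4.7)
The plan is to unwind the construction $\DiagForget = L^* \circ L_* \circ \trussFC^*$ from the proof of the right adjoint and identify the resulting subobject $\Sigma \hookrightarrow \trussFC^* \psh{F}$ concretely, where $\psh{F} : \BTrussOpen{n, \op} \to \Space$ is the $n$-diagrammatic space representing $\cat{C}$.

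First I would spell out that a point of $\DiagForget(\cat{C})(A, \ell)$ for a diagrammatic $n$-truss $(A, \ell)$ is a section $\sigma \in (\trussFC^* \psh{F})(A, \ell) \simeq \psh{F}(A)$, i.e.\ a map $a : A \to \cat{C}$ of the underlying shape, lying in the maximal subobject $\Sigma$ of $\trussFC^* \psh{F}$ that satisfies the isotopy condition. Because the earlier proof established that $\Sigma$ is a diagrammatic $n$-computad and in particular satisfies the sheaf condition, membership reduces by atomic covering to a condition checked subatom-by-subatom: $\sigma$ lies in $\Sigma(A, \ell)$ iff for every subatom $B \hookrightarrow A$ the restriction $\sigma|_B$ lies in $\Sigma(B, \ell|_B)$. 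This reduces the whole statement to the case where $A$ itself is an atom.

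For atomic $(B, \ell_B)$ I would use the isotopy condition to describe $\Sigma(B, \ell_B) \subseteq \psh{F}(B)$ as the largest subspace on which every active bordism out of $(B, \ell_B)$ in $\BTrussDiag{n}$ acts as an equivalence (and whose atom-by-atom assembly still satisfies the sheaf condition). The key technical input is then the characterisation of active bordisms in $\BTrussDiag{n}$ via submersivity: any active $(B, \ell_B) \pto (B', \ell')$ in $\BTrussDiag{n}$ is a submersive bordism of diagrammatic trusses, so its normal form is an equivalence of atoms, and by progressivity $\ell'(\bot_{B'}) = \sdepth(B')$. Consequently the atom label $\ell_B(\bot_B) = k$ constrains which unlabelled degeneracies $B \pto B'$ lift to active bordisms in $\BTrussDiag{n}$: precisely those with $\sdepth(B') \leq k$.

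From here, I would match this with the definition of algebraic codimension. A cell $a : B \to \cat{C}$ has algebraic codimension $d$ iff $d$ is the largest singular depth of an atom $B'$ through which $a$ factors via a degeneracy $B \pto B'$ in $\BTrussOpen{n}$. Combining with the previous step, $a$ lies in $\Sigma(B, \ell_B)$ iff every such degeneracy witness $B'$ is admissible as a target of an active map in $\BTrussDiag{n}$, i.e.\ iff $\sdepth(B') \leq k$ for every witness, which is exactly the bound $d \leq k = \ell_B(\bot_B)$. The main obstacle will be this last matching step: one must verify that the abstract maximality of $\Sigma$ (as produced by the coreflector $L^* L_*$) really coincides with the concrete codimension bound, and for this the two essential ingredients are the submersive characterisation of $\BTrussDiag{n} \simeq \BMeshDiag{n}$ and the progressivity of diagrammatic atoms, both of which ensure that the label at the minimum element is invariant along active equivalences and therefore well-defined as a constraint on the whole active class of $a$.
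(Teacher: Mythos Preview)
The paper offers no proof here; the observation is justified only by the sentence ``This restriction is enforced by the coreflector $L^* \circ L_*$.'' Your outline goes considerably further and has the right shape, but there is a concrete error in the key step. You assert that for a diagrammatic atom $(B,\ell_B)$ with $\ell_B(\bot_B)=k$, an unlabelled degeneracy $B\pto B'$ lifts to an active bordism in $\BTrussDiag{n}$ precisely when $\sdepth(B')\le k$. But progressivity forces $\sdepth(B)=\ell_B(\bot_B)=k$, the labelling on the source of a degeneracy is determined by the target (Lemma~\ref{lem:fct-ref-open-deg-map-lift}), and submersivity then requires the normal forms to agree; so only $\sdepth(B')=k$ is possible, not $\le k$. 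Your deduction that ``$a\in\Sigma(B,\ell_B)$ iff every degeneracy witness is admissible'' therefore does not follow from the isotopy condition in the way you suggest.

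More seriously, describing $\Sigma(B,\ell_B)$ as ``the largest subspace on which outgoing active bordisms act by equivalences'' is not what the coreflector $L^*L_*$ computes: it is a limit over a comma category in the localisation, and active maps \emph{into} $(B,\ell_B)$ constrain it as well. A direct check already at $n=1$ exposes the problem. The labelled atom $([1],(1,1,1))$ lies in $\BTrussDiag{1}$ (it refines the trivial $1$-manifold diagram), and the active degeneracy $([1],(1,1,1))\pto([0],(1))$ forces $\Sigma([1],(1,1,1))$ to consist exactly of identity $1$-morphisms, whereas the stated bound ``algebraic codimension $\le 1$'' would allow all of $\psh{F}([1])$. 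This suggests either that the inequality in the observation is oriented the wrong way or that the informal description requires more care than either you or the paper supplies. You are right to flag the matching step as the crux; making it precise would require an explicit computation of $L_*$ on atoms rather than the heuristic you propose.
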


% \bibliographystyle{plainurl}
% \bibliography{paper}
\bibliography{paper}{}
\bibliographystyle{alphaurl}

\end{document}